\DeclareRobustCommand{\gobblefive}[5]{}
\newcommand*{\SkipTocEntry}{\addtocontents{toc}{\gobblefive}}
\newlist{enumth}{enumerate}{1}
\setlist[enumth]{label=\emph{(\arabic*)}, ref=(\arabic*)}
\DeclareMathSymbol{A}{\mathalpha}{operators}{`A}%
\DeclareMathSymbol{B}{\mathalpha}{operators}{`B}%
\DeclareMathSymbol{C}{\mathalpha}{operators}{`C}%
\DeclareMathSymbol{D}{\mathalpha}{operators}{`D}%
\DeclareMathSymbol{E}{\mathalpha}{operators}{`E}%
\DeclareMathSymbol{F}{\mathalpha}{operators}{`F}%
\DeclareMathSymbol{G}{\mathalpha}{operators}{`G}%
\DeclareMathSymbol{H}{\mathalpha}{operators}{`H}%
\DeclareMathSymbol{I}{\mathalpha}{operators}{`I}%
\DeclareMathSymbol{J}{\mathalpha}{operators}{`J}%
\DeclareMathSymbol{K}{\mathalpha}{operators}{`K}%
\DeclareMathSymbol{L}{\mathalpha}{operators}{`L}%
\DeclareMathSymbol{M}{\mathalpha}{operators}{`M}%
\DeclareMathSymbol{N}{\mathalpha}{operators}{`N}%
\DeclareMathSymbol{O}{\mathalpha}{operators}{`O}%
\DeclareMathSymbol{P}{\mathalpha}{operators}{`P}%
\DeclareMathSymbol{Q}{\mathalpha}{operators}{`Q}%
\DeclareMathSymbol{R}{\mathalpha}{operators}{`R}%
\DeclareMathSymbol{S}{\mathalpha}{operators}{`S}%
\DeclareMathSymbol{T}{\mathalpha}{operators}{`T}%
\DeclareMathSymbol{U}{\mathalpha}{operators}{`U}%
\DeclareMathSymbol{V}{\mathalpha}{operators}{`V}%
\DeclareMathSymbol{W}{\mathalpha}{operators}{`W}%
\DeclareMathSymbol{X}{\mathalpha}{operators}{`X}%
\DeclareMathSymbol{Y}{\mathalpha}{operators}{`Y}%
\DeclareMathSymbol{Z}{\mathalpha}{operators}{`Z}%
\renewcommand{\emptyset}{\varnothing}
\newcommand{\respup}[1]{\emph{(}\resp {#1}\emph{)}}
\renewcommand{\leq}{\leqslant}
\renewcommand{\geq}{\geqslant}
\newcommand{\uple}[1]{\text{\boldmath${#1}$}}
\def\setminus{\mathchoice
    {\mathbin{\vrule height .62ex width 1.61ex depth -.38ex}}
    {\mathbin{\vrule height .62ex width 1.61ex depth -.38ex}}
    {\mathbin{\vrule height .50ex width 0.85ex depth -.28ex}}
    {\mathbin{\vrule height .20ex width 0.570ex depth -.24ex}}
}
\let\oldcal\mathcal
\renewcommand{\mathcal}{\mathscr}
\newcommand{\Cc}{\mathbf{C}}
\newcommand{\Nn}{\mathbf{N}}
\newcommand{\Aa}{\mathbf{A}}
\newcommand{\Dd}{\mathbf{D}}
\newcommand{\Zz}{\mathbf{Z}}
\newcommand{\Pp}{\mathbf{P}}
\newcommand{\Rr}{\mathbf{R}}
\newcommand{\Gg}{\mathbf{G}}
\newcommand{\Gm}{\mathbf{G}_{m}}
\newcommand{\Hh}{\mathbf{H}}
\newcommand{\Qq}{\mathbf{Q}}
\newcommand{\Fp}{{\mathbf{F}_p}}
\newcommand{\Fpt}{{\mathbf{F}^\times_p}}
\newcommand{\Ff}{\mathbf{F}}
\newcommand{\bQl}{\overline{\Qq}_{\ell}}
\newcommand{\Tt}{\mathbf{T}}
\newcommand{\mmu}{\boldsymbol{\mu}}
\newcommand{\mcV}{\mathscr{V}}
\newcommand{\mcO}{\mathscr{O}}
\newcommand{\mcH}{\mathscr{H}}
\newcommand{\Lb}{\mathscr{L}}
\newcommand{\proba}{\mathbf{P}}
\newcommand{\expect}{\mathbf{E}}
\def\sfrac#1#2{{#1}/{#2}}
\def\loccit{loc.\kern3pt cit.{}\xspace}
\def\cf{see\kern.3em}
\def\Cf{See\kern.3em}
\def\eg{e.g.\kern.3em}
\def\ie{i.e.,\ }
\def\resp{\text{resp.}\kern.3em}
\newcommand{\mods}[1]{\,(\mathrm{mod}\,{#1})}
\newcommand{\what}{\widehat}
\DeclareMathOperator{\frob}{Fr}
\DeclareMathOperator{\hypk}{Kl}
\newcommand{\ra}{\rightarrow}
\newcommand{\lra}{\longrightarrow}
\newcommand{\injecte}{\hookrightarrow}
\newcommand{\fleche}[1]{\stackrel{#1}{\lra}}
\DeclareMathOperator{\Spec}{Spec}
\DeclareMathOperator{\rank}{rank}
\DeclareMathOperator{\Pic}{Pic}
\DeclareMathOperator{\Jac}{Jac}
\DeclareMathOperator{\Imag}{Im}
\DeclareMathOperator{\syms}{Sym^{2}}
\DeclareMathOperator{\Frob}{\mathrm{Fr}}
\DeclareMathOperator{\Fr}{\mathrm{Fr}}
\DeclareMathOperator{\Kl}{\mathrm{Kl}}
\DeclareMathOperator{\tr}{\mathrm{Tr}}
\DeclareMathOperator{\Gal}{Gal}
\DeclareMathOperator{\supp}{supp}
\DeclareMathOperator{\Tr}{Tr}
\DeclareMathOperator{\Hom}{Hom}
\DeclareMathOperator{\End}{End}
\DeclareMathOperator{\swan}{swan}
\DeclareMathOperator{\Drop}{drop}
\DeclareMathOperator{\codim}{codim}
\DeclareMathOperator{\ft}{FT}
\DeclareMathOperator{\Ad}{Ad}
\DeclareMathOperator{\dual}{D}
\DeclareMathOperator{\ccodim}{ccodim}
\DeclareMathOperator{\FS}{FS}
\newcommand{\eps}{\varepsilon}
\renewcommand{\rho}{\varrho}
\renewcommand{\tilde}{\widetilde}
\DeclareMathOperator{\hH}{H}
\DeclareMathOperator{\SL}{\mathbf{SL}}
\DeclareMathOperator{\GL}{\mathbf{GL}}
\DeclareMathOperator{\Sp}{\mathbf{Sp}}
\DeclareMathOperator{\SO}{\mathbf{SO}}
\DeclareMathOperator{\Ort}{\mathbf{O}}
\DeclareMathOperator{\SU}{\mathbf{SU}}
\DeclareMathOperator{\Un}{\mathbf{U}}
\DeclareMathOperator{\USp}{\mathbf{USp}}
\DeclareMathOperator{\bfG}{\mathbf{G}}
\newcommand{\sheaf}[1]{\mathscr{{#1}}}
\DeclareMathSymbol{\gena}{\mathord}{letters}{"3C}
\DeclareMathSymbol{\genb}{\mathord}{letters}{"3E}
\def\dblsum{\mathop{\sum \sum}\limits}
\theoremstyle{plain}
\newtheorem{theorem}{Theorem}[chapter]
\newtheorem*{theorem*}{Theorem}
\newtheorem{lemma}[theorem]{Lemma}
\newtheorem{corollary}[theorem]{Corollary}
\newtheorem{proposition}[theorem]{Proposition}
\newtheorem{theointro}{Theorem}
\theoremstyle{remark}
\theoremstyle{definition}
\newtheorem{definition}[theorem]{Definition}
\newtheorem*{question}{Question}
\newtheorem{example}[theorem]{Example}
\newtheorem{remark}[theorem]{Remark}
\newtheorem{remarkintro}{Remark}
\newtheorem{exampleintro}{Example}
\newcommand{\abs}[1]{\left\lvert#1\right\rvert}
\newcommand{\mcL}{\mathscr{L}}
\newcommand{\mcLd}{\mathscr{L}^{\Delta}}
\newcommand{\mcS}{\mathscr{S}}
\newcommand{\mcT}{\mathscr{T}}
\newcommand{\mcF}{\mathscr{F}}
\newcommand{\mcK}{\mathscr{K}}
\newcommand{\mcG}{\mathscr{G}}
\newcommand{\mcE}{\mathscr{E}}
\newcommand{\mcU}{\mathscr{U}}
\newcommand{\mcX}{\mathscr{X}}
\newcommand{\mcY}{\mathscr{Y}}
\DeclareMathOperator{\mfm}{\mathfrak{m}}
\newcommand{\rmH}{\mathrm{H}}
\newcommand{\mcP}{\mathscr{P}}
\renewcommand{\geq}{\geqslant}
\renewcommand{\leq}{\leqslant}
\newcommand{\ov}[1]{\overline{#1}}
\newcommand\sumsum{\mathop{\sum\sum}\limits}
\newcommand{\lhat}{\widehat{L}}
\DeclareMathOperator{\wtr}{wt}
\newcommand{\Ga}{\mathbf{G}_{a}}
\newcommand{\Qlb}{{\ov{\Qq}_{\ell}}}
\newcommand{\Qellb}[1]{{\ov{\Qq}_{{#1}}}}
\newcommand{\Qlbt}{\ov{\Qq}_{\ell}^{\times}}
\newcommand{\et}{{\mathrm{\acute{e}t}}}
\newcommand{\Der}{{\mathrm{D}_c^{\mathrm{b}}}}
\newcommand{\Dcoh}{{\mathrm{D}_{\mathrm{coh}}^{\mathrm{b}}}}
\newcommand{\Perv}{{\mathbf{Perv}}}
\newcommand{\Pervint}{{{\mathrm{Perv}_\mathrm{int}}}}
\newcommand{\Ppb}{{\overline{\Pp}}}
\newcommand{\Ppintun}{\Pp^1_{\intt}}
\newcommand{\Ddb}{{\overline{\Dd}}}
\newcommand{\Ppint}{{{\Pp_\mathrm{int}}}}
\newcommand{\Ppintarith}{{{\Pp_\mathrm{int}^\arith}}}
\newcommand{\Ppbarith}{{\overline{\Pp}^\arith}}
\newcommand{\unram}[1]{\mathcal{X}({#1})}
\newcommand{\wunram}[1]{\mathcal{X}_w({#1})}
\newcommand{\nunram}[1]{\mathcal{N}({#1})}
\newcommand{\funram}[1]{\mathcal{X}_F({#1})}
\newcommand{\Thetaf}{\uple{\Theta}}
\newcommand{\Frf}{\mathbf{Fr}}
\DeclareMathOperator{\Std}{Std}
\newcommand{\NegP}{{\mathrm{Neg}_{\Pp}}}
\newcommand{\NegD}{{\mathrm{Neg}_{\Dd}}}
\newcommand{\NegParith}{{\mathrm{Neg}_{\Pp}^\arith}}
\newcommand{\DD}{\dual}
\newcommand{\inv}{{\mathrm{inv}}}
\newcommand{\pH}{\tensor*[^{\mathfrak{p}}]{\mathscr{H}}{}}
\newcommand{\ptau}{\tensor*[^{\mathfrak{p}}]{\tau{}}{}}
\newcommand{\pt}{\tensor*[^{\mathfrak{p}}]{t{}}{}}
\newcommand{\tac}{tac}
\newcommand{\tacs}{tacs}
\newcommand{\un}{{\mathbf{1}}}
\newcommand{\ev}{{\mathrm{ev}}}
\newcommand{\coev}{{\mathrm{coev}}}
\newcommand{\id}{{\mathrm{id}}}
\newcommand{\ab}{\mathrm{ab}}
\newcommand{\can}{\mathrm{can}}
\newcommand{\FM}{\mathrm{FM}}
\newcommand{\arith}{{\mathrm{ari}}}
\newcommand{\geo}{{\mathrm{geo}}}
\newcommand{\intt}{\mathrm{int}}
\newcommand{\bks}{{\setminus}}
\newcommand{\Supp}{{\mathrm{Supp}}}
\newcommand{\Hyp}{\mathrm{Hyp}}
\newcommand{\charg}[1]{{\widehat{#1}}}
\newcommand{\chag}[2]{\langle {#1},{#2}\rangle}
\newcommand{\Ks}{\Kl}
\newcommand{\garith}[1]{{ \tensor*[]{\mathbf{G}}{_{#1}^{\arith}}}}
\newcommand{\ggeo}[1]{{ \tensor*[]{\mathbf{G}}{_{#1}^{\geo}}}}
\def\llb{\mathopen{[\![}}
\def\rrb{\mathopen{]\!]}}
\begin{document}

\title[Arithmetic Fourier transforms]{Arithmetic Fourier transforms over
  finite fields\\
  [0.5cm]{\Large Generic vanishing, convolution, and equidistribution}}

\author{Arthur Forey}
\address[A. Forey]{Univ. Lille, CNRS, UMR 8524 - Laboratoire Paul Painlevé,  \newline F-59000 Lille, France} 
  \email{arthur.forey@univ-lille.fr}

\author{Javier Fres\'an}
\address[J. Fres\'an]{Sorbonne Universit\'e and Universit\'e Paris Cit\'e, CNRS, IMJ-PRG, \newline F-75005 Paris, France}
\email{javier.fresan@imj-prg.fr}

\author{Emmanuel Kowalski}
\address[E. Kowalski]{D-MATH, ETH Z\"urich, R\"amistrasse 101,  \newline CH-8092 Z\"urich, Switzerland} 
  \email{kowalski@math.ethz.ch}

\begin{abstract}
  We study the arithmetic Fourier transforms of trace functions on
  general connected commutative algebraic groups. To do so, we first
  prove a generic vanishing theorem for twists of perverse sheaves by
  characters, and using this tool, we construct a tannakian category
  with convolution as tensor operation. Using Deligne's Riemann
  hypothesis, we show how this leads to a general equidistribution
  theorem for the discrete Fourier transforms of trace functions of
  perverse sheaves, generalizing the work of Katz in the case of the
  multiplicative group. We then give some concrete examples of
  applications of these results and raise a number of questions.
\end{abstract}


\maketitle

\cleardoublepage
\thispagestyle{empty}
\vspace*{13.5pc}
\begin{center}
  \textit{Dedicated to Nick Katz, with the greatest admiration}
\end{center}
\cleardoublepage

\setcounter{tocdepth}{1}
\tableofcontents
\mainmatter

\chapter*{Preface}

The Fourier transform, and the whole collection of its variants whose
study is summarized under the heading of ``harmonic analysis'', is one
of the most important tools of mathematics. In its many forms, its
applications cover the whole range not only of mathematics, but also
physics, computer science, chemistry and indeed of all sciences where
quantitative tools are applied.

In 1976, P. Deligne observed in a letter to D. Kazhdan (which is
reproduced in Appendix~\ref{ch-app-letter}) that the
formalism of algebraic geometry, and especially of $\ell$-adic
cohomology and the derived category of $\ell$-adic sheaves, provided a
new ``geometric'' form of the Fourier transform. Instead of the
familiar integral formula
$$
\widehat{f}(y)=\int_{\Rr}f(x)e^{-2i\pi xy}dx
$$
associating to a function $f$ (say $f\colon \Rr\to\Cc$ in the Schwartz
space) its Fourier transform~$\widehat{f}$, Deligne's version takes as
input an $\ell$-adic constructible sheaf~$M$, or a complex of those, on the
one-dimensional affine space over a finite field~$k$ of
characteristic~$p$, and outputs a Fourier transform $\widehat{M}$
which is of the same kind.

We note that although the most general and convenient category of
input objects~$M$, which we will also call ``coefficients'', is given
by the formalism of derived categories of $\ell$-adic complexes with~$\ell$ prime different from~$p$, there is a simpler definition in the
case considered here, where~$M$ can (in almost all cases) be thought
of as being a continuous finite-dimensional representation
$$
\rho\colon\Gal(k(T)^{\mathrm{sep}}/k(T))\to \GL_r(\bQl)
$$
of the absolute Galois group of the field $k(T)$ of rational functions
with coefficients in~$k$.

The crucial point for the interpretation of this construction as a
Fourier transform is that to each object $M$ is associated classically
a sequence of ``trace functions'', which are functions
$$
t_M(\cdot;k_n)\colon k_n\to\Cc\simeq \bQl
$$
defined on the finite extensions $k_n$ of~$k$ of degree~$n$, for all
integers $n\geq 1$, and Deligne's Fourier transform then satisfies
$$
t_{\widehat{M}}(y;k_n)=\sum_{x\in k_n}t_M(x;k_n)e^{2i\pi
  \Tr_{k_n/\Ff_p}(xy)/p}. 
$$
Thus, the trace functions of $\widehat{M}$ coincide with the discrete
Fourier transforms of those of~$M$.

Deligne's Fourier transform shares many features with the classical
euclidean Fourier transform, once properly interpreted in terms of the
coefficients~$M$. For instance:
\begin{itemize}
\item It satisfies a form of the Fourier inversion formula
  $$
  f(x)=\int_{\Rr}\widehat{f}(y)e^{2i\pi xy}dy,
  $$
  in the sense that applying the (similarly defined) analogue of the
  inverse Fourier transform to~$\widehat{M}$ recovers~$M$. 
\item It satisfies analogues of the Plancherel formula, which are
  however less obvious: one interpretation is that if the
  representation $\rho$ above is irreducible, then so is the
  representation associated to~$\widehat{M}$. 
\item It satisfies a geometric analogue of the fundamental algebraic
  relation $\widehat{f*g}=\widehat{f}\ \widehat{g}$, which relates the
  Fourier transform and the convolution product
  $$
  (f*g)(x)=\int_{\Rr}f(y)g(x-y)dy
  $$
  of functions (this property is often taken as the key feature of
  Fourier analysis and especially Pontryagin duality~\cite{bourbaki-ts-2}). Indeed, to two coefficients $M_1$
  and~$M_2$, another geometric construction associates a third
  one~$M_3$, such that the trace function of~$M_3$ is given by
  $$
  t_{M_3}(x;k_n)=\sum_{y\in k_n}t_{M_1}(y;k_n)t_{M_2}(x-y;k_n),
  $$
  the discrete convolution of those of $M_1$ and~$M_2$. 
\item And there is a subtle analogue, due to Laumon, of the stationary
  phase principle for estimating oscillatory integrals.
\end{itemize}

There are however also special features related to the geometric
nature of trace functions:
\begin{itemize}
\item Deligne's Fourier transform preserves a particularly important
  subcategory of coefficients, that of \emph{perverse sheaves} -- this
  extremely important fact has no obvious classical analogue.
\item If a coefficient object $M$ is a perverse sheaf, and hence also
  its transform $\widehat{M}$, then one can associate to it a natural
  intrinsic \emph{symmetry group}, also called its \emph{monodromy
    group}, which is an algebraic group over~$\bQl$ (or over~$\Cc$). The
  definition of this group can be seen as a wide-ranging generalization
  of that of the Galois group of a polynomial. (In the one-dimensional
  case, where $\widehat{M}$ can be identified, in most cases, with a
  Galois
  representation~$\rho\colon\Gal(k(T)^{\mathrm{sep}}/k(T))\to
  \GL_r(\bQl)$ as above, this symmetry group is nothing but the Zariski\nobreakdash-closure of
  the image of~$\rho$.)
\end{itemize}

Deligne's Fourier transform has found a number of very important
applications in arithmetic and algebraic geometry, as well as number
theory. In the former direction, Laumon~\cite{laumon-signes} used it
to obtain a product formula for the epsilon factors of Artin-type
$L$-functions on curves over finite fields. In number theory, Katz
used it extensively to study in depth the
distribution properties of families of exponential sums, which are
obtained as discrete Fourier transforms of simple trace functions (see,
for instance,~\cite{gkm} and \cite{esde}); the symmetry group of the
Fourier transform $\widehat{M}$ plays an essential role here. A
prominent example of such sums are the Kloosterman sums
$$
\Kl_2(a;p)=\frac{1}{\sqrt{p}}\sum_{x\in \Ff_p^{\times}}
e\Bigl(\frac{\bar{x}+ax}{p}\Bigr)
$$
which are the values of the trace function of the Fourier transforms of
a one-dimensional Galois representation, and are omnipresent in modern
analytic number theory (here and below, we use the notation $e(z)=\exp(2i\pi z)$,
and $\bar{x}$ is the inverse of $x$ modulo~$p$). Results about these and
similar sums, which often rely on properties of the $\ell$-adic Fourier
transform, have by now become essential in many fundamental results of
analytic number theory -- some concrete examples appear
in Zhang's famous work on bounded gaps between
primes~\cite[Lemma\,12]{zhang}, and systematic use of the Fourier
transform begins in various papers of Fouvry, Kowalski and Michel (see,
for instance,~\cite{fkm1}).

Deligne's transform is the geometric analogue of the classical euclidean
Fourier transform on~$\Rr$ and can be generalized to $n$ variables. But,
in recent years, a number of applications have led to questions
concerning similar properties of other discrete Fourier transforms, for
instance those related to the multiplicative group $k_n^{\times}$, which
are functions on the group of multiplicative characters~$\chi\colon k_n^{\times}\to \bQl$. The study of the distribution, or
average properties, of these sums is \emph{outside} of the realm of
applications of Deligne's Fourier transform, and these
functions \emph{cannot} be expressed as trace functions of complexes of
$\ell$-adic sheaves on an algebraic variety over $k$.

The fundamental motivation for this book is the search for a
definition of the analogue of Deligne's Fourier transform on an
arbitrary commutative algebraic group over a finite field, and for the
general theory and applications of this form of harmonic analysis. In
particular, we believe that these \emph{arithmetic Fourier transforms}
can be interpreted in the context of much more general arithmetic or
geometric avatars of harmonic and functional analysis.

The basic examples of commutative algebraic groups are the
multiplicative groups (or tori), and abelian varieties, and these can
be combined (together with additive groups) in various ways. The
choice of an input object $M$ on such a group~$G$ leads to its
arithmetic Fourier transforms, which are the functions of the form
$$
\widehat{t}_M(\chi;k_n)=\sum_{x\in G(k_n)}\chi(x)t_M(x;k_n),
$$
defined for any~$n\geq 1$, where the parameter $\chi$ ranges over
characters of the finite group $G(k_n)$.

The simplest example beyond the additive case is that of
$G(k_n)=k_n^{\times}$, in which case the characters are multiplicative
characters of~$k_n$, and $\widehat{t}_M$ is called an \emph{arithmetic Mellin transform}. N.~Katz, in a striking breakthrough, succeeded a
few years ago in finding an interpretation of these functions in his book~\cite{mellin}. He
exploited the formalism of tannakian categories, and the fact that the
\emph{convolution product} extends to any commutative algebraic group:
given coefficients~$M_1$ and~$M_2$ on~$G$, there exists a
geometrically-defined object~$M_3$ such that, for all $n\geq 1$ and $x\in G(k_n)$, their respective trace
functions~satisfy
$$
t_{M_3}(x;k_n)=\sum_{y\in G(k_n)}t_{M_1}(y;k_n)t_{M_2}(xy^{-1};k_n).
$$

Although Katz's interpretation of the arithmetic Mellin transforms is
\emph{not} fully geometric (there is no analogue of the object
$\widehat{M}$ which ``is'' Deligne's Fourier transform for the
additive group), Katz shows that it is enough to define a \emph{symmetry
  group} for the arithmetic Mellin transform. In combination with
another fundamental tool, Deligne's general form of the Riemann
hypothesis over finite fields~\cite{D-WeilII}, this allowed Katz to
prove an equidistribution theorem which controls the distributions of
arithmetic Mellin transforms. A number of significant applications
followed, including the work of Keating and
Rudnick~\cite{kr1} and Hall, Keating and Roditty-Gershon~\cite{variance}.

One of the main theoretical achievements of this book is the extension
of these ideas of Katz to \emph{any} connected commutative algebraic
group.  This  is far from routine, since certain necessary
tools, such as generic cohomological vanishing, or estimates for Betti
numbers, which are very elementary in the case considered by Katz, were
not known previously for groups of dimension at least~$2$. Indeed, we
rely in an essential way on the very recent \emph{quantitative sheaf
  theory} due to Sawin~\cite{sawin_conductors} (which was partly
motivated by this work and drafted in final form jointly with the
authors).

For any suitable coefficient object on the group~$G$, our construction
provides the fundamental invariant of its arithmetic Fourier transform,
its \emph{intrinsic symmetry group}. Combined again with other tools
such as Deligne's Riemann hypothesis over finite fields, this is already
sufficient to prove a very general form of equidistribution theorem,
which encompasses the previously known cases of Deligne and Katz (and in
fact sharpens these in certain aspects). In turn, we can use this
equidistribution theorem for a number of first applications, including
strengthening and simplifying the results of~\cite{variance}. But
there remain also many open questions and problems, both on the
theoretical side and on that of applications -- we will discuss
briefly some of these at the end of this book.

After this preface, the book will continue with a more technical
introduction, which contains precise statements of some of the key
results and a quick description of some of the crucial points which are
involved in the proofs.  We then split the remainder of the book in two
parts, one containing the main theoretical results, and the other
devoted to a variety of applications. These are complemented by
appendices recalling important material, and Deligne's letter to Kazhdan. A more precise outline of each chapter will be found at the end of the
introduction.

\emph{Readers with a background in analytic number theory who are not
  familiar with the theory of trace functions and the underlying
  geometric objects are invited to first read
  Appendix~\ref{ch-app-analytic}, where we attempt to present them in a
  concrete and intuitive way.}

\SkipTocEntry\section*{Acknowledgements}

We are particularly thankful to Will Sawin, not only for many
insightful comments on parts of this work, but especially for sharing
his ideas on the complexity of $\ell$-adic sheaves on algebraic
varieties, and for allowing us to assist in the final write-up of his
work, resulting in the paper~\cite{sawin_conductors}.
\par
We thank the referees who read various parts of the book, and gently pointed
out some incorrect or confusing statements, as well as some incomplete
arguments, in the first draft. 
\par
We thank K. Soundararajan for pointing out to us the definition of Sidon
sets.  We thank Thomas Krämer for useful comments concerning some of his
papers, especially~\cite{kramer-e6}. We thank Arnaud Beauville for his
help with some questions concerning cubic hypersurfaces, especially for
the proof of Lemma~\ref{lm-beauville}. We also thank Sophie Morel for
providing us with a proof of Lemma~\ref{lm-morel}. We thank Florent
Jouve for his careful reading of Appendix~\ref{ch-app-analytic}; we also
thank Jon Keating for his encouragements, and Zeev Rudnick for his
constructive criticism, concerning this same appendix.
\par
We thank N. Katz for forwarding to us the notes of his
talk~\cite{KatzETH}, and for his comments on various parts of the text. 
\par
We also thank the organizers of the various seminars where we have
been able to present parts of this work over the last few years.
\par
Finally, we thank Pierre Deligne for kindly allowing us to reproduce his
letter to Kazhdan in an appendix.
\par
During the preparation of this work, A.\,F. and E.\,K. were partially
supported by the DFG-SNF lead agency program grant 200020L\_175755,
A.\,F. was partially supported by the SNF Ambizione grant
PZ00P2\_193354, and J.\,F. was partially supported by the grant
ANR-18-CE40-0017 of the Agence Nationale de la Recherche.


\chapter*{Introduction}

\section{Statement of results}

Since Deligne's proof of his equidistribution theorem\index{Deligne's equidistribution theorem} for traces of
Frobenius of $\ell$-adic local systems on varieties over finite
fields~\cite{D-WeilII}, it has been known that any family of exponential
sums parameterized by an algebraic variety satisfies some form of
equidistribution, and that the concrete expression of this
equidistribution statement depends on the determination of the geometric
monodromy group\index{monodromy group} of the $\ell$-adic sheaf that underlies the family of
exponential sums.

The best known result of this kind is probably the computation by
Katz~\cite{gkm} of these monodromy groups in the case of Kloosterman
sums\index{Kloosterman sums}\index{hyper-Kloosterman sums} in several variables over finite fields, which are defined for some
fixed non-trivial additive character~$\psi \colon \Ff_q \to \mathbf{C}^\times$ and
$a \in \Ff_q^\times$ as
\[
\hypk_m(a;q)=\frac{1}{q^{(m-1)/2}}
\sum_{\substack{(x_1,\ldots, x_m)\in (\Ff_q^{\times})^m\\
    x_1\cdots x_m=a}}\psi(x_1+\cdots+x_m).
\]
This computation led him in particular to the proof of the average
version of the Sato\nobreakdash--Tate law\index{Sato--Tate law} for classical Kloosterman sums,
namely the equidistribution of the sets
$\{ \hypk_2(a; q)\,|\,a \in \Ff_q^\times \}$ with respect to the Sato--Tate measure\index{Sato--Tate measure} on the interval~$[-2, 2]$ as~$q \to +\infty$ among prime powers. Further deep investigations by
Katz, especially in his monograph~\cite{esde}, provide a cornucopia of
examples of equidistribution~statements.

Among other things, this framework allows for the study of exponential
sums of the form
\begin{displaymath}
S(M, \psi)=\sum_{x \in \Ff_{q^n}} t_M(x;\Ff_{q^n})\psi(x),
\end{displaymath}
where $t_M$ is the trace function of a perverse sheaf $M$ on the
additive group~$\Gg_a$ over $\Ff_q$ and $\psi$ ranges over characters of
$\Ff_{q^n}$. These sums are the discrete Fourier transform
$\psi \mapsto S(M, \psi)$ of the function~$x \mapsto t_M(x, \Ff_{q^n})$
on the finite group $\Ff_{q^n}=\Gg_a(\Ff_{q^n})$, and the key point is that they
are themselves the trace functions of another perverse sheaf on the dual
group parameterizing additive characters, namely Deligne's Fourier transform of $M$. 

In a more recent conceptual breakthrough, Katz~\cite{mellin} succeeded
in proving equidistribution results for families of exponential sums
parameterized by multiplicative characters, despite the fact that the
set of multiplicative characters of a finite field $\Ff_q$ does not
naturally arise as the set of~$\Ff_q$\nobreakdash-points of an algebraic
variety. In analogy with the above, such sums are of the form
\begin{displaymath}
 S(M, \chi)=\sum_{x \in \Ff_{q^n}^{\times}} t_M(x;\Ff_{q^n})\chi(x),
\end{displaymath}
except that $M$ is now a perverse sheaf on the multiplicative
group~$\Gg_m$ over~$\Ff_q$ and~$\chi$ ranges over characters of
$\Ff_{q^n}^{\times}$. Katz's beautiful insight was to replace points of
algebraic varieties by fiber functors of tannakian categories as
parameter spaces, and produce the groups governing equidistribution by
means of the tannakian formalism (see \cite{fresan} for an accessible
survey). Further work of Katz generalized this to elliptic curves
\cite{katz_elliptic} and certain abelian varieties
(unpublished).

\textbf{\emph{The primary goal of this book is to extend these ideas to
    exponential sums (arithmetic Fourier transforms) parameterized by
    the characters of the points of any connected commutative algebraic
    group over a finite field.}}
\par
\medskip
\par
More precisely, let $k$ be a finite field and $\bar k$ an algebraic
closure of~$k$. For each $n \geq 1$, we denote by~$k_n$ the extension
of~$k$ of degree~$n$ inside~$\bar k$. Let~$\ell$ be a prime number
different from the characteristic of~$k$ and $\Qlb$ an algebraic closure
of the field of $\ell$-adic numbers. Let $G$ be a connected commutative
algebraic group over $k$. We denote by $\widehat{G}(k_n)$ the group of
$\Qlb$-valued characters of~$G(k_n)$ and, for each
$\chi\in \widehat{G}(k_n)$, by $\mcL_{\chi}$ the $\ell$-adic lisse
character sheaf of rank one associated to~$\chi$ by means of the Lang
torsor construction, as briefly recalled in
Section~\ref{sec:character-groups-Lang-torsor}. By perverse sheaves, we
always understand $\Qlb$-perverse sheaves.

In rough outline, we establish the following types of theoretical
results:

\begin{itemize}
\item We prove generic and stratified vanishing theorems for the
  cohomology of twists of perverse sheaves on~$G$ by the sheaves
  $\mcL_{\chi}$ associated to characters~$\chi \in \widehat{G}(k_n)$.
  
\item Using the stratified vanishing theorems, we construct a tannakian
  category of perverse sheaves on~$G$ with the convolution coming from the group law as tensor product.

\item We prove that the tannakian group of a semisimple object~$M$ of
  this category that is pure of weight zero controls the distribution
  properties of the sums
  \[
  S(M,\chi)=\sum_{x \in G(k_n)} t_M(x;k_n)\chi(x),
  \]
  where~$\chi$ ranges over the set~$\widehat{G}(k_n)$. Under some
  assumptions on $G$ (e.g., for tori and abelian varieties), we prove
  the stronger result that the unitary conjugacy classes of which these
  sums are traces become equidistributed in a maximal compact subgroup
  of the tannakian group as $n \to +\infty$, as is customary since
  Deligne's work.
\end{itemize}

Once this is done, we provide a number of applications, both of a
general nature and for concrete groups and perverse sheaves.

The following statements are special cases of our main results, which we
formulate in simplified form in order to make it possible to present
self-contained statements at this stage.

\begin{theointro}\label{thm:vanishing-thm-intro}
Let $M$ be a perverse sheaf on a connected commutative algebraic group~$G$ of
dimension $d$ over a finite field $k$.
  \begin{enumth}
  \item \textup{(Generic vanishing)}\index{generic vanishing theorem} The sets
    \begin{align*}
      \mcX(k_n)=\big\{\chi\in \charg{G}&(k_n)\mid \rmH_c^{i}(G_{\bar
        k},M\otimes\mcL_{\chi})= \rmH^{i}(G_{\bar
        k},M\otimes\mcL_{\chi})=0 \text{ for all } i\neq 0
      \\
      &\text{ and } \rmH_c^{0}(G_{\bar k},M\otimes\mcL_{\chi}) \text{ is
        isomorphic to } \rmH^{0}(G_{\bar k},M\otimes\mcL_{\chi}) \big\}
    \end{align*}
    are \emph{generic}, in the sense that the estimate
    \[
      \abs{\charg{G}(k_n) \setminus \mcX(k_n)}\ll \abs{k_n}^{d-1}
    \]
   holds for~$n\geq 1$, with an implied constant that only depends on~$M$.
    
 \item \textup{(Stratified vanishing)}\index{stratified vanishing theorem} For~$-d\leq i\leq d$
   and~$n\geq 1$, the estimate
    \[
      \abs{\big\{\chi\in \charg{G}(k_n)\mid \rmH_c^{i}(G_{\bar k},
        M\otimes\mcL_{\chi})\neq 0\text{ or } \rmH^{i}(G_{\bar k},
        M\otimes\mcL_{\chi})\neq 0\big\}}\ll \abs{k_n}^{d-i}
    \]
    holds, with an implied constant that only depends on~$M$.
  \end{enumth}
\end{theointro}

The most general vanishing statements that we prove appear as
Theorems~\ref{thm-gen-vanish} and~\ref{thm-high-vanish}. Applications to
``stratification'' estimates for exponential sums are then given in
Chapter~\ref{sec-stratification}.

\begin{remarkintro}
  (1) With variations in the definition of generic set of characters,
  such statements were proved by Katz--Laumon
  \cite{KL-fourier-exp-som} for powers of the additive group, Saibi
  \cite{saibi_FD_unipotent} for unipotent groups,
  Gabber\nobreakdash--Loeser \cite{GL_faisc-perv} for tori, Krämer-Weissauer \cite{KW_vanishing_AV}, Weissauer
  \cite{weissauer_vanishing_2016} for abelian varieties and Krämer
  \cite{kramer_perverse_2014} for semiabelian varieties (see Remark
  \ref{rem:versionsvanishing} for more precise references).
  \par
  (2) In characteristic zero, and especially over the complex
  numbers, theorems of this type have also been proved for abelian and
  semiabelian varieties by Schnell~\cite{schnell},
  Bhatt--Scholze--Schnell~\cite{bss} and Liu--Maxim--Wang~\cite{lmw2}
  (see also~\cite{lmw} for a survey of some applications of such
  results). Over arbitrary algebraically closed fields, there has also
  been recent works of Esnault and Kerz~\cite{esnault-kerz}.
  \par
  (3) In contrast with the case of abelian varieties, lack of properness
  and wild ramification phenomena are the reason one must formulate
  conditions on cohomology groups both with and without compact support.
\end{remarkintro}

Using the vanishing theorems, and ideas going back to Gabber--Loeser and
Katz, we can construct tannakian categories with the convolution on~$G$
as tensor operation. Using these, and Deligne's Riemann hypothesis over
finite fields, we obtain the following equidistribution theorem for the
Fourier transforms of trace functions on~$G$, i.e., for families of
exponential sums parameterized by characters of~$G$.

\begin{theointro}[Equidistribution on average for arithmetic Fourier transforms]
  \label{thm:equidis-thm-intro}
  Let $G$ be a connected commutative algebraic group over~$k$.
  Let~$M$ be a geometrically simple $\ell$-adic perverse sheaf
  on~$G$ that is pure of weight zero, with complex-valued trace
  functions $t_M(\cdot;k_n) \colon G(k_n) \to \Cc$ for~$n\geq 1$. There
  exists an integer~$r\geq 0$ and a compact
  subgroup~$K\subset \Un_r(\Cc)$ of the unitary group such that
  the sums
  \[
    S(M,\chi)=\sum_{x\in G(k_n)}t_M(x;k_n)\chi(x)
  \]
  for complex-valued characters $\chi$ of $G(k_n)$ become
  \emph{equidistributed on average} in~$\Cc$ with respect to the image
  by the trace of the Haar probability measure $\mu$ on~$K$. That is,
  for any bounded continuous function~$f \colon \Cc \to \Cc$, the
  following equality holds:
  \begin{equation}\label{eqn:equidistribution-intro}
  \lim_{N\to+\infty} \frac{1}{N}\sum_{1\leq n\leq N} \frac{1}{|G(k_n)|}
  \sum_{\chi} f(S(M,\chi))= \int_K f(\Tr(x))d\mu(x),
  \end{equation}
  where $\chi$ runs over all characters of $G(k_n)$.
\end{theointro}

The general version of this theorem appears as Theorem~\ref{th-3}. Under
an additional assumption (which holds for tori, abelian varieties
and~$\Gg_a$, at least), we can also deduce it from Theorem~\ref{th-2},
which is a more precise equidistribution result for unitary conjugacy
classes of Frobenius in the compact group~$K$. (The difference between
these two statements is similar to that between the
Frobenius equidistribution theorem for cycle types of Frobenius classes
in the Galois group of a polynomial, viewed as a permutation group, and
the more precise Chebotarev density theorem.)

\begin{remarkintro}
  (1) In the classical setting of~$\Gg_a$ and the Fourier transform, the
  group~$K$ is a maximal compact subgroup of the \emph{arithmetic}
  monodromy group of the (lisse sheaf underlying the)~$\ell$\nobreakdash-adic
  Fourier transform of~$M$ (see Proposition~\ref{pr-gm-unip-known}).

  Note that this is in contrast with more usual versions of Deligne's
  equidistribution theorem, without the extra Cesàro average over~$n$,
  where the focus is on the \emph{geometric} monodromy group (see,
  e.g., the versions of Katz~\cite[Ch.\,3]{gkm} and
  Katz--Sarnak~\cite[Ch.\,9]{katz-sarnak}).  This slight change of
  emphasis extends to the general situation, and means that we can
  avoid additional (necessary) assumptions such as the equality of the
  geometric and arithmetic monodromy groups, which occur frequently
  otherwise (see, e.g.,~\cite[\S\,3.3]{gkm}), and are not always easy
  to check.
  \par
  The Cesàro average can of course be interpreted as a form of
  ``smoothing'' (a ``summation method'' in the classical
  terminology). Although it is quite natural, it can be replaced by
  many others (see Remark~\ref{rm-smoothing}).
  \par
  (3) We will also discuss a ``horizontal'' version, where we consider
  suitable families $(M_p)_p$ of perverse sheaves over $\Ff_p$ for primes
  $p\to+\infty$. However, such results depend on a more quantitative
  version of the stratified vanishing theorem, which we have not
  established in full generality yet.
  \par
  (4) As already mentioned, this equidistribution theorem is essentially
  Deligne's equidistribution theorem on average for the $\ell$-adic
  Fourier transform of~$M$ when $G=\Ga$. When~$G$ is the multiplicative
  group (or its non-split form), one obtains (an average version of)
  Katz's equidistribution
  theorem~\cite{mellin}. In~\cite{katz_elliptic}, Katz proves a similar
  theorem for elliptic curves.
  \par
  (5) The assumption that $G$ is connected arises from the fact that the
  Lang torsor construction is only applicable in this case.  For the
  purpose of equidistribution results, however, one can easily handle a
  non-connected algebraic group by considering one by one the
  restrictions to the neutral component of $G$ of the objects
  $([x\mapsto c^{-1}x]^*M)$, where $c$ runs over representatives of the
  connected components of~$G$. (Note that different connected components
  might give rise to exponential sums with different distributions.)
\end{remarkintro}

\begin{exampleintro}
  A simple concrete class of examples where we obtain equidistribution
  statements is the following (in the case when~$G$ is not an abelian
  variety): assume that~$k=\Ff_p$, and let~$d$ be the dimension
  of~$G$; then for any non-constant function $f\colon G\to \Aa^1$,
  there exists a perverse sheaf~$M_f$ on $G$ with trace functions
  $$
  t_{M_f}(x;\Ff_{p^n})= \frac{(-1)^d}{p^{nd/2}}
  e\Bigl(\frac{\Tr_{\Ff_{p^n}/\Ff_p}(f(x))}{p}\Bigr)
  $$
  for all~$n\geq 1$ and~$x\in G(\Ff_{p^n})$ (where
  $e(z)=\exp(2i\pi z)$), so that Theorem~\ref{thm:equidis-thm-intro}
  shows that the exponential sums
  $$
  \frac{1}{p^{nd/2}}\sum_{x\in G(\Ff_{p^n})}\chi(x)
  e\Bigl(\frac{\Tr_{\Ff_{p^n}/\Ff_p}(f(x))}{p}\Bigr)
  $$
  (which are intuitively sums over~$d$ variables) become
  equidistributed on average, with limiting measure of a very specific
  kind. Specializing even more to $G=\Gg_m^d$, the function~$f$ is a Laurent
  polynomial in $x_1$, \ldots, $x_d$ and their inverses,
  and these exponential sums become the sums
  $$
  \frac{1}{p^{nd/2}}\sum_{x_1,\ldots, x_d\in \Ff_{p^n}^{\times}}
  \chi_1(x_1)\cdots \chi_d(x_d)
  e\Bigl(\frac{\Tr_{\Ff_{p^n}/\Ff_p}(f(x))}{p}\Bigr)
  $$
  parameterized by a tuple $(\chi_1,\ldots,\chi_d)$ of characters
  of~$\Ff_{p^n}^{\times}$.
\end{exampleintro}

As a further concrete application, we will see how to deduce
statements like the following, which considerably strengthens earlier
work of Hall, Keating and Roddity-Gershon~\cite{variance}.

\begin{theointro}[Variance of the von Mangoldt function of the Legendre
  elliptic curve]\label{th-variance}
  Let~$k$ be a finite field of characteristic~$\geq 3$. Let~$\mathcal{E}$ be the Legendre elliptic curve with affine model
  \[
    y^2=x(x-1)(x-t)
  \]
  over the field~$k(t)$. Let~$\Lambda_{\mathcal{E}/k(t)}$ be the von
  Mangoldt function of~$\mathcal{E}$, defined by the generating series
  \[
    -T\frac{L'(\mathcal{E}/k(t), T)}{L(\mathcal{E}/k(t), T)} =\sum_g
    \Lambda_{\mathcal{E}/k(t)}(g) T^{\deg(g)}
  \]
  over monic polynomials $g \in k[t]$.  
  
  Let~$f\in k[t]$ be a square-free polynomial of degree $\geq 4$ and set
  $B=k[t]/fk[t]$. Let~$m\geq 1$ be an integer. For any $a\in B^\times$,
  consider the sum
  \[
    \psi_{\mathcal{E}}(m;f,a)=\sum_{\substack{\deg(g)=m\\g\equiv a\mods{f}}}
    \Lambda_{\mathcal{E}/k(t)}(g)
  \]
  over monic polynomials $g \in k[t]$ of degree~$m$. 
  Then the following equality holds:
  \begin{align*}
    \lim_{|k|\to +\infty} \frac{1}{|k|^2} \frac{1}{|B^\times|} \sum_{a\in
      B^\times}\Bigl| \psi_{\mathcal{E}}(m;f,a)-&\frac{1}{|B^\times|}\sum_{b\in B^\times}\psi_{\mathcal{E}}(m;f, b) \Bigr|^2 \\
      &= \min\big(m,2\deg(f)-2+\deg \gcd(t(t-1), f)\big).
  \end{align*}
\end{theointro}

The meaning of the above limit is that we replace $k$ by its extensions
  $k_n$ of degree~$n\geq 1$, compute the
  variance for the base change of~$\mathcal{E}$ to~$k_n$ (note that $B$
  depends on~$k$, so it is also replaced by~$k_n[t]/fk_n[t]$), and let~$n\to+\infty$. This theorem is proved at the end of Chapter~\ref{sec-variance}. 

\begin{remarkintro} The version in~\cite{variance} requires the assumptions
  $\deg(f)>900$ and $\gcd(t(t-1), f)=t$. We have greatly relaxed the
  former condition and fully removed the latter, which was recognized as
  being quite artificial (see~\cite[Rem.\,11.0.2]{variance}). These improvements are due to the consideration of the problem in its
  natural setting, involving characters of a torus of dimension
  $\deg(f)$, whereas the authors of~\cite{variance} used cosets of a
  one-dimensional torus together with Katz's work on~$\Gg_m$.
\end{remarkintro}

We also give a proof of an unpublished theorem of Katz~\cite{KatzETH}
answering a question of Tsimerman about equidistribution of Artin
$L$-functions on curves over finite fields.

\begin{theointro}[Katz] Let $C$ be a smooth projective geometrically
  connected curve of genus $g \geq 2$ over a finite field $k$ and let
  $D=\sum n_i x_i$ be a divisor of degree one on $C$. For each
  geometrically non-trivial character
  $\rho \colon \pi_1(C)^{\ab} \to \Cc^\times$ of finite order satisfying 
  $\prod \rho(\Fr_{k(x_i), x_i})^{n_i}=1$, we write its normalized Artin
  $L$-function as
  \[
    L(\rho, T/\sqrt{\abs{k}})=\det(1-T \Theta_{C/k, \rho})
  \]
  for a conjugacy class $\Theta_{C/k, \rho}$ in the unitary group
  $\Un_{2g-2}(\Cc)$.
\begin{enumth}
\item If $C$ is non-hyperelliptic and $(2g-2)D$ is a canonical divisor
  on $C$, then the classes $\Theta_{C/k, \rho}$ lie in $\SU_{2g-2}(\Cc)$
  and become equidistributed with respect to the image on the space of
  conjugacy classes of the Haar probability measure of
  $\SU_{2g-2}(\Cc)$.
\item If $C$ is hyperelliptic, the hyperelliptic involution has a fixed
  point $O \in C(k)$ and $D=O$, then the classes $\Theta_{C/k, \rho}$
  lie in~$\USp_{2g-2}(\Cc)$ and become equidistributed with respect to
  the image on the space of conjugacy classes of the Haar probability 
  measure on~$\USp_{2g-2}(\Cc)$.
\end{enumth}
\end{theointro}

See Chapter~\ref{sec-jacobian} for the proof of this result, as well as
some more general statements (including, in
Theorem~\ref{th-generalized-jacobians}, a result where the algebraic
group~$G$ occurring may involve abelian, toric and unipotent parts).

\section{Outline}\label{ssec-outline}

In this section, we present the plan of the book, and we sketch
one of the main ideas of the proof of
Theorem~\ref{thm:equidis-thm-intro}, in order to point out the key
difficulties for groups of dimension bigger than one, which are solved
using Sawin's quantitative sheaf theory~\cite{sawin_conductors}.

The book is organized as follows:
\begin{itemize}
\item In Chapter~\ref{sec-prelim}, we state some preliminary results;
  these include a survey of the formalism of quantitative sheaf
  theory~\cite{sawin_conductors}, as well as basic structural results
  concerning commutative algebraic groups and character sheaves.
  
\item In Chapter~\ref{sec-vanishing}, we prove the generic and
  stratified vanishing theorems for commutative algebraic groups over
  finite fields.  The very rough idea is to prove a \emph{relative
    version} of the vanishing theorems for the various basic types of
  commutative groups, with a good control of the implicit
  constants. These relative statements are of independent interest. For
  example, in the case of tori, Gabber--Loeser~\cite{GL_faisc-perv}
  prove the stratified vanishing theorem as stated above only under the
  assumption that resolution of singularities over~$k$ holds for varieties of dimension up to
  that of the torus. We remove this assumption using
  alterations. For abelian varieties, we extend Weissauer's
  work~\cite{weissauer_vanishing_2016} by proving a relative version of
  the theorem, which relies on Orgogozo's
  work~\cite{Orgo_constr_mod} on constructibility and moderation.
    
\item In Chapter~\ref{sec-tannakian}, we construct a suitable tannakian
  category of perverse sheaves on a commutative group over a finite
  field with convolution as tensor operation, and establish its basic
  properties, as well as those of the corresponding tannakian monodromy
  group. We will see that some subtleties arise when defining
  ``Frobenius conjugacy classes'' corresponding to characters of~$G$.
  
\item In Chapter~\ref{sec:equidis}, we combine these two ingredients to
  establish a number of ``vertical'' equidistribution theorems; there
  are some issues when we want to refine the statements at the level of
  conjugacy classes (related to those of the previous sections), which
  we are not currently able to solve in full generality, although we can
  always establish equidistribution for the characteristic polynomials.
  
\item The beginning of Part~\ref{part-applications} introduces a selection of first
  applications of a general nature. These include the following:
  \par
  \smallskip
  \par
  \begin{enumerate}
  \item the definition of the analogue of the $L$-function for
    arithmetic Fourier transforms, which is used to study
    finite tannakian groups over abelian varieties
    (Chapter~\ref{sec-L});
  \item a stratification result for exponential sums, similar to those
    of Katz, Laumon and Fouvry, although currently often restricted to
    the ``vertical'' direction (Chapter~\ref{sec-stratification});
  \item a ``generic Fourier invertibility'' result
    (Chapter~\ref{sec-stratification});
  \item some preliminary results of independence of $\ell$ for the
    tannakian group when working with perverse sheaves which are part of
    a compatible system (Chapter~\ref{sec-indep});
  \item various results of ``Diophantine group
    theory'',\index{Diophantine group theory} where averages
    of exponential sums are related to invariants of the tannakian
    group; this includes in particular Larsen's alternative, but also
    some criteria to recognize the exceptional group $\mathbf{E}_6$
    (Chapter~\ref{sec-larsen}).
  \end{enumerate}
  
\item Chapters~\ref{sec-product},~\ref{sec-variance}
  and~\ref{sec-jacobian} contain applications to concrete cases. The
  algebraic groups involved are, respectively, the product
  $\Gg_a\times \Gg_m$, higher-dimensional tori, and jacobians of curves,
  as well as the intermediate jacobian of a smooth cubic threefold
  (where the relevant tannakian group is $\mathbf{E}_6$, as first shown
  in the complex setting by Krämer).

\item In Chapter~\ref{sec-problems}, we list some open
  questions and problems. The title   ``Much remains to be done'' paraphrases Katz (\cite[p.\,18]{mellin}). 

\item Finally, we include appendices to survey the basic theory of
  perverse sheaves (Appendix~\ref{ch-app-perverse}), as well as to recall the most
  important results of Katz concerning the arithmetic Mellin transform
  on $\Gg_m$ (Appendix~\ref{ch-app-mellin}) and the product
  formula of Laumon for the epsilon factor of $L$-functions over finite
  fields (Appendix~\ref{ch-app-product}). In Appendix~\ref{ch-app-letter}, we reproduce, with Deligne's permission, the letter to Kazhdan in which the
  $\ell$-adic Fourier transform was first discussed. To conclude, we attempt to sketch the intuitive
  nature of the theory of general trace functions, to provide some
  intuition for analytic number theorists in
  Appendix~\ref{ch-app-analytic}.
\end{itemize}

We now survey the key analytic step in the proof of Theorem
\ref{thm:equidis-thm-intro} (see Proposition~\ref{pr-weyl-sum}).

By fixing an isomorphism $\iota_0\colon \Qlb\to \Cc$, we can work with trace functions and characters with values in $\Qlb$. The first step, following from the generic
vanishing theorem, will be to prove that there exist
subsets~$\mathcal{Y}(k_n)\subset \widehat{G}(k_n)$ of characters and conjugacy
classes $\Theta_{M,k_n}(\chi)$ in some unitary group $\Un_r(\Cc)$ such
that $\Tr(\Theta_{M,k_n}(\chi))=S(M,\chi)$ holds for all $\chi\in\mcY(k_n)$ and
$$
|\mathcal{Y}(k_n)|\sim |G(k_n)|
$$
as $n\to +\infty$. The second step (an application of the theory of
tannakian categories) will be an intrinsic a priori definition of the
compact group~$K$ for which equidistribution should hold.
\par
By (essentially) the Weyl criterion\index{Weyl criterion} for
equidistribution, Theorem~\ref{thm:equidis-thm-intro} will follow from
the proof that, for every non-trivial irreducible representation $\rho$
of the unitary group~$\Un_r(\Cc)$, the limit
\[
  \lim_{N\to+\infty} \frac{1}{N}\sum_{1\leq n\leq N} \frac{1}{|G(k_n)|}
  \sum_{\chi \in \mathcal{Y}(k_n)} \Tr\rho(\Theta_{M,k_n}(\chi))
\]
exists and is equal to the multiplicity of the trivial representation in
the restriction of~$\rho$ to~$K$. Now, the tannakian formalism associates to each $\rho$ a perverse sheaf $\rho(M)$ on $G
$ such that the equality
$$
\Tr\rho(\Theta_{M,k_n}(\chi))= \sum_{x\in G(k_n)}\chi(x)
t_{\rho(M)}(x;k_n)
$$
holds for $n\geq 1$ and $\chi\in\mathcal{Y}(k_n)$. The Grothendieck--Lefschetz trace formula
yields then the
equality
\begin{equation}\label{eq-trace}
  \sum_{x\in G(k_n)}\chi(x)
  t_{\rho(M)}(x;k_n)
  = \sum_{|j|\leq d} (-1)^j \Tr\big(\frob_{k_n}\mid
  \rmH^j_c(G_{\bar{k}},\rho(M)\otimes\mcL_{\chi})\big)
\end{equation}
for~$n\geq 1$ and \emph{any} character $\chi$ of $G(k_n)$, where
$\frob_{k_n}$ is the geometric Frobenius automorphism. 

The definition of the set $\mathcal{Y}(k_n)$ 
implies the property that for~$\chi\in\mathcal{Y}(k_n)$,
the only possibly non-zero term in the right-hand side
of~(\ref{eq-trace}) is the one with~$j=0$. Thus we have
$$
\sum_{\chi \in\mathscr{Y}(k_n)} \Tr\rho(\Theta_{M,k_n}(\chi)) =\sum_{\chi
  \in\mathscr{Y}(k_n)} \Tr\big(\frob_{k_n}\mid
\rmH^0_c(G_{\bar{k}},\rho(M)\otimes\mcL_{\chi})\big).
$$
\par
If we add to the right-hand side of this last expression the two sums
\begin{gather*}
  S_1= \sum_{\chi \notin\mathscr{Y}(k_n)}\Tr\big(\frob_{k_n}\mid
  \rmH^0_c(G_{\bar{k}},\rho(M)\otimes\mcL_{\chi})\big),
  \\
  S_2=\sum_{1\leq |j|\leq d}\sum_{\chi
    \notin\mathscr{Y}(k_n)}(-1)^j\Tr\big(\frob_{k_n}\mid
  \rmH^j_c(G_{\bar{k}},\rho(M)\otimes\mcL_{\chi})\big),
\end{gather*}
then the resulting quantity is
\begin{align*}
  \sum_{|j|\leq d}\sum_{\chi\in \widehat{G}(k_n)} (-1)^j
  \Tr\big(\frob_{k_n}\mid \rmH^j_c(G_{\bar{k}},\rho(M)\otimes\mcL_{\chi})\big)&=
  \sum_{\chi\in \widehat{G}(k_n)}\sum_{x\in G(k_n)}\chi(x)
  t_{\rho(M)}(x;k_n)\\
  &=\sum_{x\in G(k_n)}t_{\rho(M)}(x;k_n)
  \sum_{\chi\in\widehat{G}(k_n)}\chi(x) \\
  &= |G(k_n)|\ t_{\rho(M)}(1;k_n)
\end{align*}
by the trace formula again, followed by an exchange of the sums and an
application of the orthogonality of characters of finite abelian groups.
This is a single value of the trace function, and it is relatively
straightforward to show that it gives the desired multiplicity as
limit. So \emph{the key difficulty is to control the two auxiliary
  sums~$S_1$ and~$S_2$}.
\par
This can be done if:
\begin{enumerate}
\item We have some bound on the size of the individual traces
  $\Tr\big(\frob_{k_n}\mid
  \rmH^j_c(G_{\bar{k}},\rho(M)\otimes\mcL_{\chi})\big)$;
\item We have some bound on the number of characters $\chi$ such that $\rmH^j_c(G_{\bar{k}},\rho(M)\otimes\mcL_{\chi})$ can be
  non-zero in a given degree~$j$.
\end{enumerate}

The second bound is given by the stratified vanishing theorem for
$\rho(M)$. For the first, Deligne's Riemann hypothesis (see
Theorem~\ref{th-deligne-riemann}) implies the inequality
$$
\big|\Tr\big(\frob_{k_n}\mid \rmH^j_c(G_{\bar{k}},\rho(M)\otimes\mcL_{\chi})\big)\big|
\leq |k_n|^{(j-d)/2}\dim
\rmH^j_c(G_{\bar{k}},\rho(M)\otimes\mcL_{\chi}),
$$
and we see that we require a bound on the dimension of the cohomology
spaces, which should be independent of~$\chi$. We obtain such bounds as
special cases of Sawin's quantitative sheaf
theory~\cite{sawin_conductors}, which is a quantitative form of the
finiteness theorems for the six operations on the derived category of
$\ell$-adic sheaves on quasi-projective algebraic varieties.

\begin{remarkintro}
  If~$G$ is one-dimensional, then the Euler--Poincaré characteristic
  formula (see Theorem~\ref{th-euler-poincare}) easily implies precise
  bounds on the dimension of the cohomology spaces that arise, and hence
  this critical issue does not arise for the additive or multiplicative
  groups, or for elliptic curves (for such groups,
  Theorem~\ref{thm:vanishing-thm-intro} is also straightforward). It
  also does not arise if the set of ``good'' characters $\mathscr{Y}(k_n)$ is the whole
  group~$\widehat{G}(k_n)$, which is the case in some instances
  considered by Katz for higher-dimensional abelian varieties.
\end{remarkintro}


\section{Conventions and notation}
\label{sec-conventions}

We summarize the notation that we use, as well as some typographical
conventions that we follow consistently unless otherwise specified. 

Given complex-valued functions $f$ and $g$ defined on a set $S$, we
write $f \ll g$\nomenclature{$f\ll g$}{asymptotic notation} if there exists a real number $C \geq 0$ (called an
``implicit constant'') such that the inequality $|f(s)|\leq C g(s)$
holds for all $s \in S$. We write $f\asymp g$ if $f\ll g$ and~$g\ll
f$.\nomenclature{$f\asymp g$}{asymptotic notation}
If $f$ and $g$ are defined on a topological space~$X$, and
$\mathfrak{F}$ is a filter on~$X$, then we say that $f\sim g$ along
$\mathfrak{F}$ if $\lim_{\mathfrak{F}} f(x)/g(x)=1$.\nomenclature{$f\sim
  g$}{asymptotic notation}

For any complex number $z$, we write $e(z)=\exp(2i\pi z)$; the value $e(a/q)$ is well-defined\nomenclature{$e(z)$}{$\exp(2i\pi z)$} for $q\geq 1$
and $a\in\Zz/q\Zz$. 

By a \emph{variety over a field} $k$,\index{algebraic variety} we mean a reduced separated
$k$-scheme of finite type. In particular, an \emph{algebraic
  group},\index{algebraic group} as
opposed to a group scheme, is always supposed to be reduced, and hence
smooth if the field $k$ is perfect.

Let~$S$ be a scheme. We say that a pair $(X,u)$ is \emph{a
  quasi-projective scheme over~$S$}\index{quasi-projective scheme} if $X$ is a scheme over~$S$ and~$u$
is a locally-closed immersion $u\colon X\to \Pp^n_S$ for some
integer~$n \geq 0$. We call $n$ the embedding dimension\index{embedding dimension} of $(X,u)$, or
simply of~$u$, and we say that $u$ is a \emph{quasi-projective
  embedding} of~$X$.  When $S$ is the spectrum of a field~$k$ and~$X$ is a
variety over~$k$, we will speak of quasi-projective
varieties\index{quasi-projective variety}
over~$k$. In some cases, we omit the mention of~$u$, when it is clear from the
context which locally-closed immersion is used. By a morphism
$f\colon (X,u)\to (Y,v)$ of quasi-projective schemes over~$S$, we mean an $S$-morphism of the underlying schemes. 

An algebraic group $\mathbf{G}$ over an algebraically closed field of
characteristic zero is called \emph{reductive}\index{reductive algebraic
  group} if all its
finite-dimensional representations are completely reducible (that is, we
do not require $\mathbf{G}$ to be connected).

Let $X$ be a scheme and $\ell$ a prime number invertible on $X$.

By a $\Qlb$-sheaf on~$X$, we always mean a constructible étale $\Qlb$-sheaf. Perverse sheaves (when~$X$ is an algebraic variety defined over a
field~$k$) are always considered with respect to the middle
perversity. We include a short survey of the most important properties
of perverse sheaves in Appendix~\ref{ch-app-perverse}, but recall here
some of the definitions. An $\ell$-adic complex is said to be
\emph{semiperverse} if the inequality 
$$\dim \supp \mcH^i(M) \leq -i$$
holds for any integer $i$. This is equivalent to
asking that the perverse cohomology sheaves~$\pH^i(M)$ are zero for
$i\geq 1$ (see~\cite[Prop.\,1.3.7]{BBD-pervers}).

We say that a complex $M$ in $\Der(X,\Qlb)$ has
\emph{perverse amplitude}\index{perverse amplitude} $[a, b]$ if its perverse cohomology sheaves
$\pH^i(M)$ are zero for $i \notin [a, b]$.


A \emph{stratification}\index{stratification} $\mcX$ of $X$ is a finite set-theoretic
partition of the associated reduced scheme $X^{\mathrm{red}}$ into
non-empty reduced locally-closed subschemes of $X$, called the
\emph{strata} of~$\mcX$.

Let~$\mcX$ be a stratification of~$X$, and let~$\mcF$ be an
$\ell$-adic sheaf on~$X$. The sheaf~$\mcF$ is said to be \emph{tame
  and constructible along $\mcX$}\index{sheaf tame and constructible
  along $\mcX$} if it is tamely ramified, as in~\cite[\S 1.3.1]{Orgo_constr_mod}, and if its
restriction to any strat of $\mcX$ is a lisse sheaf. More generally, a
complex $M\in \Der(X,\Qlb)$ is said to be tame and constructible along
$\mcX$ if all its cohomology sheaves are tame and constructible along
$\mcX$.

Let $f\colon X\to Y$ be a morphism of schemes. For an object $M$ of
$\Der(X,\Qlb)$, we write $Rf_!M=Rf_*M$ to indicate that the canonical
``forget supports'' morphism $Rf_!M\to Rf_*M$ is an isomorphism (and
similarly for equality of cohomology groups with and without compact
support).

Let $q\geq 1$ and $w\in\Zz$ be integers. A complex number $\alpha$ is
called a \emph{$q$-Weil number}\index{Weil number} of weight~$w$ if $\alpha$ is algebraic
over~$\Qq$ and all its Galois conjugates have modulus $q^{w/2}$. If~$k$
is a finite field, then a $k$-Weil number is a $|k|$-Weil number. 

\par
\medskip
\par

\emph{Throughout, for any prime $\ell$, we consider a fixed isomorphism
  $\iota_0\colon \Qlb\to\Cc$. Trace functions of $\ell$-adic perverse
  sheaves are thus always identified with complex-valued functions
  through~$\iota_0$, and similarly~$\ell$\nobreakdash-adic characters are identified with
  complex characters. On the other hand, purity of perverse sheaves (or
  lisse sheaves or $\ell$-adic complexes) refers to purity in the sense
  of Deligne, i.e., pointwise purity means that the eigenvalues of
Frobenius are Weil numbers of some weight; see the survey in
Section}~\ref{sec-weights}. 

\par
\medskip
\par
The following notation is used consistently in all the book, although
frequently with reminders (some objects, such as character sheaves, will
be defined later).

\begin{itemize}
\item $X\setminus Y$: difference set (elements of~$X$ that are not
  in~$Y$); also used in scheme-theoretic settings.
\item $M|X$ or $M_{\mid X}$: restriction of an object~$M$ (or a section
  of a sheaf) to a subset or subscheme~$X$.
\item $\abs{X}$: cardinality of a set~$X$.
\item $\tau(\chi,\psi)$: (unnormalized) Gauss sum attached to a
  multiplicative character~\hbox{$\chi\colon k^{\times}\to \Qlb^{\times}$}
  and an additive character~$\psi\colon k\to \Qlb^{\times}$ for a finite
  field~$k$, i.e.
  \begin{equation}\label{eq-gauss-sum}
    \tau(\chi,\xi)=\sum_{x\in k^{\times}}\chi(x)\psi(x).
  \end{equation}%
  \index{Gauss sum}
  \nomenclature[$tau$]{$\tau(\xi,\psi)$}{Gauss sum}
\item $\Der(X)=\Der(X,\Qlb)$\nomenclature[$D$]{$\Der(X)=\Der(X,\Qlb)$}{category
    of bounded constructible complexes of $\Qlb$-sheaves on $X$}:
  category of bounded constructible complexes of $\Qlb$-sheaves on a
  scheme $X$ such that the prime~$\ell$ is invertible in~$X$.
\item $K(X)=K(X,\Qlb)$\nomenclature{$K(X)=K(X,\Qlb)$}{Grothendieck group}: the Grothendieck group (or ring) of $\Der(X)$;
  it has a basis consisting of classes of simple perverse sheaves
  (see~\cite[\S\,0.8]{laumon-signes}).
\item $\alpha^{\deg}$\nomenclature[$alpha$]{$\alpha^{\deg}$}{geometrically
    trivial lisse sheaf}: for $k$ a finite field and $\alpha$ an
  $\ell$-adic unit, the $\ell$-adic sheaf of rank~$1$ on $\Spec(k)$ on
  which the geometric Frobenius acts by multiplication by $\alpha$; more
  generally, for $f\colon X\to \Spec(k)$ a scheme over~$k$, the
  pullback of $\alpha^{\deg}$ to~$X$.  
\item $M\otimes N$: derived tensor product of objects of $\Der(X)$.
\item $M\boxtimes N$: for~$M$ an object of~$\Der(X)$ and~$N$ an object
  of~$\Der(Y)$, the object $p_1^*M\otimes p_2^*N$ on~$X\times Y$,
  where~$p_1$ and~$p_2$ are the two
  projections.\nomenclature{$M\boxtimes N$}{external tensor product}
\item
  $\Perv(X)=\Perv(X,\Qlb)$\nomenclature[$P$]{$\Perv(X)=\Perv(X,\Qlb)$}{category
  of perverse sheaves}: the category of $\ell$-adic perverse
  sheaves on $X$. A simple perverse sheaf will also sometimes be called
  an irreducible perverse sheaf.
\item $\DD(M)$\nomenclature[$D$]{$\DD(M)$}{Verdier dual}: the Verdier dual of a complex $M$.
\item $\mcH^i(M)$: for $M\in \Der(X)$, the $i$-th cohomology sheaf of
  $M$.
\item $\pH^i(M)$: for $M\in \Der(X)$, the $i$-th perverse cohomology
  sheaf of $M$.
\item $\rmH^i(M)=\rmH^i(X_{\bar{k}},M)$: the étale cohomology groups
  of the pull-back of~$M$ to $X\times_k \bar k$.
\item $\rmH^i_c(M)=\rmH^i_c(X_{\bar{k}},M)$: the étale cohomology groups
  with compact support of $M$.
\item $h^i(X_{\bar{k}},M)=\dim \rmH^i(X_{\bar{k}},M)$.
\item $h^i_c(X_{\bar{k}},M)=\dim \rmH^i_c(X_{\bar{k}},M)$.
\item $\rmH^{*}(X_{\bar{k}},M)$ or $\rmH^{*}_c(X_{\bar{k}},M)$: the
  graded vector space which is the direct sum of all cohomology spaces
  $\rmH^i(X_{\bar{k}},M)$ or $\rmH^i_c(X_{\bar{k}},M)$.
\item $\chi(X_{\bar{k}},M)$ or $\chi_c(X_{\bar{k}},M)$: Euler--Poincaré
  characteristic for cohomology or cohomology with compact support.
\item $t_M(x;k_n)$\nomenclature{$t_M(x; k_n)$}{trace function of~$M$
    on~$X(k_n)$}: Frobenius trace function of an object $M$ of~$\Der(X)$
  for~$x\in X(k_n)$. For $x \in X(k)$, we usually abbreviate it by $t_M(x)=t_M(x;k)$.\nomenclature{$t_M(x)=t_M(x;k)$}{trace function
    of~$M$ on~$X(k)$}
\item $\braket{M}$\nomenclature{$\braket{M}$}{tannakian category generated by~$M$}: tannakian category generated by $M$.
\item $\garith{M}\ (\resp \ggeo{M}$): arithmetic (\resp geometric)
  tannakian group associated with a perverse sheaf $M$.
\item $\charg{G}(k_n)$: group of $\Qlb$-characters of the finite group
  $G(k_n)$.
\item $\charg{G}$: disjoint union of the sets $\charg{G}(k_n)$ for
  $n\geq 1$.
\item $\Pi(G)$: for a semiabelian variety $G$, the $\Qlb$-scheme of
  $\ell$-adic characters of $G$.
\item $\mcL_\chi$: character sheaf on $G_{k_n}$ associated to a
  character $\chi\in \charg{G}(k_n)$.
\item $\mcL_{\chi(f)}$: for $f\colon X\to G$ and $\chi \in\charg{G}(k_n)$, the sheaf $f^*\mcL_\chi$ on $X$. 
\item $M_\chi$: for an object $M$ of $\Der(G)$ and a character $\chi$,
  the object $M\otimes \mcL_\chi$.
\end{itemize}

Moreover, the following notational conventions will be used (often
with reminders).

\begin{itemize}
\item $k$: a finite field of characteristic $p$.
\item $\ell$: a prime different from $p$.
\item $\bar k$: an algebraic closure of $k$.
\item $k_n$: the extension of degree $n$ of $k$ inside~$\bar k$.
\item $G$: a connected commutative algebraic group (in particular of
  finite type) defined over $k$.
\item $T$: a torus; 
\item $U$: a unipotent group; 
\item $A$: an abelian variety.
\item $\mcF$: a $\Qlb$-sheaf; 
\item $\mcL$: a $\Qlb$-lisse sheaf of rank one.
\item $M, N$: objects of~$\Der(X)$ or~$\Perv(X)$.
\end{itemize}







\part{Theoretical foundations}

\numberwithin{section}{chapter}
\numberwithin{equation}{chapter}

\chapter{Preliminaries}\label{sec-prelim}

In this chapter, we summarize some tools we use throughout the book,
especially the properties of Sawin's quantitative sheaf
theory~\cite{sawin_conductors} with an emphasis on commutative algebraic
groups.

\section{Specializations of perverse sheaves}

We will frequently use the following result concerning specializations
of perverse sheaves. 

\begin{proposition}\label{pr-kl}
  Let~$k$ be a field, $f\colon Y\to X$ a surjective affine
  morphism of varieties over~$k$, and $M$ a perverse sheaf
  on~$Y$. For all closed points~$x$ outside of a closed strict subvariety of~$X$, the
  object $M|_{f^{-1}(x)}[-1]$ is perverse on~$f^{-1}(x)$.
\end{proposition}

This follows directly
from~\cite[Ch.\,III, Lemma\,6.3]{kiehlWeilConjecturesPerverse2001}.






  

\section{Review of quantitative sheaf theory}

Let $k$ be a field, $\bar{k}$ an algebraic closure of $k$, and $\ell$
a prime different from the characteristic of $k$.

\begin{definition}[Complexity]\label{def-complexity}\mbox{}\index{complexity} 
  Let $M_{n+1, m+1}$ be the
  variety of $(n+1)\times (m+1)$ matrices of maximal rank, viewed as
  an affine scheme over~$k$. For each $0 \leq m \leq n$, consider a
  geometric generic point~$a_m$ of $M_{n+1,m+1}$ defined over an
  algebraically closed extension $K$ of~$k$, and let
  $l_{a_m}\colon \Pp^{m}_K\to \Pp^{n}_K$ denote the associated linear
  map.
  
  \begin{enumerate}
  \item[(a)] The \emph{complexity} of an object $M$ of
    $\Der(\Pp^n_{k})$ is defined as
    \[
      c(M)=\max_{0\leq m\leq n}\sum_{i\in \Zz}h^i(\Pp^n_K,M\otimes
      l_{a_{m}\ast}\Qlb)=\max_{0\leq m\leq n}\sum_{i\in \Zz}h^i(\Pp^m_K, l_{a_m}^*M),
    \]
    where the last equality follows from the projection formula.
    \nomenclature{$c(M)$}{complexity on projective space}
  \item[(b)] Let $(X,u)$ be a quasi-projective variety over~$k$. For
    any object $M$ of $\Der(X)$, the \emph{complexity} of $M$ with respect
    to $u$ is defined as
    $c_{u}(M)=c(u_{!}M)$.\nomenclature{$c_u(M)$}{complexity on
      quasi-projective variety}
  \end{enumerate} 
\end{definition}

The invariance of étale cohomology under base change between
algebraically closed fields implies that the complexity is
well-defined (\ie it does not depend on the choice of fields of
definition of the generic points~$a_m$).

\begin{lemma}\label{lm-sums-betti}
  Let $(X,u)$ be a quasi-projective variety over~$k$ and let~$M$ be an
  object of~$\Der(X)$. The following inequality holds:
  \begin{equation}\label{eqn:boundBetti}
    \sum_{i \in \Zz} h^i_c(X_{\bar{k}}, M) \leq c_u(M).
  \end{equation}
\end{lemma}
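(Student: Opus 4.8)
The plan is to reduce the general inequality to the case of projective space, where the complexity is defined directly, and then extract the bound from the definition by choosing the right linear section. First I would unwind the definitions: by Definition~\ref{def-complexity}, we have $c_u(M) = c(u_! M_{\bar k})$, so after base change to $\bar k$ we may assume $k = \bar k$ and that we are comparing $\sum_i h^i_c(X, M)$ with $c(u_! M)$, where $u\colon X \to \Pp^n_k$ is the given locally closed immersion. Since $u$ is a locally closed immersion, the proper base change / adjunction machinery gives $\rmH^i_c(X_{\bar k}, M) \cong \rmH^i_c(\Pp^n_{\bar k}, u_! M)$ for all $i$ (cohomology with compact support is insensitive to extending by zero along an open immersion, and agrees with ordinary pushforward along the closed immersion part). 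So it suffices to prove the inequality for a complex $N = u_! M$ on $\Pp^n$, namely $\sum_i h^i_c(\Pp^n_{\bar k}, N) \leq c(N)$.

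Next I would look at the definition of $c(N)$ on $\Pp^n$: it is the maximum over $0 \le m \le n$ of $\sum_i h^i(\Pp^m_K, l_{a_m}^* N)$ where $l_{a_m}$ is a generic linear embedding $\Pp^m \hookrightarrow \Pp^n$. Taking $m = n$, the generic linear map $l_{a_n}\colon \Pp^n_K \to \Pp^n_K$ is an isomorphism, so $l_{a_n}^* N \cong N$ (up to the base field extension $K/k$, which does not change Betti numbers), and the corresponding term is simply $\sum_i h^i(\Pp^n_K, N)$. Since $\Pp^n$ is proper, $\rmH^i_c(\Pp^n, N) = \rmH^i(\Pp^n, N)$, so this term equals $\sum_i h^i_c(\Pp^n_{\bar k}, N)$. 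Because $c(N)$ is defined as a maximum that includes this term, we conclude $\sum_i h^i_c(\Pp^n_{\bar k}, N) \le c(N)$, which is exactly what we needed.

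Putting the two reductions together yields $\sum_i h^i_c(X_{\bar k}, M) \le c(N) = c(u_! M_{\bar k}) = c_u(M)$. The only genuinely delicate point is the first reduction, the identification $\rmH^i_c(X_{\bar k}, M) \cong \rmH^i_c(\Pp^n_{\bar k}, u_! M)$; this is standard, but one should be slightly careful that $u$ need not be an open immersion, only locally closed. The clean way is to factor $u = j \circ i$ with $i$ a closed immersion into an open subscheme $V$ and $j\colon V \hookrightarrow \Pp^n$ open, note $Ri_* = i_!$ for the closed immersion so $u_! M = j_!(i_! M)$, and then invoke that cohomology with compact support of $\Pp^n$ with coefficients in $j_!(\text{--})$ computes cohomology with compact support of $V$, which in turn (via $i_!$) computes that of $X$. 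No part of the argument requires purity or any quantitative input beyond the bare definition of complexity, so the lemma is really just a packaging statement, with the $m = n$ term of the maximum doing all the work.
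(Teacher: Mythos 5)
Your proof is correct and follows essentially the same route as the paper's: identify $\sum_i h^i_c(X_{\bar k},M)$ with $\sum_i h^i(\Pp^n_{\bar k},u_!M)$ via extension by zero and properness of $\Pp^n$, and then observe that this sum is exactly the $m=n$ term of the maximum defining $c(u_!M)$, since $l_{a_n}$ is an isomorphism and étale cohomology is invariant under extension between algebraically closed fields. The paper's proof is just a terser statement of the same two observations.
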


\begin{proof}
  This follows from the equality
  $h^i_c(X_{\bar{k}}, M)=h^i(\Pp^n_{\bar k}, u_{!}M)$ and the invariance of
  étale cohomology under extension of scalars between algebraically
  closed fields, combined with the fact that
  $l_{a_n} \colon \Pp^n_K \to \Pp^n_K$ is an isomorphism.
\end{proof}

\begin{definition}
  Let $f\colon (X,u)\to (Y,v)$ be a morphism of quasi-projective
  varieties over $k$ with embedding dimensions $n_X$ and $n_Y$
  respectively. For all integers $0 \leq m_X \leq n_X$ and~\hbox{$0 \leq m_Y \leq n_Y$}, consider geometric generic points
  $a_{m_X}$ of $M_{n_X+1,m_X+1}$ and $b_{m_Y}$ of $M_{n_Y+1,m_Y+1}$ defined
  over an algebraically closed extension $K$ of~$k$, and let
  $l_{a_{m_X}}\colon\Pp^{m_X}_K\to \Pp^{n_X}_K$ and
  $l_{b_{m_Y}}\colon \Pp^{m_Y}_K\to \Pp^{n_Y}_K$ denote the associated
  linear maps. The \emph{complexity} of $f$ is defined as
  \[
    c_{u,v}(f)=\max_{0\leq m_X\leq n_X}\max_{0\leq m_Y\leq
      n_Y}\sum_{i\in \Zz}h^i_c(X_K,u^*{l_{a_{m_X\ast}}}\Qlb\otimes f^*v^*
    {l_{b_{m_X\ast}}}\Qlb).
\]
\end{definition}

The main result of~\cite{sawin_conductors} establishes, among other things, the
``continuity'' of the six operations on the derived category with respect
to the complexity. In this result and the remainder of this
section, the implicit constants only depend on the embedding dimensions
of the quasi-projective varieties, unless otherwise specified.

\begin{theorem}[{\cite[Th.\,6.8,\,Prop.\,6.14,\,Prop.\,6.12]{sawin_conductors}}]
  \label{thm-conductors}
  Let $f\colon (X,u)\to (Y,v)$ be a morphism of quasi-projective
  varieties over~$k$. Let $M,N,P$ be objects of $\Der(X)$ and let
  $Q$ be an object of $\Der(Y)$. The following inequalities hold:
  \begin{enumth}
  \item $c_u(M\oplus N) \leq c_u(M)+c_u(N)$.
  \item\label{thm-conductors:item2} $c_u(M\otimes N)\ll c_u(M)c_u(N)$.
  \item If $M\to N\to P$ is a distinguished triangle, then
    $c_u(N)\leq c_u(M)+c_u(P)$.
  \item $c_u(M[k])= c_u(M)$ for any~$k\in\Zz$.
  \item $c_u(\mathrm{RHom}(M,N))\leq c_{u,\id}(u)c_u(M)c_u(N)$.
  \item\label{thm-conductors:item6} $c_v(Rf_!M)\ll c_{u,v}(f)c_u(M)$ and
    $c_v(Rf_*M)\ll c_{u,\id}(u)c_{v,\id}(v)c_{u,v}(f)c_u(M)$.
  \item $c_u(f^*Q)\ll c_{u,v}(f)c_v(Q)$ and
    $c_u(f^!Q)\ll
    c_{u,\id}(u)c_{v,\id}(v)c_{u,v}(f)c_v(Q)$.\label{eq-7}
  \item\label{thm-boxtimes}
    $c_{u\boxtimes v}(M\boxtimes Q)\ll c_{u\boxtimes
      v,u}(p_1)c_{u\boxtimes v,v}(p_2)c_u(M)c_v(Q)$.\label{eq-8}
  \end{enumth}
  
  In the last of these inequalities, $u\boxtimes v$ is the composition
  of~$u\times v$ with the Segre embedding and $p_1$, $p_2$ are the
  projections $X\times Y\to X$ and $X\times Y\to Y$, respectively.
\end{theorem}

\begin{remark}\label{rm-change-embedding}
  Although the notion of complexity on a quasi-projective scheme $(X,u)$
  depends on the quasi-projective immersion~$u$, note that if $v$ is
  another quasi-projective immersion of~$X$, then applying the
  property~\ref{eq-7} to the identity morphism between~$(X,u)$
  and~$(X,v)$, we get
  $$
  c_u(M)\asymp c_v(M)
  $$
  for all objects~$M$ of~$\Der(X)$, where the implied constants are
  essentially $c_{u,v}(\mathrm{Id})$ and~$c_{v,u}(\mathrm{Id})$, up to
  constants depending on the embedding dimensions of~$u$ and~$v$. Thus,
  as long as we only consider on~$X$ an absolutely bounded number of
  different quasi-projective immersions, we can think of the complexity
  as being essentially independent of them. (This is reminiscent of
  similar properties of height functions in diophantine geometry.)
\end{remark}

The complexity can also be used to control the degree of the locus where
a complex of sheaves is lisse, and of the locus where the generic base
change theorem holds.

\begin{theorem}[{\cite[Th.\,6.23]{sawin_conductors}}]
  \label{thm-lisse-locus-conductor}
  Let $(X,u)$ be an irreducible quasi-projective variety over~$k$. Let~$M$ be an object of $\Der(X)$. Let~$Z$ be the complement of the
  maximal open subset where~$X$ is smooth and~$M$ is lisse.  Then the
  estimate
  \[
    \deg (u(Z)) \ll (3 + s)c(u) c_u(M)
  \]
  holds, where the degrees are computed in the projective space target
  of~$u$, and~$s$ is the degree of the codimension $1$ part of the
  singular locus of $X$.
 \end{theorem}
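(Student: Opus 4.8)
The plan is to reduce, by cutting repeatedly with generic hyperplanes, to the case where $X$ is a curve, tracking how the complexity changes at each step via the continuity estimates of Theorem~\ref{thm-conductors}, and then to conclude with the Grothendieck--Ogg--Shafarevich formula applied to the normalization of the curve. All implied constants will depend only on the embedding dimension~$n$ of~$u$. Two elementary observations drive the bookkeeping: first, $\deg X\le c(u)$ (take $m=n-\dim X$ in Definition~\ref{def-complexity}, where the generic section meets $X$ in $\deg X$ reduced points); second, if $j\colon X'\hookrightarrow X$ is a closed immersion cut out inside $X$ by a hyperplane of $\Pp^n$, then $c_{u'}(j^*N)\ll c_u(N)$ for all $N$, by the pull-back estimate of Theorem~\ref{thm-conductors} together with the fact that the complexity of such a $j$ is bounded in terms of~$n$ alone.

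\emph{Reduction.} Write $Z=X^{\mathrm{sing}}\cup Z_M$, with $Z_M$ the locus of non-lisseness of $M$, and split $Z=Z^{\ge 1}\sqcup Z^{\le 0}$ into its positive-dimensional and its isolated parts, so $\deg u(Z)=\deg u(Z^{\ge 1})+\abs{Z^{\le 0}}$. For a generic hyperplane $H$ (genericity taken over a large field, so that Bertini applies), $X\cap H$ is smooth exactly where $X$ is and $M|_{X\cap H}$ is lisse exactly where $M$ is; hence the bad locus of $M|_{X\cap H}$ on $X\cap H$ equals $Z\cap H=Z^{\ge 1}\cap H$, the degree of the codimension-one part of $(X\cap H)^{\mathrm{sing}}$ is again $s$, and complexities grow only by factors depending on~$n$. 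Since a generic hyperplane section preserves degrees of positive-dimensional subvarieties, $\deg u(Z^{\ge 1})=\deg u(Z(X\cap H,M|_{X\cap H}))$, which is handled by induction on $\dim X$. For the isolated part I would run a parallel induction using a projection $\rho\colon\Pp^n\setminus\{p\}\to\Pp^{n-1}$ from a generic point $p\notin X$: each point of $Z^{\le 0}$ maps to a non-lisseness point of $R(\rho|_X)_*M$ on $\rho(X)\subseteq\Pp^{n-1}$, the images are pairwise distinct, and $c(R(\rho|_X)_*M)\ll c_u(M)$ by Theorem~\ref{thm-conductors}; so $\abs{Z^{\le 0}}$ is bounded by the same estimate for $\rho(X)$ inside the smaller projective space, which carries through by induction on the embedding dimension. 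The base case of both inductions is $X$ a curve.

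\emph{The curve case.} Let $Y$ be a (possibly singular) curve with embedding~$u$, let $\nu\colon\tilde Y\to Y$ be its normalization, and let $N$ be a complex on~$Y$; I must bound the number of points at which $Y$ is singular or $N$ is not lisse. The singular points number exactly $s$ (for $Y$ obtained as a sufficiently generic curve section, $Y^{\mathrm{sing}}$ is the intersection of the codimension-one part of $X^{\mathrm{sing}}$ with the ambient linear space, of degree $s$). For the rest, pass to $\tilde Y$, let $U\subseteq\tilde Y$ be the dense open where $\nu^*N$ is lisse, write $S=\tilde Y\setminus U$ and $r_i$ for the generic rank of $\mathcal{H}^i(\nu^*N)$, and apply Grothendieck--Ogg--Shafarevich to each cohomology sheaf:
\[
  \sum_{y\in S}\bigl(\mathrm{drop}_y(\mathcal{H}^i(\nu^*N))+\mathrm{Swan}_y(\mathcal{H}^i(\nu^*N))\bigr)=r_i\,\chi_c(U)-\chi_c(\tilde Y,\mathcal{H}^i(\nu^*N)).
\]
Each point of $S$ contributes at least $1$ to the left-hand side for some~$i$, so $\abs{S}\le\sum_i\bigl(r_i\,\chi_c(U)-\chi_c(\tilde Y,\mathcal{H}^i(\nu^*N))\bigr)$; substituting $\chi_c(U)=\chi_c(\tilde Y)-\abs{S}$, the terms involving $\abs{S}$ combine to give $\abs{S}\bigl(1+\sum_i r_i\bigr)\le\sum_i r_i\,\chi_c(\tilde Y)-\sum_i\chi_c(\tilde Y,\mathcal{H}^i(\nu^*N))$, whence $\abs{S}\le\bigl(\sum_i r_i\bigr)\abs{\chi_c(\tilde Y)}+\sum_i\abs{\chi_c(\tilde Y,\mathcal{H}^i(\nu^*N))}$. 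Now $\sum_i r_i$ is the total dimension of a generic stalk of $N$, hence $\ll c_u(N)\ll c_u(M)$; one has $\abs{\chi_c(\tilde Y)}\le\abs{\chi_c(Y)}+\sum_{y\in Y^{\mathrm{sing}}}(\#\nu^{-1}(y)-1)\ll c(u)+s\deg Y\ll(1+s)\,c(u)$; and $\sum_i\abs{\chi_c(\tilde Y,\mathcal{H}^i(\nu^*N))}\le\sum_i\sum_j h^j_c(\tilde Y,\mathcal{H}^i(\nu^*N))\ll c(\nu^*N)\ll c_u(M)$, using Lemma~\ref{lm-sums-betti} and Theorem~\ref{thm-conductors} applied to the standard triangles expressing $N$ through its cohomology sheaves and to the finite morphism~$\nu$. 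Adding the $s$ singular points, the number of bad points of $N$ on $Y$ is $\ll(3+s)\,c(u)\,c_u(M)$; unwinding both inductions yields the stated bound for $\deg u(Z)$. The product structure of $(3+s)$ enters precisely here, through the Euler characteristic of $\tilde Y$ and the complexity of~$\nu$.

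\emph{Main obstacle.} The crux is to make all of this uniform and, crucially, \emph{linear} in $c_u(M)$: one must ensure that a point at which some $\mathcal{H}^i(M)$ fails to be lisse genuinely forces a contribution of size $\ge 1$ to a Betti-number-like invariant; that the generic hyperplanes and the generic center of projection can be chosen so that the bad loci of $M$ and of its successive images are related by \emph{equalities} rather than inclusions; and that the singular locus of $X$, whose codimension-one part is the given quantity~$s$, is otherwise controlled by $c(u)$ (the degree of the Jacobian locus being bounded in terms of that of $X$ itself, hence in terms of $c(u)$). It is the stability of complexity under all six operations from~\cite{sawin_conductors}, embodied in Theorem~\ref{thm-conductors}, that makes these constants absolute; and it is exactly the interference of the codimension-one singularities of $X$ with the normalization step that forces the auxiliary quantity~$s$ into the final estimate.
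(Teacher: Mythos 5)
The paper itself contains no proof of this statement: Theorem~\ref{thm-lisse-locus-conductor} is imported verbatim from~\cite[Th.\,6.22]{sawin_conductors}, so your argument can only be judged on its own terms, and on its own terms it has at least two genuine gaps. First, in the curve case the key counting step fails. You claim that every point of $S$ (a point where $\nu^*N$ is not lisse) contributes at least $1$ to $\sum_{y}(\mathrm{drop}_y+\mathrm{Swan}_y)$ for some cohomology sheaf, but this is false for general constructible sheaves: on a smooth curve take $\mcF=j_!\mcL\oplus\delta_y$, where $\mcL$ is a rank-one lisse sheaf on the complement of $y$ with nontrivial tame monodromy at $y$ and $\delta_y$ is a rank-one skyscraper. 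Then $\mcF$ is not lisse at $y$, yet $\mathrm{drop}_y=0$ and $\mathrm{Swan}_y=0$, so such points are invisible to your invariant; since $M$ is an arbitrary object of $\Der(X)$, its restriction to a curve section can have cohomology sheaves of exactly this shape. (In addition, the Grothendieck--Ogg--Shafarevich identity as you wrote it is off by $r_i\abs{S}$: the correct right-hand side is $r_i\,\chi_c(\tilde Y)-\chi_c(\tilde Y,\mcH^i(\nu^*N))$, not $r_i\,\chi_c(U)-\chi_c(\tilde Y,\mcH^i(\nu^*N))$; your subsequent algebra happens to land near the corrected bound, but the derivation rests on the false ``each bad point contributes $\geq 1$'' claim.) Detecting non-lisseness on a curve requires a finer invariant (for instance comparing $\mcF$ with $j_*j^*\mcF$, or using vanishing cycles), and that is precisely the missing idea.

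Second, the induction handling the isolated part $Z^{\leq 0}$ via projection from a generic point does not work as stated. You need the images $\rho(x)$, $x\in Z^{\leq 0}$, to be \emph{zero-dimensional components} of the non-lisse locus of $R(\rho|_X)_*M$ in order for a degree bound on that locus to control their number; but the non-lisse locus of the push-forward also contains the positive-dimensional locus where $\rho|_X$ fails to be injective or \'etale, and $\rho(x)$ can (and in general must) lie on it --- for instance when $\dim(X)=n-1$ every line through $x$ meets $X$ again, so $\rho(x)$ always sits on the double locus, and a degree bound for the bad locus of $R\rho_*M$ says nothing about how many such points there are. Moreover $\rho(X)$ acquires new codimension-one singularities whose degree is controlled only by something like $\deg(X)^2\ll c(u)^2$, not by $s$, so even if the counting issue were repaired the induction would not return a bound of the stated linear shape $(3+s)\,c(u)\,c_u(M)$. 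Finally, the equality of bad loci under generic hyperplane sections (that $M|_{X\cap H}$ is non-lisse at every point of $Z\cap H$) is asserted but not proved; only this containment is needed, but it does require an argument, which you defer to the ``main obstacle'' paragraph without supplying it.
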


\begin{theorem}[{\cite[Th.\,6.27]{sawin_conductors}}]
  \label{thm-gen-bc-conductors}
  Let $(X,u)$, $(Y,v)$ and $(S,w)$ be quasi-projective algebraic
  varieties over~$k$. Let~$f\colon X\to Y$ and $g\colon Y\to S$ be
  morphisms.
  \par
  For any object $M$ of~$\Der(X)$, there exists an integer $C\geq 0$,
  depending only on $c_u(M)$ and the morphisms $(f,g,u,v,w)$, and a dense open set
  $U\subset S$ such that: 
  \par
  \emph{(i)} The image of the complement of $U$ has degree~$\leq C$.
  \par
  \emph{(ii)} The object $f_*M$ is of formation compatible with any base
  change $S'\to U\subset S$.
\end{theorem}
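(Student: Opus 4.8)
The plan is to revisit the proof of Deligne's generic base change theorem, bookkeeping all intervening complexities by means of Theorem~\ref{thm-conductors} and Theorem~\ref{thm-lisse-locus-conductor}, so that the degree of the bad locus in $S$ ends up depending only on $c_u(M)$ and the fixed geometric data $(f,g,u,v,w)$.

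\emph{Step 1: reduction to an open immersion.} First I would compactify $f$ over $Y$, choosing a quasi-compact open immersion $j\colon X\hookrightarrow\bar X$ into a quasi-projective $Y$-scheme with proper structure morphism $\bar f\colon\bar X\to Y$, together with a quasi-projective embedding $\bar u$ of $\bar X$ whose embedding dimension and degree are bounded in terms of $(u,v)$ (\eg by taking the closure of the image of $X$ inside $\Pp^{n_X}\times Y$, which is proper over $Y$). Since $\bar f$ is proper, $R\bar f_*$ commutes with arbitrary base change, satisfies $c_v(R\bar f_*N)\ll c_{\bar u}(N)$ by Theorem~\ref{thm-conductors}, and sends any complex whose formation is compatible with a given base change to one with the same property. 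As $Rf_*M=R\bar f_*(Rj_*M)$ and $c_{\bar u}(Rj_*M)$ is bounded in terms of $c_u(M)$ and the data — again by Theorem~\ref{thm-conductors} — it suffices to produce a dense open $U\subset S$ whose complement has controlled degree and over which the formation of $Rj_*M$ is compatible with every base change $S'\to U$; unwinding the cartesian squares, it is enough for this that $M$ be universally locally acyclic relative to $S$ over $(g\circ f)^{-1}(U)$.

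\emph{Step 2: quantitative generic local acyclicity.} Here I would run Deligne's argument for the existence of such a $U$: after replacing $S$ by a dense open one may assume it integral; using alterations one reduces to the case where the boundary $\bar X\setminus X$ is, over a dense open of $S$, a relative strict normal crossings divisor along which $M$ is tamely lisse, where $Rj_*M$ is computed by the relative cohomological purity formula and is visibly compatible with base change; the locus of $X$ where $M$ fails to be lisse, and the locus of $S$ over which the chosen relative geometry degenerates, are then peeled off by Noetherian induction on $\dim S$. The crucial point is that each locus removed from $S$ is contained in the image of the non-lisse locus of a complex obtained from $M$ — and from auxiliary geometric data of bounded complexity — by a bounded number of the six operations (pullbacks, proper or finite pushforwards, tensor products), so that Theorem~\ref{thm-lisse-locus-conductor} bounds its degree while Theorem~\ref{thm-conductors}, together with the invariance of complexity under finite radicial surjective morphisms (Proposition~\ref{pr-radicial}), bounds the complexity of the complexes handed to the next stage of the induction.

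The main obstacle is precisely to make this Noetherian induction uniform: one must bound the \emph{number} of stages — which works because each stage strictly decreases $\dim S$, and at each stage only boundedly many irreducible components, of bounded degree, are removed — while simultaneously controlling the \emph{complexity} of the complexes that survive to each stage, which is where the multiplicativity of complexity under $\otimes$ and the pushforward and pullback estimates of Theorem~\ref{thm-conductors} do the real work. Granting both, the accumulated bad locus in $S$ has degree bounded only in terms of $c_u(M)$ and $(f,g,u,v,w)$, which gives~(i); and over the resulting dense open $U$, the compatibility of $Rf_*M=R\bar f_*(Rj_*M)$ with any base change $S'\to U$ follows from the local acyclicity established in Step~2 combined with proper base change for $\bar f$, which gives~(ii).
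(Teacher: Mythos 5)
First, the point of comparison you should be aware of: the paper does not prove this statement at all. It is imported verbatim from Sawin's quantitative sheaf theory (\cite[Th.\,6.26]{sawin_conductors}), so there is no internal proof to measure your argument against; what you have written is an attempted reproof of the external result, and it has to be judged on its own terms.

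On those terms, the overall shape is reasonable (compactify $f$ relative to the fixed embeddings, reduce via proper base change to the open immersion $j\colon X\hookrightarrow\bar X$, then run a Noetherian induction on $\dim S$ with complexity bookkeeping through Theorems~\ref{thm-conductors} and~\ref{thm-lisse-locus-conductor}), but Step~2 has a genuine uniformity gap. The de Jong alteration you invoke is chosen so that $M$ becomes \emph{tamely} lisse along a relative strict normal crossings boundary, and that choice depends on $M$ itself --- on its actual wild ramification and singular locus --- not merely on $c_u(M)$ and the fixed data $(f,g,u,v,w)$. De Jong's theorem comes with no degree bound, and there is no estimate of the complexity of the alteration (in the sense of the quantities $c_{u,v}(\cdot)$ attached to morphisms) in terms of $c_u(M)$; consequently the pullbacks and pushforwards along the alteration that feed the next stage of your induction have complexities, hence non-lisse loci of degrees, that you cannot bound by $c_u(M)$, and the final constant $C$ would depend on $M$ rather than only on its complexity --- exactly what the statement forbids. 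This is the same obstruction the authors record in Remark~\ref{rmk-HV-effective} as the reason their own alteration-based arguments (for tori, and for Orgogozo's moderation on abelian varieties) are not effective. A secondary logical issue: the reduction ``it is enough that $M$ be universally locally acyclic relative to $S$ over $(g\circ f)^{-1}(U)$'' is not justified as stated, since local acyclicity of $M$ on $X$ does not by itself control $Rj_*M$ at the boundary; base-change compatibility of $Rj_*$ requires a condition on the pair $(\bar X,\bar X\setminus X)$ relative to $S$, which is what your SNC/tameness step is secretly supplying. To make the quantitative statement go through, the alteration/tameness step must be replaced by an effective mechanism --- for instance Deligne's dévissage to relative curves combined with the Deligne--Laumon criterion (constancy of Swan conductors implies universal local acyclicity), with the Swan conductors controlled via the complexity --- rather than by non-effective resolution-type inputs.
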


\begin{proposition}[{\cite[Th.\,6.15]{sawin_conductors}}]
  \label{prop-JorHol-cond}
  Let $(X,u)$ be a quasi-projective variety over~$k$. Let $M$ be an
  object of $\Der(X)$. For each integer~$i$, let
  $M_{i, 1},\dots, M_{i, n_i}$ denote the Jordan\nobreakdash--Hölder
  factors of the perverse cohomology sheaf $\pH^i(M)$. Then the following estimate holds: 
  \[
    \sum_{i\in \Zz} \sum_{1\leq j\leq n_i} c_u(M_{i, j})\ll
    c_{u,\id}(u)c_u(M).
  \]
\end{proposition}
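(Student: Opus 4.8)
The plan is to reduce to the case of a single perverse sheaf and then run a dévissage on the dimension of the support, feeding in the continuity statements of Theorem~\ref{thm-conductors}, the Betti bound of Lemma~\ref{lm-sums-betti}, and the control of the non-lisse locus from Theorem~\ref{thm-lisse-locus-conductor}. The truncation triangles $\ptau_{\leq i-1}M\to\ptau_{\leq i}M\to\pH^i(M)[-i]$, together with the sub-additivity of $c_u$ along exact triangles and its shift-invariance (Theorem~\ref{thm-conductors}, items (3) and (4)), reduce the claim to two assertions: that $\sum_i c_u(\pH^i(M))\ll c_{u,\id}(u)\,c_u(M)$, and that $\sum_j c_u(P_j)\ll c_{u,\id}(u)\,c_u(P)$ for a perverse sheaf $P$ with Jordan--Hölder factors $P_j$. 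The first is the continuity of the perverse truncation functors: on a quasi-projective variety the perverse $t$-structure is built from the standard one by Artin vanishing along an affine stratification, so the perverse truncations are assembled out of the six operations for affine immersions, and the bound follows from Theorem~\ref{thm-conductors} with the affine-immersion factors absorbed into $c_{u,\id}(u)$. It thus remains to treat a perverse sheaf $P$.

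\emph{Dévissage by dimension of support.} If $\dim\Supp P=0$, then $P$ is a direct sum of skyscrapers, each Jordan--Hölder factor is a rank-one skyscraper of complexity $1$, and $\sum_j c_u(P_j)=\sum_i h^i_c(X_{\bar k},P)\leq c_u(P)$ by Lemma~\ref{lm-sums-betti}. In general set $d=\dim\Supp P$ and let $V\subseteq X$ be a dense open subset on which $X$ is smooth, $\Supp P$ is smooth of pure dimension $d$, and $P|_V=\mcL[d]$ for a lisse sheaf $\mcL$; write $j\colon V\hookrightarrow X$ and $i_Z\colon Z\hookrightarrow X$ for the complementary closed immersion, with $\dim Z<d$. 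The Jordan--Hölder factors of $P$ are of two kinds: those meeting $V$, which are precisely the intermediate extensions $j_{!*}(\mcL_\alpha[d])$ of the Jordan--Hölder constituents $\mcL_\alpha$ of $\mcL$, counted with multiplicity; and those supported on $Z$, which occur among the Jordan--Hölder factors of the perverse cohomology sheaves of $i_Z^*P$ and $i_Z^!P$. For the latter we invoke the inductive hypothesis on $Z$ together with Theorem~\ref{thm-conductors}(7), which bounds $c(i_Z^*P)$ and $c(i_Z^!P)$ in terms of $c_u(P)$ and data of $i_Z$.

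\emph{The top-dimensional factors.} The number of factors $j_{!*}(\mcL_\alpha[d])$ is at most $\rank\mcL$, which is $\leq c_u(P)$ since a generic linear section of codimension $d$ meets $\overline{V}$ in finitely many points at each of which $P$ has total stalk dimension $\rank\mcL$ (Lemma~\ref{lm-sums-betti}). For each such factor, writing $j_{!*}(\mcL_\alpha[d])$ as the image of $\pH^0(j_!\mcL_\alpha[d])\to\pH^0(Rj_*\mcL_\alpha[d])$ and using items (3) and (6) of Theorem~\ref{thm-conductors} bounds $c_u(j_{!*}(\mcL_\alpha[d]))$ in terms of $c_{u|_V}(\mcL_\alpha[d])$ and $c_{u,\id}(u)$; and $\sum_\alpha c_{u|_V}(\mcL_\alpha[d])\ll c_{u|_V}(\mcL[d])\ll c_{u,\id}(u)\,c_u(P)$, the first inequality because the complexity of a lisse sheaf is governed, via restriction to generic curves, by ranks and Swan conductors at the boundary, which are additive in short exact sequences by Grothendieck--Ogg--Shafarevich, and the second by Theorem~\ref{thm-conductors}(7) applied to $j^*P=\mcL[d]$.

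\emph{Main obstacle.} The genuine difficulty throughout is \emph{uniformity}: obtaining a bound that is linear in $c_u(P)$ (up to the fixed factor $c_{u,\id}(u)$) rather than merely polynomial, even though the open set $V$ and the immersions $j$, $i_Z$ entering the estimates above depend on $P$. This forces one to control the degree of $Z$ linearly in $c_u(P)$ via Theorem~\ref{thm-lisse-locus-conductor}, to propagate this through the generic base change input of Theorem~\ref{thm-gen-bc-conductors}, and — most crucially — to exploit the additivity, not just the sub-additivity, of Euler characteristics of lisse sheaves on curves, since plain sub-additivity of $c_u$ only yields one of the two inequalities needed. In practice the cleanest organization, which is the one carried out in~\cite{sawin_conductors}, is to run the entire argument directly in terms of the generic-linear-section Betti numbers that define $c_u$, so that every reduction step is visibly additive up to a boundary contribution.
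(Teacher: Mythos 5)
A preliminary remark: the paper does not prove Proposition~\ref{prop-JorHol-cond} at all --- it is quoted from \cite[Th.\,6.15]{sawin_conductors} --- so there is no in-paper argument to compare with; your proposal has to stand on its own, and as written it does not.

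The first gap is the reduction to a single perverse sheaf. Rotating the truncation triangles and using sub-additivity only bounds $c_u(\pH^i(M))$ by $c_u(\ptau_{\leq i}M)+c_u(\ptau_{\leq i-1}M)$, so you need continuity of the perverse truncation functors, and your one-line justification does not establish it: the affine stratifications through which the perverse $t$-structure is glued are adapted to $M$, so the quantities $c_{u,v}(f)$ for the relevant immersions entering Theorem~\ref{thm-conductors} are not absolute constants but grow with $c_u(M)$. The same defect undermines the dévissage on $\dim\Supp P$: the open set $V$, its complement $Z$ and their embeddings depend on $P$, with $\deg(Z)$ controlled only linearly by Theorem~\ref{thm-lisse-locus-conductor}, and pushing this through Theorem~\ref{thm-conductors}\,(7) at each of up to $\dim X$ induction steps produces a bound that is polynomial of growing degree in $c_u(P)$, not the linear bound $\ll c_{u,\id}(u)\,c_u(P)$ being claimed. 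In addition, the step bounding $c_u(j_{!*}(\mcL_\alpha[d]))$ by writing the intermediate extension as the image of $\pH^0(j_!\mcL_\alpha[d])\to\pH^0(Rj_*\mcL_\alpha[d])$ is not covered by items (3) and (6) of Theorem~\ref{thm-conductors}: sub-additivity along exact triangles gives no control over images, kernels, or other subquotients of perverse sheaves --- and controlling the complexity of subquotients is essentially the statement you are trying to prove.

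You in fact diagnose all of this yourself in the closing paragraph, and you point to the correct mechanism --- working directly with the generic-linear-section Betti numbers, which for (semi)perverse objects are comparable to Euler characteristics and hence additive (not merely sub-additive) and termwise non-negative across short exact sequences --- but you do not carry it out; you defer it to \cite{sawin_conductors}. So the proposal is an honest plan whose decisive steps (continuity of perverse truncation, and a reduction that is genuinely additive and uniform in the $M$-dependent auxiliary subvarieties) are asserted or delegated, and the dévissage you do describe would not, if executed as written, yield the stated linear bound.
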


We also recall the quantitative statement of the Riemann
hypothesis\index{Riemann Hypothesis} over finite fields when
interpreted as a quasi-orthogonality statement.\index{quasi-orthogonality}

\begin{theorem}[{\cite[Th.\,7.13\,(2)]{sawin_conductors}}]
  \label{th-rh}
  Let~$k$ be a finite field and $\ell$ a prime different from the
  characteristic of~$k$. Let~$(X,u)$ be a geometrically irreducible
  quasi-projective algebraic variety over~$k$. Let~$M$ and~$N$ be
  geometrically simple $\ell$-adic perverse sheaves on~$X$ that are
  pure of weight zero, with complex trace functions~$t_M$ and~$t_N$
  respectively. Then the estimate
  $$
  \sum_{x\in X(k)} t_M(x)\overline{t_N(x)} \ll
  c(u)c_u(M)c_u(N)|k|^{-1/2}
  $$
  holds if~$M$ and~$N$ are not geometrically isomorphic, whereas
  $$
 \sum_{x\in X(k)} |t_M(x)|^2= 1+ O(
  c(u)c_u(M)^2|k|^{-1/2}).
  $$
  In both estimates, the implied constants only depend on the
  embedding dimension and are effective.
\end{theorem}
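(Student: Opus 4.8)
The plan is to realize $\sum_{x\in X(k)}t_M(x)\overline{t_N(x)}$ as an alternating sum of traces of Frobenius on the compactly supported cohomology of a single $\ell$-adic complex on~$X$, to read off the main term via Poincaré--Verdier duality, and to bound the remaining terms using weights together with the complexity estimates of Theorem~\ref{thm-conductors}. The first, and most delicate, step is the pointwise identity $\overline{t_N(x)}=t_{\DD N}(x)$ for every $x\in X(k)$, where $\DD$ denotes Verdier duality. Let $N^{\mathrm{c}}$ be the perverse sheaf obtained from~$N$ by transporting its $\Qlb$-coefficients through $\iota_0^{-1}\circ(\text{complex conjugation})\circ\iota_0$, so that $t_{N^{\mathrm{c}}}=\overline{t_N}$ under the identification fixed by~$\iota_0$; both $N\mapsto N^{\mathrm{c}}$ and $N\mapsto\DD N$ are auto-equivalences of $\Der(X)$ commuting with intermediate extension. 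Since~$N$ is geometrically simple, $N=j_{!*}(\mathcal{L}[\dim V])$ for a geometrically irreducible lisse sheaf~$\mathcal{L}$ on a smooth irreducible locally closed subvariety $j\colon V\hookrightarrow X$, all defined over~$k$, and purity of weight zero forces~$\mathcal{L}$ to be punctually pure of weight~$-\dim V$. On~$V$ we then have $(\mathcal{L}[\dim V])^{\mathrm{c}}\cong\mathcal{L}^\vee(\dim V)[\dim V]\cong\DD(\mathcal{L}[\dim V])$: the second isomorphism is a direct computation, and the first holds because $\mathcal{L}^{\mathrm{c}}$ and $\mathcal{L}^\vee(\dim V)$ are geometrically irreducible lisse sheaves with the same trace function on every $V(k_n)$ (again by punctual purity), hence are isomorphic by Chebotarev. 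As two intermediate extensions agreeing on a dense open subscheme coincide, $N^{\mathrm{c}}=\DD N$, which proves the identity. Consequently $t_M(x)\overline{t_N(x)}=t_{M\otimes\DD N}(x)$, and the Grothendieck--Lefschetz trace formula gives
\[
\sum_{x\in X(k)}t_M(x)\overline{t_N(x)}=\sum_{i\in\Zz}(-1)^i\Tr\bigl(\frob_k\mid\rmH^i_c(X_{\bar k},M\otimes\DD N)\bigr).
\]

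Next I would determine which groups survive. By Poincaré--Verdier duality, $\rmH^i_c(X_{\bar k},M\otimes\DD N)$ is dual to $\rmH^{-i}(X_{\bar k},\DD(M\otimes\DD N))=\rmH^{-i}(X_{\bar k},\mathrm{RHom}(M,N))=\Hom_{\Der(X_{\bar k})}(M_{\bar k},N_{\bar k}[-i])$. Since~$M$ and~$N$ are perverse, this vanishes for $i>0$; for $i=0$ it is $\Hom_{\Perv(X_{\bar k})}(M_{\bar k},N_{\bar k})$, which by Schur's lemma is~$0$ if~$M$ and~$N$ are not geometrically isomorphic and equals $\Qlb\cdot\id$ if $N=M$; and for $i<0$ it can be nonzero. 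The trace formula above therefore reduces to
\[
\sum_{x\in X(k)}t_M(x)\overline{t_N(x)}=\Tr\bigl(\frob_k\mid\rmH^0_c(X_{\bar k},M\otimes\DD N)\bigr)+\sum_{i<0}(-1)^i\Tr\bigl(\frob_k\mid\rmH^i_c(X_{\bar k},M\otimes\DD N)\bigr).
\]
Because~$M$ and~$\DD N$ are pure of weight zero, $M\otimes\DD N$ has weight~$\leq 0$, so by Deligne's theorem $\rmH^i_c(X_{\bar k},M\otimes\DD N)$ is mixed of weight~$\leq i$; for $i<0$ every Frobenius eigenvalue there has absolute value~$\leq|k|^{-1/2}$, whence
\[
\Bigl|\sum_{i<0}(-1)^i\Tr\bigl(\frob_k\mid\rmH^i_c(X_{\bar k},M\otimes\DD N)\bigr)\Bigr|\leq|k|^{-1/2}\sum_{i\in\Zz}h^i_c(X_{\bar k},M\otimes\DD N)\leq|k|^{-1/2}\,c_u(M\otimes\DD N)
\]
by Lemma~\ref{lm-sums-betti}, and $c_u(M\otimes\DD N)\ll c_u(M)\,c_u(N)$ by the sub-multiplicativity of complexity under tensor product (Theorem~\ref{thm-conductors}) together with the compatibility of complexity with Verdier duality, all implied constants depending only on the embedding dimension.

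It remains to evaluate the $\rmH^0_c$-term. If~$M$ and~$N$ are not geometrically isomorphic it vanishes, and the previous display is the first assertion. If $N=M$, the space $\rmH^0_c(X_{\bar k},M\otimes\DD M)$ is dual to the line $\Qlb\cdot\id$, on which $\frob_k$ acts trivially since the identity endomorphism of~$M$ is defined over~$k$; hence $\Tr(\frob_k\mid\rmH^0_c)=1$ and $\sum_{x\in X(k)}|t_M(x)|^2=1+O(c_u(M)^2|k|^{-1/2})$. All implied constants are effective because those in Theorem~\ref{thm-conductors} and in Deligne's weight bounds are.

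The hard part is the opening step: one genuinely needs the identity $\overline{t_N(x)}=t_{\DD N}(x)$ at \emph{every} point of~$X$, including those on the lower-dimensional strata where~$N$ fails to be punctually pure, since without a sheaf-theoretic interpretation of the entire sum there is no reason for $\sum_x t_M(x)\overline{t_N(x)}$ to be as small as~$|k|^{-1/2}$ --- a bound on individual trace values would only yield a power of~$|k|$. It is the uniqueness of the intermediate extension that forces the identity to persist off the open locus where~$N$ is a shifted punctually pure local system. The other ingredient that does real work, the estimate $c_u(M\otimes\DD N)\ll c_u(M)c_u(N)$ with constant depending only on the embedding dimension, is precisely the sort of statement the complexity formalism of~\cite{sawin_conductors} is built to deliver, so we invoke it rather than reprove it.
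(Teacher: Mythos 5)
The paper itself contains no proof of this statement: Theorem~\ref{th-rh} is imported verbatim from \cite[Th.\,7.13\,(2)]{sawin_conductors}, so there is nothing internal to compare with. Your global architecture is the standard (and, as far as I know, the actual) one: rewrite the sum as $\sum_x t_{M\otimes \DD N}(x)$, apply the Grothendieck--Lefschetz trace formula, kill the terms with $i>0$ via $\rmH^i_c(X_{\bar k},M\otimes\DD N)^\vee\simeq \mathrm{Ext}^{-i}(M_{\bar k},N_{\bar k})$ and the vanishing of negative Ext groups between perverse sheaves, evaluate the $i=0$ term by Schur's lemma (with Frobenius acting trivially on the line spanned by the identity), and bound the terms with $i<0$ by Deligne's weight estimates together with $\sum_i h^i_c\leq c_u(M\otimes\DD N)\ll c_u(M)c_u(N)$. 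All of that is correct, granted the complexity formalism (the sub-multiplicativity under $\otimes$ combined with control of complexity under Verdier duality should be cited from \cite{sawin_conductors} rather than asserted, but that is a minor point).

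The genuine gap is exactly at the step you single out as the crux: the pointwise identity $\overline{t_N(x)}=t_{\DD N}(x)$. Your justification constructs a ``conjugate sheaf'' $N^{\mathrm{c}}$ by transporting the coefficients of $N$ through $\sigma=\iota_0^{-1}\circ(\text{complex conjugation})\circ\iota_0$. But $\sigma$ is a discontinuous automorphism of $\Qlb$: no continuous automorphism of $\Qlb$ can induce complex conjugation on $\overline{\Qq}$ (a continuous automorphism fixes $\Qq_\ell$ pointwise, hence lies in $\Gal(\Qlb/\Qq_\ell)$, and since $\overline{\Qq}$ is dense it would be an involution in that group, which is torsion-free). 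Consequently $\sigma$ need not preserve the valuation ring, composing the monodromy representation of a lisse sheaf with $\sigma$ need not be continuous, and ``$N^{\mathrm{c}}$'' is simply not an object of $\Der(X,\Qlb)$. All the assertions that carry the weight of your argument --- that $N\mapsto N^{\mathrm{c}}$ is an auto-equivalence commuting with intermediate extension, that $\mathcal{L}^{\mathrm{c}}$ is a geometrically irreducible lisse sheaf to which the Chebotarev argument applies, and that uniqueness of $j_{!*}$ then propagates the identity from the lisse locus to the boundary strata --- therefore have no meaning in the $\ell$-adic formalism. The identity you want is true for geometrically simple perverse sheaves pure of weight zero, and it is itself one of the ingredients established in the quoted reference; on the open stratum your eigenvalue-by-eigenvalue argument via punctual purity is fine, but at the boundary the stalks of an intermediate extension need not be punctually pure, and one needs a genuine argument there (on curves, the local weight-monodromy computation for inertia invariants; in general, the argument of \cite{sawin_conductors}, or an induction over strata with complexity control). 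As written, then, the decisive step is unproved; if you instead cite the conjugation--duality identity for pure simple perverse sheaves from the same source, the remainder of your proof goes through.
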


Finally, we have pointwise bounds for the trace functions.

\begin{proposition}[{\cite[Prop.\,7.11\,(2)]{sawin_conductors}}]
  \label{pr-pointwise-bound}
  Let~$k$ be a finite field and $\ell$ a prime different from the
  characteristic of~$k$. Let~$(X,u)$ be a quasi-projective algebraic
  variety over~$k$, and let~$M$ be a non-punctual simple perverse sheaf
  on~$X$ which is pure of weight zero. For any $n\geq 1$
  and~$x\in X(k_n)$, the following estimate holds: 
  $$
  t_M(x;k_n)\ll \frac{1}{|k_n|^{1/2}}.
  $$
\end{proposition}

\section{Existence of rational points}

The following lemma is standard.

\begin{lemma}\label{lm-lang-weil}
  Let $(X,u)$ be a non-empty quasi-projective variety over a finite
  field~$k$ with embedding dimension~$n$. There exists a finite
  extension $k'$ of $k$ with degree bounded in terms of~$(\dim(X), \deg(\overline{u(X)}),n)$ such that $X(k')$ is
  non-empty.
\end{lemma}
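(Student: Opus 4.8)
The plan is to use a Lang--Weil / Chebotarev style estimate to produce a rational point over a controlled extension. First I would recall that the classical Lang--Weil bound gives, for a geometrically irreducible variety $X$ over $k$ of dimension $\delta$, an estimate of the shape $\abs{X(k_m) } = \abs{k_m}^\delta + O(\abs{k_m}^{\delta - 1/2})$ with an implied constant depending only on $\delta$ and the degree of a projective closure $\overline{u(X)}$. Since here $X$ is merely a non-empty quasi-projective variety, not necessarily irreducible, I would first pass to an irreducible component: $X$ contains an irreducible locally-closed subvariety $X_0$, and replacing $k$ by a bounded extension (bounded in terms of the number of components, hence in terms of $\deg \overline{u(X)}$) I may assume $X_0$ is geometrically irreducible, of some dimension $\delta \geq 0$, and that its projective closure has degree controlled by $\deg \overline{u(X)}$. (For the $\delta = 0$ case, a point is defined over an extension of degree at most $\deg \overline{u(X)}$ already.)

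Next, the Lang--Weil estimate applied to $X_0$ shows there is an absolute-type constant $C = C(\delta, \deg \overline{u(X_0)})$ such that $\abs{X_0(k_m)} > 0$ as soon as $\abs{k_m}^{\delta - 1/2} \geq C \abs{k_m}^{\delta-1}$, i.e.\ as soon as $\abs{k_m} = \abs{k}^m$ exceeds $C^2$. Choosing $m$ to be the least integer with $\abs{k}^m > C^2$ — note $m \ll \log C$, hence bounded in terms of $(\dim X, \deg \overline{u(X)})$ alone, since $\abs{k} \geq 2$ — gives $X_0(k_m) \neq \varnothing$, and a fortiori $X(k_m) \neq \varnothing$. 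Combining with the preliminary bounded base change that made a component geometrically irreducible yields a single extension $k'$ of $k$ of degree bounded in terms of $(\dim(X), \deg(\overline{u(X)}), n)$, as claimed; the dependence on the embedding dimension $n$ enters only through the (standard) bounds relating the number and degrees of the irreducible components of $X$ to $\deg \overline{u(X)}$ and $n$.

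The main obstacle — such as it is, since the lemma is asserted to be standard — is the bookkeeping needed to descend to a geometrically irreducible piece with controlled invariants: one must check that the number of geometrically irreducible components of $\overline{u(X)}$, their degrees, and the degree of the smallest field over which each becomes defined are all bounded purely in terms of $\deg \overline{u(X)}$ and $n$. This follows from standard facts about Chow forms / elimination theory (a variety of degree $D$ in $\Pp^n$ has at most $D$ components, each of degree $\leq D$, defined over an extension of degree $\leq D!$ or so), but it is the only point requiring a little care rather than a direct invocation of Lang--Weil. Everything else is a routine application of the Lang--Weil bound together with the elementary observation that a quantitative lower bound $\abs{k}^m > C^2$ can be met with $m$ bounded in terms of $C$.
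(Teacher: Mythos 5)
Your proposal is correct and follows essentially the same route as the paper, whose entire proof is the one-line remark that the statement "follows from the Lang--Weil bound or the Riemann Hypothesis for $X$ combined with estimates for sums of Betti numbers as in Katz's work." You simply flesh out the Lang--Weil option, with the reduction to a geometrically irreducible component over a bounded extension being exactly the routine bookkeeping the paper leaves implicit.
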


\begin{proof}
  Write the variety $u(X)$ as $Z\setminus Y$ for some closed
  subvarieties~$Z$ and~$Y$. Then the degrees of $Z$ and $Y\cap Z$ are
  bounded in terms of $c_u(X)$ by~\cite[Lemma\,6.26]{sawin_conductors},
  and the result then follows from the Lang--Weil bound 
  (see~\cite[Th.\,1]{lang-weil}) applied to~$Z$ and to $Y\cap Z$.
\end{proof}

\section{Structure of commutative algebraic groups}
\label{sec_com_alg_gp}

Let~$k$ be a field and let $G$ be a commutative algebraic group
over~$k$. The algebraic variety~$G$ is quasi-projective (see,
e.g.,~\cite[Prop.\,A.3.5]{cgp} or~\cite[\href{https://stacks.math.columbia.edu/tag/0BF7}{Lemma 0BF7}]{stacks}. We will
always assume that a quasi-projective immersion~$u$
of~$G$ is given, and the complexity of $\ell$-adic complexes will be understood
with respect to~$u$ (so that we sometimes write just~$c(M)$ instead
of~$c_u(M)$). If $G$ is either a power of~$\Gg_a$ or of~$\Gg_m$, we
assume that~$u$ is simply the obvious embedding in the projective space
of the same dimension. We will on occasion use auxiliary quasi-projective
immersions and rely on Remark~\ref{rm-change-embedding} to compare complexities. 

Smooth connected commutative algebraic groups can be built as successive extensions of abelian
varieties, tori, unipotent\footnote{In this book, ``unipotent'' only applies to commutative groups} and finite commutative group
schemes. The most convenient formulation of this fact for us is the following statement,
which follows from results of Barsotti--Chevalley and Rosenlicht (see
for instance the account in the book of Brion, Samuel and Umae,
combining~\cite[Cor.\,5.5.2]{BSU_alg_gps} with the structure theorem for
connected affine commutative algebraic groups over perfect fields as a
product of a unipotent group and a torus; see, e.g.,~\cite[Th.\,5.3.1\,(2)]{brion}).

\begin{proposition}\label{pr-devissage}
  Let~$k$ be a finite field and let~$G$ be a connected commutative
  algebraic group over~$k$. There exist an abelian variety~$A$, a
  torus~$T$, a unipotent group~$U$ and a finite commutative subgroup
  scheme~$N$ of~$A\times U\times T$, all defined over~$k$, such that $G$
  is isomorphic to $(A\times U\times T)/N$. 
\end{proposition}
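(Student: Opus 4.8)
The plan is to run the classical Barsotti--Chevalley--Rosenlicht dévissage, paying attention to why a nontrivial finite subgroup scheme~$N$ has to be allowed. Since $k$ is perfect, $G$ is smooth and connected, so Chevalley's structure theorem gives a canonical exact sequence $0\to L\to G\xrightarrow{\pi}A\to 0$ of algebraic groups over~$k$, with $A$ an abelian variety and $L$ a connected commutative \emph{linear} group (commutative because $G$ is, and $k$-rational by uniqueness). By the structure theory of commutative linear algebraic groups over a perfect field, $L$ is the direct product $L\cong T\times U$ of its unique maximal subtorus~$T$ and its unipotent part~$U$, both defined over~$k$ (the Jordan decomposition is a homomorphism since $L$ is commutative). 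So I am reduced to the extension $0\to T\times U\to G\xrightarrow{\pi}A\to 0$, i.e., to a class $[G]\in\operatorname{Ext}^1_k(A,T\times U)=\operatorname{Ext}^1_k(A,T)\oplus\operatorname{Ext}^1_k(A,U)$, and the goal is to split it after an isogeny.

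The main point is that $[G]$ is \emph{torsion}, and this is where the finiteness of~$k$ is used. The unipotent group~$U$ is killed by a power of~$p$, hence so is $\operatorname{Ext}^1_k(A,U)$. For the toric part, the Weil--Barsotti formula gives $\operatorname{Ext}^1_{k'}(A_{k'},\Gm)\cong A^\vee(k')$, which is finite for every finite extension $k'/k$ since $k'$ is finite; choosing $k'$ so that $T_{k'}$ is split and applying a transfer (restriction--corestriction) argument shows that $[G]$ also has finite order in the $T$-component. Fix $n\geq 1$ with $n[G]=0$ and pull back along multiplication by~$n$: set $G_n=G\times_{\pi,A,[n]_A}A$. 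This is again a smooth connected commutative algebraic group over~$k$, because $G_n\to A$ is the base change of the smooth morphism~$\pi$ and its fibres are torsors under the connected group~$L$. The extension $0\to L\to G_n\to A\to 0$ given by the second projection has class $[n]_A^*[G]=n[G]=0$, so it splits: $G_n\cong A\times L=A\times T\times U$ over~$k$. On the other hand, the first projection $\phi\colon G_n\to G$ is the base change of the finite flat surjection $[n]_A$ along~$\pi$, hence finite, flat and surjective, with $\ker\phi\cong A[n]$. Therefore $G\cong G_n/\ker\phi\cong(A\times T\times U)/N$, where $N$ is the image of $A[n]$ under the splitting isomorphism; it is a finite commutative subgroup scheme of $A\times T\times U$ over~$k$, and reordering the factors yields the statement.

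I expect the only genuinely non-formal step to be the torsion-ness of $\operatorname{Ext}^1_k$: over an infinite field the $\Gm$-part need not be torsion, so this is not a purely structural statement. It is also worth stressing that $N$ really is needed in general, and the argument above explains why: the copy of $A[n]$ coming from~$\phi$ need not match the copy $A[n]\subset A$ under the chosen splitting $G_n\cong A\times T\times U$, so $N$ is typically a ``graph-type'' subgroup scheme $\{(a,\lambda(a))\}$ for a homomorphism $\lambda\colon A[n]\to T\times U$, and the quotient is a genuinely twisted form (recovering for instance nonsplit extensions of an elliptic curve by~$\Gm$). The remaining verifications are routine: that the fibre product $G_n$ is a reduced, hence over the perfect field~$k$ smooth, commutative algebraic group in the sense of the paper's conventions; the identification of the two kernels; and the pullback formula $[n]_A^*=\,\cdot\,n$ on $\operatorname{Ext}^1_k(A,-)$, for which one may cite Serre or Oort. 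All of this is the standard argument underlying the references to Brion cited in the statement.
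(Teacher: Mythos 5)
Your argument is correct, and it is essentially the classical Barsotti--Chevalley--Rosenlicht dévissage that the paper itself invokes without proof: the paper merely cites Brion's Cor.~5.5.5 and Th.~5.3.1(2), and your steps (Chevalley's theorem over the perfect field $k$, the decomposition $L\cong T\times U$, and the torsionness of $\operatorname{Ext}^1_k(A,T\times U)$ over a finite field via Weil--Barsotti and the finiteness of $A^\vee(k')$, followed by pullback along $[n]_A$) are exactly what underlies those citations. One minor streamlining: since $\operatorname{Hom}(A_{k'},T_{k'})=0$, the restriction map $\operatorname{Ext}^1_k(A,T)\to\operatorname{Ext}^1_{k'}(A_{k'},T_{k'})$ is already injective, so the toric part is finite (not just torsion) and the corestriction argument can be avoided.
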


We further recall that a finite commutative group scheme~$N$ over a
perfect field has a unique direct product decomposition
$N=N_r\times N_l$ where~$N_r$ is reduced and~$N_l$ is local (i.e., equal
to its connected component of the identity; see,
e.g.,~\cite[Prop.\,2.5.4]{brion}).


\section{Convolution}\label{sec-convolution}

Let~$G$ be a commutative algebraic group
over a field~$k$. We denote by 
\[
m \colon G \times G \to G, \quad \mathrm{inv} \colon G \to G, \quad e \in G(k)
\] the group law, the inversion morphism, and the neutral element respectively. 
\nomenclature{$e$}{neutral element of a group}
\nomenclature{$m$}{group law}
\nomenclature[$inv$]{$\mathrm{inv}$}{inverse map on a group}

\begin{definition}[Convolution]\index{convolution}
  \index{convolution with compact support}
The
\emph{convolution product} and the \emph{convolution product with compact support} on~$G$
are the functors from $\Der(G)\times \Der(G)$ to~$\Der(G)$ defined as
\[
  M \ast_\ast N=Rm_\ast(M \boxtimes N), \quad\quad M \ast_! N=Rm_!(M
  \boxtimes N)
\]
for objects~$M$ and~$N$ of~$\Der(G)$.
\nomenclature{$M*_*N$}{convolution}
\nomenclature{$M*_{#!} N$}{convolution with compact support}
\end{definition}

If $G$ is projective, then so is the morphism $m$, and hence the two
convolutions agree. In general, there is a canonical ``forget supports''
\index{``forget support'' morphism}
morphism
\[
M \ast_! N \longrightarrow M \ast_\ast N. 
\]
We will write $M\ast_! N=M\ast_{\ast} N$ when this morphism is an
isomorphism.

If~$u$ is a quasi-projective immersion of~$G$, then we deduce from
Theorem~\ref{thm-conductors}\,\ref{thm-boxtimes} that for any
objects~$M$ and~$N$, the following estimates hold:
\[
c_u(M*_* N)\ll c_u(M)c_u(N),\quad\quad c_u(M *_! N)\ll c_u(M)c_u(N),
\]
where the implied constant depends on~$G$, and is uniform in families
(see~\cite[\S\,6.5]{sawin_conductors}).

For an object~$M$ of~$\Der(G)$, we define
\[
  M^\vee=\inv^*\DD(M), 
\]
where $\DD(M)$ is the
Verdier dual.\nomenclature{$M^\vee$}{tannakian dual of $M$}  Since $\inv^*=\inv^!$ commutes with $\DD$, the functor $M \mapsto M^\vee$
is an involution, in the sense that the functor $M\mapsto (M^\vee)^\vee$
is canonically isomorphic to the identity. 

We denote by $\un$\nomenclature[$1$]{$\un$}{skyscraper sheaf, unit for
  convolution}
the skyscraper sheaf\index{skyscraper sheaf} supported at the neutral
element $e$ of $G$.

The basic formal properties of the convolution products are given by the
following lemma:

\begin{lemma}
\label{lem-prop-conv}
Let $M$ and $N$ be objects of $\Perv(G)$. There exist canonical
isomorphisms
\begin{gather}
  \Hom(\un, M^\vee*_* N)\simeq \Hom(M, N)\simeq \Hom(M*_!N^\vee,\un),
  \label{eqn1.2lemma}\\ 
  \DD(M*_*N)\simeq\DD(M)*_!\DD(N),\quad \DD(M*_!N)\simeq\DD(M)*_*\DD(N),
  \\
  \rmH^*_c(G_{\bar k}, M)\otimes_\Qlb \rmH^*_c(G_{\bar k},
  N)\simeq \rmH^*_c(G_{\bar k}, M*_!N), 
  \\
  \rmH^*(G_{\bar k}, M)\otimes_\Qlb \rmH^*(G_{\bar k},
  N)\simeq \rmH^*(G_{\bar k}, M*_*N).
\end{gather}
In the first isomorphisms, the Hom spaces are taken in the category~$\Der(G)$.
\end{lemma}

\begin{proof}
  All these are consequences of the formal properties of the six
  operations on~$\Der(G)$. More precisely, all can be found
  in~\cite[8.1.8,\,8.1.9]{esde}, except for the first statement. This is
  proved for tori in~\cite[p.\,533]{GL_faisc-perv}; however, the
  argument applies verbatim to any~$G$, since it only uses formal properties of
  the six operations on~$\Der(G)$.
\end{proof}

From the adjunctions in \eqref{eqn1.2lemma} of Lemma~\ref{lem-prop-conv}, we see that for
all $M\in \Perv(G)$, the identity morphism $\id_M \colon M \to M$ defines
evaluation and coevaluation morphisms\nomenclature[$e$]{$\ev$}{evaluation
  map}\nomenclature[$c$]{$\coev$}{coevaluation map}\index{evaluation
  map}\index{coevaluation map} 
\[
  \ev \colon M*_!M^\vee \longrightarrow \un\quad\quad\text{ and }\quad\quad \coev
  \colon \un \longrightarrow M^\vee*_*M.
\]
As a consequence of these properties, we note that $\Der(G)$  is
a symmetric monoidal $\Qlb$-linear category with respect to either the convolution $(A,B)\mapsto A*_!B$ or $(A,B)\mapsto A*_* B$,

\section{Character groups}\label{sec:character-groups-Lang-torsor}

In this section, we denote by~$k$ a finite field, by~$\bar{k}$ an
algebraic closure of~$k$, and by~$k_n$ the extension of degree~$n$
of~$k$ in~$\bar{k}$.  Let $\ell$ be a prime number distinct from the
characteristic of $k$.

Let $G$ be a connected commutative algebraic group defined over~$k$. For each $n\geq 1$, the \emph{norm map}\index{norm map} is the group homomorphism\nomenclature{$N_{k_n/k}$}{norm map} $N_{k_n/k}\colon G(k_n)\to G(k)$ defined as
\[
N_{k_n/k}(x)=\prod_{i=0}^{n-1} \Fr_{k_n}^{i}(x).
\]

For any~$n\geq 1$, let $\charg{G}(k_n)$ be the group of characters
$\chi\colon G(k_n)\to \Qlbt$.
\nomenclature[$G$]{$\charg{G}(k_n)$}{characters of $G(k_n)$}
We denote by $\charg{G}$ the
\emph{disjoint union}
\[
  \charg{G}=\bigsqcup_{n\geq 1} \charg{G}(k_n)
\]
(note that this is not a group; we also omit the dependency on~$\ell$ from
this notation).
\nomenclature[$G$]{$\charg{G}$}{disjoint union of $\charg{G}(k_n)$}

Given any set $S\subset \charg{G}$, we also define
$S(k_n)=S\cap \charg{G}(k_n)$, so that~$S$ is the disjoint union of
the subsets $S(k_n)$.

Since~$G$ is geometrically irreducible (see, \eg, \cite[Cor.\,1.35]{milne-groups}), the estimate
 $$
|\charg{G}(k_n)|=|G(k_n)|=|k|^{n\dim(G)}+O(|k|^{(n-1/2)\dim(G)})
$$
holds for~$n\geq 1$ by the Lang--Weil estimates. If $G$ is
an abelian variety, we have more precisely
$$
(|k|^{1/2}-1)^{2n\dim(G)}
\leq |\charg{G}(k_n)|
\leq(|k|^{1/2}+1)^{2n\dim(G)}
$$
and if $G$ is a torus, then
$$
(|k|-1)^{n\dim(G)} \leq |\charg{G}(k_n)| \leq(|k|+1)^{n\dim(G)}.
$$
These can be derived from the computation of the étale cohomology of abelian
varieties combined with the trace formula, or from Steinberg's formula
for tori; see, for instance,~\cite[Th.\,15.1, Th.\,19.1]{milne-ab} for the
case of abelian varieties and~\cite[Prop.\,3.3.5]{carter} for the case
of tori.

We now recall from \cite[Sommes trig.,\,1.4]{SGA4_1-2} the \emph{Lang
  torsor}\index{Lang torsor} construction and the basic properties of
the associated character sheaves. There is an exact sequence of
commutative algebraic groups\footnote{\ Note that it is here that the
  assumption that $G$ is connected plays a role, since in general the
  image of the morphism $\mathfrak{L}$ is equal
  to the connected component of the neutral element.}
\[
  1\longrightarrow G(k)\longrightarrow G
  \overset{\mathfrak{L}}{\longrightarrow} G\longrightarrow 1,
\]
where $\mathfrak{L}$ is the Lang isogeny
$x\mapsto \Frob_k(x)\cdot x^{-1}$. The Lang
isogeny is a Galois \'etale covering with Galois group $G(k)$, and hence induces a surjective map
$\pi_1^{\et}(G,e)\to G(k)$. Given a character~$\chi\in \charg{G}(k)$, we
denote by $\mcL_\chi$ the $\ell$-adic lisse sheaf of rank one on $G$
obtained by composing this map with $\chi^{-1}$ and we say
that~$\mcL_{\chi}$ is the \emph{character sheaf} on~$G$ associated
to~$\chi$.\index{character sheaf}
\nomenclature[$L$]{$\mcL_{\chi}$}{character sheaf}

For $x\in G(k)$, the geometric Frobenius automorphism\index{geometric Frobenius automorphism}
at~$x$ acts on the
stalk of $\mcL_\chi$ at~$x$ by multiplication by $\chi(x)$. In
particular, the lisse sheaf~$\mcL_{\chi}$ is pure of weight zero.

If $\chi$ is the trivial character, then $\mcL_\chi$ is the constant sheaf  $\Qlbt$.

The dual $\DD(\mcL_{\chi})$ of a character sheaf is isomorphic
to~$\mcL_{\chi^{-1}}$, and for any two characters~$\chi_1$ and~$\chi_2$ there are canonical isomorphisms
$$
\mcL_{\chi_1}\otimes\mcL_{\chi_2}\simeq \mcL_{\chi_1\chi_2}.
$$

If $n\geq 1$ and $\chi\in\charg{G}(k_n)$ is non-trivial, then for
all~$i\in \Zz$, the cohomology space $H^i_c(G_{\bar{k}},\mcL_{\chi})$
vanishes (see~\cite[Sommes trig.,\,Th.\,2.7*]{SGA4_1-2}).  More
generally, we have the following relative version.

\begin{lemma}\label{lm-image-character}
  Let $f\colon G\to H$ be a surjective morphism of commutative algebraic
  groups over~$k$. Let $\chi\in\charg{G}(k)$. The complex
  $Rf_!\mcL_{\chi}$ vanishes unless $\mcL_{\chi}|\ker(f)^{\circ}$ is
  the constant sheaf, i.e., unless $\chi$ is trivial on $\ker(f)^{\circ}$.
\end{lemma}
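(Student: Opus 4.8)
The statement is a relative form of the classical vanishing $H^i_c(G_{\bar k}, \mcL_\chi) = 0$ for non-trivial $\chi$, and the natural strategy is to reduce it to that classical case fiber by fiber, together with the multiplicativity of the Lang-torsor construction. First I would observe that by proper (or smooth) base change it suffices to check the vanishing of the stalks of $Rf_!\mcL_\chi$ at geometric points $h$ of $H_{\bar k}$; since $f$ is a morphism of algebraic groups, all non-empty fibers of $f$ are translates of $\ker(f)$, and after translating we may assume $h = e$, so the relevant stalk is $R\Gamma_c(\ker(f)_{\bar k}, \mcL_\chi|_{\ker(f)})$. The key point is then to identify the restriction $\mcL_\chi|_{\ker(f)}$: because the Lang isogeny is compatible with the morphism $f$ (the diagram relating $\mathfrak{L}_G$, $\mathfrak{L}_H$ and $f$ commutes, as $f$ commutes with Frobenius), and because $\mcL_\chi$ is built from the character $\chi$ of $\pi_1$, the restriction of $\mcL_\chi$ to the subgroup $\ker(f)$ is again a character sheaf on $\ker(f)$, associated to the character $\chi|_{\ker(f)(k)}$ obtained by restriction — more precisely, $\mcL_\chi|_{\ker(f)^\circ}$ is the character sheaf on the connected commutative group $\ker(f)^\circ$ attached to the restriction of $\chi$ to $\ker(f)^\circ(k)$.

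With this identification in hand, I would split into two cases. If $\chi$ is non-trivial on $\ker(f)^\circ$, then $\mcL_\chi|_{\ker(f)^\circ}$ is a non-trivial character sheaf on the connected group $\ker(f)^\circ$, so by the cited result \cite[Sommes trig.,\,Th.\,2.7*]{SGA4_1-2} all the compactly supported cohomology groups of $\mcL_\chi|_{\ker(f)^\circ}$ vanish; since $\ker(f)$ is a finite union of translates of $\ker(f)^\circ$, the same holds for $\mcL_\chi|_{\ker(f)}$ (the cohomology is a direct sum over the components, each computing the cohomology of a translate, which by translation-invariance of the character sheaf up to twist by a scalar is again the vanishing cohomology of a non-trivial character sheaf on the connected group). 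Hence every stalk of $Rf_!\mcL_\chi$ vanishes, so $Rf_!\mcL_\chi = 0$. Conversely, if $\chi$ is trivial on $\ker(f)^\circ$, then $\mcL_\chi|_{\ker(f)^\circ}$ is the constant sheaf, which is exactly the remaining alternative allowed by the statement, so there is nothing more to prove.

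The step I expect to be the main obstacle is the clean identification of $\mcL_\chi|_{\ker(f)^\circ}$ with the character sheaf of the restricted character — one must check carefully that the functoriality of the Lang torsor under $f$ (which rests on the fact that $f$ is defined over $k$ and hence commutes with the geometric Frobenius) yields a commutative square of fundamental groups, so that precomposing the surjection $\pi_1^{\et}(\ker(f)^\circ, e) \to \ker(f)^\circ(k)$ with $\chi^{-1}$ recovers the restriction of $\mcL_\chi$. This is where the connectedness hypothesis on the ambient group (invoked via $\ker(f)^\circ$ rather than $\ker(f)$, since the Lang isogeny construction only directly applies to connected groups, cf.\ the footnote before the lemma) is genuinely used. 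Once this is settled, the handling of the non-connected pieces of $\ker(f)$ is routine: each component is a coset, translation by a $\bar k$-point multiplies the stalk of $\mcL_\chi$ by a nonzero scalar, and the vanishing on the connected component propagates. I would also remark that the same argument, applied to $Rf_*$ in place of $Rf_!$, would give the corresponding statement for ordinary cohomology, should it be needed later.
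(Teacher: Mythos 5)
Your proposal is correct and follows essentially the same route as the paper's proof: proper base change to reduce to stalks, identification of the fibers as translates of $\ker(f)$, decomposition into cosets of $\ker(f)^{\circ}$ using the (geometric) translation-invariance of the character sheaf, and the vanishing theorem of SGA~4$\frac{1}{2}$ (Sommes trig., Th.~2.7*) on the connected group $\ker(f)^{\circ}$. Your extra care in identifying $\mcL_{\chi}|\ker(f)^{\circ}$ with the character sheaf of the restricted character via functoriality of the Lang torsor is exactly what the paper leaves implicit.
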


\begin{proof}
  Let $M=Rf_!\mcL_{\chi}$. Let $y\in H$ and let $z\in G$ be such that
  $f(z)=y$. By the proper base change theorem, the stalk of~$M$ at~$y$
  is given by
  $$
  M_y=H^*_c(f^{-1}(y)_{\bar{k}},\mcL_{\chi})=
  H^*_c((z\ker(f))_{\bar{k}},\mcL_{\chi})=H^*_c(\ker(f)_{\bar{k}},[x\mapsto
  xz]^*\mcL_{\chi}|\ker(f)).
  $$
  \par
  We write $\ker(f)$ as the disjoint union of cosets $u\ker(f)^{\circ}$
  where~$u$ runs over a set of representatives of the group of connected
  components of~$\ker(f)$. Thus,
  $$
  H^*_c(\ker(f)_{\bar{k}},[x\mapsto
  xz]^*\mcL_{\chi}|\ker(f))=\bigoplus_u
  H^*_c(\ker(f)^{\circ}_{\bar{k}},[x\mapsto
  xuz]^*\mcL_{\chi}|\ker(f)^{\circ}).
  $$
  \par
  Since $\mcL_{\chi}$ is a character sheaf, the sheaf
  $[x\mapsto xuz]^*\mcL_{\chi}$ is geometrically
  isomorphic to $\mcL_{\chi}$, so that we have an isomorphism
  $$
  M_y\simeq \bigoplus_u
  H^*_c(\ker(f)^{\circ}_{\bar{k}},\mcL_{\chi}|\ker(f)^{\circ}),
  $$
  and the result now follows from \cite[Sommes
  trig.,\,Th.\,2.7*]{SGA4_1-2} as recalled above. 
\end{proof}

Let~$n\geq 1$ and~$\chi\in\charg{G}(k)$. The base change of
$\mcL_\chi$ to $G_{k_n}$ is the character sheaf on~$G_{k_n}$
associated to the character~$\chi\circ N_{k_n/k}$ of~$G(k_n)$. In
particular, the trace function of~$\mcL_{\chi}$ on~$k_n$ is given by
$$
t_{\mcL_{\chi}}(x;k_n)=\chi(N_{k_n/k}(x)). 
$$

When there is no risk of confusion, we will still denote
by~$\mcL_{\chi}$ the pullback of the character sheaf associated
to~$\chi$ to~$\bar{k}$. The previous remark shows that~$\chi$ and
$\chi\circ N_{k_n/k}$ give rise to the same base change to~$\bar{k}$.

Let $f\colon G\to H$ be a homomorphism of commutative algebraic groups
defined over~$k$. For any integer~$n\geq 1$, let us denote by $f_n$ the induced
morphism $G(k_n)\to H(k_n)$; then we have dual homomorphisms
\index{dual homomorphism}
$\widehat{f}_n\colon \charg{H}(k_n)\to \charg{G}(k_n)$ defined by
$\chi\mapsto \chi\circ f_n$.
\nomenclature[$f$]{$\widehat{f}_n$}{dual homomorphism}%
\nomenclature[$f$]{$\widehat{f}$}{dual homomorphism}%
The combination of all these maps gives a
map $\widehat{f}\colon \charg{H}\to\charg{G}$, which we will often
denote simply by $\chi\mapsto \chi\circ f$. We will sometimes say that a
character $\chi\in\charg{G}$ \emph{arises} from~$H$ if~$\chi$ belongs to
the image of~$\widehat{f}$.

For~$\chi\in\charg{H}(k_n)$, there is a canonical isomorphism
$\mcL_{\widehat{f}(\chi)}\simeq f^*\mcL_{\chi}$.

For any object $M$ of $\Der(G)$ and any character~$\chi$ of~$G(k)$, we
denote by 
\[
M_{\chi}=M\otimes\mcL_{\chi}
\]
\nomenclature{$M_{\chi}$}{twist of~$M$ by~$\mcL_{\chi}$}
the ``twist'' of
$M$ by the character sheaf $\mcL_{\chi}$.

For all $\chi\in\charg{G}$, and all objects $M$ and $N$ of $\Der(G)$ (or
$\Der(G_{\bar{k}})$), there are canonical isomorphisms
\begin{gather}
  \DD(M_{\chi})\simeq \DD(M)_{\chi^{-1}}, \label{eqn:isomDualChar}
  \\
  \left(M_{\chi}\right)^\vee\simeq(M^\vee)_{\chi}, \label{eqn:isomTannakianDual}
  \\
  (M*_* N)_{\chi}\simeq(M_\chi*_* N_\chi),\quad (M*_!
  N)_{\chi}\simeq(M_\chi*_!  N_\chi).
\end{gather}

The first two properties follow from duality from
$\dual(\mcL_{\chi})=\mcL_{\chi^{-1}}$, and the third from the projection
formula combined with the canonical isomorphism
$m^*\mcL_{\chi}\simeq \mcL_{\chi}\boxtimes \mcL_{\chi}$, where
$p_1$ and $p_2$ are the projections $G\times G\to G$
(see~\cite[8.1.10\,(4)]{esde}).

More generally, for any
algebraic variety $X$ over $k$, any morphism $f\colon X\to G$, and any
object~$M$ of~$\Der(X)$, we set
$$
M_{\chi}=M\otimes f^*\mcL_{\chi},
$$
and we use the same notation for objects in~$\Der(G_{\bar{k}})$ and
$\Der(X_{\bar{k}})$, or in~$\Der(G_{k_n})$ and $\Der(X_{k_n})$.

We will extensively (and often without comment) use the following
standard lemma.

\begin{lemma}
  \label{lem-characters-t-exact}
  Let $f\colon X\to G$ be a morphism from an algebraic variety $X$ to a
  connected commutative algebraic group $G$, both defined over~$k$. Let
  $\chi\in \charg{G}$ be a character. Then the functor
  $M\mapsto M_{\chi}$ on $\Der(X)$ or~$\Der(X_{\bar{k}})$ is t-exact for the standard and perverse $t$-structures.
  In particular, if~$M$ is perverse
  \textup{(}\resp semiperverse\textup{)} then so is~$M_{\chi}$.
\end{lemma}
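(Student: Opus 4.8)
The plan is to reduce the assertion to the elementary fact that tensoring by a lisse sheaf of rank one, placed in degree zero, is t-exact for the perverse t-structure, using the characterisation of semiperversity in terms of the supports of the ordinary cohomology sheaves together with Verdier duality. Recall that t-exactness of the (triangulated) functor $M\mapsto M_\chi$ means precisely that it preserves both the class of semiperverse complexes and the dual class of complexes whose perverse cohomology is concentrated in non-negative degrees.

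First I would record that $\mcL_\chi$ is lisse of rank one on $G$, so that $f^*\mcL_\chi$ is lisse of rank one on $X$; moreover it is $\otimes$-invertible in $\Der(X)$, with inverse $f^*\mcL_{\chi^{-1}}$, since $\mcL_{\chi_1}\otimes\mcL_{\chi_2}\simeq\mcL_{\chi_1\chi_2}$, $\mcL_1\simeq\Qlb$, and $f^*$ commutes with tensor products. As $f^*\mcL_\chi$ is a locally free sheaf, the derived tensor product $M\otimes f^*\mcL_\chi$ is computed termwise, whence $\mcH^i(M_\chi)\simeq\mcH^i(M)\otimes f^*\mcL_\chi$ and in particular $\supp\mcH^i(M_\chi)=\supp\mcH^i(M)$ for every $i\in\Zz$. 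Since $M\in\Der(X)$ is semiperverse precisely when $\dim\supp\mcH^i(M)\leq -i$ for all $i$ (equivalently $\pH^i(M)=0$ for $i\geq 1$), it follows that $M_\chi$ is semiperverse whenever $M$ is.

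To obtain t-exactness it then suffices to check that $M\mapsto M_\chi$ also preserves the dual condition. This is handled by Verdier duality: the $\otimes$-invertibility of $f^*\mcL_\chi$ gives, via the projection formula, a natural isomorphism
\[
  \DD(M_\chi)\;\simeq\;\DD(M)\otimes (f^*\mcL_\chi)^{-1}\;\simeq\;\DD(M)\otimes f^*\mcL_{\chi^{-1}}\;=\;\DD(M)_{\chi^{-1}},
\]
so that if $\DD(M)$ is semiperverse then, by the first step applied to $\DD(M)$ and the character $\chi^{-1}$, so is $\DD(M_\chi)$. Since a complex has its perverse cohomology in non-negative degrees exactly when its Verdier dual is semiperverse, this is the required statement, and t-exactness of $M\mapsto M_\chi$ follows. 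In particular a perverse sheaf is sent to a perverse sheaf and a semiperverse complex to a semiperverse complex. The argument over $\bar k$ is word for word the same: the pullback of $\mcL_\chi$ to $X_{\bar k}$ remains lisse of rank one, and both the termwise description of the tensor product and the duality isomorphism above are compatible with the base change $X_{\bar k}\to X$.

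The one step deserving care, and the only place where the rank-one hypothesis is genuinely used, is the duality identity $\DD(M\otimes f^*\mcL_\chi)\simeq\DD(M)\otimes f^*\mcL_{\chi^{-1}}$: it rests on $f^*\mcL_\chi$ being an invertible object of $\Der(X)$, and it requires no smoothness of $X$ — only the lissity of $\mcL_\chi$ on $G$ and the functorial properties of character sheaves recalled above. I do not anticipate any other difficulty.
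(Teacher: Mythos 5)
Your proof is correct and follows essentially the same route as the paper: tensoring with the lisse rank-one sheaf $f^*\mcL_\chi$ commutes with the ordinary cohomology sheaves, so supports are unchanged and right t-exactness (preservation of semiperversity) follows, while left t-exactness is obtained from the duality isomorphism $\DD(M_\chi)\simeq\DD(M)_{\chi^{-1}}$ applied to the first step with $\chi^{-1}$. Your extra justification of that duality identity via the invertibility of $f^*\mcL_\chi$ is a harmless elaboration of what the paper simply quotes.
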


\begin{proof}
  Let $i\in\Zz$.  Since $\mcL_\chi$ is a lisse sheaf on~$G$, the
  pullback $f^*\mcL_{\chi}$ is lisse on~$X$, and hence tensoring with
  $f^*\mcL_\chi$ is exact for the standard t-structure on $\Der(X)$
  or~$\Der(X_{\bar{k}})$ (i.e., the t\nobreakdash-structure whose heart is the
  category of constructible sheaves concentrated in degree~$0$). There are thus canonical isomorphisms
  $\mcH^i(M\otimes f^*\mcL_\chi)\simeq \mcH^i(M)\otimes
  f^*\mcL_\chi$ for all $i$. Hence, looking at the support, we see that the functor
  $M\mapsto M_{\chi}$ is right t-exact for the perverse
  t-structure. It is also left t-exact since $\DD(M_{\chi})$ is
  isomorphic to $\DD(M)_{\chi^{-1}}$, hence the result.
\end{proof}

\section{Complexity estimates for character sheaves}

We keep the notation of the previous section. 
The first essential new ingredient for our work is the fact that the
complexity of character sheaves on~$G$ is uniformly bounded.

\begin{proposition}
  \label{prop-cond-char}
  Let~$G$ be a connected commutative algebraic group over~$k$ together with a
  quasi-projective immersion~$u$.  There exists a real number~$C\geq 0$
  such that, for every~$n\geq 1$ and for every character
  $\chi\in \charg{G}(k_n)$, the inequality $c_u(\mcL_{\chi})\leq C$
  holds.
\end{proposition}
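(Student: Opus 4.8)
The plan is to reduce the statement to two already available uniform bounds: the bound for character sheaves on powers of $\Gg_m$ (where complexity can be computed by hand, since $\mcL_\chi$ is Kummer, lisse of rank one, tamely ramified along a fixed divisor independent of $\chi$) and on abelian varieties (where one uses that the character sheaves are parameterized by the dual abelian variety, a fixed proper scheme, so that finiteness theorems with uniform bounds apply to the universal character sheaf); the unipotent case is handled via Artin--Schreier sheaves, again tame ramification along a bounded divisor or, at worst, bounded Swan conductors. The dévissage of Proposition~\ref{pr-devissage} then writes $G=(A\times U\times T)/N$ with $A$, $U$, $T$, $N$ all defined over $k$ and of bounded invariants, and the key transfer tool is Proposition~\ref{pr-radicial} together with the pullback and pushforward continuity estimates of Theorem~\ref{thm-conductors}.

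First I would treat the three ``pure'' cases $A$, $T$, $U$ directly. For $T\simeq\Gg_m^d$, every $\chi\in\charg{T}(k_n)$ gives a Kummer sheaf $\mcL_\chi$ which is lisse of rank one on $\Gg_m^d$; pushing forward to $\Pp^d$ along the standard immersion, the relevant cohomology groups $h^i(\Pp^d_K,\, l_{a_m}^*\mcL_\chi)$ are controlled uniformly in $\chi$ by an Euler--Poincaré computation on a fixed curve (the generic line section of $\Gg_m^d$), because the ramification is tame along a divisor of bounded degree that does not depend on $\chi$; hence $c_u(\mcL_\chi)\leq C_T$. For $A$ an abelian variety, the character sheaves $\mcL_\chi$ (for $\chi$ of $\ell$-power-compatible type, and more generally) are fibers of a single lisse sheaf on $A\times \Pi(A)$ or a suitable finite-type parameter scheme; invariance of étale cohomology in proper families, or directly the finiteness statements in Sawin's formalism applied to the universal family, gives a bound on $c(\mcL_\chi)$ independent of $\chi$. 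For $U$ unipotent, one devisses $U$ by $\Gg_a$'s; each $\chi$ corresponds to an iterated extension built from Artin--Schreier sheaves $\mcL_{\psi(f)}$ with $f$ of bounded degree (the bound coming from the fixed embedding of $U$), whose Swan conductors at infinity are therefore bounded, so Theorem~\ref{thm-conductors} (items on $\otimes$ and exact triangles) plus a uniform bound on the building blocks yields $c_u(\mcL_\chi)\leq C_U$.

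Next I would assemble the general case. Write $p\colon A\times U\times T\to G=(A\times U\times T)/N$; this is a finite surjective morphism of commutative algebraic groups, and $p^*\mcL_\chi\simeq \mcL_{\widehat p(\chi)}=\mcL_{\chi\circ p_n}$ is a character sheaf on $A\times U\times T$, whose complexity is bounded by the product of the three bounds above via the Künneth-type estimate $c_u(M\boxtimes N)\ll c_u(M)c_u(N)$ (a consequence of Theorem~\ref{thm-conductors}\ref{thm-conductors:item2} and the pushforward estimate, applied to external tensor product as in Section~\ref{sec-convolution}). It remains to descend this bound along $p$. Since $N$ is finite, $p$ is finite étale of bounded degree away from the local part, but $N$ need not be étale in characteristic $p$. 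To handle this I would factor $p$ through its maximal étale sub-quotient: write $N=N_r\times N_l$ with $N_r$ reduced (étale) and $N_l$ local (infinitesimal radicial). Quotienting by $N_l$ gives a finite surjective \emph{radicial} morphism, and Proposition~\ref{pr-radicial} shows it preserves complexity exactly, so we may assume $N$ is étale. Then $p$ is a finite étale Galois cover with group $N_r(\bar k)$ of bounded order, and $\mcL_\chi$ is a direct summand of $Rp_!p^*\mcL_\chi=p_*\mcL_{\chi\circ p}$ (as $p$ is finite étale, $p_!p^*\simeq \bigoplus$ of twists, and the trivial isotypic component recovers $\mcL_\chi$); applying the pushforward estimate of Theorem~\ref{thm-conductors} with $f=p$ and $c_{u,v}(p)$ bounded (finite morphism of varieties with fixed embeddings), together with item (i) on direct sums, gives $c_u(\mcL_\chi)\leq c_u(p_*\mcL_{\chi\circ p})\ll c_u(\mcL_{\chi\circ p})\leq C_A C_U C_T$, uniformly in $\chi$ and $n$.

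**Main obstacle.** The analytic continuity estimates of Theorem~\ref{thm-conductors} are already uniform in $\chi$ once the \emph{input} complexity is bounded, so the real content is the three base cases, and among these the abelian variety case is the delicate one: one must exhibit the sheaves $\mcL_\chi$, for $\chi$ ranging over \emph{all} $n$ and all characters of $A(k_n)$, as a family of geometric fibers of finitely many fixed constructible complexes on a fixed parameter scheme, and invoke a form of generic/constructible base change with uniform Betti bounds. The subtlety is that $\widehat A(k_n)$ is not literally the set of $\bar k$-points of the dual abelian variety $\widehat A$; one must pass through the $\ell$-adic character scheme $\Pi(A)$ and the torsion points of $\widehat A$, and check that the Lang-torsor description of $\mcL_\chi$ is compatible with a universal character sheaf on $A\times\widehat A$. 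Once that identification is in place, the uniformity is automatic from properness of $\widehat A$; the unipotent case's only wrinkle is making the bound on $\deg f$ for the Artin--Schreier data explicit from the embedding, and the torus case is entirely elementary.
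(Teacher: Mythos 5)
Your assembly steps --- the external-product/K\"unneth estimate, the splitting $N=N_r\times N_l$ with Proposition~\ref{pr-radicial} for the local part and the direct-summand-of-$\pi_*\pi^*$ trick for the \'etale part, and the d\'evissage of Proposition~\ref{pr-devissage} --- are exactly the paper's, and they are fine. The genuine gap is in your abelian variety base case. There is no finite-type parameter variety carrying a universal $\Qlb$-character sheaf whose fibres exhaust the $\mcL_\chi$: the dual abelian variety $\widehat{A}$ parameterizes line bundles, not $\ell$-adic rank-one local systems, and the character ``scheme'' $\Pi(A)$ is a $\Qlb$-scheme (the spectrum of a completed group algebra, with an infinite discrete prime-to-$\ell$ part); it is not an algebraic variety over $k$ or $\bar{k}$, so neither properness of $\widehat{A}$, nor constructibility/generic base change, nor Sawin's estimates ``applied to the universal family'' are available in the form you invoke --- you flag the subtlety, but the identification you hope for does not exist. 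The families one can actually write down, and which do contain every $\mcL_\chi$ as a direct summand ($[N]_*\Qlb$ for $N$ prime to the characteristic, or the pushforward of the constant sheaf under the Lang isogeny over $k_n$), have complexity growing with $N$, resp.\ $n$, so the summand trick yields nothing uniform. The input that is actually needed is of a different nature: character sheaves on an abelian variety are lisse and everywhere tame on a projective variety, and for such sheaves the complexity is bounded in terms of the rank alone; this is \cite[Prop.\,7.9]{sawin_conductors}, which is what the paper cites, and it is not a formal consequence of the six-operations estimates of Theorem~\ref{thm-conductors}.

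Two smaller points in the other base cases. For tori, your Euler--Poincar\'e argument on a generic line section only controls the $m=1$ sections, whereas the complexity requires all generic linear sections of dimension $0\leq m\leq n$; this is repairable, most easily by reducing to $\Gg_m$ via the K\"unneth step you already have (the paper does exactly this and quotes \cite[Prop.\,7.5]{sawin_conductors} for $\Gg_m$). For unipotent groups, the claim that every character sheaf is an iterated extension of Artin--Schreier pullbacks ``of bounded degree'' is unsubstantiated once $U$ is not a vector group (e.g.\ for Witt vector groups); this is the one case where an honest algebraic universal family does exist, namely Saibi's bi-extension sheaf on $U^{\vee}\times U$ recalled in Section~\ref{sec-unipotent}, and the paper bounds all character sheaves at once by restricting that single sheaf and applying Theorem~\ref{thm-conductors}.
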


\begin{proof}
  We will proceed in several steps, first noting that we may assume
  that~$n=1$.
  \par
  (1) If the result is true for the groups $G_1$ and $G_2$, then it is
  true for their product \hbox{$G=G_1\times G_2$}. Indeed, let
  $p_i \colon G \to G_i$ denote the two projections. Since any character
  $\chi$ of $G(k)$ takes the form~$(x_1,x_2)\mapsto \chi_1(x_1)\chi_2(x_2)$ for some characters $\chi_i$
  of $G_i(k)$, the corresponding character sheaf is the external product
  $\mcL_{\chi}=\mcL_{\chi_1}\boxtimes \mcL_{\chi_2}$, which has
  complexity bounded in terms of the complexity of~$\mcL_{\chi_1}$ and
  that of $\mathcal{L}_{\chi_2}$, and hence bounded uniformly by assumption.
  \par
  (More precisely, this is one case where we use
  Remark~\ref{rm-change-embedding}, since we most easily bound the
  complexity of $\mcL_{\chi_1}\boxtimes \mcL_{\chi_2}$ with
  respect to the composition~$v$ of the given quasi-projective
  immersions~$u_1$ and~$u_2$ of $G_1$ and~$G_2$ and the Segre embedding
  using Theorem~\ref{thm-conductors}, as
  in~\cite[Prop.\,6.12]{sawin_conductors}.) 
   \par
  (2) If the result holds for a group~$G$, then for any finite subgroup
  scheme $H$ (defined over~$k$), the results holds for the quotient
  $G/H$ (if this quotient is an algebraic group). To see this, we can
  further decompose $H=H_r\times H_l$ where~$H_r$ is reduced and~$H_l$
  is local, so that we may assume that~$H$ is either reduced or
  local. Let $v$ be a quasi-projective embedding of~$G/H$ and let
  $\pi\colon G\to G/H$ be the quotient morphism.

  If $H$ is reduced, then $\pi$ is a finite étale covering, so for any
  lisse sheaf $\mcL$ on~$G/H$, the sheaf $\mcL$ is a direct factor of
  $\pi_*\pi^*\mcL$, and we deduce
  $$
  c_v(\mcL)\leq c_v(\pi_*\pi^*\mcL)\ll c_u(\pi^*\mcL).
  $$
  This implies the result since $\pi^*\mcL$ is a character sheaf on~$G$
  if $\mcL$ is a character sheaf on~$G/H$.
  \par
  If~$H$ is local, then the quotient morphism~$\pi$ is finite and radicial, and hence the adjunction map $\mcL \to \pi_\ast \pi^\ast \mcL=\pi_! \pi^\ast \mcL$ is an isomorphism (see, \eg,
  \cite[Cor.\,5.3.10]{fu_etale_co}). By Theorem \ref{thm-conductors}\,\ref{thm-conductors:item6}, the complexity $c_v(\mcL)=c_v(\pi_!\pi^\ast \mcL)$ is hence $\ll c_u(\pi^\ast \mcL)$, and the result again follows. 
  \par
  (3) The result is valid for tori and unipotent groups. For the
  former, since complexity is a geometric invariant, we may assume
  that we have a split torus, and the result then follows from~(1) and
  the case of $G=\Gg_m$, which is established
  in~\cite[Prop.\,7.5]{sawin_conductors}.
  \par
  Assume then that~$G$ is a unipotent group. Let $G^\vee$ be its Serre
  dual (or more precisely, an algebraic group model of it; see
  Section~\ref{sec-unipotent} for details). There exists a lisse
  $\ell$-adic sheaf~$\mcL$ of rank one on~$G^{\vee}\times G$ such that the character sheaves associated to characters of $G(k)$ are in
  bijection with the points $a\in G^\vee(k)$ by mapping
  $a\in G^{\vee}(k)$ to the restriction of the sheaf $\mcL$ to
  $\{a\}\times G$. Hence, by Theorem~\ref{thm-conductors}, the
  complexity of any character sheaf of~$G$ is bounded in terms of the
  complexity of the single sheaf $\mcL$.
  \par
  (4) The result holds for abelian varieties
  by~\cite[Prop.\,7.9]{sawin_conductors}, since abelian varieties are
  projective and any character sheaf is lisse on~$G$.
  \par
  (5) The general case now follows using the previous results and the
  dévissage of Proposition~\ref{pr-devissage}.
This completes the proof of the proposition.   
\end{proof}

\begin{remark}
  A potential alternative (more conceptual) approach to this result
  would be the following. For a character sheaf~$\mcL$ on~$G$, there is
  an isomorphism
  $$
  m^*\mcL\simeq p_1^*\mcL\otimes p_2^*\mcL
  $$
  (recall that~$m$ is the multiplication map $G\times G\to G$).  If one
  could prove directly the estimate
  \begin{equation}\label{eq-better}
    c(\mcL)^2\ll c(p_1^*\mcL\otimes p_2^*\mcL),
  \end{equation}
  then we would deduce from Theorem~\ref{thm-conductors} that
  $$
  c(\mcL)^2\ll c(m^*\mcL)\ll c(\mcL),
  $$
  and hence $c(\mcL)\ll 1$. Note that Proposition~\ref{prop-cond-char}
  shows that~(\ref{eq-better}) is indeed true, and it is maybe not out
  of the question that one could provide a direct proof.
\end{remark}

\section{Arithmetic Fourier
  transforms}\label{sec:discreteFouriertransform}

We continue with the notation of the previous section. Given an
$\ell$-adic complex $M$ in $\Der(G)$, we can consider for any fixed
$n\geq 1$ the discrete Fourier transform of the trace function
$x\mapsto t_M(x;k_n)$ on $G(k_n)$, which we normalize to be the function
from $\charg{G}(k_n)$ to $\bQl$, or~$\Cc$, defined by
$$
\chi\mapsto S(M,\chi)=\sum_{x\in G(k_n)}\chi(x)t_M(x;k_n).
$$
\nomenclature{$S(M,\chi)$}{arithmetic Fourier transform}

This Fourier transform satisfies the usual formalism of commutative
harmonic analysis (see, e.g.,~\cite{bourbaki-ts-2}). For instance the
Fourier inversion formula\index{Fourier inversion formula}
\begin{equation}\label{eq-fourier-inv}
  t_M(x;k_n)=\frac{1}{|G(k_n)|}
  \sum_{\chi\in\charg{G}(k_n)}S(M,\chi)\bar{\chi}(x)
\end{equation}
holds for any $x\in G(k_n)$, and there is also a 
Plancherel formula\index{Plancherel formula}
$$
\sum_{x\in G(k_n)}|t_M(x;k_n)|^2=
\frac{1}{|G(k_n)|}\sum_{\chi\in\charg{G}(k_n)}|S(M,\chi)|^2. 
$$

Putting together the data of these discrete Fourier transforms on
$G(k_n)$ for all $n\geq 1$, we obtain what we call the \emph{arithmetic
  Fourier transform of the complex~$M$},
\index{arithmetic Fourier transform}
an element of the product set
$$
\mathcal{C}(\charg{G}, \bQl)=\prod_{n\geq 1}\mathcal{C}(\charg{G}(k_n), \bQl),
$$
where, for any set~$X$ and ring~$A$, we denote by $\mathcal{C}(X,A)$ the
$A$-module of functions $f\colon X \to A$.

Combining the Fourier inversion formula~(\ref{eq-fourier-inv}) with the
known injectivity theorem for trace functions (see
Proposition~\ref{pr-injectivity}), we deduce a corresponding injectivity
property of the discrete Fourier transform of complexes:

\begin{proposition}\label{prop:discreteFouriertransform} 
  Let $M_1$ and $M_2$ be complexes in $\Der(G)$ such that for all
  $n\geq 1$ and all characters $\chi\in\charg{G}(k_n)$, the
  equality
  $$
  \sum_{x\in G(k_n)}\chi(x)t_{M_1}(x;k_n)= \sum_{x\in
    G(k_n)}\chi(x)t_{M_2}(x;k_n)
  $$
  holds. Then the classes of $M_1$ and $M_2$ in the Grothendieck group
  $K(G)=K(G,\bQl)$ are equal.
\end{proposition}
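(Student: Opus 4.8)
The plan is to reduce the statement to the injectivity theorem for trace functions by inverting the discrete Fourier transform one extension at a time. First I would fix an integer $n\geq 1$ and record that the hypothesis says precisely that $S(M_1,\chi)=S(M_2,\chi)$ for every $\chi\in\charg{G}(k_n)$, where $S(M_i,\chi)=\sum_{x\in G(k_n)}\chi(x)t_{M_i}(x;k_n)$. Next I would apply the Fourier inversion formula~\eqref{eq-fourier-inv} on the finite abelian group $G(k_n)$: for each $x\in G(k_n)$ one has
\[
t_{M_i}(x;k_n)=\frac{1}{|G(k_n)|}\sum_{\chi\in\charg{G}(k_n)}S(M_i,\chi)\bar\chi(x),
\]
so the equality of the two Fourier transforms forces $t_{M_1}(x;k_n)=t_{M_2}(x;k_n)$ for all $x\in G(k_n)$.

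Since $n$ was arbitrary, this shows that the Frobenius trace functions of $M_1$ and $M_2$ coincide on $G(k_n)$ for every $n\geq 1$ (here we use, as throughout the paper, the fixed isomorphism $\iota_0\colon\Qlb\to\Cc$ to make sense of this equality, but the argument is insensitive to that identification). The final step is to quote the injectivity theorem for trace functions, \cite[Th.\,1.1.2]{laumon-signes}: a class in the Grothendieck group $K(G,\bQl)$ is determined by the collection of its trace functions over all finite extensions $k_n$ of $k$. Applying this to the class $[M_1]-[M_2]\in K(G)$, whose trace function vanishes on every $G(k_n)$ by the previous paragraph, yields $[M_1]=[M_2]$.

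I do not expect a genuine obstacle here: the only point that deserves a word of care is that the injectivity statement requires the trace functions to agree over \emph{all} the $k_n$ and not merely over $k$ itself, which is exactly the form of information that Fourier inversion delivers from the hypothesis. Everything else is the standard formalism of finite commutative harmonic analysis already recalled in Section~\ref{sec:discreteFouriertransform}.
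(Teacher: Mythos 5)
Your proof is correct and follows exactly the route the paper indicates: it deduces equality of the trace functions over every $k_n$ from the Fourier inversion formula~\eqref{eq-fourier-inv} and then applies the injectivity theorem for trace functions \cite[Th.\,1.1.2]{laumon-signes} to the class $[M_1]-[M_2]$ in $K(G)$. No gaps.
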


\begin{remark}
  In Chapter~\ref{sec-stratification}, we will establish a more
  refined statement where the equality of discrete Fourier transforms is
  only assumed to hold for characters in a ``generic'' set, as described in the next section. 
\end{remark}

\section{Generic sets of characters}

For an arbitrary connected commutative algebraic group, there is no
obvious topology (or measure) on the set~$\charg{G}$ of characters which
would lead to a natural notion of sets containing ``almost all''
characters.  We will use instead the following definition of a generic
set of characters.

\begin{definition}
  \label{def-generic}
  Let~$k$ be a finite field and let~$G$ be a connected commutative
  algebraic group of dimension~$d$ over~$k$. Let~$S$ be a subset
  of~$\charg{G}$.

  Let~$i\geq 0$ be an integer. We say that~$S$ has \emph{character
    codimension at least~$i$},
  \index{character codimension}which we denote sometimes by
  $\ccodim(S)\geq i$, if the estimate 
  \begin{equation}\label{eq-c-codim}
    \abs{S(k_n)}\ll |k|^{n(d-i)}
  \end{equation}
  holds for all integers $n \geq 1$.
  \nomenclature[$c$]{$\ccodim(S)$}{character codimension}
  
  We say that~$S$ is \emph{generic}\index{generic set of characters}
  if $\charg{G}\setminus S$ has
  character codimension at least~$1$, i.e., if the estimate
    \begin{equation}\label{eq-generic}
    \abs{\charg{G}(k_n)\bks S(k_n)}\ll |k|^{n(d-1)} 
  \end{equation} holds for all integers $n \geq 1$.
\end{definition}

We now discuss the relation between the definition of generic sets
and other notions that appear in the literature, in the case of
unipotent and semiabelian varieties.

If~$G$ is unipotent, then the set of characters can be identified with
the $\bar k$-points of a $k$\nobreakdash-scheme~$G^{\vee}$; see again
Section~\ref{sec-unipotent}. If~$S\subset \charg{G}$ is algebraic (i.e., the disjoint union of the sets~$\widetilde{S}(k_n)$ for some
subvariety~$\widetilde{S}$ of~$G$), then the condition
$\ccodim(S)\geq i$ implies that the codimension of~$\widetilde{S}$
in~$G^{\vee}$ is at least~$i$. Conversely, if~$\widetilde{S}$ is a closed
subvariety of~$G^{\vee}$ over~$k$, then 
$\ccodim(\widetilde{S}(\bar{k}))\geq \codim_{G^{\vee}}(\widetilde{S})$.

Let $G$ be a semiabelian variety over~$k$. Let~$\ell$ be a prime
different from the characteristic of~$k$. The set of $\ell$-adic
characters of~$G$ can be naturally identified with the set of
$\Qlb$-points of a~$\Qlb$\nobreakdash-scheme, as we now recall. Let
$\pi^t_1(G_{\bar k})$ \nomenclature[$P$]{$\pi^t_1(G_{\bar k})$}{tame
  étale fundamental group} be the geometric tame étale fundamental
group of~$G$ (see, for instance, the paper~\cite{kerz-schmidt} of Kerz
and Schmidt for various equivalent definitions; note that it is
well-known that semiabelian varieties have good compactifications),
and let $\Pi(G,\Qlb)$ \nomenclature[$P$]{$\Pi(G,\Qlb)$}{continuous
  tame $\ell$-adic characters} be the group of continuous characters
$\chi \colon \pi_1^t(G_{\bar k})\to \Qlbt$.  For any~$n\geq 1$ and
$\chi\in \charg{G}(k_n)$, the character sheaf~$\mcL_\chi$ is tamely
ramified (indeed, only the case of tori requires proof; since the
question is geometric, we may assume that $G=\Gg_m^d$ for some integer
$d\geq 0$, and the result follows by induction from the
well\nobreakdash-known case of~$\Gg_m$ and the multiplicativity of the
tame fundamental group~\cite[Th.\,5.1]{orgogozo-tame}), and hence
corresponds to a point in $\Pi(G,\Qlb)$. For each $n\geq 1$, this
leads to a natural injective map
$$
\charg{G}(k_n)\injecte \Pi(G,\Qlb),
$$
and we will identify $\charg{G}(k_n)$ this way with a subset of
$\Pi(G,\Qlb)$.

There is a decomposition
\[
\Pi(G,\Qlb)= \Pi(G,\Qlb)_{\ell'}\times \Pi(G,\Qlb)_{\ell},
\] where
\nomenclature[$P$]{$\Pi(G,\Qlb)_{\ell'}$}{torsion characters of order prime
  to~$\ell$}
\nomenclature[$P$]{$\Pi(G,\Qlb)_{\ell}$}{characters factoring through the
  pro-$\ell$-quotient}
$\Pi(G,\Qlb)_{\ell'}$ is the group of torsion characters of order prime
to $\ell$ and $\Pi(G,\Qlb)_{\ell}$ is the group of characters that
factor through the maximal pro-$\ell$ quotient
$\pi_1^t(G_{\bar k})_{\ell}$ of $\pi_1^t(G_{\bar k})$. Since
$\pi_1^t(G_{\bar{k}})_{\ell}$ is a free $\Zz_{\ell}$-module of finite rank, by a
result of Brion and Szamuely~\cite{brion-szamuely-etale-covers}, we can
identify $\Pi(G,\Qlb)_{\ell}$ with the $\Qlb$-points of a scheme
$\Pi(G)_{\ell}$, following the arguments of Gabber and Loeser \hbox{\cite[Section 3.3]{GL_faisc-perv}}.\nomenclature[$P$]{$\Pi(G)_{\ell}$}{$\Qlb$-scheme whose $\Qlb$-points are $\Pi(G,\Qlb)_{\ell}$}

Letting $\Pi(G)$
\nomenclature[$P$]{$\Pi(G)$}{disjoint union of $\Pi(G)_{\ell}$}
be the disjoint union of the schemes $\Pi(G)_{\ell}$
indexed by $\chi\in \Pi(G,\Qlb)_{\ell'}$, we get
$$
\Pi(G,\Qlb)=\Pi(G)(\Qlb),
$$
and as above we will identify $\charg{G}$ with a subset of
$\Pi(G)(\Qlb)$.

Let~$G'$ be a semiabelian variety over~$k$ and~$f\colon G\to G'$ a
homomorphism. There is a dual morphism $\Pi(G')\to\Pi(G)$, denoted by $\chi\mapsto \chi\circ f$; if~$f$ is an inclusion, we
also write simply~$\chi\circ f=\chi_{|G}$.  The restriction of this
map to the subset $\charg{G}'$ is the map
$\widehat{f}\colon\charg{G}'\to \charg{G}$ previously defined.

\begin{definition} \label{def-most} Let $G$ be a semiabelian variety
  over a finite field~$k$, and let~$\ell$ be a prime different from
  the characteristic of~$k$.
  \begin{enumerate}
  \item A subset $S\subset \Pi(G)(\Qlb)$ is a \emph{translate of an
      algebraic cotorus} (abbreviated \emph{\tac})\index{tac} if there
    exists a surjective morphism $\pi\colon G_{\bar k}\to G'$ of semiabelian varieties over $\bar{k}$, with non-trivial \emph{connected}
    kernel, and a character $\chi_0\in \Pi(G)(\Qlb)$ such that
    \begin{displaymath}
      S=\{\chi_0\cdot (\chi'\circ \pi)\in \Pi(G)(\Qlb)\mid
      \chi'\in \Pi(G')(\Qlb)\}.
    \end{displaymath} We then say that~$S$ is \emph{defined by the quotient
      $G_{\bar{k}}\to G'$ and the character~$\chi_0$}, and that $S$ \emph{has dimension~$\dim(G'_{\bar{k}})$}. The kernel
    of~$\pi$ is also called the \emph{kernel of the~\tac}. If $G'$ and $\pi$ are defined over a finite extension $k'$ of $k$,
    then we say that $S$ is a \tac\ of~$G_{k'}$.
    
  \item We say that a subset $S\subset \Pi(G)(\Qlb)$ contains
    \emph{most} characters if the complement of $S$ is contained in a
    finite union of \tacs.
    \index{set containing most characters}

  \item We say that a subset $S\subset \Pi(G)(\Qlb)$ is \emph{weakly
      generic} if it is a generic set in the sense of the Zariski
    topology in $\Pi(G)$, \emph{i.e} it contains a dense open subset of $\Pi(G)$.\index{weakly generic set of characters}
  \end{enumerate}
  
  By extension, we shall say that a subset $S\subset \charg{G}$
  \emph{contains most characters}, or is \emph{weakly generic}, if its
  image in $\Pi(G)(\Qlb)$ satisfies this property.
\end{definition}

\begin{remark}
  \label{comp-generics}
  (1) The terminology ``most'' is used by Krämer and
  Weissauer~\cite{KW_vanishing_AV}; Esnault and Kerz~\cite{esnault-kerz}
  speak of ``quasi-linear'' subsets. What we call ``weakly generic'' is
  usually called ``generic'' (see, for example, the papers
  \cite{KW_vanishing_AV}, \cite{kramer_perverse_2014} and
  \cite{GL_faisc-perv}).
  
  (2) Let $S\subset \Pi(G)(\Qlb)$ be a subset that contains most
  characters. The Lang--Weil estimates imply that $S\cap \charg{G}$ is
  generic in the sense of (\ref{eq-generic}). Also, if $S\subset \charg{G}$ is a generic set and
  $\Pi(G)(\Qlb)\setminus S$ is not Zariski-dense,  then~$S$ is weakly
  generic.

  (3) The \tac\ defined by~$\pi$ and~$\chi_0$ can also be interpreted as
  the set of characters $\chi$ such that the restriction of~$\chi$ to
  $\ker(\pi)$ is equal to that of~$\chi_0$.

  (4) If a \tac\ $S$ of $G$ has dimension~$i$, then $S\cap \charg{G}$ has
  character codimension $\geq \dim(G)-i$ since
  $$
  |(S\cap \charg{G})(k_n)|\leq |G'(k_n)|\ll |k|^{ni}
  $$
  if~$S$ is defined by the quotient $G \to G'$ and the
  character~$\chi_0$.
\end{remark}

\begin{lemma}
\label{lem:inter:tac}
Let $G$ be a semiabelian variety over a finite field $k$. Let $\ell$ be
a prime different from the characteristic of~$k$. Let $I$ be a non-empty
finite set and let $(S_i)_{i\in I}$ be a family of \tacs\ in~$G$,
defined by quotient morphisms $\pi_i\colon G_{\bar k} \to G_{i,\bar k}$
and characters $\chi_i\in \Pi(G)(\Qlb)$.

Let~$K$ be the subgroup of $G_{\bar k}$ generated by the subgroups
$\ker(\pi_i)$.  The intersection $S=\bigcap S_i$ is non-empty if and
only if the restriction of $\chi_i$ to~$K$ is independent of~$i$.

If this is the case, then~$S$ is a \tac, which is defined by the
quotient morphism $\pi\colon G_{\bar k} \to G_{\bar k}/K$ and any of the
characters~$\chi_i$.
\end{lemma}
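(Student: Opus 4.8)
The plan is to translate the statement into one about restrictions of characters to the geometric subgroups $\ker(\pi_i)$, using the description of \tacs\ recalled in Remark~\ref{comp-generics}(3).

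First I would record the reinterpretation: by Remark~\ref{comp-generics}(3), each $S_i$ is exactly the set of $\chi\in\Pi(G)(\Qlb)$ whose restriction to $\ker(\pi_i)$ — that is, the image of $\chi$ under the pullback map $\Pi(G)(\Qlb)\to\Pi(\ker(\pi_i))(\Qlb)$ attached to the inclusion $\ker(\pi_i)\hookrightarrow G_{\bar k}$ — coincides with that of $\chi_i$. Hence $S=\bigcap_i S_i$ is the set of $\chi$ with $\chi|_{\ker(\pi_i)}=\chi_i|_{\ker(\pi_i)}$ for every $i\in I$. Next I would set up $K$: since each $\ker(\pi_i)$ is connected (being the kernel of a \tac) and $G$ is commutative, the subgroup $K=\langle\ker(\pi_i):i\in I\rangle$ is the image of the sum morphism $\prod_i\ker(\pi_i)\to G_{\bar k}$, so it is a connected — hence semiabelian — subgroup, and it is non-trivial because $I\neq\emptyset$ and each $\ker(\pi_i)\neq 0$. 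Therefore $G_{\bar k}/K$ is a semiabelian variety and $\pi\colon G_{\bar k}\to G_{\bar k}/K$ is a surjective homomorphism with non-trivial connected kernel $K$, so it may serve as the quotient morphism in the definition of a \tac. Finally, since the $\ker(\pi_i)$ generate $K$, the restriction map $\Pi(K)(\Qlb)\to\prod_i\Pi(\ker(\pi_i))(\Qlb)$ is injective, that is, a character of $K$ is determined by its restrictions to the $\ker(\pi_i)$.

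Now the \emph{if} direction together with the last clause: assume $\chi_i|_K$ is independent of $i$, with common value $\psi\in\Pi(K)(\Qlb)$. I claim $S=\{\chi\in\Pi(G)(\Qlb):\chi|_K=\psi\}$. If $\chi|_K=\psi$ then $\chi|_{\ker(\pi_i)}=\psi|_{\ker(\pi_i)}=\chi_i|_{\ker(\pi_i)}$ for all $i$, so $\chi\in S$; conversely if $\chi\in S$ then $\chi|_{\ker(\pi_i)}=\chi_i|_{\ker(\pi_i)}=\psi|_{\ker(\pi_i)}$ for all $i$, whence $\chi|_K=\psi$ by the injectivity just noted. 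Since $\chi_1|_K=\psi$, the set $S$ is non-empty, and applying Remark~\ref{comp-generics}(3) to $\pi$ and $\chi_1$ (note $\ker(\pi)=K$) identifies $S$ with the \tac\ defined by $\pi$ and $\chi_1$; as all $\chi_i$ have the same restriction to $K$, it is equally well defined by $\pi$ and any $\chi_i$. For the \emph{only if} direction, suppose $S\neq\emptyset$ and pick $\chi\in S$. Then $\chi|_{\ker(\pi_i)}=\chi_i|_{\ker(\pi_i)}$ for every $i$, so by Remark~\ref{comp-generics}(3) each $S_i$ is also the \tac\ defined by $\pi_i$ together with $\chi$ in place of $\chi_i$; replacing every $\chi_i$ by $\chi$ — which alters neither the $S_i$ nor $S$ — we may assume all the defining characters coincide, and then $\chi_i|_K=\chi|_K$ is visibly independent of $i$.

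I do not expect a genuine obstacle; the argument is essentially bookkeeping with restriction maps. The two points deserving care are the non-uniqueness of the defining character of a \tac, which is what drives the \emph{only if} direction and is implicit in the phrasing of the final clause, and the passage from characters of $K$ to their restrictions to the $\ker(\pi_i)$, which uses commutativity of $G$ (through $K=\sum_i\ker(\pi_i)$). The only non-formal input — that a quotient of a semiabelian variety by a connected subgroup is again semiabelian — is standard.
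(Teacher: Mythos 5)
Your \emph{if} direction and the identification of $S$ with the \tac\ defined by $\pi\colon G_{\bar k}\to G_{\bar k}/K$ follow essentially the paper's route: the restriction-to-kernel description of a \tac\ from Remark~\ref{comp-generics}(3), connectedness and non-triviality of $K$, and the passage between restriction to $K$ and restrictions to the $\ker(\pi_i)$ via the fact that these subgroups generate $K$. One caution there: the step you isolate as injectivity of $\Pi(K)(\Qlb)\to\prod_i\Pi(\ker(\pi_i))(\Qlb)$ is not a formality. Generation of $K$ by the $\ker(\pi_i)$ as algebraic subgroups only controls the tame fundamental group of $K$ up to finite index, so a non-trivial character of $K$ can restrict trivially to every $\ker(\pi_i)$ (take $K=\Gg_m^2$ with $K_1$ the diagonal, $K_2$ the antidiagonal, and the order-two character $(x,y)\mapsto\lambda_2(xy)$). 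The paper's proof tacitly makes the same identification (it asserts that $\chi|_K=\chi_{i_0}|_K$ is equivalent to $\chi|_{K_i}=\chi_{i_0}|_{K_i}$ for all $i$), so this is not a divergence from the paper, but you should not present it as an immediate consequence of generation.

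The genuine gap is in your \emph{only if} direction. The statement to be proved is that the restrictions to $K$ of the \emph{given} characters $\chi_i$ coincide. Your move ``replacing every $\chi_i$ by $\chi$ \ldots\ we may assume all the defining characters coincide'' is not a legitimate reduction: the property ``$\chi_i|_K$ is independent of $i$'' is precisely \emph{not} invariant under changing the defining characters of the $S_i$ (this is the non-uniqueness you yourself flag), so after the replacement you are proving a different assertion, namely that the $S_i$ admit \emph{some} system of defining characters whose restrictions to $K$ agree. That does not yield the lemma as stated, nor the final clause that $S$ is the \tac\ defined by $\pi$ and any of the original $\chi_i$. The paper argues this direction about the original data, deducing from the existence of $\chi\in S$ that $\chi_i|_K$ coincides with $\chi|_K$ for each $i$ and hence is independent of $i$. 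Note, however, that membership $\chi\in S$ only gives $\chi_i|_{\ker(\pi_i)}=\chi|_{\ker(\pi_i)}$, i.e. control of $\chi_i$ on $\ker(\pi_i)$ alone and not on all of $K$; so this direction is genuinely delicate, and your replacement trick sidesteps the difficulty rather than resolving it. As written, your argument does not establish the ``only if'' half of the lemma.
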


\begin{proof}
  We write $K_i=\ker(\pi_i)$ for $i\in I$. Since each $K_i$ is
  connected by definition, the subgroup~$K$ generated by the~$K_i$ is
  also connected.

  A character $\chi\in \Pi(G)(\Qlb)$ belongs to $S_i$ if and only if
  $\chi|_{K_i}=\chi_i|_{K_i}$. If $\chi \in S$, then the restriction
  of $\chi_i$ to~$K$ must coincide with the restriction of~$\chi$
  to~$K$, and is therefore independent of~$i$.

  Conversely, if this condition is satisfied, then pick any $i_0\in
  I$. The \tac\ defined by $G_{\bar{k}}\to G_{\bar{k}}/K$ and the
  character $\chi_{i_0}$ consists of characters $\chi$ such that
  $\chi|_K=\chi_{i_0}|_{K}$. This condition is equivalent to
  $\chi|_{K_i}=\chi_{i_0}|_{K_i}$ for all $i\in I$. Since
  $\chi_i|_{K_i}=\chi_{i_0}|_{K_i}$, this \tac\ is exactly
  the intersection of the~$S_i$.
  \end{proof}

\section{Fourier--Mellin transforms on semiabelian varieties}\label{sec:Fourier-Mellin}

Let~$k$ be a finite field and~$G$ a semiabelian variety
over~$k$. Let~$\ell$ be a prime different from the characteristic
of~$k$. We use the notation of the previous section.

We recall here some results of Gabber and Loeser for
tori~\cite{GL_faisc-perv}, generalized by
Krämer~\cite{kramer_perverse_2014} to semiabelian varieties.

Let $R$ be the ring of integers of a finite extension of $\Qq_\ell$ and
$\Omega_G=R\llb\pi_1^t(G_{\bar k})_{\ell}\rrb$. We have\nomenclature[$Omega$]{$\Omega_G$}{completed group algebra of ~$\pi_1^t(G_{\bar{k}})_{\ell}$}
\[
\Pi(G)_{\ell}=\Spec( \Qlb \otimes_R \Omega_G).
\]

Let $p\colon G_{\bar k}\to \Spec(\bar k)$ be the structural morphism.
We denote by $\can_G$ the tautological character\nomenclature[$c$]{$\can_G$}{tautological character}
$$
\can_G\colon \pi_1^t(G_{\bar{k}})_{\ell}\to \Omega_G^{\times},
$$
which defines a lisse $\Omega_G$-sheaf of rank one $\mcL_G$ on\nomenclature[$L$]{$\mcL_G$}{$\Omega_G$-sheaf associated to $\can_G$} $G_{\bar k}$.
Given an object $N$ of $\Der(G_{\bar k},R)$, one defines the \emph{Fourier--Mellin transforms of $N$},
\index{Fourier--Mellin transform on semi-abelian variety}
with and without
compact support, as the objects 
\begin{align*}
\FM_!(N)&=Rp_!(N\otimes_R \mcL_G) \\
\FM_*(N)&=Rp_*(N\otimes_R\mcL_G)
\end{align*} of the category $\Der(\bar
k,\Omega_G)=\Dcoh(\Omega_G)$. Inverting $\ell$ and passing to the direct limit over all 
$R\subset \Qlb$ and all $\chi\in \Pi(G,\Qlb)_{\ell'}$, we then get two
functors
$$
\FM_!, \FM_*\colon \Der(G_{\bar k})\to \Dcoh(\Pi(G)),
$$
where $\Dcoh(\Pi(G))$ is the derived category of the category of
coherent sheaves on~$\Pi(G)$.
\nomenclature[$F$]{$\FM_{#!}$}{Fourier-Mellin transform with compact
  support}
\nomenclature[$F$]{$\FM_{*}$}{Fourier-Mellin transform}
\par
By (the generalization of)~\cite[Cor.\,3.3.2]{GL_faisc-perv}, for an object 
$N$ of $\Der(G_{\bar k})$ and every $\chi\in \Pi(G)(\Qlb)$, viewed as a
closed immersion $i_\chi\colon \{\chi\}\to \Pi(G)$, there are canonical
isomorphisms
$$
Li_\chi^*\FM_!(N)\simeq Rp_!(N_\chi) \quad
\text{ and }
\quad
Li_\chi^*\FM_*(N)\simeq
Rp_*(N_\chi),
$$
where $Li_{\chi}$ indicates \emph{left-derived functors}.

\section{A geometric lemma}
A connected commutative algebraic group~$G$ is said to be \emph{almost
  simple}\index{almost simple group}
if it has no proper connected closed subgroup. Examples of
such groups are $\Gg_a$, $\Gg_m$ and simple abelian varieties.

We will use the following lemma in the proof of the general higher
vanishing theorem.
\begin{lemma}\label{lm-geometric}
  Let~$k$ be a field. Let~$s\geq 0$ be an integer. We denote
  $[s]=\{1,\ldots, s\}$.
  \nomenclature{$[s]$}{the set $\{1,\ldots, s\}$}
  Let
  $$
  G=\prod_{i=1}^s G_i
  $$
  be a product of almost simple connected commutative algebraic groups
  over~$k$. Let~$d=\dim(G)$.
  \par
  For any subset $I\subset [s]$, let
  $$
  G_I=\prod_{i\in I}G_i,
  $$
  which we identify with a subgroup of~$G$ in the obvious way. 
  
  Let $1\leq i \leq d$. Let $\mcE_i$ be the set of subsets~$I$ such
  that $\dim(G_I)>d-i$. For each $I\in \mcE_i$, let $H_I$ be a
  non-trivial connected subgroup of $G_I$. Then the algebraic
  subgroup~$H$ generated by all~$H_I$ has dimension at least~$i$.
\end{lemma}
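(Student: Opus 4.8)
The plan is to run an induction on the number $s$ of almost simple factors, stripping off one factor at each step. First I would discard any trivial factors, which changes neither $G$ nor any $G_I$, so that afterwards $d_j:=\dim(G_j)\geq 1$ for all $j$; then $\dim(G_I)=\sum_{j\in I}d_j$ and the condition defining $\mcE_i$ reads $\dim(G_{[s]\setminus I})<i$. Write $H$ for the subgroup generated by the $H_I$; it is closed, being the image of the multiplication map $\prod_{I\in\mcE_i}H_I\to G$, and it is connected. Two elementary consequences of almost simplicity will be used throughout: every connected subgroup of $G$ meets each $G_j$ either in a finite group or in the whole of $G_j$; and each $H_I$, being non-trivial and connected, is positive-dimensional, so that $H_I\subseteq G_j$ forces $H_I=G_j$. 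The base case $s=1$ is immediate: there $\mcE_i=\{\{1\}\}$ for any admissible $i$, so $H=H_{\{1\}}=G_1$ and $\dim(H)=d_1=d\geq i$.

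For the inductive step I would fix one factor, say $G_1$, set $G'=\prod_{j\geq 2}G_j$ with projection $\pi\colon G\to G'$, and put $d'=\dim(G')=d-d_1$, and then split into cases according to whether $G_1\subseteq H$. Suppose first $G_1\subseteq H$. If $i\leq d_1$ then $\dim(H)\geq d_1\geq i$ and we are done. If $i>d_1$, then $\ker(\pi|_H)=G_1$, so $\dim(\pi(H))=\dim(H)-d_1$; moreover any $I''\subseteq\{2,\dots,s\}$ with $\dim(G_{I''})>d-i$ belongs both to $\mcE_i$ (so $H_{I''}$ is one of our given subgroups) and to the set $\mcE_{i-d_1}$ computed inside $G'$, because $d'-(i-d_1)=d-i$; since $H_{I''}\subseteq G'_{I''}$ and $H_{I''}\subseteq\pi(H)$, the inductive hypothesis applied to $G'$ with parameter $i-d_1$ (note $1\leq i-d_1\leq d'$) yields $\dim(\pi(H))\geq i-d_1$, hence $\dim(H)\geq i$. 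Suppose now $G_1\not\subseteq H$. Then $H\cap G_1$ is finite, so $\pi|_H$ has finite kernel and $\dim(\pi(H))=\dim(H)$. One checks first that $i\leq d'$: otherwise $d_1>d-i$ puts $\{1\}$ in $\mcE_i$, forcing $G_1=H_{\{1\}}\subseteq H$, a contradiction. Now for each $I'\subseteq\{2,\dots,s\}$ with $\dim(G_{I'})>d'-i$, the set $I=I'\cup\{1\}$ satisfies $\dim(G_I)=\dim(G_{I'})+d_1>d-i$, hence $I\in\mcE_i$; the image $\pi(H_I)$ is a connected subgroup of $G'_{I'}$, and it is non-trivial, since $\pi(H_I)=\{e\}$ would force the positive-dimensional $H_I$ into the finite group $H\cap G_1$. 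As $\pi(H)\supseteq\pi(H_I)$ for all such $I'$, the inductive hypothesis applied to $G'$ with parameter $i$ gives $\dim(\pi(H))\geq i$, so $\dim(H)\geq i$.

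The step I expect to be the only real obstacle — though it is more bookkeeping than a genuine difficulty — is arranging that after projecting to $G'$ one again has a non-trivial connected subgroup indexed by \emph{every} member of the relevant family $\mcE$. This rests on the two numerical identities $d'-(i-d_1)=d-i$ and $\dim(G_{I'\cup\{1\}})=\dim(G_{I'})+d_1$, and, crucially, on the non-vanishing $\pi(H_I)\neq\{e\}$ in the case $G_1\not\subseteq H$, which is precisely the point where almost simplicity of $G_1$ (so that $H\cap G_1$ is finite) and positive-dimensionality of $H_I$ are used. Beyond this, no information on the structure of connected subgroups of products of almost simple groups (subtori, diagonals, graphs of isogenies, and so on) is needed: the induction handles all such configurations uniformly.
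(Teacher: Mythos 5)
Your proof is correct, and it follows the same broad strategy as the paper -- an induction that strips off one almost simple factor, driven by the identity $d'-(i-d_1)=d-i$ -- but the execution is genuinely different in how the reduction is organized. The paper singles out the subgroup $H_{[s]}$ attached to the full index set, observes that (being non-trivial and connected) it must surject onto some factor, reorders so that this factor is $G_s$, and then either concludes directly when $d_s\geq i$ or reduces to $(s-1,\,i-d_s)$ and finishes with the dimension count $\dim\langle H',H_{[s]}\rangle\geq \dim H'+\dim H_{[s]}-\dim(G'\cap H_{[s]})=i$, where the surjection of $H_{[s]}$ onto $G_s$ supplies the last equality. You instead fix an arbitrary factor $G_1$ and case on whether $G_1\subseteq H$: if so, you account for the $d_1$ extra dimensions through $\ker(\pi|_H)=G_1$ and reduce to parameter $i-d_1$ using the given $H_{I''}$ with $I''\subseteq\{2,\dots,s\}$; if not, you keep the parameter $i$ and transport the data by projecting the subgroups $H_{I'\cup\{1\}}$, the non-vanishing of these projections being exactly where finiteness of $H\cap G_1$ and positive-dimensionality of the $H_I$ enter -- an ingredient the paper never needs because its choice of pivot factor makes $H_{[s]}$ do that work. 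The trade-off is that your argument avoids privileging $H_{[s]}$ and any reordering of factors, at the cost of the extra verification in the second case; the paper's version is slightly shorter. One remark common to both proofs: the statement literally allows finite $H_I$, for which the conclusion fails (e.g.\ $\mmu_2\subset\Gg_m$ with $s=i=d=1$), so your standing assumption that the $H_I$ are connected (hence positive-dimensional) is the same implicit convention the paper uses in its own proof and in all applications of the lemma.
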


\begin{proof}
  We denote~$d_i=\dim(G_i)$ for $1\leq i\leq s$.
  
  We work by induction on~$s$, and for each~$s$, by induction on
  $i$. The case $s=1$ is elementary, since~$\{1\}\in\mcE_i$ then,
  hence~$H=H_{[s]}=G$ in that case. For any $s$, the result is also
  elementary for $i=1$, since for $I=[s]\in \mcE_1$, we have
  $\dim(H)\geq \dim(H_I)\geq 1$.  Assume now that $2\leq i\leq g$ and
  that the result is known for $(s,i')$ for $i'<i$ as well as for
  $(s',i)$ for any $s'<s$.

  The subgroup $H_{[s]}\subset G$ is non-trivial, and hence there exists
  some integer $j\leq s$ such that the image of $H_{[s]}$ under the
  projection $G\to G_j$ is non-trivial; this means that this image
  must be equal to~$G_j$ since all~$G_i$ are almost simple. Up to
  reordering the factors, we may assume that the projection of
  $H_{[s]}$ on~$G_s$ is surjective.

  If $d_s\geq i$, then we are done since we then have
  $\dim(H_{[s]})\geq \dim(G_s)=d_s\geq i$. We therefore assume now
  that $d_s<i$.

  Let $G'=G_1\times \cdots \times G_{s-1}$ and $i'=i-d_s$. The
  dimension of~$G'$ is $d'=d-d_s$. We have $1\leq i'\leq d'$ and
  $d-i=d'-i'$. Each $J\subset [s-1]$ with $\dim(G'_J)>d'-i'=d-i$ is an
  element of $\mcE_i$. By induction, applied to the subgroups $H_J$
  for $J\in \mcE_{i'}$, the subgroup $H'$ of $G'$ generated
  by all $H_J$ has dimension $\geq i'=i-d_s$.

  To conclude, we observe that since $H'$ is a subgroup of $G'$ with
  dimension $\geq i-d_s$ and $H_{[s]}$ is a subgroup of $G=G'\times G_s$
  such that the projection of $H_{[s]}$ to $G_s$ is surjective, the
  subgroup~$H$ that they generate together satisfies
  \begin{align*}
    \dim(H)&=\dim(H')+\dim(H_{[s]})-\dim( H'\cap H_{[s]})
    \\
           &\geq
             \dim(H')+\dim(H_{[s]})-\dim( G'\cap H_{[s]})\geq  i-d_s+d_s=i
  \end{align*}
  since $\dim( G'\cap H_{[s]})+\dim(G_s)=\dim(H_{[s]})$.
\end{proof}

\section{Geometric and arithmetic
  semisimplicity}\label{sec-semisimple}

Let $k$ be a finite field, and $\bar{k}$ an algebraic closure
of~$k$. Let $\ell$ be a prime different from the characteristic of~$k$.

For an algebraic variety $X$ over~$k$ and a complex $M$ in
$\Der(X,\Qlb)$, we will sometimes refer to properties of $M$ (e.g., $M$
being a simple or semisimple perverse sheaf) as \emph{arithmetic}, and
to the analogue for the base change of~$M$ to $M_{\bar{k}}$ as being
\emph{geometric}. Thus we may speak of a geometrically simple perverse
sheaf, or an arithmetically semisimple perverse sheaf.

We collect here some facts about certain relations between such
properties.

\begin{lemma}\label{lm-semisimple}
  Let $X$ a geometrically irreducible algebraic variety over~$k$ and
  $\mcF$ a lisse $\ell$-adic sheaf on~$X$. If $\mcF$ is arithmetically
  semsimple, then it is geometrically semisimple.
\end{lemma}

\begin{proof}
  Using the correspondence between lisse sheaves and representations of
  the étale fundamental group, this follows, e.g.,
  from~\cite[Lem.\,5\,(a)]{serre-tenseurs}.
\end{proof}

\begin{lemma}
  \label{lem-arith-ss-geo}
  Let $(X,u)$ be a quasi-projective variety over~$k$. Let $M$ be an
  arithmetically simple perverse sheaf on~$X$. There exists a finite
  extension of $k$ of degree bounded in terms of $c_u(M)$ such that
  the base change of $M$ to $X_{k'}$ is a direct sum of geometrically
  simple perverse sheaves on~$k'$.
  \par
  In particular, $M$ is geometrically semisimple.
\end{lemma}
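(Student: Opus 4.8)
The plan is to first establish geometric semisimplicity by a socle argument, then to determine the exact geometric shape of $M_{\bar k}$ from the fact that $\End_X(M)=\Qlb$, and finally to extract the degree of the required extension from the number of geometric constituents, which is controlled by the complexity. For geometric semisimplicity (which also yields the ``in particular'' clause): let $N\subseteq M_{\bar k}$ be the socle, i.e.\ the largest semisimple perverse subsheaf; it exists because perverse sheaves form a finite-length abelian category, and $N\neq 0$ since $M_{\bar k}\neq 0$. Being a canonical subobject, $N$ is stable under the descent isomorphism $\Fr_k^*M_{\bar k}\cong M_{\bar k}$ defining $M$ over $k$, hence descends to a non-zero perverse subsheaf of $M$; since $M$ is arithmetically simple this subsheaf is $M$ itself, so $N=M_{\bar k}$ and $M_{\bar k}$ is semisimple.

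Next I would pin down $M_{\bar k}$. Write $M_{\bar k}=\bigoplus_{[S]}S^{\oplus m_{[S]}}$, the sum running over the distinct geometric isomorphism classes of simple summands. The geometric Frobenius $\Fr=\Fr_k$ permutes the isotypic components, and the sub-sum over any $\Fr$-orbit is stable under the descent datum, hence descends to a subsheaf of $M$; arithmetic simplicity therefore forces a single orbit $\{[S_1],\dots,[S_r]\}$, along which the multiplicities $m_{[S_i]}$ are all equal to some $m$. Now $\End_X(M)$ is a finite-dimensional division algebra over the algebraically closed field $\Qlb$, hence equals $\Qlb$; on the other hand it is the $\Fr$-fixed subalgebra of $\End_{X_{\bar k}}(M_{\bar k})\cong\prod_{i=1}^{r}\mathrm{Mat}_m(\Qlb)$, on which $\Fr$ acts by an $r$-cycle on the factors composed with inner automorphisms. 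Since $\Gal(\bar k/k)$ is procyclic, these invariants are isomorphic to the centralizer of a single element of $\mathrm{Mat}_m(\Qlb)$, and such a centralizer reduces to $\Qlb$ only if $m=1$. Hence $m=1$: $M_{\bar k}=\bigoplus_{i=1}^{r}S_i$ is multiplicity-free, with the $S_i$ pairwise non-isomorphic geometrically simple perverse sheaves forming one $\Fr$-orbit.

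To conclude: $r$ is the number of geometric Jordan--Hölder factors of $M$, so by additivity of complexity over direct sums (Theorem~\ref{thm-conductors}\,(1)) and the fact that a non-zero perverse sheaf has complexity $\geq 1$, we get $r\leq\sum_{i=1}^{r}c_u(S_i)=c_u(M)$. Set $k'=k_r$. The $\Fr$-orbit of each $[S_i]$ has length $r$, so $\Fr_{k'}=\Fr^r$ fixes every $[S_i]$ and hence acts trivially on the now commutative algebra $\End_{X_{\bar k}}(M_{\bar k})\cong\prod_{i=1}^{r}\Qlb$ (conjugation being trivial on a commutative ring); therefore $\End_{X_{k'}}(M_{k'})=\End_{X_{\bar k}}(M_{\bar k})$, so the idempotents projecting $M_{\bar k}$ onto its summands are defined over $k'$. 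This exhibits $M_{k'}=\bigoplus_{i=1}^{r}Q_i$ with each $Q_i$ a perverse sheaf on $X_{k'}$ whose base change to $\bar k$ is $S_i$, hence geometrically simple; and $[k':k]=r\leq c_u(M)$.

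The step that requires genuine care rather than routine checking is ruling out geometric multiplicity $\geq 2$: over a general base field an arithmetically simple perverse sheaf can have a geometric constituent of multiplicity $\geq 2$ (for instance via the two-dimensional irreducible representation of the quaternion group), and what rescues the argument over a finite field is precisely that $\Gal(\bar k/k)$ is procyclic, so that the Frobenius-invariants of the endomorphism algebra form the centralizer of a \emph{single} element rather than of a non-cyclic group. The remaining ingredients are formal: the identification of $\Hom$-spaces over $k$ with the $\Fr_k$-invariants of $\Hom$-spaces over $\bar k$, Schur's lemma over $\Qlb$, and the elementary properties of complexity recalled in Section~\ref{sec-prelim}.
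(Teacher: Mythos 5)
Your proof is correct, but it follows a genuinely different route from the paper. The paper's proof is essentially a citation: by \cite[Prop.\,5.3.9\,(ii)]{BBD-pervers}, an arithmetically simple perverse sheaf is of the form $M=f_{n*}N$ with $N$ geometrically simple on $X_{k_n}$, the degree $n$ is then bounded by $c_u(M)$ ``by looking at the rank at a generic point of the support'', and $M_{k_n}$ is the direct sum of the Frobenius conjugates of $N$. What you do instead is reprove the underlying structure result from scratch: geometric semisimplicity via descent of the socle (this descent step is exactly the standard fact \cite[Prop.\,5.1.2]{BBD-pervers} that Frobenius-stable subobjects of honest perverse sheaves are again honest, together with $\Hom_k=\Hom_{\bar k}^{\Fr}$, which uses vanishing of negative $\Ext$'s between perverse sheaves), then multiplicity one by computing $\End_X(M)=\Qlb$ as the $\Fr$-invariants of $\prod_i\mathrm{Mat}_m(\Qlb)$ and using that the twisted cyclic action has invariants isomorphic to the centralizer of a single matrix, which has dimension $\geq m$. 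This Clifford--Schur argument is sound (and is in substance how BBD prove 5.3.9), and you are right that multiplicity one is the point where procyclicity of $\Gal(\bar k/k)$ enters and that it is genuinely needed for the statement, since otherwise the pieces obtained over $k'$ would only be isotypic, not geometrically simple. The degree bound is also obtained differently: the paper uses the generic rank on the support, while you use additivity of complexity (Theorem~\ref{thm-conductors}\,(1)) plus the assertion that a non-zero object has complexity $\geq 1$; the latter is true but not stated in the paper (it follows, e.g., by intersecting the support with a generic linear subspace of complementary dimension, so that the relevant term in the definition of complexity is a non-zero sum of stalk dimensions), and it is implicitly used by the paper elsewhere when bounding numbers of Jordan--Hölder factors via Proposition~\ref{prop-JorHol-cond}. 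In short: what the paper's approach buys is brevity (two lines plus a reference), while yours buys self-containedness and makes explicit exactly which finiteness-of-the-base-field input is being used; both yield a bound on $[k':k]$ in terms of $c_u(M)$.
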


\begin{proof}
  By~\cite[Prop.\,5.3.9\,(ii)]{BBD-pervers}, there exists an
  integer~$n\geq 1$ and a geometrically simple perverse sheaf $N$ on
  $X_{k_n}$ such that $M=f_{n*} N$, where $f_n\colon X_{k_n}\to X$ is
  the base change morphism. Since $N$ is non-zero, we deduce that
  $n\ll c_u(M)$ by looking at the rank at a generic point of the
  support. The base change of~$M$ to~$k_n$ is then a direct sum of
  geometrically simple perverse sheaves. 
\end{proof}

\begin{lemma}
  \label{lm-arith-geo-iso}
  Let $k$ be a finite field and $\bar{k}$ an algebraic closure
  of~$k$. Let $\ell$ be a prime different from the characteristic
  of~$k$. Let $X$ be a smooth and geometrically connected
  quasi-projective variety over~$k$. Two 
  perverse sheaves~$M$ et~$N$ on~$X$ which are geometrically simple are
  geometrically isomorphic if and only if there exists $\alpha\in\Qlb$
  such that $M\simeq \alpha^{\deg}\otimes N$.
\end{lemma}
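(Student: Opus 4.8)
The plan is to derive both implications from two standard inputs: the identification of $\Hom$-groups over $k$ with the Frobenius-invariants of the corresponding $\Hom$-groups over $\bar{k}$, and Schur's lemma in the abelian category $\Perv(X)$. The implication ``$\Leftarrow$'' is immediate: if $M\simeq\alpha^{\deg}\otimes N$ with $\alpha\in\Qlbt$, then $\alpha^{\deg}$ is the pull-back to $X$ of a rank-one sheaf on $\Spec(k)$, hence geometrically constant, so $(\alpha^{\deg}\otimes N)_{\bar{k}}\simeq N_{\bar{k}}$ and therefore $M_{\bar{k}}\simeq N_{\bar{k}}$ (here neither smoothness nor geometric connectedness of $X$ is needed).

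For ``$\Rightarrow$'', assume $M_{\bar{k}}\simeq N_{\bar{k}}$ and set $V=\Hom_{X_{\bar{k}}}(N_{\bar{k}},M_{\bar{k}})$. Since $M$ and $N$ are defined over $k$, the objects $M_{\bar{k}}$ and $N_{\bar{k}}$ carry canonical descent data, so $V$ is a nonzero finite-dimensional $\Qlb$-vector space on which the geometric Frobenius acts by an invertible endomorphism $F$ (invertible because it is induced by an autoequivalence of $\Der(X_{\bar{k}})$). As $\Qlb$ is algebraically closed, $F$ admits an eigenvalue $\alpha$, which is moreover an $\ell$-adic unit. I then claim $\Hom_{X}(\alpha^{\deg}\otimes N,M)\neq 0$. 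Indeed, $M$ and $N$ being perverse, the groups $\Hom^{i}_{\Der(X_{\bar{k}})}(N_{\bar{k}},M_{\bar{k}})$ vanish for $i<0$, so the Hochschild--Serre exact sequence for $X_{\bar{k}}/X$ (see e.g.\ \cite[\S\,5.1]{BBD-pervers}) degenerates to an isomorphism $\Hom_{X}(A,B)\simeq \Hom_{X_{\bar{k}}}(A_{\bar{k}},B_{\bar{k}})^{F}$ for all perverse $A$, $B$ on $X$. Applying this with $A=\alpha^{\deg}\otimes N$ and $B=M$, and using that $(\alpha^{\deg}\otimes N)_{\bar{k}}=N_{\bar{k}}$ geometrically but with its Frobenius action multiplied by $\alpha^{-1}$ (since $\Hom$ is contravariant in the first variable and geometric Frobenius acts on $\alpha^{\deg}$ by $\alpha$), one gets $\Hom_{X}(\alpha^{\deg}\otimes N,M)\simeq\ker(F-\alpha\mid V)\neq 0$.

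To conclude, observe that $\alpha^{\deg}\otimes N$ is again arithmetically simple, because $-\otimes\alpha^{\deg}$ is an exact autoequivalence of $\Perv(X)$; hence the nonzero morphism $\alpha^{\deg}\otimes N\to M$ just produced is a morphism between simple objects of an abelian category, and is therefore an isomorphism, giving $M\simeq\alpha^{\deg}\otimes N$. I do not anticipate a genuine obstacle: the argument is essentially a descent computation combined with Schur's lemma. The only point that needs care is the normalisation bookkeeping in the Frobenius twist, so that one lands on $\ker(F-\alpha)$ rather than $\ker(F-\alpha^{-1})$ — this depends on the chosen normalisation of $\alpha^{\deg}$ — together with the (standard) remarks that $F$ acts invertibly on $V$ and that its eigenvalues are $\ell$-adic units, which legitimises writing $\alpha^{\deg}$.
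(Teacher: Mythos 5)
Your proof is correct. The paper gives no argument for this lemma — it simply cites it as a standard fact (Loeser, Lemme 4.4.4) — and your argument is exactly the standard one: vanishing of negative Ext groups between perverse sheaves identifies $\Hom_X$ with the Frobenius invariants of $\Hom_{X_{\bar k}}$, an eigenvalue $\alpha$ of Frobenius on $\Hom_{X_{\bar k}}(N_{\bar k},M_{\bar k})$ (an $\ell$-adic unit by compactness of the Galois group, which is indeed needed for $\alpha^{\deg}$ to be defined as an étale sheaf) yields a nonzero map $\alpha^{\deg}\otimes N\to M$, and Schur's lemma between the two simple objects concludes, the $\alpha$ versus $\alpha^{-1}$ normalisation being harmless.
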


This is a standard fact (see,
e.g.,~\cite[Lemme\,4.4.4]{loeser-determinant}).





\section{A result from representation theory}

The following basic fact from the representation theory of reductive
groups will play a crucial role.

\begin{proposition}\label{pr-tensor-ab}
  Let~$F$ be a field of characteristic zero and let~$G$ be a reductive
  algebraic group over~$F$. Let~$V$ be a finite-dimensional faithful
  representation\index{faithful representation}
  of~$G$ over~$F$. Any finite-dimensional irreducible
  representation of~$G$ over~$F$ occurs in a tensor power
  $(V\oplus V^{\vee})^{\otimes m}$ for some integer~$m\geq 0$,
  where~$V^{\vee}$ is the contragredient of~$V$.
\end{proposition}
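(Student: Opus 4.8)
The plan is to deduce the statement from the tannakian description of the category $\mathrm{Rep}(G)$ of finite-dimensional representations of $G$, after first reducing to the case where $F$ is algebraically closed. (Note that ``reductive'' here, as in the paper's conventions, simply means that $\mathrm{Rep}(G)$ is semisimple, with no connectedness hypothesis; the argument below is insensitive to connectedness.)

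\textbf{Step 1: reduction to $F=\overline{F}$.} First I would check that it suffices to treat an algebraically closed base field. Given an irreducible representation $W$ over $F$, the base change $W_{\overline{F}}$ is semisimple (semisimplicity of the representation category is preserved under extension of scalars in characteristic zero), so it has an irreducible constituent $W'$. If $W'$ is a direct summand of $(V\oplus V^\vee)^{\otimes m}_{\overline{F}}$ for some $m\geq 0$, then $\Hom_{G_{\overline{F}}}(W_{\overline{F}},(V\oplus V^\vee)^{\otimes m}_{\overline{F}})\neq 0$; since forming $\Hom$ between finite-dimensional representations commutes with the field extension $F\subset\overline{F}$, we get $\Hom_{G}(W,(V\oplus V^\vee)^{\otimes m})\neq 0$, and irreducibility of $W$ then exhibits it as a direct summand of $(V\oplus V^\vee)^{\otimes m}$. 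So from now on one may assume $F=\overline{F}$.

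\textbf{Step 2: a tannakian subcategory, and why it is everything.} Excluding the trivial case $V=0$ (where $G$ is trivial), I would let $\mcC\subseteq\mathrm{Rep}(G)$ be the strictly full subcategory of objects isomorphic to direct summands of finite direct sums of the representations $(V\oplus V^\vee)^{\otimes m}$, $m\geq 0$. The routine verifications are that $\mcC$ is closed under $\oplus$; under $\otimes$, using $(V\oplus V^\vee)^{\otimes m}\otimes(V\oplus V^\vee)^{\otimes n}\simeq(V\oplus V^\vee)^{\otimes(m+n)}$ together with the fact that a tensor product of direct summands is a direct summand of the tensor product; under subobjects and quotients, which in the semisimple category $\mathrm{Rep}(G)$ are direct summands; and under duals, since $(V\oplus V^\vee)^\vee\simeq V\oplus V^\vee$; and that it contains the unit object $\un$, which is a direct summand of $V\otimes V^\vee\subseteq(V\oplus V^\vee)^{\otimes 2}$ (this is where $V\neq 0$ is used). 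Hence $\mcC$ is a tannakian subcategory, so $\mcC=\mathrm{Rep}(G')$ for a quotient homomorphism $q\colon G\twoheadrightarrow G'$, with the inclusion $\mcC\hookrightarrow\mathrm{Rep}(G)$ induced by $q$. The kernel of $q$ acts trivially on every object of $\mcC$, in particular on $V$ (a direct summand of $V\oplus V^\vee$, hence in $\mcC$); since $V$ is faithful, $\ker q$ is trivial, $q$ is an isomorphism, and $\mcC=\mathrm{Rep}(G)$. Therefore every finite-dimensional irreducible representation $W$ lies in $\mcC$, \ie it is a direct summand of some $\bigoplus_j(V\oplus V^\vee)^{\otimes m_j}$, and being indecomposable it is a direct summand of a single $(V\oplus V^\vee)^{\otimes m}$ --- in particular it occurs in it. Together with Step 1, this proves the proposition.

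\textbf{Expected difficulty.} There is no serious obstacle here: the result is classical (essentially due to Chevalley), and an alternative proof avoiding tannakian language runs through the coordinate ring --- faithfulness makes $G\hookrightarrow\GL(V)$ a closed immersion, so $F[G]$ is a quotient of $F[\GL(V)]$, which as a $G$-representation is built out of copies of $V$, $V^\vee$ and the one-dimensional pieces $\det(V)^{\pm1}$ (with $\det(V)^{-1}$ occurring in $(V^\vee)^{\otimes\dim V}$), while each irreducible of a reductive group in characteristic zero appears in $F[G]$; one then concludes as above. The only points requiring a little care are the descent in Step 1 (compatibility of $\Hom$ with base-field extension, and the fact that an $F$-irreducible need not remain irreducible over $\overline{F}$) and the closure of $\mcC$ under duals and its containing the unit object in Step 2.
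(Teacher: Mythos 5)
Your proof is correct. The paper gives no argument of its own for this proposition — it simply refers to Deligne–Milne — and your tannakian-subcategory argument (together with the coordinate-ring alternative you sketch) is precisely the classical proof underlying that reference, so the approaches coincide. One small remark: the reduction to an algebraically closed field is harmless but not actually needed, since $\mathrm{Rep}(G)$ is already semisimple over~$F$ (this is the paper's notion of reductivity, which in characteristic zero is insensitive to base change) and the forgetful functor is a fibre functor over~$F$, so your Step~2 runs verbatim over~$F$ itself.
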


See, for instance,~\cite[Prop.\,3.1]{deligne-milneI} for the proof.

\chapter{Generic vanishing theorems}\label{sec-vanishing}

Throughout this chapter, $k$ denotes a finite field, $\bar{k}$ an algebraic closure of $k$, and~$k_n$ the extension of degree~$n$ of~$k$ inside~$\bar{k}$ for each $n \geq 1$. We also fix once for all a
prime number $\ell$ different from the characteristic of~$k$. All complexes of sheaves and characters are tacitly understood to be $\ell$-adic complexes and characters for this choice of $\ell$. 

\section{Statement of the vanishing theorems}

We now state our main vanishing theorems.
 
\begin{theorem}[Generic vanishing]
  \label{thm-gen-vanish}
  Let $G$ be a connected commutative algebraic group
  over~$k$ and let~$M$ be a perverse sheaf on~$G$. The set $\mcU$ of characters
  $\chi\in \charg{G}$ satisfying
  \begin{gather*}
    \rmH^i(G_{\bar k},M_\chi)=\rmH^i_c(G_{\bar k},M_\chi)=0 \quad\text{
      for all $i\not=0$,}
    \\
    \rmH^0_c(G_{\bar k},M_\chi)= \rmH^0(G_{\bar k},M_\chi)
  \end{gather*}
  is generic in the sense of Definition~\textup{\ref{def-generic}}.
\end{theorem}

This gives the first part of Theorem~\ref{thm:vanishing-thm-intro} from
the introduction.

\begin{remark}\label{rem:versionsvanishing}
  Versions of Theorem~\ref{thm-gen-vanish} have been proved by the
  following authors:
  \begin{enumerate}
  \item Katz--Laumon~\cite[Th.\,2.1.3,
    Scholie\,2.3.1]{KL-fourier-exp-som} in the case of powers of the
    additive group and Saibi~\cite[Th.\,3.1]{saibi_FD_unipotent} in the
    case of general unipotent groups; in both cases, the generic set is a
    Zariski\nobreakdash-dense open subset of the $k$-scheme parameterizing
    characters.
  \item Gabber--Loeser~\cite[Cor.\,2.3.2]{GL_faisc-perv} for tori, with
    ``generic'' replaced by a condition implying ``weakly-generic'' in
    the sense of Definition~\ref{def-most}; see also
    \cite[Th.\,7.2.1]{GL_faisc-perv}.
  \item Weissauer~\cite[Vanishing\,Th.,\,p.\,561]{weissauer_vanishing_2016} for abelian varieties, with
    ``generic'' replaced by ``most'' characters, and
    Krämer~\cite[Th.\,2.1]{kramer_perverse_2014} for semiabelian
    varieties, for ``weakly generic'' characters.
  \end{enumerate}
\end{remark}

For our main applications, the second part of
Theorem~\ref{thm:vanishing-thm-intro} is more important. It is
provided by the following result, which also controls the
``stratification'' arising from the non-vanishing of other cohomology
groups.

\begin{theorem}[Stratified vanishing]
  \label{thm-high-vanish}
  Let $G$ be a connected commutative algebraic group of dimension $d$
  over~$k$, and~$M$ a perverse sheaf on $G$. There exist subsets
  \[
    \mcS_d\subset\dots\subset \mcS_0=\charg{G}
  \]
  such
  that the following holds: 
  \begin{enumth}
  \item For~$0\leq i\leq d$, the subset $\mcS_i$ has character
    codimension at least~$i$.
  \item For~$0\leq i\leq d$, any~$\chi\in\charg{G}$ such that at least
    one of the cohomology groups
    \begin{equation}\label{eq-groups}
      \rmH^{i}(G_{\bar k},M_\chi),\quad\quad
      \rmH^{-i}(G_{\bar k},M_\chi),\quad\quad
      \rmH^{i}_c(G_{\bar k},M_\chi),\quad\quad
      \rmH^{-i}_c(G_{\bar k},M_\chi)
    \end{equation}
    is non-zero belongs to~$\mcS_i$.
  \item For $\chi\in \mcS_0\bks \mcS_1$, the equality
    $\rmH^0_c(G_{\bar k},M_\chi)=\rmH^0(G_{\bar k},M_\chi)$
    holds.\footnote{\ Recall that by convention, this means that the
      ``forget support'' map is an isomorphism.}
  \item If~$G$ is a torus or an abelian variety, then~$\mcS_i$ is a
    finite union of \tacs\ of~$G$ of dimen\-sion~$\leq d-i$.
  \item If~$G$ is a unipotent group, then $\mcS_i$ is the set of closed
    points of a closed subvariety of dimension $\leq d-i$ of the Serre
    dual $G^\vee$.
  \end{enumth}
\end{theorem}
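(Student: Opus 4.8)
The plan is to prove a stronger, \emph{relative} and \emph{complexity-uniform} version of the statement for each of the three atomic building blocks appearing in Proposition~\ref{pr-devissage} — tori, abelian varieties, and unipotent groups — and then to assemble the general case by dévissage, reading off the specific shapes~(iv) and~(v) along the way. In each atomic case the assertion is that for a perverse sheaf $M$ (and, in the relative versions, for the relative cohomology of a perverse sheaf on a trivial fibration with fibre the atomic group) there is a filtration $\mcS_d\subset\cdots\subset\mcS_0=\charg{G}$ of the character set with $\ccodim(\mcS_i)\geq i$, controlling the non-vanishing of $\rmH^{\pm i}(G_{\bar k},M_\chi)$ and $\rmH^{\pm i}_c(G_{\bar k},M_\chi)$, and with implied constants in the character-codimension estimates depending only on the complexity $c_u(M)$. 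For unipotent $G$ the character set is literally $G^\vee(\bar k)$ for an honest $k$-scheme $G^\vee$, and the result follows from Katz--Laumon~\cite{KL-fourier-exp-som} and Saibi~\cite{saibi_FD_unipotent}, with the uniformity of the constants extracted via the complexity estimates of Theorem~\ref{thm-conductors}; the bad locus is then an honest closed subvariety of $G^\vee$ of the asserted dimension, giving~(v). For tori one follows Gabber--Loeser~\cite{GL_faisc-perv}, whose stratified statement is conditional on resolution of singularities up to $\dim T$; I would remove this hypothesis by replacing resolutions with de Jong alterations, which alter étale cohomology only by bounded pieces and do not affect the genericity estimates, obtaining $\mcS_i$ as a finite union of translates of subtori — that is, of \tacs\ — of dimension $\leq d-i$. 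For abelian varieties one starts from Weissauer~\cite{weissauer_vanishing_2016}, but the dévissage forces a \emph{relative} version over an arbitrary base $S$: for $M$ perverse on $A\times S$ and $\chi\in\charg{A}$ outside a controlled union of \tacs, the complex $R\mathrm{pr}_{S,!}(M\otimes\mathrm{pr}_A^*\mcL_\chi)$ should coincide with $R\mathrm{pr}_{S,*}(\cdots)$ and be, up to a shift, a perverse sheaf on $S$ whose complexity is bounded uniformly in $\chi$, with the bad locus of $\chi$ a finite union of \tacs\ of controlled dimension uniformly in $S$; the essential tool here is Orgogozo's constructibility and moderation~\cite{Orgo_constr_mod}, which supplies the required uniformity as the parameters vary. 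This is what ultimately yields~(iv).

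The dévissage then runs as follows. By Proposition~\ref{pr-devissage}, write $G=(A\times U\times T)/N$, and first treat $\tilde G=A\times U\times T$. Since characters of a product are products of characters, one has $\mcL_\chi=\mcL_{\chi_A}\boxtimes\mcL_{\chi_U}\boxtimes\mcL_{\chi_T}$, so for the projection $f\colon\tilde G\to U\times T$ with fibre $A$ the projection formula gives $Rf_!(M_\chi)=\big(Rf_!(M\otimes\mathrm{pr}_A^*\mcL_{\chi_A})\big)\otimes(\mcL_{\chi_U}\boxtimes\mcL_{\chi_T})$, and likewise for $Rf_*$. For $\chi_A$ outside a finite union of \tacs\ of the appropriate codimension, the relative abelian vanishing theorem makes $N(\chi_A):=Rf_!(M\otimes\mathrm{pr}_A^*\mcL_{\chi_A})=Rf_*(\cdots)$ a shift of a perverse sheaf on $U\times T$ whose complexity is bounded \emph{independently of $\chi_A$}; this is where Proposition~\ref{prop-cond-char} (uniform boundedness of $c_u(\mcL_{\chi_A})$) and the continuity of the six operations (Theorem~\ref{thm-conductors}) are used decisively. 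One then applies the complexity-uniform result — already available for $U\times T$ via the unipotent and torus cases and a further projection $U\times T\to T$ — to the whole family $\{N(\chi_A)\}$; because each of the resulting stratifications has implied constant depending only on the uniform complexity bound, their union over the controlled set of $\chi_A$ still has the right character codimension at each level. Combining the bad loci from the successive projections, and invoking the geometric Lemma~\ref{lm-geometric} to see that the subgroups generated across the almost-simple factors are large enough to force the expected codimension, produces the filtration $\mcS_i$ for $\tilde G$; Lemma~\ref{lem:inter:tac} is used to keep intersections of \tacs\ in the \tac\ class.

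To descend from $\tilde G$ to $G=\tilde G/N$, decompose $N=N_r\times N_l$. For the reduced part, $\pi\colon\tilde G\to\tilde G/N_r$ is finite étale, $\charg{(\tilde G/N_r)}$ embeds as a finite-index subset of $\charg{\tilde G}$ compatibly with $\mcL_\chi\mapsto\pi^*\mcL_\chi$, and for a perverse sheaf $M$ on $\tilde G/N_r$ the sheaf $\pi^*\pi_*M$ contains $M$ as a direct summand, so the cohomology of $M_\chi$ is controlled by that of $(\pi^*M)_{\pi^*\chi}$; restriction to a finite-index subset preserves character codimension. For the local part, $\pi$ is finite surjective radicial, so $\pi^*$ is an equivalence on étale cohomology and Proposition~\ref{pr-radicial} gives equality of complexities; the character sheaves again correspond, and the filtration descends verbatim. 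One finally reads off~(i)--(v): (i) holds because $\mcS_i$ is a finite union of pieces each of character codimension $\geq i$; (ii) holds because membership in $\mcS_i$ is, by construction, exactly the union over the dévissage steps of the loci where some $\rmH^{\pm i}_{(c)}$ can fail to vanish; (iii) is the degree-$0$ refinement of the atomic statements, propagated through the (exact up to shift) relative pushforwards, using that $Rf_!=Rf_*$ on the good locus; and (iv),(v) record the shape of the bad loci in the two cases where the character set is a genuine scheme, a shape preserved by the product step (projection formula plus Lemma~\ref{lem:inter:tac}) and by the finite quotient step.

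The main obstacle is the relative abelian variety vanishing theorem with uniform control: Weissauer's argument is absolute, and making it work in families — so that the bad locus of characters is a finite union of \tacs\ whose number and dimension are bounded uniformly in the base and in the complexity of the input — requires the full strength of Orgogozo's constructibility and moderation theorems, and is the step that genuinely goes beyond the literature. A secondary difficulty is the bookkeeping in the product step: one must check that pushing the fibre-direction bad locus down to the base and combining it with the base-direction bad locus keeps the character codimension at level $i$, which is precisely what Lemma~\ref{lm-geometric} is designed to ensure. Removing resolution of singularities in the toric case via alterations is a further technical point, but a comparatively self-contained one.
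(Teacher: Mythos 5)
Your overall plan coincides with the paper's (relative vanishing for the three atomic groups, Orgogozo's moderation for the relative abelian step, alterations replacing resolution for tori, then dévissage through $A\times U\times T$ and the finite quotient), but the order in which you run the projections in the assembly step creates a genuine gap. You push along the abelian factor first, so the torus-direction stratification must then be applied to the whole family $\{N(\chi_A)=Rf_!(M\otimes \mathrm{pr}_A^*\mcL_{\chi_A})\}$ indexed by $\chi_A\in\charg{A}$, and your argument that ``each of the resulting stratifications has implied constant depending only on the uniform complexity bound'' is exactly the point that is not available: for tori of dimension $\geq 2$ the stratified vanishing theorem is \emph{not} known with constants depending only on the complexity, precisely because the de Jong alterations used to remove resolution of singularities come with an uncontrolled number of exceptional components (this is flagged explicitly in Remark~\ref{rmk-HV-effective} and again among the open problems). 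Without uniformity in $\chi_A$, summing the torus-bad loci over the roughly $|k_n|^{\dim A - i}$ characters $\chi_A$ in a given abelian stratum does not yield the required character codimension for the combined sets $\mcS_r$, so assertion (1) is not established by your argument. Note also that your blanket claim of complexity-uniform constants for all three atomic cases overstates what can be proved; only the unipotent case has this, which is why it must be the \emph{last} step of the assembly.

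The fix is to reverse the order, as the paper does: apply the relative torus theorem first, to the single object $M$ over the base $A\times U$ (here non-uniform, $M$-dependent constants are harmless); then tame the resulting $\chi_T$-indexed family $Rp_{T!}(M_{\chi_T})$ by a \emph{single} alteration, obtained by compactifying $T$, applying Orgogozo to $j_!M$ once, and using Theorem~\ref{thm-strat-alt-orgo} for the proper projection — this works because twisting by the tame character sheaves $j_!\mcL_{\chi_T}$ preserves tameness along the fixed stratification, so the relative abelian step (Corollary~\ref{cor-av-hv-main}) applies to the whole family with constants depending only on $c_u(M)$ and the fixed alteration data; finally apply the unipotent stratification, whose constants genuinely depend only on complexity, which is uniformly bounded by Proposition~\ref{prop-cond-char} and Theorem~\ref{thm-conductors}. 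In other words, the uniformity across the family is supplied not by complexity alone but by the fixed Orgogozo datum, and this mechanism is only available for the abelian step applied to tame torus twists — which forces the order $T$, then $A$, then $U$. (A smaller remark: Lemma~\ref{lm-geometric} and Lemma~\ref{lem:inter:tac} are used inside the proofs of the relative torus and abelian statements to control intersections of \tacs, not in the final bookkeeping, and the finite-quotient step in the paper goes via pullback of characters along $A\times U\times T\to G$ rather than an embedding of $\charg{G}$ as a finite-index subset, though your version of that step is essentially equivalent.)
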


Concretely, this implies that for~$0\leq i\leq d$, the estimate
\begin{align*}
  |\{\chi\in \charg{G}(k_n)\mid\, &\rmH_c^{i}(G_{\bar k},M_\chi)\neq
  0\text{ or } \rmH_c^{-i}(G_{\bar k},M_\chi)\neq 0
  \\
  &\text{ or }\rmH^{i}(G_{\bar k},M_\chi)\neq 0\text{ or }
  \rmH^{-i}(G_{\bar k},M_\chi)\neq 0 \}|\ll \abs{k_n}^{d-i}
\end{align*}
holds for all $n\geq 1$, and so this implies the second part of
Theorem~\ref{thm:vanishing-thm-intro}.

Note that Theorem~\ref{thm-gen-vanish} is a consequence of
Theorem~\ref{thm-high-vanish}, since the set of characters satisfying
the property of Theorem~\ref{thm-gen-vanish} contains the generic set
$\mcS_0\bks \mcS_1$.

\begin{remark}
  \label{rmk-HV-effective}
  We expect that this result should be true with the stronger
  information that the implied constants in~(\ref{eq-c-codim}) for the
  subsets~$\mcS_i$ depend only on the complexity of~$M$.  A result of
  this type would be especially useful for applications to
  ``horizontal'' equidistribution theorems.

  However, we can only prove this at the current time in the following
  cases:
  \begin{enumerate}
  \item if $G$ is a unipotent group (use the equality of Fourier
    transforms of~\cite[Th.\,3.1]{saibi_FD_unipotent} combined with
    Theorem~\ref{thm-lisse-locus-conductor});
  \item if $G$ is a geometrically simple abelian variety (see Corollary
    \ref{cor-gen-van-AV-4}).
  \end{enumerate}
  
  The issues that arise in attempting to handle the general case are:
  \begin{itemize}
  \item For tori, the use of de Jong's theorem on alterations, where we
    do not control the number of exceptional components that appear
    (thus, a suitably effective version of de Jong's theorem, or an
    effective form of embedded resolution of singularities, would
    probably imply the desired conclusion in this case).
  \item For abelian varieties, the need to find and control the
    complexity of an alteration that ``moderates'' certain perverse
    sheaves, to apply results of Orgogozo.
  \end{itemize}
  
\end{remark}

\begin{corollary}
\label{cor-gen-van-der}
Let~$G$ be a connected commutative algebraic group over~$k$ and let $M$ be
an object of~$\Der(G)$. Then for generic $\chi\in \charg{G}$ and any
$i\in \Zz$, there are canonical isomorphisms
\[
  \rmH^i_c(G_{\bar k},M_{\chi})\simeq \rmH^i(G_{\bar
    k},M_{\chi})\simeq \rmH^0_c(G_{\bar k},\pH^i(M_{\chi})).
\]
\end{corollary}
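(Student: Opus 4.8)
The plan is to deduce this from the stratified vanishing theorem applied to the perverse cohomology sheaves of $M$, combined with the perverse (hyper)cohomology spectral sequences. First, since $M$ is a bounded complex, only finitely many perverse cohomology sheaves $\pH^q(M)$ are non-zero, and each is a perverse sheaf on $G$; so Theorem~\ref{thm-gen-vanish} (equivalently, the generic set $\mcS_0\bks\mcS_1$ of Theorem~\ref{thm-high-vanish}) produces, for each such $q$, a generic subset $S_q\subset\charg{G}$ on which $\pH^q(M)$ satisfies~\eqref{eqn:genericvanishingthm}. A finite intersection of generic sets is generic, because
\[
  \big|\charg{G}(k_n)\bks\textstyle\bigcap_q S_q(k_n)\big|\leq\sum_q\big|\charg{G}(k_n)\bks S_q(k_n)\big|\ll|k|^{n(d-1)};
\]
so $S=\bigcap_q S_q$ is generic, and we fix a character $\chi\in S$.

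Next I would use that twisting by $\mcL_\chi$ is t-exact for the perverse t-structure (Lemma~\ref{lem-characters-t-exact}), which gives a canonical identification $\pH^q(M_\chi)\simeq(\pH^q M)_\chi$. By the choice of $\chi$, for every $q$ the groups $\rmH^p(G_{\bar k},\pH^q(M_\chi))$ and $\rmH^p_c(G_{\bar k},\pH^q(M_\chi))$ vanish for $p\neq 0$, and the forget-supports morphism is an isomorphism in degree $0$. Hence the two perverse spectral sequences $E_2^{p,q}=\rmH^p(G_{\bar k},\pH^q(M_\chi))\Longrightarrow\rmH^{p+q}(G_{\bar k},M_\chi)$ and its compact-support analogue have $E_2$-page concentrated in the single column $p=0$; consequently all higher differentials vanish, the sequences degenerate at $E_2$, and the edge maps provide canonical isomorphisms
\[
  \rmH^i(G_{\bar k},M_\chi)\simeq\rmH^0(G_{\bar k},\pH^i(M_\chi)),\qquad
  \rmH^i_c(G_{\bar k},M_\chi)\simeq\rmH^0_c(G_{\bar k},\pH^i(M_\chi))
\]
for every $i\in\Zz$.

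Finally, the ``forget supports'' natural transformation $Rp_!\to Rp_*$, for the structure morphism $p\colon G_{\bar k}\to\Spec(\bar k)$, induces a morphism between these two spectral sequences which on the $E_2^{0,i}$ term is precisely the forget-supports map $\rmH^0_c(G_{\bar k},\pH^i(M_\chi))\to\rmH^0(G_{\bar k},\pH^i(M_\chi))$; the latter is an isomorphism by the choice of $\chi$, so the induced map $\rmH^i_c(G_{\bar k},M_\chi)\to\rmH^i(G_{\bar k},M_\chi)$ is an isomorphism as well, and composing the three canonical isomorphisms yields the statement. The argument is entirely formal once Theorem~\ref{thm-high-vanish} is in hand: the only two points requiring a word of justification are the stability of genericity under finite intersection and the compatibility of the forget-supports transformation with the perverse filtration, both routine, so there is no genuine obstacle here beyond the vanishing theorem itself.
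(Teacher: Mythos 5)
Your proof is correct and follows essentially the same route as the paper: the paper's own argument (given in detail for Corollary~\ref{cor-gen-van-AV-3} and invoked here) likewise applies the generic vanishing theorem to each of the finitely many perverse cohomology sheaves, intersects the generic sets, uses t-exactness of twisting, and lets the perverse spectral sequences degenerate to a single column, with the forget-supports map handled by the same functorial compatibility. Nothing is missing.
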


\begin{proof}
  The proof is similar to that
  of~\cite[Cor.\,2.3]{kramer_perverse_2014}; see also
  Corollary~\ref{cor-gen-van-AV-3} below.
\end{proof}

We will prove Theorems~\ref{thm-gen-vanish} and~\ref{thm-high-vanish} in
Section~\ref{ssec-proof-vanish}. Before doing this, we need to establish
some preliminaries concerning perverse sheaves on the basic building
blocks of Proposition~\ref{pr-devissage}, namely (in rough order of
difficulty) unipotent groups, tori and abelian varieties.

Note that proving either Theorem~\ref{thm:vanishing-thm-intro} or
Theorem~\ref{thm:equidis-thm-intro} for a given group~$G$ only
involves the corresponding material for groups of the types that
actually appear in Proposition~\ref{pr-devissage} applied to~$G$. In
particular, for instance, the proof of Theorem~\ref{th-variance} (and
other similar statements) only depends on the case of tori, i.e., on
Section~\ref{ssec-tori}.

To facilitate orientation, we list below the key statements about each
type of groups; Section~\ref{ssec-proof-vanish} only requires these
statements from the next three sections.

\begin{enumerate}
\item Unipotent groups: Proposition~\ref{prop-strat-unip}.
\item Tori: Corollary~\ref{cor-split-torus}.
\item Abelian varieties: Corollary~\ref{cor-av-hv-main} and the
  auxiliary Theorem~\ref{thm-strat-alt-orgo}, due to
  Orgogozo~\cite{Orgo_constr_mod}.
\end{enumerate}

To conclude this section, we also point out that we expect that in
Theorems~\ref{thm-gen-vanish} and~\ref{thm-high-vanish},
one can find a suitable set~$\mcU$ of ``geometric'' nature. For
example, if $G=A\times U \times T$, where $A$ is an abelian variety,
$U$ a unipotent group and $T$ a torus, we would expect that there
exists a finite extension $k'$ of $k$, a finite union of tacs $\mcS'$
of $A$ defined over $k'$, a finite union of tacs $\mcS$ of $T$ defined
over $k'$ and a strict closed subvariety $Z$ of $\what{U}$ such that
$\what{G}\setminus \mcU\subseteq \mcS' \times Z \times \mcS$. In this
direction, following a suggestion of one referee, we can prove the
following:

\begin{theorem}\label{th-vanishing-geometric}
  Let $G$ be a connected commutative algebraic group over~$k$ such
  that $G=A\times U$ \respup{$G=A\times T$} where~$A$ is an
  abelian variety, $U$ is a unipotent group and~$T$ is a torus.  Let
  let~$M$ be a perverse sheaf on~$G$.

  There exists a finite extension $k'$ of $k$, a finite union of tacs
  $\mcS'$ of $A$ defined over $k'$ and a strict closed subvariety $Z$
  of $\what{U}$ \respup{a finite union of tacs $\mcS$ of $T$ defined
    over $k'$} such that
  for~$\chi\in \what{G}\setminus( \mcS' \times Z)$ \respup{for
    $\chi\in \what{G}\setminus (\mcS'\times \mcS)$}, we have
  \begin{gather*}
    \rmH^i(G_{\bar k},M_\chi)=\rmH^i_c(G_{\bar k},M_\chi)=0 \quad\text{
      for all $i\not=0$,}
    \\
    \rmH^0_c(G_{\bar k},M_\chi)
    \text{ is isomorphic to } 
    \rmH^0(G_{\bar k},M_\chi).
  \end{gather*}
\end{theorem}

\begin{remark}
  We do not claim that the isomorphism between
  $ \rmH^0_c(G_{\bar k},M_\chi)$ and $\rmH^0(G_{\bar k},M_\chi)$ in
  this statement is the ``forget support'' map, since this does not
  follow from the proof. We expect that this should be true (maybe up
  to enlarging the set of exceptional characters). 
\end{remark}

\section{The case of unipotent groups}
\label{sec-unipotent}

We begin by summarizing the duality theory of commutative unipotent
groups; a good account can also be found
in~\cite[App.\,F]{boyarchenko-drinfeld}.

Let $U$ be a connected unipotent commutative algebraic group over a
finite field $k$ of characteristic~$p$. 
The functor that sends a perfect $k$-scheme $S$ (\ie a scheme for which
the absolute Frobenius is an automorphism) to the extension group
\[
  \mathrm{Ext}^1(U \times_k S, \Qq_p/\Zz_p)=\varinjlim_m\,\mathrm{Ext}^1(U \times_k S, p^{-m}\Zz_p/\Zz_p)
\]
in the category of commutative group schemes over $S$ (with
$\Qq_p/\Zz_p$ viewed as a constant group scheme) is representable by a
connected commutative group scheme $U^*$ over $k$, called the
\emph{Serre dual}\index{Serre dual}\nomenclature{$U^*$}{Serre dual, as
  group scheme} of $U$. This goes back to a remark by Serre
\cite[p.\,55]{Ser60} and was subsequently developed by Bégueri
in~\cite[Prop.\,1.2.1]{Beg} and Saibi~\cite{saibi_FD_unipotent}.
Morever, if $m_0\geq 1$ is such that $p^{m_0}U=0$, then the natural
map yields an isomorphism 
\[
\mathrm{Ext}^1(U \times_k S, p^{-m_0}\Zz_p/\Zz_p) \stackrel{\sim}{\longrightarrow}
\varinjlim_m\,\mathrm{Ext}^1(U \times_k S, p^{-m}\Zz_p/\Zz_p). 
\] 

Let $F$ be a finite abelian group. For each integer $n \geq 1$, the
short exact sequence
\[
  1\longrightarrow U(k_n)\longrightarrow U_{k_n} \xrightarrow{x
    \longmapsto \Frob_{k_n}(x)\cdot x^{-1}} U_{k_n} \longrightarrow 1
\]
induces an isomorphism
\[
\Hom(U(k_n), F) \stackrel{\sim}{\longrightarrow} \mathrm{Ext}^1(U_{k_n}, F)
\]
(see~\cite[Prop.\,F.2]{boyarchenko-drinfeld}).

Let $m\geq 1$ be such that $p^mU=0$. We take
$F=p^{-m}\Zz_p/\Zz_p\simeq \Zz/p^m\Zz$. For any integer $n\geq 1$, we
obtain an isomorphism
\[
\Hom(U(k_n), F) \stackrel{\sim}{\longrightarrow} U^{*}(k_n). 
\]

Fix now a faithful character
$\psi\colon p^{-m}\Zz_p/\Zz_p\to \bQl^{\times}$. We then obtain, for each integer $n\geq 1$, an isomorphism
\[
U^{*}(k_n) \stackrel{\sim}{\longrightarrow} \charg{U}(k_n).
\]

Saibi~\cite[Lemma\,1.5.4.1]{saibi_FD_unipotent} (see
also~\cite[Remark\, F.1\,(ii)]{boyarchenko-drinfeld}) proved that
there exists a connected commutative unipotent algebraic group
$U^{\vee}$\nomenclature{$U^{\vee}$}{algebraic group model of the Serre
  dual} and a biextension $\mcL_{U,U^{\vee}}$
\nomenclature[$L$]{$\mcL_{U,U^{\vee}}$}{bi-extension}
of~$U^{\vee}\times U$ by $\Qq_p/\Zz_p$ such that $\mcL_{U,U^{\vee}}$ induces an isomorphism between the perfectizations of~$U^{\vee}$
and~$U^*$. Together with the above character $\psi$, this induces
isomorphisms
\begin{equation}\label{eqn:isomorphismbeta}
\beta_n\colon U^{\vee}(k_n)\to \charg{U}(k_n)
\end{equation}
for all $n\geq 1$. (See
also~\cite[Remark\,F.4\,(ii)]{boyarchenko-drinfeld} for a different
approach to the construction of the model of finite type $U^{\vee}$.) We
also write $\psi_x$ for the character~$\beta_n(x)$.
\nomenclature[$psi$]{$\psi_x$}{character associated to $x\in U^{\vee}$}

We denote by $\mcL_{U,U^{\vee},\psi}$
\nomenclature[$L$]{$\mcL_{U,U^{\vee},\psi}$}{lisse sheaf on
  $U\times U^{\vee}$} the lisse $\ell$-adic sheaf of rank~$1$
on~$U^{\vee}\times U$ associated to~$\mcL_{U,U^{\vee}}$; its trace
functions are given by
\[
t_n(x,y;k_n)=\beta_n(x)(y)
\]
for all~$n\geq 1$ and $(x,y) \in U(k_n)\times U^{\vee}(k_n)$.

\begin{example}
  Fix a non-trivial additive character $\psi\colon k\to \Qlb$. Suppose
  that $U=\Gg_a^d$ for some~$d\geq 0$. For $(x,y)\in U\times U$, write
  \[
  x\cdot y=\sum_{i=1}^d x_iy_i. 
  \]

  There exists a choice of biextension with $U^{\vee}=U$, and the
  isomorphisms
  \[
  \beta_n\colon (\Gg_a^d)(k_n) \longrightarrow \charg{\Gg}_a^d(k_n)
  \]
  are given by $x\mapsto \psi_x$, where
  \[
  \psi_x(y)=\psi(\Tr_{k_n/k}(x\cdot y)).
  \]
\end{example}

We now also fix a quasi-projective scheme $S$ over $k$, and shall work relative to $S$. We denote by~$\mcL_{U,U^{\vee},\psi,S}$ the pull-back of $\mcL_{U,U^{\vee},\psi}$
to $U \times_{k} U^\vee \times_{k} S$.

Let $p \colon U \times_{k} U^\vee \times_{k} S \to U\times_{k} S$,
$p^\vee \colon U \times_{k} U^\vee \times_{k} S \to U^\vee \times_{k}
S$ and $q \colon U \times_{k} S \to S$ denote the
projections. The~\emph{Fourier transform} \index{Fourier transform} is
the equivalence of categories\nomenclature[$F$]{$\ft_{\psi,S}$}{Fourier transform on a unipotent
  group} 
\[
\ft_{\psi,S} \colon \Der(U\times_{k} S)\longrightarrow \Der(U^\vee\times_{k} S)
\]
defined by
\[
  \ft_{\psi,S}(M)=Rp^\vee_!(p^*(M)\otimes \mcL_{U,U^{\vee},\psi,S})=
  Rp^\vee_*(p^*(M)\otimes \mcL_{U,U^{\vee},\psi,S}),
\]
where the second equality (more precisely, the fact that the natural
transformation ``forget supports'' from the left-hand side to the
right-hand side is an isomorphism)
is~\cite[Th.\,3.1]{saibi_FD_unipotent}. A corollary of this is that the
Fourier transform is compatible with Verdier duality, in that there is a
canonical functorial isomorphism
\[
  \DD(\ft_{\psi,S}(M)) \simeq \ft_{\psi^{-1},S}(\DD(M))(\dim U)
\]
for each object $M$ of $\Der(U\times_{k} S)$;
see~\cite[Cor.\,3.2.1]{saibi_FD_unipotent}. We refer the reader to
Saibi's article~\cite{saibi_FD_unipotent} for the other main
properties of the $\ell$\nobreakdash-adic Fourier transform on
unipotent groups, such as the formula for the inverse Fourier
transform.

By the proper base change theorem and the definition of Fourier
transform using $Rp^{\vee}_!$, for all~$a\in U^\vee(\bar k)$ and
$i\in\Zz$, there are natural isomorphisms
\begin{equation}\label{eq-stalk-fourier}
  R^iq_!(M_{\psi_a})=\mcH^i(\ft_{\psi}(M))_a. 
\end{equation}
Since unipotent groups are affine, it follows from Artin's vanishing
theorem that the Fourier transform shifts the perverse degree by the
dimension of $U$. In particular, if $M$ is perverse, then so~is
\[
\ft_{\psi,S}(M)[\dim(U)].
\]

%
%

\begin{proposition}
\label{prop-strat-unip}
Let $U$ be a connected unipotent commutative algebraic group of
dimension $d$ over~$k$. Fix a locally-closed immersion~$u$
\respup{$u^\vee$} of $U$ \respup{$U^\vee$} into some projective space
to compute the complexity.  Let $M$ be an object of~$\Der(U)$ of
perverse amplitude $[a,b]$.
\par
There exists an integer~$C\geq 0$, depending only on $c_u(M)$, and a
stratification $(S_i)$ of $U^\vee$ such that every strat $S_i$ is
either empty or has dimension $d-i$, with the following properties:
\par
\begin{enumth}
\item The sum of the degrees of the irreducible components of $u(S_i)$
  is at most $C$.
\item\label{prop-strat-unip:item2} For each $\xi\in S_i(\bar k)$, the
  vanishing $ \rmH^j_c(U_{\bar k}, M_{\psi_\xi})=0$ holds for all
  $j\notin [a,b+i]$.
\end{enumth}

In particular, the estimate
\begin{equation}\label{eq-unipotent-bound}
  \abs{S_i(k_n)}\ll \abs{k_n}^{d-i}
\end{equation}
holds for all~$n\geq 1$, with an implicit constant that only depends on
$c_u(M)$.

Moreover, for any $\xi\in S_0(\bar k)$ and any $j\in \Zz$, we have
\[
\rmH^j_c(U_{\bar k}, M_{\psi_\xi})=\rmH^j(U_{\bar k}, M_{\psi_\xi}).
\]
\end{proposition}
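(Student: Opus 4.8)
The plan is to reduce everything to the structure of the single complex $K:=\ft_\psi(M)\in\Der(U^\vee)$, via the stalk identification $\rmH^j_c(U_{\bar{k}},M_{\psi_a})\simeq\mcH^j(K)_a$ of \eqref{eq-stalk-fourier}. Fix a quasi-projective immersion $v$ of $U^\vee$. Three inputs are needed. \emph{First}, since $\ft_\psi=p^\vee_!\bigl(p^*(-)\otimes\mcL_{U,U^{\vee},\psi}\bigr)$ and $\mcL_{U,U^{\vee},\psi}$ is a fixed lisse sheaf whose complexity is bounded in terms of $U$ alone, Theorem~\ref{thm-conductors} applied to the three operations involved gives $c_v(K)\ll c_u(M)$, with implied constant depending only on $U$, $U^\vee$ and the immersions. \emph{Second}, as recalled before the statement, $\ft_\psi(-)[\dim U]$ carries perverse sheaves to perverse sheaves, hence semiperverse complexes to semiperverse complexes, and therefore $\pH^j(K)=0$ for $j>b+d$. \emph{Third}, for every $a\in U^\vee(\bar{k})$ the twist $M_{\psi_a}$ still has perverse amplitude $[a,b]$ (tensoring with a rank-one lisse sheaf is $t$-exact), so Artin vanishing on the affine variety $U$, applied to the perverse spectral sequence of $M_{\psi_a}$, yields $\rmH^j_c(U_{\bar{k}},M_{\psi_a})=0$ for $j<a$; equivalently $\mcH^j(K)=0$ for $j<a$.

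The stratification is then built by iteratively peeling off non-lisse loci of $K$. Let $V_1\subset U^\vee$ be the maximal open subset on which $K$ is lisse, and let $V_2\subset U^\vee$ be a dense open subset over which $p^\vee_*\bigl(p^*M\otimes\mcL_{U,U^{\vee},\psi}\bigr)$ is of formation compatible with base change; by Theorem~\ref{thm-lisse-locus-conductor} and Theorem~\ref{thm-gen-bc-conductors} (the latter with $g=\id_{U^\vee}$), applied to complexes of complexity $\ll c_u(M)$, the complements $U^\vee\setminus V_1$ and $U^\vee\setminus V_2$ are closed of dimension $\leq d-1$ and of degree $\ll c_u(M)$. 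Put $S_0:=V_1\cap V_2$, a dense open of pure dimension $d$. Starting from $Z:=U^\vee\setminus S_0$, repeat: given a closed subset $Z\subset U^\vee$ with $e:=\dim Z$, apply Theorem~\ref{thm-lisse-locus-conductor} to $K$ restricted to each $e$-dimensional irreducible component of $Z$ — the relevant complexities staying $\ll c_u(M)$, since restriction to a subvariety of controlled degree multiplies the complexity by a controlled factor (Theorem~\ref{thm-conductors}) — to obtain the maximal open $\Sigma$ of the union of those components on which it is smooth and $K$ is lisse; set $S_{d-e}:=\Sigma$, and recurse on the strictly lower-dimensional closed set $Z\setminus\Sigma$. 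After at most $d$ steps this produces a stratification $(S_i)_{0\leq i\leq d}$ of $U^\vee$ in which each $S_i$ is either empty or smooth of pure dimension $d-i$ with $K|_{S_i}$ of lisse cohomology sheaves, all degrees being bounded in terms of $c_u(M)$; this gives the first assertion, and then \eqref{eq-unipotent-bound} follows from the elementary point count for varieties of bounded dimension and degree.

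It remains to verify the vanishing, now immediate on each stratum. Fix $i$ and $a\in S_i(\bar{k})$. Pullback along the locally closed immersion $S_i\hookrightarrow U^\vee$ preserves semiperversity (inspect supports of cohomology sheaves), so by the second input $\pH^j(K|_{S_i})=0$ for $j>b+d$; since $K|_{S_i}$ has lisse cohomology sheaves on the smooth variety $S_i$ of dimension $d-i$, this is equivalent to $\mcH^j(K)|_{S_i}=0$ for $j>(b+d)-(d-i)=b+i$. Combined with $\mcH^j(K)=0$ for $j<a$ from the third input, we obtain $\rmH^j_c(U_{\bar{k}},M_{\psi_a})=\mcH^j(K)_a=0$ for $j\notin[a,b+i]$, which is item~\ref{prop-strat-unip:item2}. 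Finally, for $a\in S_0(\bar{k})\subset V_2$, compatibility with base change along the point $a$ identifies $\mcH^j\bigl(p^\vee_*(p^*M\otimes\mcL_{U,U^{\vee},\psi})\bigr)_a$ with $\rmH^j(U_{\bar{k}},M_{\psi_a})$, and since $p^\vee_*(p^*M\otimes\mcL_{U,U^{\vee},\psi})=p^\vee_!(p^*M\otimes\mcL_{U,U^{\vee},\psi})=K$ here, this coincides with $\mcH^j(K)_a=\rmH^j_c(U_{\bar{k}},M_{\psi_a})$, which is the last assertion. The one genuinely delicate point throughout is the uniformity — that $c_v(K)\ll c_u(M)$ and that the degree bounds produced by the iteration depend only on $c_u(M)$ — and this is exactly what the quantitative sheaf formalism (Theorem~\ref{thm-conductors}) together with the uniform complexity bound for $\mcL_{U,U^{\vee},\psi}$ provide; the remainder is standard perverse-sheaf bookkeeping.
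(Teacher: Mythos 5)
Your argument is correct, and its skeleton coincides with the paper's: bound the complexity of $K=\ft_{\psi}(M)$ via Theorem~\ref{thm-conductors}, build the stratification by iterating Theorem~\ref{thm-lisse-locus-conductor} (and Theorem~\ref{thm-gen-bc-conductors} for the open stratum) with degree control, deduce the upper cohomological bound $j\leq b+i$ from semiperversity of $K|_{S_i}$ together with lisseness of its cohomology sheaves on an equidimensional stratum of dimension $d-i$, identify stalks via \eqref{eq-stalk-fourier}, and get $\rmH^j_c=\rmH^j$ on $S_0$ from $p^{\vee}_!=p^{\vee}_*$ combined with base-change compatibility. Where you genuinely diverge is the lower cohomological bound (vanishing of $\rmH^j_c$ for $j$ below the left end of the perverse amplitude). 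The paper obtains it by tracking the full perverse amplitude of $\ft_{\psi}(M)$ along the strata: it shrinks each open stratum to be affine so that restriction to the closed complement lowers the left end of the amplitude only by one (a variant of a result of Beilinson--Bernstein--Deligne for complements of affine opens), giving amplitude $[a+d-i,b+d]$ on $S_i$, and then runs a Verdier-duality computation with the lisse cohomology sheaves on the smooth $(d-i)$-dimensional stratum to convert this into the vanishing $j\geq a$. You instead note that for \emph{every} point of $U^{\vee}(\bar k)$ the twist $M_{\psi_a}$ still has perverse amplitude $[a,b]$ and $U$ is affine, so the dual form of Artin's vanishing theorem applied to the perverse spectral sequence for compactly supported cohomology gives $\rmH^j_c(U_{\bar k},M_{\psi_a})=0$ for $j<a$ outright, i.e.\ $\mcH^j(K)=0$ for $j<a$ globally. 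This is simpler and slightly stronger on that side: it removes the need for affine strata, the amplitude bookkeeping and the duality step, and it yields the left-hand vanishing for all characters, not only those in the strata where $K$ is lisse; what the paper's route buys in exchange is the finer statement that the restriction of $\ft_{\psi}(M)$ to $S_i$ itself has perverse amplitude $[a+d-i,b+d]$, which however is not needed for the proposition. One caveat: in your inductive construction you assert, without detail, that the complexities of $K$ restricted to components of bounded degree, and the singular-locus term entering Theorem~\ref{thm-lisse-locus-conductor}, remain bounded in terms of $c_u(M)$; this is true within the quantitative formalism and is treated at the same level of brevity in the paper's own induction, so it is acceptable, but it is the place where a fully written-out version would need a few extra lines.
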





\begin{proof} Since the Fourier transform shifts the perverse degree by
  $d$, the complex $\ft_{\psi}(M)$ has perverse amplitude $[a+d,b+d]$.
  By Theorem \ref{thm-conductors}, the complexity $c_u(\ft_{\psi}(M))$
  is bounded in terms of~$c_u(M)$.

  By Theorems~\ref{thm-lisse-locus-conductor}
  and~\ref{thm-gen-bc-conductors}, there exists a smooth
  open subscheme $S_0\subset U^\vee$, with closed complement $Y_0$ of
  degree bounded in terms of $c_u(\ft_{\psi}(M))$, and hence in terms of
  $c_u(M)$, such that the restriction of $\ft_{\psi}(M)$ to $S_0$ has
  lisse cohomology sheaves and such that $\ft_{\psi}(M)$ is of formation
  compatible with any base change $S'\to S_0\subset U^\vee$ (this
  follows from the formula for the Fourier transform in terms of
  $Rp^{\vee}_*$). Up to replacing $S_0$ by a smaller open subset, we may
  assume that $S_0$ is affine (and this does not increase the complexity
  of the complement).

  In particular, using~(\ref{eq-stalk-fourier}) and this compatibility,
  we obtain the following equality for $a\in S_0(\bar k)$:
  \begin{equation}\label{eq-apply-basechange}
    \rmH^i_c(U_{\bar k}, M_{\psi_a})=\mcH^i(\ft_{\psi}(M))_a
    =\rmH^i(U_{\bar k}, M_{\psi_a}).
  \end{equation}
  
  By a slight generalization
  of~\cite[Cor.\,4.1.10.\,\emph{ii}]{BBD-pervers}, the pullback by a
  closed immersion with affine complement of a complex of perverse
  amplitude $[a, b]$ has perverse amplitude $[a-1, b]$.  Therefore, the
  restriction of $\ft_{\psi}(M)$ to $Y_0$ has perverse amplitude
  $[a+d-1,b+d]$. Proceeding by induction, we construct a stratification
  $(S_i)_{0\leq i\leq d}$ of $U^\vee$ into strats $S_i$ such that
  \begin{enumerate}
  \item each $S_i$ is smooth, empty or equidimensional of
    dimension $d-i$;
  \item the closure of each $S_i$ has degree bounded in terms of
    $c_u(M)$;
  \item the restriction of $\ft_{\psi}(M)$ to each
    $S_i$ has lisse cohomology sheaves and is of perverse amplitude
    $[a+d-i,b+d]$.
  \end{enumerate}
  
  Let $0\leq i\leq d$. On each connected component of $S_i$, the support
  of the cohomology sheaves of~$\ft_{\psi}(M)$ is either empty or equal
  to $S_i$ (since these sheaves are lisse). However, the definition of
  perversity implies the inequality
  \[
    \dim \supp\mcH^j(\ft_{\psi}(M)_{\vert S_i})\leq -j+b+d
  \] for all integers $j$. Since $S_i$ has dimension $d-i$, the non-vanishing of $\mcH^j(\ft_{\psi}(M)_{\vert S_i})$ implies therefore the inequality
  \[
    d-i\leq-j+b+d,\quad\text{ i.e. } j\leq b+i.
  \]

  Since $S_i$ is smooth of dimension $d-i$ (so the dualizing complex
  on~$S_i$ is $\bQl(d-i)[d-i]$ and the Verdier dual of a lisse sheaf
  is the naive dual, up to shift) and the cohomology sheaves on~$S_i$
  are lisse, duality implies that $\dual(\ft_{\psi}(M)_{\vert S_i})$
  also has lisse cohomology
  sheaves, given by the formula
  \[
  \mcH^j(\dual(\ft_{\psi}(M))_{\vert
    S_i})=(\mcH^{-j-2d+2i}(\ft_{\psi}(M)_{\vert S_i}))^\vee(d-i)
  \] for all $j$. Thus, arguing as above, the perversity condition shows that  $\mcH^j(\ft_{\psi}(M)_{\vert S_i})\neq 0$ implies
  $$
  d-i \leq j+2d-2i-a-d+i, \quad\text{ i.e. } j\geq a.
  $$

  We conclude that the cohomology sheaves of the complex
  $\ft_{\psi}(M)_{\vert S_i}$ are concentrated in degrees~$[a,b+i]$. By~\eqref{eq-apply-basechange}, this implies
  assertion~\ref{prop-strat-unip:item2} of the proposition and
  concludes the proof.
\end{proof}

\begin{remark}
  This result is a generalization to all unipotent groups, and a
  quantification by means of the complexity, of some of the
  Fouvry--Katz--Laumon stratification results for additive exponential
  sums~\cite{KL-fourier-exp-som,fouvry-katz}. It may have interesting
  applications to analytic number theory, since the quantitative form
  means that it may be used over varying finite fields, \eg $\Ff_p$ as
  $p\to +\infty$ as in Chapter~\ref{sec-stratification} (although a
  referee pointed out that the complexity of the Fourier transform of a
  perverse sheaf on a non-additive unipotent group, such as Witt vectors
  of length~$2$, will usually have to depend on~$p$).
\end{remark}

For the proof of Theorem~\ref{th-vanishing-geometric}, we will also
require a relative version of the ``first step'' of
Proposition~\ref{prop-strat-unip}.

\begin{proposition}
\label{prop-strat-unip-rel}
Let $U$ be a connected unipotent commutative algebraic group of
dimension~$d$ over~$k$, and $S$ a quasi-projective scheme over
$k$. Fix a locally-closed immersion~$u$ \respup{$u^\vee$} of
$U\times S$ \respup{$U^\vee\times S$} into some projective space
to compute the complexity. Let $q\colon U\times S\to S$ be the
projection.
\par
There exists an integer~$C\geq 0$, depending only on $c_u(M)$, and a
dense open subset $U^\vee_0$ of $U^\vee$ with the following
properties:
\begin{enumth}
\item The sum of the degrees of the irreducible components of
  $u^\vee(U^\vee \setminus U^\vee_0)$ is at most $C$.
\item For any $\xi\in U^\vee_0(\bar k)$, we have
  \[
    Rq_!(M_{\psi_\xi})=Rq_*(M_{\psi_\xi}), 
  \] where $\psi_\xi$ denotes the character corresponding to $\xi$ by means of \eqref{eqn:isomorphismbeta}.\end{enumth}

In particular, the estimate
\begin{equation}
  \abs{(U^\vee \setminus U^\vee_0)(k_n)}\ll \abs{k_n}^{d-1}
\end{equation}
holds for all~$n\geq 1$, with an implicit constant that only depends
on $c_u(M)$.

Moreover, if $M$ is perverse, then
$Rq_!(M_{\psi_\xi})=Rq_*(M_{\psi_\xi})$ is perverse for every
$\xi\in U^\vee_0(\bar k)$.
\end{proposition}

\begin{proof}
  By Theorem~\ref{thm-gen-bc-conductors}, there exists a smooth open
  subscheme $U_0^\vee\subset U^\vee$, with closed complement $Y_0$ of
  degree bounded in terms of $c_u(\ft_{\psi,S}(M))$, and hence in
  terms of $c_u(M)$, such that~$\ft_{\psi,S}(M)$ is of formation
  compatible with any base change $S'\to U_0^\vee\subset U^\vee$ (this
  follows from the formula for the Fourier transform in terms of
  $Rp^{\vee}_*$). Combining this with proper base change, we get for every $\xi\in U^\vee_0(\bar k)$, the equality
  \[
    Rq_!(M_{\psi_\xi})=Rq_*(M_{\psi_\xi}).
  \]
  If $M$ is perverse, then Artin's vanishing theorem implies that
  $Rq_!(M_{\psi_\xi})$ is perverse since $q$ is affine.

  Finally, the estimates on the degrees of the irreducible components of $U^\vee \setminus U^\vee_0$ follow from
  Theorem~\ref{thm-gen-bc-conductors}\,(i) and the counting estimate
  follows from Theorem~\ref{th-rh}.
\end{proof}

\section{Perverse sheaves on tori}
\label{ssec-tori}

In this section, we generalize some of the results of Gabber and Loeser
\cite{GL_faisc-perv} about perverse sheaves on tori. We begin with a generalization of \cite[Th.\,4.1.1']{GL_faisc-perv}, which is proved in \loccit under the assumption that resolution of singularities
and simplification of~ideals hold for varieties of dimension at most the dimension of the torus in question. The structure of our proof is the same, but we are able to
replace the appeal to resolution of singularities with de Jong's theorem
on alterations~\cite{dejong-alt}.

\begin{theorem}
\label{thm-tori-vanishing}
Let $T$ be a torus over~$\bar{k}$ and let~$M$ be an object
of~$\Der(T)$. For all charac\-ters~$\chi\in \Pi(T)(\Qlb)$ outside of a
finite union of \tacs, the equality 
\[
H^i(T,M_{\chi})=H^i_c(T,M_{\chi})
\]
holds for all~$i\in\Zz$.
\end{theorem}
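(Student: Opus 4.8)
The plan is to follow the argument of Gabber--Loeser for \cite[Th.\,4.1.1']{GL_faisc-perv}, the only structural change being that their two appeals to resolution of singularities — to put the closure of the support of $M$ and the toric boundary in normal crossings position, and to render the relevant sheaf tame — are replaced by de Jong's theorem on alterations \cite{dejong-alt}, at the cost of recovering $M$ only as a direct summand of a proper pushforward rather than via a modification.

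\emph{D\'evissage and reduction to a generating support.} Since twisting by $\mcL_\chi$ is $t$-exact (Lemma~\ref{lem-characters-t-exact}), we have $\pH^q(M_\chi)=\pH^q(M)_\chi$, and the perverse spectral sequences~\eqref{eq-perv-seq} for $M_\chi$, with and without compact support, are compatible with the ``forget supports'' morphism; as a finite union of finite unions of \tacs\ is again one, it therefore suffices to treat the perverse cohomology sheaves of $M$, and then, d\'evissaging along a Jordan--H\"older filtration and using the five lemma on the long exact sequences in $\rmH^\bullet$ and $\rmH^\bullet_c$, to treat a simple perverse sheaf $M=j_{!\ast}(\mcG[\dim Z])$ with $\mcG$ an irreducible lisse sheaf on a smooth dense open $Z^\circ$ of the irreducible support $Z$. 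Over $\bar k$, $Z$ is contained in a unique minimal coset $t_0 T'$ of a subtorus; translating by $t_0$, under which $\mcL_\chi$ is invariant up to isomorphism, we may assume $Z\subseteq T'$. If $T'\subsetneq T$, then $M$ is a pushforward from $T'$, so $\rmH^\bullet_{(c)}(T,M_\chi)=\rmH^\bullet_{(c)}(T',(M_1)_{\chi|_{T'}})$ for a suitable $M_1$ on $T'$; by induction on $\dim T$ the sheaf $M_1$ satisfies the conclusion outside a finite union of \tacs\ of $T'$, whose preimage under the restriction $\Pi(T)\to\Pi(T')$ is a finite union of \tacs\ of $T$ (Remark~\ref{comp-generics}\,(3)). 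So we may assume $Z$ generates $T$, the case $\dim T=0$ being trivial.

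\emph{Alteration and the boundary computation.} Fix a smooth projective toric compactification $T\hookrightarrow\bar T$ with normal crossings boundary, and let $\bar Z\subseteq\bar T$ be the closure of $Z$, so that $\bar Z\setminus Z\subseteq\bar T\setminus T$. By de Jong's theorem there is a proper generically finite surjection $f\colon Y\to\bar Z$ with $Y$ smooth projective such that the reduced preimage of $\bar Z\setminus Z^\circ$ is a normal crossings divisor along which $f^\ast\mcG$ is tamely ramified; set $V=f^{-1}(Z^\circ)$ and $D'=f^{-1}(\bar Z\setminus Z)_{\mathrm{red}}$. Over the locus where $f$ is finite \'etale, $\mcG$ is a direct summand of $f_\ast f^\ast\mcG$, so — combining this with the decomposition theorem, applied after spreading $f$ out over a finite field and twisting to arrange purity — the intermediate extension $j_{Z,!\ast}(M|_Z)$ of $M$ to $\bar Z$ is a direct summand of $Rf_\ast N$, where $N=j_{V,!\ast}((f|_V)^\ast\mcG[\dim Z])$. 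The conclusion passes to direct summands, so it suffices to prove it for $Rf_\ast N$; and since $f$ is proper, the forget-supports morphism for $Rf_\ast N$ twisted by $\mcL_\chi$ is identified, compatibly, with that of $(N\otimes f^\ast\mcL_\chi)|_{f^{-1}(Z)}$ on $f^{-1}(Z)=Y\setminus D'$, whose cone is supported along $D'$, all of whose components $D_i$ are mapped by $f$ into the toric boundary. By the normal crossings local picture and tameness, this cone is a combination of punctured-disk cohomology contributions along the $D_i$ and their intersections; the $D_i$-contribution vanishes as soon as the monodromy of $f^\ast\mcL_\chi$ around $D_i$ — equal to $\chi$ composed with the cocharacter $\mu_i\colon\Gg_m\to T$ recording the toric orders of vanishing along $D_i$, hence a scalar depending only on the restriction of $\chi$ to the subtorus $\mu_i(\Gg_m)$ — is not the inverse of an eigenvalue of the tame, hence finite-spectrum, monodromy of $\mcG$ around $D_i$. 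For each $D_i$ this excludes finitely many \tacs\ (Remark~\ref{comp-generics}\,(3)); the higher intersections are handled by intersecting these \tacs\ (Lemma~\ref{lem:inter:tac}). Outside the resulting finite union of \tacs, all boundary contributions vanish, so $\rmH^i_c(T,M_\chi)=\rmH^i(T,M_\chi)$ for every $i$.

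\emph{Main obstacle.} The delicate step is the alteration: because $f$ is only generically finite, $M$ is recovered merely as a summand of $Rf_\ast N$, which forces the detour through the decomposition theorem — hence through a finite-field model and a purity normalization — and de Jong's theorem provides no control on the number of exceptional components of $f$, which is exactly why the resulting bound on the \tacs\ cannot be made to depend only on the complexity of $M$ (cf.\ Remark~\ref{rmk-HV-effective}). A secondary point is verifying that each boundary contribution really carves out a \emph{translate of an algebraic cotorus} in $\Pi(T)$, which uses the toric data (the cocharacters $\mu_i$) together with the description of \tacs\ in Remark~\ref{comp-generics}\,(3).
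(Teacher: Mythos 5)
Your high-level strategy is the right one (adapt Gabber--Loeser, replacing resolution of singularities by de Jong's alterations), but your implementation diverges from the paper's in a way that creates two genuine gaps. First, de Jong's theorem does \emph{not} give you an alteration $f\colon Y\to\bar Z$ along which $f^{*}\mcG$ is tamely ramified: the theorem only produces a smooth $Y$ with normal crossings boundary, and says nothing about the wild ramification of a pulled-back lisse sheaf. The paper deals with this by first passing to the finite \'etale cover of the lisse locus associated to an $\ell$-Sylow subgroup of the monodromy group (so that the sheaf, being a direct factor of $f_{*}f^{*}$ of a pro-$\ell$-monodromy sheaf, becomes automatically tame along any normal crossings divisor), and only then invoking de Jong; alternatively one could quote Orgogozo~\cite{Orgo_constr_mod}, but as written your tameness claim is unjustified. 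Second, and more seriously, your step ``the intermediate extension of $M$ to $\bar Z$ is a direct summand of $Rf_{*}N$, by the decomposition theorem after spreading out and twisting to arrange purity'' hides a very deep input: an arbitrary irreducible lisse sheaf over a finite field model cannot simply be ``twisted to be pure'' --- that purity statement is the Lafforgue--Deligne theorem (Weil~II, Conj.~1.2.10), and without it (or the equally deep decomposition theorem for arbitrary semisimple perverse sheaves over $\bar{\Ff}_p$) the BBD decomposition theorem does not apply to $Rf_{*}N$. So the pivotal splitting on which your whole reduction rests is not established by the argument you give.

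Both gaps are avoidable, and the paper's proof shows how: instead of d\'evissaging $M$ into simple perverse sheaves and trying to dominate an IC complex by a pushforward (which is what forces you into the decomposition theorem), the paper d\'evissages the \emph{statement}. It proves the relative boundary assertion (Proposition~\ref{prop-tori-vanishing}): for any $W\to\bar T$ and any complex $N$ on $\varphi^{-1}(T)$, the stalks of $R^{r}j_{W*}(N\otimes\varphi^{*}\mcL_{T})$ at boundary points are supported on finitely many \tacs. This property is stable under exact triangles, direct factors, proper pushforward (via proper base change) and restriction to dense opens, so by induction on $\dim W$ one reduces to extensions by zero of lisse sheaves with pro-$\ell$ monodromy on the complement of a normal crossings divisor --- exactly the situation at the end of Gabber--Loeser's proof of Prop.~4.3.1' --- using only the elementary splitting $N\hookrightarrow f_{*}f^{*}N$ for finite \'etale $f$, with no perverse machinery, purity, or decomposition theorem. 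The theorem then follows because for $\chi$ outside the resulting \tacs\ the complex $i^{*}Rj_{*}(M_{\chi})$ on $\bar T\setminus T$ vanishes, and its global sections compute the cone of $Rs_{!}M_{\chi}\to Rs_{*}M_{\chi}$. A final caveat about your boundary analysis: Gabber--Loeser's computation is carried out for an extension by zero of a tame lisse sheaf, whereas your $N$ is an intermediate extension which is not lisse near the components of $f^{-1}(\bar Z\setminus Z^{\circ})$ lying over $T$, so your appeal to ``punctured-disk contributions along the $D_{i}$'' does not literally reduce to their situation without the further d\'evissage above.
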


As in \cite{GL_faisc-perv}, the proof of
Theorem~\ref{thm-tori-vanishing} relies on the auxiliary Proposition
\ref{prop-tori-vanishing} stated below. We pick a smooth compactification of~$T$ by a simple
normal crossing divisor $j\colon T\to \bar{T}$ (for example, the
projective space), and denote by $i \colon \bar T\setminus T\to \bar T$
the complementary closed immersion. Given any morphism
$\varphi\colon W\to \bar T$ of varieties over~$k$, denote by
$j_W\colon \varphi^{-1}(T)\to W$ and
$i_W\colon \varphi^{-1}(\bar T \setminus T) \to W$ the corresponding
open and closed immersions. Recall the $\Omega_T$-sheaf of rank one
$\mcL_T$ on $T$ from Section \ref{sec:Fourier-Mellin}. In particular, for an object $N$ of $\Der(\varphi^{-1}(T))$ and a point $\xi$ of $ \varphi^{-1}(\bar T \setminus T)$, the stalk $j_{W*}(N\otimes \varphi^*(\mcL_T)))_\xi$ is a complex of coherent sheaves on $\Pi(T)$.

\begin{proposition}
  \label{prop-tori-vanishing}
  With notation as above, let~$N$ be an object of
  $\Der(\varphi^{-1}(T))$. There exists a finite union~$\mathcal{S}$ of
  \tacs\ in~$\charg{T}$ such that, for any $r \geq 0$ and any
  $\xi\in \varphi^{-1}(\bar T \setminus T)$, the support of the module
  $(R^rj_{W*}(N\otimes \varphi^*(\mcL_T)))_\xi$ is contained
  in~$\mathcal{S}$.
\end{proposition}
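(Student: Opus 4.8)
The plan is to follow Gabber--Loeser's argument for the analogous statement, with de Jong's theorem on alterations~\cite{dejong-alt} in place of resolution of singularities; the heart of the matter is a local monodromy computation with Kummer-type sheaves on products of copies of~$\Gm$.

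\emph{Reduction to a normal crossing situation.} The assertion only concerns the support of the coherent $\Omega_T$-module (equivalently, after inverting~$\ell$, of a closed subscheme of $\Pi(T)_\ell$) attached to each stalk $(R^rj_{W*}(N\otimes\varphi^*\mcL_T))_\xi$, so one may make any reduction on $(W,\varphi,N)$ that does not shrink this support. Given an alteration $a\colon W'\to W$, with restriction $a'$ over $\varphi^{-1}(T)$: once $\ell$ is inverted the degree of~$a'$ is invertible in~$\Qlb$, so $N$ is a direct summand of $Ra'_*a'^*N$ via the trace map; hence $Rj_{W*}(N\otimes\varphi^*\mcL_T)$ is a direct summand of $Ra_*Rj_{W'*}(a'^*N\otimes(\varphi a)^*\mcL_T)$, and, taking stalks at~$\xi$ and using proper base change for~$Ra_*$, the support in question is contained in the union, over the finitely many points of $a^{-1}(\xi)$, of the supports of the stalks of $Rj_{W'*}(a'^*N\otimes(\varphi a)^*\mcL_T)$. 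By de Jong's theorem I may thus assume $W$ smooth and $D_W:=\varphi^{-1}(\bar T\setminus T)$ a simple normal crossing divisor; and by a further standard dévissage along a stratification of $\varphi^{-1}(T)$ compatible with~$D_W$ (arranged by blow-ups, to which the summand argument applies with degree~one), together with an induction on $\dim W$, I reduce to the case where $N$ is lisse on $\varphi^{-1}(T)$.

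\emph{The local computation.} Fix $\xi\in D_W$, lying on the branches $E_1,\dots,E_c$ of~$D_W$. A deleted henselian neighborhood $V$ of~$\xi$ inside $\varphi^{-1}(T)$ has, by the normal crossing hypothesis, the étale homotopy type of a product of $c$ punctured disks, its tame fundamental group being generated by loops $\delta_1,\dots,\delta_c$ around the branches, and $(R^rj_{W*}(N\otimes\varphi^*\mcL_T))_\xi=H^r(V,N\otimes\varphi^*\mcL_T)$. If $\varphi$ is constant near~$\xi$, then $V=\varnothing$ and this stalk vanishes. Otherwise the composite $V\to T$ sends each~$\delta_i$ to a nontrivial element $\gamma_i\in\pi_1^t(T_{\bar k})$ — since $E_i$ maps into a boundary divisor of~$\bar T$, around which $\mcL_T$ is nontrivially ramified — so the subtorus $S_\xi\subseteq T$ generated by the~$\gamma_i$ is nontrivial, and $\varphi^*\mcL_T|_V$ is pulled back from the universal character sheaf of~$S_\xi$. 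Consequently $H^r(V,N\otimes\varphi^*\mcL_T)$ is the $r$-th cohomology of the Koszul complex on the pairwise commuting operators $\rho(\delta_i)\,\can_T(\gamma_i)-1$ ($1\le i\le c$, with~$\rho$ the tame monodromy representation of~$N$), acting on the $\Omega_T$-module $N_{\bar\eta}\otimes_{\Qlb}\Omega_T$. Every cohomology group of a Koszul complex is annihilated by the ideal generated by the determinants of its entries; factoring $\det(\rho(\delta_i)\can_T(\gamma_i)-1)=\prod_j(\zeta_{i,j}\can_T(\gamma_i)-1)$ according to the eigenvalues $\zeta_{i,j}$ of~$\rho(\delta_i)$, which are roots of unity by Grothendieck's local monodromy theorem, we see that the support of the stalk lies in a finite union of cosets of $\{\chi:\chi|_{S_\xi}=1\}$ — that is, in a finite union of translates of the \tac\ attached to the quotient $T\to T/S_\xi$, whose connected kernel~$S_\xi$ is nontrivial.

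\emph{Conclusion, and the main difficulty.} Since $D_W$ has only finitely many strata, only finitely many subtori~$S_\xi$ and hence only finitely many \tacs\ occur; transporting their union back down to~$T$ along the alteration produces the desired finite union~$\mathcal S$ of \tacs\ containing the support of $(R^rj_{W*}(N\otimes\varphi^*\mcL_T))_\xi$ for every $r\ge 0$ and every~$\xi$. The step I expect to be the main obstacle is the reduction: carrying out the dévissage of~$N$ into shifted lisse pieces adapted to the normal crossing geometry (and to possible wild ramification of~$N$) while making sure no support is lost under the auxiliary alterations, together with the verification, inside the local computation, that~$S_\xi$ is genuinely nontrivial on every boundary branch — equivalently, that the stalk vanishes otherwise — even when~$\varphi$ contracts or ramifies boundary divisors; this is exactly where one uses that $\mcL_T$ is nontrivially ramified along each component of $\bar T\setminus T$ (already on~$\Gm$ the Kummer sheaf is ramified at~$0$ and~$\infty$). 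Once the normal crossing situation is in hand, the local computation itself should be routine.
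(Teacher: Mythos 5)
Your outline (induction on $\dim W$, dévissage to a lisse sheaf on a dense open, de Jong's alterations to reach a smooth $W$ with strict normal crossings boundary, then a local monodromy computation in the style of Gabber--Loeser) is the same strategy as the paper's, which indeed just reduces to the situation at the end of the proof of \cite[Prop.~4.3.1']{GL_faisc-perv} and imports that computation. But your version has a genuine gap, and it is exactly the point you flag at the end without resolving: the local step is only valid when the monodromy of $N$ along the boundary branches is \emph{tame}. In characteristic $p$ the stalk $(R^rj_{W*}(N\otimes \varphi^*\mcL_T))_\xi$ is the cohomology of the punctured henselian polydisk with coefficients in $N\otimes\varphi^*\mcL_T$, and this is computed by your Koszul complex on the commuting operators $\rho(\delta_i)\can_T(\gamma_i)-1$ only if the local monodromy of $N$ factors through the (abelian) tame quotient; if $N$ is wildly ramified along some $E_i$, no such description is available and the support bound does not follow. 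The missing idea, which is how both Gabber--Loeser and the paper proceed, is to force tameness \emph{before} invoking de Jong: once $N$ is lisse on a dense locally closed $U$, pass to the finite étale cover $f\colon U'\to U$ attached to an $\ell$-Sylow subgroup of the monodromy group of $N$; then $N$ is a direct factor of $f_*f^*N$ and $f^*N$ has pro-$\ell$, hence tame, monodromy, after which one applies de Jong to the normalization of $W$ in $U'$ and runs the normal-crossings computation. Without this reduction your ``routine'' local computation does not apply to the given $N$.

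Two secondary points. First, your splitting ``$N$ is a direct summand of $Ra'_*a'^*N$ via the trace map'' is not justified for an arbitrary alteration: the composite of adjunction and trace is multiplication by the degree only over the locus where $a'$ is finite étale (or finite flat). The paper avoids this by shrinking to a dense open $U_0$ over which the (generically étale) alteration is finite étale, handling the complement by the induction on $\dim W$; your argument should be phrased the same way. Second, Grothendieck's quasi-unipotence theorem does not apply here: $T$, $W$ and $N$ live over $\bar k$ with no arithmetic structure, so the eigenvalues $\zeta_{i,j}$ need not be roots of unity. This does not change the shape of the conclusion (each condition $\chi(\gamma_i)=\zeta_{i,j}^{-1}$ is either empty or a translate of the cotorus attached to the quotient by the nontrivial subtorus determined by the $\gamma_i$), but the invocation is incorrect as stated.
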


\begin{proof}
  The idea of the proof is to reduce to the situation
  of~\cite[Prop.~4.3.1']{GL_faisc-perv}.

  We use induction on the dimension of~$W$. We can then readily assume
  that $N$ is a lisse sheaf on a locally-closed irreducible subvariety
  $U$ of $\varphi^{-1}(T)$, extended by zero to $\varphi^{-1}(T)$. We
  can assume further that $U$ is dense in $W$. Now the monodromy of $N$
  can be assumed to be pro-$\ell$. Indeed, consider the finite \'etale
  cover $f\colon U'\to U$ associated to the $\ell$-Sylow subgroup of the
  monodromy group of $N$, and let $W'$ be the normalization of $W$ in
  the function field of $U$. The sheaf~$N$ is a direct factor of
  $f_*f^*N$, and it suffices to prove the theorem for $f^* N$ and
  $W'$. Hence, we assume that the monodromy of $N$ is pro-$\ell$.

  By de Jong's theorem~\cite[Th.\,4.1]{dejong-alt}, there exists an alteration\index{alteration}
  $f\colon W'\to W$ such that $W'$ is smooth and the reduction of the
  complement of $f^{-1}(U)$ in $W'$ is a strict normal crossing
  divisor. Since we are working over a perfect field, we can further
  assume that the alteration $f$ is generically \'etale. Hence, there
  exists a dense open subset $U_0$ of $U$ such that~$f$ is finite
  \'etale over $f^{-1}(U_0)$. By induction, it is enough to prove the
  result for $U_0$ and $N_{\vert U_0}$, and hence by the same argument
  as above, it is enough to prove it for $f_*f^*N_{\vert U_0}$. By
  proper base change, it is then enough to prove the result for $W'$ and
  $f^*N_{\vert U_0}$. By a last d\'evissage, it is finally enough to
  prove it for $f^*N$.

  We are now in a situation where we can suppose that $W$ is smooth,
  that the complements of~$\varphi^{-1}(T)$ and $U$ in $W$ are strict
  normal crossing divisors, and that the monodromy of~$N$ is
  pro-$\ell$. This is exactly the situation at the end of the proof
  of~\cite[Prop.~4.3.1', starting from p.\,544, line~-4]{GL_faisc-perv}
  (with~$N$ replacing~$A$ there) and the remaining argument is identical
  to that of \loccit
  \end{proof}

\begin{proof}[Proof of Theorem \ref{thm-tori-vanishing}]
  The fact that Proposition \ref{prop-tori-vanishing} implies
  Theorem~\ref{thm-tori-vanishing} is completely similar to the fact
  that Proposition~4.3.1' implies Théorème~4.1.1' in
  \cite{GL_faisc-perv}. We keep the notation introduced before the
  statement of Proposition \ref{prop-tori-vanishing}, and apply
  Proposition~\ref{prop-tori-vanishing} with $W=\bar T$, with~$\varphi$
  the identity morphism and~$N=M$, so that $j_W=j$ and~$i_W=i$.

  Let $\chi \in \charg{T}$ such that~$\chi$ does not belong to the
  finite number of \tacs\ of~$\charg{T}$ given by
  Proposition~\ref{prop-tori-vanishing}. According
  to~\cite[Prop.\,4.5.1\,(2)]{GL_faisc-perv}, this implies that the
  object $i^*Rj_*(M_\chi)\in \Der(\bar T\setminus T)$ is trivial, and
  hence its cohomology complex
  $$
  R\Gamma(\bar T\setminus T,i^*Rj_*(M_\chi))
  $$
  is also trivial. But this last complex is isomorphic to the cone of
  the morphism
  $$
  Rs_!(M_\chi)\to Rs_*(M_\chi),
  $$
  where~$s\colon T_{\bar{k}}\to\Spec(\bar{k})$ is the structure
  morphism, hence the theorem.
\end{proof}

We now use Proposition~\ref{prop-tori-vanishing} to deduce a relative
version of Theorem \ref{thm-tori-vanishing}.

\begin{theorem}
  \label{thm-tori-vanishing-rel}
  Let~$T$ be a torus over~$\bar{k}$, let $S$ be an arbitrary scheme
  over~$\bar{k}$, and let $G=S\times T$. Denote by $p\colon G\to S$ the
  projection.  Let $N$ be an object of $\Der(G)$.
  \par
  For $\chi\in \Pi(T)(\Qlb)$ away from a finite union of \tacs\
  $\mathcal{S}$, we have $Rp_!(N_{\chi})=Rp_*(N_{\chi})$.
  \par
  In particular, if $N$ is a perverse sheaf, then for~$\chi$ not
  in~$\mathcal{S}$, the complex $Rp_!(N_\chi)=Rp_*(N_\chi)$ is a
  perverse sheaf on~$S$.
\end{theorem}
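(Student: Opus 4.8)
The plan is to deduce Theorem~\ref{thm-tori-vanishing-rel} from Proposition~\ref{prop-tori-vanishing} by exactly the mechanism that produces Theorem~\ref{thm-tori-vanishing}, carrying the base~$S$ along inside a compactification of~$T$. First I would fix a smooth compactification $j\colon T\to\bar T$ of the torus by a simple normal crossing divisor, as in the proof of Theorem~\ref{thm-tori-vanishing}, with complementary closed immersion $i\colon\bar T\setminus T\to\bar T$. Set $W=S\times\bar T$ and let $\varphi\colon W\to\bar T$ be the second projection; then $\varphi^{-1}(T)=S\times T=G$, the induced immersions $j_W\colon G\to W$ and $i_W\colon S\times(\bar T\setminus T)\to W$ are the obvious open and closed ones, and the projection $\bar p\colon W\to S$ is a \emph{proper} morphism (because $\bar T$ is proper over $\bar k$) through which $p$ factors as $p=\bar p\circ j_W$. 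Since $N$ is bounded and constructible, we may assume that $S$ is a variety over $\bar k$ (which is the only case we shall need; the general case follows by a routine limit argument), so that Proposition~\ref{prop-tori-vanishing} applies verbatim.

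I would then apply Proposition~\ref{prop-tori-vanishing} to the object $N\in\Der(\varphi^{-1}(T))$: it yields a finite union $\mcS$ of \tacs\ in $\charg T$ such that for every $r\geq 0$ and every $\xi\in S\times(\bar T\setminus T)$, the stalk $(R^rj_{W*}(N\otimes\varphi^*\mcL_T))_\xi$ is supported, as a module over $\Omega_T$, inside~$\mcS$. Exactly as in the deduction of Theorem~\ref{thm-tori-vanishing}, specializing at a character $\chi\in\Pi(T)(\Qlb)$ outside~$\mcS$ annihilates all of these stalks, which is to say $i_W^*Rj_{W*}(N_\chi)=0$ in $\Der(S\times(\bar T\setminus T))$. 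Since $i_W$ and $j_W$ are complementary closed and open immersions, the cone of the canonical map $j_{W!}(N_\chi)\to Rj_{W*}(N_\chi)$ is $i_{W*}i_W^*Rj_{W*}(N_\chi)=0$, so this map is an isomorphism. Applying $R\bar p_*$, and using that $R\bar p_!=R\bar p_*$ (as $\bar p$ is proper) together with $Rp_!=R\bar p_!\circ j_{W!}$ and $Rp_*=R\bar p_*\circ Rj_{W*}$, I conclude that the forget-supports morphism $Rp_!(N_\chi)\to Rp_*(N_\chi)$ is an isomorphism; that the resulting isomorphism is the canonical one is formal, from the compatibility of the forget-supports transformation with composition. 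This gives $Rp_!(N_\chi)=Rp_*(N_\chi)$ for every $\chi$ outside the finite union of \tacs~$\mcS$.

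For the last assertion, suppose $N$ is perverse. Tensoring with the lisse rank-one sheaf $\mcL_\chi$ pulled back to $G$ along the projection to~$T$ is $t$-exact by Lemma~\ref{lem-characters-t-exact}, so $N_\chi$ is perverse on $G=S\times T$. Since $T$ is affine over $\bar k$, the projection $p\colon G\to S$ is an affine morphism, so Artin's vanishing theorem gives that $Rp_*$ is right $t$-exact for the perverse $t$-structures; applying Verdier duality (which interchanges $Rp_*$ and $Rp_!$ and reverses the $t$-structure) then shows that $Rp_!$ is left $t$-exact. Hence $\pH^i(Rp_*(N_\chi))=0$ for all $i>0$ and $\pH^i(Rp_!(N_\chi))=0$ for all $i<0$; since the two complexes agree for $\chi\notin\mcS$, their common value has vanishing perverse cohomology outside degree~$0$, \ie is a perverse sheaf on~$S$.

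The one delicate point is the vanishing $i_W^*Rj_{W*}(N_\chi)=0$: one must verify that the stalkwise statement of Proposition~\ref{prop-tori-vanishing}, which is phrased in terms of the universal $\Omega_T$-sheaf $\mcL_T$ over $\Pi(T)$, genuinely forces the vanishing of the honest $\ell$-adic complex obtained by specializing at a character $\chi\notin\mcS$. This is precisely what is carried out in the absolute case (the proof of Theorem~\ref{thm-tori-vanishing}), and the argument transports without change, the sole modification being the replacement of the structure morphism $T\to\Spec\bar k$ and its compactification by $p\colon S\times T\to S$ and $\bar p\colon S\times\bar T\to S$. The remaining points — reducing an arbitrary base $S$ to a variety, and identifying the morphism produced as the canonical forget-supports map — are routine.
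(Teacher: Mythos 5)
Your proof is correct and follows essentially the same route as the paper: apply Proposition~\ref{prop-tori-vanishing} with $W=S\times_{\bar k}\bar T$, deduce $i_W^*Rj_{W*}(N_\chi)=0$ for $\chi$ off a finite union of \tacs, and conclude via the proper compactified projection, with Artin's vanishing theorem giving the perversity statement. The specialization step you flag as delicate (passing from the support statement about the $\Omega_T$-module stalks to the vanishing of the specialized complex at $\chi$) is exactly the point the paper settles by invoking the extension of Gabber--Loeser's Proposition~4.7.2 to an arbitrary base scheme~$S$.
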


\begin{proof}
  This is similar to Theorem \ref{thm-tori-vanishing}. We apply
  Proposition~\ref{prop-tori-vanishing} with
  \hbox{$W=S\times_{\bar k} \bar T$,} and check that, for each character
  $\chi$ away from the finite union of~\tacs\ given by the proposition,
  the object $i_W^*Rj_{W*}(N\otimes \mcL_\chi)$ is trivial, which
  follows from the immediate extension
  of~\cite[Prop.~4.7.2\,(ii)]{GL_faisc-perv} to an arbitrary base
  scheme~$S$ (instead of just tori).
\end{proof}

\begin{theorem}
\label{thm-high-van-torus}
Let $T$ be a $d$-dimensional torus over~$k$, let $S$ be an arbitrary
scheme over~$k$, and define $X=T\times S$.  Let~$i$ be an integer such
that~$1\leq i\leq d$.

Let $M$ be a perverse sheaf on~$X$.  There exist a finite
extension~$k'$ of~$k$ and a family $(S_f)_{f\in\mathcal{F}}$ of \tacs\
of~$T_{k'}$ of dimension~$\leq d-i$ with the property that for
any~$\chi\in\charg{T}_{k'}$ that does not belong to the union of
the~$S_f$ there exists a quotient torus $q\colon T_{k'}\to Z$ of
dimension~$i-1$ such that
$$
Rq_{S!}M_{\chi}=Rq_{S*}M_{\chi},
$$
with~$q_S$ denoting the projection
$q_S\colon T_{k'}\times S\to Z_{k'}\times S$, and this complex is
perverse on~$Z\times_{k'} S_{k'}$.
\end{theorem}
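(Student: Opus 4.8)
The plan is to reduce to a split torus and then to produce the family $(S_f)$ and the quotient tori by applying the relative vanishing Theorem~\ref{thm-tori-vanishing-rel} to the finitely many coordinate subtori of dimension $d-i+1$, using Lemma~\ref{lm-geometric} to bound the ``globally bad'' locus of characters and Lemma~\ref{lem:inter:tac} to recognize it as a finite union of \tacs. First I would pass to a finite extension of $k$, which will serve as the field $k'$ of the statement, over which $T$ splits; writing $T$, $S$, $M$ for the base-changed data, $T\simeq\Gg_m^{\,d}=\prod_{j=1}^{d}G_j$ with $G_j=\Gg_m$. For $I\subseteq\{1,\dots,d\}$ set $T_I=\prod_{j\in I}G_j$ and $Z_I=\prod_{j\notin I}G_j$, so that $T=T_I\times Z_I$, the projection $q_I\colon T\to Z_I$ is a quotient torus with kernel $T_I$ of dimension $d-|I|$, and $q_{I,S}=q_I\times\id_S\colon X=T\times S\to Z_I\times S$; the subsets $I$ with $|I|=d-i+1$ give the quotient tori $Z_I$ of dimension $i-1$.

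For each such $I$, I would apply Theorem~\ref{thm-tori-vanishing-rel} with torus $T_I$, base scheme $Z_I\times S$ and complex $M$ on $X=T_I\times(Z_I\times S)$: it produces a finite union $\mathcal{S}_I$ of \tacs\ of $T_I$ such that, for $\psi\notin\mathcal{S}_I$, the complex $Rq_{I,S!}(M\otimes p_{T_I}^{*}\mcL_{\psi})= Rq_{I,S*}(M\otimes p_{T_I}^{*}\mcL_{\psi})$ is a perverse sheaf on $Z_I\times S$, where $p_{T_I}\colon X\to T_I$ is the projection. A character of $T=T_I\times Z_I$ decomposes as $\chi=\chi|_{T_I}\boxtimes\chi|_{Z_I}$ with $\mcL_{\chi}\simeq p_{T_I}^{*}\mcL_{\chi|_{T_I}}\otimes p_{Z_I}^{*}\mcL_{\chi|_{Z_I}}$; since $p_{Z_I}=r_I\circ q_{I,S}$ with $r_I\colon Z_I\times S\to Z_I$ the projection, the projection formula gives
\[
Rq_{I,S!}(M_{\chi})= Rq_{I,S*}(M_{\chi})\simeq\bigl(Rq_{I,S*}(M\otimes p_{T_I}^{*}\mcL_{\chi|_{T_I}})\bigr)\otimes r_I^{*}\mcL_{\chi|_{Z_I}}.
\]
When $\chi|_{T_I}\notin\mathcal{S}_I$, the first factor on the right is perverse on $Z_I\times S$ and tensoring with the lisse rank one sheaf $r_I^{*}\mcL_{\chi|_{Z_I}}$ preserves perversity; so $Z=Z_I$ and $q_S=q_{I,S}$ satisfy the conclusion for every such $\chi$.

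It remains to package the exceptional characters. Let $\mathcal{B}$ be the set of $\chi$ for which $\chi|_{T_I}\in\mathcal{S}_I$ for \emph{all} $I$ of size $d-i+1$; by the previous step every $\chi\notin\mathcal{B}$ is handled. A \tac\ $\tau\in\mathcal{S}_I$, with connected kernel $K_\tau\subseteq T_I$ and base point $\psi_\tau$, pulls back under the restriction $\chi\mapsto\chi|_{T_I}$ to the \tac\ $\widehat{\tau}$ of $T$ defined by the quotient $T\to T/K_\tau$ and the character $\psi_\tau\boxtimes1$: indeed, since $K_\tau\subseteq T_I$, the conditions $\chi\in\widehat{\tau}$, $\chi|_{K_\tau}=\psi_\tau|_{K_\tau}$ and $\chi|_{T_I}\in\tau$ are equivalent. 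Distributing intersection over union gives $\mathcal{B}=\bigcup_{\vec{\tau}}\bigcap_{I}\widehat{\tau_I}$, the union over the finitely many choice functions $\vec{\tau}=(\tau_I)$ with $\tau_I\in\mathcal{S}_I$. By Lemma~\ref{lem:inter:tac} each $\bigcap_{I}\widehat{\tau_I}$ is empty or a \tac\ of $T$ with kernel the subtorus $K^{\vec{\tau}}$ generated by the $K_{\tau_I}$; and Lemma~\ref{lm-geometric}, applied to $G=T=\prod_{j=1}^{d}G_j$ for the given $i$ with $H_I=K_{\tau_{I_0}}$ (for any $I_0\subseteq I$ of size $d-i+1$) whenever $\dim T_I>d-i$, yields $\dim K^{\vec{\tau}}\ge i$, hence $\dim\bigl(\bigcap_{I}\widehat{\tau_I}\bigr)\le d-i$. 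Letting $(S_f)$ be the non-empty \tacs\ so obtained, we have $\bigcup_f S_f=\mathcal{B}$, and any $\chi\in\charg{T}\setminus\bigcup_f S_f$ satisfies $\chi|_{T_I}\notin\mathcal{S}_I$ for some $I$ of size $d-i+1$, so the second paragraph supplies the quotient torus $Z=Z_I$ of dimension $i-1$.

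I expect the main obstacle to be exactly this last packaging step. For a single coordinate subtorus $T_I$ the exceptional locus of Theorem~\ref{thm-tori-vanishing-rel} is only a union of \tacs\ of dimension up to $d-1$, far too large on its own; it is the combinatorial Lemma~\ref{lm-geometric}, together with the description of intersections of \tacs\ in Lemma~\ref{lem:inter:tac}, that forces the intersection over all coordinate subtori of size $d-i+1$ down to codimension at least $i$. The remaining points — the reduction to a split torus, the projection-formula identity, and descending the finitely many \tacs\ to a common field $k'$ — are routine.
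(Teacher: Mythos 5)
Your proposal is correct and follows essentially the same route as the paper: reduce to a split torus over a finite extension, apply the relative vanishing Theorem~\ref{thm-tori-vanishing-rel} once for each coordinate subtorus of dimension $d-i+1$ (the paper indexes by the complementary sets of size $i-1$, which is only a relabeling), handle the character of the complementary factor by the projection formula, and then package the exceptional characters via choice functions, Lemma~\ref{lem:inter:tac} and Lemma~\ref{lm-geometric} to get \tacs\ of dimension $\leq d-i$. Your explicit remark on extending the family of subgroups to all $I$ with $\dim T_I>d-i$ before invoking Lemma~\ref{lm-geometric} is a point the paper glosses over, but it is the same argument.
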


\begin{proof}
  Up to replacing $k$ by a finite extension, we can assume that $T_k$ is
  split, and thus reduce to $T=\Gm^d$. Now let~$1\leq i\leq d$. For each subset~$I$
  of~$[d]=\{1,\ldots, d\}$ of size~$i-1$, we apply Theorem~\ref{thm-tori-vanishing-rel} with~$(T,S)=(\Gg_m^{[d]\setminus I},\Gg_m^I\times S)$ over~$\bar{k}$,
  so that the projection~$p$ in the theorem is then the canonical
  projection
  \[
  q_I\colon \Gg_m^d\times S=\Gg_m^{[d]}\times S\to \Gg_m^{I}\times S.
  \]

  We obtain a finite union of \tacs\ of $\Gm^{[d]\setminus I}$ such that
  for characters $\chi$ of $\Gm^{[d]\setminus I}$ outside of this finite
  union, we have
  \[
  Rq_{I!}(M_\chi)=Rq_{I*}(M_\chi)
  \]
  and this complex is perverse.

  Let
  \[
  (\pi_{I,j}\colon \Gg_m^{[d]\setminus I}\to Y_{I,j},\chi_{I,j})_{j\in
    X_I}
  \]
  be the quotient morphisms and characters defining this finite family
  of \tacs. For~$j\in X_I$, we define~$K_{I,j}=\ker(\pi_{I,j})$; this is
  a non-trivial subtorus of~$\Gg_m^{[d]\setminus I}$, which we identify
  with a subtorus of~$\Gg_m^d$ using the canonical embedding
  $\Gg_m^{[d]\setminus I}\to \Gg_m^d$. In addition, we
  define~$\chi'_{I,j}\in\Pi(\Gg_m^d)(\Qlb)$ to be the character that is
  trivial on~$\Gg_m^I$ and coincides with~$\chi_{I,j}$
  on~$\Gg_m^{[d]\setminus I}$.
  \par
  Let~$\mathcal{F}$ be the set of all maps $f$ from the subsets of~$[d]$
  of size $i-1$ to the disjoint union of the~$X_I$ that send a subset
  $I$ to an element $j\in X_I$ for each~$I$; this set is finite. For
  $f\in\mathcal{F}$, let~$S_f$ be the intersection of the \tacs\
  of~$\Gg_m^d$ defined by
  $$
  (\Gg_m^d\to \Gg_m^d/K_{I,f(I)},\chi'_{I,f(I)}).
  $$
  
  We claim that the family $(S_f)_{f\in\mathcal{F}}$ (to be precise, the subfamily where $S_f$ is not empty) satisfies the assertions of the theorem.

  Indeed, first of all Lemma~\ref{lem:inter:tac} shows that~$S_f$ is
  either empty or is again a \tac; moreover, in the second case, it is
  defined by the projection $\Gg^m_d\to \Gg_m^d/T_f$ where $T_f$ is the
  subtorus of $\Gg_m^d$ generated by the $K_{I,f(I)}$ (as subtori of
  $\Gg_m^d$). By Lemma~\ref{lm-geometric} applied to~$G_i=\Gg_m$ for
  all~$i$ and the subgroups~$K_{I,f(I)}$, we have $\dim(T_f)\geq i$ for
  all such~$f$, and hence the quotient
  $$
  p_f\colon \Gg_m^d\to Y_f=\Gg_m^d/T_f
  $$
  has image of dimension $\leq d-i$, as desired.

  Finally, let $\chi\in\charg{\Gg}_m^d$ be a character that does not
  belong to any of the \tacs\ $S_f$. This implies that there exist some~$f\in\mathcal{F}$, some subset $I\subset [d]$ of size $i-1$ and some~$j\in X_I$ such that the restriction~$\chi_I$ of $\chi$ to~$\Gg_m^{[d]\setminus I}$ is not equal to~$\chi_{I,j}$.
  
  We can write~$\chi=\chi_I\chi'$ where $\chi'$ is a character
  of~$\Gg_m^I$. Considering the quotient
  $q\colon \Gg_m^d\to \Gg_m^I$, the base change $q_S$ is the canonical
  projection $q_I$ and from the application of
  Theorem~\ref{thm-tori-vanishing-rel} to $q_I$, we obtain
  \[
  R_{q_S*}(M_{\chi})=R_{q_S*}(M_{\chi_I})\otimes \mcL_{\chi'}=
  R_{q_{S!}}(M_{\chi_I})\otimes \mcL_{\chi'}=Rq_{S!}(M_{\chi}),
  \]
  and the fact that this object is perverse.
\end{proof}

We deduce two corollaries that are sometimes more convenient for
applications. The first one is Theorem~\ref{thm-high-vanish} for tori. 

\begin{corollary}
\label{cor-high-van-torus}
Let $T$ be a torus of dimension $d$ over~$k$ and let $M$ be a perverse sheaf on $T$. For
$-d\leq i\leq d$, the sets
\[
\{\chi\in\charg{T}\,\mid\,
\rmH^i(T_{\bar{k}},M_{\chi})\not=0\}\quad\text{and}\quad
\{\chi\in\charg{T}\,\mid\, \rmH^i_c(T_{\bar{k}},M_{\chi})\not=0\}
\]
are contained in a finite union of \tacs\ of~$T$ of
dimension~$\leq d-|i|$, and in particular they have character
codimension at least~$|i|$.
\end{corollary}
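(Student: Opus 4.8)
The plan is to deduce the corollary directly from Theorem~\ref{thm-high-van-torus}, which already contains all of the real content; what remains is packaging. The case $i=0$ is trivial, since the bound $\abs{S(k_n)}\ll|k|^{nd}$ is automatic and a statement about \tacs\ of dimension $d$ is vacuous, so I will assume $1\le |i|\le d$ throughout and write $j=|i|$. First I would apply Theorem~\ref{thm-high-van-torus} with base scheme $S=\Spec(k)$, the torus $T$, the perverse sheaf $M$, and the integer $j$. This produces a finite extension $k'/k$ and a finite family $(S_f)_{f\in\mathcal{F}}$ of \tacs\ of $T_{k'}$, each of dimension $\le d-j$, with the property that for every character $\chi$ outside $\bigcup_f S_f$ there is a quotient torus $q\colon T_{k'}\to Z$ with $\dim Z=j-1$ such that $Rq_!\,M_\chi=Rq_*\,M_\chi$, this common complex being a perverse sheaf $N_\chi$ on $Z$.

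The next step is to observe that this forces all four cohomology groups indexed by $\pm j$ to vanish for such $\chi$. Indeed $R\Gamma_c(T_{\bar k},M_\chi)=R\Gamma_c(Z_{\bar k},Rq_!\,M_\chi)=R\Gamma_c(Z_{\bar k},N_\chi)$ and likewise $R\Gamma(T_{\bar k},M_\chi)=R\Gamma(Z_{\bar k},Rq_*\,M_\chi)=R\Gamma(Z_{\bar k},N_\chi)$, and a perverse sheaf on a variety of dimension $j-1$ has cohomology, both with and without compact support, concentrated in degrees $[-(j-1),j-1]$ (by~\cite[4.2.4]{BBD-pervers} together with Poincaré--Verdier duality, $\DD N_\chi$ being again perverse). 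Hence $\rmH^m(T_{\bar k},M_\chi)=\rmH^m_c(T_{\bar k},M_\chi)=0$ for all $|m|\ge j$, in particular for $m=\pm j=\pm|i|$. A convenient feature here is that the conclusion $Rq_!=Rq_*$ \emph{perverse} in Theorem~\ref{thm-high-van-torus} simultaneously disposes of compactly supported and ordinary cohomology and of positive and negative degrees, so no separate duality reduction is needed. It follows that both sets in the statement (for the given $i$, of absolute value $j$) are contained in $\bigcup_f S_f$, a finite union of \tacs\ of dimension $\le d-|i|$.

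The character-codimension assertion then follows from Remark~\ref{comp-generics}~(4): each \tac\ of dimension $\le d-|i|$ has character codimension $\ge |i|$, and a finite union of sets satisfying~\eqref{eq-c-codim} with exponent $d-|i|$ satisfies it again, the implied constants adding up. The only point where I expect some care is needed — and I regard it as bookkeeping rather than a genuine obstacle — is that Theorem~\ref{thm-high-van-torus} controls characters in $\charg{T}_{k'}$, i.e.\ those realized over extensions of $k'$, whereas $\charg{T}$ also contains characters of $T(k_n)$ for $[k':k]\nmid n$; to handle these one inflates $\chi\in\charg{T}(k_n)$ to $\chi\circ N_{k_{n[k':k]}/k_n}\in\charg{T}_{k'}$, which represents the same point of $\Pi(T)(\Qlb)$ (its associated character sheaf over $\bar k$ is unchanged) and hence lies in $\bigcup_f S_f$ exactly when $\chi$ does, after which the cardinality estimate of Remark~\ref{comp-generics}~(4) — being geometric in nature — yields~\eqref{eq-c-codim} for every $n\ge1$. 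Since all of the substantive work is carried by Theorem~\ref{thm-high-van-torus}, I do not anticipate any other difficulty.
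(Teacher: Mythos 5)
Your proof is correct and follows the same route as the paper: apply Theorem~\ref{thm-high-van-torus} with the integer $|i|$, and for $\chi$ outside the resulting finite union of \tacs\ use that $Rq_!M_\chi=Rq_*M_\chi$ is perverse on a quotient of dimension $|i|-1$, so its (compactly supported and ordinary) cohomology vanishes in degrees of absolute value $\geq|i|$, whence both exceptional sets lie in the union of \tacs\ and the character-codimension bound follows as in Remark~\ref{comp-generics}. The extra bookkeeping you include ($i=0$, the passage to $k'$ and inflation of characters) is harmless and only makes explicit what the paper leaves implicit.
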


\begin{proof}
  We apply Theorem~\ref{thm-high-van-torus} to~$|i|$ and claim that the
  characters in either of these sets belong to the union of the \tacs\
  $S_f$ that arise. Indeed, if~$\chi$ is not in any~$S_f$, then there
  exists a quotient torus $T_{k'}\to Z$ of dimension~$i-1$ such that
  $ Rq_{S!}M_{\chi}=Rq_{S*}M_{\chi}$, and hence
  $$
  \rmH^i(T_{\bar{k}},M_{\chi})= \rmH^i(B_{\bar{k}},Rq_{S*}M_{\chi})=0
  $$
  since~$Rq_{S*}M_{\chi}$ is a perverse sheaf and~$\dim(B)=i-1$. The
  argument is similar for the cohomology with compact support.
\end{proof}

\begin{remark}
  We recall that, concretely, this corollary implies that for
  $|i|\leq d$, the estimate
  \[
    \abs{\set{\chi\in\charg{T}(k_n)\mid \rmH^{i}(T_{\bar{k}},M_\chi)\neq 0
        \text{ or } \rmH^{i}_c(T_{\bar{k}},M_\chi)\neq 0 }}\ll
    \abs{k_n}^{d-|i|}
  \]
  holds for all $n\geq 1$.
\end{remark}

The following ``stratified'' statement is also a useful formulation of
the result.

\begin{corollary}\label{cor-split-torus}
  Let~$T$ be a torus of dimension~$d$ over~$k$ and~$S$ a scheme
  over~$k$. Set~$X=T\times S$ and let~$q$ denote the projection
  $q\colon X\to S$. Let~$M$ be a perverse sheaf on~$X$. There exists a
  finite extension $k'$ of $k$ and a partition of $\charg{T}_{k'}$
  into subsets $(S_i)_{0\leq i\leq d}$ of character codimension
  $\geq i$ such that, for any~$i$ and~$\chi\in S_i$, the object
  $Rq_!(M_{\chi})$ of $\Der(S)$ has perverse amplitude $[0,i]$.
\end{corollary}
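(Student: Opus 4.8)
The plan is to deduce the corollary formally from Theorem~\ref{thm-high-van-torus}, by reorganizing the \tacs\ it produces into a partition and composing with a projection. First I would reduce to the split case: after enlarging $k'$ we may assume $T=\Gm^d$. Then, for each integer $j$ with $1\le j\le d$, I would apply Theorem~\ref{thm-high-van-torus} with its parameter equal to $j$, and replace $k'$ by the compositum of the finitely many finite extensions so obtained; this gives, for each $j$, a finite family of \tacs\ of $T_{k'}$ of dimension $\le d-j$, with union $\mathcal{T}_j$, such that every $\chi\in\charg{T}_{k'}\setminus\mathcal{T}_j$ admits a quotient torus $\pi_j\colon\Gm^d\to Z_j$ of dimension $j-1$ for which $Rq_{S,j!}(M_\chi)=Rq_{S,j*}(M_\chi)$ is perverse on $Z_j\times S$, where $q_{S,j}=\pi_j\times\mathrm{id}_S$. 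Enlarging $\mathcal{T}_j$ to $\bigcup_{j'\ge j}\mathcal{T}_{j'}$, I may also assume $\mathcal{T}_1\supseteq\cdots\supseteq\mathcal{T}_d$, each still a finite union of \tacs\ of dimension $\le d-j$.

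Setting $\mathcal{T}_0=\charg{T}_{k'}$ and $\mathcal{T}_{d+1}=\emptyset$, I would define the partition by $S_i=\mathcal{T}_i\setminus\mathcal{T}_{i+1}$ for $0\le i\le d$; the nesting guarantees this is indeed a partition of $\charg{T}_{k'}$. Since for $i\ge 1$ one has $S_i\subseteq\mathcal{T}_i$, a finite union of \tacs\ of dimension $\le d-i$, Remark~\ref{comp-generics}\,(4) gives $\ccodim(S_i)\ge i$ (a finite union of sets of character codimension $\ge i$ again has character codimension $\ge i$); for $i=0$ there is nothing to check.

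For the amplitude bound, fix $\chi\in S_i$. If $i<d$, then $\chi\notin\mathcal{T}_{i+1}$ by construction, so Theorem~\ref{thm-high-van-torus} at level $i+1$ furnishes a quotient torus $\pi\colon\Gm^d\to Z$ with $\dim Z=i$; writing the projection $q\colon\Gm^d\times S\to S$ as $\Gm^d\times S\xrightarrow{\,q_S\,}Z\times S\xrightarrow{\,p\,}S$ with $q_S=\pi\times\mathrm{id}_S$, the complex $N:=Rq_{S!}(M_\chi)=Rq_{S*}(M_\chi)$ is perverse on $Z\times S$ and $Rq_!(M_\chi)=Rp_!(N)$. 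If $i=d$, I take instead $Z=\Gm^d$, $q_S=\mathrm{id}$, $N=M_\chi$ (perverse by Lemma~\ref{lem-characters-t-exact}), so again $Rq_!(M_\chi)=Rp_!(N)$ with $p$ the structure projection. In both cases $p\colon Z\times S\to S$ is affine of relative dimension $\dim Z=i$ and $N$ is perverse, so by Artin's vanishing theorem for affine morphisms (\cite[Th.\,4.1.1]{BBD-pervers}) together with the standard bound on the perverse amplitude of $Rp_!$ in terms of the fiber dimension, $Rq_!(M_\chi)$ has perverse amplitude contained in $[0,i]$, hence in $[-i,i]$, as required.

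I expect no genuine obstacle here: the corollary is essentially bookkeeping on top of Theorem~\ref{thm-high-van-torus}, whose proof already does the hard work of producing the \tacs\ and quotient tori uniformly and with perversity of the direct image. The one point to get right is the last step — combining the two halves of Artin's theorem so that $Rp_!$ of a perverse sheaf along an affine morphism of relative dimension $r$ lands in perverse degrees $[0,r]$ (affineness for the lower bound, the fiber-dimension estimate for the upper bound) — which incidentally yields the slightly sharper amplitude $[0,i]$ rather than $[-i,i]$.
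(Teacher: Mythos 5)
Your proof is correct and follows essentially the same route as the paper: apply Theorem~\ref{thm-high-van-torus} at each level, pass to the compositum of the finite extensions, form the partition by taking set differences of the (nested) unions of \tacs, and bound the perverse amplitude of $Rq_!M_\chi$ by factoring $q$ through $Z\times S$ and invoking Artin's vanishing theorem for the affine projection $Z\times S\to S$. Your explicit nesting step, the separate treatment of the case $i=d$, and the sharper amplitude $[0,i]$ are only minor refinements of the paper's argument.
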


\begin{proof}
  Using the notation of the proof of Theorem~\ref{thm-high-van-torus}, for any integer~$i$
  with $1\leq i\leq d$, let~$k'_i$ be the finite extension arising from
  its application to~$i$ and let~$\mathcal{F}_i$ be the corresponding
  family of \tacs. Define~$\widetilde{S}_i$ to be the union of the~$S_f$
  for~$f\in\mathcal{F}_i$ for~$1\leq i\leq d$.
  \par
  Let~$k'$ be the compositum of all~$k'_i$.  Define
  $S_0=\charg{T}\setminus \widetilde{S}_1$ and
  $S_{i}=\widetilde{S}_i\setminus \widetilde{S}_{i+1}$
  for~$1\leq i\leq d$. These sets form a partition of~$\charg{T}_{k'}$,
  and since~$S_i\subset \widetilde{S}_i$ for~$i\geq 1$, they have
  character codimension $\geq i$. This property is also clear for~$i=0$.
  \par
  Let $0\leq i\leq d$, and let~$\chi\in S_i$.
  Then~$\chi\notin \widetilde{S}_{i+1}$, and hence the theorem
  provides a projection $q_S\colon \Gg_m^d\times S\to Z\times S$
  with~$\dim(Z)=i$ such that $Rq_{S!}M_{\chi}$ is perverse. Composing
  with the projection $r\colon Z\times S\to S$, which is affine and
  hence such that $Rr_!$ preserves objects with perverse amplitude
  $[0,+\infty]$ (by Artin's vanishing theorem), it follows that
  $Rq_!M_{\chi}$ has perverse amplitude $[0,i]$.
\end{proof}

\section{Perverse sheaves on abelian varieties}

In this section, we will review and extend some results of Kr\"amer and
Weissauer on perverse sheaves on abelian varieties.

\subsection{Statement of the results and corollaries}

Let~$k$ be a finite field, and~$\bar{k}$ an algebraic closure of~$k$. Let $X$ be an abelian variety over~$k$. We fix a projective embedding $u$ of $X$. For subvarieties of~$X$, the degree means the degree of the image by~$u$; for a \tac\ of~$S$ defined by~$\pi\colon X\to A$ and~$\chi$, we will say that the \emph{degree} of~$S$ is the degree of the image~$u(\ker(\pi))$.

For a perverse sheaf~$M$ on~$X$, a combination of the main result of Weissauer~\cite{weissauer_vanishing_2016} and of the machinery developped by Krämer and Weissauer~\cite{KW_vanishing_AV} implies that
for most characters $\chi\in\charg{X}$, we have
$ \rmH^i(X_{\bar k},M_\chi)=0$ for all~$i\not=0$; we will show here that
this result can be made quantitative using the complexity of~$M$, and
will then establish a relative version (see
Section~\ref{ssec-relative}).

\begin{theorem}
\label{thm-gen-van-AV}
Let $X$ be an abelian variety over~$k$ and let $M$ be a perverse sheaf
on~$X$.

There exist an integer~$c\geq 0$ depending only on~$c_u(M)$, a finite
extension~$k'$ of~$k$ of degree~$\leq c$, and a finite family $(S_f)_{f\in F}$ of \tacs\ of~$X_{k'}$ with $|F|\leq c$,
each of degree at most~$c$, such that any~$\chi\in\charg{X}_{k'}$ that does not belong to the union of the~$S_f$ satisfies
\[
\rmH^i(X_{\bar k},M_\chi)=0 
\]
for all~$i\not=0$.
\end{theorem}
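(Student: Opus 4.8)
The plan is to turn Weissauer's qualitative generic vanishing theorem into a quantitative statement by pushing everything through the Fourier--Mellin transform and Sawin's complexity formalism, so that both the degree of the ``bad locus'' of characters and a field of definition for it are controlled in terms of $c_u(M)$.

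\textbf{Reduction to geometrically simple $M$.} Writing $M$ as an iterated extension of its arithmetic Jordan--Hölder factors, Proposition~\ref{prop-JorHol-cond} (applied to the perverse sheaf $M=\pH^0(M)$) bounds the sum of the complexities of these factors by $\ll c_u(M)$, and by Lemma~\ref{lem-arith-ss-geo} each factor becomes, after a finite extension of $k$ of degree $\ll c_u(M)$, a direct sum of geometrically simple perverse sheaves of complexity again $\ll c_u(M)$. For any exact triangle $M'\to M\to M''$ the long exact cohomology sequence shows that the bad set of $M$ is contained in the union of those of $M'$ and $M''$, and the same holds for direct sums, so it suffices to prove the theorem for a single geometrically simple perverse sheaf, which we now assume $M$ to be.

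\textbf{The bad locus.} Consider the Fourier--Mellin transform $\FM_*(M)\in\Dcoh(\Pi(X))$ of Section~\ref{sec:Fourier-Mellin}, applied to the semiabelian variety $X$ (whose torus part is trivial). Its formation is compatible with restriction to a point $\chi\in\Pi(X)(\Qlb)$, in the sense that $Li_\chi^*\FM_*(M)\simeq R\Gamma(X_{\bar k},M_\chi)$ (with $Rp_!=Rp_*$ since $X$ is proper). Hence the set of characters violating the conclusion of the theorem is contained in
\[
  Z=\bigcup_{i\neq 0}\overline{\Supp\,\mcH^i(\FM_*(M))}\subset\Pi(X),
\]
and Weissauer's theorem~\cite{weissauer_vanishing_2016}, together with the structural machinery of Krämer--Weissauer~\cite{KW_vanishing_AV}, shows that $Z_{\bar k}$ is contained in a finite union of proper \tacs\ of $\Pi(X)_{\bar k}$. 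What remains is to bound the degree of $Z$ and a field of definition of its irreducible components in terms of $c_u(M)$, and then to pass from $Z$ to a controlled finite family of \tacs\ containing it.

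\textbf{Quantifying, and the main obstacle.} The complex $\FM_*(M)$ is built from $M$ by pullback along $X\times\Pi(X)\to X$, tensoring with the universal tame rank-one character sheaf, and a proper pushforward, so by Theorem~\ref{thm-conductors} its complexity, with respect to a fixed quasi-projective model of $\Pi(X)$, is $\ll c_u(M)$; Theorems~\ref{thm-lisse-locus-conductor} and~\ref{thm-gen-bc-conductors} then bound the degree of the locus where $\FM_*(M)$ fails to be lisse or to have formation compatible with base change, and this locus contains $Z$, so $\deg(Z)\ll c_u(M)$. \emph{The main obstacle is that $\Pi(X)$ is a $\Qlb$-scheme rather than a $k$-variety}, so the étale complexity estimates of Section~\ref{sec-prelim} do not apply to $\FM_*(M)$ verbatim; making them applicable is exactly what forces the use of Orgogozo's constructibility and moderation results (Theorem~\ref{thm-strat-alt-orgo}), via moderating $M$ along an alteration of $X$, and the delicate point is to control the complexity of that alteration in terms of $c_u(M)$ (the analogous issue in the general stratified case is left open in Remark~\ref{rmk-HV-effective}). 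Once $\deg(Z)\ll c_u(M)$ is known, $Z$ is defined over the field of definition of $M$, a bounded extension of $k$ by the first step, so its $\bar k$-irreducible components are defined over an extension $k'$ of bounded degree and each has bounded degree; each such component lies in a proper \tac, and by standard bounds on the degree of the translated abelian subvariety generated by a subvariety it lies in a proper \tac\ of bounded degree. After one further bounded extension we obtain finitely many \tacs\ $S_f$ of $X_{k'}$, with $|F|$ and $\deg(S_f)$ bounded in terms of $c_u(M)$ (the number being bounded because $\sum_f\deg(S_f)\ll\deg(Z)$), whose union contains every character $\chi$ with $\rmH^i(X_{\bar k},M_\chi)\neq0$ for some $i\neq0$. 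This is the assertion.
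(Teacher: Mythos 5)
Your reduction to geometrically simple $M$ is fine, and your identification of the bad locus with the supports of the higher cohomology sheaves of the Fourier--Mellin transform is a reasonable qualitative starting point. But the quantitative heart of the theorem --- that the number of \tacs, their degrees, and the degree of the field of definition are bounded in terms of $c_u(M)$ alone --- is precisely the step you do not carry out. Sawin's complexity formalism (Theorems~\ref{thm-conductors}, \ref{thm-lisse-locus-conductor}, \ref{thm-gen-bc-conductors}) applies to constructible $\Qlb$-complexes on quasi-projective varieties over $k$; the object $\FM_*(M)$ lives in $\Dcoh(\Pi(X))$, a coherent complex over a $\Qlb$-scheme built from the group algebra $\Omega_X$, the universal character sheaf $\mcL_X$ is an $\Omega_X$-sheaf rather than a $\Qlb$-sheaf, and $\Pi(X)$ is not even of finite type (it is an infinite disjoint union indexed by prime-to-$\ell$ torsion characters), so there is no ``complexity of $\FM_*(M)$ with respect to a quasi-projective model of $\Pi(X)$'' and no analogue of Theorem~\ref{thm-lisse-locus-conductor} bounding $\deg(Z)$. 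You acknowledge this obstacle, but then assert that it is resolved by Orgogozo's moderation results along an alteration whose complexity is to be controlled --- which is exactly the point the paper says it cannot currently do (Remark~\ref{rmk-HV-effective}); in fact Orgogozo enters only in the \emph{relative} version (Theorem~\ref{thm-gen-van-AV-rel}), where the constants are allowed to depend on the chosen alteration. The final passage, passing from components of $Z\subset\Pi(X)$ of ``bounded degree'' to \tacs\ of $X_{k'}$ with kernels of bounded degree defined over a bounded extension, is also not an argument: degrees and fields of definition for subschemes of $\Pi(X)$ are not defined in a way that transfers to degrees of abelian subvarieties of $X$, and no mechanism for this transfer is given.

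The paper's actual proof avoids the Fourier--Mellin transform entirely and is effective for a different reason. It runs the Kr\"amer--Weissauer multiplier machinery on the convolution power $M^{*(g+1)}$: by (D2) this splits as $\bigoplus_m M_m[m]$ with $M_m$ negligible for $m\neq 0$, Proposition~\ref{prop-JorHol-cond} bounds the number and complexities of the simple constituents, and for each constituent $C$ with $\chi(X_{\bar k},C)=0$ one invokes the quantitative Proposition~\ref{prop-chi=0-trans-inv}: Weissauer's theorem makes $C$ translation-invariant under a nontrivial abelian subvariety $A$, and the effectivity comes from Lemma~\ref{lem-subAV-inv-deg} (a Bombieri--Zannier-type argument) bounding the degree and field of definition of $A$ via the degree of the singular-support locus of $C$, which is controlled by Theorem~\ref{thm-lisse-locus-conductor} applied on $X$ itself. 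The vanishing for $\chi$ outside the resulting \tacs\ then follows by comparing $\rmH^*(X_{\bar k},M_\chi)^{\otimes(g+1)}$ (K\"unneth) with the perverse bound on $\rmH^*(X_{\bar k},(M_0)_\chi)$. If you want to salvage your route, you would need a genuine degree/complexity theory for coherent Fourier--Mellin transforms on $\Pi(X)$, which is not available; as written, the proposal has a gap exactly where the theorem's content lies.
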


We will prove this below, but first we establish some corollaries.

\begin{corollary}
\label{cor-gen-van-AV-2}
Let $M$ be an object of $\Der(X)$. 

There exist an integer~$c\geq 0$, depending only on~$c_u(M)$, a finite
extension~$k'$ of~$k$ of degree $\leq c$, and a finite family
$(S_f)_{f\in F}$ of \tacs\ of~$X_{k'}$, each of degree at most~$c$, with
$|F|\leq c$, such that for any~$\chi\in\charg{X}_{k'}$ that does not
belong to the union of the~$S_f$, there is a canonical isomorphism
$$
\rmH^i(X_{\bar k},M_\chi)\simeq \rmH^0(X_{\bar k},\pH^i(M)_\chi)
$$
for all~$i\in\Zz$.
\end{corollary}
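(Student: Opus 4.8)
The plan is to deduce this from Theorem~\ref{thm-gen-van-AV} applied to the perverse cohomology sheaves of $M$, followed by a degeneration of the perverse spectral sequence. Since twisting by a character sheaf is $t$-exact (Lemma~\ref{lem-characters-t-exact}), we have $\pH^q(M_\chi)\simeq \pH^q(M)_\chi$ for every $q$ and every character $\chi$. Moreover, by Proposition~\ref{prop-JorHol-cond} together with part~(3) of Theorem~\ref{thm-conductors}, the perverse cohomology sheaves $\pH^q(M)$ are non-zero only for $q$ in a range bounded in terms of $c_u(M)$, and for each such $q$ the complexity $c_u(\pH^q(M))$ is bounded in terms of $c_u(M)$: each non-zero Jordan--Hölder factor, being a non-zero perverse sheaf, has complexity at least $1$, so their total number is controlled, and $c_u(\pH^q(M))$ is at most the sum of the complexities of the Jordan--Hölder factors of $\pH^q(M)$ by repeated use of the exact-triangle estimate.

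First I would apply Theorem~\ref{thm-gen-van-AV} to each non-zero perverse sheaf $\pH^q(M)$ on $X$ over $k$: for every such $q$ this yields an integer $c_q\geq 0$ depending only on $c_u(\pH^q(M))$, hence only on $c_u(M)$, a finite extension $k'_q/k$ of degree $\leq c_q$, and a finite family of \tacs\ of $X_{k'_q}$, each of degree $\leq c_q$ and at most $c_q$ of them in total, outside of which $\rmH^j(X_{\bar k}, \pH^q(M)_\chi)=0$ for all $j\neq 0$. I would then take $k'$ to be the compositum inside $\bar k$ of the finitely many fields $k'_q$; since there are boundedly many of them, each of bounded degree, $[k':k]$ is bounded in terms of $c_u(M)$. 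Base-changing all the \tacs\ above to $X_{k'}$ (a base change of a \tac\ along $X_{k'}\to X_{k'_q}$ is again a \tac\ of the same degree) and collecting them into a single finite family $(S_f)_{f\in F}$, we obtain that $|F|$ and the degrees of the $S_f$ are bounded in terms of $c_u(M)$, and we let $c$ be a suitable such bound.

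Finally, suppose $\chi\in\charg{X}_{k'}$ does not lie in any $S_f$. Then $\rmH^j(X_{\bar k}, \pH^q(M)_\chi)=0$ for all $q$ and all $j\neq 0$, so, using $\pH^q(M_\chi)\simeq\pH^q(M)_\chi$, the perverse spectral sequence~\eqref{eq-perv-seq} for $M_\chi$,
\[
E_2^{p,q}=\rmH^p(X_{\bar k},\pH^q(M_\chi))\Longrightarrow \rmH^{p+q}(X_{\bar k},M_\chi),
\]
has $E_2$-page concentrated in the single column $p=0$. All differentials $d_r$ for $r\geq 2$ then vanish, so the spectral sequence degenerates at $E_2$ and the abutment filtration on $\rmH^i(X_{\bar k},M_\chi)$ has a single non-zero graded piece; the resulting edge morphism is the desired canonical isomorphism $\rmH^i(X_{\bar k},M_\chi)\simeq E_2^{0,i}=\rmH^0(X_{\bar k},\pH^i(M)_\chi)$. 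I do not expect a genuine obstacle in this argument: it is the same spectral sequence degeneration used for Corollary~\ref{cor-gen-van-der}, and the only points requiring attention are the uniform control of the complexities, degrees and field extensions (so that the final constant $c$ depends on $c_u(M)$ alone) and the verification that the various \tacs\ and their base changes behave as claimed.
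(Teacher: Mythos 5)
Your proof is correct and follows essentially the same route as the paper: apply Theorem~\ref{thm-gen-van-AV} to each perverse cohomology sheaf $\pH^q(M)$ (with $\pH^q(M)_\chi\simeq\pH^q(M_\chi)$ by $t$-exactness of character twists), let the perverse spectral sequence~\eqref{eq-perv-seq} degenerate, and control the number, complexities, field extensions and \tacs\ uniformly via Proposition~\ref{prop-JorHol-cond}. This is exactly the argument the paper invokes (via Corollaries~\ref{cor-gen-van-der} and~\ref{cor-gen-van-AV-3}), so no further comment is needed.
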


\begin{proof}
  This is the same argument as in the proof of
  Corollary~\ref{cor-gen-van-der}; the dependency on $c_u(M)$ is
  obtained by means of Proposition~\ref{prop-JorHol-cond} to control the
  perverse cohomology sheaves of~$M$.
\end{proof}

Alternatively, the next corollary may be more convenient for
applications.

\begin{corollary}
\label{cor-gen-van-AV-3}
Let $M$ be an object of $\Der(X_k)$. The set~$\mathcal{S}$ of
characters $\chi\in \charg{X}$ such that
there are isomorphisms
\[
\rmH^i(X_{\bar k},M_\chi)\simeq \rmH^0(X_{\bar k},\pH^i(M)_\chi)
\]
for all~$i\in\Zz$ is generic, 
and the implicit constant in~\emph{(\ref{eq-generic})} depends only on
$c_u(M)$.

In particular, if~$M$ is a perverse sheaf, then the set
of~$\chi$ such that $H^i(X_{\bar{k}},M_{\chi})=0$ for
all~$i\not=0$ is generic and the implicit constant
in~\emph{(\ref{eq-generic})} only depends on $c_u(M)$.
\end{corollary}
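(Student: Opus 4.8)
The plan is to deduce this from the quantitative generic vanishing theorem for abelian varieties (Theorem~\ref{thm-gen-van-AV}), applied to the perverse cohomology sheaves of $M$, by running the perverse spectral sequence. Since $\mcL_\chi$ is lisse, the functor $N\mapsto N_\chi$ is t-exact (Lemma~\ref{lem-characters-t-exact}), hence commutes with perverse truncation, so $\pH^q(M_\chi)\simeq\pH^q(M)_\chi$ for every $q$, and the perverse hypercohomology spectral sequence (see~\eqref{eq-perv-seq}) applied to $M_\chi$ takes the form
\[
  E_2^{p,q}=\rmH^p(X_{\bar k},\pH^q(M)_\chi)\Longrightarrow\rmH^{p+q}(X_{\bar k},M_\chi).
\]
Only finitely many $\pH^q(M)$ are non-zero, and by Proposition~\ref{prop-JorHol-cond} together with the additivity and exact-triangle estimates of Theorem~\ref{thm-conductors}, the complexity $c_u(\pH^q(M))$ is bounded in terms of $c_u(M)$ for each $q$.

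Next I would apply Theorem~\ref{thm-gen-van-AV} to each non-zero perverse sheaf $\pH^q(M)$. This produces, over a common finite extension $k'$ of $k$ of degree bounded in terms of $c_u(M)$, a finite family $\mathcal{T}$ of \tacs\ of $X_{k'}$ whose number and whose degrees are bounded in terms of $c_u(M)$, with the property that $\rmH^p(X_{\bar k},\pH^q(M)_\chi)=0$ for all $p\neq0$ and all $q$ as soon as $\chi$ avoids the union of the \tacs\ in $\mathcal{T}$. For such $\chi$ the spectral sequence above is concentrated in the row $p=0$, all differentials vanish for degree reasons, and one obtains a canonical isomorphism $\rmH^i(X_{\bar k},M_\chi)\simeq\rmH^0(X_{\bar k},\pH^i(M)_\chi)$ for every $i$. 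Consequently $\charg{X}\setminus\mathcal{S}$ is contained in the union of the \tacs\ in $\mathcal{T}$.

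It then remains to bound $\abs{(\charg{X}\setminus\mathcal{S})(k_n)}$. Each \tac\ in $\mathcal{T}$ is cut out by a surjection with non-trivial connected kernel, hence has dimension at most $d-1$; by the Lang--Weil estimates for character groups recorded in Section~\ref{sec:character-groups-Lang-torsor} (compare Remark~\ref{comp-generics}(2),(4)), the number of characters in $\charg{X}(k_n)$ lying in a fixed such \tac\ is $\ll\abs{k}^{n(d-1)}$ for all $n\geq1$, with an implied constant depending only on $d$; passing through the auxiliary field $k'$ is immaterial here, since this count only involves the dimension and degree of the kernel, which are geometric. Summing over the boundedly many \tacs\ in $\mathcal{T}$ yields $\abs{(\charg{X}\setminus\mathcal{S})(k_n)}\ll\abs{k}^{n(d-1)}$ with an implied constant depending only on $c_u(M)$, which is exactly the genericity assertion. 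Finally, when $M$ is itself perverse one has $\pH^i(M)=0$ for $i\neq0$, so the isomorphism above specializes to $\rmH^i(X_{\bar k},M_\chi)=0$ for all $i\neq0$ on the generic set $\mathcal{S}$; this gives the last statement.

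The deductive steps above are routine: the genuine difficulty is entirely packed into Theorem~\ref{thm-gen-van-AV} (quantitative generic vanishing for abelian varieties, which we may assume), and once that input is granted with the tac-count and tac-degrees controlled by the complexity, the present corollary is a formal spectral-sequence argument followed by a Lang--Weil count.
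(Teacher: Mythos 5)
Your argument is correct and follows essentially the same route as the paper's proof, which likewise combines the t-exactness of twisting, the perverse spectral sequence, Theorem~\ref{thm-gen-van-AV} applied to each perverse cohomology sheaf (with complexities controlled via Proposition~\ref{prop-JorHol-cond}), and a count of the characters lying in the boundedly many \tacs. Even your brief remark that passing through the auxiliary extension $k'$ is harmless for the count matches the paper's own level of detail, which simply bounds the bad characters by $\sum_f |A_f(k'k_n)| \ll |k_n|^{d-1}$.
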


\begin{proof}
  Assume first that~$M$ is a perverse sheaf. We apply
  Theorem~\ref{thm-gen-van-AV} to $M$, and use the notation there.
  For~$n\geq 1$, let~$k'_n=k'k_n$. For any
  $\chi\in \charg{X}(k_n)\setminus \mathcal{S}(k_n)$, the corresponding
  character in~$\charg{X}(k'_n)$ belongs to $S_f(k'_n)$ for
  some~$f\in F$. Let~$A_f$ be the abelian variety such that $S_f$ is
  defined by~$\pi_f\colon X_{k'}\to A_f$; we have
  $$
  |\charg{X}(k_n)\setminus \mathcal{S}(k_n)| \leq
  \sum_{f\in F}|A_f(k'_n)|\leq |F|\
  (|k'k_n|^{1/2}+1)^{2\dim(A_f)} \ll |k_n|^{\dim(X)-1},
  $$
  where the implied constant depends only on~$c_u(M)$ by the theorem.
  \par
  Now in the general case, recalling that $\pH^i(M_{\chi})$ is
  canonically isomorphic to $\pH^i(M)_{\chi}$ for all~$i$ and
  all~$\chi$, we have the convergent perverse spectral sequences
  $$
  E_2^{i,j}=H^i(X_{\bar{k}}, \pH^j(M)_{\chi})\Rightarrow
  H^{i+j}(X_{\bar{k}},M_{\chi}).
  $$
  By the previous case applied to each of the finitely many perverse
  cohomology sheaves, the set of~$\chi$ such that
  $H^i(X_{\bar{k}}, \pH^j(M)_{\chi})=0$ for all $i\not=0$ and all~$j$ is
  generic; for any such character, the spectral sequence degenerates and
  we obtain isomorphisms
  $$
  H^i(X_{\bar{k}},M_{\chi})\simeq H^0(X_{\bar{k}},\pH^i(M)_{\chi}).
  $$
  \par
  Applying Proposition~\ref{prop-JorHol-cond}, we see that the last
  statement concerning the implicit constant in~(\ref{eq-generic})
  holds.
\end{proof}

\begin{corollary}
\label{cor-gen-van-AV-4}
Let $X$ be a geometrically simple abelian variety over~$k$. Let $M$ be a
perverse sheaf on~$X$.  Then there exists a constant~$c$ depending only
on~$c_u(M)$ and a finite set~$\mcS\subset \Pi(X)(\Qlb)$ of cardinality
at most~$c$ such that for~$\chi\in \Pi(X)(\Qlb)\setminus \mcS$,
\[
  \rmH^i(X_{\bar k},M_\chi)=0 \text{ for } i\neq 0.
\]
\end{corollary}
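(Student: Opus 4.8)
The plan is to reduce the statement to Theorem~\ref{thm-gen-van-AV}, using that a geometrically simple abelian variety has no non-trivial proper connected algebraic subgroup. The key structural input is that a connected algebraic subgroup of an abelian variety is an abelian subvariety; hence, if $X$ is geometrically simple, the only connected subgroups of $X_{\bar k}$ are $0$ and $X_{\bar k}$. The same remains true of $X_{k'}$ for any finite extension $k'/k$, since geometric simplicity refers only to the base change to $\bar k$.

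I would then unwind Definition~\ref{def-most}: a \tac\ of $X_{k'}$ is given by a surjective morphism of abelian varieties $\pi\colon X_{\bar k}\to A'_{\bar k}$ with non-trivial connected kernel, together with a character $\chi_0$. By the previous paragraph $\ker(\pi)$, being a non-trivial connected subgroup of $X_{\bar k}$, must equal $X_{\bar k}$, so $A'=0$, the group $\Pi(A')(\Qlb)$ is trivial, and the \tac\ collapses to a single character $\{\chi_0\}\subset\Pi(X)(\Qlb)$; here I use that the character schemes of $X$ and of $X_{k'}$ coincide, both parameterizing $\Qlb$-characters of $\pi_1^t(X_{\bar k})$. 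Applying Theorem~\ref{thm-gen-van-AV} now produces a constant $c\geq 0$ depending only on $c_u(M)$, a finite extension $k'/k$ of degree $\leq c$, and a finite family $(S_f)_{f\in F}$ of \tacs\ of $X_{k'}$ with $|F|\leq c$, outside whose union $\rmH^i(X_{\bar k},M_\chi)$ vanishes for $i\neq 0$; by the discussion above each $S_f$ is a single character, so $\mcS:=\bigcup_{f\in F}S_f$ is the required finite set, of cardinality $\leq c$.

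The point that I expect to require the most care is that Theorem~\ref{thm-gen-van-AV}, as stated, controls only the torsion characters $\chi\in\charg{X}_{k'}$, whereas the corollary asserts vanishing for every $\chi\in\Pi(X)(\Qlb)\setminus\mcS$. To bridge this I would note that the ``bad locus'' $B=\{\chi\in\Pi(X)(\Qlb)\mid \rmH^i(X_{\bar k},M_\chi)\neq 0\text{ for some }i\neq 0\}$ is a Zariski-closed subset of $\Pi(X)$, being the union of the supports of the cohomology sheaves of the coherent Fourier--Mellin complex $\FM_*(M)$, and that the argument proving Theorem~\ref{thm-gen-van-AV} (building on the work of Krämer and Weissauer) shows $B$ to be contained, already as a subset of $\Pi(X)(\Qlb)$, in a finite union of \tacs\ of $X_{k'}$; by the first step these \tacs\ are singletons, so $B$ is finite and we may take $\mcS=B$. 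The cardinality bound $|\mcS|\leq c$ in terms of $c_u(M)$ then follows from bounding the degree of $B$, equivalently of the relevant Fourier--Mellin complex, by means of the same complexity estimates that enter the proof of Theorem~\ref{thm-gen-van-AV}. Everything else is formal.
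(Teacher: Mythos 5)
Your proposal is correct and follows essentially the same route as the paper, whose proof consists precisely of the observation that a \tac\ of a geometrically simple abelian variety is a single character (since the kernel of the defining quotient, being non-trivial and connected, must be all of $X_{\bar k}$), after which one invokes Theorem~\ref{thm-gen-van-AV}. Your extra care about the gap between $\charg{X}_{k'}$ and $\Pi(X)(\Qlb)$ addresses a point the paper leaves implicit; it is resolved more directly by noting that the key input to Theorem~\ref{thm-gen-van-AV}, namely Proposition~\ref{prop-chi=0-trans-inv}, already characterizes the non-vanishing locus as a \tac\ inside all of $\Pi(X)(\Qlb)$, so no separate Zariski-closedness or Fourier--Mellin argument is needed.
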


\begin{proof}
  Since $X$ is a geometrically simple abelian variety, then a tac of $X$
  contains a single character. Hence, the result follows from
  Theorem~\ref{thm-gen-van-AV}.
\end{proof}

\subsection{Proof of the results}

We now proceed with the proof of Theorem~\ref{thm-gen-van-AV}.  As we
indicated, the first ingredient is a quantitative version of a result of
Weissauer~\cite{weissauer_vanishing_2016}.

\begin{proposition}
  \label{prop-chi=0-trans-inv}
  Let~$X$ be an abelian variety over~$k$ with a projective
  embedding~$u$, and let $M$ be a geometrically simple perverse sheaf
  on~$X$ such that $\chi(X_{\bar k}, M)=0$.
  \par
  There exists a \tac\ $S$ on~$X$ with kernel an abelian
  subvariety $A$ of degree bounded in terms of $c_u(M)$, such that
  $$
  \bigoplus_i\rmH^i(X_{\bar k}, M_\chi)\not=0
  $$
  if and only if~$\chi$ is in~$S$.
  
  Moreover, $M$ is invariant by translation by $A$.
\end{proposition}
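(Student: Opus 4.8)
The plan is to combine Weissauer's qualitative structure theorem with the complexity bounds from Sawin's theory and the rationality lemma for stabilizing abelian subvarieties. First I would recall Weissauer's result (the ``Vanishing Theorem'' of~\cite{weissauer_vanishing_2016} together with the analysis in~\cite{KW_vanishing_AV}): if $M$ is a geometrically simple perverse sheaf on $X_{\bar k}$ with $\chi(X_{\bar k},M)=0$, then the set of characters $\chi$ with $\bigoplus_i \rmH^i(X_{\bar k},M_\chi)\neq 0$ is a single \tac, i.e.\ there is a surjection $\pi\colon X_{\bar k}\to A'$ with non-trivial connected kernel $A=\ker(\pi)^{\circ}$ and a character $\chi_0$ such that $\bigoplus_i\rmH^i(X_{\bar k},M_\chi)\neq 0$ iff $\chi_{|A}=(\chi_0)_{|A}$. (The underlying geometric input is that the Euler characteristic being zero forces the ``generic stalk'' of the Fourier--Mellin transform to vanish, and the support of this transform is then a proper \tac; its kernel is the maximal abelian subvariety $A$ under which $M$ is translation-invariant in the sense that $M$ arises by pullback from $X/A$ after a twist.) So the existence of the \tac\ $S$ is not the issue; the content is the \emph{quantitative} control of $k'$, of the degree of $A$, and of $\chi_0$.

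Next I would extract the bound on the degree of the kernel $A$. The point is that $A$ is characterized geometrically as the maximal abelian subvariety $B\subset X_{\bar k}$ such that $[+a]^*M\cong M$ (geometrically, up to a character twist) for all $a\in B$ — equivalently, $A+Z=Z$ where $Z$ is the (reduced) support of $M$, or of an appropriate constituent whose support is a proper subvariety when $M$ is not already a twisted pullback. The support $Z=\Supp(M)$ is a closed subvariety of $X$ whose degree with respect to $u$ is bounded in terms of $c_u(M)$, since $c_u$ controls the Betti numbers of $M$ restricted to generic linear sections (Lemma~\ref{lm-sums-betti}) and hence the degree of $\Supp(M)$ via Theorem~\ref{thm-lisse-locus-conductor}. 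Then Lemma~\ref{lem-subAV-inv-deg} applies verbatim: the maximal abelian subvariety $A$ with $A+Z_{\bar k}=Z_{\bar k}$ is defined over a finite extension $k'$ of $k$ and has degree, both bounded in terms of $(\deg Z,\dim X,n)$, hence in terms of $c_u(M)$. (One must first reduce, over a bounded extension, to the case where $M$ is arithmetically simple and where the relevant $Z$ is genuinely a proper subvariety; when $M$ is itself a twisted pullback from a positive-dimensional quotient, $A$ is already built in and the same count applies to the support of the kernel of $\pi$.) This gives the finite extension $k'$ and the degree bound on $A$.

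Finally, with $A$ now a $k'$-rational abelian subvariety of controlled degree, the \tac\ $S$ is the fibre $\{\chi : \chi_{|A}=(\chi_0)_{|A}\}$ of the restriction map $\charg{X}\to\charg{A}$, and by Remark~\ref{comp-generics}(3)--(4) it is indeed a \tac\ defined by the quotient $X_{k'}\to X_{k'}/A$ and \emph{some} character $\chi_0$; we may and do enlarge $k'$ (still by a bounded amount) so that $\chi_0$ can be taken among the characters of $X(k')$, absorbing it into the statement. The equivalence ``$\bigoplus_i\rmH^i(X_{\bar k},M_\chi)\neq 0$ iff $\chi\in S$'' is then exactly Weissauer's conclusion, now with all constants pinned down. \textbf{The main obstacle} I anticipate is the reduction step: Weissauer's theorem is stated for geometrically simple $M$, but to run the degree count through Lemma~\ref{lem-subAV-inv-deg} one needs the translation-stabilizer $A$ to be literally the stabilizer of a subvariety of bounded degree, which requires knowing that the Fourier--Mellin support is cut out by that subvariety and that, after a bounded twist, $M$ is not itself translation-invariant along a larger subgroup — tracking the constants through this geometric normalization (and through the passage between arithmetic and geometric simplicity, via Lemma~\ref{lem-arith-ss-geo}) is the delicate part; everything else is a routine application of the cited complexity estimates.
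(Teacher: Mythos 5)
Your overall route is the same as the paper's: quote Weissauer's results to get the maximal non-trivial abelian subvariety $A_{\bar k}$ of translation invariance and the structure $M\simeq \mcL_{\chi_0}\otimes q^*(\widetilde M)[\dim A]$ with $q\colon X_{\bar k}\to X_{\bar k}/A_{\bar k}$, control a distinguished $A$-invariant subvariety of bounded degree through the complexity, invoke Lemma~\ref{lem-subAV-inv-deg} to bound the field of definition and the degree of $A$, and identify the non-vanishing locus with the \tac\ defined by $(q,\chi_0^{-1})$. However, the quantitative step as you wrote it has a genuine gap. You claim that $A$ is ``equivalently'' the maximal abelian subvariety with $A+Z=Z$ for $Z=\Supp(M)$. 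Invariance of the support is much weaker than invariance of $M$: precisely in the structural situation produced by Weissauer, where $M$ is a character twist of $q^*(\widetilde M)[\dim A]$ with $\widetilde M$ of full support (for instance the intermediate extension of a lisse sheaf of higher rank on a dense open of $X/A$), one has $Z=X$, whose stabilizer is all of $X$, and Lemma~\ref{lem-subAV-inv-deg} applied to $Z$ then says nothing about $A$. Your fallbacks do not repair this: a geometrically simple $M$ has no smaller ``constituent'' to pass to, and bounding $A$ by ``the support of the kernel of $\pi$'' is circular, since that kernel is $A$ itself.

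The paper's fix is to work not with $Z$ but with $Y=Z\setminus U$, the complement in the support of the maximal open subset where $M$ is lisse. Its degree is bounded in terms of $c_u(M)$ by Theorem~\ref{thm-lisse-locus-conductor}; it is invariant under translation by $A_{\bar k}$ because $Z$ and $U$ are; and it is a proper, non-trivial invariant subvariety in the relevant cases (if $M$ were a twist of a shifted lisse sheaf pulled back from a positive-dimensional quotient, then $\chi(\widetilde M)=0$, contradicting the maximality of $A$), so Lemma~\ref{lem-subAV-inv-deg} can be applied to $Y$ to get both the bounded extension $k'$ and the degree bound for $A_{k'}$. One further point: the ``if and only if'' is not simply quoted from Weissauer in the paper; it is proved there by a short computation, namely $Rq_*(\mcL_{\chi\cdot\chi_0})=0$ when the restriction of $\chi\chi_0$ to $A_{\bar k}$ is non-trivial, giving $\rmH^i(X_{\bar k},M_\chi)=0$ for $\chi\notin S$, and the Künneth formula together with $\chi(\widetilde M)\neq 0$ giving non-vanishing of $\bigoplus_i\rmH^i(X_{\bar k},M_\chi)$ for $\chi\in S$; your write-up should include this verification rather than attribute it wholesale to the cited theorem.
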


\begin{proof}
  For any perverse sheaf~$N$ on~$X$, we denote by $\mathcal{S}_1(N)$ the set of
  characters~$\chi$ such that for some $i\neq 0$ the cohomology group
  $\rmH^i(X_{\bar{k}},N)$ is non-zero.
  
  By~\cite[Th.\,3 and Lem.\,6]{weissauer_vanishing_2016}, there exists a
  maximal abelian variety $A_{\bar{k}}$ of~$X_{\bar{k}}$ such that~$M$
  is invariant by translation by~$A_{\bar{k}}$, and this abelian variety
  is non-trivial.
  \par
  Denoting by~$q\colon X_{\bar{k}}\to X_{\bar{k}}/A_{\bar{k}}$ the
  quotient morphism, this is equivalent to the fact that~$M$ is
  isomorphic over~$\bar{k}$ to a perverse sheaf of the form
  $\mcL_{\chi_0}\otimes q^*(\widetilde{M})[\dim(A)]$ for some
  character
  $\chi_0\colon \pi_1(X_{\bar k})\to \overline{\Qq_{\ell}}^{\times}$
  and some simple perverse sheaf $\widetilde{M}$ on
  $X_{\bar k}/A_{\bar{k}}$. 
  \par
  We claim first that $A_{\bar{k}}$ is defined over $k$ and that the degree of~$A_{\bar{k}}$ in the image of~$u$ is
  bounded in terms of~$c_u(M)$.
  
  The fact that $A_{\bar{k}}$ is defined over $k$ is implicit in the
  proof of the existence of~$A$ by Weissauer. We recall his
  argument. First, a perverse sheaf~$\mcP_M$ is defined as follows
  (\cite[p.\,563]{weissauer_vanishing_2016}): the evaluation morphism
  $\ev\colon M^\vee * M\to \un$ (see Section~\ref{sec-convolution})
  induces morphisms of perverse sheaves $\pH^i(M^{\vee}*M)[-i]\to \un$
  for all~$i$, and each $\pH^i(M^{\vee}*M)[-i]$ is a direct sum of shifted irreducible perverse sheaves. Since by Lemma \ref{lem-prop-conv} $\dim\Hom (M^\vee * M, \un)=\dim \Hom (M,M)=1$, there is a unique integer~$\nu\geq 0$ and a unique perverse irreducible summand $\mcP_M[-\nu]$ of $\pH^\nu(M^\vee * M)[-\nu]$ on which the restriction of $\ev$ is not zero (see
  \cite[p.\,563\,and\,Remark\,(2),\,p.\,569]{weissauer_vanishing_2016} for details).

  
  Since $\chi(X_{\bar k}, M)=0$, we have $\nu\geq 1$, and
  by~\cite[Lemma\,2]{weissauer_vanishing_2016}, it follows that
  $\mcS_1(M)=\mcS_1(\mcP_M)$.
  Weissauer shows (see~\cite[Prop.\,2]{weissauer_vanishing_2016}) that
  there exists an abelian subvariety~$A$ of~$X$ of
  dimension~$\nu\geq 1$, with closed immersion $i\colon A\to X$, and a
  character~$\chi_0$ such that there is a geometric isomorphism
  $\mcP_M\simeq \mcL_{\chi_0}\otimes i_*\Qlb[\nu]$. Since~$A$ can therefore
  be recovered as the support of~$\mcP_M$, it is defined over~$k$.

  The perverse sheaf~$\mcP_M$ is invariant by translation
  by~$A_{\bar{k}}$, and then so is~$M$
  by~\cite[Remark\,(2)]{weissauer_vanishing_2016}. 
  
  Moreover, we have $c_u(\mcP_M)\ll c_u(M)$ by the definition
  of~$\mcP_M$ and Proposition~\ref{prop-JorHol-cond}, and
  $\deg(u\circ i(A))\ll c_u(\mcP_M)\ll c_u(M)$ by
  Theorem~\ref{thm-lisse-locus-conductor}.

  \par
  Let~$q\colon X\to X/A$ be the quotient morphism and~$\chi$ be a character not in the \tac\ $S$ of~$X_{k}$ defined
  by~$(q,\chi_0^{-1})$. We now compute for every~$i\in\Zz$ that
  \[
    \rmH^i(X_{\bar k}, \mcP_{M\chi})=\rmH^i((X/A)_{\bar k},
    Rq_*(\mcP_{M\chi})=\rmH^i((X/A)_{\bar k}, Rq_*(\mcL_{\chi\cdot
      \chi_0})\otimes \un[\dim(A)]).
  \]
  \par
  Since~$\chi$ is not in the \tac\ $S$, 
  the restriction of $\mcL_{\chi\cdot \chi_0}$ to $A_{\bar k}$ is
  non-trivial, and hence we have $Rq_*(\mcL_{\chi\cdot \chi_0})=0$ by
  Lemma~\ref{lm-image-character},
  and therefore $\rmH^i(X_{\bar k}, \mcP_{M\chi})=0$ for all~$i$.

  Conversely, if $\chi=\chi_0^{-1} \cdot (\tilde \chi\circ q)$, then we
  have
  \[
    \rmH^{*}(X_{\bar k}, \mcP_{M\chi})=\rmH^*(A_{\bar
      k},\Qlb)\otimes\rmH^{*}((X/A)_{\bar k},\mcL_{\tilde
      \chi}\un[\dim(A)]),
  \]
  by the Künneth formula, and this is non-zero.
\end{proof}


\begin{proof}[Proof of Theorem~\ref{thm-gen-van-AV}]
  We follow the method used by Krämer and Weissauer to
  prove~\cite[Th.~1.1]{KW_vanishing_AV}, keeping track of the
  complexity.

  Since~$X$ is an abelian variety, the two convolution products of
  Section~\ref{sec-convolution} coincide; for an object~$M$ of~$\Der(X)$
  and an integer $n\geq 1$, we denote by $M^{*n}$ the $n$-th iterated
  convolution product of~$M$.

  We recall the axiomatic framework of~\cite[Section
  5]{KW_vanishing_AV}, specialized to our situation as in
  \cite[Example~5.1]{KW_vanishing_AV}. Let $\Dd$ be the full subcategory
  of $\Der(X_{\bar k})$ whose objects are direct sums of shifts of
  geometrically semisimple perverse sheaves which are obtained by
  pullback from $X_{k_n}$ for some~$n\geq 1$. Let
  $\Pp\subset \Perv(X_{\bar k})$ be the corresponding subcategory of
  perverse sheaves, namely that with objects the geometrically
  semisimple perverse sheaves arising by pullback from $X_{k_n}$ for
  some~$n\geq 1$. Then the categories $\Pp$ and $\Dd$ satisfy the axioms
  (D1), (D2), and~(D3) of \cite[Section~5]{KW_vanishing_AV}, namely:
  \begin{itemize}
  \item[(D1)] The category $\Dd$ is stable under degree shift,
    convolution and perverse truncation functors; the category $\Pp$ is
    the heart of this $t$-structure, and is a semisimple abelian
    category.
  \item[(D2)] Any object $M$ of~$\Dd$ can be written (non-canonically) as
    a direct sum
    \[
      \bigoplus_{n\in \Zz} \pH^m(M)[-m].
    \]
  \item[(D3)] The Hard Lefschetz Theorem holds for objects of $\Dd$.
  \end{itemize}
  
  Let~$\Nn$ be the full subcategory of~$\Dd$ whose objects are the
  complexes~$N$ such that all geometrically simple constituents of all
  perverse cohomology sheaves $\pH^i(N)$ for $i\in \Zz$ have
  Euler--Poincaré characteristic equal to~$0$. By
  \cite[Cor.\,6.4]{KW_vanishing_AV}, the category~$\Nn$ satisfies the
  axioms (N1), (N2), (N3) and (N4) of \cite[Section 5]{KW_vanishing_AV},
  namely:
  \begin{itemize}
  \item[(N1)] We have $\Nn* \Dd\subset \Nn$ and the category $\Nn$ is
    stable under direct sums, retracts, degree shifts, perverse
    truncation and duality;
  \item[(N2)] If $N$ is an object of $\Nn$, then for most
    characters~$\chi$, we have $\rmH^i(X_{\bar k}, N_\chi)=0$ for
    all~$i$;
  \item[(N3)] The category $\Nn$ contains all objects $M$ of~$\Dd$ such
    that $\rmH^i(X_{\bar k},N)=0$ for all~$i\in\Zz$;
  \item[(N4)] The category $\Nn$ contains all simple objects of $\Pp$
    with zero Euler--Poincaré characteristic.
  \end{itemize}
  (Note that we will not make use of this version of (N2).)
  
  By~\cite[Theorem~9.1]{KW_vanishing_AV}, every $M\in \Pp$ is an
  $\Nn$-multiplier, meaning that for all integers $i\not=0$ and any
  integer $r\geq 1$, every subquotient of
  $\pH^i((M\oplus M^\vee)^{*r})$ lies in $\Nn$.
  
  We now argue as in the proof of \cite[Lemma~8.2]{KW_vanishing_AV} to
  prove Theorem~\ref{thm-gen-van-AV} for a perverse sheaf~$M$ on~$X$.

  \textbf{Step 1}. We assume that $M$ is arithmetically simple. By
  Lemma~\ref{lem-arith-ss-geo}, the base change
  of $M$ to $\bar{k}$ is an object of $\Pp$. We denote $g=\dim(X)$;
  by~(D2), we have
  \[
    M_{\bar{k}}^{*(g+1)}\simeq\bigoplus_{m\in \Zz} M_m[m],
  \]
  for some objects $M_m$ of~$\Pp$, which are in fact objects of~$\Nn$
  for~$m\not=0$ since $M$~is an $\Nn$-multiplier.

  By Proposition \ref{prop-JorHol-cond}, the number of integers~$m$
  such that $M_m$ is non-zero is bounded in terms of~$c_u(M)$, and
  similarly $c_u(M_m)$ is bounded in terms of~$c_u(M)$.  By the
  semisimplicity property in~(D1), each~$M_m$ is a direct sum of
  simple perverse sheaves in~$\Nn$, and by
  Proposition~\ref{prop-JorHol-cond}, the number and the complexity of
  these constituents are bounded in terms of $c_u(M)$. We denote
  by~$\mathscr{C}$ the finite set of all these simple perverse
  sheaves. By Lemma~\ref{lem-arith-ss-geo}, there exists a finite
  extension~$k'$ of~$k$, of degree bounded in terms of~$c_u(M)$, such
  that any element $C$ of~$\mathcal{C}$ is defined over~$k'$.

  We apply Proposition~\ref{prop-chi=0-trans-inv} to each
  $C\in\mathcal{C}$. Let~$\mcS_{C}$ denote the corresponding \tac; it is
  of degree bounded in terms of~$c_u(M)$.

  We claim that if $\chi\in \charg{X}$ does not belong to the union of
  the \tacs\ $\mcS_C$, then we have
  \[
    \rmH^i(X_{\bar k},M_\chi)=0
  \]
  for all~$i\not=0$.  This statement will conclude the proof of
  Theorem~\ref{thm-gen-van-AV} for~$M$.
  
  Let $\chi$ be a character that is not in any of the \tacs\
  $\mcS_C$. Since $M_{\chi}^{*(g+1)}$ is isomorphic to
  $(M^{*(g+1)})_{\chi}$ and $ H^*(X_{\bar{k}},C_{\chi})=0$ for
  $\chi\notin \mcS_C$, we have
  \[
    \rmH^i(X_{\bar k},M^{*(g+1)}_\chi)=\rmH^i(X_{\bar k},{M_0}_\chi),
  \]
  for any $i\in\Zz$. The right-hand side vanishes if $\abs{n}>g$
  since~$M_0$ is perverse. Finally, by the compatibility between
  convolution and the Künneth formula (see Lemma~\ref{lem-prop-conv}
  below) we also have an isomorphism
  \[
    \rmH^*(X_{\bar k},M^{*(g+1)}_\chi) \simeq
    \rmH^*(X_{\bar k},M_\chi)^{\otimes(g+1)},
  \]
  and by comparing we see that only the space
  $\rmH^0(X_{\bar k},M_\chi)$ may be non-zero, which establishes the
  claim.

  \textbf{Step 2.} Now let~$M$ be an arbitrary perverse sheaf on~$X$. By
  Proposition~\ref{prop-JorHol-cond}, the number of geometric
  Jordan--Hölder factors of $M$ is bounded in terms of~$c_u(M)$, and
  hence also the number of arithmetic Jordan-Hölder factors; we then
  apply the first step to each of the terms of a composition series
  for~$M$, and deduce the corresponding result for~$M$.
\end{proof}

\subsection{The relative version}\label{ssec-relative}

Our next goal is to establish a relative version of
Theorem~\ref{thm-gen-van-AV}.  The arguments over the complex numbers of Krämer and Weissauer
in~\cite[Section 2]{KW_vanishing_AV} do not
apply to our situation over finite fields, since they rely on Verdier stratifications. We instead use a
constructibility result of Orgogozo~\cite{Orgo_constr_mod}, which is a
stratification result, locally for the alteration
topology.\index{alteration topology}

\begin{theorem}
\label{thm-gen-van-AV-rel}
Let $S$ be a quasi-projective scheme over~$k$, and let $A$ be an abelian
variety over~$k$. Let $X=A\times S$, and denote by $f\colon X\to S$ the
canonical morphism. Fix a projective embedding~$u$ of~$X$.

Let $\alpha\colon X'\to X$ be an alteration defined over~$k$, and
$\mcX'$ a stratification of $X'$.

Let $a\leq b$ be integers. Let $M$ be an object of~$\Der(X)$ with
perverse amplitude $[a,b]$ such that $\alpha^*M$ is tame and
constructible along $\mcX'$.

There exist an integer $d\geq 1$, a finite extension $k'$ of~$k$ and a
finite family $(S_f)_{f\in \mathcal{F}}$ of \tacs\ of~$A_{k'}$, such that
\begin{enumth}
\item The integer $d$ and the size of~$\mathcal{F}$ are bounded in
  terms of~$c_u(M)$ and the data $(X,X',\alpha,\mcX')$,
\item Each \tac\ $S_f$  has degree at most~$d$,
\item The degree of $k'$ is at most~$d$,
\end{enumth}
with the property that for any $\chi\in\charg{A}_{k'}$ which does not
belong to the union of the~$S_f$, the object $Rf_*(M_{\chi})$ has
perverse amplitude~$[a,b]$.
\end{theorem}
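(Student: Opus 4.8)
The plan is to carry out the proof of Theorem~\ref{thm-gen-van-AV} relatively over $S$: the structure morphism $A\to\Spec k$ is replaced by $f\colon X=A\times S\to S$, the graded cohomology $\rmH^{*}(A_{\bar k},-)$ by $Rf_{*}$, and the constructibility‑along‑an‑$f$‑adapted‑stratification input that Krämer--Weissauer~\cite{KW_vanishing_AV} extract from Verdier (Whitney) stratifications over $\Cc$ will instead be supplied by Orgogozo's constructibility and moderation theorem (Theorem~\ref{thm-strat-alt-orgo}), fed with the given moderating datum $(\alpha,\mcX')$. Throughout, $f$ is smooth and proper, so $Rf_{!}=Rf_{*}$ and proper base change holds, which is exactly what lets fibrewise statements be assembled on $S$.

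First I would reduce to the upper perverse‑amplitude bound. Since $\DD(Rf_{*}(M_{\chi}))\simeq Rf_{*}(\DD(M)_{\chi^{-1}})$ by properness and \eqref{eqn:isomDualChar}, and since the image of a finite union of \tacs\ under $\chi\mapsto\chi^{-1}$ is again a finite union of \tacs\ of the same degree and field of definition, it suffices to prove: for every $N\in\Der(X)$ of perverse amplitude $\leq b$ with $\alpha^{*}N$ tame and constructible along $\mcX'$, there is a finite family of \tacs\ of $A_{k'}$, controlled as in the statement, outside of which $\pH^{i}(Rf_{*}(N_{\chi}))=0$ for $i>b$; one then applies this to $M$ and to $\DD(M)$ (whose amplitude is $[-b,-a]$) and takes the union of the resulting families, the second inverted. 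Next, a dévissage along the perverse truncation triangles $\ptau_{\leq i}M\to\ptau_{\leq i+1}M\to\pH^{i+1}(M)[-i-1]$, using that the perverse cohomology sheaves of a moderate complex remain moderate after a controlled refinement of $(\alpha,\mcX')$ (a consequence of~\cite{Orgo_constr_mod}) and that their complexities are bounded by $c_{u}(M)$ via Proposition~\ref{prop-JorHol-cond}, reduces everything to the case where $M$ is a perverse sheaf on $X$.

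With $M$ perverse I would run the convolution machinery of~\cite[Section~5]{KW_vanishing_AV} relatively: the fibrewise group law of $A$ gives a relative convolution $\ast_{S}$ on $\Der(X)$, and the subcategories of $\Der(X)$ built out of pullbacks from some $X_{k_{n}}$ of geometrically semisimple perverse sheaves whose restrictions to the geometric fibres have vanishing Euler--Poincaré characteristic satisfy the axioms (D1)--(D3) and (N1)--(N4) of \loccit; verifying stability of these categories under the relevant relative direct images is where Orgogozo's moderate constructibility is used in place of Verdier stratifications. As in the absolute proof one gets, with $g=\dim A$, a decomposition $M^{\ast_{S}(g+1)}\simeq\bigoplus_{m}M_{m}[m]$ in which $M_{m}$ for $m\neq0$ is a finite direct sum — of number and complexity bounded in terms of $c_{u}(M)$ by Proposition~\ref{prop-JorHol-cond} — of geometrically simple perverse sheaves with fibrewise Euler characteristic zero, together with a relative Künneth isomorphism $Rf_{*}(M^{\ast_{S}(g+1)}_{\chi})\simeq Rf_{*}(M_{\chi})^{\otimes(g+1)}$ (the relative form of Lemma~\ref{lem-prop-conv}). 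It then remains to establish a relative version of Proposition~\ref{prop-chi=0-trans-inv}: for a geometrically simple perverse sheaf $N$ on $X$ whose geometric fibres have vanishing Euler characteristic, the locus $\{(s,\chi):\rmH^{*}(A_{\bar s},(N|_{A_{s}})_{\chi})\neq0\}$ — which is constructible in $S\times\Pi(A)$, for instance by applying Theorem~\ref{thm-strat-alt-orgo} to the relative Fourier--Mellin transform of $N$ along $f$ — is contained, over a dense open of $S$ of controlled degree, in $S$ times a controlled finite union of \tacs\ of $A$. This is obtained by spreading the maximal abelian subvariety of translation‑invariance of the generic geometric fibre over each stratum of a controlled stratification of $S$, its degree and field of definition being bounded by feeding the fibrewise singular/support loci from Theorem~\ref{thm-lisse-locus-conductor} into Lemma~\ref{lem-subAV-inv-deg}, and then descending to the boundary strata by noetherian induction. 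Collecting the \tacs\ so produced for all $M_{m}$ with $m\neq0$, together with those coming from the dévissage and the duality reduction, and comparing perverse amplitudes through the relative Künneth isomorphism and Artin vanishing (which force the $m\neq0$ terms to contribute nothing outside these \tacs), yields the family $(S_{f})_{f\in\mathcal F}$ with the asserted bounds.

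The main obstacle is precisely this relative version of Proposition~\ref{prop-chi=0-trans-inv}: over $\Cc$, Krämer--Weissauer read off the structure of negligible perverse sheaves in families from Verdier stratifications, whereas over a finite field one must reconstruct it from Orgogozo's moderate constructibility combined with the effective degree bounds of Lemma~\ref{lem-subAV-inv-deg} and Theorem~\ref{thm-lisse-locus-conductor}; the genuine difficulty is that the abelian subvariety by which a simple perverse sheaf is translation‑invariant can jump from stratum to stratum of $S$, so that one must control not a single \tac\ but a bounded family of them uniformly over $S$.
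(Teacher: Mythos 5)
Your route is genuinely different from the paper's, and as written it has a real gap at its central step. You propose to rerun the entire Kr\"amer--Weissauer convolution machinery relatively over~$S$ (relative convolution $\ast_S$, relative K\"unneth, relative axioms (D1)--(N4)) and you correctly identify that everything then hinges on a relative version of Proposition~\ref{prop-chi=0-trans-inv}, i.e.\ on controlling, uniformly over~$S$, the \tacs\ attached to fibrewise Euler-characteristic-zero constituents. But that statement is exactly what you do not prove: you only sketch a spreading-out of the maximal translation-invariance abelian subvariety of the generic fibre plus noetherian induction, and you yourself flag that this subvariety can jump from stratum to stratum. There are further unaddressed points in the relative machinery itself: the restriction of a perverse sheaf on $A\times S$ to a special fibre need not be perverse, so ``fibrewise Euler characteristic zero'' and the relative analogues of (D1)--(D3) (semisimplicity, decomposition, Hard Lefschetz for $\ast_S$) are not available off the shelf for the arbitrary, non-pure, non-semisimple $M$ of the statement. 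So the proposal is a programme rather than a proof.

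The paper avoids all of this by \emph{not} relativizing the convolution argument at all; it reduces the relative statement to the already-proved absolute one applied at finitely many fibres. After reducing by duality and perverse truncation to showing that $Rf_*(M_\chi)$ is semiperverse, one applies Orgogozo's Theorem~\ref{thm-strat-alt-orgo} to $f$ and $(\alpha,\mcX')$ to get an alteration $\beta\colon S'\to S$ and a stratification $\mcS'$ such that $\beta^*Rf_*(M_\chi)$ is tame and constructible along $\mcS'$ for \emph{every} $\chi$; refining, one gets a stratification $\mcS$ of $S$ such that the support of each $\mcH^k(Rf_*(M_\chi))$ is a union of strats, together with a stratification $\mcX$ of $X$ whose strats are equidimensional over those of $\mcS$. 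By Lemma~\ref{lm-lang-weil} each strat acquires a rational point $s_i$ after a bounded extension, and one applies the absolute Corollary~\ref{cor-gen-van-AV-2} only to the finitely many restrictions $M_{s_i}$, collecting their \tacs. For $\chi$ outside these \tacs, a failure of semiperversity of $Rf_*(M_\chi)$ would, via the union-of-strats property and proper base change at $s_i$, force $\pH^k(M_{s_i})\neq 0$ for some $k\geq 1$; the equidimensionality of the strats of $\mcX$ over $S_i$ then turns this into $\dim\Supp(\mcH^r(M))>-r$, contradicting the semiperversity of $M$. In particular no uniformity of translation-invariance subvarieties over $S$ is needed: the \tacs\ are attached to finitely many chosen fibres, and the stratified constructibility of $Rf_*(M_\chi)$ propagates the fibrewise conclusion. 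If you want to salvage your approach, the missing relative Proposition~\ref{prop-chi=0-trans-inv} is precisely the hard point you would have to establish; the paper's fibrewise reduction shows it can be bypassed entirely.
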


By~\cite[Prop.\,1.6.7]{Orgo_constr_mod}, for any object~$M$ of
$\Der(X)$, there does exist an alteration $\alpha\colon X'\to X$ (in
fact, a finite surjective morphism) and a stratification $\mcX'$ of $X'$
such that $\alpha^*M$ is tame and constructible along $\mcX'$.
In particular, the following corollary follows.

\begin{corollary}
\label{cor-gen-van-AV-rel}
Let $S$ be a quasi-projective scheme over~$k$ and let~$A$ be an abelian
variety over~$k$. Define $X=A\times S$ and denote
$f\colon A\times S\to S$ the projection.

Let~$a\leq b$ be integers and let~$M$ be an object of~$\Der(X)$ with
perverse amplitude $[a,b]$. There exist a finite extension $k'$ of $k$
and a finite family $(S_f)_{f\in\mathcal{F}}$ of \tacs\ of $A_{k'}$ such
that for any character $\chi\in\charg{A}_{k'}$ that does not belong to
the union of the $S_f$, the object $Rf_*(M_{\chi})$ has
perverse amplitude~$[a,b]$.
\end{corollary}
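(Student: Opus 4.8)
The plan is to obtain Corollary~\ref{cor-gen-van-AV-rel} as an immediate consequence of Theorem~\ref{thm-gen-van-AV-rel}; the only extra ingredient is the existence of an alteration and a stratification trivializing the ramification of~$M$.

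First I would invoke \cite[Prop.\,1.6.7]{Orgo_constr_mod}: since $X=A\times S$ is a quasi-projective scheme over the finite field~$k$ and $M$ is an object of~$\Der(X)$, there exist an alteration $\alpha\colon X'\to X$ defined over~$k$ (one may in fact take $\alpha$ to be a finite surjective morphism) and a stratification $\mcX'$ of~$X'$ such that $\alpha^*M$ is tame and constructible along~$\mcX'$. This is precisely the hypothesis needed in order to apply Theorem~\ref{thm-gen-van-AV-rel} to~$M$, as already noted right after the statement of that theorem.

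Next I would apply Theorem~\ref{thm-gen-van-AV-rel} with these data $(X,X',\alpha,\mcX')$ and with integers $a\leq b$ bounding the perverse amplitude of~$M$. It produces an integer $d\geq 1$, a finite extension $k'$ of~$k$ and a finite family $(S_f)_{f\in\mathcal{F}}$ of \tacs\ of~$A_{k'}$, each of degree at most~$d$, with $[k':k]\leq d$, such that for every character $\chi\in\charg{A}_{k'}$ not lying in the union of the~$S_f$ the object $Rf_*(M_\chi)$ has perverse amplitude~$[a,b]$. Since the alteration and stratification are now fixed, the quantitative bounds furnished by the theorem reduce to constants (depending only on~$M$ and the chosen data), which the corollary does not even record; forgetting them yields exactly the assertion of Corollary~\ref{cor-gen-van-AV-rel}.

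Hence there is no genuine obstacle in the corollary itself: all the difficulty is concentrated in Theorem~\ref{thm-gen-van-AV-rel}, whose proof combines the relative form of the Krämer--Weissauer formalism with Orgogozo's constructibility and moderation results in order to control the variation of the cohomology of the fibres of~$f$ as~$\chi$ ranges over~$\charg{A}_{k'}$.
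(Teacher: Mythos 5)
Your proposal is correct and follows exactly the paper's own route: the paper deduces the corollary by citing Orgogozo's result \cite[Prop.\,1.6.7]{Orgo_constr_mod} to produce an alteration $\alpha\colon X'\to X$ and a stratification $\mcX'$ with $\alpha^*M$ tame and constructible, and then applying Theorem~\ref{thm-gen-van-AV-rel} with these data. Nothing is missing.
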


For the proof of Theorem~\ref{thm-gen-van-AV-rel}, we use the following
special case of \cite[Th.\,3.1.1]{Orgo_constr_mod}.

\begin{theorem}[Orgogozo]
\label{thm-strat-alt-orgo}
Let $f\colon X\to Y$ be a proper morphism defined over $k$. Let
$\alpha\colon X'\to X$ be an alteration and $\mcX'$ a stratification of
$X'$. Then there exist an alteration $\beta\colon Y'\to Y$ and a
stratification $\mcY'$ of $Y'$ such that for any object $M$
of~$\Der(X)$, the condition that $\alpha^*(M)$ is tame and constructible
along $\mcX'$ implies that $\beta^* Rf_*M$ is tame and constructible
along $\mcY'$.
\end{theorem}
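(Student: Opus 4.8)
The statement is Orgogozo's uniform moderation theorem, \cite[Th.\,3.1.1]{Orgo_constr_mod}, which is used as a black box above; the following sketches how its proof goes, in order to locate where the difficulty lies. The guiding principle is that the pair $(\alpha,\mcX')$ should be viewed as ``coordinates'' certifying that $M$ is tame and constructible in the refined, modulo-alterations sense, and that one must produce coordinates $(\beta,\mcY')$ downstairs which are valid \emph{simultaneously for every} such $M$. This uniformity in $M$ rules out any induction on the support of $M$ and forces a purely geometric argument on $f$ alone.

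The first reduction is to the case where $f$ has fibres of dimension at most one. Using Chow's lemma one may assume $f$ projective, fix a closed immersion $X\hookrightarrow\Pp^N\times Y$ over $Y$, and factor $f$ as a composition $X\to X_1\to\dots\to X_{N-1}\to Y$ of proper morphisms with one-dimensional fibres, where each $X_j$ is the schematic image of $X$ under a successive linear projection $\Pp^{m}\times Y\dashrightarrow\Pp^{m-1}\times Y$. Applying the one-dimensional case to $X\to X_1$ with coordinates $(\alpha,\mcX')$ produces coordinates on $X_1$; feeding these into $X_1\to X_2$, and so on, and composing the resulting alterations while refining the stratifications, reduces us to a proper relative curve. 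After a relative compactification we may assume it carries a relative normal crossings boundary, $M$ being extended by zero and its coordinates adjusted accordingly.

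For a proper relative curve the engine is de Jong's alteration theorem \cite{dejong-alt} (semistable reduction): after an alteration $\beta_0\colon Y_0\to Y$ the base change $X\times_Y Y_0$, together with the pulled-back boundary and stratification, acquires a log smooth (semistable) model over $Y_0$. On such a model the tame nearby- and vanishing-cycle formalism applies: $\nearb$ in its tame incarnation commutes with proper pushforward along relative curves and preserves tameness, while Deligne's generic base change theorem and the finiteness theorems of \cite{SGA4_1-2} provide a dense open $V\subset Y_0$ over which $Rf_*M$ is lisse along the strata of a stratification depending only on $(f,\alpha,\mcX',\beta_0)$ and tame along $Y_0\setminus V$. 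Iterating this over the reduced locally closed pieces of $Y_0\setminus V$, then over their strata, and so on, and using that a finite tower of alterations over successive strata is again an alteration over a common refinement, the process terminates by Noetherian induction and outputs the desired $(\beta,\mcY')$; since at no step did the construction depend on $M$, the conclusion is uniform in $M$.

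The main obstacle is the propagation of \emph{tameness}, not of constructibility. That $Rf_*M$ is constructible modulo alterations is comparatively soft; what is delicate is bounding the wild ramification of $Rf_*M$ along the boundary of the base uniformly in $M$, and this is precisely what forces one to put the family into semistable form over $Y$ via alterations and to analyse the tame vanishing cycles in the resulting log smooth situation. That analysis is the technical heart of \cite{Orgo_constr_mod}, and we do not reproduce it here.
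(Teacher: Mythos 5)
The paper does not prove this statement at all: it is imported verbatim as a special case of Orgogozo's theorem \cite[Th.\,3.1.1]{Orgo_constr_mod}, so your treatment of it as a black-box citation matches the paper exactly. Your sketch of the external proof (dévissage to proper relative curves, de Jong's alterations to reach a semistable/log smooth situation, tame nearby cycles and generic base change, Noetherian induction, with uniformity in $M$ coming from the fact that the construction depends only on $(f,\alpha,\mcX')$) is a reasonable outline of Orgogozo's argument, but since the paper offers no proof there is nothing further to compare.
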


\begin{proof}[Proof of Theorem \ref{thm-gen-van-AV-rel}]
  By shifting and Verdier duality, it is enough to prove the weaker
  statement where ``$M$ is of perverse amplitude $[a,b]$'' is replaced
  by ``$M$ is semiperverse''.

  Apply Theorem~\ref{thm-strat-alt-orgo} to the proper morphism
  $f\colon A\times S\to S$ and to the alteration~$\alpha$. We obtain an
  alteration $\beta\colon S'\to S$ and a stratification $\mcS'$ of $S'$
  such that $\beta^*Rf_*M$ is tame and constructible along $\mcS'$. Note
  that since any~$\mcL_{\chi}$ is lisse and tame, $\alpha^*M_\chi$ is
  tame and constructible along $\mcX'$ (see
  \cite[5.2.5]{Orgo_constr_mod} for details), and hence the complex
  $\beta^*Rf_*M_{\chi}$ is also tame and constructible along~$\mcS'$ for
  any~$\chi\in\charg{A}$.

  Consider the image of the stratification $\mcS'$ by $\beta$. By
  Chevalley's theorem, it is a covering of $S$ by constructible sets,
  but not necessarily a partition. Refine this covering and remove
  redundant strats in order to obtain a stratification $\mcS$ of $S$
  where all strats are equidimensional. Then refine the stratification
  $\mcS'$ in such a way that preimages by $\beta$ of strats of $\mcS$
  are union of strats of $\mcS'$ and that $\beta$ induces surjective
  morphisms from each strat of~$\mcS'$ to a strat of~$\mcS$.

  Let~$\chi\in\charg{A}$. Even if the complex $Rf_*M_{\chi}$ is not
  necessarily constructible along $\mcS$, it has the property that for
  any strat $S_i$ of $\mcS$, the support of the restriction of each
  cohomology sheaf of $Rf_*M_{\chi}$ to $S_i$ is either $S_i$ or empty,
  since the analogue property holds for $\beta^*Rf_*M_{\chi}$ and the
  stratification $\mcS'$, and $\beta$ is surjective from a strat of
  $\mcS'$ to one of~$\mcS$.

  Consider now the preimage of the stratification $\mcS$ by $f$, and
  also the image of the stratification $\mcX'$ of $X'$ by
  $\alpha$. Choose a stratification~$\mcX$ of $X$ that refines both
  these coverings of~$X$, with the property that for any strats $X_i$
  and~$S_j$ of~$\mcX$ and~$\mcS$ such that $f(X_i)\subset S_j$, the
  restriction of~$f$ to~$X_i$ is smooth (in particular, that $X_i$ is
  equidimensional above~$S_j$). Now refine $\mcX'$ similarly to
  $\mcS'$, in such a way that preimages by $\alpha$ of strats of
  $\mcX$ are union of strats of $\mcX'$ and $\alpha$ induces
  surjective morphisms from any strat of $\mcX'$ to a strat of $\mcX$.

  By Lemma~\ref{lm-lang-weil}, up to replacing $k$ with a finite
  extension of degree bounded in terms of $c_u(M)$ (and the fixed data
  $(X,X',\alpha,\mcX')$), we can assume that each strat $S_i$ of~$\mcS$
  has a $k$-rational point $s_i$. We now apply
  Corollary~\ref{cor-gen-van-AV-2} for each~$i$ to the restriction
  $M_{s_i}$ of $M$ to~$f^{-1}(s_i)\simeq A$ for each $i$, obtaining
  extensions~$k_i$ of~$k$ and families $(S_{f,i})_{f\in\mathcal{F}_i}$
  of \tacs\ of~$A_{k_i}$ satisfying the properties of this corollary.

  Let~$k'$ be the compositum of all~$k_i$, which has degree bounded in
  terms of $c_u(M)$ and the fixed data. We claim that for any
  character $\chi\in\charg{A}_{k'}$ that belongs to none of the \tacs\
  $S_{f,i}$ for any~$i$, the object $Rf_*M_{\chi}$ is
  semiperverse. This will conclude the proof.

 
  Suppose that the claim fails for some~$\chi$. Then there exists an
  integer $k\in \Zz$ such that
  $$
  \dim\Supp(\mcH^k(Rf_*(M_\chi)))>-k.
  $$
  Since $\Supp(\mcH^k(Rf_*(M_\chi)))$ is a union of strats of $\mcS$,
  there is a strat $S_i\subset\Supp(\mcH^k(Rf_*(M_\chi)))$ of $\mcS$ of
  dimension $>-k$. In particular, we have
  $\mcH^k(Rf_*(M_\chi))_{s_i}\neq 0$. By proper base change, we have
  $\mcH^k(Rf_*(M_\chi))_{s_i}=\rmH^k(A_{\bar k}\times
  \set{s_i},{M_{s_i}}_\chi)$, and hence the latter is also non-zero. From
  the assumption on~$\chi$ and Corollary~\ref{cor-gen-van-AV-2}, we have
  \[
    \rmH^k(A_{\bar k}\times \set{s_i},{M_{s_i}}_\chi)\simeq
    \rmH^0(A_{\bar k}\times \set{s_i},\pH^k(M_{s_i})_\chi),
  \] 
  and hence $\pH^k(M_{s_i})=\pH^0(M_{s_i}[k])\neq 0$. By definition of the
  perverse t-structure, this implies that there exists some $r\in \Zz$
  such that
  \[
    \dim \Supp(\mcH^r(M_{s_i}))\geq-r+k.
  \]

  The support of $\mcH^{r}(M)$ is a union of strats of $\mcX$, so there
  exists a strat $X_j\subset \Supp(\mcH^{r}(M))$ of $\mcX$ with
  $\dim(X_j\cap A\times \set{s_i})=\dim \Supp(\mcH^r(M_{s_i}))$. Since $X_j$ is equidimensional
  over $S_i$ and $\dim(S_i)>-k$, we conclude that
  \[
    \dim\Supp(\mcH^r(M))\geq \dim(X_j)\geq-r+k+\dim(S_i)>-r,
  \]
  contradicting the semiperversity of $M$.
\end{proof}

We now prove a vanishing theorem for higher cohomology groups of
perverse sheaves on abelian varieties. We begin with an analogue of
Theorem~\ref{thm-high-van-torus}. 

\begin{proposition}
\label{prop-av-hv-main}
Let $A$ be a $g$-dimensional algebraic variety over~$k$, let $S$ be a
quasi-projective scheme over~$k$, and define $X=A\times S$.  Fix a
projective embedding~$u$ of $X$.

Let $\alpha\colon X'\to X$ be an alteration and $\mcX'$ a
stratification of $X'$.

Let $i$ be an integer with $1\leq i\leq g$. Let $a\leq b$ be integers.

Let~$M$ be an object of $\Der(X)$ with perverse amplitude $[a,b]$ such
that $\alpha^*M$ is tame and constructible along $\mcX'$.  There exist a
finite extension $k'$ of~$k$ and a family $(S_f)_{f\in\mathcal{F}}$ of
\tacs\ of~$A_{k'}$ of dimension~$\leq d-i$ with the property that for
any~$\chi\in\charg{A}_{k'}$ which does not belong to the union of
the~$S_f$ there exists a quotient abelian variety $q\colon A_{k'}\to B$
of dimension at most $i-1$ such that $Rq_{S*}M_{\chi}$ has perverse
amplitude $[a,b]$.

Moreover, the degree of $k'$ over~$k$ and the size of~$\mathcal{F}$
depend only on $c_u(M)$ and $(X,X',\alpha,\mcX')$.

\end{proposition}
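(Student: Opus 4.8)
The plan is to mimic the proof of Theorem~\ref{thm-high-van-torus}, replacing the relative vanishing statement for tori (Theorem~\ref{thm-tori-vanishing-rel}) by its abelian analogue Theorem~\ref{thm-gen-van-AV-rel}. The only genuinely new point is that an abelian variety is not literally a product, whereas both Theorem~\ref{thm-gen-van-AV-rel} and the combinatorial Lemma~\ref{lm-geometric} are stated for products. So I would first invoke Poincaré complete reducibility to fix, possibly after a bounded extension of $k$, an isogeny $\phi\colon\prod_{j=1}^{s}A_j\to A$ with each $A_j$ simple. For $J\subset[s]$ set $A_J=\prod_{j\in J}A_j$, $C_J=\phi(A_J)$, $C'_J=\phi(A_{[s]\setminus J})$, and let $\pi_J\colon A\to A/C_J$ be the quotient. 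Then $C_J,C'_J$ are abelian subvarieties of $A$ with $C_J+C'_J=A$ and $C_J\cap C'_J$ finite, so the addition map $\mu^J\colon C_J\times C'_J\to A$ and the composite $\nu^J\colon C'_J\hookrightarrow A\xrightarrow{\pi_J} A/C_J$ are isogenies, and the square with corners $C_J\times C'_J\times S$, $A\times S$, $C'_J\times S$, $(A/C_J)\times S$ --- top map $\mu^J_S=\mu^J\times\id_S$, bottom map $\nu^J_S$, left map the projection killing $C_J$, right map $\pi_{J,S}$ --- is Cartesian.

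For each $J$ with $\dim A_J>g-i$ (so that $\dim(A/C_J)\le i-1$), I would then apply Theorem~\ref{thm-gen-van-AV-rel} to the projection $C_J\times(C'_J\times S)\to C'_J\times S$, with abelian-variety factor $C_J$ and base $C'_J\times S$, applied to the complex $(\mu^J_S)^*M$: this pullback is still of perverse amplitude $[a,b]$ since $\mu^J_S$ is finite flat of relative dimension~$0$ and hence $(\mu^J_S)^*$ is $t$-exact, and it is tame and constructible along the pullback of $(X',\alpha,\mcX')$ along $\mu^J_S$ (using \cite[5.2.5]{Orgo_constr_mod} together with the stability of tameness and constructibility under pullback), while $c_u((\mu^J_S)^*M)\ll c_u(M)$ by Theorem~\ref{thm-conductors}. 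One thus obtains a bounded extension of $k$ and a bounded family of \tacs\ $(T_{J,m})_m$ of $C_J$, with kernels non-trivial abelian subvarieties $K_{J,m}$, such that for $\chi\in\charg{A}$ whose restriction $\chi|_{C_J}$ avoids all $T_{J,m}$ one has, by the projection formula and the identity $(\mu^J)^*\mcL_\chi\simeq\mcL_{\chi|_{C_J}}\boxtimes\mcL_{\chi|_{C'_J}}$ together with the $t$-exactness of twisting by the lisse sheaf $\mcL_{\chi|_{C'_J}}$, that $Rp_*((\mu^J_S)^*(M_\chi))$ has perverse amplitude $[a,b]$ on $C'_J\times S$, where $p$ is the left vertical map. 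By the Cartesian square and proper base change this complex is $(\nu^J_S)^*R\pi_{J,S*}(M_\chi)$; since $\nu^J_S$ is finite surjective --- factoring it through a universal homeomorphism followed by a finite étale surjection --- pullback along it reflects perverse amplitude, so $R\pi_{J,S*}(M_\chi)$ has perverse amplitude $[a,b]$, with $q=\pi_J\colon A\to A/C_J$ of dimension $\le i-1$, as required.

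Finally I would assemble the family $(S_f)$ exactly as in Theorem~\ref{thm-high-van-torus}. If $\chi$ fails the conclusion, then for every $J$ with $\dim A_J>g-i$ there is an index $m_J$ with $\chi|_{C_J}\in T_{J,m_J}$, i.e. $\chi$ agrees on $K_{J,m_J}$ with the character defining that \tac; for a fixed such choice $f=(m_J)_J$, the set of such $\chi$ is the intersection over $J$ of the \tacs\ of $A$ obtained by pulling back the $T_{J,m_J}$, which by Lemma~\ref{lem:inter:tac} is empty or a \tac\ of $A$ with kernel the subgroup $\hat D_f$ of $A$ generated by the $K_{J,m_J}$. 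Pulling the $K_{J,m_J}\subset C_J=\phi(A_J)$ back through $\phi$ to non-trivial subgroups of the coordinate factors $A_J$ and applying Lemma~\ref{lm-geometric} to $\prod_jA_j$ and to $i$, the subgroup of $\prod_jA_j$ that they generate has dimension $\ge i$, hence its image under $\phi$ --- which is contained in $\hat D_f$ --- also has dimension $\ge i$, so $S_f$ has dimension $\le g-i$. Taking $(S_f)$ to be the non-empty members of this finite family, over a finite extension $k'$ containing all the bounded extensions produced above, the bounds on $[k':k]$ and $|\mathcal F|$ hold because only finitely many $J$ occur and the data $(X',\alpha,\mcX')$ enters only through its bounded pullbacks. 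The step I expect to be the main obstacle is precisely this transfer through the isogeny $\phi$: both Theorem~\ref{thm-gen-van-AV-rel} and Lemma~\ref{lm-geometric} require a genuine product, and the device that makes everything go through is the Cartesian square above, which converts relative vanishing on the split space $C_J\times C'_J\times S$ into control of $R\pi_{J,S*}$ over the non-split quotient $(A/C_J)\times S$.
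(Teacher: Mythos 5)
Your argument is correct, and its skeleton is the same as the paper's: Poincaré complete reducibility, one application of Theorem~\ref{thm-gen-van-AV-rel} for each ``large'' sub-product, then Lemma~\ref{lem:inter:tac} and Lemma~\ref{lm-geometric} to assemble the \tacs\ and bound their dimension. Where you genuinely diverge is in how the non-splitness of $A$ is handled. The paper fixes an isogeny $f\colon A\to B$ \emph{onto} a product of simple abelian varieties, proves the proposition for the split $B$, and transfers it back by considering the pushforwards $f_{S*}(M_{\chi_0})$ for finitely many characters $\chi_0$ whose restrictions exhaust $\widehat{\ker f}$; this forces a second invocation of Orgogozo's Theorem~\ref{thm-strat-alt-orgo} to manufacture tameness data for the pushforwards, and the quotients for $A$ are obtained by composing with $f$. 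You instead never leave $A$: for each $J$ you pull back along the addition isogeny $\mu^J\colon C_J\times C'_J\to A$, apply the relative theorem on the literally split space $C_J\times C'_J\times S$, and descend the conclusion to $R\pi_{J,S*}(M_\chi)$ on $(A/C_J)\times S$ via the Cartesian square, proper base change, and the fact that pullback along the isogeny $\nu^J_S$ is t-exact and conservative. Your route buys a cleaner transfer (no re-application of Orgogozo, no bookkeeping over the characters of $\ker f$, and the quotients $A\to A/C_J$ are intrinsic to $A$), at the cost of the per-$J$ base-change diagram and of having to carry the alteration data through the pullback; the paper's route keeps every application of Theorem~\ref{thm-gen-van-AV-rel} in the product setting at the cost of the global pushforward reduction.

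Two small points of hygiene rather than gaps: the t-exactness of $(\mu^J_S)^*$ and $(\nu^J_S)^*$ is best justified not by ``finite flat of relative dimension $0$'' but by factoring each isogeny as a purely inseparable (radicial) isogeny followed by a finite \'etale one, each of which (times $\mathrm{id}_S$) has t-exact pullback --- this is exactly the dévissage the paper itself uses in Step~1 of the proof of Theorem~\ref{thm-high-vanish}; and for the tameness input to Theorem~\ref{thm-gen-van-AV-rel} you should take the base change $X'\times_X(C_J\times C'_J\times S)$ with the pulled-back stratification explicitly, so that the resulting bounds on $[k':k]$ and $|\mathcal F|$ visibly depend only on $c_u(M)$ and $(X,X',\alpha,\mcX')$ as claimed.
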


\begin{proof}
  As in the proof of Theorem~\ref{thm-strat-alt-orgo}, we can work
  with each perverse cohomology sheaf, and it is therefore enough to
  prove the proposition for $a=b=0$, which means that $M$ is perverse.

  By Poincaré's complete reducibility theorem, up to replacing $k$
  with a finite extension, there exists an isogeny $f\colon A\to B$
  over~$k$ where $B$ is a product of geometrically simple abelian
  varieties. We first claim that it is enough to prove the proposition
  for $B$.

  To see this, we assume that the statement holds for~$B$. Consider the
  base change $f_B\colon X\to B\times S$.  Since $f$ is finite,
  $f_{B*}(M)$ is perverse for every perverse sheaf $M$ on $X$. By
  Theorem \ref{thm-strat-alt-orgo}, we find an alteration
  $\beta\colon B'\to B\times S$ and a stratification of $B'$ such that
  $\beta^* f_{B*}(M_\chi)$ is tame and adapted for every $M$ such that
  $\alpha^*M$ is tame and adapted to~$\mcX'$. Then the proposition can
  be applied to $f_{B*}(M_\chi)$. Let $N$ be the kernel of the isogeny
  $f$. Choose up to $\abs{N}$ characters of $A$ whose restrictions to
  $N$ run over the character group of~$N$. Then the proposition for $A$
  follows by applying the result for $B$ to the objects
  $f_{B*}(M_\chi)$, where $\chi$ varies among this finite set of
  characters. This proves the claim.

  So we assume that $A=A_1\times \cdots \times A_s$ is a product of
  geometrically simple abelian varieties. Set $g_j=\dim(A_j)$ for
  all~$j$. For any subset $I\subset [s]$, let
  $$
  A_I=\prod_{i\in I}A_i,
  $$
  viewed as a subvariety of~$A$, and let
  $A_I^{\bot}=A_{[s]\setminus I}$ be the kernel of the canonical
  projection $A\to A_I$.

  Fix an integer $1\leq i\leq g=\dim(A)$. Let $\mcE$ be the set of
  subsets $I\subset [s]$ such that $\dim(A_I)<i$; for $I\in\mcE$, we
  have $\dim(A_{I}^{\bot})>g-i$. 

  Fix $I\in\mcE$. Let $p\colon A\times S\to A_I\times S$ be the
  projection. We apply Theorem~\ref{thm-gen-van-AV-rel} to~$p$ and~$M$,
  i.e., with $(A,S)$ there equal to $(A_I^{\bot},A\times S)$. Up to
  replacing $k$ by a finite extension~$k'$, we obtain a finite family
  $(S_{I,j})_{j\in X_I}$ of \tacs\ of $A_{I,k'}^{\bot}$ such that the
  object $Rp_*(M_{\chi})$ is perverse on $A_I\times S$ for any
  $\chi\in\charg{A}^{\bot}_{I,k'}$ not in the union of these \tacs.
  Let
  $$
  (\pi_{I,j},\chi_{I,j})_{j\in X_I}
  $$
  be the projection and characters defining these \tacs, and
  let~$K_{I,j}=\ker(\pi_{I,j})$, viewed as a subgroup of~$A_{k'}$.


  Let~$\mathcal{F}$ be the set of all maps $f$ from $\mcE$ to the
  disjoint union of the $\mathcal{S}_I$ that send a subset $I$ to an
  element $j\in X_I$ for each~$I$; this set is finite. For
  $f\in\mathcal{F}$, let~$S_f$ be the intersection of the \tacs\
  of~$A_{k'}$ defined by
  $$
  (A_{k'}\to A_{k'}/K_{I,f(I)},\chi'_{I,f(I)})
  $$
  for~$I\in \mcE$.
  \par
  We claim that the family $(S_f)_{f\in\mathcal{F}}$ (to be precise, the
  subfamily where $S_f$ is not empty) satisfies the assertions of the
  theorem.
  \par
  Indeed, first of all Lemma~\ref{lem:inter:tac} shows that~$S_f$ is
  either empty or is again a \tac; moreover, in the second case, it is
  defined by the projection $A_{k'}\to A_{k'}/B_f$ where $B_f$ is the
  abelian subvariety in $A_{k'}$ generated by the $K_{I,f(I)}$, viewed
  as subvarieties of $A_{k'}$. For such~$f$, by Lemma~\ref{lm-geometric}
  applied to~$A$ and the subgroups~$K_{I,f(I)}$, we have
  $\dim(B_f)\geq i$, and hence the quotient
  $$
  p_f\colon A_{k'}\to A_{k'}/B_f
  $$
  has image of dimension $\leq d-i$.

  Finally, let $\chi\in\charg{A}_{k'}$ be a character that does not
  belong to any of the \tacs\ $S_f$. This implies that there exists
  some~$f\in\mathcal{F}$, some subset $I\subset \mcE$ and
  some~$j\in X_I$ such that the restriction $\chi_I$ of $\chi$
  to~$A_{I,k'}^{\bot}$ is not equal to~$\chi_{I,j}$.
  
  We can write~$\chi=\chi_I\chi'$ where $\chi'$ is a character
  of~$A_{I,k'}$. Then, considering the particular quotient
  $q\colon A_{k'}\to A_{I,k'}$, the base change $q_S$ is the canonical
  projection $q_I$ and hence
  $$
  R_{q_S*}M_{\chi}=R_{qS*}(M_{\chi_I})\otimes \mcL_{\chi'}
  $$
  is perverse.
\end{proof}

As in the case of tori, we state two further consequences that are
useful in applications.

\begin{corollary}
\label{cor-av-hv}
Let $A$ be an abelian variety defined over $k$ of dimension $g$. Let~$M$
be a perverse sheaf on~$A$.  For $-g\leq i\leq g$, the sets
$$
\{\chi\in\charg{A}\,\mid\,
\rmH^i(A_{\bar{k}},M_{\chi})\not=0\}
$$
are contained in a finite union of \tacs\ of~$A$ of
dimension~$\leq g-|i|$, and in particular they have character
codimension at least~$|i|$.
\end{corollary}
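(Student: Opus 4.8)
The plan is to deduce the statement from the relative higher vanishing theorem for abelian varieties, Proposition~\ref{prop-av-hv-main}, in the same way that Corollary~\ref{cor-high-van-torus} was deduced from Theorem~\ref{thm-high-van-torus} in the case of tori. For $i=0$ there is nothing to prove, since any subset of~$\charg{A}$ has character codimension~$\geq 0$, so I fix an integer~$i$ with $1\leq|i|\leq g$. By~\cite[Prop.\,1.6.7]{Orgo_constr_mod} there is an alteration $\alpha\colon X'\to A$ defined over~$k$ and a stratification~$\mcX'$ of~$X'$ such that $\alpha^*M$ is tame and constructible along~$\mcX'$. I would then apply Proposition~\ref{prop-av-hv-main} with $S=\Spec(k)$ (so $X=A$ and the projection~$q_S$ becomes the quotient morphism~$q$), with $a=b=0$ (so the hypothesis reduces to~$M$ being perverse), and with the integer~$|i|$. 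This produces a finite extension~$k'$ of~$k$ and a finite family $(S_f)_{f\in\mathcal{F}}$ of \tacs\ of~$A_{k'}$, each of dimension~$\leq g-|i|$, with the property that for every $\chi\in\charg{A}_{k'}$ lying in no~$S_f$ there is a quotient abelian variety $q\colon A_{k'}\to B$ with $\dim(B)\leq|i|-1$ such that $Rq_*(M_\chi)$ is a perverse sheaf on~$B$.

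Next I carry out the cohomological vanishing. Fix such a~$\chi$. Since $R\Gamma(A_{\bar k},-)=R\Gamma(B_{\bar k},-)\circ Rq_*$, one has $\rmH^j(A_{\bar k},M_\chi)\simeq\rmH^j(B_{\bar k},Rq_*(M_\chi))$ for every~$j$. As $Rq_*(M_\chi)$ is perverse on the variety~$B$, which has dimension~$\leq|i|-1$, its hypercohomology is concentrated in degrees~$[-(|i|-1),|i|-1]$ by~\cite[4.2.4]{BBD-pervers}; since $|i|>|i|-1$, both $\rmH^i(A_{\bar k},M_\chi)$ and $\rmH^{-i}(A_{\bar k},M_\chi)$ vanish. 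Therefore $\{\chi\in\charg{A}\mid\rmH^i(A_{\bar k},M_\chi)\neq 0\}$ is contained in $\bigcup_{f\in\mathcal{F}}S_f$, a finite union of \tacs\ of~$A_{k'}$ of dimension~$\leq g-|i|$; this is the first assertion. (There is nothing further to verify for cohomology with compact support, since~$A$ is proper and so $\rmH^i_c(A_{\bar k},-)=\rmH^i(A_{\bar k},-)$.)

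It remains to pass from this containment to the character-codimension bound. A \tac\ of dimension~$\leq g-|i|$ defined over~$k$ has character codimension~$\geq|i|$ by Remark~\ref{comp-generics}(4); for a \tac\ of~$A_{k'}$ defined by a quotient $A_{k'}\to A_f$, one bounds $|(S_f\cap\charg{A})(k_n)|$ by $|A_f(k'k_n)|$ via the injection $\charg{A}(k_n)\hookrightarrow\charg{A}(k'k_n)$, exactly as in the proof of Corollary~\ref{cor-gen-van-AV-3}, and a finite union of sets of character codimension~$\geq|i|$ again has character codimension~$\geq|i|$. I expect this last piece of bookkeeping with the auxiliary extension~$k'$ to be the only point requiring any care: all the real content, namely the production of the \tacs\ and of the quotient~$q\colon A_{k'}\to B$ of small dimension, is packaged into Proposition~\ref{prop-av-hv-main}, which in turn rests on Theorem~\ref{thm-gen-van-AV-rel} and Orgogozo's constructibility theorem.
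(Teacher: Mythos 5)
Your proof is correct and is essentially the paper's own argument: the paper simply says to argue as in Corollary~\ref{cor-high-van-torus}, applying Proposition~\ref{prop-av-hv-main} with $a=b=0$ (after using Orgogozo's result to produce an alteration making $\alpha^*M$ tame), which is exactly what you spell out, including the vanishing of cohomology of a perverse sheaf on a base of dimension $\leq |i|-1$ and the bookkeeping over the auxiliary extension~$k'$.
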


\begin{proof}
  We argue as in the proof of Corollary~\ref{cor-high-van-torus} using
  the previous theorem (with $a=b=0$), as we may since we have
  recalled that one can find an alteration~$\alpha$ of $A$ such that
  the pull-back $\alpha^*M$ is tame.
\end{proof}

\begin{corollary}
\label{cor-av-hv-main}
Let $A$ be a $g$-dimensional algebraic variety over~$k$, let $S$ be a
quasi-projective scheme over~$k$, and define $X=A\times S$.  Fix a
projective embedding~$u$ of $X$ and denote by~$q$ the projection $X\to
S$. 

Let $\alpha\colon X'\to X$ be an alteration and $\mcX'$ a
stratification of $X'$.

Let~$M$ be a perverse sheaf on~$X$ such that $\alpha^*M$ is tame and
constructible along $\mcX'$. 
There exists a finite extension $k'/k$ and a partition of
$\charg{A}_{k'}$ into subsets $(S_i)_{0\leq i\leq g}$ of character
codimension $\geq i$ such that for any~$i$ and~$\chi\in S_i$, the object
$Rq_!M_{\chi}$ has perverse amplitude~$[-i,i]$.

Moreover, for any integer~$n\geq 1$, we have
\begin{equation}\label{eq-abelian-bound}
  |S_i(k_n)|\ll |k|^{n(g-i)},
\end{equation}
where the implied constant depends only on $(c_u(M),X,X',\alpha,\mcX')$.
\end{corollary}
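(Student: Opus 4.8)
The plan is to deduce this from Proposition~\ref{prop-av-hv-main} in exactly the way Corollary~\ref{cor-split-torus} was deduced from Theorem~\ref{thm-high-van-torus}; since $M$ is already perverse there is nothing to reduce, and I write $q\colon X=A\times S\to S$ for the projection (which is proper, $A$ being an abelian variety, so $Rq_!=Rq_*$). First I would, for each integer $i$ with $1\leq i\leq g$, apply Proposition~\ref{prop-av-hv-main} to $M$, the given data $(X,X',\alpha,\mcX')$ and the integer $i$, obtaining a finite extension $k'_i$ of $k$ and a finite family $\mathcal{F}_i$ of \tacs\ of $A_{k'_i}$ of dimension $\leq g-i$, with $[k'_i:k]$ and $|\mathcal{F}_i|$ bounded in terms of $(c_u(M),X,X',\alpha,\mcX')$. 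Then I would let $k'$ be the compositum of the $k'_i$ (still of controlled degree), base change all these families to $A_{k'}$, and set $\widetilde{S}_i=\bigcup_{j\geq i}\bigcup_{f\in\mathcal{F}_j}S_f$ for $1\leq i\leq g$, together with $\widetilde{S}_0=\charg{A}_{k'}$ and $\widetilde{S}_{g+1}=\emptyset$. This is a decreasing chain in which $\widetilde{S}_i$ is a finite union of \tacs\ of dimension $\leq g-i$, so that the sets $S_i=\widetilde{S}_i\setminus\widetilde{S}_{i+1}$ for $0\leq i\leq g$ form a partition of $\charg{A}_{k'}$.

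Second, I would check the perverse amplitude. Fix $i<g$ and $\chi\in S_i$. Then $\chi\notin\widetilde{S}_{i+1}$, so in particular $\chi$ avoids all the \tacs\ of $\mathcal{F}_{i+1}$, and Proposition~\ref{prop-av-hv-main} applied to $i+1$ supplies a quotient abelian variety $\pi\colon A_{k'}\to B$ with $\dim B\leq i$ such that $Rq_{S*}M_\chi$ is perverse on $B\times S$, where $q_S=\pi\times\id_S$. Since $A$ and $B$ are proper, $q$ factors as $X\xrightarrow{q_S}B\times S\xrightarrow{p}S$ with $p$ proper and with fibres of dimension $\dim B\leq i$, whence $Rq_!M_\chi=Rq_*M_\chi=Rp_*Rq_{S*}M_\chi$; as $Rp_*=Rp_!$ has perverse amplitude contained in $[-i,i]$ (the standard estimate for proper pushforward along a morphism with fibres of dimension $\leq i$, exactly as in the torus case), the perverse sheaf $Rq_{S*}M_\chi$ is sent to an object of perverse amplitude $[-i,i]$. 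The case $i=0$ is the degenerate one, where $B$ is trivial and $Rq_!M_\chi=Rq_*M_\chi$ is itself perverse, and the case $i=g$ requires no condition on $\chi$ at all, since $q$ is already proper with $g$-dimensional fibres.

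Third, I would prove the counting estimate. Each $S_i$ is contained in $\widetilde{S}_i$, a union of at most $g\cdot\max_{j}|\mathcal{F}_j|$ \tacs\ of $A_{k'}$, each defined by a quotient abelian variety of dimension $\leq g-i$. Exactly as in the proof of Corollary~\ref{cor-gen-van-AV-3}, bounding the number of characters of $A(k_n)$ lying on such a \tac\ by the number of $k_n$-points of the corresponding quotient abelian variety and invoking the Weil estimates for abelian varieties gives $|S_i(k_n)|\ll|k|^{n(g-i)}$ (the set being empty unless $[k':k]\mid n$), the implied constant depending only on $(c_u(M),X,X',\alpha,\mcX')$ in view of the uniformity asserted in Proposition~\ref{prop-av-hv-main}; in particular $\ccodim(S_i)\geq i$.

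The substantive content is entirely contained in Proposition~\ref{prop-av-hv-main}, so this deduction is essentially bookkeeping: the points needing care are the reorganisation of the finitely many families $\mathcal{F}_i$ into the genuine decreasing filtration $\widetilde{S}_{g+1}\subseteq\cdots\subseteq\widetilde{S}_0=\charg{A}_{k'}$ so that the $S_i$ partition $\charg{A}_{k'}$, the perverse-amplitude bookkeeping through $q=p\circ q_S$, and keeping every constant dependent only on $(c_u(M),X,X',\alpha,\mcX')$ rather than on $\chi$ or $n$. I do not expect any real obstacle; the most delicate point is the uniformity of the point count over the extension $k'$, which is unproblematic because $\charg{A}_{k'}$ meets $\charg{A}(k_n)$ only when $[k':k]\mid n$ and the Weil bound for $|B(k_n)|$ depends on $\dim B$ alone.
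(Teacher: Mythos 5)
Your argument is correct and follows the same route as the paper, which deduces the corollary from Proposition~\ref{prop-av-hv-main} by imitating the proof of Corollary~\ref{cor-split-torus} and then counts characters on each \tac\ via the Weil bound for the quotient abelian variety. Your cumulative unions $\widetilde{S}_i=\bigcup_{j\geq i}\bigcup_{f\in\mathcal{F}_j}S_f$ are a harmless (indeed slightly cleaner) way of making the $\widetilde{S}_i$ genuinely nested so that the $S_i$ partition $\charg{A}_{k'}$, and the rest — properness of $q$, the factorization through $B\times S$, and the uniformity of the constants in $(c_u(M),X,X',\alpha,\mcX')$ — matches the paper's intent exactly.
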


\begin{proof}
  We argue as in the proof of Corollary~\ref{cor-split-torus} for the
  first part; to deduce~(\ref{eq-abelian-bound}), we simply note for
  each~$i\leq g$, the number of \tacs\ in
  Proposition~\ref{prop-av-hv-main} is bounded in terms of the indicated
  data, and for each \tac\ $S$ of dimension~$i$, the number of
  characters in $S(k_n)$ is $\leq (|k_n|^{1/2}+1)^{2i}\ll |k|^{ni}$.
\end{proof}


\section{Proof of the general vanishing theorem}
\label{ssec-proof-vanish}

We can now prove Theorems~\ref{thm-gen-vanish} and~\ref{thm-high-vanish}.

We consider the dévissage of Proposition~\ref{pr-devissage}. Namely,
let~$A$ be an abelian variety, $T$ a torus,~$U$ a unipotent group
and~$N$ a finite commutative subgroup scheme of~$A\times U\times T$
such that $G$ is isomorphic to $(A\times U\times T)/N$. Further, we
write $N=N_r\times N_l$ where~$N_r$ is reduced and~$N_l$ is local.


Let~$M$ be a perverse sheaf on~$G$.

\textbf{Step 1.} We claim that it is enough to prove the theorems for the
group $\widetilde{G}=A\times U\times T$.

Indeed, since $N=N_r\times N_l$, the quotient morphism
$p \colon \widetilde{G}\to G$ can be factored as the composition of an
étale isogeny and a purely inseparable one. The latter is a universal
homeomorphism, and since universal homeomorphisms preserve the étale
site, and since pull-back by a finite étale map preserves perversity, it
follows that the pull-back $p^*(M)$ is perverse.

Assume that the result of Theorem~\ref{thm-high-vanish} holds for
$p^*(M)$ on $\widetilde{G}$. Then we obtain the vanishing theorem
for~$M$ as follows. Let~$\mcS'_i$ be the subsets of loc. cit. for
$p^*(M)$ on~$\widetilde{G}$, and define $\mcS_i$ to be the set of
$\chi\in\charg{G}$ such that $\chi\circ p\in\mcS'_i$. Since $G$ has the
same dimension as~$\widetilde{G}$ and $\mcS'_i$ has character
codimension~$i$, do does $\mcS_i$.

If~$\chi\in \charg{G}$, then the projection formula gives isomorphisms
\[
  \rmH^i(\widetilde{G}_{\bar{k}}, p^*(M_\chi))=
  \rmH^i(\widetilde{G}_{\bar{k}}, p^*(M)_{\chi\circ p})
\]
for all~$i\in\Zz$.
\par
The vanishing of $\rmH^i(\widetilde{G}, p^*(M_\chi)_{\chi \circ p})$
implies that of $\rmH^i(G_{\bar{k}}, M_\chi)$, since the latter space is
a direct summand of the former. A similar argument applies for
compactly-supported cohomology, which shows that the characters
$\chi\in\charg{G}$ such that any of the groups~(\ref{eq-groups}) is
non-zero belong to~$\mcS_i$.

Finally, suppose that $\chi\in\mcS_0\setminus \mcS_1$, so that
$\chi \circ p\in \mcS'_0\setminus \mcS'_1$. Since the forget support map
is functorial, the forget support morphism
$$
\rmH^0_c(\widetilde{G}_{\bar k},p^*(M_\chi))\to
\rmH^0(\widetilde{G}_{\bar k},p^*(M_\chi))
$$
induces by restriction the forget support morphism
$$
\rmH^0_c(G_{\bar k},M_\chi)\to \rmH^0(G_{\bar k},M_\chi),
$$
and since the former is an isomorphism (from our assumption that
Theorem~\ref{thm-high-vanish} holds for~$\widetilde{G}$), so is the
latter.  This concludes the proof of the claim of Step~$1$.

\par
\textbf{Step 2.} We now assume that $G=A\times U\times T$, and will
prove Theorem~\ref{thm-high-vanish}. We fix a quasi-projective
immersion~$u$ of $G$. Let $d_A=\dim(A)$, $d_U=\dim(U)$, $d_T=\dim(T)$,
and $d=d_A+d_U+d_T=\dim(G)$. We denote by
$p_T\colon A\times U\times T\to A\times U$ the canonical projection.

Up to replacing $k$ by a finite extension, we can assume that~$T$ is
split.  By applying
Theorem~\ref{thm-high-van-torus} and
Corollary~\ref{cor-split-torus} with~$S=A\times U$,
we can partition $\charg{T}$ into subsets $(S_i)_{0\leq i\leq d_T}$ of
character codimension $\geq i$ such that
\begin{enumerate}
\item if~$\chi\in S_i$,
then the complex $Rp_{T!}(M_\chi)\in \Der(A\times U)$ is of perverse
amplitude $[-i,i]$.
\item if $\chi\in \widehat{T}\setminus S_1$, then
  $Rp_{T!}(M_{\chi})=Rp_{T*}(M_{\chi})$.
\end{enumerate}

We now wish to apply Proposition~\ref{prop-av-hv-main} to $A\times U$,
but we first need to find an alteration that moderates all complexes
$Rp_{T!}(M_\chi)$.

Let $j \colon T\to \bar T=(\mathbf{P}^1)^{d_T}$ be the obvious
compactification of $T$. By~\cite[Prop.\,1.6.7]{Orgo_constr_mod}, there
exists an alteration $\alpha\colon X\to A\times U\times \bar T$ and a
stratification~$\mcX$ of $X$ such that $\alpha^*(j_!M)$ is tame and
constructible along~$\mcX$. For each character $\chi\in \charg{T}$, the
sheaf $j_!(\mcL_\chi)$ is tame,
and hence $\alpha^*(j_!M_\chi)$ is also constructible and tame along $\mcX$ (see
\cite[5.2.5]{Orgo_constr_mod} for details).

We apply Theorem~\ref{thm-strat-alt-orgo} to the proper projection
$A\times U\times \bar T\to A\times U$.  This provides us with an
alteration $\beta\colon X'\to A\times U$ and a stratification $\mcX'$ of
$X'$ such that the complex $\beta^*Rp_{T!}(M_\chi)$ is tame and
constructible along $\mcX'$ for every $\chi\in \charg{T}$. Moreover, by
Proposition~\ref{prop-cond-char} and
Theorem~\ref{thm-conductors}, the complexity of $Rp_{T!}(M_\chi)$ is
bounded independently of $\chi\in \charg{T}$.

We can now apply Corollary~\ref{cor-av-hv-main}
to~$S=U$ and the complexes $Rp_{T!}(M_\chi)$. For each character
$\chi\in \charg{T}$, we obtain a partition
$(S_{\chi,j})_{0\leq j\leq d_A}$ of~$\charg{A}$ into subsets such that
$S_{\chi,j}$ has character codimension at least~$j$, with the property
that for $(\chi,\xi)\in S_i\times S_{\chi,j}$, the complex
$Rp_{A!}(Rp_{T!}(M_\chi))_\xi)$ has perverse amplitude $[-i-j,i+j]$.

By Proposition \ref{prop-cond-char} and Theorem \ref{thm-conductors},
the complexity of the object $Rp_{A!}(Rp_{T!}(M_\chi))_\xi)$ is bounded
independently of $(\chi,\xi) \in S_i\times S_{\chi,j}$. Hence, by
applying Proposition~\ref{prop-strat-unip} to these
objects 
we find for each $(\chi,\xi)$ a partition
$(S_{\chi,\xi,m})_{0\leq m\leq d_U}$ of $\charg{U}$ such that the set
$S_{\chi,\xi,m}$ has character codimension at least~$m$ and, moreover,
we have
\[
  \rmH^n_c(G_{\bar{k}},M_{\chi\xi\psi})=0
\]
for each $\psi\in S_{\chi,\xi,m}$ unless $n\in [-i-j,i+j+m]$.

For~$0\leq r\leq d$, we now define $\widetilde{\mcS}_r$ to be the set of characters
$(\chi,\xi,\psi)\in \charg{G}$ such that 
$$
\psi\in S_{\chi,\xi,m},\quad\quad
\xi\in S_{\chi,j},\quad\quad
\chi\in S_i
$$
for some $i$, $j$, $m$ such that $i+j+m\geq r$.


For any integer~$n\geq 1$, we have
$$
|\widetilde{\mcS}_r(k_n)|=\sum_{i+j+m\geq r} \sum_{\chi\in S_i(k_n)} \sum_{\xi\in
  S_{\chi,j}(k_n)} |S_{\chi,\xi,m}(k_n)| \ll |k|^{n(d-(i+j+m))}\ll
|k|^{n(d-r)}
$$
by~(\ref{eq-unipotent-bound}) and~(\ref{eq-abelian-bound}) (note that
the uniformity with respect to the perverse sheaf in these estimates,
and the uniform bound on the complexity, are crucial to control the sums
over~$\chi$ and~$\xi$). Thus the set $\widetilde{\mcS}_r$ has character codimension
at least~$r$.

By construction of the sets $S_i$, $S_{\chi,j}$ and $S_{\chi,\xi,m}$,
the condition $\rmH^{i}_c(G_{\bar{k}},M_{\chi\xi\psi})\neq 0$, for
$(\chi,\xi,\psi)\in \charg{G}$, implies that
$(\chi,\xi,\psi)\in \mcS_{\abs{i}}$. We apply a similar argument
with~$\DD(M)$ to obtain the analogue conclusion for ordinary cohomology
and set $\mcS_i$ to be the intersection of the set $\widetilde{\mcS}_i$
for $M$ and of the analogue for $\DD(M)$. By construction, the sets
$\mcS_i$ satisfy the first two claims of Theorem~\ref{thm-high-vanish}.

We now establish the last claims of Theorem~\ref{thm-high-vanish}.

First, let $(\chi,\xi,\psi)\in \charg{G}\setminus \mcS_{1}$. By
construction of~$S_1$ through Theorem \ref{thm-high-van-torus} (see
point~(2) above), we have $Rp_{T!}(M_\chi)=Rp_{T*}(M_\chi)$. Moreover
$p_{A!}=p_{A*}$ since $p_A$ is proper, and by the last claim of
Proposition~\ref{prop-strat-unip}, we obtain
\[
  \rmH^0(G_{\bar{k}},M_{\chi\xi\psi})
  =\rmH^0(U_{\bar{k}},Rp_{A*}Rp_{T*}M_{\chi\xi\psi})
  =\rmH^0_c(U_{\bar{k}},Rp_{A!}Rp_{T!}M_{\chi\xi\psi})=
  \rmH^0_c(G_{\bar{k}},M_{\chi\xi\psi}).
\]

Finally, if~$G$ is a torus (resp. an abelian variety) then we use
Corollary~\ref{cor-high-van-torus} (resp. Corollary~\ref{cor-av-hv}) to
prove that the sets $\mcS_i$ are contained in a finite union of \tacs\
of~$G$ of dimension $\leq d-i$.

This finally concludes the proof. \qed



\begin{remark}
  Once we have reduced the proof of Theorem~\ref{thm-high-vanish} to a
  product, the order in which we handle the toric, unipotent and abelian
  variety parts of~$G$ in the proof is essentially dictated by the fact
  that the current versions of Theorem~\ref{thm-tori-vanishing-rel} (the
  relative vanishing theorem for tori) and its corollaries are not
  uniform in terms of the complexity of the input object~$M$.

  However, if the toric part has dimension~$1$, it is not difficult to
  obtain such a statement, and thus to vary the proof.  This is not
  entirely anecdotal, because the choice of order has implications on
  the structure of the sets~$\mcS_i$ in
  Theorem~\ref{thm-high-vanish}.

  We describe the special case of $G=\Gg_m\times \Gg_a$, which will be
  used in Chapter~\ref{sec-product}. We note first that if $M$ is a
  perverse sheaf on~$\Gg_m$ over~$k$, then the vanishing
  $$
  H^i(\Gg_{m,\bar{k}},M_{\chi})= H^i_c(\Gg_{m,\bar{k}},M_{\chi})=0
  $$
  holds for $i\not=0$ and $\chi$ outside of a set
  $\mcS\subset \charg{\Gg}_m$ such that $|\mcS(k_n)|\ll 1$ for all
  $n\geq 1$, where the implied constant depends only on the complexity
  of~$M$ (one reduces to the case of a simple perverse sheaf, and then
  one can apply Lemma~\ref{lm-morel}, for instance). In particular,
  for $\chi\notin \mcS$, the complex~$M_{\chi}$ is a perverse sheaf.

  \begin{theorem}[Stratified vanishing for $\Gg_m\times
    \Gg_a$]\label{thm-vanish-gmga}
  Let~$M$ a perverse sheaf on $G=\Gg_m\times\Gg_a$. There
  exist subsets
  \[
    \mcS_2\subset\mcS_1\subset \mcS_0=\charg{G} \quad \text{and} \quad   \mcT\subset \charg{\Gg}_a
  \]
  such that the following holds:
  \begin{enumth}
  \item For $n\geq 1$, we have $|\mcS_2(k_n)|\ll 1$,
    $|\mcT(k_n)|\ll 1$ and $|\mcS_1(k_n)|\ll |k_n|$.
  \item For~$0\leq i\leq 2$, any~$\chi\in\charg{G}$ such that at least
    one of the cohomology groups
    $$
    \rmH^{i}(G_{\bar k},M_\chi),\quad\quad
    \rmH^{-i}(G_{\bar k},M_\chi),\quad\quad
    \rmH^{i}_c(G_{\bar k},M_\chi),\quad\quad
    \rmH^{-i}_c(G_{\bar k},M_\chi)
    $$
    is non-zero belongs to~$\mcS_i$.
  \item For $\chi\in \mcS_0\bks \mcS_1$, the equality
    $\rmH^0_c(G_{\bar k},M_\chi)=\rmH^0(G_{\bar k},M_\chi)$ holds.
  \item For $\psi\in\charg{\Gg}_a\setminus \mcT$, the set
    $\mcT_{\psi}$ of all $\chi\in\charg{\Gg}_m$ such that
    $\psi\boxtimes\chi\in\mcS_1$ satisfies the bound
    $|\mcT_{\psi}(k_n)|\ll 1$ for all~$n\geq 1$, with an implied
    constant that only depends on~$c(M)$.
  \end{enumth}
\end{theorem}

\begin{proof}
  Let $p\colon G\to \Gg_m$ be the projection. For $\psi$ varying in
  $\charg{\Gg}_a$, the complexes $Rp_!(M_{\psi})$ on~$\Gg_m$ have
  bounded complexity. By Proposition~\ref{prop-strat-unip}, we can
  partition $\charg{\Gg}_a$ in subsets $S_0$ and $S_1$, with $S_1$ of
  character codimension $\geq 1$, such that $Rp_!(M_{\psi})$ is
  perverse if $\psi\in S_0$.

  Let $\psi\in S_0$. Then by the elementary remark before the
  statement, the set $S_{0,\psi}$ of $\chi\in\charg{\Gg}_m$ such that 
  $Rp_!(M_{\psi})_{\chi}$ is not perverse has the property that
  $|S_{0,\psi}(k_n)|\ll 1$ for all~$n\geq 1$, where the implied
  constant depends only on~$c(M)$. The result now follows with
  \begin{align*}
    \mcS_0&=\charg{G},\\
    \mcS_1&=(\charg{\Gg}_m\times S_0)\cup \{\chi\boxtimes\psi\,\mid\,
    \psi\in S_0\text{ and } \chi\in
    S_{\psi,1}\},\\
    \mcT&=S_0,
  \end{align*}
  and $\mcS_2$ the set of characters such that one of
  $$
  \rmH^{2}(G_{\bar k},M_\chi),\quad\quad
  \rmH^{-2}(G_{\bar k},M_\chi),\quad\quad
  \rmH^{2}_c(G_{\bar k},M_\chi),\quad\quad
  \rmH^{-2}_c(G_{\bar k},M_\chi)
  $$
  is non-zero (which satisfies $|\mcS_2(k_n)|\ll 1$ for all $n$ by the
  observation before the theorem; note that $\mcS_2\subset \mcS_1$
  because the existence for given $(\chi,\psi)\in\widehat{G}$ of
  non-zero $\rmH^2$ or $\rmH^2_c$ implies that $Rp_!(M_{\psi})$ is not
  perverse).
\end{proof}

\end{remark}

\section{Proof of Theorem~\ref{th-vanishing-geometric}}

We conclude this chapter with the proof of the partial geometric
version of the vanishing theorem. This section is not needed for the
rest of the results of this book, and in particular may be omitted by
readers interested in equidistribution problems.

We will first prove Theorem~\ref{th-vanishing-geometric} in the case
$G=A\times T$, where~$A$ is an abelian variety and $T$ is a torus. The
case when~$G=A\times U$ will be a simple adaptation of this argument. 

We denote by $p_T\colon T\ra \Spec(k)$ and $p_A\colon A\to \Spec(k)$ the
structural morphisms. We will use the same notation for base-change
morphisms, e.g. for the projection $p_T\colon G\to A$.

Let~$M$ be a perverse sheaf on~$G=A\times T$.  By
Corollary~\ref{cor-gen-van-AV-rel} applied to $p_A\colon G\to T$ and
$M$, up to replacing~$k$ by a finite extension, there is a finite
union~$\mcS'$ of tacs of $A$ such that the complex~$Rp_{A*}(M_\xi)$ is
perverse for all~$\xi\in \what{A} \setminus \mcS'$. By
Theorem~\ref{thm-tori-vanishing-rel} applied to $p_T\colon G\to A$ and
$M$, also up to replacing $k$ by a finite extension, there is a finite
union~$\mcS$ of tacs of $T$ such that
$R p_{T!}(M_\chi)\simeq R p_{T*} (M_\chi)$ for
all~$\chi\in \what{T} \setminus \mcS$, and in particular the
object~$Rp_{T!}(M_{\chi})$ is then perverse.

Let~$\xi\in \what{A} \setminus \mcS'$ and~$\chi\in \what{T} \setminus \mcS$. We will prove that $(\xi,\chi)$
satisfies the properties of Theorem~\ref{th-vanishing-geometric}, thus
concluding the proof for the case~$G=A\times T$.

We start from the isomorphism $Rp_{T*}M_{\chi}=Rp_{T!}M_{\chi}$ of
complexes on~$A$, and tensor with~$\mcL_{\xi}$. By the projection
formula, and the fact that~$\mcL_{\xi}$ is lisse, we obtain an
isomorphism
\[
  R p_{T*}(M_{\xi\chi}) \simeq R p_{T!}(M_{\xi\chi}).
\]
Applying the functor $R p_{A!}=R p_{A*}$ and using functoriality in
the square
\[
  \begin{tikzcd}
    A\times T \arrow{r}{p_A} \arrow{d}{p_T}
    & T  \arrow{d}{p_T} \\
    A \arrow{r}{p_A} &\Spec(k),
   \end{tikzcd}
\]
we obtain an isomorphism
\begin{equation}
  \label{eqn:vant:3}
  R p_{T*}Rp_{A*}(M_{\xi\chi}) \simeq   R p_{T!}R p_{A*}(M_{\xi\chi}).
\end{equation}

The projection formula gives an isomorphism
$Rp_{A*}(M_{\xi\chi})\simeq Rp_{A*}(M_{\xi})\otimes \mcL_{\chi}$, and this
object is perverse by the choice of $\xi$ and the fact that $\mcL_{\chi}$ is lisse.  So from~(\ref{eqn:vant:3}), we deduce that
the cohomology with and without compact support of the perverse sheaf
$Rp_{A*}(M_{\xi\chi})$ on $T$ are isomorphic. By Artin's vanishing
theorem, since $T$ is affine, this implies that only the degree zero
component may be non-zero.  Hence, we have proved that
$\rmH_c^i(G,M_{\xi\chi})=\rmH^i(G,M_{\xi\chi})=0$ for $i\neq 0$, and
that there is an isomorphism
$\rmH_c^0(G, M_{\xi\chi})\simeq\rmH^0(G, M_{\xi\chi})$.

In the case when~$G=A\times U$, we apply the same argument,
\emph{mutatis mutandis}, simply replacing the use of
Theorem~\ref{thm-tori-vanishing-rel} by that of
Proposition~\ref{prop-strat-unip-rel}.

\begin{remark}
  The proof of Theorem~\ref{th-vanishing-geometric} relies in an
  essential way on the fact that we have, for the factors~$A$ and~$T$
  (or~$U$), the isomorphism between cohomology with and without
  compact support. Characters that only satisfy vanishing properties
  do not always satisfy the conclusion, as the following example shows.
  
  Let $E$ be an elliptic curve over~$k$ and let $p\colon E\times\Gm\to E$
  and $q\colon E\times\Gm\to \Gm$ be the two projections. Let
  $i\colon C\to E\times\Gm$ be a closed one-dimensional irreducible
  subvariety of $E\times\Gm$, with dominant projections to both $E$
  and $\Gm$. Let $M=i_{!*}\Qlb[1]$ be the perverse sheaf on
  $E\times\Gm$ which is the intermediate extension of the shifted
  constant sheaf on~$C$.
 
  We claim that both $p_!M$ and $q_!M$ are perverse. Indeed, we have
  $q_!M=q_{\vert C!}i_{!*}\Qlb[1]$, which is perverse since
  $q_{\vert C}\colon C\to \Gg_m$ is a finite morphism, and hence is
  $t$-exact by~\cite[Cor.\,2.2.6]{BBD-pervers}. Moreover, 
  $p_!M=p_{\vert C!}i_{!*}\Qlb[1]$ and $p_{\vert C}$ can be factored
  as a finite morphism followed by an affine open immersion, both of
  which are $t$-exact by~\cite[Cor.\,4.1.3]{BBD-pervers}. It follows
  that the trivial characters on $\Gm$ and $E$ are in the generic sets
  for the relative vanishing Theorem~\ref{thm-tori-vanishing-rel}
  applied to $p$ and for Corollary~\ref{cor-gen-van-AV-rel} applied to
  $q$. However,
  $\rmH^1_c((E\times\Gm)_{\bar{k}},M)\simeq
  \rmH^2_c(C_{\bar{k}},\Qlb)\neq 0$, and hence the trivial character is
  \emph{not} in the generic set given by Theorem \ref{thm-gen-vanish}
  applied to $E\times \Gm$ and $M$.

\end{remark}


\chapter{Tannakian categories of perverse sheaves}\label{sec-tannakian}

\section{Introduction}

Throughout this chapter, $k$ denotes a finite field and~$\bar{k}$ an
algebraic closure of $k$. We denote by $\ell$ a prime number different
from the characteristic of~$k$. All complexes we consider are
$\ell$-adic complexes.

Let~$G$ be a connected commutative algebraic group over~$k$.

Let~$M$ be a perverse sheaf on~$G$.  We wish to define a ``symmetry
group'' that governs the statistical behavior of the arithmetic Fourier
transform
$$
S(M,\chi)=\sum_{x\in G(k_n)}\chi(x)t_M(x;k_n)
$$
for $\chi\in\charg{G}(k_n)$. The fundamental mechanism for this
is that the symmetry group~$\Gg$ should come with a faithful linear
representation $\Gg\subset \GL_r$ for some $r\geq 0$, and to almost
all characters~$\chi$ there should be assigned an element (or conjugacy class)
$\Fr_{\chi}\in \Gg$ such that $S(M,\chi)$ is the trace
of~$\Fr_{\chi}$.

The idea behind the construction of the group $\Gg$ (following
Katz~\cite{mellin}) is based on the fact that we have a ``geometric''
control on the algebra structure on the space of arithmetic Fourier
transforms through the link with convolution: for two objects~$M_1$
and~$M_2$ on~$G$, we have
$$
S(M_1,\chi)S(M_2,\chi)=\sum_{x\in\charg{G}(k_n)}\chi(x)(t_{M_1}*t_{M_2})(x;k_n),
$$
where
$$
(t_{M_1}*t_{M_2})(x;k_n)=\sum_{y\in
  G(k_n)}t_{M_1}(y;k_n)t_{M_2}(y^{-1}x;k_n),
$$
for $x\in G(k_n)$, is the convolution product in the classical sense of
Fourier analysis on $G(k_n)$.

It is fundamental that by the proper base change theorem and the trace
formula, we can view this function as a trace function, namely
$$
(t_{M_1}*t_{M_2})(x;k_n)=t_{M_1*_! M_2}(x;k_n),
$$
where $M_1 *_! M_2$ is the convolution with compact support
(Section~\ref{sec-convolution}).

This geometric interpretation suggests to use the convolution as
``tensor operation'' to define a \emph{tannakian category}, which would
be equivalent to the category of representations of the desired symmetry
group.

In essence, this is what we will do. However, there are some significant
issues to handle:

\begin{itemize}
\item The first one, already present in the work of Katz for~$\Gg_m$,
  has to do with the fact that convolution with compact support does not
  always preserve perverse sheaves (for instance, if $G$ has
  dimension~$d$, then the convolution
  $\mcL_{\chi_1}[d]*_!  \mcL_{\chi_2}[d]$ is not perverse) or duality
  (because duality transforms the convolution $*_!$ into the convolution
  $*_*$, which is different in general).

  We can solve this first problem using a suitable quotient category
  where the two geometric convolution products turn out to coincide
  (this idea goes back to Gabber and Loeser and was also used by Katz).
  
\item A related issue is that weights do not always behave well under
  convolution, in the case of affine groups at least. Since weights
  dictate the size of the sums $S(M,\chi)$, this is a crucial issue for
  our intended applications. This is again related to the difference
  between the two geometric convolutions, each of which leads in
  practice to inequalities in one direction for the weights.
  
\item Finally there is a major new difficulty in comparison with the
  work of Katz.  The link between the abstract tannakian ideas and the
  arithmetic Fourier transform is that for a
  character~$\chi\in\charg{G}(k_n)$, the formula
  $$
  S(M,\chi)=\Tr(\Fr_{k_n}\mid H^0_c(G_{\bar{k}},M_{\chi}))
  $$
  should hold. This is in fact (by the generic vanishing theorem) only
  true in general for a generic set of~$\chi$ where the contributions of
  $H^i_c$ in the trace formula vanish for~$i\not=0$. But we also want
  ``higher-order'' versions of this formula to hold, namely for instance
  $$
  S(M,\chi)^2=\Tr(\Fr_{k_n}\mid H^0_c(G_{\bar{k}},M_{\chi}*_! M_{\chi})),
  $$
  and so on for further powers (intuitively, this is because
  understanding the limits of averages of such expressions is necessary
  to apply the Weyl equidistribution criterion, as we will do in the
  next chapter). This amounts roughly to requesting that
  $M\mapsto H^0_c(G_{\bar{k}},M_{\chi})$ should be compatible with
  convolution and so should (roughly) the generic vanishing theorem.
  
  Thus we need to distinguish various types of characters depending on
  their behavior with respect to operations of this type.
\end{itemize}




\section{Categories of objects defined over finite fields}

We denote by~$\Dd(G)$
and~$\Pp(G)$\nomenclature[$D$]{$\Dd(G)$}{subcategory of $\Der(G_{\bar{k}})$ of objects defined over a finite field}
\nomenclature[$P$]{$\Pp(G)$}{subcategory of $\Perv(G_{\bar{k}})$ of objects defined over a finite field}%
the full subcategories of
$\Der(G_{\bar{k}})$ and $\Perv(G_{\bar{k}})$ respectively whose objects
are defined over some finite extension of the base field~$k$.  These
categories are abelian categories, stable by shifts and duality. Moreover, the
perverse cohomology sheaves of an object of~$\Dd(G)$ belong to~$\Pp(G)$.

We recall from Section~\ref{sec-convolution} the definition and
properties of the two convolution bifunctors $(M,N)\mapsto M *_* N$ and
$(M,N)\mapsto M *_! N$ for objects $M$ and $N$ of $\Der(G)$ or 
$\Der(G_{\bar{k}})$. These are compatible with base
change, so that the convolutions on $G_{\bar{k}}$ preserve the
category~$\Dd(G)$. In addition, the functor $M\mapsto M^{\vee}$ also
induces a functor on $\Dd(G)$ and $\Pp(G)$.

\section{Weakly unramified characters}

\begin{definition}[Weakly unramified characters]\label{def:unramified}
  Let $M$ be an object of $\Pp(G)$. A character $\chi \in \charg{G}$ is
  said to be \emph{weakly unramified}\index{weakly unramified character}
  for~$M$ if the following holds:
    \begin{gather*}
    \rmH^i(G_{\bar k},M_\chi)=\rmH^i_c(G_{\bar k},M_\chi)=0 \quad\text{
      for all $i\not=0$,}
    \\
    \rmH^0_c(G_{\bar k},M_\chi)=\rmH^0(G_{\bar k},M_\chi). 
  \end{gather*}
  We denote by $\wunram{M}$\nomenclature[$X$]{$\wunram{M}$}{set of weakly
    unramified characters for~$M$} the set of weakly unramified
  characters for~$M$.
\end{definition}

\begin{remark}
 (1) The terminology is suggested by analogy with the case of the additive
  group, in which the characters for which generic vanishing holds
  correspond to points at which the Fourier transform is lisse.
  However, we will see that the generic vanishing condition is not in
  general strong enough to obtain the properties we seek (namely, that
  the assignment $M\mapsto \rmH^0_c(G_{\bar{k}},M_{\chi})$ defines a
  fiber functor on a suitable tannakian category of perverse sheaves on
  $G_{\bar{k}}$), see Remark \ref{rem-unram-ex3} and Example \ref{ex-non-unram}. We will introduce unramified characters in
  Definition~\ref{def:unramified-fibfunct}, as well as the variant of
  Frobenius-unramified characters in Definition~\ref{def-funram}.
  \par
  (2) Since~$G$ is not proper in general, the condition on the
  cohomology groups with support is not implied by the one on the
  cohomology groups without support.
\end{remark}

With this definition, we can reformulate the Stratified Generic
Vanishing Theorem~\ref{thm-gen-vanish} as follows:

\begin{theorem}\label{th-generic-unramified}
  The subset $\wunram{M} \subset \charg{G}$ of weakly unramified
  characters for an object~$M$ of~$\Pp(G)$ is generic.
\end{theorem}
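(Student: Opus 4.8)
The plan is to observe that Theorem~\ref{th-generic-unramified} is a direct translation of Theorem~\ref{thm-gen-vanish} through the dictionary set up by Definition~\ref{def:unramified}. First I would recall that, by Definition~\ref{def:unramified}, a character $\chi\in\charg{G}$ is weakly unramified for~$M$ precisely when the three conditions
\[
  \rmH^i(G_{\bar k},M_\chi)=\rmH^i_c(G_{\bar k},M_\chi)=0\ \text{ for }i\neq 0,\qquad \rmH^0_c(G_{\bar k},M_\chi)=\rmH^0(G_{\bar k},M_\chi)
\]
all hold. But this is exactly condition~\eqref{eqn:genericvanishingthm} appearing in the statement of Theorem~\ref{thm-gen-vanish}. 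Hence the set $\wunram{M}$ coincides, as a subset of $\charg{G}$, with the set of characters $\chi$ satisfying~\eqref{eqn:genericvanishingthm}.

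Next I would invoke Theorem~\ref{thm-gen-vanish} (the Generic Vanishing Theorem), which asserts that for any connected commutative algebraic group~$G$ over~$k$ and any perverse sheaf~$M$ on~$G$, the set of characters satisfying~\eqref{eqn:genericvanishingthm} is generic in the sense of Definition~\ref{def-generic}. Since an object of~$\Pp(G)$ is, by its very definition, a perverse sheaf on~$G_{\bar k}$ that is defined over some finite extension of~$k$ — so in particular it is (the base change of) a perverse sheaf to which Theorem~\ref{thm-gen-vanish} applies after replacing $k$ by a finite extension, and genericity is insensitive to such a replacement in view of Definition~\ref{def-generic} — the conclusion that $\wunram{M}$ is generic follows immediately.

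There is essentially no obstacle here: the only point that requires a word is that Theorem~\ref{thm-gen-vanish} is stated for a perverse sheaf $M$ on $G$ over the base field $k$, whereas an object of $\Pp(G)$ is a priori only defined over a finite extension $k'$ of $k$. One applies Theorem~\ref{thm-gen-vanish} over $k'$ to get that $\charg{G}_{k'}\setminus\wunram{M}$ has cardinality $\ll |k'|^{n(d-1)}$ for all $n$, and since the cohomology groups in question are geometric and unchanged under further base change, restricting characters along $\charg{G}(k_n)\hookrightarrow\charg{G}(k_{nm})$ for $k_{nm}\supseteq k'$ shows that $\charg{G}\setminus\wunram{M}$ has character codimension at least~$1$ over $k$ as well; this is the standard compatibility already used, e.g., in the proof of Corollary~\ref{cor-gen-van-AV-3}. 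Thus $\wunram{M}$ is generic, which is the assertion of Theorem~\ref{th-generic-unramified}. \qed
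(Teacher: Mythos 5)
Your core step --- identifying $\wunram{M}$ with the set of characters satisfying \eqref{eqn:genericvanishingthm} and invoking Theorem~\ref{thm-gen-vanish} --- is exactly what the paper does: Theorem~\ref{th-generic-unramified} is presented there as a direct reformulation of the generic vanishing theorem, with no further argument. You are also right to notice the one point the paper glosses over, namely that an object of $\Pp(G)$ is a priori only defined over a finite extension $k'=k_{n_0}$ of $k$, whereas genericity (Definition~\ref{def-generic}) is measured over $k$ and at \emph{every} level $k_n$, not only the levels containing $k'$.

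However, the patch you sketch for this point does not work as stated. Applying Theorem~\ref{thm-gen-vanish} over $k'$ bounds the number of bad characters of $G(k_{n_0 m})$ by $\ll \abs{k'}^{m(d-1)}$. For a level $k_n$ with $n_0\nmid n$ you embed $\charg{G}(k_n)$ into $\charg{G}(k_{nm})$ with $k_{nm}\supseteq k'$; the bound available at that level is then $\ll \abs{k}^{nm(d-1)}$, whereas character codimension $\geq 1$ over $k$ requires $\ll \abs{k}^{n(d-1)}$. Since $m$ may be as large as $n_0$, the exponents differ by $n(n_0-1)(d-1)$, which is unbounded in $n$ as soon as $d\geq 2$ and $n_0\geq 2$: knowing the bad set at the larger level is small says nothing about how much of it meets the (much smaller) image of $\charg{G}(k_n)$. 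The clean way to descend is to trade $M$ for an object genuinely defined over $k$: write $M=(M_0)_{\bar k}$ with $M_0$ perverse on $G_{k'}$ and set $N=f_{*}M_0$, where $f\colon G_{k'}\to G$ is the natural finite étale morphism (as in the proof of Lemma~\ref{lem-arith-ss-geo}). Since $f$ is finite étale, $N$ is perverse on $G$ over $k$, and $N_{\bar k}$ is the direct sum of the Frobenius twists of $M$, hence contains $M$ as a direct summand; therefore any $\chi$ violating \eqref{eqn:genericvanishingthm} for $M$ also violates it for $N$ (the forget-supports morphism for $N_\chi$ is the direct sum of those of the summands). Applying Theorem~\ref{thm-gen-vanish} to $N$ over $k$ then gives the bound $\ll \abs{k}^{n(d-1)}$ at every level $n$, which is exactly the genericity of $\wunram{M}$.
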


\section{Negligible objects}

In general, none of the two convolution bifunctors on the derived category preserves the subcategory of
perverse sheaves. As first observed in the case of tori by Gabber and
Loeser \cite{GL_faisc-perv}, there is however a suitable quotient of the category $\Pp(G)$ on
which both convolution functors induce the same bifunctor.

\begin{definition}
\label{def-neg}
An object $M$ of $\Pp(G)$ is said to be \emph{negligible}\index{negligible object} if the set of
characters~$\chi\in\charg{G}$ satisfying $H^0(G_{\bar{k}},M_{\chi})=0$ is
generic. An object $N$ of $\Dd(G)$ is said to be \emph{negligible} if all its perverse cohomology objects $\pH^i(N)$ are negligible.

We denote by $\NegP(G)$\nomenclature[$N$]{$\NegP(G)$}{negligible objects of
  $\mathbf{P}(G)$} and $\NegD(G)$\nomenclature[$N$]{$\NegD(G)$}{negligible
  complexes of~$\Dd(G)$} the full
subcategories of $\mathbf{P}(G)$ and $\Dd(G)$ respectively consisting of
negligible objects.

We denote by $K_{\mathrm{neg}}(G)$ the subgroup of the Grothendieck group $K(G)$
generated by classes of negligible perverse sheaves, or equivalently by
classes of negligible objects.
\nomenclature{$K_{\mathrm{neg}}(G)$}{subgroup of $K(G)$ generated by negligible objects}
\end{definition}

Given an object $M$ of $\Pp(G)$,
set\nomenclature[$N$]{$\nunram{M}$}{characters $\chi$ with $\rmH^*(G_{\bar{k}}, M_\chi)=\rmH_c^*(G_{\bar{k}},M_{\chi})=0$}
\[
  \nunram{M}=\{\chi \in \charg{G} \mid \rmH^i(G_{\bar{k}},
  M_\chi)=\rmH_c^i(G_{\bar{k}},M_{\chi})=0 \text{ for all } i \}.
\]

Using Theorem~\ref{thm-gen-vanish}, we see that~$M$ is negligible if
and only if $\nunram{M}$ is a generic subset of~$\charg{G}$. For
$M\in \NegD(G)$, we set
\[
\nunram{M}=\bigcup_i\nunram{\pH^{i}(M)}.
\]

It follows from the definition that, for each negligible perverse sheaf
$M$ (\resp object of $\NegD(G)$), the perverse sheaf $M^\vee$ is also
negligible (\resp the complex $M^{\vee}$ is negligible).


\begin{example}\label{ex-negligible}
  Any character sheaf~$\mcL_{\chi}$ on~$G$ is negligible. More
  generally, let $f\colon G\to H$ be a surjective morphism of
  algebraic groups such that the dimension $d$ of the kernel $\ker(f)$
  is positive. Let~$\eta\in\charg{G}$ and let $N$ be any object
  of~$\Der(H)$. We claim that the object $M=(f^*N)_{\eta}$ is
  negligible.

  Indeed, let~$i\in\Zz$. We can factor $f=f_1\circ f_2$, where~$f_2$ is
  smooth of relative dimension~$d$ and~$f_1$ is radicial. Then
  $f^*_2[d]$ is t-exact (see~\cite[\S\,4.2.4]{BBD-pervers}), and so is
  tensoring by~$\mcL_{\eta}$ (Lemma~\ref{lem-characters-t-exact}), so
  there is a canonical isomorphism
  $$
  \pH^i((f^*N)_{\eta})\simeq f_2^*(\pH^{i-d}(f_1^*(N)))_{\eta}.
  $$
  For~$\chi\in\charg{G}$, the projection formula leads to canonical
  isomorphisms
  $$
  H^*(G_{\bar{k}},M_{\chi})\simeq H^*(G_{\bar{k}},
  f_2^*(\pH^{i-d}(f_1^*(N)))\otimes\mcL_{\eta\chi})
  \simeq H^*(H_{\bar{k}},\pH^{i-d}(f_1^*(N))\otimes Rf_{2!}\mcL_{\eta\chi}).
  $$

  \par
  The complex $Rf_{2!}\mcL_{\eta\chi}$ is zero if the restriction of
  $\eta\chi$ to the subgroup $\ker(f_2)^{\circ}$ is not the trivial
  character (see Lemma~\ref{lm-image-character}).  Since this condition
  defines a generic set of characters~$\chi$, we deduce that~$\pH^i(M)$
  is negligible, and the result follows.
\end{example}

\begin{remark}
  Intuitively, to say that $M$ is negligible means that the arithmetic
  Fourier transform of~$M$ (see
  Section~\ref{sec:discreteFouriertransform}) satisfies $S(M,\chi)=0$
  for $\chi$ in a generic subset of~$\charg{G}$. To illustrate this
  concrete aspect, we show how it explains the previous example. Thus
  consider $M=(f^*N)_{\eta}$, with notation as above for some
  $\eta\in\charg{G}(k)$. Let $\chi\in\charg{G}(k_n)$; the corresponding
  value of the Fourier transform is
  \begin{align*}
    S(M,\chi)=\sum_{x\in G(k_n)}\chi(x)t_M(x;k_n)&= \sum_{x\in
      G(k_n)}\chi(x)(\eta\circ N_{k_n/k})(x) t_N(f(x);k_n)
    \\
    &=\sum_{y\in H(k_n)}t_N(y;k_n)\sum_{\substack{x\in G(k_n)\\f(x)=y}}
    \chi(\eta\circ N_{k_n/k})(x),
  \end{align*}
  and the inner sum is either empty or a sum of a character over the
  $k_n$-points of a coset of the kernel of~$f$, which vanishes unless
  $\chi=(\eta\circ N_{k_n/k})^{-1}$ on the kernel of~$f$.
  \par
  In some cases, one can show that, conversely, all simple negligible
  perverse sheaves are of the form $(f^*N)_{\eta}$ for some quotient
  morphism~$f$ with kernel of dimension at least~$1$. This is for
  instance the case for abelian varieties, by a result of
  Weissauer~\cite[Lemma\,6,\,Th.\,3]{weissauer_vanishing_2016} (see also
  Remark~\ref{rm-negligible}) and we will prove later that this is also
  the case for $\Gg_a\times\Gg_m$ (see Section~\ref{ssec-neg-one}).
  \par
  This structural property is however not always true. For instance, if
  $G$ is a unipotent group of dimension at least~$2$ (e.g., $G=\Gg_a^d$
  with $d\geq 2$), with Serre dual~$G^{\vee}$, then we can take any
  object $N\in \Der(G^{\vee})$ whose support~$S$ has codimension at
  least~$1$, and the inverse Fourier transform~$M$ of~$N$ will be a
  negligible object on~$G$. If~$S$ is not a translate of a subgroup
  of~$G$, then the object~$M$ is not obtained by pullback from any
  quotient of~$G$. (In the terminology of~\cite[\S\,4]{fouvry-katz}, in
  the case of~$\Gg_a^d$, such objects are said to have
  $A$-number\index{$A$-number} equal to~$0$, and they play a delicate
  role in certain analytic applications.)
\end{remark}


We recall that a full subcategory $S$ of an abelian category $C$ is said
to be a \emph{Serre subcategory}\index{Serre subcategory} if it is not empty, stable by extension
and by subobject and quotient. A strictly full triangulated subcategory
$S$ of a triangulated category $C$ is said to be
\emph{thick}\index{thick subcategory} if, for
any morphism $f\colon X\to Y$ in~$C$ which factors through an object
of~$S$, and which appears in a distinguished triangle
$$
X\fleche{f} Y\to Z
$$
with $Z$ object of~$S$, the objects~$X$ and~$Y$ are in~$S$.


\begin{lemma}
\label{lem-serre}
The category $\NegP(G)$ is a Serre subcategory of $\Pp(G)$, and
$\NegD(G)$ is a thick triangulated subcategory of $\Dd(G)$.
\end{lemma}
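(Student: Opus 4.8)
The plan is to verify the Serre and thickness axioms directly from the definition of negligibility, reducing everything to two facts: (i) a finite union of generic sets is generic (clear, since each complement has character codimension $\geq 1$ and a finite union of such complements still satisfies the bound \eqref{eq-generic}), and (ii) the non-vanishing locus of $\rmH^0(G_{\bar k},(-)_\chi)$ behaves well in short exact sequences and triangles. For the latter, the key input is the long exact cohomology sequence: for a short exact sequence $0\to M'\to M\to M''\to 0$ of perverse sheaves, tensoring with the lisse sheaf $\mcL_\chi$ is exact (Lemma~\ref{lem-characters-t-exact}), so one gets a long exact sequence relating the $\rmH^\bullet(G_{\bar k},(-)_\chi)$. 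In particular, if $\rmH^0(G_{\bar k},M_\chi)=0$ then $\rmH^0(G_{\bar k},M'_\chi)$ injects into $\rmH^{-1}(G_{\bar k},M''_\chi)$ — but this does not immediately vanish, so the cleaner route is to use instead the set $\nunram{M}$ (where \emph{all} cohomology, with and without supports, vanishes), which by Theorem~\ref{th-generic-unramified} is generic exactly when $M$ is negligible. On $\nunram{M'}\cap\nunram{M''}$ the long exact sequence forces all cohomology of $M_\chi$ to vanish as well, and symmetrically; this gives stability under subobjects, quotients and extensions all at once.

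Concretely, for the Serre property: $\NegP(G)$ is nonempty (it contains $0$, and by Example~\ref{ex-negligible} any character sheaf). Given a short exact sequence $0\to M'\to M\to M''\to 0$ in $\Pp(G)$, the long exact sequence in cohomology (both ordinary and compactly supported) shows $\nunram{M'}\cap\nunram{M''}\subseteq\nunram{M}$, $\nunram{M}\cap\nunram{M''}\subseteq\nunram{M'}$ and $\nunram{M}\cap\nunram{M'}\subseteq\nunram{M''}$ (using that $\mcL_\chi$-twisting is exact, so the sequence twists to a short exact sequence of perverse sheaves). Combined with (i) and the characterization ``$M$ negligible $\iff\nunram{M}$ generic'', this yields: if $M$ is negligible then so are $M'$ and $M''$ (stability under sub and quotient); and if $M'$ and $M''$ are negligible then so is $M$ (stability under extension). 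Hence $\NegP(G)$ is a Serre subcategory.

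For the thickness of $\NegD(G)$ in $\Dd(G)$: first note $\NegD(G)$ is strictly full and stable under shift by definition, and under cones by the following argument. Given a distinguished triangle $M\to N\to P\to M[1]$ in $\Dd(G)$, the perverse cohomology long exact sequence gives, for each $i$, an exact sequence of perverse sheaves $\cdots\to\pH^{i}(M)\to\pH^{i}(N)\to\pH^{i}(P)\to\pH^{i+1}(M)\to\cdots$; breaking it into short exact sequences and applying the Serre property of $\NegP(G)$ established above shows that if two of $M,N,P$ have all perverse cohomology negligible, so does the third. Thus $\NegD(G)$ is a triangulated subcategory. For thickness proper, suppose $f\colon X\to Y$ factors through an object of $\NegD(G)$ and sits in a triangle $X\xrightarrow{f}Y\to Z$ with $Z\in\NegD(G)$. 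Factoring $f=g\circ h$ with $h\colon X\to W$, $g\colon W\to Y$, $W\in\NegD(G)$: the octahedron (or the standard argument, cf.\ the proof that any thick subcategory is closed under direct summands) shows $X$ and $Y$ are direct summands of objects built from $W$, $Z$ and their shifts by cones, hence lie in $\NegD(G)$. The only mildly delicate point — and the one I expect to be the main obstacle — is checking the behaviour of $\nunram{-}$ (rather than just $H^0$) under the twisted long exact sequences and confirming that the compactly supported and ordinary cohomology conditions are simultaneously controlled; but this is exactly why negligibility was defined via $\pH^i$ of the complex and why Theorem~\ref{th-generic-unramified} is invoked, so it reduces to bookkeeping with long exact sequences.
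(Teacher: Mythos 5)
Your extension step is correct and is essentially the paper's argument, but your derivation of stability under subobjects and quotients has a genuine gap. The containments $\nunram{M}\cap\nunram{M''}\subseteq\nunram{M'}$ and $\nunram{M}\cap\nunram{M'}\subseteq\nunram{M''}$ are true, yet they cannot yield the implication ``$M$ negligible $\Rightarrow M'$ and $M''$ negligible'': to extract a generic set from, say, $\nunram{M}\cap\nunram{M''}$ you would already need $\nunram{M''}$ to be generic, i.e.\ $M''$ to be negligible, which is exactly what is to be proved (and if $M''$ is \emph{not} negligible, then $\nunram{M''}$ is disjoint from a generic set, so the intersection tells you nothing). You noticed the obstruction yourself in your plan --- $\rmH^0(G_{\bar k},M'_\chi)$ receives $\rmH^{-1}(G_{\bar k},M''_\chi)$, which ``does not immediately vanish'' --- but passing to $\nunram{-}$ does not remove it. What removes it is the generic vanishing theorem applied to the subquotient itself: intersect $\nunram{M}$ (or just the generic set where $\rmH^0(G_{\bar k},M_\chi)=0$) with the weakly unramified sets $\wunram{M'}$ and $\wunram{M''}$, which are generic by Theorem~\ref{th-generic-unramified} with no negligibility hypothesis; on this generic set $\rmH^{-1}(G_{\bar k},M''_\chi)=0$ and $\rmH^{1}(G_{\bar k},M'_\chi)=0$ (and likewise with compact supports), so the twisted long exact sequence squeezes $\rmH^0(G_{\bar k},M'_\chi)$ and $\rmH^0(G_{\bar k},M''_\chi)$ between zeros. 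This is precisely the device the paper uses (it writes out the extension case and treats sub/quotients as following ``easily'' by the same argument); with this one-line repair your proof of the Serre property is complete.

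On the second assertion, note that the paper does not reprove thickness: it invokes the argument of Gabber--Loeser \cite[Prop.\,3.6.1(i)]{GL_faisc-perv}. Your route --- two-out-of-three for $\NegD(G)$ via the perverse cohomology long exact sequence (which does reduce to the Serre property of $\NegP(G)$, once corrected), followed by the standard Verdier/Rickard reduction of the factorization criterion to closure under direct summands --- has the right shape, and closure under summands does follow from the Serre property applied to each $\pH^i$; but the sentence ``the octahedron shows $X$ and $Y$ are direct summands of objects built from $W$, $Z$ and their shifts'' is only a gesture, so you should either write out that classical argument or cite it, as the paper does.
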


\begin{proof}
  Fix an exact sequence
  $X\to Y\to Z$ in $\Pp(G)$ such that $X$ and~$Z$ are objects
  of~$\NegP(G)$. By Theorem~\ref{th-generic-unramified}, there is a
  generic set of characters $\chi\in\charg{G}$ that are weakly
  unramified for $X$, $Y$, and $Z$. From the long exact sequence in
  cohomology, we find that for any such~$\chi$, the vanishing
  $\rmH^i(G_{\bar k}, Y_{\chi})=\rmH^i_c(G_{\bar k}, Y_{\chi})=0$ holds
  for all $i$, and hence $Y$ is negligible. The first statement follows
  easily. An argument of Gabber--Loeser
  (see~\cite[Prop.\,3.6.1(i)]{GL_faisc-perv}) then implies that
  $\NegD(G)$ is a thick triangulated subcategory of $\Dd(G)$.
\end{proof}

\begin{lemma}
\label{lem-mildeconv-quot}
For all objects $M$ and $N$ of $\Dd(G)$, the following properties hold:
\begin{enumth}
\item \label{lem-mildeconv-quot1} The cone of the canonical morphism
  $M*_!N\to M*_*N$ lies in $\NegD(G)$. 
\item \label{lem-mildeconv-quot2} If $M$ belongs to $\NegD(G)$, then so
  do $M*_!N$ and $M*_*N$ for each object $N$.
\item \label{lem-mildeconv-quot3} If $M$ and $N$ belong to $\Pp(G)$, then
  $\pH^i(M*_!N)$ and $\pH^i(M*_*N)$ lie in $\NegP(G)$ for each non-zero integer $i$.
\end{enumth}
\end{lemma}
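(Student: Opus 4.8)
\textbf{Proof plan for Lemma~\ref{lem-mildeconv-quot}.}

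The plan is to deduce all three assertions from the Künneth-type isomorphisms in Lemma~\ref{lem-prop-conv} together with the Generic Vanishing Theorem~\ref{th-generic-unramified}. For part~\ref{lem-mildeconv-quot1}, first pick a generic set $\mcX\subset\charg{G}$ of characters that are weakly unramified simultaneously for all perverse cohomology sheaves $\pH^a(M)$ and $\pH^b(N)$ (finitely many objects, and a finite intersection of generic sets is generic by Definition~\ref{def-generic}). For $\chi\in\mcX$, I would twist the exact triangle $M*_!N\to M*_*N\to C$ by $\mcL_\chi$, use the compatibility $(M*_!N)_\chi\simeq M_\chi*_!N_\chi$ and $(M*_*N)_\chi\simeq M_\chi*_*N_\chi$ from the displayed isomorphisms after Lemma~\ref{lem-characters-t-exact}, and then apply $\rmH^*_c(G_{\bar k},-)$ and $\rmH^*(G_{\bar k},-)$. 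By Lemma~\ref{lem-prop-conv}, $\rmH^*_c(G_{\bar k},M_\chi*_!N_\chi)\simeq \rmH^*_c(G_{\bar k},M_\chi)\otimes\rmH^*_c(G_{\bar k},N_\chi)$ and likewise without supports; since $\chi$ is weakly unramified for every perverse piece, a perverse spectral sequence argument (using~\eqref{eq-perv-seq}) shows $\rmH^*_c(G_{\bar k},M_\chi)=\rmH^*(G_{\bar k},M_\chi)$ and concentrated in degree $0$, and similarly for $N$. Hence the forget-supports map $\rmH^*_c(G_{\bar k},(M*_!N)_\chi)\to\rmH^*(G_{\bar k},(M*_*N)_\chi)$ is an isomorphism, forcing $\rmH^i(G_{\bar k},C_\chi)=\rmH^i_c(G_{\bar k},C_\chi)=0$ for all $i$. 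Since this holds for all $\chi$ in the generic set $\mcX$, and the condition propagates to the perverse cohomology sheaves of $C$ on a (possibly smaller, still generic) set via Corollary~\ref{cor-gen-van-der}, every $\pH^i(C)$ is negligible; that is, $C\in\NegD(G)$.

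For part~\ref{lem-mildeconv-quot2}, suppose $M\in\NegD(G)$. Again choose a generic $\mcX$ of characters weakly unramified for all $\pH^a(M)$ and $\pH^b(N)$ and such that, in addition, $\rmH^*(G_{\bar k},\pH^a(M)_\chi)=0$ for all $a$ (possible because $M$ negligible means $\nunram{M}$ is generic, and we intersect). For $\chi\in\mcX$, the Künneth isomorphism of Lemma~\ref{lem-prop-conv} gives $\rmH^*_c(G_{\bar k},(M*_!N)_\chi)\simeq\rmH^*_c(G_{\bar k},M_\chi)\otimes\rmH^*_c(G_{\bar k},N_\chi)=0$ since the first tensor factor vanishes (using that for $\chi$ weakly unramified the compactly-supported cohomology of $M_\chi$ agrees with ordinary cohomology, which vanishes because $M$ is negligible); the same for $*_*$. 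So $(M*_!N)_\chi$ and $(M*_*N)_\chi$ have vanishing cohomology and compactly-supported cohomology for $\chi$ in a generic set, hence by Corollary~\ref{cor-gen-van-der} all their perverse cohomology sheaves are negligible, i.e.\ both convolutions lie in $\NegD(G)$.

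For part~\ref{lem-mildeconv-quot3}, apply part~\ref{lem-mildeconv-quot1}: the cone $C$ of $M*_!N\to M*_*N$ lies in $\NegD(G)$, so the long exact sequence of perverse cohomology sheaves shows that $\pH^i(M*_!N)$ and $\pH^i(M*_*N)$ differ, for every $i$, by subquotients of $\pH^j(C)$, which are negligible. Since $\NegP(G)$ is a Serre subcategory of $\Pp(G)$ (Lemma~\ref{lem-serre}), it suffices to prove that $\pH^i(M*_*N)\in\NegP(G)$ for $i\neq 0$ (the case of $*_!$ then follows, after noting that for $i\neq 0$ the discrepancy with $\pH^i(M*_!N)$ is negligible, and for the one remaining index this is exactly the claim being made). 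To see $\pH^i(M*_*N)$ is negligible for $i\neq0$: for $\chi$ in the generic weakly-unramified set, $\rmH^*(G_{\bar k},(M*_*N)_\chi)\simeq\rmH^0(G_{\bar k},M_\chi)\otimes\rmH^0(G_{\bar k},N_\chi)$ is concentrated in degree $0$, so the perverse spectral sequence~\eqref{eq-perv-seq} forces $\rmH^0(G_{\bar k},\pH^i(M*_*N)_\chi)=0$ for $i\neq0$; this is precisely negligibility of $\pH^i(M*_*N)$. The main obstacle is the bookkeeping in this last step: one must be careful that ``weakly unramified for $M$ and $N$'' genuinely controls the full complex $M*_*N$ (not merely its degree-$0$ part) via the two spectral sequences, and that the various generic subsets — weakly unramified sets for each perverse piece, the locus where Corollary~\ref{cor-gen-van-der} applies, and (in part~\ref{lem-mildeconv-quot2}) $\nunram{M}$ — can all be intersected while remaining generic; Gabber--Loeser's argument cited in~\cite[Prop.\,3.6.1]{GL_faisc-perv} is the template to follow here.
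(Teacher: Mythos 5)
Your route is the paper's route: the paper in fact omits the proof and refers to Kr\"amer's Lemma~4.3, whose argument is precisely this combination of the K\"unneth isomorphisms of Lemma~\ref{lem-prop-conv} with generic vanishing, so in outline your proposal is sound. However, two steps are asserted more strongly than what you have actually established, and both are closed by the same device: you must apply Corollary~\ref{cor-gen-van-der} (equivalently, shrink the generic set so that $\chi$ is weakly unramified for the perverse cohomologies of the \emph{convolution complexes themselves}), not only for $M$, $N$ and the cone.

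First, in (1), knowing that the composite $\rmH^*_c(G_{\bar k},(M*_!N)_\chi)\to \rmH^*_c(G_{\bar k},(M*_*N)_\chi)\to \rmH^*(G_{\bar k},(M*_*N)_\chi)$ is an isomorphism does not by itself ``force'' the cone $C$ to have vanishing cohomology: it only gives injectivity of the first arrow and surjectivity of the second. You also need one of the two legs to be an isomorphism for generic $\chi$, which is exactly what Corollary~\ref{cor-gen-van-der} applied to $M*_*N$ (or to $M*_!N$) supplies; only then do the two long exact sequences give $\rmH^i_c(G_{\bar k},C_\chi)=\rmH^i(G_{\bar k},C_\chi)=0$ and hence negligibility of each $\pH^i(C)$ as you conclude. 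Second, in (3), the implication ``abutment of~\eqref{eq-perv-seq} concentrated in degree $0$ implies $\rmH^0(G_{\bar k},\pH^i(M*_*N)_\chi)=0$ for $i\neq 0$'' is not valid as stated, since an $E_2^{0,q}$ term could a priori be killed by differentials; what is needed is degeneration of the spectral sequence, i.e.\ again Corollary~\ref{cor-gen-van-der} for $M*_*N$ after intersecting with the weakly unramified sets of its (finitely many) perverse cohomology sheaves --- which is in fact the fix you yourself point to in your closing caveat. A last small slip: in (1) and (2) the objects $M,N$ are general objects of $\Dd(G)$, so for generic $\chi$ the twisted cohomology of $M_\chi$ is \emph{not} concentrated in degree $0$; what degeneration gives is the forget-supports isomorphism $\rmH^j_c(G_{\bar k},M_\chi)\simeq\rmH^j(G_{\bar k},M_\chi)$ in every degree $j$, which is all the argument requires.
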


We omit the proof, which is the same as that of
\cite[Lem.\,4.3]{kramer_perverse_2014}.

\section{Tannakian categories}

By results of Gabriel~\cite{gab_cat_abeliennes} for abelian categories
and Verdier (see the treatment in the book~\cite{nee_trig_cat} of
Neeman) for triangulated categories, we can define the quotient of an
abelian or triangulated category by a Serre or thick subcategory. This
allows us to make the following definition.

\begin{definition}[Convolution categories]
  The \emph{convolution category}\index{convolution category} of~$G$,
  denoted~$\Ddb(G)$, \nomenclature[$D$]{$\Ddb(G)$}{convolution category}
  is the
  quotient category of $\Dd(G)$ by $\NegD(G)$; it is a triangulated
  category.
  \par
  The \emph{perverse convolution category}\index{perverse convolution
    category} of~$G$, denoted~$\Ppb(G)$, is
  \nomenclature[$P$]{$\Ppb(G)$}{perverse convolution category}
  the quotient abelian category of~$\Pp(G)$ by~$\NegP(G)$.
\end{definition}

Those two constructions are compatible, in the sense that the
t-structure on $\Dd(G)$ induces a t-structure on $\Ddb(G)$ whose heart
is the category~$\Ppb(G)$ (see~\cite[Prop.\,3.6.1]{GL_faisc-perv}).

Since the functor $N\mapsto N^{\vee}$ preserves negligible objects,
it induces a functor on $\Ppb(G)$ (\resp on $\Ddb(G)$), which is still
an involution.

\begin{proposition}\label{pr-convol-pp}
  With notation as above, the following properties hold:
  \begin{enumth}
  \item The convolution products $*_!$ and $*_*$ induce bifunctors on
    $\Ddb(G)\times \Ddb(G)$.
  \item The canonical forget support morphisms $M *_! N\to M *_* N$
    induce isomorphisms in $\Ddb(G)$, and define by passing to the
    quotient a convolution bifunctor denoted
    \[
      * : \Ddb(G)\times \Ddb(G)\to \Ddb(G).
    \]
  \item The subcategory $\Ppb(G)$ of $\Ddb(G)$ is stable under the convolution $*$.
  \item The categories $\Ddb(G)$ and $\Ppb(G)$, endowed with the bifunctor
    $*$, are symmetric $\Qlb$-linear monoidal categories\index{monoidal
      category} with unit object\index{unit object}
    $\un$ the image of the skyscraper sheaf at the neutral element of $G$.
  \end{enumth}
\end{proposition}

\begin{proof}
  The fact that $*_!$ and $*_*$ induce functors on
  $\Ddb(G)\times \Ddb(G)$ follows from Lemma~\ref{lem-mildeconv-quot}
  \ref{lem-mildeconv-quot2}. That they agree is
  Lemma~\ref{lem-mildeconv-quot} \ref{lem-mildeconv-quot1}. The
  stability of $\Ppb(G)$ under $*$ is Lemma \ref{lem-mildeconv-quot}
  \ref{lem-mildeconv-quot3}. The fact that we obtain symmetric
  $\Qlb$-linear monoidal categories is now clear. The last assertion
  follows from the canonical isomorphisms
  $\un *_! M\simeq \un *_* M\simeq M$ which exist for any complex~$M$.
\end{proof}

It is also very useful that there exists a natural subcategory
of~$\Pp(G)$ that is equivalent to the perverse convolution category.

\begin{definition} The \emph{internal convolution
    category}\index{internal convolution category} of~$G$ is
  the full subcategory $\Ppint(G)$ of the category~$\Pp(G)$ whose
  objects are perverse sheaves that have no subobject or quotient in
  $\NegP(G)$.
\end{definition}

\begin{proposition}\label{pr-equiv-int}
  The localization functor $\Pp(G) \to \Ppb(G)$ restricts to an
  equivalence of categories
  \[
    \Ppint(G)\longrightarrow \Ppb(G),
  \]
  hence the convolution product bifunctor $*$ on~$\Ppb(G)$ induces a
  convolution bifunctor $*_{\intt}$  \nomenclature{$M*_{\intt} N$}{internal convolution} on $\Ppint(G)$.
\end{proposition}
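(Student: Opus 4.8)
The plan is to deduce this from the general formalism of Serre quotients of finite-length abelian categories, applied to the Serre subcategory $\NegP(G)\subset\Pp(G)$ provided by Lemma~\ref{lem-serre}. The first point to record is that $\Pp(G)$ is an abelian category in which every object has finite length, well-powered, and both noetherian and artinian: perverse sheaves on $G_{\bar k}$ form a finite-length abelian category, and if $M\in\Perv(G_{\bar k})$ is defined over a finite extension $k_n$ of $k$, then so are all of its subobjects and quotients, since the finite lattice of subobjects of $M$ carries a continuous action of $\Gal(\bar k/k_n)$ with finite orbits. Consequently, every object $M$ of $\Pp(G)$ has a largest subobject $M_0$ lying in $\NegP(G)$, and (dually, or by stability of the relevant family under finite intersections) a smallest subobject $N$ whose quotient $M/N$ is negligible.

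Next I would isolate and prove the general lemma: if $C$ is a finite-length abelian category and $S\subset C$ a Serre subcategory, then the localization functor $Q\colon C\to C/S$ restricts to an equivalence of the full subcategory $C_S$ of objects having neither a nonzero subobject nor a nonzero quotient in $S$ onto $C/S$. For essential surjectivity, given $M$ one replaces it first by $M/M_0$, which has no nonzero subobject in $S$ and the same image as $M$ under $Q$, and then by its smallest subobject $N$ with negligible quotient; then $N\in C_S$ and $Q(N)\cong Q(M)$. For full faithfulness, given $N_1,N_2\in C_S$ one writes $\Hom_{C/S}(N_1,N_2)$ as the filtered colimit of the groups $\Hom_C(N_1',N_2/N_2'')$ over subobjects $N_1'\subseteq N_1$ with $N_1/N_1'\in S$ and $N_2''\subseteq N_2$ with $N_2''\in S$; the condition that $N_1$ has no nonzero quotient in $S$ forces $N_1'=N_1$, the condition that $N_2$ has no nonzero subobject in $S$ forces $N_2''=0$, and the colimit collapses to $\Hom_C(N_1,N_2)$. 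This is the argument underlying \cite[Prop.\,3.6.1]{GL_faisc-perv} and its extension in \cite{kramer_perverse_2014}, to which I would refer for the bookkeeping.

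Finally, since $\Ppint(G)$ is by definition the subcategory $C_S$ for $C=\Pp(G)$ and $S=\NegP(G)$, while $\Ppb(G)=\Pp(G)/\NegP(G)$, the lemma yields the asserted equivalence $\Ppint(G)\xrightarrow{\ \sim\ }\Ppb(G)$. Transporting the symmetric monoidal bifunctor $*$ on $\Ppb(G)$ of Proposition~\ref{pr-convol-pp} through this equivalence (via a quasi-inverse sending an object of $\Ppb(G)$ to the saturated representative constructed above) produces a bifunctor $*_{\intt}$ on $\Ppint(G)$, well defined up to canonical isomorphism, which finishes the proof. The only genuinely delicate step is the first one, namely checking that $\Pp(G)$ enjoys the finiteness properties required by the Serre-quotient machinery — in particular that subobjects and quotients of perverse sheaves defined over a finite field remain defined over a finite field; once that is in place everything else is a formal manipulation of localizations.
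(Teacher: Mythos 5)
Your proof is correct and is essentially the paper's own argument: the paper simply refers to Gabber--Loeser (Déf.-Prop.\ 3.7.2 of \cite{GL_faisc-perv}), whose content is exactly the Serre-quotient argument you spell out, with the quasi-inverse $M\mapsto M_{\intt}$ built from the largest negligible subobject and the smallest subobject with negligible quotient (as the paper records in Remark~\ref{rem-Mint}). One small imprecision: the lattice of subobjects of a finite-length object need not be finite, so to see that subobjects and quotients of objects of $\Pp(G)$ stay in $\Pp(G)$ you should instead invoke the standard limit/spreading-out fact that every object and morphism of $\Der(G_{\bar k})$ is defined over some finite extension of~$k$.
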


\begin{proof}
  The argument is the same as that of Gabber and
  Loeser~\cite[Déf.-Prop.\,3.7.2]{GL_faisc-perv}.
\end{proof}

The convolution product on~$\Ppint(G)$ will sometimes be called the
\emph{internal} or \emph{middle} convolution.\index{internal convolution}


\begin{remark}\label{rem-Mint}
  One can give a more explicit form of the equivalence of categories
  above, and of the internal convolution.

  First, Gabber and Loeser (\loccit) give an explicit quasi-inverse
  functor \hbox{$M \mapsto M_{\intt}$}
  \nomenclature{$M_{\intt}$}{quasi-inverse of $\Ppint(G)\to \Ppb(G)$} to
  the equivalence of categories $ \Ppint(G)\to \Ppb(G)$. Namely, let $M$
  be an object of~$\Pp(G)$. Let $M_t$ be the largest subobject of $M$
  that belongs to $\NegP(G)$ and let $M^t$ be the smallest subobject of
  $M$ such that $M/M^t$ belongs to~$\NegP(G)$. Define
  $M_{\intt}=M^t/(M^t\cap M_t)$. Then we have canonical
  isomorphisms
  \[
    M_{\intt}\simeq (M^t+M_t)/M_t,
  \]
  and the assignment $M\mapsto M_{\intt}$ is a functor which factors
  through $\Ppb(G)$ and induces a quasi-inverse of the localization
  functor.
  \nomenclature{$M^t$}{smallest subobject such that $M/M^t$ is negligible}
  \nomenclature{$M_t$}{largest subobject such that $M_t$ is negligible}
  
  In particular, this implies that if $M$ is a semisimple object of
  $\Ppb(G)$, then $M_{\intt}$ is the sum of all the simple
  constituents of $M$ that are not in $\NegP(G)$.

  Second, it follows from the argument
  in~\cite[Déf.-Prop.\,3.7.3]{GL_faisc-perv} that for $M$ and $N$
  in~$\Ppint(G)$, there are canonical isomorphisms
  \begin{equation}\label{eq-comparison-convolutions}
    M *_{\intt} N \to \pH^0(M *_! N)_{\intt} \to \pH^0(M *_* N)_{\intt}.
  \end{equation}
\end{remark}

We continue using the notation of the remark.

\begin{lemma}
  \label{lem-H-M-Mint}
  Let $M$ be an object of $\Pp(G)$. For all
  $\nunram{M/M^t}\cup \nunram{M^t\cap M_t}$, the natural morphisms
  $M^t\fleche{i} M$ and $M^t\fleche{p} M^t/(M^t\cap M_t)=M_{\intt}$
  induce isomorphisms
  \[
    H^{*}(G_{\bar{k}},(M^t)_\chi)\fleche{i_*} H^{*i}(G_{\bar{k}},M_\chi)
  \]
  and
  \[
    H^{*}(G_{\bar{k}},(M^t)_\chi) \fleche{p_*}
    H^{*}(G_{\bar{k}},(M_{\intt})_{\chi}).
  \]

  In particular, the set of characters $\chi\in\what{G}$ such that the
  cohomology groups $H^{*}(G_{\bar{k}},(M_{\intt})_\chi)$ and
  $H^{*}(G_{\bar{k}},M_\chi)$ are isomorphic is generic.
\end{lemma}

\begin{proof}
  The first statement follows immediately from the exact sequences
  \[
    M^t\to M \to M/M^t,\quad\quad 
    M^t\cap M_t \to M^t \to M^t/(M^t\cap M_t)=M_{\intt},
  \]
  and the assumption on~$\chi$. Since $\nunram{M/M^t}$ and
  $\nunram{M^t\cap M_t}$ are both generic, the final assertion follows.
\end{proof}

Recall from Section~\ref{sec-convolution} that for $M\in \Pp(G)$, the identity morphism $\id_M \colon M \to M$ defines
evaluation and coevaluation maps
\[
  \ev \colon M*_!M^\vee \to \un\quad\quad\text{ and }\quad\quad \coev
  \colon \un \to M^\vee*_*M.
\]
They correspond to maps in $\Ppb(G)$ which we denote in the same
way.

\begin{proposition}
\label{prop-rigid-cat}
The monoidal category $\Ppb(G)$ is rigid.\index{rigid monoidal category}  That is, for each object~$M$ of~$\Ppb(G)$, the morphisms
\begin{align*}
  M&\simeq M*\un \xrightarrow{\id_M*\coev} M*M^\vee
  *M\xrightarrow{\ev*\id_M}\un *M\simeq M \\
    M^\vee&\simeq \un *M^\vee \xrightarrow{\coev *\id_M} M^\vee * M * M^\vee
  \xrightarrow{\id_M *\ev} M^\vee * \un \simeq M^\vee
\end{align*} are the identity on $M$ and on $M^\vee$ respectively. 
\end{proposition}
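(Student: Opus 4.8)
The statement to prove is Proposition~\ref{prop-rigid-cat}: rigidity of $\Ppb(G)$. Here is how I would proceed.

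\medskip

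\textbf{Approach.} The plan is to verify the two triangle (zig-zag) identities that make $(\ev, \coev)$ a duality pairing in the monoidal category $\Ppb(G)$. The essential point is that everything is controlled by the adjunction isomorphisms of Lemma~\ref{lem-prop-conv}(1), which give, for all $M, N$ in $\Pp(G)$,
\[
  \Hom(\un, M^\vee *_* N)\simeq \Hom(M, N)\simeq \Hom(M *_! N^\vee, \un).
\]
Passing to the quotient category $\Ppb(G)$, where $*_!$ and $*_*$ become the single bifunctor $*$ (Proposition~\ref{pr-convol-pp}), these become, functorially in $M$ and $N$,
\[
  \Hom_{\Ppb(G)}(\un, M^\vee * N)\simeq \Hom_{\Ppb(G)}(M, N)\simeq \Hom_{\Ppb(G)}(M * N^\vee, \un),
\]
and by construction $\coev\colon \un\to M^\vee * M$ and $\ev\colon M * M^\vee\to \un$ are the images of $\id_M$ under these isomorphisms. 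So the proposition is the purely formal statement that a unit/counit pair arising from such a compatible family of adjunction isomorphisms satisfies the triangle identities; this is exactly the standard argument (see e.g. the treatment in \cite{GL_faisc-perv} for tori, or in tannakian references such as~\cite{deligne-milneI}), and I would carry it out as follows.

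\medskip

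\textbf{Key steps.} First I would record that the family of isomorphisms $\phi_{M,N}\colon \Hom(\un, M^\vee * N)\xrightarrow{\sim}\Hom(M,N)$ is natural in both variables, using the compatibility of the convolution with duality and with the "forget supports" morphism (Lemma~\ref{lem-prop-conv}, second and third lines) together with the associativity and unit constraints of the monoidal structure from Proposition~\ref{pr-convol-pp}; similarly for $\psi_{M,N}\colon \Hom(M * N^\vee, \un)\xrightarrow{\sim}\Hom(M,N)$. Second, I would unwind the composite
\[
  M\simeq M*\un\xrightarrow{\id_M * \coev} M * M^\vee * M\xrightarrow{\ev * \id_M}\un * M\simeq M
\]
and show it equals $\phi_{M,M}\bigl(\,(\id_M *\coev)\circ(\text{unit})\,\bigr)$ composed appropriately with $\psi$; by naturality of $\phi$ applied to the morphism $\id_M$ (which defines both $\ev$ and $\coev$), the whole composite reduces to $\phi_{M,M}(\coev)\circ(\ldots) = \phi_{M,M}(\phi_{M,M}^{-1}(\id_M)) = \id_M$. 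The same argument with the roles of $M$ and $M^\vee$ interchanged, using $\psi$ and the involutivity $(M^\vee)^\vee\simeq M$, gives the second identity for $M^\vee$. Throughout one must keep track of the coherence isomorphisms (associators, unitors), but these are invisible after the standard bookkeeping. It is also worth noting that $\un^\vee\simeq\un$ canonically, so no special case is needed.

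\medskip

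\textbf{Main obstacle.} The only real work is the naturality/compatibility bookkeeping in the first step: one must check that the adjunction isomorphisms of Lemma~\ref{lem-prop-conv}, which are stated at the level of $\Pp(G)$ with the two separate convolutions, descend to genuinely \emph{functorial} isomorphisms on $\Ppb(G)$ that are moreover compatible with the associativity constraint of $*$. This is where the hypotheses feed in: Lemma~\ref{lem-mildeconv-quot}(\ref{lem-mildeconv-quot1}) guarantees $*_!$ and $*_*$ agree in $\Ppb(G)$, and the duality exchange formulas $\DD(M *_* N)\simeq \DD(M)*_!\DD(N)$ translate under $M^\vee = \inv^*\DD M$ into the statement that $(M * N)^\vee\simeq M^\vee * N^\vee$ compatibly, so that the pairing is well-behaved. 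Once this is in place, the triangle identities are formal. Since this is entirely parallel to~\cite[Déf.-Prop.\,3.7.2]{GL_faisc-perv} and to the standard tannakian formalism, I would present it concisely, citing those references for the compatibilities and giving the zig-zag computation in a couple of lines.
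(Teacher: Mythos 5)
Your proposal is correct and follows essentially the same route as the paper, which simply invokes Kr\"amer's argument (\cite[Th.\,5.2]{kramer_perverse_2014}): the evaluation and coevaluation maps come from the adjunctions of Lemma~\ref{lem-prop-conv}, the two convolutions are identified in $\Ppb(G)$ via Lemma~\ref{lem-mildeconv-quot}, and the zig-zag identities are then a formal consequence of naturality and the monoidal compatibilities. The only caveat is that this formal step does require the naturality/associativity bookkeeping you flag as the main obstacle, which is exactly the content of the cited argument, so there is no gap.
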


\begin{proof}
  The argument is the same as that of Krämer
  in~\cite[Th.\,5.2]{kramer_perverse_2014}.
\end{proof}

For any object~$M$ of~$\Ppb(G)$ (\resp of~$\Ppint(G)$), we denote by
$\braket{M}$\nomenclature{$\braket{M}$}{subcategory tensor-generated by~$M$} the subcategory of $\Ppb(G)$ (\resp of $\Ppint(G)$) which
is tensor-generated by~$M$, i.e., the full subcategory whose objects are
the subquotients of all convolution powers of $M\oplus M^\vee$.

Our next goal is to prove the following crucial result:

\begin{theorem}
  \label{thm-cat_neut_tan}
  The categories $\Ppb(G)$ and $\Ppint(G)$ are neutral tannakian
  categories.
  \par
  In particular, for any object $M$ of~$\Ppint(G)$ or of~$\Pp(G)$, the
  category $\braket{M}$ is a neutral tannakian category over~$\Qlb$.\index{neutral tannakian category}
\end{theorem}

Recall that this means that there exists a fiber functor, namely a
faithful exact tensor functor from $\Ppb(G)$ to the category
$\mathrm{Vect}_\Qlb$ of finite dimensional $\Qlb$-vector spaces.
  


We begin the proof with an auxiliary result.  Recall that the trace
$\tr(f)\in\Qlb=\End(\un)$ of an endomorphism $f$ of $M\in \Ppint(G)$ is
defined as the composition
\[
  \un\xrightarrow{\coev} M*_{\intt}
  M^\vee\xrightarrow{f*_{\intt} \id_{M^\vee}} M*_{\intt}
  M^\vee\xrightarrow{\ev}\un.
\]

The dimension of $M\in \Ppint(G)$ is then intrinsically defined as
$\dim(M)=\tr(\id_M)$. It is, a priori, an element of~$\Qlb$.

\begin{proposition}
\label{prop-dimPerv=dimVect}
Let $M$ be an object of~$\Ppint(G)$. Then there is a generic set of characters $\chi\in\charg{G}$ such that
the following equality holds: 
\begin{equation}\label{eq-dim-formula}
  \dim \rmH^0(G_{\bar k},M_\chi)=\dim(M).
\end{equation}
\par
In particular, $\dim(M)$ is a non-negative integer, and there exists a
generic set of characters $\chi$ such that the dimension of
$H^0(G_{\bar{k}},M_{\chi})$ is independent of $\chi$.
\end{proposition}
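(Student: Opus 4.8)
The plan is to realise the assignment $M\mapsto \rmH^0(G_{\bar k},M_\chi)$, for $\chi$ in a suitable generic set, as the value of a monoidal functor to $\Qlb$-vector spaces, and then to use that a monoidal functor preserves categorical traces; the substance is then the cohomological identification of the abstract evaluation and coevaluation maps with honest duality pairings. First, the genericity of the set in the statement: $\wunram{M}$ is generic by Theorem~\ref{th-generic-unramified}, and the complex $C$ is negligible by Lemma~\ref{lem-mildeconv-quot}, so $\nunram{C}$ is generic; hence so is $\wunram{M}\cap\nunram{C}$, as well as its intersection with the finitely many further generic conditions the argument produces — coming from the higher perverse cohomology sheaves of $M*_! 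M^\vee$ (negligible by Lemma~\ref{lem-mildeconv-quot}) and from the correction terms of Remark~\ref{rem-Mint}. Granting~\eqref{eq-dim-formula}, the remaining assertions are immediate: $\dim\rmH^0(G_{\bar k},M_\chi)$ is the dimension of a $\Qlb$-vector space, hence a non-negative integer, and it equals $\dim(M)$ for every $\chi$ in this generic set, so it is constant there.

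Fix such a $\chi$ and consider $\omega_! := R\Gamma_c(G_{\bar k},(-)_\chi)$ and $\omega_* := R\Gamma(G_{\bar k},(-)_\chi)$ as functors $\Dd(G)\to\Der(\Qlb)$. The Künneth isomorphisms of Lemma~\ref{lem-prop-conv} say precisely that $\omega_!$ is symmetric monoidal from $(\Dd(G),*_!)$ and $\omega_*$ from $(\Dd(G),*_*)$, both carrying $\un$ to $\Qlb$ concentrated in degree $0$. Since $\chi\in\wunram{M}$, the forget-supports map $\omega_!(M)\to\omega_*(M)$ is an isomorphism onto $V[0]$, where $V:=\rmH^0(G_{\bar k},M_\chi)$. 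Combining $(M_\chi)^\vee=M^\vee_\chi$ \eqref{eqn:isomTannakianDual}, $\DD(M_\chi)\simeq\DD(M)_{\chi^{-1}}$ \eqref{eqn:isomDualChar} and $\inv^*\mcL_\chi\simeq\mcL_{\chi^{-1}}$, one finds $\omega_!(M^\vee)\simeq R\Gamma_c(G_{\bar k},\DD(M_\chi))\simeq\omega_*(M)^\vee=V^\vee[0]$, so $\chi\in\wunram{M^\vee}$ as well. Feeding these into Künneth, using $\chi\in\nunram{C}$ to identify $\omega_!$ and $\omega_*$ on $M*_! M^\vee\simeq M*_* M^\vee$, and using Lemma~\ref{lem-mildeconv-quot} together with Remark~\ref{rem-Mint} to pass from $M*_! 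M^\vee$ to the middle convolution $M*_{\intt}M^\vee$, one obtains natural isomorphisms of $\Qlb$-vector spaces $\rmH^0(G_{\bar k},(M*_{\intt}M^\vee)_\chi)\simeq V\otimes V^\vee$ and $\rmH^0(G_{\bar k},\un_\chi)\simeq\Qlb$.

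It then remains to compute the trace. By definition $\dim(M)=\tr(\id_M)\in\End(\un)=\Qlb$ is the scalar corresponding to the composite $\un\xrightarrow{\coev}M*_{\intt}M^\vee\xrightarrow{\ev}\un$ (via the symmetry $M^\vee*_{\intt}M\simeq M*_{\intt}M^\vee$). Applying $\rmH^0(G_{\bar k},(-)_\chi)$ and the isomorphisms above turns this into a composite $\Qlb\to V\otimes V^\vee\to\Qlb$; if the two maps are the standard coevaluation and evaluation of the finite-dimensional space $V$, the composite is $\dim_{\Qlb}V$ and~\eqref{eq-dim-formula} follows. Checking this is where the real work lies, and I expect it to be the main obstacle: one must unwind the adjunction $\Hom(M,M)\simeq\Hom(M*_! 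M^\vee,\un)$ of Lemma~\ref{lem-prop-conv}(1), which is built from $m^!\un$ and the biduality $\DD\DD M\simeq M$, to see that $\ev$ is — up to a shift and Tate twist — the evaluation $M\otimes\DD(M)\to\omega_G$ followed by adjunction, so that under $R\Gamma_c(G_{\bar k},(-)_\chi)$ it becomes the Poincaré duality pairing $V\otimes V^\vee\to\Qlb$, and symmetrically for $\coev$. This is the compatibility between the adjunction defining the rigidity, Verdier duality, and the Künneth formula; it is the analogue of the verification of Gabber--Loeser and Krämer, and the general case should reduce to theirs via the dévissage of Proposition~\ref{pr-devissage} once the uniformity of the generic set is secured. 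A secondary, purely bookkeeping, difficulty is tracking $*_!$ versus $*_*$ versus $*_{\intt}$, and $\Pp(G)$ versus $\Ppb(G)$ versus $\Ppint(G)$, throughout: the discrepancies are all negligible objects, hence become cohomologically invisible exactly once $\chi$ lies in the generic set above.
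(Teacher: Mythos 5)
Your proposal follows essentially the same route as the paper's proof: apply the $\chi$-twisted cohomology functor to the composite $\un\xrightarrow{\coev} M*_{\intt}M^\vee\xrightarrow{\ev}\un$, use the genericity of $\wunram{M}\cap\nunram{C}$ together with the negligible corrections from Lemma~\ref{lem-mildeconv-quot} and Remark~\ref{rem-Mint} and the K\"unneth isomorphisms of Lemma~\ref{lem-prop-conv} to identify the middle term with $\End(\rmH^0(G_{\bar k},M_\chi))$, and reduce everything to the compatibility of $\ev$ and $\coev$ with these identifications. The one point you flag as the ``main obstacle'' is settled in the paper simply by invoking the verification in the proof of \cite[Th.\,5.2]{kramer_perverse_2014}, which is a formal diagrammatic check of the adjunctions against K\"unneth that applies verbatim to any connected commutative $G$ --- in particular no d\'evissage via Proposition~\ref{pr-devissage} (nor any extra uniformity of the generic set) is needed for that step.
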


\begin{proof}
  We need to determine the morphism
  \[
    \un\xrightarrow{\coev} M*_{\intt}
    M^\vee\xrightarrow{\ev}\un.
  \]
  
  Twisting by~$\chi$ and taking cohomology, the sequence above induces a
  sequence 
  \[
    \Qlb\to H^{*}(G_{\bar{k}},(M*_{\intt} M^{\vee})_\chi)\to \Qlb.
  \]
  
  Let $P=\pH^0(M *_* M^{\vee})$.  By Lemma \ref{lem-mildeconv-quot}, the
  objects $\pH^i(M *_* M^{\vee})$ are negligible for all~$i\not=0$, so
  that for $\chi$ in a generic set of characters, we have isomorphisms
  \[
    H^{*}(G_{\bar{k}},(M*_* M^{\vee})_\chi)\simeq
    H^{*}(G_{\bar{k}},P_\chi).
  \]
  
  By Lemma~\ref{lem-H-M-Mint}, for $\chi$ in another generic set, we
  also have isomorphisms
  \[
    H^{*}(G_{\bar{k}},P_\chi)\simeq H^{*}(G_{\bar{k}},(P_{\intt})_\chi).
  \]

  By~(\ref{eq-comparison-convolutions}), the object $P_{\intt}$ is
  isomorphic to $M*_{\intt} M^{\vee}$, so these isomorphisms combine to
  imply that that, for $\chi$ generic, we have an isomorphism
  \[
    H^{*}(G_{\bar{k}},(M*_{\intt} M^{\vee})_\chi)\to
    H^{*}(G_{\bar{k}},(M*_* M^{\vee})_\chi).
  \]
  
  By Lemma~\ref{lem-prop-conv}, there are canonical isomorphisms
  \[
    H^{*}(G_{\bar{k}},(M*_* M^{\vee})_\chi)\simeq
    H^{*}(G_{\bar{k}},M_{\chi})\otimes
    H^{*}(G_{\bar{k}},(M_{\chi})^{\vee})
  \]
  for all~$\chi$.  Finally, if $\chi$ is also in $\wunram{M}$, then
  $H^{*}(G_{\bar{k}},M_{\chi})=H^0(G_{\bar{k}},M_{\chi})$ and
  $H^{*}(G_{\bar{k}},M^\vee_{\chi})=H^0(G_{\bar{k}},M_{\chi})^\vee$, and
  therefore there exists a generic set of characters for which the
  sequence above becomes
  \[
    \Qlb\to \End(H^0(G_{\bar{k}},M_{\chi}))\to \Qlb.
  \]
  
  Since the evaluation and coevaluation maps are sent to evaluation and
  covevaluation maps in vector spaces (see the proof of
  \cite[Th.\,5.2]{kramer_perverse_2014}), this composition is the
  multiplication by the dimension of $H^0(G_{\bar{k}},M_{\chi})$, which
  is therefore equal to the dimension of~$M$ in $\Ppint(G)$.
\end{proof}

\begin{proof}[Proof of Theorem~\ref{thm-cat_neut_tan}]
  By Proposition~\ref{prop-rigid-cat}, the equivalent categories
  $\Ppb(G)$ and~$\Ppint(G)$ are $\Qlb$-linear rigid tensor symmetric
  categories. Since the unit~$\un$ is (the image of) a skyscraper sheaf,
  we have $\mathrm{End}(\un)\simeq\Qlb$. 
  \par
  Proposition~\ref{prop-dimPerv=dimVect} and
  Theorem~\ref{th-generic-unramified} imply that the dimension~$\dim(M)$
  of every object~$M$ of $\Ppb(G)$ is a non-negative integer.  By a
  theorem of Deligne~\cite[Th.\,7.1]{del_cat_tan}, it follows that the
  category $\Ppb(G)$ is a tannakian category. A further theorem of
  Deligne (see the proof by Coulembier in~\cite[Th.6.4.1]{coulembier})
  implies that it is indeed neutral (i.e., there exists a fiber functor
  defined over~$\Qlb$).
\end{proof}

\begin{remark}\label{rm-chi}
  (1) In Example~\ref{ex-non-unram}, we will give examples to show that
  there may exist weakly unramified characters for which
  formula~(\ref{eq-dim-formula}) does not hold.
  \par
  (2) In this book, we will  exclusively consider from now on the
  categories $\braket{M}$ generated by a single object. A simpler proof
  that these are neutral tannakian categories is then provided by
  combining~\cite[Th.\,7.1]{del_cat_tan}
  with~\cite[Cor.\,6.20]{del_cat_tan}.
\end{remark}




\begin{corollary}\label{cor-def-group}
  Let $M$ be an object of $\Ppint(G)$. There exists an affine algebraic
  group $\Gg$ over~$\Qlb$ such that the category $\braket{M}$ is
  equivalent to the category~$\mathrm{Rep}_{\Qlb}(\Gg)$
  \nomenclature[$R$]{$\mathrm{Rep}_{\Qlb}(\Gg)$}{category of representations
    of~$\Gg$} of finite-dimensional $\Qlb$-representations
  of~$\Gg$. If~$M$ is semisimple, then the group~$\Gg$ is reductive and
  the category $\braket{M}$ is semisimple.
\end{corollary}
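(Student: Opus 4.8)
The plan is to deduce Corollary~\ref{cor-def-group} from Theorem~\ref{thm-cat_neut_tan} together with the basic dictionary of tannakian duality. Since Theorem~\ref{thm-cat_neut_tan} tells us that $\braket{M}$ is a neutral tannakian category over $\Qlb$, and the excerpt records (in the proof of Theorem~\ref{thm-cat_neut_tan}) that the fiber functor can be taken with values in $\mathrm{Vect}_{\Qlb}$, the main theorem of tannakian duality (see e.g.~\cite{del_cat_tan}) gives an affine group scheme $\Gg$ over $\Qlb$ with an equivalence $\braket{M}\simeq \mathrm{Rep}_{\Qlb}(\Gg)$. The fact that $\Gg$ is \emph{algebraic} (i.e.\ of finite type), rather than merely pro-algebraic, follows because $\braket{M}$ is generated as a tensor category by the single object $M$: its image $V$ under the fiber functor is a finite-dimensional faithful representation of $\Gg$, and a standard argument (Deligne--Milne) shows that the existence of a faithful finite-dimensional representation forces $\Gg$ to be a linear algebraic group.

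For the second assertion, suppose $M$ is semisimple. The first step is to check that $\braket{M}$ is a semisimple abelian category: every object of $\braket{M}$ is a subquotient of some convolution power $(M\oplus M^\vee)^{*n}$, and one needs that such convolution powers are semisimple in $\Ppint(G)$. This should follow from the decomposition theorem for convolutions of pure perverse sheaves (or, more elementarily in this setting, from the fact that $*_{\intt}$ of semisimple objects is semisimple — which is the analogue of~\cite[\S\,3.7]{GL_faisc-perv} and~\cite{kramer_perverse_2014} — combined with the fact that $\NegP(G)$ is a Serre subcategory by Lemma~\ref{lem-serre}, so the quotient of a semisimple object stays semisimple, and then transport back through the equivalence $\Ppint(G)\simeq\Ppb(G)$ of Proposition~\ref{pr-equiv-int}). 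Once $\braket{M}\simeq\mathrm{Rep}_{\Qlb}(\Gg)$ is a semisimple category, every finite-dimensional representation of $\Gg$ is completely reducible, which by our conventions (see the paragraph on reductive groups in the Conventions section: $\Gg$ is called reductive if all its finite-dimensional representations are completely reducible, without requiring connectedness) means precisely that $\Gg$ is reductive.

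Concretely, the write-up would be: invoke Theorem~\ref{thm-cat_neut_tan} to get that $\braket{M}$ is neutral tannakian; apply the reconstruction theorem of~\cite{del_cat_tan} together with the faithful finite-dimensional representation coming from $M$ (whose dimension is finite by Proposition~\ref{prop-dimPerv=dimVect}) to get an affine \emph{algebraic} group $\Gg/\Qlb$ with $\braket{M}\simeq\mathrm{Rep}_{\Qlb}(\Gg)$; then, assuming $M$ semisimple, establish semisimplicity of $\braket{M}$ and conclude reductivity of $\Gg$ from the definition. I expect the main obstacle to be the semisimplicity step: showing that convolution powers of a semisimple object of $\Ppint(G)$ remain semisimple. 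This is where one genuinely uses the structure of the convolution (the decomposition theorem, purity, or the Gabber--Loeser/Krämer machinery), rather than pure formalism; everything else is a direct citation of tannakian generalities.
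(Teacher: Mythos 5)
Your first paragraph is essentially the paper's argument for the first assertion: Theorem~\ref{thm-cat_neut_tan} plus the tannakian reconstruction theorem of Deligne--Milne produces the group, and algebraicity comes from the fact that $\braket{M}$ is tensor-generated by the single object $M$, which corresponds to a faithful finite-dimensional representation. No issue there.

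For the second assertion, however, your argument runs in the opposite direction from the paper's, and the step you yourself flag as the main obstacle is a genuine gap. You propose to prove first that the whole category $\braket{M}$ is semisimple, by arguing that internal convolution powers of the semisimple object $M$ remain semisimple, and only then deduce reductivity. But nothing in the paper, and nothing elementary, gives that $*_{\intt}$ preserves semisimplicity for general objects of $\Ppint(G)$: the decomposition theorem you invoke applies to \emph{pure} perverse sheaves, and Corollary~\ref{cor-def-group} carries no purity hypothesis (purity only enters later, in Theorem~\ref{thm-Harith-fib-funct}). To push your route through for an arbitrary semisimple $M$ you would need to know, e.g., that every simple object of $\Pp(G)$ is pure up to twist, which is deep input of Lafforgue--Drinfeld type that the paper never uses here. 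The paper's proof bypasses the issue by reversing the logic: under the equivalence $\braket{M}\simeq\mathrm{Rep}_{\Qlb}(\Gg)$, the object $M$ corresponds to a \emph{faithful} finite-dimensional representation of $\Gg$, and this representation is semisimple because $M$ is (the subcategory $\braket{M}\subset\Ppint(G)$ is full and closed under subquotients, so semisimplicity of $M$ is inherited). Over a field of characteristic zero, an affine algebraic group admitting a faithful semisimple finite-dimensional representation is reductive --- this is exactly what the citation \cite[Th.\,22.42]{milne-groups} supplies --- and reductivity then implies that \emph{all} finite-dimensional representations of $\Gg$, i.e.\ all objects of $\braket{M}$, are semisimple. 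In other words, the semisimplicity of the category is an output of the argument, not an ingredient; the only semisimple object needed as input is $M$ itself, so the convolution-stability claim your proof hinges on never has to be addressed.
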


\begin{proof}
  The first part follows from the tannakian reconstruction
  theorem~\cite[Th.\,2.11]{deligne-milneII}.\index{tannakian
    reconstruction theorem} If~$M$ is semisimple then
  since the category of representations of~$\Gg$ is equivalent to the
  category $\braket{M}$ generated by the semisimple object~$M$, it
  follows, e.g., from~\cite[Th.\,22.42]{milne-groups} that the group
  $\Gg$ is reductive, and that every object $N\in \braket{M}$ is
  semisimple.
\end{proof}

\begin{definition}
  For any object~$M$ of~$\Ppint(G)$ or of~$\Ppb(G)$, we denote by
  $\ggeo{M}$ the affine algebraic group over~$\Qlb$ given by the
  corollary, and we say that it is the \emph{geometric tannakian group}
  of the object~$M$. \nomenclature[$G$]{$\ggeo{M}$}{geometric tannakian
    group of~$M$}\index{geometric tannakian group}
\end{definition}

\begin{example}\label{ex-gm-geo}
  (1) Let $G=\Gg_m$. A perverse sheaf $N$ on~$\Gg_m$ is negligible if
  and only if it is a successive extension of
  shifted Kummer sheaves\index{Kummer sheaf} $\mcL_{\chi}[1]$ for some characters~$\chi$, and
  it follows that the category $\Ppint(\Gg_m)$ is the same as the
  category $\oldcal{P}$\nomenclature[$P$]{$\mathcal{P}$}{category of perverse
    sheaves on~$\Gg_m$} of Katz~\cite[Ch.\,2]{mellin}  (see
  also Section~\ref{sec-category-p}).
  \par
  (2) Let $G=\Gg_a$.  Fix an additive character $\psi$ of~$k$. By the
  proper base change theorem, a perverse sheaf $N$ on~$\Gg_a$ is
  negligible if and only if its Fourier transform $\ft_{\psi}(N)$ is
  punctual, which means that $N$ is a finite direct sum of
  Artin--Schreier
  sheaves~$\sheaf{L}_{\psi(yx)}[1]$\index{Artin--Schreier sheaf} for
  some $y\in\Gg_a$. This implies that the category $\Ppint(\Gg_a)$
  coincides with the category of perverse sheaves on~$\Gg_{a}$ with
  ``property $\oldcal{P}$'', as defined by
  Katz~\cite[(2.6.2)]{katz-rls} (this follows by combining
  Cor.\,2.6.14, Cor.\,2.6.15 and Lemma\,2.6.13 of~\cite{katz-rls}; see
  Remark 2.10.4 in \loccit).
\end{example}

\section{Euler--Poincaré characteristic and Grothendieck groups}

Proposition~\ref{prop-dimPerv=dimVect} has some other useful corollaries
which we state now.

\begin{proposition}\label{pr-chi}
  Let $M$ be an object of~$\Der(G)$.
  \begin{enumth}
  \item There exists a generic set~$\mcX\subset\charg{G}$ such that the
    Euler--Poincaré characteristic $\chi(G_{\bar{k}},M_\chi)$ is
    independent of $\chi\in \mcX$.
  \item If $M$ is negligible, then $\chi(G_{\bar{k}},M_{\chi})=0$ for
    all $\chi$ in a generic set of characters. The converse holds if $M$
    is a perverse sheaf.
  \item If~$G$ is a semiabelian variety, then the Euler--Poincaré
    characteristic $\chi(G_{\bar{k}},M_\chi)$ is independent of
    $\chi\in \charg{G}$ and it is non-negative if $M$ is a perverse
    sheaf.
  \end{enumth}
\end{proposition}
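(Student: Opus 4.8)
The plan is to deduce all three parts of Proposition~\ref{pr-chi} from the analysis already carried out in Proposition~\ref{prop-dimPerv=dimVect}, together with the generic and stratified vanishing theorems and the structure results for negligible objects.

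For part (1), first I would reduce to the case where $M$ is a perverse sheaf: by the perverse spectral sequences~\eqref{eq-perv-seq}, one has $\chi(G_{\bar k},M_\chi)=\sum_i(-1)^i\chi(G_{\bar k},\pH^i(M)_\chi)$, and there are only finitely many nonzero perverse cohomology sheaves. So it suffices to treat each $\pH^i(M)$. For a perverse sheaf $N$, Theorem~\ref{thm-gen-vanish} (equivalently Theorem~\ref{th-generic-unramified}) gives a generic set $\mcX$ of weakly unramified characters, on which $\rmH^j(G_{\bar k},N_\chi)=0$ for $j\neq 0$ and $\rmH^0_c=\rmH^0$; hence on $\mcX$ we have $\chi(G_{\bar k},N_\chi)=\dim\rmH^0(G_{\bar k},N_\chi)$. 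Now intersect $\mcX$ with the generic set produced in Proposition~\ref{prop-dimPerv=dimVect} (applied after passing to the tannakian-internal quotient $N_{\intt}$, noting that $N$ and $N_{\intt}$ differ by a negligible object whose twisted cohomology vanishes on a generic set): on that smaller generic set $\dim\rmH^0(G_{\bar k},N_\chi)=\dim(N_{\intt})$ is constant. Taking the alternating sum over $i$ of these constants gives part (1). (One could also argue more directly: the generic base-change statement Theorem~\ref{thm-gen-bc-conductors}, or Corollary~\ref{cor-gen-van-der}, shows $h^i_c(G_{\bar k},M_\chi)$ is eventually governed by $\rmH^0_c(G_{\bar k},\pH^i(M_\chi))$, and constancy follows from constructibility of the Fourier--Mellin transform; but the route through Proposition~\ref{prop-dimPerv=dimVect} is cleanest given what is already proved.)

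For part (2): if $M$ is negligible, then by definition (Definition~\ref{def-neg}) all $\pH^i(M)$ are negligible, so $\nunram{M}$ is generic, and on that set every $\rmH^j(G_{\bar k},M_\chi)=0$, whence $\chi(G_{\bar k},M_\chi)=0$. Conversely, suppose $M$ is a perverse sheaf with $\chi(G_{\bar k},M_\chi)=0$ for $\chi$ in a generic set. Intersecting with the generic set of part (1), we get $\dim(M_{\intt})=0$; since $\dim$ is a non-negative integer that is additive and multiplicative on the tannakian category $\braket{M_{\intt}}$, an object of dimension zero is the zero object, so $M_{\intt}=0$, i.e. $M$ has no subquotient outside $\NegP(G)$, which forces $M\in\NegP(G)$ (as $\Ppint(G)\to\Ppb(G)$ is an equivalence and $M_{\intt}$ is the sum of the non-negligible simple constituents). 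This gives the converse.

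For part (3): when $G$ is a semiabelian variety, the Euler--Poincar\'e characteristic is in fact \emph{constant} over all of $\charg{G}$, not merely over a generic set. Here I would invoke the Fourier--Mellin formalism of Section~\ref{sec:Fourier-Mellin}: the complex $\FM_!(M_{\bar k})\in\Dcoh(\Pi(G))$ has the property that $Li_\chi^*\FM_!(M)\simeq Rp_!(M_\chi)$ for every $\chi\in\Pi(G)(\Qlb)$, and the Euler characteristic of the fiber of a perfect (or coherent bounded) complex over a connected base is constant — or, more elementarily, one uses the fact (due to Gabber--Loeser for tori, Kr\"amer for semiabelian varieties) that $\FM_!(M)$ has locally free cohomology in the appropriate sense, so $\chi(G_{\bar k},M_\chi)=\sum_r(-1)^r\operatorname{rank}\mathscr{H}^r(\FM_!(M))$ is independent of $\chi$. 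Non-negativity for $M$ perverse then follows from part (1) once we know the common value equals the non-negative integer $\dim(M_{\intt})$ computed on the generic subset. The main obstacle I anticipate is making the semiabelian constancy statement of (3) precise with the references available in the excerpt: one must be careful that $\FM_!(M)$ genuinely behaves like a perfect complex over $\Pi(G)$ (this uses that $\Pi(G)_\ell$ is the spectrum of a regular ring and the finiteness/coherence results quoted from~\cite{GL_faisc-perv} and~\cite{kramer_perverse_2014}), and to check that the Euler characteristic is additive in exact triangles so that the reduction to perverse sheaves is legitimate; everything else is a routine assembly of the vanishing theorems and the dimension formula of Proposition~\ref{prop-dimPerv=dimVect}.
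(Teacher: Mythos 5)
Parts (1) and (2) of your proposal follow essentially the paper's own route: reduce to perverse cohomology sheaves via the Grothendieck group identity and Lemma~\ref{lem-characters-t-exact}, then invoke Proposition~\ref{prop-dimPerv=dimVect}. The only divergence is your converse in (2): the paper argues directly that on the generic set of weakly unramified characters one has $\chi(G_{\bar k},M_\chi)=\dim\rmH^0(G_{\bar k},M_\chi)$, so generic vanishing of the Euler characteristic gives generic vanishing of $\rmH^0$, which \emph{is} the definition of negligible (Definition~\ref{def-neg}); your detour through $\dim(M_{\intt})=0\Rightarrow M_{\intt}=0\Rightarrow M$ negligible is correct (the kernel of the localization functor is exactly $\NegP(G)$) but longer than necessary.

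Part (3) is where there is a genuine gap. The paper proves constancy of $\chi(G_{\bar k},M_\chi)$ over \emph{all} of $\charg{G}$ (this is how the proposition is used later, e.g.\ at the trivial character in Section~\ref{sec-jacobian}) by citing Deligne's theorem from~\cite{illu_charEP}: every character sheaf $\mcL_\chi$ on a semiabelian variety is tame, and twisting by a tame rank-one lisse sheaf does not change the Euler--Poincar\'e characteristic. Your Fourier--Mellin substitute does not recover this. First, the cohomology sheaves of $\FM_!(M)$ are not locally free in general --- their jumps are precisely the non-generic characters; what coherence/perfectness gives is only local constancy of the alternating sum of fiber dimensions. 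More seriously, $\Pi(G)$ is by construction a \emph{disjoint union} of copies of $\Pi(G)_\ell$ indexed by the prime-to-$\ell$ torsion characters, so this local constancy only yields independence of $\chi$ within a fixed connected component, i.e.\ for characters with a fixed prime-to-$\ell$ part. There is no bridge between components coming from coherence alone, and one cannot simply match each component's constant value with the generic value of part (1), since it is not clear that the generic set meets every component (the finite-order characters with a given prime-to-$\ell$ part form a very thin subset of $\charg{G}(k_n)$). So as written your argument proves strictly less than (3); to close the gap you should either cite the tame Euler-characteristic invariance as the paper does, or give an independent argument, which essentially amounts to reproving Deligne's result. (Your reading of the statement --- that the intended claim is constancy over all characters, not just over $\mcX$ --- and your deduction of non-negativity from the generic identification with the tannakian dimension both agree with the paper.)
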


\begin{proof}
  The decomposition
  $$
  M=\sum_{i\in\Zz} (-1)^i\ \pH^i(M)
  $$
  in the Grothendieck group $K(G)$, together with
  Lemma~\ref{lem-characters-t-exact}, implies that
  $$
  \chi(G_{\bar{k}},M_{\chi})=\sum_{i\in\Zz} (-1)^i\
  \chi(G_{\bar{k}},\pH^i(M)_{\chi})
  $$
  for all $\chi\in\charg{G}$.  Thus the first statement is an immediate
  consequence of Proposition~\ref{prop-dimPerv=dimVect}, combined wih
  the generic vanishing theorem, applied to each perverse cohomology
  sheaf.
  \par
  If $N$ is a negligible perverse sheaf, then by definition we
  get~$H^*(G_{\bar{k}},N_{\chi})=0$ for a generic set of characters,
  hence also $\chi(G_{\bar{k}},N_{\chi})=0$ for a generic set of
  characters. The previous formula shows that this is also true for any
  complex~$M$.

  Conversely, assume that~$M$ is a perverse sheaf
  and~$\chi(G_{\bar{k}},M_{\chi})=0$ for all~$\chi$ in a generic
  set. Combined with the generic vanishing theorem, this implies that
  $H^*(G_{\bar{k}},M_{\chi})=0$  for~$\chi$ generic, hence~$M$ is
  negligible.
  \par
  If~$G$ is a semiabelian variety, then the Euler--Poincaré
  characteristic $\chi(G_{\bar{k}},M_{\chi})$ is independent of~$\chi$
  by a result of Deligne (see~\cite{illu_charEP}), because all the
  $\chi\in \charg{G}$ are tame. In this case, the tannakian dimension of
  a perverse sheaf on~$G$ is therefore the same as its Euler--Poincaré
  characteristic.
\end{proof}



\begin{corollary}\label{cor-grothendieck}
  A perverse sheaf~$M$ in~$\Pp(G)$ is negligible if and only if its
  class in the Grothendieck group $K(G)$ belongs to the subgroup
  $K_{\mathrm{neg}}(G)$ generated by classes of negligible perverse sheaves.
\end{corollary}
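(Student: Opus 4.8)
The ``only if'' implication is immediate: a negligible perverse sheaf represents, by definition, a class in $K_{neg}(G)$. So the plan concentrates on the converse. The key observation I would make first is that $K_{neg}(G)$ is nothing but the free $\Zz$-submodule of the Grothendieck group spanned by the classes of the \emph{simple} negligible perverse sheaves. Indeed, a negligible perverse sheaf $N$ has finite length, and since $\NegP(G)$ is a Serre subcategory of $\Pp(G)$ (Lemma~\ref{lem-serre}), hence stable under passing to subobjects and quotients, every Jordan--Hölder constituent of $N$ is again negligible; thus $[N]$ is a non-negative integral combination of classes of simple negligible perverse sheaves. As the classes of all simple perverse sheaves form a $\Zz$-basis of the Grothendieck group, the classes of the simple negligible ones form a sub-basis, and $K_{neg}(G)$ is precisely their span.

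Next I would take a perverse sheaf $M\in\Pp(G)$ whose class lies in $K_{neg}(G)$ and write $[M]=\sum_i m_i [S_i]$, where the $S_i$ are the pairwise non-isomorphic simple Jordan--Hölder constituents of $M$ and the $m_i\geq 1$ are their multiplicities. Comparing this expansion with the sub-basis description of $K_{neg}(G)$ from the previous step, and using that distinct simple perverse sheaves give linearly independent classes, I conclude that each $S_i$ must be negligible. Finally, $M$ is built from the $S_i$ by a finite chain of extensions inside $\Pp(G)$, and since $\NegP(G)$ is stable under extensions, it follows that $M$ itself is negligible. This completes the argument.

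There is no serious obstacle here; the only point that requires care is the identification of $K_{neg}(G)$ with the span of a sub-basis of simple objects, which reduces exactly to the Serre subcategory property recorded in Lemma~\ref{lem-serre} together with the basis property of the Grothendieck group. If one prefers to avoid any reference to the Grothendieck group structure, the same conclusion can be reached directly: once all simple constituents $S_i$ of $M$ are known to be negligible, the intersection $\bigcap_i \nunram{S_i}$ is still generic by Theorem~\ref{th-generic-unramified} (a finite intersection of generic sets), and for any $\chi$ in it the long exact cohomology sequences attached to a composition series of $M_\chi$ yield $\rmH^i(G_{\bar{k}},M_\chi)=0$ for all $i$; hence $\nunram{M}$ is generic and $M$ is negligible.
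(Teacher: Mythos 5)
Your proof is correct, but it follows a genuinely different route from the paper's. The paper argues numerically: writing $[M]$ as a signed sum of classes of negligible perverse sheaves, it uses additivity of the Euler--Poincar\'e characteristic $\chi(G_{\bar k},M_\chi)$ on $K(G)$ to get $\chi(G_{\bar k},M_\chi)=0$ for a generic set of characters, and then invokes Proposition~\ref{pr-chi} (which rests on the generic dimension formula of Proposition~\ref{prop-dimPerv=dimVect}) to conclude that a perverse sheaf whose twists have generically vanishing Euler characteristic is negligible. You instead argue structurally: since $\NegP(G)$ is a Serre subcategory (Lemma~\ref{lem-serre}), the Jordan--H\"older constituents of any negligible perverse sheaf are negligible, so $K_{neg}(G)$ is exactly the span of the classes of \emph{simple} negligible perverse sheaves, a sub-basis of the basis of $K(G)$ given by simple perverse sheaves; uniqueness of the basis expansion forces every constituent of $M$ to be negligible, and extension-stability of $\NegP(G)$ finishes. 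Your approach avoids the Euler-characteristic machinery entirely and yields the slightly sharper fact that a perverse sheaf is negligible if and only if all of its Jordan--H\"older constituents are, while the paper's argument is shorter once Proposition~\ref{pr-chi} is in hand and handles arbitrary signed combinations directly without appealing to the basis property of $K(G)$. One cosmetic remark: in your fallback argument, the genericity of each $\nunram{S_i}$ is the characterization of negligibility recorded just after Definition~\ref{def-neg} (which indeed uses Theorem~\ref{th-generic-unramified}); the real point is that a finite intersection of generic sets is generic, as you say.
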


\begin{proof}
  It suffices to prove that a perverse sheaf~$M$ is negligible if the
  class of~$M$ in~$K(G)$ can be expressed as a finite sum
  $$
  M=\sum_{i\in I} \eps_i M_i
  $$
  in~$K(G)$, where~$M_i$ is a negligible perverse sheaf for all~$i\in I$
  and $\eps_i\in\{-1,1\}$. Such a formula implies
  the equality
  $$
  \chi(G_{\bar{k}},M_{\chi})=\sum_{i\in I} \eps_i \chi(G_{\bar{k}},M_{i,\chi})
  $$
  for all $\chi\in\charg{G}$. For a generic set of characters we have
  $\chi(G_{\bar{k}},M_{i,\chi})=0$ for all~$i\in I$, since~$M_i$ is
  negligible by assumption, hence $\chi(G_{\bar{k}},M_{\chi})=0$ for a
  generic set of characters; thus~$M$ is negligible by
  Proposition~\ref{pr-chi}, (2).
\end{proof}

\begin{corollary}\label{cor-tacs-negligible}
  Suppose that~$G$ is a semiabelian variety. Let~$M$ be a negligible
  perverse sheaf on~$G$. The Euler--Poincaré characteristic of~$M$
  is~$0$ and the set of characters~$\chi\in\charg{G}$ such that the
  space $H^0(G_{\bar{k}},M_{\chi})$ is non-zero is contained in a finite
  union of \tacs.
\end{corollary}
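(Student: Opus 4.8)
The plan is to establish the two assertions separately. The statement on the Euler--Poincar\'e characteristic is almost immediate: since $G$ is semiabelian, Proposition~\ref{pr-chi}(3) shows that $\chi(G_{\bar{k}},M_\chi)$ does not depend on $\chi\in\charg{G}$, while Proposition~\ref{pr-chi}(2), applied to the negligible perverse sheaf $M$, gives $\chi(G_{\bar{k}},M_\chi)=0$ for $\chi$ in a generic set; hence this common value is $0$, and taking $\chi$ trivial yields $\chi(G_{\bar{k}},M)=0$. So the content of the corollary is the statement about \tacs, which I would prove by reducing to the geometrically simple case and invoking the sharp (translation-invariance) form of the generic vanishing theorem.

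For the reduction, recall that $\NegP(G)$ is a Serre subcategory of $\Pp(G)$ (Lemma~\ref{lem-serre}), so every arithmetic Jordan--H\"older factor of the negligible perverse sheaf $M$ is again negligible; by Lemma~\ref{lem-arith-ss-geo} each such factor becomes, after a finite extension of $k$, a direct sum of geometrically simple perverse sheaves, each of which is geometrically negligible (negligibility only involves $\rmH^0(G_{\bar{k}},-)$ and passes to direct summands), and the extension is harmless because \tacs\ are geometric objects. Using the long exact cohomology sequences attached to a composition series of $M$ one obtains
\[
  \{\chi\in\charg{G}\mid \rmH^0(G_{\bar{k}},M_\chi)\neq 0\}\ \subset\ \bigcup_a\ \{\chi\in\charg{G}\mid \rmH^0(G_{\bar{k}},(N_a)_\chi)\neq 0\},
\]
where $N_a$ runs over the finitely many geometrically simple negligible factors. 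It thus suffices to show that for a geometrically simple negligible perverse sheaf $N$ on $G$ the set $\{\chi\mid \rmH^0(G_{\bar{k}},N_\chi)\neq 0\}$ lies in a single \tac.

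Such an $N$ has Euler--Poincar\'e characteristic $0$ (by the first paragraph together with Proposition~\ref{pr-chi}), hence is invariant over $\bar{k}$ under translation by a non-trivial connected subgroup $H\subset G_{\bar{k}}$; equivalently $N_{\bar{k}}\simeq \mcL_{\chi_0}\otimes q^*(\tilde N)[\dim H]$, where $q\colon G_{\bar{k}}\to G_{\bar{k}}/H$ is the quotient, $\chi_0\in\Pi(G)(\Qlb)$, and $\tilde N$ is a perverse sheaf on $(G/H)_{\bar{k}}$. This is the sharp form of the generic vanishing theorem: Gabber--Loeser~\cite{GL_faisc-perv} for tori, Weissauer~\cite{weissauer_vanishing_2016} for abelian varieties, and Kr\"amer~\cite{kramer_perverse_2014} for semiabelian varieties; in our framework it can also be assembled from Proposition~\ref{prop-chi=0-trans-inv} and the torus case (compare Example~\ref{ex-negligible}). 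Then, for every $\chi\in\charg{G}$, the projection formula and $\mcL_{\chi_0}\otimes\mcL_\chi\simeq\mcL_{\chi_0\chi}$ give
\[
  \rmH^*(G_{\bar{k}},N_\chi)\ \simeq\ \rmH^*\big((G/H)_{\bar{k}},\ \tilde N\otimes Rq_*\mcL_{\chi_0\chi}\big)[\dim H],
\]
and $Rq_*\mcL_{\chi_0\chi}$ vanishes unless the restriction of $\chi_0\chi$ to $H$ is trivial, by the vanishing of the cohomology of a non-trivial character sheaf on the connected commutative group $H$ (the mechanism of Lemma~\ref{lm-image-character}, together with Verdier duality; one reduces to a smooth $q$ by factoring it into an isogeny followed by a smooth morphism, as in Example~\ref{ex-negligible}). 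Hence $\rmH^0(G_{\bar{k}},N_\chi)=0$ unless $\chi|_H=\chi_0^{-1}|_H$, and by Remark~\ref{comp-generics}(3) this last condition describes exactly the \tac\ of $G$ attached to the quotient $G_{\bar{k}}\to G_{\bar{k}}/H$ and the character $\chi_0^{-1}$ --- a genuine \tac\ since $H$ is non-trivial and connected. Combined with the reduction step, this finishes the proof. The main obstacle is purely one of citation/verification: one must make sure the translation-invariance result for simple negligible perverse sheaves is available in the semiabelian generality and that its hypothesis matches Definition~\ref{def-neg} (it does, since for $G$ semiabelian negligibility of a perverse sheaf is equivalent to vanishing of its Euler--Poincar\'e characteristic, by Proposition~\ref{pr-chi}), and that the $Rq_*$ computation is unaffected by a possible inseparable part of $q$.
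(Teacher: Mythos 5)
Your first paragraph (vanishing of the Euler--Poincaré characteristic via Proposition~\ref{pr-chi}) is exactly the paper's argument. For the containment in a finite union of \tacs, however, you take a genuinely different and much heavier route. The paper's proof is two lines: by the strong form of the generic vanishing theorem (Theorem~\ref{thm-gen-van-AV} and its relatives), there is a finite family of \tacs\ outside of which $\rmH^i(G_{\bar k},M_\chi)=0$ for all $i\neq 0$; for any such $\chi$ one then has
\[
  \dim \rmH^0(G_{\bar k},M_\chi)=\chi(G_{\bar k},M_\chi)=\chi(G_{\bar k},M)=0,
\]
the middle equality by the constancy of the Euler characteristic in $\chi$ on a semiabelian variety. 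No Jordan--Hölder reduction and no structure theory of negligible objects is needed: the \tacs\ in the conclusion are simply those furnished by the vanishing theorem applied to $M$ itself.

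Your route instead passes through the classification of geometrically simple negligible perverse sheaves as character twists of pullbacks along quotients with positive-dimensional connected kernel. Granting that input, the rest of your argument (projection formula for $Rq_!$, vanishing of $Rq_!\mcL_{\chi_0\chi}$ when the character is non-trivial on the kernel as in Lemma~\ref{lm-image-character}, duality to handle ordinary cohomology) is sound, and it even gives the slightly finer conclusion that each simple constituent contributes a single \tac. But that input is exactly the weak point, and it is more than a ``citation/verification'' formality: the paper asserts this structure theorem only in some cases --- abelian varieties (Weissauer) and $\Gg_m\times\Gg_a$ (Section~\ref{ssec-neg-one}); see Example~\ref{ex-negligible} --- and Remark~\ref{rm-negligible} explains why re-deriving it inside this framework is delicate (it is intertwined with the vanishing theorem itself). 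In particular, your parenthetical claim that it ``can be assembled from Proposition~\ref{prop-chi=0-trans-inv} and the torus case'' is not a proof: Proposition~\ref{prop-chi=0-trans-inv} concerns abelian varieties, and a semiabelian variety is an extension of an abelian variety by a torus, not a product, so the required dévissage is not routine. Either verify that the structure theorem is genuinely available for semiabelian varieties in positive characteristic in the reference you cite, or bypass it altogether with the Euler-characteristic argument above, which is what the paper does.
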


\begin{proof}
  The fact that~$\chi(M)=0$ has been stated in
  Proposition~\ref{pr-chi}. By Theorem~\ref{thm-gen-van-AV}, there
  exists a finite family $(S_f)$ of \tacs\ of~$G$ such
  that~$H^i(G_{\bar{k}},M_{\chi})=0$ for all~$i\not=0$ and~$\chi$ not
  belonging to the union~$\mcS$ of these \tacs. For any $\chi$ not
  in~$\mcS$, we then deduce by loc. cit. that
  $$
  \dim H^0(G_{\bar{k}},M_{\chi})=\chi(M_{\chi})=\chi(M)=0.
  $$
\end{proof}

\section{Arithmetic fiber functors}

We now address the question of constructing arithmetic fiber functors that will be used to define conjugacy classes
of elements in the tannakian groups.\index{arithmetic fiber functor}

\begin{definition}[Unramified characters]\label{def:unramified-fibfunct}
  \index{unramified character} Let~$M$ be an object of $\Ppint(G)$. A
  weakly unramified character $\chi\in \charg{G}$ for $M$ is said to be
  \emph{unramified for $M$} if the functor
  \[
    N\longmapsto  \omega_\chi(N)=\rmH^0(G_{\bar k},N_\chi)
  \]
  is a fiber functor on the
  category~$\braket{M}\subset \Ppint(G)$.\nomenclature[$omega$]{$\omega_{\chi}$}{fiber functor
    defined by~$\chi$} We denote by 
  \[
  \unram{M}\subset \wunram{M}\subset \charg{G}
  \] the set of unramified characters for $M$.\nomenclature[$X$]{$\unram{M}$}{set of
  unramified characters}  We say that the perverse sheaf $M$ is \emph{generically unramified}\index{generically unramified} if the subset
$\unram{M} \subset \charg{G}$ is generic.\index{generically unramified
  perverse sheaf}
\end{definition}

\begin{remark}
\label{rem-unram-ex3}
In Example~\ref{ex-non-unram}, we will give examples to show that
  there may exist weakly unramified characters which are not unramified. An example is given by the sheaf $M=\mcL_{\eta(f)}(1/2)[1]$ on $\Gg_m\times \Gg_a$, where $f$ is a polynomial of degree $d$ such that $f(0)\not= 0$ and $\eta$ is a multiplicative character such that $\eta^d$ is non-trivial. 
  
We shall prove that every character $(\chi,a)$ is weakly unramified for $M$, that $\dim(\rmH^0(\Gg_m\times \Gg_a,M_{(\chi,a)})$ is $d+1$ if $a\not= 0$ but $d$ if $a=0$, implying that $(\chi,0)$ is not unramified for $M$. 

\end{remark}

We expect that all semisimple objects of $\Ppint(G)$ are generically
unramified. We can currently only prove this property for the three
fundamental types of algebraic groups.

\begin{theorem}\label{th-polite}
  If $G$ is a torus, an abelian variety or a unipotent group, then any
  semisimple object of $\Ppint(G)$ is generically unramified.
\end{theorem}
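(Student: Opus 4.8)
The plan is to reduce the statement to the existence of a single unramified character and then show that the locus of such characters is generic, exploiting the compatibility of the fiber-functor condition with the tannakian formalism already in place. First I would recall that by Theorem~\ref{thm-cat_neut_tan} the category $\braket{M}$ is a neutral tannakian category, so it admits \emph{some} fiber functor; the functor $\omega_\chi$ will be a fiber functor on $\braket{M}$ precisely when it is faithful, exact, and monoidal with $\omega_\chi(\un)=\Qlb$. Monoidality comes for free on a generic set: by Lemma~\ref{lem-prop-conv} the Künneth isomorphism $\rmH^*(G_{\bar k},M_\chi)\otimes\rmH^*(G_{\bar k},N_\chi)\simeq\rmH^*(G_{\bar k},(M*_*N)_\chi)$ holds, and using Lemma~\ref{lem-mildeconv-quot}\ref{lem-mildeconv-quot1} together with genericity of $\nunram{C}$ for the relevant cones $C$ we can pass between $*_!$ and $*_*$; combined with weak unramifiedness (so that only $\rmH^0$ survives) this gives $\omega_\chi(N_1*_{\intt}N_2)\simeq\omega_\chi(N_1)\otimes\omega_\chi(N_2)$. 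Exactness of $\omega_\chi$ on $\Ppint(G)$ restricted to $\braket{M}$ similarly follows from the long exact cohomology sequence once $\chi$ is weakly unramified for all three terms of a short exact sequence, which is a generic condition by Theorem~\ref{th-generic-unramified}. So the real content is \textbf{faithfulness}, i.e. that $\omega_\chi(N)\neq 0$ whenever $N\neq 0$ in $\braket{M}\subset\Ppint(G)$.

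To handle faithfulness I would first reduce to the case of a geometrically semisimple $M$ (hence $\ggeo{M}$ reductive by Corollary~\ref{cor-def-group}) and then use Proposition~\ref{pr-tensor-ab}: every irreducible object of $\braket{M}$, viewed as a representation of $\ggeo{M}$, occurs in some $(V\oplus V^\vee)^{\otimes m}$ where $V$ is the representation attached to $M$; translating back, every simple object of $\braket{M}$ is a subquotient of some $\pH^0$ of an iterated convolution $(M\oplus M^\vee)^{*m}$. Thus it suffices to check that $\omega_\chi$ does not kill any nonzero subquotient of such a fixed finite list of objects, and since there are finitely many simple constituents of all the $\pH^i((M\oplus M^\vee)^{*m})$ for $m$ up to the bound coming from the faithful-dimension argument — here I would invoke Proposition~\ref{prop-JorHol-cond} and the uniform complexity bound for character sheaves, Proposition~\ref{prop-cond-char} — it is enough that, for each such simple constituent $N$, the set $\{\chi : \rmH^0(G_{\bar k},N_\chi)\neq 0\}$ is generic, equivalently that $N$ is \emph{not} negligible. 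But $N$ being a simple subquotient of a convolution power of $M\oplus M^\vee$ inside $\Ppint(G)$ means it is not in $\NegP(G)$ by construction of the internal convolution category (Remark~\ref{rem-Mint}), so indeed $N$ is non-negligible and $\{\chi:\omega_\chi(N)\neq 0\}$ is generic. Intersecting the finitely many generic sets obtained (for monoidality, exactness, and faithfulness on each relevant simple object) yields a generic set $\unram{M}$, giving the theorem for geometrically semisimple $M$.

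The passage from geometrically semisimple to arithmetically semisimple $M$ is then a matter of Lemma~\ref{lem-arith-ss-geo}: after a bounded finite extension $k'$ of $k$, $M_{k'}$ is a direct sum of geometrically simple perverse sheaves, and a character of $G(k_n)$ is unramified for $M$ as soon as the induced character of $G(k'_n)$ is unramified for $M_{k'}$; since $[\charg{G}(k_n) : \text{image of } \charg{G}(k'_n)\text{-characters restricting suitably}]$ is controlled and the pullback of a generic set along $\charg{G}_{k'}\to\charg{G}$ correspondences is generic (by the Lang--Weil comparison $|\charg{G}(k_n)|\asymp|k|^{nd}$), genericity descends. I do not think the torus/abelian-variety/unipotent trichotomy enters directly in the above skeleton — it enters only through the inputs already black-boxed, namely the stratified vanishing Theorem~\ref{thm-high-vanish} (needed to know $\wunram{M}$ and $\nunram{C}$ are generic) and Proposition~\ref{prop-cond-char} (uniform complexity of character sheaves, used to bound the number of simple constituents to check) — so for these three group types everything is available and the argument closes. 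The main obstacle, and the reason the theorem is stated only for these three cases, is subtler than faithfulness: it is the verification that $\omega_\chi$ is genuinely \emph{monoidal} as a functor on the full subcategory $\braket{M}$, because the internal convolution $*_{\intt}$ is defined via a \emph{quotient} construction and one must check that the comparison maps $\omega_\chi(N_1)\otimes\omega_\chi(N_2)\to\omega_\chi(N_1*_{\intt}N_2)$ are isomorphisms \emph{compatibly}, i.e. that the negligible "error terms" $M_t$, $M^t$ of Remark~\ref{rem-Mint} are invisible to $\omega_\chi$ for a \emph{single} $\chi$ working simultaneously for all finitely many objects in play — this requires the quantitative control on the number and complexity of negligible constituents appearing, which is exactly what Proposition~\ref{prop-cond-char} and Proposition~\ref{prop-JorHol-cond} deliver for tori, abelian varieties and unipotent groups but which the authors cannot yet establish in the general dévissage.
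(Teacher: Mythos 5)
There is a genuine gap, and it sits exactly where you wave at Propositions~\ref{prop-cond-char} and~\ref{prop-JorHol-cond}. To make $\omega_\chi$ a fiber functor on $\braket{M}$ you must have the Künneth-type compatibility for \emph{every} object of $\braket{M}$, i.e. (via Proposition~\ref{pr-tensor-ab}) for every convolution power $(M\oplus M^\vee)^{*m}$ with $m$ ranging over \emph{all} positive integers: there is no ``bound on $m$ coming from the faithful-dimension argument'', because $\braket{M}$ has infinitely many non-isomorphic simple objects whenever the tannakian group is infinite, and Proposition~\ref{pr-tensor-ab} only says each one occurs in \emph{some} tensor power. So the set of characters you need is of the shape $\wunram{M}\cap\bigcap_{m\geq 1}\nunram{C_m}$, where $C_m$ is the cone of $L^{*_!^m}\to L^{*_*^m}$ for $L=M\oplus M^\vee$ (this is precisely the paper's Lemma~\ref{lm-critere-unram}). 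Each $\nunram{C_m}$ is generic, but genericity is not stable under infinite intersections, and the complexity and Jordan--Hölder bounds you invoke do not rescue this: the implied constants in the estimate $|\charg{G}(k_n)\setminus\nunram{C_m}(k_n)|\ll|k|^{n(d-1)}$ depend on $C_m$, hence grow with $m$ (the complexity of $L^{*m}$ is only bounded by roughly $c(L)^m$), so the intersection over all $m$ can a priori be empty. Your concluding claim that ``the trichotomy does not enter directly'' is therefore exactly backwards: the whole content of the theorem, and the reason it is stated only for these three classes of groups, is the group-specific mechanism that collapses this infinite family of conditions.

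For comparison, the paper's proof handles each case by such a mechanism: for abelian varieties the morphism $m$ is proper, so $*_!=*_*$, every $C_m$ vanishes and $\unram{M}=\wunram{M}$; for tori one quotes Gabber--Loeser \cite[Prop.\,3.9.3\,(iv)]{GL_faisc-perv}, which gives the monotonicity $\nunram{C_2}\subset\nunram{C_m}$ for all $m\geq 2$, so the infinite intersection equals the single generic set $\wunram{M}\cap\nunram{C_2}$; for unipotent groups one argues through the Fourier transform on the Serre dual, producing one dense open $V\subset G^\vee$ on which $\ft_\psi(N*_\intt N')$ is canonically $\ft_\psi(N)\otimes\ft_\psi(N')$ for all $N,N'$ in $\braket{M}$ simultaneously (using the intermediate-extension description and the negligibility of the error terms of Remark~\ref{rem-Mint}), so every $\psi_a$ with $a\in V(\bar k)$ is unramified. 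None of these steps appears in your proposal, and without one of them (or a genuinely new uniformity statement replacing them) the argument does not close. Incidentally, your separate worry about faithfulness is not the issue: once $\omega_\chi$ is an exact tensor functor on the rigid category $\braket{M}$ with $\End(\un)=\Qlb$, faithfulness is automatic (this is how the paper concludes in Lemma~\ref{lm-critere-unram}, via \cite[Prop.\,1.19]{deligne-milneII}).
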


For tori or abelian varieties, we need a general technical criterion
ensuring that an object $M$ is generically unramified.

\begin{lemma}\label{lm-critere-unram}
  Let $M$ be a semisimple object of $\Ppint(G)$. Set
  $L=M\oplus M^{\vee}$. For each $m\geq 2$, let~$C_m$ be the cone of the
  canonical morphism $L^{*_!^m}\to L^{*_*^m}$. All characters $\chi$ in 
  \begin{equation}\label{eq-set-unram}
    \wunram{M}\cap \bigcap_{m\geq 2}\nunram{C_m}
  \end{equation}
  are unramified for~$M$.
\end{lemma}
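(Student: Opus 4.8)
The statement asserts that if $\chi$ lies in the generic set \eqref{eq-set-unram}, then the functor $N \mapsto \omega_\chi(N) = \rmH^0(G_{\bar k}, N_\chi)$ is a fiber functor on $\braket{M}$. Since $\braket{M}$ is a neutral tannakian category by Theorem~\ref{thm-cat_neut_tan}, with the tannakian dimension of each object a non-negative integer (Proposition~\ref{prop-dimPerv=dimVect}), my plan is to verify the three defining properties of a fiber functor directly: that $\omega_\chi$ is $\Qlb$-linear, exact, and $\otimes$-compatible (sends $*_\intt$ to $\otimes_{\Qlb}$), together with faithfulness. The key observation is that for $\chi \in \wunram{M}$ the cohomology of twists is concentrated in degree zero on the nose, so the various spectral sequences and long exact sequences collapse, and for $\chi \in \bigcap_m \nunram{C_m}$ the two convolutions $*_!$ and $*_*$ agree \emph{on cohomology} for all iterated convolution powers, so the Künneth isomorphism of Lemma~\ref{lem-prop-conv} transports through.

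First I would note that every object of $\braket{M}$ is, by definition, a subquotient of $L^{*_\intt^m}$ for some $m$, where $L = M \oplus M^\vee$ and $*_\intt$ is the internal convolution on $\Ppint(G)$. Using Remark~\ref{rem-Mint}, $L^{*_\intt^m}$ is the non-negligible part of $\pH^0(L^{*_!^m})$, and one checks that for $\chi$ in the set \eqref{eq-set-unram} the twist-and-take-$\rmH^0$ operation does not see the negligible part nor the higher perverse cohomology: indeed $\chi \in \nunram{C_m}$ forces $\rmH^*(G_{\bar k}, (L^{*_!^m})_\chi) \simeq \rmH^*(G_{\bar k}, (L^{*_*^m})_\chi)$, and combined with $\chi \in \wunram{L} = \wunram{M} \cap \wunram{M^\vee}$ (note $\wunram{M^\vee} = \wunram{M}$ since $\rmH^i_c(G_{\bar k}, (M^\vee)_\chi) = \rmH^i_c(G_{\bar k}, (M_{\chi^{-1}})^\vee[\dots])$ is dual to $\rmH^{-i}(G_{\bar k}, M_{\chi^{-1}})$ — one has to be slightly careful here, possibly intersecting with $\wunram{M}$ for $\chi^{-1}$ as well, but this is still generic) one gets that $\rmH^*(G_{\bar k}, (L^{*_!^m})_\chi)$ is concentrated in degree $0$ and equals $\rmH^0(G_{\bar k}, (\pH^0(L^{*_!^m}))_\chi)$, and the contribution of any negligible subquotient vanishes because negligible perverse sheaves have generically vanishing $\rmH^0$. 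Hence $\omega_\chi$ restricted to $\braket{M}$ agrees with the functor $N \mapsto \rmH^0(G_{\bar k}, N_\chi)$ computed on a genuine perverse representative, and exactness on $\braket{M}$ follows from the long exact cohomology sequence together with the fact that for $\chi \in \wunram{}$ of the relevant objects only degree $0$ survives (so short exact sequences of perverse sheaves in $\braket{M}$ give short exact sequences of $\rmH^0$'s). $\Qlb$-linearity is immediate.

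Next I would establish the monoidal compatibility. For $N_1, N_2 \in \braket{M}$, I want $\omega_\chi(N_1 *_\intt N_2) \simeq \omega_\chi(N_1) \otimes_{\Qlb} \omega_\chi(N_2)$ functorially. By Remark~\ref{rem-Mint}, $N_1 *_\intt N_2$ is the non-negligible part of $\pH^0(N_1 *_! N_2)$; twisting by $\chi$ in the generic set and using that the cone $C$ of $N_1 *_! N_2 \to N_1 *_* N_2$ is negligible (Lemma~\ref{lem-mildeconv-quot}\ref{lem-mildeconv-quot1}) so $\rmH^*$ of its $\chi$-twist vanishes, one gets $\rmH^0(G_{\bar k}, (N_1 *_\intt N_2)_\chi) \simeq \rmH^*(G_{\bar k}, (N_1 *_! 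N_2)_\chi)$; then the third isomorphism of Lemma~\ref{lem-prop-conv} gives $\rmH^*(G_{\bar k}, (N_1 *_! N_2)_\chi) \simeq \rmH^*(G_{\bar k}, (N_1)_\chi) \otimes_{\Qlb} \rmH^*(G_{\bar k}, (N_2)_\chi)$ (using $(N_i *_! N_j)_\chi \simeq (N_i)_\chi *_! (N_j)_\chi$), and by $\chi \in \wunram{}$ each factor is just $\rmH^0$. One must check that this chain of isomorphisms is compatible with the associativity and symmetry constraints and is natural in $N_1, N_2$; this is bookkeeping of the kind already done in \cite{GL_faisc-perv} and \cite{kramer_perverse_2014}. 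Finally, faithfulness: $\omega_\chi$ is $\Qlb$-linear, exact, monoidal, and $\Qlb$-valued, so it suffices that $\omega_\chi(N) \neq 0$ whenever $N \neq 0$ in $\braket{M}$; but every non-zero object of the semisimple category $\braket{M}$ contains a non-negligible simple summand $N'$, which is in particular \emph{not} negligible, so $\rmH^0(G_{\bar k}, N'_\chi) \neq 0$ for $\chi$ generic (if it were zero for $\chi$ in a generic set, $N'$ would be negligible by definition) — and since \eqref{eq-set-unram} is generic, one can arrange $\chi$ to also witness non-vanishing; more precisely, faithfulness of $\omega_\chi$ at a \emph{fixed} $\chi$ in \eqref{eq-set-unram} follows because exact monoidal functors between tannakian categories that are non-zero are automatically faithful once the target has no non-trivial tensor ideals, equivalently because $\dim \omega_\chi(N) = \dim(N) > 0$ for $N$ a non-zero object, which is exactly Proposition~\ref{prop-dimPerv=dimVect} extended to all of $\braket{M}$ via the same cohomological collapse.

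\textbf{The main obstacle.} The routine linearity and exactness are painless; the real work is the careful verification that the isomorphism $\omega_\chi(N_1 *_\intt N_2) \simeq \omega_\chi(N_1) \otimes \omega_\chi(N_2)$ is \emph{tensor-functorial} — i.e.\ respects the symmetry and associativity isomorphisms of both $\braket{M}$ and $\mathrm{Vect}_{\Qlb}$ — because this requires chasing the Künneth isomorphism of Lemma~\ref{lem-prop-conv} through the passage $*_! \rightsquigarrow *_* \rightsquigarrow *_\intt$ and the truncation to $\pH^0$, and checking that the negligible correction terms (the cones $C_m$) do not interfere with naturality. This is precisely the point where one must invoke the intersection over \emph{all} $m \geq 1$ in \eqref{eq-set-unram}, rather than just $m = 1, 2$: to handle arbitrary iterated convolution powers uniformly one needs $\chi \in \nunram{C_m}$ for every $m$, so that the comparison $\rmH^*((L^{*_!^m})_\chi) \simeq \rmH^*((L^{*_*^m})_\chi)$ holds at every level and the associativity pentagon can be verified. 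I would model this argument on the corresponding arguments of Gabber--Loeser (\cite[\S3.7]{GL_faisc-perv}) and Krämer (\cite[Th.~5.2]{kramer_perverse_2014}), which carry out the analogous verification for tori and semiabelian varieties; the only genuinely new input here is the systematic use of the generic vanishing theorem (Theorem~\ref{th-generic-unramified}) to guarantee that \eqref{eq-set-unram} is still generic, which follows because a countable (in fact finite, once $M$ is fixed, since all but finitely many $C_m$ contribute no new tac) intersection of generic sets remains generic in the sense of Definition~\ref{def-generic} — one should note that this finiteness, or at least a uniform bound on the relevant tacs, is what makes the intersection generic rather than merely an intersection of generic sets.
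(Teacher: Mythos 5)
Your overall strategy is the same as the paper's (use the Künneth isomorphisms of Lemma~\ref{lem-prop-conv}, the cone conditions, and weak unramifiedness to show that $\omega_\chi$ is an exact tensor-compatible functor, then invoke tannakian generalities), but there is a genuine gap in your tensor-compatibility step. For arbitrary $N_1,N_2\in\braket{M}$ you invoke the negligibility of the cone $C$ of $N_1*_!N_2\to N_1*_*N_2$ (and, earlier, of the negligible subquotients of $\pH^0(L^{*_!^m})$) to conclude that $\rmH^*(G_{\bar k},C_\chi)$ vanishes \emph{at the fixed character} $\chi$. Negligibility only gives vanishing for $\chi$ in the generic set $\nunram{C}$, which depends on $C$; the hypothesis~\eqref{eq-set-unram} places $\chi$ in $\nunram{C_m}$ only for the cones attached to the morphisms $L^{*_!^m}\to L^{*_*^m}$, and there is no reason the fixed $\chi$ should kill the cone attached to an arbitrary pair $(N_1,N_2)$, nor annihilate an arbitrary negligible perverse sheaf. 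This is exactly what the hypothesis is designed to control, and it dictates the shape of the paper's argument: one proves $\omega_\chi(L^{*_{\intt}^m})\simeq\omega_\chi(L)^{\otimes m}$ \emph{only for the generating objects}, using the cones $C_m$ explicitly present in~\eqref{eq-set-unram} together with Lemma~\ref{lem-prop-conv} and weak unramifiedness of $L$, and then extends to all of $\braket{M}$ by semisimplicity: by Proposition~\ref{pr-tensor-ab}, every object of $\braket{M}$ is a direct sum of direct summands of some $L^{*_{\intt}^m}$, so the concentration of twisted cohomology in degree $0$ and the required identifications are inherited by passing to direct summands, rather than re-derived from fresh negligibility statements at $\chi$. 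As written, your monoidal-compatibility paragraph does not go through; it must be routed through the generators in this way.

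Two smaller points. First, your closing claim that the intersection $\bigcap_m\nunram{C_m}$ is generic because only finitely many of the $C_m$ impose new conditions is both unnecessary and unjustified: the lemma asserts nothing about genericity of the set~\eqref{eq-set-unram} (that is the content of Theorem~\ref{th-polite}, which is only established for tori, abelian varieties and unipotent groups, for tori via the Gabber--Loeser inclusion $\nunram{C_2}\subset\nunram{C_m}$), so you should neither assert it nor rely on it. Second, the hedge about also requiring $\chi^{-1}$ to be weakly unramified is not needed, and if it were it would weaken the statement: since $(M_\chi)^\vee\simeq(M^\vee)_\chi$ and cohomology is invariant under $\inv^*$, the group $\rmH^i_c(G_{\bar k},(M^\vee)_\chi)$ is dual to $\rmH^{-i}(G_{\bar k},M_\chi)$, so $\wunram{M^\vee}=\wunram{M}$ and no extra condition on $\chi$ is required. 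Finally, faithfulness must be argued at the fixed $\chi$ throughout; the dimension argument you give at the end does this correctly, but only once the tensor compatibility has been established as above.
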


\begin{proof}
  Let $\chi$ be a character in the set~(\ref{eq-set-unram}). By
  Proposition~\ref{pr-tensor-ab}, every object $N$ of $\braket{M}$ is a
  direct sum of direct factors of $m$-fold convolution products
  $L^{*_{\intt}^m}$ for some integers $m$. By the definition
  of~(\ref{eq-set-unram}) and Lemma~\ref{lem-prop-conv}, we have
  canonical isomorphisms
  $$
  \rmH^*(G_{\bar k},L^{*_{\intt}^m}_\chi)\simeq
  \rmH^*(G_{\bar k},L^{*_\mathrm{*}^m}_\chi)\simeq
  \rmH^*(G_{\bar k},L_\chi)^{\otimes^m}
  $$
  for any~$m$.

  By~(\ref{eq-set-unram}) again, we have
  $\rmH^*(G_{\bar k},L_\chi)=\rmH^0(G_{\bar k},L_\chi)$, and hence
  $\omega_\chi(L^{*_{\intt}^m})=\omega_\chi(L)^{\otimes^m}$. This
  proves that the functor $\omega_{\chi}$ is compatible with the tensor
  product; other compatibilities are elementary, and
  the functor~$\omega_{\chi}$ is exact on~$\braket{M}$, hence the result
  (see~\cite[Prop.\,1.19]{deligne-milneII}).
\end{proof}

\begin{proof}[Proof of Theorem~\ref{th-polite} for abelian varieties]
  If~$G$ is an abelian variety, then both convolution functors are
  canonically isomorphic; hence, all objects $C_m$ vanish and the
  set~(\ref{eq-set-unram}) is the same as $\wunram{M}=\unram{M}$,
  which is generic.
\end{proof}

\begin{remark}\label{rm-ab-var-unram}
  There is a more precise result if~$G$ is an abelian variety. Indeed,
  we have recalled that $\wunram{M}=\unram{M}$ for any semisimple object
  of~$\Ppint(G)$, and by the strong form of the Stratified Generic
  Vanishing Theorem (Theorem~\ref{thm-high-vanish}), it follows that the
  set of ramified characters is contained in a finite union of \tacs\
  of~$G$.
\end{remark}

\begin{proof}[Proof of Theorem~\ref{th-polite} for tori]
  We use the notation of the previous lemma. For a torus~$G$, a result
  of Gabber and Loeser~\cite[Prop.\,3.9.3\,(iv)]{GL_faisc-perv} implies
  that there is an inclusion $\nunram{C_2}\subset \nunram{C_m}$ for all
  integers $m\geq 2$. So the set
  \[
    \wunram{M}\cap \bigcap_{m\geq 2}\nunram{C_m}= \wunram{M}\cap
    \nunram{C_2}
  \]
  is generic, by the generic vanishing theorem and the definition of
  negligible objects.
\end{proof}



Finally we consider unipotent groups.

\begin{proof}[Proof of Theorem~\ref{th-polite} for~$G$ unipotent]
  We denote by $G^{\vee}$ a form of the Serre dual of~$G$, and we fix an
  additive character $\psi$ to compute the Fourier transform
  $\ft_{\psi}$ on~$G$ (see Section~\ref{sec-unipotent}).

  Let $M$ be a semisimple object of $\Ppint(G)$. We claim that there
  exists a dense open set $V\subset G^\vee$ such that for all objects
  $N$ and $N'$ of $\braket{M}$, the restriction of $\ft_{\psi}(N)$
  to~$V$ is lisse and there exists a canonical isomorphism
  \begin{equation}\label{eq-can-goal}
    \ft_{\psi}(N*_\intt N')|V \to (\ft_{\psi}(N)\otimes
    \ft_{\psi}(N'))|V.
  \end{equation}
  \par
  Indeed, if this claim holds, then it is elementary that for any
  $a\in V(\bar{k})$, the corresponding character $\psi_a\in\charg{G}$
  is unramified for~$M$.
  

  The claim above follows in turn from a more general statement: for
  all objects $M_1$ and~$M_2$ of $\Ppint(G)$, and for any open dense
  subset $W\subset G^{\vee}$ such that the Fourier transforms
  $\ft_{\psi}(M_1)$ and $\ft_{\psi}(M_2)$ are lisse on $W$, there
  exists a canonical isomorphism
  \[
    \ft_{\psi}(M_1*_{\intt} M_2)|W\to (\ft_{\psi}(M_1)\otimes
    \ft_{\psi}(M_2))|W.
  \]
  Indeed, the isomorphism shows in particular that the Fourier transform
  of $M_1*_{\intt} M_2$ is also lisse on~$W$; since the same is true of
  the dual $\DD(M_1)$, it follows that the Fourier transform of any
  object of $\braket{M_1}$ is lisse on~$W$, leading to the previous
  claim (with $V=W$).

  We now prove the general statement above. Let $M=\pH^0(M_1 *_!
  M_2)$. By definition of $M_1 *_\intt M_2$, we have
  $M_1 *_\intt M_2=M_{\intt}$ (see Remark~\ref{rem-Mint}).
 
  Let $\ptau_{\leq 0}$ and $\ptau_{\geq 0}$ be the perverse truncation
  functors. We have canonical morphisms
 \begin{equation}
   \ptau_{\leq 0}(M_1 *_! M_2)\to M_1 *_! M_2
  \end{equation}
  and 
  \begin{equation}
    \ptau_{\leq 0}(M_1 *_! M_2)\to
    \ptau_{\geq 0}(\ptau_{\leq 0}(M_1 *_! M_2))=\pH^0(M_1 *_! M_2)=M.
  \end{equation}
 
  By Lemma \ref{lem-mildeconv-quot}, the mapping cones of both morphisms
  are negligible. By the vanishing theorem for unipotent groups
  (Proposition~\ref{prop-strat-unip}), there is a dense open subset~$W'$
  of~$W$ such that the induced morphisms
  \begin{equation}\label{eqn:fibfunctunip1}
    \ft_\psi(\ptau_{\leq 0}(M_1 *_! M_2))|W'\to \ft_\psi(M_1 *_! M_2)|W'
  \end{equation}
  and 
  \begin{equation}\label{eqn:fibfunctunip2}
    \ft_\psi( \ptau_{\leq 0}(M_1 *_! M_2))|W'\to
    \ft_\psi(M)|W'
  \end{equation}
  are isomorphisms. Inverting~(\ref{eqn:fibfunctunip1}) and composing
  with~(\ref{eqn:fibfunctunip2}), we obtain a canonical isomorphism
  \begin{equation}
    \label{eqn:fibfunctunip3}
    \ft_\psi(M_1 *_! M_2)|W'\to  \ft_\psi(M)|W'.
  \end{equation}
  
  Let $M^t$ be the smallest subobject of $M$ such that $M/M^t$ is
  negligible. We then have a
  canonical injection $M^t\to M$ with negligible cokernel and a
  canonical surjection $M^t\to M_\intt$ with negligible kernel, by
  Remark~\ref{rem-Mint}. 
  By the
  vanishing theorem for unipotent groups
  (Proposition~\ref{prop-strat-unip}), up to replacing $W'$ by a smaller
  dense open subset, we can assume that the canonical morphisms
  \begin{equation}\label{eqn:fibfunctunip4}
    \ft_{\psi}(M^t)|W'\to    \ft_{\psi}(M)|W' 
  \end{equation}
  and
  \begin{equation}\label{eqn:fibfunctunip5}
    \ft_{\psi}(M^t)|W'\to    \ft_{\psi}(M_\intt)|W' 
  \end{equation}
  are isomorphisms. Inverting~(\ref{eqn:fibfunctunip4}) and composing
  with (\ref{eqn:fibfunctunip5}), we get a canonical isomorphism
  \begin{equation}\label{eqn:fibfunctunip6}
    \ft_{\psi}(M)|W'\to  \ft_{\psi}(M_\intt)|W'=\ft_{\psi}(M_1*_\intt M_2)|W'. 
  \end{equation}
  Composing (\ref{eqn:fibfunctunip3}) and (\ref{eqn:fibfunctunip6}), we
  get a canonical isomorphism
  \begin{equation}
    \label{eqn:fibfunctunip7}
    \ft_{\psi}(M_1*_!M_2)|W'\simeq \ft_{\psi}(M_1*_\intt M_2)|W'.
  \end{equation}
  
  Denote by $j\colon W'\to W$ the open immersion. By the definition of
  the category $\Ppint(G)$, the Fourier transform
  $\ft_{\psi}(M_1*_{\intt} M_2)$ (which is a perverse sheaf up to shift)
  has no shifted perverse component supported in $G^\vee\setminus W'$
  (such a component would be negligible), and therefore we have a
  canonical isomorphism
  \begin{equation}\label{eqn:fibfunctunip8}
    j_{!*}j^*(\ft_{\psi}(M_1*_\intt M_2)|W)\simeq \ft_{\psi}(M_1*_\intt M_2)|W
  \end{equation}
  by the properties of the intermediate extension functor $j_{!*}$ (see
  Proposition~\ref{pr-intermediate-ext}).
  
  By Lemma~\ref{lem-prop-conv}, there is a canonical isomorphism
  $\ft_{\psi}(M_1*_! M_2)\simeq \ft_\psi (M_1) \otimes
  \ft_\psi(M_2)$. Since $\ft_{\psi}(M_1)$ and $\ft_{\psi}(M_2)$ are
  lisse on $W$, we have also a canonical isomorphim
  $$
  j_{!*}j^*((\ft_\psi (M_1) \otimes \ft_\psi(M_2))|W)
  \simeq (\ft_\psi (M_1) \otimes \ft_\psi(M_2))|W,
  $$
  hence a canonical isomorphism
  \begin{equation}\label{eqn:fibfunctunip9}
    j_{!*}j^*(\ft_\psi (M_1*_! M_2)|W)
    \simeq (\ft_\psi (M_1) \otimes \ft_\psi(M_2))|W.
  \end{equation}
  
  We now apply the functor $j_{!*}$ to the
  isomorphism~(\ref{eqn:fibfunctunip7}), and use
  (\ref{eqn:fibfunctunip8}) and (\ref{eqn:fibfunctunip9}) to obtain the
  desired canonical isomorphism~(\ref{eq-can-goal}); this concludes the
  proof of the claim.
\end{proof}

\section{The arithmetic tannakian group}

In this section, we consider the situation over the finite field~$k$. Base change $M\mapsto M_{\bar{k}}$ gives a functor $\Perv(G)\to \Pp(G)$. For a perverse sheaf~$M$
on~$G$, we define the set of unramified characters for $M$ as
$\unram{M}=\unram{M_{\bar{k}}}$.

We denote by $\NegParith(G)$ (\resp $\Ppintarith(G)$)
\nomenclature[$N$]{$\NegParith(G)$}{arithmetic negligible objects}
\nomenclature[$N$]{$\Ppintarith(G)$}{arithmetic internal convolution category}
the full
subcategory of $\Perv(G)$ whose objects are the perverse sheaves~$M$
such that $M_{\bar{k}}$ is an object of $\NegP(G)$ (\resp of
$\Ppint(G)$). As in the geometric case, we find that $\NegParith(G)$ is
a Serre subcategory of $\Perv(G)$ and that the localization functor induces 
an equivalence from $\Ppintarith(G)$ to the quotient abelian category
$\Ppbarith(G)=\Perv(G)/\NegParith(G)$.\nomenclature[$P$]{$\Ppbarith(G)$}{arithmetic
  convolution category}

Also similarly to the geometric case, the two convolution bifunctors
on $\Perv(G)$ induce equivalent bifunctors on $\Ppbarith(G)$ (compare
with Proposition~\ref{pr-convol-pp}). The categories $\Ppbarith(G)$
and $\Ppintarith(G)$ are then rigid symmetric $\Qlb$-linear tensor
categories, with unit object~$\un$ still the skyscraper sheaf at the
unit of~$G$, which again satisfies $\End(\un)\simeq \Qlb$.

Let $M$ be a perverse sheaf on~$G$. To distinguish between the
arithmetic and geometric situations, we denote from now on by
$\braket{M}^\arith$ (\resp $\braket{M}^\geo$) the subcategory of
$\Ppintarith(G) \simeq \Ppbarith(G)$ (\resp
of~$\Ppint(G)\simeq \Ppb(G)$) that is tensor-generated by (the image
of) $M$ (resp. by~$M_{\bar{k}}$). Base change $N\mapsto N_{\bar{k}}$ gives a functor
from~$\braket{M}^{\arith}$ to~$\braket{M}^{\geo}$.
\nomenclature{$\braket{M}^{\arith}$}{subcategory of $\Ppintarith(G)$
tensor-generated by~$M$}
\nomenclature{$\braket{M}^{\geo}$}{subcategory of $\Ppint(G)$
tensor-generated by~$M_{\bar{k}}$}

\begin{theorem}
\label{thm-arith_cat_neut_tan}
Let $M$ be an object of $\Perv(G)$. The categories $\braket{M}^\arith$ and
$\braket{M}^\geo$ are neutral $\Qlb$\nobreakdash-linear tannakian categories. There
exist algebraic groups $\ggeo{M}$ and $\garith{M}$ over~$\Qlb$ such that
$\braket{M}^{\arith}$ is equivalent to the category
$\mathrm{Rep}_{\Qlb}(\garith{M})$ and
$\braket{M}^{\geo}$ is equivalent to the category
$\mathrm{Rep}_{\Qlb}(\ggeo{M})$.
\nomenclature[$G$]{$\garith{M}$}{arithmetic tannakian group of~$M$}
\par
Moreover, if~$r$ is the tannakian dimension of~$M$, then the objects~$M$
and~$M_{\bar{k}}$ of $\braket{M}^{\arith}$ and~$\braket{M}^{\geo}$,
respectively, correspond to faithful representations of~$\garith{M}$
and~$\ggeo{M}$  in~$\GL_r(\Qlb)$.
\end{theorem}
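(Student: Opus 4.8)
The plan is to take the geometric assertion as essentially already established and to deduce the arithmetic one from it by base change, the only new ingredient being a comparison of categorical dimensions on the two sides. \textbf{The geometric part.} For $M\in\Perv(G)$ the pull-back $M_{\bar k}$ lies in $\Pp(G)$, hence its image in $\Ppint(G)\simeq\Ppb(G)$ generates $\braket{M}^{\geo}$; by Theorem~\ref{thm-cat_neut_tan} this is a neutral $\Qlb$-linear tannakian category, and Corollary~\ref{cor-def-group} produces an affine algebraic group $\ggeo{M}$ over $\Qlb$ with $\braket{M}^{\geo}\simeq\mathrm{Rep}_{\Qlb}(\ggeo{M})$. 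So here nothing needs to be done beyond recording (at the end) that $\ggeo{M}$ is of finite type.

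\textbf{The arithmetic part.} It has been recalled that $\braket{M}^{\arith}$ is a rigid symmetric $\Qlb$-linear tensor category with $\End(\un)\simeq\Qlb$, so by Deligne's theorem \cite[Th.\,7.1]{del_cat_tan} it suffices to prove that $\dim(N)=\tr(\id_N)$ is a non-negative integer for every object $N$ of $\braket{M}^{\arith}$. I would do this by comparison with the geometric side: the base-change functor $\Perv(G)\to\Pp(G)$ maps $\NegParith(G)$ into $\NegP(G)$ (this is the definition of $\NegParith(G)$), commutes with the two convolution bifunctors (proper, resp. smooth, base change), with Verdier duality, and is perverse $t$-exact; hence it descends to a $\Qlb$-linear symmetric monoidal functor $\Ppbarith(G)\to\Ppb(G)$ carrying $\braket{M}^{\arith}$ into $\braket{M}^{\geo}$ (and it will induce, once $\garith{M}$ is constructed, a homomorphism $\ggeo{M}\to\garith{M}$ of tannakian groups). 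A symmetric monoidal functor sends the data $(\un,\ev,\coev)$ of a dualizable object to the corresponding data on its image, hence preserves categorical traces and dimensions; therefore $\dim(N)=\dim(N_{\bar k})$, which is a non-negative integer by the geometric case (equivalently by Proposition~\ref{prop-dimPerv=dimVect}). Deligne's criterion then makes $\braket{M}^{\arith}$ tannakian; it is finitely generated and $\Qlb$ is algebraically closed, so by \cite[Cor.\,6.20]{del_cat_tan} it is neutral with a fiber functor $\omega$ valued in $\Qlb$-vector spaces, and the reconstruction theorem \cite[Th.\,2.11]{deligne-milneII} yields an affine group scheme $\garith{M}$ over $\Qlb$ with $\braket{M}^{\arith}\simeq\mathrm{Rep}_{\Qlb}(\garith{M})$.

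\textbf{Faithfulness and the value $r$.} Since $\braket{M}^{\arith}$ is tensor-generated by $M$, the homomorphism $\garith{M}\to\GL(\omega(M))$ attached to the representation $\omega(M)$ is a closed immersion, by the standard fact (see \cite{deligne-milneII}) that a tannakian category generated by a single object embeds its group into the general linear group of the fiber of that object. As $\omega$ is monoidal and $\dim(M)$ is the integer $r$ (its tannakian dimension), one gets $\dim_{\Qlb}\omega(M)=r$, so $M$ corresponds to a faithful representation in $\GL_r(\Qlb)$, and in particular $\garith{M}$ is a linear algebraic group of finite type; running the same argument inside $\braket{M}^{\geo}$ gives the analogous statement for $M_{\bar k}$ and $\ggeo{M}$, the value of $r$ being the same on both sides by the dimension comparison above.

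\textbf{Main point of difficulty.} There is no deep obstacle: the one thing requiring care is that base change really does descend to a \emph{well-defined} symmetric monoidal functor between the quotient (tannakian) categories, which comes down to the compatibility of pull-back along $\Spec\bar k\to\Spec k$ with the six operations, with convolution, with duality, and with the perverse $t$-structure. Everything else is an invocation of Deligne's results together with the already-proved geometric case.
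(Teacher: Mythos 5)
Your proposal is correct and follows essentially the same route as the paper: the geometric case is Theorem~\ref{thm-cat_neut_tan} together with Corollary~\ref{cor-def-group}, the arithmetic case reduces to checking that tannakian dimensions are non-negative integers so that Deligne's criterion and \cite[Cor.\,6.20]{del_cat_tan} apply, and the faithfulness statement is formal. The only (harmless) variation is that you obtain the integrality of dimensions in $\braket{M}^{\arith}$ by transporting them through the symmetric monoidal base-change functor to the geometric side, whereas the paper simply observes that Proposition~\ref{prop-dimPerv=dimVect} applies verbatim to $\Ppintarith(G)$.
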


\begin{proof}
  The case of $\braket{M}^\geo$ is dealt with by
  Theorem~\ref{thm-cat_neut_tan} and Corollary~\ref{cor-def-group}. The
  case of $\braket{M}^\arith$ follows by the same argument because
  Proposition~\ref{prop-dimPerv=dimVect} also applies to
  $\Ppintarith(G)$.
  \par
  The last assertion is a tautological consequence of the formalism.
\end{proof}

\begin{definition}
  In the context of Theorem \ref{thm-arith_cat_neut_tan}, we call $\garith{M}$ the \emph{arithmetic tannakian
    group}\index{arithmetic tannakian group} of~$M$, and $\ggeo{M}$ its
  \emph{geometric tannakian group}.\index{geometric tannakian group} 
\end{definition}

\begin{proposition}
  Let $M$ be an object of $\Perv(G)$. The functor of base change to
  $\bar{k}$ is a tensor functor from $\braket{M}^{\arith}$ to
  $\braket{M}^{\geo}$ that induces a morphism
  $\varphi\colon \ggeo{M}\to \garith{M}$. This morphism is a closed
  immersion.
\end{proposition}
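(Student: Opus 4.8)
The plan is to deduce this from tannakian duality, using that the base change functor $N\mapsto N_{\bar k}\colon \braket{M}^{\arith}\to \braket{M}^{\geo}$ is a tensor functor. Concretely, I would fix a fiber functor $\omega$ on $\braket{M}^{\geo}$, so that $\ggeo{M}=\underline{\mathrm{Aut}}^{\otimes}(\omega)$, and use $\omega':=\omega\circ(-)_{\bar k}$ as a fiber functor on $\braket{M}^{\arith}$; since over the algebraically closed field $\Qlb$ all fiber functors on a finitely generated tannakian category are isomorphic, this realizes $\garith{M}=\underline{\mathrm{Aut}}^{\otimes}(\omega')$ up to isomorphism, and the induced homomorphism of affine group schemes is exactly the morphism $\varphi\colon \ggeo{M}\to\garith{M}$ of the statement. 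By construction, $\varphi$ is the homomorphism along which the $\garith{M}$-representation $\omega'(M)$ restricts to the $\ggeo{M}$-representation $\omega(M_{\bar k})$.

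Now I would invoke the last assertion of Theorem~\ref{thm-arith_cat_neut_tan}: $M$ and $M_{\bar k}$ correspond to \emph{faithful} representations $\rho\colon\garith{M}\hookrightarrow\GL_r(\Qlb)$ and $\ggeo{M}\hookrightarrow\GL_r(\Qlb)$ on the same $r$-dimensional space $V=\omega'(M)=\omega(M_{\bar k})$. The compatibility in the previous paragraph says precisely that the faithful representation of $\ggeo{M}$ attached to $M_{\bar k}$ is the composite $\ggeo{M}\xrightarrow{\varphi}\garith{M}\xrightarrow{\rho}\GL(V)$. Hence $\rho\circ\varphi$ is a closed immersion, and in particular $\varphi$ has trivial kernel. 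Since $\Qlb$ has characteristic zero, a homomorphism of affine algebraic groups over $\Qlb$ with trivial kernel is a closed immersion (factor it as a faithfully flat map onto its image, which is then an isomorphism, followed by the closed immersion of the image), so $\varphi$ is a closed immersion. Equivalently, one may argue via the graph: $\rho$ is separated, being a morphism of affine schemes, so $\varphi$ decomposes as its graph (a closed immersion, as $\rho$ is separated) followed by the base change of $\rho\circ\varphi$ along $\rho$ (a closed immersion), hence is a closed immersion.

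A fully equivalent, more intrinsic formulation is to apply the criterion of Deligne--Milne \cite[Prop.\,2.21]{deligne-milneII}: $\varphi$ is a closed immersion if and only if every object of $\braket{M}^{\geo}\simeq\mathrm{Rep}_{\Qlb}(\ggeo{M})$ is a subquotient of the base change of an object of $\braket{M}^{\arith}$; this is immediate since $\braket{M}^{\geo}$ is tensor-generated by $M_{\bar k}=(M)_{\bar k}$ and base change commutes with $\vee$, $\oplus$ and the convolution $*$. I do not expect a genuine obstacle here: once the tannakian formalism of Theorem~\ref{thm-arith_cat_neut_tan} is in place the argument is formal. The only point deserving a (routine) verification — needed to justify that base change really is a tensor functor for the internal convolution, i.e. commutes with the passage $N\mapsto N_{\intt}$ of Remark~\ref{rem-Mint} — is the canonicity, hence Galois-equivariance and descent to $k$, of the subobjects $N_t$ and $N^t$, combined with the base-change compatibility of $*_!$, $*_*$ and the perverse truncation functors.
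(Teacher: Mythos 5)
Your argument is correct, but it reaches the conclusion by a different route than the paper. The paper applies the criterion of \cite[Prop.\,2.21\,(b)]{deligne-milneII} directly: it must show that every object $N$ of $\braket{M}^{\geo}$ is a subquotient of the base change of an object of $\braket{M}^{\arith}$, and since such an $N$ is a priori only defined over some finite extension $k_n$, the paper realizes $N$ as a direct summand of the base change to $\bar k$ of the pushforward $f_{n*}N_1$ of a $k_n$-model $N_1$, where $f_n\colon G_{k_n}\to G$ is the projection --- a concrete descent device that does not lean on identifying the geometric category as tensor-generated by $(M)_{\bar k}$. You instead exploit exactly that identification: since $\braket{M}^{\geo}$ is tensor-generated by $M_{\bar k}$, the representation $\ggeo{M}\to\GL_r$ attached to $M_{\bar k}$ (the last assertion of Theorem~\ref{thm-arith_cat_neut_tan}) is a closed immersion; by construction of $\varphi$ it equals $\rho\circ\varphi$, and a morphism whose composite with a separated morphism is a closed immersion is itself one (your graph argument, or the trivial-kernel argument, both valid over $\Qlb$). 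Your ``intrinsic formulation'' is the same Deligne--Milne criterion as in the paper, but you discharge it at once from tensor-generation, whereas the paper discharges it via the $f_{n*}$ trick. Both routes ultimately rest on the same input, namely that base change is a tensor functor for the internal convolution, i.e.\ commutes with $N\mapsto N_{\intt}$ --- the point you rightly flag as needing a (Galois-descent) verification, and which is precisely the ``immediate'' first assertion in the paper; granting it, your shorter argument is sound, while the paper's version has the minor advantage of making explicit how objects defined only over $k_n$ are handled without invoking compatibility of the generated subcategories.
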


\begin{proof}
  The first assertion is immediate, and it implies by the tannakian
  formalism the existence of the homomorphism~$\varphi$. According
  to~\cite[Prop.\,2.21\,(b)]{deligne-milneII}, this morphism~$\varphi$ is a
  closed immersion if and only if every object of $\braket{M}^{\geo}$ is
  isomorphic to a subquotient of an object in the essential image of the
  base-change functor.
  \par
  Let $N$ be such an object of~$\braket{M}^{\geo}$, viewed as an object
  of $\Ppint(G)$. By definition of the category~$\Pp(G)$, there exists a
  finite extension $k_n$ of~$k$ in~$\bar{k}$ such that $N$ is the base
  change to~$\bar{k}$ of a perverse sheaf~$N_1$ on~$G_{k_n}$. Then $N$
  is a subquotient of the base change of the perverse sheaf $f_{n*}N_1$
  to $G_{\bar{k}}$, where $f_n\colon\Spec(k_n)\to \Spec(k)$ is the
  canonical morphism, hence the result.
\end{proof}

From now on, we will identify the geometric tannakian group of
a perverse sheaf~$M$ on~$G$ with its image in the arithmetic
tannakian group. 

We recall the convention from Section~\ref{sec-semisimple} concerning
properties over~$k$ and~$\bar{k}$. Let $M$ be a perverse sheaf on
$G$. We view $\braket{M}^{\arith}$ as a subcategory of $\Ppint(G)$, so
that the weights of an object $N\in \braket{M}^{\arith}$ are
well-defined.

\begin{theorem}
\label{thm-Harith-fib-funct}
Let~$M$ be a perverse sheaf on~$G$. Assume that~$M$ is arithmetically
semisimple and pure of weight zero. Let $r$ be the tannakian
dimension of~$M$.
\par
\emph{(1)} The groups $\garith{M}$ and $\ggeo{M}$ are reductive
subgroups of $\GL_r$.
\par
\emph{(2)} Every object $N$ of $\braket{M}^\arith$ is arithmetically
semisimple and pure of weight zero, and every object $N$ of
$\braket{M}^\geo$ is semisimple.
\end{theorem}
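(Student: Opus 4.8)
The plan is to deduce both statements from the semisimplicity of the tannakian category $\braket{M}^{\arith}$ together with weight considerations, using the fact that $\garith{M}$ is reductive. First I would prove part (1). Since $M$ is arithmetically semisimple, Corollary~\ref{cor-def-group} applied to the arithmetic setting (valid by Theorem~\ref{thm-arith_cat_neut_tan}, as noted in its proof) shows that $\braket{M}^{\arith}$ is a semisimple tannakian category, hence $\garith{M}$ is reductive; by Theorem~\ref{thm-arith_cat_neut_tan} the object $M$ corresponds to a faithful $r$-dimensional representation, so $\garith{M}\subset\GL_r$. For $\ggeo{M}$: by Lemma~\ref{lm-semisimple}/Lemma~\ref{lem-arith-ss-geo}, an arithmetically semisimple perverse sheaf is geometrically semisimple, so $M_{\bar k}$ is a semisimple object of $\Ppint(G)$; applying Corollary~\ref{cor-def-group} in the geometric setting gives that $\ggeo{M}$ is reductive, and we have identified it with a (closed) subgroup of $\garith{M}\subset\GL_r$ by the preceding proposition. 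So part (1) is essentially bookkeeping with the already-established tannakian formalism.

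Next I would prove part (2). For the geometric category: every object $N$ of $\braket{M}^{\geo}=\mathrm{Rep}_{\Qlb}(\ggeo{M})$ is a representation of the reductive group $\ggeo{M}$, hence completely reducible, i.e.\ semisimple as a tannakian object. This is immediate from part (1) once one knows that semisimplicity of objects in $\braket{M}^{\geo}$ is equivalent to complete reducibility of the corresponding representation — which is the content of the equivalence in Corollary~\ref{cor-def-group} (semisimple $M$ $\Rightarrow$ every object of $\braket{M}$ semisimple, via \cite[Th.\,22.42]{milne-groups}). For the arithmetic category: similarly, every object $N$ of $\braket{M}^{\arith}=\mathrm{Rep}_{\Qlb}(\garith{M})$ is completely reducible since $\garith{M}$ is reductive, hence arithmetically semisimple. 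It remains to show that every such $N$ is pure of weight zero. Here the argument is: $M$ is pure of weight zero by hypothesis, so $M^\vee=\inv^*\DD(M)$ is also pure of weight zero (Verdier duality negates weights and $\inv^*$ preserves them, and the shift in the definition of $M^\vee$ must be tracked — but for a perverse sheaf pure of weight $0$, $\DD(M)$ is pure of weight $0$). The convolution $*_!$ (equivalently $*_*$ in the relevant quotient) of two pure perverse sheaves of weight $0$ is mixed of weights $\le 0$ with compact support and $\ge 0$ without, by Deligne's results on weights (\cite{D-WeilII}, \cite{BBD-pervers}); in the convolution category the two agree (Lemma~\ref{lem-mildeconv-quot}) up to negligible objects, and since the perverse cohomology $\pH^i$ vanishes for $i\ne 0$ modulo negligibles (Lemma~\ref{lem-mildeconv-quot}(\ref{lem-mildeconv-quot3})), $\pH^0$ of the convolution is pure of weight zero. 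Iterating, all convolution powers of $M\oplus M^\vee$ are pure of weight zero in $\Ppintarith(G)$, and subquotients of pure objects are pure, giving the claim for all $N$ in $\braket{M}^{\arith}$.

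The main obstacle I expect is the weight-zero statement in part (2): one must carefully check that the weight-monodromy / purity formalism of \cite{BBD-pervers} behaves well under the passage to the quotient category $\Ppbarith(G)$ and under the identification $*_!\cong *_*$ there. Concretely, one needs that $\pH^0(M *_! N)$ for $M,N$ perverse pure of weight $0$ is again pure of weight $0$ after discarding the negligible part — this follows because $M*_!N$ is mixed of weights $\le 0$ (compact support purity estimate) while $M*_*N$ is mixed of weights $\ge 0$, and their classes agree modulo negligibles, which pins the weight of the non-negligible part of $\pH^0$ to exactly $0$; the shifts in the definition of convolution (no shift, since $m$ has relative dimension $0$) and of $M^\vee$ need to be tracked but cause no trouble since we are in the commutative group setting with $m\colon G\times G\to G$. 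One also has to note that ``pure of weight zero'' for an object of $\Ppintarith(G)$ means: every choice of representative perverse sheaf (equivalently, the distinguished representative $N_{\intt}$) is pure of weight zero, which is consistent because negligible summands can be arranged to carry no weight constraint but the non-negligible core is well-defined up to isomorphism (Remark~\ref{rem-Mint}). Once these compatibilities are in hand, the statement follows formally.
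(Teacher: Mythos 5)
Your proposal is correct and follows essentially the same route as the paper: reductivity and semisimplicity come from the tannakian formalism applied to the (arithmetically, hence geometrically) semisimple object $M$, and purity of all objects of $\braket{M}^{\arith}$ comes from squeezing the middle convolution between $N_1*_!N_2$ (weights $\leq 0$ by Deligne) and its $*$-counterpart (weights $\geq 0$), purity being stable under subquotients and duals. The only cosmetic differences are that the paper gets geometric semisimplicity of $M$ from purity via \cite[Th.\,5.3.8]{BBD-pervers} rather than from Lemma~\ref{lem-arith-ss-geo}, and obtains the lower weight bound by applying Verdier duality and Lemma~\ref{lem-prop-conv} instead of comparing directly with $*_*$.
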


\begin{proof}
  Since any pure perverse sheaf on~$G$ is geometrically semisimple
  by~\cite[Th.\,5.3.8]{BBD-pervers}, the assertions for
  $\braket{M}^\geo$ follow. 
  The same proof is also valid for $\braket{M}^\arith$, since~$M$ is
  arithmetically semisimple, so that the group $\garith{M}$ is also
  reductive, and all objects of $\braket{M}^\arith$ are arithmetically
  semisimple.
  \par
  We now prove the purity statement. Since $M$ is pure of weight zero, it follows from the description of $ M_{\intt}$ in Remark~\ref{rem-Mint} that the corresponding object of~$\Ppintarith(G)$ is also pure of weight
  zero, and similarly for its dual 
 
  For any perverse sheaves~$N_1$ and~$N_2$ on~$G$ that are pure of
  weight zero, the convolution~$N_1*_{\intt} N_2$ is also pure of
  weight zero. Indeed, by Deligne's Riemann
  Hypothesis~\cite[3.3.1]{D-WeilII}, the object $N_1*_!N_2$ is mixed of
  weights~$\leq 0$. Hence, the quotient $N_1*_{\intt} N_2$ of $N_1*_! N_2$ is also mixed of weights $\leq 0$
  by~\cite[Prop.\,5.3.1]{BBD-pervers}.
  Thanks to Lemma~\ref{lem-prop-conv}, the same
  applies to the Verdier dual $\DD(N_1*_{\intt} N_2)$, which implies the
  claim.

  Hence, the property of being pure of weight zero is preserved by
  convolution, duality and taking subobjects. Thus we conclude that
  every object $N$ of~$\braket{M}^\arith$ is pure of weight zero.
\end{proof}


We now show that the tannakian groups coincide with those of Katz for
the multiplicative group using the category $\oldcal{P}$ (see~\cite[Ch.\,2]{mellin} and Section~\ref{sec-category-p}), and with monodromy groups of the Fourier
transform for unipotent groups.

\begin{proposition}\label{pr-gm-unip-known}
  Let~$M$ be a perverse sheaf on~$G$. Assume that~$M$ is arithmetically
  semisimple and pure of weight zero.
  \par
  \begin{enumth}
    \item If $G=\Gg_m$, then the arithmetic and geometric tannakian
      groups of~$G$ coincide with those defined by Katz using 
      the category $\oldcal{P}$.
    \item If $G$ is unipotent of dimension~$d$, and $\psi$ is a fixed
      additive character used to define its Fourier transform, then
      there exists a dense open subset~$U$ of the Serre dual~$G^{\vee}$
      such that $(\ft_{\psi}M_{\intt})|U$ is isomorphic to a lisse sheaf
      $\mcF$ on~$U$, pure of weight~$d$, placed in degree~$0$. The
      arithmetic and geometric tannakian groups of~$M$ coincide with the
      arithmetic and geometric monodromy groups of the lisse sheaf~$\mcF$.
  \end{enumth}
\end{proposition}

\begin{proof}
  In the case of $\Gg_m$, the statement follows directly from
  Example~\ref{ex-gm-geo} (1) (see also Section~\ref{sec-category-p} for
  the definition of $\oldcal{P}$).
  \par
  Suppose then that $G$ is unipotent. To prove the first assertion of (2), we
  may assume that $M$ is simple and non-negligible. Its Fourier
  transform is then a simple $d$-shifted perverse sheaf on the Serre
  dual~$G^{\vee}$, pure of weight~$d$, and with support equal
  to~$G^{\vee}$ (since the object $M$ would be negligible if the support
  were smaller). Thus it is a single lisse sheaf, pure of weight~$d$, on
  an open dense subset of~$G^{\vee}$.
  \par
  For the second part of~(2), we note that by (the proof of)
  Theorem~\ref{th-polite} for unipotent groups, the convolution product
  on $\braket{M}^{\arith}$ can be identified with the tensor product on
  the subcategory generated by~$\mcF$ of the category of lisse sheaves
  on~$U$. The result then follows.
\end{proof}



\section{Frobenius conjugacy classes}

We keep working over the finite field~$k$ and use the same notation as
in the previous subsection. For any finite extension $k_n$ of~$k$, we
denote by $\Fr_{k_n}$ the geometric Frobenius automorphism
of~$k_n$.\nomenclature[$F$]{$\Fr_{k_n}$}{geometric Frobenius automorphism
  of~$k_n$}\index{geometric Frobenius automorphism}
  
For an object~$M$ of~$\Der(X)$, an integer~$n\geq 1$ and a character
$\chi\in\charg{G}(k_n)$, we denote by $\Fr_{M,k_n}(\chi)$
\nomenclature[$F$]{$\Fr_{M,k_n}(\chi)$}{Frobenius action
  on~$H^0_c(G_{\bar{k}},\chi)$} the automorphism of the $\Qlb$-vector
space $H^0_c(G_{\bar{k}},M_{\chi})$ induced by the action
of~$\Fr_{k_n}$. Recall from \ref{sec-weights} the notions of weights and
purity.

Let~$r$ be the dimension of this space. If the automorphism
$\Fr_{M,k_n}(\chi)$ is pure of weight zero, for instance if~$M$ is
pure of weight~$0$ and $\chi$ is weakly unramified for~$M$, then there
is a unique conjugacy class $\Theta_{M,k_n}(\chi)$
\nomenclature[$T$]{$\Theta_{M,k_n}(\chi)$}{unitary conjugacy class for
  $\Fr_{M,k_n}(\chi)$} in the complex unitary group~$\Un_r(\Cc)$
containing the semisimple part of $\iota_0(\Fr_{M,k_n}(\chi))$.

We call~$\Fr_{M,k_n}(\chi)$ the
\emph{Frobenius automorphism of~$M$ associated to $\chi$ over $k_n$}
and $\Theta_{M,k_n}(\chi)$ the \emph{unitary Frobenius conjugacy class
  of~$M$ associated to~$\chi$ over~$k_n$}.
\index{Frobenius automorphism of~$M$ associated to~$\chi$}
\index{unitary Frobenius conjugacy class of~$M$ associated to$~\chi$}

Suppose now that $M$ is an arithmetically semisimple
perverse sheaf on~$G$.

Let~$n\geq 1$ and let $\chi\in\charg{G}(k_n)$ be an \emph{unramified}
character for~$M$, so that the functor
$\omega_{\chi} \colon N \mapsto \rmH^0(G_{\bar k},N_\chi)$ is a fiber
functor on the tannakian category $\braket{M}^\arith$.  For any object
$N$ of $\braket{M}^\arith$, the Frobenius automorphism $\Fr_{k_n}$ now
induces an automorphism of $\omega_{\chi}(N)$, and thus defines an
automorphism of the fiber functor $\omega_{\chi}$. By the tannakian
formalism, this corresponds to a unique
conjugacy class in~$\garith{M}(\Qlb)$. We denote by
$\Frf_{M,k_n}(\chi)$ the corresponding conjugacy class of
$\garith{M}(\Cc)$, and call it the \emph{Frobenius conjugacy class
  of~$M$ associated to $\chi$ over $k_n$}.
\nomenclature[$F$]{$\Frf_{M,k_n}(\chi)$}{Frobenius conjugacy class
  in~$\garith{M}$}
\index{Frobenius conjugacy class of~$M$ associated to~$\chi$}


Suppose furthermore that~$M$ is pure of weight zero. Let $K_M$ be a
maximal compact subgroup of the reductive group $\garith{M}(\Cc)$. Since
all objects of $\braket{M}^{\arith}$ are pure of weight zero (by
Theorem~\ref{thm-Harith-fib-funct}), the eigenvalues of any element of
the conjugacy class $\Frf_{M,k_n}(\chi)$ are complex numbers with
modulus~$1$, so that the semisimple part of this conjugacy class is a
unitary matrix. One can then deduce from the Peter--Weyl
Theorem\index{Peter--Weyl theorem} that the $\garith{M}(\Cc)$-conjugacy
class of the semisimple part of $\Frf_{M,k_n}(\chi)$ intersects $K_M$ in
a unique conjugacy class, which is denoted $\Thetaf_{M,k_n}(\chi)$, and
is called the \emph{unitary Frobenius conjugacy class} of~$M$ associated
to~$\chi$. (See, e.g.,~\cite[9.2.4]{katz-sarnak} for this argument.)
\nomenclature[$T$]{$\Thetaf_{M,k_n}(\chi)$}{unitary Frobenius conjugacy
  class in~$\garith{M}$} \index{unitary Frobenius conjugacy class
  associated to~$\chi$}

For an unramified character~$\chi$, the space $\omega_{\chi}(M)$ has
dimension~$r$, the tannakian dimension of~$M$, and the conjugacy class
of $\Frf_{M,k_n}(\chi)$ in the automorphism group
of~$H^0_c(G_{\bar{k}},M_{\chi})$ coincides with that of
$\Fr_{M,k_n}(\chi)$, and similarly for $\Thetaf_{M,k_n}(\chi)$.

When $k_n=k$, we will sometimes use simply the notation $\Fr_M(\chi)$,
$\Theta_M(\chi)$, $\Frf_M(\chi)$, $\Thetaf_M(\chi)$.
\nomenclature[$F$]{$\Fr_{M}(\chi)$}{$\Fr_{M,k}(\chi)$}
\nomenclature[$T$]{$\Theta_{M}(\chi)$}{$\Theta_{M,k}(\chi)$}
\nomenclature[$F$]{$\Frf_{M}(\chi)$}{$\Frf_{M,k}(\chi)$}
\nomenclature[$T$]{$\Thetaf_{M}(\chi)$}{$\Thetaf_{M,k}(\chi)$}

We have the following important consequences of the formalism.

\begin{lemma}\label{lm-trace}
  Let~$M$ be an arithmetically semisimple perverse sheaf on~$G$ that is
  pure of weight zero and of tannakian dimension $r\geq 0$.
  \begin{enumth}
  \item Let~$\chi\in\wunram{M}(k)$ be a weakly unramified character
    for~$M$. For any integer $n\geq 1$, we have
    $$
    \Tr(\Theta_{M,k_n}(\chi))=
    \Tr(\Theta_M(\chi)^n)=\sum_{x\in
      G(k_n)}\chi(N_{k_n/k}(x))t_M(x;k_n),
    $$
    where~$t_M$ is the trace function of~$M$ and the trace on the left
    is that on~$\GL_r$.
  \item Let $\chi\in\unram{M}(k)$ be an unramified character. Let~$\rho$
    be an algebraic $\Qlb$-representation of~$\garith{M}$ and denote
    by~$\rho(M)$ the corresponding object of $\braket{M}^{\arith}$. The
    character $\chi$ is unramified for~$\rho(M)$ and
    $$
    \Tr(\rho(\Frf_{M}(\chi)))=\Tr(\frob_{k}\mid
    H^0_c(G_{\bar{k}},\rho(M)_{\chi})). 
    $$
  \end{enumth}
\end{lemma}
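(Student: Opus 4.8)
The plan is to deduce both statements from the Grothendieck--Lefschetz trace formula combined with the tannakian formalism, using the key point that for a weakly unramified character the only surviving cohomology is $\rmH^0_c=\rmH^0$. For part (1), I would first note that since $\chi\in\wunram{M}(k)$ and $M$ is perverse, the Lefschetz trace formula gives
\[
\sum_{x\in G(k_n)}t_{M_\chi}(x;k_n)=\sum_{|j|\le d}(-1)^j\Tr(\frob_{k_n}\mid \rmH^j_c(G_{\bar k},M_\chi)) = \Tr(\frob_{k_n}\mid \rmH^0_c(G_{\bar k},M_\chi)),
\]
all other cohomology groups vanishing by weak unramifiedness. (Here one must use that a weakly unramified character over $k$ remains weakly unramified over $k_n$, i.e.\ that $\mcL_\chi$ base-changes to $\mcL_{\chi\circ N_{k_n/k}}$ and the cohomology over $\bar k$ is unchanged; this is recalled in Section~\ref{sec:character-groups-Lang-torsor}.) Then by the formula $t_{M_\chi}(x;k_n)=\chi(N_{k_n/k}(x))t_M(x;k_n)$ the left side equals $\sum_{x\in G(k_n)}\chi(N_{k_n/k}(x))t_M(x;k_n)$. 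The right side is $\Tr(\Fr_{M,k_n}(\chi))$ on $\rmH^0_c(G_{\bar k},M_\chi)$, and since $M$ is pure of weight zero this automorphism is pure of weight zero, so its trace equals $\Tr(\Theta_{M,k_n}(\chi))$ by definition of the unitary conjugacy class (the semisimple part has the same trace). Finally, the identity $\Fr_{M,k_n}(\chi)=\Fr_{M,k}(\chi)^n$ on $\rmH^0_c(G_{\bar k},M_\chi)$ — because the geometric Frobenius of $k_n$ is the $n$-th power of that of $k$ acting on the same $\bar k$-vector space, and $\mcL_\chi$ over $\bar k$ is insensitive to the field of definition of $\chi$ — gives $\Tr(\Theta_{M,k_n}(\chi))=\Tr(\Theta_M(\chi)^n)$, and for $n=1$ the tannakian dimension $r=\dim\rmH^0_c(G_{\bar k},M_\chi)$ by Proposition~\ref{prop-dimPerv=dimVect}, so the trace is the one on $\GL_r$.

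For part (2), since $\chi\in\unram{M}(k)$ is unramified, $\omega_\chi$ is a fiber functor on $\braket{M}^{\arith}$ by Definition~\ref{def:unramified-fibfunct}, so it sends every object $N$ of $\braket{M}^{\arith}$ to a space on which $\rmH^i(G_{\bar k},N_\chi)=\rmH^i_c(G_{\bar k},N_\chi)=0$ for $i\ne 0$ and $\rmH^0_c=\rmH^0$; in particular $\chi$ is weakly unramified, hence unramified, for $\rho(M)$, which is by construction an object of $\braket{M}^{\arith}$. By the tannakian formalism, the Frobenius conjugacy class $\Frf_M(\chi)\in\garith{M}(\Qlb)$ is defined so that for every representation $\rho$ the automorphism $\rho(\Frf_M(\chi))$ of $\omega_\chi(\rho(M))=\rmH^0(G_{\bar k},\rho(M)_\chi)=\rmH^0_c(G_{\bar k},\rho(M)_\chi)$ is exactly the automorphism induced by $\frob_k$. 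Taking traces gives $\Tr(\rho(\Frf_M(\chi)))=\Tr(\frob_k\mid \rmH^0_c(G_{\bar k},\rho(M)_\chi))$ directly.

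The steps are individually routine once the definitions are unwound; the main thing to be careful about — and what I expect to be the only real obstacle — is the bookkeeping of base change in part (1): ensuring that ``weakly unramified over $k$'' propagates to $k_n$ so that the trace formula over $k_n$ collapses to the $\rmH^0$ term, and that the Frobenius-power identity $\Fr_{M,k_n}(\chi)=\Fr_{M,k}(\chi)^n$ holds on the correct space. Both follow from the compatibility of the character-sheaf construction with base change recalled in Section~\ref{sec:character-groups-Lang-torsor} (notably $\mcL_\chi=\mcL_{\chi\circ N_{k_{nm}/k_n}}$ over $\bar k$) together with the fact that étale cohomology over $\bar k$ only depends on $M_{\bar k}$; I would spell this out explicitly at the start of the proof of (1).
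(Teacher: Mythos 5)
Part (1) of your proposal is correct and is essentially the paper's own argument: weak unramifiedness collapses the Grothendieck--Lefschetz trace formula to the single term $\rmH^0_c=\rmH^0$, purity of weight zero lets you pass to the unitary class, and the identification of $\Fr_{M,k_n}(\chi)$ with $\Fr_{M,k}(\chi)^n$, via $\mcL_{\chi\circ N_{k_n/k}}=\mcL_{\chi}$ over $\bar{k}$, is exactly the base-change bookkeeping the paper uses implicitly; spelling it out is fine.

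Part (2), however, contains a genuine gap. You assert that, because $\omega_\chi$ is a fiber functor on $\braket{M}^{\arith}$, every object $N$ of this category satisfies $\rmH^i(G_{\bar k},N_\chi)=\rmH^i_c(G_{\bar k},N_\chi)=0$ for $i\neq 0$ and $\rmH^0_c=\rmH^0$, so that $\chi$ is weakly unramified, hence unramified, for $\rho(M)$. That inference does not follow from Definition~\ref{def:unramified-fibfunct}: a fiber functor is merely a faithful exact tensor functor $N\mapsto \rmH^0(G_{\bar k},N_\chi)$ into vector spaces, and this property says nothing about the higher cohomology of $N_\chi$ or about the forget-supports map $\rmH^0_c\to\rmH^0$ for objects other than those for which it is already known. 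This is precisely the point that must be proved, and without it the tannakian formalism only yields $\Tr(\rho(\Frf_{M}(\chi)))=\Tr(\frob_{k}\mid \rmH^0(G_{\bar{k}},\rho(M)_{\chi}))$ with ordinary cohomology, not the stated identity with $\rmH^0_c$. The paper closes this gap by invoking Proposition~\ref{pr-tensor-ab}: $\rho(M)$ is a direct summand of (a sum of) internal convolution powers of $M\oplus M^{\vee}$, and the required vanishing and the equality $\rmH^0_c=\rmH^0$ are then transferred from $M_\chi$ to $\rho(M)_\chi$ by the Künneth-type comparison of $*_!$, $*_*$ and $*_{\intt}$ of the kind carried out in Lemma~\ref{lm-critere-unram} and Proposition~\ref{pr-frob-unram}. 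Your proof needs this step (or an equivalent argument); the remaining points of part (2) --- restricting the fiber functor to $\braket{\rho(M)}^{\arith}$ and unwinding the definition of $\Frf_{M}(\chi)$ --- are correct and agree with the paper.
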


\begin{proof}
  (1) By definition, we have
  $$
  \Tr(\Theta_M(\chi)^n)=\Tr(\Fr_M(\chi)^n)=\Tr(\Fr_{k}^n\mid
  H^0(G_{\bar{k}},M_{\chi})).
  $$
  \par
  Since~$\chi$ is weakly unramified, we
  have~$H^i_c(G_{\bar{k}},M_{\chi})=0$ for all~$i\not=0$
  and~$H^0(G_{\bar{k}},M_{\chi})=H^0_c(G_{\bar{k}},M_{\chi})$, so that
  we can write
  $$
  \Tr(\Theta_M(\chi)^n)=\sum_{i\in\Zz}(-1)^i\Tr(\Fr_{k}^n\mid
  H^i_c(G_{\bar{k}},M_{\chi}))=\sum_{x\in
    G(k)}\chi(N_{k_n/k}(x))t_M(x;k_n),
  $$
  by the trace formula.
  \par
  (2) The fact that $\chi$ is unramified for $\rho(M)$ follows from the
  definition 
  and Proposition~\ref{pr-tensor-ab}, and the formula follows then from
  the definition of the Frobenius conjugacy class of~$\chi$
  for~$\rho(M)$.
\end{proof}


\section{Frobenius-unramified characters}

Because weakly unramified characters do not always give rise to fiber
functors, and moreover we do not always know if there exist sufficiently
many (if any) unramified characters, we introduce an intermediate
notion.

\begin{definition}[Frobenius-unramified characters]\label{def-funram}
  Let~$M$ be an object of $\Perv(G)$ which is arithmetically semisimple
  and pure of weight zero, of tannakian dimension~$r$. Let~$\rho$ be a
  representation of $\GL_r$ and let~$N$ be the object of
  $\braket{M}^{\arith}$ corresponding to the restriction of~$\rho$
  to~$\garith{M}$.  Let $n\geq 1$ and let $\chi\in\wunram{M}(k_n)$ be a
  weakly unramified character for~$M$. We say that $\chi$ is
  \emph{Frobenius-unramified
    for~$\rho$}\index{Frobenius-unramified character} if $\chi$ is
  weakly unramified for~$N$ and if the formula
  $$
  \Tr(\rho(\Theta_{M,k_n}(\chi))^v)=\Tr(\frob_{k_n}^v\mid
  H^0_c(G_{\bar{k}},N_{\chi}))
  $$
  holds for all integers~$v\geq 1$, or equivalently if
  $$
  \det(1-\rho(\Theta_{M,k_n}(\chi))T)=\det(1-T\frob_{k_n}\mid
  H^0_c(G_{\bar{k}},N_{\chi})).
  $$
  \par
  The disjoint union over~$n$ of the
  set of Frobenius-unramified characters is denoted~$\funram{\rho}$.
  \nomenclature[$X$]{$\funram{\rho}$}{Frobenius-unramified characters for~$\rho$}
\end{definition}

\begin{remark}
  (1) The key point is that since $\rho$ is a representation of $\GL_r$,
  we can consider the conjugacy class of $\rho(\Fr_{M,k_n}(\chi))$ (in
  $\GL(V)$, where $\rho$ is a representation on~$V$); a priori, this is
  not meaningful for a representation of $\garith{M}$, unless we know
  that elements of the conjugacy class of the Frobenius automorphism of
  $H^0_c(G_{\bar{k}},M_{\chi})$ are conjugate to some
  element of the arithmetic tannakian group, which is unique up to
  conjugacy in $\garith{M}$. 
  \par
  (2) We will also sometimes write $\funram{\rho}=\funram{N}$,
  \nomenclature[$X$]{$\funram{N}$}{$\funram{\rho}$}
  although
  this set depends on~$M$, since we view $N$ as an object
  of~$\braket{M}^{\arith}$.  When confusion might arise, we may also
  write $\funram{N}_M$.\nomenclature[$X$]{$\funram{N}_M$}{Frobenius-unramified characters for $N\in\braket{M}$}
\end{remark}

Any unramified character for~$M$ is Frobenius-unramified for all objects
of $\braket{M}^{\arith}$, by Lemma~\ref{lm-trace}, (2). But in
contrast to unramified characters, we can prove in all cases that the
set of Frobenius-unramified characters is generic.

\begin{proposition}\label{pr-frob-unram}
  Let~$M$ be an object of $\Perv(G)$ which is arithmetically semisimple
  and pure of weight zero and of tannakian dimension~$r\geq 0$. For any
  representation $\rho$ of $\GL_r$, the set $\funram{\rho}$ is generic.
\end{proposition}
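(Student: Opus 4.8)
I would produce an explicit generic set $\mcX\subset\charg{G}$ contained in $\funram{\rho}$, obtained as a finite intersection of the generic sets furnished by Theorem~\ref{th-generic-unramified}. First I would reduce to the case where $\rho$ is irreducible: since $\GL_r$ is reductive, write $\rho=\bigoplus_t\rho_t$ with each $\rho_t$ irreducible; a character weakly unramified for every $\rho_t(M)$ is weakly unramified for $\rho(M)=\bigoplus_t\rho_t(M)$, and as both $\det(1-\rho(\Theta_{M,k_n}(\chi))T)$ and $\det(1-T\Fr_{k_n}\mid\rmH^0_c(G_{\bar k},\rho(M)_\chi))$ are multiplicative over such direct sums, $\bigcap_t\funram{\rho_t}\subset\funram{\rho}$. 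A finite intersection of generic sets is generic, so we may assume $\rho$ irreducible.

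\textbf{Realizing $\rho$ by a convolution power.} By Proposition~\ref{pr-tensor-ab} applied to $\GL_r$ with its standard (faithful) representation $\Std$, there is a \emph{fixed} integer $m\geq 0$ with $\rho$ a direct summand of $T_m:=(\Std\oplus\Std^{\vee})^{\otimes m}$. Fix $\GL_r$-maps $\iota_0\colon\rho\to T_m$, $\pi_0\colon T_m\to\rho$ with $\pi_0\iota_0=\id$, and set $e_0=\iota_0\pi_0\in\End_{\GL_r}(T_m)$. Under the equivalence $\braket{M}^{\arith}\simeq\mathrm{Rep}_{\Qlb}(\garith{M})$ (with $M$ corresponding to the standard representation $\garith{M}\hookrightarrow\GL_r$), the object $L^{*_{\intt}^m}$, where $L=M\oplus M^{\vee}$, corresponds to $T_m|_{\garith{M}}$, and $N:=\rho(M)$ is the image of the idempotent $e\in\End_{\braket{M}^{\arith}}(L^{*_{\intt}^m})$ restricting $e_0$. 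The structural point I would exploit is that $e_0$, as an element of the commutant $\End_{\GL_r}(T_m)$, is a \emph{universal} expression built from identities, the symmetry of $\otimes$, the contraction $\Std\otimes\Std^{\vee}\to\Qlb$ and the coevaluation $\Qlb\to\Std^{\vee}\otimes\Std$ by composition and tensor product; correspondingly $e$ is the \emph{same} universal expression in $\id_M$, $\id_{M^{\vee}}$, the symmetry of $*_{\intt}$, and the maps $\ev\colon M*_{\intt}M^{\vee}\to\un$, $\coev\colon\un\to M^{\vee}*_{\intt}M$ of $\braket{M}^{\arith}$.

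\textbf{The generic set, and the identification of $\rmH^0(G_{\bar k},N_\chi)$.} Let $\mcX$ be the intersection of $\wunram{M}$, of $\wunram{N}$, and of $\nunram{\pH^i(C)}$ over all $i\in\Zz$ and all $C$ in the following finite list of negligible complexes: the cones of $M*_!M^{\vee}\to M*_*M^{\vee}$ and of $N*_!N^{\vee}\to N*_*N^{\vee}$, together with the negligible complexes arising from Lemma~\ref{lem-mildeconv-quot} and Remark~\ref{rem-Mint} in comparing $L^{*_!^j}$, $L^{*_*^j}$ and $L^{*_{\intt}^j}$ for $j\leq m$ (and their partial convolutions). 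Each of these sets is generic — $\wunram{M}$, $\wunram{N}$ by Theorem~\ref{th-generic-unramified} and each $\nunram{\pH^i(C)}$ by Lemma~\ref{lem-mildeconv-quot} together with Theorem~\ref{th-generic-unramified} — so $\mcX$ is generic. Now fix $\chi\in\mcX$ and $n\geq 1$. By Proposition~\ref{prop-dimPerv=dimVect}, $\dim\rmH^0(G_{\bar k},M_\chi)=r$; weak unramifiedness of $M$ — hence of $M^{\vee}$, via Poincaré--Verdier duality and \eqref{eqn:isomTannakianDual} — identifies $\rmH^{*}(G_{\bar k},L_\chi)$ with $\Qlb^r\oplus(\Qlb^r)^{\vee}$, concentrated in degree $0$, on which $\Fr_{k_n}$ acts as $(\Std\oplus\Std^{\vee})(\Fr_{M,k_n}(\chi))$. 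Collapsing the negligible cones in the list and iterating the Künneth isomorphism of Lemma~\ref{lem-prop-conv} (using $(A*_*B)_\chi\simeq A_\chi*_*B_\chi$) gives a $\Fr_{k_n}$-equivariant isomorphism $\rmH^{*}(G_{\bar k},(L^{*_{\intt}^m})_\chi)\simeq\rmH^{*}(G_{\bar k},L_\chi)^{\otimes m}$, again in degree $0$, with $\Fr_{k_n}$ acting as $T_m(\Fr_{M,k_n}(\chi))$. Applying the additive functor $\rmH^0(G_{\bar k},(-)_\chi)$ to $N\hookrightarrow L^{*_{\intt}^m}\twoheadrightarrow N$ (morphisms defined over $k$, hence $\Fr_{k_n}$-equivariant) and using that, under the above identification, this functor carries the symmetry of $*_{\intt}$ and the maps $\ev,\coev$ to the symmetry of $\otimes$ and the evaluation and coevaluation of vector spaces — as already verified in the proof of Proposition~\ref{prop-dimPerv=dimVect} — the idempotent $e$ is sent to $e_0$ acting on $(\Qlb^r\oplus(\Qlb^r)^{\vee})^{\otimes m}$. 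Its image is, $\Fr_{k_n}$-equivariantly, the representation space of $\rho(\Fr_{M,k_n}(\chi))$, so $\rmH^0(G_{\bar k},N_\chi)\simeq\rho(\Fr_{M,k_n}(\chi))$ as $\Fr_{k_n}$-modules. Since $\chi\in\wunram{N}$ and $\rho$ preserves the Jordan decomposition while the characteristic polynomial only depends on the conjugacy class of the semisimple part, it follows that
$$
\det(1-T\Fr_{k_n}\mid\rmH^0_c(G_{\bar k},N_\chi))=\det(1-T\rho(\Fr_{M,k_n}(\chi)))=\det(1-\rho(\Theta_{M,k_n}(\chi))T),
$$
which is the identity in Definition~\ref{def-funram}; hence $\chi\in\funram{\rho}$, and $\funram{\rho}\supset\mcX$ is generic.

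\textbf{Main obstacle.} The delicate step will be the last one: verifying that $\rmH^0(G_{\bar k},(-)_\chi)$ transports the expression for $e$ into the corresponding expression for $e_0$, i.e.\ that on the bounded subdiagram of $\braket{M}^{\arith}$ generated by $\leq m$ copies of $M$ and $M^{\vee}$ this additive functor behaves, on the nose and $\Fr_{k_n}$-equivariantly, like a symmetric monoidal functor sending $\ev,\coev$ to evaluation and coevaluation of $\Qlb$-vector spaces. This is in the same spirit as the proofs of Proposition~\ref{prop-dimPerv=dimVect} and Lemma~\ref{lm-critere-unram}, but it requires care in checking that the Künneth comparisons are compatible with the symmetry and that they survive the passage through the various negligible cones, all while tracking the Frobenius action. (By contrast with Theorem~\ref{th-polite}, only the single tensor power $T_m$ enters here, so the relevant intersection of generic sets is finite — this is precisely why $\funram{\rho}$, unlike $\unram{M}$, is provably generic in general.)
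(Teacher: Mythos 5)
Your plan runs on the same engine as the paper's proof — Proposition~\ref{pr-tensor-ab}, the Künneth isomorphisms of Lemma~\ref{lem-prop-conv}, the negligible cones from Lemma~\ref{lem-mildeconv-quot} and Remark~\ref{rem-Mint}, and the observation that only \emph{finitely many} generic sets need to be intersected — but it diverges at the key reduction, and in a way worth comparing. The paper never manipulates idempotents: it tests only the full mixed tensor powers $M_{m,l}=M^{*_{\intt} m}*_{\intt}(M^{\vee})^{*_{\intt} l}$, i.e.\ $\rho=\Std^{\otimes m}\otimes(\Std^{\vee})^{\otimes l}$, for which the Frobenius-equivariant Künneth isomorphism $\rmH^0_c(G_{\bar k},(M_{m,l})_\chi)\simeq \rmH^0_c(G_{\bar k},M_\chi)^{\otimes m}\otimes(\rmH^0_c(G_{\bar k},M_\chi)^{\vee})^{\otimes l}$ by itself pins down the Frobenius traces as $\Tr(\Theta_{M,k_n}(\chi)^v)^m\,\overline{\Tr(\Theta_{M,k_n}(\chi)^v)}^l$, with no bookkeeping of the symmetry, $\ev$ or $\coev$; a general $N$ is then expressed, via Proposition~\ref{pr-tensor-ab}, as a finite direct sum of subobjects of such $M_{m,l}$, and the result is applied to those. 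You instead isolate an irreducible $\rho$ as the image of an idempotent, appeal (implicitly) to the first fundamental theorem of invariant theory for $\GL_r$ to write that idempotent as a linear combination of composites of identities, symmetries, evaluations and coevaluations, and transport it through $\rmH^0(G_{\bar k},(-)_\chi)$. If completed, this buys more than the proposition asserts — a Frobenius-equivariant identification of $\rmH^0(G_{\bar k},N_\chi)$ with $\rho(\Fr_{M,k_n}(\chi))$ itself, which also makes explicit the summand-matching that the paper's closing sentence treats very briefly — but the cost is exactly the obstacle you flag: one must verify that on the relevant finite subdiagram the functor $\rmH^0(G_{\bar k},(-)_\chi)$ intertwines the symmetry and the (co)evaluation maps with their vector-space counterparts, Frobenius-equivariantly and through all the intermediate cones comparing $*_!$, $*_*$ and $*_{\intt}$; this is in the spirit of the Krämer compatibility quoted in the proof of Proposition~\ref{prop-dimPerv=dimVect}, but it is genuinely delicate and you do not carry it out. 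The paper's choice of test objects is precisely what lets it bypass this monoidal bookkeeping, since for characteristic polynomials of Frobenius the full tensor powers suffice; so your route is viable but substantially heavier at its crux, and you could shorten it by computing, as the paper does, directly on the objects $M_{m,l}$ and only then descending to a summand.
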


\begin{proof}
  We first observe that it is straightforward that if two
  representations~$\rho_1$ and $\rho_2$ of $\GL_r$ have the property
  that $\funram{\rho_1}$ and $\funram{\rho_2}$ are generic, then the
  sets $\funram{\rho_1\oplus\rho_2}$, $\funram{\rho_1\otimes\rho_2}$ and
  $\funram{\rho_1^{\vee}}$ are also generic. Indeed, consider the case
  of the tensor product, the others being similar (and in fact
  simpler). Let $N_i$ be the object corresponding to~$\rho_i$. For
  $\chi$ generic, we have
  $$
  H^*_c(G_{\bar{k}},(N_1*_{\intt}
  N_2)_{\chi})=H^*_c(G_{\bar{k}},(N_1*_!N_2)_{\chi}) \simeq
  H^*_c(G_{\bar{k}}, N_{1,\chi})\otimes H^*_c(G_{\bar{k}},N_{2,\chi})
  $$
  as well as
  $$
  H^*_c(G_{\bar{k}}, N_{i,\chi})=H^0_c(G_{\bar{k}},N_{i,\chi})
  $$
  for $i=1$ and $i=2$, all these isomorphisms being compatible with
  Frobenius. Thus
  \begin{multline*}
    \det(1-T\frob_{k_n}\mid H^0_c(G_{\bar{k}},(N_1*_{\intt}
    N_{2})_{\chi}))=
    \\\det(1-T\frob_{k_n}\mid
    H^0_c(G_{\bar{k}},N_{1,\chi})) \det(1-T\frob_{k_n}\mid
    H^0_c(G_{\bar{k}},N_{2,\chi}))
  \end{multline*}
  for $\chi$ generic, which then establishes the claim concerning
  $\funram{\rho_1\otimes\rho_2}$ using the definition of
  $\funram{\rho_i}$ and the assumption that these are generic sets.

  A first consequence of this observation is that we may assume that
  $\rho$ is irreducible to prove the proposition.  Recall next that
  every irreducible representation~$\rho$ of~$\GL_r$ is isomorphic to
  one of the form~$\rho=\rho_0\otimes \det(\cdot)^k$ for some
  representation~$\rho_0$ given by a Schur
  functor~$\mathbf{S}_{\lambda}$ and some integer~$k\in\Zz$ (see,
  e.g.,~\cite[Prop.\,15.47]{fulton-harris}). Since the determinant is
  itself a Schur functor, and~$\det(\cdot)^{-k}$ is the contragredient
  of~$\det(\cdot)^k$, the previous observation reduces the proof to the
  case where~$\rho=\mathbf{S}_{\lambda}$ for some~$\lambda$.

  In this case, $\rho$ is given by the image of an explicit projector
  (see, e.g.,~\cite[\S\,6.1,\,th.\,6.3]{fulton-harris}), and hence makes
  sense for any symmetric monoidal category where idempotents split. In
  particular, this applies to $\Der(G)$ with either of the two
  convolutions, since~$\Der(G)$ is known to have this property (e.g., by
  combining the fact that $\Der(G)$ is equivalent to the bounded derived
  category of the category of perverse sheaves, by a theorem of
  Beilinson~\cite[Th.\,1.3]{beilinson}, and the fact that the bounded
  derived category of an abelian category is idempotent complete, by a
  result of Balmer and
  Schlichting~\cite[Cor.\,2.10]{balmer-schlichting}). We will denote
  by~$\rho_!(M)$ (\resp $\rho_*(M)$) the action of these functors on~$M$
  for the symmetric monoidal structures given by the convolution
  $(A,B)\mapsto A*_{!}B$ (\resp by $(A,B)\mapsto A*_* B$).

  Since taking cohomologgy with compact support (\resp cohomology) is an
  additive monoidal functor for the convolution $A*_!B$ (\resp for
  $A *_* B$), by the Künneth formula, the explicit description of the
  idempotent defining~$\rho$ provides isomorphisms
  \begin{gather}\label{eq-schur-1}
    H^*_c(G_{\bar{k}},\rho_!(M)_{\chi})\simeq \rho(H^*_c(G_{\bar{k}},M_{\chi}))
    \\
    H^*(G_{\bar{k}},\rho_*(M)_{\chi})\simeq \rho(H^*(G_{\bar{k}},M_{\chi}))
    \label{eq-schur-2}
  \end{gather}
  for every~$\chi$, which are also compatible with Frobenius, where the
  Schur functor acts on the right-hand sides in the category of bounded
  complexes of~$\Qlb$-vector spaces.
  
  Let~$N=\rho(M)$ be the object of~$\braket{M}^{\arith}$ corresponding
  to~$\rho$. Since~$\rho$ is assumed to be a Schur functor, there exists
  an integer~$l\geq 0$ and an embedding
  $N\to M_{l}= M^{*_{\intt} l}$
  (see, e.g., ~\cite[\S\,6.1]{fulton-harris}). 
  We obtain a commutative square
  $$
  \begin{tikzcd}
    \rho_!(M)\arrow[r]\arrow[d] & \rho_*(M)\arrow[d]
    \\
    M_{l,!}\arrow[r] &M_{l,*}
  \end{tikzcd}
  $$
  where $M_{l,!}=M^{*_{!} l}$ and $M_{l,*}=M^{*_{*}
    l}$.  This implies, in particular, that the cone~$C$ of the morphism
  $$
  \rho_!(M)\to \rho_*(M)
  $$
  is negligible, since this is the case for the cone of the bottom
  morphism by Lemma~\ref{lem-mildeconv-quot}.  Applying
  Remark~\ref{rem-Mint}, there exists a generic set
  $\mcX$ of characters such that for
  $\chi\in\mcX$, we have isomorphisms
  \begin{equation}\label{eq-iso1}
    H^*_c(G_{\bar{k}},N_{\chi})\simeq H^*_c(G_{\bar{k}},\rho_!(M)_{\chi})
  \end{equation}
  which are compatible with Frobenius.

  Let finally~$\chi\in\mcX$ be a character which is weakly unramified
  for both~$M$ and~$N$. Then we have isomorphisms
  $$
  H^0_c(G_{\bar{k}},N_{\chi})\simeq
  H^*_c(G_{\bar{k}},N_{\chi})\simeq
  H^*_c(G_{\bar{k}},\rho_!(M)_{\chi})\simeq
  \rho(H^*_c(G_{\bar{k}},M_{\chi}))
  \simeq \rho(H^0_c(G_{\bar{k}},M_{\chi}))
  $$
  compatible with Frobenius (the first and fourth of these are given by
  the theorem, the second is~(\ref{eq-iso1}) and the third
  is~(\ref{eq-schur-1})), and hence
  $$
  \det(1-T\Frob_{k_n}\mid H^0_c(G_{\bar{k}},N_{\chi}))=
  \det(1-\rho(\Theta_{M,k_n}(\chi))T).
  $$

  Since this holds for a generic set of characters (by
  Theorem~\ref{th-generic-unramified}), we obtain the desired result.
\end{proof}

\begin{corollary}\label{cor-finite-gen-unram}
  Let~$M$ be an object of $\Perv(G)$ which is arithmetically semisimple
  and pure of weight zero and of tannakian dimension~$r\geq 0$. If the
  group~$\garith{M}$ is finite, then $M$ is generically unramified.
\end{corollary}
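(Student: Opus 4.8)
The goal is to exhibit a generic set of characters $\chi\in\charg{G}$ for which the functor $\omega_\chi=\rmH^0(G_{\bar k},(-)_\chi)$ is a fiber functor on $\braket{M}^{\arith}$ (equivalently on $\braket{M}^{\geo}$). Since $M$ is arithmetically semisimple and pure of weight zero, Theorem~\ref{thm-Harith-fib-funct} shows that $\braket{M}^{\arith}\simeq\mathrm{Rep}_{\Qlb}(\Gg)$ is a semisimple tannakian category, where $\Gg=\garith{M}$ is finite (hence reductive) by hypothesis, and the object $M$ corresponds to a faithful representation $V\subset\GL_r$ of $\Gg$. The starting point is Lemma~\ref{lm-critere-unram}: with $L=M\oplus M^\vee$ and $C_m$ the cone of $L^{*_!^m}\to L^{*_*^m}$, every $\chi\in\wunram{M}\cap\bigcap_{m\geq 1}\nunram{C_m}$ is unramified for $M$. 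Each $\nunram{C_m}$ is generic since $C_m$ is negligible (Lemma~\ref{lem-mildeconv-quot}), and $\wunram{M}$ is generic (Theorem~\ref{th-generic-unramified}); so the only real issue is that this is an \emph{infinite} intersection.

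This is where finiteness of $\Gg$ enters. First I would apply Proposition~\ref{pr-tensor-ab} to $\Gg$ and $V$: since $\Gg$ has only finitely many isomorphism classes of irreducible representations and each embeds in some $(V\oplus V^\vee)^{\otimes m}$, there is an integer $m_0\geq 1$ such that every irreducible representation of $\Gg$ — hence, $\Gg$ being reductive, every object of $\mathrm{Rep}_{\Qlb}(\Gg)$ — is a direct summand of a finite direct sum of the $(V\oplus V^\vee)^{\otimes m}$ with $m\leq m_0$. Translating through the equivalence $\braket{M}^{\arith}\simeq\mathrm{Rep}_{\Qlb}(\Gg)$, every object of $\braket{M}^{\arith}$ is a direct summand of finite direct sums of the semisimple objects $L^{*_{\intt}^m}$, $m\leq m_0$. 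I would then revisit the argument of Lemma~\ref{lm-critere-unram}: the sole role of the condition $\chi\in\nunram{C_m}$ there is to produce, via Lemma~\ref{lem-prop-conv} and the identifications of Remark~\ref{rem-Mint}, the canonical isomorphism $\omega_\chi(L^{*_{\intt}^m})=\omega_\chi(L)^{\otimes m}$; and since for an object of $\braket{M}^{\arith}$ that is a summand of sums of $L^{*_{\intt}^m}$ with $m\leq m_0$, together with its tensor squares, only the products $L^{*_{\intt}^m}$ with $m\leq 2m_0$ intervene, it suffices to impose $\chi\in\wunram{M}\cap\bigcap_{1\leq m\leq 2m_0}\nunram{C_m}$. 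This is a finite intersection of generic sets, hence generic; together with the automatic exactness of $\omega_\chi$ on the semisimple category $\braket{M}^{\arith}$ and the routine compatibilities, this produces a generic set of unramified characters. (Alternatively, one can run the argument through Proposition~\ref{pr-frob-unram}: choosing finitely many representations $\sigma_i$ of $\GL_r$ whose restrictions to $\Gg$ exhaust its irreducible representations, the set $\bigcap_i\funram{\sigma_i}$ is generic, and on it — using $\dim\omega_\chi(S)=\dim S$ for all simple objects, which follows from the character identities and the decomposition of $V^{\otimes a}\otimes(V^\vee)^{\otimes b}$ spanning the representation ring of $\Gg$ — the Frobenius data assembles into a fiber functor.)

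The point that will require care is exactly the reduction from the infinite family $\{C_m\}_{m\geq1}$ to a finite one: I must check that tensor-compatibility of $\omega_\chi$ on all of $\braket{M}^{\arith}$ genuinely follows from its compatibility on the finitely many generating convolution powers $L^{*_{\intt}^m}$, $m\leq 2m_0$ — i.e.\ that the canonical Künneth identifications behave well under passage to direct summands and iterated convolution, so that finiteness of $\Gg$ really caps the range of $m$ that intervenes. This parallels the concluding ``other compatibilities are elementary'' steps in the proofs of Lemma~\ref{lm-critere-unram} and Theorem~\ref{th-polite}, but must be carried out with the bound on $m$ in hand. A more robust alternative, avoiding any such bound, would be to prove that when $\Gg$ is finite the negligible perverse sheaves arising as Jordan--Hölder constituents of the $\pH^i(C_m)$, over all $m$, lie in a single finite set — as is visible on $\Gg_m$, where they are the Kummer sheaves attached to the finitely many tame characters in the local monodromy of $M$ at $0$ and $\infty$ — so that $\bigcap_{m\geq1}\nunram{C_m}$ is literally a finite intersection. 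Proving such a finiteness statement for an arbitrary connected commutative group $G$ is the genuinely delicate part, and is why I would favour the first route.
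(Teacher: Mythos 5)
Your argument is correct, and it rests on the same underlying mechanism as the paper's proof — finiteness of $\garith{M}$ caps the convolution powers needed to generate $\braket{M}^{\arith}$, so that the genericity conditions collapse to a finite intersection — but the scaffolding differs. The paper's proof is two lines: since $\garith{M}$ is finite, by \cite[Prop.\,2.20\,(a)]{deligne-milneII} every object of $\braket{M}^{\arith}$ is a subobject of a direct sum of copies of a \emph{single} object $N=M^{*_{\intt}m}*_{\intt}(M^{\vee})^{*_{\intt}l}$ for fixed $m,l$, and then any Frobenius-unramified character for $N$ (a generic set by Proposition~\ref{pr-frob-unram}, whose proof supplies exactly the Künneth identification \eqref{eq-lemconv1} you need) is unramified for $M$. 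Your main route instead goes through Proposition~\ref{pr-tensor-ab} plus the finiteness of the set of irreducible representations of a finite group to bound the exponent by $m_0$, and then reruns Lemma~\ref{lm-critere-unram} with the finite intersection $\wunram{M}\cap\bigcap_{m\leq 2m_0}\nunram{C_m}$; the point you rightly flag — that tensor compatibility on direct summands follows from the generating powers — is handled by naturality of the Künneth comparison maps under the idempotents cutting out the summands, which is the same "routine compatibility" the paper leaves implicit. What the paper's route buys is brevity (one citation replaces your bound on $m_0$ and the retract argument); what yours buys is that the fiber-functor property is exhibited directly from the criterion of Lemma~\ref{lm-critere-unram}, with the only nonformal input being the finite list of generators — and your parenthetical alternative via Proposition~\ref{pr-frob-unram} is essentially the paper's argument. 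Your worry about the second route, the infinite family $\{C_m\}$, is thus resolved exactly as you propose in the first route; the stronger finiteness statement about the constituents of the $\pH^i(C_m)$ is not needed.
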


\begin{proof}
  The fact that the tannakian group is finite implies that any object
  of~$\braket{M}^{\arith}$ is a subobject of a \emph{direct sum} of
  copies of a single object
  $N=M^{*_{\intt} m}*_{\intt} (M^{\vee})^{*_{\intt} l}$ for some (fixed)
  integers $m$ and~$l$ (see~\cite[Prop.\,2.20\,(a)]{deligne-milneII}). Any
  Frobenius-unramified character for~$M$ is then an unramified character
  for~$M$.
\end{proof}

\section{Group-theoretic properties}

We continue with the notation of the previous sections.


The following basic proposition establishes the relation between the
geometric and arithmetic tannakian groups.

\begin{proposition}\label{pr:geom-vs-arith1}
  Let $M$ be a geometrically semisimple object of $\Perv(G)$.  The
  geometric tannakian group $\ggeo{M}$ is a normal subgroup of the
  arithmetic tannakian group $\garith{M}$.
\end{proposition}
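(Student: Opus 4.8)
The plan is to realize $\ggeo{M}$ as the kernel of an honest quotient homomorphism out of $\garith{M}$, so that normality becomes automatic. Consider the full subcategory $\braket{M}^{\arith}_{0}\subset\braket{M}^{\arith}$ of \emph{geometrically trivial} objects, i.e. those $N$ such that the base change $N_{\bar{k}}$ is isomorphic in $\Ppint(G_{\bar{k}})$ to a direct sum of copies of $\un$. This is a tensor subcategory, stable under duals; and since $M$ is geometrically semisimple, $\braket{M}^{\geo}$ is semisimple (Corollary~\ref{cor-def-group}), so a subquotient of a power of $\un$ is again a power of $\un$, whence $\braket{M}^{\arith}_{0}$ is stable under subquotients. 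By the tannakian dictionary (\cite[Prop.\,2.21]{deligne-milneII}), the inclusion $\braket{M}^{\arith}_{0}\hookrightarrow\braket{M}^{\arith}$ corresponds to a faithfully flat homomorphism $\garith{M}\twoheadrightarrow Q$, and $K:=\ker(\garith{M}\to Q)$ is a closed normal subgroup of $\garith{M}$. It remains to identify $K$ with $\ggeo{M}$.

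The inclusion $\ggeo{M}\subseteq K$ is immediate: for $N$ geometrically trivial, $\ggeo{M}$ acts on $\omega(N)=\omega^{\geo}(N_{\bar{k}})$, but $N_{\bar{k}}\simeq\un^{\oplus a}$ and $\ggeo{M}$ acts trivially on $\un$, so it acts trivially, hence lies in $K$. For the reverse inclusion, the crucial point is a descent statement: \emph{for every object $N$ of $\braket{M}^{\arith}$, the maximal subobject $N^{\sharp}$ of $N_{\bar{k}}$ isomorphic to a power of $\un$ (this exists and is canonical because $\braket{M}^{\geo}$ is semisimple) is the base change of a subobject $N_{0}\hookrightarrow N$ over $k$, which then lies in $\braket{M}^{\arith}$ since that category is $\mathrm{Rep}_{\Qlb}(\garith{M})$ and hence closed under subobjects.} To prove this, I would use that $N_{\bar{k}}$ carries its canonical descent datum $\tau\colon \Fr_{k}^{*}N_{\bar{k}}\xrightarrow{\ \sim\ }N_{\bar{k}}$ relative to $\bar{k}/k$; since pullback along $\Fr_{k}$ is an auto-equivalence fixing $\un$, it carries the maximal power-of-$\un$ subobject $\Fr_{k}^{*}N^{\sharp}$ of $\Fr_{k}^{*}N_{\bar{k}}$, and $\tau$ maps it isomorphically onto a power-of-$\un$ subobject of $N_{\bar{k}}$; maximality (applied also to $\tau^{-1}$) forces $\tau(\Fr_{k}^{*}N^{\sharp})=N^{\sharp}$, so $N^{\sharp}$ is stable under the (cocycle-satisfying) descent datum of $N$. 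Effectivity of Galois descent for perverse sheaves, compatibly with the Serre quotients defining $\Ppintarith(G)$ and $\Ppint(G_{\bar{k}})$, then yields $N_{0}$.

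Granting the descent statement, I finish with: $K\subseteq\ggeo{M}$. Suppose $g\in\garith{M}\setminus\ggeo{M}$. Being reductive (Corollary~\ref{cor-def-group}), $\ggeo{M}$ is an observable subgroup of $\garith{M}$, so there is an object $N$ of $\braket{M}^{\arith}$ and a vector $v\in\omega(N)$ with $\ggeo{M}\cdot v=v$ but $g\cdot v\neq v$. Because the fiber functors on $\braket{M}^{\arith}$ and $\braket{M}^{\geo}$ are compatible with base change (this is how the closed immersion $\ggeo{M}\hookrightarrow\garith{M}$ was defined), one has $\omega(N)^{\ggeo{M}}=\omega^{\geo}(N^{\sharp})=\omega(N_{0})$, so $v\in\omega(N_{0})$ with $N_{0}\in\braket{M}^{\arith}_{0}$. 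Hence $g$ acts nontrivially on an object of $\braket{M}^{\arith}_{0}$, so $g\notin K$. Therefore $K=\ggeo{M}$, and this group is normal in $\garith{M}$.

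The step I expect to be the main obstacle is the descent statement of the second paragraph: checking carefully that the canonical geometrically trivial subobject $N^{\sharp}$ is respected by the descent datum and genuinely descends \emph{all the way down to $k$} (not merely to a finite subextension) inside the quotient categories $\Ppintarith(G)$ and $\Ppint(G_{\bar{k}})$, rather than only inside $\Perv$. Everything else is a formal consequence of the tannakian formalism together with the reductivity of $\ggeo{M}$ established earlier; one may also note that this mirrors the analogous normality statements for tori (Gabber--Loeser) and abelian varieties (Krämer).
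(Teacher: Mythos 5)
Your proof is correct, but it is worth saying how it sits relative to the paper, whose ``proof'' is only a pointer to Katz's Lemma~6.1 in \cite{mellin}: the underlying mechanism there is the same two-step one you use --- (i) for every $N$ in $\braket{M}^{\arith}$, the maximal geometrically trivial part $N^{\sharp}\subset N_{\bar k}$ comes from a subobject $N_0\subset N$ over $k$, and (ii) a separation argument exploiting that the reductive (hence observable) subgroup $\ggeo{M}$ is the stabilizer of a vector in some $\omega(N)$ --- but you package (ii) differently, through the faithfully flat quotient $\garith{M}\twoheadrightarrow Q$ attached to the subcategory of geometrically trivial objects, rather than through the direct computation that $\ggeo{M}$ and $\gamma\ggeo{M}\gamma^{-1}$ have the same invariants $\omega(N_0)$ in every $\omega(N)$ and hence coincide. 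Both packagings are fine; yours makes normality visible as a kernel, the other avoids invoking Deligne--Milne 2.21 a second time, and note that observability gives a single pair $(N,v)$ with $\ggeo{M}=\mathrm{Stab}(v)$ that works for all $g$ at once, which streamlines your last step. The one substantive point is the descent statement you flag, and it does go through, with no circularity: since $\braket{M}^{\geo}$ is semisimple, $N^{\sharp}$ is the image of the evaluation map $\Hom(\un,N_{\bar k})\otimes\un\to N_{\bar k}$, hence canonical; in the Serre quotient every subobject of the image of $N_{\bar k}$ is the image of a perverse subsheaf of $N_{\bar k}$, and the maximal (saturated) such representative is again canonical, defined over a finite extension of $k$, and stable under the Galois action coming from the fact that $N$ itself lives over $k$; ordinary finite Galois descent then produces $N_0$, and there is no Weil-versus-genuine-sheaf obstruction because one is descending a subobject of an object already defined over $k$. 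A pleasant feature of your argument, worth keeping, is that it uses only geometric semisimplicity (for reductivity of $\ggeo{M}$ and the isotypic decomposition), never arithmetic semisimplicity or purity, which is exactly the hypothesis of the proposition.
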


\begin{proof}
  The proof is identical with that of~\cite[Lemma 6.1]{mellin}.
\end{proof}

\begin{proposition}\label{pr:geom-vs-arith2}
  Let $M$ be an arithmetically semisimple object of $\Perv(G)$. Assume
  that~$M$ is pure of weight zero.
  \begin{enumth}
  \item The quotient
    $\garith{M}/\ggeo{M}$ is of multiplicative type.
  \item Let~$V$ be a geometrically trivial object of
    $\braket{M}^{\arith}$ which corresponds to a faithful representation
    of the group $\garith{M}/\ggeo{M}$. Any character $\chi\in\charg{G}$
    is unramified for~$V$, and the class~$\xi$ of the Frobenius
    conjugacy class of any such character is independent of~$\chi$ and
    generates a Zariski\nobreakdash-dense subgroup of~$\garith{M}/\ggeo{M}$.
  \item For any $n\geq 1$ and any character $\chi\in\charg{G}(k_n)$
    unramified for~$M$, the image in $\garith{M}/\ggeo{M}$ of the
    Frobenius conjugacy class $\Frf_{M,k}(\chi)$ is equal to~$\xi^n$.
  \end{enumth}
\end{proposition}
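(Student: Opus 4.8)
\emph{Proof plan.} Everything reduces, via the tannakian dictionary of Theorem~\ref{thm-arith_cat_neut_tan} together with the normality statement of Proposition~\ref{pr:geom-vs-arith1}, to an explicit description of the geometrically trivial objects of $\braket{M}^{\arith}$. Set $\Gamma=\garith{M}/\ggeo{M}$, an affine algebraic group over $\Qlb$. Under the identifications $\braket{M}^{\arith}\simeq\mathrm{Rep}_{\Qlb}(\garith{M})$ and $\braket{M}^{\geo}\simeq\mathrm{Rep}_{\Qlb}(\ggeo{M})$, the category $\mathrm{Rep}_{\Qlb}(\Gamma)$ is exactly the full subcategory of $\braket{M}^{\arith}$ of objects whose image in $\braket{M}^{\geo}$ is isomorphic to a direct sum of copies of $\un$. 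Using the equivalence $\Ppint(G_{\bar k})\simeq\Ppb(G_{\bar k})$, such an object is an honest perverse sheaf $N$ on $G$ with $N_{\bar k}\cong\un^{\oplus m}$; since $\un_{\bar k}$ is the skyscraper at the rational point $e$, the sheaf $N$ is supported at $e$, hence $N\cong i_{e*}W$ for a finite-dimensional continuous $\Gal(\bar k/k)$-representation $W$. As every object of $\braket{M}^{\arith}$ is arithmetically semisimple (Theorem~\ref{thm-Harith-fib-funct}(2), using that $M$ is pure of weight zero), $W$ is semisimple, so $N\cong\bigoplus_i\alpha_i^{\deg}\otimes\un$ for finitely many $\ell$-adic units $\alpha_i$.

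\textbf{Part (1).} The previous paragraph shows that every object of $\mathrm{Rep}_{\Qlb}(\Gamma)$ is a direct sum of one-dimensional representations, which is precisely the condition that $\Gamma$ be of multiplicative type (over the algebraically closed field $\Qlb$), so (1) follows.

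\textbf{Part (2).} By the above, $V=i_{e*}W$ with $W=\bigoplus_i\alpha_i^{\deg}$. The internal convolution of skyscrapers at $e$ is again a skyscraper at $e$ (here $*_{\intt}$, $*_!$ and $*_*$ all agree, no negligible part occurring), so $\braket{V}$ consists of skyscrapers at $e$, and the intrinsic functor $\omega_0\colon N\mapsto N_e$ is a fiber functor on $\braket{V}$ and on $\mathrm{Rep}_{\Qlb}(\Gamma)$; this identifies $\Gamma(\Qlb)=\mathrm{Aut}^\otimes(\omega_0)$ and produces the distinguished element $\xi\in\Gamma(\Qlb)$, the image of $\Fr_k$, acting on each such $N$ as $\mathrm{diag}(\alpha_i)$. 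Now since $e\in G(k)$, the sheaf $\mcL_\chi$ carries a canonical trivialization at $e$ with trivial Frobenius action (the Frobenius acting there by $\chi(e)=1$), whence for every $\chi\in\charg{G}$ a canonical isomorphism $\omega_\chi(N)=\rmH^0(G_{\bar k},N_\chi)\cong N_e$ compatible with convolution and duality. It follows that every $\chi\in\charg{G}$ is unramified for $V$, and that for $\chi$ unramified for $M$ the image of $\Frf_{M,k}(\chi)$ in $\Gamma$ acts on $V$ exactly as $\xi$; since $V$ is faithful, that image equals $\xi$, independently of $\chi$. For the Zariski-density: if $\overline{\langle\xi\rangle}\subsetneq\Gamma$, then $\Gamma$ being of multiplicative type, the restriction $X^*(\Gamma)\to X^*(\overline{\langle\xi\rangle})$ on character groups has non-trivial kernel, so there is a non-trivial $\eta\in X^*(\Gamma)$ with $\eta(\xi)=1$. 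Such $\eta$ corresponds to a one-dimensional geometrically trivial object $\alpha^{\deg}\otimes\un$ with $\alpha\neq1$ (as $\eta$ is non-trivial), yet $\eta(\xi)=1$ forces $\Fr_k$ to act trivially on its stalk, i.e. $\alpha=1$, a contradiction.

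\textbf{Part (3).} Over $k_n$ one has $\Fr_{k_n}=\Fr_k^n$, hence $\Fr_{k_n}$ acts on $\omega_\chi(V)\cong V_e$ as $\mathrm{diag}(\alpha_i^n)$; by faithfulness of $V$ the image of $\Frf_{M,k_n}(\chi)$ in $\Gamma$ is $\xi^n$, which is the claimed assertion. \textbf{Main obstacle.} There is no deep obstruction here: the real content is the concrete identification of geometrically trivial objects as skyscrapers $\bigoplus\alpha_i^{\deg}\otimes\un$ supported at the identity, and the compatibility of the fiber functors $\omega_\chi$ with the intrinsic ``stalk at $e$'' functor $\omega_0$ through the canonical trivialization of $\mcL_\chi$ at $e$ — this is exactly what makes the Frobenius class $\chi$-independent. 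Once that is in place, both ``of multiplicative type'' and the density statement become elementary facts about diagonalizable groups, the latter being the degenerate form, in the multiplicative-type setting, of the usual principle that Frobenii generate the arithmetic tannakian group.
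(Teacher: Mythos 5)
Your proof is correct and follows essentially the same route as the paper, which simply defers to Katz's argument (\cite[Lemma 7.1]{mellin}) after noting exactly the structural facts you spell out: geometrically trivial objects are sums of $\alpha^{\deg}\otimes\delta_e$, every character is unramified for such objects, and the $\chi$-independence of the Frobenius class plus the character-group argument give multiplicative type, density of $\langle\xi\rangle$, and the relation $\xi^n$ over $k_n$. Your write-up is just a fully expanded version of that same argument.
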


\begin{proof}
  This follows by the same arguments as in~\cite[Lemma 7.1]{mellin}
  (checking first that, using the structure of geometrically trivial
  objects as direct sums of $\alpha^{\deg}\otimes \delta_1$ for suitable
  $\alpha$, it is indeed straightforward that all characters are
  unramified for such objects).
\end{proof}


  

We will also use the following result in Chapter~\ref{sec-product}.


\begin{proposition}\label{pr-specialize}
  Let $G_1$ and~$G_2$ be connected commutative algebraic groups over~$k$
  and let $p\colon G_1\to G_2$ be a morphism of algebraic groups.
  Let~$M$ be a perverse sheaf on $G_1$ which is arithmetically
  semisimple and pure of weight zero.

  Let~$\chi_1\in\charg{G}_1(k)$ be a character such that we have
  $Rp_!(M_{\chi_1})=Rp_*(M_{\chi_1})$. Assume further that
  $N=Rp_!(M_{\chi_1})$ is perverse and arithmetically semisimple.
  \par
  \begin{enumth}
  \item The object~$N$ is pure of weight zero.
  \item Let $n\geq 1$ and let $\chi\in \wunram{N}(k_n)$ be a character
    such that $\chi_1\cdot (\chi\circ p)$ is weakly unramified
    for~$M$. Then the conjugacy classes
    $\Theta_{M,k_n}(\chi_1\cdot (\chi \circ p))$ and
    $\Theta_{N,k_n}(\chi)$ satisfy
    $$
    \det(1-T\Theta_{M,k_n}(\chi_1\cdot(\chi\circ
    p)))=\det(1-T\Theta_{N,k_n}(\chi))\in \Cc[T]
    $$
    and in particular
    $$
    \det(\Theta_{M,k_n}(\chi_1\cdot (\chi\circ
    p))))=\det(\Theta_{N,k_n}(\chi)).
    $$
  \end{enumth}
\end{proposition}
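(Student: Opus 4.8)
The plan is to identify the cohomology groups carrying the two Frobenius actions by a projection–formula computation, after which the statement reduces to bookkeeping about weights.

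For part (1) I would show that $N$ is pure of weight zero simply by squeezing it between a complex of weights $\leq 0$ and one of weights $\geq 0$. Since $M$ is pure of weight zero and $\mcL_{\chi_1}$ is lisse of rank one and pure of weight zero, the twist $M_{\chi_1}$ is pure of weight zero; hence $Rp_!(M_{\chi_1})$ is mixed of weights $\leq 0$ by Deligne's Riemann Hypothesis \cite[3.3.1]{D-WeilII}, while $Rp_*(M_{\chi_1})\simeq \DD(Rp_!(\DD M_{\chi_1}))$ is mixed of weights $\geq 0$ because $\DD(M_{\chi_1})\simeq \DD(M)_{\chi_1^{-1}}$ is again pure of weight zero. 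As $N$ equals both by hypothesis, it is pure of weight zero; together with the assumption that $N$ is perverse (and arithmetically semisimple), this guarantees that the conjugacy classes $\Theta_{N,k_n}(\chi)$ are defined.

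For part (2) the key point is the chain of canonical, Frobenius-equivariant isomorphisms
\[
N_\chi \;=\; Rp_*(M_{\chi_1})\otimes \mcL_\chi \;\simeq\; Rp_!\bigl(M_{\chi_1}\otimes p^*\mcL_\chi\bigr) \;\simeq\; Rp_!\bigl(M_{\chi_1\cdot(\chi\circ p)}\bigr),
\]
where I use the projection formula for both $Rp_!$ and $Rp_*$ (so that, since $Rp_!(M_{\chi_1})=Rp_*(M_{\chi_1})$, one gets $N_\chi=Rp_!(M_{\chi_1\cdot(\chi\circ p)})=Rp_*(M_{\chi_1\cdot(\chi\circ p)})$), the identification $p^*\mcL_\chi\simeq\mcL_{\chi\circ p}$ valid for the homomorphism $p$, and $\mcL_{\chi_1}\otimes\mcL_{\chi\circ p}\simeq\mcL_{\chi_1\cdot(\chi\circ p)}$. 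Applying $R\Gamma_c(G_{2,\bar k},-)$ and using the identity $R\Gamma_c(G_{2,\bar k},-)\circ Rp_! = R\Gamma_c(G_{1,\bar k},-)$ (composition of lower-shriek pushforwards to a point), I obtain for every $i$ a $\Fr_{k_n}$-equivariant isomorphism
\[
H^i_c(G_{2,\bar k}, N_\chi)\;\simeq\; H^i_c\bigl(G_{1,\bar k}, M_{\chi_1\cdot(\chi\circ p)}\bigr).
\]
Now I invoke the two weakly-unramified hypotheses: since $\chi\in\wunram{N}(k_n)$ and $\chi_1\cdot(\chi\circ p)\in\wunram{M}(k_n)$, both sides vanish for $i\neq 0$ and agree with ordinary cohomology in degree $0$, and by purity (of $N$ from part (1), of $M$ by hypothesis) these degree-$0$ spaces are pure of weight zero and of the same dimension $r$. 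Hence the degree-$0$ isomorphism conjugates $\Fr_{M,k_n}(\chi_1\cdot(\chi\circ p))$ to $\Fr_{N,k_n}(\chi)$, so these endomorphisms have the same characteristic polynomial; passing to semisimple parts and into $\Un_r(\Cc)$ (legitimate since they are pure of weight zero) preserves the characteristic polynomial, giving $\det(1-T\Theta_{M,k_n}(\chi_1\cdot(\chi\circ p)))=\det(1-T\Theta_{N,k_n}(\chi))$, and comparing leading coefficients yields the determinant identity.

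The only real obstacle is notational rather than conceptual: one must keep straight the various incarnations of $\chi_1$ over $k_n$ versus $k$ (through $N_{k_n/k}$, as in Section~\ref{sec:character-groups-Lang-torsor}) and verify that each isomorphism above respects the $k_n$-rational structures, so that the equivariance is genuine; the geometric content is light and amounts to the projection formula together with Weil II.
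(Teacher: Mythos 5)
Your proof is correct, but it proceeds differently from the paper's. For part (2) the paper never constructs a sheaf-level isomorphism: it works purely with trace functions, observing that the exponential sums $S_n=\sum_{x\in G_1(k_n)}t_M(x;k_n)(\chi_1\cdot(\chi\circ p))(N_{k_n/k}(x))$ and $S'_n=\sum_{y\in G_2(k_n)}t_N(y;k_n)\chi(N_{k_n/k}(y))$ agree for every $n$ by the Grothendieck--Lefschetz trace formula (since $t_N$ is the fibrewise sum of $t_{M_{\chi_1}}$), so the generating series $\exp(\sum_n S_nT^n/n)$ coincide, and by Lemma~\ref{lm-trace}\,(1) these series are exactly the reversed characteristic polynomials of $\Theta_{M}(\chi_1\cdot(\chi\circ p))$ and $\Theta_{N}(\chi)$ for weakly unramified characters. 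You instead prove the stronger statement that $H^0_c(G_{2,\bar k},N_\chi)$ and $H^0_c(G_{1,\bar k},M_{\chi_1\cdot(\chi\circ p)})$ are isomorphic as Frobenius modules, via the projection formula and $R\Gamma_c\circ Rp_!=R\Gamma_c$; this buys you an actual isomorphism rather than an equality of characteristic polynomials, at the cost of having to check that the canonical isomorphisms are defined over $k_n$ and compatible with the various normalizations of $\chi_1$ (the point you flag at the end), which the trace-function route sidesteps entirely. Your part (1), the squeeze of $N=Rp_!(M_{\chi_1})=Rp_*(M_{\chi_1})$ between weights $\leq 0$ and $\geq 0$ via Weil~II and duality, is the standard argument (the paper's proof does not spell this part out, but the same device is used elsewhere in the text, e.g.\ in the proof of Theorem~\ref{thm-Harith-fib-funct}), and it is fine.
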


\begin{proof}
  It suffices to consider the case where $\chi\in\charg{G}(k)$.  For
  any $n\geq 1$, the exponential sums
  \begin{align*}
    S_n&=\sum_{x\in G_1(k_n)}
    t_{M}(x;k_n)(\chi_1\cdot (\chi\circ p))(N_{k_n/k}(x))\\
    S'_n&= \sum_{y\in G_2(k_n)} t_{N}(y;k_n)\chi(N_{k_n/k}(y))
  \end{align*}
  are equal by the trace formula. Hence, the corresponding $L$-functions
  $$
  \exp\Bigl(\sum_{n\geq 1}
  S_n\frac{T^n}{n}
  \Bigr),\quad\quad
  \exp\Bigl(\sum_{n\geq 1}
  S'_n\frac{T^n}{n}
  \Bigr)
  $$
  are also equal.  But these
  $L$-functions coincide with the (reversed) characteristic polynomials
  of the conjugacy classes $\Theta_{M,k}(\chi_1\cdot (\chi\circ
  p))$ and
  $\Theta_{N,k}(\chi)$, by Lemma~\ref{lm-trace} (1), hence the result.
\end{proof}

\begin{remark}
  If the morphism $p\colon G_1\to G_2$ is affine, then the condition
  $Rp_!(M_{\chi_1})=Rp_*(M_{\chi_1})$ implies that $N$ is perverse.
\end{remark}

We will give an application when the group $G_2$ is the multiplicative
group. For this we need a lemma.

\begin{lemma}\label{lm-gm-det}
  Let~$N$ be a simple perverse sheaf on~$\Gg_m$ over~$k$ which is an
  object of the category~$\Ppintarith(\Gg_m)$. Assume that~$N$ is pure
  of weight~$0$ and of tannakian dimension~$1$. Suppose that there exists
  an integer~$d\geq 1$ and a finite set~$\mcY\subset\charg{\Gg}_m$
  such that for all~$n\geq 1$ and
  for~$\chi\in \charg{\Gg}_m(k_n)\setminus \mcY(k_n)$, the
  determinant~$\det(\Thetaf_{N,k_n})^d$ depends only on~$n$. Then~$N$
  is geometrically of finite order.
\end{lemma}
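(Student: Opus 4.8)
The plan is to replace $N$ by its $d$-th convolution power, for which the hypothesis becomes the assertion that a discrete Mellin transform is essentially constant, and then to confront this with the quasi-orthogonality form of Deligne's Riemann Hypothesis (Theorem~\ref{th-rh}). First I would reduce to the case where $N$ is geometrically simple. By Lemma~\ref{lem-arith-ss-geo} there is a finite extension $k'/k$ over which $N_{k'}$ is geometrically semisimple, and since on $\Gg_m$ the tannakian dimension equals the Euler--Poincar\'e characteristic (Proposition~\ref{pr-chi}), which is insensitive to base change, $N_{k'}$ is a direct sum of geometrically simple perverse sheaves of total dimension one, hence is geometrically simple. The hypothesis passes to $k'$ (replace $\mcY$ by its — still finite — image and $c_n$ by $c_{[k':k]n}$), while neither $\ggeo{N}$ nor the conclusion ``$N$ is geometrically of finite order'' is affected; so I assume henceforth that $N$ is geometrically simple.

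Next I would set $P = N^{*_{\intt} d}$, the object of $\braket{N}^{\arith}$ attached to the $d$-th tensor power of the standard $1$-dimensional representation of $\garith{N}$. Then $P$ is a geometrically simple perverse sheaf on $\Gg_m$ (an invertible object of the semisimple category $\braket{N}^{\geo}$ is simple, and having no negligible subquotient it is then simple as a perverse sheaf, being geometrically semisimple by purity), pure of weight zero (convolution preserves purity of weight zero, Theorem~\ref{thm-Harith-fib-funct}) and of tannakian dimension one. For $\chi$ in a generic set of characters one has, by the K\"unneth isomorphism of Lemma~\ref{lem-prop-conv} together with the description of $*_{\intt}$ in Remark~\ref{rem-Mint} — exactly as in the proof of Proposition~\ref{pr-frob-unram} — a Frobenius-equivariant isomorphism $\rmH^0_c(\Gg_{m,\bar k}, P_\chi)\simeq \rmH^0_c(\Gg_{m,\bar k}, N_\chi)^{\otimes d}$, so that $S(P,\chi)=S(N,\chi)^d=\Tr(\Theta_{N,k_n}(\chi))^d$ for weakly unramified $\chi$; for unramified $\chi$ (a cofinite subset of $\charg{\Gg}_m$ by Theorem~\ref{th-polite}, using that on $\Gg_m$ a finite union of \tacs\ is finite) this equals $\Thetaf_{N,k_n}(\chi)^d=\det(\Thetaf_{N,k_n}(\chi))^d$. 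Combining this with the hypothesis, $S(P,\chi)=c_n$ for every $\chi\in\charg{\Gg}_m(k_n)$ outside an exceptional set $B_n$; and since $\dim\Gg_m=1$, every generic subset of $\charg{\Gg}_m$ has complement of bounded cardinality in each $\charg{\Gg}_m(k_n)$ and $\mcY$ is finite, so $|B_n|\le C$ with $C$ independent of $n$, whereas $|\charg{\Gg}_m(k_n)|\ge(|k|-1)^n$.

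It then remains to show $\ggeo{P}$ is finite: indeed $P^{*_{\intt} m}\simeq\un$ geometrically for some $m\ge1$ would give $N^{*_{\intt} dm}\simeq\un$ geometrically, i.e. $N$ geometrically of finite order. Since $\ggeo{P}\subset\GL_1=\Gg_m$, if it were infinite we would have $\ggeo{P}=\Gg_m$, so that $P$ and $\un$ are geometrically simple, pure of weight zero and not geometrically isomorphic. Using that $\Gg_m$ is affine of dimension one (so $\rmH^i_c(\Gg_{m,\bar k},P_\chi)$ vanishes outside $i\in\{-1,0\}$ and the surviving eigenvalues have modulus $\le1$ by purity), together with Lemma~\ref{lm-sums-betti} and the uniform complexity bound for character sheaves (Proposition~\ref{prop-cond-char}), one gets $|S(P,\chi)|\le C'$ uniformly in $\chi$ and $n$; hence the Plancherel formula gives
\[
\sum_{x\in\Gg_m(k_n)}|t_P(x;k_n)|^2=\frac{1}{|\Gg_m(k_n)|}\sum_{\chi}|S(P,\chi)|^2=|c_n|^2+O(|k|^{-n}),
\]
and, since $S(\un,\chi)=1$ for all $\chi$, the polarized Plancherel formula gives
\[
t_P(1;k_n)=\sum_{x\in\Gg_m(k_n)}t_P(x;k_n)\overline{t_\un(x;k_n)}=\frac{1}{|\Gg_m(k_n)|}\sum_{\chi}S(P,\chi)=c_n+O(|k|^{-n}).
\]
On the other hand, Theorem~\ref{th-rh} applied over $k_n$ to the pairs $(P,P)$ (geometrically isomorphic) and $(P,\un)$ (not) yields $\sum_x|t_P(x;k_n)|^2=1+O(|k|^{-n/2})$ and $t_P(1;k_n)=O(|k|^{-n/2})$. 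Comparing, $|c_n|^2\to1$ and $c_n\to0$, a contradiction. Therefore $\ggeo{P}$ is finite, which completes the proof.

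The main technical point — and the reason the statement is restricted to one-dimensional tori — is keeping all the exceptional sets of characters uniformly bounded across the extensions $k_n$, so that they are negligible against $|\charg{\Gg}_m(k_n)|$; this rests on the genericity results of the previous sections and on $\dim\Gg_m=1$. A secondary point is that one must ensure $P$ (and $\un$) are geometrically simple so that Theorem~\ref{th-rh} applies, which is why the reduction to geometrically simple $N$ is carried out first.
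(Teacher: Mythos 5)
Your argument is correct in substance, but it takes a genuinely different route from the paper. The paper's proof is a two-line deduction from Katz's work on $\Gg_m$: by the classification of objects of tannakian dimension one (\cite[Th.\,8.5.3]{esde}), an $N$ that is not geometrically of finite order is a (translated) hypergeometric sheaf of positive generic rank, and for such objects Katz's equidistribution theorem (\cite[Th.\,7.2, Lemma\,20.1]{mellin}) shows that the classes $\Thetaf_{N,k_n}(\chi)$ equidistribute on the unit circle as $\chi$ varies over $\charg{\Gg}_m(k_n)$, so they cannot be essentially constant. You instead avoid the classification entirely: you pass to $P=N^{*_{\intt}d}$, observe that the hypothesis forces the Mellin transform of $P$ to be essentially constant, and derive a contradiction with the quasi-orthogonality form of the Riemann Hypothesis (Theorem~\ref{th-rh}) applied to the pairs $(P,P)$ and $(P,\un)$ via Plancherel. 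Your reductions (geometric simplicity of $N$ and of $P$, the Künneth identification $S(P,\chi)=S(N,\chi)^d$ on a generic set as in Proposition~\ref{pr-frob-unram}, the uniform boundedness of exceptional sets because $\dim\Gg_m=1$) are all sound; in fact the base change to $k'$ is not even needed, since the Euler characteristic argument you use for $P$ already shows $N_{\bar k}$ is simple. The trade-off: the paper's proof is shorter but leans on external machinery specific to $\Gg_m$, while yours uses only tools established in the paper and would adapt to situations where no classification of dimension-one objects is available, at the cost of more bookkeeping.

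One small repair is needed in the justification of the uniform bound $|S(P,\chi)|\le C'$. For compact-support cohomology of a perverse sheaf on the affine curve $\Gg_m$, Artin vanishing puts $\rmH^i_c$ in degrees $\{0,1\}$, not $\{-1,0\}$ (that range is for ordinary cohomology), and purity only bounds the eigenvalues on $\rmH^1_c$ by $|k_n|^{1/2}$, not by $1$; so as stated your parenthetical does not give the bound at the finitely many special characters, and if $|S(P,\chi)|$ could be of size $|k_n|^{1/2}$ there, the exceptional set would contribute $O(1)$ rather than $o(1)$ to the normalized Plancherel sums. The fix is immediate: since $P$ is geometrically simple and not a shifted Kummer sheaf (it lies in $\Ppintarith(\Gg_m)$), one has $\rmH^1_c(\Gg_{m,\bar k},P_\chi)=0$ for \emph{every} $\chi$ (a nonzero group would exhibit $\mcL_{\chi^{-1}}$ as a quotient of the monodromy of the irreducible middle extension underlying $P$, forcing $P$ to be a Kummer object); equivalently, Theorem~\ref{th-stratified} with $d=1$ gives $\mcS_1=\emptyset$ for objects of $\Ppintarith(\Gg_m)$ and hence $S(P,\chi)\ll c_u(P)$ uniformly in $\chi$ and $n$. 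With that substitution the rest of your contradiction ($|c_n|\to 1$ and $c_n\to 0$) goes through.
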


\begin{proof}
  If~$N$ is not geometrically of finite order, then the perverse
  sheaf~$N$ is a hypergeometric sheaf of generic rank at least~$1$ (see
  Section~\ref{sec-hypergeometric} and Theorem~\ref{th-hypergeometric}
  for reminders of the definition of hypergeometric sheaves and for this
  result, due to Katz).
  But these hypergeometric sheaves do not have the indicated property,
  e.g. because the $\Thetaf_{N,k_n}(\chi)$ become equidistributed
  in~$\mathbf{S}^1$ as~$\chi$ varies among unramified characters
  in~$\charg{\Gg}_m(k_n)$ (see Theorem~\ref{th-hypergeometric}, (3)
  and~\cite[Th.\,7.2]{mellin} or Theorem~\ref{th-2}).
\end{proof}

\begin{proposition}\label{pr-determinant-tori}
  Let $G$ be a connected commutative algebraic group over~$k$ and let
  $p\colon G\to \Gg_m$ be a non-trivial morphism of algebraic groups.
  Let~$M$ be a perverse sheaf on $G$ which is arithmetically
  semisimple and pure of weight zero.

  Let~$\chi_1\in\charg{G}(k)$ be a character such that the equality
  $Rp_!(M_{\chi_1})=Rp_*(M_{\chi_1})$ holds.  Assume further that the
  complex~\hbox{$N=Rp_!(M_{\chi_1})$} is a perverse sheaf on~$\Gg_m$
  and is arithmetically semisimple. It is then pure of weight zero.

  \par
  Suppose that the set of $\chi\in\charg{\Gg}_m$ such that
  $\chi_1(\chi\circ p)$ is unramified for the object $\det(M)$ is
  generic, and that the tannakian determinant of~$N$ is arithmetically
  \respup{geometrically} of infinite order. Then the tannakian
  determinant of~$M$ is arithmetically \respup{geometrically} of
  infinite order.

\end{proposition}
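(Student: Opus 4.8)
The plan is to prove the contrapositive: assuming that the tannakian determinant $\det(M)$ is arithmetically (resp.\ geometrically) of finite order, I would deduce that $\det(N)$ is arithmetically (resp.\ geometrically) of finite order. The purity of $N$ asserted in the statement is exactly Proposition~\ref{pr-specialize}\,(1) applied to $p$, $M$, $\chi_1$ and $N=Rp_!(M_{\chi_1})=Rp_*(M_{\chi_1})$, so only the statement about the determinant needs an argument. Write $r$ for the tannakian dimension of $M$, so that $\det(M)$ corresponds to the determinant representation $\det$ of $\GL_r$ restricted to $\garith{M}$.

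First I would record what finite order of $\det(M)$ means for Frobenius conjugacy classes. If $\det(M)^{*d}$ is arithmetically trivial for some $d\ge 1$ it is isomorphic to $\un$; if it is merely geometrically trivial it is isomorphic to $\alpha^{\deg}\otimes\un$ for some $\alpha\in\Qlb$ of absolute value one, since $\det(M)$ is pure of weight zero (Theorem~\ref{thm-Harith-fib-funct}), by the structure of geometrically trivial objects used in Proposition~\ref{pr:geom-vs-arith2}. By Proposition~\ref{pr-frob-unram} the set $\funram{\det}$ of Frobenius-unramified characters for the determinant representation (with respect to $M$) is generic, and for $\chi\in\funram{\det}(k_n)$ the scalar $\det(\Theta_{M,k_n}(\chi))$ is, by definition, the eigenvalue of $\frob_{k_n}$ on the one-dimensional space $\rmH^0_c(G_{\bar k},(\det M)_\chi)$. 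Running the cone argument from the proof of Proposition~\ref{pr-frob-unram} once more with $M^{*_!^d}$ in place of $M_{m,l}$, and using the Künneth isomorphisms of Lemma~\ref{lem-prop-conv}, one finds a generic subset of $\charg{G}$ on which this eigenvalue raised to the power $d$ equals the value of the trace function of $\det(M)^{*d}$ at the neutral element over $k_n$, namely $1$ (resp.\ $\alpha^n$). In other words, $\det(\Theta_{M,k_n}(\chi))^d$ is $1$ (resp.\ $\alpha^n$), hence independent of $\chi$, for $\chi$ in a generic subset of $\charg{G}$.

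Next I would transport this statement down along $p$. By hypothesis the set of $\chi'\in\charg{\Gg}_m$ with $\chi_1\cdot(\chi'\circ p)$ unramified for $\det(M)$ is generic; intersecting it with $\wunram{N}$ (generic by Theorem~\ref{th-generic-unramified}) and with the set of characters Frobenius-unramified for the determinant representation with respect to $N$ (generic by Proposition~\ref{pr-frob-unram}), and using that a generic subset of $\charg{\Gg}_m$ meets each $\charg{\Gg}_m(k_n)$ in all but boundedly many elements, I obtain a finite set $\mcY\subset\charg{\Gg}_m$ outside of which all the relevant hypotheses hold simultaneously. For $\chi'\notin\mcY$, Proposition~\ref{pr-specialize}\,(2) gives $\det(\Theta_{N,k_n}(\chi'))=\det(\Theta_{M,k_n}(\chi_1\cdot(\chi'\circ p)))$, while Frobenius-unramifiedness identifies the left-hand side with the one-dimensional unitary Frobenius class $\Thetaf_{\det(N),k_n}(\chi')$. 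Combining with the previous paragraph, $\Thetaf_{\det(N),k_n}(\chi')^d$ depends only on $n$ for all $\chi'\in\charg{\Gg}_m\setminus\mcY$. Since $\det(N)$ is a simple perverse sheaf on $\Gg_m$ lying in $\Ppintarith(\Gg_m)$, pure of weight zero and of tannakian dimension one, Lemma~\ref{lm-gm-det} applies and shows that $\det(N)$ is geometrically of finite order; this already settles the geometric case. For the arithmetic case I would bootstrap: writing $\det(N)^{*e}\simeq\beta^{\deg}\otimes\un$ one gets $\Thetaf_{\det(N),k_n}(\chi')^e=\beta^n$, and in the arithmetic case also $\Thetaf_{\det(N),k_n}(\chi')^d=1$ for $\chi'$ outside a finite set, whence $\beta^{nd}=1$ for all $n$; therefore $\beta$ is a root of unity and $\det(N)^{*ed}\simeq\un$, i.e.\ $\det(N)$ is arithmetically of finite order.

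The main obstacle I expect is not a single deep step but the bookkeeping: relating the one-dimensional tannakian objects $\det(M)$ and $\det(N)$ to the determinants of the Frobenius conjugacy classes $\Theta_{M,k_n}$ and $\Theta_{N,k_n}$ through \emph{Frobenius}-unramified characters (genuinely unramified characters for $M$ being possibly unavailable), and arranging the intersection of the several generic sets — those from Proposition~\ref{pr-frob-unram}, from the hypothesis on $p$, from Theorem~\ref{th-generic-unramified}, and the cone sets appearing in the proof of Proposition~\ref{pr-frob-unram} — to remain large enough after pulling back to $\charg{\Gg}_m$ for Lemma~\ref{lm-gm-det} to be applicable. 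Once these identifications are set up, the argument is a routine combination of Propositions~\ref{pr-specialize} and~\ref{pr-frob-unram} with Lemma~\ref{lm-gm-det}.
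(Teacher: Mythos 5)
Your proof is correct and is essentially the paper's argument read in contrapositive form: both rest on the determinant identity of Proposition~\ref{pr-specialize}\,(2), the genericity statements behind Proposition~\ref{pr-frob-unram} / Proposition~\ref{pr:geom-vs-arith2}\,(2) (which give the constancy of $\det(\Theta_{M,k_n}(\chi))^d$ that you re-derive by hand via the cone/K\"unneth argument), and Lemma~\ref{lm-gm-det} on $\Gg_m$. The only organisational difference is the arithmetic case, which the paper treats directly (the identity forces $\garith{\det(M)}$ to be infinite once $\det(N)$ has infinite arithmetic order), whereas you deduce it from the geometric case by the root-of-unity bootstrap; this works, and your bookkeeping of generic sets is at the same level of precision as the paper's own proof (in particular, both arguments tacitly need $\chi_1(\chi'\circ p)$ to be weakly unramified for~$M$ when invoking Proposition~\ref{pr-specialize}\,(2), and both treat the complement of a generic set in $\charg{\Gg}_m$ as "finite" when it is really only bounded in each $\charg{\Gg}_m(k_n)$ — a harmless gloss since the proof of Lemma~\ref{lm-gm-det} is insensitive to it).
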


\begin{proof}
  We begin by proving that the determinant is arithmetically of infinite
  order in both cases.  Let $n\geq 1$ and let
  $\chi\in\charg{\Gg}_m(k_n)$ be a character such that
  $\chi_1(\chi\circ p)$ is unramified for the object $\det(M)$. We then
  have
  \begin{equation}\label{eq-det-special}
    \Thetaf_{\det(M),k_n}(\chi)=\det\big(\Theta_{M,k_n}(\chi_1(\chi\circ
    p))\big)=\det\big(\Theta_{N,k_n}(\chi)\big)
  \end{equation}
  by Proposition~\ref{pr-specialize}. By assumption this is valid for
  all but finitely many $\chi\in \charg{\Gg}_m$, and moreover $N$ has
  determinant which is arithmetically of infinite order, so that the
  arithmetic tannakian group of $\det(M)$ must be infinite.

  

  It remains to deduce that the geometric tannakian determinant of~$M$
  has infinite order if the same property holds for~$N$. If not, then
  $\det(M)^d$ would be geometrically trivial for some integer
  $d\geq 1$. In this case, for any $n\geq 1$ and any character
  $\chi\in\charg{G}(k_n)$ which is Frobenius-unramified for~$\det$,
  the determinant $\det(\Theta_{M,k_n}(\chi))^d$ only depends on~$n$
  (see Proposition~\ref{pr:geom-vs-arith2},
  (2)). By~(\ref{eq-det-special}) and Lemma~\ref{lm-gm-det}, the
  tannakian determinant of~$N$ (which is an object of tannakian
  dimension~$1$ on~$\Gg_m$) is geometrically of finite order, which
  contradicts the assumption.
\end{proof}

\begin{remark}\label{rm-application}
  If $G=T\times \Gg_m$ for some torus $T$ and $p$ is the projection on
  $\Gg_m$ then, according to Theorem~\ref{thm-tori-vanishing-rel}
  applied to~$p$ and $M$, the assumption that
  $Rp_!M_{\chi_1}=Rp_*M_{\chi_1}$ and that this complex is a perverse
  sheaf is true for all $\chi_1$ outside of a finite union of \tacs\
  of~$T$.  Moreover, by varying $\chi_1$, we can always find such a
  character for which $\chi_1(\chi\circ p)$ is unramified for generic
  $\chi$, since $M$ is generically unramified by
  Theorem~\ref{th-polite}.
\end{remark}

Using further work of Katz, we can give a sufficient criterion to apply
this proposition.

\begin{corollary}\label{cor-simpler}
  Let $G$ be a connected commutative algebraic group over~$k$ and let
  $p\colon G\to \Gg_m$ be a non-trivial morphism of algebraic groups.
  Let~$M$ be a perverse sheaf on $G$ which is arithmetically semisimple
  and pure of weight zero.
  \par
  Let~$\chi_1\in\charg{G}(k)$ be a character satisfying
  $Rp_!(M_{\chi_1})=Rp_*(M_{\chi_1})$.  Assume that~$N=Rp_!(M_{\chi_1})$ is a perverse sheaf on~$\Gg_m$, which is
  arithmetically semisimple and of the form~$\sheaf{F}[1]$ for some
  middle extension sheaf~$\mcF$ (see Example~\ref{ex-middle-ext} for the
  definition of middle extension sheaves).\index{middle extension sheaf} Let
  $$
  (e_1,\ldots, e_l),\quad\quad (f_1,\ldots, f_{m})
  $$
  be the sizes of the unipotent Jordan blocks\index{unipotent Jordan blocks}
  in the tame monodromy\index{tame monodromy representation}
  representation of~$\sheaf{F}$ at~$0$ and~$\infty$ respectively.
  \par
  Suppose that the set of $\chi\in\charg{\Gg}_m$ such that
  $\chi_1(\chi\circ p)$ is unramified for the object $\det(M)$ is
  generic.
  \par
  If we have
  $$
  \sum_{i}e_i-\sum_jf_j\not=0,
  $$
  then the tannakian determinant of~$M$ is geometrically of infinite
  order.
\end{corollary}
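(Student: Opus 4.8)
The plan is to reduce the statement to Proposition~\ref{pr-determinant-tori}. All of its hypotheses are among those of the corollary: $p\colon G\to\Gg_m$ is non-trivial, $M$ is arithmetically semisimple and pure of weight zero, $\chi_1$ satisfies $Rp_!(M_{\chi_1})=Rp_*(M_{\chi_1})$, the object $N=Rp_!(M_{\chi_1})$ is a perverse sheaf on $\Gg_m$ which is arithmetically semisimple, and the set of $\chi\in\charg{\Gg}_m$ for which $\chi_1(\chi\circ p)$ is unramified for $\det(M)$ is generic. By that proposition, $N$ is pure of weight zero, and it then suffices to prove that the tannakian determinant $\det(N)$ is geometrically of infinite order; Proposition~\ref{pr-determinant-tori} will then give that $\det(M)$ is geometrically of infinite order, which is the assertion.

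So I would focus on $N=\mcF[1]$, whose tannakian determinant is a rank-one object of $\Ppintarith(\Gg_m)$ that is geometrically simple and, by the above, pure of weight zero. First I would recall, following Katz's classification of objects of the Mellin tannakian category on~$\Gg_m$ \cite[Th.\,8.5.3]{esde} (see also \cite{mellin}), that a rank-one object of $\Ppintarith(\Gg_m)$ is geometrically of finite order precisely when it is geometrically punctual (a skyscraper sheaf): an object of tannakian dimension one that is not punctual is geometrically a multiplicative translate of a hypergeometric sheaf of generic rank one, which is wildly ramified at $0$ or at $\infty$ and hence of infinite geometric order (compare the proof of Lemma~\ref{lm-gm-det}). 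Thus it is enough to show that $\det(N)$ is \emph{not} geometrically punctual.

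This is where the hypothesis $\sum_i e_i\neq\sum_j f_j$ enters, through Katz's computation of the geometric determinant in \cite[Ch.\,16]{mellin}. That computation expresses the local monodromy of $\det(N)$ at $0$ and at $\infty$ in terms of the local monodromy of $\mcF$ at $0$ and at $\infty$; in this description the tame unipotent blocks enter at $0$ (resp.\ at $\infty$) through the sizes $(e_i)$ (resp.\ $(f_j)$), while the wild parts contribute only to the tannakian dimension $-\chi(\Gg_{m,\bar k},\mcF)$ of $N$ and cancel in the relevant comparison. One reads off that $\det(N)$ can be geometrically punctual only if the contributions at $0$ and at $\infty$ balance, i.e.\ only if $\sum_i e_i=\sum_j f_j$. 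Since we assume this fails, $\det(N)$ is not punctual, hence geometrically of infinite order, and the corollary follows.

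The main obstacle is this last step: locating the precise form of the geometric-determinant formula in \cite[Ch.\,16]{mellin} and verifying the dictionary between the tame-block data $(e_i)$, $(f_j)$ of $\mcF$ and the failure of $\det(N)$ to be punctual — in particular checking that the relevant local invariant of $\det(N)$ is controlled exactly by the asymmetry $\sum_i e_i-\sum_j f_j$ rather than by the Swan conductors alone, and keeping careful track of the passage through the tannakian dimension of $N$. Should that dictionary prove awkward to assemble, an alternative would be to argue directly via equidistribution of the unitary Frobenius classes $\Thetaf_{N,k_n}$, exactly as in the proof of Lemma~\ref{lm-gm-det}, but the route through \cite[Ch.\,16]{mellin} is the one indicated by the statement of the corollary.
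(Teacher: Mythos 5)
Your route is essentially the paper's: reduce to Proposition~\ref{pr-determinant-tori} (whose hypotheses are exactly those of the corollary), and then show that the tannakian determinant of $N=\mcF[1]$ on $\Gg_m$ is geometrically of infinite order by appealing to Katz~\cite[Ch.\,16]{mellin}. The ``dictionary'' you leave as the main obstacle is supplied directly by \cite[Th.\,16.1]{mellin}, but note that it is a \emph{weight} statement, not a statement about the local monodromy of $\det(N)$ at $0$ and $\infty$: when $\sum_i e_i-\sum_j f_j\neq 0$, the determinant of Frobenius on Deligne's fiber functor is non-unitary (its absolute value is governed exactly by this difference, the wild parts being irrelevant), and then the classification of tannakian dimension-one objects \cite[Th.\,16.3]{mellin} rules out finite geometric order, since a geometrically finite-order determinant is punctual and pure of weight zero, hence has unitary Frobenius determinant. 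This is precisely the paper's two-line proof.

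One subsidiary justification in your write-up is incorrect, though the statement it supports is true: a non-punctual object of tannakian dimension one on $\Gg_{m,\bar k}$ is a multiplicative translate of a hypergeometric $\Hyp(!,\psi,\chi;\rho)$ of some type $(n,m)$, whose generic rank is $\max(n,m)$ (not necessarily one), and for $n=m$ it is \emph{tame} at both $0$ and $\infty$; so wild ramification is not why finite geometric order fails. The fact you actually need --- that a finite geometric tannakian group on $\Gg_m$ forces punctuality --- is Katz's \cite[Th.\,6.4]{mellin}, which is also what underlies Lemma~\ref{lm-gm-det}; with that citation in place of the wildness argument, and with \cite[Th.\,16.1, Th.\,16.3]{mellin} in place of your local-monodromy reading of Chapter 16, your proof closes and coincides with the paper's.
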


\begin{proof}
  According to the previous proposition, it suffices to show that the
  tannakian determinant of~$N$ is geometrically of infinite order.
  By~\cite[Th.\,16.1]{mellin}, the condition implies that the
  determinant of the Frobenius action on Deligne's fiber
  functor\index{Deligne's fiber functor}
  $\omega_{\mathrm{Del}}(N)$ is not unitary (see
  Section~\ref{sec-deligne-fiber-functor} for the definition of this
  functor), and the result follows from Katz's classification of objects
  of tannakian dimension~$1$ on~$\Gg_m$
  (Theorem~\ref{th-hypergeometric}).
\end{proof}

\section{External products}

The following proposition concerns objects on a product
$G=G_1\times G_2$, and is useful for constructing various examples (see
for instance Section~\ref{ssec-neg-one}).

\begin{proposition}\label{pr-product}
  Assume that~$G=G_1\times G_2$ for connected commutative algebraic
  groups~$G_1$ and~$G_2$. For any objects $M_i\in \Der(G_i)$, there
  exist natural isomorphisms
  $$
  M_1\boxtimes M_2\simeq (M_1\boxtimes \un_{G_2})*_{!}
  (\un_{G_1}\boxtimes M_2)\simeq
  (M_1\boxtimes \un_{G_2})*_{*}
  (\un_{G_1}\boxtimes M_2).
  $$

  Moreover, if $M_1$ and $M_2$ are perverse sheaves on~$G_1$ and~$G_2$
  with tannakian rank~$r_1$ and~$r_2$, respectively, then the object
  $M_1\boxtimes M_2\in\Der(G)$ is perverse and has tannakian
  rank~$r_1r_2$.
\end{proposition}

\begin{proof}
  We use coordinates $(x_1,x_1,y_1,y_2)$ on $G\times G$ with $x_i$ and
  $y_i$ coordinates on~$G_i$. Let $m_{12}\colon G\times G\to G$ be the
  multiplication map for~$G$, and $m_i\colon G_i\times G_i \to G_i$
  those for~$G_i$.

  Let~$N$ be the object on the right-hand side of the first isomorphism
  to be established. By definition, we have
  $$
  N=(M_1\boxtimes \un_{G_2})*_{!} (\un_{G_1}\boxtimes M_2)= m_{12,!}(
  (M_1\boxtimes \un_{G_2})\boxtimes (\un_{G_1}\boxtimes M_2)).
  $$

  Let $s\colon G\to G$ be the involution given by
  $(x_1,x_2,y_1,y_2)\mapsto (x_1,y_2,y_1,x_2)$. We have
  $m_{12}=m_{12}\circ s$, and hence
  $$
  N=m_{12,!}s_!  ((M_1\boxtimes \un_{G_2})\boxtimes (\un_{G_1}\boxtimes
  M_2)).
  $$

  Since $s$ is an involution, we have $s_!=s_*=s^*$, and therefore
  $$
  s_!  ((M_1\boxtimes \un_{G_2})\boxtimes (\un_{G_1}\boxtimes
  M_2))=p_{1}^*(M_1\boxtimes M_2)\otimes p_2^*(\un_G),
  $$
  where $p_1$, $p_2\colon G\times G\to G$ are the two projections.
  Thus, using the definition again, we obtain an isomorphism
  $$
  N\simeq m_{12!}(p_{1}^*(M_1\boxtimes M_2)\otimes
  p_2^*(\un_G))=(M_1\boxtimes M_2)*_! \un_{G},
  $$
  which is isomorphic to $M_1\boxtimes M_2$ since~$\un_G$ is the unit
  for convolution.

  We obtain similarly the second isomorphism
  $$
  M_1\boxtimes M_2\simeq (M_1\boxtimes \un_{G_2})*_{*}
  (\un_{G_1}\boxtimes M_2)\simeq
  (M_1\boxtimes \un_{G_2})*_{*}
  (\un_{G_1}\boxtimes M_2).
  $$

  It is classical that $M_1\boxtimes M_2$ is perverse if $M_1$ and $M_2$
  are, and the final assertion then results from the fact that
  $$
  H^*_c(G_{\bar{k}},M_1\boxtimes M_2)\simeq
  H^*_c(G_{1,\bar{k}},M_1)\otimes H^*_c(G_{2,\bar{k}},M_2),
  $$
  and the generic vanishing theorem.
\end{proof}

\begin{remark}
  Concretely, this proposition reflects the convolution formula
  $$
  f_1(x_1)f_2(x_2)= \sum_{(y_1,y_2)\in G(k_n)}
  f_1(y_1)\delta_2(y_2)\delta_1(x_1y_1^{-1})f_2(x_2y_2^{-1})
  $$
  for any functions $f_i\colon G_i(k_n)\to \Cc$, where the $\delta_i$ are
  Dirac masses at the unit element of~$G_i$. 
\end{remark}

\section{The rank~$1$ tannakian group}

Given the group~$G$ over~$k$, we can form the subcategory $\Ppb^1(G)$ of
$\Ppb(G)$ (\resp $\Ppintun(G)$ of $\Ppint(G)$) additively generated by
objects of tannakian rank~$1$. This is again a tannakian category, since
the convolution (\resp dual) of objects of rank~$1$ is of
rank~$1$. 
\nomenclature[$P$]{$\Ppb^1(G)$}{subtannakian category generated by rank~$1$ objects}
\nomenclature[$P$]{$\Ppintun(G)$}{subtannakian category generated by rank~$1$ objects}

\begin{proposition}\label{pr-rank1}
  Let~$\mathbf{L}(G)$ be the group of isomorphism classes of objects
  of~$\Ppint(G)$ of rank~$1$.
  \nomenclature[$L$]{$\mathbf{L}(G)$}{group of isomorphism classes of
    objects of rank~$1$}
  The tannakian group of~$\Ppb^1(G)$ and
  $\Ppintun(G)$ is the pro-algebraic subgroup of
  $$
  \GL_1^{\mathbf{L}(G)}
  $$
  defined by the equations
  $$
  \prod_{1\leq i\leq m}x_{L_i}^{n_i}=1
  $$
  for all integers~$m\geq 0$, all families $(L_i)_{1\leq i\leq m}$ of
  elements of $\mathbf{L}(G)$ and all families $(n_i)_{1\leq i\leq m}$
  of integers such that the object
  $$
  L_1^{*_{\intt} n_1}*_{\intt}\cdots*_{\intt}L_m^{*_{\intt} n_m}
  $$
  is isomorphic to~$\un_G$, or equivalently that the relation
  $$
  L_1^{n_1}\cdots L_m^{n_m}=1
  $$
  holds in the group~$\mathbf{L}(G)$.
\end{proposition}

\begin{proof}
  This amounts to proving that, for the object
  $$
  M=L_1\oplus \cdots \oplus L_m,
  $$
  the tannakian group~$\Gg$ is the subgroup of $\GL_1^m$ determined by
  the equations
  \begin{equation}\label{eq-relation-2}
    \prod_{1\leq i\leq m}x_{i}^{n_i}=1
  \end{equation}
    for all~$(n_1,\ldots,n_m)\in\Zz^m$ such that
  \begin{equation}\label{eq-relation-1}
    L_1^{n_1}\cdots L_m^{n_m}=1.
  \end{equation}
  
  Being a group of multiplicative type, $\Gg$ is
  characterized by its character group, and the character group
  of~$\Gg\subset \GL_1^m$ is $ \Zz^m/H$, where
  $$
  H=\{\chi\colon \GL_1^m\to\Gg_m\,\mid\, \Gg\subset \ker(\chi)\}.
  $$
A character~$\chi$ of~$\GL_1^m$ restricts to a character
  of~$\Gg$, so we can form the object $\chi(M)$
  in~$\Ppintun(G)$. Since the image $\chi(\Gg)$ of~$\Gg$ by~$\chi$ is the tannakian group of~$\chi(M)$, we then have
  $$
  \Gg\subset \ker(\chi)\text{ if and only if } \chi(M)\simeq \un_G. 
  $$
   On the other hand, there is a natural
  isomorphism
  $$
  \chi(M)\simeq L_1^{*_{\intt}
    n_1}*_{\intt}\cdots*_{\intt}L_m^{*_{\intt} n_m}
  $$
  (indeed, this holds when $\chi$ is the character
  $(x_1,\ldots,x_m)\mapsto x_i$, and then the general case follows by
  the compatibility with convolution and tensor product, which for
  characters is just the ordinary product), and therefore $H$ is the
  subgroup of~$\Zz^m$ formed by the tuples $(n_1,\ldots,n_m)$ which
  satisfy~(\ref{eq-relation-1}). This means that~$\Gg$ coincides with
  the subgroup of~$\GL_1^m$ determined by the
  equations~(\ref{eq-relation-2}), and concludes the proof.
\end{proof}

\begin{remark}
  This result is in fact valid, with the same proof, in any tannakian
  category. In particular, the analogue holds for the categories
  generated by objects of rank~$1$ in $\Ppintarith(G)$. Of course, as
  usual, the arithmetic tannakian group may be bigger than the geometric
  one.
\end{remark}

\begin{example}\label{ex-gl-rank-1}
  If~$G$ is a torus, then the group~$\mathbf{L}(T)$ has been
  determined by Gabber and Loeser~\cite[Th\,8.6.1]{GL_faisc-perv}, who
  denote it $\Hh_{\intt}(G)$.
  \nomenclature[$H$]{$\Hh_{\intt}(T)$}{hypergeometric group of Gabber and Loeser}

  Precisely, let~$r\geq 0$ be such that~$G$ is isomorphic
  to~$\Gg_m^r$. Let $\mathcal{S}$ be the set of one-dimensional
  subtori of~$\Gg_{m,\bar{k}}^r$. For each torus~$T\in\mathcal{S}$,
  denote by~$i_T$ the closed immersion~$T\to \Gg_{m,\bar{k}}^r$ and
  choose an isomorphism $\varphi_T\colon\Gg_{m,\bar{k}}\to T$. Recall
  that $\Pi(\Gg_{m,\bar{k}},\bQl)$ is the set of continuous tame
  characters of~$\Gg_{m,\bar{k}}$ (see
  Section~\ref{sec:Fourier-Mellin}), and write the basis vectors of
  the free abelian group
  $\Zz^{(\mathcal{S}\times \Pi(\Gg_{m,\bar{k}},\bQl))}$) as
  $(T, \chi)$. Then Gabber and Loeser prove that there is an
  isomorphism
  $$
  (\bar{k}^{\times})^r\times \Zz^{(\mathcal{S}\times
    \Pi(\Gg_{m,\bar{k}},\bQl))}\to \mathbf{L}(\Gg_{m,\bar{k}}^r)
  $$
  that maps $(\lambda, (T,\chi))$ to the object
  $$
  \delta_{\lambda}*_{\intt} R(i_T\circ
  \varphi_T)_*(j^*\mcL_{\psi}\otimes \mcL_{\chi})[1],
  $$
  where~$j\colon\Gg_{m,\bar{k}}\to\Aa^1$ is the open immersion.
\end{example}


\chapter{Equidistribution theorems}
\label{sec:equidis}


\section{Equidistribution on average}

Along with the classical form of equidistribution that goes back in
principle to Weyl and appears in Deligne's equidistribution
theorem,\index{Deligne's equidistribution theorem} we
will apply a useful variant that allows us to avoid the assumption that
the geometric and the arithmetic tannakian groups are equal, at the cost
of getting slightly weaker statements.

\begin{definition}\label{def-equid}
  Let $X$ be a locally compact topological space and let~$\mu$ be a
  Borel probability measure on~$X$. Let $(Y_n,\Theta_n)_{n\geq 1}$ be a
  sequence of pairs of finite sets $Y_n$ and maps
  $\Theta_n\colon Y_n\to X$.
  \par
  \begin{enumerate}
  \item We say that $(Y_n,\Theta_n)$, or simply $(Y_n)$ when the maps
    $\Theta_n$ are clear from the context, \emph{becomes
      $\mu$\nobreakdash-equidistributed on average as $n\to \infty$}
    \index{equidistribution on average} if
    the sets $Y_n$ are non-empty for all large enough~$n$ and if the
    sequence of probability measures
    $$
    \mu_N=\frac{1}{N'}\sum_{\substack{1\leq n\leq N\\Y_n\not=\emptyset}}
    \frac{1}{|Y_n|}\sum_{y\in Y_n}\delta_{\Theta_n(y)},\quad\quad
    N'=|\{n\leq N\,\mid\, Y_n\not=\emptyset\}|,
    $$
    defined on~$X$ for large enough $N$, converges weakly to~$\mu$ as~$N$
    goes to infinity, i.e., for any bounded continuous
    function~$f\colon X\to\Cc$, the following holds:
    \begin{equation}\label{eqn:equidis-average}
      \lim_{N\to+\infty} \frac{1}{N'}
      \sum_{\substack{1\leq n\leq N\\Y_n\not=\emptyset}}\frac{1}{|Y_n|}
      \sum_{y\in Y_n}f(\Theta_n(y))=\int_{X}fd\mu.
    \end{equation}
    \par
  \item The sequence $(Y_n,\Theta_n)$, or simply $(Y_n)$, \emph{becomes
      $\mu$\nobreakdash-equidistributed as $n\to \infty$} if the sets
    $Y_n$ are non-empty for all large enough~$n$ and if the sequence of
    probability measures
    $$
    \widetilde{\mu}_n=
    \frac{1}{|Y_n|}\sum_{y\in Y_n}\delta_{\Theta_n(y)},
    $$
    defined on~$X$ for large enough $n$, converges weakly to~$\mu$
    as~$n$ goes to infinity, i.e., for any bounded continuous
    function~$f\colon X\to\Cc$, the following holds:
    \begin{equation}\label{eqn:equidis}
      \lim_{n\to+\infty} 
      \frac{1}{|Y_n|}
      \sum_{y\in Y_n}f(\Theta_n(y))=\int_{X}fd\mu.
    \end{equation}
  \end{enumerate}
\end{definition}

\begin{remark}
  (1) In practice, since $N'\sim N$ as $N\to+\infty$, we will sometimes
  not distinguish between $N$ and~$N$', and use the convention that
  those terms for which $Y_n$ is empty are omitted from the sum over $n$
  when discussing equidistribution on average.
  \par
  (2) Since convergence of a sequence $(x_n)$ of complex numbers implies
  that of its Cesàro means $(N^{-1}\sum_{1 \leq n \leq N} x_n)$, with
  the same limit, equidistribution implies equidistribution on average.
\end{remark}

\section{The basic estimate}

We state here a preliminary estimate that will be the key analytic step in
the proof of our equidistribution results, including
Theorem~\ref{thm:equidis-thm-intro} from the introduction.

We denote as usual by $k$ a finite field with algebraic closure
$\bar{k}$, and by $k_n$ the extension of $k$ of degree~$n$
in~$\bar{k}$. We fix a prime $\ell$ distinct from the characteristic of~$k$.

\begin{proposition}\label{pr-weyl-sum-bis}
  Let $G$ be a commutative connected algebraic group over~$k$.  Let~$M$
  be an $\ell$-adic perverse sheaf on~$G$ that is arithmetically
  semisimple and pure of weight zero, and of tannakian dimension~$r$. Let~$N$
  be an object of $\braket{M}^{\arith}$.

  For all $n\geq 1$ such that $\funram{N}(k_n)$ is not empty, the following
  estimate holds: 
  \begin{equation}\label{eq-pr-weyl-sum-bis}
    \frac{1}{|\funram{N}(k_n)|}\sum_{\chi \in
      \funram{N}(k_n)}
    \Tr(\Fr_{k_n}\mid\rmH^0_c(G_{\bar k},N_\chi)) =
    t_{N}(e;k_n)+O(\abs{k_n}^{-1/2}). 
  \end{equation}
\end{proposition}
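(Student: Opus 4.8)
The idea is to follow exactly the heuristic sketched in Section~\ref{ssec-outline}, but now with the set $\funram{N}(k_n)$ playing the role of the set $\mathcal{Y}(k_n)$. First, observe that for $\chi\in\funram{N}(k_n)$ the character $\chi$ is weakly unramified for $N$, so by the definition of weakly unramified the only possibly non-zero cohomology group among the $\rmH^j_c(G_{\bar k},N_\chi)$ is the one with $j=0$, and moreover $\rmH^0_c(G_{\bar k},N_\chi)=\rmH^0(G_{\bar k},N_\chi)$. Hence the Grothendieck--Lefschetz trace formula applied to $N_\chi=N\otimes\mcL_\chi$ gives
\[
  \Tr(\Fr_{k_n}\mid\rmH^0_c(G_{\bar k},N_\chi))
  =\sum_{|j|\leq d}(-1)^j\Tr(\Fr_{k_n}\mid\rmH^j_c(G_{\bar k},N_\chi))
  =\sum_{x\in G(k_n)}t_N(x;k_n)\chi(x),
\]
for $\chi\in\funram{N}(k_n)$. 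Therefore the left-hand side of~\eqref{eq-pr-weyl-sum-bis} equals
\[
  \frac{1}{|\funram{N}(k_n)|}\sum_{\chi\in\funram{N}(k_n)}
  \sum_{x\in G(k_n)}t_N(x;k_n)\chi(x).
\]

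\textbf{Completing the sum over characters.} The plan is to add back the missing characters $\chi\in\charg{G}(k_n)\setminus\funram{N}(k_n)$. Writing $R(\chi)=\sum_{|j|\leq d}(-1)^j\Tr(\Fr_{k_n}\mid\rmH^j_c(G_{\bar k},N_\chi))$ for every $\chi\in\charg{G}(k_n)$, we have $R(\chi)=\sum_x t_N(x;k_n)\chi(x)$ by the trace formula for \emph{all} $\chi$; summing over all $\chi\in\charg{G}(k_n)$ and using orthogonality of characters of the finite abelian group $G(k_n)$ gives
\[
  \sum_{\chi\in\charg{G}(k_n)}R(\chi)=\sum_{x\in G(k_n)}t_N(x;k_n)\sum_{\chi}\chi(x)
  =|G(k_n)|\,t_N(e;k_n).
\]
Thus the left-hand side of~\eqref{eq-pr-weyl-sum-bis} can be written as
\[
  \frac{1}{|\funram{N}(k_n)|}\Bigl(|G(k_n)|\,t_N(e;k_n)-S_1-S_2\Bigr),
\]
where $S_1=\sum_{\chi\notin\funram{N}(k_n)}\Tr(\Fr_{k_n}\mid\rmH^0_c(G_{\bar k},N_\chi))$ and $S_2=\sum_{1\leq|j|\leq d}\sum_{\chi\notin\funram{N}(k_n)}(-1)^j\Tr(\Fr_{k_n}\mid\rmH^j_c(G_{\bar k},N_\chi))$. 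One also needs $|\funram{N}(k_n)|=|G(k_n)|+O(|k_n|^{d-1})$: this follows because $\funram{N}$ is generic by Proposition~\ref{pr-frob-unram} (applied to the representation $\rho$ of $\GL_r$ realizing $N$), together with the Lang--Weil estimate $|\charg{G}(k_n)|=|G(k_n)|=|k_n|^{d}+O(|k_n|^{d-1/2})$ from Section~\ref{sec:character-groups-Lang-torsor}. Combining, the main term $|G(k_n)|t_N(e;k_n)/|\funram{N}(k_n)|$ equals $t_N(e;k_n)+O(|k_n|^{-1/2})$ provided $t_N(e;k_n)$ is bounded, which holds since $N$ is pure of weight zero so $|t_N(e;k_n)|\leq \dim(N_e)$ with $\dim(N_e)$ bounded in terms of $N$.

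\textbf{Bounding $S_1$ and $S_2$: the main obstacle.} It remains to show $S_1,S_2\ll |k_n|^{d-1/2}$, which will then contribute $O(|k_n|^{-1/2})$ after division by $|\funram{N}(k_n)|\asymp|k_n|^d$. This is exactly the point flagged in the introduction as the key difficulty. For the individual traces, Deligne's Riemann Hypothesis (purity of $N$, hence of $N_\chi$ since $\mcL_\chi$ is pure of weight zero, combined with the weight bounds of~\cite[3.3.1]{D-WeilII}) gives
\[
  |\Tr(\Fr_{k_n}\mid\rmH^j_c(G_{\bar k},N_\chi))|\leq |k_n|^{(j-d)/2}\,h^j_c(G_{\bar k},N_\chi)\leq |k_n|^{(j-d)/2}\,c_u(N_\chi)
\]
using Lemma~\ref{lm-sums-betti}, and $c_u(N_\chi)\ll c_u(N)c_u(\mcL_\chi)\ll c_u(N)$ is bounded independently of $\chi$ by Theorem~\ref{thm-conductors}\ref{thm-conductors:item2} and Proposition~\ref{prop-cond-char}. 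For the number of characters, the Stratified Vanishing Theorem~\ref{thm-high-vanish} applied to $N$ shows that for each $j$ with $|j|\geq 1$ the set of $\chi\in\charg{G}(k_n)$ with $\rmH^j_c(G_{\bar k},N_\chi)\neq 0$ has size $\ll|k_n|^{d-|j|}$; for $S_1$ (the term $j=0$ over $\chi\notin\funram{N}(k_n)$) one uses instead that the complement of $\funram{N}$ is contained in a finite union of sets of character codimension $\geq 1$, so there are $\ll|k_n|^{d-1}$ such characters. Putting these together,
\[
  |S_2|\ll \sum_{1\leq|j|\leq d}|k_n|^{d-|j|}\cdot|k_n|^{(|j|-d)/2}\cdot c_u(N)\ll |k_n|^{d/2}\ll |k_n|^{d-1/2}
\]
for $d\geq 1$ (and the case $d=0$ is trivial since then all of $G$ is a point), while $|S_1|\ll |k_n|^{d-1}\cdot |k_n|^{-d/2}\cdot c_u(N)\ll|k_n|^{d/2-1}$, which is even smaller. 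The genuine subtlety here — and the reason the proof needs Sawin's machinery — is the uniform boundedness of $c_u(N_\chi)$ in $\chi$, i.e.\ that we have a bound on $h^j_c(G_{\bar k},N_\chi)$ independent of $\chi$; without that the sum over $\chi$ in $S_2$ could not be controlled. Everything else is formal manipulation with the trace formula and orthogonality of characters.
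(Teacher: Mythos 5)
Your proof follows the same route as the paper's: restrict to Frobenius-unramified (hence weakly unramified) characters so that only $\rmH^0_c$ survives, complete the character sum via the Grothendieck--Lefschetz trace formula and orthogonality to produce the main term $|G(k_n)|\,t_N(e;k_n)$, and control the two correction sums using Deligne's Riemann Hypothesis, the uniform bound $c_u(N_\chi)\ll c_u(N)$ coming from Proposition~\ref{prop-cond-char} and Theorem~\ref{thm-conductors}, and the stratified vanishing theorem; this is exactly the paper's argument, including the identification of the uniform Betti-number bound as the crucial input.

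One quantitative slip: since $N_\chi$ is mixed of weights $\leq 0$, Deligne's theorem gives that $\rmH^j_c(G_{\bar k},N_\chi)$ has weights $\leq j$, so the correct pointwise bound is $|\Tr(\Fr_{k_n}\mid \rmH^j_c(G_{\bar k},N_\chi))|\leq |k_n|^{j/2}\,h^j_c(G_{\bar k},N_\chi)$, not $|k_n|^{(j-d)/2}\,h^j_c$ as you wrote (that exponent, taken from the introduction's heuristic sketch, corresponds to a different normalization and is false here — e.g.\ for $j=0$ it would force the sums $S(N,\chi)$ to be $O(|k_n|^{-d/2})$, which is absurd). Consequently your intermediate claims $S_2\ll |k_n|^{d/2}$ and $S_1\ll |k_n|^{d/2-1}$ are too strong; with the corrected exponent the very same computation gives $S_2\ll \sum_{1\leq |j|\leq d}|k_n|^{d-|j|+|j|/2}\ll |k_n|^{d-1/2}$ and $S_1\ll |k_n|^{d-1}$, which is precisely what the paper obtains and still yields the stated error term $O(|k_n|^{-1/2})$ after dividing by $|\funram{N}(k_n)|\asymp |k_n|^d$. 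So the argument is sound once this exponent is fixed; nothing else needs to change.
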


\begin{proof}
  We fix a quasi-projective embedding~$u$ of~$G$.
  Let $d$ denote the dimension of $G$, and put $\mcX=\funram{N}$. For
  each non-zero integer $i$, consider the subset
  \[
    \mathcal{A}_{i}=\{\chi \in \charg{G} \mid \rmH^i_c(G_{\bar{k}},
    N_\chi) \neq 0\}
  \]
  consisting of those characters $\chi$ such that $N_\chi$ has
  non-trivial cohomology with compact support in degree $i$.  Then the
  left-hand side of \eqref{eq-pr-weyl-sum-bis} is equal to
  \begin{equation}
  \begin{aligned}\label{eqn:proof1bis}
    \frac{1}{|\mcX(k_n)|}&\sum_{\chi \in \mcX(k_n)}
    \Tr(\Fr_{k_n}\,|\,\rmH^0_c(G_{\bar{k}}, N_\chi)) =
    \\
    &\frac{1}{|\mcX(k_n)|}\sum_{\chi\in \charg{G}(k_n)}\sum_{|i|\leq d}
    (-1)^i \Tr(\Fr_{k_n}\,|\,\rmH^i_c(G_{\bar{k}}, N_\chi))
    \\
    &\hspace{1cm} -\frac{1}{|\mcX(k_n)|} \sum_{0<|i|\leq d} (-1)^i
    \sum_{\chi \in
      \mathcal{A}_i(k_n)}\Tr(\Fr_{k_n}\,|\,\rmH^i_c(G_{\bar{k}},
    N_\chi))
    \\
    &\hspace{2cm}-\frac{1}{|\mathscr{X}(k_n)|}\sum_{\chi\in
      \charg{G}(k_n) \setminus \mcX(k_n)}
    \Tr(\Fr_{k_n}\,|\,\rmH^0_c(G_{\bar{k}}, N_\chi)).
  \end{aligned}
  \end{equation}

  By the Grothendieck--Lefschetz trace formula
  (see~(\ref{eq-trace-formula})), the equalities
  \[
    \sum_{|i|\leq d} (-1)^i \Tr(\Fr_{k_n}\,|\,\rmH^i_c(G_{\bar{k}},
    N_\chi))=\sum_{x \in G(k_n)} t_{N_{\chi}}(x;k_n)
    =\sum_{x \in G(k_n)} \chi(x) t_N(x;k_n)
  \]
  hold for any character~$\chi$. Combined with the orthogonality of
  characters of $G(k_n)$, this shows that the first summand in
  \eqref{eqn:proof1bis} is equal to
  \[
    \frac{|\charg{G}(k_n)|}{|\mcX(k_n)|} t_N(e;k_n)=
    t_N(e;k_n)+O(|k_n|^{-1})
  \]
  since the set $\mcX$ is generic, so that the estimate
  $\frac{|\charg{G}(k_n)|}{|\mcX(k_n)|}=1+O(\abs{k_n}^{-1})$
  holds.
  
  We now turn to bounding the second and the third summands in the
  right-hand side of \eqref{eqn:proof1bis}.
  
  Since $M$ is pure of weight zero, the same holds for $N$ and
  $N_{\chi}$ by Theorem~\ref{thm-Harith-fib-funct}. It then follows from
  Deligne's Riemann Hypothesis (see Theorem~\ref{th-deligne-riemann})
  that $\rmH^i_c(G_{\bar{k}}, N_\chi)$ is mixed of weights $\leq i$ for
  any~$i$, in particular the eigenvalues of $\Fr_{k_n}$ acting on this
  space have modulus at most $|k_n|^{i/2}$. Moreover,
  using~\eqref{eqn:boundBetti} and Theorem
  \ref{thm-conductors}\,\ref{thm-conductors:item2}, we get
  \[
    h^i_c(G_{\bar{k}}, N_\chi) \leq c_u(N_{\chi}) \ll
    c_u(N) c_u(\mcL_\chi)\ll c_u(N)
  \]
  since the complexity $c_u(\mcL_\chi)$ is bounded independently of
  $\chi$ by Proposition \ref{prop-cond-char}. So the second term
  in~(\ref{eqn:proof1bis}) can be bounded by
  \[
    \frac{1}{|\mcX(k_n)|} \sum_{0<|i|\leq d} \ \sum_{\chi \in
      \mathcal{A}_i(k_n)} h^i_c(G_{\bar{k}}, N_\chi) |k_n|^{i/2}
    \leq  \frac{1}{|\mcX(k_n)|} \sum_{0<|i|\leq d}\  \sum_{\chi \in
      \mathcal{A}_i(k_n)} |k_n|^{i/2}.
  \]
  
  The Stratified Vanishing Theorem~\ref{thm-high-vanish} applied to
  $N$ gives the estimate
  \begin{equation}\label{eq-comp2bis}
    |\mathcal{A}_i(k_n)|\ll |k_n|^{d-|i|}
  \end{equation}
  for $i$ such that~$0<|i|\leq d$.  We split the sum over~$i$ into
  that over~$1\leq i\leq d$ and that over~$-d\leq i\leq -1$, and
  obtain
  \begin{align}
    \frac{1}{|\mcX(k_n)|} \sum_{0<|i|\leq d}\  \sum_{\chi \in
    \mathcal{A}_i(k_n)} |k_n|^{i/2}
    &\ll
      \frac{1}{|\mcX(k_n)|} \sum_{1\leq i\leq d}
      |k_n|^{d-i/2}
      +
      \frac{1}{|\mcX(k_n)|} \sum_{-d\leq i\leq -1}
      |k_n|^{d+3i/2}
      \notag
    \\
    &\ll \frac{|k_n|^{d-1/2}}{|\mcX(k_n)|}.
      \label{eq-comp3bis}
  \end{align}
 
  Thanks to the estimate $|\mcX(k_n)|=|k_n|^d+O(|k_n|^{d-1})$, the last
  term is~$\ll|k_n|^{-1/2}$ and tends to~$0$ as $n\to+\infty$.
  \par
  Finally, the third term in~(\ref{eqn:proof1bis}) satisfies
  \begin{equation}\label{eq-comp4bis}
    \frac{1}{|\mcX(k_n)|}\sum_{\chi\in \charg{G}(k_n)
      \setminus \mcX(k_n)} \Tr(\Fr_{k_n}\,|\,\rmH^0_c(G_{\bar{k}},
    N_\chi))
    \ll \frac{| \charg{G}(k_n) \setminus
      \mcX(k_n)|}{|\mcX(k_n)|} \ll \frac{1}{|k_n|}
  \end{equation}
  since $H^0_c(G_{\bar{k}},N_{\chi})$ is mixed of weights $\leq 0$ and
  has dimension bounded for all~$\chi$, and the set $\mcX$ is
  generic. This finishes the proof.
\end{proof}

\section{Equidistribution for characteristic
  polynomials}\label{sec-exp-sums}

Let~$k$ be a finite field, with an algebraic closure~$\bar{k}$, and let
$G$ be a connected commutative algebraic group over $k$. Let~$\ell$ be a
prime number distinct from the characteristic of~$k$.


Our most general equidistribution result concerns the characteristic
polynomials of the unitary Frobenius conjugacy classes for weakly
unramified characters. Equivalently, this is about the conjugacy classes
in the ambient unitary group.

\begin{theorem}\label{th-4}
  Let~$M$ be an $\ell$-adic perverse sheaf on~$G$ that is
  arithmetically semisimple and pure of weight zero. Let~$r\geq 0$ be
  the tannakian dimension of~$M$.  Let~$K \subset \Un_r(\Cc)$ be a
  conjugate of 
  a maximal compact subgroup of the arithmetic tannakian group
  $\garith{M}(\Cc)\subset \GL_r(\Cc)$ of~$M$, and denote by~$\nu_{cp}$
  \nomenclature[$nu$]{$\nu_{cp}$}{image of Haar measure on~$K$ on the
    space of conjugacy classes in $\Un_r(\Cc)^{\sharp}$} the measure
  on the space~$\Un_r(\Cc)^{\sharp}$\nomenclature{$K^{\sharp}$}{space
    of conjugacy classes in~$K$} of conjugacy classes in the unitary
  group which is the direct image of the Haar probability
  measure~$\mu$ on~$K$ by the quotient map $K\to
  \Un_r(\Cc)^{\sharp}$. Then the families of unitary conjugacy classes
  $(\Theta_{M,k_n}(\chi))_{\chi\in\wunram{M}(k_n)}$ become
  $\nu_{cp}$-equidistributed on average in~$\Un_r(\Cc)^{\sharp}$ as
  $n\to+\infty$.
\end{theorem}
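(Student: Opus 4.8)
The strategy is the standard Deligne--Katz equidistribution argument via the Weyl criterion, now fed by Proposition~\ref{pr-weyl-sum-bis}. By the Peter--Weyl theorem, to prove $\nu_{cp}$-equidistribution on average of the conjugacy classes $(\Theta_{M,k_n}(\chi))_{\chi\in\wunram{M}(k_n)}$ it suffices to show that for every irreducible representation $\rho$ of $\Un_r(\Cc)$ the averages
\[
  \lim_{N\to+\infty}\frac{1}{N}\sum_{1\leq n\leq N}\frac{1}{|\wunram{M}(k_n)|}\sum_{\chi\in\wunram{M}(k_n)}\Tr(\rho(\Theta_{M,k_n}(\chi)))
\]
equal the multiplicity of the trivial representation of $K$ in $\rho|_K$, i.e. $\int_K\Tr(\rho(x))\,d\mu(x)$ (for $\rho$ trivial both sides are $1$, which is automatic). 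Write $N_\rho$ for the object of $\braket{M}^{\arith}$ corresponding to the restriction of $\rho$ to $\garith{M}$. The first step is to replace the sum over $\wunram{M}(k_n)$ by a sum over $\funram{\rho}(k_n)\subset\wunram{M}(k_n)$: on the Frobenius-unramified set we have, by Definition~\ref{def-funram}, the exact equality $\Tr(\rho(\Theta_{M,k_n}(\chi)))=\Tr(\frob_{k_n}\mid\rmH^0_c(G_{\bar k},(N_\rho)_\chi))$, and since $\funram{\rho}$ is generic (Proposition~\ref{pr-frob-unram}) while $\wunram{M}$ is generic with $|\wunram{M}(k_n)|=|k_n|^d+O(|k_n|^{d-1})$, the characters in $\wunram{M}(k_n)\setminus\funram{\rho}(k_n)$ contribute $O(|k_n|^{-1})$ once we note that $\Tr(\rho(\Theta_{M,k_n}(\chi)))$ is bounded (the $\Theta$ lie in the compact group $\Un_r(\Cc)$, so $|\Tr\rho(\cdot)|\leq\dim\rho$) and $|\funram{\rho}(k_n)|/|\wunram{M}(k_n)|=1+O(|k_n|^{-1})$.

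\textbf{Main step.} Applying Proposition~\ref{pr-weyl-sum-bis} to the object $N_\rho$, we obtain
\[
  \frac{1}{|\funram{\rho}(k_n)|}\sum_{\chi\in\funram{\rho}(k_n)}\Tr(\frob_{k_n}\mid\rmH^0_c(G_{\bar k},(N_\rho)_\chi))=t_{N_\rho}(e;k_n)+O(|k_n|^{-1/2}).
\]
Combining with the previous reduction gives
\[
  \frac{1}{|\wunram{M}(k_n)|}\sum_{\chi\in\wunram{M}(k_n)}\Tr(\rho(\Theta_{M,k_n}(\chi)))=t_{N_\rho}(e;k_n)+O(|k_n|^{-1/2}),
\]
so it remains to analyze the Cesàro average of the single trace-function value $t_{N_\rho}(e;k_n)$. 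Here one uses that $N_\rho$ is pure of weight zero (Theorem~\ref{thm-Harith-fib-funct}), hence $N_\rho$ is a shift-twist of a geometrically semisimple perverse sheaf; decomposing $N_\rho$ into its arithmetically simple constituents, each constituent is either geometrically isomorphic to (a shift of) a skyscraper at $e$ — contributing a term of the shape $\alpha^n$ with $|\alpha|=1$ to $t_{N_\rho}(e;k_n)$ — or it is not, in which case by the Riemann Hypothesis at the point $e$ (or more robustly by the Chebotarev/trace-formula argument already sketched in the introduction) its contribution is a sum of $|k_n|$-Weil numbers of strictly negative weight, hence $O(|k_n|^{-1/2})$. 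The Cesàro average of a term $\alpha^n$ with $\alpha$ a root of unity tends to $\charfun_{\alpha=1}$, and more generally, for $|\alpha|=1$, $\frac1N\sum_{n\le N}\alpha^n\to\charfun_{\alpha=1}$. Interpreted tannakially: the multiplicity of the trivial representation of $\garith{M}$ inside $\rho|_{\garith{M}}$ is exactly $\dim\rmH^0$ of the "geometrically skyscraper-at-$e$" part that is arithmetically trivial, i.e. the $\garith{M}/\ggeo{M}$-invariants computed via the Frobenius class $\xi\in\garith{M}/\ggeo{M}$ of Proposition~\ref{pr:geom-vs-arith2}; the Cesàro limit over $n$ kills all the $\xi^n\neq 1$ contributions and leaves precisely $\dim\rho^{K}=\int_K\Tr\rho\,d\mu$. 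Assembling the above, the Cesàro average of the left side converges to $\int_K\Tr(\rho(x))\,d\mu(x)$, which is what the Weyl criterion requires.

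\textbf{Remaining care and the main obstacle.} Three bookkeeping points must be handled: (i) the $O(|k_n|^{-1/2})$ in Proposition~\ref{pr-weyl-sum-bis} has an implied constant depending on $c_u(N_\rho)$, which depends on $\rho$ but not on $n$, so it is harmless for a fixed $\rho$ when taking the Cesàro limit; (ii) one must check that $\funram{\rho}(k_n)$ is nonempty for all large $n$, which follows from genericity plus the Lang--Weil-type lower bound $|\funram{\rho}(k_n)|=|k_n|^d+O(|k_n|^{d-1})$; (iii) the definition of equidistribution on average (Definition~\ref{def-equid}) with the normalization $N'$ counting nonempty $Y_n$ — since $Y_n=\wunram{M}(k_n)$ is eventually nonempty, $N'\sim N$ and this is irrelevant. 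The main obstacle is the clean identification of the Cesàro limit of $t_{N_\rho}(e;k_n)$ with $\int_K\Tr(\rho(x))\,d\mu(x)$: one wants to say this equals $\langle\rho|_{\garith{M}},\charfun\rangle$ "twisted by the Frobenius automorphism of the fiber functor", and justifying that the arithmetic-versus-geometric discrepancy is governed by the multiplicative-type quotient $\garith{M}/\ggeo{M}$ with a single well-defined Frobenius class $\xi$ (Proposition~\ref{pr:geom-vs-arith2}) — and that averaging $\xi^n$ over $n$ projects onto the $\xi$-invariants, which coincide with the $K$-invariants since $K$ is a maximal compact of $\garith{M}(\Cc)$ and $\xi$ generates a Zariski-dense subgroup of $\garith{M}/\ggeo{M}$ while $K\cap\ggeo{M}(\Cc)$ is maximal compact there. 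This is precisely the place where "equidistribution on average" buys us the freedom to work with $\garith{M}$ rather than $\ggeo{M}$, as emphasized in the introduction; once $\dim\rho^K=\lim_N\frac1N\sum_{n\le N}t_{N_\rho}(e;k_n)$ is established, the theorem follows.
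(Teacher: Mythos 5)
Your proposal is correct and follows essentially the same route as the paper: Peter--Weyl reduction to traces of representations $\rho$ of $\Un_r(\Cc)$, passage from $\wunram{M}$ to the generic Frobenius-unramified set for $\rho(M)$ using the bound $|\Tr\rho(\Theta)|\leq\dim\rho$, application of Proposition~\ref{pr-weyl-sum-bis}, and then the decomposition of $\rho(M)$ into arithmetically simple constituents, where the punctual pieces $\alpha^{\deg}\otimes\delta_e$ give terms $\alpha^n$ whose Cesàro averages survive exactly when $\alpha=1$, yielding the multiplicity of the trivial representation of $\garith{M}$. The only difference is cosmetic: your final identification is routed through the class $\xi$ of Proposition~\ref{pr:geom-vs-arith2}, whereas the paper simply counts the constituents with $\alpha_i=1$ directly; both amount to the same bookkeeping.
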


\begin{remark}
  (1) To be precise, in terms of Definition~\ref{def-equid}, we consider
  the equidistribution on average of pairs $(\wunram{M}(k_n),\Theta_n)$
  with $\Theta_n(\chi)=\Theta_{M,k_n}(\chi)$.
  \par
  (2) The set $\Un_r(\Cc)^{\sharp}$ can be identified with the set of
  characteristic polynomials of unitary matrices of size~$r$, or
  equivalently with the quotient topological space
  $(\mathbf{S}_1)^r/\mathfrak{S}_r$ (by mapping a matrix to the set of
  eigenvalues, with multiplicity) so the statement means that the
  characteristic polynomials of the Frobenius automorphisms for weakly
  unramified characters tend to be distributed like the characteristic
  polynomials of random elements of~$K$ (hence the notation $\nu_{cp}$).
\end{remark}

\begin{proof}
  Let $\mcX=\wunram{M}$. It suffices to check the
  equality~(\ref{eqn:equidis-average}) for $f$ taken in a set of
  continuous functions on~$\Un_r(\Cc)^{\sharp}$ that span a dense subset
  of the Banach space $\mathcal{C}(\Un_r(\Cc)^{\sharp})$ of all continuous
  complex-valued functions on~$\Un_r(\Cc)^{\sharp}$ (since probability
  measures on $\Un_r(\Cc)^{\sharp}$ are continuous functionals on
  $\mathcal{C}(\Un_r(\Cc)^{\sharp})$ by the Riesz representation
  theorem). Thanks to the Peter\nobreakdash--Weyl
  Theorem,\index{Peter--Weyl theorem} it suffices
  to prove the equality
  $$
  \lim_{N\to +\infty}\frac{1}{N}\sum_{1\leq n\leq N}
  \frac{1}{|\mcX(k_n)|}\sum_{\chi\in\mcX(k_n)}
  \Tr(\rho(\Theta_{M,k_n}(\chi)))=\int_K \Tr(\rho(g))d\mu(g)
  $$
  for any irreducible unitary representation $\rho$
  of~$\Un_r(\Cc)$. In fact, we will prove this for any unitary
  representation~$\rho$, not necessarily irreducible.
  \par
  By the Peter--Weyl Theorem again, the right-hand side is the
  multiplicity of the trivial representation in the representation
  of~$\garith{M}$ that corresponds to the restriction of~$\rho$
  to~$K$. We denote by $N=\rho(M)$ the object of~$\braket{M}^{\arith}$
  that corresponds to this restriction of~$\rho$.
  \par
  Let $\mcX_N=\funram{N}_M$ be the set of Frobenius-unramified characters
  for~$N$. We have
  $$
  \lim_{N\to +\infty}\frac{1}{N}\sum_{1\leq n\leq N}
  \frac{1}{|\mcX(k_n)|}\sum_{\chi\in (\mcX\setminus \mcX_N)(k_n)}
  \Tr(\rho(\Theta_{M,k_n}(\chi)))=0,
  $$
  since $\mcX_N$ is generic (by Proposition~\ref{pr-frob-unram}) and the
  upper-bound
  $$
  |\Tr(\rho(\Theta_{M,k_n}(\chi)))|\leq \dim(\rho)
  $$
  holds for all~$\chi\in\mcX(k_n)$.
  \par
  By the definition of Frobenius-unramified characters, we have
  $$
  \frac{1}{|\mcX(k_n)|}\sum_{\chi\in\mcX_N(k_n)}
  \Tr(\rho(\Theta_{M,k_n}(\chi)))=
  \frac{1}{|\mcX(k_n)|}\sum_{\chi\in\mcX_N(k_n)} \Tr(\frob_{k_n}\mid
  H^0_c(G_{\bar{k}},N_{\chi}))
  $$
  for $n\geq 1$. 
  Since $\mcX$ and $\mcX_N$ are both generic, we have
  $\frac{|\mcX_N(k_n)|}{|\mcX(k_n)|}=1+O(\frac{1}{\abs{k_n}})$.  By
  Proposition~\ref{pr-weyl-sum-bis}, we deduce that
  $$
  \frac{1}{|\mcX(k_n)|}\sum_{\chi\in\mcX_N(k_n)}
  \Tr(\rho(\Theta_{M,k_n}(\chi))) = t_{N}(e;k_n) +O(|k_n|^{-1/2}),
  $$
  where~$e$ is the identity of~$G$.

  We decompose the semisimple perverse sheaf~$N$ as a direct sum
  $$
  N=\bigoplus_{r\geq 0}\bigoplus_{i\in I(r)} N_{r,i}
  $$ of pairwise non-isomorphic arithmetically simple
  perverse sheaves $N_{r,i}$ of support of dimension~$r$. For $r\geq 1$,
  we get the pointwise bound
  $$
  t_{N_{r,i}}(e;k_n)\ll \frac{1}{\sqrt{|k_n|}}.
  $$
  using Proposition~\ref{pr-pointwise-bound}.
  \par
  The punctual objects $N_{0,i}$ are of the form
  $\alpha_i^{\deg}\otimes\delta_{x_i}$ for some unitary scalars
  $\alpha_i$ and some points~$x_i$. If~$x_i\not=e$, then
  $$
  t_{N_{0,i}}(e;k_n)=0.
  $$
  \par
  Thus, if we denote by $J\subset I(0)$ the subset where $x_i=e$
  (which has cardinality equal to the multiplicity of the trivial
  representation in the restriction of~$\rho$ to $\ggeo{M}$), then the
  formula
  \begin{equation}\label{eqn:computation-traces-proof}
    \frac{1}{|\mcX(k_n)|}\sum_{\chi\in\mcX_N(k_n)}
    \Tr(\rho(\Theta_{M,k_n}(\chi))) = \sum_{i\in J} \alpha_i^n
    +O(|k_n|^{-1/2})
  \end{equation}
  holds.  The subset~$J^0\subset J$ where $\alpha_i=1$ has cardinality
  equal to the multiplicity of the trivial representation in the
  restriction of~$\rho$ to~$\garith{M}$. Averaging over~$n$ and using
  $$
  \lim_{N\to+\infty}\frac{1}{N}\sum_{1\leq n\leq N} \alpha_i^n=0
  $$
  for $i\in J\setminus J^0$, we conclude that
  $$
  \lim_{N\to +\infty}\frac{1}{N}\sum_{1\leq n\leq N}
  \frac{1}{|\mcX(k_n)|}\sum_{\chi\in\mcX_N(k_n)}
  \Tr(\rho(\Theta_{M,k_n}(\chi))) = |J^0|+O(|k_n|^{-1/2}),
  $$
  which gives the desired result.
\end{proof}


It is useful to state the following corollary of the proof, which is a
diophantine version of Schur's Lemma\index{Schur's Lemma} in our context.

\begin{corollary}[Schur's Lemma]\label{cor-schur}
  Let~$M$ and~$N$ be geometrically simple $\ell$-adic perverse sheaves
  on~$G$ which are pure of weight zero and are objects of~$\Ppintarith(G)$.  Let $\mcX$ be the set of characters which are
  weakly unramified for $M\oplus N^{\vee}$. We have
  $$
  \lim_{N\to +\infty}\frac{1}{N} \sum_{n\leq N} \frac{1}{|G(k_n)|}
  \sum_{\chi\in\mcX(k_n)}S(M *_{\intt} N^{\vee},\chi)=
  \begin{cases}
    1& \text{if $M$ is arithmetically isomorphic to~$N$},
    \\
    0&\text{ otherwise.}
  \end{cases}
  $$
\end{corollary}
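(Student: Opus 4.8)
The plan is to deduce this from Theorem~\ref{th-4} (or, more directly, from the trace-level estimate proved inside its proof together with Proposition~\ref{pr-weyl-sum-bis}), applied to a suitable object built out of $M$ and $N$. First I would set $L=M\oplus N^\vee$, which is a geometrically semisimple (indeed arithmetically semisimple, after base change is handled) perverse sheaf on $G$ that is pure of weight zero, and form the object $P=M*_{\intt}N^\vee$ of $\braket{L}^{\arith}$. The tannakian formalism gives $S(M*_{\intt}N^\vee,\chi)=\Tr(\frob_{k_n}\mid \rmH^0_c(G_{\bar k},P_\chi))$ for $\chi$ Frobenius-unramified for $P$ (as an object of $\braket{L}^{\arith}$), and by Proposition~\ref{pr-frob-unram} the set $\funram{P}$ of such characters is generic; moreover on the generic set where $\chi$ is weakly unramified for $M\oplus N^\vee$ one has, by the Künneth isomorphisms of Lemma~\ref{lem-prop-conv} and Remark~\ref{rem-Mint}, a canonical isomorphism $\rmH^0_c(G_{\bar k},P_\chi)\simeq \rmH^0_c(G_{\bar k},M_\chi)\otimes \rmH^0_c(G_{\bar k},M_\chi)^{\vee}$ when $M\simeq N$, and more generally $\rmH^0_c(G_{\bar k},P_\chi)\simeq \rmH^0_c(G_{\bar k},M_\chi)\otimes \rmH^0_c(G_{\bar k},N_\chi)^{\vee}$, exactly as in the proof of Proposition~\ref{pr-frob-unram}.

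Next I would apply Proposition~\ref{pr-weyl-sum-bis} to $N':=P$ viewed in $\braket{L}^{\arith}$: for all $n$ with $\funram{P}(k_n)\neq\emptyset$,
\[
\frac{1}{|\funram{P}(k_n)|}\sum_{\chi\in\funram{P}(k_n)}\Tr(\frob_{k_n}\mid \rmH^0_c(G_{\bar k},P_\chi))=t_{P}(e;k_n)+O(|k_n|^{-1/2}).
\]
Since $\funram{P}$ and $\mcX$ are both generic and $|G(k_n)|=|k|^{nd}+O(|k|^{n(d-1/2)})$ with $d=\dim G$, replacing the normalization $1/|\funram{P}(k_n)|$ by $1/|G(k_n)|$ and the summation set $\funram{P}(k_n)$ by $\mcX(k_n)$ introduces an error $O(|k_n|^{-1/2})$ (the individual traces being $O(1)$ by purity and the uniform conductor bound from Proposition~\ref{prop-cond-char} and Theorem~\ref{thm-conductors}), so
\[
\frac{1}{|G(k_n)|}\sum_{\chi\in\mcX(k_n)}S(M*_{\intt}N^\vee,\chi)=t_{M*_{\intt}N^\vee}(e;k_n)+O(|k_n|^{-1/2}).
\]

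Then I would identify the value at the origin $t_{M*_{\intt}N^\vee}(e;k_n)$ in the limit. Since $M$ and $N$ are geometrically simple, $M*_{\intt}N^\vee$ is the ``middle convolution'' of $M$ with $N^\vee$; its trace function at $e$ is, up to a negligible error, the inner product $\langle t_M,t_N\rangle$ on $G(k_n)$ appearing in the quasi-orthogonality Theorem~\ref{th-rh}, which is $1+O(|k_n|^{-1/2})$ if $M$ and $N$ are geometrically isomorphic and $O(|k_n|^{-1/2})$ otherwise. Concretely, $t_{M*_! N^\vee}(e;k_n)=\sum_{x\in G(k_n)} t_M(x;k_n)t_{N^\vee}(x^{-1}\cdot e;\,k_n)=\sum_{x}t_M(x;k_n)\overline{t_N(x;k_n)}$ after unwinding $N^\vee=\inv^*\DD(N)$ and using purity, and the difference between $M*_! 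N^\vee$ and $M*_{\intt}N^\vee$ lies in $\NegD(G)$, hence contributes $O(|k_n|^{-1/2})$ to the trace at any fixed point, again using a uniform conductor bound; so $t_{M*_{\intt}N^\vee}(e;k_n)=1_{M\simeq_{\geo} N}+O(|k_n|^{-1/2})$. (When $M$ and $N$ are only geometrically, not arithmetically, isomorphic one may pass to a finite extension $k'$ over which $M\simeq \alpha^{\deg}\otimes N$ with $|\alpha|=1$ by Lemma~\ref{lm-arith-geo-iso}; then $t_{M*_{\intt}N^\vee}(e;k_n)=\alpha^n+O(|k_n|^{-1/2})$, and averaging over $n$ kills the term unless $\alpha=1$, i.e. exactly when $M\simeq N$ in $\Ppintarith(G)$.) Averaging the displayed estimate over $1\le n\le N$ and letting $N\to\infty$ then gives $1$ if $M$ is isomorphic to $N$ and $0$ otherwise, as claimed.

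The main obstacle I anticipate is the careful bookkeeping needed to pass between the several generic sets ($\mcX$, $\funram{P}$, $\nunram{C}$ for the relevant cones, and the set where the Künneth isomorphisms hold) and to make sure that all the comparisons between $*_!$, $*_*$, $*_{\intt}$ at the level of $\rmH^0_c$ and at the level of the trace function at $e$ cost only $O(|k_n|^{-1/2})$ — this requires invoking the uniform complexity bound for character sheaves (Proposition~\ref{prop-cond-char}) together with the continuity of the six operations (Theorem~\ref{thm-conductors}) and the weight bound from Deligne's Riemann Hypothesis at each step, exactly as in the proofs of Proposition~\ref{pr-weyl-sum-bis} and Theorem~\ref{th-4}. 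The rest is essentially a repackaging of those two proofs specialized to $\rho=\Std\otimes\Std^\vee$ restricted to the tannakian group of $M\oplus N^\vee$, whose trace decomposes as $\overline{S(N)}\,S(M)$ on the unramified locus.
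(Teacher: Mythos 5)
Your first half is exactly the paper's argument: you apply Proposition~\ref{pr-weyl-sum-bis} to the object $M*_{\intt}N^\vee=\Hom(N,M)$ of $\braket{M\oplus N^\vee}^{\arith}$ and pass between the generic sets $\funram{\cdot}$ and $\mcX$ with an $O(|k_n|^{-1/2})$ cost. Where you diverge is the evaluation of $t_{M*_{\intt}N^\vee}(e;k_n)$, and that is where there is a genuine gap. You assert that the difference between $M*_!N^\vee$ and $M*_{\intt}N^\vee$ ``lies in $\NegD(G)$, hence contributes $O(|k_n|^{-1/2})$ to the trace at any fixed point.'' Negligibility is a statement about the vanishing of twisted cohomology for generic characters; it gives no control whatsoever on stalk traces at a single point. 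Concretely, the discrepancy in the Grothendieck group involves the perverse cohomologies $\pH^i(M*_!N^\vee)$ for $i\neq 0$, and for $i>0$ these are only mixed of weights $\leq i$ (by \cite[Th.\,5.4.1]{BBD-pervers} applied to the $!$-convolution, which has weights $\leq 0$), so their stalk traces at $e$ are a priori of size up to $|k_n|^{(i-1)/2}$ -- comparable to or larger than the main term once $\dim G\geq 2$. Typical negligible objects (twists of pullbacks from quotients, shifted constant sheaves) have trace at $e$ governed only by their weights, not by their negligibility, so this step cannot be justified as stated. A second, smaller issue is the identification $t_{M*_!N^\vee}(e;k_n)=\sum_x t_M(x;k_n)\overline{t_N(x;k_n)}+O(|k_n|^{-1/2})$: the exact identity gives $\sum_x t_M(x)t_{\DD(N)}(x)$, and $t_{\DD(N)}=\overline{t_N}$ only on the lisse locus; when the supports have dimension $\geq 2$ the boundary contribution is not obviously $O(|k_n|^{-1/2})$ without a finer weight/stratification argument.

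The fix is essentially contained in your own parenthetical remark, and it is what the paper does: do not detour through $*_!$ and Theorem~\ref{th-rh} at all. Since $Q=M*_{\intt}N^\vee$ is an object of $\braket{M\oplus N^\vee}^{\arith}$, it is arithmetically semisimple and \emph{pure of weight zero} (Theorem~\ref{thm-Harith-fib-funct}); decompose it into arithmetically simple constituents, note that the punctual constituents supported at $e$ contribute $\alpha_i^n$ with $|\alpha_i|=1$ while all positive-dimensional constituents contribute $O(|k_n|^{-1/2})$ (this pointwise bound is legitimate here precisely because of purity of weight zero), then average over $n\leq N$ to kill the terms with $\alpha_i\neq 1$. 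The number of constituents $\delta_e$ is the multiplicity of the trivial representation in $\Hom(N,M)$, which by the classical Schur's Lemma is $1$ if $M\simeq N$ in $\Ppintarith(G)$ and $0$ otherwise. This is ``the last part of the proof of Theorem~\ref{th-4}'' that the paper invokes, and it replaces the problematic comparison entirely.
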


\begin{proof}
  Proposition~\ref{pr-weyl-sum-bis} applied to the perverse sheaf
  $M\oplus N^{\vee}$ and the object $Q=\Hom(N,M)$ of the category
  $\braket{M\oplus N^{\vee}}^{\arith}$ (the homomorphisms are in the
  category $\Ppintarith(G)$) implies that
  $$
  \frac{1}{|\funram{Q}(k_n)|}\sum_{\chi \in \funram{Q}(k_n)}
  S(Q,\chi) =
  t_{Q}(e;k_n)+O(\abs{k_n}^{-1/2})
  $$
  for any $n\geq 1$, where
  $$
  S(Q,\chi)=\sum_{x\in G(k_n)}\chi(x)t_Q(x;k_n).
  $$
  \par
  Since $\funram{Q}$ is generic, and since there is a canonical
  isomorphism $Q\to M*_{\intt} N^{\vee}$, we deduce that
  $$
  \frac{1}{N} \sum_{n\leq N} \frac{1}{|G(k_n)|}
  \sum_{\chi\in\mcX(k_n)}S(M *_{\intt} N^{\vee},\chi) =
  \frac{1}{N}\sum_{n\leq N}t_{Q}(e;k_n)+O(\abs{k_n}^{-1/2})
  $$
  for all $N\geq 1$. Arguing as in the last part of the proof of
  Theorem~\ref{th-4}, we see that the right-hand side converges to the
  multiplicity of the trivial representation in the representation
  corresponding to~$Q$; by the classical form of Schur's Lemma, this is
  either~$1$ or~$0$, depending on whether $M$ is isomorphic to~$N$
  or~not.
\end{proof}

\begin{remark}\label{rm-smoothing}
  The proof of Theorem~\ref{th-4} allows us to see clearly what is
  involved in the use of the Cesàro mean\index{Cesàro mean} in the average
  equidistribution.

  First, we can see that it is necessary in general, unless
  $\garith{M}=\ggeo{M}$ (see Section~\ref{sec-without-average} for
  statements under this assumption, in particular
  Proposition~\ref{pr-converse-equality}).

  Second, we see that the use of the Cesàro average can be generalized
  to establish the convergence to the limit $\nu_{cp}$ of any sequence
  of average measures of the form
  $$
  \sum_{n\geq 1}\frac{\varphi_N(n)}{|\mcX(k_n)|}\sum_{\chi\in \mcX(k_n)}
  \delta_{\Theta_{M,k_n}(\chi)}, 
  $$
  where $\varphi_N(n)$ are non-negative coefficients that are bounded
  and satisfy the equality
  \begin{equation}\label{eq-cesaro-enough}
    \lim_{N\to +\infty}
    \sum_{n\geq 1}\varphi_N(n)\alpha^n
    =
    \begin{cases}
      0&\text{ if } \alpha\not=1,\\
      1&\text{ if } \alpha=1
    \end{cases}
  \end{equation}
  for any complex number $\alpha$ of modulus~$1$. The Cesàro case
  corresponds to $\varphi_N(n)=1/N$ for all~$n\leq N$ and
  $\varphi_N(n)=0$ for $n\geq N$, but there are many other
  possibilities. (In classical terms, as expounded for instance by
  Hardy~\cite{hardy}, these $\varphi_N$ define a ``summation
  method'',\index{summation method}
  and it is elementary that the requirements amounts
  essentially\footnote{\ Precisely, we need that the series $\sum a_n$
    with $a_1=\alpha$ and $a_{n}=\alpha^n-\alpha^{n-1}$ for $n\geq 2$ has
    ``sum'' $\alpha+(\alpha-1)/(1-\alpha)=0$ for $|\alpha|=1$
    and~$\alpha\not=1$.} to asking that this summation method gives the
  ``right'' sum $1/(1-\alpha)$ to the geometric series for~$|\alpha|=1$
  and~$\alpha\not=1$.)

  It is also instructive to view the average probabilistically,
  interpreting $\varphi_N$ as the law of a random variable $X_N$ with
  values in positive integers. The condition above is the requirement
  that the equality
  $$
  \lim_{N\to +\infty} \expect(e^{i\theta X_N})=0
  $$
  holds for all $\theta\in \Rr/2\pi\Zz\setminus\{0\}$.
  \par
  Besides the Cesàro case, where $X_N$ is a random variable uniform
  on~$\{1,\ldots,N\}$, consider a Poisson distribution~$X_N$ with
  parameter $\lambda_N>0$, shifted to have support in the positive
  integers, i.e., let
  $$
  \proba(X_N=n)=\varphi_N(n)=e^{-\lambda_N}\frac{\lambda_N^{n-1}}{(n-1)!}
  $$
  for any positive integers $N$ and~$n$.  The condition above becomes
  the limit
  $$
  \expect(e^{i\theta X_N})= \exp(i\theta+\lambda_N(e^{i\theta}-1))\to 0
  $$
  as $N\to+\infty$ for $\theta\in\Rr/2\pi\Zz\setminus\{0\}$, which holds
  provided $\lambda_N\to +\infty$, since the modulus of the left-hand
  side is $\exp(\lambda_N(\cos(\theta)-1))$.
  \par
  Intuitively, this means that if we pick a positive integer $n$
  according to a Poisson distribution with large parameter, then pick
  uniformly a random $\chi\in\mcX(k_n)$, then the Frobenius conjugacy
  class $\Theta_{M,k_n}(\chi)$ will be distributed like a random
  $\Un_r(\Cc)$-conjugacy class of an element of the maximal compact
  subgroup~$K$. (A whimsical enough way to do this -- according to the
  Rényi--Turan form of the Erd\H os--Kac Theorem, see
  e.g.~\cite[Prop.\,4.14]{mod-gaussian} -- would be to pick a large integer
  $m\geq 1$ and to take $n$ to be the number of prime factors of~$m$,
  which corresponds roughly to having $\lambda_N=\log\log N$.)
  \par
  Note however that are also many cases where the
  condition~(\ref{eq-cesaro-enough}) is not true. The most obvious is
  when $\varphi_N(N)=1$ and $\varphi_N(n)=0$ for $n\not=N$,
  corresponding to a limit without extra average at all. In addition,
  the condition implies that for any integers $q\geq 1$ and $a\in \Zz$,
  we have
  $$
  \proba(X_N\equiv a\mods{q})=\frac{1}{q}\sum_{b\mods{q}}
  e^{-2i\pi ab/q}\expect(e^{2i\pi bX_N/q})\to \frac{1}{q},
  $$
  so there is a strong arithmetic restriction that $X_N\mods{q}$
  converge to the uniform probability measure modulo~$q$ for all~$q\geq
  1$.
  \par
  Similar remarks apply in an obvious manner to our other
  equidistribution statements, e.g. to
  Theorem~\ref{thm:equidis-thm-intro}.
\end{remark}

\section{Equidistribution for arithmetic Fourier transforms}

We now deduce from Theorem~\ref{th-4} the equidistribution of the
exponential sums defined by
$$
S(M,\chi)=\sum_{x\in G(k_n)}\chi(x)t_M(x;k_n).
$$
\par
In fact, note that these sums make sense for \emph{all} characters
$\chi\in\charg{G}(k_n)$, and we can indeed prove equidistribution for
all of them. This implies Theorem~\ref{thm:equidis-thm-intro} from the
introduction. As a final addition, we prove an equidistribution
statement for the arithmetic Fourier transforms of all objects $M$ of
$\Der(G)$ which are mixed semiperverse sheaves of weights $\leq 0$. This
is of interest especially in more analytic applications, since the
condition of being semiperverse and that of being mixed of weights
$\leq 0$ are much more flexible, and easier to check, than those of
being perverse and pure.

\begin{theorem}\label{th-3}
  Let~$k$ be a finite field and let $G$ be a connected commutative
  algebraic group over $k$. Let~$\ell$ be a prime number distinct from
  the characteristic of~$k$.
  \par
  Let~$M$ be an object of $\Der(G)$. Assume that $M$ is semiperverse
  and mixed of
  weights $\leq 0$. Let~$N$ be the maximal perverse subsheaf of weight~$0$ of
  the arithmetic semisimplification of the perverse cohomology sheaf
  $\pH^0(M)$.
  \par
  Let~$r\geq 0$ be the tannakian dimension of~$N$.
  Let~$K \subset \garith{N}(\Cc)\subset \GL_r(\Cc)$ be a maximal compact
  subgroup of the arithmetic tannakian group of~$N$. Denote by~$\mu$ the
  Haar probability measure on~$K$ and by $\nu$ its image by the trace.
  \par
  The families of exponential sums $S(M,\chi)$ for
  $\chi\in\charg{G}(k_n)$ become $\nu$-equidistri\-buted on average as
  $n\to+\infty$.

\end{theorem}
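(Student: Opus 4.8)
The plan is to reduce Theorem~\ref{th-3} to the already-established Theorem~\ref{th-4} by a sequence of standard reductions that peel away the difference between the general semiperverse complex $M$ of weights $\leq 0$ and the arithmetically semisimple pure perverse sheaf $N$, controlling the resulting error terms via purity and the stratified vanishing bounds.

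\textbf{Step 1: Reduce to $N$ itself.} First I would express $S(M,\chi)$ in cohomological terms using the Grothendieck--Lefschetz trace formula and the perverse spectral sequence~\eqref{eq-perv-seq}, writing
\[
  S(M,\chi)=\sum_{x\in G(k_n)}\chi(x)t_M(x;k_n)=\sum_{|j|\leq d}(-1)^j\Tr\big(\Fr_{k_n}\mid \rmH^j_c(G_{\bar k},M_\chi)\big).
\]
Because $M$ is semiperverse, the perverse cohomology sheaves $\pH^i(M)$ vanish for $i\geq 1$, and by Lemma~\ref{lem-characters-t-exact} the same holds for $M_\chi$; combined with the Stratified Vanishing Theorem~\ref{thm-high-vanish} applied to each $\pH^i(M)$ (finitely many, each of bounded complexity), for a generic set of $\chi$ only the contribution of $\pH^0(M_\chi)$ in degree $0$ survives, and there it equals $\rmH^0_c(G_{\bar k},\pH^0(M)_\chi)=\rmH^0(G_{\bar k},\pH^0(M)_\chi)$. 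Thus, up to a term that is $O(|k_n|^{-1/2})$ uniformly (using Deligne's Riemann Hypothesis for the weight bound $\leq i$ on $\rmH^i_c$, the Betti-number bound~\eqref{eqn:boundBetti} via Theorem~\ref{thm-conductors}, and the count~\eqref{eq-comp2bis} for the number of bad $\chi$), we have $S(M,\chi)=\Tr(\Fr_{k_n}\mid \rmH^0_c(G_{\bar k},\pH^0(M)_\chi))+(\text{negligible on average})$ for $\chi$ in a generic set.

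\textbf{Step 2: Pass from $\pH^0(M)$ to its weight-zero semisimple part $N$.} Next I would use that $\pH^0(M)$ is mixed of weights $\leq 0$, so its arithmetic semisimplification has weight-graded pieces; the strictly negative pieces contribute eigenvalues of $\Fr_{k_n}$ on $\rmH^0_c$ of modulus $<1$ (indeed $\leq |k_n|^{w/2}$ with $w<0$ on a perverse sheaf, whose $\rmH^0_c$ carries weights $\leq w$), hence contribute $O(|k_n|^{-1/2})$ to $S(M,\chi)$ uniformly in $\chi$, again using the uniform Betti bound from Proposition~\ref{prop-cond-char} and Theorem~\ref{thm-conductors}. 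The semisimplification step does not change $\rmH^0_c$ up to the weight filtration, and negligible summands contribute $0$ for generic $\chi$ by definition. What remains is exactly $\Tr(\Fr_{k_n}\mid \rmH^0_c(G_{\bar k},N_\chi))$, i.e. (since $N$ is pure of weight zero, so $\rmH^0_c(G_{\bar k},N_\chi)=\rmH^0(G_{\bar k},N_\chi)$ for weakly unramified $\chi$ and the eigenvalues are unitary) the trace $\Tr(\Theta_{N,k_n}(\chi))$ of the unitary Frobenius conjugacy class, for all $\chi$ in the generic set $\wunram{N}(k_n)$.

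\textbf{Step 3: Apply Theorem~\ref{th-4} and transfer the measure.} Finally, Theorem~\ref{th-4} applied to the arithmetically semisimple pure-weight-zero perverse sheaf $N$ gives that $(\Theta_{N,k_n}(\chi))_{\chi\in\wunram{N}(k_n)}$ becomes $\nu_{cp}$-equidistributed on average in $\Un_r(\Cc)^\sharp$; pushing forward by the trace map $\Un_r(\Cc)^\sharp\to\Cc$ turns $\nu_{cp}$ into $\nu$, the image of the Haar measure $\mu$ on $K$ by the trace. To conclude for \emph{all} $\chi\in\charg{G}(k_n)$ rather than just $\wunram{N}(k_n)$, I note that $\charg{G}(k_n)\setminus\wunram{N}(k_n)$ has size $\ll|k_n|^{d-1}$ while $|\charg{G}(k_n)|\asymp|k_n|^d$, and $|S(M,\chi)|$ is bounded uniformly (again by the Betti bound plus the weight bound $\leq 0$); the normalization $\frac1{|G(k_n)|}$ in the statement then absorbs the $O(|k_n|^{d-1})$ exceptional characters into an $O(|k_n|^{-1})$ error that vanishes on average. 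Matching the Cesàro averages for any bounded continuous test function $f$ (it suffices to check on trace images of matrix coefficients, i.e. irreducible characters of $\Un_r$, via Peter--Weyl and Stone--Weierstrass) completes the proof, since the $f\circ\Tr$ of the bad characters contribute $O(\|f\|_\infty\,|k_n|^{-1})$.

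\textbf{Main obstacle.} The technical heart is Step~2: keeping all the error terms uniform in $\chi$ across the passage from an arbitrary weights-$\leq 0$ semiperverse complex to its pure weight-zero semisimple perverse constituent, which requires simultaneously the uniform conductor bound for character sheaves (Proposition~\ref{prop-cond-char}), the continuity of the six operations (Theorem~\ref{thm-conductors}) to bound the complexity of $\pH^0(M)$ and its semisimplification, the stratified vanishing count~\eqref{eq-comp2bis} to handle the non-degenerate cohomology, and Deligne's weight estimates; each ingredient is available, but their coordinated use parallels — and is slightly more delicate than — the proof of Proposition~\ref{pr-weyl-sum-bis}, which is the model to follow.
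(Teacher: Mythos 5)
Your proposal follows essentially the same route as the paper's proof: reduce to the pure weight-zero semisimple perverse part $N$ via the decomposition into perverse cohomology and weight pieces, show the remaining contributions are $O(|k_n|^{-1/2})$ on a generic set of characters using the stratified vanishing theorem together with Deligne's weight bounds and the uniform complexity estimates, apply Theorem~\ref{th-4} to $N$ and push the measure forward by the trace, and absorb the exceptional characters using only the boundedness of the test function. The one slip is the unnecessary (and in general false) intermediate claim that $|S(M,\chi)|$ is uniformly bounded over \emph{all} characters; your concluding argument correctly relies only on $\|f\|_\infty$ and the $O(|k_n|^{d-1})$ count of bad characters, which is all that is needed and is exactly how the paper handles them.
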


\begin{proof}
  Up to conjugation, we may assume that~$K\subset\Un_r(\Cc)$.
  
  We first assume that $M$ is perverse and pure of weight~$0$, so that
  the object~$N$ coincides with~$M$. We then observe that, by the
  generic vanishing theorem, it suffices to prove that the families of
  exponential sums associated to $\chi\in \wunram{M}$ become
  $\nu$-equidistributed on average, since for any bounded continuous
  function $f\colon\Cc\to\Cc$, we have
  $$
  \Bigl| \frac{1}{|\charg{G}(k_n)|} \sum_{\chi\in (\charg{G}\setminus
    \wunram{M})(k_n)}f(\Tr(\Theta_{M,k_n}(\chi)))\Bigr| \leq
  \|f\|_{\infty} \frac{|(\charg{G}\setminus \wunram{M})(k_n)|}
  {|\charg{G}(k_n)|}\to 0
  $$
  because $\wunram{M}$ is generic. But since $\Tr_*(\nu_{cp})=\nu$, 
  this equidistribution follows from Theorem~\ref{th-4} by considering
  the composition $ K\to \Un_{r}(\Cc)^{\sharp}\fleche{\Tr}\Cc$.
  \par
  We now consider the general case.  We denote by $M_0$ the arithmetic
  semisimplification of the perverse sheaf $\pH^0(M)$, and by $N'$ the
  perverse sheaf such that $M_0=N\oplus N'$, defined using the weight
  filtration on~$M_0$; the perverse sheaf~$N'$ is mixed of weights
  $\leq -1$.
  \par
  Since $M$ is semiperverse of weights~$\leq 0$, we have $\pH^i(M)=0$
  for $i\geq 1$, and $\pH^{-i}(M)$ is of weights $\leq -i\leq -1$ for
  all $i\geq 1$ (see~\cite[Th.\,5.4.1]{BBD-pervers}).
  \par
  For any $n\geq 1$ and $\chi\in\charg{G}(k_n)$, we have the equality
  \begin{equation}\label{eq-semip-equid}
    S(M,\chi)=S(N,\chi)+S(N',\chi)
    +\sum_{i\geq 1}(-1)^iS(\pH^{-i}(M),\chi)
  \end{equation}
  by~(\ref{eq-decomp-phi}).
  \par
  By generic vanishing and the trace formula (see
  Theorem~\ref{th-stratified} below, applied to $N'(-1/2)$ and
  $\pH^{-i}(M)(-1/2)$ for $i\geq 1$, which are mixed perverse sheaves of
  weights $\leq 0$), there exists a generic subset
  $\mcX\subset \charg{G}$ such that we have
  \begin{equation}\label{eq-semip-negl}
    S(N',\chi) +\sum_{i\geq 1}(-1)^iS(\pH^{-i}(M),\chi)\ll
    \frac{1}{|k_n|^{1/2}}
  \end{equation}
  for all $n\geq 1$ and $\chi\in\mcX(k_n)$. This implies that the
  sequence $(\varpi_n)$ of probability measures defined as averages of
  delta masses at the points
  $$
  S(N',\chi) +\sum_{i\geq 1}(-1)^iS(\pH^{-i}(M),\chi)
  $$
  for all $\chi\in\charg{G}(k_n)$ converges to zero in probability,
  i.e., that for any fixed real number $\eps>0$, the limit
  $$
  \lim_{n\to +\infty} \varpi_n(\{|t|>\eps\})=0
  $$
  holds.
  \par
  By the first case applied to the perverse sheaf~$N$, the sums
  $S(N,\chi)$ become $\nu$-equidistributed on average as $n\to +\infty$,
  and the formula~(\ref{eq-semip-equid})
  ensures then that the same holds for the $S(M,\chi)$ (see,
  e.g.,~\cite[Cor.\,B.4.2]{pnt} for the simple probabilistic argument
  that leads to this conclusion).
\end{proof}

\begin{remark}
  (1) As we will see later, it is often of interest to attempt to apply
  equidistribution of exponential sums to the test function
  $z\mapsto z^m$ or $z\mapsto |z|^m$ for some integer~$m\geq 1$. Such
  functions are continuous but not bounded on~$\Cc$, so that
  Theorem~\ref{th-3} does not apply, and Theorem~\ref{th-4} only gives
  the equidistribution for weakly unramified characters. In these
  attempts, the contribution of the other characters may therefore need
  to be handled separately (see for instance the proof of
  Theorem~\ref{th-sld}).
  \par
  (2) See Chapter~\ref{sec-indep} for an application of this theorem to
  a question of independence of~$\ell$ of tannakian groups.
  \par
  (3) The measure $\nu$ is also the image by the trace of
  measure~$\nu_{cp}$ on characteristic polynomials appearing in
  Theorem~\ref{th-4}. It is often called the \emph{Sato--Tate measure}
  associated to~$M$.\index{Sato--Tate measure}
\end{remark}

\begin{example}
  Let $k=\Ff_p$, and let $\psi$ be the additive character on~$k$ such
  that $\psi(x)=e(x/p)$ for $x\in k$.  Let $X\subset G$ be a
  locally-closed subvariety of~$G$ of dimension~$d\geq 1$, and let
  $f\colon X\to \Aa^1$ be a non-zero function on~$X$. Then there is a
  semiperverse sheaf~$M$ on~$G$, mixed of weights~$0$, such that the
  trace function of~$M$ is given by the formula
  $$
  t_M(x;\Ff_{p^n})=
  \begin{cases}
    (-1)^dp^{-nd/2}e(\Tr_{\Ff_{p^n}/\Ff_p}(f(x))/p)&\text{ if }
    x\in X(\Ff_{p^n})\\
    0&\text{ otherwise.}
  \end{cases}
  $$
  for $n\geq 1$ and $x\in G(k_n)$, namely
  $$
  M=j_!f^*\mcL_{\psi}[d](d/2),
  $$
  where $j\colon X\to G$ is the natural immersion.
  \par
  Hence Theorem~\ref{th-3} implies that the exponential sums
  $$
  \frac{1}{p^{nd/2}}\sum_{x\in X(\Ff_{p^n})}
  \chi(x)e\Bigl(\frac{f(x)}{p}\Bigr)
  $$
  for $\chi\in\charg{G}(\Ff_{p^n})$ \emph{always satisfy some}
  equidistribution theorem on average.
  \par
  A similar property holds if we fix a non-trivial multiplicative
  character $\eta$ of $\Ff_p^{\times}$ and an invertible function
  $g\colon X\to \Gg_m$, and consider the exponential sums
  $$
  \frac{1}{p^{nd/2}}\sum_{x\in X(\Ff_{p^n})} \chi(x)\eta(g(x))
  $$
  (using the object $j_!g^*\mcL_{\eta}[d](d/2)$, which is also mixed and
  semiperverse of weights $\leq 0$).
\end{example}
  
\section{Equidistribution for conjugacy classes}

We keep the notation of the previous sections. If the object~$M$ that we
consider is generically unramified, then we can prove equidistribution
at the level of the Frobenius conjugacy classes in the maximal compact
subgroup of the arithmetic tannakian group.


\begin{theorem}[Equidistribution on average]\label{th-2}
  Let~$k$ be a finite field and let $G$ be a connected
  commutative algebraic group over $k$. Let~$\ell$ be a prime number
  distinct from the characteristic of~$k$.
  \par
  Let~$M$ be an $\ell$-adic perverse sheaf on~$G$ that is
  arithmetically semisimple, pure of weight zero and generically
  unramified. Let $\mathcal{X}=\unram{M}$ be the set of unramified
  characters for~$M$.  Let~$K$ be a maximal compact subgroup of the
  arithmetic tannakian group $\garith{M}(\Cc)$ of~$M$, and denote
  by~$\mu^{\sharp}$ the direct image of the Haar probability
  measure~$\mu$ on~$K$ by the projection to the set~$K^{\sharp}$ of
  conjugacy classes of~$K$.
  \par
  The families of unitary Frobenius conjugacy classes
  $(\Thetaf_{M,k_n}(\chi))_{\chi\in\mathcal{X}(k_n)}$ become
  $\mu$-equidistri\-buted on average in~$K^{\sharp}$ as $n\to+\infty$.
\end{theorem}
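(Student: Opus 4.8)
The plan is to run the Weyl criterion on the compact group $K$, reducing to the analytic input already isolated in Proposition~\ref{pr-weyl-sum-bis}, exactly as in the proof of Theorem~\ref{th-4}, the only difference being that for an \emph{unramified} character the Frobenius conjugacy class genuinely lives in $K^{\sharp}$, so that its image under an arbitrary irreducible representation $\rho$ of $K$ (equivalently, of $\garith{M}$ via the unitary trick) makes sense. Set $\mcX=\unram{M}$, which is generic by hypothesis, and write $N=\rho(M)$ for the object of $\braket{M}^{\arith}$ corresponding to the algebraic extension of $\rho$ to $\garith{M}$. By the Peter--Weyl theorem the functions $g\mapsto\Tr(\rho(g))$ span a dense subspace of $\mathcal{C}(K^{\sharp})$, so it suffices to prove, for every irreducible $\rho$,
\[
\lim_{N\to+\infty}\frac{1}{N}\sum_{1\leq n\leq N}\frac{1}{|\mcX(k_n)|}\sum_{\chi\in\mcX(k_n)}\Tr(\rho(\Thetaf_{M,k_n}(\chi)))=\begin{cases}1&\text{if $\rho$ is trivial},\\0&\text{otherwise.}\end{cases}
\]
The trivial case is immediate since the inner average is identically $1$ for $n$ large, so from now on $\rho$ is non-trivial irreducible.

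Next I would record the identifications coming from the tannakian formalism. By Lemma~\ref{lm-trace}\,(2), every $\chi\in\unram{M}(k_n)$ is unramified for $N$, the fiber functor gives $\omega_\chi(N)=\rmH^0(G_{\bar k},N_\chi)=\rmH^0_c(G_{\bar k},N_\chi)$, and the Frobenius endomorphism of this space has conjugacy class $\rho(\Frf_{M,k_n}(\chi))$; since $M$, and hence $N$ by Theorem~\ref{thm-Harith-fib-funct}, is pure of weight zero, these eigenvalues have modulus $1$, so $\Tr(\rho(\Thetaf_{M,k_n}(\chi)))=\Tr(\Fr_{k_n}\mid\rmH^0_c(G_{\bar k},N_\chi))$. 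Moreover $\unram{M}\subset\funram{N}$, and $\funram{N}$ is generic by Proposition~\ref{pr-frob-unram} (applied via Proposition~\ref{pr-tensor-ab} to the simple constituents of $N$); since both $\mcX$ and $\funram{N}$ are generic of the same leading order, replacing the average over $\mcX(k_n)$ by the average over $\funram{N}(k_n)$ changes it by $O(|k_n|^{-1})$, the $O(|k_n|^{d-1})$ characters in the symmetric difference each contributing at most $\dim\rho$ and $|\mcX(k_n)|\asymp|k_n|^{d}$ with $d=\dim G$.

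Now I would invoke Proposition~\ref{pr-weyl-sum-bis} for the object $N$, which yields
\[
\frac{1}{|\funram{N}(k_n)|}\sum_{\chi\in\funram{N}(k_n)}\Tr(\Fr_{k_n}\mid\rmH^0_c(G_{\bar k},N_\chi))=t_N(e;k_n)+O(|k_n|^{-1/2}),
\]
and then conclude verbatim as in the last paragraphs of the proof of Theorem~\ref{th-4}: decompose the arithmetically semisimple $N$ into arithmetically simple summands; those with support of positive dimension contribute $t(e;k_n)\ll|k_n|^{-1/2}$ by Deligne's Riemann Hypothesis, while the punctual summands $\alpha_i^{\deg}\otimes\delta_{x_i}$ contribute $\alpha_i^{n}$ when $x_i=e$ and $0$ otherwise. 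Averaging over $n\leq N$ and using $\frac1N\sum_{n\leq N}\alpha^{n}\to 0$ for $\alpha\neq1$ annihilates every term except those with $x_i=e$ and $\alpha_i=1$, whose number is the multiplicity of the trivial representation of $\garith{M}$ in $\rho$; this multiplicity is $0$ because $\rho$ is non-trivial irreducible, so the limit is $0$, as required.

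The main obstacle here is not analytic — Proposition~\ref{pr-weyl-sum-bis} already absorbs the stratified vanishing, the Riemann Hypothesis, and the conductor bounds — but rather the careful bookkeeping that lets one pass from an abstract irreducible representation of the compact group $K$ to the perverse object $N=\rho(M)$ of $\braket{M}^{\arith}$, and then identify the unitary Frobenius conjugacy class $\Thetaf_{M,k_n}(\chi)\in K^{\sharp}$ with the Frobenius action on $\rmH^0_c(G_{\bar k},N_\chi)$. This step uses in an essential way the standing hypothesis that $M$ is \emph{generically unramified} (so that $\mcX$ is genuinely generic and the fiber functors $\omega_\chi$ exist for generically many $\chi$), purity (Theorem~\ref{thm-Harith-fib-funct}, to land in the compact group), and the trace compatibilities of Lemma~\ref{lm-trace}; once these are in place, the reduction to Theorem~\ref{th-4}'s mechanism is routine.
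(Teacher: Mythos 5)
Your proposal is correct and follows essentially the same route as the paper: a Peter--Weyl/Weyl-criterion reduction to non-trivial irreducible $\rho$, the identification $\Tr(\rho(\Thetaf_{M,k_n}(\chi)))=\Tr(\Fr_{k_n}\mid \rmH^0_c(G_{\bar k},\rho(M)_\chi))$ via Lemma~\ref{lm-trace}\,(2), and then Proposition~\ref{pr-weyl-sum-bis} plus the Cesàro average. The only (harmless) organizational difference is that the paper channels the final step through Proposition~\ref{pr-weyl-sum}, separating the cases where $\rho|_{\ggeo{M}}$ is non-trivial or trivial, whereas you treat both uniformly by the punctual-versus-positive-dimensional decomposition and the vanishing of $\tfrac1N\sum_{n\le N}\alpha^n$ for $\alpha\neq 1$, exactly as in the proof of Theorem~\ref{th-4}.
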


Precisely, we are considering here the equidistribution on average of
the pairs $(\mathcal{X}(k_n),\Theta_n)$ where
$\Theta_n(\chi)=\Thetaf_{M,k_n}(\chi)$.

\begin{proof}
  By Theorem~\ref{thm-arith_cat_neut_tan} and the definition of generic
  sets, we know that $|\mathscr{X}(k_n)|\sim |G(k_n)|$ as $n\to+\infty$, and
  hence the sets of unramified conjugacy classes are non-empty for $n$
  large enough.
  \par
  By the Peter-Weyl theorem, any continuous central function
  $f \colon K \to \Cc$ is a uniform limit of linear combinations of
  characters of finite-dimensional unitary irreducible representations
  of~$K$, and hence it suffices to prove the formula
  \eqref{eqn:equidis-average} when~$f$ is such a character. For the
  trivial representation, both sides are equal to $1$. If the
  representation is non-trivial, then the integral on the right-hand
  side vanishes, and we are reduced to showing that the limit on the
  left-hand side exists and is equal to~$0$. We thus consider a
  non-trivial irreducible representation $\rho$ of~$K$, which we
  identify with a non-trivial irreducible algebraic
  $\Qlb$-representation of the arithmetic tannakian group $\garith{M}$
  by Weyl's unitarian trick (see, e.g.,~\cite[3.2]{gkm} for this step);
  applying the next proposition then completes the proof.
\end{proof}


\begin{proposition}\mbox{}\label{pr-weyl-sum}
  With notation as in Theorem~\emph{\ref{th-2}}, let~$\rho$ be a
  non-trivial irreducible unitary representation of~$K$, identified with
  a non-trivial irreducible representation of $\garith{M}$.
  \begin{enumth}
  \item If the restriction of $\rho$ to $\ggeo{M}$ is non-trivial, then
    \begin{equation}\label{eqn:nontrivialKg}
      \frac{1}{|\mathscr{X}(k_n)|}\sum_{\chi \in \mathscr{X}(k_n)}
      \Tr(\rho(\Thetaf_{M,k_n}(\chi))) \ll \frac{1}{\sqrt{|k_n|}}
    \end{equation}
    for all $n$ such that $\mathscr{X}(k_n)$ is not empty.
  \item If the restriction of $\rho$ to $\ggeo{M}$ is trivial, then
    \begin{equation}\label{eqn:trivialKg}
      \lim_{N \to +\infty}
      \frac{1}{N}
      \sum_{\substack{1 \leq n \leq N\\\mathcal{X}(k_n)\not=\emptyset}}
      \frac{1}{|\mathscr{X}(k_n)|}
        \sum_{\chi \in \mathscr{X}(k_n)} \Tr(\rho(\Thetaf_{M,k_n}(\chi)))=0.
    \end{equation}
  \end{enumth}
\end{proposition}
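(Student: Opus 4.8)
The plan is to handle the two cases by combining the object $N=\rho(M)$ of $\braket{M}^{\arith}$ with the basic estimate of Proposition~\ref{pr-weyl-sum-bis} and the elementary analysis of trace functions at the identity that already appears in the proof of Theorem~\ref{th-4}. Recall first that any unramified character $\chi$ for $M$ is Frobenius-unramified for every object of $\braket{M}^{\arith}$ (Lemma~\ref{lm-trace}(2)), so $\mathscr{X}=\unram{M}\subset \funram{N}$; moreover $\mathscr{X}$ is generic by hypothesis and $\funram{N}$ is generic by Proposition~\ref{pr-frob-unram}, so $|\mathscr{X}(k_n)|/|\funram{N}(k_n)|=1+O(|k_n|^{-1})$ and both are $\sim |G(k_n)|$. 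Since $\chi$ is unramified for $M$, the Frobenius conjugacy class $\Frf_{M,k_n}(\chi)$ acts on $\omega_\chi(N)=\rmH^0(G_{\bar k},N_\chi)$ with the same characteristic polynomial as $\Fr_{k_n}$, and after passing to the unitary part $\Thetaf_{M,k_n}(\chi)$ we have $\Tr(\rho(\Thetaf_{M,k_n}(\chi)))=\Tr(\Fr_{k_n}\mid \rmH^0_c(G_{\bar k},N_\chi))$ (using $\rmH^0=\rmH^0_c$ as $\chi$ is weakly unramified). Thus
\[
\frac{1}{|\mathscr{X}(k_n)|}\sum_{\chi\in\mathscr{X}(k_n)}\Tr(\rho(\Thetaf_{M,k_n}(\chi)))
=\frac{1}{|\mathscr{X}(k_n)|}\sum_{\chi\in\mathscr{X}(k_n)}\Tr(\Fr_{k_n}\mid\rmH^0_c(G_{\bar k},N_\chi)),
\]
and replacing $\mathscr{X}(k_n)$ by $\funram{N}(k_n)$ on the right costs only $O(|k_n|^{-1})$ (since $\dim\rho$ bounds each trace and the symmetric difference has character codimension $\geq 1$). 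By Proposition~\ref{pr-weyl-sum-bis} this equals $t_N(e;k_n)+O(|k_n|^{-1/2})$.

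It remains to identify $t_N(e;k_n)$. Decompose the semisimple perverse sheaf $N=\bigoplus_{r\geq 0}\bigoplus_{i\in I(r)}N_{r,i}$ into arithmetically simple summands with $\dim\supp(N_{r,i})=r$. For $r\geq 1$, purity and the trace function bound for geometrically semisimple pure sheaves give $t_{N_{r,i}}(e;k_n)\ll |k_n|^{-1/2}$. The summands with $r=0$ are of the form $\alpha_i^{\deg}\otimes\delta_{x_i}$ with $|\alpha_i|=1$; those with $x_i\neq e$ contribute $0$ at $e$, and the multiplicity of the trivial representation of $\ggeo{M}$ in $\rho$ equals the number of $i$ with $x_i=e$ (i.e., the number of geometrically trivial summands of $N$). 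Now in case (1), where $\rho|\ggeo{M}$ is non-trivial, there are \emph{no} geometrically trivial summands, so $t_N(e;k_n)\ll |k_n|^{-1/2}$ and we obtain \eqref{eqn:nontrivialKg} directly, for each $n$ with $\mathscr{X}(k_n)\neq\emptyset$. In case (2), where $\rho|\ggeo{M}$ is trivial, the geometrically trivial summands are the objects $\alpha_i^{\deg}\otimes\delta_e$, $i\in J$, and $t_N(e;k_n)=\sum_{i\in J}\alpha_i^n+O(|k_n|^{-1/2})$; the $\alpha_i$ are the eigenvalues of the Frobenius conjugacy class $\xi$ of the geometrically trivial object on $\garith{M}/\ggeo{M}$ (Proposition~\ref{pr:geom-vs-arith2}), and since $\rho$ is non-trivial as a representation of $\garith{M}$ while trivial on $\ggeo{M}$, it is non-trivial on $\garith{M}/\ggeo{M}$, so none of the $\alpha_i$ is $1$. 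Averaging over $n\leq N$ and using $\frac1N\sum_{n\leq N}\alpha_i^n\to 0$ for $|\alpha_i|=1$, $\alpha_i\neq 1$, yields \eqref{eqn:trivialKg}.

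I expect the only genuinely delicate point to be the bookkeeping in case (2): one must verify that $t_N(e;k_n)$ is, up to $O(|k_n|^{-1/2})$, exactly the sum $\sum_{i\in J}\alpha_i^n$ of Frobenius eigenvalues on the geometrically trivial part, and that $\rho$ being non-trivial on $\garith{M}$ but trivial on $\ggeo{M}$ forces every such eigenvalue to be a non-trivial root of the dense cyclic subgroup generated by $\xi$ in the multiplicative-type group $\garith{M}/\ggeo{M}$, hence $\neq 1$; this is where Proposition~\ref{pr:geom-vs-arith2}(2)--(3) is used. Everything else is a direct application of Proposition~\ref{pr-weyl-sum-bis}, Lemma~\ref{lm-trace}, and the genericity of $\mathscr{X}$ and $\funram{N}$, exactly paralleling the end of the proof of Theorem~\ref{th-4}.
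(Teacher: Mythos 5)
Your proof is correct and follows essentially the same route as the paper: apply Proposition~\ref{pr-weyl-sum-bis} to $N=\rho(M)$ and control the trace function at the identity via purity and the support of the simple constituents, exactly as at the end of the proof of Theorem~\ref{th-4}. The only (harmless) deviation is in case (2), where the paper bypasses the basic estimate and uses the exact identity $\rho(\Thetaf_{M,k_n}(\chi))=\rho(\xi)^n$ from Proposition~\ref{pr:geom-vs-arith2} (after noting that $\rho$ is one-dimensional), whereas you reach the same non-trivial geometric progression through $t_N(e;k_n)$ with an extra $O(|k_n|^{-1/2})$ error term---both arguments yield \eqref{eqn:trivialKg}.
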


\begin{proof}
  (1) We assume that the restriction of~$\rho$ to the geometric
  tannakian group is non-trivial.

  Let $\rho(M)$ denote the object of the tannakian category
  $\braket{M}^{\arith}$ corresponding to the representation $\rho$ of
  the arithmetic tannakian group $\garith{M}$; this is a simple perverse
  sheaf on~$G$.

  We have $\mcX\subset \funram{\rho(M)}$. Applying
  Proposition~\ref{pr-weyl-sum-bis} to the object~$N=\rho(M)$, we obtain
  $$
  \frac{1}{|\mcX(k_n)|}\sum_{\chi \in \mcX(k_n)}
  \Tr(\rho(\Thetaf_{M,k_n}(\chi))) =
  t_{\rho(M)}(e;k_n)+O(\abs{k_n}^{-1/2})
  $$
  since the conjugacy class $\Thetaf_{M,k_n}(\chi)$ coincides with
  $\Theta_{M,k_n}(\chi)$ when $\chi$ is unramified for~$M$.
  
  Since $\rho(M)$ is a simple perverse sheaf on $G$, the classification
  of \cite[Th.\,4.3.1\,(ii)]{BBD-pervers} shows that there exist an
  irreducible closed subvariety $s \colon Y \to G$ of dimension~$r$, an
  open dense smooth subvariety $j \colon U \to Y$, and an irreducible
  lisse $\Qlb$-sheaf $\mcF$ on $U$ such that
  $\rho(M)=s_\ast j_{!\ast}\mcF[r]$. Since the functors $s_\ast$ and
  $j_{!\ast}$ are weight-preserving, the sheaf $\mcF$ is pure of
  weight~$-r$.

  If $r=0$, then $Y$ consists of a closed point of $G$, which must be
  different from the neutral element~$e$, since otherwise $\rho(M)$
  would be geometrically trivial, contrary to the assumption in~(1). In
  that case, we have therefore $t_{\rho(M)}(e;k_n)=0$. On the other
  hand, if $r\geq 1$ we get
  \begin{equation}\label{eq-comp1}
    t_{\rho(M)}(e;k_n)
    \ll \frac{1}{\sqrt{|k_n|}}
  \end{equation}
  (by Proposition~\ref{pr-pointwise-bound}), which concludes the proof
  of~(1).
  \par
  (2) We assume that the restriction of the representation~$\rho$
  to~$\ggeo{M}$ is trivial.  Then $\rho$ has dimension~$1$ since the
  quotient $\garith{M}/\ggeo{M}$ is abelian
  (Proposition~\ref{pr:geom-vs-arith2}).

  Let $Q$ be the set of integers $n\geq 1$ such that $\mathcal{X}(k_n)$
  is not empty; it contains all sufficiently large integers.  It follows
  from Proposition~\ref{pr:geom-vs-arith2} that there exists an element
  $\xi$ of $\garith{M}/\ggeo{M}$, generating a Zariski-dense subgroup of
  this group, such that $\rho(\Thetaf_{M,k_m}(\chi))=\rho(\xi)^n$ for any
  $n\geq 1$ and any $\chi\in\charg{G}(k_n)$ unramified
  for~$M$. Moreover, we have $\rho(\xi)\not=1$, since otherwise the
  representation~$\rho$ would be trivial.  We conclude that
  $$
  \frac{1}{N} \sum_{\substack{1\leq n\leq N\\\mcX(k_n)\not=\emptyset}}
  \frac{1}{|\mathcal{X}(k_n)|} \sum_{\chi\in \mathscr{X}(k_n)}
  \Tr(\rho(\Thetaf_{M,k_n}(\chi))) = \frac{1}{N} \sum_{\substack{1\leq
      n\leq N\\ n\in Q}} \rho(\xi)^n
  $$
  converges to~$0$ as $N\to+\infty$ by summing a non-trivial geometric
  progression.


  %
  \par

\end{proof}

\begin{remark}
  For certain reductive groups $\Gg\subset \GL_r(\Cc)$, a conjugacy
  class in a maximal compact subgroup $K$ of~$\Gg$ is determined by
  its characteristic polynomial (equivalently, the exterior powers of
  the standard representation\index{standard representation} generate the representation ring
  of~$\Gg$).  If $\garith{M}(\Cc)$ has this property, then
  Theorem~\ref{th-4} implies a version of Theorem~\ref{th-2}, even
  if~$M$ is not generically unramified.
  \par
  If $\Gg$ is semisimple, this property holds, for instance,
  for~$\SL_r(\Cc)\subset \GL_r(\Cc)$, for
  $\Sp_{2r}(\Cc)\subset \GL_{2r}(\Cc)$, and for
  $\Gg_2(\Cc)\subset\GL_7(\Cc)$. Indeed, the first two cases are
  explained by Katz in~\cite[Lemma\,13.1, Remark\,13.2]{gkm}; in the
  third case, we note that the second fundamental representation
  of~$\Gg_2(\Cc)$ is virtually $\bigwedge^2\Std-\Std$ (see,
  e.g.,~\cite[p.\,353]{fulton-harris}) so that the exterior powers of
  the standard $7$-dimensional representation generate the
  representation ring.
\end{remark}

We deduce immediately from Theorem~\ref{th-2} a useful corollary,
analogue to some classical consequences of the Chebotarev density
theorem.\index{Chebotarev density theorem}

\begin{corollary}\label{cor-generate}
  Let~$k$ be a finite field and let $G$ be a connected commutative
  algebraic group over~$k$. Let~$M$ be a perverse sheaf on~$G$ which
  is arithmetically semisimple, pure of weight zero and generically
  unramified.
  \par
  Let~$S$ be any finite subset of~$\charg{G}$. The union of the unitary
  Frobenius conjugacy classes of~$M$ associated to unramified characters
  in $\charg{G}\setminus S$ is dense in a maximal compact subgroup
  of~$\garith{M}(\Cc)$.
\end{corollary}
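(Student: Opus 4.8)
The plan is to deduce this from the equidistribution-on-average statement of Theorem~\ref{th-2} by the standard Chebotarev-style argument. Fix once and for all a maximal compact subgroup $K$ of $\garith{M}(\Cc)$ as in that theorem, write $\mcX=\unram{M}$ (a generic subset of $\charg{G}$ by hypothesis, so that $\mcX(k_n)\neq\emptyset$ for all large~$n$), and let $\Omega\subseteq K$ be the union of the conjugacy classes $\Thetaf_{M,k_n}(\chi)$ over all $n\geq 1$ and all $\chi\in(\mcX\setminus S)(k_n)$. This $\Omega$ is conjugation-invariant, and it suffices to prove that its closure is all of~$K$; I would argue by contradiction, assuming $\overline{\Omega}\neq K$.

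First I would produce a suitable test function. If $\overline{\Omega}\neq K$, then $V=K\setminus\overline{\Omega}$ is a non-empty conjugation-invariant open set; picking a point $x_0\in V$ and a continuous function with values in $[0,1]$, equal to~$1$ at~$x_0$ and supported in a small neighbourhood of~$x_0$ contained in~$V$, and averaging it over the conjugation action of~$K$, one obtains a continuous \emph{central} function $f\colon K\to[0,1]$ that vanishes on $\overline{\Omega}$ and satisfies $\int_K f\,d\mu>0$ (using that Haar measure on the compact group~$K$ has full support, and that averaging over conjugation preserves the integral). As a central function, $f$ descends to a continuous function on the space $K^{\sharp}$ of conjugacy classes, with $\int_{K^{\sharp}}f\,d\mu^{\sharp}=\int_K f\,d\mu>0$.

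Next I would apply Theorem~\ref{th-2} to~$f$ and deal with the removal of the finite set~$S$. Since $\charg{G}$ is the \emph{disjoint} union of the finite groups $\charg{G}(k_n)$, the set $S$ meets only finitely many of them, say $S\cap\charg{G}(k_n)=\emptyset$ for $n>n_1$. Consequently, for every $N\geq 1$ the Cesàro average of $f$ over all characters in $\mcX(k_n)$ and the Cesàro average of $f$ over the characters in $(\mcX\setminus S)(k_n)$ differ by at most $\|f\|_{\infty}\,N^{-1}\sum_{n\leq n_1}|S\cap\charg{G}(k_n)|\,/\,|\mcX(k_n)|$, which tends to~$0$ as $N\to+\infty$. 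By Theorem~\ref{th-2} the first Cesàro average converges to $\int_{K^{\sharp}}f\,d\mu^{\sharp}>0$, hence so does the second. But $f$ vanishes on $\overline{\Omega}\supseteq\Omega$, so $f(\Thetaf_{M,k_n}(\chi))=0$ for every $n$ and every $\chi\in(\mcX\setminus S)(k_n)$, which makes the second average identically~$0$ — a contradiction. Therefore $\overline{\Omega}=K$, which is exactly the asserted density in a maximal compact subgroup of $\garith{M}(\Cc)$.

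The argument is essentially bookkeeping on top of Theorem~\ref{th-2}, so there is no genuine obstacle; the two points needing a little care are the construction of the central non-negative test function that vanishes on the hypothetical proper closed subset yet has positive Haar integral (Urysohn, averaging over conjugation, full support of Haar measure), and the observation that excising the finite set~$S$ leaves the limit of the Cesàro averages unchanged — which hinges precisely on $\charg{G}$ being a disjoint union, so that~$S$ is supported at only finitely many levels~$n$.
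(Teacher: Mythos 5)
Your argument is correct and is exactly the deduction the paper intends: Corollary~\ref{cor-generate} is stated without proof as an immediate Chebotarev-style consequence of Theorem~\ref{th-2}, and your contradiction argument (a conjugation-averaged Urysohn test function vanishing on the proper closed invariant set but with positive Haar integral, plus the observation that the finite set~$S$ only affects finitely many levels~$n$ and hence not the Cesàro limit) is the standard way to carry it out.
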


\section{Equidistribution without average}
\label{sec-without-average}

We continue again with the previous notation.
If we make the extra assumption that the geometric and the arithmetic
tannakian groups coincide, then the equidistribution of Frobenius
conjugacy classes holds without averaging over~$n$. We summarize the
variants of the previous theorems in this situation.

\begin{theorem}[Equidistribution without average]
  \label{th-2bis}
  Let~$M$ be an $\ell$-adic perverse sheaf on~$G$ that is arithmetically
  semisimple, pure of weight zero. We assume that the inclusion
  $\ggeo{M} \subset \garith{M}$ is an equality.
  \par
  Let~$r\geq 0$ be the tannakian dimension of~$M$.
  Let~$K \subset \Un_r(\Cc)$ be a conjugate of 
  a maximal compact subgroup of the arithmetic tannakian group
  $\garith{M}(\Cc)\subset \GL_r(\Cc)$ of~$M$.
  Denote
  by~$\mu$ the Haar probability measure on~$K$, by~$\nu_{cp}$ its
  direct image by the map $K\to \Un_r(\Cc)^{\sharp}$, by $\nu$ its
  image by the trace, and by~$\mu^{\sharp}$ its image by the map
  $K\to K^{\sharp}$.  \nomenclature[$mu$]{$\mu^{\sharp}$}{image of
    Haar measure of~$K$ on~$K^{\sharp}$}
  \par
  \begin{enumth}
  \item The families of unitary Frobenius conjugacy classes
    $(\Theta_{M,k_n}(\chi))_{\chi\in\wunram{M}(k_n)}$ become
    $\nu_{cp}$-equidistri\-buted as $n\to+\infty$.
  \item The families of exponential sums $S(M,\chi)$ for
    $\chi\in\charg{G}(k_n)$ become
    $\nu$-equidistri\-buted as $n\to+\infty$.
  \item If $M$ is generically unramified, then the family of conjugacy
    classes~$(\Thetaf_{M,k_n}(\chi))_{\chi\in\unram{M}(k_n)}$ become
    $\mu^{\sharp}$-equidistributed as $n$ goes to infinity.
  \end{enumth}
\end{theorem}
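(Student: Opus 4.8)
The plan is to reduce Theorem~\ref{th-2bis} to the averaged statements already proved (Theorems~\ref{th-4}, \ref{th-3} and~\ref{th-2}) by exploiting the hypothesis $\ggeo{M}=\garith{M}$ to eliminate the need for the Ces\`aro average over~$n$. Recall from the proofs of those theorems that the only place where averaging over~$n$ was actually used was to kill the ``oscillating'' contributions: in the notation of the proof of Theorem~\ref{th-4}, the main term for a non-trivial irreducible representation~$\rho$ of~$\Un_r(\Cc)$ was $\sum_{i\in J}\alpha_i^n+O(|k_n|^{-1/2})$, where $J$ indexes the geometrically trivial simple constituents $N_{0,i}=\alpha_i^{\deg}\otimes\delta_{x_i}$ of $N=\rho(M)$ with $x_i=e$, and the Ces\`aro mean was invoked to make $\sum_{i\in J\setminus J^0}\alpha_i^n$ vanish in the limit. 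The key observation is that under the hypothesis $\ggeo{M}=\garith{M}$ such oscillating terms cannot occur at all: every geometrically trivial object of $\braket{M}^{\arith}$ corresponds to a representation of the trivial group $\garith{M}/\ggeo{M}$, hence is a direct sum of copies of~$\un$ (up to the fixed identification), so for a \emph{non-trivial} irreducible~$\rho$ there are no geometrically trivial constituents, i.e. $J=\varnothing$, and the main term is simply $O(|k_n|^{-1/2})$.

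More precisely, for part~(1) I would run verbatim the proof of Theorem~\ref{th-4} up to formula~\eqref{eqn:computation-traces-proof}, applied to a non-trivial unitary representation~$\rho$ of $\Un_r(\Cc)$ and $N=\rho(M)$: Proposition~\ref{pr-weyl-sum-bis} gives
\[
  \frac{1}{|\wunram{M}(k_n)|}\sum_{\chi\in\wunram{M}(k_n)}\Tr(\rho(\Theta_{M,k_n}(\chi)))
  = t_N(e;k_n)+O(|k_n|^{-1/2}),
\]
using that $\wunram{M}$ and $\funram{N}$ are both generic and differ by a set of character codimension $\geq1$, and that $|\Tr\rho(\Theta_{M,k_n}(\chi))|\leq\dim\rho$ on the complement. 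Then one decomposes $N$ into arithmetically simple summands; the summands supported in positive dimension contribute $O(|k_n|^{-1/2})$ by Deligne's Riemann Hypothesis, and the punctual summands $\alpha_i^{\deg}\otimes\delta_{x_i}$ contribute $0$ unless $x_i=e$. Since $\ggeo{M}=\garith{M}$ forces $J=\varnothing$ (no geometrically trivial constituents of a non-trivial irreducible $\rho$), we get $t_N(e;k_n)=O(|k_n|^{-1/2})$, hence the left-hand side tends to~$0$ as $n\to+\infty$, which is precisely $\nu_{cp}$-equidistribution without average by the Peter--Weyl criterion applied to $\Un_r(\Cc)^{\sharp}$. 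Part~(2) follows from~(1) exactly as Theorem~\ref{th-3} follows from Theorem~\ref{th-4}: one reduces to $\chi\in\wunram{M}$ because the complement is generic of codimension~$\geq1$, pushes forward along $\Tr\colon\Un_r(\Cc)^{\sharp}\to\Cc$ so that $\Tr_*\nu_{cp}=\nu$, and handles the general semiperverse mixed case by the weight-filtration argument and the stratification bound of Theorem~\ref{th-stratified}, none of which uses averaging. For part~(3), assuming moreover that $M$ is generically unramified, one runs the proof of Proposition~\ref{pr-weyl-sum}: case~(1) there (restriction of $\rho$ to $\ggeo{M}$ non-trivial) already gives the unaveraged bound $\ll|k_n|^{-1/2}$, and case~(2) (restriction trivial) is now vacuous since $\ggeo{M}=\garith{M}$ means the only representation of $\garith{M}$ trivial on $\ggeo{M}$ is the trivial one; thus for every non-trivial irreducible~$\rho$ of~$K$ the averaged-over-$\chi$ trace tends to~$0$ without any further average over~$n$, giving $\mu^{\sharp}$-equidistribution in $K^{\sharp}$.

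The only genuine subtlety — and the step I expect to require the most care to phrase correctly rather than the most mathematical work — is making rigorous the assertion that $\ggeo{M}=\garith{M}$ eliminates the geometrically trivial constituents for non-trivial~$\rho$, together with checking that the error terms are uniform in~$n$ in the right way. Concretely one must note that the identification of the geometric tannakian group with its image in the arithmetic one (adopted after Proposition~\ref{pr:geom-vs-arith2}) turns the equality of groups into the statement that base change $\braket{M}^{\arith}\to\braket{M}^{\geo}$ is an equivalence, so a simple object of $\braket{M}^{\arith}$ is geometrically trivial iff it is already (the image of) a geometrically trivial object, iff — by the structure of geometrically trivial objects recalled in Proposition~\ref{pr:geom-vs-arith2}, namely direct sums of $\alpha^{\deg}\otimes\delta_e$ — it corresponds under the tannakian equivalence to a direct sum of copies of the trivial representation of $\garith{M}$. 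Hence in the isotypic decomposition of $\rho(M)$ the multiplicity of geometrically trivial summands equals the multiplicity of the trivial representation in $\rho$, which is $0$ for $\rho$ non-trivial irreducible. Once this is stated cleanly, every remaining estimate is literally copied from the proofs of Theorems~\ref{th-4}, \ref{th-3}, \ref{th-2} and Proposition~\ref{pr-weyl-sum}, simply deleting the Ces\`aro average and the now-empty ``oscillating'' sums; I would therefore write the proof as a short reduction to those results rather than reproving anything, remarking only on the one place where the hypothesis $\ggeo{M}=\garith{M}$ intervenes.
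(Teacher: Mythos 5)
Your overall route is exactly the paper's: delete the Ces\`aro average from the proofs of Theorems~\ref{th-4}, \ref{th-3} and~\ref{th-2}, using that under $\ggeo{M}=\garith{M}$ the oscillating contributions disappear; for part~(3) this amounts to the paper's one-line remark that a non-trivial irreducible representation of~$\garith{M}$ remains non-trivial on~$\ggeo{M}$, so only the first part of Proposition~\ref{pr-weyl-sum} is needed. Your parts~(2) and~(3) are fine.

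There is, however, a genuine error in your treatment of part~(1). In the proof of Theorem~\ref{th-4} the representation~$\rho$ runs over the non-trivial irreducible representations of the ambient group~$\Un_r(\Cc)$, not of~$\garith{M}$, and the Weyl-criterion target for $\nu_{cp}$-equidistribution is $\int_K\Tr(\rho(g))\,d\mu(g)$, i.e.\ the multiplicity $|J^0|$ of the trivial representation in $\rho|_{\garith{M}}$ --- which is in general nonzero even for non-trivial irreducible~$\rho$ (take $\garith{M}=\Sp_r$ and $\rho=\bigwedge^2\Std$: the multiplicity is~$1$). Your claim that $J=\emptyset$ and that the Weyl sums tend to~$0$ conflates ``non-trivial as a representation of $\Un_r(\Cc)$'' with ``no trivial constituents after restriction to $\garith{M}$''; taken literally it would prove equidistribution with respect to the pushforward of the Haar measure of all of $\Un_r(\Cc)$, which is false whenever $\garith{M}\subsetneq\GL_r$. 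What the hypothesis $\ggeo{M}=\garith{M}$ actually yields is $J=J^0$: every geometrically trivial constituent $\alpha_i^{\deg}\otimes\delta_e$ of $\rho(M)$ must have $\alpha_i=1$ (otherwise it would be a non-trivial character of $\garith{M}/\ggeo{M}$), so the term $\sum_{i\in J}\alpha_i^n$ in \eqref{eqn:computation-traces-proof} is constantly equal to $|J^0|$, and the unaveraged limit exists and equals $\int_K\Tr(\rho(g))\,d\mu(g)$ --- exactly what $\nu_{cp}$-equidistribution requires. With this correction your argument coincides with the paper's, which disposes of the theorem by precisely this reduction.
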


\begin{proof}
  This follows from the Weyl Criterion\index{Weyl criterion} as in the proof of
  Theorems~\ref{th-4}, \ref{th-3} and~\ref{th-2}; in the case of the
  last statement, for instance, we use only the first part of
  Proposition~\ref{pr-weyl-sum} (as we may since a non-trivial
  irreducible representation of~$\garith{M}$ is a non-trivial
  irreducible representation of~$\ggeo{M}$ under the assumption).
\end{proof}

\begin{remark}
  There is an obvious further variant of Theorems~\ref{th-2bis} and of
  the case of mixed semiperverse objects of weights $\leq 0$
  of~\ref{th-3}: if~$M$ is mixed semiperverse of weights $\leq 0$,
  with~$N$ as in Theorem~\ref{th-3} such that $\garith{N}=\ggeo{N}$,
  then the discrete Fourier transform becomes equidistributed towards
  the measure~$\nu$ without average over~$n$.
\end{remark}

There is a converse to Theorem~\ref{th-2bis}. In fact, there is a
statement which is valid for an individual representation of the unitary
group (this will be useful in Chapter~\ref{sec-larsen}).

\begin{proposition}\label{pr-conv-weyl-geo}
  Let~$M$ be an $\ell$-adic perverse sheaf on~$G$ that is arithmetically
  semisimple and pure of weight zero. Let~$r$ be the tannakian dimension
  of~$M$ and let $\mcX=\mcX_w(M)$ be the set of weakly unramified
  characters for~$M$. Let $\rho$ be a finite-dimensional unitary
  representation of~$\Un_r(\Cc)$. Assume that the sequence
  $$
  \frac{1}{|\mcX(k_n)|}\sum_{\chi\in\mcX(k_n)}
  \Tr(\rho(\Theta_{M,k_n}(\chi))),
  $$
  defined for all integers $n\geq 1$ such that $\mcX(k_n)$ is not
  empty, has a limit. Then this limit is equal to the multiplicity of
  the trivial representation in the restriction of $\rho$
  to~$\ggeo{M}$, and the latter equals the multiplicity of the trivial
  representation in~$\rho$.
\end{proposition}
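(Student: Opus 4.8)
The plan is to reduce the statement to the computation already carried out in the proof of Theorem~\ref{th-4}. First I would observe that, since $M$ is arithmetically semisimple and pure of weight zero, the object $N=\rho(M)$ of $\braket{M}^{\arith}$ corresponding to the restriction of $\rho$ to $\garith{M}$ is again arithmetically semisimple and pure of weight zero (Theorem~\ref{thm-Harith-fib-funct}). By Proposition~\ref{pr-frob-unram} the set $\funram{N}$ of Frobenius-unramified characters for $N$ (relative to $M$) is generic, and it is contained in $\mcX=\wunram{M}$; moreover for $\chi\in\funram{N}(k_n)$ we have
$$
\Tr(\rho(\Theta_{M,k_n}(\chi)))=\Tr(\Fr_{k_n}\mid\rmH^0_c(G_{\bar k},N_\chi))
$$
by the definition of Frobenius-unramified characters. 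Since both $\mcX$ and $\funram{N}$ are generic, $|\funram{N}(k_n)|/|\mcX(k_n)|=1+O(|k_n|^{-1})$, and the contribution of the characters in $\mcX(k_n)\setminus\funram{N}(k_n)$ to the average is $O(|k_n|^{-1})$ because $|\Tr(\rho(\Theta_{M,k_n}(\chi)))|\leq\dim(\rho)$ is bounded. Therefore the given sequence has the same limit as
$$
\frac{1}{|\funram{N}(k_n)|}\sum_{\chi\in\funram{N}(k_n)}\Tr(\Fr_{k_n}\mid\rmH^0_c(G_{\bar k},N_\chi)),
$$
provided the latter exists.

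Next I would apply Proposition~\ref{pr-weyl-sum-bis} to the object $N$, which gives that this last average equals $t_N(e;k_n)+O(|k_n|^{-1/2})$. So it remains to identify $\lim_{n\to\infty}t_N(e;k_n)$ with the multiplicity of the trivial representation of $\ggeo{M}$ in $\rho$. For this I would decompose the semisimple perverse sheaf $N$ as a direct sum of pairwise non-isomorphic arithmetically simple perverse sheaves $N_{r,i}$ whose support has dimension $r$, exactly as in the proof of Theorem~\ref{th-4}. For $r\geq 1$ the classification~\cite[Th.\,4.3.1\,(ii)]{BBD-pervers} and purity force $|t_{N_{r,i}}(e;k_n)|\ll|k_n|^{-1/2}\to 0$; the objects $N_{0,i}$ are of the form $\alpha_i^{\deg}\otimes\delta_{x_i}$, and $t_{N_{0,i}}(e;k_n)=0$ unless $x_i=e$, in which case $t_{N_{0,i}}(e;k_n)=\alpha_i^n$. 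Hence, writing $J\subset I(0)$ for the set of indices with $x_i=e$,
$$
t_N(e;k_n)=\sum_{i\in J}\alpha_i^n+O(|k_n|^{-1/2}).
$$
Now I would use the hypothesis that the original sequence converges: this forces $\sum_{i\in J}\alpha_i^n$ to converge, which (since the $\alpha_i$ are distinct $k$-Weil numbers of weight $0$, i.e.\ roots of unity times units of absolute value $1$ — more precisely, since each $N_{0,i}$ is geometrically trivial and arithmetically semisimple, the $\alpha_i$ with $i\in J$ are distinct complex numbers of modulus $1$) is possible only if all $\alpha_i=1$ for $i\in J$, so that $\sum_{i\in J}\alpha_i^n=|J|$ for all $n$. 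Finally, $|J|$ is the number of geometrically trivial arithmetically simple summands of $N$ that are supported at $e$ with trivial Frobenius, but in fact $|J|$ equals the number of \emph{all} geometrically trivial simple summands of $N$ supported at $e$: indeed the convergence argument just showed there are no summands $\alpha^{\deg}\otimes\delta_e$ with $\alpha\neq 1$. The number of geometrically trivial constituents of $N=\rho(M)$ is precisely the multiplicity of the trivial representation of $\ggeo{M}$ in $\rho$, which completes the identification.

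The main obstacle I expect is the step deducing, from the convergence hypothesis, that the exponential sums $\sum_{i\in J}\alpha_i^n$ can only converge if every $\alpha_i=1$: one must rule out oscillating contributions. This is where the hypothesis is genuinely used (it is exactly the point that distinguishes the "without average" statement from the Cesàro version). The cleanest way is to note that $(\alpha_i)_{i\in J}$ are pairwise distinct complex numbers of modulus one, so $n\mapsto\sum_{i\in J}\alpha_i^n$ is an almost periodic sequence whose Cesàro mean picks out $|J^0|$ (the number of $i$ with $\alpha_i=1$) and whose mean square is $|J|$; convergence of the sequence forces these two to agree, whence $J^0=J$. Everything else is a direct transcription of arguments already present in the proofs of Theorem~\ref{th-4} and Proposition~\ref{pr-weyl-sum-bis}, so the write-up should be short.
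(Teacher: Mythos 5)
Your reduction to the convergence of the sequence $\sum_{i\in J}\alpha_i^n$ is exactly the paper's route (the paper simply quotes \eqref{eqn:computation-traces-proof} from the proof of Theorem~\ref{th-4} and then forces all $\alpha_i=1$), and the two arguments diverge only in the final elementary step. There, however, your write-up has a real flaw: the claim that the $\alpha_i$, $i\in J$, are pairwise distinct is unjustified and false in general. The indices $i\in J$ run over the simple constituents of $N=\rho(M)$ of the form $\alpha_i^{\deg}\otimes\delta_e$ \emph{counted with multiplicity} --- this is forced on you, since you also want $|J|$ to be the multiplicity of the trivial representation of $\ggeo{M}$ in $\rho$ --- and the same character of $\garith{M}/\ggeo{M}$ can occur several times in $\rho$. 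Already when the trivial representation of $\garith{M}$ occurs with multiplicity $2$ one has $\alpha_1=\alpha_2=1$: the sequence is $2+o(1)$ and converges, but its mean square is $4$, not $|J|=2$, so your identity ``mean square $=|J|$'' fails, and the deduction $|J^0|^2=|J|\Rightarrow J^0=J$ (which needs $|J^0|\leq 1$, i.e.\ distinctness) collapses with it. As written, the last step would thus derive a false numerical identity precisely in the cases with multiplicity, even though the proposition is true there.

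The repair is short and brings you back to the paper's argument. Either group equal values and write $\sum_{i\in J}\alpha_i^n=\sum_k m_k\beta_k^n$ with the $\beta_k$ pairwise distinct of modulus one; convergence to $c$ then gives $c=m_{k_0}$ (Ces\`aro mean, with $\beta_{k_0}=1$) and $|c|^2=\sum_k m_k^2$ (mean square), whence $\sum_{k\neq k_0}m_k^2=0$ and the limit equals $|J|$. Or argue as the paper does: set $L=\{i\in J:\alpha_i\neq 1\}$, note that $\sum_{i\in L}\alpha_i^n$ converges with limit $0$ (its Ces\`aro mean vanishes), while $\limsup_n\bigl|\sum_{i\in L}\alpha_i^n\bigr|\geq |L|^{1/2}$ --- a bound proved by the same second-moment computation and valid without any distinctness assumption --- so that $L$ is empty. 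Everything before this point in your proposal is correct and coincides with the paper's proof.
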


\begin{proof}
  We use the notation in the proof of Theorem~\ref{th-4}. Taking the
  equality \eqref{eqn:computation-traces-proof} into account, the
  assumption of the statement means that the limit
  $$
  \lim_{n\to+\infty} \sum_{i\in J} \alpha_i^n
  $$
  exists, where the complex numbers $\alpha_i$ have modulus~$1$ and
  the set~$J$ has cardinality equal to the multiplicity of the trivial
  representation in the restriction of $\rho$ to $\ggeo{M}$.  We claim
  that the existence of this limit implies the equality $\alpha_i=1$
  for all~$i\in J$, so that the limit is equal to~$|J|$, as desired.
  \par
  Indeed, let $L\subset J$ be the set of $i$ where $\alpha_i\not=1$. The
  sequence
  $$
  \sum_{i\in L} \alpha_i^n
  $$
  converges as well, and its limit must be zero since it converges
  to~$0$ on average over~$n\leq N$. However, the lower bound
  $$
  \limsup_{n\to +\infty}\  \Bigl|\sum_{i\in L}\alpha_i^n\Bigr|\geq |L|^{1/2}
  $$
  holds (see, e.g.,~\cite[Lemma\,11.41]{ant}), so we deduce that~$L$ is
  empty, which proves the claim.
\end{proof}

A more global form of this converse, for generically unramified objects,
is the following:

\begin{proposition}\label{pr-converse-equality}
  Let~$M$ be an $\ell$-adic perverse sheaf on~$G$ that is arithmetically
  semisimple and pure of weight zero. Assume that $M$ is generically
  unramified. Let~$r$ be the tannakian dimension of~$M$ and let
  $\mcX=\mcX_w(M)$ be the set of unramified characters for~$M$. If the
  sequence of probability measures
  $$
  \frac{1}{|\mcX(k_n)|}\sum_{\chi\in\mcX(k_n)}\delta_{\Thetaf_{M,k_n}(\chi)},
  $$
  defined when $\mcX(k_n)$ is not empty, converges weakly to some
  probability measure, then we have the equality $\garith{M}=\ggeo{M}$.
\end{proposition}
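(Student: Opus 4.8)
The plan is to prove the contrapositive-flavoured statement by contradiction: assume $\ggeo{M}\subsetneq\garith{M}$ and show that the empirical measures cannot converge. First I would recall the structural input already available. By Proposition~\ref{pr:geom-vs-arith1}, $\ggeo{M}$ is normal in $\garith{M}$, and by Proposition~\ref{pr:geom-vs-arith2}\,(1) the quotient $Q=\garith{M}/\ggeo{M}$ is of multiplicative type; under our assumption it is nontrivial. Proposition~\ref{pr:geom-vs-arith2}\,(2)--(3) then furnishes an element $\xi\in Q$ generating a Zariski-dense subgroup, with the property that for every $n\geq 1$ and every $\chi\in\charg{G}(k_n)$ unramified for~$M$, the image of $\Frf_{M,k_n}(\chi)$ in~$Q$ equals $\xi^n$. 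Since $Q$ is a nontrivial diagonalizable group and $\xi$ is dense, $\xi\neq 1$, so there is a character $\rho\colon Q\to\Cc^\times$ with $\rho(\xi)\neq 1$, which I view as a one-dimensional algebraic representation of $\garith{M}$ that is trivial on $\ggeo{M}$; after the unitarian trick we may assume $\rho$ has values on the unit circle on the maximal compact subgroup~$K$.

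Next I would run the Weyl-sum computation underlying the proof of Theorem~\ref{th-4} (equivalently Proposition~\ref{pr-weyl-sum}\,(2)) in reverse. For $\chi\in\mcX(k_n)$, where $\mcX=\unram{M}$, the unitary class $\Thetaf_{M,k_n}(\chi)$ is obtained from the semisimple part of $\Frf_{M,k_n}(\chi)$ by intersecting with~$K$; since $\rho$ is one-dimensional and factors through $Q$, this gives $\Tr(\rho(\Thetaf_{M,k_n}(\chi)))=\rho(\xi)^n$ for every such $\chi$. Hence
\[
  \int \Tr(\rho)\,d\Bigl(\tfrac1{|\mcX(k_n)|}\sum_{\chi\in\mcX(k_n)}\delta_{\Thetaf_{M,k_n}(\chi)}\Bigr)=\rho(\xi)^n.
\]
Because $M$ is generically unramified, $\mcX$ is generic, so $|\mcX(k_n)|\sim|G(k_n)|\to+\infty$ and these measures are defined for all large~$n$; since $g\mapsto\Tr(\rho(g))$ is a bounded continuous class function on $K^\sharp$, the assumed weak convergence of the measures forces the sequence $(\rho(\xi)^n)_{n}$ to converge in~$\Cc$. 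But $\rho(\xi)$ lies on the unit circle and is $\neq 1$, so the geometric sequence $\rho(\xi)^n$ does not converge, a contradiction. Therefore $\garith{M}=\ggeo{M}$.

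I do not expect a genuine obstacle here: essentially all the work has been done in Propositions~\ref{pr:geom-vs-arith1}, \ref{pr:geom-vs-arith2} and in the analysis behind Proposition~\ref{pr-weyl-sum}, and the argument is the exact converse of Theorem~\ref{th-2bis}. The points requiring a little care are (i) noting that a \emph{single} geometrically trivial representation $\rho$ detecting $\xi$ suffices, so one never needs the finer equidistribution for representations nontrivial on $\ggeo{M}$; (ii) the harmless bookkeeping that $\Thetaf$ and $\Theta$ have the same image under a one-dimensional $\rho$ (the unipotent part is killed), which legitimizes $\Tr(\rho(\Thetaf_{M,k_n}(\chi)))=\rho(\xi)^n$; and (iii) phrasing weak convergence of the empirical measures as convergence of the integral of the bounded continuous test function $\Tr(\rho)$. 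If one prefers, steps (ii)--(iii) can be replaced by a direct appeal to Proposition~\ref{pr-conv-weyl-geo} applied to a suitable extension of $\rho$ to $\Un_r(\Cc)$, using that unramified characters are weakly unramified and that $\Thetaf$ and $\Theta$ coincide on them.
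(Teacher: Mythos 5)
Your argument is correct and matches the paper's own proof: both proceed by contraposition via Proposition~\ref{pr:geom-vs-arith2}, choosing a character $\rho$ of the (abelian) quotient $\garith{M}/\ggeo{M}$ with $\rho(\xi)\neq 1$, observing that the integral of $\Tr(\rho)$ against the empirical measure equals $\rho(\xi)^n$, and concluding from the non-convergence of this unimodular geometric sequence. The extra bookkeeping you flag (unitarity of $\rho(\xi)$, $\Thetaf$ versus $\Theta$) is exactly what the paper leaves implicit, so there is nothing to change.
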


\begin{proof}
  Suppose that $\ggeo{M}\not= \garith{M}$.  By
  Proposition~\ref{pr:geom-vs-arith2}, there exists an element
  $\xi\not=1$ of $\garith{M}/\ggeo{M}$ which generates a Zariski-dense
  subgroup of this group, which is abelian. Thus there exists an
  irreducible representation~$\rho$ of the quotient
  $\garith{M}/\ggeo{M}$ such that $\rho(\xi)\not=1$; for any $n\geq 1$
  and any $\chi\in\charg{G}(k_n)$ unramified for~$M$, the equality
  $\rho(\Thetaf_{M,k_n}(\chi))=\rho(\xi)^n$ holds.
  \par
  Let $\mcX$ be the set of characters unramified for~$M$.  Then
  $$
  \frac{1}{|\mcX(k_n)|}\sum_{\chi\in\mcX(k_n)}\Tr(\rho(\Thetaf_{M,k_n}(\chi)))
  =\rho(\xi)^n
  $$
  for all $n\geq 1$ for which $\mcX(k_n)$ is not empty. Since
  $\rho(\xi)\not=1$, this quantity does not converge as $n\to+\infty$,
  which implies the proposition by contraposition.
\end{proof}

\section{Horizontal equidistribution}

The proof of Theorem~\ref{th-2} relies crucially on the estimates in the
stratified vanishing theorem~\ref{thm-high-vanish}. We expect (see
Remark \ref{rmk-HV-effective}) that the implied constants in these
estimates depend only on the complexity of the perverse sheaf~$M$ (as is
the case for unipotent groups).

Under the assumption that such a statement is valid, and in fact that
this holds for the size of the set of unramified characters, one can
obtain equidistribution statements for finite fields when their size tends to infinity (for instance, for $\Ff_p$ as
$p\to+\infty$; compare with~\cite[Ch.\,28--29]{mellin}).

We include a conditional statement of this type, anticipating some
progress soon concerning the underlying uniformity question. We leave to
the interested reader the task of formulating variants similar to
Theorems~\ref{th-3} and~\ref{th-4}.


\begin{theorem}[Horizontal equidistribution]\label{th-horiz}
  Let $\ell$ be a prime number. Let~$N\geq 1$ be an integer and let
  $(G,u)$ be a quasi-projective commutative group scheme
  over~$\Zz[1/\ell N]$ such that, for all primes $p\nmid \ell N$, the
  fiber~$G_p$ of~$G$ over~$\Ff_p$ is a connected commutative algebraic
  group for which the estimate
  $$
  |\charg{G}_p(\Ff_{p^n})\setminus\unram{M}(\Ff_{p^n})|\ll c_u(M)
  p^{n(\dim(G_p)-1)}
  $$
  holds for all primes~$p$ and~$n\geq 1$ and all arithmetically
  semisimple objects~$M$ in~$\Pervint(G_p)$ which are generically
  unramified.

  Let $(M_p)_{p\nmid N\ell}$ be a sequence of arithmetically semisimple
  objects in $\Pervint(G_p)$ which are pure of weight zero.  Suppose
  that the tannakian dimension~$r$ of $M_p$ is independent of~$p$, and
  that for all~$p$, we have $\garith{M_p}=\ggeo{M_p}$, and that this
  common reductive group is conjugate to a fixed subgroup~$\Gg$
  of~$\GL_r(\Qlb)$.

  Let~$K$ be a maximal compact subgroup of~$\Gg(\Cc)$ and let
  $\mu^{\sharp}$ be the direct image of the Haar probability measure on
  $K$ to $K^\sharp$.

  Let $\mcX_p$ be the set of characters $\chi\in \charg{G}_p(\Ff_p)$
  which are unramified for the object $M_p$.
  
  If we have $ c_{u}(M_p)\ll 1$ for all $p\nmid N\ell$, then the
  families of conjugacy classes
  $(\Thetaf_{M_p,\Ff_p}(\chi))_{\chi\in\mcX_p}$ become
  $\mu^{\sharp}$-equidistributed in $K^\sharp$ as $p\to +\infty$.
\end{theorem}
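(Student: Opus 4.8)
The plan is to mimic the proof of Theorem~\ref{th-2} (via Proposition~\ref{pr-weyl-sum}), tracking all implied constants so that they depend only on $c_u(M_p)$, which is bounded uniformly in $p$ by hypothesis. First I would reduce, by the Peter--Weyl theorem and Weyl's unitarian trick exactly as in the proof of Theorem~\ref{th-2}, to showing that for every fixed non-trivial irreducible $\Qlb$-representation $\rho$ of $\Gg$ (equivalently of $\garith{M_p}=\ggeo{M_p}$, after conjugating so that they all equal the fixed $\Gg$), one has
\[
  \frac{1}{|\mcX_p|}\sum_{\chi\in\mcX_p}\Tr(\rho(\Thetaf_{M_p,\Ff_p}(\chi)))\longrightarrow 0
  \qquad (p\to+\infty).
\]
Note that here the average is over $\chi\in\mcX_p\subset\charg{G}_p(\Ff_p)$, i.e.\ the base field is $\Ff_p$ and we do \emph{not} average over extensions $\Ff_{p^n}$: this is precisely why the hypothesis $\ggeo{M_p}=\garith{M_p}$ is needed, so that part~(2) of Proposition~\ref{pr-weyl-sum} (the geometrically trivial case, which required the Cesàro average) never occurs and only part~(1) is used.

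The key analytic input is the uniform version of Proposition~\ref{pr-weyl-sum-bis} applied to $N_p=\rho(M_p)$ over $k=\Ff_p$. Every ingredient of that proof is already uniform in the complexity: the bound $h^i_c(G_{\bar\Ff_p},(N_p)_\chi)\ll c_u(N_p)$ comes from Lemma~\ref{lm-sums-betti} and Theorem~\ref{thm-conductors}\,\ref{thm-conductors:item2} together with the uniform bound $c_u(\mcL_\chi)\leq C$ of Proposition~\ref{prop-cond-char}; Deligne's Riemann Hypothesis gives the weight bounds; and $c_u(N_p)=c_u(\rho(M_p))\ll_\rho c_u(M_p)\ll 1$ since $\rho$ is a fixed representation and $N_p$ is obtained from $M_p$ by a bounded number of convolutions, duals and subquotients controlled by Theorem~\ref{thm-conductors} and Proposition~\ref{prop-JorHol-cond}. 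The \emph{one} place where the standing hypothesis of the theorem is invoked is the estimate $|\mathcal{A}_i(\Ff_{p^n})|\ll c_u(N_p)\,p^{n(\dim(G_p)-i)}$ for the stratification sets: in general this is only Theorem~\ref{thm-high-vanish} without effective constant, but here it is \emph{assumed} (through the hypothesis on $|\charg{G}_p(\Ff_{p^n})\setminus\unram{M}(\Ff_{p^n})|$, and one observes that the argument of Theorem~\ref{thm-high-vanish} in fact produces the $\mathcal{A}_i$-bounds with the same constants as the bound on the complement of the unramified set, since the $\mcS_i$ control exactly these loci). Assembling this, Proposition~\ref{pr-weyl-sum-bis} yields
\[
  \frac{1}{|\mcX_p|}\sum_{\chi\in\mcX_p}\Tr(\rho(\Thetaf_{M_p,\Ff_p}(\chi)))
  = t_{\rho(M_p)}(e;\Ff_p)+O(p^{-1/2}),
\]
with an implied constant depending only on the (bounded) complexity and on $\rho$, using also $|\mcX_p|=|\charg{G}_p(\Ff_p)|+O(p^{\dim(G_p)-1})=p^{\dim G_p}(1+O(p^{-1/2}))$.

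Then I would bound $t_{\rho(M_p)}(e;\Ff_p)$ exactly as in part~(1) of Proposition~\ref{pr-weyl-sum}: writing $\rho(M_p)$ via the classification of simple perverse sheaves \cite[Th.\,4.3.1\,(ii)]{BBD-pervers} as $s_*j_{!*}\mcF[r]$ with $\mcF$ pure of weight $-r$, we get $t_{\rho(M_p)}(e;\Ff_p)=0$ if the support has dimension $0$ (it cannot be a skyscraper at $e$, since $\rho$ restricted to $\ggeo{M_p}=\garith{M_p}$ is non-trivial, so $\rho(M_p)$ is not geometrically trivial) and $|t_{\rho(M_p)}(e;\Ff_p)|\ll p^{-1/2}$ otherwise, where again the implied constant is controlled by the complexity of $\rho(M_p)$ hence by $c_u(M_p)\ll1$. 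Combining, the whole average is $O(p^{-1/2})\to 0$, which is the Weyl-criterion statement needed; the non-emptiness of $\mcX_p$ for large $p$ follows from $|\mcX_p|\gg p^{\dim G_p}$. The main obstacle is purely bookkeeping: one must verify that \emph{every} constant in the chain ($\rho$-dependence of $c_u(\rho(M_p))$, the constant in Proposition~\ref{prop-cond-char}, the constant in Lemma~\ref{lm-sums-betti}, and crucially the stratification constants supplied by hypothesis) genuinely depends only on $(c_u(M_p),\rho,u)$ and not on $p$; once the hypothesis on the unramified complement is in hand, nothing new is needed beyond a careful re-reading of the proofs of Propositions~\ref{pr-weyl-sum-bis} and~\ref{pr-weyl-sum}. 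There is no serious geometric difficulty, only the uniformity audit.
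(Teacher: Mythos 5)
Your proposal follows essentially the same route as the paper's proof: reduce by the Peter--Weyl/Weyl criterion to the vanishing of the Weyl sums for a non-trivial irreducible $\rho$, note that the hypothesis $\garith{M_p}=\ggeo{M_p}$ means only the first part of Proposition~\ref{pr-weyl-sum} is needed (so no Cesàro average over $n$), and then rerun the estimate of Proposition~\ref{pr-weyl-sum-bis} with constants uniform in $p$ because $c_u(M_p)\ll 1$ forces $c_u(\rho(M_p))\ll_\rho 1$ and the hypothesis supplies the uniform generic-vanishing input. The one delicate point in your write-up — the claim that the stated bound on $|\charg{G}_p(\Ff_{p^n})\setminus\unram{M}(\Ff_{p^n})|$ also yields the codimension-$i$ counts for the sets $\mathcal{A}_i$ with constants depending only on the complexity — is precisely the point the paper itself treats in a single assertion (that the constants in~(\ref{eq-comp1})--(\ref{eq-comp4bis}) are independent of $p$ under the assumptions), so your argument matches the published proof in both structure and level of detail.
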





\begin{proof}
  The argument follows that of Theorem~\ref{th-2}; it suffices to prove
  the estimate
  $$
  \frac{1}{|\mathscr{X}_p|}\sum_{\chi \in \mathscr{X}_p}
  \Tr\big(\rho(\Thetaf_{M_p,\Ff_p}(\chi))\big) \ll \frac{1}{\sqrt{p}}
  $$
  for all~$p\nmid N\ell$.  The proof of this is similar to the first
  part of Proposition~\ref{pr-weyl-sum}, noting that, under our
  assumptions, the implied constants in the key
  bounds~(\ref{eq-comp1}),~(\ref{eq-comp2bis}),~(\ref{eq-comp3bis})
  and~(\ref{eq-comp4bis}) are independent of~$p$, since the complexity
  of~$M_p$ is bounded independently of~$p$, and hence also that
  of~$\rho(M_p)$ by~\hbox{\cite[Prop.\,6.33]{sawin_conductors}}.
\end{proof}

\begin{remark}\label{rm-horizontal}
  (1) For $G$ unipotent, results of this form are unconditional by
  Proposition~\ref{prop-strat-unip} (the case of $\Gg_a$ essentially
  goes back to Katz~\cite{gkm}, whereas the case of an arbitrary power
  of~$\Gg_a$ follows from~\cite[Th.\,7.22]{sawin_conductors}). For
  $G=\Gg_m$, a similar statement is proved by Katz
  in~\cite[Th.\,28.1]{mellin}.
  \par
  (2) The result is also unconditional in the case of abelian varieties
  (see Remark~\ref{rmk-HV-effective}). We expect that a careful look at
  the proof of the generic vanishing theorem will also show that it is
  unconditional for $\Gg_m\times\Gg_a$. The case of tori of
  dimension~$\geq 2$ is however not yet known.
\end{remark}

\section{Objects of rank~$1$}

In this section, we apply the general equdistribution results to objects
in the tannakian subcategory of $\Ppintarith(G)$ additively generated by
objects of tannakian rank~$1$. The corresponding arithmetic tannakian
groups are computed (in principle) in Proposition~\ref{pr-rank1}.

\begin{proposition}\label{pr-rank1-equi}
  Let $r\geq 1$ be an integer and let
  $$
  M=L_1\oplus \cdots \oplus L_r
  $$
  where $L_i$ is an object of $\Ppintarith(G)$ of tannakian
  rank~$1$. Let
  $$
  H=\{(n_1,\ldots,n_r)\in\Zz^r\,\mid\,
  L_1^{*_{\intt} n_1}*_{\intt}\cdots*_{\intt}L_r^{*_{\intt} n_r}\simeq \un_G\},  
  $$
  and let
  $$
  K=\{(x_i)\in(\mathbf{S}^1)^r\,\mid\,     \prod_{1\leq i\leq
    r}x_{i}^{n_i}=1\text{ for all } (n_1,\ldots, n_r)\in H\}.
  $$

  Then the unitary Frobenius conjugacy classes of~$M$ are
  equidistributed on average in~$K$.
\end{proposition}

\begin{proof}
  This follows from Theorem~\ref{th-4}, on noting that the
  arithmetic tannakian~$\garith{M}$ group of~$M$ is abelian (it can be viewed as a subgroup of the diagonal subgroup
  $\GL_1^r\subset \GL_r$); hence, the conjugacy classes of elements of
  $\garith{M}$ are just singletons, and in particular are the same as
  the conjugacy classes in~$\GL_r$.
\end{proof}

As an application, we explain how to recover a theorem of
Rojas-León~\cite[Th.\,1]{rojas-leon}, which concerns the
equidistribution properties of Gauss sums.

\begin{theorem}[Rojas-León]\label{th-rojas-leon}
  Let~$r\geq 1$ be an integer. Let $(\alpha_i)_{1\leq i\leq r}$ be a family of non-constant morphisms $\alpha_i\colon \Gg_m\to \Gg_m^r$ defined over~$k$. Let $t=(t_i)\in (k^{\times})^r$ and let $(\eta_i)_{1\leq i\leq r}$ be
  characters of~$k^{\times}$.

  The tuples
  $$
  \Bigl( \frac{\chi(t)\tau(\psi,\eta_1\cdot (\chi\,
    \circ\alpha_1))}{|k_n|^{1/2}},\ldots,
  \frac{\chi(t)\tau(\psi,\eta_r\,\cdot
    (\chi\circ\alpha_r))}{|k_n|^{1/2}} \Bigr)_{\chi\in
    \widehat{\Gg_m^r}(k_n)}
  $$
  of Gauss sums are equidistributed on average in~$\Cc^r$ according
  to the probability Haar measure on a closed subgroup
  $K\subset (\mathbf{S}^1)^r\subset \Cc^r$.

  Moreover, factor $\alpha_i=[x\mapsto x^{N_i}]\circ \beta_i$ for some
  closed immersion $\beta_i$ and some integer $N_i\geq 1$. If, for each
  $i$ with $1\leq i\leq r$, the elements
  $$
  \sum_{\omega^{N_i}=\eta_i}\omega
  $$
  of $\Zz[\Pi(\Gg_{m,\bar{k}},\bQl)]$ are linearly independent
  over~$\Zz$, then $K=(\mathbf{S}^1)^r$ and equidistribution holds
  without average.
\end{theorem}

\begin{proof}
  For simplicity of notation, we will assume that $t=1$ and that each
  $N_i$ is coprime to~$p$. Let $j\colon \Gg_m\to \Aa^1$ be the open
  immersion. We recall that
  $\chi\mapsto \tau(\psi,\eta_i\cdot (\chi\circ \alpha_i))$ is the
  discrete Mellin transform on $\Gg_m$ of the trace function of the
  perverse sheaf
  \[
  L_i=\alpha_{i*}(j^*\mcL_{\psi}*_! \mcL_{\chi}[1])(1/2)
  \] on $\Gg_m^r$, which is pure of weight~$0$. This is an object of tannakian
  rank~$1$.

  Thus the first statement is a direct application of Proposition \ref{pr-rank1-equi}, with~$K$ a maximal compact subgroup for the arithmetic
  tannakian group of
  $$
  M=L_1\oplus\cdots\oplus L_r
  $$
  (note that we consider the Gauss sums in~$\Cc$ to avoid excluding
  those boundedly many $\chi$ where some
  $\eta_i\cdot(\chi\circ\alpha_i)$ is trivial, for which the modulus is
  $1/\sqrt{|k_n|}$ instead of~$1$; these do not affect the
  equidistribution property).

  Using Proposition \ref{pr-rank1-equi} again, for the second statement
  we need to prove that under the stated assumptions, there is no
  convolution relation between the objects $L_i$, or equivalently no
  relation between their classes in the group $L(\Gg_m^r)$; this implies
  that the arithmetic tannakian group of~$M$ is~$\Gg_m^r$ (i.e., is as
  large as possible).  In fact, we claim that the geometric tannakian
  group is already that large, which means that there are no geometric
  convolution relations between the objects $L_i$. This in particular
  also implies the equidistribution without average (see
  Theorem~\ref{th-2bis}).

  To prove the claim, we use the Gabber--Loeser isomorphism
  described in Example~\ref{ex-gl-rank-1} to express the class of $L_i$ in
  $L(\Gg^r_{m,\bar{k}})$ as
  $$
  \lambda_i=\Bigl(1,\sum_{\omega^{N_i}=\eta_i}(T_i,\omega)\Bigr),
  $$
  where $T_i$ is the image of~$\beta_i$ and $(T_i, \omega)$ is one of
  the basis vectors in the free abelian group generated by pairs of a
  one-dimensional subtorus and a tame character (see \loccit). (This
  fact is a form of the Hasse--Davenport relation;
  see~\cite[Th.\,8.9.1]{esde}.)

  By definition of a free abelian group, a non-trivial linear relation
  can only exist if, for some one-dimensional subtorus $T\subset
  \Gg_m^r$, the elements $\lambda_i$ with $T_i=T$ are linearly
  dependent, and this in turn is equivalent with the elements
  $$
  \sum_{\omega^{N_i}=\eta_i}\omega
  $$
  being linearly dependent, as claimed.
\end{proof}


\part{Applications}\label{part-applications}

\chapter*{Description of applications}
\label{sec-applications}

The remainder of the book is devoted to applications of the theoretical
results of the first part of this book. We split these applications in
further chapters as follows:

\begin{enumerate}
\item We define in Chapter~\ref{sec-L} the analogue of $L$-functions for
  the Fourier--Mellin transforms.  We establish with its help that the
  arithmetic tannakian group is infinite for many non-punctual objects
  on abelian varieties.
\item We present in Chapter~\ref{sec-stratification} the concrete
  analytic translation of the stratified vanishing theorem to
  stratification of estimates for exponential sums, in the spirit of
  Katz--Laumon~\cite{KL-fourier-exp-som} and
  Fouvry--Katz~\cite{fouvry-katz}. We also present a statement of
  ``generic Fourier invertibility'', which shows that two semsimple
  perverse sheaves are isomorphic in the category $\Ppbarith(G)$ if and
  only if the associated exponential sums coincide for a generic set of
  characters.
\item In Chapter~\ref{sec-indep}, we add a theoretical application of
  equidistribution in direction of \emph{independence of~$\ell$}
  properties of the tannakian groups associated to a compatible system
  of $\ell$-adic complexes.
\item In applications of equidistribution to concrete perverse sheaves,
  the main issue is to determine the tannakian group.  The main tool
  that we will use for this purpose is \emph{Larsen's Alternative}, and
  its link with equidistribution. We present this result (and a new
  variant for the exceptional group $\mathbf{E}_6$) in
  Chapter~\ref{sec-larsen}.
\item Then in the remaining chapters, we present examples of
  equidistribution for ``concrete'' groups, namely:
  \begin{itemize}
  \item the product $\Gg_m\times\Gg_a$, which (apart from unipotent
    groups) is probably the simplest group of dimension $\geq 2$
    (Chapter~\ref{sec-product}); this corresponds to rather natural
    families of exponential sums parameterized by both an additive
    character and a multiplicative character.
  \item higher-dimensional tori, with applications to the study of the
    variance of arithmetic functions on $k[t]$ in arithmetic
    progressions modulo square-free polynomials (see
    Chapter~\ref{sec-variance}).
  \item the jacobian of a curve (Chapter~\ref{sec-jacobian}); the
    application we present is a generalization of an unpublished result
    of Katz (which answered a question of Tsimerman).
  \item in the same chapter, the intermediate jacobian of a smooth
    projective cubic hypersurface of dimension~$3$, which is an abelian
    variety of dimension~$5$ (see Chapter~\ref{sec-cubic}).
  \end{itemize}
\end{enumerate}

\chapter{Über eine neue Art von $L$-Reihen}\label{sec-L}

\section{$\lhat$-functions}

Let $k$ be a finite field, with algebraic closure~$\bar{k}$ and
intermediate extensions $k_n$. We fix as usual a prime $\ell$ different
from the characteristic of~$k$.  Let $G$ be a connected commutative
algebraic group over~$k$, and let~$d$ be its dimension. We denote by~$e$
the neutral element of~$G$.

By analogy with algebraic varieties over~$k$, we can define
``$L$-functions'' for objects of~$\Der(G)$, where suitable
characters~$\chi\in\charg{G}$ play the role of primes in an ``Euler
product''.

We denote by $\charg{G}^*\subset \charg{G}$
\nomenclature[$G$]{$\charg{G}^*$}{primitive elements of~$\charg{G}$} the set of characters such
that $\chi\in \charg{G}^*(k_n)$ if and only if there is no $d\mid n$
with $d<n$ such that $\chi=\chi'\circ N_{k_n/k_d}$.  We say that
elements of $\charg{G}^*$ are \emph{primitive},\index{primitive characters} and for
$\chi\in \charg{G}^*(k_n)$, we put
$\deg(\chi)=n$.\nomenclature[$deg$]{$\deg(\chi)$}{degree of a primitive character}  We then denote by
$[\charg{G}]$\nomenclature[$G$]{$[\charg{G}]$}{primitive characters modulo
  Galois action} the quotient set of~$\charg{G}^*$ by the equivalence
relation defined by $\chi_1\sim \chi_2$ if and only if
$\deg(\chi_1)=\deg(\chi_2)$ and
$$
\chi_2=\chi_1\circ \Frob_{k_{\deg(\chi_1)}}^j
$$
for some integer~$j\in\Zz$. There are $\deg(\chi)$ primitive characters
equivalent to a given~$\chi\in \charg{G}^*$.


\begin{definition}[$\lhat$-function]
  Let $M$ be an object of~$\Der(G)$.  The
  \emph{Fourier-$L$-function},\index{Fourier
    $L$-function}\index{$\lhat$-function}\nomenclature[$L$]{$\lhat(M,T)$}{$\lhat$-function of~$M$}
  or \emph{$\lhat$-function}, of~$M$ is the formal power series
  $$
  \lhat(M,T)=\prod_{\chi\in [\widehat{G}]}
  \det(1-T^{\deg(\chi)}\Fr_{k_{\deg(\chi)}}\mid H^*_c(G_{\bar{k}},M_{\chi}))^{-1}\in
  \Qlb[[T]].
  $$
\end{definition}

This is similar to the definition
$$
L(M,T)=\prod_{x \in [X]} \det(1-T^{\deg(x)}\Fr_{k_{\deg(x)}}\mid
M_x)^{-1}\in \Qlb[[T]]
$$
of the $L$-function of~$M$ on an arbitrary algebraic variety~$X$
over~$k$, with primitive characters replacing the set $[X]$ of closed
points of~$X$.

Indeed, if $G$ is unipotent of dimension~$d$, and $\ft(M)$ denotes the
Fourier transform of~$M$ on the (or ``a'') Serre dual $G^{\vee}$ defined
with respect to some additive character~$\psi$, as in
Section~\ref{sec-unipotent}, then we obtain the identity
$$
\lhat(M,T)=L(\ft(M),|k|^dT),
$$
(e.g. by the formula~(\ref{eq-lhat2}) below, since the stalk of $\ft(M)$
at the origin is canonically isomorphic to~$M$ by the proper base change
theorem, and $|G(k_n)|=|k|^{nd}$ in this case).

In general, however, we obtain ``new'' $L$-functions.  Their fundamental
properties, including rationaliy, are given by the next proposition.

\begin{proposition}\label{pr-lhat}
  Let $M$ be an object of~$\Der(G)$. We denote as usual
  $$
  S(M,\chi)=\sum_{x\in G(k_n)}\chi(x)t_M(x;k_n)
  $$
  for $n\geq 1$ and $\chi\in\charg{G}(k_n)$.
  \begin{enumth}
  \item The $\lhat$-function satisfies
    \begin{align}\label{eq-lhat}
      \lhat(M,T)&= \exp\Bigl(\sum_{n\geq
        1}\Bigl(\sum_{\chi\in\charg{G}(k_n)}S(M,\chi)\Bigr)
      \frac{T^n}{n}\Bigr)\\
      &= \exp\Bigl(\sum_{n\geq 1}|G(k_n)|t_{M}(e;k_n)
      \frac{T^n}{n}\Bigr).
      \label{eq-lhat2}
    \end{align}
  \item\label{pr-lhat:item2} The $\lhat$-function is a rational
    function; if~$M$ is a mixed complex, then the zeros and poles of
    $\lhat(M,T)$ are $|k|$-Weil numbers of some weights.
  \item For any $\chi\in\charg{G}(k)$, the equality
    $\lhat(M_{\chi},T)=\lhat(M,T)$ holds.
  \end{enumth}
\end{proposition}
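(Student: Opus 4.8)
The plan is to prove the three parts of Proposition~\ref{pr-lhat} by the standard logarithmic-derivative manipulation, reducing everything to the Grothendieck--Lefschetz trace formula and orthogonality of characters.

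\medskip

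For part~(1), I would take the logarithm of the Euler-product definition of $\lhat(M,T)$ and expand each local factor. Writing, for a primitive character $\chi$ of degree $m=\deg(\chi)$, the eigenvalues of $\Fr_{k_m}$ on $H^*_c(G_{\bar k},M_\chi)$ (counted with the sign $(-1)^i$ of the cohomological degree, i.e.\ as a virtual quantity via the alternating trace), one gets
\[
-\log\det(1-T^{m}\Fr_{k_{m}}\mid H^*_c(G_{\bar{k}},M_{\chi}))
= \sum_{v\geq 1}\frac{T^{mv}}{v}\,\Tr(\Fr_{k_m}^v\mid H^*_c(G_{\bar k},M_\chi)),
\]
interpreting the trace as the alternating sum over degrees. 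Summing over $\chi\in[\charg G]$, and using that each class in $[\charg G]$ of degree $m$ corresponds to exactly $m$ primitive characters in $\charg G^*(k_m)$ all giving the same contribution (since $M_\chi$ and its Frobenius-twists have conjugate cohomology), together with the fact that $\charg G(k_n)$ is the disjoint union over $m\mid n$ of the primitive characters of degree $m$ (where a primitive $\chi$ of degree $m$, pulled back by the norm, has $H^*_c(G_{\bar k},M_{\chi\circ N_{k_n/k_m}})$ carrying the Frobenius $\Fr_{k_n}=\Fr_{k_m}^{n/m}$), one reorganizes the double sum over $(m,v)$ with $mv=n$ into a single sum over $n$. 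This yields
\[
\log\lhat(M,T)=\sum_{n\geq 1}\frac{T^n}{n}\sum_{\chi\in\charg G(k_n)}\sum_{i}(-1)^i\Tr(\Fr_{k_n}\mid H^i_c(G_{\bar k},M_\chi)),
\]
and the inner alternating trace equals $S(M,\chi)$ by the Grothendieck--Lefschetz trace formula (\cite[Exp.\,III]{SGA5}), exactly as in~\eqref{eq-trace}. This gives~\eqref{eq-lhat}. Then~\eqref{eq-lhat2} follows by exchanging the two sums $\sum_\chi\sum_{x\in G(k_n)}\chi(x)t_M(x;k_n)$ and applying orthogonality of characters of the finite abelian group $G(k_n)$, precisely the computation already carried out in the outline (Section~\ref{ssec-outline}): $\sum_{\chi\in\charg G(k_n)}\chi(x)=|G(k_n)|$ if $x=e$ and $0$ otherwise.

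\medskip

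For part~\ref{pr-lhat:item2}, rationality and the Weil-number statement follow from~\eqref{eq-lhat2}: the power series $\exp\bigl(\sum_{n\geq 1}|G(k_n)|t_M(e;k_n)T^n/n\bigr)$ is, up to the substitution coming from $|G(k_n)|$, the $L$-function (in the usual sense) of a complex on a point times the zeta-type factor of $G$. More concretely, $|G(k_n)|=\sum_j(-1)^j\Tr(\Fr_{k_n}\mid H^j_c(G_{\bar k},\Qlb))$ and $t_M(e;k_n)=\sum_i(-1)^i\Tr(\Fr_{k_n}\mid \mathcal H^i(M)_e)$, so $|G(k_n)|t_M(e;k_n)$ is itself the alternating trace of $\Fr_{k_n}$ on a finite-dimensional graded space (the tensor product $H^*_c(G_{\bar k},\Qlb)\otimes \mathcal H^*(M)_e$, with signs), and hence $\lhat(M,T)$ is a ratio of characteristic polynomials — a rational function. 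If $M$ is mixed, then $\mathcal H^i(M)_e$ is mixed and $H^j_c(G_{\bar k},\Qlb)$ is pure of weight $\dots$ by Deligne, so all the relevant Frobenius eigenvalues are $|k|$-Weil numbers, whence so are the zeros and poles of $\lhat(M,T)$.

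\medskip

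Part~(3) is immediate from either formula. From~\eqref{eq-lhat2}, the right-hand side only involves $t_M(e;k_n)$ and $|G(k_n)|$; since $\mcL_\chi$ has stalk at $e$ on which geometric Frobenius acts by $\chi(e)=1$ (the character sheaf is trivialized at the neutral element — its trace function is $t_{\mcL_\chi}(x;k_n)=\chi(N_{k_n/k}(x))$, equal to $1$ at $x=e$), we have $t_{M_\chi}(e;k_n)=t_M(e;k_n)$ for all $n$, hence $\lhat(M_\chi,T)=\lhat(M,T)$. Alternatively one sees it from~\eqref{eq-lhat} and the change of variables $\psi\mapsto \psi\chi^{-1}$ (or $\psi\mapsto\psi\cdot(\chi\circ N_{k_n/k})$) on $\charg G(k_n)$, which is a bijection preserving the sum $\sum_\psi S(M_\chi,\psi)=\sum_\psi S(M,\psi\chi)=\sum_\psi S(M,\psi)$.

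\medskip

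The only step requiring care — and thus the main obstacle — is the bookkeeping in part~(1): correctly matching primitive characters of degree $m$ with their $\Gal$-twists, checking that $H^*_c(G_{\bar k},M_{\chi\circ N_{k_n/k_m}})$ really is $H^*_c(G_{\bar k},M_\chi)$ with Frobenius raised to the $(n/m)$-th power (this is exactly the compatibility $\mcL_{\chi}=\mcL_{\chi\circ N_{k_{nm}/k_n}}$ recorded in Section~\ref{sec:character-groups-Lang-torsor}, combined with base-change invariance of étale cohomology), and confirming that the combinatorial identity $\sum_{\chi\in[\charg G]}\sum_{v}\;=\;\sum_{n}\sum_{\chi\in\charg G(k_n)}$ comes out with the right multiplicities. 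This is entirely parallel to the classical derivation of the Euler product for $L$-functions of sheaves on varieties (replacing closed points of degree $m$ by $\Gal(k_m/k)$-orbits of primitive characters of degree $m$), so once the dictionary is set up the verification is routine.
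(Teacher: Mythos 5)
Your proposal is correct and follows essentially the same route as the paper: applying $\log$ (the paper uses $Td\log$) to the Euler product, matching characters of $G(k_n)$ with primitive character classes of degree dividing $n$ via the norm and the compatibility $\mcL_{\chi}=\mcL_{\chi\circ N}$, invoking the trace formula to identify the coefficient of $T^n/n$ with $\sum_{\chi}S(M,\chi)$, and then orthogonality for~\eqref{eq-lhat2}, the expression of $|G(k_n)|t_M(e;k_n)$ as a finite signed sum of $n$-th powers of (Weil) numbers for~(2), and $t_{M_\chi}(e;k_n)=t_M(e;k_n)$ for~(3). The bookkeeping step you flag as the main point of care is exactly the identity the paper verifies, so no gap remains.
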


\begin{proof}
  The proof of the formula~(\ref{eq-lhat}), like in the classical case,
  is a simple consequence of the trace formula. Precisely, we apply the
  operator $f(T)\mapsto Td\log f(T) $ to both sides of this equality. On
  the left-hand side, after expressing the determinant as alternating
  product of the determinants on the various groups
  $H^i_c(G_{\bar{k}},M_{\chi})$, we obtain
  \begin{multline*}
    Td\log \lhat(M,T)= \sum_{\chi\in[\charg{G}]}\deg(\chi)\sum_{m\geq
      1}T^{m\deg(\chi)} \Tr(\Fr_{k_{\deg(\chi)}}^{m}\mid
    H^*_c(G_{\bar{k}},M_{\chi})) =
    \\
    \sum_{n\geq 1} T^n \sum_{d\mid n} \sum_{\chi\in
      [\widehat{G}](k_d)}d\Tr(\Fr_{k_d}^{n/d}\mid
    H^*_c(G_{\bar{k}},M_{\chi})).
  \end{multline*}
  \par
  On the right-hand side of~(\ref{eq-lhat}), we obtain
  $$
  \sum_{n\geq 1}T^n \sum_{\chi\in \charg{G}(k_n)} S(M,\chi),
  $$
  and hence the formula is equivalent with the fact that the identity
  \begin{equation}\label{eq-desired-identity}
    \sum_{d\mid n} \sum_{\chi\in
      [\widehat{G}](k_d)}d\Tr(\Fr_{k_d}^{n/d}\mid
    H^*_c(G_{\bar{k}},M_{\chi})) = \sum_{\chi\in \charg{G}(k_n)}
    S(M,\chi)
  \end{equation}
  holds for any integer~$n\geq 1$.
  \par
  Let~$n\geq 1$. To establish~(\ref{eq-desired-identity}) for~$n$, we
  begin with the trace formula~(\ref{eq-trace-formula}), which implies
  that
  $$
  S(M,\chi)=\Tr(\Fr_{k_n}\mid H^*_c(G_{\bar{k}},M_{\chi})),
  $$
  for any $\chi\in\charg{G}(k_n)$.

  There exists a unique divisor $d$ of~$n$ and a
  character~$\chi_0\in\charg{G}^*(k_d)$ such that
  $\chi=\chi_0\circ N_{k_n/k_d}$. The map sending $\chi$ to the
  equivalence class of~$\chi_0$ in~$[\charg{G}]$ has image the subset of
  classes~$[\eta]$ of primitive characters~$\eta$ with degree
  dividing~$n$, and for any such class~$[\eta]$, there are
  exactly~$\deg([\eta])$ characters~$\chi\in\charg{G}(k_n)$ mapping
  to~$[\eta]$. Moreover, there are canonical isomorphisms
  $$
  H^*_c(G_{\bar{k}},M_{\chi})\simeq H^*_c(G_{\bar{k}},M_{\eta}),
  $$
  with the actions of $\Fr_{k_n}$ corresponding to that
  of~$\Fr_{k_d}^{n/d}$, so that
  $$
  S(M,\chi)=\Tr(\Fr_{k_d}^{n/d}\mid H^*_c(G_{\bar{k}},M_{\eta}))
  $$
  for all~$\chi$ mapping to~$[\eta]$.
  This implies the desired
  identity~(\ref{eq-desired-identity}).
  \par
  The second formula~(\ref{eq-lhat2}) for $\lhat(M,T)$ follows
  immediately from~(\ref{eq-lhat}), since orthogonality of characters
  implies that the formula
  $$
  \sum_{\chi\in\charg{G}(k_n)}S(M,\chi)
  =|G(k_n)|t_M(e;k_n)
  $$
  holds for all~$n\geq 1$.
  \par
  Using next the trace formula and the Riemann Hypothesis to compute
  $|G(k_n)|$ as an alternating sum of $|k|$-Weil numbers, it follows
  that
  $$
  |G(k_n)|t_M(e;k_n)= \sum_{i\in I} \eps_i\alpha_i^n
  $$
  for some finite set $I$, some $\eps_i\in \{-1,1\}$, and some
  $|k|$-Weil numbers $\alpha_i$. The second assertion follows then from
  the usual power series expansion
  $$
  \exp\Bigl(\sum_{n\geq 1} \frac{\alpha^nT^n}{n}\Bigr) =
  \frac{1}{1-\alpha T}.
  $$
  \par
  The final assertion is clear either from the definition, or from the
  above, noting that $t_{M_{\chi}}(e;k_n)=t_M(e;k_n)$ for any
  $\chi\in\charg{G}(k)$ and $n\geq 1$.
\end{proof}

\begin{remark}\label{rm-lhat-different}
  To illustrate the differences with $L$-functions, we note that if~$G$
  is not unipotent, then the $\lhat$-function is very rarely a
  polynomial or the inverse of a polynomial, and does not satisfy in
  general any functional equation\index{functional equation} of the form
  $$
  \lhat(M,T)=(\text{simple quantities})
  \times  \lhat(M^{\vee},q^{\alpha}T^{-1}).
  $$
  as is the case for the standard $L$-function of~$M$ (this is related
  to the remark of Boyarchenko and
  Drinfeld~\cite[\S1.6,\,Example\,1.8]{boyarchenko-drinfeld}).
  \par
  To give a concrete example, take $G=\Gg_m$. In this case, we deduce
  from~(\ref{eq-lhat2}) the formula
  $$
  \lhat(M,T)= \exp\Bigl(\sum_{n\geq 1}(|k|^n-1)\,t_{M}(e;k_n)
  \frac{T^n}{n}\Bigr)=\frac{L(M_e,|k|T)}{L(M_e,T)}
  $$
  where $M_e$ is the stalk of~$M$ at~$e$ (where $L(M_e,T)$ is the
  $L$-function of the stalk of~$M$ at~$e$, viewed as a complex
  on~$\{e\}$). If the $L$-function $L(M_e,T)$ is not constant, then
  there can never be cancellation in this quotient to obtain a
  polynomial or the inverse of a polynomial. If (say) we have
  $$
  L(M_e,T)=(1-\alpha T)(1-\alpha^{-1}T),
  $$
  then
  $$
  \lhat(M,T)=\frac{(1-|k|\alpha T)(1-|k|\alpha^{-1}T)}{ (1-\alpha
    T)(1-\alpha^{-1}T)},
  $$
  and this satisfies no simple functional relation.
\end{remark}

We conclude with a result that will be useful in the next section when
performing induction.

\begin{proposition}\label{pr-tac}
  Let $G$ be a semiabelian variety over~$k$. Let $S$ be a \tac\
  of~$G_k$ defined by a morphism $\pi\colon G\to G'$ over~$k$ and a
  character $\chi_0\in\charg{G}(k)$, and let $[S]$ denote the classes
  in~$[\charg{G}]$ of elements of~$S$.  Let $M$ be an object
  of~$\Der(G)$. We then have
  $$
  \prod_{\chi \in [S]} \det(1-T^{\deg(\chi)}\Fr_{k_{\deg(\chi)}}\mid
  H^*_c(G_{\bar{k}},M_{\chi}))^{-1}= \lhat(R\pi_!M_{\chi_0},T).
  $$
\end{proposition}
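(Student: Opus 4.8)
The plan is to reindex the product defining $\lhat(R\pi_!M_{\chi_0},T)$ by the parametrization of $S$ by the characters of $G'$. Write $N=R\pi_!M_{\chi_0}\in\Der(G')$; since $\pi$ is surjective, $G'$ is again a semiabelian variety, so $\lhat(N,T)$ makes sense. The kernel $\ker(\pi)$ is connected (part of the definition of a \tac), so for each $n\geq 1$ the sequence of $k_n$-points $1\to\ker(\pi)(k_n)\to G(k_n)\xrightarrow{\pi}G'(k_n)\to 1$ is exact by Lang's theorem, and dualizing shows that $S(k_n)$ — which by Remark~\ref{comp-generics}~(3) consists of the characters of $G(k_n)$ whose restriction to $\ker(\pi)(k_n)$ equals that of $\chi_0$ — is a coset of $\widehat{\pi}(\charg{G'}(k_n))$; explicitly, the map
\[
\phi_n\colon \charg{G'}(k_n)\xrightarrow{\sim}S(k_n),\qquad \chi'\longmapsto \chi_0\cdot(\chi'\circ\pi)
\]
(with $\chi_0$ read as $\chi_0\circ N_{k_n/k}$) is a bijection. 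Because $\pi$ and $\chi_0$ are defined over $k$, the family $(\phi_n)$ commutes with the norm maps and is equivariant for the action $\chi\mapsto\chi\circ\Fr_k$; the next thing to check is that these properties, together with bijectivity at each finite level, force $\phi$ to carry primitive characters to primitive characters — the only non-formal point being that a non-primitive character of $S(k_n)$, written $\eta\circ N_{k_n/k_d}$, already has $\eta\in S(k_d)$, which follows by restricting to $\ker(\pi)$ and again invoking surjectivity of the norm on the connected group $\ker(\pi)$. Hence $\phi$ descends to a degree-preserving bijection $[\charg{G'}]\xrightarrow{\sim}[S]$.

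The remaining ingredient is the cohomological identification. For $\chi=\phi_n(\chi')$, the canonical isomorphisms $\mcL_{\widehat{\pi}(\chi')}\simeq\pi^*\mcL_{\chi'}$ and $\mcL_{\chi_1\chi_2}\simeq\mcL_{\chi_1}\otimes\mcL_{\chi_2}$ give $M_\chi\simeq M_{\chi_0}\otimes\pi^*\mcL_{\chi'}$, so the projection formula yields $R\pi_!(M_\chi)\simeq(R\pi_!M_{\chi_0})\otimes\mcL_{\chi'}=N_{\chi'}$; applying $R\Gamma_c(G'_{\bar k},-)$ and using $R\Gamma_c(G_{\bar k},-)=R\Gamma_c(G'_{\bar k},-)\circ R\pi_!$ produces a canonical isomorphism $H^*_c(G_{\bar k},M_\chi)\simeq H^*_c(G'_{\bar k},N_{\chi'})$, compatible with the action of $\Fr_{k_n}$ since all objects and maps are defined over $k_n$. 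Thus the local factors at $\chi\in[S]$ and at $\chi'=\phi^{-1}(\chi)\in[\charg{G'}]$ coincide, and re-indexing gives
\[
\lhat(N,T)=\prod_{\chi'\in[\charg{G'}]}\det\big(1-T^{\deg(\chi')}\Fr_{k_{\deg(\chi')}}\mid H^*_c(G'_{\bar k},N_{\chi'})\big)^{-1}=\prod_{\chi\in[S]}\det\big(1-T^{\deg(\chi)}\Fr_{k_{\deg(\chi)}}\mid H^*_c(G_{\bar k},M_\chi)\big)^{-1},
\]
which is the claimed equality.

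I expect the main obstacle to be the first paragraph: verifying that the elementary parametrization $\phi$ of $S$ by $\charg{G'}$ is compatible enough with the norm and Galois structures to descend to a degree-preserving bijection of the index sets $[\charg{G'}]$ and $[S]$; once this is settled, the identity of local factors and hence the proposition follow formally from the projection formula. As an alternative that packages this bookkeeping differently, one can apply $T\,d\log$ to both sides as in the proof of Proposition~\ref{pr-lhat} and reduce to the equality $\sum_{\chi\in S(k_n)}\Tr(\Fr_{k_n}\mid H^*_c(G_{\bar k},M_\chi))=\sum_{\chi'\in\charg{G'}(k_n)}S(N,\chi')$ for all $n$ (immediate from $\phi_n$ and the cohomological identification) together with the $S$-restricted form of identity~\eqref{eq-desired-identity}, proved exactly as in \loccit\ using Lang's theorem for $\ker(\pi)$.
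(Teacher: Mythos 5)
Your proof is correct and follows essentially the same route as the paper: the paper likewise parameterizes $[S]$ by $[\charg{G}']$ with $\deg(\chi)=\deg(\eta)$ and then identifies the local factors via the projection formula $H^*_c(G_{\bar{k}},M_{\chi_0}\otimes\pi^*\mcL_{\eta})\simeq H^*_c(G'_{\bar{k}},R\pi_!M_{\chi_0}\otimes\mcL_{\eta})$. The only difference is that you spell out the Lang-theorem bookkeeping behind the degree-preserving bijection of primitive classes, which the paper states without proof.
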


\begin{proof}
  We have $\chi\in [S]$ if and only if $\chi=\chi_0\cdot (\pi^*\eta)$
  for some $\eta\in [\charg{G}']$, with $\deg(\chi)=\deg(\eta)$. By the
  projection formula, we have a canonical isomorphism
  $$
  H^*_c(G_{\bar{k}},M_{\chi}) =
  H^*_c(G_{\bar{k}},M_{\chi_0}\otimes\pi^*\mcL_{\eta}) \simeq
  H^*_c(G'_{\bar{k}},R\pi_!M_{\chi_0}\otimes\mcL_{\eta}),
  $$
  from which the identity
  $$
  \det(1-T^{\deg(\chi)}\Fr_{k_{\deg(\chi)}}\mid
  H^*_c(G_{\bar{k}},M_{\chi}))^{-1}=
  \det(1-T^{\deg(\eta)}\Fr_{k_{\deg(\eta)}}\mid
  H^*_c(G'_{\bar{k}},(R\pi_!M_{\chi_0})_{\eta}))^{-1}
  $$
  follows for any~$\chi\in [S]$.
\end{proof}

\section{Objects with finite arithmetic tannakian groups on abelian
  varieties} 

As a non-trivial application of $\lhat$-functions, we will show that
they lead to a characterization of objects with finite arithmetic
tannakian groups on abelian varieties. This is an analogue of a result
of Katz (see~\cite[Th.\,6.2]{mellin}, recalled in
Theorem~\ref{th-katz-finite}, (1)) for~$\Gg_m$, where in fact the
$\lhat$-function appears implicitly (more precisely, where the
logarithmic derivative $Td\log \lhat(M,T)$ appears); similar results
appear in a preprint of Weissauer~\cite{weissauer_connected}.

More generally, inspired by the formulation used by Katz, we can prove a
stronger statement.

\begin{definition}[Quasi-unipotent object]
  \index{quasi-unipotent object} Let~$G$ be a connected commutative
  algebraic group over~$k$.  An object~$M$ of~$\Der(G)$ is said to be
  \emph{quasi-unipotent} if it is generically unramified and if there
  exists an integer $m\geq 1$ such that for any unramified
  character~$\chi\in \charg{G}$, the eigenvalues of Frobenius on
  $H^0(G_{\bar{k}},M_{\chi})$ are roots of unity of order at most~$m$.
\end{definition}

\begin{remark}\label{rm-quasi-unipotent}
  (1) Any perverse sheaf~$M$ on~$G$ with~$\garith{M}$ finite is
  quasi-unipotent. Indeed, first $M$ is generically unramified by
  Corollary~\ref{cor-finite-gen-unram}. Let then $m$ be the size of
  $\garith{M}$. For any unramified character $\chi\in\charg{G}$, the
  Frobenius action on $H^0(G_{\bar{k}},M_{\chi})$ is ``conjugate'' to an
  element of $\garith{M}$, so its eigenvalues are $m$-th roots of unity.
  \par
  (2) If $M$ is a quasi-unipotent perverse sheaf on~$G$, then it follows
  from the definition that any object of $\braket{M}$ is also
  quasi-unipotent.
  \par
  (3) Let $M$ be a quasi-unipotent object of~$\Der(G)$. Let
  $g_0\in G(k)$. Then the translated object $M'=[g\mapsto gg_0]^*M$ is
  also quasi-unipotent. Indeed, since $M'$ is canonically isomorphic to
  the convolution $\delta_{g_0} *_! M$, we obtain for any
  $\chi\in\charg{G}$ a canonical isomorphism
  $$
  H^*_c(G_{\bar{k}},M'_{\chi})\simeq
  H^*_c(G_{\bar{k}},(\delta_{g_0^{-1}})_{\chi})\otimes
  H^*_c(G_{\bar{k}},M_{\chi}).
  $$
  \par
  Noting that
  $H^*_c(G_{\bar{k}},(\delta_{g_0^{-1}})_{\chi})
  =H^0_c(G_{\bar{k}},(\delta_{g_0^{-1}})_{\chi})$,
  this shows already that $\chi$ is weakly-unramified for~$M$ if and
  only if it is for~$M'$.
  \par
  If~$\chi$ is weakly-unramified for~$M'$, and belongs to
  $\charg{G}(k_n)$, then the Frobenius automorphism of~$k_n$ acts on
  $H^0(G_{\bar{k}},(\delta_{g_0^{-1}})_{\chi})$ by multiplication
  by~$\chi(g_0^{-1})$, which is a root of unity of order bounded by the
  order of~$g_0$ in $G(k)$. Since $M$ is quasi-unipotent, the
  eigenvalues of Frobenius on $H^0(G_{\bar{k}},M'_{\chi})$ are roots of
  unity of order bounded independently of~$\chi$.
\end{remark}

\begin{theorem}\label{th-finite-ab-1}
  Let~$A$ be an abelian variety over~$k$.  Let $M$ be an arithmetically
  semisimple perverse sheaf of weight zero in $\Ppintarith(A)$ which is
  non-zero. If $M$ is quasi-unipotent, for instance if the group
  $\garith{M}$ is finite, then~$M$ is punctual.
\end{theorem}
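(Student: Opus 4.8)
The plan is to prove the contrapositive flavor of the statement by a dimension induction, exploiting the $\lhat$-function built from the proposition and the structural results on negligible objects on abelian varieties. First I would reduce to the case where $M$ is geometrically simple: since $M$ is arithmetically semisimple, by Lemma~\ref{lem-arith-ss-geo} it becomes, after a finite base change, a direct sum of geometrically simple perverse sheaves, and being quasi-unipotent and of weight zero is inherited by each summand (being punctual is detected summand by summand as well), so it suffices to treat one geometrically simple summand $M$. If that summand is negligible, then by the result of Weissauer quoted in Example~\ref{ex-negligible} (and Remark~\ref{rm-negligible}) it is of the form $(f^*N)_{\eta}$ for a quotient morphism $f\colon A\to A'$ with $\dim\ker(f)\geq 1$, and one can push the problem down to $A'$: the tannakian group of $N$ on $A'$ is a quotient of that of $M$, hence still finite, and $N$ is still quasi-unipotent of weight zero by the projection formula, so by induction on $\dim(A)$ the object $N$ is punctual on $A'$, forcing $M=(f^*N)_\eta$ to have support a coset of $\ker(f)$, of positive dimension — but a non-punctual object of $\Ppintarith(A)$ of that shape would be negligible \emph{and} in $\Ppintarith(A)$, which happens only in dimension zero, giving the contradiction. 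So we may assume $M$ is geometrically simple and \emph{not} negligible.

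Next I would analyze the $\lhat$-function $\lhat(M,T)$ via formula~(\ref{eq-lhat}): it is the exponential of $\sum_n \bigl(\sum_{\chi\in\charg{A}(k_n)} S(M,\chi)\bigr)T^n/n$, and since $M$ is generically unramified (automatic by Corollary~\ref{cor-finite-gen-unram} when $\garith{M}$ is finite, and part of the definition of quasi-unipotent) and $A$ is an abelian variety, the stratified vanishing theorem Theorem~\ref{thm-high-vanish} lets us compute $\sum_{\chi\in\charg{A}(k_n)} S(M,\chi)$ up to a controlled error: the generic term equals $\Tr(\Fr_{k_n}\mid H^0(A_{\bar k},M_\chi))$ which, for unramified $\chi$, is the trace of a quasi-unipotent element — hence a sum of at most $r$ roots of unity of bounded order — and the non-generic characters contribute $O(|k_n|^{\dim(A)-1})$ times bounded terms. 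The point of quasi-unipotence is that these Frobenius traces take values in a \emph{finite} set of algebraic numbers of bounded absolute value $\leq r$; combined with $|\charg{A}(k_n)|$ being a fixed polynomial in Weil numbers, one deduces that $\sum_{\chi\in\charg{A}(k_n)} S(M,\chi) = |A(k_n)|\, t_M(e;k_n)$ exactly (by orthogonality, formula~(\ref{eq-lhat2})), so $\lhat(M,T)$ is governed entirely by the single stalk $M_e$.

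The crux is then to show $t_M(e;k_n)$, equivalently the stalk $M_e$, has a specific behavior forcing $M$ to be punctual. Here I would argue: since $M$ is geometrically simple, non-negligible, of weight zero, generically unramified with finite $\garith{M}$, every unramified Frobenius conjugacy class lies in a finite group; by Theorem~\ref{th-4} (or rather the converse Proposition~\ref{pr-conv-weyl-geo}) the distribution of $\Tr(\Theta_{M,k_n}(\chi))$ over unramified $\chi$ encodes the restriction of representations to $\ggeo{M}\subset\garith{M}$, and one extracts that the multiplicity statistics force $\ggeo{M}$ to be trivial — so $M$ is geometrically of the form $\mcL_{\chi_0}\otimes(\text{punctual})$, but $M$ non-negligible means no positive-dimensional Kummer-type factor, i.e.\ $M$ is punctual. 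The \textbf{main obstacle} I expect is this last step: pinning down that a finite geometric tannakian group on an abelian variety, for a non-negligible geometrically simple object, forces triviality of $\ggeo{M}$ and hence punctuality — this requires combining the classification of simple perverse sheaves with weight $0$ (the sheaf $\mcF$ on the smooth locus is pure of weight $-\dim(\supp M)$) and a positivity argument showing that if $\dim(\supp M)\geq 1$ then the Frobenius traces cannot all be roots of unity (the generic trace of $M_\chi$ would involve $|k_n|^{-\dim(\supp M)/2}$ decay, contradicting quasi-unipotence unless the relevant cohomology is zero, which by non-negligibility it is not). Carefully handling the interplay between the arithmetic normalization (roots of unity) and the geometric weight drop is where the real work lies.
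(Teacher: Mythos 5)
Your proposal assembles several of the right ingredients (reduction to a geometrically simple constituent, the $\lhat$-function identity $\lhat(M,T)=\exp\bigl(\sum_n |A(k_n)|\,t_M(e;k_n)T^n/n\bigr)$, the fact that quasi-unipotence makes the unramified Frobenius traces sums of boundedly many roots of unity, and Corollary~\ref{cor-finite-gen-unram}), but the step you yourself flag as the ``main obstacle'' is exactly where the proof lives, and your two suggested mechanisms for it do not work. Equidistribution (Theorem~\ref{th-4}, Proposition~\ref{pr-conv-weyl-geo}) holds for every semisimple pure $M$ and places no constraint whatsoever on $\dim\Supp(M)$; it cannot ``force $\ggeo{M}$ to be trivial'' (nor should it: a simple punctual object already has a nontrivial finite cyclic geometric group). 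The decay argument is also false: for unramified $\chi$ the sum $S(M,\chi)$ is the trace of Frobenius on the weight-zero space $\rmH^0_c(A_{\bar k},M_\chi)$, hence a sum of at most $r$ unit complex numbers \emph{whatever} the dimension of the support is -- there is no tension between $\dim\Supp(M)\geq 1$ and all unramified traces being roots of unity of bounded order, and ruling that configuration out is precisely the hard content of the theorem. (A smaller inaccuracy: individual ramified sums $S(M,\chi)$ are not $O(1)$, e.g.\ the trivial character already gives a sum of size $\asymp|k_n|^{r/2}$; this does not matter for the exact orthogonality identity, but it undercuts the ``controlled error'' framing.)

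The paper's argument closes this gap by a mechanism absent from your sketch: one studies the translates $M^{(a)}=[x\mapsto x+a]^*M$ for \emph{all} $a$, so that $\lhat(M^{(a)},T)$ probes the stalk $M_a$ rather than only $M_e$; one splits $\lhat(M^{(a)},T)$ into the product $\lhat_0(M^{(a)},T)$ over unramified characters and the contribution of the finitely many \tacs\ containing the ramified characters, rewritten via Proposition~\ref{pr-tac} as $\lhat$-functions of the pushforwards $R\pi_{J*}M^{(a)}_{\chi_J}$; quasi-unipotence plus the non-archimedean convergence Lemma~\ref{lm-lambda1}, applied at a place above $p$, shows that $\lhat_0$ has no zero or pole which is a Weil number of positive weight, so the entire weight-$(-r)$ part of $\lhat(M^{(a)},T)$ (with $r=\dim\Supp(M)$, computed by Proposition~\ref{pr-lhat-weights1} from the stalk $M_a$) must come from the ramified factors. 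Comparing $a$ in the lisse locus with $a$ outside the support (or, in the general case, turning this into an identity in the Grothendieck group $K(A\setminus X)$ expressing $M$ in terms of the negligible objects $\pi_J^*R^{-r}\pi_{J*}M_{\chi_J}$) one concludes via Lemma~\ref{lm-morel}, respectively a $j_{!*}$ constituent argument, that $M$ is negligible -- contradicting $M\neq 0$ in $\Ppintarith(A)$. Without the translates, the $\lhat_0$/ramified splitting and the weight extraction, your argument does not get off the ground; as written it is a plan whose decisive step is missing. (Your preliminary reduction via Weissauer's structure theorem and an induction on $\dim A$ is also an unnecessary detour: a negligible geometric constituent of $M$ would already force $M$ itself to be negligible, contradicting $M\in\Ppintarith(A)$ directly.)
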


\begin{remark}
  As proved by Katz in the case of $\Gg_m$, one may expect that the
  conclusion of the theorem extends to objects with finite
  \emph{geometric} tannakian group (see~\cite[Th.\,6.4]{mellin} or
  Theorem~\ref{th-katz-finite}, (2)). We do not know how to prove this
  in general (Katz's deduction of this fact from the analogue of
  Theorem~\ref{th-finite-ab-1} for $\Gg_m$ uses the classification of
  objects of tannakian rank~$1$, for instance, which we do not have in
  this setting). We will however prove a weaker statement in
  Section~\ref{ssec-finite-ab} which turns out to be sufficient for many
  applications, including those of Chapter~\ref{sec-jacobian}.
\end{remark}

Before giving the proof, we state two corollaries.

\begin{corollary}\label{cor-translate-irred-var-ab}
  Let $M$ be an arithmetically simple perverse sheaf of weight zero on
  an abelian variety~$A$ over~$k$ of dimension $g\geq 1$.  Let $\bfG$ be
  the neutral component of~$\garith{M}$, and let~$S$ be the support
  of~$M$. The restriction of the standard representation of~$\garith{M}$
  to~$\bfG$ is irreducible unless there exists $x\not=e$ such that
  $M*\delta_x$ is isomorphic to~$M$. In particular, this holds unless
  there exists $x\in A$ with $x\not=e$ such that $x+S=S$.
\end{corollary}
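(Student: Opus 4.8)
The plan is to deduce this from Theorem~\ref{th-finite-ab-1}. Write $V$ for the standard representation of $\garith M$ (of dimension the tannakian dimension $r$ of $M$) and $\mathrm{Stab}(S)=\{x\in A:x+S=S\}$, a closed subgroup of $A$; since the assertion is vacuous for $x=e$, I would prove the contrapositive: if $V|_{\bfG}$ is reducible, then $\mathrm{Stab}(S)\ne\{e\}$. If $M$ is negligible, every geometric Jordan--Hölder constituent of $M_{\bar k}$ is negligible, hence --- by Weissauer's structure theorem for simple negligible perverse sheaves on abelian varieties (see Example~\ref{ex-negligible}) --- is a twisted pullback from a proper quotient of $A_{\bar k}$, so that its support, and therefore $S_{\bar k}$, is stable under translation by a positive-dimensional abelian subvariety; as $S$ descends to $k$, the same holds for all Galois conjugates of these subvarieties and hence for the positive-dimensional abelian subvariety they generate, so $\mathrm{Stab}(S)$ is positive-dimensional. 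So from now on I assume $M\in\Ppintarith(A)$ (equivalently, $M$ is not negligible); then, by Theorem~\ref{thm-Harith-fib-funct}, $\garith M$ is reductive, its identity component $\bfG$ is reductive, and every object of $\braket M^{\arith}$ is arithmetically semisimple and pure of weight zero (and $\braket M^{\arith}$ is semisimple, by Corollary~\ref{cor-def-group}).

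The first step is to manufacture a non-trivial punctual object in $\braket M^{\arith}$. Since $M$ is arithmetically simple, $V$ is an irreducible representation of $\garith M$, so reducibility of $V|_{\bfG}$ forces $\bfG\subsetneq\garith M$, i.e.\ $Q:=\garith M/\bfG$ is a non-trivial finite group. Let $P$ be the object of $\braket M^{\arith}$ attached to the regular representation $\Ind_{\bfG}^{\garith M}(\un)$ of $\garith M$ (which factors through $Q$). Then $P$ is arithmetically semisimple, pure of weight zero, non-zero of tannakian dimension $|Q|\ge 2$, and its arithmetic tannakian group is the image of $\garith M$ in $\GL\bigl(\Ind_{\bfG}^{\garith M}(\un)\bigr)$, namely $Q$ acting by its faithful regular representation --- in particular finite. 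By Theorem~\ref{th-finite-ab-1}, $P$ is therefore punctual, so in $\Ppintarith(A)$ we may write $P\cong\bigoplus_i\alpha_i^{\deg}\otimes\delta_{x_i}$, a direct sum of $|Q|$ skyscrapers counted with multiplicity, in which $\un\cong\delta_e$ occurs with multiplicity exactly one, since the trivial representation occurs in $\Ind_{\bfG}^{\garith M}(\un)$ with multiplicity $\dim\Hom_{\bfG}(\un,\un)=1$.

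The second step is to convolve $P$ with $M$. For each $i$, $\delta_{x_i}\ast_!M$ is a translate of $M$, with support $x_i+S$, and is again an arithmetically simple perverse sheaf lying in $\Ppintarith(A)$; hence $P\ast_!M$ is already perverse, equals its own $\pH^0$, has no negligible summand, and so coincides with $P\ast_{\intt}M=\bigoplus_i\alpha_i^{\deg}\otimes(\delta_{x_i}\ast_!M)$ (Remark~\ref{rem-Mint}). Under the tannakian equivalence this corresponds to $\Ind_{\bfG}^{\garith M}(\un)\otimes V\cong\Ind_{\bfG}^{\garith M}(\Res_{\bfG}V)$, so by Frobenius reciprocity the multiplicity of $M$ in $P\ast_{\intt}M$ equals $\dim\End_{\bfG}(V|_{\bfG})$, which is $\ge 2$ precisely because $V|_{\bfG}$ is reducible. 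Now a summand $\alpha_i^{\deg}\otimes(\delta_{x_i}\ast_!M)$ that is isomorphic to $M$ and has $x_i=e$ must satisfy $\alpha_i^{\deg}\otimes M\cong M$, hence $\alpha_i=1$, so this $i$ is the unique $\un$-summand of $P$; consequently at least one of the ($\ge 2$) summands of $P\ast_{\intt}M$ isomorphic to $M$ has $x_i\ne e$, whence $x_i+S=S$ with $x_i\ne e$, as required.

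The step I expect to be the main obstacle is exactly this last inference --- passing from the existence of the punctual object $P$ to a \emph{genuinely non-trivial} translation symmetry of $S$. On its own $P$ records nothing about the geometry of $S$ (all of its summands may be supported at $e$, which is precisely the case $\ggeo{M}$ connected), so the symmetry only materialises after convolving back with $M$; and the claim "$\alpha^{\deg}\otimes M\cong M$ forces $\alpha=1$" rests on the irreducibility of the geometric monodromy of $M$ together with Schur's lemma, forcing the conjugating element to be a scalar --- this is the point where the geometric, and not merely arithmetic, simplicity of $M$ is really used, and it is cleanest to run the whole argument under that hypothesis, the general arithmetically-simple case presumably then following by working with $(\ggeo{M})^{\circ}$ in place of $\bfG$ (one has $V|_{\bfG}$ reducible if and only if $V|_{(\ggeo{M})^{\circ}}$ is, because $\bfG$ is generated by $(\ggeo{M})^{\circ}$ together with a central torus). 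A secondary, routine point is the rationality of the points $x_i$, which one absorbs by a harmless finite extension of the base field.
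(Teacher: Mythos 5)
Your argument is essentially the paper's own: the paper also applies Theorem~\ref{th-finite-ab-1} to an object of $\braket{M}^{\arith}$ attached to a faithful representation of the finite component group $C=\garith{M}/\bfG$, concludes it is punctual, and then uses Frobenius reciprocity (Clifford theory for the finite-index normal subgroup $\bfG$) to turn reducibility of $V|_{\bfG}$ into an isomorphism between $M$ and a twisted translate $\alpha^{\deg}\otimes(\delta_x*M)$, hence $x+S=S$. Your packaging via the regular representation $\Ind_{\bfG}^{\garith{M}}(\un)$ and the explicit convolution $P*_{\intt}M$ is a hands-on version of the paper's one-line appeal to Clifford theory (and it quietly avoids the paper's implicit use of the fact that $C$ is abelian, which there follows from punctuality of $P$); your treatment of the negligible case and of the identification of $P*_{\intt}M$ with $P*_!M$ is more careful than what is written in the paper.

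The only real divergence is your last step, and there are two things to say about it. First, the corollary as literally stated only asserts the existence of \emph{some} $x\in A$ with $x+S=S$; for that, your worry about the scalars $\alpha_i$ is unnecessary: the Clifford argument produces a nontrivial character of $C$, i.e.\ some pair $(\alpha,x)\neq(1,e)$ with $\alpha^{\deg}\otimes(\delta_x*M)\simeq M$, and this already gives $x+S=S$ — which is exactly what the paper proves, its proof not separating the twist $\alpha^{\deg}$ from the point $x$ either. Second, if one wants the nonvacuous reading with $x\neq 0$ (which is how the corollary is used later, in Proposition~\ref{pr-kramer} together with Lemma~\ref{lm-beauville}), then your Schur-lemma argument excluding $\alpha^{\deg}\otimes M\simeq M$ with $\alpha\neq 1$ is indeed the key point, and it genuinely requires geometric simplicity — which holds in those applications. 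But your proposed reduction of the general arithmetically simple case ``by working with $(\ggeo{M})^{\circ}$ in place of $\bfG$'' cannot work: $(\ggeo{M})^{\circ}$ may have infinite index in $\garith{M}$, since $\garith{M}/\ggeo{M}$ is only of multiplicative type (Proposition~\ref{pr:geom-vs-arith2}), so the regular-representation/punctuality step breaks down; and in fact for induced objects $M=f_{n*}N$ with $N$ not geometrically Frobenius-invariant, supported on a curve with trivial stabilizer, the restriction $V|_{\bfG}$ is reducible while no nonzero $x$ satisfies $x+S=S$, so the strengthened statement really does need geometric simplicity and no such reduction exists.
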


\begin{proof}
  Let $P$ be an object of $\braket{M}^{\arith}$ which is a faithful
  representation of the finite component group $C=\garith{M}/\bfG$. Its
  tannakian group is isomorphic to~$C$, and hence the object~$P$ is punctual
  by Theorem~\ref{th-finite-ab-1}. The points appearing in the
  decomposition of~$P$ generate a finite subgroup~$B$ of~$A(\bar{k})$,
  and each skyscraper sheaf for $x\in B$ corresponds to a
  character~$\chi_x$ of~$\garith{M}$ trivial on~$\bfG$.
  \par 
  By a simple application of Frobenius reciprocity,\index{Frobenius
    reciprocity} a representation~$\rho$ of $\garith{M}$ restricts to an
  irreducible representation of~$\bfG$ unless there exists $x\in C$ such
  that $x\not=e$ and $\rho\otimes\chi_x$ is isomorphic to~$\rho$. In
  terms of perverse sheaves on~$A$, this condition (for the standard
  representation) means that $M*\delta_x$ is isomorphic to~$M$, which is
  the first assertion. Since it also implies that~$S+x=S$, this
  concludes the proof.
\end{proof}

\begin{corollary}\label{cor-support-fingp}
  Let~$A$ be an abelian variety over~$k$.  Let $M$ be a non-zero
  arithmetically semisimple perverse sheaf of weight zero in
  $\Ppintarith(A)$. If $M$ is quasi-unipotent, for instance if the group
  $\garith{M}$ is finite, then~$\ggeo{M}$ is a finite abelian group
  which is naturally isomorphic to the dual of the subgroup of
  $A(\bar k)$ generated by the support of $M$.
\end{corollary}

\begin{proof}
  By Theorem~\ref{th-finite-ab-1}, the object~$M$ is punctual. If~$F$
  denotes its support, we have an isomorphism
  $$
  M=\bigoplus_{x\in F}\alpha_x^{\deg} \otimes \delta_x
  $$
  for some unitary scalars~$\alpha_x$, and therefore a geometric
  isomorphism of~$M$ with the direct sum of the $\delta_x$
  for~$x\in F$. Let~$H$ be the subgroup of $A(\bar{k})$ generated
  by~$F$, which is a finite abelian group and let
  $\widehat{H}=\Hom(H,\Qlb^{\times})$ be the dual group of~$H$. We
  obtain an additive functor from the finite-dimensional
  $\Qlb$-representations of~$\widehat{H}$ to $\braket{M}^{\geo}$ by
  associating to the character ``evaluation at~$x$'' of $\widehat{H}$
  the object $\delta_x$. Since $\delta_x *\delta_y\simeq \delta_{x+y}$,
  this is a tensor functor, and it gives an equivalence of categories.
  Hence~$\ggeo{M}$ is
  isomorphic to~$\widehat{H}$.
\end{proof}

We will prove Theorem~\ref{th-finite-ab-1} in the next two sections. In
fact, since this case is somewhat easier, we will begin by assuming that
the abelian variety~$A$ is simple (which is in a reasonable sense the
generic case) before handling the general situation. The reader may skip
the first case to read directly the proof of the general result.

We first prove two lemmas that are used in both proofs.

\begin{lemma}\label{lm-lambda1}
  Let $R$ be a commutative ring with unit and $\lambda$ a
  non-archimedean valuation on~$R$. Assume that~$R$ is complete with the
  topology given by $\lambda$.
  \par
  Let $(\alpha_i)_{i\in I}$ be a family of elements of~$R$ such that
  $|\alpha_i|_{\lambda}\leq 1$ for all~$i\in I$, and let $(d_i)_{i\in
    I}$ be a family of positive integers such that
  $$
  \lim_{I} d_i= +\infty,
  $$
  where the limit is along the filter of the complements of finite
  subsets of~$I$.
  \par
  The product
  $$
  \prod_{i\in I}(1-\alpha_iT^{d_i})
  $$
  converges and is non-zero for $T$ such that $|T|_{\lambda}<1$.
\end{lemma}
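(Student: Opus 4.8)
The plan is to prove this by reduction to a finite subproduct, using completeness of $R$ to control the tail. The statement is a convergence claim for an infinite product in a complete non-archimedean valued ring, so I will exploit the standard fact that such a product converges precisely when its factors tend to $1$ in the $\lambda$-adic topology, and that the limit is a unit (hence non-zero) when every factor is a unit with factor-to-$1$ difference in the maximal ideal.

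First I would fix $T$ with $|T|_\lambda < 1$ and set $t = |T|_\lambda < 1$. For each $i \in I$, the factor $1 - \alpha_i T^{d_i}$ satisfies $|{\alpha_i T^{d_i}}|_\lambda = |\alpha_i|_\lambda \cdot t^{d_i} \leq t^{d_i}$. Since $d_i \to +\infty$ along the filter (more precisely, for every $N$ the set $\{i \in I : d_i \leq N\}$ is finite, which is what $\lim_I d_i = +\infty$ means for a family indexed by a possibly infinite set), and since $0 \le t < 1$, the quantities $t^{d_i} \to 0$ along $I$; hence $\alpha_i T^{d_i} \to 0$ in the $\lambda$-adic topology. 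In particular, all but finitely many of the $\alpha_i T^{d_i}$ lie in the set $\{x \in R : |x|_\lambda < 1\}$, which is an ideal by the non-archimedean (ultrametric) inequality.

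Next I would argue convergence of the partial products. Enumerate a cofinal increasing sequence of finite subsets $I_1 \subset I_2 \subset \cdots$ exhausting $I$ (possible since each $\{i : d_i \le N\}$ is finite, so $I$ is countable, or one works directly with the net of finite subsets ordered by inclusion). Write $P_n = \prod_{i \in I_n}(1 - \alpha_i T^{d_i})$. For $m > n$, the difference $P_m - P_n = P_n\big(\prod_{i \in I_m \setminus I_n}(1 - \alpha_i T^{d_i}) - 1\big)$, and expanding the finite product shows this lies in the ideal generated by the $\alpha_i T^{d_i}$ for $i \in I_m \setminus I_n$; by the ultrametric inequality, $|P_m - P_n|_\lambda \leq \max_{i \in I_m \setminus I_n}|\alpha_i T^{d_i}|_\lambda \leq \max_{i \notin I_n} t^{d_i}$, which tends to $0$ as $n \to \infty$ because only finitely many $d_i$ are bounded by any given $N$. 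Hence $(P_n)$ is a Cauchy sequence in $R$; by completeness it converges to some $P \in R$, and this limit is independent of the chosen exhaustion (again by the Cauchy estimate applied to interleaved exhaustions). This $P$ is by definition the value of the infinite product.

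Finally, for non-vanishing: choose $n_0$ large enough that $\alpha_i T^{d_i} \in \mathfrak{m}_\lambda := \{x : |x|_\lambda < 1\}$ for all $i \notin I_{n_0}$, and split $P = P_{n_0} \cdot Q$ where $Q = \prod_{i \notin I_{n_0}}(1 - \alpha_i T^{d_i})$ is a convergent product of elements of $1 + \mathfrak{m}_\lambda$. Since $\mathfrak{m}_\lambda$ is an ideal and $R$ is $\lambda$-adically complete, each such factor $1 - \alpha_i T^{d_i}$ is a unit (with inverse the convergent geometric series $\sum_{k\ge 0}(\alpha_i T^{d_i})^k$), and the limit $Q$ of partial products of units lying in $1 + \mathfrak{m}_\lambda$ again lies in $1 + \mathfrak{m}_\lambda$, hence is a unit. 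The main obstacle I anticipate is being careful about what ``$\lim_I d_i = +\infty$'' means when $I$ is an arbitrary (not necessarily countable or linearly ordered) index set, and correspondingly whether one phrases the argument with sequences of finite subsets or directly with the net of all finite subsets ordered by inclusion; the latter is cleaner and avoids any countability hypothesis, so I would adopt it, noting that the finiteness of $\{i : d_i \le N\}$ for each $N$ is exactly the condition that makes the net of partial products Cauchy. Everything else is the routine ultrametric manipulation sketched above, and the factor $P_{n_0}$ carries no information about vanishing — indeed the lemma as stated only asserts the product ``converges and is non-zero,'' and in applications one will separately know that the finitely many remaining factors $1 - \alpha_i T^{d_i}$ with $d_i$ small are non-zero, or this will follow from $|\alpha_i T^{d_i}|_\lambda < 1$ too if $|\alpha_i|_\lambda \le 1$ and $|T|_\lambda < 1$ force $\alpha_i T^{d_i} \in \mathfrak m_\lambda$ for \emph{all} $i$, making \emph{every} factor a unit and $P$ itself a unit.
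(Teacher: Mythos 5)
Your proof is correct and follows essentially the same route as the paper's: the Cauchy estimate for differences of finite partial products via the expansion of $\prod_{i}(1-\alpha_iT^{d_i})-1$ over nonempty subsets, the bound $|T|_\lambda^{d_L}\le |T|_\lambda^{\min d_i}\to 0$, and completeness of $R$. Your unit-theoretic treatment of non-vanishing (since $d_i\ge 1$, every factor lies in $1+\mathfrak{m}_\lambda$ and is invertible by the geometric series, so the tail and the whole product are units) is simply a more explicit version of the paper's remark that the product can only vanish if some factor does, which the hypotheses exclude.
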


\begin{proof}
  Let $J\subset K$ be finite subsets of~$I$.  Then for
  $|T|_{\lambda}\leq 1$, we compute that
  \begin{align*}
    \Bigl|\prod_{i\in K}(1-\alpha_iT^{d_i})-\prod_{i\in
      J}(1-\alpha_iT^{d_i})\Bigr|_{\lambda} &=\Bigl|\prod_{i\in
      J}(1-\alpha_iT^{d_i})\Bigl(\prod_{i\in K\setminus
      J}(1-\alpha_iT^{d_i})-1\Bigr)\Bigr|_{\lambda}
    \\
    &\leq \Bigl|\prod_{i\in K\setminus
      J}(1-\alpha_iT^{d_i})-1\Bigr|_{\lambda}=
    \Bigl|\sum_{\emptyset\not= L\subset K\setminus
      J}(-1)^{|L|}\sigma_{L}T^{d_{L}}\Bigr|_{\lambda}
  \end{align*}
  where 
  $$
  \sigma_L=\prod_{i\in L}\alpha_i,\quad\quad
  d_L=\sum_{i\in L}d_i.
  $$
  \par
  We note that $|\sigma_L|_{\lambda}\leq 1$ for all~$L$. Moreover, since
  the lower-bound
  $$
  d_L\geq \min_{i\in I\setminus J}d_i
  $$
  holds, the assumption that $d_i\to +\infty$ implies that for any
  integer $N\geq 1$, we can
  choose $J$ so that 
  $$
  \Bigl|\sum_{\emptyset\not= L\subset K\setminus
    J}(-1)^{|L|}\sigma_{L}T^{d_{L}}\Bigr|_{\lambda}
  \leq |T|_{\lambda}^{N}
  $$
  for any finite set $K$ containing $J$. The absolute convergence of the
  product follows when $|T|_{\lambda}<1$ using the Cauchy criterion.  In
  particular, the product can only be zero if some term is zero, and
  this is not the case if~$|T|_{\lambda}<1$.
\end{proof}

The next lemma gives basic structural information on zeros and poles of
$\lhat(M,T)$, refining the last part of Proposition~\ref{pr-lhat} in the
case of abelian varieties.

\begin{definition}
  Let $f\in \bQl(X)$ be a non-zero rational function, $k$ a finite field
  and $r\in\Zz$. We denote by
  $\wtr_{k,r}(f)$\nomenclature[$w$]{$\wtr_{k,r}(f)$}{part of $|k|$-weight~$w$
    of a rational function} the rational function
  $$
  \prod_{\alpha\text{ of $k$-weight~$-r$}}(1-\alpha T)^{v_{\alpha}(f)}
  $$
  where $\alpha$ runs over elements of~$\bQl$ which are $k$-Weil numbers
  of weight~$-r$, and $v_{\alpha}$ is the order of~$f$ at~$\alpha$.
\end{definition}

In other words (note the minus sign), the rational function
$\wtr_{k,r}(f)$ is (up to leading terms) ``the part of~$f$ with zeros
and poles of weight~$r$''. Below, we will sometimes write $\wtr_{r}$
when the finite field $k$ is clear from context.

The definition implies that the identity
$$
\wtr_{k,r}(f_1f_2)=\wtr_{k,r}(f_1)\wtr_{k,r}(f_2)
$$
holds for any rational functions~$f_1$ and~$f_2$.

\begin{proposition}\label{pr-lhat-weights1}
  Let $M$ be a
  complex on an abelian variety~$A$ over~$k$ of dimension $g\geq
  0$. Assume that $M$ is pure of weight zero and that $M_e$ has weights
  in $[a,b]$.

  \begin{enumth}
  \item The poles \respup{zeros} of $\lhat(M,T)$ are $k$-Weil
    numbers. Their weights are of the form $-w-i$ for some even
    \respup{odd} integer~$i$ with $0\leq i\leq 2g$ and some integer $w$
    with $a\leq w\leq b$.

    If there exists such a zero or pole then there exists an eigenvalue
    of weight~$w$ on~$M_e$, and the formula
    $$
    \wtr_{k,-w}(\lhat(M,T))=\wtr_{k,-w}(\det(1-T\Fr_k\mid
      M_e))^{-1}
    $$
    holds.

  \item If~$M$ is an arithmetically simple perverse sheaf, and if $e$
    belongs to the open set of the support of~$M$ where $M$ is lisse,
    then the poles \respup{zeros} of $\lhat(M,T)$ have $k$-weights equal
    to $\dim \Supp(M)-i$ for some integers $i$ with $0\leq i\leq 2g$
    such that
    $$
    \dim\Supp(M)\equiv i\mods{2},
    $$
    and there are poles and zeros of all these possible weights.
  \end{enumth}
\end{proposition}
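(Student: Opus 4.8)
The plan is to compute $\lhat(M,T)$ via formula~\eqref{eq-lhat2} of Proposition~\ref{pr-lhat}, which expresses it as
\[
\lhat(M,T)=\exp\Bigl(\sum_{n\geq 1}|A(k_n)|\,t_M(e;k_n)\frac{T^n}{n}\Bigr),
\]
and then to unwind the two factors $|A(k_n)|$ and $t_M(e;k_n)$ into sums of powers of Weil numbers. For the first factor, the Grothendieck--Lefschetz trace formula applied to the constant sheaf on the abelian variety $A$ gives $|A(k_n)|=\sum_{i=0}^{2g}(-1)^i\Tr(\Fr_k^n\mid \rmH^i(A_{\bar k},\Qlb))$, and since $\rmH^i(A_{\bar k},\Qlb)=\bigwedge^i \rmH^1(A_{\bar k},\Qlb)$ is pure of weight $i$, the eigenvalues $\beta_j$ that occur are $k$-Weil numbers of weight exactly $i$, with sign $(-1)^i$. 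For the second factor, since $M_e$ is a complex on the point $\Spec(k)$ with weights in $[a,b]$, we have $t_M(e;k_n)=\sum_{w}\sum_{\ell}(-1)^{?}\gamma_\ell^n$ where the $\gamma_\ell$ are the eigenvalues of $\Fr_k$ on the cohomology sheaves $\mathcal{H}^*(M)_e$, each $\gamma_\ell$ being a $k$-Weil number of some weight $w$ with $a\leq w\leq b$; more precisely, writing $M_e$ in the Grothendieck group as $\sum_r (-1)^r \mathcal{H}^r(M)_e$, we get $t_M(e;k_n)=\sum_{r}(-1)^r\Tr(\Fr_k^n\mid \mathcal{H}^r(M)_e)$. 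Multiplying the two expansions, $|A(k_n)|t_M(e;k_n)$ is a finite $\Zz$-linear combination of terms $(\beta_j\gamma_\ell)^n$, where $\beta_j$ has weight $i\in[0,2g]$, $\gamma_\ell$ has weight $w\in[a,b]$, and the coefficient sign is $(-1)^{i+r}$. The standard identity $\exp(\sum_{n\geq 1}\alpha^nT^n/n)=(1-\alpha T)^{-1}$ then shows $\lhat(M,T)$ is a ratio of products of $(1-\beta_j\gamma_\ell T)$, hence rational, with zeros and poles among the $k$-Weil numbers $\beta_j\gamma_\ell$, of weight $w+i$. (Note: the statement writes the weight as $-w-i$; this is consistent with the sign convention in the definition of $\wtr_{k,r}$, where $\wtr_{k,r}(f)$ collects zeros/poles of weight $-r$, so the factor $(1-\alpha T)$ with $\alpha$ of weight $w+i$ is recorded in $\wtr_{k,-(w+i)}$.) The parity claim — poles at even $i$, zeros at odd $i$ — follows because a Weil number of odd weight $i$ cannot cancel one of even weight, so after collecting by weight, the contribution from even $i$ appears in the denominator (sign $+$ from $(-1)^i$) and from odd $i$ in the numerator (sign $-$), once we also sort out that purity of $M$ forces the relevant cohomology sheaf degree parity; the cleanest way is to argue weight by weight, using that distinct weights never interfere.

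For part~(1), the key refinement is the formula
\[
\wtr_{k,-w}(\lhat(M,T))=\wtr_{k,-w}\bigl(\det(1-T\Fr_k\mid M_e)\bigr)^{-1}.
\]
To see this, isolate in the double expansion above only the terms of minimal $i$, namely $i=0$: the degree-$0$ cohomology $\rmH^0(A_{\bar k},\Qlb)=\Qlb$ contributes the single eigenvalue $\beta=1$ (weight $0$). So the ``weight $w$'' part of the zeros and poles of $\lhat(M,T)$ comes precisely from the $i=0$ piece times the weight-$w$ eigenvalues $\gamma_\ell$ of $M_e$, together with possible interference from $i\geq 1$ and lower-weight $\gamma$'s — but such products have weight $w'+i$ with $w'<w$, and these can equal $w$ only if $w'+i=w$ with $i\geq 1$, i.e. $w'\leq w-1$. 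Here I need to rule out that those extra contributions, of weight $w$ but coming from $i\geq 1$, interfere with the $i=0$ ones; they do interfere in general, so the correct statement is more delicate. The right approach: argue that the $\gamma$ of maximal weight $b$ (if it occurs) can only be reached via $i=0$, giving $\wtr_{k,-b}(\lhat(M,T))=\wtr_{k,-b}(\det(1-T\Fr_k\mid M_e))^{-1}$; more generally, induct downward on $w$, at each stage subtracting off the already-known higher-weight contributions. Actually the cleanest route is to observe that $\lhat(M,T)=L(M_e,|k|T)/L(M_e,T)$ when $A=\Gg_m$ (Remark~\ref{rm-lhat-different}) — but for a general abelian variety one uses instead $|A(k_n)|=\prod_j(1-\beta_j)$ summed appropriately; I will need the precise bookkeeping that the factor of $L(M_e,T)^{-1}=\det(1-T\Fr_k\mid M_e)^{-1}$ (the $i=0$, $\beta=1$ term) is the unique source of zeros/poles of $\lhat$ whose weight is \emph{not} of the form $w'+i$ with $i\geq 1$. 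I expect this bookkeeping — showing rigorously that at each weight $w$ the ``new'' contribution is exactly the $M_e$-part and the rest is accounted for — to be the main technical obstacle; it should follow from a downward induction on weights using that $\wtr_{k,r}$ is multiplicative and that $|A(k_n)|$ has a Weil number of weight $i$ in degree $i$ for every $i\in[0,2g]$ (in particular weight $0$ in degree $0$, weight $2g$ in degree $2g$, so all intermediate weights genuinely occur with the expected signs).

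For part~(2), assume $M$ is an arithmetically simple perverse sheaf, say $M=s_*j_{!*}\mathcal{F}[r]$ with $s\colon Y\hookrightarrow A$ the closed support of dimension $r=\dim\Supp(M)$, $j\colon U\hookrightarrow Y$ a smooth dense open where $M$ is lisse, $\mathcal{F}$ an irreducible lisse sheaf on $U$ pure of weight $-r$ (since $M$ is pure of weight $0$); this uses the classification of simple perverse sheaves \cite[Th.\,4.3.1(ii)]{BBD-pervers} together with the weight-preservation of $s_*$ and $j_{!*}$, exactly as in the proof of Proposition~\ref{pr-weyl-sum}. If $e\in U$, then $M_e=\mathcal{F}_e[r]$ is concentrated in cohomological degree $-r$ and is pure of weight $-r$; hence in the expansion of $t_M(e;k_n)$ the only eigenvalues $\gamma_\ell$ occurring are the $\Fr_k$-eigenvalues on $\mathcal{F}_e$, all of weight $w=-r$, with sign $(-1)^{-r}=(-1)^r$. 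Plugging into part~(1): the zeros and poles of $\lhat(M,T)$ have weight $-r+i$ for $i$ ranging over $[0,2g]$, equivalently $\dim\Supp(M)-i$ after relabeling $i\mapsto 2g-i$ — wait, I must be careful with the sign convention in the statement, which writes the weights as $\dim\Supp(M)-i$; reconciling this amounts to noting that by Poincaré duality on $A$ the eigenvalues $\beta_j$ of weight $i$ in $\rmH^i$ pair with those of weight $2g-i$ in $\rmH^{2g-i}$, and the $\lhat$-function inherits a corresponding symmetry through the factor $|A(k_n)|$, so one may equally index by $i$ or $2g-i$; I will choose the indexing that matches the stated ``$\dim\Supp(M)-i$''. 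The parity constraint $\dim\Supp(M)\equiv i\pmod 2$ is then forced because $i$ even contributes (via $\beta_j$ of even weight, sign $+$) to the \emph{denominator} and hence to zeros or poles of weight $-r+(\text{even})$, which have the same parity as $-r\equiv r\pmod 2$, and similarly $i$ odd gives the opposite parity; in other words the weight of any zero or pole differs from $r=\dim\Supp(M)$ by an even integer, which is the asserted congruence. Finally, that poles and zeros of \emph{all} the allowed weights genuinely occur: for $i=0$ and $i=2g$ this is the content of part~(1) applied with $M_e=\mathcal{F}_e[r]$ nonzero (so there is a genuine pole at weight $-r$, i.e. $\dim\Supp(M)$ in the statement's indexing, and by the top cohomology $\rmH^{2g}$ of $A$ a genuine zero or pole at the extreme weight); for intermediate $i$ one uses that every $\rmH^i(A_{\bar k},\Qlb)$ is nonzero (it has dimension $\binom{2g}{i}>0$) and that its eigenvalues, multiplied by the nonzero eigenvalues of $\mathcal{F}_e$, cannot all cancel against other weight strata since they live in a single weight $-r+i$ not reached by any other $(i',w')$ with $w'=-r$. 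I expect part~(2) to be routine once part~(1) is in hand; the only genuine work is the sign/indexing reconciliation between ``$-w-i$'' in part~(1) and ``$\dim\Supp(M)-i$'' in part~(2), which I would handle by fixing the convention early and carrying it consistently.
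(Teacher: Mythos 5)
Your overall route is the same as the paper's: expand $\lhat(M,T)$ via the formula $\lhat(M,T)=\exp\bigl(\sum_{n}|A(k_n)|\,t_M(e;k_n)T^n/n\bigr)$, use the trace formula with $\rmH^i(A_{\bar k},\Qlb)$ pure of weight $i$ and the stalk expansion of $t_M(e;k_n)$, and exponentiate to get rationality and the shape $-w-i$ of the weights. The genuine gap is at the key identity $\wtr_{k,-w}(\lhat(M,T))=\wtr_{k,-w}(\det(1-T\Fr_k\mid M_e))^{-1}$. You correctly spot the interference problem (a pair $(w',i)$ with $i\geq 1$ and $w'=w-i$ lands in the same weight stratum as $(w,0)$), but your proposed repair is backwards: you anchor at the \emph{maximal} weight $b$, claiming it "can only be reached via $i=0$". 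In fact $b=w'+i$ admits the solutions $(b,0),(b-1,1),\dots,(a,b-a)$, so the interference you are trying to rule out is generically present exactly there; it is the \emph{minimal} weight that is interference-free, since $w'+i=a$ with $w'\geq a$ and $i\geq 0$ forces $(w',i)=(a,0)$, and $\beta=1$ is the unique Frobenius eigenvalue of weight $0$ on $\rmH^{*}(A_{\bar k},\Qlb)$. That single observation is how the paper settles the formula, and it is the only case in which the formula is later invoked (with $w$ the lowest weight occurring in the stalk, e.g.\ $w=-\dim\Supp(M)$ in part (2)). Your "induct downward on $w$, subtracting off higher-weight contributions" does not yield the stated identity, and as described the step fails; you yourself leave it as "the main technical obstacle", so the proof is not complete at its central point.

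Two secondary points. In part (2) the weight is obtained by simply substituting $w=-\dim\Supp(M)$ into $-w-i$, giving $\dim\Supp(M)-i$; your value $-r+i$ and the attempted repair by the relabeling $i\mapsto 2g-i$ (which would give $2g-r-i$) are both off, and no Poincar\'e-duality relabeling is involved. Also, purity of $M$ does \emph{not} force any parity of the stalk degrees, so your appeal to it in part (1) does not work; what makes the parity and the "all weights occur" claims clean in part (2) is that there $M_e$ sits in the single degree $-r$ and is pure of weight $-r$, so within each weight stratum $\dim\Supp(M)-i$ every term carries the same sign $(-1)^{i+r}$ (poles for one parity of $i$, zeros for the other, with no possible cancellation inside a stratum); your remark that distinct strata cannot interfere is correct, but the statement "any zero or pole differs from $\dim\Supp(M)$ by an even integer" conflates the two parities.
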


\begin{proof}
  (1) 
  By
  Proposition~\ref{pr-lhat}, we have
  $$
  \lhat(M,T)=\exp\Bigl(\sum_{n\geq 1}|A(k_n)|t_M(e;k_n)\frac{T^n}{n}\Bigr).
  $$
  \par
  This expression, combined with the purity of~$M$ and the structure of
  the cohomology of~$A$, shows that $\lhat(M,T)$ has:
  \begin{enumerate}
  \item[(i)] Poles of the form
    $$
    T=\frac{1}{\alpha\beta},
    $$
    where $\alpha$ is an eigenvalue of Frobenius on the stalk of~$M$
    at~$e$, and $\beta$ is an eigenvalue of Frobenius on
    $H^i(A_{\bar{k}},\bQl)$ for some even integer~$i$ with
    $0\leq i\leq 2g$. Since $\alpha$ is pure of some weight~$w$ where
    $a\leq w\leq b$, and $\beta$ is of weight~$i$, such a pole is a
    $|k|$-Weil number of weight~$-w-i$.
  \item[(ii)] Zeros of the form
    $$
    T=\frac{1}{\alpha\beta},
    $$
    where $\alpha$ is an eigenvalue of Frobenius on the stalk of~$M$
    at~$e$, and $\beta$ is an eigenvalue of Frobenius on
    $H^i(A_{\bar{k}},\bQl)$ for some odd integer~$i$ with
    $1\leq i\leq 2g-1$. As above, such a zero is a $|k|$-Weil number of
    weight~$-w-i$ where $a\leq w\leq b$.
  \end{enumerate}

  The precise formula for the parts of weight~$-w$ follows from the
  above since~$\beta=1$ is the unique eigenvalue of weight~$0$
  on~$H^*(A_{\bar{k}},\bQl)$.
  
  (2) If $M$ is an arithmetically simple perverse sheaf and~$e$ is a
  point where $M$ is lisse, then the eigenvalues $\alpha$ above have
  weight~$w=-\dim(\supp(M))$, and there is at least one $\alpha$ since
  the stalk at~$e$ is non-zero. Thus the poles and zeros above have
  weight~$\dim(\supp(M))-i$.
\end{proof}

\section{Perverse sheaves with finitely many ramified characters}

In this section, we prove Theorem~\ref{th-finite-ab-1} in the case of an
arithmetically semisimple perverse sheaf of weight zero in
$\Ppintarith(A)$ which has the property that the set of ramified
characters for~$M$ is finite. This applies in particular, for instance,
if the abelian variety~$A$ is simple, since the set of ramified
characters is a finite union of \tacs\ of~$A$ (see
Remark~\ref{rm-ab-var-unram}), and each \tac\ is reduced to a single
character if~$A$ is simple.



Let~$M$ be an arithmetically semisimple perverse sheaf of weight zero
such that the set $\mcS$ of ramified characters for~$M$ is finite. We
will prove the following:

\begin{proposition}
  Under the above assumptions, if~$M$ is quasi-unipotent and
  non-punctual, then it is negligible.
\end{proposition}

In Theorem~\ref{th-finite-ab-1}, we assume that~$M$ is quasi-unipotent
and that $M_{\intt}$ is non-zero; comparing with the proposition, it
implies that~$M$ must be punctual.

We now prove the proposition.  After a finite extension of~$k$, we may
assume that each $\chi\in\mcS$ is in $\charg{A}(k)$.
  
One reduces using Lemma~\ref{lem-arith-ss-geo} to the case of~$M$
geometrically simple.  We denote by~$S$ the support of~$M$ and by $r$
its dimension; we have $r\geq 1$ since $M$ is not punctual.  We denote
by~$U$ a smooth open dense subset of~$S$ such that~$M$ is lisse on~$U$.

Let $n\geq 1$ and let~$a\in A(k_n)$. We denote
$M^{(a)}=[x\mapsto x+a]^*M$, which is a simple perverse sheaf
on~$A_{k_n}$. The stalk of~$M^{(a)}$ at~$e$ is canonically isomorphic to
the stalk~$M_a$ of~$M$ at~$a$. We note that the set of ramified
characters for~$M^{(a)}$ is also contained in~$\mcS$, and that~$M^{(a)}$
is quasi-unipotent (see Remark~\ref{rm-quasi-unipotent}, (3)).


We then write
$$
\lhat(M^{(a)},T)=\lhat_0(M^{(a)},T)\prod_{\chi\in\mcS}\det(1-T\Fr_{k}\mid
H^*(A_{\bar{k}}, M^{(a)}_{\chi}))^{-1}
$$
where
$$
\lhat_0(M^{(a)},T)=\prod_{\chi\notin\mcS}\det(1-T^{\deg(\chi)}
\Fr_{k_{\deg(\chi)}}\mid
H^0(A_{\bar{k}}, M^{(a)}_{\chi}))^{-1}.
$$

Note that $\lhat_0(M^{(a)},T)$ is a rational function since
$\lhat(M^{(a)},T)$ is one (Proposition~\ref{pr-lhat}).

The quasi-unipotence property of~$M^{(a)}$ shows that the infinite
product $\lhat_0(M^{(a)},T)$ can be viewed as a formal power series in
$\mcO[[T]]$ for some cyclotomic order~$\mcO$.  We can apply
Lemma~\ref{lm-lambda1} to any non-archimedean place~$\lambda$ of~$\mcO$,
since the eigenvalues of Frobenius on $H^0(A_{\bar{k}},M^{(a)}_{\chi})$
are roots of unity of bounded order for all unramified characters
$\chi$.  This implies that, for any non-archimedean place $\lambda$, the
infinite product $\lhat_0(M^{(a)},T)$ converges in the disc defined by
$|T|_{\lambda}<1$. Taking $\lambda$ to correspond to places above the
characteristic of~$k$, this implies that the rational function
$\lhat_0(M^{(a)},T)$ cannot have a zero or pole which is a $|k|$-Weil
number of positive weight.

Suppose that~$a\in (A\setminus S)(\bar{k})$. Then~$M^{(a)}_e=0$. Hence we
deduce that
\begin{equation}\label{eq-outside}
  1=\prod_{\chi\in\mcS}\wtr_r(\det(1-T\Fr_k\mid
  H^*(A_{\bar{k}},M^{(a)}_{\chi}))).
\end{equation}
Thus, the Frobenius automorphism has no eigenvalue of weight~$-r$ acting
on any of the cohomology spaces $H^i(A_{\bar{k}},M^{(a)}_{\chi})$. By
purity, this translates to the condition
$$
H^{-r}(A_{\bar{k}},M^{(a)}_{\chi})=0
$$
for all~$\chi\in\mcS$. 

On the other hand, suppose that~$a\in U(\bar{k})$. Then $M^{(a)}_e=M_a$ is
pure of weight~$-r$. From the above and
Proposition~\ref{pr-lhat-weights1}, (2), we deduce that
\begin{equation}\label{eq-inside}
  \lhat(M^{(a)},T)=\wtr_r(\lhat(M^{(a)},T))
  =\prod_{\chi\in\mcS}\wtr_r(\det(1-T\Fr_k\mid
  H^*(A_{\bar{k}},M^{(a)}_{\chi}))),
\end{equation}
and since the left-hand side is not~$1$, there exists (by purity again)
at least one~$\chi\in\mcS$ such that
$$
H^{-r}(A_{\bar{k}},M^{(a)}_{\chi})\not=0.
$$

If we combine these two statements, we conclude that~$S=A$. Indeed, the
spaces $H^{-r}(A_{\bar{k}},M^{(a)}_{\chi})$ are independent
of~$a\in A(\bar{k})$ up to isomorphism. Hence, since there exists some
$a_0\in U(\bar{k})$, if one of these spaces is non-zero, then no
$a\in A(\bar{k})$ can satisfy the condition required to have
$a\notin S(\bar{k})$.

Fixing again~$a_0\in U(\bar{k})$, let $\chi\in\mcS$ be such that
$$
H^{-r}(A_{\bar{k}},M^{(a_0)}_{\chi})
$$
is non-zero. Since~$M^{(a_0)}$ is a simple perverse sheaf supported
on~$S=A$, and $r=\dim(S)=\dim(A)$, it follows from Lemma~\ref{lm-morel}
that~$M^{(a_0)}_{\chi}$ is geometrically trivial. This implies that~$M$
is negligible.

\section{The general case}

In this section, we prove Theorem~\ref{th-finite-ab-1} in the general
case. Thus let $M$ be an arithmetically semisimple perverse sheaf of
weight zero in $\Ppintarith(A)$, which we assume is quasi-unipotent and
not punctual. We will show that $M$ is negligible.

It suffices to treat the case of a simple perverse sheaf~$M$
(Lemma~\ref{lem-arith-ss-geo}). 

We denote by $S$ the support of~$M$ and by $r$ its dimension; we have
$r\geq 1$ by our assumption that~$M$ is not punctual. Let $U$ be an open
dense subset of~$S$ contained in the smooth locus of~$S$ such that~$M$
is lisse on~$U$.

Let $(\mcS)_{i\in I}$ be a finite family of \tacs\ such that the set of
ramified characters is contained in the union~$\mcS$ of the
$\mcS_i$. After a finite extension of~$k$, we may assume that each
$\mcS_i$ is defined by a quotient morphism $\pi_i\colon A\to A_i$
defined over~$k$ and a character $\chi_i\in\charg{A}(k)$.

For any subset $J$ of~$I$, we denote by $\mcS_J$ the intersection of
$\mcS_i$ for $i\in J$; this is either empty or a \tac\ of~$A$, also
defined over~$k$ (Lemma~\ref{lem:inter:tac}), in which case we denote by
$\pi_J\colon A\to A_J$ and~$\chi_J$ the corresponding quotient morphism
and character; these are all defined over~$k$. From
Lemma~\ref{lem:inter:tac}, it follows also that $\ker(\pi_J)$ is the
algebraic subgroup of~$A$ generated by the family of subgroups
$(\ker(\pi_i))_{i\in I}$.  We write $[\mcS_J]$ for the set of classes
in~$[\charg{A}]$ of characters in~$\mcS_J$.

Let~$a\in A(k)$. We denote~$M^{(a)}=[x\mapsto x+a]^*M$, so that the
stalk $M^{(a)}_e$ is canonically isomorphic to the stalk~$M_a$ of~$M$
at~$a$. The ramified characters for~$M^{(a)}$ are also contained in the
\tac\ $\mcS$, and the perverse sheaf~$M^{(a)}$ is quasi-unipotent (see
Remark~\ref{rm-quasi-unipotent}, (3)).


We define
$$
\lhat_0(M^{(a)},T)=\prod_{\chi\in[\charg{A}]\setminus
  \mcS}\det(1-T^{\deg(\chi)}\Fr_{k_{\deg(\chi)}}\mid
H^0(A_{\bar{k}},M^{(a)}_{\chi}))^{-1}.
$$
  
By an application of inclusion--exclusion, we have
$$
\lhat(M^{(a)},T)= \lhat_0(M^{(a)},T) \prod_{\substack{\emptyset
    \not=J\subset I\\\mcS_J\not=\emptyset}} \prod_{\chi\in [\mcS_J]}
\det(1-T^{\deg(\chi)}\Fr_{k_{\deg(\chi)}}\mid
H^*(A_{\bar{k}},M^{(a)}_{\chi}))^{(-1)^{|J|}}.
$$
\par
For any $J\subset I$ such that $\mcS_J$ is not empty, we denote
$$
Q_J^{(a)}=R\pi_{J*}M^{(a)}_{\chi_J}.
$$
Proposition~\ref{pr-tac} implies the formula
\begin{equation}\label{eq-aux-lhat}
  \prod_{\chi \in [\mcS_J]} \det(1-T^{\deg(\chi)}\Fr_{k_{\deg(\chi)}}\mid
  H^*(A_{\bar{k}},M^{(a)}_{\chi}))^{-1}= \lhat(Q_J^{(a)},T),
\end{equation}
so that we can rewrite the above expression as
\begin{equation}\label{eq-lhat-full}
  \lhat(M^{(a)},T)= \lhat_0(M^{(a)},T) \prod_{\substack{\emptyset \not=J\subset
      I\\\mcS_J\not=\emptyset}} \lhat(Q_J^{a},T)^{(-1)^{|J|+1}}.
\end{equation}
\par
By Proposition~\ref{pr-lhat}\,\ref{pr-lhat:item2}, this shows in
particular that $\lhat_0(M^{(a)},T)$ is a rational function.  The
quasi-unipotence property of~$M^{(a)}$ shows that the infinite product
$\lhat_0(M^{(a)},T)$ can be viewed as a formal power series in
$\mcO[[T]]$ for some cyclotomic order~$\mcO$.  We can apply
Lemma~\ref{lm-lambda1} to any non-archimedean place~$\lambda$ of~$\mcO$,
since the eigenvalues of Frobenius on $H^0(A_{\bar{k}},M^{(a)}_{\chi})$
are roots of unity of bounded order for all unramified characters
$\chi$.  This implies that, for any non-archimedean place $\lambda$, the
infinite product $\lhat_0(M^{(a)},T)$ converges in the disc defined by
$|T|_{\lambda}<1$. Taking $\lambda$ to correspond to places above the
characteristic of~$k$, this implies that the rational function
$\lhat_0(M^{(a)},T)$ cannot have a zero or pole which is a $|k|$-Weil
number of positive weight.

Since $r\geq 1$, the formula~(\ref{eq-lhat-full}) therefore implies
the formula
\begin{equation}\label{eq-lhat-weight}
  \wtr_r(\lhat(M^{(a)},T))=  \prod_{\substack{\emptyset \not=J\subset
      I\\\mcS_J\not=\emptyset}}
  \wtr_r\left(\lhat(Q_J^{a},T)^{(-1)^{|J|+1}}\right).
\end{equation}

Let $J\subset I$. By proper base change, we have a canonical
isomorphism
$$
Q^{(a)}_{J,e}=(R\pi_{J*}M^{(a)}_{\chi_J})_e\simeq
H^*(\ker(\pi_J)_{\bar{k}},M^{(a)}_{\chi_J}).
$$
\par
Since~$M^{(a)}$, hence also $M^{(a)}_{\chi_J}$, is a perverse sheaf, the
complex $M^{(a)}_{\chi_J}$ is concentrated in degrees between $-r$
and~$r$. Its support is $S-a$, and consequently, the cohomology group
$$
H^i(\ker(\pi_J)_{\bar{k}},M^{(a)}_{\chi_J})= H^i((\ker(\pi_J)\cap
(S-a))_{\bar{k}},M^{(a)}_{\chi_J})
$$
vanishes unless $0\leq i+r\leq 2\dim(\ker(\pi_J)\cap (S-a))$. Since
$M^{(a)}$ has weight~$0$, this space has weight $i$ when it is
non-zero. Using the formula
$$
\lhat(Q^{(a)}_J,T)=\exp\Bigl(\sum_{n\geq
  1}|A_J(k_n)|t_{Q^{(a)}_J}(e;k_n)\frac{T^n}{n}\Bigr)
$$
of Proposition~\ref{pr-lhat}, this means that
\begin{align*}
  \wtr_r(\lhat(Q^{(a)}_J,T))
  &=\det(1-T\Fr_k\mid H^{-r}((\ker(\pi_J)\cap
  (S-a))_{\bar{k}},M^{(a)}_{\chi_J}))^{-1} \\
  &=\det(1-T\Fr_k\mid H^{-r}(((a+\ker(\pi_J))\cap
  S)_{\bar{k}},M_{\chi_J}))^{-1} \\
  &=\det(1-T\Fr_k\mid (R^{-r}\pi_{J*}M_{\chi_J})_{\pi_J(a)})^{-1}=
  \det(1-T\Fr_k\mid (\pi_J^*R^{-r}\pi_{J*}M_{\chi_J})_{a})^{-1}.
\end{align*}

Let~$X=S\setminus U$, so that~$A\setminus X=U\cup (A\setminus S)$.
If $a\in (A\setminus X)(k)$, the left-hand side
of~(\ref{eq-lhat-weight}) is the part of weight $-r$ of
$$
\lhat(M^{(a)},T)=\exp\Bigl(\sum_{n\geq
  1}|A(k_n)|t_{M}(a;k_n)\frac{T^n}{n}\Bigr).
$$
\par
Since $M_a$ is $|k|$-pure of weight~$-r$ (either because $a\in U(k)$, so
that~$M$ is lisse and of weight~$-r$ at~$a$, or because
$a\in (A\setminus S)(k)$, so that~$M_a$ is zero, hence pure of any
weight), we deduce that the equality
\begin{equation}\label{eq-lhat-weight1}
  \det(1-T\Fr_k\mid M_a)= \prod_{\substack{\emptyset \not=J\subset
      I\\\mcS_J\not=\emptyset}}
  \det(1-T\Fr_k\mid (\pi_J^*R^{-r}\pi_{J*}M_{\chi_J})_{a})^{(-1)^{|J|+1}}
\end{equation}
holds. In particular, this gives the equality
$$
t_M(a;k)= \sum_{\substack{\emptyset \not=J\subset
    I\\\mcS_J\not=\emptyset}} (-1)^{|J|+1}
t_{\pi_J^*R^{-r}\pi_{J*}M_{\chi_J}}(a;k)
$$
of values of trace functions for $a\in (A\setminus X)(k)$.

Let $n\geq 1$. Applying this argument to the base change of~$M$
to~$k_n$, we see that the formula
$$
t_M(a;k_n)= \sum_{\substack{\emptyset \not=J\subset
    I\\\mcS_J\not=\emptyset}} (-1)^{|J|+1}
t_{\pi_J^*R^{-r}\pi_{J*}M_{\chi_J}}(a;k_n)
$$
holds for $a\in (A\setminus X)(k_n)$.  By the injectivity of trace
functions (see~\cite[Th.\,1.1.2]{laumon-signes}), this means that we
have an equality
\begin{equation}\label{eq-k-equality}
  M=\sum_{\substack{\emptyset \not=J\subset I\\\mcS_J\not=\emptyset}}
  (-1)^{|J|+1} \pi_J^*R^{-r}\pi_{J*}M_{\chi_J}
\end{equation}
in the Grothendieck group $K(A\setminus X)$.

If~$U=S$ (e.g. if~$M$ is the extension by zero of a lisse sheaf of
weight~$0$ placed in degree~$-r$ on a smooth closed subvariety~$S$,
which will be the case in the applications of
Theorem~\ref{th-finite-ab-1} in Chapter~\ref{sec-jacobian}), then $X$
is empty, so this equality holds in $K(A)$. The right-hand side is a
linear combination of negligible objects (see
Example~\ref{ex-negligible}) so we deduce that~$M$ is negligible by
taking the Euler--Poincaré characteristic (see
Corollary~\ref{cor-grothendieck}).
  
We now consider the general case. Let~$j$ be the open immersion of
$A\setminus X$ in~$A$.  Recall that the classes of simple perverse
sheaves form a basis of the $\Zz$-module $K(A\setminus X)$ (see
Proposition~\ref{pr-injectivity}). Thus, the
equality~(\ref{eq-k-equality}) implies that there exists some~$J$ such
that the simple perverse sheaf~$j^*M$ appears in the decomposition in
simple perverse sheaves of the class of~$j^*N$ in~$K(A\setminus X)$,
where
$$
N=\pi_J^*R^{-r}\pi_{J*}M_{\chi_J}.
$$
Furthermore, this means that there exists $i\in\Zz$ such that~$j^*M$
occurs in the decomposition of the perverse sheaf~$\pH^i(j^*N)$, since
$$
j^*N=\sum_{i\in\Zz}(-1)^i \ \pH^i(j^*N)
$$
in~$K(A\setminus X)$.
\par
The functor $j^*$ is $t$-exact (since~$j$ is smooth of relative
dimension~$0$) so that there exists a canonical isomorphism
$\pH^i(j^*N)\to j^*\pH^i(N)$. Since~$j^*M$ and~$j^*\pH^i(N)$ are pure,
hence geometrically semisimple, this implies the existence of an
injective morphism
$$
f\colon j^*M\to j^*\pH^i(N)
$$
of perverse sheaves.  Applying the functor $j_{!*}$, which preserves
injectivity (e.g., by~\cite[\S.\,2.17]{katz-rls}) and satisfies
$j_{!*}\circ j^*=\mathrm{Id}$ on perverse sheaves, we deduce that there
exists an injective morphism $j_{!*}f\colon M\to \pH^i(N)$. Since~$N$ is
negligible, so is~$\pH^i(N)$, and hence also~$M$.

\begin{remark}\label{rm-negligible}
  A similar argument leads to a proof of the following fact: if $M$ is a
  negligible arithmetically simple perverse sheaf of weight zero in
  $\Ppintarith(A)$, and if $\nunram{M}$ is contained in a finite union
  of \tacs\ of~$A$, then there exists a morphism $\pi\colon A\to B$ of
  abelian varieties with $\dim(\ker(\pi))\geq 1$, a
  character~$\chi\in\charg{A}$ and an object~$N$ of~$\Der(B)$ such that
  $M$ is geometrically isomorphic to~$(\pi^*N)_{\chi}$.
  
  This fact is equivalent (for pure perverse sheaves of weight~$0$) to
  the characterization of negligible objects by
  Weissauer~\cite[Th.\,3]{weissauer_vanishing_2016}, since it is known
  that the assumption on~$\nunram{M}$ is always true
  (Corollary~\ref{cor-tacs-negligible}). However, the proof that this
  is so relies on the generic vanishing theorem
  (Theorem~\ref{thm-gen-van-AV}), which appeals to this result of
  Weissauer, so this remark does not provide a different proof of this
  characterization.

  We sketch the argument nevertheless for the sake of illustration. It
  is relatively elementary that it suffices to prove that the
  isomorphism class of~$M$ is invariant under translation by a
  non-trivial abelian subvariety (this
  is~\cite[Lemma\,6]{weissauer_vanishing_2016}), and we will establish
  this fact.

  To simplify matters, we assume that~$S=U$ in the notation of the
  previous proof. Since~$M$ is negligible, it is quasi-unipotent;
  arguing as in the previous proof, we obtain a finite decomposition
  $$
  M=\sum_{i\in I}n_i \pi_i^*M_i
  $$
  in $K(A)$ for some morphisms $\pi_i\colon A\to A_i$ with
  $\dim(\ker(\pi_i))\geq 1$, some objects $M_i\in\Der(A_i)$ and some
  non-zero $n_i\in\Zz$.
  \par
  Since the classes of simple perverse sheaves form a basis of the
  $\Zz$-module $K(A)$, there exists some $i\in I$ such that
  $$
  \pi_i^*M_i=mM+\sum_{j\in J}m_j M_{i,j}
  $$
  in $K(A)$ for some non-zero integers $m$ and $m_j$ and some simple
  perverse sheaves $M_{i,j}$ not isomorphic to~$M$.

  The isomorphism class of the complex $\pi_i^*M_i$ is invariant under
  translation by elements of $\ker(\pi_i)$, and a fortiori by the
  abelian subvariety $A'=\ker(\pi_i)^{\circ}$. We claim that this
  implies that the same property holds for $M$ and the other
  constituents $M_{i,j}$. Indeed, the $\bar{k}$-valued points of $A'$
  act on the finite set of isomorphism classes of the simple perverse
  sheaves $(M,M_{i,j})$, and thus the stabilizer of any of them is a
  finite index subgroup. Since it is also an algebraic subgroup, it is
  equal to $A'(\bar{k})$, and the assertion follows.  Thus the
  isomorphism class of~$M$ is invariant under translation by the
  non-trivial abelian variety~$\ker(\pi_i)^{\circ}$, as desired.
\end{remark}


\chapter{Stratification and generic Fourier
  invertibility}\label{sec-stratification}

As usual, $k$ is a finite field, with an algebraic closure $\bar{k}$ and
finite extensions $k_n$ of $k$ in $\bar{k}$ of degree~$n$. We fix a
prime $\ell$ distinct from the characteristic of~$k$.

Let $G$ be a connected commutative algebraic group over~$k$, with
dimension~$d$. Given an object $M$ of~$\Der(G)$, an integer~$n\geq 1$
and a character $\chi\in\charg{G}(k_n)$, we set
\[
S(M,\chi)=\sum_{x\in  G(k_n)}\chi(x)t_M(x;k_n).
\]

\section{Stratification for exponential sums}

The results of this section are straightforward consequences of
Theorem~\ref{thm-high-vanish} and Deligne's Riemann Hypothesis. We spell
them out since some of them are likely to be useful for applications to
analytic number theory.

\begin{theorem}\label{th-stratified}
  Let~$M$ be an object of $\Der(G)$. Assume that $M$ is semiperverse and
  mixed of weights $\leq 0$.  There exist subsets
  $\mcS_d\subset\dots\subset \mcS_0=\charg{G}$ such that
  \begin{enumth}
  \item For~$0\leq i\leq d$, the estimate
    $$
    |\mcS_i(k_n)|\ll |k|^{n(d-i)}
    $$
   holds for $n\geq 1$.
  \item The set $\mcS_d$ is empty if $M$ belongs to the category
    $\Ppintarith(G)$. 
  \item For any $n\geq 1$, any integer~$i$ with $1\leq i\leq d$ and any
    $\chi\in\charg{G}(k_n)\setminus \mcS_i(k_n)$, the estimate
    $$
    \sum_{x\in G(k_n)}\chi(x)t_M(x;k_n)\ll c_u(M)|k|^{n(i-1)/2},
    $$
    holds, where the implied constant is independent of~$M$.
  \item If~$G$ is either a torus or an abelian variety, then $\mcS_i$ is
    a finite union of \tacs\ of~$G$ of
    dimension~$\leq d-i$.
  \item If~$G$ is a unipotent group, then $\mcS_i$ is the set of closed
    points of a closed subvariety of dimension $\leq d-i$ of the Serre
    dual $G^\vee$.
  \end{enumth}
\end{theorem}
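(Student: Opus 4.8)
\textbf{Proof plan for Theorem~\ref{th-stratified}.}

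The plan is to reduce everything to the Stratified Vanishing Theorem~\ref{thm-high-vanish}, applied to the perverse cohomology sheaves of $M$, combined with Deligne's Riemann Hypothesis and the uniform complexity bounds from Section~\ref{sec-prelim}. First I would handle the case where $M$ is a perverse sheaf, pure of weight zero; the general semiperverse, mixed-of-weight-$\leq 0$ case then follows by a standard dévissage along the perverse filtration and the weight filtration, absorbing the extra shifts and the (harmless) contribution of weights~$<0$. So assume $M\in\Perv(G)$ is pure of weight~$0$. Define $\mcS_i$ to be the subset $\mcS_i$ produced by Theorem~\ref{thm-high-vanish} for $M$ (and $\DD(M)$), so that~(1) is immediate, (4) and~(5) are exactly the last two assertions of that theorem, and~(2) holds because if $M\in\Ppintarith(G)$ then $M$ is non-negligible, so $\rmH^0_c(G_{\bar k},M_\chi)\neq 0$ for $\chi$ in a generic set, but more to the point the stratum $\mcS_d$ records non-vanishing of $\rmH^{\pm d}$; for $M\in\Ppintarith(G)$ one checks (using that $\rmH^d(G_{\bar k},M_\chi)$ would force a translation-invariant constituent, i.e.\ a negligible quotient/subobject) that $\mcS_d=\emptyset$.

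For the key estimate~(3), fix $n\geq 1$, an integer $i$ with $1\leq i\leq d$, and $\chi\in\charg{G}(k_n)\setminus\mcS_i(k_n)$. By the Grothendieck--Lefschetz trace formula,
\[
  \sum_{x\in G(k_n)}\chi(x)t_M(x;k_n)
  =\sum_{|j|\leq d}(-1)^j\Tr\!\big(\Fr_{k_n}\mid \rmH^j_c(G_{\bar k},M_\chi)\big).
\]
Since $\chi\notin\mcS_i$ and the $\mcS_m$ are nested, property~(2) of Theorem~\ref{thm-high-vanish} gives $\rmH^j_c(G_{\bar k},M_\chi)=0$ for all $j$ with $i\leq|j|\leq d$; hence only the terms $|j|<i$ survive. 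For these, Deligne's Riemann Hypothesis~\cite[Th.\,3.3.1]{D-WeilII} bounds each Frobenius eigenvalue on $\rmH^j_c(G_{\bar k},M_\chi)$ by $|k_n|^{j/2}\leq|k_n|^{(i-1)/2}$, and Lemma~\ref{lm-sums-betti} together with Theorem~\ref{thm-conductors}\,\ref{thm-conductors:item2} and Proposition~\ref{prop-cond-char} gives
\[
  h^j_c(G_{\bar k},M_\chi)\leq c_u(M_\chi)\ll c_u(M)c_u(\mcL_\chi)\ll c_u(M),
\]
with an implied constant depending only on the embedding dimension of $G$. Summing the at most $2i-1\leq 2d$ surviving terms yields the bound $\ll c_u(M)|k|^{n(i-1)/2}$, as desired. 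The passage to general semiperverse $M$ mixed of weights $\leq 0$ proceeds by writing $M=\sum_{m\leq 0}(-1)^m\,{}^{\mathfrak p}\!\mcH^m(M)$ in $K(G)$, applying the perverse case to each $\pH^m(M)(-m/2)$ (which is perverse, mixed of weights $\leq 0$, so its weight-$0$ semisimplification is pure of weight~$0$ and controls the sum up to $O(|k_n|^{-1/2})$ terms), taking $\mcS_i$ to be the union of the finitely many strata obtained, and using Proposition~\ref{prop-JorHol-cond} to keep all complexity constants under control.

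I do not expect a serious obstacle here: the theorem is essentially a repackaging of results already established in the excerpt. The only point requiring mild care is assertion~(2), i.e.\ that $\mcS_d$ is empty for objects of $\Ppintarith(G)$; this needs the observation that $\rmH^{\pm d}(G_{\bar k},M_\chi)\neq 0$ for a perverse $M$ forces (by Lemma~\ref{lm-morel} after passing to the support, or by the argument behind Theorem~\ref{th-finite-ab-1}) a geometrically constant, hence negligible, constituent, contradicting $M\in\Ppintarith(G)$. The bookkeeping of implied constants in the semiperverse reduction — making sure the constant in~(3) stays independent of $M$ — is routine given Proposition~\ref{prop-JorHol-cond} and Theorem~\ref{thm-conductors}, but is the place where one must be slightly attentive.
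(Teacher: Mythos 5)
Your proposal is correct in substance and rests on the same ingredients as the paper (Theorem~\ref{thm-high-vanish}, the trace formula, Deligne's Riemann Hypothesis, and the complexity bounds of Lemma~\ref{lm-sums-betti}, Theorem~\ref{thm-conductors} and Proposition~\ref{prop-cond-char}), but the route differs in one structural way: the paper never reduces to the pure case. It defines $\mcS_i$ directly as the set of $\chi$ for which $\rmH^l_c(G_{\bar k},M_\chi)\neq 0$ for some $l\geq i$, gets assertion (3) immediately because Weil II gives that $\rmH^j_c$ of a complex mixed of weights $\leq 0$ has weights $\leq j$ (so purity and the weight filtration are never needed), and gets assertion (1) from the perverse spectral sequence: semiperversity of $M_\chi$ forces any contribution to $\rmH^l_c$ with $l\geq i$ to come from $\rmH^j_c(\pH^{l-j}(M)_\chi)$ with $j\geq l\geq i$, after which Theorem~\ref{thm-high-vanish} is applied only to the finitely many perverse cohomology sheaves $\pH^m(M)$. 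Your dévissage through the perverse and weight filtrations can be made to work (with Proposition~\ref{prop-JorHol-cond} controlling the constants, as you say), but it buys nothing and introduces the two places where your sketch is loose: the phrase that the weight-$<0$ parts ``control the sum up to $O(|k_n|^{-1/2})$ terms'' is not right as stated — those parts can contribute up to roughly $|k_n|^{(d-1)/2}$ for bad characters, so their own strata must be included in $\mcS_i$ at every level $i$ (your ``take the union'' does this, but the justification should say so explicitly rather than treat them as negligible outright); and the normalizing Tate twist should be $\pH^m(M)(m/2)$, not $(-m/2)$.

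One small definitional point on (2): Theorem~\ref{thm-high-vanish} only guarantees that its sets $\mcS_i$ \emph{contain} the non-vanishing loci, so if you take its sets verbatim you cannot conclude $\mcS_d=\emptyset$ for $M\in\Ppintarith(G)$ — the theorem's $\mcS_d$ could be strictly larger than the (empty) non-vanishing locus. The fix is exactly what the paper does: define $\mcS_i$ as the non-vanishing locus itself and use Theorem~\ref{thm-high-vanish} only for the size bound. Your argument for why the locus is empty (non-vanishing of the extreme cohomology groups forces a sub- or quotient object geometrically isomorphic to a shifted character sheaf, hence negligible, contradicting $M\in\Ppintarith(G)$) is the same as the paper's.
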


\begin{proof}
  For $1\leq i\leq d$, let $\mcS_i$ be the set of characters such that
  there exists some $l\geq i$ with $H^l_c(G_{\bar{k}},M_{\chi})\not=0$.

  If $\chi\in \charg{G}\setminus \mcS_i$, then we deduce from the trace
  formula and the Riemann Hypothesis of Deligne, combined with
  Lemma~\ref{lm-sums-betti}, that the estimate
  $$
  \sum_{x\in G(k_n)}\chi(x)t_M(x;k_n)\ll c_u(M)|k|^{n(i-1)/2}
  $$
  holds for $n\geq 1$, which is~(3). We will check that these sets also
  satisfy conditions (1) and~(2).

  Fix an integer~$i$ with $1\leq i\leq d$. For any $l$ and $j$, we have
  the perverse spectral sequence
  $$
  \rmH^j_c(G_{\bar{k}},\pH^{l-j}(M_\chi))\implies
  \rmH_c^{j}(G_{\bar{k}},M_\chi)
  $$
  (see~(\ref{eq-perverse-spectral-seq}))
  so that the condition $\chi\in\mcS_i$ implies that
  $$
  \rmH^j_c(G_{\bar{k}},\pH^{l-j}(M_\chi))\not=0
  $$
  for some $l\geq i$. Since $M$ is semiperverse, so is $M_{\chi}$,
  which means that this condition implies $j\geq l\geq i$.

  Thus, if we denote by $(\mcS_{j,i})_{0\leq i\leq d}$ the sets provided
  by the Stratified Vanishing Theorem~\ref{thm-high-vanish} applied to
  $\pH^j(M)$, we have shown that
  $$
  \mcS_i\subset \bigcup_{i\leq j\leq d}\bigcup_{i\leq l\leq j}
  \mcS_{l-j,j}.
  $$
  \par
  The set $\mcS_{l-j,j}$ has character codimension at least $j$, so that
  $\mcS_i$ has the same property, establishing~(1).  Point (2) follows
  from the fact that
  $$
  H^d_c(G_{\bar{k}},N_{\chi})=0
  $$
  for a geometrically simple perverse sheaf~$N$ which is not
  geometrically isomorphic to~$\mcL_{\chi^{-1}}$.

  Points (4) and (5) follow from the strengthened versions of the
  Stratified Vanishing Theorem for tori, abelian varieties and unipotent
  groups, which are stated in Theorem~\ref{thm-high-vanish}, (4) or
  Proposition~\ref{prop-strat-unip}.
\end{proof}

\begin{remark}
  The following elementary estimate can also sometimes be useful. Fix a
  locally-closed immersion $u\colon G\to \Pp^m$ for some
  integer~$m\geq 1$. Let~$M$ be an $\ell$-adic perverse sheaf on $G$
  that is pure of weight zero. Then by orthogonality of characters, we
  derive that the formula
  $$
  \frac{1}{|G(k_n)|}
  \Bigl|\sum_{\chi\in\charg{G}(k_n)}t_M(x;k_n)\chi(x)\Bigr|^2=
  \sum_{x\in G(k_n)}|t_M(x;k_n)|^2
  $$
  holds for $n\geq 1$. By the Riemann Hypothesis, it follows that the
  estimate
  $$
  \frac{1}{|G(k_n)|} \sum_{\chi\in\charg{G}(k_n)}
  \Bigl|\sum_{\chi\in\charg{G}(k_n)}t_M(x;k_n)\chi(x)\Bigr|^2 \ll c_u(M)
  $$
  holds for $n\geq 1$ (see~Theorem~\ref{th-rh}). Fix then a sequence
  $T=(T_n)$ of positive real numbers, and let $\mcX_T\subset\charg{G}$
  be the set such that $\chi\in\mcX_T( k_n)$ if and only if
  $$
  \Bigl|\sum_{x\in G(k_n)}t_M(x;k_n)\chi(x)\Bigr|>T_n.
  $$
  Then we find by positivity that
  $$
  |\mcX_T(k_n)|\ll c_u(M) |G(k_n)|T_n^{-2}.
  $$
\end{remark}

\begin{corollary}\label{cor-stratified-generic}
  Let~$k$ be a finite field, and let $G$ be a connected commutative
  algebraic group of dimension $d$ over~$k$.  Let $\ell$ be a prime
  distinct from the characteristic of~$k$ and let~$M$ be an $\ell$-adic
  perverse sheaf on $G$ which is pure of weight zero.

  For any generic subsets $\mcX$ and $\mcY$ of $\charg{G}$, the estimate
  $$
  \frac{1}{|G(k_n)|}\sum_{\chi\in (\mcX\setminus \mcY)(k_n)}\Bigl|
  \sum_{x\in k_n}\chi(x)t_M(x;k_n)\Bigr|\ll \frac{1}{|k|^{n/2}}
  $$
  holds for all $n\geq 1$.
\end{corollary}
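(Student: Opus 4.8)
The statement to prove is Corollary~\ref{cor-stratified-generic}: for generic subsets $\mcX$ and $\mcY$ of $\charg{G}$, and $M$ pure of weight zero perverse, one has
\[
\frac{1}{|G(k_n)|}\sum_{\chi\in (\mcX\setminus \mcY)(k_n)}\Bigl|\sum_{x\in G(k_n)}\chi(x)t_M(x;k_n)\Bigr|\ll |k|^{-n/2}.
\]

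The plan is to decompose the sum $S(M,\chi)=\sum_{x\in G(k_n)}\chi(x)t_M(x;k_n)$ via the Grothendieck--Lefschetz trace formula into $\sum_{|i|\le d}(-1)^i\Tr(\Fr_{k_n}\mid \rmH^i_c(G_{\bar k},M_\chi))$, and then bound the contribution of each cohomological degree using the Stratified Vanishing Theorem~\ref{thm-high-vanish} (applied to $M$, which is perverse) together with Deligne's Riemann Hypothesis and the complexity bounds on Betti numbers. First I would invoke Theorem~\ref{thm-high-vanish} to obtain the nested sets $\mcS_d\subset\cdots\subset\mcS_0=\charg{G}$ with $\ccodim(\mcS_i)\ge i$ and the property that any $\chi$ for which some $\rmH^{\pm i}_c(G_{\bar k},M_\chi)\ne 0$ lies in $\mcS_{|i|}$; moreover by Theorem~\ref{thm-high-vanish}(iii) we have $\rmH^0_c=\rmH^0$ for $\chi\in\mcS_0\setminus\mcS_1$, and such $\chi$ are pure of weight $\le 0$ contributions since $M$ is pure of weight zero. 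I would write $\mcX\setminus\mcY\subset \charg{G}$ and split according to the stratification: the bulk of the characters $\chi\in\mcS_0\setminus\mcS_1$ contribute only through $\rmH^0_c$, where by Deligne's Riemann Hypothesis $|\Tr(\Fr_{k_n}\mid \rmH^0_c(G_{\bar k},M_\chi))|\le h^0_c(G_{\bar k},M_\chi)\le c_u(M_\chi)\ll c_u(M)$ (using Lemma~\ref{lm-sums-betti}, Theorem~\ref{thm-conductors}(ii), and Proposition~\ref{prop-cond-char} for the uniform bound on $c_u(\mcL_\chi)$). The key observation is that for $\chi\in\mcS_0\setminus\mcS_1$ we have an \emph{extra cancellation}: here I would want to say more — in fact $\rmH^0_c(G_{\bar k},M_\chi)$ is pure of weight $0$, so its trace of Frobenius need not be small individually, and we must instead use that $\mcX\setminus\mcY\subset (\charg G\setminus\mcY)$, a set of cardinality $\ll|k|^{n(d-1)}$.

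Concretely, I would bound the left-hand side by
\[
\frac{1}{|G(k_n)|}\sum_{\chi\in(\charg G\setminus\mcY)(k_n)}|S(M,\chi)|,
\]
then apply Theorem~\ref{th-stratified}(3) with $M$ in place of the general semiperverse object: for $\chi\notin\mcS_i$ we have $S(M,\chi)\ll c_u(M)|k|^{n(i-1)/2}$. Partitioning $\charg G\setminus\mcY$ into the layers $\mcS_{i}\setminus\mcS_{i+1}$ for $0\le i\le d$ (intersected with $\charg G\setminus\mcY$, which kills the $i=0$ layer up to a set of size $\ll|k|^{n(d-1)}$), a character in $\mcS_i\setminus\mcS_{i+1}$ satisfies $S(M,\chi)\ll c_u(M)|k|^{ni/2}$ and there are $\ll|k|^{n(d-i)}$ such characters, plus the $i=0$ layer restricted to $\charg G\setminus\mcY$ has $\ll|k|^{n(d-1)}$ characters each contributing $\ll c_u(M)$. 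Summing:
\[
\frac{1}{|G(k_n)|}\Bigl(|k|^{n(d-1)}\cdot c_u(M) + \sum_{i=1}^{d}|k|^{n(d-i)}\cdot c_u(M)|k|^{ni/2}\Bigr)\ll \frac{c_u(M)}{|k|^{nd}}\Bigl(|k|^{n(d-1)}+\sum_{i=1}^d|k|^{n(d-i/2)}\Bigr).
\]
The dominant term in the inner sum is $i=1$, giving $|k|^{n(d-1/2)}$, which also dominates $|k|^{n(d-1)}$; dividing by $|G(k_n)|\asymp|k|^{nd}$ yields $\ll c_u(M)|k|^{-n/2}$, as desired.

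The main obstacle is the layer $\chi\in\mcS_0\setminus\mcS_1$, i.e.\ the generic characters, which are overwhelmingly the majority: here $S(M,\chi)=\Tr(\Fr_{k_n}\mid\rmH^0_c(G_{\bar k},M_\chi))$ is $O(c_u(M))$ but \emph{not} small, so one cannot afford to sum $|S(M,\chi)|$ over all of $\charg G(k_n)$ — that would give $O(c_u(M))$, not $O(|k|^{-n/2})$. The resolution is precisely that we only sum over $\mcX\setminus\mcY\subset\charg G\setminus\mcY$, and $\charg G\setminus\mcY$ has cardinality $\ll|k|^{n(d-1)}$ because $\mcY$ is generic; thus the generic-character contribution to our sum is $\ll |k|^{n(d-1)}c_u(M)/|G(k_n)|\ll c_u(M)|k|^{-n}$, which is even better than needed. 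So the genericity of $\mcY$ is used essentially once, to control the $\rmH^0_c$ contribution, while the Stratified Vanishing Theorem controls all the higher cohomology layers regardless of $\mcY$. I would also remark that genericity of $\mcX$ is not in fact needed beyond $\mcX\subset\charg G$, but the statement is phrased with both for symmetry with later applications; the proof only uses $\mcX\setminus\mcY\subset\charg G\setminus\mcY$. Finally I would note that the implied constant depends on $M$ only through $c_u(M)$, which is worth recording for the horizontal applications, though the statement as given only claims dependence on $M$.
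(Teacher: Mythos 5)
Your proof is correct and takes essentially the same route as the paper: after observing that genericity of $\mcX$ is irrelevant and bounding by the sum over $\charg{G}\setminus\mcY$, you stratify by the sets $\mcS_i$ of Theorem~\ref{th-stratified}, control the generic layer $\mcS_0\setminus\mcS_1$ using only the genericity of $\mcY$, and bound the higher layers by combining the cardinality estimate of Theorem~\ref{th-stratified}~(1) with the exponential sum bound of Theorem~\ref{th-stratified}~(3), exactly as in the paper's argument.
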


\begin{proof}
  We may assume that $\mcX=\charg{G}$. Let $(\mcS_i)_{0\leq i\leq d}$ be
  sets of characters as in Theorem~\ref{th-stratified}. We have
  $$
  \frac{1}{|G(k_n)|}\sum_{\chi\in (\charg{G}\setminus \mcY)(k_n)}\Bigl|
  \sum_{x\in k_n}\chi(x)t_M(x;k_n)\Bigr|= S_0+\cdots+S_{d-1},
  $$
  where for each integer $i$ with $0\leq i< d$, we put
  $$
  S_i=\frac{1}{|G(k_n)|}\sum_{\chi\in (\mcS_i\setminus (\mcS_{i+1}\cup
    \mcY))(k_n)}\Bigl| \sum_{x\in k_n}\chi(x)t_M(x;k_n)\Bigr|.
  $$
  \par
  For $i=0$, the exponential sums in the inner sum are $\ll 1$, and
  since $\mcY$ is generic, the set $\mcS_0\setminus (\mcS_{1}\cup \mcY)$
  has character codimension at least~$1$, so that
  $$
  S_0\ll |k|^{n(-d+(d-1))}=|k|^{-n}.
  $$
  \par
  For $i\leq i<d$, we have
  $\mcS_i\setminus (\mcS_{i+1}\cup \mcY)\subset \mcS_i\setminus
  \mcS_{i+1}$, so that by Theorem~\ref{th-stratified}, (1) (for the size
  of $\mcS_i$) and (3) (estimating the exponential sums for
  $\chi\notin \mcS_{i+1}$), we obtain
  $$
  S_i\ll |k|^{-nd+n((d-i)+i/2)}=|k|^{-ni/2}.
  $$
\end{proof}


The next corollary states, intuitively, that for the purpose of
computing the arithmetic Fourier transform of a semiperverse complex
(mixed of weights $\leq 0$), the contribution of any closed (suitably
``transverse'') subvariety is negligible.

\begin{corollary}\label{cor-restriction}
  Let~$k$ be a finite field, and let $G$ be a connected commutative
  algebraic group of dimension $d$ over~$k$.  Let $\ell$ be a prime
  distinct from the characteristic of~$k$ and let~$M$ be an object of
  $\Der(G)$. Assume that $M$ is semiperverse and mixed of weights
  $\leq 0$.

  Let $X\subset G$ be a closed subvariety of~$G$ and let
  $i\colon X\to G$ be the corresponding closed immersion.

  Let $m\geq 0$ be an integer such that for each $j\in\Zz$, the estimate
  $$\dim (X\cap \Supp(\mcH^j(M)))\leq \dim\Supp(\mcH^j(M))-m
  $$
  holds.

  There exists a generic subset $\mcX$ of $\charg{G}$ such that the
  estimate
  $$
  \sum_{x\in X(k_n)}\chi(x)t_M(x;k_n)\ll \frac{c_u(M)}{|k_n|^{m/2}}
  $$
  holds for all $n\geq 1$ and all $\chi\in \mcX(k_n)$.
\end{corollary}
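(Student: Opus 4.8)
The plan is to deduce the statement from the stratification Theorem~\ref{th-stratified} applied to a suitable complex on~$G$, rather than working directly with~$X$. First I would consider the pushforward $N=i_*i^*M$, which is a complex on~$G$ supported on~$X$; by the projection formula, for any character $\chi\in\charg{G}(k_n)$ we have
\[
\sum_{x\in X(k_n)}\chi(x)t_M(x;k_n)=\sum_{y\in G(k_n)}\chi(y)t_N(y;k_n),
\]
so it suffices to control the discrete Fourier transform of~$N$. By Theorem~\ref{thm-conductors}\,(vi), the complexity $c_u(N)=c_u(i_*i^*M)$ is bounded in terms of $c_u(M)$ (the closed immersion~$i$ has bounded complexity once we fix the embeddings), so the implied constant below will only depend on $c_u(M)$ as required.

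The key point is to identify the ``perverse amplitude'' of~$N$ relative to its support, in order to feed it into Theorem~\ref{th-stratified}. The hypothesis that $\dim(X\cap\Supp(\mcH^j(M)))\leq \dim\Supp(\mcH^j(M))-m$ for all~$j$ says precisely that the cohomology sheaves of~$N=i_*i^*M$ have supports of dimension at most $\dim\Supp(\mcH^j(M))-m\leq -j-m$ (using that $M$ is semiperverse, so $\dim\Supp(\mcH^j(M))\leq -j$). In other words, $N[-m]$ is semiperverse, i.e. $N$ lies in perverse degrees $\leq -m$. Moreover $N$ is mixed of weights $\leq 0$ since $M$ is and $i_*=i_!$ preserves this (it even decreases weights for~$i_!$; here $i$ is a closed immersion so $i_*=i_!$). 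Thus $N(-m/2)$ — or more precisely the Tate twist making it weight $\leq m$ — is a complex whose perverse cohomology sheaves $\pH^l(N)$ vanish for $l>-m$, each of which, after twisting by $|k_n|^{m/2}$, becomes mixed of weight $\leq 0$ and semiperverse.

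Concretely, I would apply Theorem~\ref{th-stratified} to the object $N'=N\otimes(\alpha^{\deg})$ for an appropriate half-integral Tate twist, or more cleanly, apply the stratification bound to each perverse cohomology sheaf $\pH^l(N)$ for $l\leq -m$. For such an~$l$, the sheaf $\pH^l(N)(-l/2-m/2)$ is a perverse sheaf mixed of weights $\leq 0$ (actually we get weights $\leq l\leq -m$, giving an extra saving), and Theorem~\ref{th-stratified}\,(3) provides a generic set $\mcS_{l}$ outside of which $\sum_x\chi(x)t_{\pH^l(N)}(x;k_n)\ll c_u(M)|k_n|^{n\cdot 0}$... — here I must be careful to track the weight shift: since $\pH^l(N)$ has weight $\leq l$, the corresponding trace function values are of size $O(|k_n|^{l/2})\cdot O(|k_n|^{(d-1)/2})$ on the deepest stratum, but combining with the stratification (which gains $|k_n|^{(i-1)/2}$ off the codimension-$i$ locus against a cohomology group that can only survive in bounded degree) one gets, for $\chi$ in a generic set, $\sum_x\chi(x)t_{\pH^l(N)}(x;k_n)\ll c_u(M)|k_n|^{l/2}\leq c_u(M)|k_n|^{-m/2}$. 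Taking $\mcX$ to be the intersection of the finitely many generic sets $\mcS_l$ for $-d\leq l\leq -m$ (there are boundedly many, intersection of generic is generic), and summing the alternating contributions $t_N=\sum_l(-1)^l t_{\pH^l(N)}$ over these boundedly many terms, yields the claimed bound. The main obstacle will be the careful bookkeeping of the weight shifts: the statement asks for the saving $|k_n|^{m/2}$ with constant independent of~$M$, and one has to make sure that the interplay between ``$\pH^l(N)$ lives only in degrees where $l\leq -m$'' and ``$\pH^l(N)$ is mixed of weight $\leq l$'' is exploited correctly — in particular checking that Theorem~\ref{th-stratified}, whose statement is for weights $\leq 0$, applies after twisting and that the resulting bound has the right power of $|k_n|$ on \emph{every} stratum, not just the generic one, since the contribution from deeper strata must also be $\ll |k_n|^{-m/2}$.
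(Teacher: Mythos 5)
Your overall strategy is the paper's: push the problem to~$G$ via $N=i_*i^*M=i_!i^*M$, use the support hypothesis together with semiperversity of~$M$ to see that the perverse degree drops by~$m$, and then invoke Theorem~\ref{th-stratified} on a generic set of characters. The paper does this in one stroke: it applies Theorem~\ref{th-stratified} to the single complex $i_!i^*M[-m](-m/2)$, which is semiperverse (exactly your dimension computation, shifted by~$m$) and mixed of weights $\leq 0$ (the shift lowers the weights to $\leq -m$ and the twist $(-m/2)$ raises them back to $\leq 0$), and whose trace function equals $(-1)^m|k_n|^{m/2}$ times $t_M$ restricted to~$X$; the case $i=1$ of part~(3) then gives the corollary at once, with $c_u(i_!i^*M[-m](-m/2))\ll c_u(M)$ by Theorem~\ref{thm-conductors}. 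Your decomposition into perverse cohomology sheaves is therefore an unnecessary detour, though it can be made to work.

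Within that detour there is one step that, as written, fails: the Tate twist goes the wrong way. The object $\pH^l(N)(-l/2-m/2)$ has trace function $|k_n|^{(l+m)/2}\,t_{\pH^l(N)}$ and weights $\leq 2l+m$; for the boundary (and typically dominant) case $l=-m$ this is no twist at all, so Theorem~\ref{th-stratified} applied to it only yields $\sum_x\chi(x)t_{\pH^l(N)}(x;k_n)\ll c_u(M)$, with no saving whatsoever. The correct normalization is $\pH^l(N)(l/2)$: since $\pH^l(N)$ is mixed of weights $\leq l$ (BBD 5.4.1, as $N$ has weights $\leq 0$), this twist has weights $\leq 0$ and trace function $|k_n|^{-l/2}t_{\pH^l(N)}$, and then the $i=1$ case of Theorem~\ref{th-stratified} does give your claimed bound $\sum_x\chi(x)t_{\pH^l(N)}(x;k_n)\ll c_u(\pH^l(N))\,|k_n|^{l/2}\leq c_u(\pH^l(N))\,|k_n|^{-m/2}$ on a generic set, and $\sum_l c_u(\pH^l(N))\ll c_u(N)\ll c_u(M)$ by Proposition~\ref{prop-JorHol-cond}. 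Note also that the saving here comes entirely from this weight normalization, not from "gains off the codimension-$i$ locus": the stratification is only used through its $i=1$ (generic) part, so your worry about contributions of deeper strata is moot once the twist is set up correctly.
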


Alternatively, we have
$$
S(M,\chi)=\sum_{x\in (G\setminus X)(k_n)}
\chi(x)t_M(x;k_n)+O(|k_n|^{-m/2}),
$$
which explains the interpretation that $X$ does not contribute
``systematically'' to the arithmetic Fourier transform.

\begin{proof}
  The assumption implies that the complex $N=i_!i^*M[-m](-m/2)$ is
  semiperverse on~$G$, since~$M$ is semiperverse and, for any
  $j\in\Zz$, the support of $\mcH^j(N)$ is
  $X\cap\Supp(\mcH^{j-m}(M))$ so that 
  \begin{align*}
    \dim(\Supp(\mcH^j(N)))= \dim (X\cap\Supp(\mcH^{j-m}(M)))
    &  \leq \dim(\Supp(\mcH^{j-m}(M)))-m\\
    &\leq -(j-m)-m=-j.
  \end{align*}
  \par
  Moreover, the complex~$N$ has weights $\leq 0$. Thus we may apply
  Theorem~\ref{th-stratified} to~$N$. Let $\mcS_0$, \ldots, $\mcS_d$ be
  the corresponding sets of characters, and let
  $\mcX=\charg{G}\setminus \mcS_1$. This is a generic subset of
  $\charg{G}$, and for $n\geq 1$ and $\chi\in \mcX(k_n)$, we have
  $$
  (-1)^m|k_n|^{m/2}\sum_{x\in X(k_n)}\chi(x)t_M(x;k_n)= \sum_{x\in
    G(k_n)}\chi(x)t_N(x;k_n)\ll c_u(M),
  $$
  hence the result.
\end{proof}

\begin{example}
  Let $\mcF$ be a non-zero lisse sheaf on~$G$, pure of weight~$0$, and
  let $M=\mcF[d](d/2)$. We then have $\Supp(\mcH^j(M))=\emptyset$
  except when $j=-d$, in which case the support of $\mcH^{-d}(M)$
  is~$G$.  We can therefore apply the corollary to any closed
  subvariety $X$ of~$G$ of codimension at least~$m$. In particular,
  for any closed subvariety $X\not=G$, hence of codimension at least
  $1$, there exists a generic set of characters $\mcX$ for which the
  estimate
  $$
  \sum_{x\in X(k_n)}\chi(x)t_M(x;k_n)\ll \frac{c_u(M)}{|k_n|^{1/2}}
  $$
  holds for $n\geq 1$ and $\chi\in \mcX(k_n)$.
\end{example}

A uniform version of the stratified vanishing theorem, as in
Remark~\ref{rmk-HV-effective}, would be especially welcome for
stratification estimates, as it would lead to strong potential
applications in analytic number theory (compare with the results of
Fouvry and Katz~\cite{fouvry-katz} based on stratification for the
additive Fourier transform). We state a conditional result of this kind
for emphasis.

\begin{theorem}
  Let $\ell$ be a prime number. Let~$N\geq 1$ be an integer and let
  $(G,u)$ be a quasi-projective commutative group scheme
  over~$\Zz[1/\ell N]$.

  Assume that, for all primes $p\nmid \ell N$, the fiber~$G_p$ of~$G$
  over~$\Ff_p$ is a connected commutative algebraic group such that
  \emph{Theorem~\ref{thm-high-vanish}} holds uniformly with respect to
  the complexity $c_{u_p}(M)$ where $u_p$ is the locally-closed
  immersion of~$G_p$ deduced from~$u$, i.e., such that for a perverse
  sheaf~$M$ on~$G_p$, the sets $\mcS_i$ in \loccit satisfy
  $$
  |\mcS_i(k_n)|\ll |k|^{n(d-i)}
  $$
  where the implied constant depends only on $c_{u_p}(M)$.

  Let $(M_p)_{p\nmid N\ell}$ be a sequence of arithmetically semisimple
  sheaves on $G_p$, pure of weight zero, such that $c_u(M_p)\ll 1$
  for all~$p$.
  
  For each prime $p$, there exist subsets
  $\mcS_d(\Ff_p)\subset\dots\subset \mcS_0(\Ff_p)=\charg{G}_p(\Ff_p)$ such
  that
  \begin{enumth}
  \item For~$0\leq i\leq d$ and $p$ prime, we have
    $$
    |\mcS_i(\Ff_p)|\ll p^{d-i}.
    $$
  \item The set $\mcS_d(\Ff_p)$ is empty if $M_p$ belongs to the
    category $\Ppintarith(G_p)$.
  \item For any prime $p$, any integer~$i$ with $0\leq i\leq d$ and any
    $\chi\in\charg{G}_p(\Ff_p)\setminus \mcS_i(\Ff_p)$, we have
    $$
    \sum_{x\in G(\Ff_p)}\chi(x)t_p(x)\ll p^{(i-1)/2},
    $$
    where $t_p$ is the trace function of~$M_p$ over~$\Ff_p$.
  \item If $G$ is a torus or an abelian variety, then the sets
    $\mcS_{i}(\Ff_p)$ are contained in the union of a bounded number of
    \tacs\ of~$G_{\Ff_p}$ of dimension $\leq d-i$.
  \end{enumth}
\end{theorem}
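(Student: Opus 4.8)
The plan is to follow exactly the argument of the (unconditional) stratification Theorem~\ref{th-stratified}, simply reading off where the uniformity hypothesis is needed and checking that it suffices. First I would apply the hypothesized uniform version of Theorem~\ref{thm-high-vanish} to each perverse cohomology sheaf $\pH^j(M_p)$ of $M_p$: since $c_u(M_p)\ll 1$ and, by Proposition~\ref{prop-JorHol-cond}, the complexities of the $\pH^j(M_p)$ (and their Jordan--Hölder factors) are bounded in terms of $c_u(M_p)$, the uniform hypothesis produces subsets $\mcS_{l,j}(\Ff_p)$ of $\charg{G}_p(\Ff_p)$ of size $\ll p^{d-j}$ with implied constant independent of $p$. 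Then I would define, for $1\le i\le d$,
\[
\mcS_i(\Ff_p)=\Big\{\chi\in \charg{G}_p(\Ff_p)\ \Big|\ \rmH^l_c(G_{p,\bar\Ff_p},M_{p,\chi})\neq 0 \text{ for some } l\ge i\Big\},
\]
and $\mcS_0(\Ff_p)=\charg{G}_p(\Ff_p)$. By the perverse spectral sequence (as in the proof of Theorem~\ref{th-stratified}) and semiperversity of $M_p$, a character in $\mcS_i(\Ff_p)$ forces $\rmH^j_c(G_{p,\bar\Ff_p},\pH^{l-j}(M_p)_\chi)\neq 0$ for some $j\ge l\ge i$, whence $\mcS_i(\Ff_p)\subset \bigcup_{i\le j\le d}\bigcup_{i\le l\le j}\mcS_{l-j,j}(\Ff_p)$, which gives (1) with a uniform constant.

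For point (3), if $\chi\notin \mcS_i(\Ff_p)$ then $\rmH^l_c(G_{p,\bar\Ff_p},M_{p,\chi})=0$ for all $l\ge i$; combining the Grothendieck--Lefschetz trace formula, Deligne's Riemann Hypothesis (which bounds the Frobenius eigenvalues on $\rmH^l_c$ by $p^{l/2}$ since $M_p$ is mixed of weights $\le 0$, hence so is $M_{p,\chi}$ by Theorem~\ref{thm-conductors} and the purity of $\mcL_\chi$), the bound $\sum_l h^l_c\le c_u(M_{p,\chi})\ll c_u(M_p)c_u(\mcL_\chi)\ll 1$ from Lemma~\ref{lm-sums-betti}, Theorem~\ref{thm-conductors}\,\ref{thm-conductors:item2} and Proposition~\ref{prop-cond-char}, one gets
\[
\sum_{x\in G_p(\Ff_p)}\chi(x)t_p(x)\ll p^{(i-1)/2}
\]
with implied constant independent of $p$. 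Point (2) follows as in Theorem~\ref{th-stratified}\,(2): if $M_p\in\Ppintarith(G_p)$, its geometrically simple constituents are not geometrically isomorphic to any $\mcL_{\chi^{-1}}$, so $\rmH^d_c(G_{p,\bar\Ff_p},M_{p,\chi})=0$ for all $\chi$, i.e. $\mcS_d(\Ff_p)=\emptyset$. Point (4) follows from the fact that the uniform Theorem~\ref{thm-high-vanish} for tori and abelian varieties still places each $\mcS_{l-j,j}(\Ff_p)$ inside a finite union of \tacs\ of $G_{p}$ of dimension $\le d-j$, with the number of \tacs\ bounded uniformly in $p$ because it depends only on the (bounded) complexity; a finite union of such \tacs\ of dimension $\le d-i$ contains $\mcS_i(\Ff_p)$.

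The main obstacle here is, of course, entirely bundled into the hypothesis: everything reduces to the uniform form of Theorem~\ref{thm-high-vanish}, which (as the authors explain in Remark~\ref{rmk-HV-effective}) they can only establish in special cases because of the uncontrolled number of exceptional components in de Jong's alteration theorem (for tori) and the need to control the complexity of an alteration moderating certain perverse sheaves à la Orgogozo (for abelian varieties). Given that hypothesis, the proof above is a mechanical transcription of the proof of Theorem~\ref{th-stratified}, with the single extra bookkeeping point being that \emph{all} implied constants — in the Betti number bounds, in the conductor estimates for $\mcL_\chi$, in the number of \tacs, and in the Riemann Hypothesis estimate via Theorem~\ref{th-rh} — depend only on the embedding dimension of $u$ and on $c_u(M_p)\ll 1$, hence are independent of $p$; this is exactly the content of Sawin's quantitative sheaf theory as recalled in Section~\ref{sec-prelim}, so no new difficulty arises.
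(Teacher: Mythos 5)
Your proof is correct and is essentially the argument the paper intends: the paper states this theorem without giving a proof, as a conditional restatement of Theorem~\ref{th-stratified} whose argument carries over once the uniform version of Theorem~\ref{thm-high-vanish} is assumed, and your transcription tracks the $p$-independence of all implied constants (via Proposition~\ref{prop-cond-char}, Proposition~\ref{prop-JorHol-cond} and Sawin's complexity formalism) exactly as needed. The only cosmetic slip is attributing the mixedness of $M_{p,\chi}$ to Theorem~\ref{thm-conductors}; it follows simply because $\mcL_\chi$ is lisse and pure of weight zero, and this does not affect the argument.
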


\begin{remark}
  For $G=\Gg_a^d$, results of this kind are unconditional; see for
  instance~\cite[Th.\,1.1, Th.\,3.1]{fouvry-katz} (note that there the
  sets $\mcS_i$ are points of subschemes defined over~$\Zz$, which we
  cannot hope in the general situation where $M_p$ is allowed to vary
  with~$p$).
\end{remark}

In the case of $\Gg_m^d$ (which is currently conditional), this would
give (for instance) stratification and generic square-root cancellation
for sums of the type
\[
\sum_{x_1,\ldots, x_d\in\Ff_p^{\times}} \chi_1(x_1)\cdots
\chi_d(x_d)e\Bigl(\frac{f(x_1,\ldots,x_d)}{p}\Bigr), 
\]
where $f\in \Zz[X_1,\ldots,X_d]$ is a polynomial and
$\chi_1,\ldots, \chi_d$ are Dirichlet characters modulo~$p$, together
with an a priori algebraic description of the sets of characters where
the sum has size $\asymp p^{i/2}$.

Over finite fields, we can still derive some applications, such as the
following proposition, similar to~\cite[Cor.\,1.4]{fouvry-katz}
(although the vertical direction means that equidistribution is only
in the finite set $(1/p\Zz)/\Zz\subset \Rr/\Zz$, or equivalently
modulo~$p$, as we phrase it.)



\begin{proposition}
  Let~$p$ be the characteristic of~$k$.  Let~$d\geq 1$ and $r\leq d$ be
  integers and let $f=(f_i)\colon \Gg_m^d\to \Aa^r$ be a morphism whose
  image is not contained in an affine hyperplane. For a sequence $(w_n)$
  such that $0\leq w_n< |k|^n-1$ and
  $w_n/(|k_n|^{1/2}\log|k_n|)\to +\infty$, and for an arbitrary
  generator~$y_n$ of~$k_n^{\times}$, the family of residue classes
  $$
  \Tr_{k_n/\Ff_p}(f(y_n^{v_1},\ldots, y_n^{v_d}))\mods{p},\quad\quad
  0\leq v_i\leq w_n\text{ for all } i
  $$
  become uniformly distributed in $(\Zz/p\Zz)^r$.
\end{proposition}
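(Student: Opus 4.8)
The plan is to deduce this from the Weyl criterion for equidistribution in $(\Rr/\Zz)^r$, which reduces the statement to showing that for each non-zero vector $\uple{a}=(a_1,\ldots,a_r)\in\Zz^r$, the exponential sums
\[
\frac{1}{(w_n+1)^d}\sum_{0\le v_1,\ldots,v_d\le w_n}
e\Bigl(\frac{\Tr_{k_n/\Ff_p}(\uple{a}\cdot f(y_n^{v_1},\ldots,y_n^{v_d}))}{p}\Bigr)
\]
tend to $0$ as $n\to+\infty$. Writing $g=\uple{a}\cdot f=\sum_i a_i f_i\colon \Gg_m^d\to\Aa^1$, the hypothesis that the image of $f$ is not contained in an affine hyperplane guarantees that $g$ is non-constant. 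Pushing forward $\mcL_{\psi(g)}$ (an Artin--Schreier sheaf twisted object) appropriately, one recognizes the sum over $(x_1,\ldots,x_d)=(y_n^{v_1},\ldots,y_n^{v_d})$ as a restricted sum of the trace function of the semiperverse, mixed-of-weight-$\le 0$ complex $M=j_!f^*\mcL_{\psi(g)}[d](d/2)$ (as in the Example following Theorem~\ref{th-3}), where the restriction forces each $x_i$ to lie in the ``short geometric progression'' $\{y_n^v : 0\le v\le w_n\}$ inside $k_n^\times$.

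First I would handle the passage from a full character sum to the short-box sum by the standard completion technique: expressing the indicator of $\{y_n^v: 0\le v\le w_n\}$ via multiplicative characters of $k_n^\times$, so that
\[
\sum_{0\le v_i\le w_n} e\Bigl(\tfrac{\Tr(g(y_n^{\uple v}))}{p}\Bigr)
=\frac{1}{(|k_n|-1)^d}\sum_{\chi_1,\ldots,\chi_d}\Bigl(\prod_i \widehat{\charfun}_{w_n}(\chi_i)\Bigr)
\sum_{x\in(k_n^\times)^d}\chi_1(x_1)\cdots\chi_d(x_d)\,e\Bigl(\tfrac{\Tr(g(x))}{p}\Bigr),
\]
where $\widehat{\charfun}_{w_n}$ are geometric-series coefficients satisfying $\sum_\chi|\widehat{\charfun}_{w_n}(\chi)|\ll |k_n|\log|k_n|$ and $\widehat{\charfun}_{w_n}(\mathbf 1)=w_n+1$. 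The inner sums are precisely $S(M,\chi_1\otimes\cdots\otimes\chi_d)$ for the complex $M$ on $G=\Gg_m^d$.

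Next I would apply the stratification theorem, Theorem~\ref{th-stratified}, to $M$ on $\Gg_m^d$: this produces subsets $\mcS_d\subset\cdots\subset\mcS_0=\charg{G}$ with $|\mcS_i(k_n)|\ll|k_n|^{d-i}$, each a finite union of \tacs\ of dimension $\le d-i$, such that for $\chi\notin\mcS_i$ one has $S(M,\chi)\ll c_u(M)|k_n|^{(i-1)/2}$ with constant independent of $\chi$; moreover $\mcS_d$ is empty once we arrange (by subtracting off the punctual and negligible part, or by noting $g$ non-constant so $M$ lies in $\Ppintarith$ up to negligibles) that $M$ is in the internal category. Splitting the $\chi$-sum over the layers $\mcS_i\setminus\mcS_{i+1}$ and bounding $|\widehat{\charfun}_{w_n}(\chi_i)|$ trivially by its $L^\infty$ size on each layer (combined with the $L^1$ bound on the complementary factors), the contribution of layer $i$ is $\ll (|k_n|\log|k_n|)^{d-1}\cdot|k_n|^{(d-i)}\cdot|k_n|^{(i-1)/2}\cdot(|k_n|-1)^{-d}$; after dividing by $(w_n+1)^d$ and using $w_n\gg|k_n|^{1/2}\log|k_n|$, every layer with $i\ge 1$ contributes $o(1)$. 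The main obstacle is the top layer $i=0$, i.e.\ the genuinely ``bad'' characters where the sum $S(M,\chi)$ is not small: this is controlled because $\mcS_0\setminus\mcS_1$ has character codimension $\ge 1$, so $|\mcS_0(k_n)\setminus\mcS_1(k_n)|\ll|k_n|^{d-1}$, and using $|S(M,\chi)|\le c_u(M)|k_n|^{d/2}$ (trivial bound from $h^0_c\le c_u$) together with $|\widehat{\charfun}_{w_n}(\chi_i)|\ll|k_n|$, one still gets $o((w_n+1)^d)$ after division; the delicate point requiring care is that the $d$-fold product structure of $\mcS_i$ as \tacs\ must be unwound so that the codimension bounds interact correctly across the $d$ coordinates, which is exactly where the fact that $\mcS_i$ is a \emph{finite union of \tacs\ of bounded dimension} (not merely a thin set) is used. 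Finally, isolating the single term $\chi_1=\cdots=\chi_d=\mathbf 1$, whose contribution is $(w_n+1)^d/(|k_n|-1)^d\cdot S(M,\mathbf 1)$, one sees it is $o((w_n+1)^d)$ as well since $S(M,\mathbf 1)=\sum_{x}e(\Tr(g(x))/p)/|k_n|^{d/2}\ll 1$ by Deligne's bound for a non-trivial additive character sum over $(k_n^\times)^d$ along $g$ non-constant. Assembling these estimates gives the required Weyl-sum decay, hence equidistribution.
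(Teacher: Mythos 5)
Your overall route coincides with the paper's: Weyl's criterion, completion of the short geometric progressions by multiplicative characters of $k_n^{\times}$, identification of the completed sums as Mellin transforms of the perverse sheaf $g_h^*\mcL_{\psi}[d](d/2)$ on $\Gg_m^d$, and then the stratification Theorem~\ref{th-stratified}. The genuine gap is in the one step that carries all the analytic weight: the bound for $\sum_{\chi\in\mcS_i(k_n)}\prod_j|\widehat{\charfun}_{w_n}(\chi_j)|$ over the exceptional strata. Your claimed layer-$i$ bound $(|k_n|\log|k_n|)^{d-1}\cdot|k_n|^{d-i}\cdot|k_n|^{(i-1)/2}\cdot(|k_n|-1)^{-d}$ is not a valid estimate: it multiplies an $L^1$ bound taken over $d-1$ full coordinates by the cardinality bound $|\mcS_i(k_n)|\ll|k_n|^{d-i}$ for the whole stratum, which double-counts, and neither coherent reading of it suffices. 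Cardinality times sup norms gives a layer contribution of order $|k_n|^{(d-i)/2}$, which diverges; one sup norm plus $L^1$ in the remaining coordinates extracts a saving in a single coordinate only, which fails for strata of codimension $i\geq 2$ (at the borderline $w_n\asymp|k_n|^{1/2}\log|k_n|$ that reading leaves a layer-$i$ term of size about $|k_n|^{(i-1)/2}$). What is actually needed, and what the paper isolates as Lemma~\ref{lm-boring} (a Fouvry--Katz-type lattice-point estimate), is that on a \tac\ of codimension $i$, viewed as a codimension-$i$ affine sublattice of $(\Zz/(|k_n|-1)\Zz)^d$, the sum of $\prod_j|\widehat{\charfun}_{w_n}(\chi_j)|$ gains a full factor $(w_n/|k_n|)^{i}$ over the unrestricted sum; only this saving beats, layer by layer, the loss $|k_n|^{i/2}$ coming from the weaker cancellation in $S(M,\chi)$. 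You explicitly defer exactly this point (``the delicate point requiring care''), so the core of the argument is missing rather than carried out.

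Two further slips in the same paragraph: you call $\mcS_0\setminus\mcS_1$ the ``genuinely bad'' set of size $\ll|k_n|^{d-1}$, but $\mcS_0=\charg{G}$, so $\mcS_0\setminus\mcS_1$ is the generic set of size about $|k_n|^{d}$ on which square-root cancellation holds, and it is $\mcS_1$ that has size $\ll|k_n|^{d-1}$; moreover, with only trivial bounds on a set of size $|k_n|^{d-1}$ the contribution is not $o(1)$ once $d\geq 2$, so that passage does not close as written. Finally, $S(M,\mathbf{1})\ll 1$ does not follow from ``Deligne's bound'' merely because $g_h$ is non-constant (a coordinate function already gives a normalized sum of size about $|k_n|^{d/2-1}$); the trivial character needs no separate treatment anyway, since $\mcS_d=\emptyset$ gives $S(M,\chi)\ll|k_n|^{(d-1)/2}$ for every $\chi$, which is enough for that single term.
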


\begin{proof}
  Let $G=\Gg_m^d$ and~$q=|k|$; for $n\geq 1$, denote by $\psi_n$ the
  additive character $x\mapsto e(\Tr_{k_n/\Ff_p}(x)/p)$ of $k_n$. Using
  the generator $y_n$, we can identify the group
  $G(k_n)=(k_n^{\times})^d$ with the group $(\Zz/(q^n-1)\Zz)^d$ and we
  also identify $\charg{G}(k_n)$ with $(\Zz/(q^n-1)\Zz)^d$, the element
  $\beta\in(\Zz/(q^n-1)\Zz)^d$ corresponding to the character~$\chi$
  such that
  $$
  \chi(y_n^{v_1},\ldots, y_n^{v_d})=
  e\Bigl(\frac{1}{q^n-1}(\beta_1v_1+\cdots+\beta_d v_d)\Bigr).
  $$
  \par
  By the Weyl Criterion, we need to prove that
  $$
  \lim_{n\to +\infty} \frac{1}{w_n^d}\sum_{0\leq v_i\leq w_n}
  \psi_n\Bigl(\sum_{i=1}^d h_i f_i(y_n^{v_1},\ldots, y_n^{v_d})\Bigr)=0
  $$
  for any $h\in(\Zz/p\Zz)^r\setminus \{0\}$. Detecting the interval
  $0\leq v\leq w_n$ by Fourier expansion, we have to study the limit
  of
  $$
  \frac{1}{w_n^d} \sum_{\chi\in\charg{G}(k_n)} \widehat{\alpha}_n(\chi)
  \sum_{x\in G(k_n)}\chi(x) \psi_n\Bigl(\sum_{i=1}^d h_if_i(x)\Bigr)
  $$
  where
  $$
  \widehat{\alpha}_n(\chi)=\frac{1}{(q^n-1)^d}\sum_{0\leq v_i\leq w_n}
  \overline{\chi(y_n^{v_1},\ldots,y_n^{v_d})}.
  $$
  \par
  Define $g_h\colon G\to \Aa^1$ by
  $$
  g_h(x)=\sum_{i=1}^dh_i f_i(x).
  $$
  \par
  We can write
  $$
  \frac{1}{w_n^d} \sum_{\chi\in\charg{G}(k_n)} \widehat{\alpha}_n(\chi)
  \sum_{x\in G(k_n)}\chi(x) \psi_n\Bigl(\sum_{i=1}^r h_if_i(x)\Bigr)=
  \frac{1}{w_n^d} \sum_{\chi\in\charg{G}(k_n)} \widehat{\alpha}_n(\chi)
  q^{nd/2}S(M,\chi)
  $$
  for the complex~$M=g_h^*\mcL_{\psi_1}[d](d/2)$ on~$G$, which is a
  simple perverse sheaf, pure of weight~$0$, on~$G$.  
  \par
  We apply Theorem~\ref{th-stratified} to~$M$. Let $(\mcS_i)$ be the
  subsets described there. We have $\mcS_d=\emptyset$ because the image
  of~$f$ is not contained in an affine hyperplane, which implies that
  $g_h$ is non-constant, and hence $M$ is non-trivial, from which the
  fact that it does not coincide with $\mcL_{\chi}$ for any
  character~$\chi\in\charg{G}$ follows.  Moreover, we also know that
  each $\mcS_i$ is a finite union of \tacs\ of~$G$ of
  dimension~$\leq d-i$.
  \par
  The contribution of all $\chi\in(\mcS_0\setminus\mcS_1)(k_n)$ to the
  previous sum satisfies the bound
  $$
  \frac{1}{w_n^d} \sum_{\chi\in(\charg{G}\setminus\mcS_1)(k_n)}
  \widehat{\alpha}_n(\chi) q^{nd/2}S(M,\chi) \ll \frac{q^{nd/2}}{w_n^d}
  \sum_{\chi\in\charg{G}(k_n)} | \widehat{\alpha}_n(\chi)|.
  $$

  It is well-known that the bound
  \begin{equation}\label{eq-log-bound}
    \sum_{\chi\in\charg{G}(k_n)} | \widehat{\alpha}_n(\chi)|\ll
    (\log q)^d
  \end{equation}
  holds for all $n\geq 1$ (see Remark~\ref{rm-log} below), where the
  implied constant depends on~$d$, so that
  $$
  \frac{1}{w_n^d} \sum_{\chi\in(\charg{G}\setminus\mcS_1)(k_n)}
  \widehat{\alpha}_n(\chi) q^{nd/2}S(M,\chi) \ll
  \Bigl(\frac{q^{n/2}\log(q)}{w_n}\Bigr)^d,
  $$
  which converges to~$0$ as $n\to+\infty$ by assumption.
  \par
  We now handle the remaining terms. Let $1\leq j\leq d-2$. By
  Theorem~\ref{th-stratified}, the estimate
  $$
  \frac{1}{w_n^d} \sum_{\chi\in(\mcS_j\setminus\mcS_{j+1})(k_n)}
  \widehat{\alpha}_n(\chi) q^{nd/2}S(M,\chi) \ll
  \frac{q^{n(d+j)/2}}{w_n^d} \sum_{\chi\in \mcS_j(k_n)} |
  \widehat{\alpha}_n(\chi)|
  $$
  holsd for all $n\geq 1$. From Lemma~\ref{lm-boring} below and the fact
  that $\mcS_j$ is a finite union of \tacs\ of codimension at least~$j$,
  we deduce that the estimate
  $$
  \sum_{\chi\in \mcS_j(k_n)} | \widehat{\alpha}_n(\chi)| \ll
  \Bigl(\frac{w_n}{q^n}\Bigr)^{j}(\log q)^d
  $$
  holds for $n\geq 1$. It follows that
  $$
  \frac{1}{w_n^d} \sum_{\chi\in(\mcS_j\setminus\mcS_{j+1})(k_n)}
  \widehat{\alpha}_n(\chi) q^{nd/2}S(M,\chi) \ll
  \frac{q^{n(d+j)/2}}{w_n^d}  \Bigl(\frac{w_n}{q^n}\Bigr)^{j}(\log
  q)^d=
  \Bigl(\frac{q^{n/2}}{w_n}\Bigr)^{d-j}(\log q)^d.
  $$
  \par
  The conclusion follows.
\end{proof}

\begin{lemma}\label{lm-boring}
  With notation as above, for any \tac\ $\mcS$ of~$\Gg_m^d$ of
  dimension~$d-j<d$, we have
  $$
  \sum_{\chi\in \mcS(k_n)} | \widehat{\alpha}_n(\chi)| \ll
  \Bigl(\frac{w_n}{q^n}\Bigr)^{j}(\log q)^{d}.
  $$
\end{lemma}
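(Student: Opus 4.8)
The strategy is to reduce the estimate over a \tac\ $\mcS$ of dimension $d-j$ to the known one-dimensional estimate for sums of the form $\sum_{0\le v\le w_n}e(\beta v/(q^n-1))$, applied $j$ times in the directions killed by the \tac. Recall that $\mcS$ is defined by a surjective morphism $\pi\colon \Gg_m^d\to Y$ with $\dim(Y)=d-j$, together with a character $\chi_0\in\charg{\Gg}_m^d(k_n)$; concretely, writing things additively via the chosen generator $y_n$, there is a surjective integer matrix $P$ of size $(d-j)\times d$ (dual to $\pi$) such that $\mcS(k_n)$ consists of the characters $\chi$ whose exponent vector $\beta\in(\Zz/(q^n-1)\Zz)^d$ lies in the coset $\beta_0 + \mathrm{Im}(P^{t})$, where $\beta_0$ is the exponent vector of $\chi_0$. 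After a change of basis of $\Zz^d$ (bounded in terms of $d$), we may assume $\mathrm{Im}(P^{t})$ is spanned by the first $d-j$ coordinate vectors, so that the characters in $\mcS(k_n)$ are exactly those with $\beta_{d-j+1},\dots,\beta_d$ fixed to prescribed residues and $\beta_1,\dots,\beta_{d-j}$ free.

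First I would make this normalization explicit and note that $\widehat{\alpha}_n$ factors as a product over the $d$ coordinates:
\[
\widehat{\alpha}_n(\chi)=\prod_{i=1}^d \widehat{\alpha}_n^{(1)}(\beta_i),\qquad
\widehat{\alpha}_n^{(1)}(\beta)=\frac{1}{(q^n-1)^{1/d}}\sum_{0\le v\le w_n} e\!\Bigl(\frac{-\beta v}{q^n-1}\Bigr)
\]
(with the normalizing factor distributed so that the product is $\widehat{\alpha}_n$; the exact placement of $(q^n-1)^{-1}$ is a matter of bookkeeping). Summing $|\widehat{\alpha}_n(\chi)|$ over $\chi\in\mcS(k_n)$ then factors as a product of $d-j$ full sums $\sum_{\beta_i\in\Zz/(q^n-1)\Zz}|\widehat{\alpha}_n^{(1)}(\beta_i)|$ over the free coordinates, times $j$ single fixed values $|\widehat{\alpha}_n^{(1)}(\beta_i)|$ over the constrained coordinates. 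For the free coordinates I would invoke the standard $L^1$ bound for the Fourier transform of an interval, namely $\sum_{\beta\bmod q^n-1}|\widehat{\alpha}_n^{(1)}(\beta)|\ll (q^n-1)^{-1/d}\cdot q^n\log q^n/q^n \cdot (\text{something})$ — more precisely, up to the normalization, $\sum_{\beta}\bigl|\sum_{0\le v\le w_n}e(\beta v/(q^n-1))\bigr|\ll q^n\log q^n$, which is the classical bound used for the $\mcS_0$ term in the proposition. For the $j$ constrained coordinates I would use instead the trivial pointwise bound $\bigl|\sum_{0\le v\le w_n}e(\beta v/(q^n-1))\bigr|\ll w_n$ (valid for every $\beta$, in particular without cancellation). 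Collecting the $d-j$ logarithmic factors and the $j$ factors of $w_n$, and tracking the global normalization $(q^n-1)^{-1}$, yields exactly
\[
\sum_{\chi\in\mcS(k_n)}|\widehat{\alpha}_n(\chi)|\ll \Bigl(\frac{w_n}{q^n}\Bigr)^{j}(\log q)^d,
\]
as claimed.

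\textbf{Main obstacle.} The genuine content is purely elementary harmonic analysis on $(\Zz/(q^n-1)\Zz)^d$; the only real care needed is (i) to justify that a \tac\ of dimension $d-j$ really does cut out, after a bounded-complexity change of coordinates on the character lattice, a sub-box in which $j$ of the $d$ frequency variables are frozen — this is where one uses the structure of \tacs\ from Definition~\ref{def-most} and the Smith normal form of the defining matrix $P$, with the implied constant absorbing the index of the relevant sublattice — and (ii) to make sure the normalization is distributed so that the product formula for $\widehat{\alpha}_n$ is exact and the final power of $q^n$ comes out right. I expect step (i), i.e. turning the geometric statement "$\mcS$ is a \tac\ of codimension $j$" into the arithmetic statement "the frequency set is a coset of a rank-$(d-j)$ sublattice which, up to bounded index, is a coordinate sublattice", to be the part requiring the most attention; once it is in place, the estimate is immediate from the interval bounds $\sum_\beta|\widehat I(\beta)|\ll q^n\log q^n$ and $|\widehat I(\beta)|\ll w_n$ applied coordinate by coordinate.
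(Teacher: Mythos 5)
Your overall strategy (bound $j$ of the one-variable Dirichlet-kernel factors trivially by $w_n$, and use the $L^1$ bound $\sum_{\beta}\bigl|\sum_{0\le v\le w_n}e(\beta v/(q^n-1))\bigr|\ll q^n\log q^n$ for the remaining $d-j$ variables) is the right one, but the reduction you build it on has a genuine gap. You claim that after a change of basis of $\Zz^d$ the \tac\ becomes a coset in which $j$ of the exponent coordinates are frozen and the other $d-j$ are free, so that the sum over $\mcS(k_n)$ literally factors. This is not available: the summand $|\widehat{\alpha}_n(\chi)|$ is (up to normalization) the product $\prod_{i=1}^d|D_n(\beta_i)|$ with $D_n(\beta)=\sum_{0\le v\le w_n}e(\beta v/(q^n-1))$, i.e.\ the Fourier transform of the box $0\le v_i\le w_n$ \emph{in the original coordinates}, and this product structure is destroyed by a $\GL_d(\Zz)$ change of basis on the character group (an automorphism of $\Gg_m^d$ does not carry the box to a box). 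Equivalently, a \tac\ is in general not a coset where coordinates are frozen: for $d=2$, $j=1$ and kernel the diagonal torus, $\mcS(k_n)$ is the set $\beta_1+\beta_2\equiv c\mods{q^n-1}$, and the sum is $\sum_{\beta}|D_n(\beta)|\,|D_n(c-\beta)|$, to which your factorization simply does not apply (it must be handled by Parseval/Cauchy--Schwarz, or by a trivial-times-$L^1$ splitting in the original variables). Your proposed fix via Smith normal form and ``bounded index'' does not address this: since the kernel of the quotient defining the \tac\ is connected, the relevant sublattice is already saturated, so there is no index issue; the obstruction is the incompatibility of the coordinate change with the product structure of $\widehat{\alpha}_n$. (Also, the needed change of basis would depend on $\mcS$, not only on $d$, though that alone would be harmless since the implied constant may depend on $\mcS$.)

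The paper's own proof is a reduction to Fouvry--Katz \cite[Lemma~9.5]{fouvry-katz} (in the linear case, with $p$ replaced by $q^n-1$): membership of $\chi$ in $\mcS(k_n)$ is translated into a system of integer linear congruences $\sum_{l}m_{k,l}\xi_l\equiv y_k\mods{q^n-1}$ of rank $j$ on the exponent vector $\xi$, and the estimate is then the hyperplane case of that lemma. The point your argument is missing is exactly what that argument supplies: one stays in the original coordinates, picks a nonzero $j\times j$ minor $\Delta$ of the coefficient matrix, so that for each choice of the $d-j$ remaining exponents the $j$ selected exponents are determined up to at most $O_{\mcS}(1)$ values (the multiplicity divides $|\Delta|$, uniformly in $n$); one then bounds those $j$ factors $|D_n(\xi_i)|$ trivially by $w_n+1$ and applies the $L^1$ bound in the free variables, which gives the claimed estimate after dividing by the normalization $(q^n-1)^{-d}$ (note that the lemma requires this normalization of $\widehat{\alpha}_n$, a point your ``bookkeeping'' remark leaves unresolved, and that the $\log q$ versus $\log q^n$ discrepancy is inherited from the statement itself). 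So the correct implementation of your trivial/$L^1$ splitting goes through the linear system in the original coordinates, not through a coordinate normalization of the lattice.
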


\begin{proof}
  \emph{Mutatis mutandis}, this is very close
  to~\cite[Lemma~\,9.5]{fouvry-katz}, in the (simpler) case where the
  variety $\mcV$ of loc. cit. is an affine hyperplane (but with the
  primes~$p$ replaced by the sequence $q^n-1$). Indeed, let
  $f\colon \Gg_m^d\to \Gg_m^{d-j}$ and
  $\chi_0=(\chi_{0,1},\ldots,\chi_{0,d})$ be the morphism of tori and
  the character $\chi_0$ defining $\mcS$. There exists a matrix
  $m=(m_{k,l})$ of size $d\times (d-j)$ with integral coefficients of
  rank~$j$ such that
  a character $\eta=(\eta_1,\ldots,\eta_d)$ of~$\Gg_m^d$ belongs to
  $\mcS(k_n)$ if and only if
  $$
  \prod_{l=1}^d\eta_l^{m_{k,l}}=\chi_{0,k}^{-1}
  $$
  for $1\leq k\leq d-j$. When we identify $\charg{\Gg}_m^d(k_n)$ with
  $(\Zz/(q^n-1)\Zz)^d$, this means that $\mcS(k_n)$ is identified with
  the set of solutions $(\xi_1,\ldots,\xi_d)$ in $(\Zz/(q^n-1))^d$ of
  the linear equation
  $$
  \sum_{l=1}^dm_{k,l}\xi_l=y_k
  $$
  for some $y_k\in (\Zz/(q^n-1)\Zz)$.
\end{proof}

\begin{remark}\label{rm-log}
  We recall the proof of~(\ref{eq-log-bound}). We denote $N=q^n-1$ so
  that $\charg{G}(k_n)$ is isomorphic to $(\Zz/N\Zz)^d$. The sum to
  estimate is
  \begin{align*}
    \frac{1}{N^d}\sum_{\xi\in (\Zz/N\Zz)^d} \Bigl| \sum_{0\leq v_i\leq
      w_n} e\Bigl(\frac{\xi_1v_1+\cdots +\xi_dv_d}{N}\Bigr) \Bigr| &
    =\frac{1}{N^d} \sum_{\xi\in (\Zz/N\Zz)^d} \prod_{i=1}^d \Bigl|
    \sum_{0\leq v\leq w_n} e\Bigl(\frac{\xi_iv}{N}\Bigr) \Bigr|
    \\
    &=\frac{1}{N^d}\prod_{i=1}^d \sum_{\xi\in (\Zz/N\Zz)} \Bigl|
    \sum_{0\leq v\leq w_n} e\Bigl(\frac{\xi v}{N}\Bigr) \Bigr|,
  \end{align*}
  which shows that it is enough to handle the case~$d=1$. In this case,
  one uses the bound
  $$
  \Bigl| \sum_{0\leq v\leq w_n} e\Bigl(\frac{\xi v}{N}\Bigr) \Bigr|
  \leq
  \min\Bigl(w_n+1,\frac{e((w_n+1)\xi/N)-1}{e(\xi/N)-1}
  \Bigr)
  \leq \min\Bigl(w_n+1,\frac{1}{2\|\xi/N\|}\Bigr)
  $$
  where~$\|\xi/N\|$ is the distance to the nearest integer of $\xi/N$
  (the sum is a finite geometric sum, and in the last step, we used the
  lower-bound $|\sin(x)|\geq 2\|x\|$, valid for $x\in\Rr$). We then sum
  over the range~$0\leq \xi\leq N-1$; for $\xi=0$, the bound is
  $\leq (w_n+1)\leq N$, and for $1\leq \xi\leq N-1$, we have
  $$
  \|\xi/N\|\geq \min\Bigl(\frac{\xi}{N},\frac{N-\xi}{N}\Bigr),
  $$
  hence
  $$
  \frac{1}{N}\sum_{\xi\in (\Zz/N\Zz)} \Bigl|
  \sum_{0\leq v\leq w_n} e\Bigl(\frac{\xi v}{N}\Bigr) \Bigr|
  \leq 1+\sum_{\xi=1}^{(N-1)/2}\frac{1}{\xi}\ll \log N,
  $$
  as claimed.
\end{remark}

\section{Generic Fourier invertibility}
\label{sec-generic-inversion}

For two semisimple perverse sheaves~$M$ and~$N$,
Proposition~\ref{prop:discreteFouriertransform} implies that if the
arithmetic Fourier transforms of~$M$ and~$N$ coincide, in the sense that
$S(M,\chi)=S(N,\chi)$ for any $\chi\in\charg{G}$, then the trace
functions of $M$ and~$N$ coincide over~$k_n$ for all $n\geq 1$, which
implies that $M$ and~$N$ are isomorphic
(Proposition~\ref{pr-injectivity}; see
also~\cite[Prop.\,4.2.3]{loeser-determinant} for tori).

The stratified vanishing theorem allows us to prove a statement of
``generic Fourier invertibility'' for pure perverse sheaves, which relaxes
the condition of equality of \emph{all} sums $S(M,\chi)$ and $S(N,\chi)$
to a condition for a \emph{generic} set of characters.

\begin{theorem}[Generic Fourier invertibility]\label{th-generic-inversion}
  \index{generic Fourier invertibility}
  Let $G$ be a connected commutative algebraic group over~$k$.  Let $M$
  and~$N$ be arithmetically semisimple $\ell$-adic perverse sheaves
  on~$G$ which are pure of weight zero.

  The perverse sheaves $M_{\intt}$ and~$N_{\intt}$ are arithmetically
  isomorphic if and only if there exists a generic set
  $\mcX\subset \charg{G}$ such that $S(M,\chi)=S(N,\chi)$ for all
  $\chi\in\mcX$.
\end{theorem}

\begin{proof}
  If $M_{\intt}$ is isomorphic to $N_{\intt}$, then the sums $S(M,\chi)$
  and $S(N,\chi)$ coincide for a generic set of characters because
  $S(P,\chi)$ vanishes generically for a negligible object~$P$.
  \par
  To prove the converse, we may assume that $M=M_{\intt}$ and
  $N=N_{\intt}$, i.e., that $M$ and~$N$ are objects of $\Ppintarith(G)$.
  We then argue by induction on the sum~$m$ of the lengths of~$M$
  and~$N$.
  \par
  If $m=0$, then
  the perverse sheaves $M$ and~$N$ are
  both zero.
  \par
  Suppose now that $m\geq 1$ and that the statement holds for all pairs
  $(M_1,N_1)$ of perverse sheaves in $\Ppintarith(G)$ such that the sum
  of the lengths of $M_1$ and $N_1$ is $\leq m-1$. One at least of the
  perverse sheaves~$M$ and~$N$ is non-zero, and (up to exchanging $M$
  and $N$) we may assume that $M$ is non-zero. Let $(Q_i)_{i\in I}$ be
  the simple components (without multiplicity) of the perverse sheaf
  $M\oplus N$, and for $i\in I$, let $\mu_M(i)$ (resp. $\mu_N(i)$) be
  the multiplicity of $Q_i$ in~$M_{\intt}$ (resp. $N_{\intt}$).
  \par
  Let $\mcY$ be the set of Frobenius-unramified characters for the perverse
  sheaf
  $$
  ( M *_{\intt} N^{\vee})\oplus (M *_{\intt} M^{\vee}),
  $$
  viewed as an object of~$\braket{M\oplus N}^{\arith}$.
  \par
  For any integer $n\geq 1$, we consider the sum
  $$
  T_n=\frac{1}{|G(k_n)|} \sum_{\chi\in \mcY(k_n)}S(M*_{\intt}
  N^{\vee},\chi).
  $$
  \par
  Applying Corollary~\ref{cor-schur} after decomposing $M$ and~$N$ in
  terms of the simple perverse sheaves~$Q_i$, we obtain the formula
  $$
  \lim_{N\to +\infty} \frac{1}{N}\sum_{n\leq N}T_n=\sum_{i\in
    I}\mu_M(i)\mu_N(i).
  $$
  \par
  On the other hand, we can write
  $$
  T_n=\frac{1}{|G(k_n)|} \sum_{\chi\in\mcY(k_n)}S(M*_{\intt}
  M^{\vee},\chi) +\frac{1}{|G(k_n)|}
  \sum_{\chi\in \mcY(k_n)}\Bigl(
  S(M*_{\intt} N^{\vee},\chi) - S(M*_{\intt} M^{\vee},\chi)\Bigr)
  $$
  for any $n\geq 1$.  For $\chi$ in the generic set $\mcX\cap \mcY$, the
  assumption implies that
  $$
  S(M*_{\intt} N^{\vee},\chi) = S(M*_{\intt} M^{\vee},\chi).
  $$
  Thus, using Corollary~\ref{cor-stratified-generic}, the assumption
  implies that the bound
  $$
  \frac{1}{|G(k_n)|} \sum_{\chi\in \mcY(k_n)}\Bigl( S(M*_{\intt}
  N^{\vee},\chi) - S(M*_{\intt} M^{\vee},\chi)\Bigr) \ll |k_n|^{-1/2}
  $$
  holds for~$n\geq 1$. Applying Corollary~\ref{cor-schur} once more and
  comparing with the previous computation, we deduce that
  $$
  \sum_{i\in I}\mu_M(i)\mu_N(i)=\sum_{i\in I}\mu_M(i)^2.
  $$
  \par
  The right-hand side is $\geq 1$ since $M$ is non-zero. Hence, there
  exists $i$ such that $\mu_M(i)\mu_N(i)\geq 1$, which means that~$Q_i$
  appears with positive multiplicity in both $M$ and~$N$. Removing one
  occurrence of~$Q_i$ from $M$ and $N$, we obtain perverse sheaves $M_1$
  and~$N_1$ in $\Ppintarith(G)$ for which we can apply the induction
  hypothesis, so that $M_{1}$ is isomorphic to $N_{1}$, and adding the
  simple perverse sheaf $Q_i$ to both sides, we deduce that $M$ is
  isomorphic to $N$.
\end{proof}

\begin{remark}
  In the case of tori, this theorem can be compared with a conditional
  result of Loeser~\cite[Prop.\,4.2.5]{loeser-determinant}.
\end{remark}

\chapter{Independence of~$\ell$}\label{sec-indep}

We consider in this section a connected commutative algebraic group~$G$
over a finite field~$k$. Let~$p$ be the characteristic of~$k$. Since we
will vary the prime $\ell\not=p$, we will indicate it in the
notation. For an object $M$ of $\Der(G,\Qlb)$, we will now denote by
$t_M(x;k_n)$ the $\Qlb$-valued trace function of~$M$, and we will also
specify explicitly the isomorphisms $\iota$ used to define their
complex-valued analogues. In particular, we write $\charg{G}^{(\ell)}$ for
the set of $\ell$-adic characters.
\nomenclature[$G$]{$\charg{G}^{(\ell)}$}{$\ell$-adic characters}

We recall (see, e.g, \cite[Def.\,1.2]{fujiwara} with $E=\Cc$) that if
$A$ is a set of pairs $(\ell,\iota)$ consisting of a prime number $\ell$
different from the characteristic of~$k$ and an isomorphism
$\iota\colon \Qlb\to\Cc$, a family~$(M_{\alpha})_{\alpha\in A}$ of
objects~$M_\alpha$ of $\Perv(G,\Qlb)$ is said to be a \emph{compatible system}\index{compatible system} if for
any $n\geq 1$ and $x\in G(k_n)$, the complex numbers
$\iota(t_{M_{\alpha}}(x;k_n))$ are independent of
$\alpha=(\ell,\iota)\in A$.  This is equivalent to asking that the
eigenvalues of Frobenius for the stalk of~$M_{\alpha}$ at~$x$ are
independent of~$\alpha$.

The question we wish to address is the following:
\begin{question}
  Suppose that we have a compatible system $(M_{\alpha})_{\alpha\in A}$;
  to what extent are the arithmetic and geometric tannakian
  groups of $M_{\alpha}$ independent of~$\alpha$?
\end{question}

We note that the analogue question for the monodromy groups of a
compatible system of lisse sheaves on an algebraic variety $X$ over~$k$
(especially a curve) has been considered in depth by, among others,
Serre~\cite[p.\,1--21]{serre-indep-ell},
Larsen--Pink~\cite{larsen-pink-indep} and
Chin~\cite{chin-indep-ell}. Using Deligne's Fourier transform, this
gives corresponding answers to our question in the case of the
group~$\Gg_a$. We note also that the deepest results (such as that of
Chin) depend on the global Langlands correspondance over function
fields.

In this section, we take a first step in addressing the question. We
will only compare two objects, so for the remainder of this section, we
let $(\ell_1,\iota_1)$ and~$(\ell_2,\iota_2)$ be pairs of prime numbers and
isomorphisms $\iota_j\colon \Qellb{\ell_j}\to \Cc$.  For $j=1$, $2$, we
fix an $\ell_j$-adic arithmetically semisimple perverse sheaf~$M_j$
on~$G$ which is pure of $\iota_j$-weight zero. We assume that $M_1$
and~$M_2$ are compatible, that is we assume that the system with
$A=\{(\ell_1,\iota_1), (\ell_2,\iota_2)\}$ is compatible.

\begin{lemma}\label{lm-simple}
  The following properties hold\emph{:}
  \begin{enumth}
  \item For any $n\geq 1$, the map
  \begin{align*}
  \eta \colon \charg{G}^{(\ell_1)}(k_n) &\longrightarrow \charg{G}^{(\ell_2)}(k_n) \\
  \eta &\longmapsto \iota_2^{-1}\circ \iota_1\circ \chi
  \end{align*}
  is a bijection such that $\iota_1(S(M_1,\chi))=\iota_2(S(M_2,\eta(\chi)))$ holds for all~$\chi$.    
  \item For any $\chi\in\charg{G}^{(\ell_1)}$, the objects
    $(M_1)_{\chi}$ and $(M_2)_{\eta(\chi)}$ are compatible.
  \item The set of weakly unramified characters
    $\chi\in \charg{G}^{(\ell_1)}$ for $M_1$ such that $\eta(\chi)$ is
    weakly unramified for~$M_2$ is generic.
  \end{enumth}
\end{lemma}

\begin{proof}
  This boils down to the computation
  \begin{align*}
    \iota_1(t_{(M_1)_{\chi}}(x;k_n))&=\iota_1(t_{M_1}(x;k_n)\chi(
    N_{k_n/k}(x)))
    \\
    &=
    \iota_1(t_{M_1}(x;k_n))\iota_1(\chi(N_{k_n/k}(x)))
    \\
    &=
    \iota_2(t_{M_2}(x;k_n))\iota_2(\eta(\chi)(N_{k_n/k}(x)))=
    \iota_2(t_{(M_2)_{\eta(\chi)}}(x;k_n))
  \end{align*}
  for any $n\geq 1$ and $x\in G(k_n)$, which follows from the
  definitions, and the fact that $\wunram{M_1}$ and~$\eta^{-1}(\wunram{M_2})$ are both generic, and hence so is their
  intersection in $\charg{G}^{(\ell_1)}$.
\end{proof}

\begin{remark}\label{rm-compatible-sets}
  We will say that two sets
  $\mathcal{A}_j\subset \what{G}^{(\ell_j)}$, defined for $j=1$
  and~$j=2$, are compatible if the bijection~$\eta$ induces bijections
  $\mathcal{A}_1(k_n)\to \mathcal{A}_2(k_n)$ for all~$n\geq 1$.
\end{remark}

\begin{lemma}\label{lem-compatible-rank}
  The tannakian dimensions of $M_1$ and $M_2$ coincide.
\end{lemma}

\begin{proof}
  By Proposition~\ref{prop-dimPerv=dimVect} and the generic vanishing
  theorem, the tannakian dimension of~$M_j$ is equal to the
  Euler--Poincaré characteristic of $(M_{j})_{\chi}$ for $\chi$ in a
  generic set $\mcX_j\subset \what{G}^{(\ell_j)}$. By Lemma~\ref{lm-simple}, we can find
  $\chi\in \mcX_1$ such that $\eta(\chi)\in\mcX_2$. 
  The result then follows from the fact that, since
  $(M_1)_{\chi}$ and
  $(M_{2})_{\eta(\chi)}$ are compatible, they have the same
  Euler--Poincaré characteristic (see, e.g.,
  \cite[Lemma\,6.38]{sawin_conductors}).
\end{proof}

We denote from now on by~$r$ the common tannakian dimension
of~$M_1$ and $M_2$.  We further denote
by~$K_j$ a maximal compact subgroup of
$\iota_j(\garith{M_j})(\Cc)$, and by
$\mu_j$ the probability Haar measure
on~$K_j$. We define the measures $\nu_{cp,j}$
on~$\Un_r(\Cc)^{\sharp}$ and the measures $\nu_j$
on~$\Cc$ as in Theorems~\ref{th-4} and~\ref{th-3}, respectively (the
latter is the Sato--Tate measure of~$M_j$).

\begin{lemma}\label{lm-indep-rho}
  With notation and assumptions as above, the objects~$\rho(M_1)$
  and~$\rho(M_2)$ are compatible for any representation~$\rho$
  of~$\GL_r$.
\end{lemma}

\begin{proof}
  This is clear from the definition since the character of~$\rho$ is a
  symmetric polynomial of the eigenvalues of the matrix argument
  in~$\GL_r$.
\end{proof}

The basic information we have is the following consequence of
equidistribution.

\begin{proposition}\label{pr-sato-tate}
  With notation and assumptions as above, we have
  $\nu_{cp,1}=\nu_{cp,2}$ and $\nu_1=\nu_2$.
\end{proposition}

\begin{proof}
  It suffices to prove the equality $\nu_{cp,1}=\nu_{cp,2}$, and this is
  essentially because the measures~$\nu_{cp,j}$ are both determined by
  equidistribution of ``the same data''.

  To be precise, we first note that by the Peter--Weyl Theorem, it is
  enough to prove that
  \[
  \int_{\Un_r(\Cc)^{\sharp}} \Tr(\rho(g))d\nu_{cp,1}(g)=
  \int_{\Un_r(\Cc)^{\sharp}} \Tr(\rho(g))d\nu_{cp,2}(g)
  \]
  holds for all finite-dimensional representations~$\rho$ of~$\Un_r(\Cc)$. 
  By Theorem~\ref{th-4}, applied to the
  bounded test function $f=\Tr(\rho)$, the equality
  \[
  \int_{\Un_r(\Cc)^{\sharp}} \Tr(\rho(g))d\nu_{cp,j}(g)= \lim_{N\to
    +\infty}\frac{1}{N}\sum_{n\leq N}\frac{1}{|G(k_n)|}
  \sum_{\chi\in\mcX_j(k_n)}\Tr(\rho(\Theta_{M_j,k_n}(\chi)))
  \]
  holds, where $\mcX_j \subset \charg{G}^{(\ell_j)}$ is the set of weakly unramified characters for~$M_j$.

  By Lemma~\ref{lm-indep-rho}, the objects $\rho(M_1)$ and $\rho(M_2)$ are  compatible; hence, by Lemma~\ref{lm-simple} applied to these two perverse sheaves,
  we have
  \begin{equation}\label{eq-ind-2}
    \Tr(\rho(\Theta_{M_1,k_n}(\chi)))=
    \Tr(\rho(\Theta_{M_2,k_n}(\eta(\chi))))
  \end{equation}
  if $\chi\in\mcX_1(k_n)$ is such that~$\eta(\chi)\in\mcX_2(k_n)$.
  Therefore, the difference
  \[
  \int_{\Un_r(\Cc)^{\sharp}} \Tr(\rho(g))d\nu_{cp,1}(g)-
  \int_{\Un_r(\Cc)^{\sharp}} \Tr(\rho(g))d\nu_{cp,2}(g)
  \]
  is equal to
  \[
  \lim_{N\to +\infty}\frac{1}{N}\sum_{n\leq N}\frac{1}{|G(k_n)|} \Bigl(
  \sum_{\chi\in\mcY_1(k_n)}\Tr(\rho(\Theta_{M_1,k_n}(\chi)))-
  \sum_{\chi\in\mcY_2(k_n)}\Tr(\rho(\Theta_{M_2,k_n}(\chi))) \Bigr),
 \]
  where $\mcY_1$ (\resp $\mcY_2$) is the set of $\chi\in \mcX_1$
  satisfying $\eta(\chi)\notin \mcX_2$ (\resp the set of $\chi\in\mcX_2$
  satisfying~$\eta^{-1}(\chi)\notin \mcX_1$).

  Both of the sets~$\mcY_1$ and $\mcY_2$ have positive character
  codimension, and hence we deduce 
  \[
  \int_{\Un_r(\Cc)^{\sharp}} \Tr(\rho(g))d\nu_{cp,1}(g)- \int_{\Un_r(\Cc)^{\sharp}} \Tr(\rho(g))d\nu_{cp,2}(g)=0,
  \]
  which implies the theorem.
\end{proof}

The equality of the characteristic polynomial measure or of the
Sato--Tate measures of objects in a compatible system can provide a
considerable amount of information. In ideal cases, this equality may be
enough to imply that $\garith{M_1}$ and $\garith{M_2}$ are
isomorphic. This does happen, but it is far from being always the case.

\begin{example}
  Let $H$ be a finite group and $H\subset \Un_{|H|}(\Cc)$ be its regular
  representation. Then the Sato--Tate measure is
  $$
  \Bigl(1-\frac{1}{|H|}\Bigr)\delta_0+\frac{1}{|H|}\delta_{|H|},
  $$
  where $\delta_z$ denotes a Dirac mass at a point $z\in\Cc$.  Thus the
  Sato--Tate measure only determines the order of~$H$ in that case.

  For characteristic polynomials,
  Sutherland~\cite[Remark\,1.5]{sutherland} gives examples of
  non-isomorphic transitive finite permutation groups with the same
  distributions of characteristic polynomials.  We refer the reader to Sutherland's
  survey~\cite{sutherland} for more examples and discussion of
  Sato--Tate measures in a more traditional context.
\end{example}

\begin{corollary}\label{cor-indep-ell}
  We continue with the notation and assumptions above.

  \begin{enumth}
  \item The reductive ranks of the reductive groups $\garith{M_1}$ and
    $\garith{M_2}$ are the same.
  \item The group $\garith{M_1}$ is finite if and only if the group
    $\garith{M_2}$ is finite, and in this case, both groups have the
    same order.
  \end{enumth}
\end{corollary}

\begin{proof}
  (1) The reductive rank of $\garith{M_{j}}$ is the dimension of the
  space of characteristic polynomials of $\garith{M_j}$
  (see Serre~\cite[p.\,17]{serre-indep-ell} for this fact), and hence it is equal to
  the dimension of the support of the measure~$\nu_{cp,j}$ on
  $\Un_r(\Cc)^{\sharp}$. The result therefore follows from the
  proposition.

  (2) This holds because the group $\garith{M_j}$ is finite if and only
  if the measure~$\nu_j$ is a finite sum of Dirac masses (for the ``if''
  direction, one can use the same result of Serre as in~(1)), and if
  that is true, then the size of $\garith{M_j}$ is equal to the inverse
  of $\nu_j(\{r\})$.

  (3) This follows from another result of
  Serre~\cite[p.\,19]{serre-indep-ell}, according to which a connected
  compact subgroup of $\Un_r(\Cc)$ cannot induce the same measure on
  characteristic polynomials as a non-connected subgroup.
\end{proof}

\begin{remark}
  In the next chapter, we will also see results which imply that $M_{1}$
  has tannakian group containing $\SL_r$ if and only $M_2$ has the same
  property, and some related statements, following from Larsen's
  Alternative (see Theorem~\ref{th-larsen} or
  Proposition~\ref{pr-gabber} below).
\end{remark}

Using another result of Serre, we can prove that connectedness of the
arithmetic tannakian group is also independent of~$\ell$.

\begin{proposition}
  We continue with the notation and assumptions above. The
  group~$\garith{M_1}$ is connected if and only if the group
  $\garith{M_2}$ is connected.
\end{proposition}

This follows directly by combining Proposition~\ref{pr-sato-tate} and
the following lemma, which is the measure-theoretic version of Serre's
``zero-one law'' of~\cite[p.\,18,\, Théorème]{serre-indep-ell}.

\begin{lemma}\label{lm-serre-zero-one}
  Let~$K$ be a compact subgroup of~$\Un_r(\Cc)$. Let~$\nu$ denote the
  measure on~$\Un_r(\Cc)^{\sharp}$ image of the probability Haar
  measure of~$K$ by the natural map $K\to
  \Un_r(\Cc)^{\sharp}$. Then~$K$ is connected if and only if, for all
  functions $f\in\Zz[a_1,\ldots,a_r]$, the measure
  $\nu(\{g\in K\,\mid\, f(g)=0\})$ is equal to~$0$ or~$1$,
  where~$f(g)$ is computed with $a_i$ replaced by the coefficients of
  the characteristic polynomial of~$g$.
\end{lemma}

\begin{proof}
  (1) If~$K$ is connected, then either the set
  $\{g\in K\,\mid\, f(g)=0\}$ is all of~$K$, or it has
  codimension~$\geq 1$, and has measure zero.

  (2) If~$K$ is not connected, and~$g\in K$ is an element which is not
  in the neutral component, then
  Serre~\cite[p.\,17,\,Lemme\,1]{serre-indep-ell} proves that there
  exists $f\in\Zz[a_1,\ldots,a_r]$ such that~$f$ vanishes on the
  connected component of~$g$, but is non-zero at the identity. Then
  the set
  \[
    \{g\in K\,\mid\, f(g)=0\}
  \]
  contains some connected component of~$K$, say~$p$ of them, but not
  all of them. Its measure is then $p/|\pi^0(K)|$, which is
  neither~$0$ nor~$1$.
\end{proof}

This last proposition suggests that the groups of connected components
of compatible objects should be isomorphic, as Serre proved is the
case for classical monodromy groups
(see~\cite[p.\,15]{serre-indep-ell}).  We hope to come back to this
problem soon.

Finally, under rather strong ``connectedness'' assumptions, we can get a
definitive answer by exploiting deep results of Larsen and
Pink~\cite{larsen-pink-90}.

\begin{proposition}\label{pr-lp}
  We continue with the notation and assumptions above. Denote by~$\Gg_j$
  the connected derived subgroup of~$\garith{M_j}$.  Assume that for all
  representations $\rho\colon \GL_r\to\GL(V)$, the multiplicity of the
  trivial representation in the restrictions of $\rho$ to~$\garith{M_j}$
  and to~$\Gg_j$ are equal.
  
  Then the complex semisimple Lie groups $\iota_1(\Gg_1)$
  and~$\iota_2(\Gg_2)$ are isomorphic.

  Moreover, if $M_1$ or $M_2$ is arithmetically simple, then the groups
  $\iota_1(\Gg_1)$ and $\iota_2(\Gg_2)$ are conjugate in~$\GL_r(\Cc)$.
\end{proposition}

\begin{proof}
  Let~$\rho\colon \GL_r\to \GL(V)$ be any finite-dimensional
  representation of~$\GL_r$ and let~$\mu_j(\rho)$ be the multiplicity of
  the trivial representation in the restriction of~$\rho$
  to~$\garith{M_j}$. By Proposition~\ref{pr-sato-tate}, the following equality holds:
  \[
  \mu_1(\rho)=\int_{\Un_r(\Cc)^{\sharp}}\Tr(\rho)d\nu_{cp,1}=
  \int_{\Un_r(\Cc)^{\sharp}}\Tr(\rho)d\nu_{cp,2}= \mu_2(\rho).
  \]

  By our assumption, the multiplicity $\mu_j(\rho)$ is also the
  multiplicity~$\nu_j(\rho)$ of the trivial representation in the
  restriction of~$\rho$ to~$\Gg_j$, and thus we have
  $\nu_1(\rho)=\nu_2(\rho)$.

  Since this equality holds for all representations~$\rho$, and~$\Gg_1$
  and~$\Gg_2$ are connected semisimple algebraic groups, a theorem of
  Larsen and Pink~\cite[Th.\,1]{larsen-pink-90} implies
  that~$\iota_1(\Gg_1)$ and~$\iota_2(\Gg_2)$ are isomorphic.

  Assume now that $M_1$ is arithmetically simple. Denoting by $\Ad$ the
  adjoint representation of~$\GL_r$, this is equivalent to
  $\mu_1(\Ad)=1$ by Schur's Lemma, and hence we also have $\nu_1(\Ad)=1$ and
  $\nu_2(\Ad)=1$ by the previous results. The result then follows from
  another theorem of Larsen and Pink~\cite[Th.\,2]{larsen-pink-90}.
\end{proof}

\begin{remark}\label{rm-rank}
  (1) Proposition~\ref{pr-lp} applies for instance if one knows that,
  for $j=1$ and $j=2$, the groups~$\garith{M_j}$ are connected and
  semisimple. However, it does not apply in a situation where, say
  $\garith{M_1}=\SL_r$ and~$\garith{M_2}=\GL_r$, since the determinant
  is an example of a representation where the multiplicities
  for~$\garith{M_2}$ and for its derived connected subgroup are not
  the same. (We thank one referee for pointing out this issue in our
  previous version.)

  (2) Larsen and Pink~\cite[Th.\,3\,and\,\S\,3]{larsen-pink-90} give
  examples showing that, in general, the assumption that~$M_1$ (or
  $M_2$) is simple cannot be omitted in the second part of the
  proposition.
\end{remark}




\chapter{Diophantine group theory}\label{sec-larsen}

In order to determine the tannakian (or monodromy) group associated to a
perverse sheaf, Katz has developed essentially two different sets of
methods. The first one (see, e.g.,~\cite{gkm,esde}) relies on
\emph{local monodromy information}, and applies mostly to the additive
group, although there is also a weaker analogue for the multiplicative
group (see~\cite[Ch.\,16]{mellin} and Corollary
\ref{cor-simpler}). However, we are not currently aware of any similar
tools for other groups. The second method, expounded in~\cite{katz:MMP},
is much more global, and exploits the diophantine potential of the
equidistribution of exponential sums to reveal properties of the
underlying group. It turns out that this global method adapts very well
to the tannakian framework, and this will be our fundamental tool.

We denote as usual by~$k$ a finite field, with an algebraic
closure~$\bar{k}$, and finite extensions $k_n$ of degree~$n$
in~$\bar{k}$. We fix a prime $\ell$ different from the characteristic
of~$k$.

\section{The diophantine irreducibility criterion}
\index{diophantine irreducibility criterion}
We first state Katz's criterion for a perverse sheaf to be geometrically
simple in terms of its trace functions.

\begin{proposition}\label{pr-schur}
  Let~$X$ be a quasi-projective algebraic variety over~$k$, and~$M$ an
  $\ell$-adic perverse sheaf on~$X$ which is pure of weight zero. Then
  the equality
  \begin{equation}\label{eq-schur-rh}
    \lim_{n\to +\infty} \sum_{x\in X(k_n)}
    |t_M(x;k_n)|^2=1
  \end{equation}
  holds if and only if $M$ is geometrically simple.
\end{proposition}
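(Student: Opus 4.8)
The plan is to use Theorem~\ref{th-rh} (the Riemann Hypothesis as a quasi-orthogonality statement) together with a dévissage into geometrically simple constituents. First I would recall that, since $M$ is pure of weight zero on the quasi-projective variety $X$, it is geometrically semisimple by~\cite[Th.\,5.3.8]{BBD-pervers}; however, to apply Theorem~\ref{th-rh} directly I need \emph{geometrically} simple summands, so I would instead pass to a finite extension $k'/k$ (using Lemma~\ref{lem-arith-ss-geo}) over which the arithmetic semisimplification of $M$ becomes a direct sum of geometrically simple perverse sheaves. This is a slightly delicate point: $M$ need not be arithmetically semisimple, but its trace function depends only on its class in the Grothendieck group, so I may replace $M$ by its arithmetic semisimplification without changing $t_M(\cdot;k_n)$ for any $n$. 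Writing the semisimplification as $\bigoplus_i \alpha_i^{\deg}\otimes M_i$ with $M_i$ arithmetically simple and pure of weight zero, and then base-changing to a suitable $k'$ so that each $M_i$ splits as a sum of geometrically simple perverse sheaves of weight zero, I reduce to computing $\sum_{x\in X(k_n)}|t_M(x;k_n)|^2$ as a finite sum of terms of the shape $\sum_{x\in X(k_n)} t_{N}(x;k_n)\overline{t_{N'}(x;k_n)}$ with $N,N'$ geometrically simple of weight zero.

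The core of the argument is then Theorem~\ref{th-rh}: for geometrically simple pure perverse sheaves $N,N'$ of weight zero, the sum $\sum_{x\in X(k_n)} t_N(x;k_n)\overline{t_{N'}(x;k_n)}$ is $O(|k_n|^{-1/2})$ if $N$ and $N'$ are not geometrically isomorphic, and is $1+O(|k_n|^{-1/2})$ if they are (in the latter case, after absorbing the unitary scalars $\alpha_i$ which contribute a factor of modulus one per term, one must be careful that the cross-terms between $\alpha_i^{\deg}\otimes M_i$ and $\alpha_j^{\deg}\otimes M_i$ for $i\neq j$ carry a root-of-unity weighting $(\alpha_i\overline{\alpha_j})^n$ but the same geometric summand). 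Taking $n\to+\infty$, every non-diagonal contribution vanishes, and the limit equals the number of pairs of geometrically isomorphic simple constituents appearing with the \emph{same} arithmetic twist. Concretely, if $M=\bigoplus_i \alpha_i^{\deg}\otimes M_i$ with the $M_i$ arithmetically simple and pure of weight zero, and each $M_i$ has $m_i$ geometrically simple summands (grouped so that geometrically isomorphic ones are collected), then $\lim_n \sum_{x\in X(k_n)}|t_M(x;k_n)|^2$ is a sum of squares of multiplicities, which equals $1$ precisely when $M$ itself is geometrically simple. In the ``if'' direction, if $M$ is geometrically simple then it is in particular arithmetically simple, equals $\alpha^{\deg}\otimes M_1$ for a single geometrically simple $M_1$ of weight zero and a weight-zero scalar $\alpha$ (necessarily $|\alpha|=1$), and $\sum_x |t_M|^2 = \sum_x |t_{M_1}|^2 = 1+O(|k_n|^{-1/2})\to 1$ by the second estimate of Theorem~\ref{th-rh}.

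The main obstacle I anticipate is purely bookkeeping: keeping track of the arithmetic twists $\alpha_i^{\deg}$ and the behaviour of the root-of-unity factors $(\alpha_i\overline{\alpha_j})^n$ as $n\to+\infty$, and making sure that the ``diagonal'' in the double sum $\sum_{i,j}$ is correctly identified as ``geometrically isomorphic constituents carrying the same scalar.'' A clean way to organize this is to note that if $N=\alpha^{\deg}\otimes N_0$ and $N'=\beta^{\deg}\otimes N_0'$ with $N_0,N_0'$ geometrically simple of weight zero, then $t_N(x;k_n)\overline{t_{N'}(x;k_n)} = (\alpha\bar\beta)^n\, t_{N_0}(x;k_n)\overline{t_{N_0'}(x;k_n)}$, and Theorem~\ref{th-rh} gives $\sum_x t_{N_0}\overline{t_{N_0'}} \to \delta_{N_0\simeq_{\mathrm{geo}} N_0'}$; hence the full limit is $\sum_{i,j}\,\lim_n (\alpha_i\bar\alpha_j)^n\cdot [M_i\simeq_{\mathrm{geo}} M_j]$, and since $\lim_n(\alpha_i\bar\alpha_j)^n$ exists only when $\alpha_i=\alpha_j$ (in which case it is $1$) — and one can always pass to a further extension of $k$ to force the relevant scalars to be $1$ — the limit is the number of pairs $(i,j)$ with $M_i\simeq_{\mathrm{geo}} M_j$ and $\alpha_i=\alpha_j$, which is $\sum (\text{multiplicities})^2\geq 1$ with equality iff there is exactly one geometric constituent, i.e.\ iff $M$ is geometrically simple. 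I would present the argument by first disposing of the easy ``if'' direction, then setting up the semisimplification and base change, then invoking Theorem~\ref{th-rh} term by term and passing to the limit.
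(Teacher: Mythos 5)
Your overall strategy is the right one, and it is essentially the intended argument: the paper itself gives no proof of this proposition, referring instead to Katz~\cite[Th.\,1.7.2\,(3)]{katz:MMP}, whose argument is the same quasi-orthogonality computation that Theorem~\ref{th-rh} packages quantitatively. The reduction to the arithmetic semisimplification (same trace functions, same geometric constituents), the splitting into geometrically simple summands over a finite extension via Lemma~\ref{lem-arith-ss-geo} (harmless here: the hypothesis gives the limit over \emph{all} $n$, and in the ``only if'' direction you only restrict it to the subsequence of $n$ divisible by $[k':k]$; the ``if'' direction needs no base change at all), and the term-by-term use of Theorem~\ref{th-rh} together with the $\alpha^{\deg}$-twist description of geometrically isomorphic simple perverse sheaves (Lemma~\ref{lm-arith-geo-iso}) are all as they should be.

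Two points in your bookkeeping are, however, wrong or incomplete as written. First, the parenthetical claim that ``one can always pass to a further extension of $k$ to force the relevant scalars to be $1$'' is false: the twisting scalars are weight-zero Weil numbers, hence of modulus one together with all their conjugates, but they need not be roots of unity (e.g.\ $(3+4i)/5$), so no finite extension normalizes them away. Second, from the existence of $\lim_n S_n$, where $S_n=\sum_x|t_M(x;k_n)|^2$, you cannot deduce that each individual limit $\lim_n(\alpha_i\bar\alpha_j)^n$ exists --- the oscillating cross-terms could a priori cancel against one another --- so your sentence ``the limit is the number of pairs $(i,j)$ with $M_i\simeq_{\mathrm{geo}}M_j$ and $\alpha_i=\alpha_j$'' needs an argument. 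The standard repair, which the paper itself uses in the proof of Proposition~\ref{pr-conv-weyl-geo}, is averaging: if $S_n\to 1$, then the Ces\`aro means also converge to $1$, and since $\tfrac1N\sum_{n\le N}\gamma^n\to 0$ for every unit scalar $\gamma\neq 1$ while it tends to $1$ for $\gamma=1$, the limit must equal the number of pairs of constituents that are \emph{arithmetically} isomorphic; this number is at least the total number of geometric constituents, whence geometric simplicity. (Relatedly, when $M$ is not geometrically simple the limit in~\eqref{eq-schur-rh} generally does not exist --- e.g.\ $M=Q\oplus\bigl((-1)^{\deg}\otimes Q\bigr)$ gives $S_n\approx 2+2(-1)^n$ --- so your intermediate assertion that the limit equals a sum of squares of multiplicities should only be read under the hypothesis that the limit exists, which is exactly what the ``only if'' direction provides.) With these repairs your proof is complete; the ``if'' direction is, as you say, an immediate application of the second estimate of Theorem~\ref{th-rh}, whose implied constant is uniform in $n$ because the complexity is a geometric invariant.
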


See~\cite[Th.\,1.7.2\,(3)]{katz:MMP} for the proof.

\begin{remark}
  This can be seen as a version of Schur's Lemma\index{Schur's Lemma}
  (compare with Corollary~\ref{cor-schur}): intuitively, by
  equidistribution, the limit in the proposition should converge to the
  multiplicity of the trivial representation in the representation
  $\End(\Std)$, where $\Std$ is the standard representation of the
  (usual) geometric monodromy group of the lisse sheaf on an open dense
  subset of the support of~$M$ that is associated to~$M$. The classical
  version of Schur's Lemma states that this multiplicity is equal to~$1$
  if and only if the standard representation is irreducible.
\end{remark}

\section{The Frobenius--Schur indicator}
\index{Frobenius--Schur indicator}

Recall that if~$\Gg$ is an arbitrary group and
$\rho\colon \Gg\to\GL_r(\Cc)$ is a finite-dimensional representation,
one says that $\rho$ is of \emph{orthogonal type} (resp. of
\emph{symplectic type}) if there exists a $G$-invariant non-degenerate
symmetric (resp. alternating) bilinear form on~$\Cc^r$.
\index{representation!of orthogonal type} \index{representation!of
  symplectic type} Suppose that $\rho$ is irreducible. The
Frobenius--Schur indicator $\FS(\rho)$ is defined to be~$1$ if~$\rho$ is
of orthogonal type, $-1$ if~$\rho$ is of symplectic type, and~$0$
otherwise.  If~$\Gg=K$ is a compact group, with probability Haar
measure~$\mu_K$, and if~$\rho$ is irreducible and continuous, then one has
an integral formula
$$
\FS(\rho)= \int_{K}\Tr(\rho(g^2))d\mu_K(g)
$$
(see, e.g.,~\cite[Th.\,6.2.3]{representations}).
\nomenclature{$\FS(\rho)$}{Frobenius--Schur indicator of a
  representation}

As in previous works of Katz (see, e.g.,~\cite[Th.\,9.1]{mellin}
or~\cite[Th.\,1.9.6]{katz:MMP}), there is a diophantine interpretation
of the Frobenius--Schur indicator.

\begin{proposition}\label{pr-fs}
  Let $G$ be a connected commutative algebraic group over~$k$, and let
  $M$ be an arithmetically irreducible $\ell$-adic perverse sheaf on~$G$
  which is pure of weight zero.  Let $\mcX=\wunram{M}$ be the set of
  weakly unramified characters for~$M$.
  
  The Frobenius--Schur indicator of $M$, viewed as a representation of the
  arithmetic tannakian group~$\garith{M}$, is given by the formula
  \[
    \FS(M)=\lim_{N \to +\infty} \frac{1}{N} \sum_{\substack{1\leq n\leq
        N\\\mathscr{X}(k_n)\not=\emptyset}}
    \frac{1}{|\mathscr{X}(k_n)|}\sum_{\chi \in \mathscr{X}(k_n)}
    \Tr(\Theta_{M,k_n}(\chi)^2).
    \label{eq-fs}
  \]
\end{proposition}

The proof is straightforward using the integral formula above and the
equidistribution theorem (Theorem~\ref{th-4}).


\section{Larsen's Alternative}

In this section, $r\geq 1$ is an integer and~$\Gg$ is a reductive
algebraic subgroup of $\GL_r$ over an algebraically closed field of
characteristic zero (recall that reductive groups are not required to be
connected). For each integer $m \geq 1$, the \emph{absolute $2m$-th
  moment}\index{moments of a representation}
\nomenclature{$M_{2m}(\bfG,V)$}{$2m$-th absolute moment of the
  representation~$V$ of~$\bfG$}
of an algebraic representation
$V$ of~$\Gg$ is defined as 
\[
  M_{2m}(\bfG, V)=\dim (V^{\otimes m} \otimes (V^\vee)^{\otimes
    m})^{\bfG}.
\]
When~$V$ is the ``standard'' $r$-dimensional representation given by the inclusion $\Gg \subset \GL_r$ (also
denoted by~$\Std$), we will simply write $M_{2m}(\bfG)$.
\nomenclature{$M_{2m}(\bfG)$}{$M_{2m}(\bfG,\Std)$}

If the base field is~$\Cc$, so that $\bfG$ is a reductive subgroup of $\GL_r(\Cc)$, the moments can be written as integrals over a maximal compact subgroup $K$ of $\Gg$ with Haar probability measure~$\mu_K$. Namely, for all $m \geq 1$, they are given by the integral expression
\begin{equation}\label{eq-int-moments}
  M_{2m}(\bfG)=\int_{K}|\Tr(g)|^{2m}d\mu_K(g). 
\end{equation}

We first note some elementary properties of the moments.
\begin{enumerate}
\item Given a surjective homomorphism $f\colon \Hh\to\Gg$ and a representation
  $\rho\colon \Gg\to \GL(V)$, the equality
  \[
  M_{2m}(\Hh,\rho\circ f)=M_{2m}(\bfG,\rho)
  \]
  holds for all $m \geq 1$ (since
  $(\rho^{\otimes m} \otimes (\rho^\vee)^{\otimes m})^{\bfG}=((\rho\circ
  f)^{\otimes m} \otimes (\rho\circ f)^\vee)^{\otimes m})^{\Hh}$ by
  definition).
\item For groups $\Gg_1$ and~$\Gg_2$ with representations $V_1$ and~$V_2$,
  the equality 
  \begin{equation}\label{eq-larsen-product}
    M_{2m}(\Gg_1\times\Gg_2, V_1\boxtimes V_2)= M_{2m}(\Gg_1,
    V_1)M_{2m}(\Gg_2,V_2)
  \end{equation}
  holds for all~$m \geq 1$ (this might be easiest to see using the
  integral expression~(\ref{eq-int-moments})). 
\item If $\Gg\subset \GL(V)$, and $Z$ is a subgroup of scalar matrices in
  $\GL(V)$, then the equality
  \[
  M_{2m}(\Gg,V)=M_{2m}(Z\Gg,V)
  \]
  holds (because $Z$ acts trivially on the whole space
  $V^{\otimes m} \otimes (V^\vee)^{\otimes m}$).
\item If there exists a $\bfG$-invariant decomposition
  \[
  V^{\otimes m}=\bigoplus_i n_i V_i,
  \]
  then the $2m$-th moment satisfies the inequality 
  \begin{equation}\label{eq-larsen-decomp}
    M_{2m}(\bfG,V)\geq \sum_i n_i^2,
  \end{equation}
  with equality if and only if the $V_i$ are pairwise
  non-isomorphic irreducible representations
  (see~\cite[1.1.4]{katz-larsen}).
  \item If there exists a $\bfG$-invariant decomposition 
  \[
  \End(V)=\bigoplus_i m_i W_i,
  \] then the fourth moment satisfies 
  \begin{equation}\label{eq-larsen-decomp-endomorphisms}
    M_4(\bfG,V)\geq \sum_i m_i^2, 
  \end{equation} with equality if and only if the $W_i$ are pairwise
  non-isomorphic irreducible representations
  (see~\cite[1.1.5]{katz-larsen}).
\end{enumerate}

Since the tensor constructions involved in the definition of the moments
are representations of the ambient group~$\GL(V)$, Theorem~\ref{th-4} immediately yields
a diophantine interpretation of the moments of the arithmetic tannakian
group of a perverse sheaf. 

\begin{proposition}\label{pr-larsen-diophante}
  Let $G$ be a connected commutative algebraic group over~$k$, and
  let $M$ be an arithmetically semisimple $\ell$-adic perverse sheaf
  on~$G$ which is pure of weight zero. 
  For each character $\chi\in\charg{G}(k_n)$, consider the sum
  $$
  S(M,\chi)=\sum_{x\in G(k_n)} t_M(x;k_n)\chi(x).
  $$
  Let $\mcX=\wunram{M}$ be the set of weakly unramified characters
  for~$M$ and let $m\geq 0$ be an integer. 
  
  The absolute moments of $M$, viewed as a representation of the arithmetic tannakian group~$\garith{M}$, satisfy the following: 
  \begin{align}
    M_{2m}(\garith{M}, M)&=\lim_{N \to +\infty} \frac{1}{N}
    \sum_{\substack{1\leq
        n\leq N\\\mathscr{X}(k_n)\not=\emptyset}}
    \frac{1}{|\mathscr{X}(k_n)|}\sum_{\chi \in \mathscr{X}(k_n)} |S(M,
    \chi)|^{2m}, \label{eq-larsen} \\
    M_{2m}(\garith{M}, M)&\leq \liminf_{N \to +\infty} \frac{1}{N}
    \sum_{1\leq
      n\leq N}
    \frac{1}{|G(k_n)|}\sum_{\chi \in \charg{G}(k_n)} |S(M,
    \chi)|^{2m}.
    \label{eq-larsen-2}
  \end{align}
  \par
  Moreover, \emph{if} the limit
  \begin{equation}\label{eq-limit-larsen-geo}
    \lim_{n\to+\infty}\frac{1}{|\mcX(k_n)|}\sum_{\chi\in\mcX(k_n)}
    |S(M,\chi)|^{2m}
  \end{equation}
  exists, then it is equal to the $2m$-th moment $M_{2m}(\ggeo{M},M)$ of
  $M$, viewed as a representation of the geometric tannakian group $\ggeo{M}$, and we have
  $$
  M_{2m}(\ggeo{M},M)=M_{2m}(\garith{M},M).
  $$
\end{proposition}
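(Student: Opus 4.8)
The plan is to connect the three moment quantities --- $M_{2m}(\garith{M},M)$, $M_{2m}(\ggeo{M},M)$, and the limit expressions involving $|S(M,\chi)|^{2m}$ --- using the equidistribution machinery already established, most importantly Theorem~\ref{th-4} and Proposition~\ref{pr-weyl-sum-bis}, together with the Peter--Weyl interpretation of moments. First I would recall that the $2m$-th absolute moment of $M$ as a representation of either tannakian group is, by definition and the Peter--Weyl theorem (applied to a maximal compact subgroup), the multiplicity of the trivial representation in $M^{\otimes m}\otimes (M^\vee)^{\otimes m}$, i.e. the multiplicity of the trivial representation of $\garith{M}$ (resp.\ $\ggeo{M}$) in the object $N = M^{*_{\intt} m}*_{\intt}(M^\vee)^{*_{\intt} l}$ with $l=m$. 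This object corresponds to the restriction of the representation $\rho = \Std^{\otimes m}\otimes(\Std^\vee)^{\otimes m}$ of $\GL_r$ to the respective tannakian group.

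The formula~\eqref{eq-larsen} is exactly what was proven in the course of the proof of Theorem~\ref{th-4}: taking $f = |\Tr|^{2m}$ as the (bounded on the compact group, though one must be slightly careful since $z\mapsto |z|^{2m}$ is unbounded on $\Cc$) test function, or more cleanly taking $\rho$ to be the $2m$-th moment representation, the limit of the Cesàro-averaged normalized sums over weakly unramified characters of $|S(M,\chi)|^{2m} = |\Tr(\Theta_{M,k_n}(\chi))|^{2m}$ equals the multiplicity of the trivial representation in $\rho|_{\garith{M}}$, which is $M_{2m}(\garith{M},M)$. Here one uses that for $\chi\in\funram{N}$, the value $|S(M,\chi)|^{2m}$ coincides with $\Tr(\frob\mid H^0_c(G_{\bar k},N_\chi))$ up to conjugate symmetry, and that $\funram{N}$ is generic by Proposition~\ref{pr-frob-unram}, so that restricting to $\funram{N}\cap\wunram{M}$ changes nothing in the limit; then Proposition~\ref{pr-weyl-sum-bis} gives $t_N(e;k_n) + O(|k_n|^{-1/2})$, and the last part of the argument in Theorem~\ref{th-4} --- decomposing $N$ into arithmetically simple pieces and isolating those supported at $e$ with trivial Frobenius twist --- identifies the Cesàro limit with $|J^0|$, the multiplicity of the trivial representation of $\garith{M}$. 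The inequality~\eqref{eq-larsen-2} then follows from~\eqref{eq-larsen} together with generic vanishing: the sum over all of $\charg{G}(k_n)$ differs from the sum over $\mcX(k_n)$ by a contribution from a set of character codimension $\geq 1$, and since $|S(M,\chi)|^{2m} \geq 0$ for every character, adding those extra non-negative terms can only increase the $\liminf$; one also needs $|G(k_n)| \sim |\mcX(k_n)|$, which holds since $\mcX$ is generic.

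For the last assertion, the point is that in the decomposition $N = \bigoplus_{r\geq 0}\bigoplus_{i\in I(r)} N_{r,i}$ used in the proof of Theorem~\ref{th-4}, the Cesàro limit of $t_N(e;k_n)$ picked out $|J^0|$ (trivial Frobenius, supported at $e$) but the non-averaged limit $\lim_n t_N(e;k_n)$, when it exists, picks out $|J|$ --- the number of indices $i\in I(0)$ with $x_i=e$, which is the multiplicity of the trivial representation in $\rho|_{\ggeo{M}}$, by Proposition~\ref{pr-weyl-sum-bis} applied with $N$ and the identity representation, plus the observation that the pieces with $r\geq 1$ contribute $O(|k_n|^{-1/2})\to 0$ and the pieces at $e$ with $\alpha_i\neq 1$ each contribute $\alpha_i^n$. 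So, \emph{if} the limit~\eqref{eq-limit-larsen-geo} exists, then arguing exactly as in Proposition~\ref{pr-conv-weyl-geo}: the convergence of $\sum_{i\in J}\alpha_i^n$ forces $\alpha_i=1$ for all $i\in J$ (using $\limsup_n|\sum_{i\in L}\alpha_i^n|\geq |L|^{1/2}$ for $L$ the set of indices with $\alpha_i\neq 1$), hence $J = J^0$, hence $M_{2m}(\ggeo{M},M) = |J| = |J^0| = M_{2m}(\garith{M},M)$, and the limit equals this common value. The main obstacle is bookkeeping: one must carefully transport the already-established identities from Theorem~\ref{th-4} and Proposition~\ref{pr-weyl-sum-bis} --- which are stated for $\Tr(\rho(\cdot))$ of a representation $\rho$ --- to the specific representation $\rho = \Std^{\otimes m}\otimes(\Std^\vee)^{\otimes m}$ and observe that $\Tr(\rho(g)) = |\Tr(g)|^{2m}$ when $g$ is unitary, so that the trace functions match $|S(M,\chi)|^{2m}$ on weakly unramified characters; and one must verify that the object $N$ corresponding to $\rho$ is exactly $M_{m,m}$ in the notation of Proposition~\ref{pr-frob-unram}, so that its Frobenius-unramified locus is genuinely generic. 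None of these steps is deep, but the identification of $|S(M,\chi)|^{2m}$ with $\Tr(\frob\mid H^0_c)$ for Frobenius-unramified $\chi$ via~\eqref{eq-lemconv1}--\eqref{eq-lemconv2} is the linchpin and should be stated explicitly.
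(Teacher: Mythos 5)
Your proposal is correct and follows essentially the same route as the paper: formula~\eqref{eq-larsen} is obtained by applying Theorem~\ref{th-4} (equivalently, its Weyl-sum machinery via Proposition~\ref{pr-weyl-sum-bis}) to the test function $g\mapsto|\Tr(g)|^{2m}$, the inequality~\eqref{eq-larsen-2} by positivity together with $|G(k_n)|\sim|\mcX(k_n)|$, and the final assertion by Proposition~\ref{pr-conv-weyl-geo} applied to $\rho=\Std^{\otimes m}\otimes(\Std^{\vee})^{\otimes m}$. The extra bookkeeping you carry out (retracing the proof of Theorem~\ref{th-4} and identifying the object $M_{m,m}$) is exactly what those cited results already encapsulate, so nothing further is needed.
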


\begin{proof}
  We use the integral expression
  \[
    M_{2m}(\garith{M},M)=\int_{K}|\Tr(g)|^{2m}d\mu_K(g),
  \]
  where $K \subset \garith{M}(\Cc)$ is a maximal compact subgroup with
  Haar probability measure~$\mu_K$. Recall that to each weakly
  unramified character~$\chi\in\mcX(k_n)$ is associated the unitary
  conjugacy class~$\Theta_{M,k_n}(\chi)$ such that the equality
  $S(M,\chi)=\Tr(\Theta_{M,k_n}(\chi))$ holds, and that these conjugacy
  classes become equidistributed on average as $n \to +\infty$ by
  Theorem~\ref{th-4}. The first formula~(\ref{eq-larsen}) follows from
  this result applied to the test function~$g\mapsto |\Tr(g)|^{2m}$.
  \par
  Moreover, the inequality
  $$
  \frac{1}{N} \sum_{1\leq n\leq N} \frac{1}{|G(k_n)|}\sum_{\chi \in
    \mcX(k_n)} |S(M, \chi)|^{2m} \leq \frac{1}{N} \sum_{1\leq
    n\leq N} \frac{1}{|G(k_n)|}\sum_{\chi \in \charg{G}(k_n)} |S(M, \chi)|^{2m}
  $$
  holds by positivity of $|S(M, \chi)|^{2m}$. Taking the equivalence
  $|G(k_n)|\sim |\mcX(k_n)|$ as $n \to +\infty$ from the generic
  vanishing theorem into account, we deduce the second
  formula~(\ref{eq-larsen-2}).
  \par
  Finally, the last assertion follows from
  Proposition~\ref{pr-conv-weyl-geo}, applied to the representation
  $$
  \rho=\Std^{\otimes m}\otimes(\Std^{\vee})^{\otimes m},
  $$
  and from the fact that if the limit~(\ref{eq-limit-larsen-geo})
  exists, then its value is the same as the limit
  in~(\ref{eq-larsen}).
\end{proof}

We can combine this computation with Larsen's Alternative, a remarkable
criterion that ensures that a reductive subgroup $\bfG\subset \GL_r$
is either finite or contains one of the standard classical
groups,\index{classical group}
provided it has the correct fourth or eighth moment.

\begin{theorem}[Larsen's Alternative]\label{th-larsen}
  \index{Larsen's alternative}
  Let $V$ be a vector space of dimension $r \geq 2$ over an
  algebraically closed field of characteristic zero and let
  \hbox{$\bfG \subset \GL(V)$} be a reductive algebraic
  subgroup. Let~$Z$ denote the center of $\GL(V)$ and $\bfG^\circ$ the
  connected component of the identity of $\bfG$.
  \begin{enumth}
  \item\label{item1:th-larsen} The fourth moment satisfies
    $M_4(\bfG,V)\geq 2$. Furthermore, if $V$ is self-dual and $r\geq 3$,
    then $M_4(\bfG,V)\geq 3$.
  \item If $M_4(\bfG, V)\leq 5$, then the representation of~$\bfG$
    on~$V$ is irreducible.
  \item If $M_4(\bfG, V)=2$, then either $\SL(V) \subset \bfG$ or
    $\bfG \slash (\bfG \cap\ Z)$ is finite. If $\bfG \cap\ Z$ is finite,
    for instance if $\bfG$ is semisimple, then either
    $\bfG^\circ=\SL(V)$ or $\bfG$ is finite.
  \item\label{th-larsen:item2} Assume $r \geq 5$. If $M_4(\bfG,V)=2$ and
    $M_8(\bfG, V)=24$, then $\SL(V) \subset \bfG$.
  \item Assume that there exists a non-degenerate symmetric bilinear
    form~$B$ on~$V$ such that $\bfG$ lies in~$\Ort(B)$.
    \nomenclature[$O$]{$\Ort(B)$}{orthogonal group of~$B$}
    \nomenclature[$SO$]{$\SO(B)$}{special orthogonal group of~$B$}
    If
    $M_4(\bfG, V)=3$, then either $\bfG=\SO(B)$, or $\bfG=\Ort(B)$, or
    $\bfG$ is finite. If $r$ is $2$ or $4$, then $\bfG$ is not contained in $\SO(B)$.
  \item Assume that there exists a non-degenerate alternating bilinear
    form~$B$ on~$V$ such that $\bfG$ lies in~$\Sp(B)$. If $r \geq 4$ and
    $M_4(\bfG, V)=3$, then either $\bfG=\Sp(B)$ or $\bfG$ is finite.
    \nomenclature[$Sp$]{$\Sp(B)$}{symplectic group of~$B$}
  \end{enumth}
\end{theorem}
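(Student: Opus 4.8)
\textbf{Proof plan for Theorem~\ref{th-larsen} (Larsen's alternative).} This is a group-theoretic statement about reductive subgroups $\bfG\subset\GL(V)$, independent of the rest of the paper; the plan is to reduce every case to the representation-theoretic content already encoded in the moment inequalities~\eqref{eq-larsen-decomp} and~\eqref{eq-larsen-decomp-endomorphisms}, and then to invoke Larsen's structure theorems for subgroups with small fourth and eighth moments. Throughout I work over an algebraically closed field of characteristic zero, so by the unitarian trick I may pass freely between $\bfG$ and a maximal compact subgroup and use the integral expression~\eqref{eq-int-moments} when convenient.

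First I would record the elementary items. For~\ref{item1:th-larsen}: $\End(V)=V\otimes V^\vee$ always contains the trivial representation (from $\id_V$), so it decomposes $\bfG$-invariantly as $\un\oplus W$ for some nonzero $W$ (nonzero because $\dim\End(V)=r^2\geq 4>1$), whence $M_4(\bfG,V)=M_2(\bfG,\End(V))\geq 1^2+1=2$ by~\eqref{eq-larsen-decomp-endomorphisms}; if $V$ is self-dual and $r\geq 3$, then $V\otimes V=\Sym^2V\oplus\bigwedge^2V$ refines the decomposition of $\End(V)$ and both exterior and symmetric squares are nonzero and not isomorphic to $\un$ (their dimensions exceed $1$), giving at least three distinct constituents and $M_4\geq 3$. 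Part (2) is the contrapositive of a lemma of Larsen: if $V$ is $\bfG$-reducible, write $V=\bigoplus V_i$ with $\geq 2$ summands; then $\End(V)$ contains $\bigoplus_{i,j}\Hom(V_i,V_j)$, and a short counting argument with~\eqref{eq-larsen-decomp-endomorphisms} (exactly as in~\cite[1.1.5]{katz-larsen} or~\cite{katz-larsen}'s treatment) forces $M_4(\bfG,V)\geq 6$; I would cite this rather than reprove it.

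For the main cases (3)--(6) the strategy is uniform: the hypothesis $M_4(\bfG,V)=2$ (resp.\ $=3$, together with the invariant form) says that $\End(V)$ has \emph{exactly} two (resp.\ three) irreducible constituents, which by the classification of reductive groups with this property (Larsen's ``minimal'' groups, cf.\ the proof of~\cite[Th.\,13.1]{gkm} and~\cite{katz-larsen}) pins down the connected component $\bfG^\circ$ up to the listed possibilities: $\SL(V)$ in the $M_4=2$ case, and $\SO(B)$ or $\Sp(B)$ in the respective $M_4=3$ cases with the invariant bilinear form. One then has to rule out, or rather incorporate, the finite groups: a finite subgroup can achieve the same moments because the integral~\eqref{eq-int-moments} is insensitive to whether $\bfG^\circ$ is trivial. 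So the dichotomy ``either $\bfG^\circ$ is one of the standard groups, or $\bfG$ is finite'' comes from: if $\bfG^\circ$ is nontrivial it is reductive with the prescribed small moment, hence is $\SL(V)$ / $\SO(B)$ / $\Sp(B)$ by the structure theorem; the statements about $\bfG/(\bfG\cap Z)$ and the semisimple case follow since $\bfG\cap Z$ consists of scalars and~\eqref{eq-larsen-decomp-endomorphisms}'s left-hand side is unchanged modulo $Z$ (item (3) of the elementary moment properties). The small-rank exclusions ($r\in\{2,4\}$ for $\SO(B)$; $r\geq 4$ for $\Sp(B)$; $r\geq 3$ for the self-dual fourth-moment bound) are handled by the exceptional low-rank isogenies $\mathbf{SO}_2\cong\Gm$, $\mathbf{SO}_3\cong\PGL_2$, $\mathbf{SO}_4$ not simple, $\Sp_2\cong\SL_2$, which I would dispatch case by case by direct computation of the relevant moment.

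The step I expect to be the genuine obstacle is~\ref{th-larsen:item2}: distinguishing $\SL(V)\subset\bfG$ from the competing possibility that $\bfG^\circ$ is a \emph{proper} subgroup of $\SL(V)$ which still has fourth moment $2$ --- the candidates being the image of $\SL_2$ in $\Sym^{r-1}$, $\Sp_r$, $\SO_r$, and for special $r$ the spin and $\mathbf{G}_2$, $\mathbf{E}_6$, etc.\ (the ``groups with $M_4=2$'' list). The point of the eighth-moment hypothesis $M_8(\bfG,V)=24$ is that $24$ is exactly $M_8(\SL(V),V)$ (the number of standard Young tableaux considerations, i.e.\ $\dim$ of the space of $\GL$-invariants in $V^{\otimes 4}\otimes(V^\vee)^{\otimes 4}$ for $r\geq 4$), and every proper candidate on the list has strictly larger eighth moment once $r\geq 5$. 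So the plan for~\ref{th-larsen:item2} is: enumerate the finitely many types of connected reductive $\bfG^\circ\subsetneq\SL(V)$ with $M_4=2$, compute (or quote from Katz's tables, e.g.\ in~\cite{katz-larsen}) their eighth moments, observe each exceeds $24$ for $r\geq 5$, and conclude that $\bfG^\circ=\SL(V)$, hence $\SL(V)\subset\bfG$. I would lean on~\cite{katz-larsen} and~\cite{katz:MMP} for the moment tables rather than recomputing them, since that bookkeeping is the only laborious part and is entirely standard.
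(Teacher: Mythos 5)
Items (1), (2), (3), (5) and (6) of your plan are in line with the paper: part (1) is proved by the same direct argument (the only point to tighten is that in the self-dual case the third constituent of $\End(V)$ comes from the trivial subrepresentation sitting \emph{properly inside} one of $\syms V$ or $\bigwedge^2 V$, not merely from both summands being nontrivial), and for the remaining fourth-moment statements the paper, like you, simply cites Katz's theorem in~\cite{katz-larsen}.

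The genuine gap is in your treatment of item~(4). After item~(3), the alternative to $\SL(V)\subset\bfG$ is that $\bfG/(\bfG\cap Z)$ is \emph{finite}; there are no proper connected candidates $\bfG^\circ\subsetneq\SL(V)$ with $M_4=2$ left to eliminate (indeed the groups on your list --- $\Sym^{r-1}$ of $\SL_2$, $\Sp_r$, $\SO_r$, spin groups, $\mathbf{G}_2$, $\mathbf{E}_6$ --- act on self-dual representations, so they have $M_4\geq 3$ by your own part (1), and by (3) any connected non-central possibility is already $\SL(V)$). So the entire content of (4) beyond (3) is the statement that an irreducible subgroup which is finite modulo scalars (the extraspecial normalizers and their kin) must have eighth moment strictly larger than $24$ when $r\geq 5$. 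This is not in Katz's tables and cannot be obtained by enumerating connected reductive subgroups: Katz explicitly left it as a conjecture (\cite[2.3]{katz:MMP}), and it was proved only by Guralnick and Tiep \cite[Th.\,1.4]{GurTiep}, in the form that any reductive subgroup of $\GL(V)$ not containing $\SL(V)$ satisfies $M_8(\bfG)>M_8(\GL(V))=24$ (the value $24$ coming from the decomposition of $V^{\otimes 4}$ into Schur functors for $r\geq 4$). The paper's proof of (4) consists precisely of invoking this theorem; without it, or an equivalent hard result about finite linear groups, your proposed argument for (4) does not close.
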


\begin{proof}
  The first statement concerning the fourth moment is a straightforward consequence of the inequality~(\ref{eq-larsen-decomp}). Indeed, since
  $V^{\otimes 2}\otimes (V^{\vee})^{\otimes 2}$ always contains a
  trivial one-dimensional subrepresentation, the fourth moment can only
  be~$1$ for $V$ of dimension~$1$. Moreover, there is 
  a $\GL(V)$-invariant (and hence $\bfG$-invariant) decomposition
  $$
  V^{\otimes 2}=\syms V \oplus \bigwedge\nolimits^2 V,
  $$
  where the factors are distinct and non-trivial, and of
  dimension~$\geq 2$ if~$r\geq 3$. If $V$ is self-dual, one of the two
  summands contains a proper one-dimensional $\bfG$-invariant subspace,
  so that the fourth moment is at least~$3$ using~(\ref{eq-larsen-decomp})
  again. 
  \par
  The other statements concerning the fourth moment are proved by Katz
  in~\cite[Th.\,1.1.6]{katz-larsen}. The statement about the eighth
  moment was conjectured by Katz in~\cite[2.3]{katz:MMP}, and proved by
  Guralnick and Tiep in~\cite[Th.\,1.4]{GurTiep}. Indeed, according to
  \loccit, a reductive subgroup $\bfG$ of $\GL(V)$ either satisfies
  $M_8(\bfG)>M_8(\GL(V))$ or contains the commutator subgroup
  $[\GL(V), \GL(V)]=\SL(V)$, and the eighth moment of $\GL(V)$ is equal
  to $24$ for $r\geq 4$, for instance in view of the $\GL(V)$-invariant
  decomposition
  \[
    V^{\otimes 4}=\mathrm{Sym}^4\,V \oplus \bigwedge\nolimits^4 V \oplus
    3 S^{(3, 1)}V \oplus 2 S^{(2, 2)}V \oplus 3 S^{(2, 1, 1)}V
  \]
  into pairwise non-isomorphic irreducible representations (see \eg
  \cite[Ex.\,6.5]{fulton-harris}), where $S^{\lambda}$ denotes the Schur
  functor associated to a partition $\lambda$ of $4$.
\end{proof}

In practice, computing a given moment of the arithmetic tannakian group $\garith{M}$ by means of the limit \eqref{eq-larsen} is feasible if there are sufficiently
many independent variables of summation, corresponding to the characters
of~$G$, in comparison with the number of variables involved in the
object~$M$, that is, the dimension of its support. It is
then possible, at least in some cases, to detect a diagonal behavior
that can lead to the asymptotic formula for the moment. This limitation explains why it is difficult to apply
Larsen's alternative when $G$ is one-dimensional, but starting from two-dimensional groups it can be sometimes implemented for objects supported on curves. 

\begin{remark}
  (1) Using typical terminology from geometric group theory, it is
  convenient to summarize the third part of Theorem~\ref{th-larsen} by
  saying that if $\bfG\subset \GL(V)$ has fourth moment equal to~$2$,
  then either $\bfG\supset \SL(V)$ or $\bfG$ is \emph{virtually
    central}\index{virtually central subgroup} in
  $\GL(V)$.
  \par
  (2) The book~\cite{katz:MMP} of Katz develops applications of Larsen's
  alternative which involve sums of the type
  $$
  S(f)=\sum_{x\in X(k)} t_1(x)t_2(f(x)) ,
  $$
  for suitable trace functions $t_1$ and~$t_2$ (on~$X$ and some affine
  space~$\Aa^r$, respectively), parameterized by elements
  $f\colon X\to \Aa^r$ of a ``function space'' $\mcF$. One of the
  conditions that are shown by Katz to ensure that the $2m$-th moment
  can be computed is that the evaluation maps
  $$
  f\mapsto (f(x_1),\ldots,f(x_{2m}))
  $$
  be surjective for distinct $x_i$ in $X(k)$ (see~\cite[\S 1.15,
  Th.\,1.20.2]{katz:MMP} for a precise and more general
  statement).
\end{remark}


\section{Sidon morphisms}

\begin{definition}[Sidon sets and Sidon morphisms]
  Let $A$ be an abelian group. A subset $S\subset A$ is called a
  \emph{Sidon set}\index{Sidon set} if all solutions $x_1$, $x_2$,
  $x_3$, $x_4$ in~$S$ of the equation $x_1x_2=x_3x_4$ satisfy 
  $x_1\in \{x_3,x_4\}$.
  \par
  More generally, let $r\geq 2$ be an integer. We say that $S$ is an
  \emph{$r$-Sidon set}\index{$r$-Sidon set} if all tuples $(x_i)_{1\leq i\leq r}$ and $(y_i)_{1\leq
    i\leq r}$ in $S^r$ such that the equality
  $$
  x_1\cdots x_r=y_1\cdots y_r
  $$
  holds satisfy $\{x_1, \dots, x_r\}=\{y_1, \dots, y_r\}$. A Sidon set
  is thus the same as a $2$-Sidon set.
  \par
  Let $\alpha\in A$. A subset $S \subset A$ is called an \emph{$\alpha$-symmetric
    Sidon set}\index{$\alpha$-symmetric Sidon set} if $S=\alpha S^{-1}$ and all solutions
  \hbox{$x_1, x_2, x_3, x_4 \in S$} of the equation $x_1x_2=x_3x_4$
  satisfy $x_1\in \{x_3,x_4\}$ or $x_2=\alpha x_1^{-1}$.
  \par
  Let $G$ be a connected commutative algebraic group over a field~$k$,
  and let $s\colon X\to G$ be a locally-closed immersion of
  $k$-schemes. We say that $s$ is a \emph{Sidon morphism},\index{Sidon morphism} or that
  $s(X)$ is a \emph{Sidon subvariety of~$G$}\index{Sidon subvariety} if, for any extension $k'$
  of~$k$, the subset $s(X)(k') \subset G(k')$ is a Sidon set. We define
  similarly $r$-Sidon morphisms for any $r\geq 2$.
  \par
  Let $i$ be an involution on~$X$ and $a\in G$. We say that $s$ is
  an \emph{$i$-symmetric Sidon morphism}\index{$i$-symmetric Sidon morphism} if the product morphism
  $(s\circ i)\cdot s\colon X\to G$ is a constant morphism, say
  equal to~$\alpha\in G(k)$, and
  if, for any extension $k'$ of~$k$, the set $s(X(k'))$ is an
  $\alpha$-symmetric Sidon set in $G(k')$.
\end{definition}



The interest of a Sidon morphism $X\to G$ is that it leads to
computations of the fourth moment for objects~$M$ on~$G$ that are pushed
from~$X$. We have two versions, depending on whether we have a Sidon
morphism or a symmetric Sidon morphism.

\begin{proposition}\label{pr-sidon-moments1}
  Let $G$ be a connected commutative algebraic group over a finite
  field~$k$ and let $s\colon X\to G$ be a closed immersion of $k$-schemes. Let $N$ be a geometrically simple $\ell$-adic perverse sheaf on~$X$
  which is pure of weight~$0$, so that the object $M=s_*N=s_!N$
  on~$G$ is a geometrically simple perverse sheaf on~$G$ and is pure
  of weight~$0$. 
  \par
  \begin{enumth}
  \item If $s$ is a Sidon morphism, then the equality
    $$
    M_4(\garith{M}, M)=2
    $$
    holds unless $M$ has tannakian dimension~$\leq 1$.
  \item If $X$ is a curve and $s\colon X\to G$ is a $4$-Sidon
    morphism, then the equality 
    $$
    M_8(\ggeo{M}, M)=M_8(\garith{M}, M)=24
    $$
    holds unless $N$ is geometrically isomorphic to $s^*\mcL_{\chi}[1]$
    for some character~$\chi\in\charg{G}$.
    \end{enumth}
\end{proposition}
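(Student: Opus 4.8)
The plan is to compute the absolute fourth and eighth moments of $\garith{M}$ by expanding $|S(M,\chi)|^{2m}$, averaging over $\chi$ by orthogonality of characters of $G(k_n)$, using the ($m$-)Sidon property to reduce the resulting equation $s(x_1)\cdots s(x_m)=s(y_1)\cdots s(y_m)$ to permutations, and then invoking Larsen's alternative. Since $s$ is a closed immersion, $s_*N=s_!N$ and $t_M(y;k_n)$ equals $t_N(s^{-1}(y);k_n)$ on $s(X)(k_n)$ and $0$ elsewhere, so that
\[
S(M,\chi)=\sum_{x\in X(k_n)}\chi(s(x))\,t_N(x;k_n),\qquad
\frac{1}{|G(k_n)|}\sum_{\chi\in\charg{G}(k_n)}|S(M,\chi)|^{2m}
=\!\!\sum_{\substack{(x_i),(y_i)\in X(k_n)^m\\ \prod_i s(x_i)=\prod_i s(y_i)}}\!\!\prod_i t_N(x_i)\,\overline{t_N(y_i)}.
\]
With $m=2$ in case (1) ($s$ Sidon) or $m=4$ in case (2) ($s$ $4$-Sidon), the relation $\prod_i s(x_i)=\prod_i s(y_i)$ together with the injectivity of $s$ forces $(y_i)$ to be a permutation of $(x_i)$; grouping the tuples $\mathbf x$ by their pattern of coincidences, the double sum becomes $\sum_{\mathbf x}\bigl(m!/\prod_a\mu_a(\mathbf x)!\bigr)\prod_i|t_N(x_i)|^2$, whose ``all distinct'' part is $m!\bigl(\sum_x|t_N(x;k_n)|^2\bigr)^m$ up to terms carrying a factor $\sum_x|t_N(x;k_n)|^{2j}$ with some $j\geq2$.

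Now $N$ is geometrically simple and pure of weight $0$, so Proposition~\ref{pr-schur} gives $\sum_x|t_N(x;k_n)|^2\to1$; and as long as $N$ is not punctual — which holds whenever $M$ has tannakian dimension $\geq2$, since $M=s_*N$ — purity (the cohomology sheaves $\mathcal H^{-i}(N)$ vanish for $i\leq 0$ and have weight $\leq -i$) gives $\max_x|t_N(x;k_n)|^2\ll|k_n|^{-1}$, whence $\sum_x|t_N(x;k_n)|^{2j}\ll|k_n|^{-(j-1)}\to0$ for $j\geq2$. Therefore $\frac{1}{|G(k_n)|}\sum_{\chi\in\charg{G}(k_n)}|S(M,\chi)|^{2m}\to m!$. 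By the generic vanishing theorem $|\wunram{M}(k_n)|\sim|G(k_n)|$, so by positivity $\limsup_n\frac{1}{|\wunram{M}(k_n)|}\sum_{\chi\in\wunram{M}(k_n)}|S(M,\chi)|^{2m}\leq m!$, and Proposition~\ref{pr-larsen-diophante}, formula~\eqref{eq-larsen}, then yields $M_{2m}(\garith{M},M)\leq m!$. Writing $r$ for the tannakian dimension, we have $\garith{M}\subset\GL_r$, hence also $M_{2m}(\garith{M},M)\geq M_{2m}(\GL_r,\Std)$, which equals $m!$ as soon as $r\geq m$. For $m=2$, Theorem~\ref{th-larsen}\,\ref{item1:th-larsen} supplies the lower bound $M_4(\garith{M},M)\geq2$ directly, so combined with the upper bound this proves part~(1) whenever $r\geq2$ (the excluded case being $r\leq1$).

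For part~(2), granting $r\geq5$, the above gives $M_8(\garith{M},M)\leq24$, while $M_8(\garith{M},M)\geq M_8(\GL_r)=24$, so $M_8(\garith{M},M)=24$; and part~(1) (a $4$-Sidon morphism is Sidon) gives $M_4(\garith{M},M)=2$. By the Guralnick--Tiep theorem, Theorem~\ref{th-larsen}\,\ref{th-larsen:item2}, this forces $\SL_r\subset\garith{M}$. Since $\ggeo{M}$ is normal in $\garith{M}$ with abelian quotient (Propositions~\ref{pr:geom-vs-arith1} and~\ref{pr:geom-vs-arith2}) and $\SL_r$ is perfect, $\SL_r=[\garith{M},\garith{M}]\subset\ggeo{M}\subset\GL_r$, so both $M_8(\garith{M},M)$ and $M_8(\ggeo{M},M)$ are squeezed between $M_8(\GL_r)=24$ and $M_8(\SL_r)=24$, giving the claim.

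The main obstacle — and the only place where the excluded case $N\simeq s^*\mcL_\chi[1]$ and the full strength of the $4$-Sidon hypothesis are needed — is the input $r\geq5$. The $4$-Sidon property forces $\dim\langle s(X)\rangle\geq4$ (the $4$-fold product map $s(X)^4\to G$ has fibres of bounded cardinality, so its image, contained in $\langle s(X)\rangle$, has $\gg|k_n|^4$ points for all $n$), so the curve-generated part of $G$ is large; one then has to show that a geometrically simple perverse sheaf on a curve $X$ for which $M=s_*N$ has small tannakian dimension must be, up to twist, the shifted pull-back along $s$ of a character sheaf of $G$. This should be done by separating the projective case (where $4$-Sidonness forces the genus of $X$ to be $\geq4$, hence $-\chi_c(X)\geq6$) from the affine case (handled via the classification of rank-one and low-rank tame/wild local systems on curves in terms of Kummer and Artin--Schreier sheaves). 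Pinning down this classification step is the real work; the rest is the routine orthogonality computation sketched above.
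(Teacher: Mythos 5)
Your part~(1) is correct and follows the paper's own route: orthogonality of characters, the Sidon property to collapse the relation $s(x_1)s(x_2)=s(x_3)s(x_4)$, Proposition~\ref{pr-schur} to evaluate the limit, the inequality of Proposition~\ref{pr-larsen-diophante} to get $M_4\leq 2$, and Theorem~\ref{th-larsen} for the lower bound; your extra bookkeeping of coincidence patterns (controlled by $\max_x|t_N(x;k_n)|\ll |k_n|^{-1/2}$ for non-punctual $N$) is a harmless refinement.

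Part~(2), however, has a genuine gap, and the route you propose cannot be completed. Everything in your argument after the moment computation hinges on the input $r\geq 5$, which you need in order to invoke the Guralnick--Tiep eighth-moment criterion (Theorem~\ref{th-larsen}) and deduce $\SL_r\subset\garith{M}$; you defer this to a "classification step" asserting that a simple perverse sheaf on a curve with $M=s_*N$ of small tannakian dimension must be a twisted pullback of a character sheaf. That assertion is false. Take $G=\Gg_m^4$, $f$ square-free of degree $4$, and $s\colon \Aa^1[1/f]\to \Gg_m^4$, $x\mapsto (z-x)_{z\in Z}$, which is a $4$-Sidon morphism by Proposition~\ref{pr-sidon-morphisms}; take $N=\mcL_{\psi}[1](1/2)$ an Artin--Schreier sheaf, which is wildly ramified at infinity, hence not geometrically isomorphic to any $s^*\mcL_{\chi}[1]$ (those pullbacks are tensor products of Kummer sheaves, tame everywhere). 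By Lemma~\ref{lm-rank-var} its tannakian dimension is $(4-1)\cdot 1+\swan_\infty=4<5$. So the hypotheses of~(2) can hold with $r\leq 4$, your bridge to Guralnick--Tiep is unavailable, and the conclusion $M_8=24$ must be reached without any group-theoretic input of Larsen type.

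This is in fact what the paper does, and it also explains where the excluded case $N\simeq s^*\mcL_{\chi}[1]$ is really used — not to bound $r$ from below, but to control the ramified characters. Since $N$ and the simple perverse sheaves $s^*\mcL_{\chi^{-1}}[1](1/2)$ are pure of weight zero, geometrically simple and never geometrically isomorphic, the quasi-orthogonality form of the Riemann Hypothesis (Theorem~\ref{th-rh}, with complexities of character sheaves bounded uniformly in $\chi$ by Proposition~\ref{prop-cond-char}) gives $S(M,\chi)\ll 1$ uniformly for \emph{all} $\chi$. Hence the characters outside $\wunram{M}(k_n)$, of which there are $O(|k_n|^{\dim G-1})$, contribute $o(1)$, so the limit (not merely the Cesàro limit) of the eighth-moment average over weakly unramified characters exists and equals $24$; the last assertion of Proposition~\ref{pr-larsen-diophante} then yields directly $M_8(\ggeo{M},M)=M_8(\garith{M},M)=24$, for every tannakian dimension. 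Your sketch never exploits the hypothesis in this way, and without it you cannot even pass from the full-character average to the weakly unramified one, nor identify the geometric moment.
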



\begin{proof}
 Let $n\geq 1$ be an integer. The formula
  $$
  \frac{1}{|G(k_n)|}\sum_{\chi \in \charg{G}(k_n)} |S(M, \chi)|^{4}
  =\sum_{\substack{y_1,\ldots,y_4\in X(k_n)\\
      s(y_1)s(y_2)=s(y_3)s(y_4)}}
  t_N(y_1,k_n)  t_N(y_2;k_n)\overline{t_N(y_3;k_n)t_N(y_4;k_n)}
  $$
  holds by orthogonality of characters. If~$s$ is a Sidon morphism, then we obtain by definition
  $$
  \frac{1}{|G(k_n)|}\sum_{\chi \in \charg{G}(k_n)} |S(M, \chi)|^{4}
  =2\Bigl(\sum_{y\in X(k_n)} |t_N(y,k_n)|^2\Bigr)^2-
  \sum_{y\in X(k_n)}|t_N(y,k_n)|^4,
  $$
  where the second term accounts for the double-counting of the
  solutions of the equation $s(y_1)s(y_2)=s(y_3)s(y_4)$ where
  $y_1=y_2=y_3=y_4$. In particular, we deduce that the inequality
  $$
  \frac{1}{|G(k_n)|}\sum_{\chi \in \charg{G}(k_n)} |S(M, \chi)|^{4}
  \leq 2\Bigl(\sum_{y\in X(k_n)} |t_N(y,k_n)|^2\Bigr)^2
  $$
  holds for all~$n\geq 1$. Since $N$ is geometrically simple, the
  right-hand side of this expression converges to~$2$ as
  $n\to +\infty$ by Proposition~\ref{pr-schur}.  Using the
  inequality~(\ref{eq-larsen-2}) from
  Proposition~\ref{pr-larsen-diophante}, we deduce that
  $$
  M_{4}(\garith{M}, M)\leq 2
  $$
  in the setting of~(1). Hence, the fourth moment is either $\leq 1$
  or equal to~$2$. By Theorem~\ref{th-larsen}\,\ref{item1:th-larsen},
  the former is only possible if $M_{\intt}$ is of tannakian
  dimension~$\leq 1$.
  \par
  Now we assume that~$s$ is a $4$-Sidon morphism. We obtain similarly
  $$
  \frac{1}{|G(k_n)|}\sum_{\chi \in \charg{G}(k_n)} |S(M, \chi)|^{8}\leq
  24\Bigl(\sum_{y\in X(k_n)} |t_N(y,k_n)|^2\Bigr)^4
  $$
  where the right-hand side converges to~$24$ for the same reason as
  before.
  \par
  Assume now that~$X$ is a curve and $s$ is a $4$-Sidon morphism.  We
  apply the Riemann Hypothesis (Theorem~\ref{th-rh}) to the simple perverse
  sheaves $s^*\mcL_{\chi^{-1}}[1](1/2)$ (of weight~$0$) and to~$N$. By assumption, these
  are not geometrically isomorphic, and therefore the estimate
  $$
  S(M,\chi)=\sum_{y\in X(k_n)} \chi(s(y))t_N(y;k_n)\ll 1
  $$
  holds for all characters~$\chi$. We deduce then that the formula
  $$
  \lim_{n\to+\infty} \frac{1}{|G(k_n)|}\sum_{\chi \notin
    \wunram{M}(k_n)} |S(M, \chi)|^{2m} =0
  $$
  holds for any integer $m\geq 1$; we finally conclude from the previous
  computations and the last assertion of
  Proposition~\ref{pr-larsen-diophante} that
  $M_8(\ggeo{M},M)=M_8(\garith{M},M)=24$.
\end{proof}

We now state the version involving symmetric Sidon morphisms.

\begin{proposition}\label{pr-sidon-moments2}
  Let $G$ be a connected commutative algebraic group over a finite
  field~$k$. Let~$X$ be a smooth irreducible algebraic variety over~$k$
  and $i$ an involution on~$X$. Let $s\colon X\to G$ be an $i$-symmetric
  Sidon morphism which is a closed immersion.  Let $\alpha$ be the
  constant value of the morphism $(s\circ i)\cdot s$.
  \par
  Let $N$ be a geometrically simple $\ell$-adic perverse sheaf on~$X$
  which is pure of weight~$0$, so that the object $s_*N=s_!N$
  on~$G$ is a geometrically simple perverse sheaf on~$G$ and is pure of
  weight~$0$.
  \par
  \begin{enumth}
  \item If $i^*N$ is isomorphic to $\dual(N)$, then we have
    $(s_*N)^{\vee}=[\times \alpha^{-1}]^*(s_*N)$, and
    $$
    M_4(\garith{s_*N},s_*N)=3,
    $$
    unless $(s_*N)_{\intt}$ has tannakian dimension~$\leq 2$.
  \item If
    $i^*N$ is not isomorphic to $\dual(N)$, then
    $$
    M_4(\garith{s_*N},s_*N)=2,
    $$
    unless $(s_*N)_{\intt}$ has tannakian dimension~$\leq 2$.
  \end{enumth}
\end{proposition}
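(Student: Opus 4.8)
The plan is to mimic the proof of Proposition~\ref{pr-sidon-moments1}, but now using the symmetry hypothesis to exploit the additional $\bfG$-invariant structure coming from a bilinear form. First I would record the duality statement: since $s$ is an $i$-symmetric Sidon morphism with $(s\circ i)\cdot s$ constant equal to $a$, we have $m\circ((s\circ i)\times \mathrm{inv}) = [\times a^{-1}]\circ s$ on $X$, so that $s\circ i = [\times a^{-1}]\circ \mathrm{inv}\circ s$; combining with $\dual(s_*N)=s_*\dual(N)$ (as $s$ is a closed immersion, $s_*=s_!$) and the hypothesis $i^*N\simeq \dual(N)$ gives $\dual(s_*N) = s_*i^*N = (s\circ i)_* N$ (using $i$ an involution, so $i_*=i^*$), and rewriting $s\circ i$ as above yields $\dual(s_*N)\simeq [\times a^{-1}]^*(s_*N)$. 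This is the analogue of $N$ being ``self-dual up to translation'', so the standard representation of $\garith{s_*N}$ carries a non-degenerate bilinear form up to a character twist.

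Next I would compute the fourth moment diophantinely, exactly as in the proof of Proposition~\ref{pr-sidon-moments1}. Writing $M=s_*N$ and $t=t_N$, orthogonality of characters gives, for each $n\geq 1$,
\[
\frac{1}{|G(k_n)|}\sum_{\chi\in\charg{G}(k_n)}|S(M,\chi)|^4
= \sum_{\substack{y_1,\dots,y_4\in X(k_n)\\ s(y_1)s(y_2)=s(y_3)s(y_4)}}
t(y_1;k_n)t(y_2;k_n)\overline{t(y_3;k_n)t(y_4;k_n)}.
\]
Now the $a$-symmetric Sidon condition means the solutions split into three families: $\{y_1,y_2\}=\{y_3,y_4\}$ (giving the two ``diagonal'' contributions as before) and the extra family $s(y_2)=a\,s(y_1)^{-1}$, i.e. $y_2=i(y_1)$ and $y_4=i(y_3)$ with $y_1,y_3$ arbitrary. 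Using $i^*N\simeq\dual(N)$, the summand $\overline{t(i(y_3);k_n)} = t_{\dual(i^*N)}(y_3;k_n)$ matches up with $t(y_3;k_n)$-type terms, so this third family contributes $\bigl(\sum_{y\in X(k_n)}|t(y;k_n)|^2\bigr)^2$ as well (after checking the conjugation/trace bookkeeping carefully). Hence the right-hand side equals $3\bigl(\sum_{y\in X(k_n)}|t(y;k_n)|^2\bigr)^2$, which converges to $3$ as $n\to+\infty$ by Proposition~\ref{pr-schur} since $N$ is geometrically simple. By the inequality~(\ref{eq-larsen-2}) in Proposition~\ref{pr-larsen-diophante}, $M_4(\garith{M},M)\leq 3$; combined with the lower bound $M_4(\garith{M},M)\geq 3$ from Theorem~\ref{th-larsen}\,\ref{item1:th-larsen} (valid since the standard representation of $\garith{M}$ is self-dual up to a one-dimensional twist, hence the moment, which is insensitive to scalar twists by property~(3) of the moments, is that of a self-dual representation — provided $r\geq 3$), we get $M_4(\garith{M},M)=3$, unless $(s_*N)_{\intt}$ has tannakian dimension $\leq 2$, which is the stated exception. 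For part~(2), when $i^*N$ is not isomorphic to $\dual(N)$, the third family of solutions contributes a term $\bigl(\sum_y t(y;k_n)\,\overline{t(i(y);k_n)}\bigr)\cdot\overline{(\cdots)}$; since $N$ and $i^*\dual(N)$ are geometrically non-isomorphic geometrically simple perverse sheaves, Theorem~\ref{th-rh} (quasi-orthogonality) forces this to tend to $0$, leaving only the two diagonal terms, so $M_4(\garith{M},M)=2$, again modulo the low-dimension exception.

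The main obstacle will be the careful bookkeeping in the middle step: getting the complex conjugates, the twist by $a$, and the matching $i^*N\simeq\dual(N)$ (versus $\dual(i^*N)$, $i^*\dual(N)$, etc.) to line up so that the third family genuinely produces $+\bigl(\sum|t|^2\bigr)^2$ and not something that cancels or double-counts part of the diagonal — in particular one must verify that the solution $y_2=i(y_1)$, $y_4=i(y_3)$ is \emph{disjoint} from the two diagonal families except on a set of lower-dimensional cardinality (which it is, generically, since $s$ being a closed immersion means $s(y_1)s(i(y_1))=a$ pins things down), so its contribution is clean. A secondary point to handle is the dimension hypothesis: Theorem~\ref{th-larsen}\,\ref{item1:th-larsen} needs $r\geq 3$ for the bound $M_4\geq 3$ in the self-dual case, and one should check that when $r=1$ or $r=2$ the object falls into the excluded ``tannakian dimension $\leq 2$'' case, so the statement is vacuously consistent there. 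Everything else — purity and geometric simplicity of $s_*N$, the reduction to the Grothendieck-group/trace-function level, the use of Proposition~\ref{pr-schur} and Theorem~\ref{th-rh} — is routine given the earlier results.
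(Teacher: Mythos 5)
Your proposal is correct and follows essentially the same route as the paper: the duality claim via $\dual(s_*N)\simeq s_*(i^*N)=(s\circ i)_*N$ and the relation $s\circ i=[\times a]\circ\inv\circ s$, then the fourth-moment count by orthogonality of characters, with the symmetric Sidon condition producing the extra off-diagonal family whose contribution $\bigl|\sum_y t_N(y;k_n)t_N(i(y);k_n)\bigr|^2$ tends to $1$ (case (1), Proposition~\ref{pr-schur}) or to $0$ (case (2), Theorem~\ref{th-rh}), followed by the upper bound from~(\ref{eq-larsen-2}) and the lower bound from Theorem~\ref{th-larsen}\,(1). Two cosmetic points: your displayed identity has an $a$ versus $a^{-1}$ slip (the symmetry gives $s\circ i=[\times a]\circ\inv\circ s$), and in case (1) the bound $M_4\geq 3$ should be justified not by the scalar-twist property (3) (a square root of the twisting character need not exist) but by the same argument as in the proof of Theorem~\ref{th-larsen}\,(1): $V^{\vee}\simeq V\otimes L$ forces the one-dimensional subrepresentation $L^{\vee}\subset V^{\otimes 2}$, which makes one of $\syms V$, $\bigwedge^2 V$ reducible once $r\geq 3$, exactly the adaptation the paper's ``conclude as before'' intends.
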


\begin{proof}
  Let $M=s_*N$. In the situation of~(1), the definition of~$\alpha$
  means that there is an equality
  $ s\circ i=[\times \alpha]\circ (\inv\circ s)$. Therefore, we obtain
  canonical isomorphisms
  \begin{multline}
    M^{\vee}=\inv^*(\dual(s_*N))=\inv^*(s_*(\dual(N)))
    = \inv^*((s\circ i)_* N)\\
    =\inv^*([\times \alpha]_*(\inv\circ s)_* N)=(\inv \circ [\times
    \alpha]\circ \inv)_*(s_*N)=[\times \alpha^{-1}]^*M.
  \end{multline}
  \par
  We go back to the general case. Arguing as in the proof of the
  previous proposition, we obtain the inequality
  \begin{multline*}
    \frac{1}{|G(k_n)|}\sum_{\chi \in \charg{G}(k_n)} |S(M, \chi)|^{4}
    \leq 2\Bigl(\sum_{y\in X(k_n)} |t_N(y,k_n)|^2\Bigr)^2
    \\
    + \sum_{(y,z)\in X(k_n)^2}
    t_N(y,k_n)t_N(i(y);k_n)\overline{t_N(z;k_n)t_N(i(z);k_n)}
  \end{multline*}
  for all $n\geq 1$, by the definition of symmetric Sidon sets.  The
  second sum is equal to the quantity
  $$
  \Bigl|\sum_{y\in X(k_n)} t_N(y,k_n)t_N(i(y);k_n)\Bigr|^2,
  $$
  which converges to~$1$ under the assumption~(1)
  (using~(\ref{eq-gabber-conjugate})), by Proposition~\ref{pr-schur},
  and to~$0$ under the assumption~(2), by the Riemann Hypothesis. Thus
  we deduce from Proposition~\ref{pr-larsen-diophante} that
  $$
  M_4(\garith{M},M)\leq 3,\quad\quad \text{resp.}\quad\quad
  M_4(\garith{M},M)\leq 2,
  $$
  in case~(1) (resp.~(2)), and we conclude as before from
  Theorem~\ref{th-larsen},~(1).
\end{proof}

\begin{remark}\label{rm-careful}
  The caveats concerning the tannakian dimension of $s_*N$ in these
  statements are necessary.  We will indeed see concrete examples (see
  Example~\ref{ex-fourth-moment-2}\,(1) and
  Remark~\ref{rm-jacobian-careful}\,(1)) where the fourth moment does
  \emph{not} coincide with the limit
  $$
  \lim_{n\to+\infty} \frac{1}{|G(k_n)|}\sum_{\chi \in \charg{G}(k_n)}
  |S(M, \chi)|^{4} 
  $$
  (althouth the latter exists) because of the contribution of some
  special ramified characters.
\end{remark}

The result of Propositions~\ref{pr-sidon-moments1}
and~\ref{pr-sidon-moments2} will be the basis of applications in
Chapters~\ref{sec-product}, \ref{sec-variance} and~\ref{sec-jacobian}.
Here are the relevant cases of Sidon morphisms, together with some
further elementary examples.


\begin{proposition}\label{pr-sidon-morphisms}
  Let~$k$ be a field, not necessarily finite.
  \par
  \begin{enumth}
  \item For any $\alpha\in k^{\times}$, the embedding
    $x\mapsto (x,\alpha x)$ of $\Gg_m$ in $\Gg_m\times \Gg_a$ is a Sidon
    morphism.
  \item Let $C$ be a smooth projective connected algebraic curve of
    genus $g\geq 2$ over~$k$. Let $D$ be a divisor of degree~$1$ on~$C$,
    and let~$A=\mathrm{Jac}(C)$ be the jacobian\index{jacobian of a curve} of~$C$. The closed
    immersion $s\colon x\mapsto x-D$ of~$C$ in~$A$ is a Sidon
    morphism unless~$C$ is hyperelliptic, in which case it is an
    $i$-symmetric Sidon morphism, where $i$ is the
    hyperelliptic\index{hyperelliptic curve}
    involution.
  \item With notation as in the previous item, if the gonality\index{gonality} of~$C$ is
    at least~$5$, then $s$ is a $4$-Sidon morphism.
  \item Let $d\geq 1$ be an integer and let $f$ be a separable
    polynomial of degree~$d$ over~$k$. Let~$Z$ be the set of zeros
    of~$f$. The closed immersion $x\mapsto (z-x)_{z\in Z}$ of
    $\Aa^1[1/f]$ in $\Gg_m^Z$ is a Sidon morphism if $d\geq 2$. It is a
    $4$-Sidon morphism if $d\geq 4$.
  \item Suppose that the characteristic of~$k$ is not~$3$. The graph
    $s\colon x\mapsto (x,x^3)$ from $\Gg_a$ to~$\Gg_a^2$ is an
    $i$-symmetric Sidon morphism, where $i$ is the involution $x\mapsto
    -x$.
  \item The morphism $x\mapsto (x,1-x)$ from $\Gg_m\setminus\{1\}$ to
    $\Gg_m\times\Gg_m$ is a Sidon morphism.
  \end{enumth}
\end{proposition}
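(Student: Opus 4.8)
The plan is to treat the five items independently. For each of them I would first reduce to the case where $k$ is algebraically closed: being a (symmetric) Sidon set is inherited by subsets and only gets harder as the ambient group grows, and any equation $x_1x_2=x_3x_4$ (or its $r$-fold analogue) over an extension $k'$ persists after embedding $k'$ into an algebraically closed field, so it suffices to verify the defining condition there. After this reduction, items (1), (4), (5) become elementary algebra, while (2) and (3) require a small dose of curve theory. (In each case $s$ is a closed immersion, as asserted, and I would not dwell on that.)

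For (1), I would unwind the group law on $\Gg_m\times\Gg_a$: the equation $s(x_1)s(x_2)=s(x_3)s(x_4)$ becomes $x_1x_2=x_3x_4$ together with $\alpha(x_1+x_2)=\alpha(x_3+x_4)$, and since $\alpha\neq 0$ the multisets $\{x_1,x_2\}$ and $\{x_3,x_4\}$ are the roots of the same monic quadratic, hence equal. For (4), setting $P(T)=\prod_i(T-x_i)$ and $Q(T)=\prod_j(T-y_j)$ with $1\le i,j\le r$ ($r=2$ or $r=4$), the identity $s(x_1)\cdots s(x_r)=s(y_1)\cdots s(y_r)$ in $\Gg_m^Z$ says exactly that the polynomial $P-Q$, which has degree $<r$, vanishes at the $d$ distinct points of $Z$; as soon as $d\ge r$ this forces $P=Q$, whence $\{x_i\}=\{y_j\}$. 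For (5), $s(x_1)+s(x_2)=s(x_3)+s(x_4)$ in $\Gg_a^2$ gives $x_1+x_2=x_3+x_4=:\sigma$ and, via $x^3+y^3=(x+y)^3-3xy(x+y)$, the relation $3\sigma(x_1x_2-x_3x_4)=0$; in characteristic $\neq 3$ this splits into the case $x_1x_2=x_3x_4$ (hence $\{x_1,x_2\}=\{x_3,x_4\}$ as in (1)) and the case $\sigma=0$, i.e. $x_2=-x_1$, which is precisely the exception allowed by $i\colon x\mapsto -x$. I would also record that $(s\circ i)\cdot s$ is the constant morphism $x\mapsto (x,x^3)+(-x,-x^3)=0$, so here $a=0$, and that $s(\Gg_a)=-s(\Gg_a)$ since $(-x,(-x)^3)=(-x,-x^3)$; thus $s$ is an $i$-symmetric Sidon morphism.

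For (2) and (3), after identifying $A=\Pic^0(C)$, an identity $\sum_k s(x_k)=\sum_k s(y_k)$ (with $2$ or $4$ summands) translates into a linear equivalence $\sum x_k\sim\sum y_k$ of effective divisors. If these divisors are distinct, the complete linear system of $\sum x_k$ has positive dimension; removing its base locus and passing to a general pencil yields a base-point-free $g^1_{m'}$, hence a map $C\to\Pp^1$ of degree $m'$ at most $2$ (resp. $4$). In the degree-$4$ case this contradicts gonality $\ge 5$, proving (3); in the degree-$2$ case it forces $C$ to be hyperelliptic, so $s$ is a Sidon morphism when $C$ is not hyperelliptic. When $C$ is hyperelliptic with involution $i$ and unique $g^1_2$ of class $[H]$ (so $x+i(x)\sim H$ for all $x$), I would use this to see that $(s\circ i)\cdot s\colon x\mapsto [x+i(x)-2D]$ is the constant $a:=[H-2D]$, that $a-s(x)=[i(x)-D]=s(i(x))$, so $a-s(C)=s(C)$, and that any distinct solution of $x_1+x_2\sim x_3+x_4$ must have $|x_1+x_2|$ equal to the $g^1_2$, whence $x_2=i(x_1)$, i.e. $s(x_2)=a-s(x_1)$ — the permitted exception. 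This shows $s$ is an $i$-symmetric Sidon morphism.

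The only non-formal ingredients are the two classical consequences of Riemann--Roch used in (2)--(3): that a curve carrying a $g^1_2$ is hyperelliptic, and that a degree-$4$ divisor moving in a pencil forces gonality at most $4$. Invoking these correctly, and matching the constant $a$ and the involution $i$ to the bookkeeping in the definition of a symmetric Sidon morphism, is where the (minor) care lies; everything else is routine verification.
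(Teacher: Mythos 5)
Your proof is correct and follows essentially the same route as the paper: the quadratic-roots argument for (1), polynomial interpolation at the zeros of $f$ for (4), the symmetric-function identity in characteristic $\neq 3$ for (5), and for (2)--(3) the observation that linearly equivalent distinct effective divisors of degree $2$ (resp.\ $4$) produce a map $C\to\Pp^1$ of degree at most $2$ (resp.\ $4$), together with uniqueness of the hyperelliptic $g^1_2$ in the symmetric case. The only differences are cosmetic: you phrase (2)--(3) via linear systems rather than the rational function directly, and you spell out the verification of the constant $a$ and of $S=aS^{-1}$, which the paper leaves implicit.
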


\begin{proof}
  (1) For $x_1$, \ldots, $x_4$ in~$\Gg_m$, the equation
  $$
  (x_1,\alpha x_1)\cdot (x_2,\alpha x_2)=(x_3, \alpha x_3)\cdot
  (x_4,\alpha x_4)
  $$
  in $\Gg_a\times \Gg_m$ means that $x_1+x_2=x_3+x_4$ and
  $x_1x_2=x_3x_4$, which implies that $\{x_1,x_2\}=\{x_3,x_4\}$, both
  sets being the solutions of the same quadratic equation.
  \par
  (2) Let $x_1$, \ldots, $x_4$ in~$C$ be solutions of
  $$
  s(x_1)+s(x_2)=s(x_3)+s(x_4).
  $$
  \par
  Assume $x_1\notin\{x_3,x_4\}$. Then the equation implies the existence
  of a rational function on $C$ with zeros $\{x_1, x_2\}$ and poles
  $\{x_3, x_4\}$, which corresponds to a morphism $f \colon C \to \Pp^1$
  of degree at most~$2$. This is not possible unless $C$ is
  hyperelliptic.
  \par
  With the same notation, if~$C$ is hyperelliptic with hyperelliptic
  involution~$i$, then the uniqueness of the morphism
  $f \colon C \to \Pp^1$ of degree~$2$ up to automorphisms (see,
  e.g.,~\cite[Rem.\,4.30]{Liu}) shows that~$i$ exchanges the points of
  the fibers of~$f$, or in other words, that the equalities $x_2=i(x_1)$
  and $x_4=i(x_3)$ hold.
  \par
  (3) The argument is similar: the equation
  $$
  s(x_1)+s(x_2)+s(x_3)+s(x_4)=
  s(x_5)+s(x_6)+s(x_7)+s(x_8)
  $$
  where $\{x_i\}\not=\{y_i\}$ implies the existence of a non-constant
  morphism $f\colon C\to\Pp^1$ of degree at most~$4$, and hence implies that
  $C$ has gonality at most~$4$.
  \par
  (4) Suppose that $x_1$, \ldots $x_4$ satisfy
  $s(x_1)s(x_2)=s(x_3)s(x_4)$. Then we get
  $$
  (x_1-z)(x_2-z)=(x_3-z)(x_4-z)
  $$
  for all $z\in Z$, i.e., the monic polynomials $(x_1-X)(x_2-X)$ and
  $(x_3-X)(x_4-X)$ take the same values at the points of~$Z$. By
  interpolation, they are equal if $|Z|=d\geq 2$. The case of the
  $4$-Sidon property is analogous with polynomials of degree~$4$.
  \par
  (5) Suppose that $x_1$, \ldots, $x_4\in\Gg_a^4$ satisfy
  $$
  \begin{cases}
    x_1+x_2=x_3+x_4\\
    x_1^3+x_2^3=x_3^3+x_4^3.
  \end{cases}
  $$
  If $x_2\not=-x_1$, then these imply that
  $$
  (x_1+x_2)^2-3x_1x_2=(x_3+x_4)^2-3x_3x_4,
  $$
  and therefore $x_1x_2=x_3x_4$ when the characteristic is not~$3$. Now
  we conclude as in~(1).
  \par
  (6) This is again about quadratic equations: let $x_1$, \ldots, $x_4$
  in~$\Gg_m\setminus \{1\}$ be such that
  $$
  \begin{cases} x_1x_2=x_3
    \\
    (1-x_1)(1-x_2)=(1-x_3)(1-x_4).
  \end{cases}
  $$
  Then we get further $x_1+x_2=x_3+x_4$, and conclude as before.
\end{proof}

\begin{remark}
  (1) Example~(1) is classical: it is often attributed to
  Ruzsa~\cite{ruzsa}, but it was pointed out by Eberhard and
  Manners~\cite{eberhard-manners} that it occurs previously in a paper
  of Ganley~\cite[p.\,323]{ganley}, where it is attributed to Spence.
  \par
  Example~(5) was also indicated to us by Eberhard and Manners.
  \par
  (2) There is much work in combinatorics in trying to find the largest
  possible Sidon sets in a finite abelian group~$A$ (for instance, see
  the classification in~\cite{eberhard-manners} of known examples of
  size $\sim |A|^{1/2}$, which they show are all related to finite
  projective planes). A natural analogue geometric question is to
  classify the Sidon morphisms $s\colon X\to G$ such that $\dim(X)$ is
  maximal. The best possible value for a given group $G$ is
  $\dim(X)=\lfloor \tfrac{\dim(G)}{2}\rfloor$.  When can this be
  achieved?

  Note that a subset~$S$ of an abelian group~$A$ is a Sidon set if and
  only if the induced map $S^2/\mathfrak{S}_2\to A$ defined by
  $(x,y)\mapsto x+y$ is injective (where~$\mathfrak{S}_2$ acts by
  permuting the two coordinates).  Consider the variant definition of a
  Sidon morphism $s\colon X\to G$ where we ask that~$s$ be a morphism
  such that the induced map $s^{(2)}\colon X^{(2)}\to G$ from the
  symmetric square of~$X$ to~$G$ is a closed immersion. Again we have
  $2\dim(X)\leq \dim(G)$, but we can see in this case that if~$G$ is an
  abelian variety, then equality is not possible. Indeed, this would
  imply that~$s^{(2)}$ is an isomorphism, which is impossible
  (if~$\dim(X)\geq 2$, because $X^{(2)}$ is then singular, and if~$X$ is
  a curve, because it would have to be smooth of genus~$2$, so~$G$ is an
  abelian surface, but for instance the second cohomology groups do not
  have the same dimension).
  
\end{remark}


The result concerning jacobians of smooth projective curves can be
generalized by considering either Rosenlicht's generalized
jacobians\index{generalized jacobian}
(which appear in geometric class field theory, see the book of
Serre~\cite{serre-alg-groups}), or the Picard group\index{Picard group} of certain singular
curves. The case of generalized jacobians is analyzed in complete
generality in our paper~\cite{ffk-sidon}. We state the result here
(see~\cite[Th.\,1]{ffk-sidon}).


\begin{proposition}\label{pr-sidon-gen-jac}
  Let $k$ be a (not necessarily finite) field and let~$C$ be a smooth
  projective geometrically connected curve of genus~$g$
  over~$k$. Let~$\mfm$ be an effective divisor on~$C$ and $J_{\mfm}$
  the associated generalized jacobian, which is a commutative
  algebraic group of dimension $g+\max(\deg(\mfm)-1,0)$.  Let~$\delta$
  be a divisor of degree~$1$ on~$C$ whose support does not intersect
  that of~$\mfm$.  Let~$s\colon C\setminus\mfm\to J_{\mfm}$ be the
  morphism induced by the map $x\mapsto (x)-\delta$ on divisors.
 
  If $\dim(J_{\mfm})\geq 2$, then~$s$ is either a Sidon morphism or a
  symmetric Sidon morphism.

  If, moreover, $(C\setminus\mfm)(k)$ is non-empty, then it is a
  symmetric Sidon set if and only if one of the following conditions
  hold:
  \begin{enumth}
  \item $g=1$ and~$\deg(\mfm)=2$; in this case, writing $\mfm=(p)+(q)$
    (where~$p$ and~$q$ are not necessarily $k$-points of $C$, but the
    divisor $\mfm$ is assumed to be defined over~$k$), the
    value~$\alpha$ of
    $s(x)+s(p+q-x)$ for $x\in (C\setminus \mfm)(k)$ is independent
    of~$x$ and~$s((C\setminus \mfm)(k))$ is an $\alpha$-symmetric
    Sidon set.
  \item $g\geq 2$, the curve~$C$ is hyperelliptic, and either
    $\deg(\mfm)\leq 1$ or $\mfm=(p)+(i(p))$ for some~$p\in C$,
    where~$i$ is the hyperelliptic involution on $C$. In both of these
    cases, the 
    value~$\alpha$
    of $s(x)+s(i(x))$ for $x\in (C\setminus \mfm)(k)$ is independent
    of~$x$ and~$s((C\setminus \mfm)(k))$ is an $\alpha$-symmetric
    Sidon set.
  \end{enumth}
\end{proposition}

\begin{remark}\label{rm-gener-jac}
  (1) Both the generalized jacobians $J_S$ and the Picard group scheme
  of an irreducible curve are connected commutative algebraic groups
  over~$k$ which may involve all types of groups (unipotent groups,
  abelian varieties and tori).

  More precisely, the following results hold:
  \begin{enumerate}
  \item Let~$C$ be a smooth projective curve of genus $g\geq 0$ over~$k$
    and~$\mfm$ an effective divisor on~$C$. Write $\mfm$ in the form
    $$
    \mfm=\sum_{x \in\Supp(S)}n_x\,(x)
    $$
    with $n_x\geq 1$.  The generalized jacobian $J_{\mfm}$ is an extension
    $$
    0\to L_{\mfm}\to J_{\mfm}\to \Jac(C)\to 0
    $$
    of the (usual) jacobian of~$C$, with kernel
    $L_{\mfm}=R_{\mfm}/\Gg_m$, where $R_{\mfm}$ is isomorphic to a
    product
    $$
    R_{\mfm}=\prod_{x\in\Supp(\mfm)}(\Gg_m\times V_x)
    $$
    with $V_x$ unipotent of dimension $n_x-1$, and with $\Gg_m$
    embedded diagonally in~$R_{\mfm}$ (see, e.g.,~\cite[p.\,2
    and\,V.13,\, V.14]{serre-alg-groups}).
    \par
    In particular, assuming that $g\geq 1$, the group $J_{\mfm}$ has
    non-trivial abelian, toric and unipotent parts as soon as the
    support of ${\mfm}$ contains two distinct points, one of which at
    least has coefficient $\geq 2$.
  \item Let~$C$ be an irreducible projective curve~$C$ over an
    algebraically closed field. Let $\widetilde{C}\to C$ be the
    normalization of~$C$, and for $x\in C(k)$, define $m_x$ to be the
    cardinality of the fiber of $\widetilde{C}\to C$ over~$x$. Then
    $\Pic^0(C)$ has dimension $\dim H^1(C,\mcO_C)$, and it is an
    extension
    $$
    0\to K_C\to \Pic^0(C)\to \Jac(\widetilde{C})\to 0
    $$
    of the jacobian of the normalization $\widetilde{C}$, with kernel
    $K_C$ which is an extension of a torus of dimension
    $$
    \sum_{x\in C\setminus U}(m_x-1)
    $$
    by a unipotent group, of dimension therefore equal to
    $$
    \dim H^1(C,\mcO_C)-g(\widetilde{C})-\sum_{x\in C\setminus
      U}(m_x-1)
    $$
    (see, e.g.,~\cite[Def.\,5.13,\,Th.\,7.5.19,\,
    Lemma\,5.18]{Liu}).
  \end{enumerate}

  Note furthermore that these two classes of algebraic groups are
  closely related (e.g., any generalized jacobian $J_{\mfm}$ is the
  Picard group of some singular curve).

  \par
  (2) All the examples of Sidon morphisms in
  Proposition~\ref{pr-sidon-morphisms} can be interpreted in terms of
  generalized jacobians. For instance, consider the curve $C=\Pp^1$
  over~$k$, and the effective divisor $S=(0)+2(\infty)$, so that
  $U=\Pp^1\setminus \{0,\infty\}=\Gg_m$. According to the above, the
  generalized jacobian $J_S$ is isomorphic to
  $G=(\Gg_m\times (\Gg_m\times\Gg_a))/\Gg_m^{\Delta}$, where the
  subgroup $\Gg_m^{\Delta}$ is embedded diagonally by
  $x\mapsto (x, (x,0))$. An isomorphism $\varphi\colon J_S\to G$ is
  given as follows: given a divisor $E$ of degree~$0$ on~$\Pp^1$,
  represent it as the divisor of a rational function
  $g\colon \Pp^1\to \Pp^1$, and let
  $$
  \varphi(E)=(g(0), (g(\infty),\frac{g'}{g}(\infty)))
  $$
  (this can be checked from the description in~\cite[p.\,2 and\,V.13,\,
  V.14]{serre-alg-groups}).  The morphism
  $G\to \Gg_m\times \Gg_a$ given by
  $(x,(y,a))\mapsto (xy^{-1},a)$ is an isomorphism, and using it to
  identify $G$ with $\Gg_m\times \Gg_a$, the formula above becomes
  $\varphi(E)=(\tfrac{g(1)}{g(\infty)},\tfrac{g'}{g}(\infty))$.

  Consider then the morphism $U=\Gg_m\to J_S$ defined using the divisor
  $D=(1)$. Then the morphism $s_D\colon x\mapsto (x)-(1)$ is given by
  $s_D(x)=(x,1-x)$ (take $g(t)=(t-x)/(t-1)$ to compute
  $\varphi((x)-(1))$). This is a Sidon morphism, the argument for this
  being identical with that of Proposition~\ref{pr-sidon-morphisms}\,(1).

  We refer again to~\cite[\S\,2]{ffk-sidon} for more discussion, in
  particular in comparison with the paper of Eberhard and Manners.
\end{remark}

\section{Gabber's torus trick}

We discuss here another criterion to have a large tannakian group that
also involves Sidon sets, but in a very different manner from their
appearance in the previous sections.  This criterion is difficult to
apply for an individual object, but it leads to simple specialization
results.

We use a version of Gabber's ``torus trick''
(see~\cite[Th.\,1.0]{esde}). The following statement is specialized to
the case of $\SL_r$ and written in the language of compact Lie groups.
\index{Gabber's torus trick}
\begin{theorem}[Gabber]\label{th-gabber}
  Let $V$ be a finite-dimensional complex vector space of dimension
  $r\geq 1$, and let $\bfG$ be a connected semisimple compact subgroup
  of $\GL(V)$ that acts irreducibly on~$V$. Let~$D$ be the subgroup
  consisting of the elements of~$\GL(V)$ that are diagonal with respect
  to some basis, and let $\chi_1, \dots, \chi_r$ be the characters
  $D\to \Cc^{\times}$ giving the coefficients of the elements of~$D$.
  \par
  Let $A\subset D$ be a subgroup of the normalizer of $\bfG$ in
  $\GL(V)$. Let $S\subset \widehat{A}$ be the subset of the group of
  characters of~$A$ given by the restrictions to~$A$ of the diagonal
  characters $\chi_i$. If $|S|=r$ and $S$ is a Sidon set in
  $\widehat{A}$, then $\bfG=\SU(V)$.
\end{theorem}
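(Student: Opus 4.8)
The plan is to pass to the (complexified) Lie algebra $\mathfrak g_{\Cc} \subset \End(V) = \mathfrak{gl}(V)$ of $\bfG$ and exploit the conjugation action of $A$ on it. Since $\bfG$ is semisimple, $\mathfrak g_{\Cc}$ is a semisimple complex Lie algebra contained in $\mathfrak{sl}(V)$, and since $A$ normalizes $\bfG$, it is stable under conjugation by $A$. Fix the basis $e_1,\dots,e_r$ of $V$ in which the elements of $D$ (hence those of $A$) are diagonal; then $A$ acts on the elementary matrix $E_{ij}$ through the character $\psi_i\psi_j^{-1}$, where $\psi_i := \chi_i|_A \in \widehat A$. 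The point of the hypotheses "$|S|=r$" and "$S=\{\psi_1,\dots,\psi_r\}$ is a Sidon set" is that they make the $A$-weight structure of $\mathfrak{gl}(V)$ as simple as possible: the $\psi_i$ are pairwise distinct, and for $i\neq j$ the characters $\psi_i\psi_j^{-1}$ are pairwise distinct and non-trivial (if $\psi_i\psi_j^{-1}=\psi_k\psi_l^{-1}$ then $\psi_i\psi_l=\psi_k\psi_j$, so by the Sidon property $\psi_i\in\{\psi_k,\psi_j\}$, forcing $(i,j)=(k,l)$; and $\psi_i\psi_j^{-1}=1$ forces $i=j$). Hence the trivial $A$-weight space of $\mathfrak{gl}(V)$ is exactly the diagonal subalgebra, while for $i\neq j$ the $\psi_i\psi_j^{-1}$-weight space is exactly the line $\Cc E_{ij}$.

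First I would decompose $\mathfrak g_{\Cc}$ into its intersections with the $A$-weight spaces, getting $\mathfrak g_{\Cc}=(\mathfrak g_{\Cc}\cap\mathfrak h)\oplus\bigoplus_{(i,j)\in I}\Cc E_{ij}$, where $\mathfrak h$ is the traceless diagonal subalgebra and $I\subset\{(i,j):i\neq j\}$ records which off-diagonal root vectors lie in $\mathfrak g_{\Cc}$. Two observations then drive the argument. First, $I$ is symmetric: because $\mathfrak g_{\Cc}$ is semisimple, the trace form $\langle X,Y\rangle=\operatorname{tr}_V(XY)$ restricts to a non-degenerate form on it (its radical is an ideal on which the form vanishes, hence a solvable ideal by Cartan's criterion, hence zero), and this form puts $(\mathfrak g_{\Cc})_\lambda$ in perfect pairing with $(\mathfrak g_{\Cc})_{\lambda^{-1}}$; since the $\lambda^{-1}$-weight line of $\mathfrak{gl}(V)$ is $\Cc E_{ji}$ when $\lambda=\psi_i\psi_j^{-1}$, we get $(i,j)\in I\Rightarrow(j,i)\in I$. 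Second, $I$ is "transitive" in the sense that $(i,j),(j,k)\in I$ with $i\neq k$ forces $(i,k)\in I$, because $[E_{ij},E_{jk}]=E_{ik}$. Next I would use irreducibility of $V$: if the graph on $\{1,\dots,r\}$ with edge set $I$ were disconnected, with vertices partitioned into non-empty sets $P\sqcup Q$ and no edges between them, then $\operatorname{span}(e_i:i\in P)$ would be stable under $\mathfrak h$ and under every $E_{ij}$ with $(i,j)\in I$ (such a pair has $i,j$ on the same side), hence stable under $\mathfrak g_{\Cc}$ and so under the connected group $\bfG$, contradicting irreducibility. So the graph is connected, and a connected graph whose edge relation is symmetric and transitive in the above sense is complete (propagate along a simple path). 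Hence $I=\{(i,j):i\neq j\}$.

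Once $I$ is full, $E_{ii}-E_{jj}=[E_{ij},E_{ji}]\in\mathfrak g_{\Cc}$ for all $i,j$, so $\mathfrak g_{\Cc}\supseteq\mathfrak h\oplus\bigoplus_{i\neq j}\Cc E_{ij}=\mathfrak{sl}(V)$; combined with $\mathfrak g_{\Cc}\subseteq\mathfrak{sl}(V)$ this gives $\mathfrak g_{\Cc}=\mathfrak{sl}(V)$. Thus $\bfG$ is a connected compact group whose complexified Lie algebra is $\mathfrak{sl}(V)$ and whose given embedding into $\GL(V)$ is the standard representation; this forces $\bfG=\SU(V)$ (for a $\bfG$-invariant Hermitian metric on $V$), since the standard representation of the universal cover $\SU(V)$ is faithful so the covering $\SU(V)\to\bfG$ must be an isomorphism. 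The argument is mostly routine "torus trick" linear algebra; the step I expect to require the most care is keeping the Sidon hypotheses honestly in play, since they are exactly what guarantees the off-diagonal $A$-weight spaces of $\mathfrak{gl}(V)$ are one-dimensional with distinct weights, which is what lets the symmetry and transitivity of $I$ be read off and then propagated along a connected graph.
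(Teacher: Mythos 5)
Your proof is correct, and I checked the key steps: the Sidon hypothesis together with $|S|=r$ does give that the off-diagonal weights $\psi_i\psi_j^{-1}$ ($i\neq j$) are non-trivial and pairwise distinct, so each off-diagonal $A$-weight space of $\mathfrak{gl}(V)$ is the single line $\Cc E_{ij}$ and your decomposition of $\mathfrak g_{\Cc}$ is legitimate (the argument also uses only that $A$ is a commuting family of diagonalizable operators, so it covers arbitrary, possibly non-closed or finite, $A$, as the statement requires); the trace-form argument for the symmetry of $I$, the bracket relation $[E_{ij},E_{jk}]=E_{ik}$ for transitivity, the connectivity-from-irreducibility step, and the final identification $\mathfrak g_{\Cc}=\mathfrak{sl}(V)\Rightarrow \bfG=\SU(V,h)$ for a $\bfG$-invariant Hermitian form $h$ are all sound. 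Note, however, that the paper does not prove this statement at all: it quotes Gabber's theorem in the form given by Katz (\emph{Exponential sums and differential equations}, Th.~1.0), and the accompanying remark explains that Gabber's original result only yields that $\bfG$ contains a maximal torus of $\SU(V)$, the equality $\bfG=\SU(V)$ then being deduced from a Bourbaki proposition on connected compact groups. So your argument is a genuinely different (and more self-contained) route: rather than first producing a maximal torus and then invoking a structural result, you determine the full complexified Lie algebra directly from the $A$-weight decomposition, obtaining all root vectors $E_{ij}$ at once. What the paper's route buys is brevity (a citation); what yours buys is a complete elementary proof of the specialized statement, essentially reproving the torus trick in the case at hand, with the Sidon condition visibly doing its job of forcing the off-diagonal weight spaces to be distinct lines.
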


\begin{remark}
  Properly speaking, Gabber's original result implies here that $G$
  contains a maximal torus of~$\SU(V)$, and the fact that $G$ is
  semisimple and connected then implies that $G$ is $\SU(V)$ (see,
  e.g.,~\cite[p.\,36, prop.\,13]{bourbaki-lie-9}).
\end{remark}

We emphasize that the subgroup $A$ can be arbitrary: it may be finite,
and need not be closed.

We can then deduce the following criterion.

\begin{proposition}\label{pr-gabber}
  Let $G$ be a connected commutative algebraic group over the finite
  field~$k$.  Let $M$ be a simple perverse sheaf on~$G$ which is pure of
  weight~$0$ and of tannakian dimension $r\geq 1$. Assume that $M$ is
  generically unramified.
  \par
  The geometric tannakian group $\ggeo{M}$ contains $\SL_r$ if and only
  there exists an unramified character $\chi\in\charg{G}(k_n)$ for some
  integer $n\geq 1$ such that the eigenvalues of $\Thetaf_{M,k_n}(\chi)$
  are distinct and form a Sidon set in~$\Cc^{\times}$. 
\end{proposition}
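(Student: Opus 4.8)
The plan is to deduce Proposition~\ref{pr-gabber} from the equidistribution theorem (Theorem~\ref{th-2}), the tannakian formalism, and Gabber's torus trick (Theorem~\ref{th-gabber}). First I would dispose of the easy direction. Suppose $\ggeo{M}$ contains $\SL_r$; then, since $M$ is generically unramified, Theorem~\ref{th-2} (or Theorem~\ref{th-4}) tells us that the unitary Frobenius conjugacy classes $\Thetaf_{M,k_n}(\chi)$ for $\chi$ unramified become equidistributed on average in the space of conjugacy classes of a maximal compact subgroup $K$ of $\garith{M}(\Cc)$. A maximal compact subgroup $K$ satisfies $\SU_r\subset K$ (up to the central torus), and the set of elements of $K$ whose eigenvalues are distinct and form a Sidon set in $\Cc^\times$ is a non-empty open set of positive measure (distinctness is open and dense; the Sidon condition $x_ix_j=x_kx_l\Rightarrow x_i\in\{x_k,x_l\}$ cuts out, in the complement of the diagonals, the complement of finitely many proper real-analytic subvarieties, hence still has full measure locally). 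By Corollary~\ref{cor-generate} (density of the union of unramified Frobenius conjugacy classes in a maximal compact), or directly by the equidistribution statement applied to a suitable test function supported in this open set, there must exist some $n$ and some unramified $\chi\in\charg{G}(k_n)$ with $\Thetaf_{M,k_n}(\chi)$ lying in it; this gives the desired character.

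For the converse, suppose we are given $n\geq 1$ and an unramified character $\chi\in\charg{G}(k_n)$ such that the eigenvalues of $\Thetaf_{M,k_n}(\chi)$ are distinct and form a Sidon set in $\Cc^\times$. We want to show $\SL_r\subset\ggeo{M}$. The idea is to apply Theorem~\ref{th-gabber} to a suitable compact group built from $\ggeo{M}$ together with the cyclic group generated by (the semisimple part of) the Frobenius class. Let $\bfG$ be the derived group of the neutral component $(\ggeo{M})^\circ$; since $M$ is geometrically simple, the standard representation restricted to $(\ggeo{M})^\circ$ is... well, not necessarily irreducible, but $M$ being simple means $\End(M_{\bar k})=\Qlb$, so the standard representation of $\ggeo{M}$ is irreducible; I would first reduce to the case where it stays irreducible on the neutral component (if not, one is in the ``induced'' situation and the eigenvalues of any Frobenius class have extra multiplicative structure incompatible with being a Sidon set of size $r$ — this needs a short argument, but it is exactly the kind of degenerate case Gabber's trick is designed around, and in fact the hypothesis $|S|=r$ with $S$ Sidon in the theorem already rules it out after we set $A$ appropriately). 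Take $K$ a maximal compact subgroup of $\garith{M}(\Cc)$, let $\xi\in K$ be a representative of the semisimple part of $\Thetaf_{M,k_n}(\chi)$, and let $T$ be a maximal torus of $K$ containing $\xi$ (possible since $\xi$ has distinct eigenvalues, so it is regular). Set $A$ to be the (topologically closed) subgroup of the diagonal torus $D\subset\GL_r(\Cc)$ generated by $\xi$ together with the maximal compact of the central torus of $\garith{M}$; then $A$ normalizes the compact form $G_{\mathrm{cpt}}$ of $\ggeo{M}$ (indeed $\ggeo{M}$ is normal in $\garith{M}$ by Proposition~\ref{pr:geom-vs-arith1}, and after Weyl's unitarian trick $G_{\mathrm{cpt}}$ is normalized by $K\supset\langle\xi\rangle$). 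The restrictions to $A$ of the $r$ diagonal characters $\chi_i$ are exactly the eigenvalues of $\xi$ (as characters of $\langle\xi\rangle$), so the set $S$ of Theorem~\ref{th-gabber} is precisely the eigenvalue set, which by hypothesis has $r$ distinct elements forming a Sidon set in $\Cc^\times\supset\widehat A$. Applying Theorem~\ref{th-gabber} to $\bfG=$ the semisimple connected compact subgroup underlying $\ggeo{M}^{\mathrm{der},\circ}$ acting on $V=\Cc^r$ gives $\bfG=\SU_r$, i.e. $\ggeo{M}$ contains $\SL_r$.

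The main obstacle I expect is the reduction, in the converse direction, to the situation where Gabber's hypotheses genuinely apply: one must check (i) that the geometric tannakian group acts irreducibly with the compact form being connected semisimple up to center — which requires knowing $M$ geometrically simple forces the standard representation of $\ggeo{M}^\circ$ to be irreducible, and here one uses that $M$ is simple and an object of $\Ppintarith(G)$, so there is no translation-invariance issue (see Corollary~\ref{cor-translate-irred-var-ab} for the abelian-variety analogue, and in general the point is that $M$ has no ``induced'' structure compatible with a size-$r$ Sidon eigenvalue set), and (ii) that the eigenvalues of $\Thetaf_{M,k_n}(\chi)$, which a priori are unitary complex numbers, indeed form a subset of the character group $\widehat A$ — this is automatic once we set $A=\overline{\langle\xi\rangle}\cdot(\text{center})$, since the $\chi_i|_A$ are by construction the eigenvalue characters. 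A secondary technical point is making the measure-theoretic argument in the easy direction precise: that the locus of Sidon-spectrum regular elements in $\SU_r$ (or $K$) is open of positive measure, so that Corollary~\ref{cor-generate} or the equidistribution of Theorem~\ref{th-2} forces a Frobenius class to land there. Once these normalizations are in place, the proof is a direct citation of Theorem~\ref{th-gabber} in one direction and of Theorem~\ref{th-2}/Corollary~\ref{cor-generate} in the other.
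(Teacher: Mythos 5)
Your proposal is correct and follows the paper's own proof essentially verbatim: the forward direction is the same equidistribution/positive-measure-of-an-open-set argument via Theorem~\ref{th-2}, and the converse applies Gabber's torus trick (Theorem~\ref{th-gabber}) with $A$ generated by a representative of the Frobenius class $\Thetaf_{M,k_n}(\chi)$ and with the neutral component of $\ggeo{M}$, whose normalization by $\garith{M}$ comes from Proposition~\ref{pr:geom-vs-arith1}. The points you flag as needing care (semisimplicity and irreducibility of the action of the neutral component before invoking Theorem~\ref{th-gabber}) are passed over at least as quickly in the paper, which applies the theorem directly to the neutral component, so your write-up is, if anything, the more explicit of the two.
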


\begin{proof}
  Suppose that $\ggeo{M}$ contains $\SL_r$. Let $U\subset \SU_r(\Cc)$ be
  the set of matrices whose eigenvalues are distinct and form a Sidon
  set in~$\Cc^{\times}$. This is an open set (for the Lie group
  topology), so that  equidistribution implies
  $$
  \liminf_{N\to+\infty}\frac{1}{N} \sum_{1\leq n\leq N}
  \frac{1}{|G(k_n)|} \sum_{\Thetaf_{M,k_n}(\chi)\in U}1 >0,
  $$
  and hence there exists $n\geq 1$ and $\chi\in\charg{G}(k_n)$ such that
  $\Thetaf_{M,k_n}(\chi)\in U$.
  \par
  Conversely, if an unramified character $\chi\in\charg{G}(k_n)$ exists
  with $\Thetaf_{M,k_n}(\chi)\in U$, then we can apply
  Theorem~\ref{th-gabber} to the group $A$ generated by a fixed element
  in the conjugacy class $\Thetaf_{M,k_n}(\chi)$, and to the neutral
  component of the geometric tannakian group of~$M$ (which is normalized
  by $\garith{M}$, since $\ggeo{M}$ is normal in $\garith{M}$ by
  Proposition~\ref{pr:geom-vs-arith1} and its neutral component is a
  characteristic subgroup).
\end{proof}

In general, we do not have robust methods to check the existence of a
character with the desired properties. However, we may combine this with
a specialization argument.

\begin{proposition}
  Let $G$ be a connected commutative algebraic group over the finite
  field~$k$.  Let $M$ be a simple perverse sheaf on~$G$ which is pure of
  weight~$0$ and of tannakian dimension $r\geq 1$. Assume that $M$ is
  generically unramified. Let $f\colon G\to H$ be a morphism of
  commutative algebraic groups over~$k$.
  \par
  Suppose that the object $N=Rf_!M$ is a geometrically simple perverse
  sheaf on~$H$ that is pure of weight~$0$, and suppose that
  $\chi\circ f$ is unramified for $M$ whenever $\chi$ is unramified
  for~$N$.
  \par
  If the geometric tannakian group $\ggeo{N}$ contains $\SL_r$, then
  $\ggeo{M}$ contains $\SL_r$.
\end{proposition}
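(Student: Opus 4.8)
The plan is to reduce the statement to Proposition~\ref{pr-gabber} applied on both sides, transporting a character with ``Sidon eigenvalues'' from $H$ back to $G$ along $f$ by means of the projection formula.

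First I would produce the character on $H$. Since $\ggeo{N}$ contains $\SL_r$ and $N$ is generically unramified, Proposition~\ref{pr-gabber} applied to $N$ gives an integer $n\geq 1$ and a character $\chi\in\charg{H}(k_n)$, unramified for $N$, such that the eigenvalues of $\Thetaf_{N,k_n}(\chi)$ are pairwise distinct and form a Sidon set in $\Cc^\times$. (This is the one step where generic unramifiedness of $N$ is used; it is also what makes the hypothesis ``$\chi\circ f$ is unramified for $M$ for $\chi$ unramified for $N$'' non-vacuous. If one prefers not to invoke Proposition~\ref{pr-gabber} here, the same character is obtained directly from Theorem~\ref{th-4} applied to $N$, which is arithmetically semisimple: a maximal compact subgroup of $\garith{N}(\Cc)\supseteq\SL_r$ contains $\SU_r(\Cc)$, the set of unitary conjugacy classes with distinct Sidon eigenvalues is open and non-empty, hence of positive measure for $\nu_{cp}$, so equidistribution on average forces such a conjugacy class to occur.)

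Next I would transport $\chi$ to $G$. Put $\chi'=\chi\circ f=\widehat f(\chi)\in\charg{G}(k_n)$, which is unramified for $M$ by hypothesis. Using the canonical isomorphism $\mcL_{\chi'}\simeq f^*\mcL_\chi$ and the projection formula,
\[
  Rf_!(M_{\chi'})=Rf_!\bigl(M\otimes f^*\mcL_\chi\bigr)\simeq (Rf_!M)\otimes\mcL_\chi=N_\chi ,
\]
and composing $Rf_!$ with the structural morphism $H_{\bar k}\to\Spec(k_n)$ yields a Frobenius-equivariant isomorphism $R\Gamma_c(G_{\bar k},M_{\chi'})\simeq R\Gamma_c(H_{\bar k},N_\chi)$. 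Since $\chi'$ is weakly unramified for $M$ and $\chi$ is weakly unramified for $N$, both complexes are concentrated in degree $0$, so $H^0_c(G_{\bar k},M_{\chi'})\simeq H^0_c(H_{\bar k},N_\chi)$ as $\Fr_{k_n}$-modules (pure of weight $0$, since $M$ is). In particular the tannakian dimension of $N$ equals $r$, and the unitary conjugacy classes attached to the semisimple parts of Frobenius on the two sides coincide: $\Theta_{M,k_n}(\chi')=\Theta_{N,k_n}(\chi)$. As $\chi'$ and $\chi$ are unramified for $M$ and $N$ respectively, these equal $\Thetaf_{M,k_n}(\chi')$ and $\Thetaf_{N,k_n}(\chi)$.

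Finally, the eigenvalues of $\Thetaf_{M,k_n}(\chi')$ are therefore pairwise distinct and form a Sidon set in $\Cc^\times$, and $M$ is generically unramified by hypothesis; hence Proposition~\ref{pr-gabber} applied to $M$ gives $\SL_r\subset\ggeo{M}$, as desired. The only genuine difficulty is the appeal to generic unramifiedness of $N$ in the first step; everything else is the projection formula combined with the routine bookkeeping that ``weakly unramified plus cohomology of dimension $r$ concentrated in degree $0$'' allows one to identify $\Theta$ with $\Thetaf$ and to match the conjugacy classes for $M$ and $N$.
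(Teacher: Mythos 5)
Your proposal is correct and follows essentially the paper's own proof: Proposition~\ref{pr-gabber} applied to $N$ produces an unramified character $\chi$ with distinct Sidon eigenvalues, your projection-formula identification $H^0_c(G_{\bar k},M_{\chi\circ f})\simeq H^0_c(H_{\bar k},N_\chi)$ is just a direct way of establishing the equality of characteristic polynomials that the paper gets from the trace-formula comparison underlying Proposition~\ref{pr-specialize}, and Proposition~\ref{pr-gabber} applied to $M$ then concludes. The only caution concerns your parenthetical alternative via Theorem~\ref{th-4}: that argument only produces a \emph{weakly} unramified character for $N$, whereas both the hypothesis on $\chi\circ f$ and the passage to $\Thetaf_{N,k_n}(\chi)$ require $\chi$ to be unramified for $N$, so the route through Proposition~\ref{pr-gabber} is the one that actually works.
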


\begin{proof}
  By Proposition~\ref{pr-gabber}, the assumption implies that there
  exists a character~$\chi\in\charg{H}(k_n)$ unramified for~$N$ for
  which $\Thetaf_{N,k_n}(\chi)$ has distinct eigenvalues forming a Sidon
  set. Since $\Thetaf_{M,k_n}(\chi\circ f)$ has the same characteristic
  polynomial, the character $\chi\circ f\in\charg{G}(k_n)$ has the same
  property; by Proposition~\ref{pr-gabber} again, it follows that
  $\ggeo{M}$ contains $\SL_r$.
\end{proof}


\section{Recognition criteria for $\mathbf{E}_6$}

We include here a criterion of Krämer to recognize the exceptional
group~$\mathbf{E}_6$ in one of its $27$-dimensional faithful
representations (we always mean by $\mathbf{E}_6$ the simply-connected
form).
\index{exceptional group~$\mathbf{E}_6$}

\begin{proposition}[Krämer]\label{pr-e6}
  Let $\bfG$ be a connected semisimple linear algebraic group
  over~$\Qlb$ or~$\Cc$ and~$\rho$ an irreducible faithful
  $27$-dimensional representation of~$\bfG$. If the $729$-dimensional
  representation $\End(\rho)$ of~$\bfG$ contains an irreducible
  $78$-dimensional subrepresentation, then $\bfG$ is isomorphic to the
  exceptional group $\mathbf{E}_6$ and~$\rho$ is one of its two
  fundamental $27$-dimensional representations.
\end{proposition}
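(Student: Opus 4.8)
The plan is a classification argument in the representation theory of semisimple groups. Since the statement depends only on the isomorphism type of $\bfG$ as an algebraic group over an algebraically closed field of characteristic zero, we may assume that we work over $\Cc$. Write $\bfG$ as an almost-direct product $\bfG_1\cdots\bfG_s$ of its simple factors, pull $\rho$ back to the simply connected cover $\prod_i\bfG_i$, and use that an irreducible representation of a product of simple groups is an external tensor product: $\rho\cong\rho_1\boxtimes\cdots\boxtimes\rho_s$ with each $\rho_i$ irreducible, and necessarily nontrivial because $\rho$ is faithful. As $\dim\rho=27=3^3$ is odd, each $\dim\rho_i$ is odd and at least $3$, so $s\in\{1,2,3\}$ with $(\dim\rho_i)_i$ equal to $(3,3,3)$, $(3,9)$ or $(27)$.

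First I would rule out $s=2$ and $s=3$. In those cases $\End(\rho)=\bigotimes_i\End(\rho_i)$, and any irreducible constituent of $\End(\rho)$ has the form $\bigotimes_iU_i$ with $U_i$ an irreducible constituent of $\End(\rho_i)$; so a $78$-dimensional constituent would yield a factorization $78=\prod_i\dim U_i$. The simple groups with an irreducible representation of dimension $3$ are $\mathrm{SL}_3$ (standard or dual) and $\mathrm{SL}_2$ (on $\mathrm{Sym}^2$), with $\End(\rho_i)$ of constituent-dimensions $\{1,8\}$ and $\{1,3,5\}$; those with a $9$-dimensional irreducible representation are $\mathrm{SL}_9$ (standard/dual), $\mathrm{SL}_2$ (on $\mathrm{Sym}^8$) and $\mathrm{Spin}_9$ (on the $9$-dimensional standard representation), with constituent-dimensions $\{1,80\}$, $\{1,3,5,7,9,11,13,15,17\}$ and $\{1,36,44\}$. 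A direct check shows that $78=2\cdot3\cdot13$ admits no factorization into two or three factors drawn from these sets, so $\bfG$ must be simple.

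It then remains to go through the simple groups that admit an irreducible representation of dimension exactly $27$ and to examine $\rho\otimes\rho^{\vee}$ in each case. Bounding highest weights by the Weyl dimension formula, the list (a simple group together with its $27$-dimensional representation, up to isogeny, duality and diagram automorphisms) is $\mathrm{SL}_{27}$ (standard), $\mathrm{SL}_2$ ($\mathrm{Sym}^{26}$), $\mathrm{SL}_3$ ($V_{2\varpi_1+2\varpi_2}$), $\mathrm{SO}_7$ ($\mathrm{Sym}^2_0$ of the standard), $\mathrm{Sp}_8$ ($\Lambda^2_0$ of the standard), $G_2$ ($V_{2\varpi_1}$), and $\mathbf{E}_6$ (one of its two $27$-dimensional minuscule representations $V_{\varpi_1}$, $V_{\varpi_6}$). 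For $\mathrm{SL}_{27}$ one has $\End(\rho)=\mathbf{1}\oplus\mathfrak{sl}_{27}$, of constituent-dimensions $1$ and $728$; for $\mathrm{SL}_2$ every constituent of $\mathrm{Sym}^{26}\otimes\mathrm{Sym}^{26}$ is odd-dimensional; for $\mathrm{SL}_3$ the representation $\rho$ is self-dual with trivial central character, hence so are all constituents of $\End(\rho)$, while the only $78$-dimensional irreducible representations of $\mathrm{SL}_3$, namely $V_{11\varpi_1}$ and $V_{11\varpi_2}$, have nontrivial central character; and $\mathrm{SO}_7$, $\mathrm{Sp}_8$ and $G_2$ have no irreducible representation of dimension $78$ at all. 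Finally, for $\mathbf{E}_6$ one has $\rho\otimes\rho^{\vee}=\mathbf{1}\oplus\mathfrak{e}_6\oplus W$ with $W$ the remaining $650$-dimensional irreducible constituent, so the adjoint representation $\mathfrak{e}_6$ is an irreducible $78$-dimensional subrepresentation of $\End(\rho)$ and the hypothesis is satisfied; since $V_{\varpi_1}$ and $V_{\varpi_6}$ are exactly the $27$-dimensional irreducible representations of $\mathbf{E}_6$, the theorem follows.

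The real obstacle is not conceptual but one of exhaustiveness: making sure the two classification lists — simple groups with a $3$-, $9$-, or $27$-dimensional irreducible representation — are complete, and carrying out accurately the elementary (Weyl character formula and tensor-product) bookkeeping that rules out a $78$-dimensional constituent in every case except $\mathbf{E}_6$. All of this is routine; the point is to organize it so that no case is missed.
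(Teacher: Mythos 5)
The paper itself contains no proof of this proposition: it simply refers to Krämer~\cite[Lemma~4]{kramer-e6}, so there is no in-text argument to compare yours against line by line. (The closest argument in the paper is the proof of Proposition~\ref{pr-larsen-e6}, which prunes the classification differently, via non-self-duality of the representation; your hypothesis of a $78$-dimensional constituent gives no such shortcut, so you are forced to list \emph{all} simple groups with a $27$-dimensional irreducible representation, which makes exhaustiveness the crux, as you say.) Your strategy is sound and the individual verifications you give are correct: the factorization $\rho\simeq\rho_1\boxtimes\cdots\boxtimes\rho_s$ with dimensions in $\{3,9,27\}$, the constituent-dimension bookkeeping that rules out $s=2,3$ (your lists of simple groups with irreducibles of dimensions $3$ and $9$, namely $A_1,A_2$ and $A_1,A_8,B_4$, are complete, and $78$ indeed admits no factorization from the listed sets), the parity argument for $\symk^{26}$ of $\SL_2$, the central-character argument for $\SL_3$ (the only $78$-dimensional irreducibles are $V_{11\varpi_1}$ and $V_{11\varpi_2}$, of nontrivial central character), the absence of $78$-dimensional irreducibles for $B_3$, $C_4$, $G_2$, and the decomposition $\End(\rho)\simeq\mathbf{1}\oplus\mathfrak{e}_6\oplus W$ with $\dim W=650$ for $\mathbf{E}_6$.

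There is, however, one concrete omission in your classification of simple groups with a $27$-dimensional irreducible representation: you miss type $B_{13}$, that is, $\mathrm{Spin}_{27}$ acting through $\SO_{27}$ on its $27$-dimensional vector representation. This is a faithful irreducible $27$-dimensional representation of the connected semisimple group $\SO_{27}$ (the proposition only requires $\rho$ to be faithful on $\bfG$, not on a simply connected form), so it belongs on the list; the complete list is $A_1$, $A_2$, $A_{26}$, $B_3$, $B_{13}$, $C_4$, $G_2$, $\mathbf{E}_6$. The missing case is harmless for the conclusion, since for the vector representation $V$ of $\SO_{27}$ one has $\End(V)\simeq \mathbf{1}\oplus\bigwedge^2 V\oplus \syms_0 V$ of dimensions $1$, $351$, $377$, hence no $78$-dimensional constituent; but as written your case analysis is not exhaustive, and this one-line case must be added for the proof to be complete.
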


See~\cite[Lemma\,4]{kramer-e6} for the proof. We will apply this in
Section~\ref{sec-cubic}, although somewhat differently than we use
Larsen's Alternative.  The following criterion is closer to the spirit
of the latter, and might have interesting applications (see again
Section~\ref{sec-cubic} for an attempt).

\begin{proposition}\label{pr-larsen-e6}
  Let $\bfG$ be a connected semisimple linear algebraic group
  over~$\Qlb$ or~$\Cc$, and let $\rho$ be a faithful representation of
  $\bfG$ of dimension~$27$. Then $\bfG$ is isomorphic to the exceptional
  group $\mathbf{E}_6$ and~$\rho$ to one of the two fundamental
  $27$-dimensional representations of~$\bfG$ if and only if
  $M_4(\bfG,\rho)=3$ and $\rho$ is not self-dual.
\end{proposition}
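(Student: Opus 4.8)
The plan is to prove Proposition~\ref{pr-larsen-e6} by combining the Guralnick--Tiep classification of reductive subgroups $\bfG\subset\GL(V)$ with small fourth moment (used in the proof of Larsen's alternative, Theorem~\ref{th-larsen}) with Kr\"amer's recognition criterion (Proposition~\ref{pr-e6}). The ``only if'' direction is the easy one: if $\bfG\cong\mathbf{E}_6$ and $\rho$ is one of its two fundamental $27$-dimensional representations, then $\rho$ is not self-dual (the two $27$-dimensional representations are dual to each other and distinct), and the decomposition of $\End(\rho)=\rho\otimes\rho^\vee$ into irreducibles for $\mathbf{E}_6$ is $\mathbf{1}\oplus\mathbf{78}\oplus\mathbf{650}$ (the adjoint representation has dimension $78$, and $729=1+78+650$), so that by the equality case of~\eqref{eq-larsen-decomp-endomorphisms} we get $M_4(\bfG,\rho)=1^2+1^2+1^2=3$.

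For the ``if'' direction, suppose $M_4(\bfG,\rho)=3$ and $\rho$ is not self-dual; I want to conclude $\bfG\cong\mathbf{E}_6$ with $\rho$ a minuscule $27$. First I would record that $M_4(\bfG,\rho)=3$ forces, via~\eqref{eq-larsen-decomp-endomorphisms}, a decomposition $\End(\rho)=\mathbf{1}\oplus W_1\oplus W_2$ into three pairwise non-isomorphic irreducible $\bfG$-subrepresentations (note $\End(\rho)$ always contains the trivial representation since $\rho$ is irreducible by Theorem~\ref{th-larsen}(2), as $M_4(\bfG,\rho)=3\le 5$; and it contains a second summand, the kernel of the trace, which must itself split into exactly two further irreducibles for the fourth moment to be exactly $3$). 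Since $\rho$ is not self-dual, $W_1$ and $W_2$ cannot be the symmetric and alternating squares; rather they are the two nontrivial constituents of $\rho\otimes\rho^\vee$. One of these two constituents is the adjoint representation $\mathrm{Lie}(\bfG)$, which always occurs inside $\rho\otimes\rho^\vee$ with multiplicity one when $\rho$ is faithful and irreducible and $\bfG$ is semisimple (the Killing form gives the inclusion $\mathrm{Lie}(\bfG)\hookrightarrow\mathfrak{gl}(V)=\End(\rho)$, and its image is $\bfG$-irreducible because $\bfG$ is simple---one first reduces to the simple case, see below). So the adjoint representation of $\bfG$ is one of $W_1,W_2$, hence is an irreducible subrepresentation of $\End(\rho)$.

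The remaining point is to pin down $\dim\mathrm{Lie}(\bfG)$. Here I would invoke the Guralnick--Tiep moment bounds (\cite{GurTiep}, as cited in the proof of Theorem~\ref{th-larsen}), which classify the reductive $\bfG\subset\GL_r$ with $M_4(\bfG)\le 3$: apart from the classical groups $\SL_r,\Sp_r,\SO_r$ (which are ruled out here since for those the relevant constituents of $\rho\otimes\rho^\vee$ are $\mathrm{Sym}^2$ and $\wedge^2$, forcing $\rho$ self-dual, contrary to hypothesis) and finite groups (ruled out since $\bfG$ is assumed connected semisimple, hence infinite unless trivial, and $r=27>1$), the only possibility with $\rho$ non-self-dual, $r=27$ and $M_4=3$ is $\bfG$ of type $\mathbf{E}_6$ acting through a minuscule $27$-dimensional representation. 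Alternatively, and more self-containedly, once one knows that the adjoint representation is an irreducible summand of $\End(\rho)$ one can feed this directly into Kr\"amer's Proposition~\ref{pr-e6}: we need $\dim\mathrm{Lie}(\bfG)=78$. To get this without citing the full Guralnick--Tiep list, reduce first to $\bfG$ simple: if $\bfG=\bfG_1\times\cdots\times\bfG_s$ then $\rho=\rho_1\boxtimes\cdots\boxtimes\rho_s$ and $M_4(\bfG,\rho)=\prod M_4(\bfG_i,\rho_i)\ge 2^s$ by Theorem~\ref{th-larsen}(1) applied to each factor of dimension $\ge 2$; since $M_4=3<4$ we must have $s=1$. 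Then among simple groups with a faithful irreducible non-self-dual representation of dimension exactly $27$, a short case check (the only simple groups with a $27$-dimensional irreducible representation are $\mathbf{E}_6$ with its two minuscule $27$'s, $\mathrm{PGL}_2$/$\mathrm{SL}_2$ with $\mathrm{Sym}^{26}$ which is self-dual, $\mathrm{SL}_{27}$ with the standard representation giving $M_4=2$, and a few others all of which are either self-dual or have $M_4\ne 3$) leaves only $\mathbf{E}_6$.

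The main obstacle I expect is making the ``short case check'' of simple groups with $27$-dimensional faithful irreducibles genuinely short and airtight without either reproducing the Guralnick--Tiep analysis or a table of small-dimensional representations; the cleanest route is probably to lean on Proposition~\ref{pr-e6} (which already packages the hard representation-theoretic input) by first establishing that the adjoint representation $\mathrm{Lie}(\bfG)$ is an irreducible constituent of $\End(\rho)$---which follows from $M_4(\bfG,\rho)=3$ together with non-self-duality and faithfulness as sketched above---and then arguing that in the list $\mathbf{1}\oplus W_1\oplus W_2$ the dimensions $1+\dim W_1+\dim W_2=729$ together with the constraint that $\dim W_i\in\{\dim\mathrm{Lie}(\bfG)\}\cup\{\text{other constituent}\}$ and the known possible values of $\dim\mathrm{Lie}(\bfG)$ for a simple group admitting a $27$-dimensional irreducible representation force $\dim\mathrm{Lie}(\bfG)=78$, at which point Proposition~\ref{pr-e6} finishes the argument. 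So the skeleton is: (1) reduce to $\bfG$ simple via multiplicativity of $M_4$; (2) from $M_4=3$ and non-self-duality extract the decomposition $\End(\rho)=\mathbf{1}\oplus W_1\oplus W_2$ with the adjoint among the $W_i$; (3) identify $W_i=\mathbf{78}$; (4) apply Proposition~\ref{pr-e6}; (5) the converse is the dimension count $729=1+78+650$ plus non-self-duality of the minuscule $27$.
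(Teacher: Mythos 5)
Your forward direction and your reduction to a single (almost) simple factor via multiplicativity of $M_4$ are essentially the paper's argument (the paper is slightly more careful, passing through the universal cover $\bfG_1\times\cdots\times\bfG_k\to\bfG$ rather than assuming $\bfG$ is literally a direct product, and ruling out $M_4(\bfG_i,\rho_i)=1$ by faithfulness). The converse, however, has a genuine gap at exactly the point you flag. Your first route invokes a ``Guralnick--Tiep classification of reductive $\bfG\subset\GL_r$ with $M_4\leq 3$'', but no such result is available here: the Guralnick--Tiep theorem cited in this paper concerns the \emph{eighth} moment, and the $M_4=3$ statements in Theorem~\ref{th-larsen} (due to Katz) only apply under the extra hypothesis that $\bfG$ preserves a symmetric or alternating form, which is precisely what your non-self-duality hypothesis rules out. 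Your second, ``self-contained'' route reduces everything to an enumeration of simple groups admitting a faithful irreducible $27$-dimensional representation, but the enumeration you sketch is both incomplete (it omits, e.g., the self-dual $27$-dimensional irreducibles of $A_2$, $B_3$ and $G_2$) and unproven; since identifying $\dim\mathrm{Lie}(\bfG)=78$ already requires this enumeration, the subsequent appeal to Proposition~\ref{pr-e6} does no real work, and the burden of proof has simply been moved to the unestablished ``short case check''.

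The paper closes this gap with a device you do not use: since $\rho$ is not self-dual, the longest element of the Weyl group cannot act by $-1$, so the root system of the simple factor must be of type $E_6$, $A_l$ ($l\geq 2$), or $D_l$ with $l$ odd. After that restriction the case check really is short: in type $A$ the only $27$-dimensional irreducibles are the self-dual $A_2$-representation of highest weight $2\varpi_1+2\varpi_2$ and the standard representation of $A_{26}$ (which has $M_4=2$), and in type $D_l$ ($l\geq 5$ odd) the smallest non-self-dual irreducibles are the half-spin representations of dimension $2^{l-1}$, so no $27$ occurs. If you want to salvage your argument, replace the appeal to a fourth-moment classification (or to an unverified list of $27$-dimensional representations of all simple groups) by this non-self-duality constraint on the type; with it, your skeleton becomes essentially the paper's proof, and the detour through the adjoint constituent and Proposition~\ref{pr-e6} can be dropped.
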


\begin{proof}
  Suppose first that $\bfG=\mathbf{E}_6$ and $\rho$ is one of its
  fundamental representations of dimension~$27$. These representations
  are not self-dual (see~\cite[Table\,1,\,p.\,213]{bourbaki-lie-6}).
  Using the Weyl dimension formula\index{Weyl dimension formula} (see~\cite[Th.\,2,\
  p.\,151]{bourbaki-lie-8} and~\cite[Pl.\,V,\,p.\,260]{bourbaki-lie-6}),
  we see that the dimensions of the irreducible representations
  of~$\mathbf{E}_6$ that may possibly occur in the $729$-dimensional
  representation on $\End(\rho)$ are $1$, $27$, $78$, $351$, $650$. We
  know that the trivial representation appears once in~$\End(\rho)$, and
  that the $78$-dimensional adjoint representation~$\Ad$ appears at
  least once. But the equation
  $$
  729-79=650=27a+78b+351c+650d
  $$
  has the unique non-negative integral solution $(a,b,c,d)=(0,0,0,1)$
  (looking modulo~$3$, it becomes $d\equiv 1\mods{3}$). So we must have
  an isomorphism
  $$
  \End(\rho)\simeq \mathbf{1}\oplus \Ad\oplus \rho_{650},
  $$
  where $\rho_{650}$ has dimension~$650$, and hence the fourth moment
  $M_4(\mathbf{E}_{6},\rho)$ is equal to~$3$. (This is also noted
  without proof by Katz~\cite[Rem.\,1.2.3]{katz-larsen}.)
  \par
  We now prove the converse, and assume that $M_4(\bfG,\rho)=3$ and
  $\rho$ is not self-dual. Since the fourth moment is $\leq 5$, the
  representation $\rho$ is irreducible (Theorem~\ref{th-larsen},
  (1)). Now let
  $$
  \bfG_1\times \cdots \times \bfG_k\to \bfG\fleche{\rho}\GL_{27}
  $$
  be the representation obtained from the decomposition of the algebraic
  universal covering of $\bfG$ in product of almost simple groups. This
  composition decomposes as an external tensor product
  $$
  \rho_1\boxtimes \cdots\boxtimes \rho_k
  $$
  of irreducible representations of~$\bfG_i$.  We then have
  $$
  3=M_4(\bfG,\rho)=\prod_{i=1}^kM_4(\bfG_i,\rho_i)
  $$
  by~(\ref{eq-larsen-product}).
  The condition $M_4(\bfG_i,\rho_i)=1$ is impossible (since it implies
  that $\dim(\rho_i)=1$, and hence $\rho_i$ would be trivial and this
  contradicts the faithfulness assumption), so we have a single factor
  $\bfG_1$.
  \par
  The representation $\rho_1$ is not self-dual, which implies that the
  root system of~$\bfG_1$ (and hence of $\bfG$) can only be of type $E_6$,
  or $A_l$ for $l\geq 2$ or $D_l$ with $l\geq 3$ odd (see,
  e.g.~\cite[p.\,132, prop.\,12]{bourbaki-lie-8}, combined with the fact
  that the longest element of the Weyl group\index{Weyl group} acts by $-\mathrm{Id}$ for
  the other simple root systems).
  \par
  The groups of type $A_l$ with $l\geq 2$ which have a $27$-dimensional
  irreducible representation are of type $A_2$ (the representation with
  highest weight $2\varpi_1+2\varpi_2$, in the standard notation of
  Bourbaki) or $A_{26}$ (the standard representation).
  In the first case, the representation is actually self-dual, and in
  the second case, the fourth moment is equal to~$2$, so these are
  excluded (in particular, groups of type $D_3=A_2$ are also excluded).
  \par
  Let $l\geq 5$ be an odd integer. The representations of groups of type
  $D_l$ which are not self-dual and have smallest possible dimension are
  the half-spin representations\index{half-spin representation} of dimension $2^{l-1}$
  (see~\cite[p.\,210]{bourbaki-lie-8}). Thus only $D_5$ could possibly
  give rise to a representation of dimension~$27$; but one can check
  that there is no representation of this dimension of a group of
  type~$D_5$ (e.g., because of the Weyl Dimension Formula,
  see~\cite[Th.\, 2, p.\,151]{bourbaki-lie-8}).
  \par
  We conclude that the group $\bfG_1$ must be $\mathrm{E}_6$; since its
  $27$-dimensional representations are faithful, the projection
  $\bfG_1\to \bfG$ is an isomorphism.
\end{proof}

\begin{remark}
  This criterion also shows that it may happen that the fourth moment
  $M_4(\bfG,V)$ of a representation of a group~$\bfG$ is equal to~$3$,
  but the representation~$V$ is not self-dual.
\end{remark}

\section{Finiteness of tannakian groups on abelian varieties}
\label{ssec-finite-ab}

The following result strenghtens Theorem~\ref{th-finite-ab-1} in
situations when one can apply Larsen's Alternative to the fourth moment
on abelian varieties.

\begin{proposition}\label{pr-finite-ab-2}
  Let $M$ be a geometrically simple perverse sheaf of weight zero on a
  simple abelian variety~$A$ over~$k$.  Let $d$ be the tannakian
  dimension of~$M$. If the group $\garith{M}$ is virtually central,
  then the object $\End(M)$ in
  $\Ppint(G)$ is punctual and the fourth moment of
  $\garith{M}$ is equal to $d^2$.
\end{proposition}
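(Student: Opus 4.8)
The plan is to combine Theorem~\ref{th-finite-ab-1} with the fourth-moment description coming from Larsen's Alternative (Theorem~\ref{th-larsen}), via Propositions~\ref{pr-larsen-diophante} and~\ref{pr-sidon-moments1}-style arguments. First I would observe that the hypothesis says $\garith{M}$ is, modulo its intersection with the center $Z$ of $\GL_d$, a finite group; equivalently, if we set $\Gg = \garith{M}$, the quotient $\Gg/(\Gg\cap Z)$ is finite. The object $\End(M)$ in $\Ppint(A)$ corresponds to the representation $\Std\otimes\Std^{\vee}$ of $\Gg$; since scalars act trivially on $\End(\Std)$, the tannakian group of $\End(M)$ is $\Gg/(\Gg\cap Z)$, which is finite. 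By Theorem~\ref{th-finite-ab-1} applied to (the semisimplification of) $\End(M)$ — which is arithmetically semisimple of weight zero and lies in $\Ppintarith(A)$ — every geometrically simple constituent with finite arithmetic tannakian group is punctual, hence $\End(M)$ is punctual in $\Ppint(A)$. This gives the first assertion.

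Next I would deduce the fourth moment. Since $A$ is a \emph{simple} abelian variety, the set of ramified characters for $M$ is a finite union of \tacs\ (Remark~\ref{rm-ab-var-unram}), each of which, because $A$ is simple, reduces to a single character; thus the set $\mcS$ of ramified characters for $M$ — and more generally for $M\oplus M^{\vee}$ and for $\End(M)$ — is finite. By Proposition~\ref{pr-larsen-diophante}\eqref{eq-larsen}, with $\mcX=\wunram{M}$,
\[
  M_4(\garith{M},M)=\lim_{N\to+\infty}\frac1N\sum_{1\le n\le N}
  \frac{1}{|\mcX(k_n)|}\sum_{\chi\in\mcX(k_n)}|S(M,\chi)|^4,
\]
and by orthogonality of characters of $A(k_n)$ the inner sum over all $\chi\in\charg{A}(k_n)$ equals
\[
  |A(k_n)|\sum_{\substack{y_1,\dots,y_4\in A(k_n)\\ y_1+y_2=y_3+y_4}}
  \text{(products of }t_M\text{ and }\overline{t_M}\text{)}
  =|A(k_n)|\,\sum_{x\in A(k_n)} t_{M*_!M^{\vee}*_!M*_!M^{\vee}}(x;k_n).
\]
More usefully, $\frac{1}{|A(k_n)|}\sum_{\chi}|S(M,\chi)|^4=\sum_{x\in A(k_n)}|t_{\End(M)}(x;k_n)|^2$ where here $\End(M)=M*_{\intt}M^{\vee}$; since $\End(M)$ is punctual and of tannakian dimension equal to $\sum_i m_i^2$ where $\End(\Std)=\bigoplus_i m_i W_i$ is its decomposition into irreducibles of $\Gg$ (the $W_i$ being pairwise non-isomorphic because the category $\braket{M}^{\arith}$ is semisimple by Theorem~\ref{thm-Harith-fib-funct}), this punctual object is a sum of $\sum_i m_i^2$ skyscraper sheaves, and precisely $M_4(\garith{M},M)=\sum_i m_i^2$ of them are supported at the neutral element. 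One then checks, using $M$ geometrically simple so that $\mu(\mathrm{Ad})=1$ and the fact that every $W_i$ appearing is trivial on $\ggeo{M}$ (since $\Gg/(\Gg\cap Z)$ is finite forces $\ggeo{M}\subset Z$, hence $\ggeo{M}$ acts by scalars, hence trivially on $\End(\Std)$), that $\End(\Std)$ decomposes into $d^2$ distinct one-dimensional characters of $\Gg$, whence $M_4(\garith{M},M)=d^2$. I would also handle the contribution of the finitely many characters in $\charg{A}(k_n)\setminus\mcX(k_n)$ (which is $O(|k_n|^{d-1})$, and each summand $O(1)$ after the generic-vanishing estimate on dimensions), confirming it does not affect the limit — this is where the finiteness of $\mcS$, guaranteed by $A$ simple, is essential (compare Remark~\ref{rm-careful}).

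The main obstacle I anticipate is the bookkeeping around the punctuality of $\End(M)$ versus the averaged fourth moment: one must be careful that the ``fourth moment'' $M_4(\garith{M},M)$ as a group-theoretic invariant genuinely equals $t_{\End(M)}(e;k_n)$ in the limit, i.e., that the negligible part of $M*_!M^{\vee}$ (the difference between $*_!$ and $*_{\intt}$, controlled by Lemma~\ref{lem-mildeconv-quot}) contributes nothing at the neutral element for generic $\chi$ — this is exactly the mechanism in the proof of Proposition~\ref{pr-weyl-sum-bis}, and I would invoke Proposition~\ref{pr-larsen-diophante} directly to avoid reproving it. A secondary point requiring care: the reduction of $\End(M)$ (a priori not in $\Ppintarith$ but in $\Ppint$ geometrically) to a situation where Theorem~\ref{th-finite-ab-1} applies — here one passes to the arithmetic semisimplification and uses that each geometric constituent inherits a finite arithmetic tannakian group, being a subquotient of convolution powers of $M\oplus M^{\vee}$ whose tannakian group is finite modulo center.
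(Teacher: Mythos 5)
Your first assertion is handled exactly as in the paper: $\garith{M}/(Z\cap\garith{M})$ is the arithmetic tannakian group of the arithmetically semisimple object $\End(M)$, and Theorem~\ref{th-finite-ab-1} gives punctuality (no semisimplification is needed, since by Theorem~\ref{thm-Harith-fib-funct} every object of $\braket{M}^{\arith}$ is already arithmetically semisimple). The problems are in your treatment of the fourth moment. Your key claim that ``$\End(\Std)$ decomposes into $d^2$ \emph{distinct} one-dimensional characters of $\Gg$'' is asserted rather than proved, and the justifications you offer do not yield it: the equality $\mu(\mathrm{Ad})=1$ says nothing about multiplicities of the other characters, and the statement ``$\Gg/(\Gg\cap Z)$ finite forces $\ggeo{M}\subset Z$'' is unwarranted (it only forces $\ggeo{M}$ to be finite modulo $Z$, and the paper explicitly does not rule out non-central finite geometric tannakian groups on abelian varieties). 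Without distinctness, punctuality only gives $\End(\Std)=\bigoplus_i m_iW_i$ with one-dimensional $W_i$ and $\sum_i m_i=d^2$, hence by~(\ref{eq-larsen-decomp-endomorphisms}) only the \emph{lower} bound $M_4(\garith{M})=\sum_i m_i^2\geq d^2$. (Also, several statements in that passage are incorrect as written: the tannakian dimension of $\End(M)$ is $d^2$, not $\sum_i m_i^2$, and the number of skyscrapers supported at the neutral element with trivial twist is the multiplicity of the trivial representation, which is $1$ by Schur, not $M_4$.) The gap is fillable in two ways: the paper's way, which is to prove the matching upper bound $M_4\leq d^2$ directly from $\lvert\Tr(g)\rvert\leq d$ on a maximal compact subgroup together with $M_2(\garith{M})=1$ (Schur's lemma, since $M$ is irreducible), via~(\ref{eq-int-moments}); or purely representation-theoretically, by noting that for any character $\chi$ of $\Gg$ one has $\Hom_{\Gg}(\chi,\Std\otimes\Std^{\vee})\simeq\Hom_{\Gg}(\Std\otimes\chi,\Std)$, of dimension at most~$1$ since both are irreducible, so the characters are automatically pairwise distinct and $M_4=d^2$.

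The diophantine detour through Proposition~\ref{pr-larsen-diophante} is both unnecessary (once $\End(M)$ is punctual the argument is pure representation theory, as above) and not rigorous as you set it up. For the finitely many ramified characters, the individual sums $S(M,\chi)$ are \emph{not} $O(1)$ in general: Deligne's bounds only give $S(M,\chi)\ll \lvert k_n\rvert^{s/2}$ with $s$ controlled by the dimension of the support of $M$, so their contribution to the average of $\lvert S(M,\chi)\rvert^4$ over all characters need not vanish in the limit; this is precisely the phenomenon of Remark~\ref{rm-careful} and of the genus-two case in Theorem~\ref{thm:equidistribution-on-jacobians}, where the full-sum limit differs from the true fourth moment. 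So even granting that $A$ simple makes the ramified set finite, your claim that the ramified characters ``do not affect the limit'' is not established, and the identity you would obtain from the full sum is only an a priori upper bound for the limit in~(\ref{eq-larsen}), not an evaluation of $M_4$. The paper's proof avoids all of this by never leaving the compact group.
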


\begin{proof}
  We observe that $\garith{M}/(Z\cap
  \garith{M})$ is the arithmetic tannakian group of the arithmetically
  semisimple object
  $\End(M)$, and apply Theorem~\ref{th-finite-ab-1} to obtain the first
  conclusion.
  \par
  In particular, this implies that $\End(M)$, as a representation of
  $\garith{M}$, is a direct sum of characters. From~(\ref{eq-larsen-decomp}),
  applied to a decomposition in sum of characters, it follows that
  $$
  M_4(\garith{M})\geq d^2.
  $$
  \par
  On the other hand, let
  $K$ be a maximal compact subgroup of $\garith{M}(\Cc)$, and
  $\mu$ its Haar probability measure. By~(\ref{eq-int-moments}) and
  Schur's Lemma, we derive the inequality
  $$
  M_{4}(\garith{M})=\int_{K}|\Tr(g)|^{4}d\mu(g) \leq
  d^2\int_{K}|\Tr(g)|^{2}d\mu(g) =d^2,
  $$
  which concludes the proof.
\end{proof}

\begin{remark}
  There may exist irreducible subgroups~$\bfG$ of $\GL(V)$ with fourth
  moment equal to~$\dim(V)^2$. Indeed, this is the case, for instance,
  of any group which has the property that all irreducible
  representations with trivial central character have dimension~$1$,
  since only such representations can appear in the decomposition of
  $\End(V)$. A concrete example is given by finite Heisenberg
  groups\index{finite Heisenberg groups} (see, e.g., the
  paper~\cite{gerardin} of Gérardin for the relevant facts).
\end{remark}


\chapter{The product of the additive and the multiplicative groups}
\label{sec-product}

\section{Introduction}

In this chapter, we consider what is perhaps the simplest case of our
equidistribution results beyond those of the additive group and the
multiplicative group, namely the case of
$G=\Gg_m\times\Gg_a$. Concretely, this means that we are looking at the
distribution of two-parameter exponential sums of the type
\begin{equation}\label{eq-sums}
  \frac{1}{p}\sum_{(x,y)\in\Fpt\times\Fp}
  \chi(x)e\Bigl(\frac{ay}{p}\Bigr)t(x,y),
\end{equation}
where~$p$ is a prime number, $\chi$ a complex-valued multiplicative
character of the finite field $\Fp$, and the function~$t$ is a trace
function on $\Gg_m\times\Gg_a$ over~$\Fp$. In practice, we mostly
consider the analogues over extensions of~$\Ff_p$ of degree
$n\to+\infty$, but we will also discuss an horizontal statement in
Corollary~\ref{cor-hor-gagm}.

Throughout this chapter, we denote by~$k$ a finite field with an
algebraic closure~$\bar{k}$, and by~$\ell$ a prime different from the
characteristic of~$k$. We also fix a non-trivial additive
character~$\psi \colon k \to \bQl^\times$. For every $n\geq 1$, we
define~$\psi_n=\psi\circ\Tr_{k_n/k}$, a non-trivial additive character
of the extension~$k_n$ of~$k$ of degree~$n$ in~$\bar{k}$.

We always denote by~$G$ the group $\Gg_m\times\Gg_a$, and we will denote
by $p_1$ and~$p_2$ the projections $G\to \Gg_m$ and~$G\to\Gg_a$.  For
any $n\geq 1$ and any pair $(\chi,a)$ of an $\ell$-adic character of
$k_n^{\times}$ and an element of $k_n$, we will sometimes denote by
$\chag{\chi}{a}$ the character $(x,y)\mapsto \chi(x)\psi_n(ay)$ of
$G(k_n)$, and by $\mcL_{\chi,a}$ the corresponding $\ell$-adic character
sheaf.
\nomenclature[$c$]{$\chag{\chi}{a}$}{character on $\Gg_m\times\Gg_a$}

We first state the specialization of Theorem~\ref{th-3} to this case,
showing that there is always \emph{some} equidistribution statement for
the sums~(\ref{eq-sums}) in the vertical direction.

\begin{theorem}\label{th-1}
  Let~$M$ be an arithmetically semisimple $\ell$-adic perverse sheaf on
  $\Gg_m\times\Gg_a$ over~$k$, with trace function over~$k_n$
  denoted~$t(x,y;k_n)$.  Assume that~$M$ is pure of weight zero.
  \par
  There exist an integer $r\geq 0$ and a reductive
  subgroup~$\bfG\subset \GL_r$
  such that the sums
  $$
  S_n(a,\chi)=\sum_{(x,y)\in k_n^{\times}\times k_n}
  \chi(x)\psi_n(ay)t(x,y;k_n),
  $$
  where~$(a,\chi)$ are pairs of an element of~$k_n$ and a multiplicative
  character of $k_n^{\times}$, become equidistributed on average as
  $n\to +\infty$, with limit measure the image under the trace of the
  Haar probability measure on a maximal compact subgroup of~$\bfG(\Cc)$.
\end{theorem}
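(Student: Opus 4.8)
The plan is to derive Theorem~\ref{th-1} as the specialization of the general equidistribution Theorem~\ref{th-3} to the group $G=\Gg_m\times\Gg_a$; the only real content is matching the parameterization by pairs $(\chi,a)$ with the intrinsic character group $\charg{G}$.

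First I would fix, for each $n\geq 1$, the natural identification of $\charg{G}(k_n)$ with the set of pairs $(\chi,a)$, where $\chi$ is a multiplicative character of $k_n^{\times}$ and $a\in k_n$. Indeed, a character of the finite abelian group $G(k_n)=k_n^{\times}\times k_n$ is a product of a multiplicative character of $k_n^{\times}$ and an additive character of $k_n$; and since $\psi_n$ is non-trivial, the pairing $(a,y)\mapsto\psi_n(ay)$ on $k_n$ is perfect, so $a\mapsto(y\mapsto\psi_n(ay))$ identifies $k_n$ with the group of additive characters of $k_n$. Thus the pairs $(\chi,a)$ in the statement run bijectively over $\charg{G}(k_n)$, with $\chag{\chi}{a}(x,y)=\chi(x)\psi_n(ay)$; by the multiplicativity of character sheaves on a product (as in the proof of Proposition~\ref{prop-cond-char}), the associated character sheaf is the external tensor product of a Kummer sheaf on $\Gg_m$ and an Artin--Schreier-type sheaf on $\Gg_a$, pulled back along $p_1$ and $p_2$, although this last description is needed only as a sanity check.

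Next I would note that under this identification the sum $S_n(\chi,a)$ is exactly the value $S(M,\chag{\chi}{a})=\sum_{(x,y)\in G(k_n)}\chag{\chi}{a}(x,y)\,t_M((x,y);k_n)$ of the discrete Fourier transform of $M$ over $k_n$. The hypotheses reduce the auxiliary object $N$ of Theorem~\ref{th-3} to $M$ itself: since $M$ is perverse, $\pH^{0}(M)=M$; since $M$ is arithmetically semisimple, its arithmetic semisimplification is $M$; and since $M$ is pure of weight zero, its weight-zero perverse subsheaf is all of $M$. (Here $M$ need not lie in $\Ppintarith(G)$, but this is harmless: by Theorem~\ref{thm-arith_cat_neut_tan} the arithmetic tannakian group $\garith{M}$ is defined through the image $M_{\intt}$ of $M$ in $\Ppbarith(G)\simeq\Ppintarith(G)$, and by Theorem~\ref{thm-Harith-fib-funct} it is a reductive subgroup of $\GL_r$, with $r$ the tannakian dimension of $M$.) Applying Theorem~\ref{th-3} with $N=M$, and taking $\bfG=\garith{M}\subset\GL_r$, a maximal compact subgroup $K\subset\bfG(\Cc)$ with Haar probability measure $\mu$, and $\nu=\Tr_{*}\mu$, the families $\bigl(S(M,\chag{\chi}{a})\bigr)_{\chag{\chi}{a}\in\charg{G}(k_n)}$ become $\nu$-equidistributed on average as $n\to+\infty$. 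Translating back through the bijection of the first step (and using $\abs{\charg{G}(k_n)}=\abs{G(k_n)}=(\abs{k_n}-1)\abs{k_n}$), this is exactly the asserted equidistribution on average of the sums $S_n(\chi,a)$ over pairs $(\chi,a)$, with limiting measure the image under the trace of the Haar probability measure on a maximal compact subgroup of $\bfG(\Cc)$.

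Since Theorem~\ref{th-3}---and with it the generic and stratified vanishing theorems for $\Gg_m\times\Gg_a$, the construction of the tannakian category, and the application of Deligne's Riemann Hypothesis---is already available, there is no genuine obstacle here: the argument is pure bookkeeping. The only points that deserve a line of care are the perfect-pairing identification of the additive characters of $k_n$ with $k_n$, and the verification that $N=M$ under the present hypotheses, so that one may take $\bfG=\garith{M}$.
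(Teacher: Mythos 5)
Your proposal is correct and is exactly the paper's own argument: the paper disposes of Theorem~\ref{th-1} with the single remark that, taking $G=\Gg_m\times\Gg_a$ and $\bfG=\garith{M}$, it is Theorem~\ref{th-3} applied to the object~$M$ (for which $N=M$ under the stated hypotheses). Your additional bookkeeping — identifying $\charg{G}(k_n)$ with the pairs $(\chi,a)$ via the perfect pairing $\psi_n$, and noting that $\garith{M}$ is reductive in $\GL_r$ — just makes explicit what the paper leaves implicit.
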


With $G=\Gg_m\times \Gg_a$, $\bfG=\garith{M}$ and $r$ the tannakian dimension of $M$, this is
Theorem~\ref{th-3} for the object~$M$.

The remainder of this chapter will be dedicated to the exploration of
special examples.  We consider in particular examples where the object
$M$ (and hence the trace function in~(\ref{eq-sums})) is supported on
the ``diagonal'' $y=x$. Larsen's Alternative will allow us to prove,
with surprisingly little computation, that in this case the
group~$\bfG$ in Theorem~\ref{th-1} is always essentially as large as
possible.

More precisely, we first define
$\Delta\colon \Gg_m\to \Gg_m\times\Gg_a$
\nomenclature[$Delta$]{$\Delta$}{diagonal embedding $\Gg_m\to \Gg_m\times\Gg_a$} \index{diagonal embedding} to be the
diagonal embedding $x\mapsto (x,x)$; this is a closed
immersion. Define the diagonal in $\Gg_m\times\Gg_a$ to be the image
of~$\Delta$, and $j\colon \Gg_m\to\Gg_a$ to be the open immersion.

For any morphism $\lambda\colon \Gg_m\to G$, for an integer $n\geq 1$
and a pair $(\chi,a)\in \charg{\Gg}_m(k_n)\times k_n$, we denote by
$\mcL^{\lambda}_{\chi,a}$ the sheaf
$\lambda_*(\mcL_{\chi}\otimes j^*\mcL_{\psi(ay)})$ on $G_{\bar{k}}$.
\nomenclature[$L$]{$\mcL^{\lambda}_{\chi,a}$}{sheaf $\lambda_*(\mcL_{\chi}\otimes j^*\mcL_{\psi(ay)})$}

\begin{theorem}\label{th-sld1}
  With notation as in Theorem~\emph{\ref{th-1}}, suppose that the input
  object~$M$ is geometrically simple and supported on the diagonal.
  Suppose that~$M$ is not punctual and not geometrically isomorphic to
  $\mcLd_{\eta,b}[1]$ for some
  $\chag{\eta}{b}\in\charg{G}(k_n)$. Then the integer~$r$ is~$\geq 2$
  and the group~$\Gg$ contains~$\SL_r$.
\end{theorem}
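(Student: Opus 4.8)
The plan is to apply Larsen's alternative (Theorem~\ref{th-larsen}) to the arithmetic tannakian group $\bfG = \garith{M}$, using the fact that the diagonal embedding $\Delta\colon \Gg_m \to \Gg_m\times \Gg_a$, $x\mapsto (x,x)$, is an $\alpha$-Sidon morphism with $\alpha = 1$ by Proposition~\ref{pr-sidon-morphisms}\,(1) (taking $\alpha=1$ there). Write $M = \Delta_* N$ for a geometrically simple perverse sheaf $N$ on $\Gg_m$, pure of weight zero; since $\Delta$ is a closed immersion, $\Delta_*N = \Delta_!N$ and $M$ is geometrically simple and pure of weight zero on $G$. The object $M$ is not punctual by hypothesis, so $N$ is not punctual, and the exclusion of $M \simeq \mcLd_{\eta,b}[1](1/2)$ means precisely that $N$ is not geometrically isomorphic to $\Delta^*\mcL_{\chag{\eta}{b}}[1]$ for any $\chag{\eta}{b}$. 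Hence Proposition~\ref{pr-sidon-moments1}\,(1) applies and yields
\[
M_4(\garith{M}, M) = 2
\]
unless $M_{\intt}$ has tannakian dimension $\leq 1$.

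First I would dispose of the low-dimensional cases. Let $r$ be the tannakian dimension of $M$; we must show $r \geq 2$. If $r = 0$ then $M$ would be negligible, i.e.\ $M_{\intt} = 0$; but then $N$ would be negligible on $\Gg_m$, which by Example~\ref{ex-gm-geo}\,(1) forces $N$ to have a constituent of the form $\mcL_\chi[1]$, and since $N$ is geometrically simple this would mean $N \simeq \mcL_\chi[1] = \Delta^*\mcL_{\chag{\chi}{0}}[1]$, contradicting the hypothesis. If $r = 1$, then $M_{\intt}$ is a geometrically simple perverse sheaf of tannakian dimension~$1$; one checks (using that $M$ is pushed from $\Gg_m$ via a closed immersion, so that $M_{\intt} = M$ since $M$ has no negligible sub/quotient by the argument just given, and that the support of $M$ is the diagonal, which is one-dimensional) that a tannakian-dimension-one simple perverse sheaf supported on a curve $\Delta(\Gg_m)$ and pure of weight zero must be geometrically of the form $\Delta^*(\text{rank one lisse})[1]$; combined with the classification of tannakian dimension one objects on $\Gg_m$ (Katz), such an object is geometrically $\mcL_\chi[1]$ twisted, hence $\Delta^*\mcL_{\chag{\eta}{b}}[1]$ up to shift and Tate twist, again contradicting the hypothesis. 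So $r \geq 2$, and consequently the exceptional cases in Proposition~\ref{pr-sidon-moments1}\,(1) do not arise, giving $M_4(\garith{M}, M) = 2$.

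With $M_4(\garith{M}) = 2$ and $r \geq 2$ in hand, I would finish by invoking Larsen's alternative. By Theorem~\ref{th-larsen}\,(1) applied to the reductive group $\bfG = \garith{M} \subset \GL_r$ with its standard ($r$-dimensional) representation: since $M_4(\bfG) = 2$, either $\SL_r \subset \bfG$ or $\bfG/(\bfG \cap Z)$ is finite, where $Z$ is the center of $\GL_r$. The latter alternative is what must be ruled out. Here I would argue that if $\bfG/(\bfG\cap Z)$ were finite, then the arithmetic tannakian group of the arithmetically semisimple object $\End(M) = M *_{\intt} M^\vee$ in $\Ppintarith(G)$ is finite, so by Corollary~\ref{cor-finite-gen-unram} and the structure of objects with finite tannakian group, $\End(M)$ would be quasi-unipotent; but $\End(M)$ is pushed forward — up to the negligible cone — from the convolution $N * N^\vee$ on $\Gg_m$, and one knows (from Katz's classification on $\Gg_m$, cf.\ the mechanism behind Lemma~\ref{lm-gm-det} and Theorem~\ref{th-finite-ab-1}'s philosophy) that a non-punctual geometrically simple $N$ on $\Gg_m$ of generic rank $\geq 1$ has $\End(N)_{\intt}$ non-punctual, hence $N$ cannot have finite tannakian group unless $N$ is itself (geometrically) a shifted character sheaf — again excluded. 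Therefore $\SL_r \subset \bfG = \Gg$, as claimed.

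The main obstacle I anticipate is the $r=1$ elimination together with the ``$\bfG/(\bfG\cap Z)$ finite'' elimination: both require translating the exclusion hypothesis (``$M$ not geometrically isomorphic to $\mcLd_{\eta,b}[1](1/2)$'') into a statement about the object $N$ on $\Gg_m$ and then invoking Katz's classification of small-dimension and finite-monodromy objects on $\Gg_m$ (the category $\mathcal P$ of~\cite{mellin}) via the identification $\Ppint(\Gg_m) = \mathcal P$ recalled in Example~\ref{ex-gm-geo}\,(1) and the compatibility of $\Delta_*$ with convolution. One must be careful that $\Delta_*$ is a tensor functor from $\braket{N}^{\geo}$ on $\Gg_m$ to $\braket{M}^{\geo}$ on $G$ — in fact it induces an isomorphism of tannakian groups $\ggeo{N} \xrightarrow{\sim} \ggeo{M}$ because $\Delta$ is a closed immersion and convolution on $G$ restricted to objects supported on a subgroup $\Delta(\Gg_m)$ is convolution on that subgroup — so that every statement about $\Gg = \garith{M}$ and $\ggeo{M}$ transfers to the corresponding groups for $N$, where Katz's results are available. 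Once this dictionary is set up cleanly, the moment computation and Larsen's alternative are essentially formal.
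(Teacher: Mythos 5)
Your opening step is fine and matches the paper: the diagonal is a Sidon morphism, so Proposition~\ref{pr-sidon-moments1}\,(1) gives $M_4(\garith{M})=2$ once the low tannakian dimensions are excluded, and then Larsen's alternative leaves only the dichotomy ``$\SL_r\subset\garith{M}$ or $\garith{M}/(\garith{M}\cap Z)$ finite''. The genuine gap is in how you rule out the second branch (and it also infects your $r\leq 1$ discussion). Your key premise --- that $\Delta_*$ is a tensor functor identifying $\braket{N}$ on $\Gg_m$ with $\braket{M}$ on $G$, so that $\End(M)$ is ``pushed forward, up to negligibles, from $N*N^{\vee}$ on $\Gg_m$'' and $\ggeo{N}\simeq\ggeo{M}$ --- is false. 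The diagonal $\{(x,x)\}$ is \emph{not} a subgroup of $\Gg_m\times\Gg_a$ (the two coordinates carry different group laws), so convolution on $G$ of diagonal objects does not restrict to convolution on $\Gg_m$: already $M*M^{\vee}$ is supported on essentially all of $G$, since $M^{\vee}$ lives on the hyperbola $\{(x^{-1},-x)\}$. The paper makes exactly this point, and its examples are direct counterexamples to your dictionary: the rank-two Kloosterman sheaf has tannakian dimension~$1$ and group $\GL_1$ on $\Gg_m$, but its pushforward to the diagonal has tannakian dimension~$2$ and group $\GL_2$ (Example~\ref{ex-fourth-moment-2}\,(2)). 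For the same reason, invoking Katz's $\Gg_m$-classification of dimension-one or finite-monodromy objects to control the $G$-tannakian data of $M$ or $\End(M)$ is invoking the wrong statement; the $G$-dimension is computed by the local-invariant formula of Lemma~\ref{lm-rank-gagm}, which is what actually shows $r\geq\rank(\mcF)\geq 1$ and that $r=1$ forces $M\simeq\mcLd_{\eta,b}[1](1/2)$.

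The correct elimination of the projectively finite case (the crux of the theorem) has to pass through specialization in the additive variable rather than a transfer of tannakian groups. In the paper one first splits according to $\garith{M}\cap Z$: if it is finite then $\garith{M}$ itself is finite, and Lemma~\ref{lm-finite} (which specializes to the twists $M_a=p_{1,*}M\otimes\mcL_{\psi(ax)}$ on $\Gg_m$ and uses density of Frobenius classes plus Katz's finiteness theorem there) forces $M$ punctual, a contradiction. If $\garith{M}\cap Z=Z$, then $\End(M)$ has finite tannakian group, hence is generically unramified, so the values $|S(M,\chag{\chi}{a})|^2$ lie in a fixed finite set on a generic set of characters; for all but boundedly many $a$ this passes to $M_a$ on $\Gg_m$, Katz's Theorem~6.2 of~\cite{mellin} then makes $\End(M_a)$ punctual, and a direct fourth-moment computation over $\Gg_m$ for such $a$ (using Lemma~\ref{lm-rank-gagm}\,(3) to know $\dim\End(M_a)=r^2$ generically) yields $M_4(\garith{M})\geq r^2\geq 4$, contradicting $M_4=2$. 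None of this is recovered by your argument, so as written the proof does not go through.
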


We will see that we can in fact fairly often show that~$\Gg=\GL_r$, and
in that setting the sums
$$
S_n(\chi,a)= \sum_{x\in k_n^{\times}}\chi(x)\psi_n(ax)t_M(x,x;k_n)
$$
tend to be distributed like the trace of a random matrix
in~$\Un_r(\Cc)$, almost independently of the input object~$M$.

\begin{example}\label{ex-1}
  Let~$a\in k$, and let~$\chi$ be a multiplicative character of~$k$. The
  \emph{Kloosterman--Salié sums}\index{Kloosterman--Salié sums} over~$k$ are defined by
  $$
  \Ks(\chi,a;k)=\frac{1}{\sqrt{|k|}}\sum_{x\in k^{\times}}
  \chi(x)\psi(ax+x^{-1}).
  $$
  
  These sums have been studied extensively, in particular because of
  their applications in the analytic theory of modular forms (see the
  surveys~\cite{kloosterman-survey} or~\cite{heath-brown}). We may
  fix~$\chi$, obtaining a family of exponential sums parameterized
  by~$a$: this is the discrete additive Fourier transform of the
  function which is~$0$ at~$x=0$ and otherwise maps~$x$ to
  $\chi(x)\psi(x^{-1})$. Alternatively, we may fix~$a$, and then we
  are considering the discrete Mellin transform of the
  function~$x\mapsto \psi(ax+x^{-1})$. These are both well-known
  examples of their respective theories, and their distribution
  properties are as follows:

  \begin{itemize}
  \item If~$\chi$ is the trivial character, we have \emph{Kloosterman
      sums},\index{Kloosterman sums} which are equidistributed with
    respect to the Sato\nobreakdash--Tate measure,\index{Sato--Tate measure} that is, to the image
    of the Haar probability measure on the space of conjugacy classes
    of~$\SU_2(\Cc)$; this reflects the fact that the geometric and
    arithmetic monodromy groups for the $\ell$-adic Fourier transform of
    the extension by zero of $\mcL_{\psi(x^{-1})}$ are both equal
    to~$\SL_2$, by work of Katz~\cite[Thm.\,11.1]{gkm}.
  \item If the characteristic~$p$ of~$k$ is odd and~$\chi$ is the
    character of order~$2$, then we have \emph{Salié sums},\index{Salié
      sums} whose arithmetic monodromy group is a finite subgroup
    of~$\SL_2$, isomorphic to a semi-direct product of~$\Fpt$
    and~$\Zz/2\Zz$ (this can be deduced from~\cite[Cor.\,8.9.2]{esde},
    which shows that the corresponding sheaf is Kummer-induced). The
    finiteness of the group reflects the fact that Salié sums can be
    computed elementarily (see,
    e.g.,~\cite[p.\,288,\,Exerc.\,50]{bourbaki-ts-2}), and is also an
    analogue of the fact that Bessel functions with half-integral index
    are elementary functions (see,
    e.g.,~\cite[p.\,269,\,Exerc.\,20]{bourbaki-ts-2}).
  \item If~$p\geq 7$ and~$\chi$ is fixed, but $\chi^2$ is non-trivial,
    then the neutral component of the geometric monodromy group
    is~$\SL_2$, but the determinant of geometric monodromy group is not trivial, and more precisely
    has order equal to the order of~$\chi$; see~\cite[Th.\,8.11.3, Lemma
    8.11.6]{esde}.
  \item If instead we fix~$a\in\Fpt$ and vary the multiplicative
    character~$\chi$, then the geometric tannakian group (which
    coincides with the one associated by Katz's theory in~\cite{mellin},
    see Appendix~\ref{ch-app-mellin})
    contains $\SL_2$ for all~$a$. Indeed, the sheaf $\mathcal{L}_{\psi(ax+x^{-1})}$ on $\Gg_m$ is not geometrically isomorphic to any of its non-trivial multiplicative translates by the same argument as in the proof of~\cite[Th.~14.2]{mellin}, and hence the characterisation in~\cite[Cor.\,8.3]{mellin} shows that this tannakian group is Lie-irreducible; since it is a subgroup of~$\GL_2$, it necessarily contains $\SL_2$.   If~$a=-1$, then the arithmetic and
    geometric tannakian groups are both equal to~$\SL_2$ (this is the case of
    the Evans sums in~\cite[Th.\,14.2]{mellin}). In general, the
    tannakian determinant is geometrically isomorphic to the skyscraper sheaf at~$\alpha=-1/a$ (so its Mellin transform is proportional to $\chi\mapsto \chi(\alpha)$) by \cite[Th.\,21.1]{mellin}. Letting $n$ denote the order of $\alpha$ in the finite group $\Fpt$, it follows that the geometric tannakian group consists of those matrices whose determinant is an $n$th root of unity. 
   
  \item If~$a=0$ and we vary~$\chi$, we have Gauss sums; the arithmetic
    and geometric tannakian groups are equal to~$\GL_1$.
  \end{itemize}

  The relation with Theorem~\ref{th-1} is the following: we are
  considering the finite field~$k$, and the perverse sheaf~$M$ of weight
  zero is $M=\Delta_*\sheaf{L}[1](1/2)$, where $\mcL$ is the lisse sheaf
  $\sheaf{L}=\mcL_{\psi(x^{-1})}$ of rank one; it is geometrically
  simple, and perverse since~$\Delta$ is a closed immersion
  (Corollary~\ref{cor-closed-immersion}). The group~$\Gg$ of
  Theorem~\ref{th-1} is then~$\GL_2$ (as follows from
  Theorem~\ref{th-sld1}).
  \par
  Note that when we specialize to a fixed character~$\chi$ or a
  fixed~$a$, we obtain a monodromy group or a tannakian group that is a
  subgroup of~$\Gg$ (as seems natural), which has the following
  property: the identity component of the derived group~$\Gg'$ is
  independent of~$\chi$ (resp. $a$), except for a finite exceptional
  set. In fact, the exceptional set for fixed~$\chi$ contains only the
  Legendre character (if~$p$ is odd), and the exceptional set for
  fixed~$a$ contains only~$a=0$.
  \par
  Note also that when we vary~$\chi$ for~$a$ fixed, only the neutral
  component of the identity of the geometric tannakian group is
  independent of~$\chi$, but the tannakian group is usually not
  connected.
  \par
  Finally, observe that here \emph{none} of the ``specialized''
  geometric tannakian groups for either~$\Gg_a$ or~$\Gg_m$ coincides
  with the geometric tannakian group~$\Gg=\GL_2$. However, in an
  intuitive sense, the collection of all of them ``generate'' this
  group.
  \par
  We expect these phenomena to be very general, and we will consider
  such questions in greater generality in later works.
\end{example}

\begin{remark}
  (1) Theorem~\ref{th-sld1} applies for instance to one-variable
  exponential sums of the form
  $$
  \frac{1}{\sqrt{|k|}}\sum_{x\in
    k^{\times}}\chi(x)\eta(g(x))\psi(ax+f(x))
  $$
  for suitable polynomials~$f$ and~$g$ and for a multiplicative
  character~$\eta$.
  \par
  It is worth noting that, even if we are only interested in the
  distribution of these one-variable sums (and not in the more general
  sums allowed by Theorem~\ref{th-1} with a two-variable trace
  function), the \emph{proof} of Theorem~\ref{th-1}, passing through the
  tannakian machinery, requires the consideration of objects supported
  on all of~$\Gg_m\times\Gg_a$, simply because the convolution of two
  objects on $\Gg_m\times \Gg_a$ that are supported on the
  diagonal~$\Delta$ will be supported on the product set
  $\Delta\cdot \Delta=\Gg_m\times\Gg_a$.
  \par
  (2) Remark~\ref{rm-gener-jac}\,(2), suggests a different
  interpretation of Theorem~\ref{th-sld1}. Indeed, using this remark, we
  can view $\Gg_m\times \Gg_a$ as a generalized jacobian of $C=\Pp^1$
  and the diagonal morphism $\Gg_m\to \Gg_m\times \Gg_a$ as a morphism
  of the type $x\mapsto (x)-(D)$ for a suitable divisor~$D$
  on~$\Gg_m$. For a perverse sheaf of weight zero on~$\Gg_m$, we have
  the arithmetic Mellin transform
  $$
  \chi\mapsto \sum_{x\in k_n^{\times}}t_M(x;k_n)\chi(x)
  $$
  as in the work of Katz, which may have a variety of tannakian groups
  (see~\cite[Ch.\,14\,to\,27]{mellin} for examples involving for
  instance $\SL_n$, $\GL_n$, $\Ort_{2n}$, $\SO_n$, $\Sp_{2g}$ and
  $G_2$). Then the further operation of twisting by an additive
  character $\psi$ leads to the sums
  $$
  (\chi,\psi)\mapsto  \sum_{x\in k_n^{\times}}t_M(x;k_n)\chi(x)\psi(x)
  $$
  which correspond to the diagonal object $\Delta_*M$ on the generalized
  jacobian\index{generalized jacobian} $\Gg_m\times \Gg_a$. The theorem is then, analytically, an
  instance of the common situation where twisting an exponential sum by
  a generic additive character leads to ``more random'' exponential sums
  (here, replacing a potentially complicated tannakian group on $\Gg_m$
  by one that in almost all cases contains the special linear
  group). Note however that the tannakian dimension may change when
  adding this extra twist.
\end{remark}

\begin{example}
  The following case of two-variable equidistribution has been studied
  ``by hand'' by Kowalski and Nikeghbali~\cite[\S 4.1,
  Th.\,11]{k-n}. Let $d>5$ be a fixed integer, and consider the sums
  $$
  S(\chi,a)=\frac{1}{\sqrt{|k|}} \sum_{t\in k}\chi(t^d-dt-a)
  $$
  where the character~$\chi$ is extended by~$\chi(0)=0$ if $\chi$ is
  non-trivial and~$\chi(0)=1$ if~$\chi$ is trivial.
  \par
  We can express these sums as Mellin transforms, namely
  $$
  S(\chi,a)=\sum_{x\in k^{\times}}\sum_{y\in k}
  \widehat{S}(x,y)\chi(x)\psi(ay)
  $$
  where
  $$
  \widehat{S}(x,y)=\frac{1}{|G(k)|}
  \dblsum_{\chag{\chi}{a}\in\charg{G}(k)} \overline{\chi(x)}\psi(-ay)S(\chi,a).
  $$
  We compute then
  \begin{align*}
    \widehat{S}(x,y)
    &=
      \frac{1}{\sqrt{|k|}} 
      \frac{1}{|G(k)|}
      \dblsum_{\chag{\chi}{a}\in\charg{G}(k)}
      \overline{\chi(x)}\psi(-ay)
      \sum_{t\in k}\chi(t^d-dt-a)\\
    &=\frac{1}{\sqrt{|k|}} 
      \frac{1}{|G(k)|}
      \sum_{t\in k}\sum_{a\in k}\psi(-ay)
      \sum_{\chi}\overline{\chi(x)}\chi(t^d-dt-a)
    \\
    &=\frac{1}{|k|^{3/2}}\sumsum_{\substack{t\in k,a\in k\\t^d-dt-a=x}}
    \psi(-ay)=
    \frac{1}{|k|^{3/2}}\sum_{t\in k}\psi(-y(t^d-dt-x)).
  \end{align*}

  Note that this trace function is not of diagonal type.  It was proved
  however in~\cite{k-n} that when~$|k|\to +\infty$ (including the
  horizontal case where $k=\Ff_p$ with $p\to+\infty$), the
  sums~$S(\chi,a)$ become equidistributed like the trace of random
  matrices in the unitary group~$\Un_{d-1}(\Cc)$. This was done by
  applying Deligne's equidistribution theorem, and the computation of
  the relevant monodromy group by Katz, for each fixed~$\chi$, and then
  averaging over~$\chi$.

  It would be interesting to recover this result directly from
  Theorem~\ref{th-1} (with $\Gg=\GL_{d-1}$), but it is not obvious how
  to do so: the reader can check that the computation of the fourth
  moment, for instance, is not at all straightforward.
\end{example}

\begin{remark}
  Finally we remark that since the key tool to compute tannakian
  groups for objects supported on the diagonal will be Larsen's
  Alternative combined with the Sidon property of the diagonal, one
  can prove similar results for objects of the form
  $[x\mapsto (x,x^3)]_*M$ on $\Gg_a^2$, for~$M$ on~$\Gg_a$ (in
  characteristic~$\not=3$), and objects of the form
  $[x\mapsto (x,1-x)]_*M$ on~$\Gg_m^2$ for~$M$
  on~$\Gg_m\setminus\{1\}$ (see Proposition~\ref{pr-sidon-morphisms},
  (5) and~(6)).  The corresponding exponential sums are of the form
  $$
  \sum_{x\in k_n}t_M(x;k_n)\psi_n(ax+bx^3)
  $$
  and
  $$
  \sum_{x\in k_n^{\times}\setminus\{1\}}t_M(x;k_n)\chi_1(x)\chi_2(1-x).
  $$
  respectively.
\end{remark}

\section{Tannakian group for diagonal objects}

We first compute the tannakian dimension~$r$ for a perverse sheaf
on~$G=\Gg_m\times \Gg_a$ which is supported on the diagonal.

\begin{lemma}\label{lm-rank-gagm}
  Let $M=\Delta_*(\sheaf{M})[1]$ for some geometrically irreducible
  middle extension sheaf~$\sheaf{M}$ on~$\Gg_m$.
  \par
  \begin{enumth}
  \item The tannakian dimension~$r$ of the object~$M$ is given by the formula
  \begin{equation}\label{eq-rank-gagm}
    r=\sum_{\lambda} \max(0,\lambda -1)+\sum_{x\in\bar{k}^{\times}}
    (\swan_x(\sheaf{M})+\Drop_x(\sheaf{M}))
    +\rank(\sheaf{M})+\swan_0(\sheaf{M}),
  \end{equation}
  where~$\lambda$ runs over the breaks of~$\sheaf{M}$ at infinity, in the
  sense of~\cite[Ch.\,1]{gkm}, counted with multiplicity.
  \item We have $r=1$ if and only if $M=\mcLd_{\eta,b}[1]$ for
    some $\chag{\eta}{b}\in\charg{G}$.
  \item For all but finitely many $a\in\bar{k}$, the tannakian dimension
    of $M_a=p_{1,*}M\otimes j^*\mcL_{\psi(ax)}$ on~$\Gg_{m,k(a)}$ is
    equal to~$r$.
  \end{enumth}
\end{lemma}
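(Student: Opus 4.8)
The plan is to reduce the computation of the tannakian dimension to an Euler--Poincar\'e characteristic on $\Gg_m$ and then apply the Grothendieck--Ogg--Shafarevich formula. By Proposition~\ref{prop-dimPerv=dimVect}, together with generic vanishing (Theorem~\ref{thm-gen-vanish}) and Proposition~\ref{pr-chi}\,(1), the tannakian dimension $r$ of $M$ equals $\chi(G_{\bar k},M_\chi)=\dim\rmH^0(G_{\bar k},M_\chi)$ for $\chi=\chag{\eta}{a}$ ranging over a generic set of characters of $G$. Since $p_1\circ\Delta=\id$ and $p_2\circ\Delta=j$, the projection formula for the closed immersion $\Delta$ gives a canonical isomorphism $M_\chi\simeq \Delta_*\bigl((\mcM\otimes\mcL_\eta\otimes j^*\mcL_{\psi(ax)})[1]\bigr)$; writing $\mcN=\mcM\otimes\mcL_\eta\otimes j^*\mcL_{\psi(ax)}$, a middle-extension sheaf on $\Gg_m$, we get $\rmH^i(G_{\bar k},M_\chi)\simeq \rmH^{i+1}(\Gg_{m,\bar k},\mcN)$, hence $r=-\chi(\Gg_{m,\bar k},\mcN)=-\chi_c(\Gg_{m,\bar k},\mcN)$ for such $\chi$ (using that $\rmH^*_c=\rmH^*$ generically).

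The core computation is then Grothendieck--Ogg--Shafarevich on $\Gg_m\subset\Pp^1$. Let $V\subset\Gg_m$ be the open dense subset on which $\mcM$ (equivalently $\mcN$, the twists being lisse on $\Gg_m$) is lisse, of rank $\rho=\rank(\mcM)$, and let $Z=\Gg_m\setminus V$. Decomposing $\chi_c(\Gg_{m,\bar k},\mcN)$ as $\chi_c(V_{\bar k},\mcN_{|V})+\sum_{x\in Z}\dim\mcN_x$ and applying the Euler--Poincar\'e formula on $\Pp^1$ to the lisse sheaf $\mcN_{|V}$, the terms over the finite points of $Z$ reorganize into $-\sum_{x\in\bar k^\times}(\swan_x(\mcM)+\Drop_x(\mcM))$, because $\mcL_\eta$ and $j^*\mcL_{\psi(ax)}$ are lisse (hence tame) at finite points of $\Gg_m$ and so affect neither Swan conductors nor drops there. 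At $0$, the sheaf $\mcL_\eta$ is tame and $j^*\mcL_{\psi(ax)}$ is unramified, so $\swan_0(\mcN)=\swan_0(\mcM)$. The only delicate point is at $\infty$: $\mcL_\eta$ is tame there, and for all but finitely many $a$ --- namely $a\neq 0$ and $a$ outside the finite set of ``bad directions'' attached to the break-$1$ part of $\mcM$ at $\infty$ --- the theory of breaks under tensoring with the break-$1$ sheaf $j^*\mcL_{\psi(ax)}$ (see, e.g.,~\cite[Ch.\,1]{gkm}) gives $\swan_\infty(\mcN)=\sum_i\max(\lambda_i,1)$, where $\lambda_1,\dots,\lambda_\rho$ are the breaks of $\mcM$ at $\infty$ counted with multiplicity. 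Combining these contributions and rewriting $\sum_i\max(\lambda_i,1)=\rho+\sum_i\max(0,\lambda_i-1)=\rank(\mcM)+\sum_\lambda\max(0,\lambda-1)$ yields formula~\eqref{eq-rank-gagm}. I expect the break analysis at $\infty$, and in particular pinning down the finite exceptional set of~$a$, to be the main obstacle; it is also the only point where genericity in the additive parameter is used.

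For part~(2), all four summands in~\eqref{eq-rank-gagm} are non-negative and $\rank(\mcM)\geq 1$, so $r=1$ forces $\rank(\mcM)=1$, $\swan_0(\mcM)=0$, $\swan_x(\mcM)=\Drop_x(\mcM)=0$ for all $x\in\bar k^\times$ (so $\mcM$ is lisse on $\Gg_m$), and $\sum_\lambda\max(0,\lambda-1)=0$ (so the unique break at $\infty$ is $\leq 1$). By the classification of lisse rank-one sheaves on $\Gg_m$ that are tame at $0$ with Swan conductor at most $1$ at $\infty$, the sheaf $\mcM$ is geometrically isomorphic to $\mcL_\eta\otimes\mcL_{\psi(bx)}$ for a Kummer character $\eta$ and some $b$ (both defined over a suitable $k_n$), i.e. $M$ is geometrically isomorphic to $\mcLd_{\eta,b}[1]$; the converse is the direct verification, from the same formula, that this object has $r=1$. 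For part~(3), since $p_1\circ\Delta=\id$ we have $p_{1,*}M=\mcM[1]$, whence $M_a=(\mcM\otimes j^*\mcL_{\psi(ax)})[1]$, a perverse sheaf on the torus $\Gg_m$. By Proposition~\ref{prop-dimPerv=dimVect} and Proposition~\ref{pr-chi}\,(3) (valid since $\Gg_m$ is semiabelian), the tannakian dimension of $M_a$ equals $-\chi(\Gg_{m,\bar k},\mcM\otimes j^*\mcL_{\psi(ax)}\otimes\mcL_\zeta)$ for \emph{any} Kummer character $\zeta$, and the Grothendieck--Ogg--Shafarevich computation above --- with $\mcL_\eta$ replaced by the tame sheaf $\mcL_\zeta$ --- shows that this equals $r$ precisely for $a$ outside the same finite exceptional set as in part~(1), which proves the claim.
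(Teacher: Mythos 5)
Your proof is correct, and it reaches formula~\eqref{eq-rank-gagm} by a slightly different route than the paper for part~(1). The paper first fixes a generic \emph{non-trivial} multiplicative character $\chi$, passes to $\Aa^1$ via $j_!(\sheaf{M}\otimes\mcL_{\chi})$, observes that this is a Fourier sheaf, and quotes Katz's formula for the generic rank of the Fourier transform (ESDE, Lemma~7.3.9\,(2)); the ramification bookkeeping at $\infty$ is thus absorbed into that citation, and the explicit break analysis under tensoring with $\mcL_{\psi(ax)}$ only appears in the paper's proof of part~(3). You instead fix the additive parameter $a$ generically, keep everything on $\Gg_m$, and run Grothendieck--Ogg--Shafarevich directly on $\mcN=\sheaf{M}\otimes\mcL_{\eta}\otimes j^*\mcL_{\psi(ax)}$, doing the break-at-$\infty$ analysis yourself (breaks $<1$ raised to $1$, break-$1$ part unaffected outside finitely many "bad" $a$, breaks $>1$ unchanged), which is exactly the computation the paper performs in its part~(3); the two bookkeepings match, since your $\swan_\infty(\mcN)=\rank(\sheaf{M})+\sum_\lambda\max(0,\lambda-1)$ replaces the paper's split into $\Drop_0(j_!(\sheaf{M}\otimes\mcL_\chi))=\rank(\sheaf{M})$ plus the break defect at $\infty$. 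What your approach buys is self-containedness and a unified treatment: the same computation gives both (1) and (3), with genericity entering only through the additive variable (the Kummer twist being irrelevant to the Euler characteristic), and the exceptional set of characters has character codimension~$1$ as required; what the paper's approach buys is brevity in (1) by outsourcing the local analysis to Katz. Your parts (2) and (3) coincide with the paper's arguments (in (2) the paper likewise reduces to rank one, lisse on $\Gg_m$, tame at $0$, break $\leq 1$ at $\infty$, and identifies such a sheaf geometrically as a Kummer twist of an Artin--Schreier sheaf, i.e.\ some $\mcLd_{\eta,b}$), so there is no gap.
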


\begin{proof}
  (1) By Proposition~\ref{prop-dimPerv=dimVect}, it is enough to
  determine the ``generic'' value of the dimension of the cohomology
  space
  $$
  H^0_c(G_{\bar{k}},M\otimes p_1^*\mcL_{\chi}\otimes p_2^*\mcL_{\psi(ay)})
  $$
  as $\chi$ varies in~$\charg{\Gg}_m$ and $a$ in~$\bar{k}$.  We have a
  canonical isomorphism
  $$
  H^0_c(G_{\bar{k}},M\otimes p_1^*\mcL_{\chi}\otimes
  p_2^*\mcL_{\psi(ay)})= H^1_c(\Gg_{m,\bar{k}},\sheaf{M}\otimes
  \mcL_{\chi}\otimes j^*\mcL_{\psi(ax)}),
  $$
  If $\chi$ is non-trivial, this space is also isomorphic to
  $$
  H^1_c(\Aa^1_{\bar{k}},j_!(\sheaf{M}\otimes \mcL_{\chi})\otimes
  \mcL_{\psi(ax)}).
  $$
  \par
  For all but at most one value of $\chi$, the sheaf
  $j_!(\sheaf{M}\otimes \mcL_{\chi})$ is a Fourier sheaf\index{Fourier
    sheaf} in the sense of~\cite[(7.3.5)]{esde} (i.e., a middle
  extension sheaf $\mcF$ such that Deligne's Fourier transform is also a
  middle extension sheaf). Hence, the space
  $ H^1_c(\Aa^1_{\bar{k}},j_!(\sheaf{M}\otimes \mcL_{\chi})\otimes
  \mcL_{\psi(ax)})$ is the stalk at~$a$ of the Fourier transform
  of~$j_!(\sheaf{M}\otimes \sheaf{L}_{\chi})$, and its generic value
  $r_{\chi}$ as $a$ varies in~$\bar{k}$ is computed in~\cite[Lemma
  7.3.9,\,(2)]{esde}, namely
  $$
  r_{\chi}=\sum_{\lambda} \max(0,\lambda -1)+\sum_{x\in\bar{k}}
  (\swan_x(j_!(\sheaf{M}\otimes
  \sheaf{L}_{\chi}))+\Drop_x(j_!(\sheaf{M}\otimes \sheaf{L}_{\chi})),
  $$
  where $\lambda$ runs over the breaks at $\infty$
  of~$j_!(\sheaf{M}\otimes \sheaf{L}_{\chi})$, counted with
  multiplicity. Since $\mcL_{\chi}$ is lisse on~$\Gg_m$, the formulas
  $$
  \swan_x(j_!(\sheaf{M}\otimes \sheaf{L}_{\chi}))=\swan_x(\sheaf{M})
  \quad\quad \Drop_x(j_!(\sheaf{M}\otimes
  \sheaf{L}_{\chi}))=\Drop_x(\sheaf{M})
  $$
  hold for any $x\in\bar{k}^{\times}$. Since $\mcL_{\chi}$ is tamely
  ramified at~$0$ for $\chi$ non-trivial, we have
  $$
  \swan_0(j_!(\sheaf{M}\otimes \sheaf{L}_{\chi}))=\swan_0(\sheaf{M})
  \quad\quad \Drop_0(j_!(\sheaf{M}\otimes
  \sheaf{L}_{\chi}))=\rank(\sheaf{M})
  $$
  for $\chi$ non-trivial, which leads to~(\ref{eq-rank-gagm}).
  \par
  (2) Since $\rank(\sheaf{M})\geq 1$, and all terms in the
  sum~(\ref{eq-rank-gagm}) are non-negative, we deduce that the
  condition $r=1$ may hold only if $\sheaf{M}$ has rank~$1$ and
  $\sheaf{M}$ is lisse on~$\Gg_m$, tame at~$0$, and has (unique) break
  at most~$1$ at~$\infty$. Twisting by a suitable Kummer sheaf, we may
  then assume that $\sheaf{M}$ is lisse on~$\Aa^1$, and it must then be
  geometrically isomorphic to an Artin--Schreier sheaf, which by
  untwisting implies that $M$ is geometrically isomorphic to some
  $\mcLd_{\eta,b}$.
  \par
  (3) For the object $M_a=\sheaf{M}[1]\otimes j^*\mcL_{\psi(ax)}$
  on~$\Gg_{m,k(a)}$, the tannakian dimension is its compactly-supported
  Euler--Poincaré characteristic, which is equal to
  \begin{multline}\label{eq-ra}
    r_a= \swan_0(\sheaf{M}\otimes j^*\mcL_{\psi(ax)})
    +\swan_{\infty}(\sheaf{M}\otimes
    j^*\mcL_{\psi(ax)})
    \\+\sum_{x\in\bar{k}^{\times}}
    (\swan_x(\sheaf{M}\otimes j^*\mcL_{\psi(ax)})
    +\Drop_x(\sheaf{M}\otimes
    j^*\mcL_{\psi(ax)}))
  \end{multline}
  (see~(\ref{eq-ep-gm})). Since $\mcL_{\psi(ax)}$ is lisse on~$\Gg_a$,
  the formulas
  $$
  \swan_x(\sheaf{M}\otimes
  j^*\mcL_{\psi(ax)})=\swan_x(\sheaf{M}),\quad\quad
  \Drop_x(\sheaf{M}\otimes j^*\mcL_{\psi(ax)}))=\Drop_x(\sheaf{M})
  $$
  hold for $x\in \bar{k}$.
  \par
  Assume that~$a\not=0$. Let $\lambda$ be a break of~$\sheaf{M}$ at
  infinity, and $V_{\lambda}$ the corresponding break-space. Then
  $V_{\lambda}\otimes \mcL_{\psi(ax)}$ coincides with the
  $\mu$-break-space $W_{\mu}$ of $\sheaf{M}\otimes j^*\mcL_{\psi(ax)}$
  where $\mu=\max(1,\lambda)$, except possibly if ~$\lambda=1$ and
  $\mcL_{\psi(-ax)}$ occurs in $V_{\lambda}$.  Thus, for all but
  finitely many~$a$, we have
  \begin{align*}
    \swan_{\infty}(\sheaf{M}\otimes j^*\mcL_{\psi(ax)})
    =\sum_{\mu} \mu \dim W_{\mu}
    &=\sum_{\lambda} \dim V_{\lambda}
    +\sum_{\lambda>1} (\lambda-1)\dim V_{\lambda}\\
    &=\rank(\sheaf{M})+\sum_{\lambda}\max(0,\lambda-1),
  \end{align*}
  which leads to~$r_a=r$ by comparing~(\ref{eq-ra})
  with~(\ref{eq-rank-gagm}).
\end{proof}

\begin{remark}
  We will classify all objects of tannakian dimension~$1$ in
  Section~\ref{ssec-neg-one}, and the diagonal objects of tannakian
  dimension~$2$ in Section~\ref{ssec-rank2}.
\end{remark}

We continue with a lemma to exclude finite tannakian groups in the
diagonal situation. The first step is to exploit the specific shape
of~$G$ to understand the structure of the set of characters which are
not Frobenius unramified for suitable objects (or which are ramified,
for objects which are generically unramified).

\begin{lemma}\label{lm-structure-gmga}
  Let~$M$ be a perverse sheaf on $G=\Gg_m\times\Gg_a$ and~$N$ an object
  of $\braket{M}^{\arith}$ which is arithmetically semisimple and pure
  of weight~$0$.  For all but finitely many $a\in \bar{k}$, the set of
  $\chi\in\charg{\Gg}_m$ such that $\chag{\chi}{a}$ is not
  Frobenius-unramified for~$N$ is finite.
  \par
  In particular, if~$M$ has finite arithmetic tannakian group, then for
  all but finitely many $a\in \bar{k}$, the set of $\chi\in\charg{\Gg}_m$
  such that $\chag{\chi}{a}$ is ramified is finite.
\end{lemma}

\begin{proof}
  The first statement follows immediately from the proof of
  Proposition~\ref{pr-frob-unram} combined with
  Theorem~\ref{thm-vanish-gmga}.
  
  The last statement follows from the first as in the proof of
  Corollary~\ref{cor-finite-gen-unram}.
\end{proof}

\begin{lemma}\label{lm-finite}
  Let~$C\subset G=\Gg_m\times\Gg_a$ be a line given by $y=\alpha x$
  where $\alpha\in k^{\times}$.
  \par
  Let~$M$ be a geometrically simple perverse sheaf
  on~$\Gg_m\times\Gg_a$ supported on~$C$ and of weight zero. Assume that
  the arithmetic tannakian group~$\Gg$ of~$M$ is finite. Then~$M$ is
  punctual.
\end{lemma}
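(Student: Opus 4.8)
The plan is to reduce the statement to the one-dimensional situation where a finite tannakian group forces punctuality, by transporting the problem to $\Gg_m$ via a suitable projection. Since $C$ is the line $y = \alpha x$ with $\alpha \in k^\times$, the map $p_1 \colon G \to \Gg_m$ restricts to an isomorphism $C \xrightarrow{\sim} \Gg_m$ (with inverse $x \mapsto (x, \alpha x)$); equivalently, $C = \lambda(\Gg_m)$ for the closed immersion $\lambda \colon x \mapsto (x, \alpha x)$. Thus $M = \lambda_* N$ for some geometrically simple perverse sheaf $N$ on $\Gg_m$ that is pure of weight zero, and $M$ is punctual if and only if $N$ is punctual.

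First I would relate the arithmetic tannakian group $\garith{M}$ on $G$ to that of a pushforward on $\Gg_m$. The natural candidate is the projection $p_2 \colon G \to \Gg_a$ — but $\Gg_a$ is the wrong target for a Mellin-type argument. Instead I would use a group homomorphism $q \colon G \to \Gg_m$ that is generically injective on $C$; a convenient choice is a character of the form $(x,y) \mapsto x$ twisted so that, restricted to $C$, it becomes an isomorphism onto $\Gg_m$. Concretely, take $q = p_1$: its restriction to $C$ is already an isomorphism of algebraic groups. Then $Rq_! M = Rq_! \lambda_* N$; since $q \circ \lambda = \mathrm{id}_{\Gg_m}$ and $\lambda$ is a closed immersion, this is canonically $N$ (up to the forget-supports issue, which is trivial here because $\lambda$ is proper onto its image and $q|_C$ is finite). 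So $Rq_! M = Rq_* M = N$ is a geometrically simple perverse sheaf on $\Gg_m$, pure of weight zero, as required to apply Proposition~\ref{pr-specialize} or, more to the point, Proposition~\ref{pr-determinant-tori}-style specialization for tannakian groups.

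Next I would invoke the fact, available since $M$ is an object of $\Ppintarith(G)$ with finite $\garith{M}$, that $M$ is quasi-unipotent (Remark~\ref{rm-quasi-unipotent}, (1)) and hence generically unramified (Corollary~\ref{cor-finite-gen-unram}). Then for all but finitely many $\chi \in \charg{\Gg}_m$, the character $\chi \circ q \in \charg{G}$ is unramified for $M$, and Proposition~\ref{pr-specialize} gives
\[
\det(1 - T\,\Theta_{M,k_n}(\chi \circ q)) = \det(1 - T\,\Theta_{N,k_n}(\chi)) \in \Cc[T]
\]
for all $n \ge 1$ and such $\chi$. Since $\garith{M}$ is finite, the conjugacy classes $\Theta_{M,k_n}(\chi \circ q)$ — equivalently their characteristic polynomials — take only finitely many values, so the same is true of $\Theta_{N,k_n}(\chi)$ as $\chi$ ranges over a generic set. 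By Proposition~\ref{pr-weyl-sum-bis} applied to $N$ and to each tensor power $\Std^{\otimes m} \otimes (\Std^\vee)^{\otimes l}$ — i.e. by the equidistribution theorem for characteristic polynomials on $\Gg_m$ (Theorem~\ref{th-4}, or directly Katz's result in~\cite{mellin}) — this forces the image measure $\nu_{cp}$ of the Haar measure on a maximal compact of $\garith{N}(\Cc)$ to be supported on a finite set, hence $\garith{N}$ is finite. Now apply the result that a simple perverse sheaf on $\Gg_m$ with finite arithmetic tannakian group is punctual: this is essentially Katz's classification, since a non-punctual object of $\Ppintarith(\Gg_m)$ is a hypergeometric sheaf of positive generic rank (\cite[Th.\,8.5.3]{esde}), and such sheaves have infinite tannakian group (their Frobenius conjugacy classes equidistribute in a positive-dimensional group, cf. the argument in Lemma~\ref{lm-gm-det}). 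Therefore $N$ is punctual, and hence $M = \lambda_* N$ is punctual.

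The main obstacle I anticipate is the careful handling of ramified characters and the passage through Proposition~\ref{pr-specialize}: one must check that $Rq_! M = Rq_* M = N$ genuinely holds (not just generically), that the generic set of $\chi$ for which $\chi \circ q$ is unramified for $M$ is nonempty and large enough to run the equidistribution argument, and that the finiteness of the set of characteristic polynomials of $\Theta_{N,k_n}(\chi)$ really does contradict $\garith{N}$ being infinite — which ultimately rests on the $\Gg_m$ case being already known. A cleaner alternative, avoiding equidistribution entirely, would be to argue directly: $\garith{M}$ finite implies $\garith{N}$ is a quotient (or subquotient) of $\garith{M}$ via the tensor functor $Rq_!$, hence finite, and then quote the $\Gg_m$ statement; this is the route I would actually take, using Proposition~\ref{pr-specialize} only to identify $\Theta_{N,k_n}(\chi)$ with $\Theta_{M,k_n}(\chi \circ q)$ so that the finiteness transfers. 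Either way, the essential content is the reduction $C \cong \Gg_m$ together with the already-established one-dimensional finiteness criterion.
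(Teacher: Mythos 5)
Your overall strategy (reduce to $\Gg_m$ via the line $C\simeq\Gg_m$ and quote the known one-dimensional finiteness criterion of Katz) is the right idea, and it is indeed what the paper does; but the way you transport the problem has a genuine gap. You project by $q=p_1$, so the only characters of $G=\Gg_m\times\Gg_a$ you ever test are those of the form $\chi\circ q=\chag{\chi}{0}$, i.e.\ the slice $a=0$ inside $\charg{G}$. Generic unramifiedness of $M$ (from Corollary~\ref{cor-finite-gen-unram}) only says the ramified locus has character codimension $\geq 1$ in the two-dimensional family $\charg{G}$; it gives no information whatsoever about a fixed codimension-one slice, so your claim ``for all but finitely many $\chi\in\charg{\Gg}_m$ the character $\chi\circ q$ is unramified for $M$'' does not follow. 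This is not a technicality: Example~\ref{ex-non-unram} in the paper exhibits diagonal objects for which the characters $\chag{\chi}{0}$ are weakly unramified but \emph{not} unramified (the tannakian dimension drops on that slice), and without unramifiedness you cannot place $\Theta_{M,k_n}(\chag{\chi}{0})$ in the finite group $\garith{M}$, so the boundedness of eigenvalue orders that drives your argument is unavailable. Your fallback ``cleaner alternative'' — that $Rq_!$ is a tensor functor making $\garith{N}$ a subquotient of $\garith{M}$ — is also not something you may quote: such functoriality of tannakian groups under pushforward is precisely one of the open problems listed in Section~\ref{sec-problems}, and the compatibility of $Rq_!$ with negligible objects and middle convolution fails in general (negligibility is a generic condition on $\charg{G}$ and again says nothing on the $a=0$ slice).

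The paper circumvents exactly this obstacle by \emph{not} fixing $a=0$: since $\unram{M}$ is generic in $\charg{G}$, there exist $n\geq 1$ and some $a\in k_n$ such that $\chag{\chi}{a}$ is unramified for $M$ for all but finitely many $\chi$; one then twists the middle-extension sheaf on $C\simeq\Gg_m$ by $\mcL_{\psi(ax)}$ to form $M_a$ on $\Gg_m$, observes that Frobenius acts on $H^0_c(\Gg_{m,\bar k},M_a\otimes\mcL_{\chi})=H^0_c(G_{\bar k},M_{\chag{\chi}{a}})$ with uniformly bounded finite order, invokes Corollary~\ref{cor-generate} (density of Frobenius classes) plus the fact that a compact Lie group has no small subgroups to conclude $\garith{M_a}$ is finite, and finally applies Katz's theorem on $\Gg_m$ to get punctuality of $M_a$, a contradiction. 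So your argument can be repaired, but only by replacing the projection $p_1$ (equivalently $a=0$) with a twist by a suitably generic additive character before descending to $\Gg_m$, and by replacing the unproved functoriality claim with the density/no-small-subgroups argument.
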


\begin{proof}
  The assumption implies that $M$ is generically unramified by
  Corollary~\ref{cor-finite-gen-unram}.
  \par
  We assume that~$M$ is not punctual to get a
  contradiction. Then $M$ is, up to twist and shift, the pushforward
  to~$G$ of a middle extension sheaf~$\sheaf{M}$
  on~$\Gg_m\simeq C$.

  For all~$a$, we denote
  $\sheaf{M}_a=\sheaf{M}\otimes j^*\sheaf{L}_{\psi(ax)}$; then
  $M_a=\sheaf{M}_a[1](1/2)$ is a perverse sheaf on~$\Gg_m$.

  By Lemma~\ref{lm-structure-gmga}, 
  there exists~$n\geq 1$ and $a\in k_n$ such that for all but finitely
  many $\chi\in\charg{\Gg}_m$, the character $\chag{\chi}{a}$ is
  unramified for~$M$. The action of the Frobenius automorphism of~$k_n$
  on the space
  $$
  \hH^0_c(G_{\bar{k}},M\otimes\sheaf{L}_{\chi,a})
  =\hH^0_c(\Gg_{m, \bar{k}},M_a\otimes\sheaf{L}_{\chi})
  $$
  is then by assumption of finite order bounded independently
  of~$\chi$. The corresponding unitary Frobenius elements
  $\Thetaf_{M_a,k_{nm}}(\chi)$, for $m\geq 1$, are then dense in a
  maximal compact subgroup~$K$ of the complex points of the arithmetic
  tannakian group of the perverse sheaf~$M_a$ on~$\Gg_m$ by
  Corollary~\ref{cor-generate}. It follows that~$K$, and hence
  also~$\garith{M_a}$, is a finite group since a compact real Lie
  group has no non-trivial small subgroup. By Katz's results on finite
  tannakian groups on~$\Gg_m$ (see Theorem~\ref{th-katz-finite}), this
  would imply that the perverse sheaf~$M_a$ is punctual, which is a
  contradiction.
\end{proof}


We will now prove a slightly more general statement than
Theorem~\ref{th-sld1}.

\begin{theorem}\label{th-sld}
  Let~$\lambda\colon \Gg_m\to \Gg_m\times\Gg_a$ be the closed embedding
  $\lambda (x)=(x,\alpha x)$ for some $\alpha\in k^{\times}$ and let~$C$
  be its image.
  \par
  Let~$M$ be a geometrically simple perverse sheaf
  on~$\Gg_m\times\Gg_a$ supported on~$C$ and of weight zero. Assume
  that~$M$ is not punctual, and that $M$ is not geometrically isomorphic
  to $\mcL^{\lambda}_{\eta,b}[1](1/2)$ for some
  $\chag{\eta}{b}\in\charg{G}(k)$.
  \par
  Let~$r\geq 0$ be the tannakian dimension of~$M$ and
  denote~$\Gg=\garith{M}\subset \GL_r$.
  \par
  We then have~$r\geq 2$, the group~$\Gg$ contains~$\SL_r$ and the
  standard representation of~$\Gg$ in~$\GL_r$ is not self-dual.
\end{theorem}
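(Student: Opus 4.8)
The strategy is to compute the fourth moment of the standard representation of $\garith{M}$ by exploiting the Sidon property of $\lambda$, invoke Larsen's alternative, and discard the residual ``finite'' possibility by Lemma~\ref{lm-finite}. First I would record the shape of $M$: since $\lambda$ is a closed immersion and $M$ is geometrically simple, supported on $C=\lambda(\Gg_m)$ and not punctual, we may write $M=\lambda_*N=\lambda_!N$ with $N=\lambda^*M=\sheaf{M}[1]$ for a geometrically irreducible middle-extension sheaf $\sheaf{M}$ on $\Gg_m$ of positive rank; $N$ is pure of weight zero, being both $\lambda^*M$ and $\lambda^!M$. The group automorphism $(u,v)\mapsto(u,\alpha^{-1}v)$ of $G$ intertwines $\lambda$ with the diagonal embedding $\Delta$ and preserves convolution, weights, simplicity and the tannakian groups, so the tannakian dimension~$r$ of $M$ is computed exactly by Lemma~\ref{lm-rank-gagm}, whose part~(2) says that $r=1$ holds precisely when $M$ is geometrically isomorphic to $\mcL^{\lambda}_{\eta,b}[1](1/2)$. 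As this is excluded by hypothesis, $r\geq 2$; in particular $M$ has positive tannakian dimension, hence is not negligible, hence is an object of $\Ppintarith(G)$ with $M=M_{\intt}$.

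Next I would compute $M_4(\garith{M},M)$. By Proposition~\ref{pr-sidon-morphisms}\,(1) the morphism $\lambda$ is Sidon, and $N$ is geometrically simple and pure of weight zero, so Proposition~\ref{pr-sidon-moments1}\,(1) applies — the tannakian dimension of $M=M_{\intt}$ being $r\geq 2$ — and yields $M_4(\garith{M},M)=2$, with $\garith{M}\subseteq\GL_r$ and $r\geq 2$. Now I distinguish two cases according to whether the tannakian determinant $\det(M)$ has finite or infinite arithmetic order. If $\det(M)$ is of finite order, then $\garith{M}$ has finite center, hence is semisimple, so $\garith{M}\cap Z$ is finite ($Z\subset\GL_r$ being the scalars), and the second sentence of Theorem~\ref{th-larsen}\,(3) forces either $(\garith{M})^\circ=\SL_r$, giving $\SL_r\subseteq\garith{M}$, or $\garith{M}$ finite. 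In the latter case Lemma~\ref{lm-finite}, applicable since $M$ is supported on the line $C$ and pure of weight zero, forces $M$ to be punctual, contradicting the hypothesis. This settles the finite-order case.

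The delicate case — and the main obstacle — is when $\det(M)$ has infinite arithmetic order, equivalently when $\garith{M}$ contains the full torus of scalar matrices, so that $\bar\Gg:=\garith{M}/(\garith{M}\cap Z)\subset\PGL_r$ is semisimple. Here $M_4(\garith{M},M)=\dim\End(\Qlb^r)^{\bar\Gg}=2$ only tells us that $\bar\Gg$ acts irreducibly on the traceless endomorphisms $\mathfrak{sl}_r$, which does not by itself give $\bar\Gg=\PGL_r$, i.e. $\SL_r\subseteq\garith{M}$ (for instance $A_4\subset\PGL_2$ would satisfy it). The plan is to reduce to $\Gg_m$ as in the proof of Lemma~\ref{lm-finite}: writing $M_a=Rp_{1!}(M_{\chag{0}{a}})=Rp_{1*}(M_{\chag{0}{a}})=\sheaf{M}\otimes\mcL_{\psi(\alpha a x)}[1]$, a non-punctual perverse sheaf on $\Gg_m$ of tannakian dimension $r$ for all but finitely many~$a$ (Lemma~\ref{lm-rank-gagm}\,(3)), Proposition~\ref{pr-specialize} shows that $\garith{M_a}$ is again of infinite order modulo scalars with the same semisimple quotient $\bar\Gg$; but then $\sheaf{M}\otimes\mcL_{\psi(\alpha a x)}$ has very constrained local monodromy at $0$ and $\infty$ — the Artin--Schreier twist absorbing its tame part at $\infty$ — and a contradiction is extracted from Katz's classification of objects of small tannakian group on $\Gg_m$ together with the determinant criterion of Corollary~\ref{cor-simpler} applied to $p=p_1$ and the family $(M_a)$. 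Making this last step rigorous is where the real work lies.

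Finally, that the standard representation of $\garith{M}$ is not self-dual is independent of everything above: $M^\vee=\inv^*\DD(M)$ is geometrically simple and supported on $\inv(C)$, whereas $M$ is supported on $C$, and since $\inv\colon(u,v)\mapsto(u^{-1},-v)$ does not stabilize the line $C=\{(x,\alpha x)\}$ the two supports differ, so $M\not\cong M^\vee$ even geometrically. As the standard representation of $\garith{M}$ is self-dual precisely when $M\cong M^\vee$ in $\braket{M}^{\arith}$, it is not self-dual. (For $r\geq 3$ this is automatic once $\SL_r\subseteq\garith{M}$; the substance is the case $r=2$.)
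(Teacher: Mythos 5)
Your setup, the reduction to the diagonal, the use of Lemma~\ref{lm-rank-gagm}(2) to get $r\geq 2$, the Sidon computation $M_4(\garith{M},M)=2$ via Propositions~\ref{pr-sidon-morphisms}(1) and~\ref{pr-sidon-moments1}, the exclusion of a finite $\garith{M}$ by Lemma~\ref{lm-finite}, and the non-self-duality argument (support of $M^\vee$ is the hyperbola $\inv(C)\neq C$) all match the paper's proof. But there is a genuine gap exactly where you say "the real work lies": after Larsen's alternative, the residual case is $\garith{M}\cap Z=Z$ with $\garith{M}/Z$ finite, and your proposal does not actually rule it out. Your sketched route is also off target: Proposition~\ref{pr-specialize} only compares characteristic polynomials of Frobenius conjugacy classes and does not show that $\garith{M_a}$ has "the same semisimple quotient" as $\garith{M}$, and Corollary~\ref{cor-simpler} is a criterion for the determinant to have infinite order, which is irrelevant in the branch where you have already granted that the determinant (hence the scalars) lies in $\garith{M}$; neither tool produces the needed contradiction.

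The paper closes this case differently, by a moment bootstrap rather than local monodromy or determinant criteria. If $Z\subseteq\garith{M}$ and $\garith{M}/Z$ is finite, then $\End(M)$ has finite arithmetic tannakian group, hence is generically unramified (Corollary~\ref{cor-finite-gen-unram}), so $|S(M,\chag{\chi}{a})|^2=S(\End(M),\chag{\chi}{a})$ takes only finitely many values on a generic set of characters. Specializing in $a$, for all but boundedly many $a\in k_n$ the sums $|S(M_a,\chi)|^2$ take finitely many values, forcing $\garith{\End(M_a)}$ to be finite; by Katz's theorem on $\Gg_m$ (\cite[Th.\,6.2]{mellin}) the object $\End(M_a)$ is then \emph{punctual}, say $\bigoplus_s n(a,s)\,\gamma_{a,s}^{\deg}\otimes\delta_s$ with $\sum_s n(a,s)=r^2$ for generic $a$ (Lemma~\ref{lm-rank-gagm}(3)). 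The fourth moment of $M_a$ over $\Gg_m$ is then $\sum_s n(a,s)^2\geq r^2\geq 4$, and averaging over $a$ and letting $n\to+\infty$ yields $M_4(\garith{M})\geq 4$, contradicting $M_4(\garith{M})=2$. Without this (or an equivalent) argument, the conclusion $\SL_r\subseteq\garith{M}$ is not established in your proposal.
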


Note that the last item implies in particular that~$\Gg$ cannot be equal
to~$\SL_2$. 

\begin{proof}
  We may assume that $\alpha=1$.  We first note that our assumptions
  and Lemma~\ref{lm-rank-gagm} imply that~$r\geq 2$ (otherwise, $M$
  would be punctual or geometrically isomorphic to some
  perverse sheaf $\mcL^{\lambda}_{\eta,b}[1](1/2)$).

  We will apply Larsen's Alternative. The closed immersion $\lambda$
  is a Sidon morphism (Proposition~\ref{pr-sidon-morphisms}\,(1)), and
  therefore we have $M_4(\Gg)=2$ by
  Proposition~\ref{pr-sidon-moments1} (since~$r\geq 2$). 

  \par
  Our assumptions therefore imply that $M_4(\Gg)=2$.  By Larsen's
  Alternative (Theorem~\ref{th-larsen}\,(3)), it follows that either
  $\Gg$ contains~$\SL_r$, or $\Gg/\Gg\cap Z$ is finite, where
  $Z\subset \GL_r$ is the group of scalar matrices. We must show that
  this second case actually does not arise. We proceed by contradiction,
  assuming therefore that $\Gg/\Gg\cap Z$ is finite.
  \par
  The intersection $\Gg\cap Z$ is either finite or equal to~$Z$. In the
  first case, the group~$\Gg$ would be finite, so that the object~$M$
  would be punctual by Lemma~\ref{lm-finite}, which contradicts our
  assumptions.
  \par
  So we are left with the case $\Gg\cap Z=Z$. The object $\End(M)$ of
  $\braket{M}^{\arith}$ has tannakian group $\Gg/\Gg\cap Z$, which is
  then finite. In particular, this object is generically unramified
  (Corollary~\ref{cor-finite-gen-unram}).

  Let $n\geq 1$. For $a\in k_n$, the complex
  $M_a=p_{1,*}M\otimes j^*\mcL_{\psi(ax)}$ on~$\Gg_{m,k_n}$ is a perverse
  sheaf, geometrically simple and of weight~$0$, since the restriction
  of~$p_1$ to~$C$ is an isomorphism.  For all but a bounded number of
  $a\in k_n$, Lemma~\ref{lm-structure-gmga} implies that $M_a$ has the
  property that
  $$
  S(\End(M_a),\chi)=|S(M_a, \chi)|^2=|S(M,\chag{\chi}{a})|^2
  $$
  take only finitely many values as $\chi\in\charg{\Gg}_{m,k_n}$
  varies. By equidistribution, this is only possible if the arithmetic
  tannakian group of the object $\End(M_a)\in\braket{M_a}^{\arith}$ on
  $\Gg_{m,k_n}$ is finite.  By Katz's Theorem~\ref{th-katz-finite}, this
  implies that~$\End(M_a)$ is punctual, say
  $$
  \End(M_a)=\bigoplus_{s\in S_a}n(a,s) \gamma_{a,s}^{\deg}\otimes
  \delta_{s}
  $$
  for a subset $S_a\subset k_n^{\times}$, integers $n(a,s)\geq 1$ and
  unitary scalars $\gamma_{a,s}$. For all but finitely many
  $a\in\bar{k}$, we know also from Lemma~\ref{lm-rank-gagm}\,(3) that
  $$
  r^2=\dim \End(M)=\dim \End(M_a)=\sum_{s\in S_a}n(a,s).
  $$
  \par
  Since all $\chi\in\charg{G}(k_n)$ are unramified for $\End(M_a)$ and
  $|\gamma_{a,s}|=1$, we compute
  \begin{multline*}
    \frac{1}{|\charg{\Gg}_m(k_n)|}
    \sum_{\chi\in\charg{\Gg}_m(k_n)}|S(M_a,\chi)|^4=
    \frac{1}{|\charg{\Gg}_m(k_n)|}
    \sum_{\chi\in\charg{\Gg}_m(k_n)}|S(\End(M_a),\chi)|^2
    \\
    = \frac{1}{|\charg{\Gg}_m(k_n)|}
    \sum_{\chi\in\charg{\Gg}_m(k_n)}\Bigl|\sum_{s\in
      S_a}n(a,s)\gamma_{a,s}^n\chi(s)\Bigr|^2=\sum_{s\in S_a}n(a,s)^2
    |\gamma_{a,s}^n|^2 \geq r^2.
  \end{multline*}
  \par
  Averaging over~$a\in k_n$, then letting $n\to+\infty$, it follows that
  $M_4(\Gg) \geq r^2\geq 4$, which is a contradiction.
  \par
  Finally, we note that the tannakian dual of~$M$ is supported on the
  image of the diagonal under the inversion map of~$\Gg_m\times\Gg_a$,
  namely on the hyperbola
  $$
  \{(x^{-1}, -x)\,\mid\, x\in\Gg_m\}\subset \Gg_m\times
  \Gg_a.
  $$
  \par
  Since this is not a translate of the diagonal, the tannakian dual
  of~$M$ cannot be geometrically isomorphic to~$M$.
\end{proof}

\begin{example}\label{ex-fourth-moment-2}
  (1) Suppose that~$M=\mcLd_{\eta,b}[1](1/2)$ for some
  $(\eta,b)\in\charg{G}(k)$, which corresponds to the case excluded in
  Theorem~\ref{th-sld}. For $n\geq 1$, denote by $\eta_n$ the character
  $\eta\circ N_{k_n/k}$ of $k_n^{\times}$. Then the sums $S_n(\chi,a)$
  are essentially Gauss sums, namely
  $$
  S_n(\chi,a)=\frac{1}{|k|^{n/2}} \sum_{x\in k_n^{\times}}
  (\chi\eta_n)(x)\psi_n((a+b)x)=\frac{1}{|k|^{n/2}}
  \overline{(\chi\eta_n)}(a+b)\tau(\chi\eta_n,\psi_n)
  $$
  (see~(\ref{eq-gauss-sum}) for the normalization).
  \par
  The equidistribution properties of the Gauss sums are well-known (see
  for instance~\cite[Th.\,9.5]{gkm}), and one deduces easily that the
  arithmetic tannakian group of~$M$ is equal to~$\GL_1$. The fourth
  moment of all sums $S_n(\chi,a)$ converges to~$2$, as we saw in the
  previous proof, but the single contribution to the fourth moment of
  the (ramified) character~$\chag{\eta^{-1}}{-b}$ is
  $(|k_n|-1)^4/|k_n|^{4}\to 1$. (See Proposition~\ref{prop-rk1} for the
  classification of objects of tannakian dimension~$1$ in general.)
  \par
  (2) Let $M_0=\mcK\ell_{2,\psi}(1/2)$ be the Kloosterman complex of
  rank~$2$ on~$\Gg_m$ (see~(\ref{eq-kloosterman-sheaf})) associated
  to~$\psi$, twisted to be pure of weight~$0$
  (see~\cite[Th.\,8.4.13]{esde}). It is of the form
  $\mathcal{M}_0[1](1/2)$ for some middle extension
  sheaf~$\mathcal{M}_0$, pure of weight~$0$ as lisse sheaf on~$\Gg_m$.
  \par
  The object $M_0$ has tannakian dimension~$1$ and geometric tannakian
  group equal to~$\GL_1$ as a $\Gg_m$-object (since it is a
  hypergeometric complex, see Theorem~\ref{th-hypergeometric}). On the
  other hand, the object $M=\Delta_*M_0=\Delta_*\sheaf{M}_0[1](1/2)$ on
  $G$ has tannakian dimension~$2$, and arithmetic tannakian group
  $\GL_2$ by Lemma~\ref{lm-rank-gagm} and Theorem~\ref{th-sld}.
  \par
  We compute the corresponding exponential sums to see the concrete
  meaning of the theorem in this case. For $n\geq 1$ and
  $\chag{\chi}{a}\in\charg{G}(k_n)$, we have the formula
  \begin{align*}
  S_n(\chi,a)&=
  \frac{1}{|k_n|}
  \sum_{x\in k_n^{\times}}
  \Bigl(\sum_{y\in k_n^{\times}}\psi_n(xy+1/y)\Bigr)
  \chi(x)\psi_n(ax)
  \\
  &=
  \frac{1}{|k_n|}
  \sum_{y\in k_n^{\times}}\psi_n(1/y)\sum_{x\in k^{\times}}
  \psi_n((a+y)x)\chi(x).
\end{align*}
For $\chi$ non-trivial, extended by $\chi(0)=0$, this is equal to
$$
S_n(\chi,a)=\frac{\tau(\chi,\psi_n)}{|k_n|} \sum_{y\in
  k_n^{\times}}\overline{\chi(a+y)}\psi_n(1/y).
$$
\end{example}

In order to complete the determination of the tannakian group in the
situation of Theorem~\ref{th-sld}, we need to compute the tannakian
determinant of~$M$.\index{tannakian determinant} There are various tools to do this:
\begin{enumerate}
\item one can attempt to compare the tannakian determinants for $M$
  (supported on a line) with those on~$\Gg_m$, which can often be
  computed using the results of Katz~\cite{mellin};
\item one can use the relation between the tannakian determinant at
  $\chag{\chi}{a}$ and the determinant of Frobenius acting on the
  cohomology group
  $$
  H^0_c(G_{\bar{k}}, M_{\chag{\chi}{a}})\simeq
  H^0_c(\Gg_{m,\bar{k}}, M_a\otimes\mcL_{\chi})
  $$
  (with notation as above). The latter determinant (on a curve) may
  often be computed using the theory of local epsilon
  factors\index{local epsilon factor} of Deligne and Laumon (see
  Appendix~\ref{ch-app-product}). We will not give explicit examples
  here, but we perform a computation of this kind in
  Chapter~\ref{sec-variance} (see
  Proposition~\ref{pr-infinite-order-variance}).
\end{enumerate}

As an example of the first approach, we have for instance the following
criterion:


\begin{proposition}\label{pr-determinant}
  Let~$C\subset G=\Gg_m\times\Gg_a$ be a line defined by $y=\alpha x$
  where $\alpha\in k^{\times}$.  Let~$M$ be a geometrically simple
  perverse sheaf on~$\Gg_m\times\Gg_a$ supported on~$C$ and of weight
  zero. Assume that~$M$ is not punctual, and that the restriction
  of~$M$ to~$C$ is not geometrically isomorphic to $\mcL_{\eta,b}[1]$
  for some multiplicative character~$\eta$ and some~$b$. Let~$r\geq 0$
  be the tannakian dimension of~$M$.
  \par
  Suppose that for all but finitely many~$a$, the tannakian determinant
  of~$p_{1,*}M_{\chag{1}{a}}$ on~$\Gg_m$ is geometrically of infinite
  order. Then we have $\Gg=\GL_r$.
\end{proposition}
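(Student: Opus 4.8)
The plan is to reduce the determination of $\Gg=\garith{M}$ to showing that the object $\det(M)$ of $\braket{M}^{\arith}$, which has tannakian dimension one, is of infinite order, and to obtain the latter by transporting the corresponding fact from $\Gg_m$ along the projection $p_1$ via Proposition~\ref{pr-determinant-tori}.

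First I would apply Theorem~\ref{th-sld} to the closed immersion $\lambda\colon\Gg_m\to\Gg_m\times\Gg_a$, $\lambda(x)=(x,\alpha x)$; its hypotheses hold here, since $M$ is geometrically simple, not punctual, and the assumed exclusion — the restriction of $M$ to $C$ not being geometrically isomorphic to $\mcLd_{\eta,b}[1](1/2)$ — is exactly the statement that $M$ is not geometrically isomorphic to $\mcL^{\lambda}_{\eta,b}[1](1/2)$. This gives $r\geq 2$ and $\SL_r\subseteq\Gg\subseteq\GL_r$. Since $\SL_r$ is the kernel of $\det\colon\GL_r\to\Gg_m$, the closed subgroups of $\GL_r$ between $\SL_r$ and $\GL_r$ are the preimages of the closed subgroups of $\Gg_m$, namely $\Gg_m$ and the $\mu_d$; hence $\Gg=\GL_r$ holds if and only if $\det(\Gg)=\Gg_m$, i.e. if and only if $\det(M)$ is arithmetically of infinite order. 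As being geometrically of infinite order implies being arithmetically of infinite order, it suffices to prove $\det(M)$ is geometrically of infinite order.

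Next I would set up the reduction to $\Gg_m$. Write $M=\lambda_*M'$ with $M'=\lambda^*M$ a geometrically simple perverse sheaf, pure of weight zero, on $\Gg_m$. For $a$ outside the finite exceptional set of the hypothesis, put $\chi_1=\chag{1}{a}\in\charg{G}(k)$. Using the projection formula together with $p_1\circ\lambda=\mathrm{id}$ and the properness of $\lambda$, one finds $Rp_{1,!}(M_{\chi_1})=Rp_{1,*}(M_{\chi_1})=M'\otimes\lambda^*p_2^*\mcL_{\psi(ay)}=:N$, a simple perverse sheaf on $\Gg_m$ that is pure of weight zero and arithmetically semisimple, and of tannakian dimension $r$ for all but finitely many $a$ by Lemma~\ref{lm-rank-gagm}~(3); by hypothesis $\det(N)$ is geometrically of infinite order. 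Proposition~\ref{pr-determinant-tori}, applied with $p=p_1$ and this $\chi_1$, then yields precisely what we want — that $\det(M)$ is geometrically of infinite order, whence $\det(\Gg)=\Gg_m$ and $\Gg=\GL_r$ — \emph{provided} its remaining hypothesis is checked: that the set of $\chi\in\charg{\Gg}_m$ for which $\chag{\chi}{a}$ is unramified for $\det(M)$ is generic.

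Verifying this last hypothesis is the main obstacle, since $\Gg_m\times\Gg_a$ is not one of the three fundamental types covered by Theorem~\ref{th-polite}. What helps is that it suffices to show $M$ itself is generically unramified on $\Gg_m\times\Gg_a$ — any character unramified for $M$ being a fortiori unramified for the subobject $\det(M)$ of $\braket{M}^{\arith}$ — and, more precisely, that the bad locus has the \emph{fibred} form the argument needs: its complement should meet the fibre $\{\chag{\chi}{a}:\chi\in\charg{\Gg}_m\}$ in a finite set for all but finitely many $a$. I would establish this by running, for the specific group $\Gg_m\times\Gg_a$, the dévissage used to prove the stratified vanishing theorem in Section~\ref{ssec-proof-vanish}: reduce the torus factor by $p_1$ via the relative vanishing theorem for tori (Theorem~\ref{thm-tori-vanishing-rel}), reduce the unipotent factor by the Fourier transform along $p_2$ (Proposition~\ref{prop-strat-unip}), and combine with the criterion of Lemma~\ref{lm-critere-unram}; this exhibits the bad locus as contained in a finite union of products of a proper translated subtorus of $\Gg_m$ with $\Gg_a$, or of $\Gg_m$ with a finite set of additive characters, each fibred in the required way. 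Alternatively, one can appeal to the classification of tannakian-dimension-one objects from Section~\ref{ssec-neg-one} to write $\det(M)$ explicitly (punctual, a pushforward from a line, or a pullback from a one-dimensional quotient) and read off the unramified set in each case, noting that a punctual $\det(M)$ is geometrically of finite order in $G(\bar k)=\bar k^\times\times\bar k$ and so is ruled out by the hypothesis on $\det(N)$ through Proposition~\ref{pr-determinant-tori}. Either way, feeding the genericity into Proposition~\ref{pr-determinant-tori} completes the proof that $\Gg=\GL_r$.
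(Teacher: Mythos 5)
Your opening reduction is fine and matches the paper: Theorem~\ref{th-sld} gives $r\geq 2$ and $\SL_r\subset\Gg\subset\GL_r$, so everything hinges on showing that the tannakian determinant of~$M$ has infinite order, and the identification $Rp_{1,!}(M_{\chag{1}{a}})=Rp_{1,*}(M_{\chag{1}{a}})=N_a$ with a simple pure perverse sheaf on~$\Gg_m$ is also how the paper proceeds. The gap is in the step you yourself flag as the main obstacle: Proposition~\ref{pr-determinant-tori} needs the set of $\chi$ with $\chag{1}{a}\cdot\chag{\chi}{0}$ \emph{unramified} (in the fiber-functor sense of Definition~\ref{def:unramified-fibfunct}) for $\det(M)$ to be generic, and this is exactly what is not available on $\Gg_m\times\Gg_a$. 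Generic unramifiedness is only proved in the paper for tori, abelian varieties and unipotent groups (Theorem~\ref{th-polite}); for a general $G$ it is listed among the open problems, and the remark following Proposition~\ref{pr-determinant} makes explicit that the route through Proposition~\ref{pr-determinant-tori} is available only \emph{if} $\det(M)$ is known to be generically unramified. Your sketch for supplying it does not close this: running the dévissage of the stratified vanishing theorem together with Lemma~\ref{lm-critere-unram} controls \emph{weak} unramifiedness, but the fiber-functor property requires the characters to lie in $\bigcap_{m\geq 1}\nunram{C_m}$, a simultaneous condition over all convolution powers, and an infinite intersection of generic sets need not be generic. The uniformity devices that make this work on the factors — Gabber--Loeser's inclusion $\nunram{C_2}\subset\nunram{C_m}$ for tori, and the propagation of the lisse locus under the Fourier transform for unipotent groups — are not shown (in the paper or in your argument) to extend to the product. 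Your fallback via the dimension-one classification is also not as stated: Proposition~\ref{prop-rk1} describes simple objects of tannakian dimension one as convolutions $M_f*_{\mathrm{int}}p_1^*(H)*_{\mathrm{int}}p_2^*(N)$, not as punctual objects, pushforwards from a line, or pullbacks from a quotient, so the case analysis you propose does not apply.

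The paper's proof avoids the issue entirely: it never invokes unramified characters or Proposition~\ref{pr-determinant-tori}. Instead, using Proposition~\ref{pr-specialize}, for weakly unramified characters one has $\det(\Theta_{M,k_n}(\chag{\chi}{a}))=\det(\Theta_{N_a,k_n}(\chi))$; for each of the (all but finitely many) $a$ with $\det(N_a)$ geometrically of infinite order, the determinants $\det(\Theta_{N_a,k_n}(\chi))$ equidistribute on average on the unit circle, uniformly in~$a$ because the complexity of $\det(N_a)$ is bounded independently of~$a$. Averaging over~$a$ shows that the determinants $\det(\Theta_{M,k_n}(\chag{\chi}{a}))$ equidistribute on average on the whole circle; comparing with Theorem~\ref{th-4} (which works with weakly unramified characters and Frobenius-unramified sets, both always generic by Proposition~\ref{pr-frob-unram}) forces the determinant of $\garith{M}$ to be infinite, hence $\Gg=\GL_r$. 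If you want to salvage your route, you would first have to prove generic unramifiedness of $\det(M)$ on $\Gg_m\times\Gg_a$, which is a genuinely open point, not a routine verification.
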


\begin{proof}
  Since~$\Gg$ contains~$\SL_r$, it suffices to prove that the
  determinant of~$\Gg$ is arithmetically of infinite order.

  Since $p_1\colon C\to \Gg_m$ is an isomorphism, it follows that for
  any $a\in \Gg_a$, the object $N_a=p_{1,*}M_{\chag{1}{a}}$ on~$\Gg_m$
  is a perverse sheaf, and is arithmetically simple and pure of
  weight~$0$.
  \par
  We claim that the assumption implies that the determinants of
  $\Theta_{M,k_n}(\chag{\chi}{a})$ are equidistributed on average on the
  unit circle, where $\chag{\chi}{a}$ vary among Frobenius-unramified
  classes for the determinant. Indeed, denoting $\mcX$ this set of
  characters, we have for any non-zero integer $h\in\Zz$ the relation
  $$
  \frac{1}{|G(k_n)|}
  \sum_{\chag{\chi}{a}\in \mcX(k_n)}
  \det(\Theta_{M,k_n}(\chag{\chi}{a}))^h
  = \frac{1}{|k_n|}
  \sum_{a\in k_n}
  \frac{1}{|k_n^{\times}|}
  \sum_{\substack{\chi\in \charg{\Gg}_m(k_n)\\\chag{\chi}{a}\in \mcX(k_n)}}
  \det(\Theta_{N_a,k_n}(\chi))^h.
  $$
  \par
  The contribution of those finitely many~$a$ such that $N_a$ has
  geometrically finite-order determinant tends to~$0$. For the other
  values of~$a$, we have
  $$
  \lim_{N\to+\infty} \frac{1}{N}\sum_{n\leq N}
  \frac{1}{|k_n^{\times}|}
  \sum_{\substack{\chi\in
      \charg{\Gg}_m(k_n)\\\chag{\chi}{a}\in \mcX(k_n)}}
  \det(\Theta_{N_a,k_n}(\chi))^h=0
  $$
  by equidistribution, in fact uniformly with respect to~$a$ since the
  complexity of $\det(N_a)$ is bounded independently of~$a$. We deduce
  that
  $$
  \lim_{N\to+\infty} \frac{1}{N}\sum_{n\leq N}
  \frac{1}{|k_n|}\sum_{a\in k_n}
  \frac{1}{|k_n^{\times}|}
  \sum_{\substack{\chi\in
      \charg{\Gg}_m(k_n)\\\chag{\chi}{a}\in \mcX(k_n)}}
  \det(\Theta_{N_a,k_n}(\chi))^h=0,
  $$
  which proves the claim.

  But by Theorem~\ref{th-4}, the determinants of
  $\Theta_{M,k_n}(\chag{\chi}{a})$ are known to be equidistributed on
  average on the subset of the unit circle corresponding to the
  determinant of the arithmetic tannakian group of~$M$; if the latter
  were finite, this would be a finite group of roots of unity. By
  contraposition, the result follows.
\end{proof}

\begin{remark}
  If $\det(M)$ is known to be generically unramified, then it suffices
  to assume that the tannakian determinant of~$p_{1,*}M$ on~$\Gg_m$ is
  geometrically of infinite order, since in this case we can apply
  Proposition~\ref{pr-determinant-tori} to some twist
  $M_{\chag{\chi_1}{a_1}}$ such that the set of characters $\chi$ for
  which the character $\chag{\chi_1}{a_1}\chag{\chi}{0}$ is unramified
  is generic.
\end{remark}

\begin{example}\label{ex-non-unram}
  Proposition~\ref{pr-determinant} applies for instance to objects of
  the form
  $$
  M=\sheaf{L}_{\eta(f)}[1](1/2)
  $$
  where $\eta$ is a non-trivial multiplicative character of~$k$,
  and~$f\in k[X]$ is a polynomial such that~$f(0)\not=0$ with
  degree~$d\geq 2$ such that~$\eta^d$ is non-trivial, as explained by
  Katz in~\cite[Th.\,17.5]{mellin}. Indeed, in this case, the assumption
  of the proposition holds for all~$a\not=0$.

  The dimension formula~(\ref{eq-rank-gagm}) shows that the tannakian
  dimension is~$d+1$.
  Note that~\cite[Th.\,17.5]{mellin} provides the equidistribution for
  the subfamily with~$a=0$, under the assumption that~$f$ is not of the
  form $g(X^b)$ for some $b\geq 2$, but as traces of matrices
  in~$\Un_d(\Cc)$, because the corresponding object on~$\Gg_m$ has
  tannakian dimension~$d$. This means that the characters
  $\chag{\chi}{0}$ are examples of weakly-unramified characters for~$M$
  which are not unramified (since they do not give the ``right''
  dimension).
\end{example}

As explained in Remark~\ref{rm-horizontal}\,(2), we expect that we can
apply Theorem~\ref{th-horiz} unconditionally to~$G$. Thus this
proposition should imply the following result:

\begin{corollary}\label{cor-hor-gagm}
  Let~$\ell$ be a prime number. Assume that
  \textup{Theorem~\ref{th-horiz}} holds for~$G$. For all $p\not=\ell$,
  let~$M_p$ be a geometrically simple perverse sheaf of weight zero on
  $(\Gg_m\times \Gg_a)_{\Ff_p}$ supported on the diagonal with
  $c_u(M_p)\ll 1$, where~$u$ is the natural locally-closed immersion
  $\Gg_m\times\Gg_a\injecte \Aa^2\injecte \Pp^2$. Suppose that the
  tannakian dimension~$r$ of~$M_p$ is independent of~$p$ and that $M_p$
  satisfies the assumption of Proposition~\textup{\ref{pr-determinant}}
  for~$k=\Ff_p$. Then the sums
  $$
  S(\chi,a;p)=\sum_{x\in\Ff_p^{\times}}t_{M_p}(x)\chi(x)
  e\Bigl(\frac{ax}{p}\Bigr),
  $$
  for $\chi$ a multiplicative character of~$\Ff_p$ and~$a\in\Ff_p$,
  become equidistributed according to the trace of a random unitary
  matrix in~$\Un_r(\Cc)$.
\end{corollary}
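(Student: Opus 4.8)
For a family $(M_p)$ of geometrically simple perverse sheaves of weight zero on $(\Gg_m\times\Gg_a)_{\Ff_p}$ supported on the diagonal, with $c_u(M_p)\ll 1$, tannakian dimension $r$ independent of $p$, and each $M_p$ satisfying the hypothesis of Proposition~\ref{pr-determinant} over $\Ff_p$, the discrete Fourier–Mellin sums $S(\chi,a;p)$ become equidistributed as $p\to+\infty$ according to the image by the trace of Haar measure on $\Un_r(\Cc)$.

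The plan is to deduce the statement from the horizontal equidistribution theorem, Theorem~\ref{th-horiz}, applied to the group scheme $G=\Gg_m\times\Gg_a$ over $\Zz[1/\ell]$ and to the family $(M_p)$, and then to compose the resulting equidistribution of unitary Frobenius conjugacy classes with the trace map $\Un_r(\Cc)\to\Cc$. First I would verify the hypotheses bearing on the sheaves $M_p$. Being geometrically simple, each $M_p$ is arithmetically simple, hence arithmetically semisimple, and it is pure of weight zero with $c_u(M_p)\ll 1$ by assumption. Since $M_p$ satisfies the conditions of Proposition~\ref{pr-determinant} (in particular it is not punctual and its restriction to the diagonal is not of the excluded Artin--Schreier--Kummer type), Theorem~\ref{th-sld} applies over $\Ff_p$: it gives $r\geq 2$, shows that $\garith{M_p}$ contains $\SL_r$, and that the standard representation is not self-dual; since a negligible perverse sheaf has tannakian dimension $0$ whereas $r\geq 2$, the arithmetically simple sheaf $M_p$ is not negligible, hence lies in $\Pervint((\Gg_m\times\Gg_a)_{\Ff_p})$. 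Proposition~\ref{pr-determinant} then upgrades the inclusion to $\garith{M_p}=\GL_r$, and the geometric-infinite-order clause of the determinant hypothesis, fed into Proposition~\ref{pr-determinant-tori} (via the remark following Proposition~\ref{pr-determinant}, applicable because $\det(M_p)$, being of tannakian dimension one, is generically unramified), shows that $\det(M_p)$ is geometrically of infinite order. Combined with Propositions~\ref{pr:geom-vs-arith1} and~\ref{pr:geom-vs-arith2} (so that $\ggeo{M_p}$ is normal in $\GL_r$ with quotient of multiplicative type, hence contains $\SL_r$), this yields $\ggeo{M_p}=\GL_r=\garith{M_p}$, with common maximal compact subgroup $K=\Un_r(\Cc)$ independent of $p$.

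The remaining hypothesis of Theorem~\ref{th-horiz} is the complexity-uniform bound
\[
\abs{\charg{G}_p(\Ff_{p^n})\setminus\unram{N}(\Ff_{p^n})}\ll c_u(N)\,p^{n(\dim(G_p)-1)}
\]
for every arithmetically semisimple generically unramified object $N$ of $\Pervint(G_p)$, with implied constant independent of $p$; in particular this subsumes that $M_p$ is generically unramified. As recorded in Remarks~\ref{rmk-HV-effective} and~\ref{rm-horizontal}, the unipotent part is already available unconditionally (combining Saibi's Fourier transform with Theorem~\ref{thm-lisse-locus-conductor}, as in Proposition~\ref{prop-strat-unip}, the complexity of the character sheaves being controlled by Proposition~\ref{prop-cond-char}); what is still missing is the complexity-uniform form of the one-dimensional torus case of the Stratified Vanishing Theorem~\ref{thm-high-vanish} and of the generic-unramifiedness statement (Theorem~\ref{th-polite}, through Lemma~\ref{lm-critere-unram}), together with a uniform version of the dévissage of Section~\ref{ssec-proof-vanish} (no abelian-variety factor occurs here, so Orgogozo's results are not needed). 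This uniform input is exactly the point left open in Remark~\ref{rm-horizontal}, and it is the main obstacle of the proof. Granting it, Theorem~\ref{th-horiz} applies verbatim: writing $\mcX_p\subset\charg{G}_p(\Ff_p)$ for the set of characters of $G(\Ff_p)$ unramified for $M_p$, the families $(\Thetaf_{M_p,\Ff_p}(\chag{\chi}{a}))_{\chag{\chi}{a}\in\mcX_p}$ become equidistributed in $\Un_r(\Cc)^\sharp$ as $p\to+\infty$ with respect to the image of the Haar probability measure on $\Un_r(\Cc)$.

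Finally I would extend the equidistribution from $\mcX_p$ to all characters of $G(\Ff_p)$ and pass to traces. The number of characters of $G(\Ff_p)$ outside $\mcX_p$ is $\ll c_u(M_p)\,p\ll p=o(\abs{\charg{G}_p(\Ff_p)})$ by the uniform generic vanishing above, while for every character $\chag{\chi}{a}$ Deligne's Riemann Hypothesis and Lemma~\ref{lm-sums-betti} give the crude bound $\abs{S(\chi,a;p)}\ll c_u(M_p)\,p^{\dim(G)/2}\ll p$; hence the contribution of the excluded characters to any average $\frac{1}{\abs{\charg{G}_p(\Ff_p)}}\sum_{\chag{\chi}{a}}f(S(\chi,a;p))$ with $f$ bounded continuous tends to $0$, exactly as in the proof of Theorem~\ref{th-3}. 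For $\chag{\chi}{a}\in\mcX_p$ one has $S(\chi,a;p)=\Tr(\Thetaf_{M_p,\Ff_p}(\chag{\chi}{a}))$, and since the trace map $\Un_r(\Cc)\to\Cc$ is continuous and bounded on the compact group $\Un_r(\Cc)$, the equidistribution of conjugacy classes from the previous step pushes forward to the asserted equidistribution of the sums $S(\chi,a;p)$ towards the law of the trace of a Haar-random element of $\Un_r(\Cc)$.
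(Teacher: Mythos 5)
Your argument follows exactly the route the paper intends: the paper gives no separate proof of this corollary, presenting it as a consequence of Theorem~\ref{th-sld} and Proposition~\ref{pr-determinant} fed into the conditional horizontal equidistribution Theorem~\ref{th-horiz}, with the complexity-uniform vanishing input for $\Gg_m\times\Gg_a$ left as expected but unproven (Remarks~\ref{rmk-HV-effective} and~\ref{rm-horizontal}), and you verify the same group-theoretic hypotheses, correctly flag that this uniform estimate is precisely the open point on which the statement remains conditional, and carry out the standard passage from unramified characters to all characters and from conjugacy classes to traces. The only loose thread is your claim that $\det(M_p)$ is generically unramified simply because it has tannakian dimension one, which the paper does not establish for $\Gg_m\times\Gg_a$ (Theorem~\ref{th-polite} covers only tori, unipotent groups and abelian varieties), but this caveat sits inside the part of the argument that is conditional in the paper as well.
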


\section{Diagonal objects of dimension~$2$}\label{ssec-rank2}

The computation of Lemma~\ref{lm-rank-gagm} allow us, for instance, to
classify those sheaves~$\sheaf{M}$ which give rise to geometrically
simple perverse sheaves on the diagonal with tannakian dimension~$r=2$.
Indeed, the (usual) rank of~$\sheaf{M}$ must be either~$1$ or~$2$.

In the first case, one and only one of the following conditions must be
true:
\begin{enumerate}
\item $\sheaf{M}$ is lisse on~$\Gg_m$, tamely ramified at~$0$ and has
  (unique) break at $\infty$ equal to~$2$; if the characteristic
  of~$k$ is not equal to~$2$, then the only such sheaves are
  isomorphic to
  $$
  \sheaf{L}_{\psi(ax^2+bx)}\otimes\sheaf{L}_{\eta}
  $$
  where~$a\not=0$ and~$\eta$ is a multiplicative character. The
  corresponding exponential sums are ``twisted quadratic Gauss sums''.
\item $\sheaf{M}$ is lisse on $\Gg_m$ and has Swan conductor~$1$
  at~$0$ and unique break $\leq 1$ at~$\infty$; the only such sheaves
  are isomorphic to
  $$
  \sheaf{L}_{\psi(a/x+bx)}\otimes\sheaf{L}_{\eta}
  $$
  where~$a\not=0$ and~$\eta$ is a multiplicative character (we recover
  the example of Kloosterman--Salié sums).
\item there exists a unique~$\beta\in\bar{k}^{\times}$ such that
  $\sheaf{M}$ is lisse on~$\Aa^1\setminus\{\beta\}$, it has unique break
  $\leq 1$ at~$\infty$ and is tamely ramified at~$0$ and~$\beta$; the only
  such sheaves are isomorphic to
  $$
  \sheaf{L}_{\eta(x-\beta)}\otimes\mcL_{\xi(x)}\otimes\sheaf{L}_{\psi(\alpha
    x)}
  $$
  where~$\beta\not=0$, $\alpha\in\Gg_a$ and~$\eta$ and~$\xi$ are
  multiplicative characters. The corresponding exponential sums are
  $$
  \frac{1}{|k_n|^{1/2}}\sum_{x\in k_n^{\times}}
  \eta(x-\beta)(\chi\xi)(x)\psi((a+\alpha) x),
  $$
  which can be seen as twisted Jacobi sums.
\end{enumerate}

On the other hand, if $\sheaf{M}$ has rank~$2$, then it must be lisse
on~$\Gg_m$, tamely ramified at~$0$ and have breaks~$\leq 1$
at~$\infty$.  Up to twist by a multiplicative character, we obtain a
sheaf lisse on~$\Aa^1$ with breaks~$\leq 1$ at~$\infty$. Since we
assume $\sheaf{M}$ to be geometrically irreducible, the two breaks
must be equal, say equal to~$\lambda$.  Their sum is the Swan
conductor at~$\infty$, which is also the Euler--Poincaré
characteristic (since~$\sheaf{M}$ is lisse on~$\Gg_m$ and tame at~$0$,
see~(\ref{eq-ep-gm})); thus either $\lambda=1/2$ or $\lambda=1$. The
first case gives Euler--Poincaré characteristic equal to~$1$, so we
have a hypergeometric sheaf of rank~$2$ by Katz's classification (see
Theorem~\ref{th-hypergeometric}, e.g., a Kloosterman sheaf of
rank~$2$, with the corresponding sums described in
Example~\ref{ex-fourth-moment-2}\,(2)). In the second case, we may
have a pullback of such a sheaf by~$x\mapsto x^2$. For the pullback of
the Kloosterman sheaf, the exponential sums are then given by the
formulas
\begin{align*}
  S_n(\chi,a)&=\frac{1}{|k_n|} \sum_{x\in k_n^{\times}} \Bigl(
  \sum_{y\in k_n^{\times}}\psi_n(xy+xy^{-1})\Bigr) \chi(x)\psi_n(ax)
  \\
  &= \frac{1}{|k_n|} \sum_{y\in k_n^{\times}} \sum_{x\in
    k_n^{\times}}\chi(x)\psi_n(x(a+y+y^{-1}))
  \\
  &= \frac{\tau(\chi,\psi_n)}{|k_n|} \sum_{\substack{y\in
      k_n^{\times}\\a+y+y^{-1}\not=0}} \overline{\chi(a+y+y^{-1})},
\end{align*}
for $\chi$ non-trivial.


\section{Negligible objects and objects of dimension one}
\label{ssec-neg-one}

We conclude our discussion of the group $G=\Gg_m\times\Gg_a$ by
classifying the negligible objects as well as the objects of tannakian
dimension~$1$. This may be helpful for further investigations (e.g., to
compute the determinant of the tannakian group in some cases, or to
apply the Goursat--Kolchin--Ribet criterion,
see~\cite[Prop.\,1.8.2]{esde}).  \index{Goursat--Kolchin--Ribet
  criterion}

We will denote by $\ft_{\psi/\Gg_m}$ the relative Fourier transform
functor $\Der(G)\to \Der(G)$, defined by
$$
\ft_{\psi/\Gg_m}(M)=Rq_{2,!}(q_1^*M\otimes\mcL_{\psi(ay)})
$$
where $q_1$ and $q_2$ are the two projections
$\Gg_m\times \Gg_a\times\Gg_a\to \Gg_m\times \Gg_a$, and we use
coordinates $(x,y,a)$ on~$G\times \Gg_a=\Gg_m\times\Gg_a\times
\Gg_a$. This functor satisfies the same basic properties as the Fourier
transform over base fields (see,
e.g.,~\cite[\S\,2]{KL-fourier-exp-som}), and in particular
$\ft_{\psi/\Gg_m}(M)[1]$ is perverse if~$M$ is perverse.

\begin{proposition}
  \label{prop-rk0}
  Let~$M$ be a simple perverse sheaf on~$G$ over~$\bar{k}$.
  \par
  The perverse sheaf $M$ is negligible if and only if~$M$ is isomorphic
  to an object of the form
  \begin{equation}\label{eq-f1}
    p_1^*(N)\otimes \mcL_{\psi(ay)}[1]
  \end{equation}
  for some perverse sheaf~$N$ on~$\Gg_m$ and some $a$, or to an object
  of the form
  \begin{equation}\label{eq-f2}
    \mcL_{\chi}[1]\otimes p_2^*(M),
  \end{equation}
  for some perverse sheaf $M$ on~$\Gg_a$ and some multiplicative
  character~$\chi$.
\end{proposition}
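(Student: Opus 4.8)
The plan is to characterize simple negligible perverse sheaves on $G = \Gg_m \times \Gg_a$ by analyzing which quotient morphisms with positive-dimensional kernel can arise. Recall from Example~\ref{ex-negligible} that any object of the form $(f^*N)_\eta$, where $f\colon G \to H$ is a surjective morphism with $\dim \ker(f) \geq 1$ and $\eta \in \charg{G}$, is negligible. Since $G$ is two-dimensional, such a morphism $f$ has one-dimensional image (if the kernel were all of $G$ the object would be punctual, which is negligible only if it is zero as a perverse sheaf — actually a skyscraper is not negligible, so we exclude that). Up to isogeny, the one-dimensional quotients of $\Gg_m \times \Gg_a$ are $\Gg_m$ (via $p_1$, or a Kummer-twisted variant) and $\Gg_a$ (via $p_2$, or an Artin--Schreier-twisted variant); there are no other one-dimensional connected commutative algebraic groups, and $\Hom(\Gg_m, \Gg_a) = \Hom(\Gg_a, \Gg_m) = 0$ forces any surjection $G \to \Gg_m$ to factor through $p_1$ and any surjection $G \to \Gg_a$ to factor through $p_2$ (after composing with an automorphism of the target). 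Incorporating the character twist $\eta = \chag{\chi}{a}$ into the data, the two cases yield exactly the forms \eqref{eq-f1} and \eqref{eq-f2}: in \eqref{eq-f1} the morphism is $p_1$, the pulled-back object is $p_1^*N$, and the twist by $\mcL_{\psi(ay)} = p_2^*\mcL_{\psi(ax)}$ accounts for the $\Gg_a$-component of the character (the $\Gg_m$-component of the twist can be absorbed into $N$); symmetrically for \eqref{eq-f2}.

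So one direction — that objects of the form \eqref{eq-f1} or \eqref{eq-f2} are negligible — follows directly from Example~\ref{ex-negligible} applied to $f = p_1$ (whose kernel is $\Gg_a$, one-dimensional) and $f = p_2$ (whose kernel is $\Gg_m$, one-dimensional), after noting that $N$ (resp. $M$) may be taken perverse by passing to perverse cohomology sheaves, and that simplicity of the total object forces $N$ (resp. $M$) to be simple. The converse is the substantive part. First I would reduce to the case where $M$ is the extension by zero of a lisse sheaf on a locally closed smooth irreducible subvariety $V$ of $G$, placed in the appropriate degree. The key is to control the support $V$: I claim that a simple negligible $M$ supported on a curve $V \subsetneq G$ must have $V$ a translate of a one-dimensional subgroup, and then the lisse sheaf on $V$ must be (a twist of) a character sheaf; if $V = G$, then $M$ is a twisted pullback through one of the two projections.

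To prove the support statement, the strategy is to use the characterization of negligibility through the Euler--Poincaré characteristic and the convolution structure, combined with an analysis on $\Gg_m$ and $\Gg_a$ via the projections $p_1$ and $p_2$. Concretely: if $M$ is negligible, then for generic $\chi \in \charg{G}$, $S(M,\chi) = 0$, hence $H^0_c(G_{\bar k}, M_\chi) = 0$. Writing a generic character as $\chag{\chi_1}{a}$, one has $S(M, \chag{\chi_1}{a}) = \sum_x t_M(x) \chi_1(x) \psi(ax)$; fixing a generic $a$ and varying $\chi_1$, this is (up to normalization) a Mellin transform on $\Gg_m$ of the function $x \mapsto t_M(\cdot)$ restricted appropriately, and one would want to deduce that $Rp_{1!}(M_{\chag{1}{a}})$ is negligible on $\Gg_m$ for generic $a$, or — more usefully — directly analyze $Rp_{1!}(M_{\chag{1}{a}})$ using the vanishing theorem for tori (Corollary~\ref{cor-high-van-torus}) applied fiberwise. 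Then one invokes Katz's classification of simple perverse sheaves on $\Gg_m$: a negligible object on $\Gg_m$ is a sum of shifted Kummer sheaves. Pulling this information back through $p_1$ and $p_2$ and using that $M$ is determined by its generic Fourier behavior (Proposition~\ref{prop:discreteFouriertransform} and its generic refinement Theorem~\ref{th-generic-inversion}), one reconstructs $M$ in the Grothendieck group as a combination of objects of the forms \eqref{eq-f1}, \eqref{eq-f2}; simplicity then pins down a single such summand. The main obstacle is the geometric step of showing that the support of a simple negligible $M$ is either all of $G$ or a translate of a subgroup — this requires either a direct argument on the structure of subvarieties of $\Gg_m \times \Gg_a$ stable under the translation action implicit in negligibility, or an adaptation of the Grothendieck-group manipulation used in Remark~\ref{rm-negligible} for abelian varieties to the present non-proper setting (where one must be careful about the open subvariety $A \setminus X$ on which equalities of classes hold, as in the proof of Theorem~\ref{th-finite-ab-1}). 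I expect this to be where most of the work lies; the passage from "support is a translate of a subgroup" to the explicit normal forms is then routine bookkeeping with the two projections.
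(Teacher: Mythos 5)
The easy direction is fine, but the converse — which is the entire content of the proposition — is not established in your proposal. You say yourself that the key step (showing the support of a simple negligible $M$ is a translate of a one-dimensional subgroup or all of $G$, and then identifying the lisse part) is ``where most of the work lies'', and neither of the two routes you gesture at goes through as described. The adaptation of Remark~\ref{rm-negligible} and Theorem~\ref{th-finite-ab-1} uses ingredients specific to abelian varieties (properness, the $\lhat$-function and quasi-unipotence, and the fact that the ramified locus is a finite union of \tacs), none of which is available on $\Gg_m\times\Gg_a$. Worse, the reconstruction step you propose is vacuous: for a negligible $M$ the discrete Fourier transform vanishes on a generic set of characters, so Theorem~\ref{th-generic-inversion} only tells you that $M_{\intt}\simeq 0$, i.e.\ nothing, while Proposition~\ref{prop:discreteFouriertransform} requires equality at \emph{all} characters, which you do not have; and knowing the class of $M$ in $K(G)$ does not by itself recover the simple perverse sheaf. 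There is also a Fubini-type issue in the step ``$Rp_{1!}(M_{\chag{1}{a}})$ is negligible on $\Gg_m$ for generic $a$'': genericity of the bad set inside $\charg{G}$ (codimension $\geq 1$ in two variables) only bounds the number of bad $\chi_1$ on average over $a$, whereas negligibility on the one-dimensional group $\Gg_m$ requires a bounded number of bad $\chi_1$ for each fixed $a$; this needs the relative vanishing theorem, not the absolute one, and is not addressed.

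For comparison, the paper's proof sidesteps all of this by applying the Fourier transform relative to $\Gg_m$ in the additive variable: $F=\ft_{\psi/\Gg_m}(M)$ is again a simple perverse sheaf on $G$, its restriction to $\Gg_m\times\{a\}$ is $p_{1!}(M_{\psi(ay)})$, and this restriction has generic rank zero for generic $a$ by the relative vanishing theorem (Theorem~\ref{thm-tori-vanishing-rel}) together with negligibility of $M$. One then argues by cases on $\dim\Supp(F)$: if it is $0$ or $1$, simplicity forces the support to be a single point or a single horizontal line $\Gg_m\times\{a\}$, and inverting the Fourier transform yields the form \eqref{eq-f1}; if it is $2$, the restriction of $F$ to the generic point $\eta$ of $\Gg_a$ is a perverse sheaf of generic rank zero on $\Gg_m$ over $k(\eta)$, hence geometrically a Kummer sheaf by Gabber--Loeser, so $F\simeq p_2^*(N')\otimes\mcL_\chi$ and inverting gives \eqref{eq-f2}. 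The support/structure statement you left open is exactly what this Fourier-theoretic trichotomy supplies, so your outline cannot be considered a proof without it.
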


\begin{proof}
  It is elementary that the objects of the two forms in the statement
  are negligible (see Example~\ref{ex-negligible}), so we need to prove
  the converse.
  
  Let $M$ be a simple negligible perverse sheaf on~$G$. We consider the
  (shifted) Fourier transform 
  $$
  F=\ft_{\psi/\Gg_m}(M)[1]
  $$ of $M$ relative
  to $\Gg_m$; this is a perverse sheaf on~$G$. For $a\in \Gg_a$, the
  restriction~$F_a$ of this complex to $\Gg_m\times\{a\}$ is isomorphic
  to $p_{1!}(M_{\psi(ay)})$. Hence, for~$a$ generic, the object
  $F_a=(F\vert\Gg_m\times\set{a})[-1]$ is a perverse
  sheaf 
  by Proposition~\ref{pr-kl}; moreover, if~$a$ is such
  that~$\Gg_m\times \{a\}$ intersects a dense open subset where~$M$ is
  lisse, the generic rank of~$F_a$ is still zero.

  We now distinguish cases according to the dimension $d$ of the support
  of $F$.

  (1) If $d=0$, then $F$ is supported on finitely many points. Since~$M$
  is simple and the Fourier transform preserves simple perverse sheaves,
  $F$ is also simple. This implies that the support of $F$ is
  irreducible, and hence it is a single point $(x,a)$. The point $a$
  correspond to the character $\psi(ay)$ via inverse Fourier
  transform. Hence, $M$ is of the form $p_1^*(N)\otimes\mcL_{\psi(ay)}$,
  where $N$ is a sheaf with finite support in $\Gg_m$, which is an
  object of the form~(\ref{eq-f1}).

  (2) If $d=1$, then the support of $F$ is a curve
  $C\subset \Gg_m\times\Gg_a$. If, for generic $a\in \Gg_a$, the
  intersection of $C$ with $\Gg_m\times\{a\}$ is non-empty, then the
  support of $F_a$ is finite and non-empty, contradicting the fact that
  this sheaf is of generic rank zero. Hence, for generic $a\in \Gg_a$,
  the intersection of $C$ with $\Gg_m\times\{a\}$ is empty.  We then
  deduce that~$F_a=0$ for generic $a$. Hence, $C$ is a finite union of
  horizontal lines. As in (1), $C$ is irreducible, and hence is of the
  form $\Gg_m\times\set{a}$ for some $a\in \Gg_a$. Hence,~$M$ is of the
  form $p_1^*(N)\otimes \mcL_{\psi(ay)}$ for some perverse sheaf $N$ on
  $\Gg_m$; this is again of the form~(\ref{eq-f1}).

  (3) Finally, assume that $d=2$. Let~$\eta$ be the generic point
  of~$\Gg_a$.  Then $F_{\eta}$ is a perverse sheaf with Euler--Poincaré
  characteristic zero on~$\Gg_m$ over~$k(\eta)$. By
  Proposition~\ref{pr-ep-0}, it follows that $F_{\eta}$, viewed as a
  perverse sheaf on $\Gg_m$ over $k(\eta)$, is geometrically isomorphic
  to a Kummer perverse sheaf $\mcL_\chi[1]$ for some multiplicative
  character~$\chi$.  Hence,~$F$ is of the form
  $p_2^*(N')\otimes \mcL_\chi[1]$ for some perverse sheaf~$N'$
  on~$\Gg_a$. Taking the relative inverse (shifted) Fourier transform,
  we find that there exists some object $N$ of~$\Der(\Gg_a)$ such that
  $M$ is isomorphic to $p_2^*(N)\otimes \mcL_{\chi}[1]$.
\end{proof}

We will now classify the objects of tannakian dimension one.

By Proposition~\ref{pr-product}, the most obvious objects of tannakian
dimension one on $\Gg_m\times\Gg_a$ are those of the form
$H\boxtimes N$, for some simple hypergeometric complex~$H$ on~$\Gg_m$
and some simple perverse sheaf~$N$ on~$\Gg_a$ with Fourier transform of
rank one (we refer again to Section~\ref{sec-hypergeometric} for
reminders concerning hypergeometric complexes, which are the objects of
tannakian rank~$1$ on the multiplicative group). The next lemma provides
another class of such objects.

\begin{lemma}\label{lm-mf}
  Let~$f\in\bar{k}(x)^{\times}$ be a rational function and~$U$ a dense
  open set of~$\Gg_a$ where $f$ is defined and non-zero. Let
  $C\subset V=\Gg_m\times U$, with coordinates $(x,a)$, be the curve
  with equation $f(a)=x$. Let $\overline{\Qq}_{\ell,f}$ be the
  intermediate extension to $\Gg_m\times\Gg_a$ of the constant sheaf
  on~$C$ shifted by~$1$, and let~$M_f$ be the inverse relative Fourier
  transform of~$\overline{\Qq}_{\ell,f}$, also shifted by~$1$.

  \begin{enumth}
  \item Write $f=f_1/f_2$ with $f_i\in \bar{k}[x]$
    coprime. Let~$\widetilde{C}\subset G$ be the curve with equation
    $$
    f_1(a)=f_2(a)x,
    $$
    and let~$i\colon \widetilde{C}\to G$ be the closed immersion. We
    then have isomorphisms 
    $$
    \overline{\Qq}_{\ell,f}\simeq i_*\Qlb[1]\simeq i_!\Qlb[1].
    $$
  \item The perverse sheaf $M_f$ on~$G$ has tannakian dimension one.
  \item For any~$y_0\in\Gg_a$, the restriction of $M_f$ to
    $\Gg_m\times \{y_0\}\subset G$ is of the form $\mcG_{y_0}[2]$ for
    some sheaf~$\mcG_{y_0}$ on~$\Gg_m$, identified to a sheaf
    on~$\Gg_m\times\{y_0\}$, of generic rank bounded by
    $\max(\deg(f_1),\deg(f_2))$.
  \item If~$f\in k(x)$, then for~$n\geq 1$, the Fourier transform
    on~$G(k_n)$ of the trace function of~$M_f$ is given by
    $$
    (\chi,b)\mapsto |k_n|\chi(f(b))
    $$
    for~$\chag{\chi}{b}\in \widehat{G}(k_n)$ in a generic set.
  \end{enumth}
\end{lemma}

\begin{proof}
  The curve $\widetilde{C}$ contains $C$, and the assumption that $f_1$
  and $f_2$ are coprime implies that~$\widetilde{C}$ is smooth (since
  the partial derivative with respect to~$x$ is $f_2(a)$, which is
  non-zero on~$\widetilde{C}$). It is irreducible since it is isomorphic
  to the open subset of~$\Gg_a$ where $f_1f_2$ is non-zero by the
  projection $(x,a)\mapsto a$ with inverse $a\mapsto
  f_1(a)/f_2(a)$. Since $i_*=i_!$ for a closed immersion, it follows
  that $i_*\Qlb[1]=i_!\Qlb[1]$ is a perverse sheaf, and since it
  restricts to $\overline{\Qq}_{\ell,f}$ on $V$, these perverse sheaves
  are isomorphic.
  
  We next show that for generic $\chag{\chi}{b}\in\charg{G}$, we have
  $$
  \dim\rmH^0_c(G_{\bar{k}}, (M_f)_{\chag{\chi}{b}})=1,
  $$
  which will prove~(2).
  \par
  This cohomology group can be computed by first taking the relative
  additive Fourier transform~$F$ of $M_f$, restricting it to the line
  $\Gg_m\times\set{b}$, then taking the cohomology
  of~$F\otimes\mcL_{\chi}$ on $\Gg_m\times\set{b}$. Since the Fourier
  transform~$F$ of $M_f$ is~$\overline{\Qq}_{\ell,f}$, there exists a
  dense open set~$U$ of~$\Gg_a$ such that for $b\in U$, the
  restriction of~$F$ to~$\Gg_m\times \{b\}$ is a rank one skyscraper
  sheaf supported on $f(b)$. Such a sheaf, tensored with any character
  $\mcL_\chi$, has its $0$-th cohomology group of dimension~$1$.

  In fact, the same argument shows that if~$\chag{\chi}{b}$ is defined
  over~$k_n$, then the action of Frobenius on the one-dimensional
  space~$\rmH^0_c(G_{\bar{k}}, (M_f)_{\chag{\chi}{b}})$ is
  $|k_n|\chi(f(b))$, which proves the last statement.

  To prove~(3), we observe that, by definition of the Fourier transform,
  yet another description of~$M_f$ is
  $$
  M_f=R\phi_*\mcL_{\psi(-ay)}[2],
  $$
  where $\phi$ is the restriction of the projection $(x,y,a)\mapsto
  (x,y)$ to the subvariety
  $$
  Z=\{(x,y,a)\in\Gg_m\times \Gg_a\times\Gg_a\,\mid\, f_1(a)=xf_2(a)\}
  $$
  of $\Gg_m\times\Gg_a^2$. Since $\phi$ is an affine quasi-finite
  morphism, we obtain~(3) with
  $$
  \mcG=\widetilde{\phi}_*\mcL_{\psi(-ay_0)},
  $$
  where $\widetilde{\phi}$ is the restriction of $\phi$ to
  $Z_{y_0}$. This sheaf has generic rank bounded by the size of the
  fibers of~$\phi$, and is $\leq \max(\deg(f_1),\deg(f_2))$.
\end{proof}

\begin{remark}
  The last statement amounts to the following computation of Fourier
  transform on~$G(k_n)$: by the first part, writing~$f=f_1/f_2$,
  where~$f_i$ are polynomials without common factor, the perverse
  sheaf~$\overline{\Qq}_{\ell,f}$ is the shifted constant sheaf on the
  smooth irreducible curve defined by $f_1(a)=xf_2(a)$ in~$G$.
  Therefore, the trace function of $\overline{\Qq}_{\ell,f}$ at
  $(x,a)\in G(k_n)$ is equal to $1$ if $f_1(a)=xf_2(a)$, and~$0$
  otherwise, so the trace function of $M_f$ has the value
  $$
  \sum_{\substack{a\in k\\f_1(a)=xf_2(a)}}\psi_n(-ay)
  $$
  at $(x,y)\in k_n^{\times}\times k_n$. For $(\chi,b)\in\charg{G}(k_n)$,
  we get the Fourier transform
  $$
  \sum_{(x,y)\in G(k_n)} \chi(x)\psi_n(by) \sum_{\substack{a\in
      k_n\\f_1(a)=xf_2(a)}}\psi(-ay) =\sum_{\substack{a\in
      k_n\\f_1(a)f_2(a)\not=0}} \sum_{y\in k_n} \chi(f(a))\psi_n((b-a)y)
  $$
  and this is equal to $|k_n|\chi(f(b))$ if~$f_1(b)f_2(b)\not=0$,
  and~$0$ otherwise.
\end{remark}




The basic classes of objects of tannakian dimension~$1$ we have just
described turn out to be sufficient to obtain all of them.

\begin{proposition}
  \label{prop-rk1}
  Let~$M$ be a simple perverse sheaf on~$G$ over~$\bar{k}$. Assume
  that~$M$ is in~$\Ppint(G)$.  Then~$M$ has tannakian dimension one if
  and only if there exist a rational function $f$, a simple
  hypergeometric complex~$H$ on~$\Gg_m$ and a perverse sheaf~$N$
  on~$\Gg_a$ with Fourier transform of rank one such that~$M$ is
  isomorphic to the convolution
  $$
  M_f*_{\mathrm{int}} (H\boxtimes N).
  $$
\end{proposition}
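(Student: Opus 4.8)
The plan is to reduce the classification of tannakian-dimension-one objects on $G=\Gg_m\times\Gg_a$ to the classification of dimension-one objects on $\Gg_m$ and $\Gg_a$ separately, using the relative Fourier transform $\ft_{\psi/\Gg_m}$ along the $\Gg_m$-direction as the main structural tool, exactly as in the proof of Proposition~\ref{prop-rk0}. The ``if'' direction is routine: for $M=M_f*_{\intt}p_1^*(H)*_{\intt}p_2^*(N)$, the tannakian dimension is multiplicative for convolution (it is the generic value of $\dim\rmH^0$, which by Lemma~\ref{lem-prop-conv} turns convolution into tensor product), $M_f$ has tannakian dimension one by Lemma~\ref{lm-mf}, $p_1^*(H)$ has tannakian dimension one because $H$ is hypergeometric (Euler characteristic one on $\Gg_m$, and this is unchanged after pulling back and twisting by a generic character of $\Gg_a$), and $p_2^*(N)$ has tannakian dimension one because $N$ has Fourier transform of rank one. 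Hence the convolution has tannakian dimension one.

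For the ``only if'' direction, suppose $M$ is simple of tannakian dimension one. First I would apply the relative Fourier transform $F=\ft_{\psi/\Gg_m}(M)$, a perverse sheaf on $G$; it is simple since the relative Fourier transform preserves simple perverse sheaves, and its formation is compatible with restriction to fibres $\Gg_m\times\{a\}$, where $F_a\simeq p_{1!}(M_{\psi(ay)})$. The key point is that for generic $a\in\Gg_a$, the complex $F_a$, viewed as a perverse sheaf on $\Gg_{m}$ over $\bar k(a)$ — more precisely over the generic point $\eta$ of $\Gg_a$ — is simple of Euler characteristic one, because the tannakian dimension of $M$ equals the generic value of $\dim\rmH^0(G_{\bar k},M_{\langle\chi,a\rangle})=\dim\rmH^1_c(\Gg_{m,\bar k},F_a\otimes\mcL_\chi)$, which is the Euler–Poincaré characteristic of $F_\eta$ (generic rank zero in degree $\neq 1$). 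So $F_\eta$ is a geometrically simple perverse sheaf on $\Gg_{m,k(\eta)}$ of Euler characteristic one, and Lemma~\ref{lem-hyp} applies: over an algebraic closure of $k(\eta)$, we have $F_\eta\simeq \Hyp_{\lambda}(!,\psi,\chi;\rho)$ with $\lambda\in k(\eta)^\times$ and $\chi$, $\rho$ tuples of disjoint characters of $k^\times$, \emph{independent of $\eta$}.

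The remaining work is to ``spread out'' this fibrewise description to a global description of $F$ on $\Gg_m\times\Gg_a$ and to translate it back through inverse Fourier transform. Since the characters $\chi,\rho$ and the additive character $\psi$ are constant in $\eta$, the hypergeometric sheaf $\Hyp(!,\psi,\chi;\rho)$ on $\Gg_m$ is a fixed object $H$; the only parameter that varies is $\lambda=\lambda(a)\in k(a)^\times$, a rational function. I would argue that $F$ is isomorphic, over a dense open $V=\Gg_m\times U$ meeting every generic fibre, to the pullback along $(x,a)\mapsto (x/\lambda(a),a)$ of $p_1^*H$, up to tensoring by a character sheaf pulled back from $\Gg_a$; concretely, $F$ differs from $p_1^*(H)$ by the ``translation by $\lambda(a)$'' in the $\Gg_m$-direction, which is encoded by the sheaf $\overline{\Qq}_{\ell,\lambda}$ of Lemma~\ref{lm-mf} via relative convolution, and by a further $p_2^*$-twist corresponding to a rank-one Fourier-transform object $N$ on $\Gg_a$. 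Passing to inverse relative Fourier transforms, relative convolution on the $F$-side becomes tensor product on the $M$-side with the appropriate Artin–Schreier twists, and the three factors $\overline{\Qq}_{\ell,\lambda}$, $p_1^*H$, $p_2^*N'$ produce respectively $M_f$ (with $f=\lambda$), $p_1^*(H)$, and $p_2^*(N)$, all convolved (internally) together. The main obstacle I anticipate is precisely this spreading-out step: showing that a fibrewise hypergeometric structure with a rationally-varying scaling parameter $\lambda(a)$ assembles into an honest global object of the stated shape, and in particular controlling what happens on the non-generic fibres of $\Gg_a$ (where $F_a$ may degenerate) so that the global isomorphism class of $M$ is pinned down — this is where one must be careful, using simplicity of $M$ and the fact that $M_{\intt}$ is determined by a generic set of characters (Proposition~\ref{prop:discreteFouriertransform} and Theorem~\ref{th-generic-inversion}) to conclude that the candidate convolution, which agrees with $M$ generically, is in fact isomorphic to $M$.
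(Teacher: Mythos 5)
Your proposal is correct and follows essentially the same route as the paper's proof: the ``if'' direction via multiplicativity of the tannakian dimension together with Lemma~\ref{lm-mf}, and the ``only if'' direction via the relative Fourier transform $\ft_{\psi/\Gg_m}$, Lemma~\ref{lem-hyp} applied at the generic point of~$\Gg_a$, spreading out to a dense open subset, and translating back through the inverse Fourier transform to identify the three convolution factors $M_f$, $p_1^*(H)$ and $p_2^*(N)$. The only minor divergence is in the gluing step you flag: the paper resolves it by observing that $M_f$, being the inverse Fourier transform of an intermediate extension, is independent of the chosen dense open set, rather than by invoking simplicity together with generic Fourier inversion.
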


\begin{proof}
  Since the tannakian dimension is multiplicative in convolutions, the
  ``if'' assertion follows from Lemma~\ref{lm-mf} and the fact
  that~$H\boxtimes N$ has tannakian dimension~$1$ by an application of
  Proposition~\ref{pr-product}.
  \par
  Conversely, let $M$ be a simple perverse sheaf on $G$ of tannakian
  dimension one. As in the dimension zero case, we consider the
  shifted Fourier transform $F=\ft_{\psi/\Gg_m}(M)[1]$ of $M$ relative
  to~$\Gg_m$. For generic $a\in \Gg_a$, the object
  $F_a=F\vert\Gg_m\times\set{a}$ on $\Gg_m\times\{a\}$ is perverse of
  generic rank one, as in the beginning of the proof of
  Proposition~\ref{prop-rk0}.

  In particular, for the generic point $\eta$ of $\Gg_a$, the object
  $F_{\eta}$, viewed as a perverse sheaf on $\Gg_{m,k(\eta)}$, is of
  tannakian dimension one. By Theorem~\ref{th-hypergeometric}\,(2), it
  is isomorphic to a hypergeometric complex multiplicatively translated
  by a non-zero rational function $f(\eta)$ of $\eta$, and tensored by a
  rank one object on $k(\eta)$.  Thus, there exists a dense open subset
  $W$ of $\Gg_m\times\Gg_a$ and an isomorphism
  \begin{equation}\label{eq-zero-iso}
    F\vert W\simeq p_2^*(N)\vert W\otimes m_f^*(H),
  \end{equation}
  where $N$ is a perverse sheaf on $\Gg_a$ of generic rank one, $H$ is a
  hypergeometric complex on $\Gg_m$ and $m_f\colon W\to \Gg_m$ is the
  morphism
  \[
    (x,a)\mapsto x/f(a)
  \]
  (in particular, $x/f(a)$ is defined and non-zero for~$(x,a)\in
  W$). Using~\cite[Th.\,8.4.10]{esde} and Proposition~\ref{prop-rk0}, we
  may assume that~$H$ is simple.

  Let~$\widehat{N}$ be the Fourier transform of~$N$. We claim that there
  are isomorphisms
  \begin{equation}\label{eq-final-iso}
    M\simeq (H\boxtimes \un)*_{\intt} M_{f}*_{\intt} (\un\boxtimes
    \widehat{N})\simeq M_{f}*_{\intt} (H\boxtimes \widehat{N}),
  \end{equation}
  which will conclude the proof. The second isomorphism follows from
  commutativity and associativity of the convolution combined with the
  isomorphism of Proposition~\ref{pr-product}, hence we need only check
  the first.

  Let~$P=(H\boxtimes \un)*_{\intt} M_{f}$; we need to show that $M$ is
  isomorphic to $P*_{\intt} (\un\boxtimes \widehat{N})$. We will do this
  by showing that the restriction to~$W$ of their relative Fourier
  transforms are isomorphic; since both objects involved are perverse
  sheaves, this will give the result.
  
  Precisely, denote
  $$
  P_!= (H\boxtimes \un)*_{!} M_{f},\quad\quad P_*= (H\boxtimes
  \un)*_{*} M_{f}.
  $$

  We claim that there are isomorphisms
  \begin{gather}
    \label{eq-p-iso}
    P_!\simeq P\simeq P_*,\\
    \label{eq-first-iso}
    \ft_{\psi/\Gg_m}(P)|W\simeq m_f^*(H),
    \\
    \ft_{\psi/\Gg_m}(P*_{\intt} (\un\boxtimes\widehat{N}))\simeq
    p_2^*(N)\otimes \ft_{\psi/\Gg_m}(P),
    \label{eq-second-iso}
  \end{gather}
  where~$M_{f}$ is the complex in Lemma~\ref{lm-mf}, and~$\un$ denotes
  the unit object on~$\Gg_a$.

  Assuming these to be true, it follows by
  combining~(\ref{eq-zero-iso}), (\ref{eq-first-iso})
  and~(\ref{eq-second-iso}) that
  $$
  \ft_{\psi/\Gg_m}(M)|W\simeq p_2^*(N)|W\otimes m_f^*(H) \simeq
  p_2^*(N)|W\otimes \ft_{\psi/\Gg_m}(P)|W \simeq \ft_{\psi/\Gg_m}(P*_!
  (\un\boxtimes\widehat{N}))|W,
  $$
  proving the first part of~(\ref{eq-final-iso}), and thereby concluding
  the proof.
  

  We will begin with the rigorous sheaf-theoretic computations, but we
  include afterwards the (potentially more enlightening) computations of
  trace functions (assuming all objects to be defined over~$k$).

  \textbf{Proof of~(\ref{eq-p-iso}).}  Since $G$ is affine, it suffices 
  to prove that $P_!$ is semiperverse: indeed, it first follows that
  $P_!$ is perverse by~\cite[Lemma\,2.6.7]{katz-rls}; since the dual of
  $P_*$ is $(\dual(H)\boxtimes \un)*_! \dual(M_f)$, this first fact
  (applied to the duals of $H$ and $M_f$, which are of the same type)
  also implies that $P_*$ is perverse, and~(\ref{eq-p-iso}) follows.

  To prove that $P_!$ is semiperverse, we need to estimate the dimension
  of the support of the cohomology sheaves $\mcH^i(P_!)$. Let $i\in\Zz$
  and $(x_0,y_0)\in G$. By the definition of convolution and the proper
  base change theorem, we have an isomorphism
  $$
  \mathcal{H}^i(P_!)_{(x_0,y_0)}\simeq H^i_c(G_{\bar{k}},[(x,y)\mapsto
  (x_0x,y_0+y)]^*(H\boxtimes \un)\otimes [(x,y)\mapsto (x^{-1},-y)]^*M_f).
  $$

  Let $H_{x_0}=[x\mapsto x_0x]^*H$. The last cohomology group is
  isomorphic to
  $$
  H^i_c(G_{\bar{k}},p_1^*H_{x_0}\otimes p_2^*[y\mapsto
  y+y_0]^*\un\otimes [(x,y)\mapsto (x^{-1},-y)]^*M_f).
  $$

  If we denote by $\widetilde{M}_{f,y_0}$ the complex on $\Gg_m$ given
  by $[x\mapsto (x^{-1},-y_0)]^*M_f$, then the projection formula shows
  that the cohomology group is isomorphic to
  $$
  H^i_c(\Gg_{m,\bar{k}},H_{x_0}\otimes \widetilde{M}_{f,y_0}).
  $$

  By Lemma~\ref{lm-mf}\,(3) the object $\widetilde{M}_{f,y_0}$ is of the
  form $\mcG_{y_0}[2]$ for some sheaf on~$\Gg_m$.


  \begin{enumerate}
  \item If $H_{x_0}$ is of the form $\mcF_{x_0}[1]$ for some simple
    middle extension hypergeometric sheaf on~$\Gg_m$, then
    $\mcH^i(P_!)_{(x_0,y_0)}$ is isomorphic to
    $$
    H^{3+i}_c(\Gg_{m,\bar{k}},\mcF_{x_0}\otimes \mcG_{y_0}).
    $$

    It follows immediately that $\mcH^i(P_!)$ is zero for $i\geq 0$.

    For $i=-1$, since $\mcF_{x_0}$ and $\mcG_{y_0}$ are lisse on dense
    open subsets of $\Gg_m$ and $\mcF_{x_0}$ is simple, the stalk at
    $(x_0,y_0)$ is non-trivial if and only if $\mcF_{x_0}$ is a
    Jordan--Hölder factor of the dual of $\mcG_{y_0}$ (on such an open
    set).

    But, for a given~$y_0$, the sheaf $\mcG_{y_0}$ has generic rank
    $\leq \max(\deg(f_1),\deg(f_2))$ according to
    Lemma~\ref{lm-mf}\,(3), hence has at most as many Jordan--Hölder
    factors. The hypergeometric sheaves $\mcF_{x_0}$ are pairwise
    non-isomorphic by~\cite[8.5.6]{esde}, since they are all
    multiplicative translates of a fixed hypergeometric sheaf; for a
    given $y_0$, there are therefore at most $\max(\deg(f_1),\deg(f_2))$
    values of $x_0$ for which the stalk of $\mcH^1(P_!)$ at $(x_0,y_0)$
    is non-zero. This implies that the support of $\mcH^1(P_!)$ is of
    dimension $\leq 1$, so that $P_!$ is semiperverse.


  \item If~$H$ is isomorphic to~$\delta_{x_1}$ for some $x_1\in\Gg_m$,
    then $H_{x_0}\simeq \delta_{x_0x_1}$. Then $\mcH^i(P_!)_{(x_0,y_0)}$
    is isomorphic to
    $$
    H^{2+i}_c(\Gg_{m,\bar{k}},\delta_{x_0x_1}\otimes \mcG_{y_0}),
    $$
    which is zero if $i\not=-2$. Thus $P_!$ is also semiperverse in that
    case.
  \end{enumerate}
  
  \textbf{Proof of~(\ref{eq-first-iso}).}  We compute
  $\ft_{\psi/\Gg_m}(P_!)$, which is the same as $\ft_{\psi/\Gg_m}(P)$ by
  the previous result. Consider
  $$
  X=\{(x,u,v,y,c,d,a)\in \Gg_m^3\times \Gg_a^4\,\mid\, uv=x,\ c+d=y\},
  $$
  and the morphism $\sigma_{x,a}\colon (x,u,v,y,c,d,a)\mapsto (x,a)$
  from~$X$ to~$G$. Then
  $$
  \ft_{\psi/\Gg_m}(P_!)= R\sigma_{x,a,!}(\mcL_{\psi(ay)}\otimes
  H(u)\otimes \un(c)\otimes M_f(v,d)),
  $$
  where we use a shorthand notation for pullbacks, where, e.g.
  $$
  H(u)=[(x,u,v,c,d,y)\mapsto u]^*H.
  $$

  Denoting
  $$
  Y=\{(x,u,v,y,c,d,a)\in X\,\mid\, c=0\},\quad\quad
  Z=\{(x,u,v,y,a)\in\Gg_m^3\times \Gg_a^2\,\mid\, uv=x\},
  $$
  and noting the isomorphism $Y\to Z$ given by
  $$
  (x,u,v,y,c,d,a)\mapsto (x,u,v,y,a)
  $$
  with inverse
  $$
  (x,u,v,y,a)\mapsto (u,x,v,y,0,y,a),$$
  this becomes
  $$
  R\sigma_{x,a,!}(\mcL_{\psi(ay)}\otimes H(u)\otimes M_f(v,y)),
  $$
  with an abuse of notation involving in using the
  notation~$\sigma_{x,a}$ again for the projection from points on~$Z$ to
  $(x,a)$. Factoring $\sigma_{x,a}$ into
  $(x,u,v,y,a)\mapsto (x,u,v,a)\mapsto (x,a)$, we recognize
  $$
  R\sigma_{x,a,!}(H(u)\otimes \ft_{\psi/\Gg_m}(M_f)(v,a))=
  R\sigma_{x,a,!}(H(u)\otimes \overline{\Qq}_{\ell,f}(v,a)),
  $$
  again with some abuse of notation.

  We write $f=f_1/f_2$ as in Lemma~\ref{lm-mf}.  Let
  $$
  Z_f=\{(x,u,v,a)\,\mid\, uv=x,\ vf_2(a)=f_1(a)\}.
  $$

  By Lemma~\ref{lm-mf}, we have an isomorphism
  $$
  R\sigma_{x,a,!}(H(u)\otimes \overline{\Qq}_{\ell,f}(v,a))\simeq
  R\sigma_{x,a,!}(H(u)),
  $$
  and since
  $\sigma_{x,a}\colon \sigma_{x,a}^{-1}(W)\to W\subset \Gg_m\times\Gg_a$
  is an isomorphism with inverse
  $$
  (x,a)\mapsto (x,x/f(a),f(a),a),
  $$
  this is simply $H(x/f(a))=m_f^*(H)$ on~$W$. In other words, we have
  proved that there is an isomorphism
  $$
  \ft_{\psi/\Gg_m}(P_!)|W\simeq m_f^*(H),
  $$
  as desired.


  
  (Formally, we can also show that the trace functions on both sides
  of~(\ref{eq-first-iso}) coincide, disregarding the difference between
  the various convolutions.  For~$(x,a)\in k^{\times}\times k$, the
  object $\ft_{\psi/\Gg_m}(P)$ has trace function at~$(x,a)$ equal to
  $$
  \sum_{y\in k} \sum_{u\in k^{\times}}
  t_H(u)t_{M_{f}}(x/u,y)\psi(ay)=\sum_{u\in k^{\times}}t_H(u)\sum_{y\in
    k}t_{M_{f}}(x/u,y)\psi(ay).
  $$

  The inner sum over~$y$ is the value at~$(x/u,a)$ of the trace
  function of the relative Fourier transform of~$M_{f}$, hence by
  definition it is the trace function at~$(x/u,a)$ of the
  object~$\overline{\Qq}_{\ell,f}$. Writing $f=f_1/f_2$ where~$f_1$
  and~$f_2$ are polynomials without common zeros, this value is~$1$ if
  $f_1(a)=xu^{-1}f_2(a)$, and~$0$ otherwise. Thus,
  provided~$f_1(a)f_2(a)\not=0$, the above expression is
  $$ 
  \sum_{u\in k^{\times}}t_H(u)\sum_{y\in k}t_{M_{f}}(x/u,y)\psi(ay)=
  \sum_{\substack{u\in
      k^{\times}\\f_1(a)=xu^{-1}f_2(a)}}t_H(u)=t_H(x/f(a)).
  $$

  If~$(x,a)\in W$, this is the same as the trace function of~$m_f^*(H)$,
  as desired.)

  
  \textbf{Proof of~(\ref{eq-second-iso}).} The argument is
  similar. Consider
  $$
  X=\{(x,u,v,y,c,d,a)\in \Gg_m^3\times \Gg_a^4\,\mid\, uv=x,\ c+d=y\},
  $$
  and the morphism $\sigma_{x,a}\colon (x,u,v,y,c,d,a)\mapsto (x,a)$
  from~$X$ to~$G$. Then
  $$
  \ft_{\psi/\Gg_m}(P_!*_!  (\un\boxtimes\widehat{N}))\simeq
  R\sigma_{x,a,!}(\mcL_{\psi(ay)}\otimes P_!(u,c)\otimes \un(v)\otimes
  \widehat{N}(d)),
  $$
  where we use the shorthand notation for pullbacks. Denoting
  $$
  Y=\{(x,u,v,y,c,d,a)\in X\,\mid\, u=x,\ v=1,\ c+d=y\}\simeq
  Z=\{(x,c,d,y,a)\,\mid\, c+d=y\},
  $$
  this is
  $$
  R\sigma_{x,a,!}(\mcL_{\psi(ay)}\otimes P_!(x,c)\otimes \widehat{N}(d))
  $$
  (with again an abuse of notation involved in the
  notation~$\sigma_{x,a}$). From the isomorphism
  $$
  Z\simeq \{(x,c,d,a)\in\Gg_m\times \Gg_a^3\}
  $$
  with inverse $(x,c,d,a)\mapsto (x,c,d,c+d,a)$, we get
  $$
  R\sigma_{x,a,!}(\mcL_{\psi(ac)}\otimes \mcL_{\psi(ad)} \otimes
  P_!(x,c)\otimes \widehat{N}(d)).
  $$

  Factor $\sigma_{x,a}$ as $(x,c,d,a)\mapsto (x,c,a)\mapsto (x,a)$; by
  Fourier inversion, we obtain
  \begin{equation}\label{eq-dual-iso-1}
    \ft_{\psi/\Gg_m}(P_!*_!  (\un\boxtimes\widehat{N}))\simeq
    R\sigma_{x,a,!}(P_!(x,c)\otimes N(a)\otimes
    \mcL_{\psi(ac)})=N(a)\otimes \ft_{\psi/\Gg_m}(P_!).
  \end{equation}


   We next claim that we have an isomorphism
  \begin{equation}\label{eq-dual-iso-2}
    \ft_{\psi/\Gg_m,*}(P_**_* (\un\boxtimes\widehat{N}))\simeq
    N(a)\otimes \ft_{\psi/\Gg_m,*}(P_*).
  \end{equation}
  
  Indeed, we can compute the dual of both sides using the fact that
  duality transforms the $*$ convolution in the~$!$ convolution, and
  that both types of Fourier transforms are isomorphic; and the
  isomorphisms~(\ref{eq-p-iso}) and~(\ref{eq-dual-iso-1}) applied to
  the dual of~$M$ (with~$H$ and~$N$ therefore replaced by their
  respective duals) leads to~(\ref{eq-dual-iso-2}).
  
  Again because $P_!\simeq P_*\simeq P$ and the two Fourier transforms
  coincide, we conclude that
  $$
  M\simeq P*_!  (\un\boxtimes\widehat{N})\simeq P*_*
  (\un\boxtimes\widehat{N}) \simeq P *_{\intt}
  (\un\boxtimes\widehat{N}),
  $$
  which establishes~(\ref{eq-second-iso}).

  (Here also we illustrate the result by computing trace functions.  The
  value at~$(x,a)$ of the trace function of the relative Fourier
  transform of $P*_{\intt} (\un\boxtimes \widehat{N})$ is equal to
  $$
  \sum_{y\in k}\psi(ay)\Bigl(\sum_{v\in
    k}t_P(x,v)t_{\widehat{N}}(y-v)\Bigr)=
  \sum_{v\in k}t_P(x,v)\sum_{w\in k}\psi(a(v+w))t_{\widehat{N}}(w).
  $$

  By Fourier inversion, this is the same as
  $$
  t_N(a)\sum_{v\in k}t_P(x,v)\psi(av),
  $$
  which is the value at~$(x,a)$ of the trace function
  of~$p_2^*(N)\otimes \ft_{\psi/\Gg_m}(P)$. By~(\ref{eq-first-iso}),
  this coincides on~$W$ with the trace function
  of~$p_2^*(N)|W\otimes m_f^*(H)$.)
\end{proof}

\begin{remark}
  The trace functions (over~$k$) of simple negligible objects are of the
  form
  $$
  (x,y)\mapsto t(x)\psi(by)
  $$
  for some trace function~$t$ on~$\Gg_m$ and some $b\in k$, or
  $$
  (x,y)\mapsto \chi(x)t(y)
  $$
  for some trace function~$t$ on~$\Gg_a$ and some multiplicative
  character $\chi$.  The trace functions of simple objects of
  tannakian dimension one are convolutions of functions of the three
  types
  $$
  (x,y)\mapsto \sum_{f(z)=x}
  \psi(-yz),\quad\quad (x,y)\mapsto \mathscr{H}(x),\quad\quad
  (x,y)\mapsto t(y),
  $$
  where $f$ is a non-zero rational function, $\mathscr{H}$ is the trace
  function of a hypergeometric sheaf and $t$ is the trace function of an
  object on~$\Gg_a$ whose Fourier transform has generic rank one.  The
  associated exponential sums are (up to normalization by powers
  of~$|k|$) of the form
  $$
  S(\chi,a)= \chi(f(a))\widehat{\mathscr{H}}(\chi)\widehat{t}(a),
  $$
  where $\widehat{t}$ is the trace function of an $\ell$-adic character,
  and $\widehat{\mathscr{H}}$ is a product of monomials in Gauss sums
  (see~(\ref{eq-mellin-hypergeom})).
\end{remark}




\chapter{Variance of arithmetic functions in arithmetic
  progressions}\label{sec-variance}

\section{Introduction}

In this chapter, we will consider some of the first natural concrete
applications of our results to problems which, as stated, do not seem to
refer to algebraic groups, or equidistribution statements of any kind.
These problems are related to one of the most essential questions of
modern analytic number theory, namely the study of arithmetic functions
in arithmetic progressions to large moduli.

Concretely, this means that we are given an arithmetic function~$f$
(i.e., a complex-valued function defined on the set of positive
integers), an integer $q\geq 1$ (the ``modulus'') and $x\geq 2$, and we seek
to understand the quantities
$$
\sum_{\substack{n\leq x\\n\equiv a\mods{q}}}f(n)
$$
for $a$ varying among residue classes modulo~$q$, or only for $a$
coprime to~$q$. The focus is on these sums in settings where both $x$
and~$q$ are large, and the goal is often to obtain asymptotic formulas
valid for $q$ as large as possible in comparison with~$x$.

The literature on this topic is enormous, and the applications cover
almost all of analytic number theory: indeed, this subject encompasses,
almost by definition, all of sieve theory and its applications
(see~\cite{cribro}), and it is in particular at the source of most of
the recent developments in prime number theory, going back to the
Bombieri--Vinogradov Theorem (see, e.g.,~\cite[Ch.\,17]{ant}), and
including such celebrated results as the Green--Tao Theorem, or Zhang's
Theorem~\cite{zhang}, or the Maynard--Tao method (see,
e.g.,~\cite{bourbaki-gaps}).

The problems that we consider here are the analogue for polynomials
over finite fields, and in the limit when the size of the field tends
to infinity, of questions related to the distribution of the
quantities above, and especially of their \emph{variance}, as
functions of~$a$. In other words, we are interested~in
$$
\sum_{a\mods{q}} \Bigl|\sum_{\substack{n\leq x\\n\equiv
    a\mods{q}}}f(n)-\frac{1}{q}\sum_{n\leq x}f(n)\Bigr|^2
$$
or (often more naturally for applications) the variant
\begin{equation}\label{eq-general-variance}
  \sum_{\substack{a\mods{q}\\(a,q)=1}} \Bigl|\sum_{\substack{n\leq x\\n\equiv
      a\mods{q}}}f(n)-\frac{1}{\varphi(q)}\sum_{n\leq x}f(n)\Bigr|^2
\end{equation}
where the sum covers only invertible residue classes.  (In both cases,
the choice of ``expected main term'' is natural, but might require
adjustments, depending on the arithmetic function involved.)

The serious study of these function field analogues has been initiated
especially by Keating and Rudnick and a number of collaborators (see for
instance~\cite{kr1}, in the case where $f$ is the von Mangoldt function,
using results of Katz~\cite{katz-characters}, which themselves relied on
his work on the Mellin transform over finite fields~\cite{mellin}).

It is quite easy to understand the link between a quantity
like~(\ref{eq-general-variance}), in the function field case, and
equidistribution problems of the type considered in
Chapter~\ref{sec:equidis}. Indeed, we are then in the situation
where~$q$ is a polynomial in $k[t]$ for some finite field~$k$, and the
sum over $n\leq x$ is replaced by the sum over monic
polynomials~$g\in k[t]$ of degree~$m$. Then for any complex-valued
function~$f$ defined for polynomials in~$k[t]$, we see (using
orthogonality of characters, or the discrete Plancherel formula) that
the formula
\begin{multline*}
  \sum_{a\in (k[t]/qk[t])^{\times}}
  \Bigl|\sum_{\substack{\deg(g)=m\\g\equiv
      a\mods{q}}}f(g)-\frac{1}{|(k[t]/qk[t])^{\times}}
  \sum_{\deg(g)=m}f(g)\Bigr|^2 =
  \\
  |(k[t]/qk[t])^{\times}|
  \quad\sum_{\chi\not=1} \ \Bigl| \sum_{\deg(g)=m}
  \chi(g)f(g) \Bigr|^2
\end{multline*}
holds, where~$\chi$ runs over non-trivial characters of the group
$(k[t]/qk[t])^{\times}$. These characters can be identified with the
characters of~$G(k)$ for some commutative algebraic group~$G$ (by a
simple special case of geometric class-field theory; in the case which
we will consider, when~$q$ is squarefree, it will be a very explicit
torus). Moreover, for many natural arithmetic functions, the inner sum
over~$g$ monic of degree~$m$ in~$k[t]$ can be interpreted as the value
at~$\chi$ of the arithmetic Fourier transform of some object on this
group~$G$. In the limit where $k$ is replaced by its extensions~$k_n$ of
degree $n\to+\infty$ (and $m$ is fixed), we can therefore expect to
determine the asymptotic behavior of this variance from our
equidistribution theorems.

We will now consider in detail the version of this question when $f$ is
the von Mangoldt function associated to a higher-degree $L$-function
(the classical von Mangoldt function being related to the Riemann zeta
function, which has degree~$1$), in which case Hall, Keating, and
Roddity-Gershon~\cite{variance} have shown that new phenomena appear
(again relying on~\cite{mellin}). These are conjectured to correspond to
new behavior also in the (currently inaccessible) situation over number
fields.  We refer the reader to the introductions of both
papers~\cite{kr1} and~\cite{variance} for extensive discussions of these
motivating conjectures, and for additional references to other papers.

We will see that, as suggested by the discussion above, the
equidistribution theory for arithmetic Fourier transforms on
higher-dimensional tori leads to generalizations, strengthenings, and
better understanding, of these previous results. This leads in
particular to Theorem~\ref{th-variance} in the Introduction, but the
method is suitable for the proof of many similar statements.


In the remainder of this chapter, as before, we denote by~$k$ a finite
field, with an algebraic closure~$\bar{k}$, and for each $n \geq 1$
by~$k_n$ the extension of degree~$n$ of~$k$ in~$\bar{k}$. We fix a prime
$\ell$ distinct from the characteristic of~$k$, and all complexes are
understood to be $\ell$-adic complexes.




  

\section{Equidistribution on tori associated to polynomials}

In what follows, we fix a square-free monic
polynomial~\hbox{$f\in k[t]$} of degree~\hbox{$d\geq 2$}. We denote
by~$B$ the (étale) $k$-algebra $B=k[t]/fk[t]$ of degree~$d$ over~$k$ (in
spite of the notation, $B$ depends on~$f$), by~$Z$ the zero locus
of~$f$, and by $\Aa^1_k[1/f]$ the complement of~$Z$ in the affine line
over~$k$.

We begin with a result of Katz~\cite{katz-characters}.

\begin{proposition}[Katz]\label{pr-katz}
  The functor $ A\mapsto (B\otimes_k A)^{\times}$ on $k$-algebras is
  represented by a torus~$T$ defined over~$k$. This torus splits over any
  extension of~$k$ where~$f$ splits in linear factors.
  \par
  Moreover, the map $x\mapsto t-x$ defines a closed
  immersion
  $$
  i_f\colon \Aa^1_k[1/f]\longrightarrow T,
  $$
  and there exists a morphism of algebraic groups
  $$
  p\colon T\longrightarrow \Gg_m
  $$ satisfying $p\circ i_f=(-1)^{\deg(f)}f$, where
  we view $f$ as defining a morphism \hbox{$\Aa^1[1/f]\to \Gg_m$}.
\end{proposition}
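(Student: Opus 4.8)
The plan is to realise $T$ as the Weil restriction of scalars $\Res_{B/k}(\Gg_m)$, the homomorphism $p$ as the norm map $N_{B/k}$, and $i_f$ as the morphism attached to the element $t-x\in B\otimes_kA$, and then to verify the three assertions in explicit coordinates. Since $f$ is square-free and $k$ is a perfect field, $f$ is separable and $B=k[t]/fk[t]$ is a finite étale $k$-algebra; writing $f=\prod_i f_i$ as a product of distinct monic irreducibles identifies $B\simeq\prod_i K_i$ with $K_i=k[t]/f_ik[t]$ a finite separable field extension of~$k$. For every $k$-algebra $A$ one has $(B\otimes_kA)^{\times}=\Gg_m(B\otimes_kA)$, so the functor in the statement is represented by $T:=\Res_{B/k}(\Gg_m)=\prod_i\Res_{K_i/k}(\Gg_m)$, which is a smooth affine commutative $k$-group scheme by the standard theory of Weil restriction along finite étale algebras. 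To see that $T$ is a torus with the stated splitting behaviour, I would pass to a finite extension $k'/k$ over which $f$ splits into linear factors (necessarily distinct, as $f$ stays separable over $k'$): the Chinese Remainder Theorem then gives $B\otimes_kk'\simeq(k')^{d}$, whence $T_{k'}\simeq\Res_{(k')^{d}/k'}(\Gg_m)\simeq\Gg_m^{d}$ is a split torus of rank $d=\deg f$. Consequently $T$ is a torus of dimension $d$, split over every such $k'$.

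Next I would analyse $i_f$ and $p$. Fixing the $A$-module basis $1,t,\dots,t^{d-1}$ of $B\otimes_kA=A[t]/f(t)A[t]$ identifies the affine space underlying $B$ with $\Aa^d_k$ and exhibits $T$ as the open subscheme of $\Aa^d_k$ on which the norm form $b\mapsto N_{B/k}(b)=\det_A(\text{multiplication by }b)$ is invertible (Weil restriction of the open immersion $\Gg_m\hookrightarrow\Aa^1$ being an open immersion). In these coordinates the element $t-x$ has coordinate vector $(-x,1,0,\dots,0)$, so $x\mapsto t-x$ is the restriction to $\Aa^1_k$ of a linear, hence closed, immersion $\Aa^1_k\hookrightarrow\Aa^d_k$. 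A one-line computation over the roots $\alpha$ of $f$ in $\bar k$ gives
\[
  N_{B/k}(t-x)=\prod_{f(\alpha)=0}(\alpha-x)=(-1)^{\deg f}f(x),
\]
so the preimage in $\Aa^1_k$ of the open set $T\subset\Aa^d_k$ is exactly $\Aa^1_k[1/f]$; therefore $i_f\colon\Aa^1_k[1/f]\to T$, obtained by restricting the closed immersion $\Aa^1_k\hookrightarrow\Aa^d_k$ along the open immersion $T\hookrightarrow\Aa^d_k$, is itself a closed immersion. Taking $p\colon T\to\Gg_m$ to be the norm homomorphism $N_{B/k}$, the same computation reads $p\circ i_f=(-1)^{\deg f}f$ as morphisms $\Aa^1_k[1/f]\to\Gg_m$, which completes the verification.

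I do not expect any genuine obstacle here: the proposition merely repackages standard facts about Weil restriction and the norm map. The only points demanding some care are bookkeeping ones — confirming that the localisation at $f$ on the source matches the invertible locus of $T$, which comes down to pinning the sign in $N_{B/k}(t-x)=(-1)^{\deg f}f(x)$ — and, if one prefers to avoid base change to $\bar k$, arguing directly from the splitting over a suitable finite extension that $\Res_{B/k}(\Gg_m)$ is a torus and not merely a smooth connected commutative affine group.
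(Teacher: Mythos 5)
Your proof is correct, and it is essentially the argument the paper relies on: the paper cites Katz for this statement without reproducing a proof, and its explicit description of the split case ($g\mapsto (g(z))_{z\in Z}$, norm $g\mapsto \prod_z g(z)$, and $p(i_f(x))=\prod_z(z-x)=(-1)^{\deg f}f(x)$) is exactly the Weil-restriction construction $T=\Res_{B/k}(\Gg_m)$ with $p=N_{B/k}$ that you spell out. Your verification that $i_f$ is a closed immersion (linear embedding into $\Aa^d$ restricted over the open locus where the norm is invertible, whose preimage is $\Aa^1_k[1/f]$) and the sign computation $N_{B/k}(t-x)=(-1)^{\deg f}f(x)$ are both accurate.
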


\begin{remark}
  As noted by Katz~\cite[p.\,3224]{katz-characters}, the torus $T$ is
  isomorphic to a generalized jacobian associated to $\Pp^1$ with
  divisor $(\infty)+Z$ (compare Remark~\ref{rm-gener-jac}).
\end{remark}

We call the morphism $p$ the \emph{norm}. If~$f$ splits completely
over~$k$, say
$$
f=\prod_{z\in Z}(t-z),
$$
then the torus $T$ is split by the morphism sending $g$ to
$(g(z))_{z\in Z}$. The norm is then given by
$$
p(g)=\prod_{z\in Z}g(z), 
$$
and in particular one has
$$
p(i_f(x))=\prod_{z\in Z}(z-x)=(-1)^{\deg(f)}f(x). 
$$

We denote by $\widehat{B}^{\times}$ (\resp by $\widehat{k}^{\times}$) the
group of $\ell$-adic characters of the finite group $B^{\times}$
(\resp of~$k^{\times}$).  We extend characters of~$B^{\times}$ to~$k[t]$
by putting $\chi(g)=0$ if $g$ is not coprime to~$f$.
Since $B^{\times}=T(k)$, the
group~$\widehat{B}^{\times}$ of characters of~$B^{\times}$ is also equal
to the group $\charg{T}(k)$ of characters of~$T(k)$ (although we will
sometimes distinguish them to avoid confusion between characters of $B$,
operating on polynomials, and characters of~$T$).

If $f$ splits over~$k$ as above, then the Chinese
Remainder Theorem induces an isomorphism
$(\widehat{k}^{\times})^Z\to \widehat{B}^{\times}$, under which an element $(\chi_z)_{z\in Z} 
\in (\widehat{k}^{\times})^Z$ corresponds to the character~$\chi$
of~$B^{\times}$ that maps $g\in k[t]$ to 
$$
\chi(g)=\prod_{z\in Z}\chi_z(g(z)). 
$$

Let~$M$ be a perverse sheaf on~$\Aa^1_k[1/f]$ which is pure of weight
zero.  We are interested in the distribution properties of families of
one-variable exponential sums of the type
\begin{equation}\label{eq-varsum}
  \sum_{x\in k\setminus Z}t_{M}(x)\chi(t-x)
\end{equation}
for $\chi\in \widehat{B}^{\times}$, or of the underlying $L$-functions
(recall that~$t$ is an indeterminate).

We start by interpreting these sums as Mellin transforms on~$T$ in order
to apply our general equidistribution results.
Let~$\chi\in \widehat{B}^{\times}$.  Let~$\widetilde{\chi}$ be the
character of~$T(k)$ corresponding to~$\chi$. The sum~(\ref{eq-varsum})
takes the form
\begin{equation}\label{eq-sum-variance}
  \sum_{x\in k}t_{M}(x)\chi(t-x) = 
  \sum_{x\in \Aa^1[1/f](k)}t_M(x)\widetilde{\chi}(i_f(x))= 
  \sum_{y\in T(k)}t_{i_{f*}M}(y)\widetilde{\chi}(y).
\end{equation}

Note also that by adapting the argument
of~\cite[Lem.\,1.1]{katz-characters}, for any $n\geq 1$, we have
\begin{equation}\label{eq-katz-1}
  \sum_{x\in k_n}t_{M}(x;k_n)\chi(N_{k_n/k}(t-x))=
  \sum_{y\in T(k_n)}t_{i_{f*}M}(y;k_n)\widetilde{\chi}(N_{k_n/k}(y)).
\end{equation}

The variation with~$\chi\in \widehat{B}^{\times}$ of the
sums~(\ref{eq-varsum}) is therefore governed by the tannakian group of
the perverse sheaf $i_{f*}M$ on~$T$. By Theorem~\ref{th-polite}, this
perverse sheaf is generically unramified.


We first compute the tannakian dimension of the object $i_{f*}M$, in the
most important cases.

\begin{lemma}\label{lm-rank-var}
  Let~$\mcF$ be a middle extension sheaf on~$\Aa^1_k[1/f]$ which is pure
  of weight zero.\footnote{\ Recall (see Example~\ref{ex-weights} (3))
    that this means that the restriction of~$\mcF$ to any dense open set
    where it is lisse is punctually pure of weight~$0$.} Define
  $M=\mcF[1](1/2)$, which is a perverse sheaf of weight zero
  on~$\Aa^1_k[1/f]$. The tannakian dimension~$r$ of~$i_{f*}M$ is given
  by
  $$
  r=(\deg(f)-1)\rank(\mcF)+\sum_{x\in\Pp^1(\bar{k})}
  \swan_x(\mcF)+\sum_{x\in \bar{k}}\Drop_x(\mcF)\geq
  (\deg(f)-1)\rank(\mcF).
  $$
\end{lemma}

\begin{proof}
  The object~$M$ is a perverse sheaf and so is $i_{f*}M$ because $i_f$
  is a closed immersion (see Corollary~\ref{cor-closed-immersion}).  The
  tannakian dimension is the Euler--Poincaré characteristic
  $\chi_c(T_{\bar{k}},(i_{f*}M)_{\chi})$ for a generic
  character~$\chi\in\charg{T}$ (Proposition~\ref{prop-dimPerv=dimVect}).

  For any integer~$i$, we have natural isomorphisms
  $$
  H^i_c(T_{\bar{k}},(i_{f*}M)_{\chi})\simeq H^i_c(\Aa^1[1/f]_{\bar{k}},
  M\otimes i_f^*\mcL_{\chi})\simeq H^i_c(\Aa^1[1/f]_{\bar{k}},
  \mcF[1]\otimes i_f^*\mcL_{\chi}).
  $$
  
  As explained in~\cite[p.\,3227]{katz-characters}, the pullback
  $i_f^*\mcL_{\chi}$ is geometrically isomorphic to the tensor product
  $$
  \mcL=\bigotimes_{z\in Z} \mcL_{\chi_z(z-x)}
  $$
  where~$x$ is the coordinate on~$\Aa^1[1/f]$ and $\chi$ corresponds to
  the tuple $(\chi_z)$ of characters of $k^{\times}$ as above.
  
  Now using the Euler--Poincaré formula on a curve (see
  Theorem~\ref{th-euler-poincare}), we obtain
  \begin{multline*}
    r=
    -\chi_c(\Aa^1[1/f]_{\bar{k}},\mcF\otimes\mcL)=
    -\rank(\mcF)\chi_c(\Aa^1[1/f]_{\bar{k}})
    \\
    +\sum_{x\in\Pp^1}\swan_x(\mcF\otimes\mcL)
    +\sum_{x\in\Aa^1[1/f]}\Drop_x(\mcF\otimes\mcL).
  \end{multline*}

  The first term is equal to $\rank(f)(\deg(f)-1)$ since~$f$ is
  square-free, and the second is the sum of Swan conductors of~$\mcF$,
  since the sheaf~$\mcL$ is everywhere tame. The third is the sum of the
  drops of $\mcF$ on~$\Aa^1[1/f]$, since $\mcL$ is lisse
  on~$\Aa^1[1/f]$.
\end{proof}

We now apply Larsen's Alternative to compute the tannakian group of
such perverse sheaves.

\begin{proposition}
  Let~$\mcF$ be a middle extension sheaf on~$\Aa^1_k[1/f]$ which is pure
  of weight zero and irreducible of rank at least~$2$. Let
  $M=\mcF[1](1/2)$. Assume that $M$ is not geometrically isomorphic to
  $i_f^*\mcL_{\chi}[1]$ for some character~$\chi$ of~$G$.

  Then $i_{f*}M$ is a geometrically simple perverse sheaf, pure of
  weight zero and of tannakian dimension at least~$2$.

  Moreover, if $\deg(f)\geq 2$, then the fourth moment of the tannakian
  group~$\garith{i_{f*}M}$ of~$i_{f*}M$ is equal to~$2$, and if
  $\deg(f)\geq 4$, then the eighth moment is equal to~$24$.
\end{proposition}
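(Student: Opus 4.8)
The plan is to deduce everything from the Sidon-morphism criteria of Propositions~\ref{pr-sidon-moments1} and~\ref{pr-sidon-morphisms}, once the geometric shape of the closed immersion $i_f$ has been identified. First I would record the structural claims. Since $\mcF$ is a geometrically irreducible middle-extension sheaf of rank $\geq 2$ on the smooth affine curve $\Aa^1[1/f]$, the shift $\mcF[1]$, and hence $M=\mcF(1/2)[1]$, is a geometrically simple perverse sheaf; by hypothesis it is pure of weight zero. By Proposition~\ref{pr-katz}, $i_f$ is a closed immersion, so $i_{f*}M=i_{f!}M$, and the pushforward along a closed immersion of a geometrically simple pure perverse sheaf is again geometrically simple, perverse, and pure of weight zero. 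For the tannakian dimension $r$ of $i_{f*}M$, I would quote Lemma~\ref{lm-rank-var}, which gives $r\geq(\deg(f)-1)\rank(\mcF)\geq 2$, using $\deg(f)=d\geq 2$ and $\rank(\mcF)\geq 2$.

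The key observation is that, geometrically, $i_f$ is exactly the Sidon morphism of Proposition~\ref{pr-sidon-morphisms}(4). Indeed, over $\bar k$ the torus $T$ becomes $\Gg_m^Z$, where $Z\subset\bar k$ is the set of roots of $f$, via $g\mapsto(g(z))_{z\in Z}$, and under this identification $i_f$ becomes the closed immersion $x\mapsto(z-x)_{z\in Z}$ of \loccit Since the Sidon and $4$-Sidon conditions pass to subsets, and the image of $(\Aa^1[1/f])(k')$ under $i_f$ is contained in its image over $\bar k$ for every extension $k'$ of $k$, Proposition~\ref{pr-sidon-morphisms}(4) shows that $i_f$ is a Sidon morphism since $\deg(f)\geq 2$, and a $4$-Sidon morphism when $\deg(f)\geq 4$.

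It then remains to invoke the moment computations. Proposition~\ref{pr-sidon-moments1}(1), applied with $G=T$, $s=i_f$ and $N=M$, gives $M_4(\garith{i_{f*}M},i_{f*}M)=2$ unless $i_{f*}M$ has tannakian dimension $\leq 1$, which is excluded by the bound $r\geq 2$ above. When moreover $\deg(f)\geq 4$, Proposition~\ref{pr-sidon-moments1}(2), applied to the curve $\Aa^1[1/f]$ and the $4$-Sidon morphism $i_f$, gives $M_8(\ggeo{i_{f*}M},i_{f*}M)=M_8(\garith{i_{f*}M},i_{f*}M)=24$ unless $N=M$ is geometrically isomorphic to $i_f^*\mcL_\chi[1]$ for some character $\chi$ of $T$, which is precisely the case excluded in the statement. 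The one point requiring care is the hypothesis-matching in these last two steps: one must check that the notion of Sidon morphism into $T$ is legitimately obtained from Proposition~\ref{pr-sidon-morphisms}(4), which is phrased for the split morphism into $\Gg_m^Z$ — that is, that the Sidon property is a geometric condition, checkable over $\bar k$ — and that the excluded object $i_f^*\mcL_\chi[1]$ here coincides with the one appearing in Proposition~\ref{pr-sidon-moments1}(2); everything else is a direct application of the cited results.
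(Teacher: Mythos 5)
Your proposal is correct and follows essentially the same route as the paper: the paper's proof also notes that $i_f$ becomes a Sidon (resp. $4$-Sidon) morphism by Proposition~\ref{pr-sidon-morphisms}\,(4), checked after a finite extension splitting $T$, and then invokes Proposition~\ref{pr-sidon-moments1} together with the bound $r\geq 2$ from Lemma~\ref{lm-rank-var} and the non-isomorphism hypothesis on $M$. The only addition in the paper is an explicit verification of the eighth moment via orthogonality of Dirichlet characters, included purely ``for the sake of concreteness''.
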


\begin{proof}
  The previous lema implies that~$i_{f*}M$ has tannakian dimension~$\geq
  2$. It is geometrically simple since~$M$ is.
  \par
  One argument to obtain the result is to observe that $i_f$ is a Sidon
  morphism when $\deg(f)\geq 2$, and a $4$-Sidon morphism when
  $\deg(f)\geq 4$ (by Proposition~\ref{pr-sidon-morphisms}, (4), since
  these properties can be checked after a finite extension), so that the
  result follows from Proposition~\ref{pr-sidon-moments1} since the
  tannakian dimension is~$\geq 2$, and the assumption on~$M$).
  \par
  For the sake of concreteness, we show also how to perform the
  computation of the eighth moment using the interpretation of the sums
  in terms of Dirichlet characters.  The eighth moment of the full
  family of exponential sums over~$k$ is equal to
  \begin{multline*}
    \frac{1}{|B^{\times}|}\sum_{\chi\in \widehat{B}^{\times}} \Bigl|
    \sum_{x\in k}t_M(x)\chi(t-x) \Bigr|^8= \sum_{x_1,\ldots, x_8}
    \prod_{i=1}^4t_M(x_i)\prod_{i=5}^8\overline{t_M(x_i)} \times
    \\
    \frac{1}{|B^{\times}|}\sum_{\chi\in \widehat{B}^{\times}}
    \chi((t-x_1)\cdots (t-x_4)) \overline{\chi((t-x_5)\cdots (t-x_8))}.
  \end{multline*}

  By orthogonality, the inner sum is~$0$ unless
  $$
  (t-x_1)\cdots (t-x_4)\equiv (t-x_5)\cdots (t-x_8)\mods{f},
  $$
  in which case it is equal to~$|B^{\times}|$. Since the degree of~$f$
  is at least~$4$, this congruence can only occur when
  $$
  (t-x_1)\cdots (t-x_4)= (t-x_5)\cdots (t-x_8)
  $$
  in $k[t]$. We then distinguish according to the size of
  $\{x_1,\ldots, x_4\}$. If this set has four elements, then so does
  $\{x_5,\ldots, x_8\}$, and the two sets are equal. The contribution
  arising from this case is
  $$
  \sum_{x_1,\ldots,x_4}\sum_{\sigma\in \mathfrak{S}_4} t_M(x_1)\cdots
  t_M(x_4)\overline{t_M(x_{\sigma(1)})\cdots t_M(x_{\sigma(4)})}=
  24\Bigl(\sum_{x\in k} |t_M(x)|^2\Bigr)^4.
  $$
  \par
  On the other hand, if the set $\{x_1,\ldots, x_4\}$ has three
  elements, say $x$, $y$ and~$z$, then so does $\{x_5,x_6,x_7,x_8\}$,
  and there are an absolutely bounded number of possibilities for
  $(x_1,x_2,x_3,x_4)$ given $x$, $y$ and~$z$. A similar result holds for
  two or one elements, and since~$t_M(x)\ll |k|^{-1/2}$, one sees that
  these altogether contribute at most
  $$
  \frac{1}{|k|^4}\sum_{x,y,z\in k}1\ll \frac{1}{|k|}.
  $$
  \par
  These computations can be repeated over~$k_n$ for~$n\geq 1$
  using~(\ref{eq-katz-1}), and using Proposition~\ref{pr-schur}, we
  deduce by letting~$n\to+\infty$ that
  $$
  \frac{1}{|B^{\times}|}\sum_{\chi\in \widehat{B}^{\times}} \Bigl|
  \sum_{x\in k}t_M(x)\chi(t-x) \Bigr|^8\to 24
  $$
  as~$|k|\to+\infty$.
  \par
  Finally, the usual argument using the definition of generic sets of
  characters together with~(\ref{eq-sum-variance}) and the Riemann
  Hypothesis imply that
  $$
  \frac{1}{|B^{\times}|}\sum_{\substack{\chi\in
      \widehat{B}^{\times}\\\widetilde{\chi}\text{ ramified}}} \Bigl|
  \sum_{x\in k}t_M(x)\chi(t-x) \Bigr|^8\to 0,
  $$
  so that Proposition~\ref{pr-larsen-diophante} gives the result.
\end{proof}

\begin{corollary}\label{cor-variance-mono}
  Under the assumptions of the proposition, the tannakian group of
  $i_{f*}M$ contains~$\SL_r$, where~$r$ is the tannakian dimension
  of~$i_{f*}M$, if $\deg(f)\geq 4$.
\end{corollary}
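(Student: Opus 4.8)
The plan is to combine the moment computation from the previous proposition with Larsen's Alternative, exactly as was done for tori and as sketched at the very end of the excerpt for the $\Gg_m\times\Gg_a$ case. Write $M' = i_{f*}M$ for brevity and let $r$ be its tannakian dimension and $\Gg = \garith{M'} \subset \GL_r$. By the proposition, under the stated hypotheses ($\mcF$ irreducible of rank $\geq 2$, $M$ not geometrically isomorphic to $i_f^*\mcL_\chi[1]$, $\deg(f)\geq 4$), we have $M_4(\Gg, M') = 2$ and $M_8(\Gg, M') = 24$, and also $r\geq 2$. Moreover, since $\deg(f)\geq 4$ implies $(\deg(f)-1)\rank(\mcF)\geq 3\cdot 2 = 6$, Lemma~\ref{lm-rank-var} gives $r\geq 6 \geq 5$, so the hypothesis $r\geq 5$ needed for part~\ref{th-larsen:item2} of Theorem~\ref{th-larsen} is automatic.

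First I would invoke Theorem~\ref{th-larsen}\,\ref{item1:th-larsen} to note that $M_4(\Gg,M')=2$ forces $M'$ to have tannakian dimension $\geq 2$ (which we already know) and that, by the third part of that theorem, $M_4(\Gg, M')=2$ gives the dichotomy: either $\SL_r\subset \Gg$, or $\Gg/(\Gg\cap Z)$ is finite, where $Z\subset\GL_r$ is the group of scalars. To rule out the second alternative I would argue exactly as in the proof of Theorem~\ref{th-sld}: if $\Gg\cap Z$ is finite then $\Gg$ is finite, so $M'$ would be a perverse sheaf on the torus $T$ with finite arithmetic tannakian group; but then, restricting to a complete splitting field of $f$ (which only enlarges the arithmetic group by a bounded amount and preserves the property of being finite), one reduces to the multiplicative group $\Gg_m$, where $M'_a = p_{1,*}M'\otimes\mcL_{\psi(ax)}$-type specializations with finite tannakian group must be punctual by Katz~\cite[Th.\,6.2]{mellin}, contradicting $\rank(\mcF)\geq 2$. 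If instead $\Gg\cap Z = Z$, then $\End(M')$ has finite tannakian group, hence (by Corollary~\ref{cor-finite-gen-unram} and an argument using the generic vanishing theorem, as in Theorem~\ref{th-sld}) the sums $|S(M',\chi)|^2$ take finitely many values on a generic set; specializing again to $\Gg_m$ and using Katz's classification forces $\End(M')$ punctual, which via~(\ref{eq-larsen-decomp}) gives $M_4(\Gg,M')\geq r^2\geq 4$, contradicting $M_4(\Gg,M')=2$. Hence $\SL_r\subset\Gg$.

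Actually, since we also have $M_8(\Gg, M')=24$ and $r\geq 5$, the cleanest route is to apply Theorem~\ref{th-larsen}\,\ref{th-larsen:item2} directly: $M_4(\Gg,M')=2$ together with $M_8(\Gg,M')=24$ and $r\geq 5$ immediately yields $\SL_r\subset\Gg$, with no need for the finiteness-exclusion digression. I would present the proof this way, citing the proposition for the two moment values and Lemma~\ref{lm-rank-var} for $r\geq 6$, and then quoting Theorem~\ref{th-larsen}\,\ref{th-larsen:item2} as a black box.

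The main obstacle is essentially bookkeeping rather than conceptual: one must make sure the hypotheses of the proposition (hence of Larsen's eighth-moment criterion) are genuinely in force — in particular that $i_{f*}M$ really is geometrically simple of tannakian dimension $\geq 2$ and that the excluded case $M\simeq i_f^*\mcL_\chi[1]$ is correctly propagated — and that the passage between "arithmetic tannakian group" and "the group $\Gg$ appearing in Larsen's theorem as a subgroup of $\GL_r$" is legitimate, which it is by Theorem~\ref{thm-arith_cat_neut_tan} (the standard representation of $\garith{M'}$ is faithful of dimension $r$). No serious difficulty is expected beyond assembling these ingredients in the right order.
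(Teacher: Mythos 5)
Your proposal is correct and follows essentially the same route as the paper: quote the proposition for $M_4=2$ and $M_8=24$, use Lemma~\ref{lm-rank-var} to see the tannakian dimension is large enough (indeed $r\geq 6\geq 5$), and conclude by the eighth-moment case of Larsen's Alternative (Theorem~\ref{th-larsen}\,\ref{th-larsen:item2}). The extra digression excluding the finite case via the fourth moment is unnecessary, as you yourself note, and is not part of the paper's argument.
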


\begin{proof}
  By Lemma \ref{lm-rank-var}, the assumption implies~$r\geq 4$, and the
  result follows from Larsen's Alternative, in the form of the eighth
  moment theorem of Guralnick and Tiep
  (see Theorem~~\ref{th-larsen}\,\ref{th-larsen:item2}).
\end{proof}

\section{Application to von Mangoldt functions}

Suppose again that~$M$ is of the form $\mcF[1](1/2)$ for some
middle extension sheaf~$\mcF$ on~$\Aa^1_k[1/f]$ which is pure of
weight zero and geometrically irreducible of rank at least~$2$.

The statement of equidistribution on average for the object $i_{f*}M$
leads automatically to distribution statements of any ``continuous''
function of the polynomials in the variable~$T$ which are the twisted
$L$-functions of~$M$, namely
$$
\det(1-\frob_{k}T\,\mid\,
H^0_c(\Aa^1[1/f]_{\bar{k}},M\otimes\mcL_{\widetilde{\chi}}))
=\det(1-\frob_{k}T\,\mid\, H^0_c(T_{\bar{k}},(i_{f*}M)_{\chi}))
$$
as~$\chi\in \widehat{B}^{\times}$ varies, where~$\widetilde{\chi}$ is
now the character of the fundamental group of~$\Aa^1_k[1/f]$ that
corresponds to~$\chi$ by class-field theory,
and~$\mcL_{\widetilde{\chi}}$ is the associated rank one sheaf. 
\par
For instance, this leads to statements concerning the variance of von
Mangoldt functions in arithmetic progressions, as we now
explain.\index{von Mangoldt function}
\par
Write
$$
L(M,T)=\det(1-\frob_{k}T\,\mid\, H^0_c(\Aa^1[1/f]_{\bar{k}},M))=
\prod_{x}\det(1-\frob_{k_{\deg(x)}}T^{\deg(x)}\,\mid\, \mcF_{x})^{-1},
$$
where~$x$ runs over the set of closed points of~$\Aa^1_k[1/f]$, which
may be identified with the set of irreducible monic polynomials
in~$k[t]$ which are coprime to~$f$. Expanding the logarithmic derivative
of the local factor at a closed point~$x$, corresponding to an
irreducible monic polynomial~$\pi\in k[t]$, we have
$$
-Td\log(\det(1-\frob_{k_{\deg(x)}}T^{\deg(x)}\,\mid\, \mcF_{x})^{-1})=
\sum_{\nu\geq 1}\Lambda_M(\pi^{\nu})T^{\nu\deg(\pi)},
$$
which defines the von Mangoldt function~$\Lambda_M(\pi^{\nu})$ for any
monic irreducible polynomial~$\pi$ coprime to~$f$ and any~$\nu\geq
1$. We further define~$\Lambda_M(g)=0$ if~$g\in k[t]$ is not a power of
such an irreducible polynomial.  The full logarithmic derivative then
has the formal power series expansion
$$
-T\frac{L'(M,T)}{L(M,T)}=\sum_{g}\Lambda_M(g)T^{\deg(g)}
$$
over all monic polynomials~$g\in k[t]$.
\nomenclature[$L$]{$\Lambda_M$}{von Mangoldt function of~$M$}
\par
For an integer $m\geq 1$ and a polynomial $a\in k[t]$, we then define
$$
\psi_M(m;f,a)=\sum_{\substack{\deg(g)=m\\g\equiv
    a\mods{f}}}\Lambda_M(g).
$$
\par
We consider the average
$$
A_M(m;f)=\frac{1}{|B^{\times}|}\sum_{a\in B^{\times}}\psi_M(m;f,a)
$$
and the variance
$$
V_M(m;f)= \frac{1}{|B^{\times}|}\sum_{a\in
  B^{\times}}\abs{\psi_M(m;f,a)-A_M(m;f)}^2.
$$

These are related to exponential sums as follows.

\begin{proposition}
  With assumptions and notation as above, we have 
  $$
  V_M(m;f)=\frac{1}{|B^{\times}|^2}\sum_{\substack{\chi\in
      \widehat{B}^{\times}\\\chi\not=1}} V_M(m;\chi)
  $$
  where
  $$
  V_M(m;\chi)=\Bigl|\sum_{x\in k_m}t_M(x;k_m)\chi(N_{k_m/k}(t-x))\Bigr|^2.
  $$
  \par
  In particular, if $\chi$ is weakly unramified for~$i_{f*}M$, then we
  have
  $V_M(m;\chi)=|\Tr(\Theta_{M}(\chi)^m)|^2$.
\end{proposition}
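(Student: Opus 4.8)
The plan is to route everything through the twisted $L$-functions and reduce to orthogonality of characters of the finite group $B^{\times}$ together with Lemma~\ref{lm-trace}.

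\textbf{Step 1: Fourier analysis on $B^{\times}$.} Expanding the congruence $g\equiv a\pmod f$ by orthogonality of characters of $B^{\times}$ (and recalling that $\Lambda_M(g)=0$ unless $g$ is a power of an irreducible coprime to $f$, so the coprimality restriction is automatic), one gets
\[
\psi_M(m;f,a)=\frac{1}{|B^{\times}|}\sum_{\chi\in\widehat{B}^{\times}}\bar\chi(a)\,\psi_M(m;\chi),\qquad \psi_M(m;\chi):=\sum_{\deg(g)=m}\Lambda_M(g)\chi(g).
\]
Summing over $a\in B^{\times}$ isolates the $\chi=1$ term, so $A_M(m;f)=|B^{\times}|^{-1}\psi_M(m;1)$ and hence $\psi_M(m;f,a)-A_M(m;f)=|B^{\times}|^{-1}\sum_{\chi\neq1}\bar\chi(a)\,\psi_M(m;\chi)$. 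The Plancherel formula on $B^{\times}$ then gives $V_M(m;f)=|B^{\times}|^{-2}\sum_{\chi\neq1}|\psi_M(m;\chi)|^2$.

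\textbf{Step 2: identification $|\psi_M(m;\chi)|^2=V_M(m;\chi)$.} By definition of $\Lambda_M$, the series $\sum_m\psi_M(m;\chi)T^m$ is the coefficient expansion of $-T\,d\log L(M\otimes\mcL_{\widetilde\chi},T)$, where $\widetilde\chi$ is the character of $\pi_1$ of $\Aa^1_k[1/f]$ attached to $\chi$ by class field theory. The key point is that tensoring $M$ with the character sheaf $\mcL_{\widetilde\chi}$ multiplies the local factor at a monic irreducible $\pi$ by $\chi(\pi)$: at a closed point of degree $\deg(\pi)$ lying over a root $z$ of $\pi$, geometric Frobenius acts on the stalk of $i_f^{*}\mcL_{\widetilde\chi}$ by $\widetilde\chi(N_{k_{\deg(\pi)}/k}(t-z))=\chi(\pi)$, by the defining property of character sheaves recalled in Section~\ref{sec:character-groups-Lang-torsor}; hence $\Lambda_{M\otimes\mcL_{\widetilde\chi}}(g)=\Lambda_M(g)\chi(g)$ for every monic $g$. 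On the other hand, applying the Grothendieck--Lefschetz trace formula in the shape $L(K,T)=\exp\bigl(\sum_m\tfrac{T^m}{m}\sum_{x\in\Aa^1[1/f](k_m)}t_K(x;k_m)\bigr)$ to $K=M\otimes\mcL_{\widetilde\chi}$ yields $-T\,d\log L(M\otimes\mcL_{\widetilde\chi},T)=-\sum_m T^m\sum_{x\in k_m}t_M(x;k_m)\,\widetilde\chi(N_{k_m/k}(t-x))$, the terms with $x\in Z$ vanishing as $t-x$ is then not coprime to $f$. Comparing coefficients of $T^m$, and using the identification $\widehat{B}^{\times}=\charg{T}(k)$ under which $\widetilde\chi(N_{k_m/k}(t-x))=\chi(N_{k_m/k}(t-x))$ as in~\eqref{eq-katz-1}, we get $\psi_M(m;\chi)=-\sum_{x\in k_m}t_M(x;k_m)\chi(N_{k_m/k}(t-x))$, so $|\psi_M(m;\chi)|^2=V_M(m;\chi)$. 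Combined with Step~1 this proves the displayed formula for $V_M(m;f)$.

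\textbf{Step 3: the weakly unramified case.} By~\eqref{eq-katz-1}, for any $\chi$ the sum defining $V_M(m;\chi)$ equals $\sum_{y\in T(k_m)}t_{i_{f*}M}(y;k_m)\,\widetilde\chi(N_{k_m/k}(y))$. Now $i_{f*}M$ is a geometrically simple perverse sheaf pure of weight zero, being the push-forward along the closed immersion $i_f$ of the simple perverse sheaf $M=\mcF(1/2)[1]$; in particular it is arithmetically semisimple, so when $\widetilde\chi\in\wunram{i_{f*}M}(k)$ Lemma~\ref{lm-trace}(1) identifies the above with $\Tr(\Theta_{i_{f*}M,k_m}(\widetilde\chi))=\Tr(\Theta_{i_{f*}M}(\widetilde\chi)^{m})=\Tr(\Theta_{M}(\chi)^{m})$, the last equality being the convention of this section. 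Hence $V_M(m;\chi)=|\Tr(\Theta_M(\chi)^m)|^2$.

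I expect the only real obstacle to be the bookkeeping in Step~2: matching the twisted $L$-function $L(M\otimes\mcL_{\widetilde\chi},T)$ with the Dirichlet series $\sum_g\Lambda_M(g)\chi(g)T^{\deg(g)}$ requires care with the class-field-theory normalization of character sheaves and with the shift and Tate twist in $M=\mcF(1/2)[1]$ (which contribute only an overall sign, immaterial after taking $|\cdot|^2$). Step~1 is just orthogonality on a finite abelian group, and Step~3 is a direct application of Lemma~\ref{lm-trace} and~\eqref{eq-katz-1}.
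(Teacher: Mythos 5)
Your proof is correct and follows essentially the same route as the paper, which handles the first identity by orthogonality of characters exactly as in the cited computation of Hall–Keating–Roddity-Gershon (your Steps 1–2 simply spell out the twisted-$L$-function/trace-formula comparison that reference performs) and deduces the second assertion from Lemma~\ref{lm-trace} and~\eqref{eq-katz-1}, as in your Step 3. The sign and half-Tate-twist bookkeeping you flag indeed only affects unimodular factors under the paper's normalization, so it disappears after taking $|\cdot|^2$.
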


\begin{proof}
  The first part is proved, using the orthogonality of characters,
  exactly like~\cite[\S 6, (6.3.4)]{variance}. The second assertion then
  follows from Lemma~\ref{lm-trace} and~(\ref{eq-katz-1}).
\end{proof}

\begin{remark}
  The von Mangoldt function can be replaced by many other arithmetic
  functions in this argument; we refer to the discussion by Sawin
  in~\cite{sawin-witt} (which proves analogue equidistribution
  statements to ours for the case of ``short intervals'', which amounts
  to considering a unipotent group instead of a torus) and
  to~\cite{sawin-factorization} for a discussion of how classical
  arithmetic functions which are related to ``factorization functions''
  (functions of polynomials~$g$ that depend only on the factorization
  type of~$g$) can be interpreted as trace functions using
  representation theory of the symmetric groups.
\end{remark}

We now obtain a formula for the variance, with some additional
assumption.

\begin{corollary}\label{cor-variance}
  In addition to the assumptions of this section, assume that $m\geq 2$,
  and that the tannakian determinant of~$M$ is geometrically of infinite
  order. Then
  $$
  \lim_{|k|\to+\infty}|B^{\times}|^2V_M(m;f)=\min(m,r),
  $$
  where $r$ is the tannakian dimension of~$i_{f*}M$.
\end{corollary}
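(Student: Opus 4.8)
The plan is to deduce Corollary~\ref{cor-variance} from the equidistribution-on-average theorem for the tannakian group of $i_{f*}M$, combined with the moment computations already carried out. First I would recall that, under the standing assumptions, the previous proposition identifies $|B^\times|^2 V_M(m;f)$ (after discarding the trivial character) with the average
\[
\frac{1}{|B^\times|}\sum_{\substack{\chi\in\widehat{B}^\times\\\chi\neq 1}}|V_M(m;\chi)|
\]
where $V_M(m;\chi)=|\sum_{x\in k_m}t_M(x;k_m)\chi(N_{k_m/k}(t-x))|^2$, and that for $\chi$ weakly unramified for $i_{f*}M$ one has $V_M(m;\chi)=|\Tr(\Theta_{i_{f*}M,k_m}(\chi)^m)|^2=|\Tr(\Theta_{i_{f*}M,k_m}(\chi))^m|^2$ — more precisely, this is $|\Tr(\Theta_{i_{f*}M,k_m}(\chi))|^2$ where the conjugacy class is taken over $k_m$, i.e. the $m$-th power appears through base change. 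I would then note that by Corollary~\ref{cor-variance-mono} and the hypothesis that $\det(i_{f*}M)$ is geometrically of infinite order, the arithmetic tannakian group $\garith{i_{f*}M}$ satisfies $\SL_r\subset\garith{i_{f*}M}\subset\GL_r$ and its geometric counterpart contains $\SL_r$ as well (so $\garith{}/\ggeo{}$ is a torus and the geometric group also contains $\SL_r$); this is exactly the situation where a maximal compact $K$ has $\SU_r\subset K\subset\Un_r$.

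Next I would apply Theorem~\ref{th-4} (equidistribution on average for characteristic polynomials) to $i_{f*}M$: the unitary Frobenius conjugacy classes $\Theta_{i_{f*}M,k_n}(\chi)$ for $\chi\in\wunram{i_{f*}M}(k_n)$ become $\nu_{cp}$-equidistributed on average, where $\nu_{cp}$ is the image of Haar measure on $K$. The quantity we want is the average over $\chi$ of the test function $g\mapsto|\Tr(g^m)|^2$ on $\Un_r(\Cc)^\sharp$ — here I must be slightly careful: we are not averaging over $n$, but looking at a single value $n=m$. So instead of directly invoking the Cesàro statement, I would go through Proposition~\ref{pr-weyl-sum-bis}, or rather redo its proof, applied to the object $N=(i_{f*}M)^{*_\intt m}*_\intt((i_{f*}M)^\vee)^{*_\intt m}$, i.e. the representation $\Std^{\otimes m}\otimes(\Std^\vee)^{\otimes m}$ of $\garith{i_{f*}M}$. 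That proposition gives, for each $m$ with $\funram{N}(k_m)$ nonempty,
\[
\frac{1}{|\funram{N}(k_m)|}\sum_{\chi\in\funram{N}(k_m)}|\Tr(\Theta_{i_{f*}M,k_m}(\chi))|^2= t_N(e;k_m)+O(|k_m|^{-1/2}),
\]
and $t_N(e;k_m)=t_{(i_{f*}M)^{*m}*_\intt\ldots}(e;k_m)$, whose limit as $|k|\to\infty$ (with $m$ fixed) is the multiplicity of the trivial representation in $\Std^{\otimes m}\otimes(\Std^\vee)^{\otimes m}$ restricted to $\ggeo{i_{f*}M}$, by the argument in the last part of the proof of Theorem~\ref{th-4} (decompose the semisimple object into simple pieces, the positive-dimensional-support pieces contribute $O(|k|^{-1/2})$, and punctual pieces at $e$ contribute their scalar; here there is exactly one such piece since $\ggeo{}\supseteq\SL_r$ and $\garith{}/\ggeo{}$ acts through scalars). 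Since $\SU_r\subset K$, this multiplicity equals $\int_K|\Tr(g)|^{2m}$ computed for $\SU_r$ — no wait: the relevant integral is $\int_K|\Tr(g^m)|^2\,d\mu(g)$, not $\int_K|\Tr(g)|^{2m}$. These differ; I would need to re-express $V_M(m;\chi)=|\Tr(\Theta_{k_m}(\chi))|^2$ where $\Theta_{k_m}(\chi)$ is the Frobenius over $k_m$, which by the compatibility in Lemma~\ref{lm-trace} equals $\Tr(\Theta_{k}(\chi)^m)$ only when $\chi\in\charg{T}(k)$; since here $\chi$ ranges over $\charg{T}(k_m)=B^\times$-characters with $B$ over $k$, actually $m$ plays the role of the field-extension degree and $V_M(m;\chi)=|\sum_{x\in k_m}t_M(x;k_m)\chi(N_{k_m/k}(t-x))|^2$ is literally $|\Tr(\Theta_{i_{f*}M,k_m}(\chi))|^2$ with a single Frobenius over $k_m$. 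So in fact we want $\int_K|\Tr(g)|^2\,d\mu(g)=M_2(K)$, the second moment, which is $1$ by Schur's lemma since $\Std$ is irreducible for $K\supseteq\SU_r$. That would give limit $1$, not $\min(m,r)$ — so the $m$-dependence must re-enter, meaning the Frobenius over $k_m$ is $\Theta_k(\chi)$ raised to... Let me reconsider: the conjugacy class attached to a character in $\charg{T}(k_m)$ is over $k_m$, and $\Tr(\Theta_{i_{f*}M,k_m}(\chi))=\sum_{y\in T(k_m)}\chi(y)t_{i_{f*}M}(y;k_m)$, full stop; but $V_M(m;\chi)$ in the proposition uses $\chi\in\widehat{B}^\times=\charg{T}(k)$ and $N_{k_m/k}$, so the sum is over $T(k_m)$ with the character $\chi\circ N_{k_m/k}$, which is the base change to $k_m$ of the $k$-character $\chi$; hence $\Tr(\Theta_{i_{f*}M,k_m}(\chi\circ N))=\Tr(\Theta_{i_{f*}M,k}(\chi)^m)$ by Lemma~\ref{lm-trace}(1). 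Good — so $V_M(m;\chi)=|\Tr(\Theta_{i_{f*}M,k}(\chi)^m)|^2$, and the average over $\chi\in\widehat{B}^\times$ of this, in the limit $|k|\to\infty$ (with $m$ fixed and the torus/degree $d=\deg f$ fixed), becomes $\int_K|\Tr(g^m)|^2\,d\mu(g)$ by equidistribution of $\Theta_{i_{f*}M,k}(\chi)$ over the maximal compact $K$ as $\chi$ ranges over $\widehat{B}^\times=\charg{T}(k)$.

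The hard part, then, reduces to two things. First, the equidistribution as $|k|\to\infty$ (horizontal in the size of the base field, with the torus dimension $d=\deg f$ fixed): here I would invoke Theorem~\ref{th-4}'s proof machinery directly, i.e. expand $|\Tr(\Theta_{i_{f*}M,k}(\chi)^m)|^2=\Tr(\rho(\Theta_{i_{f*}M,k}(\chi)))$ for $\rho=(\Std^{\otimes m}\otimes(\Std^\vee)^{\otimes m})$ and apply Proposition~\ref{pr-weyl-sum-bis} to $N=\rho(i_{f*}M)$, then let $|k|\to+\infty$ so that $t_N(e;k)\to$ the multiplicity of the trivial representation of $\ggeo{i_{f*}M}$ in $\rho$; the error terms are $O(|k|^{-1/2})$ uniformly because $c_u(i_{f*}M)$ is controlled (via Proposition~\ref{prop-cond-char} and Theorem~\ref{thm-conductors}, the complexity of $i_{f*}M$ is bounded in terms of $\mcF$ and $d$, independently of $|k|$), and the contribution of non-weakly-unramified characters is negligible by the generic vanishing theorem. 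Second — and this is the genuine representation-theory content — I need $\int_K|\Tr(g^m)|^2\,d\mu(g)=\min(m,r)$ where $\SU_r\subset K\subset\Un_r$. The integral $\int_K|\Tr(g^m)|^2$ is the same for $K=\Un_r$ and for any $K$ with $\SU_r\subset K\subset\Un_r$ because the power map $g\mapsto g^m$ composed with $|\Tr(\cdot)|^2$ only sees the $\PSU_r$ part together with the scalar to the $m$, and the scalar cancels in $|\cdot|^2$; so it equals $\int_{\Un_r}|\Tr(g^m)|^2\,dg$, which is the classical Diaconis–Shahshahani / Rains computation giving exactly $\min(m,r)$. I would cite this (e.g. Diaconis–Shahshahani, or Rains's work on powers of random unitary matrices) rather than reprove it. The main obstacle I anticipate is bookkeeping the uniformity as $|k|\to\infty$ — ensuring all implied constants ($O(|k|^{-1/2})$ error in Proposition~\ref{pr-weyl-sum-bis}, the size $|B^\times|=|T(k)|$ versus $|\wunram{i_{f*}M}(k)|$, and the complexity bound) depend only on $\mcF$ and $\deg f$ and not on $|k|$ — and correctly matching the combinatorial factor $\min(m,r)$ with the moment integral, checking carefully that the $m$ in the von Mangoldt degree becomes the $m$-th power of the Frobenius class rather than an extension degree that would trivialize the answer.
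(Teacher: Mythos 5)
The decisive analytic step in your argument is wrong. At the key point you set $\rho=\Std^{\otimes m}\otimes(\Std^{\vee})^{\otimes m}$ and claim $|\Tr(\Theta^m)|^2=\Tr(\rho(\Theta))$; in fact $\Tr(\rho(g))=|\Tr(g)|^{2m}$, whereas $\Tr(g^m)$ is the character of the \emph{virtual} representation $\psi^m(\Std)$ (Adams operation), not of $\Std^{\otimes m}$. With your $\rho$, Proposition~\ref{pr-weyl-sum-bis} computes the $2m$-th absolute moment of $\Tr(\Theta(\chi))$, whose limit would be $\int|\Tr(g)|^{2m}\,d\mu$ (equal to $m!$ for $m\leq r$ when the group is $\GL_r$), not $\min(m,r)$ — you had flagged exactly this distinction earlier in your write-up, but reintroduced the wrong representation at the crucial Weyl-sum step. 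Moreover, since $\psi^m(\Std)\otimes\psi^m(\Std)^{\vee}$ is only virtual, one cannot form ``$\rho(i_{f*}M)$'' directly: one would have to expand $|\Tr(g^m)|^2$ as a $\Zz$-linear combination of characters of genuine representations with trivial central character and run Proposition~\ref{pr-weyl-sum-bis} term by term; and then the assertion ``there is exactly one punctual piece, so no Ces\`aro average over $n$ is needed'' would have to be proved for each of them (the multiplicities of the trivial representation are in general larger than one; what one actually needs is that the geometric and arithmetic trivial multiplicities coincide for these central-character-trivial constituents).

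Related to this, you never really use the hypothesis that the tannakian determinant is geometrically of infinite order, yet this is precisely the engine of the paper's much shorter proof: combined with Corollary~\ref{cor-variance-mono} it forces $\ggeo{i_{f*}M}=\garith{i_{f*}M}=\GL_r$, so Theorem~\ref{th-2bis} gives genuine equidistribution of the classes $\Theta_{M}(\chi)$ without averaging over $n$, and one then simply integrates the bounded continuous test function $g\mapsto|\Tr(g^m)|^2$ against Haar measure on $\Un_r(\Cc)$ and quotes the Diaconis--Evans evaluation $\int_{\Un_r(\Cc)}|\Tr(g^m)|^2\,d\mu=\min(m,r)$. Your observation that this integral is the same for every compact $K$ with $\SU_r\subset K\subset \Un_r$ (the test function descends to $\mathrm{PU}_r$) is correct and could in principle be used to avoid the determinant hypothesis, but only after repairing the representation-theoretic step above and supplying the no-averaging justification; you should also replace the appeal to ``genericity'' for the ramified characters by the uniform bound $V_M(m;\chi)\ll 1$ (which holds here because $H^2_c$ of the twisted curve vanishes for $\mcF$ irreducible of rank $\geq 2$). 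The remaining ingredients — the identity $V_M(m;\chi)=|\Tr(\Theta_M(\chi)^m)|^2$, the use of Corollary~\ref{cor-variance-mono}, and the citation of the $\min(m,r)$ moment computation — are fine and agree with the paper.
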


\begin{proof}
  Combined with Corollary~\ref{cor-variance-mono}, the assumption
  implies that the arithmetic and geometric tannakian groups
  of~$i_{f*}M$ are both equal to~$\GL_r$. Thus the limit exists by
  Theorem~\ref{th-2bis} and is equal to
  $$
  \int_{\Un_r(\Cc)}|\Tr(g^m)|^2d\mu(g)
  $$
  where $\mu$ is the Haar probability measure. This matrix integral is
  equal to $\min(m,r)$ by work of Diaconis and
  Evans~\cite[Th.\,2.1]{dia-ev}.
\end{proof}

To check the assumption on the tannakian determinant, we have a first
general criterion, which is however quite restricted.

\begin{proposition}
  \label{pr-local-monodromy-variance}
  With notation and assumptions as above, suppose that there exists
  $z\in Z$ such that the local monodromy representation\index{local monodromy representation} of~$\mcF$ at~$z$
  has a non-zero unipotent tame component while the local monodromy at
  infinity has no unipotent tame component. Then the tannakian
  determinant of~$i_{f*}M$ is geometrically of infinite order.
\end{proposition}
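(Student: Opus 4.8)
The plan is to reduce the statement about $i_{f*}M$ on the torus $T$ to a statement about the object $N = p_!(i_{f*}M)_{\chi_1}$ on $\Gg_m$ obtained by pushing forward along the norm morphism $p\colon T \to \Gg_m$, and then apply the machinery developed in Corollary~\ref{cor-simpler}, which gives a criterion for geometric infinite order of the tannakian determinant in terms of the local monodromy of a middle-extension sheaf on $\Gg_m$ at $0$ and $\infty$. Concretely, since $p\circ i_f = (-1)^{\deg(f)}f$ by Proposition~\ref{pr-katz}, pushing $M = \mcF(1/2)[1]$ forward first by $i_f$ and then by $p$ is the same as pushing $M$ forward along the morphism $x \mapsto (-1)^{\deg f}f(x)$ from $\Aa^1[1/f]$ to $\Gg_m$. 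The first step is therefore to verify that for a suitable character $\chi_1$, the complex $N = Rp_!((i_{f*}M)_{\chi_1})$ is a perverse sheaf on $\Gg_m$ of the form $\mcG[1]$ for a middle-extension sheaf $\mcG$, that it is arithmetically semisimple and pure of weight zero, and that the set of $\chi\in\charg{\Gg}_m$ for which $\chi_1(\chi\circ p)$ is unramified for $\det(i_{f*}M)$ is generic; the relative vanishing theorem for tori (Theorem~\ref{thm-tori-vanishing-rel}) together with the fact that $p$ is affine — hence $N$ is perverse — and the generic unramifiedness of perverse sheaves on $T$ (Theorem~\ref{th-polite}) handle the needed hypotheses of Proposition~\ref{pr-determinant-tori} and Corollary~\ref{cor-simpler}, exactly as in Remark~\ref{rm-application} adapted to this torus.

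The second step is to compute, in terms of the local monodromy of $\mcF$, the sizes $(e_i)$ and $(f_j)$ of the unipotent Jordan blocks of the tame monodromy of $\mcG$ at $0$ and at $\infty$ respectively, and to check that the inequality
$$
\sum_i e_i - \sum_j f_j \neq 0
$$
holds under the hypothesis of the proposition. The point $0$ of $\Gg_m$ corresponds, under $x\mapsto (-1)^{\deg f}f(x)$, to the fibre over the zeros $z\in Z$ of $f$ (together with possible contributions coming from the poles of $f$, but $f$ is a polynomial, so only the zeros matter), and the point $\infty$ corresponds to $x = \infty$. A local stationary-phase / nearby-cycle analysis of $Rp_!$ — concretely, the computation of the local monodromy of the pushforward of $\mcF$ along the map $f$ near the critical values — shows that the tame unipotent part of $\mcG$ at $0$ is built out of the tame unipotent parts of the local monodromy representations of $\mcF$ at the points $z\in Z$, while the tame unipotent part at $\infty$ is controlled by the local monodromy of $\mcF$ at $\infty$ (and, if $f$ has degree $\geq 2$, the ramification of the covering $x\mapsto f(x)$ above $\infty$ only multiplies the break data, so a tame unipotent block at $\infty$ for $\mcG$ forces a tame unipotent block at $\infty$ for $\mcF$). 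Under the hypothesis that some $z\in Z$ contributes a non-zero tame unipotent block while the local monodromy of $\mcF$ at $\infty$ has no tame unipotent component, we get $\sum_i e_i > 0$ and $\sum_j f_j = 0$, so the displayed inequality holds.

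The third step is then a direct application of Corollary~\ref{cor-simpler}: with $G = T$, $p$ the norm, $M = i_{f*}(\mcF(1/2)[1])$, and $N = Rp_!(M_{\chi_1}) = \mcG[1]$ arithmetically semisimple and pure of weight zero, the nonvanishing of $\sum_i e_i - \sum_j f_j$ yields that the tannakian determinant of $M = i_{f*}M$ is geometrically of infinite order. (A small bookkeeping remark: one must be slightly careful that Corollary~\ref{cor-simpler} is applied to $i_{f*}M$ as an object on the torus $T$, which is what the statement of the proposition requires; the reduction above does exactly this.)

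I expect the main obstacle to be the second step — the precise determination of the tame unipotent Jordan block structure of $\mcG = R^{1}p_!(\cdots)$ at $0$ and $\infty$ from that of $\mcF$. The cleanest route is probably to work one zero $z\in Z$ at a time using the local structure of the map $x\mapsto f(x)$ near $z$ (where $f$ has a simple zero, since $f$ is square-free, so the map is étale near $z$ and the local monodromy of the pushforward at $0$ literally inherits the local monodromy of $\mcF$ at $z$), combined with the behaviour at $\infty$ where the map $x\mapsto f(x)$ is totally ramified of degree $\deg(f)\geq 2$; the contribution of the other zeros and of $\infty$ to the local monodromy of $\mcG$ at $0$ requires checking that there is no cancellation among unipotent blocks, which follows because they sit in monodromy representations attached to disjoint points and the break/Swan data distinguishes them. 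An alternative, more robust, route avoiding explicit stationary phase is to argue by the Euler–Poincaré / determinant-of-cohomology formalism and Laumon's product formula for $\varepsilon$-factors, computing the determinant of Frobenius on $H^0_c(T_{\bar k}, (i_{f*}M)_\chi)$ as $\chi$ varies and reading off non-unitarity; this is the second bullet of the two approaches listed after Theorem~\ref{th-sld} in the excerpt, and it may in fact be the approach the authors intend, since they explicitly flag it there.
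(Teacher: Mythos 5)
Your overall strategy is the paper's: apply Corollary~\ref{cor-simpler} to the norm morphism $p\colon T\to\Gg_m$ and verify the criterion $\sum_i e_i-\sum_j f_j\neq 0$ by tracking the tame unipotent parts of the pushforward at $0$ and $\infty$. Your untwisted local analysis in Step 2 is also essentially what the paper does: since $f$ is square-free, the map $\eps f$ with $\eps=(-1)^{\deg f}$ is étale at each $z\in Z$, so the local monodromy of $(\eps f)_*\mcF$ at $0$ contains $\bigoplus_{z\in Z}\mcF_z$ and hence a non-zero unipotent tame block, while at $\infty$ the totally (tamely) ramified degree-$d$ covering cannot create a unipotent tame block if $\mcF$ has none.

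The genuine problem is in your Step 1, and it feeds back into Step 2. You propose to secure the hypotheses of Corollary~\ref{cor-simpler} by choosing a ``suitable'' $\chi_1$ via the relative vanishing theorem for tori, ``exactly as in Remark~\ref{rm-application}'', i.e.\ by taking $\chi_1$ generic. But if $\chi_1$ is non-trivial, the object being pushed forward along $\eps f$ is $\mcF\otimes i_f^*\mcL_{\chi_1}$, and $i_f^*\mcL_{\chi_1}$ is geometrically isomorphic to $\bigotimes_{z\in Z}\mcL_{\chi_{1,z}(z-x)}$, which is tamely ramified with \emph{non-trivial} character at every $z$ with $\chi_{1,z}\neq 1$. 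Such a twist converts the unipotent tame blocks of $\mcF$ at $z$ into tame blocks with character $\chi_{1,z}$, so for generic $\chi_1$ the pushforward to $\Gg_m$ has \emph{no} unipotent tame part at $0$, and the inequality $\sum_i e_i-\sum_j f_j\neq 0$ that you assert in Step 2 can no longer be verified (both sums may well vanish). In other words, the generic-twist mechanism you use to make $Rp_!=Rp_*$ and perversity hold is incompatible with the local-monodromy hypothesis you then need. The paper's fix is much simpler and you should adopt it: by Proposition~\ref{pr-katz}, $p\circ i_f$ coincides with the \emph{finite} morphism $\eps f\colon\Aa^1[1/f]\to\Gg_m$, so one may take $\chi_1$ trivial; then $Rp_!(i_{f*}M)=Rp_*(i_{f*}M)=((\eps f)_*\mcF)[1](1/2)$ automatically (no relative vanishing theorem, no affineness argument needed), this object is perverse because finite pushforward is perverse t-exact, and your untwisted computation of the tame unipotent parts at $0$ and $\infty$ then applies verbatim, giving the conclusion via Corollary~\ref{cor-simpler}. (Your alternative suggestion via local $\eps$-factors is not what is intended here: the paper reserves that machinery for Proposition~\ref{pr-infinite-order-variance}, i.e.\ the Legendre-curve case where $f$ is coprime to $t(t-1)$ and the present criterion does not apply.)
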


\begin{proof}
  We apply Corollary~\ref{cor-simpler} to the norm morphism
  $p\colon T\to\Gg_m$. Indeed, $p\circ i_f$ coincides with the finite
  morphism $\eps f\colon \Aa^1[1/f]\to \Gg_m$, where
  $\eps=(-1)^{\deg(f)}$ (Proposition~\ref{pr-katz}), so that the
  equalities
  $Rp_!(i_{f*}M)=Rp_!(i_{f!}M)=(\eps f)_*M=((\eps f)_!\mcF)[1](1/2)$
  hold, and the sheaf
  $((\eps f)_!\mcF)[1](1/2)=((\eps f)_*\mcF)[1](1/2)$ has no tame
  unipotent local monodromy at infinity, but has some non-trival tame
  unipotent monodromy at~$0$ in view of the canonical isomorphism
  $$ 
  ((\eps f)_*\mcF)_0\simeq \bigoplus_{z\in Z}\mcF_z.
  $$
  Hence, the tannakian determinant of the object $i_{f*}M$ is
  geometrically of infinite order.
\end{proof}

We now explain the proof of Theorem~\ref{th-variance}, where we will
also use a different approach to checking that the tannakian determinant
has infinite order, which may be useful in other contexts.

Let~$\pi\colon \mathcal{E}\to \Pp^1$ be the morphism which ``is'' the
Legendre elliptic curve.\index{Legendre elliptic curve} We start with
the sheaf
$$
\mcF=R^1\pi_*\bQl(1/2).
$$
\par
This is a middle extension sheaf on~$\Aa^1_k$.  It is pure of weight
zero and geometrically irreducible of rank~$2$ (in particular, its
$H^2_c$ vanishes), and is tamely ramified at~$0$, $1$ and~$\infty$, with
drop equal to~$1$ at~$0$ and~$1$.  Using Lemma~\ref{lm-rank-var}, we
compute that the tannakian dimension is $r=2\deg(f)-2+a$, where~$a$ is the
degree of the gcd of~$f$ and $t(t-1)$.

Now the pullback of~$\mcF$ to~$\Aa^1_k[1/f]$ is a middle extension
sheaf, geometrically irreducible of rank~$2$ and pure of weight~$0$, for
which we keep the same notation.
We can then apply Corollary~\ref{cor-variance} to~$\mcF$, using the
following proposition. In order to conclude after doing so, we check
that the contribution of the local factors at~$z\in Z$ to the
$L$-functions (which might not be of weight~$0$) is negligible
(compare~\cite[Prop.\,6.5.3]{variance}).

\begin{proposition}\label{pr-infinite-order-variance}
  Let $M=\mcF[1](1/2)$. The tannakian determinant of~$i_{f*}M$ is
  geometrically of infinite order.
\end{proposition}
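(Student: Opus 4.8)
The plan is to show that the tannakian determinant of $i_{f*}M$ has infinite geometric order by combining two ingredients: a local computation of $\varepsilon$-factors (or equivalently of the determinant of Frobenius on the relevant cohomology group) at the finite places coming from $Z$, together with the general criterion of Proposition~\ref{pr-determinant-tori} (or its more concrete incarnation via Corollary~\ref{cor-simpler}). The key point is that the Legendre family $\mathcal{E}\to\mathbf{P}^1$ has \emph{multiplicative} reduction at the points $t=0$, $t=1$ and $t=\infty$, so the sheaf $\mcF=R^1\pi_*\bQl(1/2)$ has non-trivial tame unipotent local monodromy at each of these three points. However, Proposition~\ref{pr-local-monodromy-variance} does not apply directly, because $\mcF$ also has unipotent tame monodromy at $\infty$, so one cannot simply invoke the asymmetry between $0$ and $\infty$.

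First I would pass to the norm morphism $p\colon T\to\Gg_m$ from Proposition~\ref{pr-katz}, so that $Rp_!(i_{f*}M)=(\varepsilon f)_*\mcF[1](1/2)$ where $\varepsilon=(-1)^{\deg f}$ and $\varepsilon f\colon \Aa^1[1/f]\to\Gg_m$ is the induced finite morphism. Then I would compute the local monodromy of the perverse sheaf $N=(\varepsilon f)_*\mcF[1](1/2)$ on $\Gg_m$ at $0$ and at $\infty$. At $0$, the stalk decomposes (canonically) as $\bigoplus_{z\in Z}\mcF_z$, and since $f$ is square-free of degree $\geq 4$, at least one $z\in Z$ is a ramification point of $\mcF$ (indeed, any $z$ which is not $0,1$ contributes a smooth reduction point, but there are plenty of $z$'s; more to the point, whether or not $t(t-1)$ divides $f$, the generic rank-$2$ sheaf contributes local data at $\infty$ that does not match). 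The crucial computation is to track the sizes of the unipotent Jordan blocks in the tame monodromy of the pushed-forward sheaf at $0$ and at $\infty$ and to show that the alternating sum $\sum_i e_i-\sum_j f_j$ of Corollary~\ref{cor-simpler} is non-zero. The monodromy at $\infty$ of $(\varepsilon f)_*\mcF$ is governed by the behaviour of $\mcF$ at the points of $f^{-1}(\infty)$ under $\varepsilon f$, which includes the point $t=\infty$ itself (where $\mcF$ has a single unipotent $2\times 2$ block, up to the quadratic twist character of the Legendre family) together with the ramification behaviour of the cover $\varepsilon f$ at $\infty$; the monodromy at $0$ is supported at the zeros of $f$, i.e. the fibers $\mcF_z$ for $z\in Z$, each of which is either a rank-$2$ lisse stalk (contributing a trivial $1+1$ block, i.e. two blocks of size $1$) or, if $z\in\{0,1\}$, a unipotent $2$-block plus the invariants. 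Summing $e_i$ over $0$ gives roughly $2\#Z = 2\deg f$ (with small corrections from $z\in\{0,1\}$), while summing $f_j$ over $\infty$ gives a strictly smaller number because the cover $\varepsilon f$ is totally ramified of degree $\deg f$ over $\infty$ (so $f^{-1}(\infty)$ has a single point), contributing only a bounded number of blocks there.

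The main obstacle, and the step I expect to require genuine care, is this local monodromy bookkeeping at $\infty$: one must correctly compute $\swan_\infty$ (which will be $0$ by tameness) and the precise number and sizes of the unipotent Jordan blocks of the tame inertia representation of $(\varepsilon f)_*\mcF$ at $\infty$, using the structure of induced representations of the (tame) inertia group and the ramification of $\varepsilon f\colon\Aa^1[1/f]\to\Gg_m$ at infinity. Since $\varepsilon f$ is a degree-$\deg f$ polynomial map, it is totally (tamely, if $p\nmid \deg f$, but in any case tamely up to the wild part which is absent since we only care about the tame quotient) ramified over $\infty$; the local monodromy of the pushforward at $\infty$ is then the induced representation from the inertia subgroup of index $\deg f$, applied to the local monodromy of $\mcF$ at $t=\infty$, which is a single unipotent $2\times 2$ Jordan block twisted by the order-$2$ character. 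Working out that this induced representation has total unipotent block-size strictly smaller than $2\deg f-2+a$ (in fact $\ll 1$ worth of ``excess'' unipotency) is the computation that makes $\sum e_i-\sum f_j\neq 0$, and then Corollary~\ref{cor-simpler} immediately gives that the tannakian determinant of $M$, hence of $i_{f*}M$, is geometrically of infinite order. As a cross-check, one can alternatively verify the conclusion by a direct $\varepsilon$-factor computation: the determinant of Frobenius on $H^0_c(\Aa^1[1/f]_{\bar k},M\otimes\mcL_{\widetilde\chi})$ is, by Laumon's product formula for $\varepsilon$-factors, a product of local terms, and the local term at $0$ (resp. $\infty$) involves the character $\widetilde\chi$ raised to powers determined by the Swan and drop data; since these powers differ at $0$ and $\infty$ by the non-zero integer $\sum e_i-\sum f_j$, the determinant varies with $\widetilde\chi$ in a way incompatible with finite order, giving the same conclusion. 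I would write up the monodromy-counting argument as the primary proof and mention the $\varepsilon$-factor approach as a remark.
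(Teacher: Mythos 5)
Your route is genuinely different from the paper's in the hard case, and its core computation is basically sound, but it starts from a factual error and leaves a real hypothesis unverified. The error: the Legendre sheaf does \emph{not} have unipotent tame monodromy at $t=\infty$. The reduction there is additive (type $I_2^*$), and the local monodromy is $\mcL_{\lambda_2}\otimes\mathrm{Unip}(2)$ — the quadratic-twisted unipotent block you yourself use later — which has \emph{no} unipotent tame component. Consequently Proposition~\ref{pr-local-monodromy-variance} \emph{does} apply directly whenever $\gcd(f,t(t-1))\neq 1$, which is exactly how the paper disposes of that case; your stated reason for needing a new argument is wrong, though the need is real in the remaining case $\gcd(f,t(t-1))=1$. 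There your Jordan-block count is essentially correct: at $0$ of $\Gg_m$ each $z\in Z$ is an unramified point of both $\mcF$ and the covering $\eps f$, so $\sum_i e_i=2\deg f$ exactly (not ``roughly, with corrections''), and at $\infty$ one gets $\sum_j f_j=0$ by Frobenius reciprocity, since $\lambda_2\otimes\mathrm{Unip}(2)$ has no inertia invariants (the relevant benchmark is $2\deg f$, not the tannakian dimension $2\deg f-2+a$). This would be a shorter argument than the paper's, which in the coprime case instead computes $\det(\Thetaf_M(\chi))$ by local constants (Proposition~\ref{pr-local}) and identifies the determinant, via generic Fourier inversion, with a convolution of hypergeometric objects of visibly infinite order.

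The genuine gap is the unverified hypothesis of Corollary~\ref{cor-simpler} (equivalently Proposition~\ref{pr-determinant-tori}): with your choice $\chi_1=1$, you must know that $\chi\circ p$ is unramified for $\det(i_{f*}M)$ for all but finitely many $\chi\in\charg{\Gg}_m$. Genericity of the unramified locus in $\charg{T}$ says nothing about the one-dimensional slice $\{\chi\circ p\}$, which could a priori lie inside the ramified locus (a finite union of translated cotori); and the standard fix of Remark~\ref{rm-application} — replacing $\chi_1$ by a generic character so that unramifiedness holds along the slice — destroys precisely the unipotent blocks at $0$ your argument relies on: twisting by a $\chi_1$ with non-trivial components turns the trivial local monodromy at each $z$ into non-trivial tame characters, so $\sum_i e_i$ becomes $0$ while $\sum_j f_j$ remains $0$ (unless $\prod_z\chi_{1,z}=\lambda_2$), and the criterion gives nothing. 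Until you resolve this tension the transfer of the $\Gg_m$-determinant conclusion back to $T$ is not justified, and this is plausibly why the paper resorts to the unconditional $\eps$-factor computation in the coprime case. Your proposed cross-check is also too coarse as stated: a determinant depending on $\chi$ only through monomials $\prod_z\chi_z(c_z)$ is exactly the shape a finite-order determinant can have, so differing ``local exponents'' at $0$ and $\infty$ do not by themselves exclude finite geometric order — the paper needs the Gauss-sum factors and the hypergeometric identification for that. (Minor: arithmetic semisimplicity of $N=(\eps f)_*\mcF[1](1/2)$ is also a hypothesis of Corollary~\ref{cor-simpler} that you do not check, though the paper glosses this as well.)
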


\begin{proof}
  If $f$ is not coprime to $t(t-1)$, then we can apply
  Proposition~\ref{pr-local-monodromy-variance}, since $\mcF$ has
  non-trivial tame unipotent monodromy at $0$ and~$1$, and none at
  infinity.  So we assume that $f$ is coprime with $t(t-1)$.
  \par
  We may assume that the polynomial $f$ splits in linear factors
  over~$k$ and that $k\not=Z\cup\{0,1\}$. Fix a non-trivial additive
  character $\psi$ of~$k$. We will then prove in
  Proposition~\ref{pr-local} below, using the theory of local constants,
  that there exists a generic set of characters $\mcX\subset \unram{M}$
  and elements $\xi_z\in \Aa^1[1/f]$ such that for $n\geq 1$ and
  $\chi\in \mcX(k_n)$, the equality
  $$
  \det(\Theta_{M,k_n}(\chi))= \gamma^n \, 
  H_1(\chi)^{-1} \, H_2\Bigl(\prod_{z\in Z}\chi_z^{-1}\Bigr)^{-1}
  $$
  holds, for some number $\gamma$ independent of~$\chi$ and~$n$, where
  the functions $H_1$ and~$H_2$ are products of Gauss sums described
  in~(\ref{eq-h1}) and~(\ref{eq-h2}) below.
  \par
  On~$\Gg_m$, the function
  $$
  \chi_z\mapsto \chi_z(\xi_z) \frac{1}{|k|}\Bigl(\sum_{y\in
    k^{\times}}\chi_z(y)\psi(y)\Bigr)^{2}
  $$
  coincides for $\chi_z$ non-trivial with the arithmetic Mellin
  transform of the multiplicative translated hypergeometric complex
  $\Hyp_{\xi_z}(!,\psi, 1,1;\emptyset)(1/2)$
  (see~(\ref{eq-mellin-hypergeom}) for this; in this case, this is a
  shifted and translated Kloosterman sheaf). Since the function
  $\chi\mapsto H_1(\chi)^{-1}$ is the product of these functions
  over~$z\in Z$, it coincides generically with the Mellin transform
  on~$T$ of the tensor product
  $$
  \bigotimes_{z\in Z} p_z^*\Hyp_{\xi_z^{-1}}(!;\psi,1,1;\emptyset)(1/2),
  $$
  where $p_z$ is the projection from~$T$ to the $z$-component in the
  splitting $g\mapsto (g(z))$ of the torus~$T$. (Indeed, this reflects
  the formula
  $$
  \sum_{x\in T(k)}
  \chi(x)\prod_{z\in Z}f_z(p_z(x))=
  \sum_{(x_z)\in (k^{\times})^Z}\prod_{z\in Z} \chi_z(x_z)\prod_{z\in
    Z}f_z(x_z)= \prod_{z\in Z}\sum_{x\in k^{\times}}\chi_z(x)f_z(x)
  $$
  for arbitrary functions $f_z$ on~$k^{\times}$.)
  \par
  Similarly, the function $\chi\mapsto H_2(\prod \chi_z^{-1})^{-1}$,
  which only depends on the product~$\eta$ of the component characters
  $(\chi_z)$, coincides (for $\eta$ non-trivial) with the arithmetic
  Mellin transform of the object $\Delta_*L$, where
  $L=\Hyp(!,\psi,\lambda_2,\lambda_2;\emptyset)(1/2)$ and
  $$
  \Delta \colon \Gg_m\to \Gg_m^Z\simeq T
  $$
  is the closed immersion $x\mapsto (x^{-1},\ldots, x^{-1})$. This
  reflects the fact that~$\Delta$ is a morphism of algebraic groups, and
  that the dual $\widehat{\Delta}$ on~$\charg{T}(k)$ is given by
  $$
  (\chi_z)_{z\in Z}\mapsto \prod_{z\in Z}\chi_z^{-1}.
  $$
  \par
  By Theorem~\ref{th-generic-inversion}, the
  formula~(\ref{eq-root-number2}) therefore implies that the tannakian
  determinant of~$M$ is geometrically isomorphic in $\Ppb(T)$ to the
  perverse sheaf
  $$
  D=(\Delta_*L) * \Bigl(\bigotimes_{z\in Z}
  p_z^*\Hyp_{\xi_z}(!;\psi,1,1;\emptyset)(1/2)\Bigr).
  $$
  \par
  The object~$D$ visibly has infinite geometric tannakian group since
  for any~$m\geq 1$, we have
  $$
  D^{* m}=(\Delta_*L)^{* m} * \Bigl(\bigotimes_{z\in Z}
  p_z^*\Hyp_{\xi_z^{-1}}(!;\psi,1,1;\emptyset)^{*m}(1/2)\Bigr),
  $$
  in $\Ppb(T)$, and the $m$-th convolution powers on $\Gg_m$ of the
  hypergeometric complexes that appear are not geometrically trivial
  (see Theorem~\ref{th-hypergeometric}).
\end{proof}

We complete this section by proving the formula for the determinant.

\begin{proposition}\label{pr-local}
  Suppose that $f$ splits in linear factors over~$k$.
  For $z\in Z$, define
  $$
  \xi_z=z(z-1) \prod_{x\in Z\setminus\{z\}}(z-x)^{2}\in k^{\times}.
  $$
  \par
  There exist numbers $\eps_0$, $\eps_1$ 
  with the following property. For a character $\chi\in\unram{M}$ such
  that all components $\chi_z$ for $z\in Z$ are non-trivial, and such
  that the product of the components is not of order at most~$2$, we
  have
  \begin{equation}\label{eq-root-number2}
    \det(\Thetaf_M(\chi))^{-1}= (-1)^r|k|
    \,\eps_0\eps_1
    \,    H_1(\chi)
    \,    H_2\Bigl(\prod_{z\in Z}\chi_z^{-1}\Bigr)
  \end{equation}
  where
  \begin{align}
    \label{eq-h1}
    H_1(\chi)&= \prod_{z\in Z} \chi_z(\xi_z^{-1})|k|\Bigl(\sum_{y\in
      k^{\times}}\chi_z(y)\psi(y)\Bigr)^{-2},
    \\
    H_2(\chi)&=|k|\Bigl(\sum_{y\in
      k^{\times}}(\lambda_2\chi)(y)\psi(y)\Bigr)^{-2}.
    \label{eq-h2}
  \end{align}
\end{proposition}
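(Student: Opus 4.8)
The plan is to compute $\det(\Thetaf_M(\chi))$ via Deligne's theory of local constants applied to the $L$-function of $M\otimes\mcL_{\widetilde\chi}$ on $\Aa^1[1/f]$. First I would recall that for an unramified character $\chi$ we have $\det(\Thetaf_M(\chi))=\det\big(\Fr_k\mid \rmH^0_c(\Aa^1[1/f]_{\bar k}, M\otimes\mcL_{\widetilde\chi})\big)$, and that since all the other compactly supported cohomology groups vanish for such $\chi$, this determinant is (up to an explicit power of $|k|$ and a sign coming from the shift $[1]$ and the Tate twist $(1/2)$, which contribute the factor $(-1)^r|k|$) equal to the leading/constant term of the completed $L$-function. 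The global epsilon factor $\eps\big(\Aa^1[1/f]_{\bar k}, \mcF\otimes i_f^*\mcL_{\widetilde\chi}\big)$ then factors, by Laumon's product formula for epsilon constants (or Deligne--Langlands local constants), into a product of local epsilon factors over the places of $\Pp^1$, together with a global term involving the determinant of the sheaf. The places to consider are: the points $z\in Z$ (where $i_f^*\mcL_{\widetilde\chi}$ is ramified, via the $\chi_z(z-x)$ factor, while $\mcF$ is lisse since $f$ is coprime to $t(t-1)$); the points $0$ and $1$ (where $\mcF=R^1\pi_*\bQl(1/2)$ has tame unipotent monodromy, $i_f^*\mcL_{\widetilde\chi}$ being lisse there); and the point $\infty$ (where both $\mcF$ and $i_f^*\mcL_{\widetilde\chi}$ are ramified, the latter tamely via $\prod\chi_z$). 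Away from $Z\cup\{0,1,\infty\}$, the local factors are trivial.

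Next I would compute each local term explicitly. At a point $z\in Z$, because $\mcF$ is lisse of rank $2$ there with Frobenius eigenvalue structure that can be read off from the fibre $\mcF_z$, the local epsilon factor is (up to a controlled monomial) a Gauss sum raised to the power $\dim\mcF_z=2$, evaluated at the character $\chi_z$ translated by the local coordinate $z-x$; tracking the shift by the local coordinate produces the argument $\xi_z$, where $\xi_z$ encodes the product of $(z-x)$ over the other branch points together with the contribution of $0,1$ — this is exactly $z(z-1)\prod_{x\in Z\setminus\{z\}}(z-x)^2$. This gives the factor $H_1(\chi)=\prod_{z\in Z}\chi_z(\xi_z^{-1})|k|\big(\sum_y\chi_z(y)\psi(y)\big)^{-2}$. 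At the points $0$ and $1$, since $\mcF$ has tame \emph{unipotent} monodromy and $i_f^*\mcL_{\widetilde\chi}$ is unramified, the local epsilon factors are the constants $\eps_0,\eps_1$, independent of $\chi$ (they depend only on $\mcF$, $\psi$, and the local parameter). At $\infty$, the behaviour of $\mcF$ is tame and the twisting character is $\prod\chi_z$ up to sign; here one must be careful about the quadratic character $\lambda_2$ which enters because the elliptic fibration's monodromy at $\infty$ involves a quadratic twist (the local monodromy of the Legendre family at $\infty$), and this produces the factor $H_2\big(\prod\chi_z^{-1}\big)=|k|\big(\sum_y(\lambda_2\chi)(y)\psi(y)\big)^{-2}$ with $\chi=\prod\chi_z$. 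Finally, the global determinant term $\det(\mcF\otimes i_f^*\mcL_{\widetilde\chi})$ evaluated at Frobenius contributes a further monomial in $\chi$ that I would absorb into the various $\chi_z(\xi_z)$ and sign factors; since $\det\mcF$ is (geometrically) a quadratic character for the Legendre family, this is where both the power of $\lambda_2$ and the remaining constants are pinned down.

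Assembling the product formula, collecting the $|k|$-powers (each rank-$2$ Gauss sum pair contributes $|k|$, and there are $|Z|=\deg(f)$ of them plus the one at $\infty$, matched against $|k|^{(\text{something})}$ from the normalization and from $r=2\deg(f)-2+a$), and tracking the sign $(-1)^r$ coming from the alternating product of $\dim$'s and the action of $\Fr$ on the $1$-dimensional determinant lines, I would arrive at the stated formula~\eqref{eq-root-number2}. The restriction that all $\chi_z$ are non-trivial and $\prod\chi_z$ is not of order $\leq 2$ is precisely what is needed to keep $\chi$ in a generic unramified set and to ensure all the relevant Gauss sums are the ``clean'' ones (not degenerate), and to ensure $j_*(\mcF\otimes\mcL_\chi)$ is a Fourier sheaf so the cohomological interpretation of each Gauss sum goes through.

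The main obstacle I expect is the careful bookkeeping at the bad point $\infty$: the Legendre family has tame but non-unipotent (quasi-unipotent, with a quadratic twist) local monodromy at $\infty$, and correctly identifying how the local epsilon factor there interacts with the tame character $\prod\chi_z$ — in particular producing exactly $\lambda_2\chi$ rather than $\chi$ or $\lambda_2$ alone — requires either a direct local computation with the explicit monodromy matrix of $R^1\pi_*\bQl$ at $\infty$, or an appeal to known epsilon-factor computations for elliptic surfaces. A secondary difficulty is normalization: making sure the Tate twists $(1/2)$ on $\mcF$ and the shift $[1]$ are accounted for consistently in passing between $L(M\otimes\mcL_{\widetilde\chi},T)$ and the characteristic polynomial of $\Thetaf_M(\chi)$, so that the prefactor comes out as exactly $(-1)^r|k|\,\eps_0\eps_1$ and not with an extra power of $|k|$ or an extra root of unity. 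Once $\infty$ is handled, the computations at the $z\in Z$ and at $0,1$ are routine applications of the stationary-phase / local-constant formulas for tamely twisted lisse sheaves.
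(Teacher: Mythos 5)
Your overall route is the same as the paper's: express $\det(\Thetaf_M(\chi))$ as $(-1)^r$ times the inverse of the global epsilon constant of $j_!(M\otimes\mcL_{\chi})$ via the functional equation, apply Laumon's product formula with the differential $\omega=dt$, and compute the local constants case by case at the points of $Z$, at $0,1$, at $\infty$ (where the tame monodromy $\mcL_{\lambda_2}\otimes\mathrm{Unip}(2)$ of the Legendre sheaf produces the $\lambda_2$-twisted Gauss sum), with trivial contributions elsewhere. That is exactly the structure of the paper's argument, and your identification of the main delicate point (the quadratic twist at $\infty$, giving $\lambda_2\eta$ with $\eta=\prod_z\chi_z^{-1}$) is correct.

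There is, however, one concrete error in your bookkeeping. You assert that the local epsilon factors at $0$ and $1$ are constants $\eps_0,\eps_1$ independent of $\chi$, and correspondingly you try to extract the full $\xi_z=z(z-1)\prod_{x\neq z}(z-x)^2$ from the local computation at the point $z$ alone. Neither claim survives the actual computation. At $x\in\{0,1\}$ the sheaf $\mcL_{\chi}$ is lisse but $\mcF$ has a drop, so the local exponent $a(\Pp^1_{(x)},M,dt)$ equals $-1$, not $0$; by the twisting formula for epsilon factors of a complex tensored with a lisse sheaf, the local constant there is $\eps_x\cdot\prod_{z\in Z}\chi_z(z-x)^{-1}$, hence genuinely $\chi$-dependent, and it is precisely these two factors that supply the $\chi_z\bigl(z(z-1)\bigr)^{-1}$ part of $\chi_z(\xi_z^{-1})$. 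Conversely, at a point $z\in Z$ the lisse rank-two sheaf $\mcF(1/2)\otimes\bigotimes_{x\neq z}\mcL_{\chi_x(x-t)}$ contributes only its Frobenius determinant $\frac{1}{|k|}\prod_{x\neq z}\chi_x(x-z)^2$ to the power of the local exponent of $\mcL_{\chi_z(t-z)}$, together with the squared Gauss sum in $\chi_z$; after swapping the double product over $Z$ this yields only the $\prod_{x\neq z}(z-x)^2$ part of $\xi_z$, never the $z(z-1)$ part. As written, your derivation would therefore either drop the factor $\chi_z\bigl(z(z-1)\bigr)^{-1}$ from $H_1(\chi)$ or smuggle it in at $z$ without justification; the fix is simply to apply the twisting formula honestly at $0$ and $1$, keeping track of the nonzero local exponent caused by the unipotent drop of the Legendre sheaf there, and to reserve the names $\eps_0,\eps_1$ for the $\chi$-independent constants $\eps(\Pp^1_{(x)},M,dt)$ that remain after the $\chi$-dependent part has been split off.
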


\begin{proof}
  Let $j\colon \Aa^1[1/f]\to \Pp^1$ be the open immersion. Let
  $\chi\in\widehat{B}$ be a Dirichlet character and $\mcL_{\chi}$ the
  lisse rank~$1$ sheaf on~$\Aa^1[1/f]$ that corresponds to it. The
  $L$-function of $j_!(M\otimes \mcL_{\chi})$ satisfies a functional
  equation of the form
  $$
  L(j_!(M\otimes\mcL_{\chi}),T)=\eps(\chi)T^aL(\dual(j_!(M\otimes
  \mcL_{\chi})), T^{-1})
  $$
  where $a=-\chi(j_!(M\otimes \mcL_{\chi}))=-r$ is an integer and
  $$
  \eps(\chi)= \det(-\Frob_k\mid H^0(\Pp^1_{\bar{k}}, j_!(M\otimes
  \mcL_{\chi})))^{-1} =\det(-\Frob_k\mid H^0_c(\Aa^1_{\bar{k}}[1/f],
  M\otimes \mcL_{\chi}))^{-1}
  $$
  (see, e.g.,~\cite[(3.1.1.3),\,(3.1.1.5)]{laumon-signes} or the
  reminder in Section~\ref{sec-product-formula}).
  \par
  By Lemma~\ref{lm-trace}, if $\chi\in\charg{T}$ is unramified
  for~$M$, then we deduce that
  \begin{equation}\label{eq-root-number}
    \det(\Thetaf_M(\widetilde{\chi}))=(-1)^r\eps(\chi)^{-1},
  \end{equation}
  where $r$ is the tannakian dimension of~$i_{f*}M$.  By a theorem of
  Laumon,\footnote{\ Which, in the case we use it, goes back to
    Deligne~\cite[Th.\,9.3]{deligne-constantes};
    see~\cite[3.2.1.9]{laumon-signes} for references.} we can express
  the constant $\eps(\chi)$ as a product over closed points
  $$
  \eps(\chi)=|k|^{-2}\prod_{x\in|\Pp^1|}\eps_x(\chi)
  $$
  of local constants, previously defined by
  Deligne~\cite{deligne-constantes} and characterized by the properties
  of~\cite[Th.\,3.1.5.4]{laumon-signes}. Precisely, fixing a non-trivial
  additive character~$\psi$ of~$k$ and a non-zero meromorphic
  differential $1$-form $\omega$ on~$\Pp^1$, we can then define
  $$
  \eps_x(\chi)=\eps(\Pp^1_{(x)},j_*(M\otimes\mcL_{\chi})|\Pp^1_{(x)},
  \omega\mid \Pp^1_{(x)})
  $$
  with the notation of \loccit See again
  Section~\ref{sec-product-formula}; in particular the factor $|k|^{-2}$
  above is given by~(\ref{eq-product-formula}), namely the exponent is
  obtained by the computation
  $$
  -2=1\cdot (1-0)\cdot (-2),
  $$
  where $-2$ is the generic rank of the object~$M$ (a sheaf of
  rank~$2$ in degree~$-1$).
  \par
  We take $\omega=dt$, where $t$ is the standard coordinate
  on~$\Pp^1$. The data of~$\psi$ and $\omega$ allows us to define
  non-trivial additive characters~$\psi_x$ of the completed local
  field at any closed point $x\in|\Pp^1|$ by the recipe
  in~\cite[Th.\,3.1.5.4,\,(v)]{laumon-signes}. For all closed points
  $x\in\Aa^1$, the character $\psi_x$ is of conductor zero since
  $\omega$ is regular at~$x$ (see~\cite[3.1.3.6]{laumon-signes}).  For
  $x=\infty$, we have $c(\psi_{\infty})=-2$ since $\omega$ has a
  double pole at $\infty$.
  \par
  The main tool to compute the local constants is the
  formula~(\ref{eq-3156}) for twisting by a lisse sheaf: for any
  closed point $x$, if $K$ is an $\ell$-adic complex on the
  \emph{trait} $\Pp^1_{(x)}$ and $F$ is a lisse $\Qlb$-sheaf
  on~$\Pp^1_{(x)}$ of rank~$r(F)$, then we have
  \begin{equation}\label{3.1.5.6}
    \eps(\Pp^1_{(x)},(K\otimes F)|\Pp^1_{(x)},\omega|\Pp^1_{(x)})=
    \det(\frob_x\mid F)^{a(\Pp^1_{(x)},K,\omega|\Pp^1_{(x)})}
    \eps(\Pp^1_{(x)},K,\omega|\Pp^1_{(x)})^{r(F)},
  \end{equation}
  where the local exponent $a(\Pp^1_{(x)},K,\omega|\Pp^1_{(x)})$ is
  defined in~(\ref{eq-3151}) and~(\ref{eq-3152}). Moreover, we will
  often use the formula
  \begin{align*}
    \eps(\Pp^1_{(x)},K[1],\omega)= \eps(\Pp^1_{(x)},K,\omega)^{-1}
  \end{align*}
  (see~(\ref{eq-shift})).
  \par
  Let $(\chi_z)_{z\in Z}$ be the tuple of characters corresponding
  to~$\chi$. We recall that $\mcL_{\chi}$ is isomorphic to
  $\bigotimes_{z\in Z} \mcL_{\chi_z(z-t)}$.
  \par
  We now compute the local constants, distinguishing between
  the cases 
  $x\in \Aa^1\setminus (\{0,1\}\cup Z)$,
  $x\in\{0,1\}$, $x\in Z$ and $x=\infty$.
  \par
  \medskip
  \par
  \textbf{Case 1}.  Let $x\in\Aa^1$ and $x\notin Z\cup \{0,1\}$. In this
  case, $M\otimes\mcL_{\chi}$ is a lisse sheaf shifted by~$1$, and since
  $c(\psi_x)=0$, we find
  \begin{equation}\label{eq-elsewhere}
    \eps_x(\chi)=1
  \end{equation}
  by~(\ref{3.1.5.6}).
  \par
  \smallskip
  \par
  \textbf{Case 2.} Let $x\in \{0,1\}$.
  Then
  $\mcL_{\chi}$ is a lisse sheaf at~$x$, since we assumed that $f$ is
  coprime with $t(t-1)$.  We find
  $$
  \eps_x(\chi)=\eps_x\,
  t_{\mcL_{\chi}}(x)^{a(\Pp^1_{(x)},M|\Pp^1_{(x)},dt)}
  $$
  by~(\ref{3.1.5.6}) with $F=\mcL_{\chi}$, where
  $\eps_x=\eps(\Pp^1_{(x)},M,dt)$, which is independent of~$\chi$. We
  further compute that
  $$
  a(\Pp^1_{(x)},M|\Pp^1_{(x)},dt)= -
  a(\Pp^1_{(x)},\mcF(1/2)|\Pp^1_{(x)},dt)= -(2-1+0)=-1
  $$
  by~(\ref{eq-3151}) and~(\ref{eq-3152}), since $\mcF$ has drop~$1$
  at~$x$ (see, e.g.,~\cite[p.\,73]{mellin}) and $dt$ is regular
  at~$x$. Hence,
  \begin{equation}\label{eq-01}
    \eps_x(\chi)=\eps_x \prod_{z\in Z}\chi_z(z-x)^{-1}.
  \end{equation}
  \par
  \smallskip
  \par
  \textbf{Case 3.} Let $x\in Z$. Then we can write
  $$
  M\otimes\mcL_{\chi}=\mcF[1](1/2)\otimes \mcL^{(x)}\otimes
  \mcL_{\chi_x(t-x)}= (\mcF(1/2)\otimes \mcL^{(x)}\otimes
  \mcL_{\chi_x(t-x)})[1],
  $$
  where $\mcF$ and $\mcL^{(x)}$ are both lisse sheaves
  at~$x$. Applying~(\ref{3.1.5.6}) after an inversion due to the shift,
  we get
  \begin{align*}
    \eps_x(\chi)&= \eps(\Pp^1_{(x)},\mcF[1](1/2)\otimes
    \mcL^{(x)}\otimes \mcL_{\chi_x(t-x)},dt)^{-1}
    \\
    &=\det(\Fr_x\mid \mcF(1/2)\otimes \mcL^{(x)})^{-a}
    \eps(\Pp^1_{(x)},\mcL_{\chi_x(t-x)},dt)^{-2}
  \end{align*}
  where
  $$
  a= a(\Pp^1_{(x)},\mcL_{\chi_x(t-x)},dt)=1+0-0=1
  $$
  if $\chi_x$ is non-trivial by~(\ref{eq-3151}) and~(\ref{eq-3152})
  again.
  \par
  We have
  $$
  \det(\Fr_x\mid \mcF(1/2)\otimes \mcL^{(x)})=
  \frac{1}{|k|}\prod_{\substack{z\in Z\\z\not=x}}\chi_z(z-x)^2,
  $$
  and by~(\ref{eq-3531}), we find that
  $$
  \eps(\Pp^1_{(x)},\mcL_{\chi_x(t-x)},dt)=
  \eps_0(\Pp^1_{(x)},\mcL_{\chi_x(t-x)},dt)= -\chi_x(-1) \sum_{y\in
    k^{\times}}\chi(y)\psi(y)
  $$
  if $\chi_x$ is not trivial (here we also use the fact that
  $x\in k$).
  \par
  These computations imply that
  \begin{equation}
    \label{eq-Z}
    \eps_x(\chi)=
    \prod_{\substack{z\in Z\\ z\not=x}}\chi_z(z-x)^{-2}\,
    |k|\, \Bigl(  \sum_{y\in k^{\times}}\chi(y)\psi(y)\Bigr)^{-2},
  \end{equation}
  if $\chi_x$ is not trivial.
  \par
  \smallskip
  \par
  \textbf{Case 4.} Let $x=\infty$.  Write $u=1/t$, a uniformizer
  at~$\infty$, so that $dt=-u^{-2}du$. Then
  $ \mcL_{\chi}=\mcL^{(\infty)}\otimes\mcL_{\eta(u)}$ where
  $$
  \mcL^{(\infty)}=\bigotimes_{z\in Z}\mcL_{\chi_z(uz-1)},\quad\quad
  \eta=\prod_{z\in Z}\chi_z^{-1}.
  $$
  \par
  The sheaf~$\mcL^{(\infty)}$ is lisse at $\infty$ and the local
  eigenvalue of Frobenius there is equal to~$(-1)^{\deg(f)}$.  On the
  other hand, we have $M=\mcF[1](1/2)$, and $\mcF$ is of rank~$2$,
  tamely ramified at~$\infty$ with local monodromy isomorphic to
  $\mathcal{L}_{\lambda_2}\otimes \mathrm{Unip(2)}$, where $\lambda_2$
  is the Legendre character and $\mathrm{Unip(2)}$ is a unipotent Jordan
  block of size~$2$ (see, e.g.,~\cite[p.\,73]{mellin}).
  \nomenclature[$U$]{$\mathrm{Unip(n)}$}{unipotent Jordan block of size~$n$}
  \par
  Computing first as in the previous case, we get
  \begin{align*}
    \eps_{\infty}(\chi)&=
    \eps(\Pp^1_{(\infty)},\mcF(1/2)\otimes\mcL^{(\infty)} \otimes
    \mcL_{\eta(u)},-u^{-2}du)^{-1}
    \\
    &= \det(\Fr_{\infty}\mid \mcL^{(\infty)})^{-a}
    \eps(\Pp^1_{(\infty)},\mcF(1/2)\otimes
    \mcL_{\eta(u)},-u^{-2}du)^{-1}
  \end{align*}
  where
  \begin{align*}
    a&= a(\Pp^1_{(\infty)},\mcF(1/2)\otimes \mcL_{\eta(u)},-u^{-2}du)
    \\
    & =a(\Pp^1_{(\infty)},\mcF(1/2)\otimes \mcL_{\eta(u)})-2\times 2
    =(2+0-2)-4=-4
  \end{align*}
  if $\eta$ is non-trivial (see again~(\ref{eq-3151})
  and~(\ref{eq-3152})).
  Note then that
  $$
  \det(\Fr_{\infty}\mid \mcL^{(\infty)})^{-a}=\prod_{z\in
    Z}\chi_z(-1)^{4}=1.
  $$
  \par
  The shape of the local monodromy and the multiplicativity property
  under extensions shows that if $\lambda_2\eta$ is not trivial, then
  the formula
  $$
  \eps(\Pp^1_{(\infty)},\mcF(1/2)\otimes \mcL_{\eta(u)},-u^{-2}du)=
  \eps(\Pp^1_{(\infty)},\mcL_{(\lambda_2\eta)(u)},-u^{-2}du)^2
  $$
  holds. Indeed, in this case, the stalk at~$\infty$
  of~$\mcF\otimes \mcL_{\eta}$ and of its semisimplification both
  vanish, so that
  $$
  \eps(\Pp^1_{(\infty)},\mcF(1/2)\otimes \mcL_{\eta(u)},-u^{-2}du)=
  \eps_0(\Pp^1_{(\infty)},j_*\mcF(1/2)\otimes \mcL_{\eta(u)},-u^{-2}du),
  $$
  where $\eps_0$ is the local factor defined by~(\ref{eq-eps0}), and
  $j$ is the inclusion of the generic point of~$\Pp^1_{(\infty)}$, and
  one can apply~(\ref{eq-3157});
  compare~\cite[8.12]{deligne-constantes}.
  \par
  Let $\beta$ be the character of the local field at infinity
  associated to $\lambda_2\eta$ by local class field theory.
  Using~(\ref{eq-3155}), we derive the formula
  $$
  \eps(\Pp^1_{(\infty)},\mcL_{(\lambda_2\eta)(u)},-u^{-2}du)
  =\beta(-u^{-2})|k|^{-2}\eps(\Pp^1_{(\infty)},\mcL_{(\lambda_2\eta)(u)},du).
  $$
  \par
  From~(\ref{eq-3531}), we deduce further that if $\lambda_2\eta$ is
  non-trivial, then
  $$
  \eps(\Pp^1_{(\infty)},\mcL_{(\lambda_2\eta)(u)},-u^{-2}du) = |k|^{-2}
  \sum_{y\in k^{\times}}(\lambda_2\eta)(y)\psi(y).
  $$
  \par
  The final outcome is that
  \begin{equation}\label{eq-infinity}
    \eps_{\infty}(\chi)=|k|^4
    \Bigl(\sum_{y\in k^{\times}}(\lambda_2\eta)(y)\psi(y)\Bigr)^{-2},
  \end{equation}
  if $\eta\notin\{1,\lambda_2\}$.
  
  \par
  
  We now simply combine the formulas (\ref{eq-elsewhere}),
  (\ref{eq-01}), 
  (\ref{eq-Z}) and (\ref{eq-infinity}) to conclude the proof, noting
  that the contribution of all $x\in Z$ involves the product
  $$
  \prod_{x\in Z} \prod_{\substack{z\in Z\\z\not=x}} \chi_z(z-x)^{-2}=
  \prod_{z\in Z}\chi_z\Bigl(\prod_{\substack{x\in
      Z\\x\not=z}}(z-x)^{-2}\Bigr).
  $$
\end{proof}

\begin{remark}
  It it also certainly possible to perform this computation by
  automorphic methods (using the global case of the $\GL_2$-Langlands
  correspondence\index{Langlands correspondence} over~$k(t)$, first proved by Drinfeld). However, more
  general situations might be easier to handle using these geometric
  arguments.
  \par
  Yet another possible approach, which would be well-suited for
  generalizations, would be to use Loeser's general computation of the
  tannakian determinant for an arbitrary perverse sheaf on a
  torus~$\Tt$ (see~\cite[Th.\,3.6.1]{loeser-determinant}), which can
  be identified with an element of the hypergeometric group
  $\Hh_{\intt}(\Tt)$ of Gabber and Loeser (see
  Example~\ref{ex-gl-rank-1}). This group is isomorphic (\loccit) to
  $\Tt(\bar{k})\times \Zz^{S}$ for some explicit set~$S$ (related to
  sub-tori of dimension~$1$ in~$\Tt$ and tame $\ell$-adic characters
  of~$\Gg_m$. It would then be enough to show that there exists some
  $s\in S$ such that the $s$-component of $\det(M)$ is non-zero to
  deduce that $\det(M)$ has infinite order (without computing exactly
  the determinant).
\end{remark}


\chapter{Equidistribution on abelian varieties}\label{sec-jacobian}

In this chapter, we consider some aspects of equidistribution on
abelian varieties. We denote as before by $k$ a finite field, and
by~$\bar{k}$ an algebraic closure of~$k$. We denote by $k_n$ the
extension of degree~$n$ in~$\bar{k}$. The prime $\ell$ is different from
the characteristic of~$k$.

\section{Equidistribution in the jacobian of a curve}

The main result of this section is a generalization of a theorem
announced by Katz during a talk at a workshop held at the University of
Zürich in September~$2012$~\cite{KatzETH}, answering a question of
Tsimerman.

Let $C$ be a smooth projective geometrically connected curve of genus
$g \geq 2$ over~$k$, and let $A=\mathrm{Jac}(C)$ be its jacobian.  We
recall that $C$ may not have $k$-rational points but always has a
$k$-rational divisor of degree one. We fix such a divisor~$\Delta$ and
we denote by~$s_{\Delta} \colon C \hookrightarrow A$ the closed
immersion obtained by sending a point $x$ to the class of the divisor
$(x)-\Delta$. Recall that the functor $s_{\Delta\ast}=s_{\Delta!}$
preserves perversity (Corollary~\ref{cor-closed-immersion}).

\begin{theorem}[Katz]\label{thm:equidistribution-on-jacobians}
  Let $\Delta$ be a divisor of degree one on $C$. Let $M_0$ be a
  geometrically simple perverse sheaf on $C$ of generic rank~$r\geq 1$
  which is pure of weight zero.  Let $M=s_{\Delta*}M_0$ and let $d$ denote
  the tannakian dimension of~$M$.
  \begin{enumth}
  \item We have $d\geq (2g-2)r\geq 2$.
  \item\label{thm:equidistribution-on-jacobians:item1} Assume that $C$
    is hyperelliptic, that $\Delta=(0_C)$ for some $k$-rational point
    $0_C \in C(k)$ fixed by the hyperelliptic involution~$i$, and
    that~$\dual(M_0)$ is geometrically isomorphic to~$i^\ast M_0$. Then,
    up to conjugacy, there are inclusions
    \begin{displaymath}
      \ggeo{M}=\garith{M}=\Sp_d
      \text{ or }\SO_d \subset \ggeo{M} \subset
      \garith{M} \subset \Ort_d.
    \end{displaymath}
  \item\label{item2-thmKatz} If $C$ is not hyperelliptic, or if $C$ is
    hyperelliptic but $\dual(M_0)$ is not geometrically isomorphic
    to~$i^\ast M_0$, then there are inclusions
    \begin{displaymath}
      \SL_d \subset \ggeo{M} \subset \garith{M} \subset \GL_d.
    \end{displaymath}
  \end{enumth}
\end{theorem}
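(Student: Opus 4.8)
The plan is to determine $\garith{M}$ by running Larsen's Alternative on it — with a fourth moment (and, in part~(2), the self‑pairing variant) supplied by the Sidon structure of the Abel--Jacobi map — and then to descend from $\garith{M}$ to $\ggeo{M}$ using that $\ggeo{M}$ is normal with abelian quotient. First I would set up the objects common to all three parts. Since $s_D$ is a closed immersion and $M_0$ is geometrically simple perverse, pure of weight zero and of generic rank $r\geq 1$ (in particular non‑punctual), $M=s_{D*}M_0=s_{D!}M_0$ is geometrically simple perverse, pure of weight zero, with support the curve $S=s_D(C)$. I would record that no nonzero translation of $A$ stabilizes $S$: such a translation induces an automorphism $\sigma$ of $C$ with $y\mapsto[(\sigma y)-(y)]$ constant, hence $\sigma^{*}=\mathrm{id}$ on $H^{0}(C,\Omega^{1})$, hence $\sigma=\mathrm{id}$ by the structure of the canonical map (embedding in the non‑hyperelliptic case, hyperelliptic pencil otherwise), so the translation is trivial. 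Consequently $M$ is not negligible — a simple negligible perverse sheaf on an abelian variety is geometrically a character‑sheaf twist of a pullback from a proper quotient, whose support is stable under a positive‑dimensional subgroup — so $M$ lies in $\Ppintarith(A)$, and the tannakian machinery of Section~\ref{sec-tannakian} applies: $\braket{M}^{\arith}$ is neutral tannakian, $\garith{M}$ is reductive, and $M$ corresponds to a faithful $d$‑dimensional representation $V$ (Theorems~\ref{thm-arith_cat_neut_tan} and~\ref{thm-Harith-fib-funct}). Part~(1) is then the Euler--Poincaré count: by Proposition~\ref{prop-dimPerv=dimVect} and the projection formula, $d=-\chi_c(C_{\bar k},\mcF\otimes s_D^{*}\mcL_\chi)$ for generic $\chi$, with $M_0=\mcF[1]$, and since $s_D^{*}\mcL_\chi$ is lisse and tame, Grothendieck--Ogg--Shafarevich gives $d=(2g-2)r+\sum_x(\Drop_x\mcF+\swan_x\mcF)\geq(2g-2)r\geq 2$. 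Finally, by Corollary~\ref{cor-translate-irred-var-ab} combined with the translation fact, the standard representation of $\garith{M}$ stays irreducible on $\bfG=(\garith{M})^{\circ}$; since $d\geq 2$, this shows $\bfG$ is not contained in the scalars $Z$, i.e.\ $\garith{M}/(\garith{M}\cap Z)$ is infinite — the input that will discard the "finite modulo scalars" branch of Larsen's Alternative.

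For part~(3): by Proposition~\ref{pr-sidon-morphisms}(2) the map $s_D$ is a Sidon morphism when $C$ is non‑hyperelliptic, and an $i$‑symmetric Sidon morphism (for $i$ the hyperelliptic involution) when $C$ is hyperelliptic, and in the hyperelliptic branch of~(3) we also have $i^{*}M_0\not\simeq\DD(M_0)$. Either way, Proposition~\ref{pr-sidon-moments1}(1) (resp.\ Proposition~\ref{pr-sidon-moments2}(2)) gives $M_4(\garith{M},V)=2$, the low‑dimension caveats being vacuous for $d\geq 3$ (for $d\geq 2$ in the non‑hyperelliptic case); the corner case $d=2$ I would treat by hand, using irreducibility of $\bfG$ on $V$ and the classification of reductive subgroups of $\GL_2$. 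Larsen's Alternative (Theorem~\ref{th-larsen}, part~(3)) then forces $\SL(V)\subseteq\garith{M}$ or $\garith{M}/(\garith{M}\cap Z)$ finite; the latter is excluded, so $\SL_d\subseteq\garith{M}\subseteq\GL_d$. Since $\ggeo{M}$ is normal in $\garith{M}$ with multiplicative‑type, hence abelian, quotient (Propositions~\ref{pr:geom-vs-arith1} and~\ref{pr:geom-vs-arith2}) and $\SL_d$ is perfect, we get $\ggeo{M}\supseteq[\garith{M},\garith{M}]\supseteq\SL_d$, hence $\SL_d\subseteq\ggeo{M}\subseteq\garith{M}\subseteq\GL_d$.

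For part~(2): now $C$ is hyperelliptic, $D=(0_C)$ with $0_C$ a Weierstrass point, and $\DD(M_0)\simeq i^{*}M_0$ geometrically. Because $2(0_C)$ is linearly equivalent to the hyperelliptic pencil, one checks $\inv\circ s_D=s_D\circ i$, so $s_D$ is an $i$‑symmetric Sidon morphism whose constant $a$ is the neutral element; Proposition~\ref{pr-sidon-moments2}(1) then gives $\DD(M)=M$ — whence $M^{\vee}\simeq M$, geometrically and, after absorbing a weight‑zero scalar twist, arithmetically — together with $M_4(\garith{M},V)=3$ (for $d\geq 3$; $d=2$ handled directly). Thus $\garith{M}$ preserves a non‑degenerate bilinear form $B$ on $V$, symmetric or alternating according to the symmetry type of the given self‑duality of $M_0$ twisted by the sign $-1$ of Poincaré duality on $H^{1}$ of a curve: $B$ alternating when $M_0$ is orthogonally self‑dual, symmetric when $M_0$ is symplectically self‑dual. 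Applying Theorem~\ref{th-larsen} (part~(5) in the symmetric case, part~(6) in the alternating one) and again discarding the finite possibility, $\garith{M}\in\{\SO_d,\Ort_d\}$ or $\garith{M}=\Sp_d$; passing to commutator subgroups ($[\Ort_d,\Ort_d]=\SO_d$, $[\SO_d,\SO_d]=\SO_d$ for $d\geq 3$, $[\Sp_d,\Sp_d]=\Sp_d$) yields $\SO_d\subseteq\ggeo{M}\subseteq\garith{M}\subseteq\Ort_d$ or $\ggeo{M}=\garith{M}=\Sp_d$. For $d=2$ the orthogonal branch is impossible — $\Ort_2$ acts irreducibly on $V$ while $\SO_2$ does not — so one lands in $\Sp_2=\SL_2$.

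The substantive difficulty is not the moment computations (formal once the Sidon lemmas are in place) nor the group theory (Larsen's Alternative), but the exclusion of the "finite modulo scalars" alternative; I route this through Corollary~\ref{cor-translate-irred-var-ab}, which rests on the finiteness theorem for tannakian groups on abelian varieties (Theorem~\ref{th-finite-ab-1}) and hence on the whole $\lhat$‑function circle of ideas, so it is the deepest ingredient invoked. A secondary but real irritant in part~(2) will be checking that the similitude factor relating $M^{\vee}$ and $M$ is trivial, so that $\garith{M}\subseteq\Ort_d$ rather than merely $\mathrm{GO}_d$; I expect this to follow from purity but it needs care, as do the handful of low‑dimensional cases ($d\leq 3$) falling outside the ranges in Theorem~\ref{th-larsen} and in Propositions~\ref{pr-sidon-moments1} and~\ref{pr-sidon-moments2}.
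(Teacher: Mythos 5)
Your proposal is correct in substance and follows the same skeleton as the paper's proof — Euler--Poincar\'e count for (1), the Sidon/symmetric-Sidon structure of $s_D$ feeding the fourth-moment propositions, Larsen's Alternative, and descent from $\garith{M}$ to $\ggeo{M}$ by normality — but it diverges at two real decision points, and both of your choices work. To discard the ``finite modulo scalars'' branch the paper argues numerically: Proposition~\ref{pr-finite-ab-2} forces the fourth moment to equal $d^2$ in that branch, incompatible with $M_4=2$, and in part (2) it simply quotes Theorem~\ref{th-finite-ab-1} to see that $\garith{M}$ is infinite. You instead invoke Corollary~\ref{cor-translate-irred-var-ab} (irreducibility of the standard representation on $(\garith{M})^{\circ}$), which requires knowing that no nonzero point of $A$ translates $s_D(C)$ into itself; both routes ultimately rest on Theorem~\ref{th-finite-ab-1}, so neither is cheaper, but yours needs that no-translation lemma, and your one-line justification of it is too quick: triviality of $\sigma^{*}$ on $H^0(C,\Omega^1)$ only pins $\sigma$ down to $\{1,i\}$ when $C$ is hyperelliptic (and in characteristic $2$ the hyperelliptic involution itself acts trivially on $H^0(\Omega^1)$). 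The patch is easy — $\sigma=i$ would force $y\mapsto 2s_D(y)$ to be constant, which is absurd; alternatively use that $\sigma_{*}=\mathrm{id}$ on the whole Jacobian and run Riemann--Hurwitz for a fixed-point-free automorphism — but as written this is the one soft step. For the $d=2$ corner of part (2), the paper identifies the precise shape of $M_0$ ($g=2$, rank one, lisse, geometrically a restricted character sheaf) and recomputes the fourth moment as $2$ after subtracting the contribution of the single ramified character, landing in $\Sp_2=\SL_2$; your argument (self-duality plus irreducibility on the neutral component, so the $\SO_2$ branch is impossible since a torus cannot act irreducibly in dimension $2$) is shorter and equally valid. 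You also treat the hyperelliptic branch of part (3) explicitly via Proposition~\ref{pr-sidon-moments2}(2), which the paper leaves implicit (note $d=2$ cannot occur there: $g=2$ with $M_0$ lisse of rank one forces the duality hypothesis of part (2)). Finally, your worry about the similitude factor is legitimate and is not resolved by purity alone: under the purely geometric hypothesis $\DD(M_0)\simeq i^{*}M_0$, one only gets $M^{\vee}\simeq\alpha^{\deg}\otimes M$ arithmetically for some weight-zero unit $\alpha$, which need not be a root of unity (it need not be an algebraic integer, so Kronecker does not apply); the paper's proof reads the duality hypothesis of Proposition~\ref{pr-sidon-moments2}(1) arithmetically and does not address this either, so what your (and the paper's) argument cleanly yields for the arithmetic group under the stated hypotheses is containment in the corresponding similitude group, the containment in $\Ort_d$ (resp.\ equality with $\Sp_d$) requiring an arithmetic self-duality or a normalizing twist of $M_0$.
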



\begin{proof}
  We write~$s=s_{\Delta}$ for simplicity.  Since $A$ is an abelian
  variety, the dimension $d$ is the Euler--Poincaré characteristic of
  $M_{\chi}$ for \emph{any} $\chi\in\charg{A}$ (see
  Proposition~\ref{pr-chi}), in particular for the trivial character,
  which means that $d=\chi(A_{\bar{k}},M)=\chi(C_{\bar{k}},M_0)$. Write
  $M_0=\mcF_0[1](1/2)$ for some middle extension sheaf~$\mcF_0$ on~$C$
  of generic rank~$r$; using the Euler--Poincaré characteristic formula
  on a curve (see~(\ref{eq-ep-sheaf}), for instance), it follows that
  \begin{equation}\label{eq-tan-jac-2}
    \chi(C_{\bar{k}},M_0)=\chi(C_{\bar{k}},\mcF_0[1])=(2g-2)r+\sum_{x\in
      C(\bar{k})}(\swan_x(\mcF_0)+\Drop_x(\mcF_0)) \geq (2g-2)r.
  \end{equation}
  \par
  According to Proposition~\ref{pr-sidon-morphisms}\,(2), the embedding
  $s$ is a Sidon morphism if~$C$ is not hyperelliptic, and is an
  $i$-symmetric Sidon morphism in the hyperelliptic situation of~(2).
  \par
  Suppose first that~$C$ is not hyperelliptic. Using the fact
  that~$d\geq 2$, we deduce from Proposition~\ref{pr-sidon-moments1}
  that $M_4(\garith{M})=2$. Thus, by Larsen's Alternative
  (Theorem~\ref{th-larsen}\,(3)), either $\garith{M}$ is virtually
  central, \ie $\garith{M}/\garith{M}\cap Z$ is finite, or $\garith{M}$
  contains $\SL_d$.  Proposition~\ref{pr-finite-ab-2} shows that the
  first case is not possible, since $M_4(\garith{M})=2$ is not the
  square of an integer.  Then the fact that~$\garith{M}$ contains
  $\SL_d$ implies that $\ggeo{M}$ also contains $\SL_d$ (indeed, the
  intersection $\ggeo{M} \cap \SL_d$ is a normal subgroup of
  $\garith{M}$ by Proposition~\ref{pr:geom-vs-arith1}, and hence is a
  normal subgroup of~$\SL_d$; it is therefore either equal to $\SL_d$,
  or is contained in the center~$\mmu_d$; but since $d\geq 2$, the
  latter would imply that $\garith{M}/\ggeo{M}$ is not abelian).
  \par
  We now assume that~$C$ is hyperelliptic. First we consider the case
  when $d\geq 3$.
  \par
  If $\dual(M_0)$ is not geometrically isomorphic to $i^*M_0$, then
  Proposition~\ref{pr-sidon-moments2}\,(2) implies 
  \hbox{$M_4(\garith{M})=2$} since we assume that $d\geq 3$; as previously, we
  then conclude that $\garith{M}$ contains~$\SL_d$.
  \par
  If the conditions of (2) hold, then the constant morphism
  $(s\circ i)+ s$ is given by
  $$
  s(i(x))+ s(x)=(x)+i(x)-2(0_C)=0,
  $$
  the identity element of~$A$.  Proposition~\ref{pr-sidon-moments2}\,(1)
  implies then that $M$ is self-dual and has $M_4(\garith{M})=3$, again
  from our assumption that $d\geq 3$. We conclude in that case by
  Larsen's Alternative (Theorem~\ref{th-larsen}\,(5)), combined with the
  fact that $\garith{M}$ is infinite by Theorem~\ref{th-finite-ab-1}.
  \par
  There remains to consider the case when $d=2$ (and~$C$ hyperelliptic).
  Since $d=\chi(C_{\bar{k}},M_0)$, formula~(\ref{eq-tan-jac-2}) shows
  that this situation can only occur if~$(g,r)=(2,1)$ and if the sheaf
  $\mcF_0$ is lisse on~$C$. Thus the curve~$C$ has genus~$2$, and the
  sheaf~$\mcF_0$ is a rank~$1$ sheaf corresponding to a character of the
  fundamental group of~$C$.  As we will recall below in general, there
  exists then a character~$\chi_0\in \charg{A}(k)$ such that $\mcF_0$ is
  geometrically isomorphic to $s^*\mcL_{\chi_0}$ on~$C$.  The duality
  condition $M_0\simeq i^*\dual(M_0)$ is then always satisfied.

  We claim that in this situation, the fourth moment $M_4(\garith{M})$
  is still equal to~$2$. Indeed, from the proof of
  Proposition~\ref{pr-sidon-moments2}, we know that
  $$
  \frac{1}{|A(k_n)|}\sum_{\chi \in \charg{A}(k_n)} |S(M, \chi)|^{4}
  $$
  converges to~$3$ as $n\to+\infty$. The contribution of the
  character~$\chi_0^{-1}$, which is the only ramified character, is
  $$
  \frac{1}{|A(k_n)|} |S(M,\chi_0^{-1})|^4=\frac{1}{|A(k_n)|}\Bigl|
  \sum_{x\in C(k_n)}t_{M_0}(x;k_n)\overline{\chi_0(x)}
  \Bigr|^4=\frac{|C(k_n)|^4}{|k_n|^2|A(k_n)|}
  $$
  which converges to~$1$ as $n\to+\infty$. We then conclude from
  Larsen's Alternative that the group $\garith{M}$ contains $\SL_2=\Sp_2$.
\end{proof}

\begin{remark}\label{rm-jacobian-careful}
  (1) Note that the last case provides a concrete example where the
  limit
  \[
  \lim_{n\to+\infty} \frac{1}{|A(k_n)|}\sum_{\chi \in \charg{A}(k_n)}
  |S(M, \chi)|^{4}
  \]
  exists, where the sum ranges over all characters, but its value is
  \emph{not} the fourth moment of the standard representation of the
  tannakian group (see Remark~\ref{rm-careful}).
  \par
  (2) If the curve $C$ has gonality at least~$5$, then the inclusions
  \begin{displaymath}
    \SL_d \subset \ggeo{M} \subset \garith{M} \subset \GL_d
  \end{displaymath}
  can be deduced without appealing to
  Proposition~\ref{pr-finite-ab-2}. Indeed, the immersion~$s_{\Delta}$
  is then a $4$-Sidon morphism by
  Proposition~\ref{pr-sidon-morphisms}\,(3), so we deduce from
  Proposition~\ref{pr-sidon-moments1}\,(2) that $\garith{M}$ (and hence
  also~$\ggeo{M}$, as before) contains $\SL_d$. (Precisely, we are in
  the excluded case of this statement, but we can observe that there are
  only finitely many ramified characters here, and that the assumption
  implies that the genus of~$C$ is at least five, so that the
  contribution to the $8$-th moment of the ramified characters is
  $$
  \ll \frac{1}{|k_n|^g}|k_n|^{8/2}\to 0,
  $$
  so that we do obtain the correct $8$-th moment.)
\end{remark}

\begin{remark} 
  In characteristic zero, Krämer and
  Weissauer~\cite{kramer-weissauer-larsen} have obtained closely
  related results, using more geometric methods in the case of the
  object $M=s_{\Delta*}\Qlb[1]$.
\end{remark} 

We now explain how Theorem \ref{thm:equidistribution-on-jacobians}
answers a question of Tsimerman, which was Katz's original
motivation. Let $\rho \colon \pi(C)^{\ab} \to \Cc^\times$ be a character
of finite order. By the Riemann hypothesis for curves over finite
fields, the Artin $L$-function\index{Artin $L$-function} $L_C(\rho, s)$
is a polynomial of degree $2g-2$ in the variable $T=q^{-s}$ all of whose
reciprocal roots have absolute value $\sqrt{q}$. We can then write
\[
L(\rho, \sfrac{T}{\sqrt{q}})=\det\left(1-T\Theta_{C\slash k, \rho}\right)
\] for a unique conjugacy class $\Theta_{C/k, \rho}$ in the unitary
group $\Un_{2g-2}(\Cc)$.

\begin{question}[Tsimerman]
  How are these conjugacy classes distributed as $\rho$ varies?
\end{question}

From now on, we shall normalize the characters as follows: we fix a
divisor $\Delta=\sum n_i x_i$ of degree one on $C$ and we only consider
those characters $\rho$ satisfying
\begin{displaymath}
  \prod \rho(\Frob_{\kappa(x_i), x_i})^{n_i}=1. 
\end{displaymath}
Through the isomorphism $\pi_1(C)^{\ab} \simeq \pi_1(A)$ induced by
$s_{\Delta}\colon C \hookrightarrow A=\mathrm{Jac}(C)$, such normalized
characters correspond to characters
$\rho \colon \pi_1(A) \to \Cc^\times$ satisfying
$\rho(\Frob_{k, 0_A})=1$. Since they are in addition supposed to be of
finite order, they arise via the Lang isogeny from the elements
of~$\charg{A}(k)$. 
Replacing $k$ with $k_n$, we obtain the corresponding characters in
$\charg{A}(k_n)$. Thus the following statement answers Tsimerman's
question when considering conjugacy classes associated to normalized
characters over~$k_n$ and taking $n\to+\infty$.

\begin{corollary}
  Let $C$ be a smooth projective geometrically connected curve of genus
  $g \geq 2$ over~$k$ with jacobian~$A$.
\begin{enumth}
\item\label{corKatz:item1} If $C$ is hyperelliptic, the hyperelliptic
  involution has a fixed $k$-point \hbox{$0 \in C(k)$,} and we use this
  point to define the embedding $C\to A$, then the conjugacy classes
  $(\Theta_{C/k,\chi})_{\chi\in\charg{G}(A_n),\chi\not=1}$
  are conjugacy classes in~$\USp_{2g-2}(\Cc)$ and become equidistributed
  with respect to the image of the Haar probability measure on the set
  of conjugacy classes.
\item\label{cor:curves2} If $C$ is not hyperelliptic and $(2g-2)\Delta$
  is a canonical divisor on $C$, then the conjugacy classes
  $(\Theta_{C/k,\chi})_{\chi\in\charg{A}(k_n),\chi\not=1}$ are conjugacy
  classes in~$\SU_{2g-2}(\Cc)$ and become equidistributed with respect
  to the image of the Haar probability measure on the set of conjugacy
  classes.
\end{enumth}
\end{corollary}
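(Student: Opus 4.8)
The plan is to apply Theorem~\ref{thm:equidistribution-on-jacobians} with the input object $M_0=\Qlb[1](1/2)$ on $C$, together with the equidistribution statement without average of Theorem~\ref{th-2bis}. Set $M=s_{D\ast}M_0$ on $A=\mathrm{Jac}(C)$, where $s_D\colon C\hookrightarrow A$ sends $x$ to the class of $(x)-D$. First I would record the dictionary between $M$ and the Artin $L$-functions. Since $s_D$ induces an isomorphism $\pi_1(C)^{\ab}\xrightarrow{\sim}\pi_1(A)$, each $\chi\in\charg{A}$ restricts to a character of $\pi_1(C)^{\ab}$, and the projection formula gives $M_\chi\simeq s_{D\ast}(\mcL_{\chi|C}[1](1/2))$, where $\mcL_{\chi|C}=s_D^\ast\mcL_\chi$, hence canonical isomorphisms $\rmH^i_c(A_{\bar k},M_\chi)\simeq\rmH^{i+1}(C_{\bar k},\mcL_{\chi|C})(1/2)$ for all $i$ (using that $C$ is proper). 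For $\chi\neq1$ the Lang torsor construction makes $\mcL_\chi$, hence $\mcL_{\chi|C}$ on $C_{\bar k}$, geometrically non-trivial, so $\rmH^0$ and $\rmH^2$ of $\mcL_{\chi|C}$ vanish while $\rmH^1$ has dimension $2g-2$ and is pure of weight one. This shows at once that $M$ is arithmetically semisimple, pure of weight zero, of tannakian dimension $d=2g-2$, that $\wunram{M}=\charg{A}\setminus\{1\}$, and -- by the Grothendieck--Lefschetz trace formula applied to $\mcL_{\chi|C}$ -- that $L(\rho,T/\sqrt{|k_n|})=\det\bigl(1-T\,\Fr_{k_n}\mid\rmH^0_c(A_{\bar k},M_\chi)\bigr)$ for the normalized character $\chi$ corresponding to $\rho$, i.e.\ $\Theta_{C/k_n,\rho}=\Theta_{M,k_n}(\chi)$. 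I would also note that $M$ lies in $\Ppintarith(A)$ (being geometrically simple and non-negligible), hence is generically unramified by Theorem~\ref{th-polite}.

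For part~\ref{corKatz:item1}, taking $D=(0)$ with $0$ a $k$-rational Weierstrass point, the relation $(x)+(i(x))\sim2(0)$ gives $-s_D(x)\sim s_D(i(x))$, i.e.\ $\inv\circ s_D=s_D\circ i$ as morphisms $C\to A$. This has two consequences. First, $\inv^\ast s_{D\ast}\Qlb[1](1/2)\simeq s_{D\ast}i^\ast\Qlb[1](1/2)\simeq s_{D\ast}\Qlb[1](1/2)$, and since $\Qlb[1](1/2)$ is self Verdier-dual on the smooth curve $C$ we get $M^\vee\simeq M$; the autoduality it induces on the standard representation $\rmH^1(C_{\bar k},\mcL_{\chi|C})$ is the cup product transported through the isomorphism $i^\ast\mcL_{\chi|C}\simeq\mcL_{\chi|C}^\vee$, which is alternating, so $\ggeo{M}\subset\Sp_d$. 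Combined with the dichotomy of Theorem~\ref{thm:equidistribution-on-jacobians}\,\ref{thm:equidistribution-on-jacobians:item1} (noting that for $g=2$ one has $d=2$ and $\Sp_2=\SL_2$, so the conclusion $\SL_d\subset\ggeo M=\garith M$ there is already the symplectic one), this forces $\ggeo{M}=\garith{M}=\Sp_{2g-2}$. Second, the isomorphism $\mcL_{\chi|C}^\vee\simeq i^\ast\mcL_{\chi|C}$, which is in fact \emph{arithmetic}, endows $\rmH^0_c(A_{\bar k},M_\chi)$ with a Frobenius-equivariant alternating duality for every $\chi\neq1$, so each $\Theta_{M,k_n}(\chi)$ indeed lies in $\USp_{2g-2}(\Cc)$, a maximal compact subgroup of $\Sp_{2g-2}(\Cc)$. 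Theorem~\ref{th-2bis}\,(1) then gives that $\bigl(\Theta_{M,k_n}(\chi)\bigr)_{\chi\in\charg{A}(k_n)\setminus\{1\}}$ becomes equidistributed with respect to the image of the Haar probability measure on $\USp_{2g-2}(\Cc)$, which by the dictionary above is the asserted statement.

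For part~\ref{cor:curves2}, Theorem~\ref{thm:equidistribution-on-jacobians}\,\ref{item2-thmKatz} yields $\SL_d\subset\ggeo{M}\subset\garith{M}\subset\GL_d$, so I must show that the tannakian determinant of $M$ is trivial, arithmetically and geometrically -- this is where the hypothesis $(2g-2)D=K_C$ enters. The point is that, up to a fixed power of $|k_n|$, one has $\det\bigl(\Fr_{k_n}\mid\rmH^0_c(A_{\bar k},M_\chi)\bigr)=\det\bigl(\Fr_{k_n}\mid\rmH^1(C_{\bar k},\mcL_{\chi|C})\bigr)$, and I would compute this determinant of cohomology by the Deligne--Laumon product formula for $\varepsilon$-constants, exactly as in Proposition~\ref{pr-local} of Section~\ref{sec-variance}: since $\mcL_{\chi|C}$ is everywhere lisse and tame, the local terms reduce, after choosing a meromorphic differential on $C$, to the value of $\chi$ on the degree-zero divisor class $K_C-(2g-2)D$, times a quantity independent of $\chi$. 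The hypothesis makes this class vanish, so $\det\bigl(\Fr_{k_n}\mid\rmH^0_c(A_{\bar k},M_\chi)\bigr)=1$ for all $\chi\neq1$ -- whence every $\Theta_{M,k_n}(\chi)$ lies in $\SU_{2g-2}(\Cc)$ -- and the tannakian determinant of $M$ is trivial, so $\garith{M}\subset\SL_d$ and therefore $\ggeo{M}=\garith{M}=\SL_{2g-2}$. Since $\SU_{2g-2}(\Cc)$ is a maximal compact subgroup of $\SL_{2g-2}(\Cc)$, Theorem~\ref{th-2bis}\,(1) together with the dictionary concludes the proof.

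The main obstacle is the $\varepsilon$-constant computation of the last paragraph, which must both produce the value $1$ for the Frobenius determinant and, equivalently, identify the tannakian determinant of $M$ with the character of $\charg{A}$ attached to $K_C-(2g-2)D$; everything else is either formal (the passage through Theorems~\ref{thm:equidistribution-on-jacobians} and~\ref{th-2bis}) or classical (the alternating-type verification in the hyperelliptic case, which is the usual reason the monodromy of families of jacobians is symplectic).
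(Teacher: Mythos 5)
Your argument is correct and runs along the same lines as the paper's: same object $M=s_{D*}\Qlb[1](1/2)$, same cohomological dictionary identifying $\Theta_{C/k,\rho}$ with $\Theta_{M,k_n}(\chi)$ and showing every non-trivial character is unramified, reduction to Theorem~\ref{thm:equidistribution-on-jacobians}, the symplectic self-duality argument in the hyperelliptic case, and determinant triviality from the hypothesis that $(2g-2)D$ is canonical together with the normalization $\chi(D)=1$. The one place where you diverge is the computation of $\det\bigl(\Fr_{k_n}\mid \rmH^1(C_{\bar k},\mcL_{\chi|C})(1/2)\bigr)$ in part~(2): the paper simply quotes Weil's classical formula, which says the constant of the functional equation for $\mcL_{\chi|C}$ is $q^{1-g}\chi(\mathrm{can})$, whereas you propose to rederive it from the Deligne--Laumon product formula for local $\varepsilon$-constants as in Proposition~\ref{pr-local}; the paper's footnote notes these are equivalent, so this is a legitimate (if longer) route. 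Be aware, though, that your sketch only shows the determinant equals $\chi(K_C-(2g-2)D)$ times a $\chi$-independent constant; to conclude that it is exactly $1$ (which is what triviality of the \emph{arithmetic} determinant, hence $\garith{M}\subset\SL_{2g-2}$ and the $\SU_{2g-2}$ statement, requires) you must also evaluate that constant and check it is precisely the power $q^{n(g-1)}$ cancelled by the half-Tate twist --- this is exactly the content of Weil's formula, and it is the step you flag as the ``main obstacle''; carrying out the unramified local $\varepsilon$-factor computation (each local factor being $\det(\Fr_x\mid\mcL_{\chi|C})^{\ord_x(\omega)}$ up to the universal constants) does supply it. Finally, your parenthetical about the $g=2$ case invokes a conclusion ``$\SL_d\subset\ggeo{M}=\garith{M}$'' that is not in the statement of Theorem~\ref{thm:equidistribution-on-jacobians}(2) for the hyperelliptic situation; the cleaner way to resolve the dichotomy when $d=2$ is to note that $\garith{M}\subset\Sp_2$ together with irreducibility of the standard representation (since $M$ is simple) rules out the orthogonal alternative, which is how the paper's assertion should be read as well.
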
 

\begin{proof}
  Consider the weight zero perverse sheaf
  $M_0=\overline\Qq_\ell(1/2)[1]$ on $C$ and set
  $M=s_{\Delta\ast}M_0$. For each rank one $\ell$-adic lisse sheaf $\Lb$
  on $A$, there are isomorphisms
  \begin{displaymath}
    H^i(A_{\bar k}, M \otimes \Lb) \simeq H^i(A_{\bar k},
    s_{\Delta\ast}(M_0 \otimes s_{\Delta}^\ast\Lb)) \simeq H^i(C_{\bar k},
    M_0 \otimes s_{\Delta}^\ast \Lb)
    \simeq H^{i+1}(C_{\bar k}, s_{\Delta}^\ast \Lb(1/2))
  \end{displaymath}
  by the projection formula and the exactness of $s_{\Delta\ast}$. It
  follows that $M$ has tannakian dimension
  \[
    -\chi(C_{\bar k}, s_{\Delta}^\ast \Lb)=2g-2,
  \]
  and moreover that all non-trivial characters are unramified for~$M$
  (since we are considering an abelian variety). By Theorem~\ref{th-2},
  it suffices therefore to prove that the arithmetic and geometric
  tannakian groups of $M$ coincide and are equal to $\Sp_{2g-2}$ in case
  \ref{corKatz:item1} and to $\SL_{2g-2}$ in case \ref{cor:curves2}.
  
  Assume $C$ is hyperelliptic with hyperelliptic involution $i$. Then
  the sheaf $M_0$ is geometrically isomorphic to $i^\ast \DD(M_0)$ and,
  because of the shift by $1$ in the definition of $M_0$, the
  corresponding self-duality is symplectic. Therefore, if in addition we
  assume that $i$ has a fixed $k$-point $0 \in C(k)$, which we use as
  divisor $\Delta$, Theorem
  \ref{thm:equidistribution-on-jacobians}\,\ref{thm:equidistribution-on-jacobians:item1}
  gives the equality $\ggeo{M}=\garith{M}=\Sp_{2g-2}.$
 
  If $C$ is not hyperelliptic and $(2g-2)\Delta$ is a canonical divisor
  on $C$, in view of Theorem
  \ref{thm:equidistribution-on-jacobians}\,\ref{item2-thmKatz}, it
  suffices to show that the arithmetic group $\garith{M}$ lies in
  $\SL_{2g-2}$. For this, we compute the determinant of the action of
  Frobenius on $H^1(C_{\bar k}, \Lb(1/2))$. Since this cohomology is
  even-dimensional, this is also the determinant of $-\mathrm{Fr}_k$,
  which is the constant in the functional equation for the $L$-function
  of $\Lb(1/2)$. By a classical result of
  Weil~\cite{weil-basic},\footnote{\ Which can also easily be recovered
    from the theory of local constants, applying the results of Deligne
    and Laumon (see Appendix~\ref{ch-app-product}).} in the case of
  $\Lb$ this constant is given by $q^{1-g}\rho_{\Lb}(\text{can})$ for a
  canonical divisor $\text{can}$, where $\rho_{\Lb}$ is the character
  associated to~$\Lb$, which factors through the jacobian.  Taking the
  half-Tate twist into account, along with the fact that
  $\rho_{\Lb}(\text{can})=1$ since $(2g-2)\Delta$ is a canonical divisor and
  characters are normalized to take the value~$1$ at $\Delta$, it follows
  that the determinant is trivial, as claimed.
\end{proof}


We conclude this section by a (partial) generalization of
Theorem~\ref{thm:equidistribution-on-jacobians} to the setting of
generalized jacobians arising in geometric class-field theory. This
gives a natural example of an application of our results where the
algebraic group~$G$ is not restricted to being either a torus, an
abelian variety or a unipotent group, but may involve all three of these
fundamental building blocks (see Remark~\ref{rm-gener-jac}). For
simplicity, we will only deal with the case where $C$ is not
hyperelliptic.

\begin{theorem}\label{th-generalized-jacobians}
  Assume that the curve~$C$ is not hyperelliptic.  Let $S$ be an
  effective divisor on the curve~$C$. Let $U$ be the complement of the
  support of~$S$ in~$C$. Let $\Delta$ be a divisor of degree one on $U$.  Let
  $J_S$ be the generalized jacobian of~$C$ relative to the divisor~$S$,
  and let $s_\Delta\colon U\to J_S$ be the natural immersion defined by
  $x\mapsto (x)-\Delta$.
  \par
  Let $M_0$ be a semiperverse object on~$U$, mixed of weights $\leq 0$
  and put $M=s_{\Delta!}M_0$. Let $d$ be the tannakian dimension of the
  semisimplification $\tilde{M}$ of the part of $\pH^0(M)$ which is pure
  of weight~$0$. Assume that $\tilde{M}$ is non-zero.
  \par
  Then we have $d\geq 2$, and either the arithmetic tannakian group
  of~$\tilde{M}$ contains $\SL_d$ or $\garith{\tilde{M}}$ is virtually
  central in $\GL_d$.
\end{theorem}

\begin{proof}
  We note that $M$ is a semiperverse object on~$J_S$ since $s_D$ is
  quasi-finite, and is mixed of weights $\leq 0$ by the Riemann
  Hypothesis.
  \par
  To check that $d\geq 2$, we use the general Euler--Poincaré
  characteristic formula (see Theorem~\ref{th-euler-poincare})
  as in~(\ref{eq-tan-jac-2}), to conclude.  We then need only observe
  that $s_D$ is a Sidon morphism by Proposition~\ref{pr-sidon-gen-jac},
  and apply Larsen's Alternative.
\end{proof}

\begin{remark}
  (1) Since we do not know in general if perverse sheaves on the group
  $J_S$ are generically unramified, the corresponding equidistribution
  statement is currently restricted to the distribution of the
  arithmetic Fourier transforms
  $$
  \sum_{x\in U(k_n)}t_M(x;k_n)\chi(x)
  $$
  for $\chi\in\charg{J}_S(k_n)$.
  \par
  (2) Again because the group $J_S$ is \emph{a priori} fairly arbitrary
  here, we can not exclude the possibility that $\garith{\tilde{M}}$ is
  virtually central (e.g., finite), since we do not have currently a
  general version of Proposition~\ref{pr-finite-ab-2}. (In our case,
  since the jacobian of $C$ is a non-trivial quotient of $J_S$, we can
  expect that the statement should indeed extend.)
  \par
  (3) It is possible that $\widetilde{M}$ is zero; in this case, we have
  of course $d=0$, and the tannakian group is trivial.
\end{remark}

\section{The intermediate jacobian of a cubic threefold}
\label{sec-cubic}

Our second application involving abelian varieties is related to a very
classical and important construction in algebraic geometry, that of the
intermediate jacobian\index{intermediate jacobian} of a smooth cubic
threefold,\index{smooth cubic threefold} which was used by
Clemens and Griffiths to prove that these threefolds, over~$\Cc$, are
not rational (although they are unirational).

The geometric setting, which over finite fields goes back at least to
the work of Bombieri and Swinnerton-Dyer~\cite{bombieri-sd} (computing
the zeta function of smooth cubic threefolds over finite fields) is the
following.
\par
Let $k$ be a field of characteristic different from~$2$, and
let~$X\subset \Pp^4_k$ be a smooth cubic threefold over~$k$. We denote
by~$F(X)$\nomenclature{$F(X)$}{Fano variety of lines} the Fano scheme of
lines\index{Fano variety of lines} in~$X$, which is a smooth projective
and geometrically connected surface over~$k$ (see,
e.g.,~\cite[\S\,4]{beauville-theta} or~\cite[Lem.\,3]{bombieri-sd}
or~\cite[Cor.\,1.12, Th.\,1.16]{altman-kleiman}; this uses the fact that
the characteristic is different from~$2$). Let then
$A(X)$\nomenclature{$A(X)$}{Albanese variety of~$F(X)$}\index{Albanese
  variety of the Fano variety} be the
Albanese variety of~$F(X)$, which is known to be isomorphic to the
Picard variety of~$F(X)$\index{Picard variety of the Fano variety} (see, e.g.~\cite[Cor.\,4.3.3]{huybrechts}). It
has dimension~$5$, and if the base field is contained in~$\Cc$, then the
analytification of~$A(X)$ is canonically isomorphic to the intermediate
jacobian $J(X)$ of Griffiths, which is defined analytically in terms of
Hodge theory (this is due to Murre;
see~\cite[Prop.\,9]{beauville-theta}).

The Albanese morphism $s\colon F(X)\to A(X)$\index{Albanese morphism} is a closed immersion,
according to a theorem of Beauville~\cite[p.\,201,
cor.]{beauville-theta}.  If we view $A(X)$ as the Picard variety, then
the morphism $s$ can be identified geometrically with the map
sending a line $l\in F(X)$ to the divisor defined by the curve $C_s$
which is the Zariski-closure in $F(X)$ of the set of lines $l'\not=l$
such that $l'\cap l$ is not empty.

The problem we consider is then the following: if~$k$ is a finite field
of odd characteristic, what is the arithmetic tannakian group of the
perverse sheaf $M=s_*\Qlb[2](1)$ on~$A(X)$? (It is perverse because $s$
is a closed immersion, as in previous similar examples.) The
corresponding exponential sums are then
$$
S(M,\chi)=\frac{1}{|k_n|}\sum_{l\in F(X)(k_n)}\chi(s(l))
$$
for a character $\chi\in \charg{A(X)}(k_n)$.

Up to correcting a small oversight, the following answer is the analogue
over finite fields of a result of Krämer over~$\Cc$
(see~\cite[Th.\,2]{kramer-e6}).

\begin{proposition}\label{pr-kramer}
  Let $k$ be a finite field of characteristic different
  from~$2$. Let~$X$ be a smooth cubic threefold over~$k$, and denote
  by~$F(X)$ the Fano scheme of lines in~$X$, by~$A(X)$ the Albanese
  variety of~$F(X)$, and by
  $$
  s \colon F(X)\to A(X)
  $$
  the natural closed immersion.
  \par
  Let $\ell$ be a prime different from the characteristic of~$k$, and
  let $M$ be the object $M=s_*\Qlb[2](1)$ on~$A(X)$.  The connected
  derived subgroup of the arithmetic tannakian monodromy group of the
  object $M$ of the category $\Ppbarith(A(X))$ is isomorphic to the
  exceptional group~$\mathbf{E}_6$.
\end{proposition}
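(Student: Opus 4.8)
The strategy is to combine three ingredients: (1) the equidistribution/diophantine machinery developed in the paper, specialized to the abelian variety $A(X)$ and to the closed immersion $s\colon F(X)\to A(X)$; (2) the fact that, over $\bar k$, the perverse sheaf $M=s_*\Qlb[2](1)$ is geometrically simple, pure of weight zero, and of tannakian dimension $27$; and (3) Kr\"amer's recognition criterion for $\mathbf{E}_6$, namely Proposition~\ref{pr-e6}. Since $A(X)$ is an abelian variety, every non-trivial character is unramified for $M$ (there are no non-trivial \tacs\ worries beyond the finitely many that can be read off from Theorem~\ref{thm-high-vanish}), and by Proposition~\ref{pr-chi} the tannakian dimension of $M$ equals its Euler--Poincar\'e characteristic $\chi(F(X)_{\bar k},\Qlb[2](1))=\chi(F(X)_{\bar k},\Qlb)$, which is the topological Euler number of the Fano surface; this is $27$ by the classical computation (already used by Bombieri--Swinnerton-Dyer and Beauville). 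Geometric simplicity of $M$ will follow either from Proposition~\ref{pr-schur} applied to the trace function of $M$, or more directly from the fact that $F(X)$ is geometrically connected and smooth so $\Qlb[2](1)$ is a simple perverse sheaf on it and $s$ is a closed immersion.

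First I would record that $M$ is arithmetically semisimple (it is geometrically simple, hence a twist of a geometrically simple object) and pure of weight zero, so the whole tannakian formalism of Section~\ref{sec-tannakian} applies: there is an arithmetic tannakian group $\garith{M}\subset\GL_{27}$, a geometric one $\ggeo{M}\trianglelefteq\garith{M}$, and by Theorem~\ref{th-finite-ab-1} (since $M$ is non-punctual on an abelian variety) both groups are infinite, in fact $\garith{M}/(Z\cap\garith{M})$ is infinite. Next I would analyze the object $\End(M)=M*_\intt M^\vee$ in $\Ppintarith(A(X))$: its decomposition into arithmetically simple summands corresponds, via the tannakian dictionary, to the decomposition of the $729$-dimensional representation $\End(\mathrm{Std})$ of $\garith{M}$ into irreducibles. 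The key geometric input, which is exactly the content of Kr\"amer's argument in \cite{kramer-e6} transposed to the $\ell$-adic setting, is that $\End(M)$ contains a direct summand which is a geometrically simple perverse sheaf whose underlying tannakian object has dimension $78$; concretely this comes from the geometry of the Fano surface and the $\mathbf{E}_6$-symmetry of the $27$ lines on a cubic surface (the hyperplane sections). Granting this, Proposition~\ref{pr-e6} applies directly — the connected semisimple part of $\garith{M}$ has a faithful irreducible $27$-dimensional representation whose $\End$ contains an irreducible $78$-dimensional piece — and forces that connected semisimple group to be $\mathbf{E}_6$ with $\mathrm{Std}$ one of its two fundamental $27$-dimensional representations.

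There is a subtlety to address, which is the ``small oversight'' alluded to in the statement: one must be careful that $\garith{M}$ may a priori be $Z\cdot\mathbf{E}_6$ rather than $\mathbf{E}_6$, or that $\ggeo{M}$ could differ, so the clean statement is about the \emph{connected derived subgroup} $(\garith{M}^\circ)'$. To pin this down I would argue that the derived group of $\garith{M}^\circ$ acts irreducibly on $\mathrm{Std}$ (Schur, using geometric simplicity of $M$) and is semisimple, then invoke Proposition~\ref{pr-e6} for this group; the possible extra central torus or component group does not affect the derived-subgroup statement. One should also note that $M_4(\garith{M},\mathrm{Std})=3$ and $\mathrm{Std}$ is not self-dual (the dual of $M$ is supported on $-s(F(X))$, which is not a translate of $s(F(X))$ since $F(X)$ is not symmetric up to translation — this is where one uses that $X$ is a genuine cubic threefold), so that Proposition~\ref{pr-larsen-e6} gives an \emph{alternative} route to the same conclusion, and I would present this as a cross-check. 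Computing $M_4=3$ diophantinely would require showing $\frac1{|A(X)(k_n)|}\sum_{\chi}|S(M,\chi)|^4\to 3$, i.e. counting solutions of $s(y_1)+s(y_2)=s(y_3)+s(y_4)$ on the Fano surface; the ``diagonal'' solutions contribute $2$ and one must identify one further family of solutions contributing $1$, coming again from the incidence geometry of lines.

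\textbf{Main obstacle.} The genuinely hard step is establishing the $78$-dimensional summand of $\End(M)$ — equivalently, verifying in the $\ell$-adic/finite-field setting the geometric input that in characteristic zero is supplied by Kr\"amer's Hodge-theoretic and Lefschetz-theoretic analysis of the Fano surface and its theta divisor. I expect this to require transporting Kr\"amer's computation (the decomposition of $\mathrm{R}\mathcal Hom$ of the intersection complex of $F(X)$ inside $A(X)$, and the identification of the adjoint $78$-dimensional piece with an explicit geometrically simple perverse sheaf) using the constructibility and base-change results available here, plus a spreading-out argument from characteristic zero to reduce to the complex case where \cite{kramer-e6} applies; alternatively one proves $M_4=3$ and non-self-duality directly by point-counting on $F(X)$ over $k_n$ and then invokes Proposition~\ref{pr-larsen-e6}, in which case the obstacle becomes the precise enumeration of the ``extra'' coincidences $s(y_1)+s(y_2)=s(y_3)+s(y_4)$, which again reduces to a classical fact about the $27$ lines. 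Either way, the finite-field-specific parts (purity, the trace formula, equidistribution on average, the stratified vanishing estimates controlling the error terms) are routine given the results already proved in the paper; the crux is the piece of projective geometry.
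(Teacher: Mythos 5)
Your overall architecture is the same as the paper's: tannakian dimension $27$ from the Euler characteristic of the Fano surface, the $78$-dimensional summand of $M*M^{\vee}$ coming from Kr\"amer's analysis (in the paper this summand is identified concretely as $i_*\Qlb[1]$ for the theta divisor $\Theta(X)=s(F(X))-s(F(X))$, simple because $\Theta(X)$ is geometrically irreducible, of tannakian dimension $78$ as in \cite[Cor.\,6]{kramer-e6}), and then Proposition~\ref{pr-e6} applied to the connected derived subgroup. So the route is right, and your ``main obstacle'' is handled in the paper by citing the decomposition theorem together with Kr\"amer's computation rather than by a new argument.

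However, there is a genuine gap at the step you dispose of with ``Schur, using geometric simplicity of $M$''. Geometric simplicity gives irreducibility of the standard representation of $\garith{M}$ (and of $\ggeo{M}$), but it does \emph{not} imply irreducibility of its restriction to the neutral component $\garith{M}^{\circ}$, let alone to the derived group: by Clifford theory an irreducible representation can become reducible on a finite-index normal subgroup, and if that happened Proposition~\ref{pr-e6} could not be applied to the connected semisimple group. This is precisely the ``small oversight'' the paper corrects. The missing ingredient is Corollary~\ref{cor-translate-irred-var-ab} (which rests on Theorem~\ref{th-finite-ab-1}): the restriction of the standard representation to the neutral component is irreducible \emph{unless} there is a point $x\in A(X)$ with $x+s(F(X))=s(F(X))$. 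Ruling this out is a nontrivial geometric fact, the second assertion of Lemma~\ref{lm-beauville} (Beauville's argument: such an $x$ would translate the theta divisor, which has a unique singular point, forcing $x=0$; the first assertion, using $K_{F(X)}^2=45$ odd, is what gives non-self-duality). You invoke the theta geometry only for the non-self-duality used in your $M_4$ cross-check, never for the translation-invariance statement, so as written your argument for irreducibility on the connected derived subgroup would not go through; once Corollary~\ref{cor-translate-irred-var-ab} and Lemma~\ref{lm-beauville} are inserted, the restriction to $\garith{M}^{\circ}=C\cdot\bfG$ (with $C$ a central torus) is irreducible, hence so is its restriction to $\bfG$, and Proposition~\ref{pr-e6} concludes. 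Note also that your fourth-moment cross-check via Proposition~\ref{pr-larsen-e6} is exactly the alternative the paper discusses but cannot complete unconditionally, as the needed count of coincidences $s(y_1)+s(y_2)=s(y_3)+s(y_4)$ rests on a Harris-type Galois computation that is only known for general cubics over $\Cc$; so it should not be relied upon even as a check.
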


For the proof, we will use the following lemma, whose proof was
communicated to us by Beauville.

\begin{lemma}[Beauville]\label{lm-beauville}
  With notation as above, there is no~$x\in A(X)$ such that
  $-s(F(X))=x+s(F(X))$, and there is no non-zero~$x\in A(X)$ such that
  $s(F(X))=x+s(F(X))$.
\end{lemma}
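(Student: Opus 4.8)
The plan is to deduce both statements from two standard facts about the Fano surface, together with the topological ($\ell$-adic) and coherent (``holomorphic'') Lefschetz fixed-point formulas: both hypotheses of the lemma produce an automorphism of $F(X)$ whose action on cohomology is so rigid that it cannot be compatible with the number of fixed points it is forced to have. Write $Y=s(F(X))\subset A=A(X)$, a smooth projective surface, and recall from the structure theory of the Fano surface of a smooth cubic threefold in characteristic $\neq 2$ (Bombieri--Swinnerton-Dyer, Altman--Kleiman, Clemens--Griffiths, Beauville) that $s$ is a closed immersion, that $A=\mathrm{Alb}(F(X))$ has dimension $5$, that $b_1(F(X))=10$ and $b_2(F(X))=45$ (so the topological Euler characteristic is $27$ and $\chi(F(X),\mathscr{O})=6$), that $h^1(F(X),\mathscr{O})=5$ with $H^1(F(X),\mathscr{O})\simeq H^1(A,\mathscr{O}_A)$, and that the wedge map $\Lambda^2 H^0(F(X),\Omega^1)\to H^0(F(X),\Omega^2)$ is an isomorphism. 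These are the only geometric inputs.

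For the second assertion, let $H=\mathrm{Stab}(Y)\subset A$ be the stabiliser of $Y$ and suppose $H\neq\{0\}$. If $\dim H\geq 1$, then $B:=H^{\circ}$ is a positive-dimensional abelian variety acting freely on $Y$ by translation, so $Y\to Y/B$ is a $B$-torsor and the topological Euler characteristic of $Y$ equals that of $B$ times that of $Y/B$, hence is $0$; this contradicts $\chi_{\mathrm{top}}(F(X))=27$. If $H$ is finite and nontrivial, choose a non-zero $x\in H(\bar k)$ of finite order. Translation $t_x$ is then an automorphism of $Y$ whose fixed locus is empty (for $x\neq 0$ the fixed scheme of $t_x$ on $A$ is empty), while $t_x$ acts as the identity on $H^0(Y,\mathscr{O})$, on $H^1(Y,\mathscr{O})\simeq H^1(A,\mathscr{O}_A)$, and — via the wedge isomorphism and the fact that translations fix invariant differentials — on $H^2(Y,\mathscr{O})\simeq\bigl(\Lambda^2 H^0(Y,\Omega^1)\bigr)^{\vee}$. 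The coherent Lefschetz fixed-point theorem (``no fixed points $\Rightarrow$ zero Lefschetz number'') gives $0=\sum_q(-1)^q\mathrm{Tr}(t_x\mid H^q(Y,\mathscr{O}))=\chi(Y,\mathscr{O})=6$, a contradiction. Hence $\mathrm{Stab}(Y)=\{0\}$.

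For the first assertion, suppose $-Y=x+Y$ for some $x\in A$. Then $\phi\colon a\mapsto -a-x$ is an involution of $A$ with $\phi(Y)=Y$, so it restricts to an involution $\phi|_Y$ of $F(X)$; since $Y$ is positive-dimensional, $\phi|_Y\neq\mathrm{id}$, and since $\mathrm{Fix}(\phi)\subset A$ is a torsor under $A[2]$ (finite, as $\mathrm{char}\,k\neq 2$), $\mathrm{Fix}(\phi|_Y)$ is finite. Because $\phi$ is inversion followed by a translation, $\phi^{*}$ acts by $-1$ on $H^1(Y,\mathbf Q_\ell)\simeq H^1(A,\mathbf Q_\ell)$, hence by $-1$ on $H^3$ (Poincaré duality) and by $+1$ on $H^0$ and $H^4$; similarly $\phi^{*}=-1$ on $H^0(Y,\Omega^1)$ and on $H^1(Y,\mathscr{O})$, so $\phi^{*}=+1$ on $H^0(Y,\Omega^2)\simeq\Lambda^2 H^0(Y,\Omega^1)$ and on $H^2(Y,\mathscr{O})$. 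Writing $t=\mathrm{Tr}(\phi^{*}\mid H^2(Y,\mathbf Q_\ell))$, the $\ell$-adic Lefschetz formula gives $\#\mathrm{Fix}(\phi|_Y)=1-(-10)+t-(-10)+1=22+t$. On the other hand $\phi|_Y$ is an involution with isolated fixed points, so at each fixed point $d\phi$ equals $-\mathrm{id}$ on the tangent plane (a $+1$-eigendirection would give a curve of fixed points), whence $\det(1-d\phi)=4$ and the coherent Lefschetz formula gives $\#\mathrm{Fix}(\phi|_Y)/4=1-(-5)+10=16$, i.e.\ $\#\mathrm{Fix}(\phi|_Y)=64$. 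Therefore $t=42$; but $\phi^{*}$ is an involution of the $45$-dimensional space $H^2(Y,\mathbf Q_\ell)$, so $t\equiv 45\equiv 1\pmod 2$ is odd — a contradiction, which proves the first assertion.

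The routine verifications I would have to carry out are the compatibilities of $\phi^{*}$ (resp.\ $t_x^{*}$) with the isomorphisms $H^{*}(Y)\simeq H^{*}(A)$ coming from the Albanese and with the wedge map, and the local analysis of the involution at its fixed points. The main obstacle is ensuring the two pillars of the argument hold in positive characteristic: on the geometric side, that the numerology of the Fano surface ($b_1$, $b_2$, $\chi(\mathscr{O})$, the reducedness of $\mathrm{Pic}^0(F(X))$ giving $h^1(\mathscr{O})=5$ and $H^1(\mathscr{O})\simeq H^1(A,\mathscr{O}_A)$, the bijectivity of $\Lambda^2 H^0(\Omega^1)\to H^0(\Omega^2)$) is available when $\mathrm{char}\,k\neq 2$, which requires care with the references; and on the analytic side, that the coherent Lefschetz fixed-point theorem applies — its ``no fixed point'' form used in the second part is characteristic-free, while the local-term form used in the first part needs the automorphism's order to be prime to the characteristic, which holds here since $\phi$ has order $2$ and $\mathrm{char}\,k\neq 2$ (alternatively, in the first part one lifts $(X,\phi)$ to characteristic zero and argues there). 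A minor point to dispose of is that $\mathrm{Stab}(Y)$ has no infinitesimal part, which is immediate from smoothness once $\mathrm{char}\,k\neq 2$.
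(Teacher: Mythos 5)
Your proposal is correct in substance, but it takes a genuinely different and heavier route than the paper's proof, which is very short. Writing $Y=s(F(X))$, for the second assertion the paper simply notes that $Y=x+Y$ forces $\Theta=x+\Theta$ for the theta divisor $\Theta=Y-Y$ and invokes Beauville's theorem that $\Theta$ is smooth away from a single point, so $x$ fixes that point and hence $x=0$; your stabiliser-plus-Lefschetz argument avoids the theta-divisor theorem but instead needs the coherent numerology of the Fano surface ($h^1(\mathscr{O}_F)=5$, $H^1(F,\mathscr{O})\simeq H^1(A,\mathscr{O}_A)$, the wedge isomorphism), and your case split is superfluous: any non-zero $x$ with $x+Y=Y$ gives a fixed-point-free translation acting trivially on all $H^q(Y,\mathscr{O})$, so the fixed-point-free coherent Lefschetz formula already yields $6=\chi(Y,\mathscr{O})=0$ without choosing a finite-order element or treating $\dim\mathrm{Stab}\geq 1$ separately. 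For the first assertion the paper argues as follows: the involution $a\mapsto -x-a$ restricts to an involution $i$ of $F(X)$ with finitely many fixed points, the quotient $F(X)/i$ has only isolated ordinary double points, so $K_{F(X)}=p^{*}K_{F(X)/i}$ for the quotient map $p$ and $K_{F(X)}^2=2\,K_{F(X)/i}^2$ is even, contradicting $K_{F(X)}^2=45$; your double Lefschetz computation ($64$ fixed points from Woods Hole, forcing the even trace $42$ on the $45$-dimensional $H^2(Y,\mathbf{Q}_{\ell})$) is at bottom the same kind of parity obstruction, hinging on the odd number $45$ ($b_2$ rather than $K^2$, the two being tied by Noether's formula). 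What the paper's route buys is economy of characteristic-$p$ inputs: it needs only $K^2=45$ and Beauville's result on $\Theta$, both available for characteristic $\neq 2$, whereas the positive-characteristic validity of $h^1(\mathscr{O}_F)=5$ and of the wedge isomorphism $\Lambda^2H^0(\Omega^1_F)\simeq H^0(\Omega^2_F)$ — which you correctly single out as the main obstacle — is precisely what your proof still has to source from the literature.
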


\begin{proof}
  We argue by contradiction.

  For the first assertion, if $x$ existed such that
  $-s(F(X))=x+s(F(X))$, then the involution $a\mapsto -x-a$ of~$A(X)$
  would induce an involution $i$ of the variety $F(X)$ with a finite
  number of fixed points. The quotient variety $F(X)/i$ is then a
  normal variety with only isolated ordinary double points as
  singularities. In particular, it is Gorenstein (see,
  e.g.,~\cite[Cor.\,21.19]{eisenbud}), so its canonical divisor
  $K_{F(X)/i}$, defined as the direct image of the canonical divisor
  of the smooth locus of~$F(X)/i$, is a Cartier divisor (see
  e.g~\cite[p.\,79]{kollar}).
  Since the projection
  $p\colon F(X)\to F(X)/i$ is étale outside of the set of fixed
  points, the canonical divisor of $F(X)$ is $K=p^*(K_{F(X)/i})$. This
  implies that $K^2=2(K_{F(X)/i}^2)$ is even. However, it is known
  that $K^2=45$, which is odd (see,
  e.g.,~\cite[Prop.\,4.6]{huybrechts}).
  \par
  For the second assertion, note that $s(F(X))=x+s(F(X))$ would
  imply that
  $$
  s(F(X))-s(F(X))=x+s(F(X))-s(F(X)),
  $$
  so that the theta divisor $\Theta(X)=s(F(X))-s(F(X))$
  \nomenclature[$T$]{$\Theta(X)$}{theta divisor}%
  \index{theta divisor}%
  satisfies $\Theta(X)=x+\Theta(X)$. However,
  Beauville~\cite[\S\,3,\,Prop.\,2]{beauville-theta} showed that
  $\Theta(X)$ is smooth except for a single singularity, so this
  equality can only happen if~$x=0$.
\end{proof}

\begin{remark}
  The cohomological analogue of this proposition is not true: for
  $\eps\in\{-1,1\}$, the cohomology class of $\eps s(F(X))$
  in~$H^6(A(X))$ is $\Theta^3/6$, where $\Theta$ is the cohomology
  class of the symmetric theta divisor $s(F(X))-s(F(X))$ (the fact
  that~$s(F(X))$ has the same class as $-s(F(X))$ is due to the fact
  that $x\mapsto -x$ acts trivially on even-degree cohomology groups;
  the computation in terms of~$\Theta$ is explained, e.g.,
  in~\cite[Cor.\,5.3.12,\,(i)]{huybrechts}).
\end{remark}

We now give a proof of Proposition~\ref{pr-kramer} adapting Krämer's
argument over~$\Cc$, the key point being the recognition criterion of
$\mathbf{E}_6$ in Proposition~\ref{pr-e6}.

\begin{proof}
  Since $F(X)$ is a smooth, projective and geometrically connected
  surface, the object~$M$ is a simple perverse sheaf on~$A(X)$. The
  tannakian dimension of~$M$ is equal to the Euler--Poincaré
  characteristic of $M$ over $\bar{k}$ (Proposition~\ref{pr-chi}), which
  is equal to the Euler--Poincaré characteristic of the Fano surface
  $F(X)$, which is~$27$ (a result of Fano, see,
  e.g.,~\cite[Prop.\,1.23]{altman-kleiman}).
  \par
  Let $\Theta(X)$ be the theta divisor $s(F(X))-s(F(X))$
  in~$A(X)$, and $i\colon \Theta(X)\to A(X)$ the closed immersion.  The
  object $M * M^{\vee}$ contains the object $N=i_*\Qlb[1]$ by the
  decomposition theorem (see~\cite[proof of Th.\,2]{kramer-e6}). This is
  also a simple perverse sheaf since $\Theta$ is a geometrically
  irreducible divisor (see, e.g.,~\cite[Prop.\,2]{beauville-theta}). The
  tannakian dimension of~$N$ can be computed as in~\cite[Cor.\,6]{kramer-e6}
  (or by lifting to characteristic~$0$, as can be done as in~\cite[Proof
  of Lemma\,5]{bombieri-sd}), and is equal to~$78$.
  \par
  To conclude using Proposition~\ref{pr-e6}, applied to the connected
  derived subgroup~$\bfG$ of $\garith{M}$, it suffices therefore to
  check that~$\bfG$ still acts irreducibly on the $27$-dimensional
  representation corresponding to~$M$.
  \par
  To see this, note that the neutral component~$(\garith{M})^{\circ}$
  acts irreducibly by Corollary~\ref{cor-translate-irred-var-ab}
  combined with Lemma~\ref{lm-beauville}. Then its derived group~$\bfG$ must
  also act irreducibly since
  $$
  (\garith{M})^{\circ}=C\cdot \bfG
  $$
  for some torus~$C$, which is central by irreducibility.
\end{proof}





It is natural to ask whether this proposition can also be proved using
the fourth moment criterion of Proposition~\ref{pr-larsen-e6} instead of
Krämer's criterion.
  
We have not fully succeeded in doing so, but we can show that the
question translates to an interesting geometric property of the cubic
threefolds. Conversely, this property follows in fact from the previous
proof, as we will now explain. 
  
In order to apply Proposition~\ref{pr-larsen-e6}, we need to check that
the object $M$ is not self-dual, that it has tannakian dimension~$27$
and that its fourth moment is $M_4(\garith{M})=3$.




Lemma~\ref{lm-beauville} implies that~$M$ is not self-dual. The second
property is derived as in the beginning of the previous proof. Now we
attempt to compute the fourth moment.
\par
We write $F=F(X)$ and~$A=A(X)$. We use the diophantine interpretation of
the fourth moment. Summing over all characters, we find as usual using
orthogonality that for $n\geq 1$, the formula
$$
\frac{1}{|A(k_n)|}\sum_{\chi\in\charg{A}(k_n)}|S(M,\chi)|^4=
\frac{1}{|k_n|^4} \sum_{\substack{(l_1,\ldots,l_4)\in
    F(k_n)^4\\s(l_1)-s(l_2)=s(l_3)-s(l_4)}}1
$$
holds. We rewrite this in the form
$$
\frac{1}{|k_n|^4} \sum_{(l_1,l_2)\in   F(k_n)^2}
N(l_1,l_2)
$$
where
$$
N(l_1,l_2)=|\{(l_3,l_4)\in F(k_n)^2\,\mid\,
s(l_1)-s(l_2)=s(l_3)-s(l_4)\}|.
$$
\par
For $s\in F(k_n)$, we have
$$
N(s,s)=|F(k_n)|,
$$
and hence
$$
\frac{1}{|k_n|^4} \sum_{\substack{(l_1,l_2)\in F(k_n)^2}} N(l_1,l_2)
=\frac{|F(k_n)|^2}{|k_n|^4} + \frac{1}{|k_n|^4} \sum_{l_1\not=l_2\in
  F(k_n)} N(l_1,l_2)
$$
\par
Since $F$ is a geometrically irreducible surface over~$k$, the first
term converges to~$1$ as $n\to+\infty$. To handle the second term,
consider the morphism given by the first projection
$$
f\colon F^2\times_A F^2\to F^2,
$$
where the fiber product is defined by the morphisms
$(l_1,l_2)\mapsto s(l_1)-s(l_2)$ and
$(l_3,l_4)\mapsto s(l_3)-s(l_4)$. 
\par
We then have
$$
N(l_1,l_2)=|f^{-1}(l_1,l_2)(k_n)|=|f^{-1}(l_1,l_2)(\bar{k})^{\Frob_{k_n}}|,
$$
the number of fixed points of the Frobenius of~$k_n$ acting on the
fiber of~$f$. The fiber product $F^2\times_A F^2$ contains the
diagonal $\Delta=\{(l_1,l_2,l_1,l_2)\}$, and we denote by
$\widetilde{f}$ the morphism obtained by restriction
$$
\widetilde{f}\colon (F^2\times_AF^2)\setminus \Delta\to F^2.
$$
\par
We then deduce that
$$
N(l_1,l_2)=1+\widetilde{N}(l_1,l_2)\quad\text{ where }\quad
\widetilde{N}(l_1,l_2)=|\widetilde{f}^{-1}(l_1,l_2)(\bar{k})^{\Frob_{k_n}}|,
$$
and hence
$$
\frac{1}{|k_n|^4} \sum_{\substack{(l_1,l_2)\in F(k_n)^2}} N(l_1,l_2)
=\frac{|F(k_n)|^2}{|k_n|^4} +\frac{|F(k_n)|(|F(k_n)|-1)}{|k_n|^4} +
\frac{1}{|k_n|^4} \sum_{l_1\not=l_2\in F(k_n)} \widetilde{N}(l_1,l_2).
$$
\par
A theorem of Beauville~\cite[Prop.\,8]{beauville-theta} implies that
$\widetilde{f}$ is generically finite of degree~$5$. By the Chebotarev
Density Theorem\index{Chebotarev density theorem} (see, e.g.,~\cite[Th.\,9.7.13]{katz-sarnak}) it
follows that
$$
\lim_{n\to+\infty}
\frac{1}{|k_n|^4} \sum_{l_1\not=l_2\in F(k_n)} \widetilde{N}(l_1,l_2)
$$
is equal to the number of orbits of the Galois group of the Galois
closure of $\widetilde{f}$ in its permutation
representation\index{permutation representation} on the
generic fiber of $\widetilde{f}$.
\par
The generic point~$\eta$ of $F^2$ is a pair of two disjoint lines
$\eta=(\widetilde{s}_1,\widetilde{s}_2)$. Beauville showed that the
points $(\widetilde{s}_3,\widetilde{s}_4)\in F^2$ such that
$(\widetilde{s}_1,\widetilde{s}_2,\widetilde{s}_3,\widetilde{s}_4)$ is
in the fiber over~$\eta$ are such that the lines $\widetilde{s}_3$ and
$\widetilde{s}_4$ are contained in the intersection~$S$ of $X$ and of
the projective $3$-space spanned by
$(\widetilde{s}_1,\widetilde{s}_2)$. Thus~$S$ is a smooth cubic surface,
and the lines $\widetilde{s}_3$ and $\widetilde{s}_4$ are elements of
the set $\Lambda$ of the five lines in~$S$ intersecting both
$\widetilde{s}_1$ and~$\widetilde{s}_2$; for these geometric facts,
see~\cite[p.\,203, rem.\,2]{beauville-theta} or~\cite[proof of
Cor.\,4.3.9]{huybrechts}.

We claim that the subgroup of the Galois group of the $27$
lines\index{$27$ lines of a smooth cubic surface} which
fixes the two lines $\widetilde{s}_1$ and $\widetilde{s}_2$ is the
Galois group of the set of seven lines
$\{\widetilde{s}_1,\widetilde{s}_2\}\cup \Lambda$ (see Lemma~\ref{lm-27}
below). Now it follows
from the work of Harris on Galois groups of enumerative problems
(see~\cite[p.\,718]{harris-enumerative}) that this permutation
representation is indeed transitive, in fact that it has image
isomorphic to the whole symmetric group $\mathfrak{S}_5$, \emph{if we
  take the base field to be $\Cc$ and the cubic threefold to be
  general}.
\par
One may expect this to also be true in our situation:
\begin{itemize}
\item the restriction of the base field should not be problematic
  (indeed, the fact that the ``generic'' Galois group of the $27$ lines
  on a smooth cubic surface is isomorphic to the Weyl group\index{Weyl group} of
  $\mathbf{E}_6$, which is the starting point of Harris's work, is known
  in all odd characteristics, by work of
  Achter~\cite[Prop.\,4.8]{achter-27});
\item the (four-dimensional) family of hyperplane
  sections\index{hyperplane sections} that we consider is dominant over
  the (also four-dimensional) space of hyperplane sections of the cubic
  hypersurface~$X$ (indeed, for any hyperplane~$H$ in $\Pp^4$
  intersecting $X$ in a smooth surface, we can pick two distinct lines
  $(l_1,l_2)$ in $X\cap H$, and the corresponding section is $H\cap X$);
\item and the family of all hyperplane sections of~$X$ is probably
  general enough for the result of Harris to extend. (This is in fact
  the most delicate point.)
\end{itemize}

If we assume that this expectation holds for~$X$, then we would deduce
that
\begin{equation}\label{eq-harris}
  \lim_{n\to+\infty}
  \frac{1}{|k_n|^4} \sum_{l_1\not=l_2\in F(k_n)} \widetilde{N}(l_1,l_2)
  =1
\end{equation}
holds, and hence conclude that
$$
\lim_{n\to+\infty} \frac{1}{|k_n|^4} \sum_{\substack{(l_1,l_2)\in
    F(k_n)^2}} N(l_1,l_2)=3.
$$
\par
Under this assumption, we therefore derive from
Proposition~\ref{pr-larsen-diophante} that $M_4(\garith{M})\leq 3$.
Since~$M$ is not of tannakian dimension~$1$, the fourth moment is 
equal to either~$2$ or~$3$. We can at least partially exclude the
first possibility as follows:

\begin{enumerate}
\item For ``most'' cubic threefolds, the abelian variety $A$ is
  absolutely simple (see Lemma~\ref{lm-simple-27} below for a precise
  statement). In this case, there are only finitely many characters
  $\chi\in\charg{A}$ which are not weakly-unramified, and for which
  $$
  |S(M,\chi)|=\Bigl|
  \frac{1}{|k_n|}\sum_{x\in F(k_n)}\chi(s(x))\Bigr|
  \ll |k_n|,
  $$
  so that
  $$
  \frac{1}{|\charg{A}(k_n)|}
  \sum_{\chi\notin \wunram{A}(k_n)}
  |S(M,\chi)|^4\ll \frac{|k_n|^4}{|k_n|^5}\to 0
  $$
  as $n\to+\infty$, and from Proposition~\ref{pr-larsen-diophante},
  the computation we have performed actually means that the fourth
  moment is equal to~$3$.
\item We may use the beginning of Krämer's proof to deduce that
  $M*M^{\vee}$ contains an irreducible summand of dimension~$78$,
  which excludes the possibility that the fourth moment be equal
  to~$2$.
\end{enumerate}

So under the above assumptions, we conclude that $M_4(\garith{M})=3$ and
we can then apply Proposition~\ref{pr-larsen-e6} (as in the previous
argument, we use Corollary~\ref{cor-translate-irred-var-ab} to deduce
that the neutral component of~$\garith{M}$ still acts irreducibly).

Now, going backwards, if we use Proposition~\ref{pr-kramer}, then we
\emph{do} know that the fourth moment of~$\garith{M}$ is equal to~$3$,
since the tannakian group is~$\mathbf{E}_6$. It follows that, at least
in the first of the above two situations, the limit
formula~(\ref{eq-harris}) must be true.

\begin{remark}
  In contrast with Theorem~\ref{thm:equidistribution-on-jacobians},
  Proposition~\ref{pr-kramer} will not extend to compute the fourth
  moment for perverse sheaves of the form $s_*M$ for a more general
  simple perverse sheaf $M$ on $F(X)$. One can expect that, in this
  case, the fourth moment should be equal to~$2$, but this seems
  difficult to prove.
\end{remark}

We now state and prove the two lemmas we used above. The first one is
certainly a standard fact in the study of the $27$ lines.

\begin{lemma}
  \label{lm-27}
  Let $S$ be a smooth cubic hypersurface in~$\Pp^3$ over an
  algebraically closed field. Let $l_1$ and $l_2$ be two
  disjoint lines in~$S$.  Let $\Lambda$ be the set of the five lines
  in~$S$ intersecting both~$l_1$ and~$l_2$. Any
  Galois-automorphism of the twenty seven lines that fixes the lines
  in $\{l_1,l_2\}\cup \Lambda$ is the identity.
\end{lemma}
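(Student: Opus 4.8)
The plan is to reduce the statement to an elementary computation in the Picard lattice of $S$, which makes transparent that the hypothesis ``general'' plays no role: the conclusion holds for every smooth cubic surface over an algebraically closed field. Recall the standard model: one has $S\cong \mathrm{Bl}_{p_1,\dots,p_6}\Pp^2$ for six points in general position, so that $\mathrm{Pic}(S)=\Zz L\oplus \Zz E_1\oplus\cdots\oplus\Zz E_6$ with $L^2=1$, $E_i^2=-1$, $L\cdot E_i=0$, and the twenty-seven lines are exactly the classes $a_i=E_i$, $b_i=2L-\sum_{j\ne i}E_j$, and $c_{ij}=L-E_i-E_j$ for $1\le i<j\le 6$; moreover the full automorphism group of the incidence configuration they form is $W(E_6)$, any Galois automorphism of the twenty-seven lines acts $\Zz$-linearly on $\mathrm{Pic}(S)$, and these line classes span $\mathrm{Pic}(S)$, so such an automorphism is uniquely determined by the induced permutation of the lines. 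I would cite Manin's \emph{Cubic Forms} (or Dolgachev's book, or Harris's paper on Galois groups of enumerative problems) for all of these classical facts.

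Since $W(E_6)$ acts transitively on ordered pairs of skew lines --- equivalently, any triple $(\ell_1,\ell_2,\Lambda)$ as in the statement is carried to any other by an element of $W(E_6)$ --- I may assume $\ell_1=a_1=E_1$ and $\ell_2=b_1=2L-E_2-\cdots-E_6$, which are skew because $a_1\cdot b_1=0$. A short incidence computation, using that $a_i$ meets $b_j$ exactly when $i\ne j$, that $a_i$ meets $c_{jk}$ exactly when $i\in\{j,k\}$, and that $b_i$ meets $c_{jk}$ exactly when $i\in\{j,k\}$, shows that the lines meeting $a_1$ are $\{b_2,\dots,b_6,c_{12},\dots,c_{16}\}$, those meeting $b_1$ are $\{a_2,\dots,a_6,c_{12},\dots,c_{16}\}$, and hence the five common transversals are $\Lambda=\{c_{1j}=L-E_1-E_j:\ j=2,\dots,6\}$.

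Now let $g$ be a Galois automorphism fixing each of $\ell_1$, $\ell_2$ and the five lines of $\Lambda$, and view $g$ as a linear isometry of $\mathrm{Pic}(S)$. From $g(E_1)=E_1$ and $g(L-E_1-E_j)=L-E_1-E_j$ one gets $g(L-E_j)=L-E_j$ for $j=2,\dots,6$. Summing these five identities gives $g(5L-E_2-\cdots-E_6)=5L-E_2-\cdots-E_6$; subtracting $g(\ell_2)=g(2L-E_2-\cdots-E_6)=2L-E_2-\cdots-E_6$ yields $g(3L)=3L$, hence $g(L)=L$ since $\mathrm{Pic}(S)$ is torsion-free; and then $g(L-E_j)=L-E_j$ forces $g(E_j)=E_j$ for $j=2,\dots,6$. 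Thus $g$ fixes the basis $L,E_1,\dots,E_6$ of $\mathrm{Pic}(S)$, so $g=\mathrm{id}$ on $\mathrm{Pic}(S)$ and in particular $g$ fixes all twenty-seven lines.

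There is no serious obstacle here; once the configuration is set up correctly the argument is purely linear algebra. The only points that warrant care, and which I would spell out by invoking the classical theory rather than reproving it, are (a) that a Galois automorphism of the twenty-seven lines acts linearly on $\mathrm{Pic}(S)$ and is determined by its effect on the lines --- which rests on the fact that the line classes span $\mathrm{Pic}(S)$ and that $W(E_6)$ is exactly the automorphism group of the incidence graph --- and (b) the legitimacy of the reduction to the model pair $(a_1,b_1)$, which follows from transitivity of $W(E_6)$ on ordered pairs of skew lines.
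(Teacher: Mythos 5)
Your proof is correct, but it goes by a different mechanism than the paper's. The paper normalizes the disjoint pair as two exceptional curves $\ell_1=E_1$, $\ell_2=E_2$ of a blow-down structure (citing that any two disjoint lines can be so realized), so that $\Lambda=\{L_{12},L_3,\dots,L_6\}$, and then argues purely combinatorially with the incidence table: a fixed-line automorphism must fix each $E_i$ (only $E_1,E_2,E_3$ meet $L_4,L_5,L_6$, etc.), then each $L_i$, then each $L_{ij}$ — no appeal to $\Pic(S)$ at all. You instead normalize the pair as $(E_1,\,2L-E_2-\cdots-E_6)$ via transitivity of $W(\mathbf{E}_6)$ on ordered skew pairs, promote the permutation to an isometry of $\Pic(S)$ using the identification of the configuration automorphism group with $W(\mathbf{E}_6)$ and the fact that the line classes span, and then kill it by linear algebra (it fixes the basis $L,E_1,\dots,E_6$, hence is trivial on $\Pic(S)$, hence fixes all $27$ distinct line classes). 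Both arguments are sound and both confirm that ``general'' is not needed; the trade-off is that your route buys a shorter, more structural deduction (triviality on the Picard lattice) at the cost of two extra classical inputs — that incidence-preserving permutations of the $27$ lines extend uniquely to lattice isometries, and transitivity on ordered skew pairs — whereas the paper's route is more self-contained, needing only the incidence table and the existence of a blow-down adapted to the two given disjoint lines. One small stylistic caution: your reduction via transitivity could equally well be avoided by taking the paper's normalization $(E_1,E_2)$, where the same lattice computation goes through with $\Lambda=\{L-E_1-E_2,\,2L-\textstyle\sum_{j\neq i}E_j\ (i\geq 3)\}$, slightly reducing the list of facts you must cite.
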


\begin{proof}
  The key point in this computation is the fact that no line on~$S$ is
  disjoint from all lines in $\Lambda$.  More precisely, we use the
  classical description of $S$ as a blow-up of $\Pp^2$ in six points
  which are in general position (see,
  e.g.,~\cite[Prop.\,3.2.3]{huybrechts}), and the resulting partition
  of the $27$ lines in subsets
  \begin{gather*}
    E_1, \ldots, E_6\\
    L_{ij},\quad 1\leq i<j\leq 6\\
    L_1,\ldots, L_6,
  \end{gather*}
  with incidences described as follows:
  \begin{gather*}
    E_i\cap L_j\not=\emptyset\text{ if and only if } i\not=j,\\
    E_i\cap L_{i,j}\not=\emptyset\text{ for any } j,\\
    L_i\cap L_{i,j}\not=\emptyset\text{ for any } j,\\
    L_{i,j}\cap L_{k,l}\not=\emptyset\text{ for } \{i,j\}\cap
    \{k,l\}=\emptyset,
  \end{gather*}
  all other pairs of lines being disjoint (see,
  e.g.,~\cite[Rem.\,3.2.4, 3.3.1]{huybrechts}).
  \par
  We choose the blow-up, as we may, so that $l_1=E_1$
  and~$l_2=E_2$ (see~\cite[3.3.2]{huybrechts}). We then have
  $\Lambda=\{L_{12},L_3,L_4,L_5,L_6\}$.
  \par
  Let $\sigma$ be a Galois automorphism which fixes the seven given
  lines. Since $\sigma$ respects incidence relations, we see:
  \begin{enumerate}
  \item For any $i$, we have $\sigma(E_i)=E_i$. Indeed, assume
    that~$i=3$ for simplicity, since all cases are similar. Then $E_3$
    meets $L_4$, $L_5$, $L_6$, which implies that $\sigma(E_3)$ also
    intersects these three lines. But the only lines with this
    property are $E_1$, $E_2$ and~$E_3$; since $\sigma$ fixes the
    first two of these, we have $\sigma(E_3)=E_3$.
  \item For any $i$, we have $\sigma(L_i)=L_i$. Indeed, assume that
    $i=1$; from the previous point, the lines $L_{12}$, $E_2$, \ldots,
    $E_6$, which all meet $L_1$, are fixed by $\sigma$, so that
    $\sigma(L_1)$ fixes all of them. We see that the only line with this
    property is $L_1$, so that $\sigma(L_1)=L_1$.
  \item For any $i<j$, we have $\sigma(L_{i,j})=L_{i,j}$. We consider
    the example of $L_{1,3}$, the other cases being similar. The lines
    $E_1$, $E_3$, $L_1$, $L_3$ all meet $L_{1,3}$, and hence (by the first
    two points) also intersect $\sigma(L_{1,3})$. But this means that
    $\sigma(L_{1,3})$ must of one of the $L_{i,j}$, and the only one
    that has the desired property is $L_{1,3}$.
  \end{enumerate}
\end{proof}

The second lemma concerns the ``generic'' simplicity of the intermediate
jacobian. Explicit examples that show that this property is not always
valid are given for instance by Debarre, Laface and
Roulleau~\cite[Cor.\,4.12]{d-l-r}; for the Fermat threefold
$$
x_0^3+\cdots+x_4^3=0
$$
over $\Ff_p$, with $p\geq 5$, the intermediate jacobian is isogenous to
$E^5$, where $E$ is the Fermat curve $y_0^3+y_1^3+y_2^3=0$.
\index{Fermat curve and threefold}

\begin{lemma}\label{lm-simple-27}
  Let $k$ be a finite field of characteristic~$p>11$.  Let~$\mathcal{M}$
  be the coarse moduli space of smooth projective cubic threefolds
  over~$k$. For any integer~$n\geq 1$, let $ \mathcal{M}_s(k_n)$ be the
  subset of $X\in\mathcal{M}(k_n)$ such that the abelian variety $A(X)$
  is simple over~$k_n$.
  \par
  There exists $\delta>0$ such that the asymptotic formula
  $$
  |\mathcal{M}_s(k_n)|=|\mathcal{M}(k_n)|(1+O(|k_n|^{-\delta}))
  $$
  holds for $n\geq 1$.
\end{lemma}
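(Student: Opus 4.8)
The plan is to deduce the lemma from two facts: that the intermediate jacobian of a cubic threefold is simple over $k_n$ whenever its Frobenius characteristic polynomial is irreducible over $\Qq$, and that the family of smooth cubic threefolds has large geometric monodromy; the counting is then a ``large sieve for Frobenius'' argument.

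First I would pass from the coarse moduli space to an honest variety. Let $U\subset\Pp^{34}_k$ be the open subscheme parameterizing smooth cubic threefolds $X\subset\Pp^4$; it is a smooth, geometrically irreducible quasi-projective variety of dimension $34$, and it carries a lisse $\ell$-adic sheaf $\mcL$ of rank $10$ realizing the $\ell$-adic Tate modules of the intermediate jacobians $A(X_u)$ of the members of the universal family $f$ (a Tate twist of $R^3f_*\Qlb$). Since $\PGL_5$ acts on $U$ with trivial stabilizer outside a proper closed subvariety, one has $|\mathcal{M}(k_n)|=|U(k_n)|/|\PGL_5(k_n)|+O(|k_n|^{\dim\mathcal{M}-1})$ with $\dim\mathcal{M}=10$, and the same relation ties $\mathcal{M}_s(k_n)$ to the number of $u\in U(k_n)$ with $A(X_u)$ simple. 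The characteristic polynomial $P_u(T)=\det(T-\Fr_{k_n}\mid A(X_u))\in\Zz[T]$ depends only on $u$, and by Poincaré's complete reducibility theorem, if $P_u$ is irreducible over $\Qq$ then $A(X_u)$ is $k_n$-simple (a decomposition $A(X_u)\sim B_1\times B_2$ with $\dim B_i\geq 1$ would factor $P_u$). Hence it suffices to prove
\[
  \#\{u\in U(k_n): P_u\text{ is reducible over }\Qq\}\ll|U(k_n)|\,|k_n|^{-\delta}
\]
for some fixed $\delta>0$, uniformly in $n$.

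The geometric input I would use is that the geometric monodromy group of $\mcL$ is Zariski-dense in $\Sp_{10}$. Over $\Cc$ this is classical (Beauville and others); under the hypothesis $p>11$ the relevant moduli has good reduction, so the statement transports to characteristic $p$ by a specialization argument, and by Nori's theorem (or Matthews--Vaserstein--Weisfeiler) the reduction of the monodromy modulo $\ell$ is then the full group $\Sp_{10}(\Ff_\ell)$ for all large $\ell$. Granting this, the rest is a standard sieve in the style of Chavdarov and Kowalski. If $P_u$ is reducible over $\Qq$, then $P_u\bmod\ell$ is reducible over $\Ff_\ell$ for every $\ell$, i.e. $\rho_\ell(\Fr_u)$ has reducible characteristic polynomial over $\Ff_\ell$, where $\rho_\ell\colon\pi_1(U)\to\Sp_{10}(\Ff_\ell)$ is the mod-$\ell$ monodromy. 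A purely group-theoretic count shows that the proportion of $g\in\Sp_{10}(\Ff_\ell)$ whose characteristic polynomial is irreducible over $\Ff_\ell$ is bounded below by an absolute constant $c>0$, independent of $\ell$. Feeding this, together with independence across primes (Goursat's lemma and the absence of common composition factors among the groups $\Sp_{10}(\Ff_\ell)$, so the geometric monodromy of a product of the $\rho_\ell$ is the full product), into Kowalski's large sieve inequality for Frobenius conjugacy classes --- whose error term grows only polynomially in the sieving range, by standard bounds on Betti numbers of the covers $U[\ell]$ --- and taking sieving primes up to a small power $|k_n|^{\eta}$, one obtains the required power saving $\delta=\delta(\eta)>0$. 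Dividing by $|\PGL_5(k_n)|$ and absorbing the negligible contribution of cubics with nontrivial automorphisms (a proper closed subvariety) finishes the proof.

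I expect the main obstacle to be precisely the monodromy input in positive characteristic: one must know that $H^3$ of smooth cubic threefolds over $\bar k$ carries the full symplectic geometric monodromy, and it is here that $p>11$ enters, ensuring good reduction of the relevant moduli and a clean comparison with the characteristic-zero situation (consistently with Achter's analysis of the monodromy of the $27$ lines). Everything downstream --- the reduction to irreducibility of $P_u$, the statistics of characteristic polynomials in $\Sp_{2g}(\Ff_\ell)$, and the large-sieve bookkeeping --- is routine and essentially contained in the literature on generic simplicity of abelian varieties in families.
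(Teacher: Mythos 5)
Your route is essentially the paper's: reduce simplicity of $A(X)$ over $k_n$ to irreducibility over $\Qq$ of the characteristic polynomial of Frobenius on $H^1(A(X)_{\bar k},\Qlb)$, feed the full symplectic monodromy $\Sp_{10}$ of the family into the large sieve for Frobenius (Kowalski~\cite[\S\,6,\,\S\,8]{kowalski-sieve}, in the Chavdarov style you describe), and conclude with a power saving; your extra bookkeeping with the parameter space $U\subset\Pp^{34}$ and the $\PGL_5$-quotient is a more explicit way of handling the coarse moduli space than the paper, which simply works with the sheaf on $\mathcal{M}$, and it is harmless. The one step where your argument is weaker than what you would need is the monodromy input in characteristic $p$: ``the statement transports to characteristic $p$ by a specialization argument'' is not a proof over a non-proper base at a \emph{fixed} prime --- specialization of $\pi_1$ only gives clean comparisons for proper families or under tameness along the boundary, and the containment you need between the char-$0$ and char-$p$ monodromy groups is not formal, so Nori/MVW applied to the characteristic-zero image does not by itself control the mod-$\ell$ image of $\pi_1(U_{\bar{\Ff}_p})$. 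This is exactly the point the paper settles by citing Achter~\cite[Th.\,4.3]{achter-27}, whose proof obtains $\Sp_{10}$ in odd characteristic via Collino's degeneration of intermediate jacobians to the hyperelliptic locus $\mathcal{H}_5\subset\mathcal{A}_5$ (where full monodromy is known in characteristic $p$), together with semicontinuity of monodromy; since you already point to Achter's analysis, the fix is simply to replace your specialization sketch by that reference, after which everything downstream in your sieve argument matches the paper's proof.
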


\begin{proof}
  Fix a prime $\ell$ invertible in~$k$.  Let $\mcF$ be the lisse
  $\ell$-adic sheaf on~$\mathcal{M}$ parameterizing the cohomology
  group~$H^1(A(X)_{\bar{k}},\bQl)$. The geometric monodromy group
  of~$\mcF$ is the symplectic group~$\Sp_{10}$ by a result of
  Achter~\cite[Th.\,4.3]{achter-27} (based on semicontinuity of
  monodromy\index{semicontinuity of monodromy} and the extension to positive odd characteristic of a result
  of Collino~\cite{collino} over~$\Cc$, which states that the
  Zariski-closure of the image of~$\mathcal{M}$ in the moduli space
  $\mathcal{A}_5$ of principally polarized abelian varieties of
  dimension~$5$ contains the locus~$\mathcal{H}_5$ of jacobians of
  hyperelliptic curves of genus~$5$).

  Using the method in~\cite[\S\,6,\,\S\,8]{kowalski-sieve}, this implies
  that there exists $\delta>0$ such that the set $\mathcal{M}_i(k_n)$ of
  threefold $X\in\mathcal{M}(k_n)$ for which the characteristic
  polynomial of Frobenius acting on $H^1(A(X)_{\bar{k}},\bQl)$ is
  irreducible in~$\Qq[X]$ satisfies the asymptotic
  $$
  |\mathcal{M}_i(k_n)|=|\mathcal{M}(k_n)|(1+O(|k_n|^{-\delta}))
  $$
  for $n\geq 1$, and one deduces the lemma since
  $\mathcal{M}_i(k_n)\subset \mathcal{M}_s(k_n)$.
\end{proof}

\begin{remark}
  A qualitative form of the result, namely the equality
  $$
  \lim_{n\to +\infty}\frac{|\mathcal{M}_s(k_n)|}{|\mathcal{M}(k_n)|}=1,
  $$
  can be proved, \emph{mutatis mutandis}, for finite fields of all odd
  characteristic.  It also possible to improve this estimate to obtain
  absolute simplicity.
\end{remark}

\chapter{``Much remains to be done''}\label{sec-problems}

We conclude this book with a selection of open problems (related to
the results of this text) and questions (concerning potential
generalizations and more speculative possibilities).


\section{Problems}

\begin{enumerate}
\item \label{pb1} Prove a version of the vanishing theorem where the size
  of the exceptional sets is controlled by the complexity in all cases,
  and moreover where those sets have a clear algebraic or geometric
  structure (similar to that of \tacs\ for tori and abelian varieties).
\item Prove that any object is generically unramified for a general
  group, or find a counter-example to this statement.
\item Establish functoriality properties relating tannakian groups of
  $M$ on~$G$ (resp. $N$ on~$H$) with those of $f_*M$ (resp. of $f^*N$)
  when we have a morphism $f\colon G\to H$ of commutative algebraic
  groups.
\item Study the situation in families over a base like $\Spec(\Zz)$.
\item Find additional and robust tools to compute the tannakian group,
  or at least to determine some of its properties, which are applicable
  when Larsen's Alternative is not. In particular, find analogues (if
  they exist) of the local monodromy techniques for the Fourier
  transform on $\Gg_a$ (i.e, of the local Fourier transform functors of
  Laumon).

  We note that recent work of Lawrence and Sawin~\cite{LS} and
  Javanpeykar, Krämer, Lehn and Maculan~\cite{jklm} computes the
  tannakian group of many objects of the form $i_*\Qlb[d]$ on abelian
  varieties over~$\Cc$, where $i\colon X\to A$ is a closed immersion
  which is either a hypersurface (in the case of~\cite{LS}) or has
  dimension $<(\dim(A)-1)/2$ (in the case of~\cite{jklm}). It should be
  possible to extend their results to the situation over finite fields,
  and it would be interesting to see if it also leads to more cases with
  other perverse sheaves with similar support conditions. Moreover,
  Ji~\cite{ji} has also performed similar computations for $\Gg_m^n$.
  
\item Construct interesting concrete perverse sheaves where the
  tannakian group is an exceptional group. In this direction, we note
  that automorphic methods have been used by Heinloth, Ngô and
  Yun~\cite{h-n-y} to construct sheaves on~$\Gg_a$ with any of the
  exceptional groups as geometric monodromy groups, hence (taking
  inverse Fourier transforms of these) also to sheaves with these as
  tannakian groups. Moreover, Katz has shown the existence of examples
  involving $G_2$, for $\Gg_a$~\cite[Th.\,11.1]{gkm},
  $\Gg_m$~\cite[Ch.\,26,\,27]{mellin} and on some elliptic
  curves~\cite[Th.\,4.1]{katz_elliptic}. In the case of~$\Gg_m$, his
  result is of a ``statistic'' nature: in a certain family of objects
  whose trace functions are related to hypergeometric sums
  \emph{evaluated at a fixed~$a$}, he shows that for ``most'' values
  of~$a$ in~$\Gg_m$, the tannakian group is~$G_2$. After the first
  draft of this book was written, Zurbuchen~\cite{zurbuchen} improved
  this result by showing that (as expected by Katz) any (non-zero)
  value of~$a$ has the desired property, provided the characteristic
  of the finite field is large enough.
\item Find further applications!
\end{enumerate}

\section{Questions}

Many of the following questions are rather speculative and much more
open-ended that the problems above. They may not have any interesting
answer, but we find them intriguing.

\begin{enumerate}
\item \textit{For a given~$G$, what are the tannakian groups that may
    arise?}
  \par
  This is motivated in part by the striking difference concerning finite
  groups between $\Gg_a$ and $\Gg_m$ or abelian varieties. In the former
  case, the solution of Abhyankar's Conjecture gives a characterization
  of which finite groups will appear, and a recent series of works of
  Katz, Rojas-Léon and Tiep has shown that there are many possibilities,
  even when one restricts attention to Fourier transforms of general
  hypergeometric sheaves (see for instance~\cite{kt}) and $\Gg_m$). On
  the other hand, we have already mentioned that Katz proved that finite
  cyclic groups are the only possible finite geometric tannakian groups
  on~$\Gg_m$, and Corollary~\ref{cor-support-fingp} is a statement going
  in a similar direction for abelian varieties (although not yet as
  precise since it requires \emph{a priori} that the arithmetic
  tannakian group be finite).
\item \textit{For a given group~$G$, if $M$ is a
    semisimple perverse sheaf associated to a semisimple lisse
    sheaf~$\mcF$ on an open dense subset of~$G$, what (if any) are the
    relations between the ``ordinary'' monodromy group~$\Gg$ of~$\mcF$
    and its tannakian groups?}
  \par
  In particular, suppose that $\mcF$ has finite monodromy group; what
  constraints does that impose on the tannakian group of~$M$?  We note
  that there is one ``obvious'' relation: the tannakian group acts
  irreducibly on its standard representation if and only if the lisse
  sheaf~$\mcF$ is irreducible.
  \par
  Since this last fact amounts to the discrete Plancherel
  formula,\index{discrete Plancherel formula} or
  equivalently to a relation between the second moments of both
  groups, a more specific question could be: are there non-trivial
  inequalities between the moments of the monodromy group of~$\mcF$
  and those of the tannakian group? For instance, does there exist a
  constant~$c\geq 0$, independent of the size of the finite field $k$,
  such that
  $$
  M_{4}(\garith{M})\leq cM_4(\Gg),\quad \text{ and (or) }\quad
  M_{4}(\Gg)\leq cM_4(\garith{M})\, ?
  $$
  \par
  One can get trivial bounds, similar to the bounds for the norm of the
  discrete Fourier transform on $G(k_n)$ when viewed as a map from
  $L^{2m}(G(k_n))$ to $L^{2m}(G(k_n))$ for $m>1$ and $n$ varying, but
  this norm has been determined by Gilbert and
  Rzeszotnik~\cite[Th.\,2.1]{gilbert-rzeszotnik} and depends on~$n$.
  (On the other hand, a referee has pointed out that certain heuristic
  examples indicate that the question is most likely ``No'', and it will
  be interesting to confirm this rigorously.)

  Of course, the meaning of ``relation'' between the ordinary and
  tannakian groups could encompass very different aspects, and it is of
  interest to note that the papers of Lawrence and Sawin and of
  Javanpeykar, Krämer, Lehn and Maculan use their computations of
  tannakian groups of certain objects on abelian varieties to study the
  ordinary monodromy groups of families of subvarieties of abelian
  varieties (see~\cite[Th.\,5.6]{LS} and~\cite[Th.\,4.10]{jklm} for
  details).
\item \textit{Can one construct a ``natural'' fiber functor $\omega$ on the
    tannakian category for~$G$, similar to Deligne's fiber
    functor\index{Deligne's fiber functor} for $\Gg_m$? }
  \par
  This would lead to a definition of Frobenius conjugacy classes for all
  characters (by considering for any~$\chi$ the conjugacy class
  in~$\garith{M}$ corresponding to the fiber functor defined by
  $M\mapsto \omega(M_{\chi})$), and potentially provide useful extra
  information to help determine the tannakian group. This is not even
  clear in the case of~$\Gg_a$, but C. Ji~\cite[Th.\,2.9]{ji} has
  constructed an analogue of Deligne's fiber functor for $\Gg_m^n$ for
  all integers~$n\geq 1$.
\item \textit{Can one find an \emph{a priori} characterization of the
    families $(f_n)_{n\geq 1}$, where
    $f_n\colon \widehat{G}(k_n)\to \Cc$ is a function, that arise as the
    arithmetic Fourier transforms of trace functions of complexes, or of
    perverse sheaves, on~$G$?}
  \par
  More generally, is there a natural ``geometric'' object, with
  appropriate notions of sheaves, etc, on the ``space'' of characters
  of~$G$? A crucial test for such a geometric interpretation of the
  discrete Fourier transforms would be the definition of an inverse
  transform.

  This geometric description exists when $G$ is unipotent, since the
  Serre dual $G^{\vee}$ is also a commutative algebraic group, and the
  Fourier transform is defined as a functor from $\Der(G)$ to
  $\Der(G^{\vee})$, but such a strong ``algebraicity'' property does not
  hold for other commutative algebraic groups (see for
  instance~\cite[Example\,1.8]{boyarchenko-drinfeld}, or
  Remark~\ref{rm-lhat-different}).

  There are however some hints in a more positive direction:
  \begin{enumerate}
  \item Gabber and Loeser~\cite[Th.\,3.4.7]{GL_faisc-perv} have
    characterized perverse sheaves on tori in terms of the structure of
    their (coherent) Mellin transforms (which can also be defined for
    semiabelian varieties), and Loeser~\cite[Ch.\,4]{loeser-determinant}
    has defined a variant over finite fields taking the Frobenius
    automorphism into account.
    \par
    It would be of considerable interest to understand better the
    (essential) image of these Mellin transforms, and to obtain a
    geometric form of Mellin inversion in this context.
  \item Considering the well-established analogy of $\ell$-adic
    sheaves with $\mathcal{D}$-modules (the basic setup of Katz's
    work~\cite{esde}), it is well-understood in the complex setting
    that the Mellin transform of a $\mathcal{D}$-module is a
    difference equation (e.g., the Mellin transform $\Gamma(s)$ of the
    exponential satisfies the difference equation
    $\Gamma(s+1)=s\Gamma(s)$); see for instance the paper of Loeser
    and Sabbah~\cite{loeser-sabbah}.
  \end{enumerate}
    
\item \textit{Is there an analogous theory for non-commutative algebraic groups?}
  \par
  For instance, let $G$ be a reductive group over a finite field~$k$,
  such as~$\SL_d(k)$. Deligne--Lusztig Theory parameterizes the
  irreducible representations of $G(k_n)$ (or some other more convenient
  basis of the $\ell$-adic representation ring) in terms of pairs
  $(\Tt,\theta)$ of a maximal torus of $G$ over~$k$ and a
  character~$\theta$ of $\Tt(k)$ (see for
  instance~\cite[Ch.\,7]{carter}), and the corresponding series of
  representations have (essentially) constant dimension as $\theta$
  varies, so that the character values in such series are suggestively
  sums of a fixed number of roots of unity. The theory of character
  sheaves of Lusztig gives a geometric form of this theory.
  \par
  Are there equidistribution statements for the Fourier coefficients of
  suitably algebraic conjugacy-invariant functions on $G(k_n)$?  In the
  case of characteristic functions of conjugacy classes, this might lead
  to interesting consequences concerning the error term in the
  Chebotarev Density Theorem for Galois extensions with Galois group of
  the form $G(k_n)$.
  \par
  In the case of (possibly non-commutative) unipotent groups, the Serre
  dual still exists as a unipotent group; a theory of character sheaves,
  and of the Fourier transform has been studied by Lusztig and
  Boyarchenko--Drinfeld (see for instance the
  survey~\cite{boyarchenko-drinfeld}).
\item \textit{Is there an analogue of automorphic duality for other
    groups than $\Gg_a$?}
  \par
  What we mean by this is the following: in the case of a simple
  middle extension sheaf~$\mcF$ on~$\Gg_a$ over a finite field~$k$ that
  is pure of weight zero and not geometrically isomorphic to an
  Artin--Schreier sheaf, there is (by the Langlands
  correspondence,\index{Langlands correspondence} due to Lafforgue in
  this generality) an automorphic representation~$\pi$\index{automorphic
    representation} on some general
  linear group over the adèle ring of~$k(t)$ such that (among other
  properties) the $L$-functions of (twists of)~$\pi$ coincide (up to
  normalization) with those of (twists of) the Fourier transform
  of~$\mcF$. Automorphic methods and results are then available to study
  the Fourier transform of~$\mcF$.
  \par
  If $G$ is a commutative algebraic group which is different
  from~$\Gg_a$, are there objects of a similar nature as automorphic
  forms and representations that would ``correspond'' to the arithmetic
  Fourier transform of suitable perverse sheaves on~$G$?  Such objects
  would presumably have some kind of $L$-function, which would coincide
  with the $\lhat$-function that we have defined. In particular, is
  there such a theory for $\Gg_m$?
\end{enumerate}


\appendix
\numberwithin{equation}{chapter}

\chapter{Review of perverse sheaves}
\label{ch-app-perverse}

In this appendix, we summarize the basic definitions and facts about
$\ell$-adic perverse sheaves.  The fundamental reference for this
material is the work of Beillison, Bernstein, Deligne and
Gabber~\cite{BBD-pervers}. Other useful summaries of perverse sheaves
are provided by Katz in~\cite[\S 2.1\,to\,2.3]{katz-rls} and
in~\cite[\S\,1.1,\,1.2,\,1.5]{katz:MMP}. For basic material on trace
functions in this context, see also~\cite[\S\,1.1]{laumon-signes}.

\section{Complexes of $\ell$-adic sheaves}

In this appendix, we work over a field $k$ of characteristic $p$ and
fix a prime $\ell\neq p$. For $X$ a separated scheme of finite type
over $k$, one can define the triangulated category of complexes of
$\ell$-adic sheaves $\Der(X)=\Der(X,\Qlb)$.

For $M\in\Der(X)$, we write $\mcH^n(M)$ for the $n$-th cohomology sheaf
of $M$, which is an $\ell$-adic constructible sheaf. We denote by
$\tau^{\leq n}$ and $\tau^{\geq n}$ the truncation functors; for every
object~$M$, we have canonical maps $\tau^{\leq n}(M)\to M$ and
$M\to \tau^{\geq n}(M) $. The composite functor
$\tau^{\geq 0}\circ \tau^{\leq 0}$ is canonically isomorphic to
$M\mapsto \mcH^0(M)$.

For varying $X$, the categories $\Der(X)$ satisfy all the properties of
Grothendieck's formalism of the six functors (see~\cite[1.12]{D-WeilII}
or~\cite[2.2.18]{BBD-pervers} in the case when $k$ is finite or
algebraically closed, which suffices for this book).

More precisely, $\Der(X)$ is endowed with two bifunctors 
\[
  (M,N)\mapsto \mathrm{RHom}(M,N),\quad\quad (M,N)\mapsto M\otimes N
\]
from $\Der(X)\times \Der(X)$ to $\Der(X)$, and for a morphism
$f \colon X\to Y $ of finite type, we have functors
\[
  M\mapsto Rf_*M\quad\quad M\mapsto Rf_!M
\]
from $\Der(X)$ to $\Der(Y)$, and functors
\[
M\mapsto f^*M\quad\quad M\mapsto f^!M
\]
from $\Der(Y)$ to $\Der(X)$. These functors satisfy the usual
compatibilities and adjunctions.

The dualizing complex for $X$ is defined to be $s^!\Qlb$, where
$s\colon X\to \Spec(k)$ is the structure morphism, and the Verdier dual
of $M\in \Der(X)$ is $\dual(M)=\mathrm{RHom}(M,s^!\Qlb)$. When $X$ is
smooth of pure dimension $d$, there is a canonical isomorphism
\begin{equation}
\label{eqA:duality}
s^!\Qlb\simeq \Qlb(d)[2d].
\end{equation}

Let $s\colon X\to \Spec(k)$ be the structure morphism. For any
object~$M$ of~$\Der(X)$ and~$i\in\Zz$, the $i$-th \emph{cohomology
  group} of~$X$ with coefficients in~$M$ (resp. \emph{cohomology group
  with compact support of~$X$ with coefficients in~$M$}) is given by
$$
\rmH^i(X,M)=\mcH^i(s_*M),\quad\quad
\rmH^i_c(X,M)=\mcH^i(s_!M),
$$
where we identify $\ell$-adic sheaves on~$\Spec(k)$ with $\Qlb$-vector
spaces.

When $X$ is a smooth curve, two other important results (the
Euler--Poincaré characteristic formula and Laumon's product formula for
epsilon factors) which are used in this book will be reviewed in
Appendix~\ref{ch-app-product}.

\section{Perverse sheaves}

\begin{definition}
  A complex $M\in \Der(X)$ is said to be \emph{semiperverse} if its
  cohomology sheaves satisfy
\[\dim \supp(\mcH^i(M))\leq -i \text{ for every }i\in \Zz,
\]
and $M$ is said to be \emph{perverse} if both $M$ and $\dual(M)$ are
semiperverse (see~\cite[(4.0.1')]{BBD-pervers}).

We denote by $\Perv(X)$ the full subcategory of perverse sheaves
in~$\Der(X)$, by ${}^\mathfrak{p}D^{\leq 0}(X)$ the full subcategory
of semiperverse sheaves, and by ${}^\mathfrak{p}D^{\geq 0}(X)$ the
full subcategory of objects $M$ such that $\dual(M)$ is semiperverse.
\end{definition}

\begin{theorem}
  The data of ${}^\mathfrak{p}D^{\leq 0}(X)$ and
  ${}^\mathfrak{p}D^{\geq 0}(X)$ give rise to a $t$-structure on
  $\Der(X)$. Its heart
  $\Perv(X)={}^\mathfrak{p}D^{\leq 0}(X)\cap {}^\mathfrak{p}D^{\geq
    0}(X)$ is therefore an abelian category.
\end{theorem}

\begin{example}
  Suppose that $X$ is smooth of pure dimension $d$, and let $\mcF$ be
  a lisse $\ell$-adic sheaf on $X$. Then the complex $\mcF[d]$ (i.e.,
  the sheaf~$\mcF$ put in degree~$-d$) is a perverse sheaf.

  Indeed, $\mcF[d]$ is clearly semiperverse and by (\ref{eqA:duality}),
  we see that $\dual(\mcF[d])=\mcF^\vee(d)[d]$, where $\mcF^\vee$ is the
  dual lisse sheaf of $\mcF$, so the dual of $\mcF[d]$ is also
  semiperverse.
\end{example}

If~$M$ is a complex in~$\Der(X)$ with support~$Y\subset X$, then there
exists an open dense subset~$U$ of~$Y$ such that the restriction
of~$M$ to~$U$ is lisse, i.e., all of the cohomology sheaves of~$M|U$
are lisse sheaves. We then say that \emph{$M$ is lisse on~$U$}.

One also defines
\[
{}^\mathfrak{p}D^{\leq n}(X)={}^\mathfrak{p}D^{\leq 0}(X)[n]
\text{ and }
{}^\mathfrak{p}D^{\geq n}(X)={}^\mathfrak{p}D^{\geq 0}(X)[n].
\]

The inclusion functors ${}^\mathfrak{p}D^{\leq n}(X)\subset \Der(X)$
and ${}^\mathfrak{p}D^{\geq n}(X)\subset \Der(X)$ admit right and left
adjoints, called the \emph{perverse truncation functors}, which are
denoted
\[
{}^\mathfrak{p}\tau^{\leq n}\colon \Der(X)\to {}^\mathfrak{p}D^{\leq n}(X)
\text{ and }
{}^\mathfrak{p}\tau^{\geq n}\colon \Der(X)\to {}^\mathfrak{p}D^{\geq n}(X).
\]
 
\begin{definition}
  The $n$-th \emph{perverse cohomology sheaf} of a complex
  $M\in \Der(X)$ is the perverse sheaf
  \[
    \pH^n(M)=\tau^{\leq 0}\tau^{\geq 0}(M[n])\in \Perv(X).
  \]
\end{definition}
 
Given a distinguished triangle $M\to N\to L\to$ in $\Der(X)$, we have a
long exact sequence
\begin{equation}
  \cdots \to \pH^i(M)\to \pH^i(N)\to \pH^i(L)\to \pH^{i+1}(M)\to \cdots
\end{equation}
of perverse cohomology sheaves.

Let~$M$ be a perverse sheaf on~$X$. From general principles, there are
convergent spectral sequences
\begin{equation}\label{eq-perverse-spectral-seq}
  E_2^{p,q}=H^p(X, \pH^q(M))\Longrightarrow H^{p+q}(X,M),\quad\quad
  E_2^{p,q}=H^p_c(X, \pH^q(M))\Longrightarrow H^{p+q}_c(X,M),
\end{equation}
which are called the \emph{perverse spectral sequences}.

We also have an equality
\begin{equation}\label{eq-decomp-phi}
M=\sum_{j\in\Zz}(-1)^j\,\pH^j(M)  
\end{equation}
in the Grothendieck group~$K(X)$ (see,
e.g.,~\cite[(0.8)]{laumon-signes}).

As with the standard $t$-structure, perverse cohomology sheaves give a
criterion to check whether a complex is semiperverse.

\begin{lemma}
  A complex $M\in \Der(X)$ is semiperverse if and only if $\pH^i(M)=0$
  for all integers $i\geq 1$.
\end{lemma}

See~\cite[Prop.\,1.3.7]{BBD-pervers} for the proof.

\begin{definition}
  An exact\footnote{\ Namely, a functor that commutes with shift and
    preserves distinguished triangles.} functor from $\Der(X)$ to
  $\Der(Y)$ is said to be \emph{left $t$-exact} (resp. \emph{right
    $t$-exact}) if it sends ${}^\mathfrak{p}D^{\geq 0}(X)$ to
  ${}^\mathfrak{p}D^{\geq 0}(Y)$ (resp. ${}^\mathfrak{p}D^{\leq 0}(X)$
  to ${}^\mathfrak{p}D^{\leq 0}(Y)$). It is said to be \emph{$t$-exact}
  if it is both left and right $t$-exact.
\end{definition}

The following important result is a direct consequence of Artin's
cohomological vanishing theorem (see~\cite[Th.\,4.1.1]{BBD-pervers}).

\begin{theorem}
  Let $f\colon X\to Y$ be an affine morphism, then $Rf_*$ is right
  $t$-exact and $Rf_!$ is left $t$-exact.
\end{theorem}

Since a closed immersion~$i$ is affine and proper (so that $Ri_*=i_!$),
we obtain as corollary:

\begin{corollary}\label{cor-closed-immersion}
  If~$i$ is a closed immersion, then~$i_*$ is $t$-exact.
\end{corollary}

More generally (see~\cite[Cor.\,4.1.3]{BBD-pervers}), the functors $f_!$
and $f_*$ are $t$-exact if $f$ is quasi-finite and affine.

A central result is the construction of the intermediate extension,
see~\cite[Cor.\,1.4.25]{BBD-pervers}.

\begin{proposition}\label{pr-intermediate-ext}
  Let $j\colon U\to X $ be a locally closed immersion. Let $M$ be a
  perverse sheaf on $U$. Then there exists a unique perverse sheaf
  $j_{!*}(M)$ on $X$, called the \emph{middle extension} or
  \emph{intermediate extension} of~$M$, such that
  \begin{itemize}
  \item There exists an isomorphism $j^*j_{!*}(M)\simeq M$.
  \item The perverse sheaf $j_{!*}(M)$ is supported on the closure
    $\overline{U}$ of $U$.
  \item The perverse sheaf $j_{!*}(M)$ has no subobject and no quotient
    supported on $\overline{U} \setminus U$.
  \end{itemize}
\end{proposition}

The most important example of this construction is when
$j\colon U\to X$ is a dense open immersion, with $U$ smooth of pure
dimension $d$, and $M=\mcF[d]$ for a lisse sheaf $\mcF$. Note that the
uniqueness implies that $\dual
(j_{!*}\mcF[d])=j_{!*}\mcF^\vee(d)[d]$. When $\mcF=\bQl$ is the
constant sheaf on~$U$, then $j_{!*}\bQl[d]$ is called the
\emph{intersection complex} of $X$.

\begin{example}\label{ex-perverse-curve}
  Let $X$ be a curve, $U$ a dense open subset of $X$ contained in the
  smooth locus of $X$ and $\mcF$ a lisse sheaf on $U$. Then
  $j_{!*}\mcF[1]=R^0j_*\mcF[1]$, where $j\colon U\to X$ is the open
  immersion.
\end{example}

The fundamental result concerning the category of perverse sheaves is
the following theorem~\cite[Th.\,4.3.1]{BBD-pervers}.

\begin{theorem}\label{thm:perv:simple}
  The category $\Perv(X)$ is artinian and noetherian, i.e., all objects
  are of finite length. Its simple objects are of the form
  $j_{!*}\mcF[d]$ where $j\colon U\to X$ is a locally closed immersion
  with $U$ smooth irreducible of dimension $d$ and~$\mcF$ is an
  irreducible lisse sheaf on $U$.
\end{theorem}

\begin{example}\label{ex-middle-ext}
  Let~$X$ be a smooth and geometrically connected curve.  Following
  Katz~\cite[\S\,7.3]{esde}, a constructible sheaf~$\mcF$ on~$X$ is
  called a \emph{middle extension sheaf}\index{middle extension sheaf}
  if, for any dense open set~$U$ of~$X$ such that~$\mcF$ is lisse
  on~$U$, with open immersion $j\colon U\to X$, the canonical morphism
  $\mcF\to j_*j^*\mcF$ is an isomorphism.
  \par
  There is a one-to-one correspondence between irreducible middle
  extension sheaves and simple perverse sheaves on~$X$ with support
  equal to~$X$; for a middle extension sheaf~$\mcF$, the corresponding
  simple perverse sheaf is~$\mcF[1]$. Conversely, for a simple
  perverse sheaf~$M$ with support equal to~$X$, of the form
  $j_{!*}\mcF[1]$ as in the theorem, the corresponding (irreducible)
  middle extension sheaf is $j_*\mcF$.
\end{example}

For simple perverse sheaves, the bounds on the dimension of the support
of the cohomology sheaves have an ``automatic improvement'' from the
bound given by the semi-perversity, except for~$\mcH^{-\dim(X)}$.

\begin{proposition}\label{pr-support-property}
  Let $M$ be a simple perverse sheaf on~$X$ which is not
  punctual. Then for any~$i\not=-\dim(\supp(X))$, we have
  \[
    \dim \supp(\mcH^i(M))\leq -i-1.
  \]
\end{proposition}

\begin{proof}
  This results from the classification of simple perverse sheaves and
  from the general description of the intermediate extension functor
  in~\cite[Prop.\,2.1.11]{BBD-pervers}.
\end{proof}

\begin{example}
  In the case of a curve, this property can be seen from
  Example~\ref{ex-perverse-curve}, since in that case any simple
  perverse sheaf which is not punctual is supported on a dense open
  subset.
\end{example}

We thank S. Morel for communicating us a proof of the following lemma
(see also \cite[Sublemma\,1.10.5]{katz:MMP}).

\begin{lemma}\label{lm-morel}
  Let $k$ be an algebraically closed field. Let $X$ be an irreducible
  projective variety of dimension $d$ over $k$, and let $M$ be a
  simple perverse sheaf on $X$ such that $\rmH^{-d}(X, M)$ is
  non-zero. Then the support of~$M$ is~$X$ and there exists an open
  immersion $j \colon U \hookrightarrow X$ such that
  $M=j_{!\ast}\Qlb[d]$.
\end{lemma}

\begin{proof}
  By the classification of simple perverse sheaves in
  Theorem~\ref{thm:perv:simple}, there exists a locally closed
  immersion $j \colon U \hookrightarrow X$ and a simple lisse sheaf
  $\mathcal{F}$ on $U$ such that
  $M=j_{!\ast}\mathcal{F}[\dim(U)]$. The cohomolog $\rmH^i(X,M)$
  vanishes unless $|i|\leq \dim(U)$ (see~\cite[4.2.4]{BBD-pervers}),
  so the assumption implies that~$\dim(U)=d$.  Besides, the formula
  for intermediate extensions from \cite[2.1.11]{BBD-pervers} implies
  the vanishing $\mcH^i(M)=0$ for $i<-d$. From the spectral sequence
  \[
    E_2^{p, q}=H^p(X, \mcH^q(M)) \Longrightarrow H^{p+q}(X, M),  
  \]
  we then get an isomorphim
  $\rmH^{-d}(X, M)\simeq H^0(X, \mathcal{H}^{-d}(M))$. The
  non-vanishing of this cohomology group implies that
  $\mathcal{H}^{-d}(M)$ has a global section. Hence, $\mathcal{F}$ has
  a global section and is therefore trivial.
\end{proof}

\section{Weights}\label{sec-weights}

In this section we assume that $k$ is a finite field of characteristic
$p$, and denote by $\bar k$ an algebraic closure. We also fix an
isomorphism $\iota \colon \Qlb \to \Cc$.

Let $q$ be a prime power and let $w\in \Zz$ be an integer. An element
$x\in \Qlb$ is said to be a \emph{$q$-Weil number of weight
  $w$}\index{$q$-Weil number of weight~$w$} if it
is algebraic over~$\Qq$, and if all the complex conjugates of
$\iota(x)$ are complex numbers with modulus $q^{w/2}$.

Let $X$ be a separated scheme of finite type over $k$ and $\mcF$ a
$\Qlb$-sheaf on $X$. Let $x$ be a closed point of $X$, with residue
field $k(x)$. Viewing $k(x)$ as a subfield of the fixed algebraic
closure $\bar k$ of $k$ defines a geometric point
$\tilde x\colon \Spec(\bar k)\to X$ supported at $x$. The geometric
Frobenius automorphism, inverse of $y\mapsto y^{{k(x)}}$ in
$\mathrm{Gal}(\bar k/ k)$, acts on the stalk $\mcF_{\tilde x}$ of $\mcF$
at $\tilde x$. We denote by $\frob_x$ this endomorphism of
$\mcF_{\tilde x}$, which is well-defined up to conjugacy.

\begin{definition}[{\cite[1.2]{D-WeilII}, \cite[5.1.5]{BBD-pervers}}]
  Let $X$ be a separated scheme of finite type over $k$, $\mcF$ a
  $\Qlb$-sheaf on $X$, and $M$ an object of~$\Der(X)$.
  \begin{enumerate}
  \item The sheaf $\mcF$ is \emph{punctually pure of weight $w$} if
    for every $x\in \abs{X}$, the eigenvalues of $\frob_x$ are
    $\abs{k(x)}$-Weil numbers of weight $w$.
    \index{punctually pure sheaf}
    
  \item The sheaf $\mcF$ is \emph{mixed} if it admits a finite
    filtration with successive quotients that are punctually pure. The
    weights of the non-zero quotients are called the punctual weights
    of~$\mcF$.\index{punctual weight}

  \item The complex $M$ is \emph{mixed} if all its cohomology sheaves
    are mixed. It is \emph{mixed of weights $\leq w$} if for every
    $i\in \Zz$, the sheaf $\mcH^i(M)$ is mixed with punctual weights
    $\leq w+i$.  It is \emph{mixed of weights $\geq w$} if its Verdier
    dual $D(M)$ is mixed of weights $\leq -w$.  \index{mixed complex of
      weights~$\leq w$}

  \item The complex $M$ is \emph{pure of weight $w$} if it is both mixed
    of weights $\leq w$ and of weights $\geq w$.  \index{pure complex of
      weight~$w$}
  \end{enumerate}
\end{definition}

\begin{remark}
  Deligne also defines $\iota$-weights and $\iota$-pure or mixed sheaves
  and complexes for any fixed isomorphism $\iota$; the notion above
  means that the objects are $\iota$-pure for all $\iota$
  (see~\cite[1.2.6]{D-WeilII}).
\end{remark}

We write $D_{\leq w}(X)$ and $D_{\geq w}(X)$ for the full subcategories
of $\Der(X)$ of objects mixed of weights $\leq w$ and $\geq w$. Thanks
to the shift in the definition, one has in particular
$D_{\leq w}[1]=D_{\leq w+1}$.

\begin{example}\label{ex-weights}
  (1) Suppose that $X$ is smooth of pure dimension $d$, and that
  $M\in \Der(X)$ is such that all its cohomology sheaves are lisse
  on~$X$. Then $M$ is pure of weight $w$ if and only if each sheaf
  $\mcH^i(M)$ is punctually pure of weight $w+i$.
  \par
  (2) The characterization of (1) does not apply in general. For
  instance, let~$X=\Aa^1$ be the affine line, and $j\colon \Gg_m\to X$
  the open immersion. Let~$M=(j_*\hypk_2)[1](1/2)$ be the Kloosterman
  sheaf of rank~$2$ shifted to be in degree~$-1$ and Tate-twisted to be
  of weight~$0$ (see~(\ref{eq-kloosterman-sheaf})). Then~$M$ is pure of
  weight~$0$. However, the cohomology
  sheaf~$\mcH^{-1}(M)=j_*\hypk_2(1/2)$ is not punctually pure of
  weight~$-1$: indeed, the stalk of this sheaf at~$0$ has rank~$1$ with
  a Frobenius eigenvalue of weight~$-2$.
  \par
  (3) If~$\mcF$ is a middle extension sheaf on~$X$ (see
  Example~\ref{ex-middle-ext} for the definition), we say that~$\mcF$ is
  pure of weight~$w$ if the perverse sheaf $M[\dim(X)](\dim(X)/2)$ is
  pre of weight~$w$. This is equivalent to the condition that the
  restriction of~$\mcF$ to any dense open set where it is lisse is
  punctually pure of weight~$w$.
\end{example}

Deligne's main theorem in~\cite[3.3.1,\ 6.2.3]{D-WeilII}, which directly
implies the most general form of the Riemann Hypothesis over finite
fields, is the following:

\begin{theorem}[Deligne]\label{th-deligne-riemann}
  Let $f\colon X\to Y$ be a separated morphism of schemes of finite
  type over~$k$. Then the functor $Rf_!$ sends $D_{\leq w}(X)$ to
  $D_{\leq w}(Y)$.
\end{theorem}

Using duality, one gets the following list of compatibilities of the
different functors on~$\Der(X)$ (see~\cite[5.1.14]{BBD-pervers}):
\begin{enumerate}
\item $Rf_!$ and $f^*$ preserve $D_{\leq w}$;
\item $Rf_*$ and $f^!$ preserve $D_{\geq w}$;
\item $\otimes$ sends $D_{\leq w}\times D_{\leq w'}$ to
  $D_{\leq w+w'}$;
\item $\mathrm{RHom}$ sends $D_{\leq w}\times D_{\geq w'}$ to $D_{\geq -w+w'}$;
\item Verdier duality exchanges $D_{\leq w}$ and $D_{\geq -w}$.
\end{enumerate}

\section{Trace functions}

We continue with the notation of the previous section, so that $X$ is an
algebraic variety over a finite field~$k$. 

Let $M$ be a complex in $\Der(X)$. For any integer $n\geq 1$ and
$x\in X(k_n)$, the stalk~$M_{\bar{x}}$ of~$M$ at a geometric
point~$\bar{x}$ above~$x$ is a complex of finite-dimensional
$\Qlb$-vector spaces, with only finitely many non-zero cohomology
spaces. The geometric Frobenius~$\Fr_{k_n}$ of~$k_n$ (the inverse of the
automorphisme $a\mapsto a^{|k_n|}$ of $k_n$) acts on $M_{\bar{x}}$, and
this action is independent of the choice of~$\bar{x}$ up to
conjugacy. We denote
$$
t_M(x;k_n)=\sum_{i\in\Zz} (-1)^i\Tr(\Fr_{k_n}\mid \mcH^i(M)_{\bar{x}}),
$$
which is also independent of~$\bar{x}$ above~$x$.

Whenever we have fixed the isomorphism $\iota_0\colon \bQl\to \Cc$ (as
in the whole of the main text, see Section~\ref{sec-conventions}), we
will view the trace function as a function $X(k_n) \to \Cc$ whenever
convenient.

\begin{definition}
  The \emph{trace function} $t_M$ of~$M$ is the data of the whole family
  of functions $(t_M(\cdot;k_n))_{n\geq 1}$.
\end{definition}

\begin{remark}
  We will sometimes write simply $t_M(x)$ for $t_M(x;k)$, when $x\in
  X(k)$.
  \par
  Viewing $X(k_n)$ as a subset of $X(\bar{k})$, we will also sometimes
  denote the stalk of~$M$ simply by~$M_x$, instead of introducing
  explicitly a specific geometric point over~$x$.
\end{remark}

Let $f\colon X\to Y$ be a morphism of algebraic varieties over~$k$.  The
following properties holds for objects $M_i$ and~$M$ of~$\Der(X)$ and
$N$ of~$\Der(Y)$:
\begin{align*}
  t_{\bQl}&=1\quad\text{($\bQl$ in degree~$0$)}\\
  t_{M[k]}&=(-1)^kt_M,\quad\quad t_{M(w)}=q^{-w/2}t_M\\
  t_{M_2}&=t_{M_1}+t_{M_3}\quad \text{ for any distinguished triangle }
  M_1\to M_2\to M_3\to, \\
  t_{M_1\otimes M_2} &= t_{M_1}t_{M_2}\\
  t_{f^*N}&=t_N\circ f,\quad\text{ i.e. }\quad t_{f^*N}(x;k_n)=
  t_N(f(x);k_n)\text{ for all $n\geq 1$ and $x\in X(k_n)$}
  \\
  t_{Rf_!M}(y;k_n)&= \sum_{\substack{x\in X(k_n)\\f(x)=y}}t_M(x;k_n).
\end{align*}

The last of these properties is a form of the Grothendieck--Lefschetz
trace formula (see~\cite[Exp.\,III, \S4]{SGA5}). Applied to a
complex~$M$ and to the structure morphism $X\to \Spec(k)$, it takes the
customary form
\begin{equation}\label{eq-trace-formula}
  \sum_{x\in X(k_n)}t_M(x;k_n)=
  \sum_{i\in\Zz}(-1)^i\Tr(\Fr_{k_n}\mid H^i_c(X_{\bar{k}},M)).
\end{equation}

Suppose that~$M$ is a semisimple perverse sheaf which is pure of
weight~$0$. Then by a result of Gabber
(see~\cite[proof\,of\,Prop.\,6.40]{sawin_conductors}), the equality
\begin{equation}\label{eq-gabber-conjugate}
  t_{\dual(M)}(x;k_n)=\overline{t_M(x;k_n)}
\end{equation}
holds for all~$n\geq 1$ and~$x\in X(k_n)$.

We also recall a useful injectivity statement:

\begin{proposition}\label{pr-injectivity}
  Let~$M_1$ and~$M_2$ be objects of~$\Der(X)$. The trace functions
  of~$M_1$ and~$M_2$ coincide, in the sense that
  $$
  t_{M_1}(x;k_n)=t_{M_2}(x;k_n)
  $$
  for all~$n\geq 1$ and all~$x\in X(k_n)$, if and only if the classes
  of~$M_1$ and~$M_2$ in the Grothendieck group~$K(X)$ are equal. In
  particular, if $M$ and~$N$ are semisimple perverse sheaves, then~$M$
  and~$N$ are isomorphic.
  \par
  Moreover, the classes of simple perverse sheaves form a basis of the
  $\Zz$-module $K(X)$.
\end{proposition}

This is proved in~\cite[Th.\,1.1.2]{laumon-signes}.

\chapter{The arithmetic Mellin transform over finite fields}
\label{ch-app-mellin}

We summarize here the most important results of Katz~\cite{mellin}
concerning the arithmetic Mellin transform on~$\Gg_m$. These results are
used in various places in the book.

\section{The category $\oldcal{P}$}\label{sec-category-p}

Let $k$ be a finite field with algebraic closure $\bar{k}$ and with
finite extensions $k_n/k$ for $n\geq 1$.

Katz defines a category $\oldcal{P}$ as the full subcategory of the
category of perverse sheaves on $\Gg_m$ over $\bar{k}$ whose objects are
perverse sheaves $N$ such that, for any perverse sheaf $M$ on~$\Gg_m$,
the objects $M *_! N$ and $M *_* N$ are both perverse
(see~\cite[Ch.\,2]{mellin} and~\cite[2.6.2]{katz-rls}).  Katz proved
that a perverse sheaf $N$ is an object of~$\oldcal{P}$ if and only if it
admits no shifted Kummer sheaf\index{Kummer sheaf} $\mcL_{\chi}[1]$ as
either subobject or quotient (this follows, e.g, from the combination
of~\cite[Lemma\,2.6.13, Lemma\,2.6.14, Cor.\,2.6.15]{katz-rls}).

The category $\oldcal{P}_{\arith}$ is defined as the full subcategory of
perverse sheaves on $\Gg_m$ over $k$ whose objects are those perverse
sheaves $N$ such that the base change of~$N$ to $\bar{k}$ is an object
of $\oldcal{P}$ (\cite[Ch.\,4]{mellin}).

Using the correct notion of exactness from the work of Gabber and
Loeser, the categories $\oldcal{P}_{\arith}$ and $\oldcal{P}$ are
neutral tannakian categories with the middle convolution
$$
M*_{\intt} N=\Imag(M *_! N\to M *_*N)
$$
as tensor operation (see~\cite[p.\,535]{GL_faisc-perv}).

The tannakian dimension of an object of~$\oldcal{P}$ is its
Euler--Poincaré characteristic. 

\section{Deligne's fiber functor and Frobenius conjugacy classes}
\label{sec-deligne-fiber-functor}

One remarkable canonical fiber functor on the tannakian
category~$\oldcal{P}$ is given by a theorem of Deligne.

\begin{theorem}[Deligne]\label{th-deligne-fiber-functor}
  Let $k$ be a finite field with algebraic closure $\bar{k}$. 
  Let $j_0\colon \Gg_m\to \Aa^1$ be the open immersion. Then the functor
  $$
  \omega_{\mathrm{Del}}\colon M\mapsto H^0(\Aa^1_{\bar{k}},j_{0!}M)
  $$
  is a fiber functor on the category $\oldcal{P}$.
\end{theorem}
\nomenclature[$omega$]{$\omega_{\mathrm{Del}}$}{Deligne's fiber functor}
\index{Deligne's fiber functor}

This is~\cite[Th.\,3.1 and Appendix]{mellin}.

Let $N$ be an object of $\oldcal{P}_{\arith}$ which is arithmetically
semisimple and pure of weight~$0$. Let $\garith{N}$ be the tannakian
group of the tannakian subcategory of~$\oldcal{P}_{\arith}$ generated by
$N$. Using Deligne's fiber functor and the tannakian formalism, Katz
defines a Frobenius conjugacy class $\Frf_{N,k_n}(\chi)$ in $\garith{N}$
for any $n\geq 1$ and any $\ell$-adic character $\chi$ of $k_n^{\times}$
by considering the fiber functor
$\omega_{\chi}\colon M\mapsto \omega_{\mathrm{Del}}(M_{\chi})$
(see~\cite[Ch.\,5]{mellin}).

\section{Finite tannakian groups}

\begin{theorem}[Katz]\label{th-katz-finite}
  Let $k$ be a finite field with algebraic closure $\bar{k}$. Let $N$ be
  a perverse sheaf in the category $\oldcal{P}_{\arith}$. Assume that $N$
  is arithmetically semisimple and pure of weight~$0$.
  \begin{enumth}
  \item If every Frobenius conjugacy class of~$N$ is quasi-unipotent,
    then the object $N$ is punctual.
  \item If the geometric tannakian group of $N$, i.e., the tannakian
    group of the tannakian subcategory of $\oldcal{P}$ generated by
    $N\otimes \bar{k}$, is \emph{finite}, then the object $N$ is
    punctual.
  \end{enumth}
\end{theorem}

These statements are~\cite[Th\,6.2 and Th.\,6.4]{mellin}.

\section{Hypergeometric complexes and sheaves}
\label{sec-hypergeometric}

Katz has also classified the perverse sheaves on $\Gg_m$ with tannakian
dimension~$0$ and~$1$. Indeed, since the tannakian dimension is equal to
the Euler--Poincaré characteristic in this case, the question is to
classify simple perverse sheaves~$M$ on~$\Gg_m$ with $\chi(M)=0$
or~$1$.

For Euler--Poincaré characteristic zero, we have:

\begin{proposition}\label{pr-ep-0}
  Let~$k$ be an algebraically closed field of characteristic $p>0$ with
  $p\not=\ell$. Let~$M$ be a simple perverse sheaf on~$\Gg_m$ with
  $\chi(\Gg_m,M)=0$. Then there exists a tame character~$\chi$
  of~$\Gg_m$ such that $M$ is isomorphic to~$\mcL_{\chi}[1]$.
\end{proposition}

This is~\cite[Prop.\,8.5.2]{esde}.

Katz has furthermore shown that the objects with Euler--Poincaré
characteristic~$1$ are exactly the \emph{hypergeometric complexes} on
$\Gg_m$, defined in~\cite[8.2, 8.3]{esde}.  \index{hypergeometric
  sheaf}\index{hypergeometric complex}

We recall the definition and notation for hypergeometric complexes.
Let~$k$ be a field of positive characteristic. Fix a pair $(m,n)$ of
non-negative integers and a non-trivial $\ell$-adic additive
character~$\psi$ of a finite subfield of $k$. Denote by
$j\colon \Gg_{m}\to \Aa^1$ the open immersion. Let
$$
\uple{\chi}=(\chi_1,\dots,\chi_n),\quad\quad
\uple{\rho}=(\rho_1,\dots,\rho_m)
$$
be two tuples of tame $\ell$-adic continuous characters
$\pi_1^t(\Gm)\to \Qlbt$.  Denote by $\bar \psi$ the inverse of
$\psi$ and write
$$
\bar{\uple{\chi}}=(\chi_1^{-1},\dots,\chi_n^{-1}),\quad\quad
\bar{\uple{\rho}}=(\rho_1^{-1},\dots,\rho_m^{-1}).
$$

The \emph{hypergeometric complex}
$\Hyp(!,\psi, \uple{\chi};\uple{\rho})$ in $\Der(\Gm)$ is then defined
inductively as follows:
\nomenclature[$H$]{$\Hyp(#!,\psi, \uple{\chi};\uple{\rho})$}{hypergeometric
  sheaf}

\begin{enumerate}
\item If $(m,n)=(0,0)$ then $\Hyp(!,\psi, \emptyset;\emptyset)$ is the
  skyscraper sheaf supported at 1.
\item If $(m,n)=(1,0)$ then
  $\Hyp(!,\psi, \chi;\emptyset)=j^*(\mcL_\psi) \otimes \mcL_\chi[1]$.
\item If $(m,n)=(0,1)$ then
  $\Hyp(!,\psi, \emptyset, \rho)=\inv^*(j^*(\mcL_{\bar \psi}) \otimes
  \mcL_{\bar \rho})[1]$.
\item If $(m,n)=(m,0)$ with $m\geq 2$ then
  $\Hyp(!,\psi, \uple{\chi};\emptyset)$ is the convolution
\[
  \Hyp(!,\psi, \chi_1;\emptyset)*_!\dots *_! \Hyp(!,\psi,
  \chi_n;\emptyset).
\]
\item If $(m,n)=(0,n)$ with $n\geq 2$ then
  $\Hyp(!,\psi, \emptyset;\uple{\rho})$ is the convolution
\[
  \Hyp(!,\psi, \emptyset;\rho_1)*_!\dots *_! \Hyp(!,\psi,
  \emptyset;\rho_n).
\]
\item In the general case, we have
\[
  \Hyp(!,\psi, \uple{\chi};\uple{\rho})=\Hyp(!,\psi,
  \uple{\chi};\emptyset)*_!\Hyp(!,\psi, \emptyset;\uple{\rho}).
\]
\end{enumerate}
\par
For $\lambda\in k^\times$, define also
\[
  \Hyp_\lambda(!,\psi, \uple{\chi};\uple{\rho})=[x\mapsto \lambda
  x]_*\Hyp(!,\psi, \uple{\chi};\uple{\rho}).
\]

It follows from these definitions that the general convolution
formula
$$
\Hyp_\lambda(!,\psi, \uple{\chi};\uple{\rho})*_!  \Hyp_\mu(!,\psi,
\uple{\chi}';\uple{\rho}')= \Hyp_{\lambda\mu}(!,\psi,
\uple{\chi},\uple{\chi}';\uple{\rho},\uple{\rho}')
$$
holds.

Let $K$ be an extension of~$k$. We say that a complex $M$ on $\Gg_m$
over~$K$ is \emph{hypergeometric over~$k$} if there exists
$\lambda\in k^{\times}$, an additive character $\psi$, and families of
tame multiplicative characters $\uple{\chi}$ and $\uple{\rho}$ over~$k$
such that $M\otimes K$ is isomorphic to
$\Hyp_\lambda(!,\psi, \uple{\chi};\uple{\rho})$, where the characters
involved are defined on~$\Gg_m$ over~$K$ by composition with the
canonical morphism $\pi_1^t((\Gg_m)_K)\to \pi_1^t((\Gg_m)_k)$.

Before stating some of the main results concerning hypergeometric
sheaves, we need a further definition: the tuples $\uple{\chi}$ and
$\uple{\rho}$ are said to be \emph{disjoint} if $(n,m)\not=(0,0)$ and
$\chi_i\not=\rho_j$ for all $i$ and~$j$.\index{disjoint tuples of
  characters}

\begin{theorem}[Katz]\label{th-hypergeometric}
  Assume that $k$ is algebraically closed.
  \begin{enumth}
  \item If the tuples $\uple{\chi}$ and $\uple{\rho}$ are disjoint, then
    $\Hyp_\lambda(!,\psi, \uple{\chi};\uple{\rho})$ is a simple
    nonpunctual perverse sheaf of Euler characteristic~$1$ on~$\Gg_m$.
  \item Let $K$ be an extension of $k$ and $\bar{K}$ an algebraic
    closure of~$K$. Let $M$ be a simple perverse sheaf~$M$ on $\Gg_m$
    over~$K$ with Euler--Poincaré characteristic equal to~$1$.  Then the
    base change $M\otimes \bar{K}$ of $M$ to $\bar{K}$ is hypergeometric
    over~$K$.
  \item Let $k$ be a finite field and
    $\bar{k}$ an algebraic closure
    of~$k$.  If the tuples $\uple{\chi}$ and
    $\uple{\rho}$ are disjoint, then the tannakian group of the
    hypergeometric object $\Hyp_\lambda(!,\psi,
    \uple{\chi};\uple{\rho})$ on~$\Gg_m$ over $\bar{k}$ is $\GL_1$.
  \end{enumth}
\end{theorem}

\begin{proof}
  The first statement follows from \cite[Th.\,8.4.2]{esde}, and the
  third is explained, e.g., in~\cite[proof of Cor.\,6.3]{mellin}.
  \par
  The second statement is \cite[Th.\,8.5.3]{esde} if
  $K=\bar{K}=k$.  Applied to
  $\bar{K}$ instead of an algebraic closure
  of~$k$, this gives the result except that we only know \emph{a priori}
  that $\lambda\in
  \bar{K}^{\times}$.  We need to check that in fact $\lambda\in
  K^{\times}$. To do this, we check the steps of the proof of \loccit,
  which is easily seen to provide this extra information.
  \par
  Say that $M$ is of type $(m,n)$ if
  $m$ is the dimension of the tame part of~$M$ at~$0$ and
  $n$ the dimension of the tame part at infinity.  The strategy of the
  proof is to reduce by induction to the case $m> n$, then to
  $n=0$ and finally to the case $m=n=0$.
  \par
  Each of these reduction steps follows a similar pattern.  First, up to
  tensoring $M$ by
  $\mcL_\Lambda$ for some tame continuous character
  $\Lambda$ of
  $\pi_1^t({\Gg}_{m,k})$, one can assume that the trivial character
  occurs in the local monodromy
  at~$0$. From Kummer theory, we have an isomorphism
  $\pi_1^t({\Gg}_{m,k})\simeq \widehat{\Zz}(1)_{p'}$; since
  $M$ is defined over $k(\eta)$, the character
  $\Lambda$ must be of finite order and hence is a character of
  $k^\times$. All the characters $\chi$ and
  $\rho$ appear as such $\Lambda$.
  \par
  After this tensoring step, one considers the Fourier transform
  $\ft_\psi(j_*M)$, and one checks that it is of type $(n, m-1)$, and is
  still a geometrically simple perverse sheaf of Euler
  characteristic~$1$.
  \par
  At the end of the induction, one is left either with a skyscraper
  sheaf, which must be supported on some $\lambda\in
  K^{\times}$ since $M$ is geometrically simple
  over~$K$, or with a perverse sheaf that is geometrically isomorphic to
  $\mcL_{\psi(\lambda x)}$ for some $\lambda\in
  \bar{K}^\times$, and since this sheaf is defined over
  $K$, we must have $\lambda\in K$, as desired.
\end{proof}

\begin{remark}\label{rm-hypergeo}
  (1) In~\cite[Ch.\,8]{esde}, Katz has also determined the geometric
  monodromy group of almost all hypergeometric sheaves.  We observe in
  passing that this computation has recently been used by Fresán and
  Jossen~\cite{fresan-jossen} to construct examples of $E$-functions
  that are not related to hypergeometric functions, answering a question
  raised by Siegel in his fundamental paper~\cite{siegel}.
  \par
  (2) Theorem~\ref{th-hypergeometric} is a key ingredient in the proof
  of the theorem of Gabber and Loeser that determines the
  group~$\Hh_{\intt}(\Gg^r_{m,\bar{k}})$ of isomorphism classes of
  objects on~$\Gg^r_{m,\bar{k}}$ with tannakian rank~$1$, which is
  explained in Example~\ref{ex-gl-rank-1}.  In fact, it is not very
  difficult to deduce from Theorem~\ref{th-hypergeometric}, (3), that
  the group~$\Hh_{\intt}(\Gg_{m,\bar{k}})$
  is isomorphic to
  $$
  \bar{k}^{\times}\times \Zz^{\Pi(\Gg_{m,\bar{k}},\bQl)},
  $$
  where we recall that $\Pi(\Gg_{m,\bar{k}},\bQl)$ denotes the set of
  continuous tame characters of~$\Gg_{m,\bar{k}}$ (see
  Section~\ref{sec:Fourier-Mellin}).
  \par
  An isomomorphism $\Phi$ between these groups is determined as follows:
  given $\lambda \in\bar{k}^{\times}$ and a function
  $f\in\Zz^{\Pi(\Gg_{k,\bar{k}},\bQl)}$, let $\uple{\chi}$ be the tuple
  whose distinct elements are the characters $\chi$ such that
  $f(\chi)\geq 1$, each repeated with multiplicity $f(\chi)$, and let
  $\uple{\rho}$ be the tuple whose distinct elements are the characters
  $\chi$ such that $f(\chi)\leq -1$, each repeated with multiplicity
  $-f(\chi)$. Then one has
  $$
  \Phi(\lambda, f)=\Hyp_\lambda(!,\psi, \uple{\chi};\uple{\rho}).
  $$
  \par
  Conversely, the function $f$ can be recovered from an element $M$ of
  $\Hh_{\intt}(\Gg_m)$ by looking at the tame characters
  appearing in the local monodromy at~$0$ and~$\infty$, and their
  multiplicities.
\end{remark}

Let now $k$ be a finite field, with $\psi$ a non-trivial additive
character of~$k$. Let $\lambda\in k^{\times}$ and let 
$$
M=\Hyp_\lambda(!,\psi, \uple{\chi};\uple{\rho})
$$
for tuples $\uple{\chi}$ and $\uple{\rho}$ of tame characters associated
to multiplicative characters $k^{\times}\to \bQl^{\times}$ (denoted in
the same manner).  The trace function of $M$ is then given by
$$
t_M(x;k)=(-1)^{m-n}
\sum_{\substack{(x_i)\in (k^{\times})^n,\ (y_j)\in (k^{\times})^m\\
    x_1\cdots x_n=\lambda^{-1}xy_1\cdots y_m}}
\psi\Bigl(\sum_{i=1}^n x_i-\sum_{j=1}^m y_j\Bigr) \prod_{i=1}^n \chi_i(x_i)
\prod_{j=1}^m \rho_j(y_j),
$$
with the obvious analogue for finite extensions of~$k$
(see~\cite[(8.2.7)]{esde}). 

For a multiplicative character $\chi\colon k^{\times}\to \Qlbt$, let
$$
\tau(\psi,\chi)=\sum_{x\in k^{\times}}\psi(x)\chi(x)
$$
denote the Gauss sums over~$k$. Then the arithmetic Mellin transform of
the hypergeometric complex~$M$ is
\begin{equation}\label{eq-mellin-hypergeom}
  \sum_{x\in k^{\times}}\chi(x)t_M(x;k)=\chi(\lambda)
  \prod_{i=1}^m\tau(\psi,\chi\chi_i)
  \prod_{j=1}^n\tau(\bar{\psi},\bar{\chi}\bar{\rho}_j)
\end{equation}
for $\chi\colon k^{\times}\to \Qlbt$ (a monomial in Gauss sums;
see~\cite[(8.2.7),\,(8.2.8)]{esde}).\index{Gauss sum}

In particular, if $n \geq 1$ and $\chi_i=1$ for all $i$, and if
$\uple{\rho}$ is empty and $\lambda=1$, we obtain the unnormalized \emph{hyper-Kloosterman sums}\index{hyper-Kloosterman sums}
$$
(-1)^n\sum_{\substack{x_1,\ldots,x_n \in
    k^{\times}\\x_1\cdots x_n=x}}\psi(x_1+\cdots+x_n).
$$
The corresponding hypergeometric complex
\begin{equation}\label{eq-kloosterman-sheaf}
  \mcK\ell_{n,\psi}=\Hyp(!,\psi,(1,\ldots,1);\emptyset)
\end{equation}
is called a \emph{Kloosterman complex}; it is of the form $\hypk_n[1]$
for a lisse sheaf~$\hypk_n$ of rank~$n$ on~$\Gg_m$, called the
\emph{Kloosterman sheaf} of rank~$n$ (see~\cite[Rem.\,8.4.3]{esde}).

\chapter{The product formula for epsilon factors}
\label{ch-app-product}

We recall in this Appendix the formula of Laumon~\cite{laumon-signes}
for the epsilon factor of an object of $\Der(X)$ on a curve~$X$, and
recall the main parts of the formalism of local epsilon factors. We also
include the general Euler--Poincaré characteristic formula. 

\section{The product formula}\label{sec-product-formula}

The results in this section are quoted directly
from~\cite[\S\,3]{laumon-signes}.

Let $k$ be a finite field of characteristic~$p$, with $k_n/k$ the
extension of~$k$ of degree~$n$ in an algebraic closure~$\bar{k}$
of~$k$.

Let~$X$ be a smooth projective curve over~$k$. We denote by~$[X]$
\nomenclature[$X$]{$[X]$}{set of closed points of~$X$}
the
set of closed points of~$X$. For a complex $M$ in $\Der(X)$, the
$L$-function of~$M$ is defined by the product
$$
L(M,T)=\prod_{x\in [X]}\det(1-T^{\deg(x)}\Fr_{k_{\deg(x)}}\mid
M_x)^{-1}.
$$
It satisfies the relation
$$
L(M,T)=\det(1-T\Frob_k \mid H^*(X_{\bar{k}},M))^{-1}
$$
and the functional equation\index{functional equation}
$$
L(M,T)=\eps(M)T^{a(M)}L(\dual(M),T^{-1}),
$$
where
\[
  a(M)=-\chi(X_{\bar{k}},M),\quad
  \eps(M)=\det(-\Frob_k \mid H^*(X_{\bar{k}},M))^{-1}.
\]

Laumon's product formula, which had been conjectured by Deligne, is an
expression for $\eps(M)$ in terms of local epsilon factors.\index{local epsilon factor}

Consider a fixed non-trivial $\ell$-adic additive character $\psi$
of~$\Ff_p$, and denote $\psi_k=\psi\circ \Tr_{k/\Ff_p}$. Furthermore,
consider a fixed non-zero meromorphic $1$-form $\omega$ on~$X$. 

\begin{theorem}[Laumon]\label{th-laumon}
  Suppose that~$X$ is connected. Let~$g$ be the common genus of all
  the connected components of~$X_{\bar{k}}$, and~$n\geq 1$ the number
  of these connected components.
  \par
  Let $M$ be an object of~$\Der(X)$ of generic rank~$r(M)$.  There
  exist specific local constants $\eps_x(M)$, depending on the choice
  of~$\omega$, such that
  \begin{equation}\label{eq-product-formula}
    \eps(M)=|k|^c\prod_{x\in |X|}\eps_x(M)
  \end{equation}
  where $c=n(1-g)r(M)$.
\end{theorem}

This is~\cite[Th.\,3.2.1.1]{laumon-signes}, defining (in the notation
of \loccit) the local factors by
\begin{equation}\label{eq-local-global}
  \eps_x(M)=\eps(X_{(x)},M|X_{(x)},\omega|X_{(x)}).
\end{equation}
\nomenclature[$eps$]{$\eps(X_{(x)},M|X_{(x)},\omega|X_{(x)})$}{local epsilon factor}

\section{Local epsilon factors}

We summarize here the basic identities and formal properties of the
local epsilon factors~$\eps_x(M)$ in Laumon's
Theorem~\ref{th-laumon}. The existence and uniqueness of these local
factors, subject to certain conditions, are given precisely by Laumon
in~\cite[Th\,3.1.5.4]{laumon-signes}; they were defined earlier by
Deligne~\cite{deligne-constantes}.

The local epsilon factors are attached to a triple $(T,M,\omega)$,
where $T$ is a strictly henselian local ring of equal characteristic
with residue field containing~$k$, $M$ is an object of $\Der(T)$
and~$\omega$ is a non-zero meromorphic $1$-form on~$T$.

The notation $\eps(X_{(x)},M|X_{(x)},\omega|X_{(x)})$
in~(\ref{eq-local-global}) refers to these factors with the subscript
$(x)$ referring to strict localization at~$x$.

We now recall the local exponents $a(T,M,\omega)$ and $a(T,M)$, which
require additional notation (see~\cite[3.1.5]{laumon-signes}):
\begin{enumerate}
\item We denote by~$v$ the valuation of~$T$, extended to $1$-forms by
  $v(adb)=v(a)$ if $v(b)=1$.
\item We denote by $t$ the closed point of~$T$ and by $\eta$ the
  generic point.
\item We denote by $\bar{t}$ (resp. $\bar{\eta}$) a geometric
  generic point of~$T$ above~$t$ (resp. above $\eta$).
\item We denote by $k_t$ the residue field of~$T$ at~$t$.
\item For an object $M$ of $\Der(T)$, we denote by $r(M_{\bar{\eta}})$
  (resp. $r(M_{\bar{t}})$) the generic rank of~$M$ (resp. the rank of
  the stalk at the closed point) and by $s(M_{\bar{\eta}})$ the Swan
  conductor; all of these are defined for an étale sheaf first and
  extended by additivity, see~\cite[\S\,2.2.1]{laumon-signes}.
  \index{Swan conductor}
\end{enumerate}

With these notation, the local conductor exponents are defined by the
formulas
\begin{align}
  a(T,M)&=r(M_{\bar{\eta}})+s(M_{\bar{\eta}})-r(M_{\bar{t}}),
  \label{eq-3152}\\
  a(T,M,\omega)&=a(T,M)+r(M_{\bar{\eta}})v(\omega).
  \label{eq-3151}
\end{align}
(see~\cite[(3.1.5.1),\,(3.1.5.2)]{laumon-signes}).%
\nomenclature{$a(T,M,\omega)$}{local exponent}%
\nomenclature{$a(T,M)$}{local exponent}%

In the global case, we will denote
$$
a_x(M,\omega)=a(X_{(x)},M|X_{(x)},\omega|X_{(x)}).
$$

Furthermore, for a lisse $\bQl$-sheaf $\mcF$ on the generic point~$\eta$
of~$T$, one defines
\begin{equation}\label{eq-eps0}
\eps_0(T,\mcF,\omega)=\eps(T,j_!\mcF,\omega),
\end{equation}
where $j\colon \{\eta\}\to T$ is the open immersion
(see~\cite[3.1.5.6,\, p.\,187]{laumon-signes}).

For a short exact sequence $0\to \mcF'\to \mcF\to \mcF''\to 0$, we have
\begin{equation}\label{eq-3157}
  \eps_0(T,\mcF,\omega)=
  \eps_0(T,\mcF',\omega)
  \eps_0(T,\mcF'',\omega).
\end{equation}
\nomenclature[$eps$]{$\eps_0(T,\mcF,\omega)$}{local epsilon factor}

The local epsilon factors satisfy (among other things) the following
properties (see, respectively, formula (3.1.5.6), formula (3.1.5.5) and
section 3.5.3.1 in~\cite{laumon-signes}):

\begin{enumerate}
\item For any lisse $\bQl$-sheaf~$\mcF$ of rank~$r$ on~$T$, the formula
  \begin{equation}\label{eq-3156}
    \eps(T, M\otimes \mcF,\omega)=
    \det(\frob\mid \mcF)^{a(T,M,\omega)}
    \eps(T,M,\omega)^{r}
  \end{equation}
  holds, where $\frob$ denotes the geometric Frobenius automorphism at
  the closed point~$t$ of~$T$.
\item For a non-zero rational function $a$ on~$T$, the formula
  \begin{equation}\label{eq-3155}
    \eps(T,M,a\omega)=
    \chi(a)|k_{t}|^{r(M_{\bar{\eta}})v(a)}
    \eps(T,M,\omega)
  \end{equation}
  holds, where $\chi$ is the character of the completion of the residue
  field at~$\eta$ associated, by local class field theory,\footnote{\
    Normalized as explained in~\cite[(3.1.4)]{laumon-signes}.} to the
  lisse sheaf $\det(M)|\eta$ on~$\eta$, viewed as a character of the
  local Galois group.
\item For a non-trivial multiplicative character~$\chi$ of the residue
  field~$k_t$ and the corresponding lisse Kummer sheaf~$\mcL_{\chi}$
  on~$\{\eta\}$, and for a uniformizer~$\pi$ at~$x$, we have
  \begin{equation}\label{eq-3531}
    \eps_0(T,\mcL_{\chi},d\pi)=\chi(-1)\sum_{a\in
      k_t^{\times}}\chi(a)\psi(\Tr_{k_t/\Ff_p}(a)).
  \end{equation}
\end{enumerate}

We also have the elementary shift formula
\begin{equation}\label{eq-shift}
  \eps(T,M[1],\omega)=
  \eps(T,M,\omega)^{-1}.
\end{equation}

\section{The Euler--Poincaré characteristic
  formula}\label{sec-euler-poincare}

We keep the notation of Section~\ref{sec-product-formula}. In
particular, $X$ is a smooth projective curve over a finite field $k$
with algebraic closure $\bar{k}$. We assume that~$X$ is geometrically
connected, and denote by~$g$ the genus of~$X$.

Let $M$ be a complex in~$\Der(X)$. For any point $x\in X(\bar{k})$, the
Swan conductor\index{Swan conductor}
\nomenclature[$swan$]{$\swan_x(M)$}{Swan conductor of a complex at~$x$}
$\swan_x(M)$ is defined by additivity from the case of a
$\bQl$-sheaf (in which case, it is defined for instance
in~\cite[(2.1.2.5)]{laumon-signes} or~\cite[Ch.\,1]{gkm}). Similarly,
the drop $\Drop_x(M)$ is defined by additivity from the drop
$$
\Drop_x(\mcF)=\rank(\mcF)-\dim(\mcF_x)
$$
of a $\bQl$-sheaf $\mcF$.
\nomenclature[$drop$]{$\Drop_x(M)$}{drop of a complex at~$x$}

\begin{theorem}[Grothendieck--Ogg--Shafarevich]\label{th-euler-poincare}
  Let $U\subset X$ be an open dense subset. Let $M$ be a complex
  in~$\Der(U)$, let $V$ be an open dense subset of~$U$ on which $M$ is
  lisse of generic rank~$r(M)$.
  \par
  We have
  $$
  \chi(U_{\bar{k}},M)=\chi(U_{\bar{k}},\bQl)r(M)-\sum_{x\in X(\bar{k})}
  \swan_x(M) -\sum_{x\in U(\bar{k})}\Drop_x(M),
  $$
  where $\chi(U_{\bar{k}},\bQl)=(2-2g)-|(X\setminus U)|$.
\end{theorem}

This statement follows from~\cite[Th.\,2.2.1.2]{laumon-signes}, which
corresponds to~$X=U$ (up to changes in notation) by applying this
result to $j_*M$, where~$j\colon U\to X$ is the open immersion, and
using the additivity of the Euler--Poincaré characteristic, in the
sense that
$$
\chi(X_{\bar{k}},j_*M)=
\chi(U_{\bar{k}},M)+\chi((X\setminus U)_{\bar{k}},i^*j_*M)
$$
with $i$ the closed immersion of $X\setminus U$ in~$X$.
\par
For the case of a $\Qlb$-sheaf, the statement is also given for
instance in~\cite[Ch.\,14]{mellin}.

We consider some special cases that appear in this book.

(1) If $U=X$ and $M=\mcF[1]$ for some $\Qlb$-sheaf~$\mcF$ of generic rank~$r$
on~$X$, then the formula becomes
\begin{equation}\label{eq-ep-sheaf}
  \chi(X_{\bar{k}},M)=(2g-2)r+\sum_{x\in
    X(\bar{k})}(\swan_x(\mcF)+\Drop_x(\mcF)).
\end{equation}

(2) If $U=\Gg_m\subset X=\Pp^1$ and $M=\mcF[1]$ for some
$\Qlb$-sheaf~$\mcF$ of generic rank~$r$ on~$\Gg_m$, then
\begin{equation}\label{eq-ep-gm}
  \chi((\Gg_m)_{\bar{k}},M)=
    \swan_0(\mcF)
    +\swan_{\infty}(\mcF)
    +\sum_{x\in\bar{k}^{\times}}    (\swan_x(\mcF)+\Drop_x(\mcF)).
\end{equation}

\chapter{Deligne's letter to Kazhdan}
\label{ch-app-letter}

\textit{We reproduce below the content of Deligne's letter to Kazhdan,
  in which the $\ell$-adic Fourier transform was defined for the first
  time (the typography is not faithfully reproduced).  } 

\par
\vskip 0.2cm
\hrule
\vskip 0.1cm
\par


\begin{flushright}
29-11-76
\end{flushright}

Dear \foreignlanguage{russian}{Каждан},

This is perhaps a partial answer to an old letter of yours. I thought to
the matter again because of some estimations of trigonometrical sums
Hooley asked me about. As I am in a hurry to continue writing up Weil
II, I will leave many open ends and soon turn to French.

\par

\underline{Theme}: many functions correspond to sheaves, and operations
on functions to operations on sheaves. What about harmonic analysis on
$\mathbb{G}_a$?

\medskip

\noindent\textcircled{a} If $X$ is a scheme $/\mathbb{F}_q$, we will
consider
\begin{enumerate}
\item[$\alpha$)] objects of the derived category
  $\mathrm{D}^b(X, \overline{\mathbb{Q}}_\ell)$
  \par
  $\downarrow$ by $\sum (-1)^i \mathrm{H}^i$
\item[$\beta$)] virtual $\ell$-adic sheaves [this means either: elements
  of the Grothendieck group of the abelian category of constructible
  sheaves --- or if possible and useful, objects of some Picard category
  having this $K^0$ as set of isomorphism classes of objects]
  \par
  $\downarrow$ by $\mathrm{Tr}(F_x^\ast, \mathcal{F}_{\bar x})$ 
  (this map is \underline{injective}) 
\item[$\gamma$)] ``functions'': a system of functions on the $X(\mathbb{F}_{q^n})$
\end{enumerate}

\medskip

\noindent\textcircled{b} Here are corresponding operations:
\par
On functions: $+$, $\cdot$, $\sum$ \hspace{2cm} On
$\alpha), \beta)$: $\oplus$, $\otimes$, $\mathrm{R}\pi_!$

\noindent
Convolution of functions: if $G$ is a group, and $K, L \in
\mathrm{D}^b(X, \overline{\mathbb{Q}}_\ell)$, one considers the product
$$
\pi \colon G \times G \to G,\quad\text{and}\quad K \boxtimes
L=\mathrm{pr}_1^\ast K \otimes \mathrm{pr}_2^\ast L,\text{ and }
$$
\[
K \ast L=\mathrm{R}\pi_!(K \boxtimes L) 
\]

\noindent
Kernel: given $Z\to X \times Y$ and
$K \in \mathrm{D}^b(Z, \overline{\mathbb{Q}}_\ell)$, this defines an
operation $\mathrm{D}^b(X) \to \mathrm{D}^b(Y)$
\[
  L_X \mapsto \mathrm{Rp}_{2!}(K \otimes \mathrm{Rp}_{1}^\ast
  L_X).
\]

\medskip


\noindent\textcircled{c} Now I want to consider Fourier transform.
 \par
 Let us choose
 $\psi \colon \mathbb{F}_p \to \overline{\mathbb{Q}}_\ell^\ast$.  If $f$
 is a function on $X$, we get a sheaf $\mathcal{F}(\psi f)$.\footnote{\
   Pull back by $f$ of the sheaf on $\mathbb{G}_a$, rank~$1$, defined by
   $\psi$ and Artin--Schreier $T^p-T=X$.}  Fourier transform, on
 $\mathbb{G}_a$, is given by the
kernel $\mathcal{F}(\psi(xy))$ on $\mathbb{G}_a \times \mathbb{G}_a$.

\medskip

\underline{Definition}:
$\underline{F}(K)=\mathrm{Rpr}_{2!}(\mathcal{F}(\psi(xy)) \otimes
\mathrm{Rpr}_{1}^\ast K)$

\medskip

\noindent
\underline{Proposition 1}:
$\underline{F}(K \ast L)=\underline{F}(K) \otimes \underline{F}(L)$
\hspace{2cm} (from
$\mathcal{F}(\psi(x(y'+y''))=\mathcal{F}(\psi(xy')) \otimes
\mathcal{F}(\psi(xy''))$)

\smallskip
\noindent
\underline{Proposition 2}:
$\underline{F}\underline{F}(K)=K^\vee(-1)[-2]$: \quad $\vee$ is for
``image by $x \mapsto -x$'', $(-1)$ for a Tate twist, and $[-]$ for décalage.

Kernels compose like expected: we have to compute
$\mathrm{R}\pi_! \mathcal{F}(\psi(x+z)y)$ for
$\pi \colon \mathbb{G}_a \times \mathbb{G}_a \times \mathbb{G}_a
\stackrel{(13)}{\longrightarrow} \mathbb{G}_a \times \mathbb{G}_a$,
one gets
$$
\begin{cases}
  \mathbb{Q}_\ell(-1)& \text{ on the diagonal, in degree $2$}
  \\
  0& \text{elsewhere}
\end{cases}
$$
hence the result.

\medskip
\noindent
It is convenient in such computations to forget writing $\psi$ and
writing $\int \cdots dy$ for a $\mathrm{R}\pi_!$.

\smallskip
\noindent
\underline{Remark}: this defines, via prop 1, an isomorphism
$\underline{F}(K \otimes L)(-1)[-2]=\underline{F}(K) \ast
\underline{F}(L)$.

For Plancherel formula, one suffer somewhat of not having complex
conjugation.  Let $\overline{F}$ be $F$ defined using $\psi(-x)$. Then

\noindent
a) inner product: $\langle K, L \rangle=\mathrm{R}\Gamma(K \otimes L)$

\noindent
b) \underline{Proposition} $\langle FK, \overline{F}L \rangle=\langle K, L \rangle(-1)[-2]$.

This boils down to the usual
$$
\int \psi((x'-x'')y)K(x)L(x'')dx' dx''
dy\underset{\substack{\uparrow\\\text{by} \int_y}}{=}\int
\delta^{(-1)[-2]}(x'-x'')K(x')L(x'')dx'dx''.$$

\medskip

Everything done above can be generalized to any abelian connected
unipotent group $U$. The dual $U^\ast$ is to be taken in Serre's sense
(it is natural only up to inseparable isogenies, but this does not
matter. For $n$ large enough, one has a pairing
\[
U \times U^\ast \fleche{\boldsymbol{\cdot}} W_n
\]
(better: the pairing is in the cowitt vectors
$W_{-\infty}=\displaystyle{\varinjlim_{\text{by }V}} W_n$). Given
$\psi \colon \mathbb{Q}_p \slash \mathbb{Z}_p=W_{-\infty}(\mathbb{F}_p)
\to \overline{\mathbb{Q}}_\ell^\ast,$ and using the sheaf given by the
Lang covering of $W_{-\infty}/\mathbb{F}_p$ and $\psi$, everything can
be repeated, \underline{with} $(-1)[-2]$ \underline{replaced by}
$(-d)[-2d]$ where $d$ is the dimension.

\medskip

This requires to be careful if one wants to consider $\mathbb{Q}_p$ as a
(ind pro quasi) unipotent algebraic group $/\mathbb{F}_p$.

\medskip

\noindent
\textcircled{d} Where $\underline{F}$ is, there should also be an action
of the metaplectic group! (here
symplectic). Let me work for $\mathbb{G}_a$, and for $p \neq 2$. The
most precise way of speaking I see is working over $\mathbb{F}_p$, with
kernels. [It gives more than actions of $\mathrm{SL}(2, k)$ on
$\mathrm{D}^b(\mathbb{G}_a, \mathbb{Q}_\ell), k/\mathbb{F}_q$.]

\medskip
\noindent
\underline{Wanted}:
$P \in \mathrm{D}^b(\mathrm{SL}(2) \times \mathbb{G}_a \times
\mathbb{G}_a)$, viewed as a family of kernels on
$\mathbb{G}_a \times \mathbb{G}_a$ parametrized by $\mathrm{SL}(2)$.
Plus ``$P_{g'} \cdot P_{g''}=P_{g'\cdot g''}$''

\noindent
We know what is wanted for generators:
\par\noindent
\smallskip
\begin{tabular}{ccccl}
  $U^-$ &  $\begin{pmatrix}1 & 0\\a&1 
  \end{pmatrix}$ & $\mapsto$ & $(\ \ \otimes
  \mcF(\psi(\frac{ax^2}{2}))$ & (noyau sur la diagonale)
  \\
  \noalign{\bigskip}  $H$ &  $\begin{pmatrix}\lambda & 0\\0&\lambda^{-1}
  \end{pmatrix}$ & $\mapsto$ & $(x\to \lambda x)_*(\ \ )$ & (noyau sur
  $y=\lambda x$)
  \\
  \noalign{\bigskip} $a\not=0$\ $U_0^+$ & $\begin{pmatrix}1 & a\\0&1
  \end{pmatrix}$ & $\mapsto$ &
  $\Bigl(\int_x\mcF\psi(\frac{a^{-1}x^2}{2})\Bigr)^{-1}
  \mcF\psi(\frac{a^{-1}x^2}{2})\,*$
  & (noyau: faisceau loc c${}^{\underline{t}}$, de rg~$1$, 
  \\
  &&&&en degré~$-1$)
\end{tabular}

An explanation: $\mathrm{R}\Gamma \mathcal{F}\psi(a^{-1} \frac{x^2}{2})$
is of dimension $1$, and degree $1$, and I take the dual one dimensional
vector space [in $a$: a sheaf], in degree $-1$.

\medskip
\noindent
En français:

Raisonnons un peu a priori. Comme ``fonctions'', on sait ce que sont les
noyaux cherchés. On cherche des faisceaux leur donnant naissance. Sur
$U^- \times H \times U_0^+ \times U^- \times \mathbb{G}_a \times
\mathbb{G}_a$, composant les générateurs, on trouve un faisceau
localement constant de rang $1$, placé en degré $-1$, qui convient. En
chaque point de $U^- \times H \times U_0^+ \times U^-$, comme fonction
de $x, y$, il est de la forme $\psi f$, pour $f$ une fonction qui, en
$x, y$ (sur $\mathbb{G}_a \times \mathbb{G}_a$) est quadratique
homogène. Regardons la surjection
\[
U^- \times H \times U_0^+ \times U^- \longrightarrow G-B^- \qquad\qquad (G=\mathrm{SL}(2), B^-=\left(\begin{smallmatrix} \ast & 0 \\ \ast & \ast \end{smallmatrix}\right)) 
\]
Puisque comme ``fonctions'' ce que nous cherchons existe, le faisceau
obtenu est constant sur les fibres de (cette application
$\times \mathbb{G}_a \times \mathbb{G}_a$).

\medskip
\noindent
\underline{Obtenu}: un faisceau de rang $1$, en degré $-1$, localement
constant, sur $(G-B^-) \times \mathbb{G}_a \times \mathbb{G}_a$.

\medskip

Pour compléter ce tableau, il est bon de comprendre en quel sens, pour
$a \to 0$, on a
$$
  \Bigl(\int \mathcal{F}\psi(a^{-1}\frac{x^2}{2})dx \Bigr)^{-1} \cdot \
  \ \mathcal{F}\psi(a^{-1} \frac{x^2}{2}) \quad\longrightarrow \quad
  \delta \quad (\text{faisceau $\mathbb{Q}_\ell$ en $x=0$})
$$
[où]
$$
\int \mathcal{F}\psi(a^{-1}\frac{x^2}{2})dx
$$
est un faisceau de rang $1$ (degré $-1$) sur la droite de $a$; ce
faisceau se trivialise sur le revêtement de la droite de $a$ donné par
$\sqrt{a}$, car
\[
  \int \mathcal{F}\psi(a^{-2}\frac{x^2}{2})dx=\int
  \mathcal{F}\psi(\frac{(a^{-1}x)^2}{2}dx=\int
  \mathcal{F}\psi(\frac{x^2}{2})dx \quad \text{par
    ch${}^{\underline{\text{nt}}}$ de variable}
\]
\noindent
Il correspond à une somme de Gauss; sur $\int \cdots$, \quad
$|\text{Frobenius}|=q^{1/2}$.

\noindent
Traçons le plan $a, x$\quad ; le faisceau considéré est défini pour
$a \neq 0$; il se ramifie (sauvagement) le long de $a=0$, et la
ramification est équisingulière pour $x\not=0$. Si $j$ est l'inclusion
de $a \neq 0$ dans le plan, on a
$$
\begin{cases}
  j_\ast(\text{faisceau})=j_!(\text{faisceau})& \text{ nul pour $a=0$}
   \\
   \mathrm{R}^1j_\ast(\text{faisceau})& \text{concentré en $(0, 0)$, où
     c'est $\delta$}
   \\
   \mathrm{R}^2j_\ast(\text{faisceau}) &\text{concentré en $(0,0)$}
\end{cases}
$$

\medskip

Ceci se vérifie assez facilement en éclatant $2$ fois $(0, 0)$, la 2ème
fois en éclatant (courbe exceptionnelle) $\cap$ (transformé pur de l'axe
des $x$) : on utilise
\[
\xymatrix{ 
& (\text{plan éclaté}) \ar[dr]^{\pi} & \\
(a \neq 0) \ar@{^{(}->}[ur]^{\widetilde{j}} \ar@{^{(}->}[rr] & & (\text{plan}) 
} \qquad \mathrm{R}j_\ast=\mathrm{R}\pi_\ast \mathrm{R}\widetilde{j}_\ast 
\]

\medskip
On contrôle en projetant sur la droite des $a$ : si $p_a$ est cette
projection, on a
$$
Rp_{a*}Rj_*=Rj_* Rp_{a*}
$$
[où] $Rp_{a*}$ donne $\mathbb{Q}_{\ell}$ sur la droite $a$, $\setminus
\{0\}$,  et $Rj_*$ sur cette droite $(a\not=0)\injecte a=0$, donc
$$
\begin{cases}
  \text{deg. } 0 & \mathbb{Q}_{\ell}\\
  \text{deg. } 1 & \mathbb{Q}_{\ell}(-1) \text{ en } 0.
\end{cases}
$$

\medskip
\noindent
Ceci nous dit ce que nous devons faire pour construire $P$:
\begin{enumerate}
\item[(a)] sur $U^+$, le noyau s'obtient à partir de 
\[
  \tau_{\leq 0}\Bigl(\mathrm{R}j_\ast\Bigl(\Bigl(\int
  \mathcal{F}\psi(a^{-1}\frac{x^2}{2})dx \Bigr)^{-1}
  \mathcal{F}(\psi(a^{-1}\frac{x^2}{2})) \Bigr)
\] sur $U^+ \times \mathbb{G}_a$, comme convolution.

\item[(b)] sur $G \times \mathbb{G}_a \times \mathbb{G}_a$, on prend le
  noyau déjà construit sur
  $(G-B^-)\times \mathbb{G}_a \times \mathbb{G}_a,$ et pour $j$
  l'inclusion dans $G \times \mathbb{G}_a \times \mathbb{G}_a,$ on lui
  applique $\tau_{\leq 0} \mathrm{R}j_\ast$.
\end{enumerate}

\bigskip
\noindent
Je me suis convaincu que la formule $P_g \cdot P_{g'}=P_{gg'}$ vaut au
sens le plus fort possible:
\begin{enumerate}
\item[a)] sur $G \times G \times \mathbb{G}_a \times \mathbb{G}_a$\quad , on prend $P_{g''}(y, z)P_{g'}(x, y)$.
\item[b)] on intègre par rapport à $y$:
  $(P \cdot P)_{g', g''}=\int dy \cdots$\quad sur\
  $G \times G \times \mathbb{G}_a \times \mathbb{G}_a$
\item[c)] si $\pi$ est $G \times G \to G \colon g', g'' \mapsto g'g''$,
  on a un isomorphisme
  $$
  (P\cdot P)=\pi^\ast P
  $$
\item[d)] on a une compatibilité pour un composé triple [en c), on a
  unicité à une constante près, et on normalise par ce qui se passe à
  l'origine].
\end{enumerate}

\bigskip

Bien sûr, tout ceci devrait valoir pour un espace vectoriel $V$, et
$\mathrm{Sp}(V \oplus V^\ast)$. Il est facile de se convaincre qu'on a
en tout cas un noyau $P_g(v, v')$ qui est un faisceau virtuel, et que
sur la cellule des $g \in \mathrm{Sp}$ où
$gV^\ast \cap V^\ast=0$, il est donné de façon naturelle par un faisceau
de rang $1$, localement constant, en degré $-n$. J'espère que le noyau
lui-même s'en déduit par une suite d'opérations $\tau_{\leq}j_\ast$,
avec un résultat localement constant de rang $1$ sur un sous-espace, en
degré $-k$, sur la strate $\dim(V^\ast/V^\ast \cap gV^\ast)=k$... (qu'on
ait un noyau ainsi stratifié doit pouvoir se vérifier par Fourier).

\bigskip
\noindent
\underline{Question}: Le foncteur
$K \mapsto (x \mapsto -x)_\ast \underline{\mathrm{RHom}}(K,
\mathbb{Q}_\ell)$ commute-t-il à l'action de $\mathrm{SL}(2)$\ ?
\par
\medskip
\noindent
\underline{Question bis}: pour $P_g$ le noyau, et $K$ sur
$\mathbb{G}_a$, a-t-on
\[
  \mathrm{Rpr}_{2!}(P_g \otimes \mathrm{pr}_1^\ast K)
  \stackrel{\sim}{\longrightarrow} \mathrm{Rpr}_{2\ast}(P_g \otimes
  \mathrm{pr}_1^\ast K) \quad ?
\]

\medskip
\noindent
\underline{Question ter}: y commute-t-il virtuellement -- au moins
virtuellement sur $\overline{\mathbb{F}}$\quad\ ?

\par
\vspace{2cm}
\begin{center}
  Bien à toi,
  \par
  \bigskip\bigskip
  \par
  \qquad\qquad P. Deligne
\end{center}

\chapter{Intuition for analytic number theorists}
\label{ch-app-analytic}

The goal of this informal appendix is to provide readers who have a
background in analytic number theory with some intuition and feeling for
objects such as $\ell$-adic complexes, perverse sheaves, or tannakian
categories, all of which are essential tools in this book.

The focus here concerns trace functions of \emph{more than one
  variable}.  On the other hand, the theory of trace functions in
\emph{one} variable is more accessible, as the algebraic objects can be
presented more concretely using Galois theory of function fields.  Some
familiarity with this point of view will certainly also be very helpful
in developing intuition. A very concise introduction can be found in the
Pisa survey of Fouvry, Kowalski and Michel~\cite{pisa}, and a more
detailed treatment is contained in the lectures of Michel at the
2016 Arizona Winter School~\cite{arizona}.

We fix a finite field $k$, and denote by $k_n$ the extension of $k$ of
degree~$n$ inside a fixed algebraic closure~$\bar{k}$.  For simplicity
of notation, we will mostly speak about trace functions on the affine
space~$\Aa^m$ for some integer $m\geq 0$. However, it will be implicit
that most of what we discuss can be done for any algebraic variety $Y$
over $k$ (and this is needed, for instance because we often naturally
wish to restrict a trace function to a subvariety, where some particular
property holds), for instance for powers of the multiplicative group
$\Gg_m$ (i.e., $Y$ such that $Y(k_n)=(k_n^{\times})^d$ for some
$d\geq 0$). The reader should keep in mind that for such a subvariety,
of dimension~$d\leq m$, the size of the finite set~$Y(k_n)$ of points
of~$Y$ with coordinates in~$k_n$ is approximately $|k_n|^{d}$ when $n$
is large.

Throughout, we fix a non-trivial additive character
$\psi\colon k\to \Cc^{\times}$ and, for $n\geq 1$, we define
\begin{align*}
\psi_n \colon k_n &\longrightarrow \Cc^{\times} \\
x &\longmapsto \psi(\Tr_{k_n/k}(x)).
\end{align*}
We finally note that we will completely ignore (here) the distinction
between $\Qlb$ and~$\Cc$.

\section{Trace functions}

The concrete origin for the use of methods of algebraic geometry and
étale cohomology in analytic number theory lies in trace functions, and
especially in exponential sums. Properly speaking, a trace function
on~$\Aa^m$ is the data of a family $(t_n)_{n\geq 1}$ of functions
$k_n^m\to \Cc$, and it is associated to some algebraic object~$M$, which
we call a ``coefficient object''.  \index{coefficient object} This
object is not uniquely determined by~$(t_n)$, but we will not worry
about this matter in this appendix.

The first examples of trace functions arise from polynomials
$f\in k[X_1,\ldots,X_m]$ by means of 
\begin{equation}\label{eq-simplest-trace}
  t_n(x_1,\ldots,x_m)=\psi_n(f(x_1,\ldots,x_m));
\end{equation}
the corresponding coefficient object is denoted by $\mcL_{\psi(f)}$. Many other examples are then obtained by applying various operations,
which are known to preserve the set of trace functions (these are
operations on the coefficient objects, which are reflected in a specific
operation at the level of trace functions). These operations include the
following, where we indicate the algebraic notation for the
corresponding coefficient objects:
\begin{itemize}
\item The constant function $1$ is associated to the coefficient object 
  $M=\bQl$.
\item The sum of the trace functions associated to $M_1$ and $M_2$ is
  associated to $M_1\oplus M_2$.
\item If $(t_n)$ is a trace function associated to~$M$, then
  $((-1)^kt_n)$ is a trace function for each integer $k\in \Zz$,
  associated to a coefficient denoted by $M[k]$ and called a ``shift''
  of $M$.
 
\item If $(t_n)$ is a trace function associated to~$M$, then
  $(|k_n|^r t_n)$ is a trace function for each integer~$r \in \Zz$,
  associated to a coefficient denoted by $M(-r)$ and called a ``(Tate)
  twist'' of $M$.
  
\item The product of the trace functions associated to $M_1$ and $M_2$
  is associated to $M_1\otimes M_2$.
\item If $f=(f_1,\ldots,f_d)\colon \Aa^m\to \Aa^d$ is a tuple of
  polynomials in $k[X_1,\ldots,X_m]$, and $s=(s_n)$ is a trace
  function on~$\Aa^d$ associated to a coefficient~$N$, then
  $$
  t_n(x_1,\ldots,x_m)=s_n(f(x_1,\ldots,x_m))
  $$
  defines a trace function $(t_n)$ on~$\Aa^m$, which we also denote by 
  $s\circ f$. The corresponding coefficient is $f^*N$.
\item If $f=(f_1,\ldots,f_d)\colon \Aa^m\to \Aa^d$ is a tuple of
  polynomials in $k[X_1,\ldots,X_m]$, and $t=(t_n)$ is a trace function
  on~$\Aa^m$, associated to a coefficient object~$M$, then
  \begin{equation}\label{eq-sum-fiber}
    s_n(y_1,\ldots, y_d)=\sum_{\substack{x\in k_n^m\\f(x)=y}}t_n(x)
  \end{equation}
  defines a trace function on~$\Aa^d$; the associated coefficient object is
  denoted by $Rf_!M$.
\end{itemize}

\begin{example}[Fourier transform]
  This formalism is already sufficient to explain Deligne's Fourier
  transform. Let $m\geq 1$ be an integer, and consider the projections
  $$
  p_1,\ p_2\colon \Aa^{2m}\to \Aa^m
  $$
given by 
  $$
  p_1(x_1,\ldots,x_m,y_1,\ldots,y_m)=(x_1,\ldots,x_m),\quad\quad
  p_2(x_1,\ldots,x_m,y_1,\ldots,y_m)=(y_1,\ldots,y_m).
  $$
  We write 
  \[
  X\cdot Y=X_1Y_1+\cdots+X_mY_m
  \]
  for variables $X_i$ and $Y_j$. This is a polynomial with coefficients
  in~$k$, so the functions
  $$
  F_n(x,y)=\psi_n(x_1y_1+\cdots +x_my_m)
  $$
  define a trace function $F=(F_n)$ on~$\Aa^{2m}$, associated to the
  coefficient object $\mcL_{\psi(X\cdot Y)}$.
  
  Let $t=(t_n)$ be a trace function on~$\Aa^m$ with coordinates $(x_1, \ldots, x_m)$. Then the discrete
  Fourier transforms $(\widehat{t}_n)$, which are defined for $n\geq 1$
  and $y\in k_n^m$ by
  $$
  \widehat{t}_n(y)=\sum_{x\in k_n^m}t_n(x)F_n(x,y)=
  \sum_{x\in k_n^m}t_n(x)\psi_n(x\cdot y), 
  $$
  also define a trace function $\widehat{t}=(\widehat{t}_n)$. Indeed,
  for any $y$, the set of all $x\in k_n^m$ can be identified with the
  set of $(x,y)\in k_n^{2m}$ such that $p_2(x,y)=y$, and we have
  $t_n(x)=t_n(p_1(x,y))$, so that if~$t$ is associated to the
  coefficient object~$M$, then the formalism above shows that $\widehat{t}$ is
  associated to
  $$
  \widehat{M}=Rp_{2!}(p_1^*M\otimes \mcL_{\psi(X\cdot Y)}).
  $$
\end{example}

\section{Weights and purity: lisse sheaves}

The formalism of trace functions is useful in analytic
number theory \emph{because} of Deligne's Riemann hypothesis over finite
fields. This also leads to some understanding of the important
qualitative differences between various types of trace functions---corresponding to classes of coefficients which may (for instance) be
lisse sheaves, constructible sheaves, complexes of constructible
sheaves, or perverse sheaves. We will try in this and the following
sections to provide the readers with some intuition of the concrete meaning of these
notions.

The key concept (due to Deligne) is that of a coefficient~$M$ which is
\emph{punctually pure}, or \emph{pure}, of some weight~$w\in\Zz$. The
main conceptual difficulty is that the meaning of this property for the
corresponding trace function is not straightforward in general.

The simplest case (from which the others will be derived) is that of $M$
which is a single ``lisse sheaf''. \emph{In that case}, the concrete
meaning\footnote{\ But not exactly the precise definition.} of $M$ being
\emph{punctually pure of weight~$w$}, in terms of the trace function $t=(t_n)$,
is that there exist
\begin{itemize}
\item an integer $r\geq 0$, the \emph{rank} of~$M$, 
\item for each $n\geq 1$ and~$x\in k_n^m$, a unitary matrix
$\Theta_M(x;k_n)\in \Un_r(\Cc)$, well-defined up to conjugacy, 
\end{itemize}
so that the following equality holds:  
$$
t_n(x)=|k_n|^{w/2}\Tr(\Theta_M(x;k_n)).
$$
In particular, note that this implies the estimate
$$
|t_n(x)|\leq r|k_n|^{w/2}
$$
for all~$n$ and~$x\in k_n^m$.

In the remainder of this appendix, we will sometimes say that a lisse
sheaf, or its trace function, is ``pure'' instead of the more correct
``punctually pure''.

\begin{remark}
  The matrix $\Theta_M(x;k_n)$ is not arbitrary in~$\Un_r(\Cc)$. For
  instance, its eigenvalues (which of course determine the trace) are
  Weil numbers of weight~$0$, i.e., algebraic numbers in~$\Cc$ for which
  all Galois conjugates have modulus~$1$. Moreover, if $n'$ is a multiple of~$n$, then $x\in k_n^m$ can also be
  viewed as an element of $k_{n'}^m$ through the inclusion $k_n \subset k_{n'}$, and the formula
  $$
  \Theta_M(x;k_{n'})= \Theta_M(x;k_{n})^{n'/n}
  $$
  holds (\ie the eigenvalues of the matrix $\Theta_M(x;k_{n'})$ are
  those of $\Theta_{M}(x;k_n)$ raised to the power~$n'/n$).
\end{remark}

As one can expect, the trace functions defined by the
formulas~(\ref{eq-simplest-trace}), associated to $\mcL_{\psi(f)}$, are of this type, with $r=1$, $w=0$, and the matrix $\Theta(x;k_n)$ reduced
to the single complex number of modulus one $\psi_n(f(x))$. Moreover, it
is also intuitively clear (and true) that some of the operations
discussed above will respect the special class of trace functions
associated to pure lisse sheaves.

For instance:
\begin{itemize}
\item If $t$ and~$t'$ are trace functions associated to objects $M$
  and~$N$ which are both lisse sheaves pure of (the same) weight~$w$,
  then $t+t'$ is also pure of weight~$w$; we have 
  \[
  \Theta_{M\oplus N}(x;k_n)=\Theta_M(x;k_n)\oplus \Theta_N(x;k_n).
  \]
\item If $t$ and~$t'$ are trace functions associated to objects $M$
  and~$N$ which are both lisse sheaves pure of weights~$w$ and~$w'$,
  respectively, then $tt'$ is also pure of weight~$w+w'$. In other
  words, $M\otimes N$ is still a lisse sheaf, pure of that weight; in
  fact, we have 
  \[
  \Theta_{M\otimes N}(X;k_n)=\Theta_M(x;k_n)\otimes \Theta_N(x;k_n).
  \]
\item If $f=(f_1,\ldots,f_d)\colon \Aa^m\to \Aa^d$ is a tuple of
  polynomials in $k[X_1,\ldots,X_m]$, and $s$ is a trace function
  on~$\Aa^d$ associated to a lisse sheaf of weight~$w$, then $s\circ f$
  is also pure of weight~$w$. In other words, $f^*N$ is still a lisse
  sheaf, pure of weight~$w$; in fact, we have
  \[
  \Theta_{f^*N}(x;k_n)=\Theta_N(f(x);k_n).
  \]
\end{itemize}

But elementary examples show that the crucially important operation of
``summing over the fiber'' (see~(\ref{eq-sum-fiber})) does not always
send a single lisse sheaf to a lisse sheaf, and may also not map a trace
function which is pure of some weight to another one.

\begin{example}\label{ex-trace-fns}
  (1) Let $m=d=1$ and $f\in k[X]$ a polynomial of degree $2$,
  viewed as a map from $\Aa^1$ to itself. We consider the trace function
  $(t_n)$ with $t_n(x)=\psi_n(x)$, associated to the lisse sheaf
  $\mcL_{\psi(X)}$ (of weight~$0$), and the trace function $(s_n)$
  defined by
  $$
  s_n(x)=\sum_{\substack{y\in k_n\\ f(y)=x}}t_n(y)=\sum_{\substack{y\in
      k_n\\ f(y)=x}}\psi_n(y),
  $$
  for $n\geq 1$ and $x\in k_n$, which is associated to the coefficient object 
  $Rf_!\mcL_{\psi(X)}$. For most $x$, the value of $s_n(x)$ is either $0$ (if $f(y)=x$ has no solutions in $k_n$) or a sum of two
  roots of unity, but for the single point $x_0=f(y_0)$, where $y_0$ is
  the unique zero of the derivative of~$f$, the value $s_n(x_0)$ is a
  single root of unity (note that $y_0$, and hence $x_0$, belongs to $k$, so it also belongs to $k_n$ for all $n$, but the value of $s_n(x_0)$ does vary with $n$). 

  (2) We consider $m=2$ and the trace function $(t_n)$ defined by
  $t_n(x,y)=\psi_n(xy^2)$ for $(x,y)\in k_n^2$. It is associated to the coefficient object 
  $\mcL_{\psi(XY^2)}$, which is pure of weight~$0$. Let $d=1$ and
  $f=X$. Then $Rf_!\mcL_{\psi(XY^2)}$ has the trace function $(s_n)$
  such that
  $$
  s_n(x)=\sum_{y\in k_n} \psi_n(xy^2)=\begin{cases}
    \text{a quadratic Gauss sum}& \text{ if } x\not=0,\\
    |k_n|&\text{ if } x=0.
  \end{cases}
  $$
\end{example}

Neither of these examples of trace functions are associated to a single
punctually pure lisse sheaf. However, it turns out that the underlying
reason is not the same. In Example~(1), the issue is that $(s_n)$ is
associated to a single constructible sheaf which is ``not lisse'' at the
point~$x_0$. In Example~(2), the issue is that $(s_n)$ is associated to
a ``complex'' of constructible sheaves, i.e., not to a single sheaf.

\section{Weights and purity: constructible sheaves and complexes}

In fact, the most general source of trace functions are \emph{(bounded)
  mixed complexes of constructible sheaves}.  We now try to outline the
concrete interpretation of these more general conditions.

The first step goes from a single lisse sheaf to a \emph{single
  constructible sheaf}. Such a sheaf is \emph{(punctually) pure of
  weight~$w$} if there is a ``stratification''
$$
\emptyset=X_0\subset X_1\subset \cdots\subset X_q=\Aa^m
$$
of $\Aa^m$, where $X_i$ is a closed subvariety of~$X_{i+1}$, so
that the restriction of~$M$ to each of the pieces~$X_{i+1}\setminus X_i$
is a single lisse sheaf, punctually pure of weight~$w$, and of some
rank~$r_i\geq 0$ (which in general depends on~$i$).

Concretely, for a given $x\in k_n^m$, there exists a unique $i$ such that
$x\in X_{i+1}\setminus X_i$, and then there exists a unitary matrix
$\Theta_M(x;k_n)$ of size $r_i$ such that
$$
t_n(x)=|k_n|^{w/2}\Tr(\Theta_M(x;k_n)).
$$

\begin{example}
  Example~(1) above is of this kind, with the stratification
  $$
  \emptyset\subset \{x_0\} \subset \Aa^1,
  $$
  and with $r_0=1$ and $r_1=2$. On $\{x_0\}$, the unique eigenvalue is
  $s_n(x_0)=\psi_n(y_0)$, viewing $x_0$ as belonging to $k_n$. On $\Aa^1\setminus \{x_0\}$, the two
  eigenvalues are either opposite (hence the trace is zero) if
  $x\notin f(k_n)$, or are given by $\psi_n(y)$, for $y$ ranging over
  the two roots of the quadratic equation~$f(y)=x$.
\end{example}

More generally, Deligne defined a \emph{mixed constructible sheaf} of
weights $\leq w$ by the condition that there is a filtration with
associated punctually pure quotients~$M_j$, each of some weight
$w_j\leq w$. Concretely, this implies that the trace function $t=(t_n)$
is given by
$$
t_n(x)=\sum_{j\in J}t_{n,j}(x)
$$
for some finite set~$J$, where each family $(t_{n,j})_{n\geq 1}$ is the
trace function of a constructible sheaf which is pure of weight
$w_j\leq w$.
 
Finally, the most general type of trace functions arises from
objects~$M$ that are \emph{complexes of constructible sheaves}. Such a complex gives in particular rise to a \emph{sequence} $(\mcH^i(M))_{i\in\Zz}$ of
constructible sheaves, with $\mcH^i(M)=0$ for all but finitely many $i$,
in such a way that
$$
t_n(x)=\sum_{i\in\Zz}(-1)^it_{n,i}(x)
$$
for all $n\geq 1$ and $x\in k_n$, where $(t_{n,i})_{n\geq 1}$ is the
system of trace functions for the constructible sheaf~$\mcH^i(M)$. (These sheaves are called the \emph{cohomology sheaves} of
the complex~$M$.)

\begin{example}
  Example~(2) above is obtained from a complex of constructible
  sheaves~$M$, where there are two non-zero pieces, namely $\mcH^1(M)$
  and~$\mcH^2(M)$.

  The sheaf~$\mcH^1(M)$ is constructible for the stratification
  $$
  \emptyset\subset \{0\}\subset \Aa^1,
  $$
  with the piece on~$\{0\}$ of rank~$0$, and the piece
  on~$\Aa^1\setminus \{0\}$ of rank~$1$, pure of weight~$1$, with the
  corresponding unique eigenvalue equal to the quadratic Gauss sum
  $$
  \sum_{y\in k_n} \psi_n(xy^2)
  $$
  for $x\in k_n\setminus \{0\}$.

  The sheaf $\mcH^2(M)$ is also constructible, for the same
  stratification (but this is not a general feature), with
  the lisse sheaf of rank~$0$ on $\Aa^1\setminus \{0\}$, and a piece of
  rank~$1$ of weight~$2$ at $\{0\}$, with eigenvalue $|k_n|$.
\end{example}

However, for a complex~$M$, the definition of what it means that $M$ is
\emph{pure of weight~$w$} is much more subtle than for a single
sheaf. In particular, it does \emph{not} mean that each piece $\mcH^i(M)$ is itself a punctually pure sheaf of
  weight~$w$. More precisely, one defines first the \emph{mixed
  complexes of weights $\leq w$}, which are those such that $\mcH^i(M)$
is a mixed constructible sheaf of weights $\leq w+i$ for any
$i\in\Zz$. There is then furthermore defined another complex $\dual(M)$,
called the \emph{Verdier dual} of~$M$, and $M$ is said to be pure of
weight~$w$ if $M$ is mixed of weights~$\leq w$ and~$\dual(M)$ is mixed
of weights $\leq -w$.

\begin{remark}
  (1) For a single lisse sheaf~$M$ which is punctually pure of
  weight~$0$, the corresponding complex has $\mcH^0(M)=M$ and
  $\mcH^i(M)=0$ for all $i\not=0$. One can prove that the Verdier dual
  is a complex~$\dual(M)$ such that $\mcH^{-2m}(\dual(M))$ is a lisse
  sheaf which is pure of weight $-2m$ and all the other cohomology
  sheaves vanish, so that the two definitions of purity coincide for
  lisse sheaves. In fact, the trace function of $\dual(M)$ is \emph{in
    this case} the complex conjugate of the trace function of~$M$.
  \par
  (2) In practice, if an analytic number theorist is interested in a
  single trace function (e.g., one that represents a concrete family of
  exponential sums which one is interested in estimating) and one is not applying further
  operations like $Rf_!$, then one can quite often reduce to the case of
  a single lisse sheaf. This is for example the case for the
  hyper-Kloosterman sums in two variables
  $$
  \Kl_3(x;k_n)=\frac{1}{|k_n|}\sum_{\substack{a,b,c\in
      k_n^{\times}\\abc=x}} \psi_n(a+b+c),
  $$
  or the famous sums
  $$
  FI(x,y;k_n)=\sum_{z\in k_n^{\times}} \Kl_3(xz;k_n)
  \Kl_3(yz;k_n)\psi_n(z)
  $$
  which arose in the work of Friedlander and Iwaniec on the ternary
  divisor function~\cite{friedlander-iwaniec}, and reappeared in the
  work of Zhang~\cite{zhang}.
  
  Indeed, if the exponential sum is mixed, this will often be clear from
  the definition, or from a preliminary analysis, and one can
  ``isolate'' the part of most interest (of highest weight usually),
  which will be associated to a punctually pure constructible
  sheaf. Then by restricting the set of definition according to a
  suitable stratification, one will ensure that one handles a lisse
  sheaf.
  
  For $m=1$, this second step means avoiding finitely many values of $x$
  where the sheaf has unusual behavior; for $m\geq 2$, this means
  avoiding those that satisfy some non-trivial polynomial equation
  $g(x_1,\ldots,x_m)=0$. These special parameters can then be handled
  separately---giving rise to a kind of inductive process which
  reflects exactly the algebraic stratification of the corresponding
  coefficient~$M$.
\end{remark}

One good explanation for the focus on mixed objects with bounded weights
can be found (a posteriori) from the statement of Deligne's most general
form of the Riemann hypothesis. In our context, it can be stated as
follows:

\begin{theorem}[Deligne]\label{thm:DRH}
  Let $(t_n)$ be a trace function on~$\Aa^m$ associated to a complex~$M$
  which is mixed of weights~$\leq w$. Let $f=(f_1,\ldots, f_d)$ be a
  tuple of polynomials in $k[X_1,\ldots,X_m]$.  The complex $Rf_!M$ is
  \emph{mixed} of weights $\leq w$ , and so its trace functions
  $$
  s_n(y)=\sum_{\substack{x\in k_n^m\\f(x)=y}}t_n(x)
  $$
  are also mixed of weights~$\leq w$.
\end{theorem}

\begin{remark}
  On the other hand, even if $M$ is a single lisse sheaf, punctually
  pure of weight~$w$, it is \emph{not always the case} that $Rf_!M$ is
  pure.
\end{remark}

A benefit of introducing these more general definitions is that all
operations now respect the property of being mixed for any trace
function, with a good understanding of how the weights may~change:
\begin{itemize}
\item The lisse sheaf $M=\bQl$ is pure of weight~$0$.
\item If $M_1$ and $M_2$ have weights $\leq w_1$ and $\leq w_2$,
  respectively, then $M_1\oplus M_2$ has weights \hbox{$\leq \max(w_1,w_2)$}
  and $M_1\otimes M_2$ has weights $\leq w_1+w_2$.
\item If $M$ has weights $\leq w$, then for any $k\in\Zz$, the shifted
  complex $M[k]$ has weights $\leq w-k$.
  \item If $M$ has weights $\leq w$, then for any $r\in\Zz$, the twisted
  complex $M(r)$ has weights $\leq w-2r$.
\item If $f=(f_1,\ldots,f_d)\colon \Aa^m\to \Aa^d$ is a tuple of
  polynomials in $k[X_1,\ldots,X_m]$, and $s=(s_n)$ is a trace function
  on~$\Aa^d$ associated to a mixed complex~$N$ of weights~$\leq w$, then
  $f^*N$ has weights $\leq w$.
\item If $f=(f_1,\ldots,f_d)\colon \Aa^m\to \Aa^d$ is a tuple of
  polynomials in $k[X_1,\ldots,X_m]$, and if $M$ has weights $\leq w$,
  then   $Rf_!M$ has weights $\leq w$ (this is again Deligne's Theorem \ref{thm:DRH}).
\end{itemize}
\par
All objects that occur in practice in analytic number theory\footnote{\
  And indeed more generally in algebraic geometry.} are mixed
complexes. This means that any trace function $(t_n)$ has a
decomposition
$$
t_n=\sum_{a\leq w\leq b} t_{n,w}
$$
for some $a$ and $b$ (independent of~$n$), where $(t_{n,w})_{n\geq 1}$
is a trace function associated to a complex which is pure of weight~$w$.

\section{Perverse sheaves}

There remains the task of attempting to explain a further fundamental
subclass of trace functions (hence of complexes), those associated to
\emph{perverse sheaves}. This is a distinguished class of complexes with
remarkable geometric and arithmetic properties. For analytic purposes,
the most important of these is maybe that the \emph{simple} perverse
sheaves provide a \emph{canonical basis} of the abelian group of trace
functions, and that if we restrict to pure perverse sheaves, then this
is in a natural sense a \emph{quasi-orthogonal basis} for the trace
functions of pure complexes of weight $0$. We will now explain these
properties.

The rigorous definition of perverse sheaves is of a similar nature to
that of pure complexes: it is the combination for both the complex~$M$
and its Verdier dual $\dual(M)$ of a relatively simple condition, called
\emph{semiperversity}.\footnote{\ The complication is that the Verdier
  dual is often difficult to compute.} The condition of
semiperversity concerns the size of the support of the cohomology
sheaves $\mcH^i(M)$ (which are intuitively the points~$x$ where
$\mcH^i(M)$ does not vanish; in the stratification in terms of lisse
sheaves, this is where these sheaves have non-zero rank): for any
$i\in\Zz$, the support of $\mcH^i(M)$ should be of dimension at most
$-i$. (In particular, if $i\geq 1$, then the support should be empty, so
$\mcH^i(M)$ should be zero then.)

Remarkably, this condition can be recovered intuitively from basic
analytic intuition (which highlights that it is extremely natural).
  
Thus consider a trace function $t=(t_n)$ associated to a complex $M$
on~$\Aa^m$ and assume that it is mixed of weights $\leq 0$. From the
analytic point of view, we are often in the situation where the
mean-square of the values of the trace function $t_n$ are bounded (after
some normalization maybe), and bounded away from zero, i.e., for $n$
large enough, we have
\begin{equation}\label{eq-perverse-norm}
  \sum_{x\in k_n^m}|t_n(x)|^2\asymp 1.
\end{equation}

For $i\in\Zz$, the cohomology sheaf $\mcH^i(M)$ should be
``essentially'' pure of weight~$i$ (rigorously, we only know that it is
mixed of weights $\leq i$). So the contribution to the sum above of the
$x$ in the support~$S_i$ of $\mcH^i(M)$ should be expected to be of
order of magnitude
$$
|k_n|^{2\cdot i/2}\times |S_i(k_n)|\approx |k_n|^{i+d_i}
$$
if $S_i$ has dimension $d_i$. Hence the
estimate~(\ref{eq-perverse-norm}) only has a chance to hold if
$i+d_i\leq 0$ for all~$i$, and this is \emph{precisely} the
semiperversity condition.

\begin{example}
  Consider a family of exponential sums of type
  $$
  \frac{1}{|k_n|^{m}} \sum_{y\in k_n^m}\psi_n(f(y)+x_1y_1\cdots +x_my_m)
  $$
  with parameters $(x_1,\ldots,x_m)\in k_n^m$ (these functions of $x$
  are the trace functions of a complex~$M$ which is a normalized form of
  Deligne's Fourier transform of the lisse sheaf $\mcL_{\psi(f)}$).
  \par
  We expect ``generic'' square-root cancellation,
  \index{square-root cancellation} so as $n$ varies, for ``most'' choices of
  $x\in k_n^m$, this sum should be of size about $|k_n|^{-m/2}$. Since
  $\mcH^i(M)$ is of weight~$\leq i$, and hence contributes terms of size
  typically expected to be~$|k_n|^{i/2}$, this expectation corresponds
  to the fact that $\mcH^i(M)$ should be ``generically'' zero
  unless~$i=m$, while $\mcH^{-m}(M)$ contributes a fixed number of
  complex numbers of modulus $\leq |k_n|^{-m/2}$.

  But for special values of $x$, those satisfying some non-trivial
  polynomial equation $g(x)=0$, one may obtain a larger sum than
  square-root cancellation. Experience teaches that usually this size
  only jumps by one factor $|k_n|^{1/2}$ (so the sum is about
  $|k_n|^{-m/2+1/2}$) if only this one condition is imposed; if it is
  bigger (say of size $|k_n|^{-m/2+1}$), this should mean that a second
  (independent) equation $h(x)=0$ holds, and so on.

  This ``stratification'' of bounds getting steadily worse only on
  smaller subsets corresponds to cohomology sheaves $\mcH^i(M)$
  (contributing terms of size $|k_n|^{i/2}$) vanishing outside of
  subvarieties of dimension at most $-i$.

  In the extreme case, the exponential sum is of size~$1$ (i.e., there
  is no cancellation at all) at worse for finitely many values of the
  parameters, corresponding to $\mcH^0(M)$ being supported on finitely
  many points.
  \par
  This particular example is at the root of the results of Katz, Laumon
  and Fouvry on stratification for additive exponential
  sums~\cite{fouvry,KL-fourier-exp-som,fouvry-katz}.  It should suggest
  to analytic readers that semiperversity is a relatively easy
  condition to check, and that it should be natural and ubiquitous in
  analytic number theory.
\end{example}

The following statement provides a concrete illustration of the
advantages of perverse sheaves.

\begin{theorem}\label{th-appendix-basis}
  The $\Zz$-module of trace functions on~$\Aa^m$ over~$k$ is generated
  by the trace functions of perverse sheaves, and the trace functions of
  \emph{simple} perverse sheaves form a basis.
\end{theorem}

The first statement is in fact very explicit. Indeed, if $t=(t_n)$ is an
arbitrary trace function, associated to a complex~$M$, one can define
(in addition to its ``usual'' cohomology sheaves $\mcH^i(M)$) its
\emph{perverse cohomology sheaves} $\pH^i(M)$, which are perverse
sheaves, zero for $|i|>m$, such that their trace functions
$(\pt_{i,n})_{n\geq 1}$ satisfy the equation
$$
t_n=\sum_{i\in\Zz}(-1)^i\ \pt_{i,n}
$$
for all $n\geq 1$. Furthermore, a complex $M$ is mixed of
weights~$\leq w$ if and only if each $\pH^i(M)$ is also mixed of
weights~$\leq w+i$ (similarly to the cohomology sheaves;
see~\cite[Th.\,5.4.1]{BBD-pervers}).

\begin{remark}
  To say that a complex $M$ is perverse is to say that its perverse cohomology sheaves are $M=\pH^0(M)$ and
  $\pH^i(M)=0$ for all~$i\not=0$.
\end{remark}

Up to the terminology and notation, the second statement of
Theorem~\ref{th-appendix-basis} is proved by Laumon
in~\cite[Th.\,1.1.2]{laumon-signes} (it was already mentioned by Deligne
in his letter to Kazhdan; see Appendix~\ref{ch-app-letter}). To
understand it, one must explain what are the simple perverse sheaves
which are mentioned there. We will content ourselves with stating the
quasi-orthonormality property which holds for a simple perverse sheaf
that is pure of weight~$0$.  It is another consequence of Deligne's
Riemann Hypothesis, proved by Katz, that if $t=(t_n)$ is the trace
function of a perverse sheaf~$M$, then
\begin{equation}\label{eq-quasi-ortho}
  \limsup_{n\to +\infty}\sum_{x\in k_n^m}|t_n(x)|^2=1
\end{equation}
\emph{if and only if} $M$ is \emph{simple}.

\begin{remark}
  One of the fundamental results of Beilinson, Bernstein, Deligne and
  Gabber \cite[Cor.\,5.3.4]{BBD-pervers} is that a simple perverse sheaf
  which is mixed, as a complex, is in fact \emph{pure} of some weight;
  since non-mixed complexes do not appear in practice, this means that
  simple perverse sheaves in analytic number theory are always pure of
  some weight, and the quasi-orthonormality characterization can be
  extended to all simple perverse sheaves, up to normalization.
\end{remark}

\begin{example}
  We can illustrate how useful this quasi-orthonormality statement can
  be to guess or understand some properties of perverse sheaves by
  noting that it strongly suggests a non-trivial property of simple
  perverse sheaves. Namely, let $M$ be a simple perverse sheaf, pure of
  weight~$0$, and generically non-zero (i.e., the support of~$M$ is all
  of~$\Aa^m$). If we repeat the argument leading to the guess of the
  semiperversity condition, we see that we expect that the contribution
  to
  $$
  \sum_{x\in k_n^m}|t_n(x)|^2
  $$
  of each non-zero cohomology sheaf $\mcH^i(M)$ should be of size
  $$
  \alpha_i |k_n|^{i+d_i}
  $$
  for some integer $\alpha_i\geq 1$, and comparison
  with~(\ref{eq-quasi-ortho}) indicates that $i+d_i$ will be $<0$ except
  for one single value of~$i$. Moreover, one knows that the cohomology
  sheaf $\mcH^{-m}(M)$ is generically non-zero, so this value must be
  $i=-m$, so that we expect that
  $$
  d_i\leq -i-1\quad \text{ for }\quad i\not=-m,
  $$
  which is stronger than the condition $d_i\leq -i$ derived from
  semiperversity only. This is indeed true (it is the improved support condition of
  Proposition~\ref{pr-support-property}).
\end{example}

\section{Tannakian categories}

The results of this book also rely in an essential way on another tool
that is most likely unfamiliar to analytic number theorists: the
formalism of tannakian categories.  In very rough terms, this refers to
a method to construct or define a \emph{group} (which in our case will
be the ``symmetry group''\index{symmetry group} that governs the
equidistribution properties of a trace function), by recovering it from
the way it acts on finite-dimensional $K$-vector spaces, for some 
algebraically closed field $K$ of characteristic zero (which can be considered to be~$\Cc$). That this is possible is indicated by the following
result:

\begin{theorem}[Tannaka]
  Let $\Gg$ be a compact group. Assume that for every
  finite-dimensional complex vector space $V$ on which the group~$\Gg$
  acts linearly, via a continuous homomorphism
  $\rho\colon \Gg\to \GL(V)$, we are given an invertible linear
  transformation $\alpha(\rho)\colon V\to V$, and suppose that these
  data satisfy the following conditions\emph{:}
  \par
  ``Whenever $\Gg$ acts by $\rho$ on~$V$ and by $\pi$ on $W$, we have
  $$
  \alpha(1)=\mathrm{Id}_{\Cc},\quad\quad
  \alpha(\rho\otimes \pi)=\alpha(\rho)\otimes\alpha(\pi)\, ;
  $$
  whenever $\Gg$ acts by~$\rho$ on~$V$, we have
  $$
  \alpha(\bar{\rho})=\overline{\alpha(\rho)},
  $$
  where $\bar{\rho}$ is the same action as~$\rho$ but viewed as a
  representation on the conjugate vector space; and whenever we have a
  linear map $u\colon V\to W$ such that
  $$
  u(\rho(g)v)=\pi(g)u(v)
  $$
  for all $g\in \Gg$ and $v\in V$, then we have
  $$
  u\circ \alpha(\rho)=\alpha(\pi)\circ u
  $$
  as linear maps from $V$ to~$W$.''
  \par
  Then there exists a unique element $g\in \Gg$ such that
  $\alpha(\rho)=\rho(g)$ for all actions $\rho$ of~$\Gg$.
\end{theorem}

More generally, note that the ``set'' of all data of all $\alpha(\rho)$
of the type considered in this theorem can naturally be used to form a
group (with $(\alpha\beta)(\rho)=\alpha(\rho)\circ \beta(\rho)$), and
then the result identifies the group~$\Gg$ with these data. 

In a converse direction, the main theorem of the theory of tannakian
categories establishes a list of conditions on a suitable category which
guarantees that it is ``equivalent'' to the category of representations
of a group~$\Gg$ (although the context is that of algebraic groups, such
as $\GL_n(\Cc)$, instead of compact groups). A key property to apply the
``reconstruction theorem'' is that one must be able to associate to each
object~$M$ a finite-dimensional vector space $\omega(M)$ (corresponding
to the abstract space on which the group acts), and one needs to have
defined a bilinear operation on these objects, say $M\star N$, in such a
way that $\omega(M\star N)=\omega(M)\otimes\omega(N)$. Such an
``assignment'' $\omega$ is called a \emph{fiber functor}; it is not
unique, and its construction may be a delicate matter.

In the applications in this book (following the idea of Katz
in~\cite{mellin}), the objects that will correspond in this abstract way
to the actions of~$\Gg$ on vector spaces are certain perverse sheaves,
and the operation $\star$ is a form of algebraic convolution which
respects the corresponding usual convolution operation on trace
functions.

For the classical form of Tannaka duality for compact groups, we refer
to the presentation by Joyal and
Street~\cite[\S\,1]{joyal-street}. For an accessible treatment of
tannakian categories, emphasizing the natural evolution from Galois
theory, we refer to the book~\cite{szamuely} of Szamuely.

\section{Frequently asked questions}

We conclude by trying to answer some natural questions that an
analytically-minded reader of little faith may raise:

\begin{itemize}
\item \emph{Is it possible to describe trace functions (or the
    underlying algebraic objects) ``by generators and relations'', by
    listing a number of basic examples and a list of operations
    preserving trace functions, so that all trace functions are obtained
    from these basic data in finitely many steps?}
  \par
  \medskip
  \par
  It is true that in many applications to analytic number theory, the
  sheaves or trace functions which occur are constructed precisely in
  such a way (e.g., starting from an additive character, replacing the
  variable by a polynomials, taking the Fourier transform,~etc).

  However, it seems extremely unlikely that one could provide a
  satisfactory and rigorous version of such an idea, for instance
  because it is known that there are $q$ geometrically irreducible
  middle extension sheaves of rank~$2$ on the projective line
  over~$\Ff_q$ with~$4$ singular points and principal tame local
  monodromy at each point (see for
  instance~\cite[Prop.\,7.1]{deligne-flicker}; the proof of this fact
  relies on automorphic methods).  All these sheaves have bounded
  complexity as $q$ varies. However, only six such sheaves are
  explicitly known (they are associated to certain elliptic curves
  over~$\Ff_q(t)$ with four singular fibers), as shown by
  Beauville~\cite{beauville-stables}.

  Since operations on sheaves tend to increase the complexity in general
  (although in a controllable manner), it seems very difficult to
  imagine how one could construct the ``other'' $q-6$ sheaves in a
  straighforward way.
  
\item \emph{Why are perverse sheaves essential to the results of this
    book?  Why can one not (even in the simplest cases, such as
    exponential sums parameterized by multiplicative characters) work
    around the requirement to use such objects in a way similar to the
    previous papers of Fouvry, Kowalski and Michel?}
  \par
  \medskip
  \par
  The simplest reason for this (not the only one) is that the use of
  tannakian methods (which is the only way we know to produce the
  symmetry group for arithmetic Fourier transforms) depends on applying
  many times a number of operations which will have uncontrollable
  effect on the type of complex we work with, even when starting with a
  single lisse sheaf.
  \par
  More technically, the same tannakian idea requires the construction of
  an \emph{abelian} category (which will ``be'' the category of
  representations of the symmetry group); general complexes do not form
  an abelian category, whereas perverse sheaves form one---certainly
  the best known abelian category beyond that of lisse sheaves.
\item \emph{Conversely, if perverse sheaves are so natural and have such
    remarkable properties, and suffice to describe all trace functions,
    why not dispense with general complexes then?}

\medskip

  Here the issue is that, although perverse sheaves and their trace
  functions are individually wonderful things, they are not \emph{in
    toto} stable by all the operations that one might want to apply.  In
  particular, if $M$, $M_1$, $M_2$ and $N$ are perverse sheaves, then it
  is \emph{not true} in general that $M_1\otimes M_2$, or $f^*N$, or
  $Rf_!M$, are perverse sheaves (on their respective affine spaces). (A
  significant and highly non-trivial exception, however, is that if $M$
  is perverse on~$\Aa^m$, then its Fourier transform in the sense of
  Deligne is still perverse.) In the case of our applications, the
  problem appears in the definition of the algebraic convolution that is
  used to apply the tannakian formalism---\emph{a priori}, even for $M$
  and~$N$ perverse, their algebraic convolution is simply a complex of
  constructible sheaves.

\item \emph{Why is there no normalization by the size of the sum in a
    formula like~\emph{(\ref{eq-quasi-ortho})}\,?}
  
  \medskip
  
  It is a useful property of perverse sheaves, although surprising at
  first sight, that the definition itself implies a normalization for
  these sums. If $M$ is a perverse sheaf with support~$\Aa^m$ which is
  pure of weight~$0$, then the local eigenvalues at a ``generic'' point
  $x$ of~$k_n^m$ are of weight $-m$, i.e., they are typically of size
  $|k|^{-m/2}$. So the sum~(\ref{eq-quasi-ortho}) is naturally expected
  to be of bounded size, \emph{without normalizing}.
\end{itemize}


\backmatter

\printindex

\printnomenclature[45mm]

\bibliographystyle{abbrv}
\bibliography{bibliography_convolution.bib}

\end{document}